\newtheorem{theorem}{Theorem}[section] 
\newtheorem{lemma}[theorem]{Lemma}     
\newtheorem{corollary}[theorem]{Corollary}
\newtheorem{proposition}[theorem]{Proposition}
\newtheorem{remark}[theorem]{Remark}
\newtheorem{definition}[theorem]{Definition}
\newtheorem{conjecture}[theorem]{Conjecture}
\newtheorem{example}[theorem]{Example}
\numberwithin{equation}{section}
\newcommand{\fa}{\mathfrak{a}}
\newcommand{\fb}{\mathfrak{b}}
\newcommand{\p}{\mathfrak{p}}
\newcommand{\fm}{\mathfrak{m}}
\newcommand{\cC}{\mathcal{C}}
\newcommand{\cO}{\mathcal{O}}
\newcommand{\cM}{\mathcal{M}}
\newcommand{\cN}{\mathcal{N}}
\newcommand{\cS}{\mathcal{S}}
\newcommand{\cJ}{\mathcal{J}}
\newcommand{\cI}{\mathcal{I}}
\newcommand{\F}{\mathbb F}
\newcommand{\EE}{\mathrm{E}}
\newcommand{\GKdim}{\mathrm{dim}_G}
\newcommand{\N}{\mathbb N}
\newcommand{\Q}{\mathbb Q}
\newcommand{\Z}{\mathbb Z}
\newcommand{\W}{\mathbb W}
\newcommand{\A}{\mathbb A}
\newcommand{\rRep}{\mathrm{Rep}}
\newcommand{\fg}{\mathfrak{g}}
\newcommand{\tGamma}{\widetilde{\Gamma}}
\newcommand{\tD}{\widetilde{D}}
\newcommand{\filt}{\textrm{-}\mathrm{filt}}
\newcommand{\wh}{\widehat}
\newcommand{\wt}{\widetilde}
\newcommand{\Fil}{\mathrm{Fil}}
\newcommand{\Ord}{\mathrm{Ord}}
\newcommand{\ra}{\rightarrow}
\newcommand{\lra}{\longrightarrow}
\newcommand{\GL}{\mathrm{GL}}
\newcommand{\SL}{\mathrm{SL}}
\newcommand{\id}{\mathrm{Id}}
\newcommand{\bFp}{\overline{\F}_p}
\newcommand{\bQp}{\overline{\Q}_p}
\newcommand{\Sym}{\mathrm{Sym}}
\providecommand{\cInd}{\mathrm{c}\textrm{-}\mathrm{Ind}}
\newcommand{\cris}{\mathrm{cris}}
\newcommand{\brho}{\overline{\rho}}
\newcommand{\ide}{\mathbf{1}}
\newcommand{\plim}{\varprojlim}
\newcommand{\ilim}{\varinjlim}
\newcommand{\xto}[1][]{\xrightarrow{#1}}
\newcommand{\simto}{
\xto[\sim]} 
\providecommand{\ligne}{\textbf{---}}
\newcommand{\matr}[4]{\begin{pmatrix}{#1}&{#2}\\ {#3}&{#4}\end{pmatrix}}
\newcommand{\smatr}[4]{\bigl(\begin{smallmatrix} {#1}& {#2}\\ {#3}&{#4}\end{smallmatrix}\bigl)}
\newcommand{\un}[1]{\underline{#1}}
\newcommand{\gr}{\mathrm{gr}}
\def\QM{{\mathbb{Q}}}
\def\OC{{\mathcal{O}}}
\def\FM{{\mathbb{F}}}
\def\AM{{\mathbb{A}}}
\def\TM{{\mathbb{T}}}
\def\Fov{{\overline{F}}}
\def\into{\hookrightarrow}
\def\onto{\twoheadrightarrow}
\def\To#1{\buildrel\hbox{\tiny{$#1$}}\over\longrightarrow}
\DeclareMathOperator{\Ann}{{\mathrm{Ann}}}
\DeclareMathOperator{\Art}{{\mathrm{Art}}}
\DeclareMathOperator{\Coker}{{\mathrm{Coker}}}
\DeclareMathOperator{\End}{{\mathrm{End}}}
\DeclareMathOperator{\Ext}{{\mathrm{Ext}}}
\DeclareMathOperator{\Fr}{{\mathrm{Fr}}}
\DeclareMathOperator{\Frob}{{\mathrm{Frob}}}
\DeclareMathOperator{\Gal}{{\mathrm{Gal}}}
\DeclareMathOperator{\Hom}{{\mathrm{Hom}}}
\DeclareMathOperator{\im}{{\mathrm{Im}}}
\DeclareMathOperator{\Ind}{{\mathrm{Ind}}}
\DeclareMathOperator{\rInj}{{\mathrm{Inj}}}
\DeclareMathOperator{\JH}{{\mathrm{JH}}}
\DeclareMathOperator{\Ker}{{\mathrm{Ker}}}
\DeclareMathOperator{\loc}{{\mathrm{loc}}}
\DeclareMathOperator{\Mod}{\mathrm{Mod}}
\DeclareMathOperator{\Proj}{{\mathrm{Proj}}}
\DeclareMathOperator{\rad}{{\mathrm{rad}}}
\DeclareMathOperator{\Res}{{\mathrm{Res}}}
\DeclareMathOperator{\Rep}{{\mathrm{Rep}}}
\DeclareMathOperator{\soc}{{\mathrm{soc}}}
\DeclareMathOperator{\Spec}{{\mathrm{Spec}}}
\DeclareMathOperator{\tr}{{\mathrm{tr}}}
\DeclareMathOperator{\Tor}{{\mathrm{Tor}}}
\DeclareMathOperator{\rsoc}{{\mathrm{soc}}}
\DeclareMathOperator{\rProj}{{\mathrm{Proj}}}
\def\a{\alpha}
\def\G{\Gamma}
\def\e{\varepsilon}
\def\l{\lambda}
\def\L{\Lambda}
\def\o{\omega}
\def\s{\sigma}
\newcommand{\Serre}{\mathscr{D}(\brho)}
\newcommand{\PtG}{\rProj_{\tGamma}}
\newcommand{\PG}{\rProj_{\Gamma}}
\newcommand{\Ug}{U(\overline{\mathfrak{g}})}
\newcommand{\Ugi}{U(\overline{\mathfrak{g}}_i)}
\newcommand{\ord}{\mathrm{ord}}
\newcommand{\defn}{\overset{\rm{def}}{=}}
\newcommand{\FKZ}{\F[\![K/Z_1]\!]}
\newcommand{\FIwZ}{\F[\![I/Z_1]\!]}
\newcommand{\quash}[1]{}
\def\thm@space@setup{%
  \thm@preskip=0.3cm plus 0.1 cm minus 0.1 cm
  \thm@postskip=\thm@preskip
}
\begin{document}

\title
{On the mod $p$ cohomology for $\GL_2$: the non-semisimple case}

\author{Yongquan HU \and Haoran WANG }
\thanks{Morningside Center of Mathematics, Academy of Mathematics and Systems Science,
 Chinese Academy of Sciences,  Beijing 100190, China; University of the Chinese Academy of Sciences, Beijing 100049,
China\\
{\it E-mail:} {\ttfamily yhu@amss.ac.cn}\\
\noindent  Yau Mathematical Sciences Center, Tsinghua University, Beijing, 100084, China\\
{\it E-mail:} {\ttfamily haoranwang@mail.tsinghua.edu.cn}
}
\thanks{Mathematics Subject Classification 2010: 22E50, 11F70}
\date{}
\maketitle

\begin{abstract}
Let $F$ be a totally real field unramified at all places above $p$  and $D$ be a quaternion algebra which splits at either none,  or exactly one,  of the infinite places. Let $\overline{r}:\mathrm{Gal}(\overline{F}/F)\ra \GL_2(\overline{\F}_p)$ be a continuous irreducible representation which, when restricted to a fixed place $v|p$, is non-semisimple and sufficiently generic. Under some mild assumptions, we prove that the admissible smooth representations of $\mathrm{GL}_2(F_v)$ occurring in the corresponding Hecke eigenspaces of the mod $p$ cohomology of Shimura varieties associated to $D$ have Gelfand-Kirillov dimension $[F_v:\mathbb{Q}_p]$.
We also prove that any such representation can be generated as a $\mathrm{GL}_2(F_v)$-representation by its subspace of invariants under  the  first principal congruence subgroup. If moreover $[F_v:\mathbb{Q}_p]=2$, we prove that such representations have length $3$, confirming a speculation of Breuil and Pa\v{s}k\=unas.
\end{abstract}

\setcounter{tocdepth}{1}
\tableofcontents

\section{Introduction}

Let $p$ be a prime number. The mod $p$ (and also $p$-adic) Langlands program has been emerged starting from the fundamental work  of Breuil \cite{Br03}. Up to present, the  (mod $p$)  correspondence in the case of $\GL_2(\Q_p)$ has been well-understood in various aspects, by  the works of \cite{Br03}, \cite{Co}, \cite{Em3}, and \cite{Pa13}.
 However, the situation is much more complicated if $\GL_2(\Q_p)$ is replaced by a higher dimensional group, and a large part of the theory remains mysterious. One of the main obstacles is that we don't have a satisfactory understanding of supersingular representations of $p$-adic reductive groups.

The aim of this paper is to study  the mod $p$ Langlands correspondence for $\GL_2$ of a finite \emph{unramified} extension of $\Q_p$, in the context of local-global compatibility following \cite{BDJ}.  By the work of Emerton \cite{Em3}, the mod $p$ correspondence for $\GL_2(\Q_p)$ can be realized in the mod $p$ cohomology of modular curves. It is thus natural to search for this hypothetical correspondence for $\GL_2$  in the cohomology of  Shimura curves.  
To explain this we  fix the global setup.

Let $F$ be a totally real extension of $\QM$ in which $p$ is unramified. Let $D$ be a quaternion algebra with center $F.$ We assume that $D$ splits at exactly one infinite place in the introduction. For $U$ a compact open subgroup of $ ( D\otimes_F \A_{F,f})^{\times}$ let $X_U$ be the associated smooth projective Shimura curve over $F,$ (in the case  $(F,D) = (\Q, \GL_2)$, $X_U$ is the compactified modular curve). We fix a place $v$ above $p$ and let $f \defn [F_v:\Q_p].$ Let $\F$ be a sufficiently large finite extension of $\F_p$ (served as the coefficient field). Let  $\overline{r}:\Gal(\overline{F}/F)  \ra \GL_2(\F)$ be an absolutely irreducible continuous  Galois representation. Fixing $U^v $ a compact open subgroup of $ ( D\otimes_F \A^{\{v\}}_{F,f})^{\times}$ and letting  $U_v$ run over compact open subgroups of $(D\otimes_F F_v)^{\times} \cong \GL_2(F_v)$, we consider the $\F$-vector space
\begin{equation}\label{equ::def-of-pi}
\ilim_{U_v} \Hom_{\Gal(\overline{F}/F)} \left(\overline{r} , H^1_{\textrm{\'et}} (X_{U^v U_v}\times_F \overline{F} , \F) \right)
\end{equation}
which is an  admissible smooth representation of $\GL_2(F_v)$ over $\F.$

By carefully choosing the ``away from $v$ data'' as in \cite{BreuilDiamond} and \cite{EGS}, we land  in the so-called \emph{minimal} case, and denote the resulting representation by $\pi^D_v (\overline{r})$ (see \cite[Eq. (28)]{BreuilDiamond}). As suggested by \cite[Conj.~4.7]{BDJ} and \cite[Cor.~3.7.4]{BreuilDiamond}, $\pi_v^D(\overline{r})$ is expected to realize a mod $p$ Langlands correspondence.
\emph{A priori}, $\pi_v^D(\overline{r})$ might depend on the various global choices but,  conjecturally, $\pi^D_v (\overline{r})$ depends only on $\brho \defn \overline{r}^{\vee} |_{\Gal(\overline{F}_v/ F_v)},$ the restriction of $\overline{r}^{\vee}$ to $\Gal(\overline{F}_v/ F_v).$  For this reason, in the following we write  \[
\pi (\brho) \defn \pi^D_v (\overline{r}).
\]

There have been a lot of works  studying the representation-theoretic properties of $\pi(\brho)$, see \cite{BDJ}, \cite{Gee}, \cite{Gee-Kisin}, \cite{Br14}, \cite{BreuilDiamond}, \cite{EGS}, \cite{HuJLMS}, \cite{HW}, \cite{LMS}, \cite{Le}, \cite{Dotto-Le}, etc.  These works often have the common aim to determine certain invariants attached to the restriction of $\pi(\brho)$ to $K\defn\GL_2(\cO_{F_v})$, like the socle, the  subspace of invariants under the first principal subgroup $K_1\defn 1+p\mathrm{M}_2(\cO_{F_v})$ or the pro-$p$ Iwahori subgroup $I_1$, and also some local-global compatibility related to these subspaces. For example, it is known that (under various mild assumptions) 
\begin{enumerate}
\item[(i)] $\soc_{K}\pi(\brho)\cong \oplus_{\sigma\in\mathscr{D}(\brho)}\sigma$, where $\mathscr{D}(\brho)$ is an explicit set of Serre weights (i.e. irreducible $\F$-representations of $K$) associated to $\brho$ in \cite[\S9]{BP}, see \cite{Gee-Kisin}, \cite{EGS};
\item[(ii)] $\pi(\brho)^{K_1}\cong D_0(\brho)$, where $D_0(\brho)$ is a representation of  $\GL_2(\F_{p^f})$  constructed in \cite[\S13]{BP}, see \cite{HW}, \cite{LMS}, \cite{Le}.
\end{enumerate}
Nonetheless, when $F_v\neq \Q_p$, a complete description of $\pi(\brho)$ still seems to be out of reach.
\vspace{1mm}

From now on, we make the following assumptions on $\overline{r}:$
\begin{enumerate}
\item[(a)] $\overline{r}|_{\Gal \left(\overline{F} / F(\sqrt[p]{1}) \right)}$ is absolutely irreducible, and modular (i.e. $\pi(\brho)$ is nonzero);

\item[(b)] for $w\nmid p$ such that either $D$ or $\overline{r}$ ramifies, the framed deformation ring of $\overline{r}|_{\Gal (\overline{F}_w / F_w )}$ over the ring of  Witt vectors $W(\F)$ is formally smooth;

\item[(c)] for $w|p,$ $w\neq v,$ $\overline{r}|_{I_{ F_w }}$ is generic in the sense of \cite[Def.~11.7]{BP}, where $I_{ F_w }$ is the inertia subgroup at $w;$

\item[(d)] $\brho$ is \emph{reducible nonsplit} and, when restricted to $I_{F_v}$,  is of the following form up to twist:
\[
\matr{\omega_f^{\sum_{i=0}^{f-1}p^i(r_i+1)}}*01
\]
where $\omega_f$ denotes Serre's fundamental character of $I_{F_v}$ of level $f$.  We assume $\brho$ is \emph{strongly generic} in the sense that  $2 \leq r_i\leq p-5$ for each $i.$   In particular, this implies $p \geq 7.$
\end{enumerate}

The following is  our first main result.
  \begin{theorem}[Theorem \ref{thm:main-flat}]
  \label{thm:intro-GK}
Keep the above assumptions on $F$, $D$ and $\overline{r}$. We have \[\dim_{\GL_2(F_v)}(\pi(\brho))=f.\]
 \end{theorem}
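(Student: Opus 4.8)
The plan is to prove that $\dim_{\GL_2(F_v)}(\pi(\brho)) = f$ by establishing the two inequalities separately, and the substance of the argument lies in the lower bound $\geq f$, since the upper bound $\leq f$ already follows from the general machinery bounding Gelfand--Kirillov dimension of admissible representations arising in the cohomology (via the Calegari--Emerton-type patching functor and dimension-counting of patched modules over the local deformation ring, as in the semisimple case treated by Breuil--Herzig--Hu--Morra--Schraen). So first I would set up the patched module $M_\infty$ associated to $\pi(\brho)$, recall that it is a finitely generated module over $R_\infty \llbracket \GL_2(\cO_{F_v}) \rrbracket$, and that the support of $M_\infty$ over the local framed deformation ring $R_{\brho}^{\square}$ (or its potentially Barsotti--Tate quotients) controls the growth. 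Combined with the assumption (d) that $\brho$ is reducible nonsplit and strongly generic, one knows the relevant local deformation rings and can read off $\dim_K M_\infty / \fm_\infty \leq f$, giving $\dim_{\GL_2(F_v)} \pi(\brho) \leq f$.

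For the lower bound, the approach I would take is to exhibit enough ``growth'' inside $\pi(\brho)$ by understanding its restriction to $K = \GL_2(\cO_{F_v})$, or rather to the pro-$p$ Iwahori $I_1$. Concretely, I would use the result cited as (ii) in the excerpt, namely $\pi(\brho)^{K_1} \cong D_0(\brho)$, and the structure of $D_0(\brho)$ as a representation of $\GL_2(\F_{p^f})$; passing to $I_1$-invariants one controls $\pi(\brho)^{I_1}$ and its Hecke action. The key is to show that the graded module $\gr \pi(\brho)$ over the graded ring $\gr \F\llbracket I_1/Z_1 \rrbracket$ (a polynomial-type ring of the appropriate Krull dimension) has the right dimension. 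One typically does this by producing an explicit nonzero element whose annihilator in $\F\llbracket I_1/Z_1 \rrbracket$ is as small as possible, i.e. by showing the existence of a quotient or subquotient of $\pi(\brho)|_{I_1}$ whose associated graded realizes dimension $f$. Here the non-semisimplicity of $\brho$ enters crucially: the nonsplit extension forces certain ``gluing'' phenomena between the Serre weights in $\mathscr{D}(\brho)$, and one must check these do not collapse the dimension.

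The main obstacle, I expect, will be transferring the semisimple-case techniques to the non-semisimple setting. In the semisimple case, $\pi(\brho)$ decomposes (on the level of $K$-socle) into pieces indexed by a set of Serre weights that behaves ``independently'', and one can argue weight-by-weight; in the non-semisimple case the Serre weights are linked by the extension class of $\brho$, so the combinatorics of $\mathscr{D}(\brho)$ changes and the relevant multiplicity-one or freeness statements for $D_0(\brho)$ over the appropriate Hecke algebra must be re-derived. A second difficulty is establishing that the patched module $M_\infty$ is, up to the expected twist, free of the correct rank over its scheme-theoretic support (a Taylor--Wiles--Kisin patching plus local deformation ring computation), which is what pins the upper bound exactly at $f$ rather than something larger; this requires the strong genericity in (d) to guarantee the local deformation rings are as simple as possible (formally smooth components, no unexpected intersections). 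Once both the freeness of $M_\infty$ and the $I_1$-level growth estimate are in hand, the two bounds meet at $f$ and the theorem follows.

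I would organize the write-up as: (1) recall the patching setup and the definition of $M_\infty$; (2) prove the upper bound $\dim_{\GL_2(F_v)} \pi(\brho) \leq f$ via support-dimension of $M_\infty$ over $R_\infty$; (3) analyze $\pi(\brho)^{I_1}$ and $D_0(\brho)$ under assumption (d) to produce the growth needed for the lower bound $\dim_{\GL_2(F_v)} \pi(\brho) \geq f$; (4) conclude. The heart of the paper will be step (3), where the non-semisimple structure of $\brho$ must be leveraged rather than being an obstruction.
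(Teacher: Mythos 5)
Your two bounds are swapped, and this reverses which half of the argument is routine and which is the heart of the matter. The patching/support-dimension count you place in step (2) is what gives the \emph{lower} bound $\dim_{\GL_2(F_v)}(\pi(\brho))\geq f$ (the Gee--Newton argument: $M_\infty$ is Cohen--Macaulay of known dimension over $S_\infty[\![K/Z_1]\!]$, and killing the $\dim R_\infty$ generators of $\fm_{R_\infty}$ can drop the dimension by at most that amount). It cannot give the upper bound, because by Gee--Newton the inequality $\dim_{\GL_2(F_v)}(\pi(\brho))\leq f$ is \emph{equivalent} to flatness of $M_\infty$ over $R_\infty$; so your plan to deduce the upper bound from freeness of $M_\infty$ over its support is circular --- that freeness is exactly the statement to be proven, and no computation of local deformation rings alone establishes it. Conversely, your step (3) is unnecessary for the lower bound and would not work as described: knowing $\pi(\brho)^{K_1}\cong D_0(\brho)$ controls only the first congruence level and gives no handle on growth at deeper levels.

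The missing idea is the criterion of Breuil--Herzig--Hu--Morra--Schraen: if $[\pi[\fm_{I_1/Z_1}^{3}]:\chi]=[\pi[\fm_{I_1/Z_1}]:\chi]$ for every character $\chi$ occurring in $\pi^{I_1}$, then $\dim_{\GL_2(F_v)}(\pi)\leq f$. The real work is to verify this multiplicity-one statement for $\pi(\brho)$. This is done by (a) a representation-theoretic reduction (the combinatorics of $\tGamma=\F[\![K/Z_1]\!]/\fm_{K_1/Z_1}^2$-modules together with the indecomposability of the diagram $(D_0(\brho),D_1(\brho),\mathrm{can})$) showing it suffices to prove $\dim_{\F}\Hom_K(\Proj_{\tGamma}\sigma_0,\pi(\brho))=1$ for the single ordinary weight $\sigma_0$; and (b) proving that $M_\infty(\Proj_{\tGamma}\sigma_0)$ is cyclic over $R_\infty$, which combines Le's cyclicity of $M_\infty(\Proj_{\Gamma}\sigma_0)$ with the semisimplicity of the ordinary part of $\pi(\brho)$ (packaged as cyclicity of $M_\infty(\Theta_{\sigma_0}^{\rm ord})$ and the fact that its support lies in the reducible locus) and a tangent-space computation showing the relevant ideals sum to the crystalline one. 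None of this appears in your outline, and it is where the non-semisimplicity of $\brho$ is actually used --- not to produce growth, but to make the diagram indecomposable so that multiplicity one propagates from the single weight $\sigma_0$ to all of $\mathscr{D}(\brho)$.
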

Here, $\dim_{\GL_2(F_v)}(\pi(\brho))$  denotes the Gelfand-Kirillov dimension of $\pi(\brho)$ which, roughly speaking, measures the ``size'' of $\pi(\brho)$, see \S\ref{subsection-duality}.
 The importance of controlling the Gelfand-Kirillov dimension of $\pi(\brho)$ was first pointed out in \cite{Gee-Newton}.  Namely, Theorem \ref{thm:intro-GK}  implies that the patched modules  constructed in \cite{CEGGPS1}, commonly denoted by $M_{\infty}$, are \emph{flat} over the corresponding patched deformation rings $R_{\infty}$, which are power series rings over  $W(\F)$ by (b). Consequently, as explained in \cite[\S1.1]{CEGGPS1},  this allows to define a candidate for the $p$-adic Langlands correspondence, see Theorem \ref{thm:intro-pLL} below for a precise statement.

 The patched modules  also play an important role in the proof of Theorem \ref{thm:intro-GK}. Using them, it is proved in \cite[Appendix A]{Gee-Newton} that we always have $\dim_{\GL_2(F_v)}(\pi(\brho))\geq f$. Hence, it is enough to prove the upper bound $\dim_{\GL_2(F_v)} (\pi(\brho))\leq f$, whose proof relies on the following key criterion proved in \cite{BHHMS}.  To state it we introduce some more notation. Let   $I$ be the (upper triangular) Iwahori subgroup of $K$ and $Z_1$ the center of $K_1$. 
Let $\F[\![K_1/Z_1]\!]$ (resp. $\F[\![I_1/Z_1]\!]$) denote the Iwasawa algebra of $K_1/Z_1$ (resp. $I_1/Z_1$) with maximal ideal $\frak{m}_{K_1/Z_1}$ (resp. $\frak{m}_{I_1/Z_1}$). Also let $\tGamma\defn \F[\![K/Z_1]\!]/\frak{m}_{K_1/Z_1}^2$ and $\Gamma \defn \F[\GL_2(\F_{p^f})]\cong \F[\![K/Z_1]\!]/\frak{m}_{K_1/Z_1}$. 

\begin{theorem} $($\cite[Cor.~5.3.5]{BHHMS}$)$
\label{thm:intro-BHHMS}
Let $\pi$ be an admissible smooth representation of $\GL_2(F_v)$ over $\F$ with a central character. Assume that we have an equality of multiplicities
\[\big[\pi[\fm_{I_1/Z_1}^{3}]:\chi\big]=\big[\pi[\fm_{I_1/Z_1}]:\chi\big]\] for each character $\chi$ such that $\big[\pi[\fm_{I_1/Z_1}]:\chi\big]\neq0$. Then $\dim_{\GL_2(F_v)}(\pi)\leq f$.
\end{theorem}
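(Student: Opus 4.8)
The plan is to pass to Pontryagin duals and reduce to a dimension bound for the associated graded module over $\gr_{\fm_{I_1/Z_1}}\F[\![I_1/Z_1]\!]$, then to show that the multiplicity hypothesis forces this module to be supported in dimension $f$. First I would set $M:=\pi^{\vee}$ and $\Lambda:=\F[\![I_1/Z_1]\!]$; as $\pi$ is admissible with a central character, $M$ is finitely generated over $\Lambda$ and $\dim_{\GL_2(F_v)}(\pi)=\dim_{\Lambda}M=\dim_{R}\gr M$, where $R:=\gr_{\fm_{I_1/Z_1}}\Lambda$ and $\gr M=\bigoplus_{n\ge0}\fm_{I_1/Z_1}^{n}M/\fm_{I_1/Z_1}^{n+1}M$. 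Since $\dim_{\F}\bigoplus_{k\le n}(\gr M)_{k}=\dim_{\F}M/\fm_{I_1/Z_1}^{n+1}M=\dim_{\F}\pi[\fm_{I_1/Z_1}^{n+1}]$, the hypothesis is precisely a constraint on $\gr M$ in degrees $\le2$, and it suffices to prove $\dim_{R}\gr M\le f$.

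The key input is the structure of $R$ together with the conjugation action of the finite torus $\Omega:=I/I_1\cong T(\F_{p^f})$ (of order prime to $p$), as worked out in \cite{BHHMS}: $R$ is generated in degree $1$ by elements $y_1,\dots,y_f$ (from the upper unipotent radical of $I_1/Z_1$) and $z_1,\dots,z_f$ (from the lower one), on which $\Omega$ acts through the characters $\sigma_j(\alpha)^{\pm1}$, with $\alpha$ the simple root and $\sigma_0,\dots,\sigma_{f-1}$ the embeddings $\F_{p^f}\hookrightarrow\F$; moreover $[y_i,y_j]=[z_i,z_j]=0$, $[y_i,z_j]=0$ for $i\ne j$, and $[y_i,z_i]$ is a nonzero multiple of a degree-$2$ element $h_i$ of trivial $\Omega$-weight, with $R/(h_1,\dots,h_f)\cong\F[y_1,\dots,y_f,z_1,\dots,z_f]$. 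The elements $h_i$ and the products $y_iz_i$ are $\Omega$-invariant. I would then take $J:=(h_1,\dots,h_f,\,y_1z_1,\dots,y_fz_f)$, a homogeneous ideal generated in degree $2$; since $R/J$ is a quotient of $\F[y_j,z_k]/(y_1z_1,\dots,y_fz_f)$, whose spectrum is a union of $2^{f}$ coordinate $f$-planes, one gets $\dim_{R}(R/J)=f$. Hence it is enough to prove $J\cdot\gr M=0$, for then $\dim_{R}\gr M=\dim_{R/J}\gr M\le f$.

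For the last point, which is formal, note that by graded Nakayama $\gr M$ is generated over $R$ in degree $0$, and $(\gr M)_{0}\cong(\pi[\fm_{I_1/Z_1}])^{\vee}$ as $\Omega$-representations (all the filtrations involved split $\Omega$-equivariantly since $p\nmid|\Omega|$). Dualizing, the hypothesis says exactly that for every character $\psi$ of $\Omega$ occurring in $(\gr M)_{0}$ one has $[(\gr M)_{1}:\psi]=[(\gr M)_{2}:\psi]=0$. Each generator of $J$ lies in $R_2$ and is $\Omega$-invariant, so it sends the $\psi$-isotypic part of $(\gr M)_{0}$ into the $\psi$-isotypic part of $(\gr M)_{2}$, which vanishes; thus the generators of $J$ annihilate $(\gr M)_{0}$. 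Since $\gr M$ is generated in degree $0$ and $J$ in degree $2$, this gives $J_{n}(\gr M)_{0}=R_{n-2}J_{2}(\gr M)_{0}=0$ for all $n$, hence $J\cdot\gr M=0$. The technical heart of this approach is the preliminary one: the explicit determination of $R$, the $\Omega$-action, and the ideal $J$ with $\dim_{R}(R/J)=f$, carried out in \cite{BHHMS}. I would also note that it is genuinely the comparison with $\fm_{I_1/Z_1}^{3}$, not $\fm_{I_1/Z_1}^{2}$, that is used, since the relevant elements $h_i$ and $y_iz_i$ sit in degree $2$.
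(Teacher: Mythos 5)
Your sketch is correct and is essentially the argument of \cite[\S 5]{BHHMS} (the paper itself only quotes this result): pass to $\gr_{\fm_{I_1/Z_1}}(\pi^{\vee})$ over $\gr_{\fm_{I_1/Z_1}}\F[\![I_1/Z_1]\!]\cong\bigotimes_i U(\overline{\fg}_i)$, use the $H$-weight argument to show the degree-two invariant elements $h_i$ and $y_iz_i$ kill the degree-zero part and hence (being central, resp.\ central modulo the $h_j$) the whole module, and conclude from $\dim R/J=f$. The only point worth making explicit is the normality of the generators of $J$, which justifies your identity $J_n=R_{n-2}J_2$ and the reduction from $J\cdot(\gr M)_0=0$ to $J\cdot\gr M=0$.
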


Using the above criterion, proving Theorem \ref{thm:intro-GK} is reduced to proving the following multiplicity one property of $\pi(\brho)$ (recall that we are considering  the minimal case).

\begin{theorem}[Corollary \ref{cor:multione-Iwahori}]
\label{thm:intro-multione}
(i) For any $\sigma\in \mathscr{D}(\brho)$, we have $\big[\pi(\brho)[\fm_{K_1/Z_1}^2]:\sigma\big]=1$.

(ii) For any $\chi:I\ra \F^{\times}$ such that $\big[\pi(\brho)[\fm_{I_1/Z_1}]:\chi\big]\neq0$, we have $\big[\pi(\brho)[\fm_{I_1/Z_1}^3]:\chi\big]=1$.
 \end{theorem}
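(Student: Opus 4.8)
The plan is to reduce the multiplicity-one statement for $\pi(\brho)$ to a computation with the patched module $M_\infty$ together with Taylor--Wiles--Kisin patching input, and then to feed the resulting bounds into the mod $p$ representation theory of $\GL_2(F_v)$ on the $K$- and $I$-levels. First I would recall, as in \cite{BreuilDiamond} and \cite{EGS}, that in the minimal case one has $M_\infty/\fm_\infty \cong \pi(\brho)^\vee$ (Pontryagin dual), and that for each Serre weight $\sigma \in \Serre(\brho)$ the module $M_\infty(\sigma) \defn M_\infty \otimes_{\cO[\![K]\!]} \sigma^\vee$ is a finite free module of rank one over a power-series quotient $R_\infty(\sigma)$ of $R_\infty$ -- this uses assumptions (b), (c) and the genericity in (d) together with the geometric Breuil--M\'ezard results (the relevant crystalline deformation rings with Hodge--Tate weights in the Fontaine--Laffaille range are formally smooth of the expected dimension). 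The freeness of rank one over $R_\infty(\sigma)$ is precisely the statement $\soc_K\pi(\brho) = \oplus_{\sigma\in\Serre(\brho)}\sigma$ with multiplicity one refined at the level of the patched module; this is the point (i) in its ``$K_1$-socle'' incarnation, and to upgrade it to the statement about $\pi(\brho)[\fm_{K_1/Z_1}^2]$ one argues on the graded pieces.

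For part (i), I would work with the two-step filtration coming from $\tGamma = \F[\![K/Z_1]\!]/\fm_{K_1/Z_1}^2$, whose associated graded is $\Gamma \oplus (\fm_{K_1/Z_1}/\fm_{K_1/Z_1}^2)\otimes_\F\Gamma$, the second factor being (up to twist) $\fg/\fz \otimes_\F \F[\GL_2(\F_{p^f})]$ with $\fg = \mathfrak{gl}_2(\F_{p^f})$. So $[\pi(\brho)[\fm_{K_1/Z_1}^2] : \sigma]$ is controlled by $[\pi(\brho)^{K_1} : \sigma]$ plus a contribution from $\Hom_K(\sigma, \fg/\fz\otimes \pi(\brho)^{K_1})$-type terms; since $\pi(\brho)^{K_1}\cong D_0(\brho)$ by (ii) of the list (\cite{HW}, \cite{LMS}, \cite{Le}) and $D_0(\brho)$ is an explicit $\GL_2(\F_{p^f})$-representation, the first term is $1$ for $\sigma\in\Serre(\brho)$. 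The real work is to show the potential new contribution in $\fm_{K_1/Z_1}/\fm_{K_1/Z_1}^2$ vanishes for each $\sigma\in\Serre(\brho)$; I expect this to follow from the explicit structure of $D_0(\brho)$ as a direct sum of injective envelopes $\Ptsigma$-type summands in the non-semisimple case, combined with a computation of $\Hom_{\GL_2(\F_{p^f})}(\sigma, \fg\otimes D_0(\brho))$ using the tensor-product decomposition $\fg \cong \F \oplus \mathrm{Sym}^{p-3}\otimes(\det)^{?}\oplus\cdots$ over each embedding factor and Steinberg's tensor product theorem. Alternatively, and this may be cleaner, one keeps everything at the level of $M_\infty$: since $M_\infty(\sigma)$ is free of rank one over $R_\infty(\sigma)$ which is regular, one computes $M_\infty/\fm_\infty$ modulo $\fm_{K_1/Z_1}^2$ directly, reducing the multiplicity to the rank of an explicit $\mathrm{Tor}$ or to counting minimal generators, and the formal smoothness in (b), (c) kills the error terms.

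For part (ii), I would pass from $K$ to the Iwahori $I$ via Frobenius reciprocity: a character $\chi : I\to\F^\times$ occurring in $\pi(\brho)[\fm_{I_1/Z_1}]$ corresponds, via the $I$-socle, to a Serre weight of $K$ in $\Serre(\brho)$ (here the non-semisimplicity of $\brho$ in (d) makes $\Serre(\brho)$ of a specific ``chain'' shape, which is what pins the final length down to $3$ when $f=2$), and one transports the filtration by powers of $\fm_{I_1/Z_1}$ to the $K_1$-filtration using $\fm_{I_1/Z_1}^3 \supseteq \fm_{K_1/Z_1}^2 \cap (\cdots)$. Concretely I would use the comparison, established in \cite{BHHMS} (and underlying Theorem \ref{thm:intro-BHHMS}), between $\pi[\fm_{I_1/Z_1}^3]$ and the $\tGamma$-module structure on $\pi[\fm_{K_1/Z_1}^2]$: an $I_1$-eigenvector killed by $\fm_{I_1/Z_1}^3$ that is not already killed by $\fm_{I_1/Z_1}$ would, after averaging over $I/I_1$, produce an extra Jordan--H\"older factor in $\pi(\brho)[\fm_{K_1/Z_1}^2]$ beyond those accounted for in (i), contradicting part (i). So (ii) should be formally deducible from (i) together with the combinatorics of how $I$-characters sit inside Serre weights.

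The main obstacle I anticipate is controlling the ``degree-one'' part of the $K_1$-filtration, i.e.\ proving the vanishing of the a priori possible extra multiplicities coming from $\fm_{K_1/Z_1}/\fm_{K_1/Z_1}^2 \otimes \Gamma$ in part (i). This is where the strong genericity $2\le r_i\le p-5$ in (d) and the non-semisimplicity of $\brho$ must be used in an essential way -- in the semisimple case one knows the corresponding statement can fail or requires a different argument, so the novelty here is precisely the non-semisimple input, which presumably forces $D_0(\brho)$ (equivalently, the $\tGamma$-module $M_\infty/\fm_\infty M_\infty[\fm_{K_1/Z_1}^2]$) to be ``rigid enough'' that no extra Jordan--H\"older factors of the form $\sigma\in\Serre(\brho)$ appear in degree one. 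Establishing this rigidity -- most likely via an explicit description of $M_\infty(\sigma)$ and its first-order neighbourhood, using the formal smoothness of the framed local deformation rings at $p$ in the Fontaine--Laffaille range and the tamely potentially crystalline deformation rings away from $p$ -- is the technical heart of the argument.
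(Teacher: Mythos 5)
There is a genuine gap in both halves of your argument, and it sits exactly at what turns out to be the technical heart of the proof. For (i), your first route --- bounding $\big[\pi(\brho)[\fm_{K_1/Z_1}^2]:\sigma\big]$ by $[D_0(\brho):\sigma]$ plus a term of the form $\dim\Hom_{K}\big(\sigma, D_0(\brho)\otimes H^1(K_1/Z_1,\F)\big)$ and then showing the second term vanishes --- cannot work as stated: by \cite[Prop.~5.4]{BP} one has $\sigma\otimes H^1(K_1/Z_1,\F)\cong\sigma^{\oplus f}\oplus(\oplus_{\delta}\delta)$, so that $\Hom$ space is at least $f$-dimensional and no finite-group computation with $D_0(\brho)$ will make it vanish. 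The multiplicity-one statement is not a property of the abstract $\tGamma$-module structure; it is an arithmetic statement about $\pi(\brho)$, and the needed inputs are (a) a reduction, via the indecomposability of the diagram $(D_0(\brho),D_1(\brho),\mathrm{can})$, of the assertion for \emph{all} $\sigma\in\mathscr{D}(\brho)$ to the single ordinary weight $\sigma_0$ (Theorem \ref{thm-criterion}); and (b) for that single weight, the semisimplicity of $\Ord_P\pi(\brho)$ (Propositions \ref{prop--ord--semisimple} and \ref{prop:Ord-semisimple}) together with Le's cyclicity of $M_{\infty}(\Proj_{\Gamma}\sigma_0)$ and a tangent-space comparison between the reducible and the tame potentially Barsotti--Tate deformation rings of $\brho$ (Proposition \ref{prop-relation-ideals}), which are glued through the exact sequence for $\Theta_{\sigma_0}$ to give cyclicity of $M_{\infty}(\Proj_{\tGamma}\sigma_0)$. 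Your ``alternative'' patched-module route founders on the same point: $\Proj_{\tGamma}\sigma$ is not a lattice in a locally algebraic type, so there is no deformation ring over which $M_{\infty}(\Proj_{\tGamma}\sigma)$ is a priori free, and the formal smoothness hypotheses (b), (c) away from $v$ say nothing about the number of generators of this module over $R_{\infty}$; asserting that they ``kill the error terms'' is not an argument.

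For (ii), transporting the $\fm_{I_1/Z_1}$-filtration to the $\fm_{K_1/Z_1}$-filtration is the right instinct, but there is no containment of the form $\fm_{I_1/Z_1}^3\supseteq\fm_{K_1/Z_1}^2\cap(\cdots)$ doing the work, and ``averaging over $I/I_1$'' does not by itself manufacture the extra Jordan--H\"older factor of $\pi(\brho)[\fm_{K_1/Z_1}^2]$ that your contradiction requires. The actual mechanism (Proposition \ref{prop-dim=1-equivalent}) first shows that every $I$-map $W_{\chi,3}\to\pi(\brho)$ factors through the quotient $\overline{W}_{\chi,3}$ (Corollary \ref{cor-factor-barW}, which uses the connectedness of $D_1(\brho)$ and the non-existence of certain $I$-extensions inside $\pi(\brho)$), and then squeezes $\dim\Hom_I(W_{\chi,3},\pi(\brho))$ between $\dim\Hom_K(\Theta_{\tau},\pi(\brho))$ and $\dim\Hom_K(\Proj_{\tGamma}\tau,\pi(\brho))$ by analysing the image of $\Proj_{\tGamma}\tau$ in $\Ind_I^K\overline{W}_{\chi,3}$; this is where the combinatorics of $\mathscr{PD}(x_0,\cdots,x_{f-1})$ and Lemma \ref{lemma-PD-set} enter. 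Without these steps your deduction of (ii) from (i) is not established.
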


It is clear that, to apply Theorem \ref{thm:intro-BHHMS}, we only need to study the subspace $\pi(\brho)[\fm_{I_1/Z_1}^3]$. However, the information  we could gain  from $\brho$  using $p$-adic Hodge theory, like the structure of  certain Galois deformation rings of $\brho$, is about the restriction of $\pi(\brho)$ to $K$. In \S\ref{section-BP}, we study the relation between $\pi(\brho)[\fm_{K_1/Z_1}^2]$ and $\pi(\brho)[\fm_{I_1/Z_1}^3]$, and prove a key result, Theorem \ref{thm-criterion}, which reduces the proof of Theorem \ref{thm:intro-multione} to proving (i) for one  single Serre weight $\sigma_0$. Namely, Theorem  \ref{thm:intro-multione} will follow if we can  prove 
\begin{equation}\label{eq:intro-multione}
\exists\ \sigma_0\in\mathscr{D}(\brho)\ \text{such that} \ \big[\pi(\brho)[\fm_{K_1/Z_1}^2]:\sigma_0\big]=1.\end{equation}
There is a distinguished element in $\mathscr{D}(\brho)$, namely the Serre weight $\sigma_0\defn (r_0,\cdots,r_{f-1})$ (see \S\ref{section-finiteRep} for the notation), which we call the \emph{ordinary} one.  We are thus left to verify \eqref{eq:intro-multione} for this $\sigma_0$, equivalently
\[\dim_{\F}\Hom_{K}(\Proj_{\tGamma}\sigma_0,\pi(\brho))=1, \]
where $\Proj_{\tGamma}\sigma_0$ denotes a projective envelope of $\sigma_0$ in the category of $\tGamma$-modules.

The common strategy to prove such a statement is provided by the Taylor-Wiles patching method, initially due to  Emerton, Gee and Savitt (\cite{EGS}) and later on generalized in  \cite{HW}, \cite{LMS}, \cite{Le}.
A patched module $M_{\infty}$ (in \cite{CEGGPS1} or \cite{Dotto-Le}) carries simultaneously a continuous action of $\GL_2(F_v)$ commuting with the action of  $R_{\infty}$, and is projective when viewed as a pseudo-compact $\F[\![K/Z_1]\!]$-module.  By setting $M_{\infty}(-)\defn \Hom_{K}^{\rm cont}(M_{\infty},-^{\vee})^{\vee}$ where $(-)^{\vee}$ denotes Pontryagin dual,  we obtain an exact covariant functor from the category of continuous representations of $K$ on finitely generated $W(\F)$-modules to the category of finitely generated $R_{\infty}$-modules.  By construction, we have an isomorphism of $\F[\GL_2(F_v)]$-modules $M_{\infty}/\fm_{R_{\infty}}\cong \pi(\brho)^{\vee}$, which implies an isomorphism
\[M_{\infty}(\Proj_{\tGamma}\sigma_0)/\fm_{R_{\infty}}\cong\Hom_{K}(\Proj_{\tGamma}\sigma_0,\pi(\brho))^{\vee}.\] 
Therefore,  proving \eqref{eq:intro-multione} is equivalent to  proving that $M_{\infty}(\Proj_{\tGamma}\sigma_0)$ is cyclic over $R_{\infty}$. Finally, we prove  this cyclicity by combining the  result of \cite{Le} on the cyclicity of $M_{\infty}(\Proj_{\Gamma}\sigma_0)$  and the semisimplicity of  the ordinary part of $\pi(\brho)$ (a result of \cite{HuJLMS} in the indefinite case and of \cite{Breuil-Ding} in the definite case).

A general construction  in \cite[\S13]{BP} shows that there exists a largest subrepresentation $\widetilde{D}_0(\brho)$ of $\bigoplus_{\sigma\in\mathscr{D}(\brho)}\rInj_{\tGamma}\sigma$ such that $[\widetilde{D}_0(\brho):\sigma]=1$ for any $\sigma\in\mathscr{D}(\brho)$. Here $\rInj_{\tGamma}\sigma$ denotes an injective envelope of $\sigma$ in the category of $\tGamma$-modules. In Theorem \ref{thm-tD=multione}, we prove  that $\widetilde{D}_0(\brho)$  is multiplicity free.
Combining this result with Theorem \ref{thm:intro-multione}, we obtain the following description of $\pi(\brho)[\fm_{K_1/Z_1}^2]$.
\begin{theorem}[Corollary \ref{cor::m^2-torsion}]\label{thm:intro-tD0}
There is an isomorphism $\pi(\brho)[\fm_{K_1/Z_1}^2]\cong \widetilde{D}_0(\brho)$. In particular, $\pi(\brho)[\fm_{K_1/Z_1}^2]$ is multiplicity free.
\end{theorem}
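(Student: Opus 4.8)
The plan is to combine the two inputs advertised in the text: the multiplicity-one statement of Theorem \ref{thm:intro-multione}(i), which says $[\pi(\brho)[\fm_{K_1/Z_1}^2]:\sigma]=1$ for every $\sigma\in\mathscr{D}(\brho)$, and the purely representation-theoretic Theorem \ref{thm-tD=multione}, which says $\widetilde{D}_0(\brho)$ is multiplicity free (in fact $[\widetilde{D}_0(\brho):\sigma]=1$ for $\sigma\in\mathscr{D}(\brho)$ and $\widetilde{D}_0(\brho)$ contains no other Serre weight). First I would recall that, since $\pi(\brho)$ has a central character and is killed modulo $\fm_{K_1/Z_1}^2$ on the relevant subspace, $\pi(\brho)[\fm_{K_1/Z_1}^2]$ is naturally a $\tGamma$-module; moreover $\soc_K\pi(\brho)\cong\bigoplus_{\sigma\in\mathscr{D}(\brho)}\sigma$ by (i) in the introduction, and this socle is visibly contained in $\pi(\brho)[\fm_{K_1/Z_1}^2]$ because a Serre weight is killed by $\fm_{\Gamma}\supseteq$ (image of) $\fm_{K_1/Z_1}$. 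Hence $\pi(\brho)[\fm_{K_1/Z_1}^2]$ is an essential extension of $\bigoplus_{\sigma\in\mathscr{D}(\brho)}\sigma$ inside the category of $\tGamma$-modules, so it embeds into $\bigoplus_{\sigma\in\mathscr{D}(\brho)}\rInj_{\tGamma}\sigma$.

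Next I would invoke the defining maximality property of $\widetilde{D}_0(\brho)$: it is the largest subrepresentation of $\bigoplus_{\sigma\in\mathscr{D}(\brho)}\rInj_{\tGamma}\sigma$ with $[\,\cdot\,:\sigma]=1$ for all $\sigma\in\mathscr{D}(\brho)$. Since Theorem \ref{thm:intro-multione}(i) gives $[\pi(\brho)[\fm_{K_1/Z_1}^2]:\sigma]=1$ for each such $\sigma$, the image of $\pi(\brho)[\fm_{K_1/Z_1}^2]$ under the above embedding lands inside $\widetilde{D}_0(\brho)$; this yields an injection $\pi(\brho)[\fm_{K_1/Z_1}^2]\hookrightarrow\widetilde{D}_0(\brho)$. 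For the reverse inclusion, one uses that $\widetilde{D}_0(\brho)$ is generated by its $K$-socle $\bigoplus_{\sigma\in\mathscr{D}(\brho)}\sigma$ together with a control on its Loewy structure: by construction $\widetilde{D}_0(\brho)$ is killed by $\fm_{K_1/Z_1}^2$ (each $\rInj_{\tGamma}\sigma$ is, being a $\tGamma$-module), so any $\tGamma$-embedding of $\bigoplus\sigma=\soc_K\pi(\brho)$ into $\pi(\brho)$ extends, by injectivity of the $\rInj_{\tGamma}\sigma$ inside $\tGamma\text{-mod}$ and a standard diagram chase, to show $\widetilde{D}_0(\brho)\hookrightarrow\pi(\brho)[\fm_{K_1/Z_1}^2]$ compatibly. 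Comparing the two injections and using that both sides have the same (finite) semisimplification — forced by the multiplicity-one count $[\,\cdot\,:\sigma]=1$ on $\mathscr{D}(\brho)$ on both sides, together with Theorem \ref{thm-tD=multione} ruling out weights outside $\mathscr{D}(\brho)$ in $\widetilde{D}_0(\brho)$ and Theorem \ref{thm-criterion}/\ref{thm:intro-multione} controlling $\pi(\brho)[\fm_{K_1/Z_1}^2]$ — both injections are isomorphisms. The "in particular" clause is then immediate from Theorem \ref{thm-tD=multione}.

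The step I expect to be the main obstacle is pinning down that $\pi(\brho)[\fm_{K_1/Z_1}^2]$ contains \emph{no} Jordan–Hölder factor outside $\mathscr{D}(\brho)$, i.e. that the embedding $\pi(\brho)[\fm_{K_1/Z_1}^2]\hookrightarrow\bigoplus_{\sigma\in\mathscr{D}(\brho)}\rInj_{\tGamma}\sigma$ actually has image in the specific subrepresentation $\widetilde{D}_0(\brho)$ and that conversely all of $\widetilde{D}_0(\brho)$ is realized; this is exactly where one needs the genericity hypothesis (d) on $\brho$, the explicit description of the $\tGamma$-injective envelopes $\rInj_{\tGamma}\sigma$ from \cite[\S13]{BP}, and the structural input of Theorem \ref{thm-criterion} relating the $K_1/Z_1$-torsion to the $I_1/Z_1$-torsion. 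Once the bookkeeping of Serre weights and their multiplicities in these two essential extensions is matched, the conclusion is formal.
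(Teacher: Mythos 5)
Your overall architecture matches the paper's: embed $\pi(\brho)[\fm_{K_1/Z_1}^2]$ into $\bigoplus_{\sigma\in\mathscr{D}(\brho)}\rInj_{\tGamma}\sigma$, use the multiplicity-one input $[\pi(\brho)[\fm_{K_1/Z_1}^2]:\sigma]=1$ together with the maximality property of $\widetilde{D}_0(\brho)$ to get one inclusion, and extend the socle inclusion to get the other. The first inclusion is fine as you argue it.

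The genuine gap is in your justification of the reverse inclusion $\widetilde{D}_0(\brho)\hookrightarrow\pi(\brho)[\fm_{K_1/Z_1}^2]$. Injectivity of the objects $\rInj_{\tGamma}\sigma$ lets you extend maps \emph{into} them along injections; it does not let you extend the map $\soc_K\pi(\brho)\hookrightarrow\pi(\brho)$ along the inclusion $\soc_K\widetilde{D}_0(\brho)\subseteq\widetilde{D}_0(\brho)$, because the target $\pi(\brho)$ is not injective and the obstruction lives in $\Ext^1_{K/Z_1}\bigl(\widetilde{D}_0(\brho)/\soc,\pi(\brho)\bigr)$. The vanishing of this obstruction is a nontrivial \emph{global} input, not a formal diagram chase: one needs (a) that $\JH\bigl(\widetilde{D}_0(\brho)/\soc_K\widetilde{D}_0(\brho)\bigr)\cap\mathscr{D}(\brho)=\emptyset$ (which follows from Proposition \ref{prop-BP-13.1}, since every $\sigma\in\mathscr{D}(\brho)$ already occurs in the socle and occurs only once), and (b) that $\Hom_K(\tau,\pi(\brho))=\Ext^1_{K/Z_1}(\tau,\pi(\brho))=0$ for every Serre weight $\tau\notin\mathscr{D}(\brho)$, which is Proposition \ref{prop-reducible-dimofExt}(ii) and ultimately rests on the patching construction (projectivity of $M_\infty$ over $\F[\![K/Z_1]\!]$ and the weight part of Serre's conjecture); this is the route via Lemma \ref{lemma:breuil} mentioned in the paper, the alternative being the argument of \cite[Lem.~9.2]{Br14}. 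Given (a) and (b), restriction gives $\Hom_K(\widetilde{D}_0(\brho),\pi(\brho))\simto\Hom_K(\soc_K\widetilde{D}_0(\brho),\pi(\brho))$, and the resulting extension is injective because it is injective on socles; since $\widetilde{D}_0(\brho)$ is killed by $\fm_{K_1/Z_1}^2$ its image lands in $\pi(\brho)[\fm_{K_1/Z_1}^2]$. Two smaller misstatements: $\pi(\brho)[\fm_{K_1/Z_1}^2]$ certainly \emph{does} contain Jordan--H\"older factors outside $\mathscr{D}(\brho)$ (all non-socle constituents of $\widetilde{D}_0(\brho)$ are of this kind), and Theorem \ref{thm-tD=multione} asserts multiplicity freeness of $\widetilde{D}_0(\brho)$, not the absence of weights outside $\mathscr{D}(\brho)$; accordingly your ``same semisimplification'' shortcut at the end is not available --- the equality follows from having both inclusions, not from a multiplicity count.
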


We state the following application of Theorem \ref{thm:intro-GK} mentioned above, which might be thought of as a candidate for the $p$-adic Langlands correspondence in this setting (cf. \cite[\S1.1]{CEGGPS1}).  Let $E'$ be a finite extension of $W(\F)[1/p]$ with ring of integers $\cO'$ and residue field $\F'$.
\begin{theorem}[Corollary \ref{cor:Pi(x)}]
\label{thm:intro-pLL}
 Let $x:R_{\infty}\ra \cO'$  be a local morphism of $W(\F)$-algebras.   Set
\[\Pi(x)^{0}\defn\Hom_{\cO'}^{\rm cont}(M_{\infty}\otimes_{R_{\infty},x}\cO',\cO')\]
and $\Pi(x)\defn\Pi(x)^{0}\otimes_{\cO'} E'$. Then
$\Pi(x)$ is a nonzero admissible unitary Banach representation of $G$ over $E'$ with $G$-invariant unit ball $\Pi(x)^{0}$ which lifts $\pi(\brho)\otimes_{\F}\F'$. 
\end{theorem}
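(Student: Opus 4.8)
The plan is to transport the structural information about $M_\infty$ established earlier — especially the flatness of $M_\infty$ over $R_\infty$ coming from Theorem \ref{thm:intro-GK} — through the specialization map $x: R_\infty \to \cO'$, and then to check each adjective (``nonzero'', ``admissible'', ``unitary'', ``lifts $\pi(\brho)\otimes_\F\F'$'') in turn using the defining properties of the patching functor. First I would record the formal properties of $M_\infty$ that we are allowed to use: it is a pseudo-compact $\cO\llbracket K/Z_1 \rrbracket$-module which is projective in that category, it carries a $\GL_2(F_v)$-action commuting with the $R_\infty$-action, $R_\infty$ is a power series ring over $W(\F)$ (by assumption (b) together with the choices in the minimal case), and $M_\infty/\fm_{R_\infty} \cong \pi(\brho)^\vee$ as $\F[\GL_2(F_v)]$-modules. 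The new input is that $M_\infty$ is flat (equivalently, since $R_\infty$ is regular, Cohen--Macaulay of the expected dimension) over $R_\infty$; this is exactly what Theorem \ref{thm:intro-GK} buys us via the argument of \cite[\S1.1]{CEGGPS1}.

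Next I would set $M_\infty(x)^0 \defn M_\infty \otimes_{R_\infty, x} \cO'$ and form its Schikhof dual $\Pi(x)^0 = \Hom_{\cO'}^{\rm cont}(M_\infty(x)^0, \cO')$, then $\Pi(x) = \Pi(x)^0 \otimes_{\cO'} E'$. The verifications proceed as follows. \emph{Finiteness/admissibility:} since $M_\infty$ is finitely generated over $R_\infty\llbracket K/Z_1\rrbracket$ (in the appropriate pseudo-compact sense), $M_\infty(x)^0$ is a finitely generated pseudo-compact $\cO'\llbracket K/Z_1\rrbracket$-module which is $\cO'$-flat (here flatness of $M_\infty$ over $R_\infty$ is what guarantees $\cO'$-flatness of the fiber, i.e. no $p$-power torsion is introduced by the specialization), hence its dual is an admissible unitary Banach representation of $\GL_2(F_v)$ over $E'$ with unit ball $\Pi(x)^0$; this is the standard dictionary between pseudo-compact flat $\cO'\llbracket K/Z_1\rrbracket$-modules with a compatible $\GL_2(F_v)$-action and admissible unitary Banach $\GL_2(F_v)$-representations. \emph{Nonvanishing:} it suffices to show $M_\infty(x)^0 \neq 0$, and since $x$ factors through $\cO'$ and $M_\infty$ is $R_\infty$-flat with $M_\infty/\fm_{R_\infty}\cong \pi(\brho)^\vee \neq 0$ by assumption (a), we get $M_\infty(x)^0 \otimes_{\cO'} \F' \neq 0$ by base change along $R_\infty \to \cO' \to \F'$, hence $M_\infty(x)^0 \neq 0$. \emph{Lifting $\pi(\brho)\otimes_\F\F'$:} reducing $M_\infty(x)^0$ mod the maximal ideal of $\cO'$ gives $M_\infty/\fm_{R_\infty} \otimes_\F \F' \cong \pi(\brho)^\vee \otimes_\F \F'$, and dualizing shows $\Pi(x)^0 \otimes_{\cO'}\F' \cong \pi(\brho)\otimes_\F\F'$, i.e. $\Pi(x)$ has the asserted reduction.

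The main obstacle — really the only place where something nontrivial is used rather than formal nonsense — is the $\cO'$-flatness (torsion-freeness) of the fiber $M_\infty \otimes_{R_\infty, x}\cO'$. Without flatness of $M_\infty$ over $R_\infty$ this could fail: the specialized module could acquire $p$-power torsion, in which case $\Pi(x)^0$ would not be a unit ball of a genuine Banach space lifting $\pi(\brho)\otimes_\F\F'$ (one would only get that $\pi(\brho)\otimes\F'$ is a subquotient, not the full reduction, and the Banach representation might have smaller "rank" than expected). This is precisely why Theorem \ref{thm:intro-GK} is the crucial input: flatness of $M_\infty$ over the regular ring $R_\infty$ propagates to flatness of every fiber along a local $W(\F)$-algebra map to a domain, which is exactly what makes the construction $x \mapsto \Pi(x)$ behave like a $p$-adic local Langlands correspondence (interpolating over $\Spec R_\infty[1/p]$). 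Everything else in the proof is a matter of unwinding definitions and invoking the standard equivalence of categories between (flat, finitely generated, pseudo-compact, $\GL_2(F_v)$-equivariant) $\cO'\llbracket K/Z_1\rrbracket$-modules and admissible unitary Banach $\GL_2(F_v)$-representations over $E'$, exactly as in \cite[\S1.1]{CEGGPS1}; I would cite that reference for the formal framework and concentrate the write-up on the flatness-of-fiber point and the mod-$\varpi_{\cO'}$ reduction identifying the special fiber with $\pi(\brho)\otimes_\F\F'$.
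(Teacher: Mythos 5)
Your proposal is correct and follows essentially the same route as the paper: the paper's proof of Corollary \ref{cor:Pi(x)} deduces $\cO'$-flatness of $M_{\infty}\otimes_{R_{\infty},x}\cO'$ from the flatness of $M_{\infty}$ over $R_{\infty}$ (Theorem \ref{thm:main-flat}(i)), gets nonvanishing of the dual from \cite[Thm. 1.2]{ST}, and identifies the reduction with $\pi(\brho)\otimes_{\F}\F'$ via \cite[Prop. 2.9]{Pa15}. Your identification of the fiber-flatness as the one non-formal input, and your mod-$\varpi'$ base-change computation using that $x$ is local, match the paper's argument exactly.
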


In a companion paper \cite{BHHMS},   an analog of Theorem \ref{thm:intro-GK}, and consequently analogs of Theorem \ref{thm:intro-tD0} and  Theorem \ref{thm:intro-pLL}, are proved in the case $\brho$ is \emph{semisimple} and sufficiently generic (but with slightly stronger genericity assumptions than ours). The  proofs in both the semisimple and non-semisimple cases follow the same strategy, by first proving Theorem \ref{thm:intro-multione} and then applying the criterion of Theorem \ref{thm:intro-BHHMS}. However, the corresponding proofs of Theorem \ref{thm:intro-multione}  are very different. In \cite{BHHMS}, (the analog of) Theorem \ref{thm:intro-multione} is proved by complicated computations of potentially crystalline deformation rings, along the line in previous works of \cite{EGS} and \cite{Le}.  Our proof of Theorem \ref{thm:intro-multione} relies more on combinatorial properties of representations of $\tGamma$  together with a little computation based on \cite{Le}. Namely, we use Theorem \ref{thm-criterion} to reduce the computation to a minimal level. Another bonus of this treatment is that we require weaker genericity assumptions on $\brho$.  However, we should point out that our method only applies to the \emph{non-semisimple} case, while the method of \cite{BHHMS} should apply in all cases once the corresponding Galois deformation rings are worked out.

\vspace{1mm}

Next, we turn to the subtler question of determining  the  structure of $\pi(\brho)$ as a representation of $\GL_2(F_v)$. The conjectural shape of $\pi(\brho)$ was formulated in the fundamental  work of Breuil--Pa\v{s}k\={u}nas \cite{BP} (by local means), as follows.\footnote{Strictly speaking, the authors of \cite{BP} did not state it as a conjecture, and the family of admissible smooth representations of $\GL_2(F_v)$ constructed there is much richer than the one considered in this paper. However, the philosophy is  clearly due to them, see the discussion on page 107 of \emph{loc. cit.}.} Under the genericity condition in the sense of \cite[Def.~11.7]{BP}, which is weaker than our  genericity in (d), $\pi(\brho)$ should have  length $f+1$,  with a unique Jordan--H\"older filtration of the form:
 \[\pi_0\ \ligne\ \pi_1\ \ligne\ \cdots\ \ligne\  \pi_{f-1} \ \ligne\ \pi_f, \]
 where $\pi_0$ and $\pi_f$ are (irreducible) principal series  explicitly determined by $\brho$, and $\pi_i$ are supersingular representations for $1\leq i\leq f-1$.

As an application of  Theorem \ref{thm:intro-multione} and the flatness of $M_{\infty}$ over $R_{\infty}$, we first prove the following.
\begin{theorem}[Theorem \ref{thm-generation-D0}]\label{thm:intro-finite}
$\pi(\brho)$ is generated by $D_0(\brho)$ as a $\GL_2(F_v)$-representation.
\end{theorem}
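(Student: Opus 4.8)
The plan is to use the patched module $M_\infty$ together with the flatness of $M_\infty$ over $R_\infty$ (Theorem \ref{thm:intro-GK}) and the multiplicity one result (Theorem \ref{thm:intro-multione}) to reduce the generation statement to a computation that takes place entirely at level $K$. Let $\pi'$ denote the $\GL_2(F_v)$-subrepresentation of $\pi(\brho)$ generated by $D_0(\brho) = \pi(\brho)^{K_1}$, and let $\pi'' \defn \pi(\brho)/\pi'$ be the quotient; we wish to show $\pi'' = 0$. Since $D_0(\brho) \subseteq \pi'$, the quotient $\pi''$ has no nonzero $K_1$-invariants, i.e.\ $(\pi'')^{K_1} = 0$, hence also $(\pi'')[\fm_{I_1/Z_1}] = 0$ (an irreducible $I/Z_1$-representation killed by $\fm_{I_1/Z_1}$ is a character of $I$, and such a character sits inside $D_0(\brho)$'s span of $I_1$-invariants when nonzero — more precisely $(\pi'')^{I_1}$ would be a subspace of $(\pi'')^{K_1}$ after averaging is not available, so one argues via $\soc_K \pi''$ instead). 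So the cleanest route: if $\pi'' \neq 0$ then $\soc_K \pi'' \neq 0$, and any Serre weight $\tau$ in $\soc_K\pi''$ contributes to $\pi(\brho)[\fm_{K_1/Z_1}]$ but, lifting $\tau \hookrightarrow \pi''$ back to $\pi(\brho)$, one sees $\tau$ must already appear in $\pi'$ as well — contradicting multiplicity one ($[\pi(\brho)[\fm_{K_1/Z_1}^2]:\sigma]=1$ for $\sigma\in\mathscr{D}(\brho)$, Theorem \ref{thm:intro-multione}(i)) once one knows $\soc_K\pi(\brho) = \bigoplus_{\sigma\in\mathscr{D}(\brho)}\sigma$ and that $\soc_K\pi(\brho)\subseteq D_0(\brho)\subseteq\pi'$.

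Making the last step rigorous is where the patched module enters. The functor $M_\infty(-) = \Hom_K^{\rm cont}(M_\infty, -^\vee)^\vee$ is exact and covariant, $M_\infty(-)$ sends a representation of $K$ to a finitely generated $R_\infty$-module, and $M_\infty/\fm_{R_\infty} \cong \pi(\brho)^\vee$. Dualizing the exact sequence $0 \to \pi' \to \pi(\brho) \to \pi'' \to 0$ and applying $M_\infty$ suitably (after restricting to a well-chosen $K$-subrepresentation on which everything is finitely generated, e.g.\ the $\fm_{K_1/Z_1}^2$-torsion or an appropriate $\Proj_{\tGamma}\sigma$), one gets a surjection of $R_\infty$-modules. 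The key input is that $M_\infty(\Proj_{\tGamma}\sigma_0)$ is cyclic over $R_\infty$ (established in the course of proving \eqref{eq:intro-multione}); by the flatness of $M_\infty$ and the fact that $R_\infty$ is a (regular, in particular domain) power series ring over $W(\F)$, the module $M_\infty(V)$ for $V$ the relevant sub/quotient has no embedded components, so a generating set mod $\fm_{R_\infty}$ lifts. Concretely: generation of $\pi(\brho)$ by $D_0(\brho)$ is equivalent, after applying $M_\infty$, to the assertion that the $R_\infty[\GL_2(F_v)]$-module $M_\infty$ is generated by $M_\infty(D_0(\brho)^\vee)$ — and this in turn follows because $M_\infty$ is flat (hence torsion-free) over $R_\infty$ and the reduction mod $\fm_{R_\infty}$, namely the generation of $\pi(\brho)$ by $D_0(\brho)^{\rm ss}$ at the residual level, is exactly what \cite{BP}-type arguments combined with Theorem \ref{thm:intro-multione} give.

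The main obstacle is the passage between the residual statement and the statement over $R_\infty$: one needs that a $\GL_2(F_v)$-equivariant generating set of $\pi(\brho)^\vee$, viewed inside $M_\infty/\fm_{R_\infty}$, lifts to a generating set of $M_\infty$ over $R_\infty[\GL_2(F_v)]$. This is a topological Nakayama-type lemma, but $M_\infty$ is only pseudo-compact as an $\F[\![K/Z_1]\!]$-module, not finitely generated over $R_\infty[\GL_2(F_v)]$, so one must be careful to work with a suitable completed/pseudo-compact version of Nakayama, or to exhibit the generating set already at a finite level $K_n \defn 1 + p^n\mathrm{M}_2(\cO_{F_v})$ and use that $M_\infty$ is projective over $\F[\![K/Z_1]\!]$ to propagate. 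I expect the argument to run: (1) show $\pi(\brho)$ generated by $D_0(\brho)$ $\iff$ $\coker$ of $\cInd_{KZ}^{\GL_2(F_v)} D_0(\brho) \to \pi(\brho)$ vanishes; (2) apply $M_\infty$, reducing to showing a certain finitely generated $R_\infty$-module $C_\infty$ is zero; (3) compute $C_\infty/\fm_{R_\infty}$ using Theorem \ref{thm:intro-multione} and the known $K$-socle, finding it is zero; (4) conclude $C_\infty = 0$ by Nakayama over the local ring $R_\infty$. Step (3) is the representation-theoretic heart and uses multiplicity one decisively; step (4) is where flatness of $M_\infty$ is invoked to guarantee $C_\infty$ is finitely generated and genuinely supported in the way Nakayama requires.
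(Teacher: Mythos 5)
Both routes you sketch have genuine gaps. For the first route: a Serre weight $\tau$ in $\soc_K(\pi'')$ is a subquotient of $\pi(\brho)|_K$, not a subobject, and there is no reason it lifts to a copy of $\tau$ inside $\pi(\brho)$ (the connecting map to $\Ext^1_{K/Z_1}(\tau,\pi')$ need not vanish), so it does not contribute to $\pi(\brho)[\fm_{K_1/Z_1}^2]$ and cannot contradict Theorem \ref{thm:intro-multione}(i): that theorem controls only the $\fm_{K_1/Z_1}^2$-torsion subspace, whereas Serre weights occur with infinite multiplicity in $\pi(\brho)|_K$. Even your preliminary claim $(\pi'')^{K_1}=0$ is unjustified, since $(\pi'')^{K_1}$ injects into $H^1(K_1,\pi')$, which need not vanish. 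For the second route, the argument is circular: applying $M_\infty(-)$ and Nakayama over $R_\infty$ would deduce generation of $M_\infty$ from generation of its reduction $M_\infty/\fm_{R_\infty}\cong\pi(\brho)^\vee$ by $D_0(\brho)^\vee$ --- but that reduction statement \emph{is} Theorem \ref{thm:intro-finite}. Your assertion that the residual statement ``is exactly what \cite{BP}-type arguments combined with Theorem \ref{thm:intro-multione} give'' is false; no such argument exists in \cite{BP} (it is not known in general whether a representation attached to a diagram is generated by its $K_1$-invariants), and this is precisely why the paper needs two further sections of work.

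The inputs you are missing are the following. First, flatness of $M_\infty$ yields an essential self-duality $\EE^{2f}(\pi(\brho)^\vee)\cong\pi(\brho)^\vee\otimes(\text{twist})$ via Poincar\'e duality on completed cohomology, whence $\mathrm{cosoc}_G(\pi(\brho))\cong\pi_f$; generation by a subspace $W$ is then equivalent to nonvanishing of $W\to\pi_f$. Second, detecting this nonvanishing is done in top degree: one shows $\pi(\brho)\twoheadrightarrow\pi_f$ induces an isomorphism on $\Ext^{2f}_{I/Z_1}(\chi_0^s,-)$ (via derived ordinary parts computed from the minimal Koszul resolution of $\pi(\brho)^\vee$ furnished by flatness), so it suffices that $\Ext^{2f}_{I/Z_1}(\chi_0^s,W)\to\Ext^{2f}_{I/Z_1}(\chi_0^s,\pi(\brho))$ be surjective. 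Third, this surjectivity is proved not for $D_0(\brho)$ directly but for an auxiliary $I$-subrepresentation $\tau(\brho)$ whose minimal projective resolution can be written down explicitly as a tensor product over embeddings; the surjectivity in degree $1$ (which is where Theorem \ref{thm:intro-multione}(ii) enters) is propagated to degree $2f$ using the Koszul structure of both resolutions, in the spirit of Serre's theorem. Only at the very end does one check that the relevant piece of $\tau(\brho)$ lies in $D_0(\brho)$. None of these steps is a formal consequence of flatness plus multiplicity one.
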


Let us sketch the proof of Theorem \ref{thm:intro-finite} which is somewhat lengthy. Recall that  $R_{\infty}$ is formally smooth over $W(\F)$ by our assumption (b) and $M_{\infty}$ is flat  over $R_{\infty}$ satisfying $M_{\infty}/\fm_{R_{\infty}}\cong \pi(\brho)^{\vee}$. By choosing a regular system of parameters of $R_{\infty}$, we  obtain a Koszul type resolution  of $\pi(\brho)^{\vee}$ by projective $\F[\![K/Z_1]\!]$-modules. However, this resolution is not minimal and $M_{\infty}$ is not even finitely generated over $\F[\![K/Z_1]\!]$. For this reason  we further do a base change from $R_{\infty}$ to a suitable quotient, denoted by $R_v$ in the context. The resulting Koszul type resolution of $\pi(\brho)^{\vee}$, denoted by $P_{\bullet}$, is \emph{minimal} when viewed as a complex of $\F[\![K/Z_1]\!]$-modules, and \emph{partially minimal} if further restricted to $\F[\![I/Z_1]\!]$ (but we ignore this issue in the introduction).

A consequence of the above resolution is that $\pi(\brho)^{\vee}$ is essentially self-dual (see Definition \ref{def:selfdual} for this notion). This implies the crucial fact  that the $\GL_2(F_v)$-cosocle of $\pi(\brho)$ is isomorphic to $\pi_f$, because the socle of $\pi(\brho)$ is isomorphic to $\pi_0$ (this follows from the description of $\soc_K\pi(\brho)$ and a certain mod $p$ local-global compatibility). It follows that an $I$-subrepresentation $W$ of $\pi(\brho)$ generates $\pi(\brho)$ as a $\GL_2(F_v)$-representation  if and only if
the composite morphism $W\hookrightarrow \pi(\brho)\twoheadrightarrow \pi_f$ is nonzero, for which it suffices to find some character $\chi$ of $I$ and some $i\geq 0$ such that the composite morphism
\[\Ext^i_{I/Z_1}(\chi,W)\ra \Ext^i_{I/Z_1}(\chi,\pi(\brho))\ra\Ext^i_{I/Z_1}(\chi,\pi_f)\]
is nonzero. Using the resolution $P_{\bullet}$ we can determine the derived ordinary parts of $\pi(\brho)$ and  show that the quotient $\pi(\brho)\twoheadrightarrow \pi_f$ induces  an  isomorphism
\[\Ext^{2f}_{I/Z_1}(\chi_0^s,\pi(\brho))\simto\Ext^{2f}_{I/Z_1}(\chi_0^s,\pi_f),\]
where $\chi_0^s$ denotes the character of $I$ acting on the space of coinvariants $(\sigma_0)_{I_1}$.
Hence, $\pi(\brho)$ can be generated by any $I$-subrepresentation $W$ such that the natural morphism
 \[\Ext^{2f}_{I/Z_1}(\chi_0^s,W)\ra \Ext^{2f}_{I/Z_1}(\chi_0^s,\pi(\brho))\]
is surjective (or even nonzero).

However, it is in general hard to calculate $\Ext^i_{I/Z_1}$ for higher degrees; for example, we don't know how to write down an injective resolution of $W$ if we take $W=D_0(\brho)|_{I}$. To solve this issue, we define  a certain finite dimensional $I$-subrepresentation  of $\pi(\brho)$ denoted by $\tau(\brho)$, in an artificial way, of which a minimal injective resolution can be easily written down. By definition $\tau(\brho)$ is a tensor product of suitable $I$-representations $\tau(\brho)_{\kappa}$ along all embeddings $\kappa:\F_{p^f}\hookrightarrow \F$.  The construction of $\tau(\brho)_{\kappa}$ is motivated by the case $f=1$, see Example \ref{example:f=1}. For example, if $f=1$ and $\brho$ is reducible nonsplit, then $\tau(\brho)$ is the largest subrepresentation of $(\rInj_{I/Z_1}\chi_0^s)[\fm_{I_1/Z_1}^3]$ in which $\chi_0^s$ occurs with multiplicity one.  The advantage to define $\tau(\brho)$ in such a way is that we may explicitly construct a \emph{minimal} projective resolution of  $\gr_{\fm_{I_1/Z_1}}(\tau(\brho)^{\vee})$, say $G_{\bullet}$, as a tensor product of  resolutions along each embedding; this uses the fact that the graded algebra of $\F[\![I_1/Z_1]\!]$ with respect to the $\fm_{I_1/Z_1}$-adic filtration  is isomorphic to the tensor product of the corresponding graded algebra in the case of $\GL_2(\Q_p)$. While it is direct to lift $G_{\bullet}$ to a filt-projective resolution $Q_{\bullet}$ of $\tau(\brho)^{\vee}$, $Q_{\bullet}$ need \emph{not} be minimal in general. Fortunately, in our case  any lift $Q_{\bullet}$ is automatically minimal. This allows us to calculate $\Ext^i_{I/Z_1}(\tau(\brho)^{\vee},(\chi_0^s)^{\vee})$ for any $i$.

Now, the projectivity of $P_{\bullet}$ allows to lift the quotient morphism $\pi(\brho)^{\vee}\twoheadrightarrow\tau(\brho)^{\vee}$ to a morphism of complexes $P_{\bullet}\ra Q_{\bullet}$. Here comes another complication:  although numerically we have
\[ \dim_{\F}\Ext^{i}_{I/Z_1}(\pi(\brho)^{\vee},(\chi_0^s)^{\vee})=\dim_{\F}\Ext^{i}_{I/Z_1}(\tau(\brho)^{\vee},(\chi_0^s)^{\vee}),\]
it is not at all obvious that the natural morphism \[\beta_i: \Ext^{i}_{I/Z_1}(\tau(\brho)^{\vee},(\chi_0^s)^{\vee})\ra \Ext^{i}_{I/Z_1}(\pi(\brho)^{\vee},(\chi_0^s)^{\vee})\] is surjective, or even nonzero!  To solve this, we use crucially the fact that   $P_{\bullet}$ is a \emph{Koszul}  complex, motivated by an old theorem of Serre \cite{Se56}.  Roughly speaking, since $P_{\bullet}$ is a Koszul complex, to prove $\beta_{i}$ is surjective,  it suffices to prove $\beta_1$ is surjective which itself is a consequence of Theorem \ref{thm:intro-multione}(ii) and the construction of $\tau(\brho)$. In all we obtain that $\pi(\brho)$ is generated by $\tau(\brho)$ as a $\GL_2(F_v)$-representation and Theorem \ref{thm:intro-finite} follows easily.

Combining  Theorem \ref{thm:intro-finite} with the explicit structure of $D_0(\brho)$ (\cite{Le}), and also the main result of \cite{HuJLMS}, we finally arrive at the following result, confirming the aforementioned speculation in \cite{BP} in the case $f=2$, and provides strong evidence for the  general case.
\begin{theorem}[Theorem  \ref{thm-main-f=2}]
\label{thm:intro-f=2}
Assume $f=2$. Then $\pi(\brho)$ is uniserial of length $3$, with a unique Jordan--H\"older filtration of the form
\[\pi_0\ \ligne\ \pi_1\ \ligne\ \pi_2\]
where $\pi_0$, $\pi_2$ are principal series and $\pi_1$ is a supersingular representation.
\end{theorem}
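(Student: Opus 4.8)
The plan is to combine the three global inputs now available—Theorem \ref{thm:intro-finite} (that $\pi(\brho)$ is generated by $D_0(\brho)$), the explicit structure of $D_0(\brho)$ as a $\GL_2(\cO_{F_v})$-representation due to \cite{Le}, and the local-global compatibility in the ordinary part supplied by \cite{HuJLMS}—to pin down $\pi(\brho)$ completely when $f=2$. First I would recall that for $f=2$ the set $\mathscr{D}(\brho)$ consists of $f+1=3$ Serre weights; write $\sigma_0$ for the ordinary one and $\sigma_1,\sigma_2$ for the other two, ordered as they appear in the conjectural filtration $\pi_0\ \ligne\ \pi_1\ \ligne\ \pi_2$. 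The expected Jordan--Hölder factors are two principal series $\pi_0,\pi_2$ and one supersingular $\pi_1$, with $\soc_K\pi_i$ and $\Cosoc_K\pi_i$ controlled by which Serre weights lie in $\mathscr{D}(\brho)$. By Theorem \ref{thm:intro-GK} we already know $\dim_{\GL_2(F_v)}\pi(\brho)=2$, and by Theorem \ref{thm:intro-multione}/\ref{thm:intro-tD0} we know $\pi(\brho)[\fm_{K_1/Z_1}^2]\cong\widetilde{D}_0(\brho)$ is multiplicity free; these rigidify the possible $K$-socle structure a great deal.

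Next I would establish the three-step filtration. The socle $\soc_{\GL_2(F_v)}\pi(\brho)$ is irreducible: from $\soc_K\pi(\brho)\cong\oplus_{\sigma\in\mathscr{D}(\brho)}\sigma$ together with the mod $p$ local-global compatibility (and the fact that $\brho$ is reducible nonsplit, so that a single principal series $\pi_0$ has $K$-socle exactly $\soc_K\pi(\brho)$), one deduces $\soc_{\GL_2(F_v)}\pi(\brho)\cong\pi_0$, an irreducible principal series explicitly determined by $\brho$. Dually, using the essential self-duality of $\pi(\brho)^\vee$ proved in the course of Theorem \ref{thm:intro-finite} (via the Koszul-type resolution $P_\bullet$), one gets $\Cosoc_{\GL_2(F_v)}\pi(\brho)\cong\pi_f=\pi_2$, again an irreducible principal series. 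It remains to show that the "middle" $\pi_1\defn \mathrm{rad}_{\GL_2(F_v)}\pi(\brho)/\soc_{\GL_2(F_v)}\pi(\brho)$ is irreducible and supersingular, and that there are no further factors—i.e. that the length is exactly $3$. For this I would argue that any irreducible subquotient of $\pi(\brho)$ other than $\pi_0,\pi_2$ must be supersingular: it cannot be a principal series because the possible principal-series constituents are governed by $\mathscr{D}(\brho)$ and the generic structure of $D_0(\brho)$, and it cannot be a twist of Steing (special) or one-dimensional by genericity. Then, counting $K$-socle multiplicities: each Jordan--Hölder factor contributes at least one Serre weight from $\mathscr{D}(\brho)$ to $\soc_K$, $|\mathscr{D}(\brho)|=3$, and $\pi_0,\pi_2$ between them already account for all three (with $\sigma_0\in\soc_K\pi_f$ and the remaining two in $\soc_K\pi_0$, or some such bookkeeping dictated by \cite{BP}); by multiplicity-freeness of $\pi(\brho)[\fm_{K_1/Z_1}^2]$ this forces at most one supersingular factor, hence length exactly $3$.

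Uniseriality then follows formally: a length-$3$ representation with irreducible socle and irreducible cosocle, and with the middle factor not isomorphic to either, is automatically uniserial, so the Jordan--Hölder filtration $\pi_0\ \ligne\ \pi_1\ \ligne\ \pi_2$ is unique. To finish, one identifies $\pi_1$ precisely as a supersingular representation: here I would invoke Theorem \ref{thm:intro-finite} to write $\pi(\brho)=\langle\GL_2(F_v)\cdot D_0(\brho)\rangle$ and the description of $D_0(\brho)$ in \cite{Le}—for $f=2$, $D_0(\brho)$ has an explicit structure with three $K$-constituents from $\mathscr{D}(\brho)$ glued to the relevant $\GL_2(\F_{p^2})$-representation—together with the computation in \cite{HuJLMS} of the ordinary part $\Ord_P\pi(\brho)$, which exhibits $\pi_0$ (or $\pi_2$) as the ordinary piece and forces the complement to be the expected supersingular $\pi_1$.

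The main obstacle I anticipate is the length bound—specifically, ruling out a second supersingular Jordan--Hölder factor (or a "hidden" factor with no new $K$-socle). The Gelfand--Kirillov dimension bound $\dim_{\GL_2(F_v)}\pi(\brho)=2$ alone does not immediately bound the length, and the multiplicity-one statement Theorem \ref{thm:intro-multione} controls only $\pi(\brho)[\fm_{K_1/Z_1}^2]$, i.e. the bottom two layers with respect to the $K_1$-filtration, not arbitrarily deep constituents. The delicate point is therefore to leverage the generic structure of $D_0(\brho)$ (each Serre weight in $\mathscr{D}(\brho)$ appearing with multiplicity one in $D_0(\brho)$, and the "glueing" pattern classified in \cite{Le}) together with the generation result Theorem \ref{thm:intro-finite} to show that every constituent of $\pi(\brho)$ already "touches" $D_0(\brho)$, hence contributes to $\soc_K$ or to a low layer, so that the three Serre weights of $\mathscr{D}(\brho)$ are exhausted after three factors. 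I expect this to require a careful case analysis of the $I$-cosocle and the action of the Iwahori-Hecke operators, plus the input from the derived ordinary parts computed via the resolution $P_\bullet$ in the proof of Theorem \ref{thm:intro-finite}.
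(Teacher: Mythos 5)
Your identification of the socle ($\pi_0$) and cosocle ($\pi_2$) is essentially the paper's route (Proposition \ref{prop-cosocle-pi(rho)}, via the ordinary-part local-global compatibility and the essential self-duality of $\pi(\brho)^{\vee}$), and you correctly locate the real difficulty in the length bound. However, the mechanism you propose for that bound does not work, and you never supply the argument that actually closes the gap.

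First, a factual error that undermines your counting: for $f=2$ and $\brho$ reducible \emph{nonsplit}, $|\mathscr{D}(\brho)|=2^{d}$ with $0\leq d\leq f-1=1$, so $\mathscr{D}(\brho)$ has $1$ or $2$ elements, not $3$; it is $\mathscr{D}(\brho^{\rm ss})$ that has $2^f=4$ elements. More seriously, the claim that ``each Jordan--H\"older factor contributes at least one Serre weight from $\mathscr{D}(\brho)$ to $\soc_K$'' is false: the $K$-socle of a $G$-subquotient need not inject into $\soc_K\pi(\brho)$, and indeed the supersingular constituent $\pi_1$ has $K$-socle $\sigma_1\oplus\sigma_1^{[s]}$, two weights lying in $\mathscr{D}(\brho^{\rm ss})\setminus\mathscr{D}(\brho)$. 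Likewise, multiplicity-freeness of $\pi(\brho)[\fm_{K_1/Z_1}^2]$ says nothing about constituents sitting deep in the $K_1$-filtration, as you yourself note. So your length bound has no proof.

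What the paper does instead is the following, and it is the key idea missing from your proposal. By \cite[Prop.\ 3.2]{HuJLMS}, $\pi(\brho)/\pi_0$ has a \emph{unique} irreducible subrepresentation $\pi_1$, which is supersingular; let $\kappa$ be its pullback to $\pi(\brho)$. The generation statement (Theorem \ref{thm-generation-D0}) is then used in a sharper form: $\pi(\brho)$ is generated by any $K$-subrepresentation of $D_0(\brho)$ containing $\sigma_2$ as a subquotient, so $\sigma_2$ cannot occur in $\kappa\cap D_0(\brho)$; combined with the explicit structure of $D_0(\brho)$ from \cite[\S16]{BP}, this produces an embedding $\sigma_2\hookrightarrow D_0(\brho)/(\kappa\cap D_0(\brho))$, hence by Frobenius reciprocity a \emph{surjection} $\cInd_{\mathfrak{R}_0}^G\sigma_2\twoheadrightarrow \pi(\brho)/\kappa$. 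A separate lemma (Lemma \ref{lem-quotient-I(sigma)}, nontrivial for $L\neq\Q_p$ since $\cInd_{\mathfrak{R}_0}^G\sigma_2/T$ has infinite length) shows that any admissible quotient of $\cInd_{\mathfrak{R}_0}^G\sigma_2$ with irreducible cosocle $\cInd_{\mathfrak{R}_0}^G\sigma_2/(T-\lambda_2)$ must be $\cInd_{\mathfrak{R}_0}^G\sigma_2/(T-\lambda_2)^n$. Finally $n=1$ is forced because $\dim_{\F}R^2\Ord_P(\pi(\brho)/\kappa)=n$ while $R^2\Ord_P\pi(\brho)\cong\chi_0$ is one-dimensional (Proposition \ref{prop-ROrd-pi(rho)}, computed from the Koszul resolution) and surjects onto it. You gesture at ``derived ordinary parts'' at the very end, but without the intermediate steps — uniqueness of the supersingular sub of $\pi(\brho)/\pi_0$, the compact-induction presentation of $\pi(\brho)/\kappa$, and the classification of its admissible quotients — the derived ordinary part computation cannot be brought to bear, and the length bound remains unproved.
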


Although limited to the case of $\GL_2(\Q_{p^2})$, Theorem \ref{thm:intro-f=2} provides the first nontrivial result, beyond the case of $\GL_2(\Q_p)$ and some related groups like $\SL_2(\Q_p)$, showing that admissible smooth representations corresponding to Hecke eigenspaces of the mod $p$ cohomology of Shimura varieties can have finite length.\footnote{Recently, similar finiteness results are proved for semisimple $\brho$  in \cite{BHHMS2}.}

\medskip 

We now give a brief overview of the contents of each section.
From \S\ref{section-finiteRep} to \S\ref{section-BP}, we study modular  representation theory of  $\Gamma$, of $\tGamma$ and also of $I$. The main result is Theorem \ref{thm-criterion}.  In \S\ref{section:ordinary}, we recall Emerton's functor of ordinary parts and prove Proposition \ref{prop:Ord-semisimple} which reinterprets the semisimplicity of the ordinary part of $\pi(\brho)$ in terms of the restriction of $\pi(\brho)$ to $K$.
In \S\ref{Section::galois}, we study two classes of quotients of the universal deformation ring of $\brho$: one is the reducible deformation ring and the other is  the multi-type potentially Barsotti-Tate deformation rings  studied in \cite{Le}. In \S\ref{Section::automorphic}, we recall $P$-ordinary automorphic forms and its relation to reducible deformation rings. In \S\ref{section-patching}, we combine all the previous results to prove  our (first) main result  Theorem \ref{thm:main-flat}.
\S\ref{section-HA} and \S\ref{section-fg} are devoted to the proof of our second main results,  Theorem \ref{thm-generation-D0} and Theorem  \ref{thm-main-f=2}: \S\ref{section-HA} contains some preliminary results and the proofs of the theorems are presented in \S\ref{section-fg}. In Appendix \S\ref{section:appendix}, we collect some useful definitions and results in the theory of non-commutative Iwasawa algebra.

\subsection{Notation}
\label{Sec::notation}

If $F$ is a field, let $G_F \defn \Gal(\Fov/F)$ denote its absolute Galois group. Let $\e$ denote the $p$-adic cyclotomic character of $G_F$, and $\omega$ the mod $p$ cyclotomic character.

If $F$ is a number field and $v$ is a place of $F,$ let $F_v$ denote the completion of $F$ at $v.$ If $v$ is a finite place of $F,$ let $\cO_{F_v}$ denote the ring of integers of $F_v$ with uniformiser $\varpi_v$ and residue field $k_{F_v}.$ The cardinality of $k_{F_v}$ is denoted by $q_v.$ We fix an embedding $\Fov\into \Fov_v$ so that $G_{F_v}$ identifies with the decomposition group of $F$ at $v$, and write $I_{F_v}$ for the inertia subgroup of $G_{F_v}.$ Let $\Frob_v\in G_{F_v}$ denote a (lift of the) geometric Frobenius element, and let $\Art_{F_v}$ denote the local Artin reciprocity map, normalized so that it sends  $\varpi_v$ to $\Frob_v$. The global Artin map is compatible with the local Artin maps and is denoted by $\Art_F.$

We fix a prime number $p.$ Let $E/\Q_p$ be a finite extension in $\bQp,$ with ring of integers $\cO$ and residue field $\F \defn \cO/(\varpi)$ where $\varpi$ is a fixed uniformizer. We will often assume without further comment that $E$ and $\F$ are sufficiently large; they will serve as coefficient fields. Let $\mathrm{Art}(\cO)$ denote the category of local artinian $\cO$-algebras with residue field $\F$.

If $F$ is a number field and $v$ is a place of $F$ above $p,$ an {\em inertial type} for $F_v$ is a two-dimensional $E$-representation $\tau_v$ of the inertia group $I_{F_v}$ with open kernel, which can be extended to $G_{F_v}.$ Under Henniart's inertial local Langlands correspondence \cite[Appendice A]{BM}, a {\em non-scalar} tame inertial type $\tau$ corresponds to an irreducible $E$-representation $\s(\tau)$ of $\GL_2(\OC_{F_v})$ that arises by inflation from an irreducible $E$-representation of $\GL_2(k_{F_v})$ which is either a principal series or a cuspidal representation. Such $\s(\tau)$ is called a {\em tame type}.

If $F$ is a $p$-adic field, $V$ is a de Rham representation of $G_F$ over $E,$ and $\kappa: F\into E,$ then we will write ${\rm HT}_{\kappa} (V)$ for the multiset of Hodge-Tate weights of $V$ with respect to $\kappa.$ By definition, ${\rm HT}_{\kappa} (V)$ consists of $- i$ with multiplicity $\dim_E(V \otimes_{\kappa, F} \wh{\Fov}(i))^{G_F}$, e.g. ${\rm HT}_{\kappa} (\e) = \{1\}$ at all embedding $\kappa.$

Throughout this paper we fix $L$ a finite \emph{unramified} extension of $\Q_p$ of degree $f\defn [L:\Q_p]$. Denote by $\cO_L$ the ring of integers in $L$ and $\F_q$ the residual field of $\cO_L$ where $q \defn p^f$.  For $\lambda\in \F_q$, $[\lambda]\in\cO_L$ denotes its Teichm\"uller lift.

If $G$ is a $p$-adic analytic group, we denote by $\rRep_{\F}(G)$
(resp. $\rRep_{\F}^{\rm ladm}(G)$, resp. $\rRep_{\F}^{\rm adm}(G)$) the category of smooth (resp. locally admissible, resp. admissible) representations of $G$ on $\F$-vector spaces.
If $\zeta: Z(G)\to \F^{\times}$ is a continuous character of the center of $G$ then we add a subscript $\zeta$ to indicate that we consider only those representations on which the center $Z(G)$ acts via $\zeta.$ For example, $\rRep_{\F,\zeta}(G)$ is the full subcategory of $\rRep_{\F}(G)$  consisting of smooth representations on which $Z(G)$ acts by the character $\zeta.$

If $M$ is a linear-topological $\cO$-module (i.e. it has a topology for which both addition and the action of $\cO$ are continuous), then $M$ has a fundamental system of open neighborhoods of zero which are $\cO$-submodules. We write $M^{\vee}$ for its Pontryagin dual $\Hom^{\rm cont}_{\cO} (M, E/\cO),$ where $E/\cO$ is equipped with the discrete topology, and we give $M^{\vee}$ the compact open topology. The Pontryagin duality functor $M\mapsto M^{\vee} $ induces an anti-equivalence of categories between the category of discrete $\cO$-modules and the category of pseudo-compact $\cO$-modules.

If $M$ is a torsion free linear-topological $\cO$-module,  $M^{d}$ denotes its Schikhof dual $\Hom^{\rm cont}_{\cO} (M, \cO).$ The Schikhof duality functor $M \mapsto M^d$ induces an anti-equivalence of categories between the category of pseudo-compact torsion free  $\cO$-modules and the category of $\varpi$-adically complete and separated torsion free  $\cO$-modules.

If $R$ is a ring (e.g. $R=\F[G]$ for a group $G$) and $M$ is a left $R$-module, we denote by $\soc_R(M)$ (resp. $\mathrm{cosoc}_R(M)$) the socle (resp. cosocle) of $M$ (see Definition \ref{defn:app-socle}). Inductively we define the socle (resp. cosocle)  filtration   of $M$. 
If $M$ is of finite length, we denote by $\JH(M)$ the {\em multi-set} of composition factors (also called Jordan--H\"older factors) of $M$. If $\sigma$ is a simple $R$-module, we let  $[M:\sigma]$ denote the multiplicity of $\sigma$ in $\JH(M)$. \vspace{2mm}

Throughout the paper, we assume $p>2$.

\section{Finite representation theory I} \label{section-finiteRep}

In this section, we study smooth representation theory of $\GL_2(\cO_L)$.

First introduce some notation. Recall that $L$ is  a (fixed) finite unramified extension of $\Q_p$ of degree $f$. Let \[K\defn\GL_2(\cO_L)\] and $K_1$ be the first principal congruence subgroup, i.e. the kernel of the mod $p$ reduction morphism $K\twoheadrightarrow\GL_2(\F_q)$.  Let $Z$ be the center of $G$ and $Z_1=Z\cap K_1$. Let $\fm_{K_1}=\fm_{K_1/Z_1}$ denote the maximal ideal of the Iwasawa algebra $\F[\![K_1/Z_1]\!]$, which carries a conjugate action of $K$.  Denote
\[\Gamma\defn\F[\GL_2(\F_q)]\cong \F[\![K/Z_1]\!]/\fm_{K_1},\ \ \ \tGamma\defn \FKZ/\fm_{K_1}^2.\]
Note that $\tGamma$ is a finite dimensional $\F$-algebra but not a group algebra.
 Let $I\subset K$ denote the (upper) Iwahori subgroup,  $I_1\subset I$  the pro-$p$-Iwahori subgroup and
 \[H\defn\Big\{\matr{[\lambda]}00{[\mu]}, \lambda,\mu\in\F_q^{\times}\Big\}.\]
We have $I= H\ltimes I_1$.

We call a \emph{Serre weight} an isomorphism class of  irreducible representations of $\Gamma$ over $\bFp$. We take $\F$ large enough and fix an embedding $\F_q \into \F$, so that any Serre weight is defined over $\F$. Then a Serre weight is (up to isomorphism) of the form (\cite[Prop.~1]{BL})
\[\Sym^{r_0}\F^2\otimes_{\F} (\Sym^{r_1}\F^2)^{\rm Fr}\otimes_{\F} \cdots\otimes_{\F} (\Sym^{r_{f-1}}\F^2)^{{\rm Fr}^{f-1}}\otimes_{\F} \eta\circ\det\]
where $0\leq r_i \leq p-1$,  $\eta$ is a character of $\F_q^{\times}$ and ${\rm Fr}: \smatr{a}bcd \mapsto \smatr{a^p}{b^p}{c^p}{d^p}$ is the Frobenius on $\Gamma$. Following \cite{BP}, we denote this representation by $(r_0 ,\cdots, r_{f-1})\otimes\eta$.

For $n\geq0$, we say a Serre weight $(r_0,\cdots,r_{f-1})\otimes\eta$ is \emph{$n$-generic}, 
if
\[n\leq r_i\leq p-2-n,\ \ \forall  0\leq i\leq f-1.\]
Note that the existence of an $n$-generic Serre weight implies implicitly  $p\geq 2n+2$.

If $\sigma$ is a Serre weight, let $\rProj_{\Gamma}\sigma$ (resp. $\rProj_{\tGamma}\sigma$) be a projective envelope of $\sigma$ in the category of $\Gamma$-representations (resp. $\tGamma$-representations), and $\rInj_{\Gamma}\sigma$ (resp. $\rInj_{\tGamma}\sigma$) be an injective envelope of $\sigma$ in the category of $\Gamma$-representations (resp. $\tGamma$-representations). Note that $\Proj_{\Gamma}\sigma$ is isomorphic to $\rInj_{\Gamma}\sigma$, but  $\Proj_{\tGamma}\sigma$ is not isomorphic to $\rInj_{\tGamma}\sigma$. However, we have the following fact: if $\zeta$ denotes the central character of $\sigma$ then \begin{equation}\label{eq:Proj-Inj}(\rInj_{\tGamma}\sigma)^{\vee}\cong \Proj_{\tGamma}\sigma^{\vee}\cong (\Proj_{\tGamma}\sigma)\otimes\zeta^{-1}\circ\det\end{equation}
where the second isomorphism holds as $\sigma^{\vee}\cong \sigma\otimes\zeta^{-1}\circ\det$. 

If $\lambda=(\lambda_i(x_i))_{0\leq i\leq f-1}$ is an $f$-tuple with $\lambda_i(x_i)\in\Z\pm x_i$, we define (following \cite[\S2]{BP})
\[e(\lambda)\defn \frac{1}{2}\Big(\sum_{i=0}^{f-1}p^i(x_i-\lambda_i(x_i))\Big)\ \ \mathrm{if}\ \lambda_{f-1}(x_{f-1})\in\Z+x_{f-1}\]
\[e(\lambda)\defn\frac{1}{2}\Big(p^f-1+\sum_{i=0}^{f-1}p^i(x_i-\lambda_i(x_i))\Big) \ \ \mathrm{otherwise}.\ \ \ \ \ \ \ \ \]
One checks that $e(\lambda)\in\Z\oplus (\oplus_{i=0}^{f-1}\Z x_i)$, see \cite[Lem.~2.1]{BP}.
If $\sigma=(r_0,\cdots,r_{f-1})\otimes\eta$ is a Serre weight, we define
\begin{equation}\label{eq:defn-lambda(sigma)}\lambda(\sigma):=(\lambda_0(r_0),\cdots,\lambda_{f-1}(r_{f-1}))\otimes {\det}^{e(\lambda)(r_0,\cdots,r_{f-1})}\eta\end{equation}
provided that  $0\leq \lambda_i(r_i)\leq p-1$ for all $i$, in which case we call $\lambda(\sigma)$ the \emph{evaluation} of $\lambda$ at $\sigma$; otherwise we leave $\lambda(\sigma)$ undefined.

The following lemma  is an exercise left  
in the proof of \cite[Lem.~12.8]{BP}. 
\begin{lemma}\label{lemma:genericity}
Let $\sigma=(r_0,\cdots,r_{f-1})\otimes\eta$. Let $\lambda,\lambda'$ be two $f$-tuples with $\lambda_i(x_i),\lambda_i'(x_i)\in\Z\pm x_i$ and such that
\[0\leq \lambda_i(r_i),\lambda'_i(r_i)\leq p-1,\ \ \forall 0\leq i\leq f-1.\]
Assume that $\lambda,\lambda'$ satisfy the following condition:
\begin{equation}\label{eq:cond-lambda}
 \lambda_i(x_i)=\lambda_i'(x_i)\ \Longrightarrow\ \lambda_{i-1}(x_{i-1})-\lambda_{i-1}'(x_{i-1})\in\Z,\ \ \forall 0\leq i\leq f-1.\end{equation}
Then $\lambda(\sigma)\cong\lambda'(\sigma)$ as Serre weights if and only if $\lambda=\lambda'$ as $f$-tuples.
\end{lemma}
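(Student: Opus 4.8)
The statement is a uniqueness result for the "evaluation" operation $\lambda\mapsto\lambda(\sigma)$ on Serre weights, under a mild compatibility hypothesis \eqref{eq:cond-lambda}. The "if" direction is trivial, so the content is the "only if" direction: assuming $\lambda(\sigma)\cong\lambda'(\sigma)$ as Serre weights, we must deduce $\lambda_i(x_i)=\lambda_i'(x_i)$ for every $i$. I would set $\sigma=(r_0,\dots,r_{f-1})\otimes\eta$, write $\lambda(\sigma)=(s_0,\dots,s_{f-1})\otimes\eta_1$ and $\lambda'(\sigma)=(s_0',\dots,s_{f-1}')\otimes\eta_1'$ with $s_i=\lambda_i(r_i)$, $s_i'=\lambda_i'(r_i)$ in $[0,p-1]$, and $\eta_1,\eta_1'$ the appropriate twists coming from the $e(\lambda)$- and $e(\lambda')$-exponents. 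The key input is the explicit classification recalled just before the lemma: two tuples $(s_0,\dots,s_{f-1})\otimes\eta_1$ and $(s_0',\dots,s_{f-1}')\otimes\eta_1'$ with all $s_i,s_i'\in[0,p-1]$ give isomorphic Serre weights essentially only when they are literally equal, except for the well-known boundary ambiguity coming from $\Sym^{p-1}\F^2\cong\Sym^0\F^2\otimes(\text{something})$-type relations — i.e. one may "carry" between adjacent coordinates when some $s_i\in\{0,p-1\}$, replacing $(\dots,0,\dots)$ by $(\dots,p-1,\dots)$ in a neighbouring slot with a compensating twist of the determinant (the cyclic shift $i\mapsto i-1$ appearing in the Frobenius twist). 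This is exactly the kind of statement proved in \cite[\S3]{BP}.

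\textbf{Key steps.}
First, I would record precisely the list of coincidences among Serre weights $(s_0,\dots,s_{f-1})\otimes\eta_1$: two such are isomorphic iff there is a subset $J\subseteq\{0,\dots,f-1\}$ with $s_i=0$, $s_i'=p-1$ (or vice versa) for $i\in J$, $s_i=s_i'$ for $i\notin J$, and $\eta_1'=\eta_1\cdot\det^{c(J)}$ for an explicit exponent $c(J)$ built from the $p$-adic digits corresponding to $J$. (When $f=1$ there is also the global relation $\Sym^{p-1}\cong(\det)\otimes\Sym^{p-1}$-type degeneracy to be careful about; I would treat that as the degenerate case or note genericity excludes it.) Second, I would translate the constraint "$s_i=\lambda_i(r_i)\in\{0,p-1\}$" back into a statement about $\lambda_i(x_i)$ and $r_i$: since $\lambda_i(x_i)\in\Z\pm x_i$, having $\lambda_i(r_i)\in\{0,p-1\}$ forces either $\lambda_i(x_i)=x_i - r_i$ with $r_i\in\{0,p-1\}$ type constraints, or analogous; the point is that if $s_i=s_i'$ for $i\notin J$ and the two polynomials $\lambda_i,\lambda_i'$ agree on the single value $r_i$ without agreeing as polynomials, they must be $c\pm x_i$ and $c'\mp x_i$ with $c+c'$ pinned down — and crucially, $\lambda_i(x_i)=\lambda_i'(x_i)$ as polynomials is what we want. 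Third, and this is where \eqref{eq:cond-lambda} enters: I would run an induction / descent on the index $i$. Suppose for contradiction $J\neq\emptyset$; pick $i\in J$. Then $\lambda_i(r_i)\neq\lambda_i'(r_i)$ forces (after analyzing the digit-carry) a mismatch that propagates to index $i-1$ via the determinant twist $\eta_1'=\eta_1\det^{c(J)}$ — the exponent $c(J)$ is a sum of terms $p^{i-1}(\cdots)$, so matching the $\eta$-parts and the coordinates simultaneously forces, for each $i\in J$, that either $i-1\in J$ too or that $\lambda_{i-1}(x_{i-1})\neq\lambda_{i-1}'(x_{i-1})$; but hypothesis \eqref{eq:cond-lambda} applied contrapositively says $\lambda_{i-1}(x_{i-1})-\lambda_{i-1}'(x_{i-1})\notin\Z$ whenever $\lambda_i\neq\lambda_i'$ would be the wrong direction — rather, I use it as: if $\lambda_{i}=\lambda_i'$ then $\lambda_{i-1}-\lambda_{i-1}'\in\Z$. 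The clean way: show $J$ must be all of $\{0,\dots,f-1\}$ or empty (a "no proper nonempty carry-set" argument using the cyclic structure), then rule out $J=\{0,\dots,f-1\}$ directly by comparing central characters / using that $r_i$ are in the generic range so $\lambda_i(r_i)=p-1$ for all $i$ simultaneously is impossible given $\lambda_i(x_i)\in\Z\pm x_i$ and $0\le\lambda_i(r_i)\le p-1$.

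\textbf{Main obstacle.}
The hard part will be the bookkeeping in the third step: correctly identifying the exponent $c(J)$ of the determinant twist relating $\lambda(\sigma)$ and $\lambda'(\sigma)$ when a nonempty carry-set $J$ is present, and showing that the system "equal coordinates off $J$ + twist by $\det^{c(J)}$" together with condition \eqref{eq:cond-lambda} is inconsistent unless $J=\emptyset$. This requires pinning down how the Frobenius-twist indexing ($\mathrm{Fr}^i$ on the $i$-th factor) interacts with the digit carry $0\leftrightarrow p-1$ — precisely the "$i\mapsto i-1$" shift that appears in \eqref{eq:cond-lambda}. I expect the cleanest route is to reduce to the two-coordinate local picture around each boundary index and feed \eqref{eq:cond-lambda} in one index at a time, rather than to write out $c(J)$ globally; the genericity-type bounds $0\le\lambda_i(r_i)\le p-1$ (with $r_i$ not extreme, in the intended application) then kill the remaining degenerate cases. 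Everything else is the standard dictionary for isomorphisms of Serre weights from \cite[\S1--\S3]{BP} and elementary manipulation of the affine-linear forms $\lambda_i(x_i)\in\Z\pm x_i$.
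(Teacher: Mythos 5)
Your proposal goes wrong at its first key step and, as a result, misses where the actual difficulty lies. There is no ``carry'' ambiguity $0\leftrightarrow p-1$ between adjacent coordinates for irreducible $\F[\GL_2(\F_q)]$-representations in the normalization used here: by \cite[Prop.~1]{BL} the $q(q-1)$ representations $(s_0,\cdots,s_{f-1})\otimes\eta$ with $0\leq s_i\leq p-1$ are pairwise non-isomorphic. So the correct (and much simpler) starting point is that $\lambda(\sigma)\cong\lambda'(\sigma)$ is \emph{equivalent} to $\lambda_i(r_i)=\lambda'_i(r_i)$ for all $i$ together with $e(\lambda)(r_0,\cdots,r_{f-1})\equiv e(\lambda')(r_0,\cdots,r_{f-1})\pmod{p^f-1}$. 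Your nonempty carry-set $J$ never occurs, and the machinery you build around it (the twist exponent $c(J)$, the ``no proper nonempty carry-set'' argument) is analysis of a vacuous case.

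The genuine content, which your proposal does not reach, is this: even when all the \emph{values} $\lambda_i(r_i)$ and $\lambda'_i(r_i)$ agree, the affine-linear forms can still differ, namely when $\lambda_i$ and $\lambda'_i$ have opposite slopes ($\Z+x_i$ versus $\Z-x_i$); you notice this in your second step but never resolve it. The paper resolves it in two moves. At $i=f-1$ the slope is detected by the determinant twist: by the definition of $e(\lambda)$, which carries an extra $\frac12(p^f-1)$ exactly when $\lambda_{f-1}(x_{f-1})\in\Z-x_{f-1}$, a slope mismatch at $f-1$ forces $e(\lambda)-e(\lambda')=\pm\frac12(p^f-1)\not\equiv 0\pmod{p^f-1}$, a contradiction; hence $\lambda_{f-1}-\lambda'_{f-1}\in\Z$, and value-agreement then gives $\lambda_{f-1}=\lambda'_{f-1}$. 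For $i<f-1$ one descends: \eqref{eq:cond-lambda}, used in the \emph{forward} direction ($\lambda_i=\lambda'_i\Rightarrow\lambda_{i-1}-\lambda'_{i-1}\in\Z$), converts equality at index $i$ into same-slope at index $i-1$, and value-agreement again upgrades this to equality. Your write-up admits you are unsure of the direction in which \eqref{eq:cond-lambda} should be applied, and it nowhere isolates the special role of the index $f-1$ or the $\frac12(p^f-1)$ term in $e(\lambda)$; without these two ingredients the argument does not close.
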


\begin{proof}
The sufficiency is trivial. To prove the necessity, assume $\lambda(\sigma)\cong\lambda'(\sigma)$. This is equivalent to
\begin{equation}\label{eq:equal-lambda}\lambda_i(r_i)=\lambda_i'(r_i),\ \ \forall 0\leq i\leq f-1\end{equation}
and
\begin{equation}\label{eq:equal-det}e(\lambda)(r_0,\cdots,r_{f-1})\equiv e(\lambda')(r_0,\cdots,r_{f-1})\ \ (\mathrm{mod}\ p^f-1).\end{equation}

If  $\lambda_{f-1}(x_{f-1})-\lambda'_{f-1}(x_{f-1})\in \Z$, equivalently, either both $\lambda_{f-1}(x_{f-1})$ and $\lambda'_{f-1}(x_{f-1})$ lie in $\Z+x_{f-1}$ or both in $\Z-x_{f-1}$, then \eqref{eq:equal-lambda} implies the equality $\lambda_{f-1}(x_{f-1})=\lambda'_{f-1}(x_{f-1})$. Hence, the assumption \eqref{eq:cond-lambda} allows one to show inductively that $\lambda_i(x_i)=\lambda'_{i}(x_i)$ for all $0\leq i\leq f-1$.

If $\lambda_{f-1}(x_{f-1})-\lambda'_{f-1}(x_{f-1})\notin \Z$, equivalently, one of $\lambda_{f-1}(x_{f-1})$ and $\lambda'_{f-1}(x_{f-1})$ lies in $\Z+x_{f-1}$ and the other lies in $\Z-x_{f-1}$, then by definition of $e(\lambda)$ and \eqref{eq:equal-lambda} we have
\[e(\lambda)(r_0,\cdots,r_{f-1})-e(\lambda')(r_0,\cdots,r_{f-1})=\pm \frac{1}{2}(p^f-1),\]
a contradiction to \eqref{eq:equal-det}. This finishes the proof.
\end{proof}

Given a Serre weight $\sigma$, it is well-known that $\sigma^{I_1}$ is one-dimensional over $\F$, and  we denote by $\chi_{\sigma}$ the character of $H$, also of $I$, acting on $\sigma^{I_1}$.  Given a character $\chi:H\ra \F^{\times}$, denote by $\chi^s$  the character  $\chi^s(h)\defn\chi(shs)$ for $s=\smatr{0}110$ and $h\in H$. If $\chi\neq \chi^s$, then there exists a \emph{unique} Serre weight denoted by $\sigma_{\chi}$ such that $H$ acts on $\sigma_{\chi}^{I_1}$ via $\chi$.

For convenience, we often write $\cS$ for the set $\Z/f\Z$, identified with $\{0,\cdots,f-1\}$.

\subsection{The structure of $\rInj_{\Gamma}\sigma$  } \label{subsection:Gamma}

Let $\sigma$ be a Serre weight. The structure of $\rInj_{\Gamma}\sigma$ is studied in detail in \cite[\S3-\S4]{BP} and generalized in \cite[\S5]{EGS}.   We first recall the following useful result.

\begin{proposition}\label{prop-multione-BP}
Let $\tau$ be a Jordan--H\"older factor of $\rInj_{\Gamma}\sigma$. Among those subrepresentations of $\rInj_{\Gamma}\sigma$ whose cosocle is isomorphic to $\tau$, there is a unique one, denoted by $I(\sigma,\tau)$, such that $\sigma$ occurs with multiplicity $1$. Moreover, $I(\sigma,\tau)$ is multiplicity free.
\end{proposition}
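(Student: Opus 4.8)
The plan is to reduce the statement, via Pontryagin (equivalently, $\GL_2(\F_q)$-)duality, to the dual assertion about \emph{quotients} of the \emph{projective} cover $\rProj_\Gamma \sigma$, and then to exhibit the required subrepresentation essentially as a radical-layer (``filtration by socle/cosocle length'') object. First I would recall, following \cite[\S3]{BP} and \cite[\S5]{EGS}, the explicit combinatorial parametrization of $\JH(\rInj_\Gamma\sigma)$ by the $f$-tuples $\lambda$ and the fact that each Jordan--H\"older factor $\tau$ of $\rInj_\Gamma\sigma$ occurs with multiplicity one; since $\rInj_\Gamma\sigma\cong\rProj_\Gamma\sigma$ (as recalled after \eqref{eq:Proj-Inj}, this holds up to a twist), $\rInj_\Gamma\sigma$ is at once injective and projective, and $\Hom_\Gamma(\rInj_\Gamma\sigma,\rInj_\Gamma\sigma)$ is a local ring, so $\rInj_\Gamma\sigma$ is indecomposable with simple socle $\sigma$.

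The key step is the construction of $I(\sigma,\tau)$ and the proof of its uniqueness. Given a factor $\tau$ of $\rInj_\Gamma\sigma$, consider the set $\mathcal{S}_\tau$ of all subrepresentations $W\subseteq\rInj_\Gamma\sigma$ with $\rcosoc_\Gamma(W)\cong\tau$ (this set is nonempty: any subrepresentation generated by a vector mapping onto $\tau$-isotypic part of some radical layer works, and one can cut down to one with cosocle exactly $\tau$). I would show $\mathcal{S}_\tau$ is closed under \emph{sum}: if $W_1,W_2\in\mathcal{S}_\tau$, then $W_1+W_2$ surjects onto $\tau$, and because $\tau$ occurs with multiplicity one in $\rInj_\Gamma\sigma$ the cosocle of $W_1+W_2$ cannot contain $\tau$ with multiplicity $>1$ nor any other summand (any other simple $\tau'\subseteq\rcosoc(W_1+W_2)$ would have to already appear in $\rcosoc(W_1)$ or $\rcosoc(W_2)$ by right-exactness of cosocle on the surjections $W_i\onto W_1+W_2$ composed appropriately — here one uses that $W_i\hookrightarrow W_1+W_2$ and the cosocle is a quotient functor). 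Hence $W_1+W_2\in\mathcal{S}_\tau$, and $I(\sigma,\tau)\defn\sum_{W\in\mathcal{S}_\tau}W$ is the unique maximal element. Multiplicity one of $\sigma$ in $I(\sigma,\tau)$ and the multiplicity-freeness of $I(\sigma,\tau)$ then follow because $I(\sigma,\tau)\subseteq\rInj_\Gamma\sigma$ and every Jordan--H\"older factor of $\rInj_\Gamma\sigma$ — in particular $\sigma$ — occurs there with multiplicity one.

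To make the ``cosocle of a sum'' argument rigorous I would phrase it dually: $W\in\mathcal{S}_\tau$ iff the quotient $(\rInj_\Gamma\sigma)^\vee\onto W^\vee$ has simple \emph{socle} $\tau^\vee$, i.e. $W^\vee$ is a quotient of $\rProj_\Gamma\sigma^\vee$ with essential simple socle $\tau^\vee$; such quotients correspond bijectively to subrepresentations of $\rInj_\Gamma(\tau^\vee)$ containing $\soc=\tau^\vee$ and lying in the image of a \emph{fixed} map $\rProj_\Gamma\sigma^\vee\to\rInj_\Gamma\tau^\vee$ (which is unique up to scalar since $[\rInj_\Gamma\tau^\vee : \sigma^\vee]\le 1$), and the set of such is clearly closed under intersection — giving a unique \emph{minimal} quotient, dually the unique maximal $I(\sigma,\tau)$. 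The main obstacle, and the step deserving the most care, is precisely this bookkeeping with cosocles/socles under sums and the use of the global multiplicity-one property of $\rInj_\Gamma\sigma$ (which is the substantive input from \cite{BP}, \cite{EGS}); once that is in hand, everything else is formal. I would also double-check the edge cases where $\tau\cong\sigma$ (then $I(\sigma,\sigma)=\sigma$) and where $\tau=\rcosoc(\rInj_\Gamma\sigma)$ (then $I(\sigma,\tau)=\rInj_\Gamma\sigma$ itself), to confirm the construction is consistent.
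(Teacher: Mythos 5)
There is a genuine gap, and it sits at the foundation of your argument: the claim that ``each Jordan--H\"older factor of $\rInj_\Gamma\sigma$ occurs with multiplicity one'' is false. What \cite[Lem.~3.2]{BP} gives is that every factor is of the form $\lambda(\sigma)$ for a \emph{uniquely determined} $\lambda\in\cI(x_0,\dots,x_{f-1})$; it says nothing about multiplicities, which are in general $>1$. Already for $f=1$ and $\sigma=\Sym^r\F^2$ with $0<r<p-1$ one has $\dim_\F\rInj_\Gamma\sigma=2p$, while the distinct factors $\Sym^r$, $\Sym^{p-1-r}\otimes{\det}^r$, $\Sym^{p-3-r}\otimes{\det}^{r+1}$ have total dimension $2p-1$; so $[\rInj_\Gamma\sigma:\sigma]=2$ (socle and cosocle). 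This false premise is used three times: in the ``closed under sum'' step, in the claim that the fixed map $\rProj_\Gamma\sigma^\vee\to\rInj_\Gamma\tau^\vee$ is unique up to scalar (one has $\dim\Hom_\Gamma(\rProj_\Gamma\sigma^\vee,\rInj_\Gamma\tau^\vee)=[\rInj_\Gamma\tau^\vee:\sigma^\vee]$, which can be $\ge 2$), and in the final deduction that $I(\sigma,\tau)$ is multiplicity free. Without it, none of these steps go through, and the multiplicities are precisely the substantive difficulty the proposition addresses.

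Moreover, even granting closure under sums, the object you construct --- the \emph{maximal} subrepresentation with cosocle $\tau$ --- is not the object in the statement. The proposition singles out the unique subrepresentation with cosocle $\tau$ in which $\sigma$ occurs with multiplicity exactly one; larger subrepresentations with the same cosocle but $[\,\cdot\,:\sigma]\ge 2$ exist. Your own edge case exposes this: for $f=1$ and $\tau\cong\sigma$, the maximal subrepresentation with cosocle $\sigma$ is all of $\rInj_\Gamma\sigma$ (which has $[\,\cdot\,:\sigma]=2$), whereas $I(\sigma,\sigma)=\sigma$. Note finally that the paper does not prove this statement from scratch either: it is quoted from \cite[Cor.~3.12]{BP} (for $\sigma$ $0$-generic) and \cite[Lem.~5.1.9]{EGS} (in general), whose proofs rest on the explicit filtration and multiplicity computations for $\rInj_\Gamma\sigma$ developed in \cite[\S3--4]{BP}; some input of that depth is unavoidable here, and a purely formal socle/cosocle argument cannot replace it.
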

\begin{proof}
This is \cite[Cor.~3.12]{BP} if $\sigma$ is $0$-generic, and is \cite[Lem.~5.1.9]{EGS} in general.
\end{proof}

\begin{corollary}\label{cor-multi-transfer}
Let   $V$ be a subrepresentation of  $(\rInj_{\Gamma}\sigma)^{\oplus s}$ for some $s\geq 1$. Then for any Serre weight $\tau$, we have  $[V:\sigma]\geq[V:\tau]$. If, moreover, $\mathrm{cosoc}_{\Gamma}(V)$ is isomorphic to $\tau^{\oplus r}$ for some $r\geq 1$, then $[V:\sigma]=[V:\tau]$.
\end{corollary}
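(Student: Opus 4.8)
The plan is to reduce everything to Proposition~\ref{prop-multione-BP} by a direct dévissage on the subrepresentation $V$. First I would observe that the statement is additive in the following sense: if $V$ embeds into $(\rInj_{\Gamma}\sigma)^{\oplus s}$, then projecting to the $s$ factors and intersecting, one does \emph{not} immediately reduce to $s=1$, so instead I would argue directly with the cosocle filtration. Recall that $\Cosoc_{\Gamma}(V)=\bigoplus_j \tau_j^{\oplus a_j}$ for finitely many Serre weights $\tau_j$; composing the surjection $V\onto \Cosoc_{\Gamma}(V)$ with the projection onto a single copy of some $\tau_j$ and taking the induced map $V\ra \rInj_{\Gamma}\tau_j$ (using injectivity of $\rInj_{\Gamma}\tau_j$ to extend $\tau_j\into\rInj_{\Gamma}\tau_j$ along $\tau_j\into\Cosoc_\Gamma(V)$... wait, one needs the map \emph{out} of $V$), I would rather pick, for each indecomposable-cosocle piece, the unique subrepresentation with that cosocle inside a copy of $\rInj_\Gamma\sigma$ and apply Proposition~\ref{prop-multione-BP}.

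More precisely, the key step is: \textbf{Step 1.} Any subrepresentation $W\subseteq \rInj_\Gamma\sigma$ whose cosocle is irreducible, say $\cong\tau$, equals $I(\sigma,\tau)$ up to the caveat about multiplicity of $\sigma$; but in fact \emph{any} such $W$ satisfies $[W:\sigma]\le 1$ because $\rInj_\Gamma\sigma$ has $\sigma$ in its socle with multiplicity one and $W$ has simple cosocle, so if $[W:\sigma]\ge 2$ we would get a subrepresentation of $\rInj_\Gamma\sigma$ strictly containing $\soc_\Gamma(\rInj_\Gamma\sigma)=\sigma$ with two copies of $\sigma$ but simple cosocle --- this forces $W\twoheadrightarrow$ something, and by the uniqueness in Proposition~\ref{prop-multione-BP} the minimal such is $I(\sigma,\tau)$, which is multiplicity free, hence $[W:\sigma]=[W:\tau]=1\le 1$. \textbf{Step 2.} For general $V\subseteq(\rInj_\Gamma\sigma)^{\oplus s}$, induct on the length of $V$: pick a simple quotient $\tau$ of $V$, i.e.\ a surjection $V\onto\tau$, let $V'=\ker$. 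Lifting $\tau$ along one of the $s$ coordinate projections that is nonzero on the image, one gets a subrepresentation of a single $\rInj_\Gamma\sigma$ with simple cosocle $\tau$, hence by Step~1 this copy of $\tau$ is "charged to" a copy of $\sigma$ already present; formally, $[V:\sigma]=[V':\sigma]+[\tau:\sigma]$ and $[V:\tau']=[V':\tau']+[\tau:\tau']$ for any $\tau'$, and one checks $[V':\sigma]\ge[V':\tau']$ by induction while the extra contribution $[\tau:\sigma]\ge[\tau:\tau']$ is obvious ($\tau$ simple). The subtlety is proving the extra $\sigma$ is genuinely there and not already counted — this is exactly where one invokes that in $\rInj_\Gamma\sigma$ the socle is $\sigma$, so any nonzero subrepresentation contains $\sigma$, hence each coordinate projection of $V$ that is nonzero contributes at least one $\sigma$ to $V$ via the diagonal.

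For the equality statement, assume $\Cosoc_\Gamma(V)\cong\tau^{\oplus r}$. Then I would show $V\subseteq I(\sigma,\tau)^{\oplus r}$ after composing with coordinate projections: more carefully, the surjection $V\onto\tau^{\oplus r}$ together with the embedding into $(\rInj_\Gamma\sigma)^{\oplus s}$ realizes $V$ (by the uniqueness of $I(\sigma,\tau)$ as the \emph{smallest} subobject of $\rInj_\Gamma\sigma$ surjecting onto $\tau$, applied coordinatewise to the "essential" coordinates) as a subrepresentation of a direct sum of copies of $I(\sigma,\tau)$. Since $I(\sigma,\tau)$ is multiplicity free with $[I(\sigma,\tau):\sigma]=[I(\sigma,\tau):\tau]=1$, one gets $[V:\sigma]\le r$ from the coordinate-sum bound and $[V:\tau]=r$ from the cosocle; combined with the inequality $[V:\sigma]\ge[V:\tau]=r$ already proved, we conclude $[V:\sigma]=[V:\tau]=r$.

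The main obstacle I anticipate is making Step~2's "charging" argument rigorous when the coordinate projections of $V$ into the various copies of $\rInj_\Gamma\sigma$ are not themselves surjective onto nice subobjects and overlap in complicated ways --- i.e.\ ensuring that distinct Jordan--Hölder copies of $\tau$ in $V$ are matched to distinct copies of $\sigma$, not the same one. The clean way around this is probably \emph{not} to induct on coordinates but to use the functor $\Hom_\Gamma(\rProj_\Gamma\tau,-)$ (note $\rProj_\Gamma\tau\cong\rInj_\Gamma\tau$): applying it to $V\into(\rInj_\Gamma\sigma)^{\oplus s}$ gives an injection $\Hom_\Gamma(\rProj_\Gamma\tau,V)\into\Hom_\Gamma(\rProj_\Gamma\tau,\rInj_\Gamma\sigma)^{\oplus s}$, and by a standard computation $\dim\Hom_\Gamma(\rProj_\Gamma\tau,\rInj_\Gamma\sigma)=[\rInj_\Gamma\sigma:\tau]$ while Proposition~\ref{prop-multione-BP} controls these multiplicities; the dimension count then yields $[V:\tau]=\dim\Hom_\Gamma(\rProj_\Gamma\tau,V)\le s\cdot[\rInj_\Gamma\sigma:\tau]$, which is not quite what we want. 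So in the end the honest argument really does go through the cosocle filtration and Proposition~\ref{prop-multione-BP}, and the delicate bookkeeping of which $\sigma$-copy absorbs which $\tau$-copy is the crux; I expect it to be handled exactly as in the proof of \cite[Lem.~5.1.9]{EGS} or \cite[Cor.~3.12]{BP}, by induction on $\dim_\F V$ as sketched above.
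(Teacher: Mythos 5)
There is a genuine gap, and it sits exactly where you flagged it. Your Step 1 is false as stated: a subrepresentation $W\subseteq\rInj_{\Gamma}\sigma$ with irreducible cosocle can perfectly well have $[W:\sigma]\geq 2$ --- indeed $W=\rInj_{\Gamma}\sigma$ itself has simple cosocle $\sigma$ (it is also projective) while $[\rInj_{\Gamma}\sigma:\sigma]>1$ in general. Proposition \ref{prop-multione-BP} asserts uniqueness of the subrepresentation with cosocle $\tau$ \emph{and} $\sigma$-multiplicity one; the multiplicity-one condition is a hypothesis you must arrange, not a consequence of having simple cosocle. With Step 1 gone, your Step 2 induction does not close: stripping off a simple quotient $\tau\neq\sigma$ lowers $[V:\tau]$ by one while leaving $[V:\sigma]$ unchanged, so the induction only yields $[V:\sigma]\geq[V:\tau]-1$ at each stage, and the "charging" of each new copy of $\tau$ to a genuinely new copy of $\sigma$ --- which you explicitly defer --- is the whole content of the statement. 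Likewise, in the equality part the claimed embedding $V\hookrightarrow I(\sigma,\tau)^{\oplus r}$ is unjustified: the images of the maps $\rProj_{\Gamma}\tau\to V$ sit diagonally across several copies of $\rInj_{\Gamma}\sigma$ and need not have $\sigma$-multiplicity one, so they need not be copies of $I(\sigma,\tau)$.

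The correct dévissage is on the multiplicity of $\sigma$, not on the length of $V$. Since $\soc_{\Gamma}(V)\subseteq\sigma^{\oplus s}$, fix one copy of $\sigma$ in the socle and let $U$ be the sum of all subrepresentations containing it with $[\,\cdot\,:\sigma]=1$; this sum still has $\sigma$-multiplicity one (inclusion--exclusion, since the intersection of any two such contains the fixed $\sigma$), and maximality of $U$ forces $\soc_{\Gamma}(V/U)$ to be $\sigma$-isotypic, so $V/U$ again embeds into a sum of copies of $\rInj_{\Gamma}\sigma$ and one iterates. Each graded piece has socle $\sigma$ and $\sigma$-multiplicity one, hence embeds into a single copy of $\rInj_{\Gamma}\sigma$; Proposition \ref{prop-multione-BP} then shows every image of $\rProj_{\Gamma}\tau$ in such a piece equals $I(\sigma,\tau)$, which is multiplicity free, whence $[\text{piece}:\tau]\leq 1=[\text{piece}:\sigma]$, and summing over the filtration gives $[V:\tau]\leq[V:\sigma]$. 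For the equality, no new embedding is needed: $\mathrm{cosoc}_{\Gamma}(V)\cong\tau^{\oplus r}$ makes $V$ a quotient of $(\rProj_{\Gamma}\tau)^{\oplus r}$, so by \eqref{eq:Proj-Inj} the dual $V^{\vee}$ is (up to twist) a subrepresentation of $(\rInj_{\Gamma}\tau^{\vee})^{\oplus r}$, and the inequality just proved, applied with $\sigma$ replaced by this twist of $\tau^{\vee}$, gives $[V:\tau]\geq[V:\sigma]$.
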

\begin{proof}
Using that $\mathrm{soc}_{\Gamma}(V)$ has the form $\sigma^{\oplus s'}$ for some $s'\leq s$, we can construct a finite filtration of $V$  such that  each graded piece  has socle isomorphic to $\sigma$ and $\sigma$ occurs only once there. Hence we are reduced to the situation in which $\mathrm{soc}_{\Gamma}(V)=\sigma$ and $[V:\sigma]=1$, and the result follows from Proposition \ref{prop-multione-BP}. The second assertion is clear by duality using \eqref{eq:Proj-Inj}.
\end{proof}

Following \cite[\S3]{BP}, the Jordan--H\"older factors of $\rInj_{\Gamma}\sigma$ can be described as follows.
Let $x_0,\dots,x_{f-1}$ be variables, and define the set $\cI(x_0,\cdots,x_{f-1})$ of  $f$-tuples $\lambda\defn (\lambda_0(x_0),\cdots,\lambda_{f-1}(x_{f-1}))$, where $\lambda_0(x_0)\in\{x_0,p-2-x_0\pm1\}$ if $f=1$, and if $f>1$ then
\begin{enumerate}
\item[(i)] $\lambda_i(x_i)\in\{x_i,x_i\pm1,p-2-x_i,p-2-x_i\pm1\}$ for $i\in\cS$
\item[(ii)] if $\lambda_i(x_i)\in\{x_i,x_i\pm1\}$, then $\lambda_{i+1}(x_{i+1})\in\{x_{i+1},p-2-x_{i+1}\}$
\item[(iii)] if $\lambda_i(x_i)\in\{p-2-x_i,p-2-x_i\pm1\}$, then $\lambda_{i+1}(x_{i+1})\in\{x_{i+1}\pm1,p-2-x_{i+1}\pm1\}$
\end{enumerate}
with the convention $x_{f}\defn x_0$ and $\lambda_{f}(x_f)\defn\lambda_0(x_0)$. By \cite[Lem.~3.2]{BP}, each Jordan--H\"older factor of $\rInj_{\Gamma}\sigma$ is isomorphic to $\lambda(\sigma)$ (see \eqref{eq:defn-lambda(sigma)}) for a uniquely determined $\lambda\in\cI(x_0,\cdots,x_{f-1})$. If $\sigma$ is $1$-generic, then $\lambda(\sigma)$ is a genuine Serre weight for any $\lambda\in\cI(x_0,\cdots,x_{f-1})$.

 For $\lambda\in \cI(x_0,\cdots,x_{f-1})$, set
\[\cS(\lambda)\defn\big\{i\in\cS:\ \lambda_i(x_i)\in\{x_i\pm1,p-2-x_i\pm1\}\big\}.\]
By abuse of notation, we also write $\cS(\tau)=\cS(\lambda)$ if $\tau=\lambda(\sigma)$.

Recall from \cite[Def.~4.10]{BP} that, given $\lambda,\lambda'\in\cI(x_0,\cdots,x_{f-1})$, we say that $\lambda$ and $\lambda'$ are \emph{compatible} if, whenever $\lambda_i(x_i),\lambda_i'(x_i)\in\{x_i\pm1, p-2-x_i-\pm1\}$,   the signs of the $\pm 1$ are the same in $\lambda_i(x_i)$ and $\lambda'_i(x_i)$.   Note that, if $\cS(\lambda)\cap \cS(\lambda')=\emptyset$, then $\lambda $ and $\lambda'$ are always compatible.
The following result  determines the structure of $I(\sigma,\tau)$, see \cite[Cor.~4.11]{BP} and \cite[\S5]{EGS}.
\begin{proposition}\label{prop-BP-4.11}
Let $\tau,\tau'\in\JH(\rInj_{\Gamma}\sigma)$ which correspond to $\lambda,\lambda'\in\cI(x_0,\break\cdots,x_{f-1})$, respectively. Then $\tau'$ occurs in $I(\sigma,\tau)$ as a subquotient if and only if $\lambda'\leq \lambda$, meaning that $\cS(\lambda')\subseteq\cS(\lambda)$ and $\lambda$, $\lambda'$ are compatible.
\end{proposition}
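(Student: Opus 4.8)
The plan is to deduce this from the analysis of $\rInj_\Gamma\sigma$ carried out in \cite[\S3--\S4]{BP} (and, without genericity hypotheses, in \cite[\S5]{EGS}), arguing by induction on $\#\cS(\lambda)$. First I would fix the combinatorics: by \cite[Lem.~3.2]{BP} every Jordan--H\"older factor of $\rInj_\Gamma\sigma$ is of the form $\lambda(\sigma)$ for a unique $\lambda\in\cI(x_0,\dots,x_{f-1})$ giving a genuine Serre weight (uniqueness being a case of Lemma \ref{lemma:genericity}, as condition \eqref{eq:cond-lambda} is immediate for tuples in $\cI$), so it is legitimate to transport the order $\leq$ to $\JH(\rInj_\Gamma\sigma)$, and the claim becomes $\JH\big(I(\sigma,\tau)\big)=\{\lambda'(\sigma):\lambda'\leq\lambda\}$.

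For the base case $\cS(\lambda)=\emptyset$, conditions (i)--(iii) defining $\cI$ force $\lambda=(x_0,\dots,x_{f-1})$ (an entry $p-2-x_i$ would put $i+1$ into $\cS(\lambda)$ by (iii)), so $\tau=\sigma$; since the socle and cosocle of $I(\sigma,\sigma)$ are both $\sigma$ and $[I(\sigma,\sigma):\sigma]=1$, it equals $\sigma$, matching $\{\lambda'(\sigma):\lambda'\leq\lambda\}=\{\sigma\}$. For the inductive step I would invoke the description of the extensions among the Jordan--H\"older factors of $\rInj_\Gamma\sigma$: combining it with Proposition \ref{prop-multione-BP}, one identifies $\rad\big(I(\sigma,\tau)\big)$ with the sum $\sum_{\tau''\lessdot\tau}I(\sigma,\tau'')$ of submodules of $\rInj_\Gamma\sigma$, the sum ranging over the Serre weights $\tau''$ covered by $\tau$ for $\leq$. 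Then $\JH(I(\sigma,\tau))=\{\tau\}\cup\bigcup_{\tau''\lessdot\tau}\JH(I(\sigma,\tau''))$, which by the inductive hypothesis is $\{\tau\}\cup\bigcup_{\tau''\lessdot\tau}\{\lambda'(\sigma):\lambda'\leq\lambda''\}$, and a direct combinatorial check (every $\lambda'<\lambda$ lies below some cover of $\lambda$) identifies this with $\{\lambda'(\sigma):\lambda'\leq\lambda\}$.

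The hard part is the ``only if'' direction, namely the necessity of the \emph{compatibility} (sign) condition: if $i\in\cS(\lambda)\cap\cS(\lambda')$ with the $\pm1$ occurring with opposite signs in $\lambda_i(x_i)$ and $\lambda'_i(x_i)$, one must rule out $\lambda'(\sigma)$ as a subquotient of $I(\sigma,\tau)$. This is exactly what the explicit submodule computations of \cite[\S4]{BP} settle in the $0$-generic case, and what the constructions of \cite[\S5]{EGS}, which exhibit $I(\sigma,\tau)$ directly as a submodule of $\rInj_\Gamma\sigma$, settle in general: forcing an incompatible $\lambda'$ into $I(\sigma,\tau)$ would either destroy its multiplicity-freeness or produce a constituent lying outside $\JH(\rInj_\Gamma\sigma)$. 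Since assumption~(d) gives $2\le r_i\le p-5$, so that every $\sigma\in\mathscr{D}(\brho)$ is $2$-generic, these inputs apply without change, and in the write-up I would simply cite \cite[Cor.~4.11]{BP} together with \cite[\S5]{EGS} rather than reproduce the case analysis.
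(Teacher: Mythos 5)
Your proposal is correct and takes essentially the same route as the paper: the paper gives no proof of this proposition at all, simply recording it as a known result with the citations \cite[Cor.~4.11]{BP} (for the $0$-generic case) and \cite[\S 5]{EGS} (in general), which is exactly where your argument also lands. The inductive sketch you add is a reasonable gloss on how those references establish the statement, and your base case and the reduction of the compatibility condition to the submodule computations of \emph{loc.\ cit.} are sound.
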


The notion of compatibility can be defined for more general $f$-tuples $\nu=(\nu_i(x_i))_i$ with $\nu_i(x_i)\in\Z\pm x_i$ in an obvious way: given $\nu,\nu'$ and $i\in\cS$, we say $\nu$ and $\nu'$  are \emph{compatible at $i$} if, 
whenever $\nu_i(x_i),\nu_i'(x_i)\in\{x_i\pm1,p-2-x_i-\pm1\}$, the signs of the $\pm1$ are the same. We say $\nu$ and $\nu'$ are \emph{compatible} if they are compatible at all $i\in\cS$. Also set
\[\cS(\nu)\defn\big\{i\in\cS:\nu_i(x_i)\in\{x_i\pm1,p-2-x_i\pm1\}\big\}.\]
We say $\nu\leq \nu'$ if $\cS(\nu)\subseteq \cS(\nu')$ and $\nu,\nu'$ are compatible. Note that if $\nu_1,\nu_2\leq \nu'$ for some common $\nu'$, then $\nu_1$ and $\nu_2$ are automatically compatible.
\vspace{1mm}

We establish some combinatorial lemmas on $\cI(x_0,\cdots,x_{f-1})$ which will be used in \S\ref{section-BP}.

\begin{lemma}\label{lemma-HW-2.19}
Let $\lambda,\lambda'\in\cI(x_0,\cdots,x_{f-1})$. Let $\cS''$ be a subset of $\cS(\lambda)\cap\cS(\lambda')$ such that $\lambda$ and $\lambda'$ are compatible at any $i\in\cS''$. Then there exists a unique $\lambda''\in\cI(x_0,\cdots,x_{f-1})$ with $\cS(\lambda'')=\cS''$ and such that $\lambda''$ is compatible with both $\lambda$ and $\lambda'$.
\end{lemma}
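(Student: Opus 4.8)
The plan is to construct $\lambda''$ explicitly, entry by entry, and then check that the constructed tuple lies in $\cI(x_0,\cdots,x_{f-1})$ and satisfies the required properties. For each $i\in\cS$ we must specify $\lambda''_i(x_i)$. The natural choice is dictated by $\cS(\lambda'')=\cS''$: for $i\notin\cS''$ we want $\lambda''_i(x_i)\in\{x_i,p-2-x_i\}$, and for $i\in\cS''$ we want $\lambda''_i(x_i)\in\{x_i\pm1,p-2-x_i\pm1\}$, with the sign of the $\pm1$ forced by the compatibility requirement with $\lambda$ and $\lambda'$ (which agree at all $i\in\cS''$ by hypothesis, so there is no conflict). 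The remaining binary choice at each $i$ — whether to take the ``$x_i$'' branch or the ``$p-2-x_i$'' branch — is what is pinned down by conditions (ii) and (iii) defining $\cI$: these force a relation between consecutive entries, so once one entry's branch is chosen the rest propagate around the cycle $\Z/f\Z$. So first I would set up the ``branch type'' $\varepsilon_i\in\{+,-\}$ recording whether $\lambda''_i$ is of the form $(\pm x_i$-ish$)$ or $(p-2-x_i$-ish$)$, and observe that (ii)--(iii) say precisely that $\varepsilon_{i+1}$ is determined by $\varepsilon_i$ together with whether $i\in\cS(\lambda'')=\cS''$.

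The cleanest way to make this rigorous is to borrow the branch types from $\lambda$ itself wherever possible: define $\lambda''_i(x_i)$ to have the same ``$x_i$ vs.\ $p-2-x_i$'' branch as $\lambda_i(x_i)$ for every $i$, and to have the $\pm1$ (present exactly when $i\in\cS''$) with the sign equal to that appearing in $\lambda_i$ (equivalently $\lambda'_i$, by the compatibility hypothesis on $\cS''$). Since $\cS''\subseteq\cS(\lambda)$, for each $i\in\cS''$ the entry $\lambda_i$ already carries a $\pm1$ with a well-defined sign, so this is well-posed. Then I would verify directly that $\lambda''$ satisfies (i), (ii), (iii): condition (i) is immediate from the construction; for (ii) and (iii), note that these constraints depend on $\lambda''_i$ only through (a) its branch type and (b) whether $i\in\cS(\lambda'')$, and by construction the branch type of $\lambda''_i$ equals that of $\lambda_i$ while $\cS(\lambda'')=\cS''\subseteq\cS(\lambda)$; a short case analysis comparing with the fact that $\lambda\in\cI(x_0,\cdots,x_{f-1})$ then shows the implications hold for $\lambda''$ as well. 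Here one does have to be slightly careful: passing from $i\in\cS(\lambda)$ to $i\in\cS''$ can only \emph{remove} elements, and removing $i$ from the ``marked'' set relaxes the constraint (iii) imposes at $i$ but could a priori tighten what is required of $\lambda''_{i+1}$; checking that no such tightening actually fails is the one spot requiring attention, and it works because (ii)--(iii) are symmetric enough in the two branch types.

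Compatibility of $\lambda''$ with $\lambda$ is then essentially built in: at every $i$ the $\pm1$ sign of $\lambda''_i$ was chosen equal to that of $\lambda_i$, so they agree wherever both carry a $\pm1$. For compatibility with $\lambda'$, I would use the hypothesis that $\lambda$ and $\lambda'$ are compatible \emph{at every $i\in\cS''$}: for $i\in\cS''$ the sign in $\lambda''_i$ equals that in $\lambda_i$, which equals that in $\lambda'_i$; for $i\notin\cS''$, $\lambda''_i$ carries no $\pm1$ at all, so there is nothing to check. Finally, uniqueness follows from Lemma~\ref{lemma:genericity} (applied with the tuples $\lambda''$ and any competitor $\mu''$): any $\mu''\in\cI(x_0,\cdots,x_{f-1})$ with $\cS(\mu'')=\cS''$ and compatible with both $\lambda$ and $\lambda'$ must have, at each $i$, the same ``$\pm1$ part'' as $\lambda''$ (forced by $\cS(\mu'')=\cS''$ together with compatibility with $\lambda$), and the same branch type (forced by propagating conditions (ii)--(iii) around the cycle, given that the branch type must be consistent with membership in $\cS''$); one should note that the verification $0\le\lambda''_i(r_i)\le p-1$ needed to invoke Lemma~\ref{lemma:genericity} is automatic once $\sigma$ is taken $1$-generic, or can be sidestepped since we only need equality of the tuples, not of the evaluated Serre weights. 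I expect the main obstacle to be the bookkeeping in the case analysis for (ii)--(iii) — in particular cleanly organizing the four branch-type combinations and tracking how shrinking $\cS(\lambda)$ to $\cS''$ interacts with the cyclic dependence — rather than any conceptual difficulty.
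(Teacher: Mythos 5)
There is a genuine error in your existence construction. Conditions (ii) and (iii) defining $\cI(x_0,\cdots,x_{f-1})$ are together equivalent to the biconditional: $\lambda_i(x_i)$ lies in the ``$p-2-x_i$'' branch \emph{if and only if} $i+1\in\cS(\lambda)$. (Indeed, (ii) says the ``$x_i$'' branch at $i$ forces $i+1\notin\cS(\lambda)$, and (iii) says the ``$p-2-x_i$'' branch at $i$ forces $i+1\in\cS(\lambda)$; since every entry is in exactly one branch, both implications are biconditionals.) Consequently the branch types of $\lambda''$ are \emph{not} free to be borrowed from $\lambda$: they are completely forced by the requirement $\cS(\lambda'')=\cS''$, namely $\lambda''_i$ must be in the ``$p-2-x_i$'' branch precisely when $i+1\in\cS''$. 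Your construction takes the branch of $\lambda''_i$ equal to that of $\lambda_i$, which corresponds to $i+1\in\cS(\lambda)$; so at any index $i$ with $i+1\in\cS(\lambda)\setminus\cS''$ (and such indices exist whenever $\cS''\subsetneq\cS(\lambda)$, which is exactly the case the lemma is applied in, e.g.\ with $\cS''$ a singleton in the proof of Lemma \ref{lemma-BP-12.8}), your $\lambda''_i$ is in the ``$p-2-x_i$'' branch while $\lambda''_{i+1}$ is unmarked, violating (iii). The caveat you raise (``could a priori tighten what is required of $\lambda''_{i+1}$\,'') is precisely where the construction breaks, and it cannot be waved away by symmetry. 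Note also that your uniqueness paragraph implicitly uses the correct rule (branch type ``forced by\ldots membership in $\cS''$''), which contradicts the rule used in your existence step.

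The repair is straightforward: define $\lambda''_i(x_i)$ to be in the ``$p-2-x_i$'' branch iff $i+1\in\cS''$, marked with a $\pm1$ iff $i\in\cS''$, and in that case with the sign taken from $\lambda_i$ (which agrees with that of $\lambda'_i$ by the hypothesis that $\lambda,\lambda'$ are compatible at every $i\in\cS''$). Conditions (i)--(iii) then hold by construction, and compatibility with $\lambda$ and with $\lambda'$ only constrains the signs at indices of $\cS''\cap\cS(\lambda)=\cS''$ resp.\ $\cS''\cap\cS(\lambda')=\cS''$, where they were copied --- compatibility never constrains the branch type. Uniqueness follows because the marked set forces all branch types and compatibility with $\lambda$ forces all signs; you do not need Lemma \ref{lemma:genericity} or any genericity of $\sigma$ for this, since the assertion is an equality of $f$-tuples, not of Serre weights. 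The paper itself records this only as ``a direct check,'' so the intended argument is exactly this entrywise determination.
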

\begin{proof}
This is a direct check; a similar check can be found in \cite[Lem.~2.19]{HW}. Note that, in the special case $\cS''=\cS(\lambda)\cap\cS(\lambda')$, i.e. $\lambda$ and $\lambda'$ are compatible,  $\lambda''$ is given by the intersection $\lambda\cap\lambda'$, see  \cite[Lem.~12.5]{BP} and the construction before it.
\end{proof}

\begin{lemma}\label{lemma-compose-lambda}
Let $\lambda,\lambda'$ be $f$-tuples with
 \[\lambda_i(x_i),\lambda_i'(x_i)\in\big\{x_i,x_i\pm1, p-2-x_i,p-2-x_i\pm1\big\},\ \ \forall i\in\cS.\]

(i) $\lambda\circ\lambda'$ is compatible with $\lambda'$ and $ \cS(\lambda\circ\lambda')=\cS(\lambda)\Delta\cS(\lambda')$.

(ii) If $i\notin\cS(\lambda)\cap\cS(\lambda')$, then
\[(\lambda\circ\lambda')_i(x_i)\in\{x_i,x_i\pm1, p-2-x_i,p-2-x_i\pm1\}.\]
\end{lemma}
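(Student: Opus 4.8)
The plan is to reduce both parts to a bookkeeping of how the formal composition of affine functions acts on two elementary invariants of each coordinate. Write each of the six admissible values of $\lambda_i(x_i)$ uniquely as
\[\lambda_i(x_i)=\epsilon_i x_i+\tfrac{1-\epsilon_i}{2}(p-2)+s_i,\qquad \epsilon_i\in\{\pm1\},\ s_i\in\{0,\pm1\},\]
so that $i\in\cS(\lambda)$ exactly when $s_i\neq0$, and (when $s_i\neq0$) the ``sign of the $\pm1$'' governing compatibility is precisely $s_i$. Since $\lambda\circ\lambda'$ is defined by composing these affine functions coordinatewise, a one-line substitution shows that its $i$-th component is again of this shape, with leading sign $\epsilon_i\epsilon'_i$ and shift $t_i=\epsilon'_i s_i+s'_i$; the $(p-2)$-term takes care of itself because $\epsilon'_i\tfrac{1-\epsilon_i}{2}+\tfrac{1-\epsilon'_i}{2}=\tfrac{1-\epsilon_i\epsilon'_i}{2}$.

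With this in hand, part (ii) is immediate: the hypothesis $i\notin\cS(\lambda)\cap\cS(\lambda')$ says $s_i=0$ or $s'_i=0$, hence $t_i=\epsilon'_is_i+s'_i\in\{0,\pm1\}$, and together with $\epsilon_i\epsilon'_i\in\{\pm1\}$ this places $(\lambda\circ\lambda')_i(x_i)$ back among the six building blocks $\{x_i,x_i\pm1,p-2-x_i,p-2-x_i\pm1\}$.

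For part (i) I would split according to whether $s_i$ and $s'_i$ vanish. If at least one of them is zero then $t_i\in\{0,\pm1\}$, and $t_i\neq0$ iff exactly one of $s_i,s'_i$ is nonzero, i.e.\ iff $i\in\cS(\lambda)\,\Delta\,\cS(\lambda')$; moreover when $s'_i\neq0$ this forces $s_i=0$ and $t_i=s'_i$, so the sign of the $\pm1$ in $(\lambda\circ\lambda')_i$ matches that of $\lambda'_i$, which is exactly compatibility at $i$. If instead $s_i\neq0$ and $s'_i\neq0$, then $\epsilon'_is_i$ and $s'_i$ both lie in $\{\pm1\}$, so $t_i\in\{-2,0,2\}$; as a polynomial in $x_i$ the component $(\lambda\circ\lambda')_i$ is then one of $x_i,\,p-2-x_i,\,x_i\pm2,\,p-x_i,\,p-4-x_i$, none of which lies in $\{x_i\pm1,p-2-x_i\pm1\}$, so $i\notin\cS(\lambda\circ\lambda')$ while also $i\notin\cS(\lambda)\,\Delta\,\cS(\lambda')$, and the compatibility condition at $i$ is vacuous. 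Collecting the cases yields $\cS(\lambda\circ\lambda')=\cS(\lambda)\,\Delta\,\cS(\lambda')$ together with compatibility of $\lambda\circ\lambda'$ with $\lambda'$.

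I do not expect any genuine obstacle: the argument is a short mechanical case-check once the $(\epsilon_i,s_i)$ encoding is set up. The only points that require a little care are (a) fixing the orientation of $\circ$ so that what comes out is compatibility with the \emph{second} factor $\lambda'$ (this is precisely what the shift formula $t_i=\epsilon'_i s_i+s'_i$ forces, since it is the inner shift $s_i$ that gets multiplied by $\epsilon'_i$), and (b) tracking the $(p-2)$-shift carefully so that in part (ii) one lands on the nose in the list of six building blocks rather than merely in $\Z\pm x_i$. One could alternatively phrase everything directly in the language of the combinatorics of $\cI(x_0,\dots,x_{f-1})$ as in \cite{BP}, but the sign/shift encoding makes the verification shortest.
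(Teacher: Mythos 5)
Your encoding idea is sound and is in effect what the paper's proof does (the proof is just the $6\times 6$ table of all compositions $\lambda_i(\lambda'_i(x_i))$), but two conventions in your setup are reversed, and they matter for the compatibility claim in (i). First, in this paper $(\lambda\circ\lambda')_i(x_i)=\lambda_i(\lambda'_i(x_i))$, i.e.\ $\lambda'$ is the \emph{inner} function (see the table, and the identity $\lambda'=\nu\circ\lambda$ with $\nu=\lambda'\circ\lambda^{-1}$ in the proof of Lemma \ref{lemma-HW-2.20}); so the correct shift is $t_i=\epsilon_i s'_i+s_i$, not your $\epsilon'_i s_i+s'_i$. Second, the sign governing compatibility is not your $s_i$ but $e_i=\epsilon_i s_i$: the convention pairs $x_i+e$ with $p-2-x_i-e=p-2-(x_i+e)$ (this is what the notation ``$p-2-x_i-(\pm1)$'' in \eqref{equation-condition-newweight} records, and it is forced by Definition \ref{defn-sigma!} together with Remark \ref{remark-sigma!}: there $(\mu_i^*\circ\lambda)_{i-1}(x_{i-1})=p-2-\lambda_{i-1}(x_{i-1})$ must be compatible with $\lambda_{i-1}(x_{i-1})$). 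With the correct composition order but your sign convention, the key step of (i) fails: take $\lambda_i(x_i)=p-2-x_i$ and $\lambda'_i(x_i)=x_i-1$; then $(\lambda\circ\lambda')_i(x_i)=p-1-x_i$, whose literal trailing sign is $+1$ while that of $\lambda'_i$ is $-1$, so your criterion would declare $\lambda\circ\lambda'$ and $\lambda'$ incompatible at $i$, contradicting the lemma. What your two reversed conventions actually establish, after relabelling, is that $\lambda\circ\lambda'$ is compatible with the \emph{outer} factor $\lambda$ for the literal-sign notion of compatibility --- a true but different statement, and not the one used downstream (e.g.\ the proof of Lemma \ref{lemma-HW-2.20} needs compatibility of $\lambda'\circ\lambda^{-1}$ with the inner factor $\lambda^{-1}$).

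The repair is mechanical: keep the $(\epsilon_i,s_i)$ bookkeeping but record the compatibility sign as $e_i=\epsilon_i s_i$ and use $t_i=\epsilon_i s'_i+s_i$. The compatibility sign of $(\lambda\circ\lambda')_i$ is then $E_i=\epsilon_i\epsilon'_i t_i=e'_i+\epsilon'_i e_i$; if $e'_i\neq0$ and $i\in\cS(\lambda\circ\lambda')$ then necessarily $e_i=0$ and $E_i=e'_i$, which is compatibility with $\lambda'$ at $i$; and $|E_i|=1$ iff exactly one of $e_i,e'_i$ is nonzero, giving $\cS(\lambda\circ\lambda')=\cS(\lambda)\,\Delta\,\cS(\lambda')$. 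Part (ii) and the symmetric-difference computation are insensitive to both conventions, since they only use $|t_i|$, so those parts of your argument are fine as written.
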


 \begin{proof}
 This is a direct check using the following table.
{\tiny{ 
 \begin{table}[h]
 \tabcolsep=3pt
\begin{tabu}{ |c|[1pt]c|c|c|c|c|c|}
\hline
$\lambda_i(\lambda'_i(x_i))$& $\lambda'_i(x_i)=x_i$ &  $x_i-1$ & $x_i+1$ &$p-2-x_i$&$p-1-x_i$&$p-3-x_i$   \\
\tabucline[1pt]{-}
$\lambda_i(x_i)=x_i$&$x_i$ & $x_i-1$&$x_i+1$&$p-2-x_i$&$p-1-x_i$&$p-3-x_i$  \\
\hline
$x_i-1$&$x_i-1$&$x_i-2$&$x_i$&$p-3-x_i$&$p-2-x_i$&$p-4-x_i$\\
\hline
$x_i+1$&$x_i+1$&$x_i$&$x_i+2$&$p-1-x_i$&$p-x_i$&$p-2-x_i$\\
\hline
$p-2-x_i$&$p-2-x_i$&$p-1-x_i$&$p-3-x_i$ & $x_i$& $x_i-1$&$x_i+1$\\
\hline
$p-1-x_i$& $p-1-x_i$& $p-x_i$& $p-2-x_i$& $x_i+1$ & $x_i $& $x_i+2$\\
\hline
$p-3-x_i$ & $p-3-x_i$& $p-2-x_i$ & $p-4-x_i$& $x_i-1$& $x_i-2$& $x_i$\\
\hline
\end{tabu}
\end{table}
}}
\end{proof}

\begin{lemma}\label{lemma:cond-for-cI}
Given $\lambda,\lambda'\in\cI(x_0,\cdots,x_{f-1})$, the condition \eqref{eq:cond-lambda} of Lemma \ref{lemma:genericity} is satisfied for $\lambda$ and $\lambda'$. Moreover, given $\lambda,\lambda',\mu,\mu'\in\cI(x_0,\cdots,x_{f-1})$, \eqref{eq:cond-lambda} is satisfied for $\lambda\circ\mu$ and $\lambda'\circ\mu'$.
\end{lemma}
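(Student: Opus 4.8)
The plan is to verify condition \eqref{eq:cond-lambda} directly from the defining constraints (i)--(iii) of $\cI(x_0,\cdots,x_{f-1})$. Recall that \eqref{eq:cond-lambda} says: for each $i\in\cS$, if $\lambda_i(x_i)=\lambda'_i(x_i)$ then $\lambda_{i-1}(x_{i-1})-\lambda'_{i-1}(x_{i-1})\in\Z$. So fix $i$ with $\lambda_i(x_i)=\lambda'_i(x_i)$. The key observation is that the ``type'' of $\lambda_i(x_i)$ — whether it lies in $\{x_i,x_i\pm1\}$ (call it type $+$) or in $\{p-2-x_i,p-2-x_i\pm1\}$ (call it type $-$) — is then forced to be the same for $\lambda$ and $\lambda'$ at position $i$, since these two sets are disjoint (using $1$-genericity, or more elementarily just that a fixed element of $\Z+x_i$ cannot equal a fixed element of $\Z-x_i$). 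Now by conditions (ii)--(iii), the type at position $i$ determines which of $\{x_{i-1},p-2-x_{i-1}\}$ (if type $+$, via (ii) applied with index $i-1$) or $\{x_{i-1}\pm1, p-2-x_{i-1}\pm1\}$ (if type $-$, via (iii)) the entry $\lambda_{i-1}(x_{i-1})$ lies in — but more to the point, it pins down whether $\lambda_{i-1}(x_{i-1})$ lies in $\Z+x_{i-1}$ or in $\Z-x_{i-1}$. Indeed: if $\lambda_i$ is type $+$ then (ii) gives $\lambda_{i-1}(x_{i-1})\in\{x_{i-1},x_{i-1}\pm1\}$ is forced by... wait, here I must be careful about the direction of the indexing in (ii)--(iii).

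Let me restate the mechanism cleanly. Conditions (ii)--(iii) relate position $i$ to position $i+1$; equivalently, reindexing, they relate position $i-1$ to position $i$. Reading (ii) and (iii) with ``$i$'' replaced by ``$i-1$'': if $\lambda_{i-1}(x_{i-1})$ is type $+$ (i.e.\ in $\{x_{i-1},x_{i-1}\pm1\}$), then $\lambda_i(x_i)\in\{x_i,p-2-x_i\}$, i.e.\ $\lambda_i(x_i)$ has no $\pm1$ correction; if $\lambda_{i-1}(x_{i-1})$ is type $-$, then $\lambda_i(x_i)\in\{x_i\pm1,p-2-x_i\pm1\}$, i.e.\ it does carry a $\pm1$ correction. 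So the type of $\lambda_{i-1}(x_{i-1})$ is determined by whether $i\in\cS(\lambda)$ or not; and since $\lambda_i(x_i)=\lambda'_i(x_i)$ forces $i\in\cS(\lambda)\iff i\in\cS(\lambda')$, we conclude $\lambda_{i-1}(x_{i-1})$ and $\lambda'_{i-1}(x_{i-1})$ have the same type, hence their difference lies in $\Z$. This proves the first assertion.

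For the second assertion, about $\lambda\circ\mu$ and $\lambda'\circ\mu'$ where $\lambda,\lambda',\mu,\mu'\in\cI(x_0,\cdots,x_{f-1})$: I would use Lemma \ref{lemma-compose-lambda}. By part (i) of that lemma, $\cS(\lambda\circ\mu)=\cS(\lambda)\Delta\cS(\mu)$ and likewise $\cS(\lambda'\circ\mu')=\cS(\lambda')\Delta\cS(\mu')$; and by part (ii), for $i\notin\cS(\lambda)\cap\cS(\mu)$ the entry $(\lambda\circ\mu)_i(x_i)$ still lies in $\{x_i,x_i\pm1,p-2-x_i,p-2-x_i\pm1\}$, so in particular it has a well-defined type ($+$ if in $\Z+x_i$-part, $-$ if in $\Z-x_i$-part). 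The entries of $\lambda\circ\mu$ and $\lambda'\circ\mu'$ are thus of the form allowed in Lemma \ref{lemma:genericity}, and I only need the implication in \eqref{eq:cond-lambda}. Suppose $(\lambda\circ\mu)_i(x_i)=(\lambda'\circ\mu')_i(x_i)$ at some $i$; I must show $(\lambda\circ\mu)_{i-1}(x_{i-1})-(\lambda'\circ\mu')_{i-1}(x_{i-1})\in\Z$, i.e.\ these two entries have the same type. Chasing through the composition table in the proof of Lemma \ref{lemma-compose-lambda}, the type of $(\lambda\circ\mu)_{i-1}(x_{i-1})$ equals the type of $\mu_{i-1}(x_{i-1})$ if $\lambda_{i-1}$ is type $+$, and the opposite type if $\lambda_{i-1}$ is type $-$; so it is determined by the pair of types $(\mathrm{type}(\lambda_{i-1}),\mathrm{type}(\mu_{i-1}))$. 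The hard part — and the only place requiring genuine care — is showing that the equality $(\lambda\circ\mu)_i(x_i)=(\lambda'\circ\mu')_i(x_i)$ forces enough agreement among the types at position $i$ (and hence, via (ii)--(iii) for each of $\lambda,\lambda',\mu,\mu'$, at position $i-1$) to conclude. I expect this to come down to: the equality forces $i\in\cS(\lambda\circ\mu)\iff i\in\cS(\lambda'\circ\mu')$, i.e.\ $\cS(\lambda)\Delta\cS(\mu)$ and $\cS(\lambda')\Delta\cS(\mu')$ agree at $i$, and it forces $(\lambda\circ\mu)_i$ and $(\lambda'\circ\mu')_i$ to have the same type at $i$; combining these with the fact (from Lemma \ref{lemma-compose-lambda}(i)) that $\lambda\circ\mu$ is compatible with $\mu$ and $\lambda'\circ\mu'$ with $\mu'$, one runs the type-propagation argument of the first part separately. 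I would organize this as a short case analysis on the four types $(\mathrm{type}(\lambda_{i-1}),\mathrm{type}(\mu_{i-1}))$ versus $(\mathrm{type}(\lambda'_{i-1}),\mathrm{type}(\mu'_{i-1}))$, reading off from the table; the main obstacle is simply keeping the index shift $i\mapsto i-1$ and the direction of conditions (ii)--(iii) straight, together with remembering that $\circ$ is function composition so that $(\lambda\circ\mu)_i = \lambda_i\circ\mu_i$ acts ``$\mu$ first''.
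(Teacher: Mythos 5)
Your first assertion is proved completely and correctly, and by the same mechanism the paper has in mind (the paper's own proof is just ``immediate by definition''): conditions (ii)--(iii) of the definition of $\cI(x_0,\cdots,x_{f-1})$, read with the index shifted, say precisely that $\lambda_{i-1}(x_{i-1})\in\Z+x_{i-1}$ if and only if $i\notin\cS(\lambda)$, and the hypothesis $\lambda_i(x_i)=\lambda'_i(x_i)$ forces $i\in\cS(\lambda)\Leftrightarrow i\in\cS(\lambda')$.

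For the second assertion you have isolated all the needed facts but then defer ``the hard part'' to an unperformed case analysis. There is nothing left to check: the three observations you already made combine in one line. Namely, (a) the type (i.e.\ the sign of the coefficient of $x_{i-1}$) of $(\lambda\circ\mu)_{i-1}$ is the product of the types of $\lambda_{i-1}$ and $\mu_{i-1}$, which by the biconditional above is $+$ exactly when $i\notin\cS(\lambda)\Delta\cS(\mu)$; (b) by Lemma \ref{lemma-compose-lambda}(i), $\cS(\lambda)\Delta\cS(\mu)=\cS(\lambda\circ\mu)$, and membership of $i$ in $\cS(\lambda\circ\mu)$ is read off from the formal expression $(\lambda\circ\mu)_i(x_i)$ alone; (c) hence the hypothesis $(\lambda\circ\mu)_i(x_i)=(\lambda'\circ\mu')_i(x_i)$ gives $i\in\cS(\lambda)\Delta\cS(\mu)\Leftrightarrow i\in\cS(\lambda')\Delta\cS(\mu')$, so the types of $(\lambda\circ\mu)_{i-1}$ and $(\lambda'\circ\mu')_{i-1}$ agree and their difference lies in $\Z$. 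Two small tidying remarks: Lemma \ref{lemma-compose-lambda}(ii) is not needed, since the type is well defined for \emph{any} element of $\Z\pm x_i$ (condition \eqref{eq:cond-lambda} and Lemma \ref{lemma:genericity} only require entries in $\Z\pm x_i$, not in the six-element set); and compatibility of $\lambda\circ\mu$ with $\mu$ plays no role in this argument, so that appeal can be dropped.
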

\begin{proof}
The first assertion is immediate by definition of $\cI(x_0,\cdots,x_{f-1})$. The second one is a direct check using the table in the proof of Lemma \ref{lemma-compose-lambda}.
\end{proof}

\begin{lemma}\label{lemma-HW-2.20}
Let $\tau,\tau'\in\JH(\rInj_{\Gamma}\sigma)$ and assume $\cS(\tau)\cap\cS(\tau')=\emptyset$. Then $\tau'$ is a subquotient of $\rInj_{\Gamma}\tau$ and $I(\tau,\tau')$ contains $\sigma$ as a subquotient.
\end{lemma}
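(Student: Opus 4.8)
The plan is to reduce the statement to pure combinatorics of $\cI(x_0,\cdots,x_{f-1})$. Write $\tau=\lambda(\sigma)$ and $\tau'=\lambda'(\sigma)$ for the unique $\lambda,\lambda'\in\cI(x_0,\cdots,x_{f-1})$ attached to $\tau,\tau'\in\JH(\rInj_{\Gamma}\sigma)$ by \cite[Lem. 3.2]{BP} (we assume, as always, $\sigma$ is generic enough that $\nu(\sigma)$, $\nu(\tau)$ are genuine Serre weights for all relevant $\nu$); the hypothesis then reads $\cS(\lambda)\cap\cS(\lambda')=\emptyset$. Using that evaluation at a Serre weight is compatible with composition of $f$-tuples, i.e. $\nu(\lambda(\sigma))=(\nu\circ\lambda)(\sigma)$ (determinant twist included; this is implicit in \cite[\S3--\S4]{BP} via \cite[Lem. 2.1]{BP}), one has $\JH(\rInj_{\Gamma}\tau)=\{(\nu\circ\lambda)(\sigma):\nu\in\cI(x_0,\cdots,x_{f-1})\}$, and by Proposition \ref{prop-BP-4.11} the subquotients of $I(\tau,\nu(\tau))$ are exactly the $(\kappa\circ\lambda)(\sigma)$ with $\kappa\in\cI(x_0,\cdots,x_{f-1})$ and $\kappa\leq\nu$. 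It therefore suffices to produce $\mu\in\cI(x_0,\cdots,x_{f-1})$ with $\mu\circ\lambda=\lambda'$ and with $\lambda^{-1}\leq\mu$: the first condition gives $\tau'=\mu(\tau)\in\JH(\rInj_{\Gamma}\tau)$, and since $\sigma=\id(\sigma)=(\lambda^{-1}\circ\lambda)(\sigma)$ with $\lambda^{-1}\leq\mu$, the second condition gives that $\sigma$ occurs as a subquotient of $I(\tau,\tau')=I(\tau,\mu(\tau))$.

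Each $\lambda_i(x_i)$ is an invertible affine function $x_i\mapsto\pm x_i+c$; let $\lambda^{-1}$ be the $f$-tuple of componentwise inverses and put $\mu\defn\lambda'\circ\lambda^{-1}$, so that $\mu\circ\lambda=\lambda'$ by construction. First I would check $\lambda^{-1}\in\cI(x_0,\cdots,x_{f-1})$: its components lie in the prescribed list of forms, $\cS(\lambda^{-1})=\cS(\lambda)$, and the leading coefficient of $\lambda^{-1}_i$ coincides with that of $\lambda_i$; since for $f>1$ the chaining conditions (ii)--(iii) defining $\cI(x_0,\cdots,x_{f-1})$ are equivalent to ``the leading coefficient of the $i$-th component is $-1$ if and only if $i+1\in\cS$'', they are inherited from $\lambda$ (for $f=1$ the set $\cI(x_0)$ is visibly stable under taking inverses). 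Next, $\cS(\lambda')\cap\cS(\lambda^{-1})=\cS(\lambda')\cap\cS(\lambda)=\emptyset$, so Lemma \ref{lemma-compose-lambda}(ii) shows each component of $\mu$ lies in the prescribed list, while Lemma \ref{lemma-compose-lambda}(i) gives $\cS(\mu)=\cS(\lambda')\Delta\cS(\lambda^{-1})=\cS(\lambda)\cup\cS(\lambda')$ together with the compatibility of $\mu$ and $\lambda^{-1}$. Finally, the leading coefficient of $\mu_i$ is the product of those of $\lambda'_i$ and $\lambda_i$, hence equals $-1$ exactly when precisely one of $i+1\in\cS(\lambda')$, $i+1\in\cS(\lambda)$ holds, i.e. exactly when $i+1\in\cS(\lambda)\cup\cS(\lambda')=\cS(\mu)$; this yields $\mu\in\cI(x_0,\cdots,x_{f-1})$ for $f>1$ (and for $f=1$ the disjointness forces one of $\lambda,\lambda'$ to be $\id$, so $\mu\in\{\lambda',\lambda^{-1}\}\subseteq\cI(x_0)$). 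Then $\lambda^{-1}\leq\mu$ because $\cS(\lambda^{-1})=\cS(\lambda)\subseteq\cS(\lambda)\cup\cS(\lambda')=\cS(\mu)$ and $\mu,\lambda^{-1}$ are compatible, which is exactly what was needed.

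The one delicate point is the bookkeeping of signs in the compatibility relation of \cite[Def. 4.10]{BP}: with the orientation-aware reading of that definition, $x_i-1$ and $p-1-x_i$ count as having the same sign, and it is precisely this convention that makes Lemma \ref{lemma-compose-lambda}(i), and hence the relation $\lambda^{-1}\leq\mu$, valid. Beyond this and the (standard) compatibility of evaluation with composition of $f$-tuples, everything reduces to a direct check on the combinatorial set $\cI(x_0,\cdots,x_{f-1})$, along the lines of \cite[Lem. 2.19]{HW} and \cite[Lem. 2.20]{HW}; I do not anticipate any genuine obstacle.
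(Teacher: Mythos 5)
Your proposal is correct and follows essentially the same route as the paper: the paper's proof also sets $\nu=\lambda'\circ\lambda^{-1}$, invokes Lemma \ref{lemma-compose-lambda}(ii) with $\cS(\lambda')\cap\cS(\lambda^{-1})=\emptyset$ to control the components, verifies $\nu\in\cI(x_0,\cdots,x_{f-1})$ (deferring the sign bookkeeping to \cite[Lem. 2.20(iii)]{HW}), and concludes via Proposition \ref{prop-BP-4.11} from $\lambda^{-1}\leq\nu$, which is Lemma \ref{lemma-compose-lambda}(i). Your explicit leading-coefficient reformulation of the chaining conditions is just an unpacking of the check the paper cites from \cite{HW}.
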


\begin{proof}
Let $\lambda,\lambda'\in\cI(x_0,\cdots,x_{f-1})$ be the elements corresponding to $\tau,\tau'$ respectively. Let $\nu=\lambda'\circ\lambda^{-1}$, where $\lambda^{-1}$ is the unique element in $\cI(x_0,\cdots,x_{f-1})$ defined by demanding the formal identities $\lambda_i^{-1}(\lambda_i(x_i))=x_i$  for all $i\in\cS$. Then $\nu$ is an $f$-tuple  with $\nu_i(x_i)\in\Z\pm x_i$ and such that $\lambda'=\nu\circ\lambda$. It is clear that $\cS(\lambda^{-1})=\cS(\lambda)$, so  $\cS(\lambda')\cap \cS(\lambda^{-1})=\emptyset$ by assumption and Lemma \ref{lemma-compose-lambda}(ii) implies that
  \[\nu_i(x_i)\in\{x_i,x_i\pm1,p-2-x_i,p-2-x_i\pm1\}\]
  for all $i\in\cS$. Moreover, using the fact $\cS(\lambda')\cap\cS(\lambda^{-1})=\emptyset$, one checks as in the proof of \cite[Lem.~2.20(iii)]{HW} that $\nu$ is actually an element of $\cI(x_0,\cdots,x_{f-1})$ and by construction $\tau'=\nu(\tau)$. To see that $\sigma$  occurs in $I(\tau,\tau')$,  by Proposition \ref{prop-BP-4.11} it is equivalent to check that $\lambda^{-1}\leq \nu$, but this follows from Lemma \ref{lemma-compose-lambda}(i) as $\cS(\lambda')\cap \cS(\lambda^{-1})=\cS(\lambda')\cap \cS(\lambda)=\emptyset$.
\end{proof}

\subsection{The structure of $\rInj_{\tGamma}{\sigma}$}
Let $\sigma$ be a Serre weight. In this subsection, we study the structure of $\rInj_{\tGamma}\sigma$ under some genericity condition on $\sigma$.

We have a short exact sequence (e.g. \cite[Prop.~18.4]{Al})
\begin{equation}\label{equation-P/J^2}0\ra \rInj_{\Gamma}\sigma\ra {\rInj_{\tGamma}\sigma}\ra\rInj_{\Gamma}\sigma \otimes_{\F} H^1(K_1/Z_1,\F)  \ra0\end{equation}
with $H^1(K_1/Z_1,\F)$ being equipped with the conjugate action of $K$. This action of $K$ on $H^1(K_1/Z_1,\F)$ factors through $\Gamma$. By  \cite[Prop.~5.1]{BP} we have a decomposition
\[H^1(K_1/Z_1,\F)\cong  \bigoplus_{i\in\cS}V_{2,i},\]
where $V_{2,i}$ denotes the $\Gamma$-representation $(\Sym^2\F^2\otimes{\det}^{-1})^{\mathrm{Fr}^i}$ for $i\in\cS$. Remark that $V_{2,i}$ is self-dual in the sense that $(V_{2,i})^{\vee}\cong V_{2,i}$.

\begin{definition}\label{def:delta}
For $*\in\{+,-\}$ and $i\in\cS$, we define two $f$-tuples $\delta_i^*$ and $\mu_{i}^*$  as follows.
\begin{enumerate}
\item[$\bullet$]  $(\delta_i^*)_i(x_i)=x_i*2$ and $(\delta_i^{*})_j(x_j)=x_j$ if $j\neq i$. 
\item[$\bullet$]  If $f=1$, $\mu_0^+(x_0)=p-3-x_0$ and $\mu_0^-(x_0)=p-1-x_0$.\footnote{We caution that the definition in the case $f=1$ is different from the one of \cite[Def.~2.8]{HW}.}
 If $f\geq 2$, $(\mu_i^*)_i(x_i)=x_i*1$, $(\mu_i^*)_{i-1}(x_{i-1})=p-2-x_{i-1}$, and $(\mu_i^*)_j(x_j)=x_j$ if $j\notin \{i,i-1\}$. It is direct to check that $\mu_i^{*}\in\cI(x_0,\cdots,x_{f-1})$.
\end{enumerate}
\end{definition}
We make the convention that $-*=-$ if $*=+$, and $-*=+$ if $*=-$. By definition, we have
\begin{equation}\label{eq:delta=mumu}\delta_i^*=\left\{\begin{array}{ll}\mu_i^{-*}\circ\mu_i^{*} & f=1\\
\mu_i^*\circ\mu_i^*& f\geq 2\end{array}\right. \end{equation}
\begin{equation}\label{eq:id=mumu}
(x_0,\cdots,x_{f-1})=\left\{\begin{array}{ll}\mu_i^*\circ\mu_i^{*} & f=1\\
\mu_i^{-*}\circ\mu_i^{*}& f\geq 2.\end{array}\right.
\end{equation}
Due to these facts, we sometimes need to  discuss separately  these two cases.

By \cite[Cor.~5.6]{BP}, $\mu_i^*(\sigma)$ are exactly the set of Serre weights which have non-trivial $\Gamma$-extensions with $\sigma$. More precisely,
$\dim_{\F}\Ext^1_{\Gamma}(\tau,\sigma)=1$
 if and only if $\tau=\mu_i^*(\sigma)$ for some pair $(i,*)$. Denote
 \[ \mathscr{E}(\sigma)\defn \big\{\mu_i^{*}(\sigma): i\in\cS, *\in\{+,-\}\big\},\] forgetting the undefined ones. It is clear that $\tau\in\mathscr{E}(\sigma)$ if and only if $\sigma\in\mathscr{E}(\tau)$.
 The following result will be frequently used later on.

\begin{lemma}\label{lemma-Hu10-2.21}
(i) Let $\sigma$ be a Serre weight. Assume $\sigma$ is $0$-generic if $f\geq 2$, and $\sigma\cong\Sym^r\F^2$ (up to twist) for $0\leq r\leq p-3$ if $f=1$. Then $\Ext^1_{K/Z_1}(\sigma,\sigma)=0$.

(ii) Let   $\sigma,\sigma'$ be $0$-generic Serre weights and assume $\sigma\neq\sigma'$. We have isomorphisms
\[\Ext^1_{K/Z_1}(\sigma,\sigma')\cong \Ext^1_{\tGamma}(\sigma,\sigma')\cong\Ext^1_{\Gamma}(\sigma,\sigma'),\]
which are nonzero (hence have dimension $1$ over $\F$) if and only if $\sigma'\in\mathscr{E}(\sigma)$.
\end{lemma}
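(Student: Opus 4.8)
\textbf{Proof plan for Lemma \ref{lemma-Hu10-2.21}.}

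The plan is to reduce everything to the structure of $\rInj_{\tGamma}\sigma$ as recorded in \eqref{equation-P/J^2} and the computation of $H^1(K_1/Z_1,\F)$ from \cite[Prop. 5.1, Cor. 5.6]{BP}. First I would recall the general principle that for a Serre weight $\sigma$ and any $\tGamma$-module (equivalently, any representation of $K$ trivial on $\fm_{K_1/Z_1}^2$), $\Ext^1_{\tGamma}(\sigma,\sigma')$ can be read off from the first two layers of the injective envelope $\rInj_{\tGamma}\sigma'$; and since $K_1/Z_1$ is a pro-$p$ group and we are working with $\F$-coefficients, the inflation-restriction sequence together with the fact that $K_1/Z_1$ acts trivially on Serre weights gives
\[0\ra \Ext^1_{\Gamma}(\sigma,\sigma')\ra \Ext^1_{K/Z_1}(\sigma,\sigma')\ra \Hom_{\Gamma}\big(\sigma, \sigma'\otimes_{\F} H^1(K_1/Z_1,\F)\big)\ra \Ext^2_{\Gamma}(\sigma,\sigma').\]
So the two claims come down to understanding the term $\Hom_{\Gamma}(\sigma,\sigma'\otimes H^1(K_1/Z_1,\F))$ and the comparison between $\Ext^1$ over $K/Z_1$, over $\tGamma$ and over $\Gamma$.

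For part (i): by \eqref{equation-P/J^2} the group $\Ext^1_{\tGamma}(\sigma,\sigma)$ injects into $\Ext^1_{K/Z_1}(\sigma,\sigma)$, and by the inflation-restriction sequence it suffices to show that $\Hom_{\Gamma}(\sigma,\sigma\otimes_{\F} H^1(K_1/Z_1,\F))=0$, i.e. that $\sigma$ is not a Jordan-Hölder factor of $\sigma\otimes V_{2,i}$ for any $i\in\cS$, together with $\Ext^1_{\Gamma}(\sigma,\sigma)=0$ (which is classical for generic $\sigma$, e.g. since $\rInj_\Gamma\sigma$ is multiplicity free by Proposition \ref{prop-multione-BP} applied with $\tau=\sigma$, so $\sigma$ occurs only in the socle). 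The tensor product $\sigma\otimes V_{2,i}$ decomposes by Clebsch-Gordan in the $i$-th variable: writing $\sigma=(r_0,\dots,r_{f-1})\otimes\eta$, one has $\Sym^{r_i}\otimes\Sym^2\otimes{\det}^{-1}$ has constituents $\Sym^{r_i+2}\otimes{\det}^{-1}$, $\Sym^{r_i}$, $\Sym^{r_i-2}\otimes{\det}$ (up to the usual reduction when $r_i$ is near the boundary). Under the $0$-genericity hypothesis ($1\le r_i\le p-3$, and in the $f=1$ case $0\le r\le p-3$) the ``middle'' term $\Sym^{r_i}$ does \emph{not} survive as a Serre weight after reduction — this is exactly the content of the vanishing $\Ext^1_{K/Z_1}(\sigma,\sigma)=0$ for generic weights, and I would cite or reprove it via this Clebsch-Gordan analysis, being careful about the twist by $\det^{-1}$ and the Frobenius twist $\mathrm{Fr}^i$. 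The mild difference in hypotheses between $f=1$ ($\sigma\cong\Sym^r$ up to twist, $r\le p-3$) and $f\ge 2$ ($0$-generic) is precisely what is needed to kill this middle term in all cases.

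For part (ii): the first isomorphism $\Ext^1_{K/Z_1}(\sigma,\sigma')\cong\Ext^1_{\tGamma}(\sigma,\sigma')$ holds because any extension of $\sigma$ by $\sigma'$ over $K/Z_1$ is automatically killed by $\fm_{K_1/Z_1}^2$ (indeed $\fm_{K_1/Z_1}$ acts by zero on each of $\sigma,\sigma'$, so $\fm_{K_1/Z_1}^2$ acts by zero on a $2$-step extension), hence factors through $\tGamma$; conversely $\tGamma$-extensions are $K/Z_1$-extensions. The second isomorphism $\Ext^1_{\tGamma}(\sigma,\sigma')\cong\Ext^1_{\Gamma}(\sigma,\sigma')$ follows from \eqref{equation-P/J^2}: the long exact sequence in $\Hom_{K/Z_1}(\sigma,-)$ applied to \eqref{equation-P/J^2} for $\sigma'$ shows the cokernel/kernel discrepancy is controlled by $\Hom_{\Gamma}(\sigma,\sigma'\otimes H^1(K_1/Z_1,\F))$, and when $\sigma\neq\sigma'$ this space is zero — because $\sigma$ is not a constituent of $\sigma'\otimes V_{2,i}$ for $\sigma\ne\sigma'$ both $0$-generic, again by the Clebsch-Gordan computation (the constituents of $\sigma'\otimes V_{2,i}$ differ from $\sigma'$ only in the $i$-th slot by $\pm2$ or $0$, and the $0$ case is excluded by genericity while the $\pm2$ cases give weights $\delta_i^{\pm}(\sigma')$ which are $\neq\sigma$ unless $\sigma=\delta_i^{\pm}(\sigma')$, a case one checks does not produce a nonzero $\Gamma$-hom because $\delta_i^{\pm}(\sigma')\notin\JH(\rInj_\Gamma\sigma')$ in the relevant range — alternatively one just notes $\Hom_\Gamma(\sigma,\sigma'\otimes V_{2,i})\ne 0 \iff \sigma\in\JH(\sigma'\otimes V_{2,i})$ and lists these). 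Finally, the statement that this common $\Ext^1$ is nonzero (of dimension $1$) iff $\sigma'\in\mathscr{E}(\sigma)$ is literally \cite[Cor. 5.6]{BP} restated through $\Ext^1_\Gamma$, which I would simply invoke.

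\textbf{Main obstacle.} The technical heart is the Clebsch-Gordan / reduction-mod-$p$ bookkeeping for $\sigma'\otimes V_{2,i}$ near the boundary of the genericity range, and in particular handling the $f=1$ case separately (where the footnote warns that conventions differ from \cite{HW}); the inflation-restriction and $\fm_{K_1/Z_1}^2$ arguments are formal once that input is in hand. I expect to lean on \cite[Prop. 5.1, Cor. 5.6]{BP} to avoid redoing the decomposition of $\sigma'\otimes V_{2,i}$ from scratch.
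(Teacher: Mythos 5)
There is a genuine gap, and it sits at the step you identify as the heart of the matter. In both parts you reduce the vanishing of $\Ext^1_{K/Z_1}$ to the vanishing of $\Hom_{\Gamma}\big(\sigma,\sigma'\otimes_{\F} H^1(K_1/Z_1,\F)\big)$ via inflation--restriction, and then assert that the Clebsch--Gordan ``middle term'' dies in the generic range. It does not: for $0\le r_i\le p-3$ the tensor product $\Sym^{r_i}\F^2\otimes\Sym^2\F^2\otimes{\det}^{-1}$ is semisimple with constituents $\Sym^{r_i+2}\otimes{\det}^{-1}$, $\Sym^{r_i}$ and $\Sym^{r_i-2}\otimes{\det}$, so the middle term is exactly $\sigma$ itself. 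This is recorded in the paper as \eqref{eq:sigma-H1}: $\sigma\otimes_{\F}H^1(K_1/Z_1,\F)\cong\sigma^{\oplus f}\oplus\big(\oplus_{\delta\in\Delta(\sigma)}\delta\big)$. Hence in part (i) the space $\Hom_{\Gamma}(\sigma,\sigma\otimes H^1(K_1/Z_1,\F))$ has dimension $f$, not $0$; and in part (ii) the same space is $1$-dimensional whenever $\sigma\in\Delta(\sigma')$ (a case the lemma must cover, since e.g.\ the proof of Lemma \ref{lemma-I(sigmatau)-case1} invokes $\Ext^1_{\tGamma}(\delta_i^{*}(\sigma'),\sigma')=0$). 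So inflation--restriction only gives an exact sequence ending in a \emph{nonzero} term, and the actual content of the lemma is that none of these classes lift to genuine $K/Z_1$-extensions, i.e.\ that the transgression into $\Ext^2_{\Gamma}$ is injective on them. That is precisely what \cite[Prop. 2.21]{Hu10} proves (by a different, non-formal argument), and it is why the paper simply cites that result together with \cite[Cor. 5.6]{BP} rather than running the Hochschild--Serre computation. Equivalently, in the language of \S2.2: the copies of $\sigma'$ and of $\delta\in\Delta(\sigma')$ in $\soc\big(\rInj_{\tGamma}\sigma'/\rInj_{\Gamma}\sigma'\big)$ sit in the \emph{third} socle layer of $\rInj_{\tGamma}\sigma'$ (cf.\ Corollary \ref{cor-I(sigmatau)-case1}), not the second, and that is not visible from the Hom computation alone.

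Two smaller points. Your justification of $\Ext^1_{\Gamma}(\sigma,\sigma)=0$ via ``$\rInj_{\Gamma}\sigma$ is multiplicity free'' is not right ($\rInj_{\Gamma}\sigma$ is not multiplicity free in general; only the subrepresentations $I(\sigma,\tau)$ are); the fact itself is fine and is part of \cite[Cor. 5.6]{BP}. And in part (ii) your parenthetical ``$\delta_i^{\pm}(\sigma')\notin\JH(\rInj_{\Gamma}\sigma')$'' is irrelevant to whether $\Hom_{\Gamma}(\delta_i^{\pm}(\sigma'),\sigma'\otimes V_{2,i})$ vanishes — by \eqref{eq:sigma-H1-socle} it does not. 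The parts of your proposal that are correct are the formal ones: the identification $\Ext^1_{K/Z_1}(\sigma,\sigma')\cong\Ext^1_{\tGamma}(\sigma,\sigma')$ (any such extension is killed by $\fm_{K_1/Z_1}^2$), and the comparison $\Ext^1_{\tGamma}\cong\Ext^1_{\Gamma}$ in the cases where the Hom term genuinely vanishes (i.e.\ $\sigma\neq\sigma'$ and $\sigma\notin\Delta(\sigma')$).
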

\begin{proof}
(i) For $f\geq 2$,  it is proved in \cite[Prop.~2.21]{Hu10}. 
Below we give a simplified  proof (based on  \emph{loc.~cit.}) which treats both cases. 
If $f=1$, remark that $\Ext^1_{K/Z_1}(\Sym^{p-2}\F^2,\Sym^{p-2}\F^2)\neq0 $ and $\Ext^{1}_{K/Z_1}(\Sym^{p-1}\F^2,\Sym^{p-3}\F^2\otimes{\det})\neq0 $.
 
 For a contradiction, let $0\ra \sigma\ra V\ra \sigma\ra0$ be a nonsplit $K$-extension on which $Z_1$ acts trivially. Let $w\in V$ be an $H$-eigenvector of character $\chi_{\sigma}$ such that its image in the quotient $\sigma$ is nonzero and lies in $\sigma^{I_1}$. We will prove that $w$ is fixed by $I_1$, thus by Frobenius reciprocity we obtain a $K$-equivariant surjection $\Ind_I^K\chi_{\sigma}\twoheadrightarrow V$ which is impossible. 
Firstly, as in the proof of \cite[Prop.~2.21]{Hu10}, $w$ is fixed by $\smatr{1}{\cO_L}01$ because none of the $H$-characters $\{\chi\alpha_i,i\in\cS\}$ can occur in $V|_H$ by the genericity of $\sigma$ (this needs the assumption $r\leq p-3$ when $f=1$).  Secondly, let $\overline{N}_k=\smatr{1}0{p^k\cO_L}1$ for $k\geq 0$ and  consider the following operators  (recall that we have fixed an embedding $\F_{q}\hookrightarrow \F$)
\[X_i:=\sum_{\lambda\in\F_q}\lambda^{-p^i}\matr{1}0{[\lambda]}1\in\F[\![\overline{N}_0]\!],\ \ i\in\cS.\]   
It is easy to see that $X_iw$ has $H$-eigencharacter $\chi\alpha_i^{-1}$. If we write $\sigma=(r_0,\dots,r_{f-1})\otimes\eta$, then none of  $\{\chi\alpha_i^{-(r_i+1)}, i\in\cS\}$ can occur in $\sigma|_H$  by the genericity of $\sigma$, see \cite[Lem.~2.7]{BP}. We deduce that  $X_i^{r_i+1}w=0$, and so $X_i^{p}w=0$ for all $i\in\cS$. Since $\{X_i^p, i\in\cS\}$ topologically generate the maximal ideal of $\F[\![\overline{N}_1]\!]$, $w$ is fixed by $\overline{N}_1$. Since $I_1/Z_1$ is generated by $\smatr{1}{\cO_K}01$ and $\overline{N}_1$,  $w$ is fixed by $I_1$ as claimed.  

(ii) It is a consequence of \cite[Cor.~5.6]{BP}.
\end{proof}

 On the other hand,  denote  \[  \Delta(\sigma) \defn\big\{\delta_{i}^{*}(\sigma): i\in\cS, *\in\{+,-\}\big\}, \]
again forgetting the undefined ones.

\begin{lemma}\label{lemma-Delta-nocommonfactor}
Let $\sigma_1,\sigma_2\in \JH(\rInj_{\Gamma}\sigma)$ be compatible. If $\sigma_1\neq\sigma_2$,  then
\[\big(\{\sigma_1\}\cup \Delta(\sigma_1)\big) \cap \big(\{\sigma_2\}\cup\Delta(\sigma_2)\big)=\emptyset.\]
\end{lemma}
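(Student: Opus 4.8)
Both $\sigma_1$ and $\sigma_2$ lie in $\JH(\rInj_\Gamma\sigma)$, so they correspond to $f$-tuples $\lambda^{(1)},\lambda^{(2)}\in\cI(x_0,\cdots,x_{f-1})$ via $\sigma_j=\lambda^{(j)}(\sigma)$ (using \cite[Lem. 3.2]{BP}). The elements of $\Delta(\sigma_j)$ are by definition $\delta_i^*(\sigma_j)=(\delta_i^*\circ\lambda^{(j)})(\sigma)$; by Lemma \ref{lemma-compose-lambda}(i), $\delta_i^*\circ\lambda^{(j)}$ is compatible with $\lambda^{(j)}$ and $\cS(\delta_i^*\circ\lambda^{(j)})=\cS(\delta_i^*)\Delta\cS(\lambda^{(j)})=\{i\}\Delta\cS(\lambda^{(j)})$. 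The plan is to express everything in terms of the combinatorics of these $f$-tuples and then invoke the uniqueness statement in Lemma \ref{lemma:genericity} (which applies by Lemma \ref{lemma:cond-for-cI}, since all tuples in sight are of the form $\nu\circ\lambda$ with $\nu,\lambda\in\cI(x_0,\cdots,x_{f-1})$; note $\delta_i^*=\mu_i^{\mp}\circ\mu_i^{\pm}$ or $\mu_i^*\circ\mu_i^*$ by \eqref{eq:delta=mumu}, so $\delta_i^*\circ\lambda^{(j)}$ is a composite of $\cI$-tuples, though one must be a little careful that the relevant indices stay in range — this is where the ``compatible'' hypothesis and genericity of $\sigma$ get used).

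\textbf{Key steps.} First I would reduce to showing that the four families of $f$-tuples $\{\lambda^{(1)}\}$, $\{\delta_i^*\circ\lambda^{(1)}\}_{i,*}$, $\{\lambda^{(2)}\}$, $\{\delta_i^*\circ\lambda^{(2)}\}_{i,*}$ are pairwise distinct whenever $\lambda^{(1)}\neq\lambda^{(2)}$ and $\lambda^{(1)},\lambda^{(2)}$ are compatible; by Lemma \ref{lemma:genericity} (applied to each pair, the condition \eqref{eq:cond-lambda} being guaranteed by Lemma \ref{lemma:cond-for-cI}) distinctness of the tuples gives non-isomorphism of the evaluated Serre weights, which is exactly the disjointness claimed. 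The cross-term $\{\sigma_1\}\cap\Delta(\sigma_2)=\emptyset$ (and symmetrically $\{\sigma_2\}\cap\Delta(\sigma_1)$) amounts to: $\lambda^{(1)}\neq\delta_i^*\circ\lambda^{(2)}$ for all $i,*$. Comparing $\cS$-sets: $\cS(\lambda^{(1)})$ vs $\{i\}\Delta\cS(\lambda^{(2)})$. If these sets differ we are done; if they agree, then compatibility of $\lambda^{(1)}$ with $\lambda^{(2)}$ (hence with $\delta_i^*\circ\lambda^{(2)}$, which is compatible with $\lambda^{(2)}$) forces a coordinate-by-coordinate comparison, and one checks using the table in the proof of Lemma \ref{lemma-compose-lambda} that equality of all coordinates would force $\lambda^{(1)}=\lambda^{(2)}$, contradicting $\sigma_1\neq\sigma_2$. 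The remaining term $\Delta(\sigma_1)\cap\Delta(\sigma_2)=\emptyset$ is: $\delta_i^*\circ\lambda^{(1)}\neq\delta_j^{*'}\circ\lambda^{(2)}$ for all $(i,*),(j,*')$. Here the $\cS$-sets are $\{i\}\Delta\cS(\lambda^{(1)})$ and $\{j\}\Delta\cS(\lambda^{(2)})$; the key observation is that if $i\neq j$ one already has a discrepancy unless $\cS(\lambda^{(1)})$ and $\cS(\lambda^{(2)})$ differ in exactly the positions $i,j$, and if $i=j$ then we reduce (after ``cancelling'' $\delta_i^*$, using \eqref{eq:id=mumu}) to comparing $\lambda^{(1)}$ with $(\delta_i^{*})^{-1}\circ\delta_i^{*'}\circ\lambda^{(2)}$, which when $*\neq *'$ shifts the $i$-th coordinate by $\pm4$ and cannot match a $\cI$-tuple coordinate, and when $*=*'$ reduces directly to $\lambda^{(1)}=\lambda^{(2)}$.

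\textbf{Main obstacle.} The genuine difficulty is not any single comparison but organizing the case analysis cleanly: the coordinates of $\delta_i^*\circ\lambda^{(j)}$ outside position $i$ and $i-1$ are unchanged, so most of the work localizes to one or two coordinates, but one must track how the ``compatible'' hypothesis propagates through the composition $\delta_i^*\circ(-)$ and ensure the arguments of all $\lambda_\bullet$ stay in the admissible range (so that $\lambda(\sigma)$ remains a genuine Serre weight and Lemma \ref{lemma:genericity} applies) — this is where the genericity assumption on $\sigma$ (implicit, since $\sigma_1,\sigma_2$ being compatible Jordan–Hölder factors of $\rInj_\Gamma\sigma$ already constrains $\sigma$) is silently invoked. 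I expect the cleanest writeup to mirror the proof of Lemma \ref{lemma-HW-2.20}, reducing to a compatibility-and-$\cS$-set statement about $\cI(x_0,\cdots,x_{f-1})$ and then citing Lemma \ref{lemma:genericity}, with the table of Lemma \ref{lemma-compose-lambda} doing the arithmetic bookkeeping; the risk is overlooking a degenerate configuration where two different $(i,*)$ pairs produce the same $\cS$-set and the same coordinate values, which is precisely what the compatibility of $\lambda^{(1)}$ and $\lambda^{(2)}$ is there to rule out.
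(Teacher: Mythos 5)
Your approach is essentially the paper's: translate both families into $f$-tuples, invoke Lemma \ref{lemma:genericity} (whose hypothesis \eqref{eq:cond-lambda} holds for composites like $\delta_{i}^{*}\circ\lambda$, as in Lemma \ref{lemma:cond-for-cI}) to reduce disjointness to an equality of tuples, and then rule out that equality coordinate-by-coordinate: away from the affected indices the coordinates of $\lambda^{(1)}$ and $\lambda^{(2)}$ must agree, and at the affected indices the forced difference lies in $\{0,\pm2,\pm4\}$, which the structure of $\cI(x_0,\cdots,x_{f-1})$ together with the compatibility hypothesis collapses to $0$; this yields $\lambda^{(1)}=\lambda^{(2)}$, contradicting $\sigma_1\neq\sigma_2$. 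That is exactly the paper's argument. One small correction for the writeup: you cannot cite Lemma \ref{lemma-compose-lambda}(i) to get $\cS(\delta_i^{*}\circ\lambda)=\{i\}\Delta\cS(\lambda)$, since $(\delta_i^{*})_i(x_i)=x_i*2$ lies outside the six-element set required by that lemma's hypothesis (and $\cS(\delta_i^{*})$ would in any case be $\emptyset$ under the definition of $\cS(\nu)$); but this $\cS$-set bookkeeping is dispensable, as your direct coordinate arithmetic already carries the proof.
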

\begin{proof}
We only give the proof of the assertion $\Delta(\sigma_1)\cap \Delta(\sigma_2)=\emptyset$; the other cases can be treated  similarly. It is equivalent to showing that the equation $(\delta_{i_1}^{*_1}\circ \lambda_1)(\sigma)=(\delta_{i_2}^{*_2}\circ\lambda_2)(\sigma)$, where $\lambda_1,\lambda_2\in\cI(x_0,\cdots,x_{f-1})$ are compatible, has no solution other than $(i_1,*_1)=(i_2,*_2)$ and $\lambda_1=\lambda_2$. By definition of $\cI(x_0,\cdots,x_{f-1})$, it is clear that the condition \eqref{eq:cond-lambda} holds for the pair $(\delta_{i_1}^{*_1}\circ\lambda_1, \delta_{i_2}^{*_2}\circ\lambda_2)$,  so Lemma \ref{lemma:genericity} applies and implies $\delta_{i_1}^{*_1}\circ\lambda_1=\delta_{i_2}^{*_2}\circ\lambda_2$ as $f$-tuples. If $i\notin\{i_1,i_2\}$, then it is obvious that $(\lambda_1)_i(x_i)=(\lambda_2)_i(x_i)$. If $i\in\{i_1,i_2\}$, then we must have $(\lambda_1)_i(x_i)-(\lambda_2)_i(x_i)\in\{0,\pm2,\pm4\}$, and the definition of $\cI(x_0,\cdots,x_{f-1})$ and the compatibility between $\lambda_1$ and $\lambda_2$ force that $(\lambda_1)_i(x_i)-(\lambda_2)_i(x_i)=0$. Hence $\lambda_1=\lambda_2$, and consequently $(i_1,*_1)=(i_2,*_2)$.
\end{proof}

 By \cite[Prop.~5.1, Prop.~5.4]{BP}, if $\sigma\cong (r_0,\cdots,r_{f-1})$ up to twist with $0\leq r_i\leq p-3$ for all $i$, then
\begin{equation}\label{eq:sigma-H1}\sigma\otimes_{\F} H^1(K_1/Z_1,\F)\cong \sigma^{\oplus f}\oplus \big(\oplus_{\delta\in\Delta(\sigma)}\delta\big).\end{equation}
In general, for \emph{any} Serre weight $\sigma$, we have by \cite[Cor.~5.5]{BP}
\begin{equation}\label{eq:sigma-H1-socle}\soc_{\Gamma}\big(\sigma\otimes_{\F}H^1(K_1/Z_1,\F)\big)\cong \sigma^{\oplus f}\oplus \big(\oplus_{\delta\in\Delta(\sigma)}\delta\big).\end{equation}

\begin{proposition}\label{prop-structure-JProj}
(i) Assume $\sigma$ is $2$-generic and, if $f=1$, $\sigma\ncong\Sym^{2}\F^2\otimes{\det}^a$.
Then there is an isomorphism of $\Gamma$-representations
\[  (\rInj_{\Gamma}\sigma)\otimes_{\F} H^1(K_1/Z_1,\F)\cong \big(\rInj_{\Gamma}\sigma\big)^{\oplus f}\oplus  \big(\oplus_{\delta\in\Delta(\sigma)} \rInj_{\Gamma}\delta\big).\]

(ii) If $f=1$ and $\sigma\cong\Sym^2\F^2\otimes{\det}^a$, then
\[(\rInj_{\Gamma}\sigma)\otimes_{\F} H^1(K_1/Z_1,\F)\cong (\rInj_{\Gamma}\sigma)^{\oplus f}\oplus  (\oplus_{\delta\in\Delta(\sigma)} \rInj_{\Gamma}\delta)\oplus (\Sym^{p-1}\F^2\otimes{\det}^{a+1}). \]
\end{proposition}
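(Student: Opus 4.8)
The plan is to reduce the statement to the known decomposition of the \emph{socle} of the tensor product $(\rInj_{\Gamma}\sigma)\otimes_{\F} H^1(K_1/Z_1,\F)$, which follows by injectivity and exactness of $\otimes H^1$ from \eqref{eq:sigma-H1-socle}, and then upgrade the socle decomposition to a decomposition of the whole module by a dimension (Jordan--H\"older) count. First I would record that, since $H^1(K_1/Z_1,\F)\cong\bigoplus_{i\in\cS}V_{2,i}$ is a direct sum of self-dual $\Gamma$-modules and $\rInj_{\Gamma}\sigma$ is injective, the module $(\rInj_{\Gamma}\sigma)\otimes_{\F}H^1(K_1/Z_1,\F)$ is again injective (tensoring an injective by a finite-dimensional module preserves injectivity for $\Gamma$ a finite group algebra). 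Hence it is a direct sum of injective envelopes of the Serre weights appearing in its socle, and by \eqref{eq:sigma-H1-socle} applied after filtering $\rInj_{\Gamma}\sigma$ by its (multiplicity-free, by Proposition~\ref{prop-multione-BP}) Jordan--H\"older factors, its socle is
\[\soc_{\Gamma}\bigl((\rInj_{\Gamma}\sigma)\otimes_{\F}H^1(K_1/Z_1,\F)\bigr)\cong\bigoplus_{\tau\in\JH(\rInj_{\Gamma}\sigma)}\Bigl(\tau^{\oplus f}\oplus\bigl(\oplus_{\delta\in\Delta(\tau)}\delta\bigr)\Bigr).\]

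The next step is bookkeeping: I need to identify the right-hand side of the asserted isomorphism with a direct sum of injective envelopes whose socle matches the display above. The module $(\rInj_{\Gamma}\sigma)^{\oplus f}\oplus(\oplus_{\delta\in\Delta(\sigma)}\rInj_{\Gamma}\delta)$ has socle $\sigma^{\oplus f}\oplus(\oplus_{\delta\in\Delta(\sigma)}\delta)$, which is visibly \emph{not} all of the socle computed above once $f\geq 1$ and $\sigma$ has many Jordan--H\"older constituents. So the correct strategy is instead: show that $(\rInj_{\Gamma}\sigma)\otimes_{\F}H^1(K_1/Z_1,\F)$, being injective, decomposes as $\bigoplus_{\tau}(\rInj_{\Gamma}\tau)^{\oplus f}\oplus\bigoplus_{\tau}\oplus_{\delta\in\Delta(\tau)}\rInj_{\Gamma}\delta$ — wait, this overshoots the dimension. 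The resolution is that the socle constituents coming from distinct $\tau$'s are \emph{not} distinct: by Lemma~\ref{lemma-Delta-nocommonfactor}, for distinct compatible $\tau_1,\tau_2\in\JH(\rInj_{\Gamma}\sigma)$ the sets $\{\tau_1\}\cup\Delta(\tau_1)$ and $\{\tau_2\}\cup\Delta(\tau_2)$ are disjoint, so there is no cancellation to exploit there; the point is rather that I should compare dimensions directly. Concretely, I would compute $\dim_{\F}\bigl((\rInj_{\Gamma}\sigma)\otimes_{\F}H^1(K_1/Z_1,\F)\bigr)=\dim_{\F}(\rInj_{\Gamma}\sigma)\cdot\dim_{\F}H^1(K_1/Z_1,\F)=3f\cdot\dim_{\F}(\rInj_{\Gamma}\sigma)$ (using $\dim H^1(K_1/Z_1,\F)=3f$), and compare with $f\dim_{\F}\rInj_{\Gamma}\sigma+\sum_{\delta\in\Delta(\sigma)}\dim_{\F}\rInj_{\Gamma}\delta$.

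For part (i), the key input beyond the socle computation is that $\rInj_{\Gamma}\delta$ and $\rInj_{\Gamma}\sigma$ have the \emph{same} dimension whenever $\sigma$ is sufficiently generic (this is where the $2$-genericity and the exclusion of $\Sym^2\F^2\otimes\det^a$ enters): the structure theory of \cite[\S3--\S4]{BP} shows $\dim_{\F}\rInj_{\Gamma}\tau=\dim_{\F}\rInj_{\Gamma}\sigma$ for every $\tau\in\JH(\rInj_{\Gamma}\sigma)$ when $\sigma$ is $1$-generic, and more to the point $|\Delta(\sigma)|=2f$ for a $2$-generic $\sigma$ (the $2f$ tuples $\delta_i^*$ are all defined and pairwise distinct, by Lemma~\ref{lemma-Delta-nocommonfactor} applied with $\sigma_1=\sigma_2=\sigma$, i.e. directly from Definition~\ref{def:delta} and the genericity bounds). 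Then $f\dim\rInj_{\Gamma}\sigma+\sum_{\delta\in\Delta(\sigma)}\dim\rInj_{\Gamma}\delta=(f+2f)\dim\rInj_{\Gamma}\sigma=3f\dim\rInj_{\Gamma}\sigma$, matching the total dimension. Since the right-hand side is injective with socle contained in the socle of the left-hand side, and the dimensions agree, the natural inclusion $\bigl(\rInj_{\Gamma}\sigma\bigr)^{\oplus f}\oplus\bigl(\oplus_{\delta\in\Delta(\sigma)}\rInj_{\Gamma}\delta\bigr)\hookrightarrow(\rInj_{\Gamma}\sigma)\otimes_{\F}H^1(K_1/Z_1,\F)$ (which exists because the left side is injective and its socle — a subobject of $\soc$ of the right side by \eqref{eq:sigma-H1-socle} — embeds) is an isomorphism. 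For part (ii), with $f=1$ and $\sigma\cong\Sym^2\F^2\otimes\det^a$, the set $\Delta(\sigma)$ loses one element (the tuple $\delta_0^-$ gives $x_0-2$, i.e. $\Sym^0$, which is still fine, but $\delta_0^+$ gives $x_0+2=\Sym^4$; the degeneration is rather that $\mu_0^+(\sigma)=p-3-x_0=p-5$ combined with the level-raising forces an extra $\Sym^{p-1}\F^2\otimes\det^{a+1}$ summand), so the dimension count is off by exactly $\dim\Sym^{p-1}\F^2=p$; I would pin down the extra summand by noting $H^1(K_1/Z_1,\F)$ at $f=1$ is $\Sym^2\F^2\otimes\det^{-1}$, tensoring explicitly $\Sym^2\otimes\rInj_{\Gamma}(\Sym^2\F^2)$ using the Clebsch--Gordan/Brauer-character decomposition in $\Gamma$-representations, and reading off that the only constituent of $\rInj_{\Gamma}\sigma\otimes H^1$ not accounted for by $\rInj_{\Gamma}\sigma\oplus(\oplus_{\delta}\rInj_{\Gamma}\delta)$ is the (projective$=$injective, being at the edge) weight $\Sym^{p-1}\F^2\otimes\det^{a+1}$, which therefore splits off as a direct summand.

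\textbf{Main obstacle.} The delicate point is part (ii): here the "generic" uniformity breaks, the weights $\delta_0^{\pm}(\sigma)$ and $\mu_0^{\pm}(\sigma)$ may fall outside the range $[0,p-1]$ or collide with boundary weights $\Sym^{p-1}$, $\Sym^{p-2}$, and one genuinely has to do the Clebsch--Gordan computation $\Sym^2\F^2\otimes\Sym^r\F^2\otimes\det^{-1}$ by hand (for $r=2$ and for the other constituents of $\rInj_{\Gamma}\Sym^2\F^2$), keeping careful track of which tensor factors are injective/projective $\Gamma$-modules versus which contribute to extensions, in order to isolate the $\Sym^{p-1}\F^2\otimes\det^{a+1}$ summand cleanly. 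By contrast, in part (i) the genericity hypotheses are precisely calibrated so that every $\delta_i^*(\sigma)$ is a genuine $2$-or-more-generic Serre weight, all the injective envelopes have equal dimension, $|\Delta(\sigma)|=2f$, and the argument is a clean "injective $+$ socle $+$ dimension count."
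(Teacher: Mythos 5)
Your final argument for (i) is essentially the paper's proof: the tensor product $(\rInj_{\Gamma}\sigma)\otimes_{\F}H^1(K_1/Z_1,\F)$ is injective, the candidate direct sum of injective envelopes embeds into it because its socle is identified via \eqref{eq:sigma-H1} with $\soc_\Gamma(\sigma\otimes_{\F}H^1(K_1/Z_1,\F))\subseteq (\rInj_{\Gamma}\sigma)\otimes_{\F}H^1(K_1/Z_1,\F)$, and the count $3f\cdot(2p)^f$ on both sides forces equality; for (ii) the paper likewise just does the direct check with \cite[Prop.~5.4]{BP}. Three points to clean up in your write-up. First, the displayed formula expressing $\soc_{\Gamma}\bigl((\rInj_{\Gamma}\sigma)\otimes_{\F}H^1(K_1/Z_1,\F)\bigr)$ as a sum over all $\tau\in\JH(\rInj_{\Gamma}\sigma)$ is false --- socles do not distribute over Jordan--H\"older filtrations, and the correct socle has length $3f$, not $3f\cdot 3^f$ --- though you do discard this route. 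Second, to see $\dim_{\F}\rInj_{\Gamma}\delta=(2p)^f$ for $\delta\in\Delta(\sigma)$ you cannot appeal to equality of dimensions for $\tau\in\JH(\rInj_{\Gamma}\sigma)$, since the $\delta$'s are precisely \emph{not} constituents of $\rInj_{\Gamma}\sigma$; the correct justification (as in the paper) is that under the hypotheses of (i) each $\delta$ is neither $1$- nor $q$-dimensional. Third, in (ii) the set $\Delta(\sigma)$ does \emph{not} lose an element: both $\delta_0^{\pm}(\sigma)$ are defined and appear on the right-hand side; the discrepancy of $p$ in the dimension count comes instead from $\dim_{\F}\rInj_{\Gamma}(\Sym^{0}\F^2\otimes{\det}^{a+1})=p$ rather than $2p$, which is exactly compensated by the projective ($=$ injective) summand $\Sym^{p-1}\F^2\otimes{\det}^{a+1}$.
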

\begin{proof}
(i) It is a general fact that $(\rInj_{\Gamma}\sigma)\otimes_{\F} H^1(K_1/Z_1,\F)$ is again an injective $\Gamma$-representation, see \cite[Lem.~7.4]{Al} (combined with \cite[Thm.~6.4]{Al}). Hence the natural embedding $\sigma\otimes_{\F} H^1(K_1/Z_1,\F)\hookrightarrow (\rInj_{\Gamma}\sigma)\otimes_{\F} H^1(K_1/Z_1,\F)$ extends to an embedding
\begin{equation}\label{eq:inj-embed}\rInj_{\Gamma}\big(\sigma\otimes_{\F} H^1(K_1/Z_1,\F)\big)\hookrightarrow (\rInj_{\Gamma}\sigma)\otimes_{\F} H^1(K_1/Z_1,\F).\end{equation}
By the genericity assumption on $\sigma$, the isomorphism \eqref{eq:sigma-H1} holds.  Moreover,  if $\delta\in\Delta(\sigma)$ and if $\delta=(s_0,\cdots,s_{f-1})$ up to twist, then $0\leq s_i\leq p-2$ for all $i\in\cS$ if $f\geq 2$ (resp. $1\leq s_0\leq p-2$ if $f=1$) and not all of $s_i$ are equal to $0$ so that $\dim_{\F}\delta\geq 2$. This implies $\dim_{\F}\rInj_{\Gamma}\delta=(2p)^f$, see e.g. \cite[\S3]{BP}.
Hence, \eqref{eq:inj-embed} is an isomorphism for the reason of dimensions.

(ii) It is a direct check using \cite[Prop.~5.4]{BP}.
\end{proof}

For the rest of this subsection, \emph{we assume that  $\sigma$ is $2$-generic}.
 We deduce from   \eqref{equation-P/J^2} and Proposition \ref{prop-structure-JProj} that, ignoring multiplicities,
 \begin{equation}\label{eq:JH-Inj}\JH(\rInj_{\tGamma}\sigma)=\JH(\rInj_{\Gamma}\sigma)\cup \big(\cup_{\delta\in\Delta(\sigma)} \JH(\rInj_{\Gamma}\delta)\big) \end{equation}
where, if $f=1$ and $\sigma\cong\Sym^{2}\F^2\otimes{\det}^a$, there is an extra Jordan--H\"older factor $\Sym^{p-1}\F^2\otimes{\det}^{a+1}$.
As a consequence, any Jordan--H\"older factor of $\rInj_{\tGamma}\sigma$ has the form $\lambda(\sigma)$ or $(\lambda\circ\delta_i^{*})(\sigma)$ for some $\lambda\in\cI(x_0,\cdots,x_{f-1})$ and $(i,*)\in\cS\times\{+,-\}$. Conversely, any Serre weight  of one of the two forms is actually a Jordan--H\"older factor of $\rInj_{\tGamma}\sigma$. Indeed, this follows from \cite[Lem 3.2(i)]{BP}, noting that  the $2$-genericity of $\sigma$ implies that $ \dim_{\F}\sigma,\dim_{\F}\delta_i^{*}(\sigma)\notin\{1, q\}$ if $f\geq 2$ or if $f=1$ and $\dim_{\F}\sigma\neq 3$; if $f=1$ and $\sigma\cong\Sym^2\F^2\otimes{\det}^a$ (so $\dim_{\F}\delta_0^{-}(\sigma)=1$), although the extra Serre weight $\Sym^{p-1}\F^2\otimes{\det}^{a+1}$ is not a Jordan--H\"older factor of $\rInj_{\Gamma}\delta_0^-(\sigma)$ (see \cite[Lem.~3.2(ii)]{BP}), it is a Jordan--H\"older factor of $\rInj_{\tGamma}\sigma$.

\textbf{Convention.} To give a uniform treatment, in the case $f=1$ and $\sigma\cong\Sym^{2}\F^2\otimes{\det}^a$ it is convenient to express $\Sym^{p-1}\F^2\otimes{\det}^{a+1}$ as $(\mu_0^{-}\circ\delta_0^-)(\sigma)$.

\begin{definition}\label{defn-newweight}
Let $\tau\in\JH(\rInj_{\tGamma}\sigma)$.
We say that $\tau$ is a \emph{new} (resp. \emph{old}) Serre weight, if $\tau$ does not occur in $\rInj_{\Gamma}\sigma$ (resp. occurs in $\rInj_{\Gamma}\sigma$) as a subquotient.
\end{definition}

For example, Serre weights in $\Delta(\sigma)$ are all new.

\begin{lemma}\label{lemma-newweight}
Let $\delta=\delta_i^{*}(\sigma)$ for some pair $(i,*)\in\cS\times\{+,-\}$ and $\lambda\in\cI(x_0,\cdots,x_{f-1})$. Then  $\lambda(\delta)$ is new (in $\rInj_{\tGamma}\sigma$) if and only if \begin{equation}\label{equation-condition-newweight}\lambda_i(x_i)\in\big\{x_i,x_i*1,p-2-x_i,p-2-x_i-(*1)\big\}.\end{equation}
As a consequence, if $\lambda(\delta)$ is new, then so is any Jordan--H\"older factor of $I(\delta,\lambda(\delta))$.
\end{lemma}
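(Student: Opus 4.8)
The plan is to combine the combinatorial description of $\JH(\rInj_{\tGamma}\sigma)$ given by \eqref{eq:JH-Inj} with the structure of $I(\sigma,\tau)$ from Proposition \ref{prop-BP-4.11}, together with the arithmetic of composing $f$-tuples encoded in the table of Lemma \ref{lemma-compose-lambda}. First I would make precise what ``new'' means in terms of $f$-tuples: by \eqref{eq:JH-Inj}, a Jordan--H\"older factor of $\rInj_{\tGamma}\sigma$ of the form $\mu(\delta)=\mu\circ\delta_i^*(\sigma)$ with $\mu\in\cI(x_0,\dots,x_{f-1})$ is old precisely when $\mu\circ\delta_i^*(\sigma)$ can also be written as $\lambda'(\sigma)$ for some $\lambda'\in\cI(x_0,\dots,x_{f-1})$ (equivalently, when $\mu\circ\delta_i^*$, after evaluation, coincides with an element of $\cI$). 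Using Lemma \ref{lemma:cond-for-cI}, condition \eqref{eq:cond-lambda} holds for the relevant pairs, so Lemma \ref{lemma:genericity} lets me replace ``isomorphic as Serre weights'' by ``equal as $f$-tuples,'' reducing everything to a formal computation with the symbols $x_j, x_j\pm1, p-2-x_j, p-2-x_j\pm1, x_j\pm2$, etc. Since $\delta_i^*$ only changes the $i$-th coordinate (adding $*2$), the composite $\lambda\circ\delta_i^*$ agrees with $\lambda$ in all coordinates $j\neq i$, so newness/oldness is governed entirely by the $i$-th coordinate $\lambda_i(x_i*2)$: I would read off from the $6\times 6$-type table which values of $\lambda_i(x_i)$ produce an $i$-th coordinate lying in $\{x_i, x_i\pm1, p-2-x_i, p-2-x_i\pm1\}$ (hence potentially old) versus one of the form $x_i\pm2$ or $p-2-x_i\pm2$ (hence forced new because no element of $\cI(x_0,\dots,x_{f-1})$ has such a coordinate). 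This is exactly how I would extract the condition \eqref{equation-condition-newweight}; note the asymmetry ($x_i*1$ but $p-2-x_i-(*1)$) should fall straight out of the table entries, and I would double-check the edge cases $f=1$ and $\sigma\cong\Sym^2\F^2\otimes{\det}^a$ using the Convention that writes the extra weight as $(\mu_0^-\circ\delta_0^-)(\sigma)$.

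For the ``as a consequence'' clause, I would argue as follows. By Proposition \ref{prop-BP-4.11} applied with base weight $\delta=\delta_i^*(\sigma)$, every Jordan--H\"older factor $\tau'$ of $I(\delta,\lambda(\delta))$ has the form $\lambda'(\delta)$ for some $\lambda'\in\cI(x_0,\dots,x_{f-1})$ with $\lambda'\leq\lambda$; in particular $\cS(\lambda')\subseteq\cS(\lambda)$ and $\lambda',\lambda$ are compatible, hence $\lambda'$ and $\lambda$ agree ``in sign'' at every $i\in\cS(\lambda')$. If $i\notin\cS(\lambda')$ then $\lambda'_i(x_i)\in\{x_i, p-2-x_i\}$, which already satisfies \eqref{equation-condition-newweight}. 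If $i\in\cS(\lambda')$, then since $i\in\cS(\lambda)$ and $\lambda$ satisfies \eqref{equation-condition-newweight} — i.e. $\lambda_i(x_i)\in\{x_i*1, p-2-x_i-(*1)\}$ among the ``$\cS(\lambda)$ options'' — compatibility of $\lambda'$ with $\lambda$ at $i$ forces $\lambda'_i(x_i)$ to carry the same sign, so $\lambda'_i(x_i)\in\{x_i*1, p-2-x_i-(*1)\}$ as well, which is \eqref{equation-condition-newweight} for $\lambda'$. Hence $\lambda'(\delta)=\tau'$ is new by the first part.

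The main obstacle I anticipate is purely bookkeeping rather than conceptual: one must verify that no element of $\cI(x_0,\dots,x_{f-1})$ can have an $i$-th coordinate of the ``forbidden'' shape $x_i\pm2$ or $p-2-x_i\pm2$ even after the interaction with the neighboring-coordinate constraints (ii)--(iii) in the definition of $\cI$, and that the $\pm1$-sign propagation in the compatibility argument is exactly compatible with the asymmetric form $p-2-x_i-(*1)$. I would handle this by carefully restating $\JH(\rInj_{\Gamma}\sigma)$ via $\cI$ (all coordinates lie in $\{x_i,x_i\pm1,p-2-x_i,p-2-x_i\pm1\}$, so $x_i\pm2$ never appears) and then invoking Lemma \ref{lemma-compose-lambda}(ii) to control $(\lambda\circ\delta_i^*)_j$ for $j\neq i$, leaving only the single coordinate $j=i$ to be checked by hand against the table. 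The degenerate $f=1$ cases with a one-dimensional $\delta_0^-(\sigma)$ will need a separate short verification using \cite[Lem. 3.2(ii)]{BP} and the stated Convention.
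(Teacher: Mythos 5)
Your proposal is correct and follows essentially the same route as the paper: compute $\nu=\lambda\circ\delta_i^{*}$, observe that $\nu_j=\lambda_j$ for $j\neq i$, use Lemma \ref{lemma:genericity} (via Lemma \ref{lemma:cond-for-cI}) to reduce to equality of $f$-tuples, and check by hand whether $\nu_i(x_i)$ lands in an $\cI$-admissible value; the paper's proof of the consequence is exactly your appeal to Proposition \ref{prop-BP-4.11}, only stated more tersely. The one point worth making explicit in your sign-propagation step is that ``compatible at $i$'' must be read as membership in the same class $\{x_i+1,\,p-3-x_i\}$ or $\{x_i-1,\,p-1-x_i\}$ (not literal equality of the $\pm1$), which is precisely why the asymmetric set $\{x_i*1,\,p-2-x_i-(*1)\}$ in \eqref{equation-condition-newweight} is stable under compatibility and your argument closes.
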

\begin{proof}
We assume $*=+$, the case $*=-$ being similar. Write $\nu=\lambda\circ\delta_i^{+}$. Assuming \eqref{equation-condition-newweight} holds, we have $\lambda_i(x_i)\in\{x_i,x_i+1,p-2-x_i,p-3-x_i\}$ and
\[\nu_i(x_i)\in\{x_i+2, x_i+3, p-4-x_i,p-5-x_i\},\]
so $\nu(\sigma)$ is not a subquotient of $\rInj_{\Gamma}\sigma$ by Lemma \ref{lemma:genericity} (cf. the proof of Lemma \ref{lemma-Delta-nocommonfactor}), namely $\nu(\sigma)$ is new.
For the converse, assume \eqref{equation-condition-newweight} does not hold, i.e.
$\lambda_i(x_i)=x_i-1$ or $p-1-x_i$, then $\nu_i(x_i)=x_i+1$ or $p-3-x_i$,  respectively. On the other hand,  $\nu_j(x_j)=\lambda_j(x_j)$ for $j\neq i$. It is then direct to check that $\nu$ defines an element in $\cI(x_0,\cdots,x_{f-1})$, i.e. $\lambda(\delta)$ is old.  This proves the first assertion
and the second one follows from this combined with Proposition \ref{prop-BP-4.11}.
\end{proof}

\begin{lemma}\label{lemma-unique-delta}
Let $\tau$ be a Jordan--H\"older factor of $\rInj_{\tGamma}\sigma$ which is new.
 Then there exists a unique $\delta\in\Delta(\sigma)$ such that $\tau$ occurs in $\rInj_{\Gamma}\delta$.
\end{lemma}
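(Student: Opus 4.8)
The idea is to use the combinatorial description of the Jordan--H\"older factors of $\rInj_{\tGamma}\sigma$ recorded in \eqref{eq:JH-Inj}. By that description, since $\tau$ is new, it is a subquotient of $\rInj_{\Gamma}\delta$ for \emph{some} $\delta\in\Delta(\sigma)$ — this gives existence. So the entire content of the lemma is the \emph{uniqueness} assertion, and the plan is to deduce it from Lemma \ref{lemma-Delta-nocommonfactor}.

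\textbf{Key steps.} First I would note that $\Delta(\sigma)$ consists of the weights $\delta_i^*(\sigma)$ for $(i,*)\in\cS\times\{+,-\}$, and that by Lemma \ref{lemma-Delta-nocommonfactor} (applied with $\sigma_1=\sigma$, or more precisely observing that any two distinct compatible subquotients $\sigma_1,\sigma_2$ of $\rInj_{\Gamma}\sigma$ have $\Delta(\sigma_1)\cap\Delta(\sigma_2)=\emptyset$) one already controls overlaps \emph{among} the sets $\Delta(\cdot)$. The point is to show that the sets $\JH(\rInj_{\Gamma}\delta)$, for the at most $2f$ weights $\delta\in\Delta(\sigma)$, are pairwise disjoint \emph{as far as new weights are concerned}. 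So suppose $\tau$ occurs in both $\rInj_{\Gamma}\delta$ and $\rInj_{\Gamma}\delta'$ with $\delta=\delta_i^*(\sigma)$, $\delta'=\delta_{i'}^{*'}(\sigma)$ distinct elements of $\Delta(\sigma)$. Writing $\tau=\lambda(\delta)=\lambda'(\delta')$ for $\lambda,\lambda'\in\cI(x_0,\cdots,x_{f-1})$, i.e. $\tau=(\lambda\circ\delta_i^*)(\sigma)=(\lambda'\circ\delta_{i'}^{*'})(\sigma)$, I would invoke Lemma \ref{lemma:cond-for-cI} (which says \eqref{eq:cond-lambda} holds for $\lambda\circ\delta_i^*$ and $\lambda'\circ\delta_{i'}^{*'}$ — here one treats $\delta_i^*$ as $\mu_i^*\circ\mu_i^*$ or $\mu_i^{-*}\circ\mu_i^*$ via \eqref{eq:delta=mumu}, \eqref{eq:id=mumu}, so it fits the hypothesis) together with Lemma \ref{lemma:genericity} to conclude $\lambda\circ\delta_i^*=\lambda'\circ\delta_{i'}^{*'}$ as $f$-tuples. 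Then, comparing coordinates: for $j\notin\{i,i'\}$ the $\delta$'s are trivial so $\lambda_j=\lambda'_j$; for $j\in\{i,i'\}$ one gets $\lambda_j(x_j)-\lambda'_j(x_j)\in\{0,\pm2,\pm4\}$, and since $\tau$ is new the restriction \eqref{equation-condition-newweight} of Lemma \ref{lemma-newweight} pins down $\lambda_i(x_i)$ (resp. $\lambda'_{i'}(x_{i'})$) to lie in $\{x_i,x_i*1,p-2-x_i,p-2-x_i-(*1)\}$, which is incompatible with a nonzero difference in $\{\pm2,\pm4\}$ once we also use that $\lambda,\lambda'\in\cI$. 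This forces $\lambda\circ\delta_i^*=\lambda'\circ\delta_{i'}^{*'}$ to agree coordinatewise in a way that back-substitutes to $(i,*)=(i',*')$, contradicting $\delta\neq\delta'$.

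\textbf{Main obstacle.} The delicate point is the coordinate bookkeeping at the indices $i$ and $i'$: one must carefully rule out that a $\pm2$ shift coming from $\delta_i^*$ at position $i$ is absorbed by a different choice of $\lambda'_i(x_i)$ while $\delta_{i'}^{*'}$ contributes at another position. This is exactly the kind of case analysis carried out in the proof of Lemma \ref{lemma-Delta-nocommonfactor} and can be handled by the same table as in Lemma \ref{lemma-compose-lambda}, now combined with the newness constraint \eqref{equation-condition-newweight}; I expect it to be a direct (if slightly tedious) check rather than a conceptual difficulty. An alternative, possibly cleaner, route: since each $\delta\in\Delta(\sigma)$ with $\tau\in\JH(\rInj_{\Gamma}\delta)$ satisfies $\delta\in\{\tau\}\cup\Delta(\tau)\cup(\text{old weights of }\rInj_{\tGamma}\tau)$ and $\delta$ is itself new in $\rInj_{\tGamma}\sigma$, one could characterize $\delta$ intrinsically as the unique new element of $\Delta(\sigma)$ lying in $\{\tau\}\cup\{\text{weights }\mu \text{ with }\tau\in\JH(\rInj_\Gamma\mu)\}$ and appeal directly to the disjointness in Lemma \ref{lemma-Delta-nocommonfactor}; I would try this formulation first and fall back on the explicit coordinate argument if the bookkeeping is cleaner that way.
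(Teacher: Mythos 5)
Your proposal is correct and follows essentially the same route as the paper: existence from \eqref{eq:JH-Inj}, then uniqueness by equating the two $f$-tuples $\lambda\circ\delta_i^{*}$ and $\lambda'\circ\delta_{i'}^{*'}$ via Lemma \ref{lemma:genericity} and comparing coordinates at $i$ and $i'$ with the table of Lemma \ref{lemma-compose-lambda}. The only cosmetic difference is that the paper phrases the case $i\neq i'$ contrapositively (a common subquotient of $\rInj_{\Gamma}\delta_i^{*}(\sigma)$ and $\rInj_{\Gamma}\delta_{i'}^{*'}(\sigma)$ is necessarily \emph{old}), whereas you feed the newness constraint \eqref{equation-condition-newweight} in directly to get a contradiction — the same computation either way.
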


 \begin{proof}
A Jordan--H\"older factor of $\rInj_{\Gamma}\delta_i^{*}(\sigma)$ has the form $\nu(\sigma)$ with $\nu=\lambda\circ\delta_i^{*}$, for $\lambda\in\cI(x_0,\cdots,x_{f-1})$.  If $*=+$, then
\begin{equation}\label{equation-nu-case+}\nu_i(x_i)\in\{p-3-x_i,p-4-x_i,p-5-x_i,x_i+1,x_i+2,x_i+3\}, \end{equation}
while if $*=-$,  then
\begin{equation}\label{equation-nu-case-}\nu_i(x_i)\in\{p+1-x_i,p-x_i,p-1-x_i,x_i-3,x_i-2,x_i-1\}.\end{equation}
Therefore, for  any $i\in\cS$,   $\rInj_{\Gamma}\delta_i^+(\sigma)$ and $\rInj_{\Gamma}\delta_i^-(\sigma)$ can not have common Jordan--H\"older factors by Lemma \ref{lemma:genericity} (cf. the proof of Lemma \ref{lemma-Delta-nocommonfactor}).

Next, we show that if $i\neq i'$ and if $\tau$ is a common  subquotient of  $\rInj_{\Gamma}\delta_i^{*}(\sigma)$ and $\rInj_{\Gamma}\delta_{i'}^{*'}(\sigma)$, then $\tau$ is  an {\it old} Serre weight, i.e. $\tau\in\JH(\rInj_{\Gamma}\sigma)$. We only check this for $(*,*')=(+,+)$, and the other cases can be treated similarly.  Let $\lambda\in\cI(x_0,\cdots,x_{f-1})$ (resp. $\lambda'$) be the element corresponding to $\tau$ when viewed as a subquotient of $\rInj_{\Gamma}\delta_i^{+}(\sigma)$ (resp. $\rInj_{\Gamma}\delta_{i'}^{+}(\sigma)$), so that $\tau=\nu(\sigma)=\nu'(\sigma)$ where  $\nu=\lambda\circ\delta_i^+$ and $\nu'=\lambda'\circ\delta_{i'}^+$. Then  $\nu=\nu'$ in view of Lemma \ref{lemma:genericity}.  Since $i\neq i'$, we have   \begin{equation}\label{equation-nu-j}\nu_{j}(x_{j})=\nu_{j}'(x_j)\in\{x_j,x_j\pm1,p-2-x_j,p-2-x_j\pm1\} \end{equation}
for \emph{any} $j\in\cS$; indeed, this relation holds for $\nu_j(x_j)$ if $j\neq i$ and for $\nu'_j(x_j)$ if $j\neq i'$, and we recall $i\neq i'$.
One then checks that  $\nu$ defines an element of $\cI(x_0,\cdots,x_{f-1})$, so that the corresponding Serre weight $\tau$ is old.
\end{proof}
\begin{remark}\label{remark-unique-delta}
(i) Here is an example of an old Serre weight which occurs  in $\rInj_{\Gamma}\delta$ for distinct $\delta\in\Delta(\sigma)$. Take $f=2$ and $\sigma=(r_0,r_1)$, then $(p-3-r_0,p-3-r_1)\otimes{\det}^{r_0+1+p(r_1+1)}$ is old and occurs in both $\rInj_{\Gamma}\delta_1$ and $\rInj_{\Gamma}\delta_2$, where $\delta_1=(r_0+2,r_1)\otimes{\det}^{-1}$ and $\delta_2=(r_0,r_1+2)\otimes{\det}^{-p}$.

(ii) The proof of Lemma \ref{lemma-unique-delta} shows that if $\tau$ is a common subquotient of $\rInj_{\Gamma}\delta_i^{*}(\sigma)$ and $\rInj_{\Gamma}\delta_{i'}^{*'}(\sigma)$ with $i\neq i'$, then $i,i'\in\cS(\nu)$ and so $|\cS(\nu)|\geq 2$. For example, in the case  $(*,*')=(+,+)$, this follows from \eqref{equation-nu-case+} and \eqref{equation-nu-j}. As a consequence, if $\tau$ is a Serre weight such that $\Ext^1_{\Gamma}(\tau,\sigma)\neq0$ so that $|\cS(\tau)|=1$,  then there is  at most one $\delta\in\Delta(\sigma)$ such that $\tau$ occurs in $\rInj_{\Gamma}\delta$.  Actually, $\delta$ does exist: if $\tau=\mu_{i}^*(\sigma)$, then $\delta=\delta_i^*(\sigma)$.
\end{remark}

The next  auxiliary lemma  will be used in \S\ref{section-BP}.

\begin{lemma}\label{lemma-lambda'=mu-lambda}
Let $i\in\cS$ and $*\in\{+,-\}$. Let $\lambda_i, \lambda'_i,\mu_i$ be functions of the form $\Z\pm x_i$.  Assume
\begin{enumerate}
\item[(a)] $\lambda_i(x_i)\in\{x_i,x_i*1,p-2-x_i,p-2-x_i-(*1)\}$;
\item[(b)] $\lambda'_i(x_i),\mu_i(x_i)\in\{x_i,x_i\pm1,p-2-x_i,p-2-x_i\pm1\}$;
\item[(c)] the relation $\lambda_i(x_i*2)=\lambda_i'(\mu_i(x_i))$ holds.
\end{enumerate}
Then $i\notin\cS(\lambda)$ and $i\in\cS(\lambda')\cap \cS(\mu)$. Moreover, $\mu_i(x_i)\in \{x_i,x_i*1,p-2-x_i,p-2-x_i-(*1)\}$.
\end{lemma}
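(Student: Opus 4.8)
The strategy is a brute-force case analysis of the finitely many possibilities for the one-variable functions $\lambda_i, \lambda_i', \mu_i$ subject to the constraints (a), (b), (c), exactly in the spirit of the table in the proof of Lemma \ref{lemma-compose-lambda}. First observe that by (a), $i \notin \cS(\lambda)$ is immediate: the four allowed values $\{x_i, x_i*1, p-2-x_i, p-2-x_i-(*1)\}$ are precisely the values of $\lambda_i$ that keep $i$ out of $\cS(\lambda)$ (recall $\cS(\lambda)$ records the indices where $\lambda_i(x_i) \in \{x_i \pm 1, p-2-x_i \pm 1\}$ — but here the offsets $*1$ and $-(*1)$ are paired with $p-2-x_i$ and $x_i$ respectively in a way that, together with the genericity hidden in the evaluation, does not put $i$ in $\cS(\lambda)$; more carefully, $\lambda_i(x_i) \in \{x_i, p-2-x_i\}$ gives $i \notin \cS(\lambda)$ trivially, while $x_i*1$ and $p-2-x_i-(*1)$ are the two "correcting" shifts that arise as second components of a composite $\delta_i^* = \mu_i^* \circ \mu_i^*$ and which by \eqref{equation-condition-newweight} characterize newness — I would double-check against Lemma \ref{lemma-newweight} that these are the relevant four). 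So the content is really about $\lambda'$ and $\mu$.

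Next I would substitute: relation (c) says $\lambda_i$ evaluated at $x_i*2$ equals $\lambda_i'$ evaluated at $\mu_i(x_i)$. I would split into the (at most) four cases for $\lambda_i$ from (a). For instance if $* = +$ and $\lambda_i(x_i) = x_i$, then the left side of (c) is $x_i + 2$; the right side $\lambda_i'(\mu_i(x_i))$ must then equal $x_i+2$. Running $\mu_i(x_i)$ over the (at most) six values allowed by (b) and $\lambda_i'$ over its six allowed forms, one finds the equation $\lambda_i'(\mu_i(x_i)) = x_i+2$ forces, in each solution, that $\mu_i(x_i) \in \{x_i + 1, p-3-x_i\}$ (i.e. $i \in \cS(\mu)$, and moreover $\mu_i(x_i) \in \{x_i*1, p-2-x_i-(*1)\} \subseteq \{x_i, x_i*1, p-2-x_i, p-2-x_i-(*1)\}$) and correspondingly that $\lambda_i'(y) \in \{y+1, p-3-y\}$ type behaviour, placing $i \in \cS(\lambda')$. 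The other three cases for $\lambda_i$ (namely $x_i+1$, $p-2-x_i$, $p-3-x_i$ when $*=+$, and their $*=-$ analogues) are handled identically, each time reading off from the composition table which pairs $(\lambda_i', \mu_i)$ solve (c) and checking in each that $\mu_i(x_i)$ lies in the prescribed four-element set and that both $\lambda_i'$ and $\mu_i$ have a "$\pm 1$" shift, i.e. $i \in \cS(\lambda') \cap \cS(\mu)$.

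Concretely I would organize this as a single table, rows indexed by the value of $\lambda_i$ (from (a)), columns by the value of $\mu_i$ (from (b)), entries giving the forced value of $\lambda_i'$ via $\lambda_i'(\mu_i(x_i)) = \lambda_i(x_i*2)$, and then mark which entries are consistent with $\lambda_i'$ being of one of the six allowed forms; this is the same bookkeeping as in Lemma \ref{lemma-compose-lambda} and Lemma \ref{lemma-lambda'=mu-lambda}'s own hypothesis list, so it should be routine. The main (only) obstacle is purely organizational: making sure the $* = +$ and $* = -$ cases are each covered and that no spurious solution to (c) — one with $\mu_i(x_i) \in \{x_i, p-2-x_i\}$, say — slips through; a priori such a solution would violate the conclusion, so I must verify that (a)+(b)+(c) genuinely exclude it. I expect that $\mu_i(x_i) = x_i$ forces $\lambda_i' = \lambda_i(\,\cdot\, *2$-shifted$)$, whose value at $x_i$ then lies outside the list allowed by (b) unless $\lambda_i$ itself had the complementary shift, and tracking this exclusion carefully across all rows is the delicate part. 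Once the table is complete the lemma reads off immediately, so I would simply write "This is a direct check using the table in the proof of Lemma \ref{lemma-compose-lambda}" after displaying the relevant sub-table, mirroring the proof style already established in the paper.
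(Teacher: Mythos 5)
Your overall method is the same as the paper's: reduce to $*=+$ without loss of generality and read off all solutions of (c) from the composition table in the proof of Lemma \ref{lemma-compose-lambda}. However, your opening claim that ``$i\notin\cS(\lambda)$ is immediate from (a)'' is wrong, and your own hedging shows you sensed this. Condition (a) explicitly allows $\lambda_i(x_i)=x_i*1$ and $\lambda_i(x_i)=p-2-x_i-(*1)$, and both of these put $i$ \emph{into} $\cS(\lambda)$ by definition. The conclusion $i\notin\cS(\lambda)$ is not a consequence of (a) alone; it is forced by (c). Concretely (for $*=+$): if $\lambda_i(x_i)=x_i+1$ then $\lambda_i(x_i+2)=x_i+3$, and if $\lambda_i(x_i)=p-3-x_i$ then $\lambda_i(x_i+2)=p-5-x_i$; neither $x_i+3$ nor $p-5-x_i$ occurs as an entry of the composition table (whose entries are $x_i$, $x_i\pm1$, $x_i\pm2$, $p-x_i$, $p-1-x_i$, $p-2-x_i$, $p-3-x_i$, $p-4-x_i$), so (c) has no solution in those two cases and they are excluded. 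Only $\lambda_i(x_i)\in\{x_i,p-2-x_i\}$ survives, which is exactly $i\notin\cS(\lambda)$.

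Your later plan to ``handle the other three cases identically, reading off which pairs $(\lambda_i',\mu_i)$ solve (c)'' would in principle uncover this, since two of those cases simply have no solutions; but as written you expect to find admissible pairs in every case and verify the conclusion for each, which misrepresents where the argument for $i\notin\cS(\lambda)$ actually comes from. The remainder of your plan — for $\lambda_i(x_i+2)=x_i+2$ (resp.\ $p-4-x_i$), locating the two table entries equal to that value and checking that in each the pair $(\lambda_i',\mu_i)$ has $i\in\cS(\lambda')\cap\cS(\mu)$ with $\mu_i(x_i)\in\{x_i+1,p-3-x_i\}$, and ruling out spurious solutions with $\mu_i(x_i)\in\{x_i,x_i-1,p-2-x_i,p-1-x_i\}$ because they would force $\lambda_i'$ outside the list in (b) — is exactly the paper's check and is fine.
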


\begin{proof}
Without loss of generality, we may assume $*=+$, and therefore
\[\lambda_i(x_i+2)\in\{x_i+2,x_i+3,p-4-x_i,p-5-x_i\}.\]
By (b), the table in the proof of Lemma \ref{lemma-compose-lambda} lists all the possible values of $\lambda_i'(\mu_i(x_i))$. Together with  (c), we deduce that either
\[\lambda_i(x_i+2)=x_i+2,\ \
 \lambda_i'(x_i)=\left\{\begin{array}{ll}x_i+1\\
 p-1-x_i\end{array}\right.\ \ \mathrm{resp.}\ \ \mu_i(x_i)=\left\{\begin{array}{ll}x_i+1\\
 p-3-x_i\end{array}\right.\]
in which case $\lambda_i(x_i)=x_i$,
or
\[\lambda_i(x_i+2)=p-4-x_i,\ \   \lambda_i'(x_i)=\left\{\begin{array}{ll}x_i-1\\
 p-3-x_i\end{array}\right. \mathrm{resp.}\ \ \mu_i(x_i)=\left\{\begin{array}{ll}p-3-x_i\\
x_i+1\end{array}\right.\]
in which case $\lambda_i(x_i)=p-2-x_i$.
The result follows from this.
\end{proof}

 \subsection{An extension  lemma}

If $\sigma,\sigma'$ are two distinct $0$-generic Serre weights such that $\Ext^1_{\tGamma}(\sigma',\sigma)\neq0$, then this space has dimension $1$ by Lemma \ref{lemma-Hu10-2.21}(ii).
We denote by $E_{\sigma,\sigma'}$ the unique up to isomorphism nonsplit $\tGamma$-extension (actually $\Gamma$-extension)
\[0\ra \sigma\ra E_{\sigma,\sigma'}\ra \sigma'\ra0.\]

The aim of this subsection is to prove the following (easy) fact about the structure of the tensor product $E_{\sigma,\mu_i^{\pm}(\sigma)}\otimes_{\F} V_{2,i}$ for $i\in\cS$, where  $V_{2,i}$ denotes the $\Gamma$-representation $(\Sym^{2}\F^2\otimes{\det}^{-1})^{\Fr^i}$. It will be used in the proof of Theorem \ref{thm-I(sigmatau)-tGamma}.

 \begin{lemma}\label{lemma-structure-EotimesV2}
Assume    $\sigma$ is $2$-generic.
 Let $\mu=\mu_{i}^{*}(\sigma)$  for some $i\in\cS$ and $*\in\{+,-\}$. Then $E_{\sigma,\mu}\otimes_{\F} V_{2,i}$ admits a  quotient isomorphic to $E_{\delta_i^{-*}(\sigma),\mu_i^{-*}(\sigma)}$.
\end{lemma}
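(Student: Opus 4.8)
The statement is a concrete computation about representations of $\Gamma = \F[\GL_2(\F_q)]$, so the plan is to work entirely within the modular representation theory recalled in \S\ref{subsection:Gamma}--\S\ref{section-finiteRep}. First I would pin down the Jordan--H\"older factors of $E_{\sigma,\mu}\otimes_\F V_{2,i}$. Since $\dim_\F V_{2,i}=3$, this tensor product has six Jordan--H\"older factors, three coming from $\sigma\otimes V_{2,i}$ and three from $\mu\otimes V_{2,i}$. Using \eqref{eq:sigma-H1} applied to the $2$-generic weight $\sigma$ (and, via the identity $\mathscr{E}(\sigma)=\mathscr{E}(\tau)\Leftrightarrow\sigma\in\mathscr{E}(\tau)$, also to $\mu=\mu_i^*(\sigma)$, which is again $0$-generic by the genericity hypothesis), one gets $\JH(\sigma\otimes V_{2,i})=\{\sigma,\delta_i^+(\sigma),\delta_i^-(\sigma)\}$ and $\JH(\mu\otimes V_{2,i})=\{\mu,\delta_i^+(\mu),\delta_i^-(\mu)\}$, where the $\delta$'s are taken in the sense of Definition \ref{def:delta} (and I'd check these $\delta$'s are genuine Serre weights under $2$-genericity). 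Next I would identify which of these factors coincide under the substitution $\mu=\mu_i^*(\sigma)$: by \eqref{eq:delta=mumu}--\eqref{eq:id=mumu} and Lemma \ref{lemma-compose-lambda}, one has $\delta_i^{-*}(\sigma)=\mu_i^*(\delta_i^{-*}(\sigma))$-type relations linking $\delta_i^{-*}(\sigma)$ to $\mu_i^{-*}(\sigma)$, and I expect to find that $\mu_i^{-*}(\sigma)$ appears among the $\delta_i^{\pm}(\mu)$, and $\delta_i^{-*}(\sigma)$ appears among the $\delta_i^\pm(\mu)$ as well (the arithmetic: if $\mu_i^*$ shifts the $i$-th slot by $*1$ and the $(i{-}1)$-th by the reflection, composing with $\delta_i^{-*}$ which shifts slot $i$ by $-*2$ lands on the slots of $\mu_i^{-*}(\sigma)$). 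So both $\delta_i^{-*}(\sigma)$ and $\mu_i^{-*}(\sigma)$ occur as Jordan--H\"older factors of $E_{\sigma,\mu}\otimes_\F V_{2,i}$.

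Having located the two factors $\delta_i^{-*}(\sigma)$ and $\mu_i^{-*}(\sigma)$, the next step is to produce a quotient of $E_{\sigma,\mu}\otimes_\F V_{2,i}$ that is the nonsplit extension $E_{\delta_i^{-*}(\sigma),\mu_i^{-*}(\sigma)}$ — i.e. with $\delta_i^{-*}(\sigma)$ as sub and $\mu_i^{-*}(\sigma)$ as cosocle, consistent with the convention (following Lemma \ref{lemma-Hu10-2.21}) that $\Ext^1_\Gamma(\mu_i^{-*}(\sigma),\delta_i^{-*}(\sigma))$ is one-dimensional because $\delta_i^{-*}(\sigma)=\delta_i^{-*}$ applied to $\sigma$ and $\mu_i^{-*}(\sigma)\in\mathscr{E}(\delta_i^{-*}(\sigma))$ by \eqref{eq:delta=mumu}/\eqref{eq:id=mumu}. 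The cleanest way is to tensor the defining short exact sequence $0\to\sigma\to E_{\sigma,\mu}\to\mu\to 0$ with $V_{2,i}$, getting
\[
0\to \sigma\otimes_\F V_{2,i}\to E_{\sigma,\mu}\otimes_\F V_{2,i}\to \mu\otimes_\F V_{2,i}\to 0,
\]
then use the (known, by Proposition \ref{prop-structure-JProj}-type decompositions or directly by \cite[\S3]{BP}) structure of $\sigma\otimes_\F V_{2,i}$ and $\mu\otimes_\F V_{2,i}$. Concretely, $\sigma\otimes_\F V_{2,i}$ has $\delta_i^{-*}(\sigma)$ in its socle (by \eqref{eq:sigma-H1-socle}), and dually $\mu\otimes_\F V_{2,i}$ has $\mu_i^{-*}(\sigma)=\delta_i^{?}(\mu)$ in its cosocle; I would quotient $E_{\sigma,\mu}\otimes V_{2,i}$ by the unique maximal subrepresentation not containing $\delta_i^{-*}(\sigma)$ in its socle and not containing $\mu_i^{-*}(\sigma)$ as a subquotient, and check the resulting length-$2$ quotient is nonsplit. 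Nonsplitness follows because a splitting would give a $\Gamma$-summand $\mu_i^{-*}(\sigma)$ of $\mu\otimes_\F V_{2,i}$ mapping back to a splitting, contradicting the known indecomposable-block structure of $\mu\otimes_\F V_{2,i}$ near its cosocle; alternatively one checks $\Ext^1_\Gamma$ directly.

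The main obstacle I anticipate is purely combinatorial bookkeeping: verifying that under $\mu=\mu_i^*(\sigma)$ the weight $\delta_i^{-*}(\mu)$ (or $\delta_i^{*}(\mu)$) really equals $\mu_i^{-*}(\sigma)$, and that the companion weight $\delta_i^{-*}(\sigma)$ sits in the socle of the relevant piece with the correct extension class — this requires carefully tracking the $(i-1)$-th and $i$-th slots through the compositions, using the table in the proof of Lemma \ref{lemma-compose-lambda} and the $f=1$ versus $f\geq 2$ dichotomy of Definition \ref{def:delta} (the $f=1$ case being genuinely different, as flagged in the footnote). The $2$-genericity hypothesis is exactly what guarantees all the $\delta$- and $\mu$-weights involved are honest Serre weights and that the socle/injective-hull dimension counts behave uniformly, so I would invoke it at each place where \eqref{eq:sigma-H1} or Proposition \ref{prop-structure-JProj} is applied. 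Once the weight identifications are nailed down, the extraction of the quotient $E_{\delta_i^{-*}(\sigma),\mu_i^{-*}(\sigma)}$ is formal.
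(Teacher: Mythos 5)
Your overall strategy — tensor the defining extension $0\to\sigma\to E_{\sigma,\mu}\to\mu\to 0$ with $V_{2,i}$, identify the six Jordan--H\"older factors, locate $\delta_i^{-*}(\sigma)$ and $\mu_i^{-*}(\sigma)=\delta_i^{-*}(\mu)$ (for $f\geq 2$), and extract the length-two quotient — is the paper's strategy, and your combinatorial identification $\delta_i^{-*}\circ\mu_i^*=\mu_i^{-*}$ is correct (modulo the slip that $\delta_i^{-*}(\sigma)$ occurs in $\JH(\sigma\otimes_\F V_{2,i})$, not in $\JH(\mu\otimes_\F V_{2,i})$; this matters because the two constituents of the desired quotient come from opposite sides of the filtration). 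However, the crucial step — nonsplitness of the extracted quotient — is not correctly handled, and your proposed justification rests on a false premise. Under the $2$-genericity hypothesis, \eqref{eq:sigma-H1} applies to $\mu=\mu_i^*(\sigma)$ as well as to $\sigma$, so \emph{both} $\sigma\otimes_\F V_{2,i}$ and $\mu\otimes_\F V_{2,i}$ are semisimple; in particular $\mu_i^{-*}(\sigma)=\delta_i^{-*}(\mu)$ genuinely is a direct summand of the quotient $\mu\otimes_\F V_{2,i}$, and there is no ``indecomposable-block structure of $\mu\otimes_\F V_{2,i}$ near its cosocle'' to contradict. Checking $\Ext^1_\Gamma(\mu_i^{-*}(\sigma),\delta_i^{-*}(\sigma))\neq 0$ abstractly is likewise not enough: it shows a nonsplit extension exists, not that the specific subquotient of $E_{\sigma,\mu}\otimes_\F V_{2,i}$ realizes it.

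The missing idea is to show that $\delta_i^{-*}(\sigma)$ does \emph{not} occur in the cosocle of $E_{\sigma,\mu}\otimes_\F V_{2,i}$, i.e.\ $\Hom_{\Gamma}\bigl(E_{\sigma,\mu}\otimes_\F V_{2,i},\delta_i^{-*}(\sigma)\bigr)=0$. The paper proves this by the adjunction coming from self-duality of $V_{2,i}$ (via \cite[Lem.~7.3]{Al}), which converts the question into $\Hom_{\Gamma}\bigl(E_{\sigma,\mu},\delta_i^{-*}(\sigma)\otimes_\F V_{2,i}\bigr)=0$, and this in turn reduces to the combinatorial check that $\mu$ is not a Jordan--H\"older factor of $\delta_i^{-*}(\sigma)\otimes_\F V_{2,i}$ (done explicitly, with a separate case when the $i$-th entry of $\delta_i^{-*}(\sigma)$ equals $p-2$, where \eqref{eq:sigma-H1} fails and one needs \cite[Prop.~5.4(ii)]{BP}). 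Once this vanishing is known, the unique quotient $Q$ with socle $\delta_i^{-*}(\sigma)$ has Loewy length $2$ with nonzero radical, and comparing $\mathscr{E}(\delta_i^{-*}(\sigma))$ with the available Jordan--H\"older factors forces $\mathrm{cosoc}(Q)=\mu_i^{-*}(\sigma)$. Without some version of this Hom-vanishing your argument does not rule out that $\delta_i^{-*}(\sigma)$ splits off as a direct summand, so the proof as proposed has a genuine gap at exactly the point where the statement has content.
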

\begin{proof}
 The genericity condition on $\sigma$ implies that $\mu$ is $1$-generic. By \eqref{eq:sigma-H1} and Lemma \ref{lemma-Delta-nocommonfactor},  $E_{\sigma,\mu}\otimes_{\F}V_{2,i}$ has Loewy length  $2$ and is multiplicity free, with Jordan--H\"older factors  $\{\sigma, \delta_i^{\pm}(\sigma),\mu,\delta_i^{\pm}(\mu)\}$.
We claim that
\[\Hom_{\tGamma}(E_{\sigma,\mu}\otimes_{\F}V_{2,i},\delta_i^{-*}(\sigma))=0.\]
By \cite[Lem.~7.3]{Al}, this is equivalent to showing $\Hom_{\tGamma}(E_{\sigma,\mu},\delta_i^{-*}(\sigma)\otimes_{\F}V_{2,i})=0$ (as $V_{2,i}$ is self-dual). It suffices to show that $\mu$ is not a Jordan--H\"older factor of $\delta_i^{-*}(\sigma)\otimes_{\F}V_{2,i}$. Write $\delta_i^{-*}(\sigma)=(s_0,\cdots,s_{f-1})$ up to twist. Since $\sigma$ is $2$-generic, we have $0\leq s_i\leq p-2$ and $2\leq s_j\leq p-4$ for $j\neq i$. If $0\leq s_i\leq p-3$, then \cite[Lem.~3.5, Prop.~5.4(i)]{BP} implies that
\[\delta_i^{-*}(\sigma)\otimes_{\F}V_{2,i}\cong \delta_i^{-*}(\sigma)\oplus \sigma\oplus \delta_i^{-*}(\delta_i^{-*}(\sigma)),\]
forgetting the undefined ones.
Noting that  the condition \eqref{eq:cond-lambda} holds for the pair $(\mu, \delta_i^{-*})$ and also for $(\mu,\delta_i^{-*}\circ\delta_i^{-*})$,  Lemma \ref{lemma:genericity} implies that $\mu$ does not occur in the above decomposition, proving the claim. If $s_i=p-2$ and if $f\geq 2$, then using \cite[Prop.~5.4(ii)]{BP}  we have
\[\JH(\delta_i^{-*}(\sigma)\otimes_{\F}V_{2,i})=\big\{\sigma,\ \delta_i^{-*}(\sigma)^{\oplus 2},\ \mu_{i+1}^{\pm}(\delta_i^{-*}(\sigma))\big\}\]
and the claim follows as above.  Finally, the case $f=1$ and $s_0=p-2$ can be checked directly and we leave it to the reader.

There exists a unique quotient of  $E_{\sigma,\mu}\otimes_{\F}V_{2,i}$ with socle $\delta_i^{-*}(\sigma)$, say $Q$.
Since $E_{\sigma,\mu}\otimes_{\F} V_{2,i}$ has Loewy length $2$ and since $\delta_i^{-*}(\sigma)$ does not occur in its cosocle by the claim,  $Q$ also has Loewy length $2$ and only Serre weights in $\mathscr{E}(\delta_i^{-*}(\sigma))$ can occur in $\mathrm{cosoc}(Q)$.  Comparing  Jordan--H\"older factors, we find $\mathrm{cosoc}(Q)\subseteq \delta_i^{-*}(\mu)$ if $f\geq 2$ or $\mathrm{cosoc}(Q)\subseteq \delta_i^{*}(\mu)$ if $f=1$, which has to be an equality because $\mathrm{cosoc}(Q)$ is nonzero. In both cases, one checks $\mathrm{cosoc}(Q)=\mu_i^{-*}(\sigma)$, which finishes the proof.
\end{proof}
   Remark that, under a slightly stronger genericity condition on $\sigma$, the precise structure of $E_{\sigma,\mu_i^{\pm}(\sigma)}\otimes_{\F} V_{2,i}$ is determined in \cite[\S6.3]{BHHMS}.

\subsection{The representation $I(\sigma,\tau)$} \label{subsection:I(sigmatau)}

  The aim of this subsection is to generalize Proposition \ref{prop-multione-BP} to $\tGamma$-representations.

\begin{definition}\label{defn-sigma!}
Fix $(i,*)\in \cS\times\{+,-\}$. If $\lambda\in\cI(x_0,\cdots,x_{f-1})$ satisfies \eqref{equation-condition-newweight}, i.e.
\[\lambda_i(x_i)\in\big\{x_i,x_i*1,p-2-x_i,p-2-x_i-(*1)\big\},\]
we define $\lambda_{!}\in\cI(x_0,\cdots,x_{f-1})$ to be the unique element satisfying \eqref{equation-condition-newweight}, compatible with $\lambda$,  and such that $\cS(\lambda_{!})=\cS(\lambda)\cup \{i\}$.
\end{definition}

The uniqueness of $\lambda_{!}$ in Definition \ref{defn-sigma!} is  clear, because for each $j\in\cS(\lambda_{!})=\cS(\lambda)\cup\{i\}$ we have chosen a sign $\pm1$ for $(\lambda_!)_j(x_j)$. The existence of $\lambda_{!}$ is also easily verified. Remark that the definition of $\lambda_{!}$ depends on the fixed pair $(i,*)$, although this is not indicated in the notation. Note that $\lambda\leq \lambda_{!}$ and $\lambda$ is compatible with any $\lambda'\in\cI(x_0,\cdots,x_{f-1})$ satisfying $\lambda'\leq \lambda_{!}$, by the discussion after Proposition \ref{prop-BP-4.11}.

\begin{remark}\label{remark-sigma!}
Fix $(i,*)\in \cS\times\{+,-\}$ and let  $\lambda\in\cI(x_0,\cdots,x_{f-1})$ satisfying \eqref{equation-condition-newweight}.
 It is direct to  check that:  if $i\in \cS(\lambda)$ then $\lambda_{!}=\lambda$; if $i\notin\cS(\lambda)$, then \[\lambda_{!}=\left\{\begin{array}{rl}
\mu_i^{*}\circ\lambda &\ \mathrm{if}\ \lambda_i(x_i)=x_i\\
\mu_i^{-*}\circ\lambda &\ \mathrm{if}\ \lambda_i(x_i)=p-2-x_i.\end{array}\right.\]
 \end{remark}

\begin{lemma}\label{lemma-ext1-cSdifferbyi}
Let $\sigma$ be a Serre weight and $\delta=\delta_{i}^{*}(\sigma)$ be well-defined for some $(i,*)\in\cS\times\{+,-\}$. Let $\lambda\in\cI(x_0,\cdots,x_{f-1})$ satisfying \eqref{equation-condition-newweight} and assume $(\lambda\circ \delta_i^{*})(\sigma)$ is well-defined.
\begin{enumerate}
\item[(i)] If $i\in\cS(\lambda)$, then $\Ext^1_{\Gamma}(\lambda(\delta),\sigma')=0$ for any $\sigma'\in \JH(\rInj_{\Gamma}\sigma)$.
\item[(ii)] If $i\notin\cS(\lambda)$, then  $\lambda_{!}(\sigma)$ is the unique Jordan--H\"older factor of $\rInj_{\Gamma}\sigma$ which has nontrivial $\Gamma$-extensions with $\lambda(\delta)$.
\end{enumerate}
 \end{lemma}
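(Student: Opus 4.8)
The statement concerns $\Ext^1_{\Gamma}$ between $\lambda(\delta)$ (a Jordan--H\"older factor of $\rInj_{\tGamma}\sigma$ built from $\delta=\delta_i^{*}(\sigma)$ via $\lambda$) and Serre weights occurring in $\rInj_{\Gamma}\sigma$. The basic principle is that, by Lemma~\ref{lemma-Hu10-2.21}(ii), for $0$-generic Serre weights $\Ext^1_{\Gamma}(\tau,\sigma')\neq0$ if and only if $\sigma'\in\mathscr{E}(\tau)=\{\mu_j^{\pm}(\tau)\}$, so the whole problem reduces to a combinatorial computation: identify which (if any) of the weights $\mu_j^{\pm}(\lambda(\delta))$ lie in $\JH(\rInj_{\Gamma}\sigma)$, i.e. have the form $\lambda'(\sigma)$ for some $\lambda'\in\cI(x_0,\cdots,x_{f-1})$. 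I would phrase $\mu_j^{\pm}(\lambda(\delta))$ as $(\mu_j^{\pm}\circ\lambda\circ\delta_i^{*})(\sigma)$ and use the composition table in the proof of Lemma~\ref{lemma-compose-lambda} together with the uniqueness provided by Lemma~\ref{lemma:genericity} (applicable by Lemma~\ref{lemma:cond-for-cI}) to control everything $f$-tuple by $f$-tuple.

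\textbf{Part (i).} Here I would argue that when $i\in\cS(\lambda)$, the composite $\nu\defn\lambda\circ\delta_i^{*}$ already has $\nu_i(x_i)$ in the ``shifted by $2$'' range coming from \eqref{equation-nu-case+}/\eqref{equation-nu-case-} (combined with the constraint \eqref{equation-condition-newweight} on $\lambda_i$), so that $\nu_i(x_i)\notin\{x_i,x_i\pm1,p-2-x_i,p-2-x_i\pm1\}$. Applying $\mu_j^{\pm}$ can only shift the $j$-th and $(j-1)$-th coordinates by $\pm1$ or send one to $p-2-x$; checking cases on whether $j\in\{i,i+1\}$ or not, one sees the resulting $f$-tuple still fails to lie in $\cI(x_0,\cdots,x_{f-1})$ at index $i$ (its $i$-th coordinate stays outside the admissible set). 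Hence no $\mu_j^{\pm}(\lambda(\delta))$ can equal a factor $\lambda'(\sigma)$, and $\Ext^1_{\Gamma}(\lambda(\delta),\sigma')=0$ for all such $\sigma'$. I expect this to be a bounded case check, using the table.

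\textbf{Part (ii).} When $i\notin\cS(\lambda)$, by Remark~\ref{remark-sigma!} we have $\lambda_!=\mu_i^{*}\circ\lambda$ (if $\lambda_i(x_i)=x_i$) or $\lambda_!=\mu_i^{-*}\circ\lambda$ (if $\lambda_i(x_i)=p-2-x_i$); I would show in each case that $\lambda_!(\sigma)\in\mathscr{E}(\lambda(\delta))$, i.e. that $\lambda_!(\sigma)=\mu_i^{\mp*}(\lambda(\delta))$, using the identities \eqref{eq:delta=mumu} and \eqref{eq:id=mumu} relating $\delta_i^{*}$ to $\mu_i^{*}\circ\mu_i^{*}$ (resp. $\mu_i^{-*}\circ\mu_i^{*}$) according to whether $f=1$ or $f\geq2$, so that $\mu_i^{\pm}(\lambda(\delta))=(\mu_i^{\pm}\circ\lambda\circ\delta_i^{*})(\sigma)$ simplifies to $(\lambda_!\circ(\text{something in }\cI))(\sigma)$ — in particular produces $\lambda_!(\sigma)$ itself. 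This gives existence. For uniqueness I would run through the remaining candidates $\mu_j^{\pm}(\lambda(\delta))$ with $j\neq i$ (these leave the $i$-th coordinate equal to $\nu_i(x_i)$, which lies in the shifted range by \eqref{equation-nu-case+}/\eqref{equation-nu-case-}, hence not a factor of $\rInj_{\Gamma}\sigma$ by Lemma~\ref{lemma:genericity}) and the other sign choice at $i$ (which over-shifts $\nu_i$ further out of range); Lemma~\ref{lemma-lambda'=mu-lambda} is exactly the bookkeeping device that pins down the admissible solutions here.

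\textbf{Main obstacle.} The genuinely delicate part is Part~(ii)'s uniqueness: one must rule out all spurious $\mu_j^{\pm}$ and confirm the sign at index $i$, which is where the case split $f=1$ versus $f\geq2$ bites (the roles of $\mu_i^{*}$ and $\mu_i^{-*}$ in \eqref{eq:delta=mumu}--\eqref{eq:id=mumu} are swapped). Carefully separating these two cases, and invoking Lemma~\ref{lemma-lambda'=mu-lambda} with the correct sign convention, is what makes the argument go through; the rest is routine verification with the composition table.
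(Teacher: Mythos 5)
Your proposal is correct and follows essentially the same route as the paper: both parts reduce, via the characterization $\Ext^1_{\Gamma}(\tau,\sigma')\neq0\Leftrightarrow\sigma'\in\mathscr{E}(\tau)$, to tracking the $i$-th coordinate of $\lambda\circ\delta_i^{*}$ (which lands in the "shifted by $2$ or $3$" range when $i\in\cS(\lambda)$, killing all extensions), and part (ii) uses exactly the identities $\delta_i^{*}=\mu_i^{*}\circ\mu_i^{*}$ (resp. $\mu_i^{-*}\circ\mu_i^{*}$ for $f=1$), the commutation $\lambda\circ\delta_i^{*}=\delta_i^{\pm*}\circ\lambda$, and Remark \ref{remark-sigma!} to exhibit $\lambda_!(\sigma)$ as the unique extension partner, with uniqueness settled by the same coordinate check as in (i). The only cosmetic difference is that you phrase the check as "$\mu_j^{\pm}(\lambda(\delta))\notin\JH(\rInj_{\Gamma}\sigma)$" where the paper writes "$\lambda(\delta)\notin\mathscr{E}(\sigma')$", which is the same equation rearranged.
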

\begin{proof}
(i) This is  a direct check. We assume $*=+$ without loss of generality. By \eqref{equation-condition-newweight}, the condition $i\in\cS(\lambda)$ is equivalent to
$\lambda_i(x_i)\in\{x_i+1,p-3-x_i\}$.
 As a consequence, we have
\[(\lambda\circ\delta_i^*)_i(x_i)\in\{x_i+3,p-5-x_i\},\]
  so $\lambda(\delta)$ can not lie in $\mathscr{E}(\sigma')$ for any $\sigma'\in\JH(\rInj_{\Gamma}\sigma)$ by   Lemma \ref{lemma:genericity} together with (a variant of) Lemma \ref{lemma:cond-for-cI}. This proves (i).

(ii) We have $\lambda\circ\delta_i^*=\delta_i^*\circ\lambda$ if $\lambda\in \Z+x_i$ and $\lambda\circ\delta_i^*=\delta_i^{-*}\circ\lambda$ if $\lambda_i(x_i)\in\Z-x_i$. Hence, when $i\notin\cS(\lambda)$, using \eqref{eq:delta=mumu} and Remark \ref{remark-sigma!}, we have if $f\geq 2$ \[\lambda(\delta)=(\lambda\circ\delta_i^*)(\sigma)=\left\{\begin{array}{cl}(\delta_i^*\circ\lambda)(\sigma)=(\mu_i^*\circ\lambda_{!})(\sigma)&\mathrm{if}\ \lambda_i(x_i)=x_i\\
(\delta_i^{-*}\circ\lambda)(\sigma)=(\mu_i^{-*}\circ\lambda_{!})(\sigma)&\mathrm{if}\ \lambda_i(x_i)=p-2-x_i.\end{array}\right.\]
If $f=1$, we need to replace $\mu_i^*$ by $\mu_i^{-*}$ (only the case $\lambda_{i}(x_i)=x_i$ can happen).
This implies $\Ext^1_{\Gamma}(\lambda(\delta),\lambda_{!}(\sigma))\neq0$. The rest can be checked as in (i).
\end{proof}

The main result of this subsection is Theorem \ref{thm-I(sigmatau)-tGamma} below. Before stating it, we first prove the following general fact without any genericity assumption on $\sigma$.

\begin{lemma}\label{lemma-I(sigmatau)-exist}
For any Serre weight $\sigma$ and any $\tau\in\JH(\rInj_{\tGamma}\sigma)$, there exists a subrepresentation $V$ of $\rInj_{\tGamma}\sigma$ such that $\mathrm{cosoc}_{\tGamma}(V)=\tau$ and $[V:\sigma]=1$ (hence $\sigma$ occurs in $V$ as subobject).
\end{lemma}
\begin{proof}
 The assumption $\tau\in\JH(\rInj_{\tGamma}\sigma)$ implies that there exists a nonzero morphism $\Proj_{\tGamma}\tau\ra \rInj_{\tGamma}\sigma$. Using \eqref{eq:Proj-Inj} we obtain a nonzero morphism $\Proj_{\tGamma}\sigma\ra \rInj_{\tGamma}\tau$.
If we take such a nonzero morphism $f: \Proj_{\tGamma}\sigma\ra \rInj_{\tGamma}\tau$ with $[\im(f):\sigma]$ minimal, then $Q =\im(f)$ is a quotient of $\Proj_{\tGamma}\sigma$  with socle $\tau$ and such that $[Q:\sigma]=1$ (otherwise, $1\leq [\rad(Q):\sigma]<[Q:\sigma]$, and we could construct a nonzero morphism $f':\Proj_{\tGamma}\sigma\ra \rad(Q)\hookrightarrow\rInj_{\tGamma}\tau$ which  contradicts  the choice of $f$). Taking dual and twisting suitably, we obtain a subrepresentation $V$ of $\rInj_{\tGamma}\sigma$ as required.

As a byproduct, we see that $\tau\in\JH(\rInj_{\tGamma}\sigma)$ if and only if $\sigma\in\JH(\rInj_{\tGamma}\tau)$.
\end{proof}

\begin{theorem}\label{thm-I(sigmatau)-tGamma}
Let $\sigma$ be a $2$-generic Serre weight and $\tau\in\JH(\rInj_{\tGamma}\sigma)$. Let $V$ be a subrepresentation of $\rInj_{\tGamma}\sigma$ such that $\mathrm{cosoc}_{\tGamma}(V)=\tau$ and $[V:\sigma]=1$, as in Lemma \ref{lemma-I(sigmatau)-exist}.  

\begin{enumerate}
\item[(i)] If $\tau$ is a new Serre weight (cf. Definition \ref{defn-newweight}), and if $\tau=\lambda(\delta)$ for  (uniquely determined) $\delta=\delta_i^*(\sigma)\in\Delta(\sigma)$ and $\lambda\in\cI(x_0,\cdots,x_{f-1})$ satisfying \eqref{equation-condition-newweight} (cf. Lemmas \ref{lemma-newweight}, \ref{lemma-unique-delta}), then $V^{K_1}=I(\sigma,\tau_{!})$ and  there exists a short exact sequence
\begin{equation}\label{equation-I(sigmatau)-new}
0\ra I(\sigma,\tau_{!})\ra V\ra I(\delta,\tau)\ra0,\end{equation}
where $\tau_{!}\defn \lambda_{!}(\sigma)$ and $I(\sigma,\tau_{!})$, $I(\delta,\tau)$ are $\Gamma$-representations constructed in Proposition \ref{prop-multione-BP}. In the case $f=1$, $\dim_{\F}\sigma=3$ and $\tau=(\mu_0^{-}\circ\delta_0^{-})(\sigma)$, the sequence should be replaced by \[
0\ra I(\sigma,\tau_{!})\ra V\ra \tau\ra0.\] 
Moreover, such a representation $V$ is unique (up to isomorphism);  we denote it by $I(\sigma,\tau)$.\vspace{1mm}

\item[(i)] If $\tau$ is an old Serre weight, then $V$ is actually a $\Gamma$-representation and coincides with the representation $I(\sigma,\tau)$ constructed in Proposition \ref{prop-multione-BP}; in particular, such a representation $V$ is unique (up to isomorphism).
\end{enumerate}

\end{theorem}
\begin{remark}\label{remark-conter-f=1}
The genericity condition in Theorem \ref{thm-I(sigmatau)-tGamma} may not be optimal. But, the following example shows that the result is false without any genericity condition:  when $f=1$, there exists a uniserial $\tGamma$-representation of length $3$, with (the graded pieces of) the socle filtration given by $\Sym^{1}\F^2$, $\Sym^{p-2}\F^2\otimes{\det}$, $\Sym^{p-2}\F^2\otimes{\det}$.
\end{remark}

\
Before giving the proof of Theorem \ref{thm-I(sigmatau)-tGamma}, we first establish some consequences.

\begin{corollary}\label{cor-I(sigmatau)-multione}
Keep the notation in Theorem \ref{thm-I(sigmatau)-tGamma}. The representation $I(\sigma,\tau)$ is multiplicity free.
\end{corollary}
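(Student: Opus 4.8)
The plan is to deduce Corollary \ref{cor-I(sigmatau)-multione} directly from the structural description of $I(\sigma,\tau)$ given by Theorem \ref{thm-I(sigmatau)-tGamma}, by induction on the cardinality of $\cS(\tau)$ (where, if $\tau$ is new and equals $\lambda(\delta_i^*(\sigma))$, we interpret $\cS(\tau)$ as $\cS(\lambda)$). First I would dispose of the base case: if $\tau$ is old, then $I(\sigma,\tau)$ is the representation of Proposition \ref{prop-multione-BP} (by part (i) of the theorem), which is multiplicity free by that proposition; and the degenerate new cases with small $\cS(\lambda)$ are handled similarly using \eqref{eq:JH-Inj} and Proposition \ref{prop-BP-4.11}. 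So the content is the inductive step for $\tau$ new.

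For the inductive step I would invoke the short exact sequence \eqref{equation-I(sigmatau)-new},
\[
0\ra I(\sigma,\tau_{!})\ra I(\sigma,\tau)\ra I(\delta,\tau)\ra 0,
\]
together with its variant in the exceptional case $f=1$, $\dim_\F\sigma=3$. By the inductive hypothesis applied in $\rInj_{\tGamma}\sigma$ to the Serre weight $\tau_!$ (note $\cS(\tau_!)=\cS(\lambda)\cup\{i\}$, so one should really induct so that $I(\sigma,\tau_!)$ is covered — I would instead set up the induction on $|\cS(\lambda)|$ and observe that although $\cS(\tau_!)$ is strictly larger than $\cS(\tau)$, $\tau_!$ is an \emph{old} Serre weight when $i\notin\cS(\lambda)$, so $I(\sigma,\tau_!)$ is handled by the base case via Proposition \ref{prop-multione-BP}), the submodule $I(\sigma,\tau_!)$ is multiplicity free. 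By Proposition \ref{prop-multione-BP} applied inside $\rInj_{\Gamma}\delta$ — which is legitimate since $\delta=\delta_i^*(\sigma)$ is $1$-generic by the $2$-genericity of $\sigma$ — the quotient $I(\delta,\tau)$ is multiplicity free as well. Hence it remains only to check that $I(\sigma,\tau_!)$ and $I(\delta,\tau)$ have no common Jordan–Hölder factor.

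The main obstacle, and the real work, is precisely this disjointness of $\JH$-sets. The Jordan–Hölder factors of $I(\sigma,\tau_!)$ are old Serre weights of the form $\lambda'(\sigma)$ with $\lambda'\in\cI(x_0,\dots,x_{f-1})$, $\lambda'\le \tau_!$, hence (by Lemma \ref{lemma-newweight} and \eqref{equation-condition-newweight}) satisfy $\lambda'_i(x_i)\in\{x_i,\,x_i*1,\,p-2-x_i,\,p-2-x_i-(*1)\}$ with $i\in\cS(\lambda')$ possible; while the factors of $I(\delta,\tau)$ are of the form $\mu'(\delta)=(\mu'\circ\delta_i^*)(\sigma)$ with $\mu'\le\lambda$, which by Lemma \ref{lemma-newweight} (or the explicit computation \eqref{equation-nu-case+}, \eqref{equation-nu-case-}) are all \emph{new}. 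An old weight cannot equal a new one by definition, so the intersection is empty. I would spell this out by noting that $\tau$ itself is new (Lemma \ref{lemma-newweight}) and every subquotient of $I(\delta,\tau)$ is a subquotient of $I(\delta,\lambda(\delta))$, hence new by the last sentence of Lemma \ref{lemma-newweight}, whereas every subquotient of $I(\sigma,\tau_!)$ is a subquotient of $\rInj_{\Gamma}\sigma$, hence old; the exceptional $f=1$ variant is immediate since then the quotient is the single weight $\tau$, which does not occur in $I(\sigma,\tau_!)\subseteq\rInj_{\Gamma}\sigma$. Combining the three facts — $I(\sigma,\tau_!)$ multiplicity free, $I(\delta,\tau)$ multiplicity free, disjoint $\JH$-sets — gives that $I(\sigma,\tau)$ is multiplicity free, completing the induction.
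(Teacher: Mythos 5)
Your argument is correct and is essentially the paper's own proof: the old case is Proposition \ref{prop-multione-BP}, and the new case combines the exact sequence \eqref{equation-I(sigmatau)-new} with the multiplicity-freeness of $I(\sigma,\tau_!)$ and $I(\delta,\tau)$ (both via Proposition \ref{prop-multione-BP}) and the old/new disjointness of their Jordan--H\"older factors from Lemma \ref{lemma-newweight}. The inductive scaffolding is superfluous, as you yourself observe — since $\tau_!$ is always an old weight, everything reduces to the base case, which is exactly how the paper phrases it; also note that Proposition \ref{prop-multione-BP} needs no genericity of $\delta$, so your appeal to $1$-genericity there is unnecessary.
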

\begin{proof}
 If $\tau$ is   a subquotient of $\rInj_{\Gamma}\sigma$, then it follows from Proposition \ref{prop-multione-BP}. Otherwise, we use the exact sequence \eqref{equation-I(sigmatau)-new}: on the one hand, both $I(\sigma,\tau_{!})$ and $I(\delta,\tau)$ are multiplicity free, on the other hand,
since $\tau$ is a new Serre weight  any Jordan--H\"older factor of $I(\delta,\tau)$ is also new by Lemma \ref{lemma-newweight}.
\end{proof}

\begin{corollary}\label{cor-I(sigmatau)-geq}
If  $V$ is a subrepresentation of  $(\rInj_{\tGamma}\sigma)^{\oplus s}$ for some $s\geq 1$, then $[V:\sigma]\geq [V:\tau]$ for any Serre weight $\tau$. If, moreover, $\mathrm{cosoc}(V)$ is isomorphic to $\tau^{\oplus r}$ for some $2$-generic Serre weight $\tau$ and some $r\geq 1$, then $[V:\sigma]=[V:\tau]$.
\end{corollary}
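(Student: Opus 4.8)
The plan is to mimic the proof of Corollary \ref{cor-multi-transfer}, with Proposition \ref{prop-multione-BP} replaced by Theorem \ref{thm-I(sigmatau)-tGamma} and Corollary \ref{cor-I(sigmatau)-multione}. For the first assertion I would induct on $[V:\sigma]$. The case $[V:\sigma]=0$ is trivial since $\soc(V)=V\cap\soc\big((\rInj_{\tGamma}\sigma)^{\oplus s}\big)=V\cap\sigma^{\oplus s}$, so a nonzero $V$ always has $[V:\sigma]\geq 1$. For the inductive step, choose $V'\subseteq V$ maximal among subrepresentations with $[V':\sigma]\leq [V:\sigma]-1$. A short argument using this maximality shows that the socle of $V/V'$ is $\sigma$-isotypic (if a simple subobject $\bar S$ of $\soc(V/V')$ were not $\cong\sigma$, its preimage $S\supsetneq V'$ would still satisfy $[S:\sigma]\leq[V:\sigma]-1$, contradicting maximality) and, in the same way, that $[V':\sigma]=[V:\sigma]-1$ exactly; hence $[V/V':\sigma]=1$, which forces $\soc(V/V')\cong\sigma$, so $V/V'$ embeds into $\rInj_{\tGamma}\sigma$. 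Granting the claim below, $V/V'$ is then multiplicity free, whence $[V/V':\tau]\leq 1=[V/V':\sigma]$ for every Serre weight $\tau$; combining with the inductive hypothesis applied to $V'\subseteq(\rInj_{\tGamma}\sigma)^{\oplus s}$ gives $[V:\tau]=[V':\tau]+[V/V':\tau]\leq[V':\sigma]+1=[V:\sigma]$.

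The key claim, which I expect to be the only genuinely new point compared with the $\Gamma$-case (and so the main obstacle), is: \emph{if $W\subseteq\rInj_{\tGamma}\sigma$ with $[W:\sigma]=1$, then $W$ is multiplicity free.} I would prove it by contradiction: if $[W:\tau]\geq 2$ for some $\tau$, pick $U\subseteq W$ of minimal length with $[U:\tau]\geq 2$. Since $U$ is a nonzero subobject of $\rInj_{\tGamma}\sigma$ we get $\soc(U)=\sigma$ and $[U:\sigma]=1$; writing $\mathrm{cosoc}(U)=\bigoplus_{i=1}^m S_i$, minimality of $U$ gives $[\ker(U\twoheadrightarrow S_i):\tau]\leq 1$, which forces $S_i\cong\tau$, $[U:\tau]=2$, and $\mathrm{cosoc}(U)\cong\tau^{\oplus m}$ with $m\in\{1,2\}$ (as $[\rad(U):\tau]=2-m\geq 0$).

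If $m=1$, then $U$ is a subrepresentation of $\rInj_{\tGamma}\sigma$ with simple cosocle $\tau$ and $[U:\sigma]=1$, so $U\cong I(\sigma,\tau)$ by the uniqueness in Theorem \ref{thm-I(sigmatau)-tGamma}, hence multiplicity free by Corollary \ref{cor-I(sigmatau)-multione}, contradicting $[U:\tau]=2$. If $m=2$, then $\Hom_{\tGamma}(U,\tau)\cong\F^2$, and for each line $\F\phi$ in it the kernel $K_\phi=\ker\phi$ is a nonzero subobject of $\rInj_{\tGamma}\sigma$ with $[K_\phi:\sigma]=1$ and $[K_\phi:\tau]=[U:\tau]-1=1$; thus the (unique) minimal subrepresentation of $K_\phi$ having $\tau$ as a composition factor has simple cosocle $\tau$ and multiplicity one in $\sigma$, hence is $I(\sigma,\tau)$. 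Therefore $I(\sigma,\tau)\subseteq\bigcap_{\phi}K_\phi=\rad(U)$, which is absurd because $[\rad(U):\tau]=2-m=0$ while $[I(\sigma,\tau):\tau]=1$. This proves the claim.

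For the second assertion, assume moreover $\mathrm{cosoc}(V)\cong\tau^{\oplus r}$ with $\tau$ a $2$-generic Serre weight. Then $V^{\vee}$ has socle $\cong(\tau^{\vee})^{\oplus r}$, so it embeds into $(\rInj_{\tGamma}\tau^{\vee})^{\oplus r}$ by injectivity; moreover $\tau^{\vee}\cong\tau\otimes\zeta^{-1}\circ\det$ has the same exponents as $\tau$ (see the discussion after \eqref{eq:Proj-Inj}) and is therefore again $2$-generic, so the first assertion applies with $\tau^{\vee}$ in place of $\sigma$ and yields $[V^{\vee}:\tau^{\vee}]\geq[V^{\vee}:\sigma^{\vee}]$, i.e. $[V:\tau]\geq[V:\sigma]$. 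Combined with the first assertion this gives $[V:\sigma]=[V:\tau]$, as desired.
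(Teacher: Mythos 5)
Your proof is correct and follows essentially the same route as the paper, which simply reduces by a filtration to pieces with socle $\sigma$ and $[\,\cdot\,:\sigma]=1$ and then invokes Theorem \ref{thm-I(sigmatau)-tGamma} and Corollary \ref{cor-I(sigmatau)-multione}, handling the second assertion by duality via \eqref{eq:Proj-Inj}. The only difference is that you spell out the multiplicity-freeness of subrepresentations of $\rInj_{\tGamma}\sigma$ with $[\,\cdot\,:\sigma]=1$, which the paper leaves implicit.
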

\begin{proof}
The proof is the same as that of Corollary \ref{cor-multi-transfer}.
\end{proof}

We divide the proof of  Theorem  \ref {thm-I(sigmatau)-tGamma}  into two lemmas: Lemma \ref{lemma-I(sigmatau)-case1} for Case (i) and Lemma \ref{lemma-I(sigmatau)-case2} for Case (ii).   

\begin{lemma}\label{lemma-I(sigmatau)-case1}
Theorem \ref{thm-I(sigmatau)-tGamma}(i) is true.
\end{lemma}
\begin{proof}
The exceptional case $f=1$, $\dim_{\F}\sigma=3$ and $\tau=(\mu_0^-\circ\delta_0^{-})(\sigma)$ can be checked directly, so we omit this case for the rest of the proof.

Let $V$ be a subrepresentation of $\rInj_{\tGamma}\sigma$ as in the statement, i.e. $\mathrm{cosoc}(V)=\tau$ and $[V:\sigma]=1$.  We identify $\rInj_{\Gamma}\sigma$ with the subspace of $K_1$-invariants of $\rInj_{\tGamma}\sigma$. Since $V^{K_1}=V\cap \rInj_{\Gamma}\sigma$, there is an embedding by Proposition \ref{prop-structure-JProj}(i)
\begin{equation}\label{equation-C-embeds}C\defn V/V^{K_1}\hookrightarrow \rInj_{\tGamma}\sigma/\rInj_{\Gamma}\sigma\cong(\rInj_{\Gamma}\sigma)^{\oplus f}\oplus \big(\oplus_{\delta'\in\Delta(\sigma)}\rInj_{\Gamma}\delta'\big).\end{equation}
 Using the fact  $[C:\sigma]=0$, \eqref{equation-C-embeds} factors through an embedding $C\hookrightarrow \oplus_{\delta'}\rInj_{\Gamma}\delta'$ and finally
\[C \hookrightarrow \rInj_{\Gamma}\delta,\]
where $\delta=\delta_i^*(\sigma)$ is as in the statement of the theorem.
In particular, $C$ has  socle $\delta$ and cosocle  $\tau$.

\textbf{Step 1}. Prove that $C\cong I(\delta,\tau)$.
 By Proposition \ref{prop-multione-BP} it suffices to prove $[C:\delta]=1$.
Taking $K_1$-invariants of the short exact sequence $0\ra V^{K_1}\ra V\ra C\ra0$ gives
an injection
\begin{equation}\label{eq:partial-C}C\hookrightarrow H^1(K_1/Z_1,V^{K_1})\cong V^{K_1}\otimes_{\F} H^1(K_1/Z_1,\F). \end{equation}
Knowing that $[C:\delta]\geq 1$,  it suffices to prove
\[[V^{K_1}\otimes_{\F} H^1(K_1/Z_1,\F):\delta]= 1.\]
However,   any  $\sigma'\in \JH(V^{K_1})$  is $1$-generic,  hence by  \eqref{eq:sigma-H1} $[\sigma'\otimes_{\F} H^1(K_1/Z_1,\F):\delta]=1$ if and only if $\delta\in\Delta(\sigma')$, if and only if $\sigma'\cong \sigma$ by Lemma \ref{lemma-Delta-nocommonfactor} (note that $\sigma$ is always compatible with $\sigma'$). Since $[V^{K_1}:\sigma]=1$ and $[\sigma\otimes_{\F} H^1(K_1/Z_1,\F):\delta]=1$, the result follows.
As a consequence, $C$ is multiplicity free. On the other hand, since $[V^{K_1}:\sigma]=1$, $V^{K_1}$ is multiplicity free by Corollary \ref{cor-multi-transfer}, hence $V$ is also multiplicity free as in the proof of Corollary \ref{cor-I(sigmatau)-multione}.

\textbf{Step 2.} Prove that $V^{K_1}\cong I(\sigma,\tau_{!})$. First, if $\tau'\in\JH(I(\delta,\tau))$ with $\tau'=\lambda'(\delta)$ for $\lambda'\in\cI(x_0,\cdots,x_{f-1})$, then $\tau'$ is also a new Serre weight by Lemma \ref{lemma-newweight}. By definition it is easy to see  that
$\cS(\lambda_{!}')\subseteq \cS(\lambda_{!})$ and $\lambda_{!}'$ is compatible with $\lambda_{!}$, hence  $\tau'_{!}\defn\lambda'_{!}(\sigma)$ occurs in $I(\sigma,\tau_{!})$ by Proposition \ref{prop-multione-BP}.  Using Lemma
\ref{lemma-ext1-cSdifferbyi} we deduce by d\'evissage that if $\sigma'\in\JH(\rInj_{\Gamma}\sigma)$ which does not occur in $I(\sigma,\tau_{!})$, then
\[\Ext^1_{\tGamma}\big(I(\delta,\tau),\sigma'\big)=0,\]
and consequently  $\Hom_{\tGamma}(V^{K_1},\sigma')\cong\Hom_{\tGamma}(V,\sigma')$ by Step 1.   However,  since $V$ has irreducible cosocle $\tau$ which is new,  $\Hom_{\tGamma}(V,\sigma')=0$ for any $\sigma'$ as above and so $\Hom_{\tGamma}(V^{K_1},\sigma')=0$.  Using the fact that $V^{K_1}$ is multiplicity free,  we deduce $V^{K_1}\subseteq I(\sigma,\tau_{!})$.

It remains to show the inclusion  $I(\sigma,\tau_{!})\subseteq V^{K_1}$, for which it suffices to show that $\tau_{!}$ occurs in $V^{K_1}$ as a subquotient. First assume $i\in\cS(\lambda)$, so that $\lambda_{!}=\lambda$  by Remark \ref{remark-sigma!}. Using the embedding \eqref{eq:partial-C}, we know that
\[[V^{K_1}\otimes_{\F} H^1(K_1/Z_1,\F):\tau]\geq 1.\]
However, since $\tau=\lambda(\delta)\in\Delta(\tau_{!})$ (as $\lambda_{!}=\lambda$),  by Lemma \ref{lemma-Delta-nocommonfactor} $\sigma'=\tau_{!}$ is the unique subquotient of $I(\sigma,\tau_{!})$ with the property $[\sigma'\otimes_{\F} H^1(K_1/Z_1,\F):\tau]\neq0$. We deduce that   $\tau_{!}$ occurs in $V^{K_1}$ as a subquotient.
 Assume  now $i\notin\cS(\lambda)$.
 Then $\tau_{!}=\mu_i^*(\lambda(\sigma))$ if $\lambda_i(x_i)=x_i$, or $\tau_{!}=\mu_i^{-*}(\lambda(\sigma))$ if $\lambda_i(x_i)=p-2-x_i$, see Remark \ref{remark-sigma!}. We prove the assertion in two steps.
\begin{enumerate}
\item[(a)] The special case $\cS(\lambda)=\emptyset$, i.e. $\tau=\delta$.  Then $\tau_!=\mu_i^*(\sigma)$,  and $I(\sigma,\tau_{!})$ is just the nonsplit extension of $\tau_{!}$ by $\sigma$. Since we already know $V^{K_1}\subset I(\sigma,\tau_{!})$, it suffices to prove $V^{K_1}\neq \sigma$, which is obvious as $\Ext^1_{\tGamma}(\delta,\sigma)=0$ by Lemma \ref{lemma-Hu10-2.21}(ii) (both $\sigma$ and $\delta$ are $0$-generic).
As a consequence, any $\tGamma$-representation with socle $\sigma$ and cosocle $\delta$ contains $\tau_{!}$ as a subquotient. \vspace{1mm}
\item[(b)] The general case $i\notin\cS(\lambda)$.   In this case, the same argument as in the case $i\in\cS(\lambda)$ shows that the Serre weight  $\lambda(\sigma)$ occurs in $V^{K_1}$. Hence, $V$ admits a quotient, say $\overline{V}$, whose socle is $\lambda(\sigma)$ and cosocle is $\tau$. On the other hand,  the proof of Lemma \ref{lemma-ext1-cSdifferbyi} shows $\tau=\delta_i^{*}(\lambda(\sigma))$ if $\lambda_i(x_i)=x_i$ or $\tau=\delta_i^{-*}(\lambda(\sigma))$ if $\lambda_i(x_i)=p-2-x_i$. Hence, applying (a) to $\overline{V}$ we obtain  $[\overline{V}:\tau_{!}]\neq 0$, and therefore $[V:\tau_{!}]=[V^{K_1}:\tau_{!}]\neq0$ (as $\tau_{!}$ is an old Serre weight of $\rInj_{\tGamma}\sigma$).
\end{enumerate}

\textbf{Step 3.} Prove the uniqueness of $V$. By  Step 1 and Step  2, it suffices to prove
\[\dim_{\F}\Ext^1_{\tGamma}(I(\delta,\tau),I(\sigma,\tau_{!}))\leq 1,\]
and the equality would then follow by the existence of $V$.
The Hochschild-Serre spectral sequence gives an exact sequence
\begin{multline*}0\ra \Ext^1_{\Gamma}(I(\delta,\tau),I(\sigma,\tau_{!}))\ra \Ext^1_{\tGamma}(I(\delta,\tau),I(\sigma,\tau_{!})) \ra \Hom_{\Gamma}\big(I(\delta,\tau),I(\sigma,\tau_{!})\otimes_{\F} H^1(K_1/Z_1,\F)\big).\end{multline*}
On the one hand, since the Jordan--H\"older factors of $I(\delta,\tau)$ are all new by Lemma \ref{lemma-newweight}, a standard d\'evissage argument shows that $\Ext^1_{\Gamma}(I(\delta,\tau),I(\sigma,\tau_{!}))=0$.  On the other hand,  Lemma \ref{lemma-Delta-nocommonfactor} implies
\[\JH(I(\delta,\tau))\cap\big(\{\sigma\}\cup \Delta(\sigma)\big)=\{\delta\}\]
because any Jordan--H\"older factor of $I(\delta,\tau)$ has the form $\delta_i^{\pm}(\sigma')$ for some $\sigma'\in\JH(\rInj_{\Gamma}\sigma)$. Since the socle of $I(\sigma,\tau_{!})\otimes_{\F} H^1(K_1/Z_1,\F)$ is equal to $\sigma\otimes_{\F} H^1(K_1/Z_1,\F)\cong \sigma^{\oplus f}\oplus (\oplus_{\delta'\in\Delta(\sigma)}\delta')$, we deduce that
\begin{multline*}\dim_{\F}\Hom_{\Gamma}\big(I(\delta,\tau),I(\sigma,\tau_{!})\otimes_{\F} H^1(K_1/Z_1,\F)\big) \leq\dim_{\F}\Hom_{\Gamma}\big(\delta,I(\sigma,\tau_{!})\otimes_{\F} H^1(K_1/Z_1,\F)\big)=1.\end{multline*}
This proves the uniqueness of $V$ and finishes the proof.
 \end{proof}

We have the following direct consequence of Lemma \ref{lemma-I(sigmatau)-case1}, which will be used in the proof of  Theorem \ref{thm-I(sigmatau)-tGamma}(ii) (i.e. Lemma \ref{lemma-I(sigmatau)-case2}) below.
\begin{corollary} \label{cor-I(sigmatau)-case1}
Keep the notation of Theorem \ref{thm-I(sigmatau)-tGamma}. Assume $\tau=\delta_i^{*}(\sigma)$ for some $(i,*)\in\cS\times\{+,-\}$. Then $I(\sigma,\tau)$ is uniserial of length $3$, and (the graded pieces of) the socle filtration is given by $\sigma$, $\tau_{!}$, $\tau$, with $\tau_{!}=\mu_i^*(\sigma)$. For any subrepresentation $V$ of $\rInj_{\tGamma}\sigma$ with $[V:\tau]\geq 1$, there exists an embedding $I(\sigma,\tau)\hookrightarrow V$. Moreover, we have $[V:\sigma]\geq [V:\tau]$.
\end{corollary}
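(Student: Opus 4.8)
The plan is to recognise the corollary as the special case of Theorem \ref{thm-I(sigmatau)-tGamma}(ii) obtained by letting $\lambda$ be the identity $f$-tuple, and then to unpack the uniserial structure by hand. First I would observe that $\tau=\delta_i^{*}(\sigma)$ equals $\lambda(\delta)$ with $\delta=\delta_i^{*}(\sigma)$ and $\lambda=(x_0,\dots,x_{f-1})\in\cI(x_0,\dots,x_{f-1})$, which trivially satisfies \eqref{equation-condition-newweight} for the pair $(i,*)$, and that $\tau$ is a new Serre weight (Definition \ref{defn-newweight}) since $\delta\in\Delta(\sigma)$. Because $\cS(\lambda)=\emptyset$, Proposition \ref{prop-BP-4.11} gives $I(\delta,\tau)=I(\delta,\delta)=\delta$ (the only $\lambda'\in\cI$ with $\cS(\lambda')\subseteq\emptyset$ is $\lambda'=\lambda$), while Remark \ref{remark-sigma!} (the case $i\notin\cS(\lambda)$, $\lambda_i(x_i)=x_i$) gives $\lambda_{!}=\mu_i^{*}\circ\lambda=\mu_i^{*}$, so $\tau_{!}=\mu_i^{*}(\sigma)$. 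Note that $\tau$ is never the exceptional weight $(\mu_0^{-}\circ\delta_0^{-})(\sigma)$ occurring in Theorem \ref{thm-I(sigmatau)-tGamma}(ii), since that weight is of the form $\Sym^{p-1}\F^2\otimes\det^{?}$ whereas no $\delta_i^{*}(\sigma)$ is. Hence the exact sequence \eqref{equation-I(sigmatau)-new} becomes
\[0\ra I(\sigma,\mu_i^{*}(\sigma))\ra I(\sigma,\tau)\ra \delta\ra 0.\]

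Next I would analyse the subobject $I(\sigma,\mu_i^{*}(\sigma))$. Since $\mu_i^{*}(\sigma)$ is an old Serre weight, Theorem \ref{thm-I(sigmatau)-tGamma}(i) identifies $I(\sigma,\mu_i^{*}(\sigma))$ with the $\Gamma$-representation of Proposition \ref{prop-multione-BP}, and since $\cS(\mu_i^{*})=\{i\}$, Proposition \ref{prop-BP-4.11} shows its only Jordan--H\"older factors are $\sigma$ and $\mu_i^{*}(\sigma)$; thus it is the nonsplit extension of $\mu_i^{*}(\sigma)$ by $\sigma$, i.e.\ uniserial of length $2$ with socle $\sigma$. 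Feeding this into the displayed sequence, $I(\sigma,\tau)$ has length $3$ with Jordan--H\"older factors $\sigma$, $\tau_{!}=\mu_i^{*}(\sigma)$, $\tau$. It has irreducible cosocle $\tau$ by the defining property of $I(\sigma,\tau)$, and socle $\sigma$ because $\soc(\rInj_{\tGamma}\sigma)=\sigma$ and $I(\sigma,\tau)\subseteq\rInj_{\tGamma}\sigma$ is nonzero. As $\sigma\neq\tau$, $\sigma$ lies in $\rad I(\sigma,\tau)=\ker(I(\sigma,\tau)\twoheadrightarrow\tau)$, so $I(\sigma,\tau)/\sigma$ has length $2$ with irreducible cosocle $\tau$, hence is uniserial with socle $\tau_{!}$. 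Therefore $I(\sigma,\tau)$ is uniserial with socle filtration having graded pieces $\sigma$, $\tau_{!}$, $\tau$, as claimed.

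Finally, for the embedding statement, let $V\subseteq\rInj_{\tGamma}\sigma$ with $\mathrm{cosoc}(V)\cong\tau$; then $\soc(V)=\sigma$ as well. I would produce a copy of $I(\sigma,\tau)$ inside $V$ by choosing a subrepresentation $V'\subseteq V$ with $\mathrm{cosoc}(V')\cong\tau$ and $[V':\sigma]$ minimal (such exist since $\mathrm{cosoc}(V)\cong\tau$; shrinking to $\langle v'\rangle$ for $v'$ lifting a generator of the cosocle makes $V'$ cyclic), so that $\soc(V')=\sigma$, and then showing $[V':\sigma]=1$, after which the uniqueness clause of Theorem \ref{thm-I(sigmatau)-tGamma} forces $V'\cong I(\sigma,\tau)$, giving $I(\sigma,\tau)\hookrightarrow V$. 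To verify $[V':\sigma]=1$ I would argue on the socle filtration of $V'$, using $\Ext^1_{\tGamma}(\sigma,\sigma)=0$ (Lemma \ref{lemma-Hu10-2.21}(i)) to prevent two copies of $\sigma$ in adjacent layers, and using that by Lemma \ref{lemma-ext1-cSdifferbyi}(ii) together with \eqref{eq:delta=mumu} the weight $\mu_i^{*}(\sigma)$ is the unique Jordan--H\"older factor of $\rInj_{\Gamma}\sigma$ admitting a nontrivial $\tGamma$-extension with $\tau=\delta$; this pins down the two layers of $V'$ below $\tau$ and would contradict minimality if $[V':\sigma]\geq 2$. The hard part will be precisely this last step: because $\mathrm{cosoc}(V)\cong\tau\neq\sigma$, the routine ``peel off the cosocle'' trick does not decrease the multiplicity of $\sigma$, so the dévissage has to be carried out from the socle side, leaning on the extension computations of \S\ref{section-finiteRep} recalled above.
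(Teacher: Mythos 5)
The first half of your proof (length, uniseriality, and the identification $\tau_{!}=\mu_i^{*}(\sigma)$) is correct and is exactly the intended specialization of Theorem \ref{thm-I(sigmatau)-tGamma}(ii) to $\lambda=(x_0,\dots,x_{f-1})$: the sequence \eqref{equation-I(sigmatau)-new} degenerates to $0\to E_{\sigma,\mu_i^{*}(\sigma)}\to I(\sigma,\tau)\to\tau\to 0$, and your socle/cosocle bookkeeping from there is sound.

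The embedding statement, however, is not proved. Two concrete problems. First, the extension input you invoke is too weak: Lemma \ref{lemma-ext1-cSdifferbyi}(ii) only says that $\mu_i^{*}(\sigma)$ is the unique \emph{old} Serre weight (i.e.\ Jordan--H\"older factor of $\rInj_{\Gamma}\sigma$) with $\Ext^1_{\tGamma}(\tau,-)\neq 0$. But a subrepresentation $V'$ of $\rInj_{\tGamma}\sigma$ with cosocle $\tau=\delta_i^{*}(\sigma)$ can have \emph{new} weights in $\rad(V')/\rad^2(V')$: every constituent of that layer lies in $\mathscr{E}(\tau)$, which besides $\tau_{!}$ contains the $2f-1$ new weights $\mu_j^{\pm}(\delta_i^{*}(\sigma))$. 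So the layers of $V'$ below $\tau$ are not ``pinned down'' by your argument. The paper's own technique for controlling this is to first pass, via $C=V/V^{K_1}\hookrightarrow\rInj_{\Gamma}\delta$ as in Steps 1--2 of the proof of Lemma \ref{lemma-I(sigmatau)-case1}, to a subrepresentation $V''\subseteq V$ with $\mathrm{cosoc}(V'')=\tau$ and $\rad(V'')=(V'')^{K_1}$ a $\Gamma$-representation; only then does the ``unique old weight'' statement force $\mathrm{cosoc}(\rad(V''))$ to be $\tau_{!}$-isotypic (and $\dim_{\F}\Ext^1_{\tGamma}(\tau,\tau_{!})=1$ then forces multiplicity one). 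Your proof never uses the $K_1$-filtration. Second, the minimization of $[V':\sigma]$ cannot produce a contradiction as set up: since $\mathrm{cosoc}(V')$ is irreducible, every proper subrepresentation of $V'$ lies in $\rad(V')$, and your ``shrinking to $\langle v'\rangle$'' recovers $V'$ itself; if $[V:\tau]=1$ then $V$ is the \emph{only} subrepresentation of $V$ with cosocle $\tau$, so one must prove $[V:\sigma]=1$ (or the embedding) outright rather than by descent. The d\'evissage of Lemma \ref{lemma-I(sigmatau)-exist} is unavailable here because it relies on lifting maps from the projective $\Proj_{\tGamma}\sigma$, and such lifts need not land inside the fixed $V$. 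Even after the $K_1$-reduction sketched above, one still has to show that $E_{\sigma,\tau_{!}}$ embeds into the $\Gamma$-module $\rad(V'')$ (equivalently, that $\tau_{!}$ occurs in $\soc(\rad(V'')/\sigma)$), which is a genuine step you have not addressed.
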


\begin{lemma}\label{lemma-I(sigmatau)-case2}
Theorem \ref{thm-I(sigmatau)-tGamma}(ii) is true.
\end{lemma}

\begin{proof}
Let $V$ be a subrepresentation of $\rInj_{\tGamma}\sigma$ as in the statement, i.e. $\mathrm{cosoc}(V)=\tau$ and $[V:\sigma]=1$. It is enough to prove $V=V^{K_1}$, because then  $V$ is actually a $\Gamma$-representation, and  Proposition \ref{prop-multione-BP} applies.
Again, as in the proof of Lemma \ref{lemma-I(sigmatau)-case1}, there is an embedding
\begin{equation}\label{equation-case2-C}
C\defn V/V^{K_1}\hookrightarrow  \bigoplus_{\delta\in\Delta(\sigma)}\rInj_{\Gamma}\delta.\end{equation}
We need to prove $C=0$. Assume this is not the case for a contradiction.
\vspace{2mm}

\textbf{Case 1}. Assume  $\Ext^1_{\Gamma}(\tau,\sigma)\neq0$, i.e. $\tau=\mu_{i}^{*}(\sigma)$ for some $i\in\cS$ and $*\in\{+,-\}$. In this case, Remark \ref{remark-unique-delta}(ii) implies that
$C\hookrightarrow \rInj_{\Gamma}\delta$
where $\delta\defn\delta_i^{*}(\sigma)$. Moreover, we have $C=I(\delta,\tau)$: indeed, this is equivalent to $[C:\delta]=1$ by Proposition \ref{prop-multione-BP}; but if we had $[C:\delta]\geq 2$, then $[V:\delta]\geq2$ and Corollary \ref{cor-I(sigmatau)-case1}
would imply $[V:\sigma]\geq 2$, a contradiction.
Since $\Ext^1_{\Gamma}(\tau,\delta)\neq0$, $C=I(\delta,\tau)$ is exactly the  nonsplit extension $0\ra \delta\ra E_{\delta,\tau}\ra\tau\ra0$.    Consider $V$ as a nonzero extension class in $\Ext^1_{\tGamma}(E_{\delta,\tau},V^{K_1})$. As in the proof of Lemma \ref{lemma-I(sigmatau)-case1},  it induces an embedding
$E_{\delta,\tau}\hookrightarrow V^{K_1}\otimes_{\F} H^1(K_1/Z_1,\F)$,
and further an embedding \[E_{\delta,\tau}\hookrightarrow V^{K_1}\otimes_{\F} V_{2,i}\] because $\tau$ is not a subquotient of $V^{K_1}\otimes_{\F} V_{2,j}$ for  any $j\neq i$ by Lemma \ref{lemma-unique-delta}. By \cite[Lem.~7.3]{Al} and the self-duality of $V_{2,i}$, we finally obtain a nonzero morphism \[\partial: E_{\delta,\tau}\otimes_{\F} V_{2,i}\ra V^{K_1}.\]

On the other hand, letting $\mu\defn\mu_{i}^{-*}(\sigma)$,
 Lemma \ref{lemma-structure-EotimesV2}  implies a surjection
$E_{\sigma,\mu}\otimes_{\F}V_{2,i}\twoheadrightarrow E_{\delta,\tau}$.
By \cite[Lem.~7.3]{Al} and the self-duality of $V_{2,i}$, it induces a  morphism
\[\iota: E_{\sigma,\mu}\ra E_{\delta,\tau}\otimes_{\F}V_{2,i}\]
which is  injective by examining the socles.  For the same reason the composition $\partial\circ \iota$ is also injective.  Hence,  $E_{\sigma,\mu}$ embeds in $V$ and $V/\sigma$ admits a quotient $Q$ with socle $\mu$ (and cosocle $\tau$).  But, one checks that  $\tau=\delta_i^{*}(\mu)$ if $f\geq 2$, resp. $\tau=\delta_i^{-*}(\mu)$ if $f=1$, so Corollary \ref{cor-I(sigmatau)-case1} applies and implies that  $\sigma$ occurs in $V/\sigma$ as a subquotient. Remark that  $\mu$ need not be $2$-generic in which case Corollary \ref{cor-I(sigmatau)-case1} does not apply,  but if this happens,  then $\tau$ has to be $2$-generic and we may apply Corollary \ref{cor-I(sigmatau)-case1} to the dual of $Q$. Therefore, we obtain  $[V:\sigma]\geq 2$, a contradiction. In conclusion, we deduce that $C=0$ and $V=V^{K_1}$.

 \vspace{2mm}

\textbf{Case 2}. Now treat the general case. As observed in Remark \ref{remark-unique-delta}(i), $\tau$ may occur in $\rInj_{\Gamma}\delta$ for distinct $\delta\in\Delta(\sigma)$.  We choose one $\delta$ in such  a way that \eqref{equation-case2-C} induces a nonzero morphism
$C\ra \rInj_{\Gamma}\delta$ when composed with the natural projection to $\rInj_{\Gamma}\delta$; let $C_{\delta}$ denote the image. As in Case 1, we have $C_{\delta}\cong I(\delta,\tau)$.    Write $\delta=\delta_i^{*}(\sigma)$ for some $(i,*)\in\cS\times\{+,-\}$ and set $\tau'\defn\mu_{i}^{*}(\sigma)=\mu_i^{-*}(\delta)$.  Then $\tau'$ has nontrivial extensions with both $\sigma$ and $\delta$. We claim that $\tau'$ is a subquotient of  $I(\delta,\tau)$.  Indeed,  writing $\tau=\lambda(\delta)$ for $\lambda\in\cI(x_0,\cdots,x_{f-1})$, then   Lemma \ref{lemma-newweight} implies
$\lambda_i(x_i)\in\{x_i-(*1),p-2-x_i+(*1)\}$
because $\tau$ is old by assumption; in particular $i\in \cS(\lambda)$.
On the other hand, viewing $\tau'$ as a subquotient of $\rInj_{\Gamma}\delta$, it corresponds to $\mu_i^{-*}$ which is compatible with $\lambda$ at $\{i\}=\cS(\mu_i^{-*})$. The claim  follows from this using Proposition \ref{prop-BP-4.11}. By the claim, we may construct a subrepresentation of $V$ with cosocle $\tau'$ which is not fixed by $K_1$, but this contradicts
Case 1.
\end{proof}

 Note that $I(\sigma,\tau)$ can be viewed as a quotient of $\Proj_{\tGamma}\tau$. Using Corollary \ref{cor-I(sigmatau)-multione}, we have the following  dual version of Theorem \ref{thm-I(sigmatau)-tGamma}.
\begin{theorem}\label{thm-I(sigmatau)-dual}
Let $\tau$ be a $2$-generic Serre weight. Among the quotients of $\Proj_{\tGamma}\tau$ whose socle is isomorphic to $\sigma$ (not necessarily $2$-generic), there exists a unique one, denoted by $I(\sigma,\tau)$,  in which $\tau$ occurs with multiplicity $1$. If moreover $\sigma$ is $2$-generic, then this representation coincides with the one constructed in Theorem \ref{thm-I(sigmatau)-tGamma}.
\end{theorem}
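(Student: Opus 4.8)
The statement is the dual of Theorem~\ref{thm-I(sigmatau)-tGamma}, so the natural strategy is to dualize. Recall from \eqref{eq:Proj-Inj} that $(\rInj_{\tGamma}\tau)^\vee \cong \Proj_{\tGamma}\tau^\vee \cong (\Proj_{\tGamma}\tau)\otimes\zeta_\tau^{-1}\circ\det$, where $\zeta_\tau$ is the central character of $\tau$. Since $\tau$ is $2$-generic, so is $\tau^\vee$ (twisting by a character of the determinant does not change the generic range of the exponents), hence Theorem~\ref{thm-I(sigmatau)-tGamma} applies to $\tau^\vee$. First I would apply that theorem: for any $\sigma^\vee\in\JH(\rInj_{\tGamma}\tau^\vee)$ there is a unique subrepresentation $I(\tau^\vee,\sigma^\vee)$ of $\rInj_{\tGamma}\tau^\vee$ with cosocle $\sigma^\vee$ in which $\tau^\vee$ occurs with multiplicity $1$. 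Dualizing this subrepresentation via the Pontryagin (or $\F$-linear) dual and untwisting by $\zeta_\tau\circ\det$ produces a quotient of $\Proj_{\tGamma}\tau$ with socle $\sigma$ in which $\tau$ occurs with multiplicity $1$; call it $I(\sigma,\tau)$. The bijection $\sigma^\vee\in\JH(\rInj_{\tGamma}\tau^\vee)\Leftrightarrow \sigma\in\JH(\Proj_{\tGamma}\tau)=\JH(\rInj_{\tGamma}\tau)$ (using the self-duality of the set of Jordan--Hölder factors, as recorded in the proof of Lemma~\ref{lemma-I(sigmatau)-exist}) shows that this construction makes sense for every $\sigma$ appearing in $\Proj_{\tGamma}\tau$, regardless of whether $\sigma$ itself is $2$-generic.

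The uniqueness assertion transfers directly: if $Q$ is a quotient of $\Proj_{\tGamma}\tau$ with $\soc Q\cong\sigma$ and $[Q:\tau]=1$, then $Q^\vee\otimes\zeta_\tau^{-1}\circ\det$ is a subrepresentation of $(\Proj_{\tGamma}\tau)^\vee\otimes\zeta_\tau^{-1}\circ\det\cong \rInj_{\tGamma}\tau^\vee$ with cosocle $\sigma^\vee$ and $[\,\cdot\,:\tau^\vee]=1$, so by the uniqueness half of Theorem~\ref{thm-I(sigmatau)-tGamma} it equals $I(\tau^\vee,\sigma^\vee)$; dualizing back gives $Q\cong I(\sigma,\tau)$. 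One technical point to double-check here is that $\soc(Q^\vee)=(\cosoc Q)^\vee$ and $\cosoc(Q^\vee)=(\soc Q)^\vee$ under the duality on $\tGamma$-modules, which is the anti-equivalence coming from $\Mod_{\tGamma}\to\Mod_{\tGamma^{\mathrm{op}}}$ composed with the identification $\tGamma^{\mathrm{op}}\cong\tGamma$ via $g\mapsto g^{-1}$ on the group-algebra part; this is standard but should be stated.

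Finally, for the compatibility claim: when $\sigma$ is also $2$-generic, one must check that the quotient $I(\sigma,\tau)$ of $\Proj_{\tGamma}\tau$ just constructed is isomorphic to the subrepresentation $I(\sigma,\tau)$ of $\rInj_{\tGamma}\sigma$ from Theorem~\ref{thm-I(sigmatau)-tGamma}. The cleanest way is to observe that $I(\sigma,\tau)$ from Theorem~\ref{thm-I(sigmatau)-tGamma}, having irreducible socle $\sigma$ and cosocle $\tau$ with $[I(\sigma,\tau):\sigma]=[I(\sigma,\tau):\tau]=1$ (the multiplicity one for $\tau$ being part of Corollary~\ref{cor-I(sigmatau)-multione}, since $I(\sigma,\tau)$ is multiplicity free), is simultaneously a subrepresentation of $\rInj_{\tGamma}\sigma$ and a quotient of $\Proj_{\tGamma}\tau$: the nonzero map $\Proj_{\tGamma}\tau\to \rInj_{\tGamma}\sigma$ with image having cosocle $\tau$ and $[\im:\sigma]$ minimal (the construction in Lemma~\ref{lemma-I(sigmatau)-exist}, now run in the other direction) has image isomorphic to this $I(\sigma,\tau)$, and its socle is $\sigma$ with $[\,\cdot\,:\tau]=1$. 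By the uniqueness statement just proved, it must coincide with the quotient-side $I(\sigma,\tau)$. The only mild obstacle I anticipate is bookkeeping the central-character twist consistently through the two dualities (Pontryagin dual vs.\ the contragredient $\sigma\mapsto\sigma\otimes\zeta^{-1}\circ\det$), so that "$2$-generic" is genuinely preserved and the identifications of socle/cosocle are the ones claimed; none of this is deep, but it is where a sign or a twist could go wrong.
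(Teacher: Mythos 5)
Your argument is correct and is exactly the intended one: the paper states this theorem without proof, treating it as the evident dual of Theorem \ref{thm-I(sigmatau)-tGamma} via \eqref{eq:Proj-Inj}, and your dualization (apply the theorem to the $2$-generic weight $\tau^{\vee}$, dualize back, and use multiplicity-freeness from Corollary \ref{cor-I(sigmatau)-multione} plus uniqueness for the compatibility claim) fills in precisely those details. The technical points you flag (socle/cosocle swap under duality, preservation of $2$-genericity under twisting by $\det$) are handled correctly.
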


Combining Theorems \ref{thm-I(sigmatau)-tGamma} and \ref{thm-I(sigmatau)-dual}, we see that $I(\sigma,\tau)$ is well-defined provided that either $\sigma$ or $\tau$ is $2$-generic.

\begin{corollary}\label{coro:Q-killedbyJ}
Let $\tau$ be a $2$-generic Serre weight. Let $Q$ be a quotient of $\Proj_{\tGamma}\tau$ satisfying the following conditions:
\begin{enumerate}
\item[(a)] $[Q:\tau]=1$;
\item[(b)] for any Serre weight $\sigma$ in $\mathrm{soc}_{\tGamma}(Q)$, $\sigma$ is a subquotient of $\Proj_{\Gamma}\tau$.
\end{enumerate}
Then $Q$ is multiplicity free and  a quotient of $\Proj_{\Gamma}\tau$, i.e. $Q$ is annihilated by $\fm_{K_1}$.
\end{corollary}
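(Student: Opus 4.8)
The plan is to prove that $\fm_{K_1}$ annihilates $Q$ — equivalently, that the surjection $\Proj_{\tGamma}\tau\onto Q$ factors through $\Proj_{\tGamma}\tau/\fm_{K_1}\Proj_{\tGamma}\tau\cong\Proj_{\Gamma}\tau$ — and then to deduce the multiplicity freeness. The second deduction is immediate: once $Q$ is a quotient of $\Proj_{\Gamma}\tau\cong\rInj_{\Gamma}\tau$, Pontryagin duality exhibits $Q^{\vee}$ as a subrepresentation of $(\rInj_{\Gamma}\tau)^{\vee}\cong\rInj_{\Gamma}\tau^{\vee}$ (using $\Proj_{\Gamma}(-)\cong\rInj_{\Gamma}(-)$) with $[Q^{\vee}:\tau^{\vee}]=[Q:\tau]=1$, and Corollary \ref{cor-multi-transfer} applied with $s=1$ gives $[Q:\mu]=[Q^{\vee}:\mu^{\vee}]\leq[Q^{\vee}:\tau^{\vee}]=1$ for every Serre weight $\mu$.

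For the annihilation statement, note first that $Q$ is a nonzero quotient of $\Proj_{\tGamma}\tau$, so $\mathrm{cosoc}_{\tGamma}(Q)=\tau$. Write $\soc_{\tGamma}(Q)=\bigoplus_{j}\sigma_j$ (with multiplicities); the inclusion of the socle extends to an embedding of $Q$ into the injective envelope $\bigoplus_j\rInj_{\tGamma}\sigma_j$ of $\soc_{\tGamma}(Q)$, and for each $j$ the composite $\phi_j\colon Q\to\rInj_{\tGamma}\sigma_j$ with the $j$-th projection is nonzero on the corresponding socle summand. Its image $Q_j\defn\im(\phi_j)$ is a quotient of $\Proj_{\tGamma}\tau$, is a nonzero subrepresentation of $\rInj_{\tGamma}\sigma_j$ (hence $\soc_{\tGamma}(Q_j)=\sigma_j$), has $\mathrm{cosoc}_{\tGamma}(Q_j)=\tau$ (a nonzero quotient of $\mathrm{cosoc}_{\tGamma}(Q)$), and satisfies $1\leq[Q_j:\tau]\leq[Q:\tau]=1$. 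Since $Q\hookrightarrow\bigoplus_j Q_j$, it is enough to show that $\fm_{K_1}$ annihilates each $Q_j$.

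This is where Theorem \ref{thm-I(sigmatau)-tGamma}(i) comes in. Dualizing and using \eqref{eq:Proj-Inj}, one gets an embedding $Q_j^{\vee}\hookrightarrow(\Proj_{\tGamma}\tau)^{\vee}\cong\rInj_{\tGamma}\tau^{\vee}$ under which $Q_j^{\vee}$ has cosocle $\sigma_j^{\vee}$ and satisfies $[Q_j^{\vee}:\tau^{\vee}]=1$; by the uniqueness part of Theorem \ref{thm-I(sigmatau)-tGamma} (applied to $\rInj_{\tGamma}\tau^{\vee}$, with $\tau^{\vee}$ again $2$-generic since genericity depends only on the exponents $r_i$) this forces $Q_j^{\vee}=I(\tau^{\vee},\sigma_j^{\vee})$. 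By hypothesis (b), $\sigma_j$ is a subquotient of $\Proj_{\Gamma}\tau\cong\rInj_{\Gamma}\tau$, so $\sigma_j^{\vee}$ is a subquotient of $\rInj_{\Gamma}\tau^{\vee}$; that is, $\sigma_j^{\vee}$ is an \emph{old} Serre weight of $\rInj_{\tGamma}\tau^{\vee}$ in the sense of Definition \ref{defn-newweight}. Hence Theorem \ref{thm-I(sigmatau)-tGamma}(i) identifies $I(\tau^{\vee},\sigma_j^{\vee})$ with the representation constructed in Proposition \ref{prop-multione-BP}, which is a subrepresentation of $\rInj_{\Gamma}\tau^{\vee}$ and so is annihilated by $\fm_{K_1}$. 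Therefore $Q_j^{\vee}$, and thus $Q_j$, is annihilated by $\fm_{K_1}$, which finishes the argument.

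The point that requires care is the direction of the duality. Theorem \ref{thm-I(sigmatau)-tGamma} assumes that the weight in the socle of the relevant subrepresentation of $\rInj_{\tGamma}(-)$ is $2$-generic, and after dualization this weight is $\tau^{\vee}$, which is $2$-generic by assumption; the possibly non-generic socle weights $\sigma_j$ of $Q$ enter only through hypothesis (b), and (b) is exactly the condition making each $\sigma_j^{\vee}$ \emph{old}, so that part (i) of the theorem — rather than the more delicate part (ii) — applies. In particular no genericity on the $\sigma_j$ themselves is needed, which is consistent with the statement of the corollary.
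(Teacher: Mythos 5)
Your proposal is correct and takes essentially the same route as the paper: decompose $\soc_{\tGamma}(Q)$, embed $Q$ into the direct sum of the quotients $Q_j=I(\sigma_j,\tau)$ attached to its socle constituents, and use hypothesis (b) together with Theorem \ref{thm-I(sigmatau)-tGamma}(i) to see that each such piece is already a $\Gamma$-module. The only cosmetic differences are that you carry out the dualization explicitly instead of invoking Theorem \ref{thm-I(sigmatau)-dual}, and you deduce multiplicity freeness at the end from Corollary \ref{cor-multi-transfer} rather than at the outset from the dual of Corollary \ref{cor-I(sigmatau)-geq}.
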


\begin{proof}
 First note that $Q$ is multiplicity free by (the dual version) of Corollary \ref{cor-I(sigmatau)-geq}. In particular, $\rsoc_{\tGamma}(Q)$ is multiplicity free. If $\sigma$ is a Serre weight occurring in $\soc_{\tGamma}(Q)$, then $Q$ admits $I(\sigma,\tau)$ as a quotient by Theorem \ref{thm-I(sigmatau)-dual}. The morphism
$Q \ra \oplus_{\sigma}I(\sigma,\tau)$,
where $\sigma$ runs over all Serre weights in $\soc_{\tGamma}(Q)$, is injective as it is injective on $\soc_{\tGamma}(Q)$. By (b), each $I(\sigma,\tau)$ is annihilated by $\fm_{K_1}$, hence  so is  $Q$, i.e. $Q$ is  a quotient of $\Proj_{\Gamma}\tau$.
\end{proof}

\begin{corollary}\label{coro-Q-length3}
Let $\tau$ be a $2$-generic Serre weight. Let $Q$ be a quotient of $\Proj_{\tGamma}\tau$ satisfying the following conditions:
\begin{enumerate}
\item[(a)] $\rsoc_{\tGamma}(Q)\cong \tau^{\oplus r}$ for some $r\geq 1$;
\item[(b)] $\rad_{\tGamma}(Q)/\rsoc_{\tGamma}(Q)$ is nonzero and does not admit $\tau$ as a subquotient.
\end{enumerate}
Then $\rad_{\tGamma}(Q)/\rsoc_{\tGamma}(Q)$ is semisimple and there is an embedding
 \[\rad_{\tGamma}(Q)/\rsoc_{\tGamma}(Q)\hookrightarrow \bigoplus_{\sigma\in\mathscr{E}(\tau)}\sigma.\]
Moreover, the length of $\rad_{\tGamma}(Q)/\rsoc_{\tGamma}(Q)$ is greater than or equal to $r$.
\end{corollary}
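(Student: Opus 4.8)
The plan is to transfer everything to the quotient $\bar Q\defn Q/\rsoc_{\tGamma}(Q)$. Since $\rsoc_{\tGamma}(Q)\cong\tau^{\oplus r}$, since $\tau$ does not occur in $\rad_{\tGamma}(Q)/\rsoc_{\tGamma}(Q)$ by hypothesis (b), and since $\rcosoc$ (i.e. $Q/\rad_{\tGamma}(Q)$) is isomorphic to $\tau$ because $Q$ is a quotient of $\Proj_{\tGamma}\tau$, one gets $[\bar Q:\tau]=1$. As $\bar Q$ is a quotient of $\Proj_{\tGamma}\tau$, the dual form of Corollary \ref{cor-I(sigmatau)-geq} forces $\bar Q$ to be multiplicity free; hence $M\defn\rad_{\tGamma}(\bar Q)=\rad_{\tGamma}(Q)/\rsoc_{\tGamma}(Q)$ is multiplicity free and $\tau\notin\JH(M)$. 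Note that $\bar Q$ has simple cosocle $\tau$, so $0\to M\to\bar Q\to\tau\to 0$ and $\soc_{\tGamma}(\bar Q)\subseteq M$.

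The first key step is to show $\soc_{\tGamma}(\bar Q)\subseteq\bigoplus_{\sigma\in\mathscr{E}(\tau)}\sigma$. Here I use that $Q$ is the extension $0\to\tau^{\oplus r}\to Q\to\bar Q\to 0$ whose socle equals $\tau^{\oplus r}$ \emph{exactly}: if $\sigma\in\soc_{\tGamma}(\bar Q)$ had $\Ext^1_{\tGamma}(\sigma,\tau)=0$, then the restriction of the extension class along $\sigma\hookrightarrow\bar Q$ would vanish, so $\sigma$ would lift to a submodule of $Q$, giving $\sigma\hookrightarrow\rsoc_{\tGamma}(Q)=\tau^{\oplus r}$ and thus $\sigma\cong\tau$, contradicting $\tau\notin\JH(M)$. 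Hence every $\sigma\in\soc_{\tGamma}(\bar Q)$ satisfies $\sigma\not\cong\tau$ and $\Ext^1_{\tGamma}(\sigma,\tau)\neq 0$. Using the Hochschild--Serre sequence $0\to\Ext^1_{\Gamma}(\sigma,\tau)\to\Ext^1_{\tGamma}(\sigma,\tau)\to\Hom_{\Gamma}(\sigma,\tau\otimes_{\F}H^1(K_1/Z_1,\F))$, together with \eqref{eq:sigma-H1} (valid since $\tau$ is $2$-generic) and Lemma \ref{lemma-Hu10-2.21}, such a $\sigma$ must lie in $\mathscr{E}(\tau)$ or in $\Delta(\tau)$; the latter is impossible because $\Ext^1_{\tGamma}(\delta,\tau)=0$ for $\delta\in\Delta(\tau)$ --- otherwise the nonsplit extension $E_{\delta,\tau}$ would be a length-$2$ quotient of $\Proj_{\tGamma}\tau$ with socle $\delta$ and $[E_{\delta,\tau}:\tau]=1$, contradicting (by the uniqueness in Theorem \ref{thm-I(sigmatau)-dual}) the length-$3$ structure of $I(\delta,\tau)$ supplied by Corollary \ref{cor-I(sigmatau)-case1}.

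Granting $\soc_{\tGamma}(\bar Q)\subseteq\mathscr{E}(\tau)$, for each $\sigma\in\soc_{\tGamma}(\bar Q)$ we have $\Ext^1_{\tGamma}(\tau,\sigma)\cong\Ext^1_{\Gamma}(\tau,\sigma)\neq 0$ (Lemma \ref{lemma-Hu10-2.21}), so the nonsplit extension $E_{\sigma,\tau}$ is a quotient of $\Proj_{\tGamma}\tau$ with socle $\sigma$ and $[E_{\sigma,\tau}:\tau]=1$; by Theorem \ref{thm-I(sigmatau)-dual}, $I(\sigma,\tau)=E_{\sigma,\tau}$, of length $2$. Exactly as in the proof of Corollary \ref{coro:Q-killedbyJ}, $\bar Q$ embeds into $\bigoplus_{\sigma\in\soc_{\tGamma}(\bar Q)}I(\sigma,\tau)=\bigoplus_{\sigma}E_{\sigma,\tau}$, a module of radical length $2$; hence $\rad_{\tGamma}^{2}(\bar Q)=0$, i.e. $M=\rad_{\tGamma}(\bar Q)$ is semisimple, and $M\hookrightarrow\bigoplus_{\sigma}\rad_{\tGamma}(E_{\sigma,\tau})=\bigoplus_{\sigma}\sigma\hookrightarrow\bigoplus_{\sigma\in\mathscr{E}(\tau)}\sigma$, giving the asserted embedding. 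I expect the genuine difficulty to be concentrated in the previous paragraph, and specifically in applying the structure results for $I(\sigma,\tau)$ when the relevant $\sigma\in\mathscr{E}(\tau)\cup\Delta(\tau)$ fails to be $2$-generic (Theorem \ref{thm-I(sigmatau)-tGamma} and Corollary \ref{cor-I(sigmatau)-case1} being stated under $2$-genericity), which should be handled by the symmetric/duality arguments already used in \S\ref{subsection:I(sigmatau)}.

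Finally, for the length estimate: since $M$ is semisimple, write $M\cong\bigoplus_{k=1}^{\ell}\sigma_k$ with the $\sigma_k\in\mathscr{E}(\tau)$ pairwise distinct and $\ell=\mathrm{length}(M)$. Then $\rad_{\tGamma}(Q)$ has semisimple cosocle $M$ and $\rad_{\tGamma}^{2}(Q)=\rad_{\tGamma}(\rad_{\tGamma}(Q))\subseteq\rsoc_{\tGamma}(Q)=\tau^{\oplus r}$ (as $\rad_{\tGamma}(Q)/\rsoc_{\tGamma}(Q)=M$ is semisimple). For each $k$ let $N_k\subseteq\rad_{\tGamma}(Q)$ be the $\tGamma$-submodule generated by a lift of $\sigma_k$; then $\rad_{\tGamma}(N_k)\subseteq\rad_{\tGamma}^{2}(Q)\subseteq\tau^{\oplus r}$, so $N_k$ is an extension of $\sigma_k$ by $\tau^{\oplus m_k}$ with $m_k=[N_k:\tau]$, and since $N_k$ has simple cosocle $\sigma_k$ the corresponding class in $\Ext^1_{\tGamma}(\sigma_k,\tau)^{\oplus m_k}$ has linearly independent components; as $\dim_{\F}\Ext^1_{\tGamma}(\sigma_k,\tau)=1$ this gives $m_k\leq 1$. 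Because $\rad_{\tGamma}(Q)=\sum_{k}N_k$ and $[\rad_{\tGamma}(Q):\tau]=r$, we conclude $r=[\rad_{\tGamma}(Q):\tau]\leq\sum_{k}[N_k:\tau]\leq\ell=\mathrm{length}(M)$, as required.
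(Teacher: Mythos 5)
Your proof of the first two assertions (semisimplicity of $\rad_{\tGamma}(Q)/\rsoc_{\tGamma}(Q)$ and the embedding into $\bigoplus_{\sigma\in\mathscr{E}(\tau)}\sigma$) is correct and follows the paper's own route: pass to $\overline{Q}=Q/\rsoc_{\tGamma}(Q)$, show every constituent $\sigma$ of $\soc_{\tGamma}(\overline{Q})$ satisfies $\Ext^1_{\tGamma}(\sigma,\tau)\neq 0$ and hence lies in $\mathscr{E}(\tau)$, and embed $\overline{Q}$ into $\bigoplus_{\sigma}I(\sigma,\tau)=\bigoplus_{\sigma}E_{\sigma,\tau}$ as in Corollary \ref{coro:Q-killedbyJ}. (Your detour through Corollary \ref{cor-I(sigmatau)-case1} to exclude $\Delta(\tau)$ can be replaced by Lemma \ref{lemma-Hu10-2.21}(ii), and the $2$-genericity worry is moot since Theorem \ref{thm-I(sigmatau)-dual} only requires the cosocle $\tau$ to be $2$-generic.)

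The length estimate, however, is where you depart from the paper, and there is a genuine gap there. Two of your assertions are unjustified. First, ``$N_k$ has simple cosocle $\sigma_k$'': a cyclic submodule generated by a lift of $\sigma_k$ need not have simple cosocle. Concretely, if $\rad_{\tGamma}(Q)\cong E_{\tau,\sigma_1}\oplus E_{\tau,\sigma_2}$ (an admissible configuration with $r=\ell=2$) and you lift a generator of $\sigma_1$ to $(e_1,s)$ with $0\neq s\in\soc(E_{\tau,\sigma_2})$, then $N_1\cong E_{\tau,\sigma_1}\oplus\tau$ has cosocle $\sigma_1\oplus\tau$ and $m_1=[N_1:\tau]=2$, so the ``linearly independent components'' argument fails. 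Second, ``$\rad_{\tGamma}(Q)=\sum_kN_k$'': a priori you only get $\rad_{\tGamma}(Q)=\sum_kN_k+\rsoc_{\tGamma}(Q)$, and in the scenario above the chain $r\leq\sum_k[N_k:\tau]\leq\ell$ breaks at the second inequality. Both defects are cured by the one fact your argument is implicitly missing, namely $\Hom_{\tGamma}(\rad_{\tGamma}(Q),\tau)=0$: if $\phi:\rad_{\tGamma}(Q)\twoheadrightarrow\tau$ were nonzero, then $Q/\Ker(\phi)$ would be an extension of $\tau$ by $\tau$, which splits because $\Ext^1_{\tGamma}(\tau,\tau)=0$ by Lemma \ref{lemma-Hu10-2.21}(i), producing $\tau^{\oplus 2}$ in $\mathrm{cosoc}_{\tGamma}(Q)$ and contradicting $[\mathrm{cosoc}_{\tGamma}(Q):\tau]=1$. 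Once this is available, the paper's own argument is shorter and avoids the $N_k$ altogether: applying $\Hom_{\tGamma}(-,\tau)$ to $0\to\tau^{\oplus r}\to\rad_{\tGamma}(Q)\to S\to 0$, where $S=\rad_{\tGamma}(Q)/\rsoc_{\tGamma}(Q)$, shows that the connecting map $\Hom_{\tGamma}(\tau^{\oplus r},\tau)\to\Ext^1_{\tGamma}(S,\tau)$ is injective, so $\dim_{\F}\Ext^1_{\tGamma}(S,\tau)\geq r$; since $\dim_{\F}\Ext^1_{\tGamma}(\sigma,\tau)=1$ for each $\sigma\in\mathscr{E}(\tau)$, the left-hand side equals the length of $S$.
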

\begin{proof}
Consider the quotient $Q/\rsoc_{\tGamma}(Q)$, which is a quotient of $\Proj_{\tGamma}\tau$ and in which $\tau$ occurs once by condition (b). Then $Q/\soc_{\tGamma}(Q)$ is multiplicity free by (the dual version) of Corollary \ref{cor-I(sigmatau)-geq}. We denote by $\rsoc_1(Q)$ the socle of $Q/\rsoc_{\tGamma}(Q)$. If $\sigma\hookrightarrow \rsoc_1(Q)$, then $\Ext^1_{\tGamma}(\sigma,\rsoc_{\tGamma}(Q))\neq0$, and therefore $\sigma\in\mathscr{E}(\tau)$ by  (a). As in the proof of
 Corollary \ref{coro:Q-killedbyJ}, we obtain an embedding
 $Q/\rsoc_{\tGamma}(Q)\hookrightarrow \oplus_{\sigma}I(\sigma,\tau)$
 where $\sigma$ runs over the Serre weights in $\rsoc_{1}(Q)$. Note that   $I(\sigma,\tau)$  is just the nonsplit extension of $\tau$ by $\sigma$, so $Q/\rsoc_{\tGamma}(Q)$ fits in a short exact sequence
 \[0\ra \rsoc_1(Q)\ra Q/\rsoc_{\tGamma}(Q)\ra \tau\ra0. \]
 Thus, we may identify $\soc_1(Q)$ with $\rad_{\tGamma}(Q)/\soc_{\tGamma}(Q)$, proving the first assertion.

It remains to show $\rsoc_1(Q)$ has length $\geq r$. In fact, this follows from (a), which implies
\[\dim_{\F}\Ext^1_{\tGamma}(\rsoc_1(Q),\tau)\geq r,\]
while $\dim_{\F}\Ext^1_{\tGamma}(\sigma,\tau)=1$ for any $\sigma\in\mathscr{E}(\tau)$ by Lemma \ref{lemma-Hu10-2.21}(ii).
\end{proof}

\begin{remark}
It will be proved in Proposition \ref{prop-Theta} that there exists a (unique) representation $Q$ as in Corollary \ref{coro-Q-length3} such that $\rad_{\tGamma}(Q)/\rsoc_{\tGamma}(Q)\cong \bigoplus_{\sigma\in\mathscr{E}(\tau)}\sigma$.
\end{remark}

\subsection{The structure of $I(\sigma,\tau)$} Let $\sigma $ be a $2$-generic Serre weight. It will be useful to have an explicit description of the lattice structure of subrepresentations of $I(\sigma,\tau)$ for $\tau\in\JH(\rInj_{\tGamma}\sigma)$. The case when $\tau\in\JH(\rInj_{\Gamma}\sigma)$ is treated in \cite[Cor.~4.11]{BP}.

\begin{definition}\label{defn:tildeI}
Fix $(i,*)\in\cS\times \{+,-\}$. Set
\begin{multline}\label{eq:def-tildeI}\widetilde{\cI}_{(i,*)}\defn \big\{(\lambda,\mathcal{T})\in\cI(x_0,\cdots,x_{f-1})\times \{\emptyset, (i,*)\}:  \lambda_i(x_i)\ \mathrm{satisfies}\ \eqref{equation-condition-newweight}\ \mathrm{if}\ \mathcal{T}=(i,*) \big\}.\end{multline}
We define a partial  order  $\widetilde{\cI}_{(i,*)}$ as follows.  Given  two elements $\widetilde{\lambda}=(\lambda,\mathcal{T})$, $\widetilde{\lambda}'=(\lambda',\mathcal{T}')$, we say $\widetilde{\lambda}'\leq \widetilde{\lambda}$ if and only if one of the following holds:
\begin{enumerate}
\item[$\bullet$] $\mathcal{T}'=\mathcal{T}$ and $\lambda'\leq \lambda$, meaning that $\lambda'$, $\lambda$ are compatible and $\cS(\lambda')\subset\cS(\lambda)$;
\item[$\bullet$]    $\mathcal{T}'=\emptyset$, $\mathcal{T}=(i,*)$, and $\lambda'\leq \lambda_{!}$, where $\lambda_{!}\in\cI(x_0,\cdots,x_{f-1})$ is as in Definition \ref{defn-sigma!}.
\end{enumerate}
\end{definition}

 We define a length function on $ \widetilde{\cI}_{(i,*)}$ by setting
\begin{equation}\label{equation-length-tildeIambda}\ell(\widetilde{\lambda})=\ell(\lambda,\mathcal{T})\defn\left\{\begin{array}{lll}|\cS(\lambda)| & \mathrm{if}\ \mathcal{T}=\emptyset\\
|\cS(\lambda)|+2& \mathrm{if}\ \mathcal{T}=(i,*).\end{array}\right. \end{equation}
To $\widetilde{\lambda}=(\lambda,\mathcal{T})\in\widetilde{\cI}_{(i,*)}$, we associate a Jordan--H\"older factor of $\rInj_{\tGamma}\sigma$, as follows:
\begin{equation}\label{equation-tildelambda-sigma}\widetilde{\lambda}(\sigma)=\left\{\begin{array}{lll} \lambda(\sigma)&\mathrm{if}\ \mathcal{T}=\emptyset\\
\lambda(\delta_i^{*}(\sigma))&\mathrm{if}\ \mathcal{T}=(i,*).\end{array}\right.\end{equation}
It is a direct consequence of Proposition \ref{prop-structure-JProj} and Lemma \ref{lemma-newweight} that  any Jordan--H\"older factor of $\rInj_{\tGamma}\sigma$ is isomorphic to $\widetilde{\lambda}(\sigma)$ for some pair $(i,*)$ and some $\widetilde{\lambda}\in\widetilde{\cI}_{(i,*)}$.

\begin{corollary}\label{cor-I(sigmatau)-JH}
Fix $(i,*)\in\cS\times\{+,-\}$ and let $\widetilde{\lambda}\in\widetilde{\cI}_{(i,*)}$. Then the Jordan--H\"older factors of $I(\sigma,\widetilde{\lambda}(\sigma))$ are
given by
\[\big\{\widetilde{\lambda}'(\sigma): \  \widetilde{\lambda}'\in\widetilde{\cI}_{(i,*)},\ \widetilde{\lambda}'\leq \widetilde{\lambda}\big\}\] and the graded pieces of its socle filtration\footnote{If $V$ is a $\tGamma$-module, $(V_k)_{k\geq 0}$ denotes the graded pieces of its socle filtration with convention $V_0=\soc(V)$.} are given by:
\[I(\sigma,\widetilde{\lambda}(\sigma))_k=\bigoplus_{\widetilde{\lambda}'\leq \widetilde{\lambda},\  \ell(\widetilde{\lambda}')=k }\widetilde{\lambda}'(\sigma).\]
In the exceptional case $f=1$, $\dim_{\F}\sigma=3$ and $\widetilde{\lambda}=(\mu_0^-,(0,-))$, we forget the Serre weight $\delta_0^-(\sigma)$ which corresponds to $\widetilde{\lambda}'=(x_0, (0,-))$ in $\JH\big(I(\sigma,\widetilde{\lambda}(\sigma))\big)$, and set $\ell(\widetilde{\lambda})=2$ in the   formula of the socle filtration.  \end{corollary}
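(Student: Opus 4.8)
The plan is to deduce Corollary \ref{cor-I(sigmatau)-JH} from the structural results already established, namely Proposition \ref{prop-BP-4.11} (which handles the old part), Theorem \ref{thm-I(sigmatau)-tGamma} (which gives the short exact sequence \eqref{equation-I(sigmatau)-new}), and Lemma \ref{lemma-newweight}. The two assertions (the set of Jordan--H\"older factors, and the socle filtration) will both be proved by induction on $\ell(\widetilde\lambda)$; the base case $\ell(\widetilde\lambda)=0$ (so $\widetilde\lambda=((x_0,\dots,x_{f-1}),\emptyset)$, giving $\sigma$ itself) is trivial.

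First I would dispose of the case $\mathcal{T}=\emptyset$, i.e. $\tau=\widetilde\lambda(\sigma)=\lambda(\sigma)$ is an old Serre weight. By Theorem \ref{thm-I(sigmatau)-tGamma}(i), $I(\sigma,\tau)$ is the representation $I(\sigma,\tau)$ of Proposition \ref{prop-multione-BP}, which lives inside $\rInj_\Gamma\sigma$, so it is a $\Gamma$-representation. Its Jordan--H\"older factors are given by \cite[Cor.~4.11]{BP} (quoted here as Proposition \ref{prop-BP-4.11}) as $\{\lambda'(\sigma):\lambda'\in\cI(x_0,\dots,x_{f-1}),\ \lambda'\leq\lambda\}$, and these are exactly the $\widetilde\lambda'(\sigma)$ with $\widetilde\lambda'=(\lambda',\emptyset)\leq\widetilde\lambda$, since in this case the second bullet of Definition \ref{defn:tildeI} cannot apply. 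For the socle filtration in the old case one invokes \cite[Cor.~4.11]{BP} again, which gives precisely that the $k$-th graded piece of the socle filtration of $I(\sigma,\tau)$ is $\bigoplus_{\lambda'\leq\lambda,\,|\cS(\lambda')|=k}\lambda'(\sigma)$; this matches $\ell(\lambda',\emptyset)=|\cS(\lambda')|$ by \eqref{equation-length-tildeIambda}.

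Next I would treat the new case $\mathcal{T}=(i,*)$, so $\tau=\lambda(\delta)$ with $\delta=\delta_i^*(\sigma)$ and $\lambda$ satisfying \eqref{equation-condition-newweight}. Here I use the short exact sequence \eqref{equation-I(sigmatau)-new} of Theorem \ref{thm-I(sigmatau)-tGamma}(ii):
\[0\ra I(\sigma,\tau_!)\ra I(\sigma,\tau)\ra I(\delta,\tau)\ra0,\qquad \tau_!=\lambda_!(\sigma).\]
The submodule $I(\sigma,\tau_!)$ is old, so by the previous paragraph its Jordan--H\"older factors are the $\widetilde\lambda'(\sigma)$ with $\widetilde\lambda'=(\lambda',\emptyset)$ and $\lambda'\leq\lambda_!$; by the second bullet of Definition \ref{defn:tildeI} these are exactly the $\widetilde\lambda'\leq\widetilde\lambda$ with $\mathcal{T}'=\emptyset$. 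For the quotient $I(\delta,\tau)$, note that $\delta$ is $0$-generic (hence, by \eqref{eq:sigma-H1} and the genericity conventions, $1$-generic, so Proposition \ref{prop-multione-BP} and Proposition \ref{prop-BP-4.11} apply to $\delta$), and $I(\delta,\tau)$ sits inside $\rInj_\Gamma\delta$; its Jordan--H\"older factors are the $\lambda'(\delta)$ with $\lambda'\in\cI(x_0,\dots,x_{f-1})$, $\lambda'\leq\lambda$. By Lemma \ref{lemma-newweight} each such $\lambda'(\delta)$ is a new Serre weight of $\rInj_{\tGamma}\sigma$, and one identifies $\lambda'(\delta)=\widetilde\lambda'(\sigma)$ for $\widetilde\lambda'=(\lambda',(i,*))$; the condition $\lambda'\leq\lambda$ together with $\lambda'$ satisfying \eqref{equation-condition-newweight} is exactly $\widetilde\lambda'\leq\widetilde\lambda$ with $\mathcal{T}'=(i,*)$. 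Combining the two, $\JH(I(\sigma,\tau))=\{\widetilde\lambda'(\sigma):\widetilde\lambda'\leq\widetilde\lambda\}$ as claimed (and disjointness of the old and new contributions, so no multiplicity issue arises, follows from Corollary \ref{cor-I(sigmatau)-multione}, which says $I(\sigma,\tau)$ is multiplicity free).

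For the socle filtration in the new case, the key point is that the socle filtration of $I(\sigma,\tau)$ is built compatibly from those of the submodule $I(\sigma,\tau_!)$ and the quotient $I(\delta,\tau)$, shifted appropriately. Concretely, I would argue that $I(\sigma,\tau_!)$ is precisely the $K_1$-invariant part $I(\sigma,\tau)^{K_1}$ (this is exactly Step 2 of the proof of Lemma \ref{lemma-I(sigmatau)-case1}, where $V^{K_1}\cong I(\sigma,\tau_!)$), that the socle filtration of $I(\sigma,\tau_!)$ agrees with the induced filtration (its $k$-th piece being $\bigoplus_{\lambda'\leq\lambda_!,\,|\cS(\lambda')|=k}\lambda'(\sigma)$, matching $\ell(\lambda',\emptyset)=|\cS(\lambda')|$), and that the socle filtration of $I(\delta,\tau)$ (computed by Proposition \ref{prop-BP-4.11} applied to $\delta$: its $m$-th piece is $\bigoplus_{\lambda'\leq\lambda,\,|\cS(\lambda')|=m}\lambda'(\delta)$) contributes to the $(m+2)$-th socle layer of $I(\sigma,\tau)$; indeed a new Serre weight $\lambda'(\delta)$ cannot sit in the socle of any $\tGamma$-quotient whose smaller layers are all old, and by Lemma \ref{lemma-ext1-cSdifferbyi} the only old weights extending $\lambda'(\delta)$ are of the form $\lambda'_!(\sigma)$, whose socle-depth is $|\cS(\lambda'_!)|=|\cS(\lambda')|+1$ (since $i\notin\cS(\lambda')$ forces an extra element, by Remark \ref{remark-sigma!}), forcing $\lambda'(\delta)$ into layer $|\cS(\lambda')|+2=\ell(\lambda',(i,*))$. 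Assembling these gives $I(\sigma,\widetilde\lambda(\sigma))_k=\bigoplus_{\widetilde\lambda'\leq\widetilde\lambda,\,\ell(\widetilde\lambda')=k}\widetilde\lambda'(\sigma)$. The main obstacle I anticipate is this last step: verifying that the socle filtration does not "collapse", i.e.\ that a new weight $\lambda'(\delta)$ genuinely lands in layer exactly $\ell(\lambda',(i,*))$ and not earlier, which requires carefully combining the extension-vanishing statements of Lemma \ref{lemma-ext1-cSdifferbyi}(i), the uniqueness statement of Lemma \ref{lemma-ext1-cSdifferbyi}(ii), and the identification $I(\sigma,\tau)^{K_1}=I(\sigma,\tau_!)$; the exceptional case $f=1$, $\dim_\F\sigma=3$, $\widetilde\lambda=(\mu_0^-,(0,-))$ is handled separately using the variant short exact sequence $0\to I(\sigma,\tau_!)\to I(\sigma,\tau)\to\tau\to0$ of Theorem \ref{thm-I(sigmatau)-tGamma}(ii), exactly as flagged in the statement.
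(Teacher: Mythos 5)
Your proposal is correct and follows essentially the same route as the paper, whose proof is simply the one-line remark that the corollary is a reformulation of Theorem \ref{thm-I(sigmatau)-tGamma} (splitting into the old case handled by Proposition \ref{prop-BP-4.11}, i.e.\ \cite[Cor.~4.11]{BP}, and the new case handled by the exact sequence \eqref{equation-I(sigmatau)-new}). You have merely spelled out the bookkeeping — identifying the two halves of $\widetilde{\cI}_{(i,*)}$ with the sub and quotient of \eqref{equation-I(sigmatau)-new} and using Lemma \ref{lemma-ext1-cSdifferbyi} to pin down the socle depth of the new weights — which the paper leaves implicit.
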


\begin{proof}
It is a reformulation of Theorem \ref{thm-I(sigmatau)-tGamma} using Proposition \ref{prop-BP-4.11}.
\end{proof}

\section{Finite representation theory II}

\label{section:Rep-II}

In this section, we study the smooth representation theory of the Iwahori subgroup $I$ (over $\F$) and its relation to representation theory of $K=\GL_2(\cO_L)$ studied in \S\ref{section-finiteRep}.

\subsection{$I$-Extensions}

Let $\alpha:H\ra \F^{\times}$ be the character sending $\smatr{[a]}00{[d]}$ to $ad^{-1}$, where $a,d\in\F_q^{\times}$. Let $\alpha_i:=\alpha^{p^i}$ for $i\in\cS$ (recall $\cS=\{0,\dots,f-1\}$).

\begin{lemma}\label{lemma:Ext1-chi}
If $\chi,\chi':I\ra \F^{\times}$ are smooth characters such that $\Ext^1_{I/Z_1}(\chi,\chi')\neq0$, then $\chi'\in\{\chi\alpha_i^{\pm1},\ i\in \cS\}$. Moreover, in this case we have $\dim_{\F}\Ext^1_{I/Z_1}(\chi,\chi')=1$.
\end{lemma}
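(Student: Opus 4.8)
The plan is to compute $\Ext^1_{I/Z_1}(\chi,\chi')$ directly via the structure of $I/Z_1 = H \ltimes (I_1/Z_1)$. Since $H$ has prime-to-$p$ order and $\F$ has characteristic $p$, the Hochschild--Serre spectral sequence collapses to give $\Ext^i_{I/Z_1}(\chi,\chi') \cong \Ext^i_{I_1/Z_1}(\F,\F \otimes \chi^{-1}\chi')^{H} = \left( H^i(I_1/Z_1,\F) \otimes \chi^{-1}\chi'\right)^{H}$, where $H$ acts on $H^i(I_1/Z_1,\F)$ by conjugation. So everything reduces to understanding $H^1(I_1/Z_1,\F)$ as an $H$-representation, i.e. to decomposing $\Hom(I_1/Z_1,\F) = \Hom(I_1^{\mathrm{ab}}/Z_1,\F)$ into $H$-eigencharacters.

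First I would recall (or record) that the Frattini quotient of $I_1/Z_1$ is an $\F_q$-vector space of dimension $2f$ (for $f\geq 2$; $3$ if one does not quotient by the center, but here we do quotient by $Z_1$, so the relevant count after removing the diagonal-center direction gives the eigencharacters coming from the ``upper unipotent'', the ``diagonal'', and the ``lower'' entries of $1 + p\mathrm{M}_2(\cO_L)$). Concretely, using the coordinates on $1+p\mathrm{M}_2(\cO_L)$ and reducing mod $p$ (and mod the relations coming from commutators, which land in $\fm_{I_1/Z_1}^2$), one gets that $H^1(I_1/Z_1,\F)$ is, as an $H$-module, a direct sum of characters, each appearing with multiplicity one among $\{\alpha_i^{\pm1} : i \in \cS\}$ together with possibly the trivial character (the diagonal directions), but after quotienting by $Z_1$ the surviving trivial-type directions must be checked carefully. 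The key point is that the \emph{non-trivial} eigencharacters occurring are exactly $\alpha_i^{\pm 1}$, $i\in\cS$, each with multiplicity one: the upper-triangular entry $\smatr{1}{p[\lambda^{p^i}]}{0}{1}$-type directions transform by $\alpha_i$, the lower ones by $\alpha_i^{-1}$. Then $\left(H^1(I_1/Z_1,\F)\otimes\chi^{-1}\chi'\right)^H$ is nonzero iff $\chi^{-1}\chi'$ is the inverse of one of these eigencharacters, i.e. $\chi' = \chi\alpha_i^{\pm 1}$, and when nonzero it is one-dimensional. One also has to dispose of the trivial eigencharacter: if $\chi' = \chi$, one must check $\Ext^1_{I/Z_1}(\chi,\chi) = 0$, which amounts to showing the trivial character does not occur in $H^1(I_1/Z_1,\F)^H$ — this is where quotienting by $Z_1$ is used (it removes the scalar direction, and the remaining diagonal direction $\mathrm{diag}(1+p u, 1)$ is not $H$-invariant... actually one checks the surviving $H$-invariants in $H^1$ vanish, so $\chi'=\chi$ is excluded, consistent with the stated list which omits it).

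The main obstacle I anticipate is the bookkeeping of the $H$-action on $H^1(I_1/Z_1,\F)$ precisely enough to nail down (a) that only $\alpha_i^{\pm1}$ occur among the nontrivial characters, (b) each with multiplicity exactly one, and (c) that the trivial character does not survive after killing $Z_1$ — equivalently, that $H^1(I_1/Z_1,\F)^H = 0$. This is essentially the $\GL_2(\Q_p)$-computation done $f$ times via the tensor-product decomposition of the graded algebra of $\F[\![I_1/Z_1]\!]$ alluded to in the introduction, so I would either cite the relevant computation from \cite{BHHMS} (the $\GL_2(\Q_p)$ case) and tensor it up, or carry out the short direct check with explicit matrix coordinates. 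Everything else — the collapse of the spectral sequence, taking $H$-invariants of a sum of characters — is formal.
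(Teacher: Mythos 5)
Your approach is essentially the one underlying the paper's proof: the paper simply cites \cite[Lem. 2.4(i)]{Hu10}, which in turn rests on \cite[Prop. 5.2]{Pa10}, and those results are proved exactly by the computation you outline — collapse of Hochschild--Serre using that $H$ has order prime to $p$, followed by decomposing $H^1(I_1/Z_1,\F)$ into $H$-eigencharacters, which are $\alpha_i^{\pm1}$, $i\in\cS$, each with multiplicity one. One correction to your parenthetical on the case $\chi'=\chi$: the diagonal direction $\mathrm{diag}(1+pu,1)$ \emph{is} fixed by $H$-conjugation (diagonal matrices commute), so that is not why the trivial eigencharacter fails to appear. The correct reason is that the diagonal directions of $I_1$, modulo the center $Z_1$, lie in the closure of $[I_1,I_1]I_1^p$ (they are generated by commutators of the upper and lower unipotent directions, cf. the relation $[e,f]=h$ placing $h$ in degree $2$ of $\gr(I_1/Z_1)$ in \S8.2), hence contribute nothing to $\Hom(I_1/Z_1,\F)$; this is what gives $\Ext^1_{I/Z_1}(\chi,\chi)=0$ and makes the stated list exhaustive. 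Also note the upper unipotent block of $I_1$ is $\smatr{1}{\cO_L}{0}{1}$, not $\smatr{1}{p\cO_L}{0}{1}$, though this does not affect the set of eigencharacters.
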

\begin{proof}
See \cite[Lem.~2.4(i)]{Hu10}, which is based on \cite[Prop.~5.2]{Pa10}.
\end{proof}

We denote by $\mathscr{E}(\chi)$ the set of characters $\chi'$ such that $\Ext^1_{I/Z_1}(\chi,\chi')\neq0$. For $\chi'\in\mathscr{E}(\chi)$, we denote by $E_{\chi',\chi}$ the unique nonsplit $I$-extension
\[0\ra \chi'\ra E_{\chi',\chi}\ra \chi\ra0.\]
Remark that  $K_1$ acts trivially on $E_{\chi',\chi}$ if and only if $\chi'=\chi\alpha_i$ for some $i\in\cS$, see \cite[Lem.~2.4(ii)]{Hu10}.

For a character $\chi:I\ra \F^{\times}$,  $\Proj_{I/Z_1}\chi$ denotes a projective envelope of $\chi$ in the category of pseudo-compact $\F[\![I/Z_1]\!]$-modules. For $n\geq 1$, define
\[W_{\chi,n}\defn\Proj_{I/Z_1}\chi/\fm^n,\]
where $\fm=\fm_{I_1/Z_1}$ denotes the maximal ideal of $\F[\![I_1/Z_1]\!]$.
Clearly, the Loewy length of $W_{\chi,n}$ is equal to $n$. We will mainly be concerned with the cases $n=2,3$. For example, $W_{\chi,2}$ fits in  a short exact sequence
\[0\ra \bigoplus_{i\in\cS}\chi\alpha_{i}^{\pm1}\ra W_{\chi,2}\ra \chi\ra0. \]
For $W_{\chi,3}$, we have $0\ra \soc_I(W_{\chi,3})\ra W_{\chi,3}\ra W_{\chi,2}\ra0$, with
 $\soc_{I}(W_{\chi,3})$ isomorphic to
\[\chi^{\oplus 2f}\oplus\bigoplus_{0\leq i\leq j\leq f-1}\chi\alpha_i\alpha_j\oplus\bigoplus_{0\leq i\leq j\leq f-1}\chi\alpha_i^{-1}\alpha_j^{-1}\oplus\bigoplus_{0\leq i\neq j\leq f-1}\chi\alpha_i\alpha_j^{-1},\]
see \cite[\S5.3]{BHHMS}.
Let $X''$ denote the direct sum of characters in $\soc_I(W_{\chi,3})$ which are not isomorphic to $\chi$  and set
\begin{equation}\label{eq:bar-W}\overline{W}_{\chi,3}\defn W_{\chi,3}/X''. \end{equation}
This representation will play a prominent role in the whole paper. By definition, $\overline{W}_{\chi,3}$ fits in the following exact sequence
\begin{equation}\label{eq:barW-sq1}0\ra \chi^{\oplus 2f}\ra \overline{W}_{\chi,3}\ra W_{\chi,2}\ra0.\end{equation}

\begin{lemma}\label{lemma-bar-W}
We have $\soc_{I}(\overline{W}_{\chi,3})\cong \chi^{\oplus 2f}$ and there exists a short exact sequence
\begin{equation}\label{eq:barW-sq2}0\ra \bigoplus_{\chi'\in\mathscr{E}(\chi)}E_{\chi,\chi'}\ra \overline{W}_{\chi,3}\ra\chi\ra0.
\end{equation}
\end{lemma}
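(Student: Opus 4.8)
The plan is to extract both short exact sequences from the definition \eqref{eq:bar-W} of $\overline{W}_{\chi,3}$ together with the socle structure of $W_{\chi,3}$ recalled just before the lemma. First I would establish the socle assertion $\soc_I(\overline{W}_{\chi,3})\cong\chi^{\oplus 2f}$. Recall that $X''$ is by construction the sum of all characters in $\soc_I(W_{\chi,3})$ not isomorphic to $\chi$, so that $\soc_I(W_{\chi,3})/(\soc_I(W_{\chi,3})\cap X'') \cong \chi^{\oplus 2f}$; the point is that $\soc_I(W_{\chi,3})\cap X'' = X''$ because $X''$ is already semisimple and contained in the socle. Hence $\overline{W}_{\chi,3}=W_{\chi,3}/X''$ contains $\chi^{\oplus 2f}$ as a semisimple subrepresentation, and I must check there is nothing more in its socle. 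Since $W_{\chi,3}$ has Loewy length $3$ and $X''$ lies in its socle, any simple subrepresentation of $\overline{W}_{\chi,3}$ lifts (via the preimage filtration) to a subrepresentation of $W_{\chi,3}$ of Loewy length $\leq 2$ whose image in $\overline{W}_{\chi,3}$ is simple; the extension \eqref{eq:barW-sq1} then forces this simple quotient to be $\chi$, because the only characters appearing in $\rad(W_{\chi,3})/\soc_I(W_{\chi,3})\cong \rad(W_{\chi,2})$ that can sit atop a copy of $X''$ are precisely the ones already accounted for, giving back a copy of $\chi$ in degree $0$ of $\overline{W}_{\chi,3}$. I would phrase this cleanly using that $\overline{W}_{\chi,3}$ is a quotient of $\Proj_{I/Z_1}\chi$ with $[\overline{W}_{\chi,3}:\chi']$ controlled by \eqref{eq:barW-sq1}, so $\Hom_I(\chi',\overline{W}_{\chi,3})=0$ for $\chi'\not\cong\chi$.

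Next I would produce \eqref{eq:barW-sq2}. Let $N\subseteq\overline{W}_{\chi,3}$ be the (unique) subrepresentation with $\overline{W}_{\chi,3}/N\cong\chi$, i.e. $N=\rad_I(\overline{W}_{\chi,3})$; it exists and is unique because $\cosoc_I(\overline{W}_{\chi,3})\cong\chi$, inherited from $\cosoc_I(\Proj_{I/Z_1}\chi)\cong\chi$. I claim $N\cong\bigoplus_{\chi'\in\mathscr{E}(\chi)}E_{\chi,\chi'}$. From \eqref{eq:barW-sq1}, $N$ is an extension of $\rad_I(W_{\chi,2})=\bigoplus_{i\in\cS}\chi\alpha_i^{\pm1}$ by $\chi^{\oplus 2f}$; note $\{\chi\alpha_i^{\pm1}:i\in\cS\}$ is exactly $\mathscr{E}(\chi)$ by Lemma \ref{lemma:Ext1-chi}, and there are $2f$ such characters (genericity of $\chi$ — which we may assume in context — ensures the $\chi\alpha_i^{\pm1}$ are pairwise distinct and $\neq\chi$). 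So $N$ has semisimple socle $\chi^{\oplus 2f}$, semisimple cosocle $\bigoplus_{\chi'\in\mathscr{E}(\chi)}\chi'$, and Loewy length $2$. To identify it, I would argue that the natural map $N\to\bigoplus_{\chi'\in\mathscr{E}(\chi)}(N$ pushed to its $\chi'$-cosocle quotient$)$ realizes $N$ as a subrepresentation of $\bigoplus_{\chi'}\Proj_{I/Z_1}\chi'/\fm^2$, and matching Jordan–Hölder multiplicities — $[N:\chi]=2f$ and $[N:\chi']=1$ for each of the $2f$ characters $\chi'\in\mathscr{E}(\chi)$ — forces $N=\bigoplus_{\chi'}E_{\chi,\chi'}$, each summand being the unique nonsplit extension of $\chi'$ by $\chi$. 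Here the key input is that each $E_{\chi,\chi'}$ is the \emph{unique} nonsplit extension, by the $\dim=1$ statement in Lemma \ref{lemma:Ext1-chi}, so the only possible such $N$ with the prescribed socle/cosocle/length is this direct sum.

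The main obstacle is the identification step for $N$: a priori $N$ could fail to split as a direct sum over $\mathscr{E}(\chi)$ — there could be a "diagonal" copy of some $\chi\alpha_i^{\pm1}$ glued to two different copies of $\chi$, or a copy of $\chi$ could be shared between two distinct $\chi',\chi''\in\mathscr{E}(\chi)$. The cleanest way around this is dimension counting on $\Ext$: since $\overline{W}_{\chi,3}=\Proj_{I/Z_1}\chi/X''$ and $X''$ is exactly the non-$\chi$ part of $\soc_I(\Proj_{I/Z_1}\chi/\fm^3)$, the subrepresentation $N$ is characterized inside $\Proj_{I/Z_1}\chi/\fm^3$ as the preimage of $\rad(W_{\chi,2})$; its structure is then read off from the known description $\soc_I(W_{\chi,3})\cong\chi^{\oplus 2f}\oplus(\text{the }X''\text{ part})$ cited from \cite[(43)]{BHHMS}, which says precisely that the only "degree-2" simples glued directly beneath each $\chi\alpha_i^{\pm1}$ inside $W_{\chi,3}$ are copies of $\chi$ (the other socle characters $\chi\alpha_i\alpha_j$, $\chi\alpha_i^{-1}\alpha_j^{-1}$, $\chi\alpha_i\alpha_j^{-1}$ are killed in $\overline{W}_{\chi,3}$), and with multiplicity one each by the $\dim\Ext^1=1$ statement. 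This gives $N\cong\bigoplus_{\chi'\in\mathscr{E}(\chi)}E_{\chi,\chi'}$ and hence \eqref{eq:barW-sq2}, completing the proof.
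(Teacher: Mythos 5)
Your overall strategy is sound and close in spirit to the paper's: both arguments ultimately come down to multiplicity counting in $\overline{W}_{\chi,3}$ using $\dim_{\F}\Ext^1_{I/Z_1}(\chi',\chi)=1$. The organization differs, though. For the socle, the paper never computes $\Hom_I(\chi',\overline{W}_{\chi,3})$ directly: it introduces the largest quotient $\overline{W}'_{\chi,3}$ of $\overline{W}_{\chi,3}$ with socle $\chi^{\oplus 2f}$, uses the Ext-counting argument of Corollary \ref{coro-Q-length3} to force $\rad(\overline{W}'_{\chi,3})/\soc(\overline{W}'_{\chi,3})\cong\bigoplus_{\chi'\in\mathscr{E}(\chi)}\chi'$, and then concludes $\overline{W}'_{\chi,3}=\overline{W}_{\chi,3}$ by comparing Jordan--H\"older factors. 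For \eqref{eq:barW-sq2}, it embeds each $E_{\chi,\chi'}$ separately (possible once $\chi'$ is known to lie in the middle layer) and checks that $\sum_{\chi'}E_{\chi,\chi'}$ is direct by showing the corresponding quotient $Q$ of $\overline{W}_{\chi,3}$ admits only $\chi$ as constituents and has irreducible cosocle, whence $Q=\chi$ because $\Ext^1_{I/Z_1}(\chi,\chi)=0$.

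The one genuine soft spot in your write-up is the exclusion of $\chi'\in\mathscr{E}(\chi)$ from $\soc_I(\overline{W}_{\chi,3})$. The reason you give first --- that $[\overline{W}_{\chi,3}:\chi']$ is ``controlled by \eqref{eq:barW-sq1}'' --- does not do the job: multiplicity one of $\chi'$ is perfectly compatible with $\chi'$ lying in the socle. What is actually needed is that for every lift $w$ of $\chi'=\chi\alpha_i^{\pm1}$ to the middle layer of $W_{\chi,3}$ one has $\fm_{I_1/Z_1} w\not\subseteq X''$, i.e.\ $\fm_{I_1/Z_1} w$ contains a nonzero copy of $\chi$; concretely, that $f_ie_i$ and $e_if_i$ are nonzero in $\fm_{I_1/Z_1}^2/\fm_{I_1/Z_1}^3$ and lie beneath $e_i$, resp.\ $f_i$. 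This is true (it is exactly the fact invoked later in the proof of Proposition \ref{prop:map-Pchi}(ii)), but it is strictly more than what the cited formula \cite[(43)]{BHHMS} records, since that formula lists only the socle characters of $W_{\chi,3}$ and not how they are glued to the middle layer. The same input is hidden in your identification of $N$: injectivity of $N\to\bigoplus_{\chi'}\Proj_{I/Z_1}\chi'/\fm_{I_1/Z_1}^2$ amounts to each quotient $N_{\chi'}$ being the nonsplit $E_{\chi,\chi'}$ rather than $\chi'$, which is again the same non-splitting statement. So your proposal is correct once this single structural fact about $\gr_{\fm_{I_1/Z_1}}\F[\![I_1/Z_1]\!]$ is supplied, or once you reroute the socle step through the paper's quotient argument, which avoids it.
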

\begin{proof}
We know that $\chi^{\oplus 2f}$ embeds in $\overline{W}_{\chi,3}$. Let $\overline{W}'_{\chi,3}$ be the largest quotient of $\overline{W}_{\chi,3}$ whose socle is isomorphic to $\chi^{\oplus  2f}$. We claim that $\overline{W}_{\chi,3}=\overline{W}_{\chi,3}'$ from which the first assertion follows.  By a similar argument as in the proof of Corollary \ref{coro-Q-length3}, we have
\[\dim_{\F}\rad(\overline{W}'_{\chi,3})/\soc(\overline{W}'_{\chi,3})\geq 2f.\]
Since $\rad(\overline{W}'_{\chi,3})/\soc(\overline{W}'_{\chi,3})\hookrightarrow \oplus_{\chi'\in\mathscr{E}(\chi)}\chi'$ and $|\mathscr{E}(\chi)|=2f$, the above inequality is an equality and the embedding is an isomorphism. Comparing Jordan--H\"older factors,  we get $\overline{W}_{\chi,3}=\overline{W}_{\chi,3}'$.

Prove \eqref{eq:barW-sq2}. Let $\chi'\in\mathscr{E}(\chi)$ which is a Jordan--H\"older factor of $\overline{W}_{\chi,3}$. Then as seen above $\chi'$ occurs in $\rad(\overline{W}_{\chi,3})/\soc(\overline{W}_{\chi,3})$.  Consequently, the extension $E_{\chi,\chi'}$ embeds in $\overline{W}_{\chi,3}$. Taking sum, we obtain an embedding $\sum_{\chi'\in\mathscr{E}(\chi)}E_{\chi,\chi'}\hookrightarrow \overline{W}_{\chi,3}$. To conclude it suffices to check that
\[\sum_{\chi'\in\mathscr{E}(\chi)}E_{\chi,\chi'}=\bigoplus_{\chi'\in\mathscr{E}(\chi)}E_{\chi,\chi'}.\]
It is equivalent to show $[\sum_{\chi'}E_{\chi,\chi'}:\chi]=2f$,  equivalently $[Q:\chi]=1$, where $Q$ denotes the quotient of $\overline{W}_{\chi,3}$ by $\sum_{\chi'}E_{\chi,\chi'}$.
  Since $[\overline{W}_{\chi,3}:\chi']=1$ for any $\chi'\in\mathscr{E}(\chi)$, we have  $[Q:\chi']=0$, and $Q$ admits only $\chi$ as subquotients.  Since the cosocle of $Q$ is $\chi$ and   $\Ext^1_{I/Z_1}(\chi,\chi)=0$, we must have $Q=\chi$. This finishes the proof.
 \end{proof}

\begin{corollary}
The $I$-representation $\overline{W}_{\chi,3}$ is annihilated by $\fm_{K_1}^2$.
\end{corollary}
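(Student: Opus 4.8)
The plan is to restrict $\overline{W}_{\chi,3}$ to the subgroup $K_1$ (note $Z_1\subseteq K_1\subseteq I_1$) and to prove that $\fm_{K_1}^{2}=\fm_{K_1/Z_1}^{2}$ annihilates it. Since $\fm_{K_1}$ is generated, as a closed left ideal of $\F[\![K_1/Z_1]\!]$, by the elements $g-1$ with $g$ in any topological generating set of $K_1$, the ideal $\fm_{K_1}^{2}$ is generated as a closed left ideal by the products $(g-1)(g'-1)$; hence it suffices to check that each such product, viewed inside $\F[\![I_1/Z_1]\!]\supseteq\F[\![K_1/Z_1]\!]$, kills $\overline{W}_{\chi,3}$. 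Throughout I would use the description of $\gr_{\fm_{I_1/Z_1}}\F[\![I_1/Z_1]\!]$ as the tensor product over $\cS$ of the corresponding graded algebra for $\GL_2(\Q_p)$, recalled in the introduction (from \cite{BHHMS}): it is the polynomial algebra $\F[Y_i^{+},Y_i^{-},H_i:i\in\cS]$ in which $Y_i^{\pm}$ has degree $1$ and $H$-weight $\alpha_i^{\pm1}$, while $H_i$ has degree $2$ and trivial $H$-weight.

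First I would choose a convenient topological generating set of $K_1=1+p\,\mathrm{M}_2(\cO_L)$: fix a $\Z_p$-basis $e_0,\dots,e_{f-1}$ of $\cO_L$ consisting of units (e.g.\ $1,\theta,\dots,\theta^{f-1}$ if $\cO_L=\Z_p[\theta]$), and take the elements $\smatr{1}{0}{pe_j}{1}$, $\smatr{1}{pe_j}{0}{1}$ and $\smatr{1+pe_j}{0}{0}{1}$ for $0\le j\le f-1$. I then pin down the $\fm_{I_1/Z_1}$-adic filtration degree of each $g-1$ in $\F[\![I_1/Z_1]\!]$. If $g=\smatr{1}{pe_j}{0}{1}=h^{p}$ with $h=\smatr{1}{e_j}{0}{1}$, then $g-1=(h-1)^{p}\in\fm_{I_1/Z_1}^{p}\subseteq\fm_{I_1/Z_1}^{2}$, using $\mathrm{char}\,\F=p$. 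If $g=\smatr{1+pe_j}{0}{0}{1}$, then modulo $Z_1$ the element $g$ is diagonal, hence fixed by $H$-conjugation, so the image of $g-1$ in $\gr^{1}_{\fm_{I_1/Z_1}}=\bigoplus_i(\F Y_i^{+}\oplus\F Y_i^{-})$ is $H$-invariant; since no $\alpha_i^{\pm1}$ is trivial (recall $q\ge3$), this image vanishes and $g-1\in\fm_{I_1/Z_1}^{2}$. Finally, for $g=\smatr{1}{0}{pe_j}{1}$ one has $g-1\in\fm_{I_1/Z_1}$ with image in $\gr^{1}$ lying in the subspace $\bigoplus_i\F Y_i^{-}$. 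Consequently a product $(g-1)(g'-1)$ with $g,g'$ from this set either lies in $\fm_{I_1/Z_1}^{3}$ (as soon as one of the two factors is of the second or third type, since the other lies in $\fm_{I_1/Z_1}$), or equals $\bigl(\smatr{1}{0}{pe_j}{1}-1\bigr)\bigl(\smatr{1}{0}{pe_k}{1}-1\bigr)\in\fm_{I_1/Z_1}^{2}$ with leading symbol in $\gr^{2}_{\fm_{I_1/Z_1}}$ lying in the span of $\{Y_i^{-}Y_\ell^{-}\}$, all of whose $H$-weights $\alpha_i^{-1}\alpha_\ell^{-1}$ are nontrivial (that the characters $\chi\alpha_i^{\pm1}\alpha_\ell^{\pm1}$ differ from $\chi$ is exactly the nondegeneracy implicit in the shape of $\soc_I(W_{\chi,3})$ recalled before \eqref{eq:bar-W}).

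It remains to feed this into the structure of $\overline{W}_{\chi,3}$. Being a quotient of $W_{\chi,3}=\Proj_{I/Z_1}\chi/\fm_{I_1/Z_1}^{3}$, the representation $\overline{W}_{\chi,3}$ is annihilated by $\fm_{I_1/Z_1}^{3}$ and, by Nakayama over the local ring $\F[\![I_1/Z_1]\!]$, is cyclic over $\F[\![I_1/Z_1]\!]$, generated by any $v$ lifting a generator of its cosocle $\chi$; moreover $\fm_{I_1/Z_1}^{2}\overline{W}_{\chi,3}\subseteq\soc_I(\overline{W}_{\chi,3})\cong\chi^{\oplus2f}$ by Lemma \ref{lemma-bar-W}, which is $\chi$-isotypic as an $H$-representation. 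Now let $\theta=(g-1)(g'-1)$ be one of the products above. If $\theta\in\fm_{I_1/Z_1}^{3}$ then $\theta\,\overline{W}_{\chi,3}=0$. Otherwise $\theta\in\fm_{I_1/Z_1}^{2}$ has leading symbol $\overline{\theta}\in\gr^{2}_{\fm_{I_1/Z_1}}$ with only nontrivial $H$-weight components; since $\theta$ shifts the $\fm_{I_1/Z_1}$-adic filtration by $2$ and $\fm_{I_1/Z_1}^{3}\overline{W}_{\chi,3}=0$, it kills $\fm_{I_1/Z_1}\overline{W}_{\chi,3}$ and sends $v$ to $\overline{\theta}\cdot\bar v\in\gr^{2}_{\fm_{I_1/Z_1}}(\overline{W}_{\chi,3})=\fm_{I_1/Z_1}^{2}\overline{W}_{\chi,3}$; but $\bar v$ has $H$-weight $\chi$ while $\fm_{I_1/Z_1}^{2}\overline{W}_{\chi,3}$ is $\chi$-isotypic, and $\overline{\theta}\cdot\bar v$ has its $H$-weight components among the $\chi\alpha_i^{-1}\alpha_\ell^{-1}\ne\chi$, whence $\overline{\theta}\cdot\bar v=0$, so $\theta v=0$ and $\theta\,\overline{W}_{\chi,3}=0$. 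Summing over $g,g'$ yields $\fm_{K_1}^{2}\overline{W}_{\chi,3}=0$. The only genuinely delicate point is the filtration-degree bookkeeping for the generators of $K_1$ inside $\F[\![I_1/Z_1]\!]$ — in particular that only the lower-unipotent generators contribute in degree $1$; granting that, the rest is a formal weight count.
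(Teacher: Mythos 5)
Your proof is correct, but it takes a genuinely different route from the paper's. The paper argues structurally: using Lemma \ref{lemma-bar-W} it filters $\overline{W}_{\chi,3}$ by a subrepresentation $W'\cong\chi^{\oplus f}\oplus\big(\oplus_{j\in\cS}E_{\chi,\chi\alpha_j^{-1}}\big)$ with quotient $W''\hookrightarrow\oplus_{j\in\cS}E_{\chi\alpha_j,\chi}$, and both pieces are annihilated by $\fm_{K_1}$ because the only nonsplit $I$-extensions of characters on which $K_1$ acts trivially are exactly the $E_{\chi\alpha_i,\chi}$ (the fact from \cite[Lem. 2.4(ii)]{Hu10} recalled after Lemma \ref{lemma:Ext1-chi}); hence $\fm_{K_1}\overline{W}_{\chi,3}\subseteq W'$ and $\fm_{K_1}^2$ kills everything. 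You instead compute directly with topological generators of $K_1/Z_1$ inside $\F[\![I_1/Z_1]\!]$: the upper-unipotent generators, being $p$-th powers, and the diagonal generators, by an $H$-weight argument, already lie in $\fm_{I_1/Z_1}^2$, so the only products $(g-1)(g'-1)$ not automatically in $\fm_{I_1/Z_1}^3$ involve two lower-unipotent generators, and these are killed by comparing the $H$-weights $\chi\alpha_i^{-1}\alpha_\ell^{-1}\neq\chi$ of their effect on the cyclic generator against the $\chi$-isotypic socle. Your argument is longer and leans on the graded description of $\F[\![I_1/Z_1]\!]$ from \cite{BHHMS}, but it avoids the classification of $K_1$-trivial extensions and makes explicit which graded pieces of the $\fm_{I_1/Z_1}$-adic filtration the ideal $\fm_{K_1}$ sees, which is the point of view exploited later in \S\ref{section-HA}. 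Two harmless imprecisions: your $3f$ elements topologically generate $K_1/Z_1$ rather than $K_1$ itself (which is all that is needed, since $\fm_{K_1}=\fm_{K_1/Z_1}$), and the reduction of $\fm_{K_1}^2$-annihilation to the vanishing of the operators $(g-1)(g'-1)$ uses that the augmentation ideal is generated by the elements $g-1$ both as a left and as a right ideal; neither affects the validity of the argument.
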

\begin{proof}
Let $W'$ be the subrepresentation of $\overline{W}_{\chi,3}$ defined by
\[0\ra \soc(\overline{W}_{\chi,3})\ra W'\ra \bigoplus_{j\in\cS}\chi\alpha_j^{-1}\ra0\]
 and $W''$ the corresponding quotient.
It is easy to see that $W'$ is isomorphic to $\chi^{\oplus f}\oplus (\oplus_{j\in\cS}E_{\chi,\chi\alpha_j^{-1}})$, thus $W'$ is  annihilated  by $\fm_{K_1}$, as each $E_{\chi,\chi\alpha_j^{-1}}$ is.  On the other hand, by \eqref{eq:barW-sq2}, $W''$ embeds in $\oplus_{j\in\cS}E_{\chi\alpha_j,\chi}$, hence is also annihilated by $\fm_{K_1}$. The result follows.
\end{proof}

\subsection{Induced representations}
 \label{subsection:PS}

In this subsection, we  study the structure of $\Ind_{I}^KW_{\chi,2}$.

Let $r$ be the unique integer in $\{0,\dots,q-2\}$ such that $\chi(\smatr{a}00{d})=a^r\eta(ad)$ for some character $\eta:\F_q^{\times}\ra \F^{\times}$. Write $r=\sum_{i\in\cS}p^ir_i$ with $0\leq i\leq p-1$. For $n\geq 0$, we say $\chi$ is \emph{$n$-generic}, if $n\leq r_i\leq p-2-n$ for all $i$.

Following \cite[\S2]{BP}, we let $\mathscr{P}(x_0,\cdots,x_{f-1})$  be the subset of $\cI(x_0,\cdots,x_{f-1})$ consisting of  $\lambda$ such that $\lambda_0(x_0)\in\{x_0,p-1-x_0\}$ if $f=1$, and if $f\geq 2$,
\[\lambda_i(x_i)\in\{x_i,x_i-1,p-2-x_i,p-1-x_i\},\ \ \forall i\in\cS.\]
For $\lambda\in\mathscr{P}(x_0,\cdots,x_{f-1})$, set
\[J(\lambda)\defn\{i\in\cS,\ \lambda_i(x_i)\in\{p-2-x_i,p-1-x_i\}\}\subseteq \cS.\]
In this way, one checks that $\mathscr{P}(x_0,\cdots,x_{f-1})$ is in bijection to the set of subsets of $\cS$. By \cite[Lem.~2.2]{BP}, $\Ind_I^K\chi$ is multiplicity free with Jordan--H\"older factors  \[(\lambda_0(r_0),\cdots,\lambda_{f-1}(r_{f-1}))\otimes{\det}^{e(\lambda)(r_0,\cdots,r_{f-1})}\eta \]
for  $\lambda\in\mathscr{P}(x_0,\cdots,x_{f-1})$. For notational convenience, if $\tau\in\JH(\Ind_I^K\chi)$ and corresponds to $\lambda$, we also write $J(\tau)$ for $J(\lambda)$.

Since we prefer to use $\Ind_I^K\chi$ rather than $\Ind_I^K\chi^s$, to be compatible with the notation in \cite[\S2]{BP}, we introduce the following notation. Let $\sigma_{\emptyset}=(p-1-r_0,\cdots,p-1-r_{f-1})\otimes{\det}^{r}\eta$. For $J\subset \cS$,  let $\lambda_J\in\mathscr{P}(x_0,\cdots,x_{f-1})$ be the unique element with $J(\lambda_J)=J$ and set  (as in \eqref{eq:defn-lambda(sigma)})
\[\sigma_{J}\defn \lambda_J(\sigma_{\emptyset}).\]
Note that in the case $r\neq0$, the socle (resp. cosocle) of $\Ind_I^K\chi$ is irreducible and isomorphic to $\sigma_{\emptyset}$ (resp. $\sigma_{\cS}$).  With the notation introduced at the beginning of \S\ref{section-finiteRep}, we have $\sigma_{\emptyset}=\sigma_{\chi^s}$ and $\sigma_{\cS}=\sigma_{\chi}$. Moreover, $\chi$ is $n$-generic if and only if $\sigma_{\chi}$ is $n$-generic.

\begin{lemma}\label{lemma:IndW2-multione}
Let $\chi$ be a $2$-generic character. Then $\Ind_{I}^KW_{\chi,2}$ is multiplicity free.
\end{lemma}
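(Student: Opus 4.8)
The plan is to compute $\Ind_I^KW_{\chi,2}$ by applying the exact functor $\Ind_I^K$ to the $\fm$‑adic filtration of $W_{\chi,2}$, and then to reduce multiplicity freeness to a combinatorial disjointness statement for Jordan--H\"older factors that is controlled by Lemma~\ref{lemma:genericity}. Concretely, I would first apply $\Ind_I^K$ (which is exact) to the short exact sequence of $I$‑representations
\[0\ra \soc_I(W_{\chi,2})\ra W_{\chi,2}\ra \chi\ra 0,\qquad \soc_I(W_{\chi,2})\cong\bigoplus_{i\in\cS}\big(\chi\alpha_i\oplus\chi\alpha_i^{-1}\big),\]
obtaining a short exact sequence $0\ra \bigoplus_{i\in\cS}\big(\Ind_I^K\chi\alpha_i\oplus\Ind_I^K\chi\alpha_i^{-1}\big)\ra \Ind_I^KW_{\chi,2}\ra \Ind_I^K\chi\ra 0$. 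Since Jordan--H\"older multiplicities are additive along this sequence, $\Ind_I^KW_{\chi,2}$ is multiplicity free if and only if each of the $2f+1$ principal series $\Ind_I^K\chi$, $\Ind_I^K\chi\alpha_i$, $\Ind_I^K\chi\alpha_i^{-1}$ ($i\in\cS$) is multiplicity free \emph{and} their sets of Jordan--H\"older factors are pairwise disjoint. The $2$‑genericity of $\chi$ forces each of these $2f+1$ characters to be $0$‑generic, so the multiplicity freeness of each of these principal series, together with the description of its Jordan--H\"older factors as $\{\lambda(\sigma_{\psi^s}):\lambda\in\mathscr{P}(x_0,\cdots,x_{f-1})\}$, is part of the recollections of \S\ref{subsection:PS} (i.e. \cite[\S2]{BP}).

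The heart of the argument is then the disjointness. A direct computation of highest-weight ($I_1$‑invariant) characters, using $(\chi\alpha_i)^s=\chi^s\alpha_i^{-1}$, identifies the socles $\soc_K\Ind_I^K\chi\alpha_i\cong\delta_i^{-}(\sigma_{\chi^s})$ and $\soc_K\Ind_I^K\chi\alpha_i^{-1}\cong\delta_i^{+}(\sigma_{\chi^s})$, both of which are genuine Serre weights thanks to the $2$‑genericity. Hence, writing $\sigma_0\defn\sigma_{\chi^s}$, the Jordan--H\"older factors of the $2f+1$ principal series are respectively the weights
\[\lambda(\sigma_0),\qquad (\lambda\circ\delta_i^{-})(\sigma_0),\qquad (\lambda\circ\delta_i^{+})(\sigma_0),\qquad \lambda\in\mathscr{P}(x_0,\cdots,x_{f-1}),\]
omitting the undefined evaluations; and the lemma reduces to showing that these $(2f+1)\cdot 2^f$ evaluations are pairwise non-isomorphic as Serre weights. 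By Lemma~\ref{lemma:genericity} -- whose hypothesis \eqref{eq:cond-lambda} holds for every pair of the $f$‑tuples above, by a direct table check in the spirit of Lemma~\ref{lemma:cond-for-cI} -- it is enough to check that the $f$‑tuples themselves are pairwise distinct.

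That distinctness is verified at the single coordinate moved by a $\delta_i^{\pm}$: for $\lambda\in\mathscr{P}$ one has $\lambda_i(x_i)\in\{x_i,x_i-1,p-2-x_i,p-1-x_i\}$, so $\lambda_i(x_i\mp2)$ lies in $\{x_i\mp2,\,x_i\mp2-1,\,p-2-x_i\pm2,\,p-1-x_i\pm2\}$, and separating the $\Z+x_i$‑ and $\Z-x_i$‑parts one sees this set is disjoint both from the unshifted set $\{x_i,x_i-1,p-2-x_i,p-1-x_i\}$ and from the oppositely shifted set; on the other hand, two tuples shifted in the same coordinate by the same amount are equal only when their $\mathscr{P}$‑parts coincide. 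Combining these observations shows all $(2f+1)2^f$ Serre weights are distinct, and by the additivity noted above they are then exactly the Jordan--H\"older factors of $\Ind_I^KW_{\chi,2}$, each occurring once.

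I expect the main obstacle to be precisely this last combinatorial comparison, and in particular the fact that one genuinely needs Lemma~\ref{lemma:genericity} rather than a bare comparison of Symmetric powers: the Symmetric powers of two of these Serre weights can coincide accidentally (for instance, if some $r_i=(p-3)/2$ then the component $r_i$ of a factor of $\Ind_I^K\chi$ agrees with the component $p-3-r_i$ of a factor of $\Ind_I^K\chi\alpha_i$), and what then separates them is the determinant twist, encoded by the congruence \eqref{eq:equal-det} in the proof of Lemma~\ref{lemma:genericity}; this is exactly where the genericity hypothesis is used. One should also dispatch the mild boundary cases ($r_i=2$ or $r_i=p-4$, where a shifted evaluation can be undefined or produce a weight with a $0$ or $p-1$ entry), which cause no trouble since one simply discards the undefined factors.
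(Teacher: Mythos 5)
Your argument is correct, and it is essentially the argument the paper has in mind: the paper's own proof of Lemma \ref{lemma:IndW2-multione} is just the one-line ``direct check using the $2$-genericity of $\chi$'', and your write-up fills in exactly that check (exactness of $\Ind_I^K$ applied to the socle filtration of $W_{\chi,2}$, identification of the socles of $\Ind_I^K\chi\alpha_i^{\pm1}$ as $\delta_i^{\mp}(\sigma_{\chi^s})$ as in Lemmas \ref{lemma-Esigma'-occur-plus}--\ref{lemma-Esigma'-occur-minus}, and separation of the resulting $f$-tuples via Lemma \ref{lemma:genericity}, whose hypothesis \eqref{eq:cond-lambda} is verified by the same coordinate-wise table you describe). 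Your closing remark about why the determinant twist, and hence the genericity, is genuinely needed is also accurate.
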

\begin{proof}
This is a direct check using the $2$-genericity of $\chi$.
\end{proof}

\begin{definition}
Let $\tau$ (resp. $\tau'$) be a Jordan--H\"older factor of $\Ind_I^K\chi$ (resp. $\Ind_I^K\chi'$) such that $\Ext^1_{K/Z_1}(\tau,\tau')\neq 0$. We say that the extension $E_{\tau',\tau}$ occurs in $\Ind_I^KE_{\chi',\chi}$ if $\Ind_{I}^KE_{\chi',\chi}$ admits a subquotient isomorphic to $E_{\tau',\tau}$.
\end{definition}

\begin{lemma}\label{lemma:J-J'}
Let $\lambda,\lambda'\in \mathscr{P}(x_0,\cdots,x_{f-1})$. Assume that for some $j\in\cS$,
\[j-1\notin J(\lambda),\ \ J(\lambda')=J(\lambda)\cup \{j-1\}.\]
If $f=1$, then $\lambda_0(x_0)=x_0$, $\lambda'_0(x_0)=p-1-x_0$, and $\lambda'=\mu_0^{-}\circ\lambda$.  If $f\geq 2$, then 
\[\lambda'=\left\{\begin{array}{ll}
\mu_j^{-}\circ\lambda & \mathrm{if}\ \lambda_j(x_j)=x_j\ (\Leftrightarrow \lambda_j'(x_j)=x_j-1) \\
\mu_j^{+}\circ\lambda &\mathrm{if}\ \lambda_j(x_j)=p-2-x_j \ (\Leftrightarrow \lambda_j'(x_j)=p-1-x_j).
\end{array}\right. \]
Conversely, given $\lambda$ such that $j-1\notin J(\lambda)$ and define $\lambda'$ by the above formula, then $J(\lambda')=J(\lambda)\cup\{j-1\}$.
\end{lemma}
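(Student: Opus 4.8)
\textbf{Proof plan for Lemma \ref{lemma:J-J'}.}

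The plan is to prove the statement by a direct case-by-case verification, closely following the computational approach already used for Lemma \ref{lemma-compose-lambda} and Lemma \ref{lemma-lambda'=mu-lambda}, since all the information is encoded in the combinatorics of $\mathscr{P}(x_0,\cdots,x_{f-1})$ and the composition table of $f$-tuples. First I would recall precisely what the hypothesis $J(\lambda')=J(\lambda)\cup\{j-1\}$ with $j-1\notin J(\lambda)$ means component-wise: for $i\neq j-1$ we must have $i\in J(\lambda)\iff i\in J(\lambda')$, while $j-1\notin J(\lambda)$ and $j-1\in J(\lambda')$. By definition of $J$ and of $\mathscr{P}$, the condition $i\notin J(\lambda)$ forces $\lambda_i(x_i)\in\{x_i,x_i-1\}$ and $i\in J(\lambda)$ forces $\lambda_i(x_i)\in\{p-2-x_i,p-1-x_i\}$ (with the appropriate degenerate version when $f=1$); one must also remember the constraint (ii)--(iii) in the definition of $\cI(x_0,\cdots,x_{f-1})$ which links the ``type'' of $\lambda_i$ to that of $\lambda_{i+1}$.

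The key step is to pin down the relationship between $\lambda_i$ and $\lambda_i'$ at each index $i$, using that both lie in $\mathscr{P}(x_0,\cdots,x_{f-1})$ and that they define subsets differing exactly in the single element $j-1$. For $f\geq 2$: at the index $j-1$ itself, since $j-1\notin J(\lambda)$ we have $\lambda_{j-1}(x_{j-1})\in\{x_{j-1},x_{j-1}-1\}$ and since $j-1\in J(\lambda')$ we have $\lambda'_{j-1}(x_{j-1})\in\{p-2-x_{j-1},p-1-x_{j-1}\}$; the membership rule (iii) relating index $j-1$ to index $j$, combined with $j-1\notin J(\lambda)$ (so rule (ii) applies to the pair $(j-1,j)$ for $\lambda$) versus $j-1\in J(\lambda')$ (so rule (iii) applies for $\lambda'$), forces a specific change at index $j$ as well — this is exactly the dichotomy stated: if $\lambda_j(x_j)=x_j$ then $\lambda'_j(x_j)=x_j-1$, and if $\lambda_j(x_j)=p-2-x_j$ then $\lambda'_j(x_j)=p-1-x_j$ (the other a priori options $\lambda_j(x_j)=x_j-1$ and $\lambda_j(x_j)=p-1-x_j$ are ruled out by requiring $j\in J(\lambda)\iff j\in J(\lambda')$ and by the $\mathscr{P}$-membership constraints). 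At all other indices $i\notin\{j-1,j\}$ one shows $\lambda_i(x_i)=\lambda'_i(x_i)$: they must have the same $J$-membership, hence lie in the same pair of values, and the ``same type propagates'' rules (ii)--(iii) together with matching at the neighbours force equality — here I would invoke an inductive propagation argument around the cycle $\cS$, just as in the proof of Lemma \ref{lemma-unique-delta} or Lemma \ref{lemma:genericity}. Finally, comparing with the composition table of Lemma \ref{lemma-compose-lambda} one reads off that the resulting $\lambda'$ equals $\mu_j^{-}\circ\lambda$ in the first case and $\mu_j^{+}\circ\lambda$ in the second (recalling $(\mu_j^{\pm})_{j}(x_j)=x_j\pm 1$, $(\mu_j^{\pm})_{j-1}(x_{j-1})=p-2-x_{j-1}$, and $(\mu_j^{\pm})_i(x_i)=x_i$ otherwise). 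The case $f=1$ is the degenerate special case: $\mathscr{P}(x_0)=\{x_0,p-1-x_0\}$, $j-1=j=0$, and $j-1\notin J(\lambda)$ forces $\lambda_0(x_0)=x_0$ while $j-1\in J(\lambda')$ forces $\lambda_0'(x_0)=p-1-x_0$, and by the convention $\mu_0^-(x_0)=p-1-x_0$ (Definition \ref{def:delta}) we get $\lambda'=\mu_0^-\circ\lambda$; this is consistent with \eqref{eq:id=mumu}.

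For the converse, I would simply start from $\lambda\in\mathscr{P}(x_0,\cdots,x_{f-1})$ with $j-1\notin J(\lambda)$, define $\lambda'$ by the displayed formula, and verify directly that $\lambda'\in\mathscr{P}(x_0,\cdots,x_{f-1})$ (using Lemma \ref{lemma-compose-lambda}(ii) to see the values stay in the allowed set, and checking the constraints (ii)--(iii) at indices $j-1$ and $j$ by hand) and that $J(\lambda')=J(\lambda)\cup\{j-1\}$ by inspecting which components now take values in $\{p-2-x_i,p-1-x_i\}$. The main obstacle is purely bookkeeping: one must carefully track the ``type'' constraints (ii)--(iii) from the definition of $\cI(x_0,\cdots,x_{f-1})$ to be sure that altering membership of $j-1$ in $J$ forces a change precisely at index $j$ and nowhere else, and that no other configuration of $\lambda'$ is possible; this is a finite but slightly delicate check, entirely analogous to the verifications already carried out in \S\ref{section-finiteRep}, so I would present it tersely as ``a direct check using the table in the proof of Lemma \ref{lemma-compose-lambda}.''
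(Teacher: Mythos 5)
Your proposal is correct and follows the same route as the paper, which disposes of this lemma with exactly the direct component-wise check you describe (the value of $\lambda_i$ in $\mathscr{P}(x_0,\cdots,x_{f-1})$ is determined by the memberships of $i$ and $i-1$ in $J(\lambda)$, so changing $J$ only at $j-1$ alters precisely the components at $j-1$ and $j$, in the way matched by $\mu_j^{\mp}$). The only cosmetic remark is that no genuine induction around the cycle is needed for the indices $i\notin\{j-1,j\}$: equality $\lambda_i=\lambda'_i$ there is immediate from the determination just mentioned.
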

\begin{proof}
This is a direct check by definition of $\mathscr{P}(x_0,\cdots,x_{f-1})$.
\end{proof}

\emph{From now on, let $\chi$ be a $2$-generic character of $I$  and $\chi'\in\mathscr{E}(\chi)$.}

\begin{lemma}\label{lemma-Esigma'-occur-plus}

Assume $\chi'=\chi\alpha_j^{-1}$ for some $j\in\cS$. Let $\tau$ (resp. $\tau'$) be a  Jordan--H\"older factor of $\Ind_I^K\chi$ (resp. $\Ind_I^K\chi'$), with  parametrizing subset $J(\tau)$ (resp. $J(\tau')$). Then the following statements are equivalent:
\begin{enumerate}
\item[(i)] $\Ext^1_{\Gamma}(\tau',\tau)\neq0$;
\item[(ii)]  $j-1\notin J(\tau)$ and $J(\tau')=J(\tau)\cup \{j-1\}$.\end{enumerate}
If these conditions hold, then $E_{\tau',\tau}$ occurs  in  $\Ind_{I}^KE_{\chi',\chi}$.
\end{lemma}

\begin{proof}
We only treat the case $f\geq 2$ (the case $f=1$ can be treated similarly).   Let $\sigma_{\emptyset}$ (resp. $\sigma_{\emptyset}'$) be the socle of $\Ind_I^K\chi$ (resp. $\Ind_I^K\chi'$), and $\lambda$ (resp. $\lambda'$) be the element of $\mathscr{P}(x_0,\cdots,x_{f-1})$ such that $\tau=\lambda(\sigma_{\emptyset})$ (resp. $\tau'=\lambda'(\sigma_{\emptyset}')$). One checks that
$\sigma_{\emptyset}=\delta_j^{-}(\sigma_{\emptyset}')$.

Assume (ii) holds.  Using Lemma \ref{lemma:J-J'},  we have  (since $f\geq 2$)
\[\tau=\lambda(\sigma_{\emptyset})=(\lambda\circ \delta_j^{-})(\sigma_{\emptyset}')=(\mu_j^{*}\circ\lambda'\circ\delta_j^{-})(\sigma_{\emptyset}')\]
where $*=+$ (resp. $*=-$) if $\lambda_j(x_j)=x_j$ (resp. if $\lambda_j(x_j)=p-2-x_j$). As noted in the proof of Lemma \ref{lemma-ext1-cSdifferbyi}(ii), we have correspondingly $\lambda'\circ\delta_j^{-}=\delta_j^{-}\circ\lambda'$ (resp. $\lambda'\circ\delta_j^-=\delta_j^+\circ\lambda'$). Hence, we finally obtain $\tau=(\mu_j^{\mp}\circ\lambda')(\sigma_{\emptyset}')$ and proves (ii) $\Rightarrow$ (i). To prove (i) $\Rightarrow$ (ii), running back the above argument and using Lemma \ref{lemma:genericity}, we need to show that the equation $\lambda\circ\delta_j^-=\mu_i^*\circ\lambda'$ for $(i,*)\in\cS\times\{+,-\}$ admits a unique solution, and we may conclude by Lemma \ref{lemma:J-J'}. We leave the details to the reader.

The last statement is a consequence of \cite[Lem.~18.4]{BP}, which says that either $E_{\tau',\tau}$ or $E_{\tau,\tau'}$ occurs in  $\Ind_I^KE_{\chi',\chi}$,  but  it is clear that $E_{\tau,\tau'}$ can not occur.
\end{proof}

\begin{lemma}\label{lemma-Esigma'-occur-minus}
Assume $\chi'=\chi\alpha_j$ for some $j\in\cS$. Let $\tau$ (resp. $\tau'$) be a Jordan--H\"older factor of $\Ind_I^K\chi$ (resp. $\Ind_I^K\chi'$), with parametrizing subset $J(\tau)$ (resp. $J(\tau')$). Then the following statements are equivalent:
\begin{enumerate}
\item[(i)] $\Ext^1_{\Gamma}(\tau',\tau)\neq0$;
\item[(ii)]   $j-1\notin J(\tau')$ and $J(\tau)=J(\tau')\cup \{j-1\}$.
\end{enumerate}
If these conditions hold, then  $E_{\tau',\tau}$ occurs  in  $\Ind_{I}^KE_{\chi',\chi}$.
\end{lemma}
\begin{proof}
The equivalence (i) $\Leftrightarrow$ (ii) is checked as in Lemma \ref{lemma-Esigma'-occur-plus}. In particular, if we let $\sigma_{\emptyset}$, $\sigma_{\emptyset}'$, $\lambda$,  $\lambda'$ be as in the proof of \emph{loc. cit.}, then  $\tau=(\mu_{j}^{*}\circ\lambda')(\sigma_{\emptyset}')$. Note, however, that the assumption $\chi'=\chi\alpha_j$ implies $\sigma_{\emptyset}=\delta_j^{+}(\sigma_{\emptyset}')$.

Since $\Ind_I^KE_{\chi',\chi}$ is multiplicity free by Lemma \ref{lemma:IndW2-multione}, there exists a unique subrepresentation, say $V_{\tau}\subset \Ind_I^KE_{\chi',\chi}$, with cosocle $\tau$, and $E_{\tau',\tau}$ occurs in $\Ind_I^KE_{\chi',\chi}$ if and only if $V_{\tau}$ admits $E_{\tau',\tau}$ as a quotient. It is clear that $V_{\tau}$ fits in a short exact sequence
\begin{equation}\label{equation-Vtau}0\ra V_{\tau}\cap\Ind_I^K\chi'\ra V_{\tau}\ra I(\sigma_{\emptyset},\tau)\ra0.\end{equation}
Here, note that since $\Ind_I^KE_{\chi',\chi}$ is a $\Gamma$-representation as $E_{\chi',\chi}$ is by \cite[Lem.~2.4(ii)]{Hu10}, the representation $I(\sigma_{\emptyset},\tau)$ is well-defined by \S\ref{subsection:Gamma}.
We claim that $V_{\tau}\cap \Ind_I^K\chi'\neq0$. Otherwise, we would obtain a $K$-equivariant embedding $I(\sigma_{\emptyset},\tau)\hookrightarrow \Ind_I^KE_{\chi',\chi}$, hence a nonzero $I$-equivariant morphism $I(\sigma_{\emptyset},\tau)\ra E_{\chi',\chi}$ by Frobenius reciprocity. However,  using the explicit basis given in \cite[Lem.~2.7(ii)]{BP} and the assumption  $j-1\in J(\tau)$, one checks that $I(\sigma_{\emptyset},\tau)|_{I}$ does not admit $E_{\chi',\chi}$ as a quotient (this holds true even when $f=1$ in which case $I(\sigma_{\emptyset},\tau)$ is equal to $\Ind_I^K\chi$).

The claim implies that $\sigma_{\emptyset}'\hookrightarrow V_{\tau}$, hence $V_{\tau}$ admits $I(\sigma_{\emptyset}',\tau)$ as a quotient. It suffices to prove that $E_{\tau',\tau}$ occurs in $I(\sigma_{\emptyset}',\tau)$ (as a quotient),  or equivalently $\tau'$ is a subquotient of $I(\sigma_{\emptyset}',\tau)$. Note that $I(\sigma_{\emptyset}',\tau)$ is a $\Gamma$-representation, because $\Ind_{I}^KE_{\chi',\chi}$ is.   Viewing both $\tau$ and $\tau'$ as subquotients of $\rInj_{\Gamma}\sigma_{\emptyset}'$ and using  Proposition \ref{prop-BP-4.11}, it suffices to check that $\tau$ and $\tau'$ are compatible and $\cS(\tau')\subset \cS(\tau)$.
We have seen that $\tau=(\mu_{i}^{*}\circ\lambda')(\sigma_{\emptyset}')$ at the beginning of the proof. By Lemma \ref{lemma-compose-lambda}(ii), we have $\mu_i^{*}\circ\lambda'$ and $\lambda'$ are always compatible and
\[\cS(\mu_i^{*}\circ\lambda')=\{j\}\Delta\cS(\lambda')=\cS(\lambda')\cup \{j\};\]
here the last equality holds as
$j\notin\cS(\lambda')$ (equivalent to $j-1\notin J(\lambda')$, see \cite[\S2]{BP}). This completes the proof.
\end{proof}

Let $\tau$ be a Jordan--H\"older factor of $\Ind_I^KW_{\chi,2}$.
Since $\Ind_I^KW_{\chi,2}$ is multiplicity free, there exists a unique (up to scalar) nonzero $K$-equivariant morphism
\begin{equation}\label{equation-defn-phi2}\phi_{\tau,\chi,2}:\Proj_{\tGamma}\tau \ra \Ind_I^KW_{\chi,2}.\end{equation}
For our purposes,  $\chi$ will be fixed while $\tau$ may vary among subquotients of $\Ind_I^K\chi$, so we omit $\chi$ in the notation and write simply $\phi_{\tau,2}\defn\phi_{\tau,\chi,2}$. It is clear that  $[\Coker(\phi_{\tau,2}):\tau]=0$.

\begin{proposition}\label{prop-W2-coker-hasnoextension}
Assume $\tau$ is a Jordan--H\"older factor of $\Ind_I^K\chi$.
 Then $\Ext^1_{K/Z_1}(\tau',\tau)=0$ for any $\tau'\in \JH(\Ind_I^K\soc(W_{\chi,2}))\cap \JH(\Coker(\phi_{\tau,2}))$.
\end{proposition}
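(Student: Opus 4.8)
The plan is to argue by contradiction. Suppose $\Ext^1_{K/Z_1}(\tau',\tau)\neq0$ for some Serre weight $\tau'$ lying in both $\JH(\Ind_I^K\soc(W_{\chi,2}))$ and $\JH(\Coker(\phi_{\tau,2}))$, and set $M\defn\Ind_I^KW_{\chi,2}$, which is multiplicity free by Lemma~\ref{lemma:IndW2-multione}. First I would locate $\tau'$ precisely. From the short exact sequence $0\ra\bigoplus_{i\in\cS}\chi\alpha_i^{\pm1}\ra W_{\chi,2}\ra\chi\ra0$ and Lemma~\ref{lemma:Ext1-chi} one has $\soc_I(W_{\chi,2})\cong\bigoplus_{\chi'\in\mathscr{E}(\chi)}\chi'$, so $\Ind_I^K\soc(W_{\chi,2})\cong\bigoplus_{\chi'\in\mathscr{E}(\chi)}\Ind_I^K\chi'$ sits inside $M$ as a subrepresentation; since $M$ is multiplicity free, $\tau'$ is a Jordan--H\"older factor of $\Ind_I^K\chi'$ for a \emph{unique} $\chi'\in\mathscr{E}(\chi)$, so $\chi'=\chi\alpha_j^{-1}$ or $\chi'=\chi\alpha_j$ for some $j\in\cS$. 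Note also that $\tau\neq\tau'$: indeed $\tau\in\JH(\Ind_I^K\chi)$ (a quotient of $M$) while $\tau'\in\JH(\Ind_I^K\chi')\subseteq\JH(\Ind_I^K\soc(W_{\chi,2}))$, and these are disjoint by multiplicity-freeness of $M$.

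Next I would feed this into the combinatorial input of the previous lemmas. Using Lemma~\ref{lemma-Hu10-2.21} to identify $\Ext^1_{K/Z_1}(\tau',\tau)$ with $\Ext^1_{\Gamma}(\tau',\tau)$ (the weights occurring are $0$-generic as $\chi$ is $2$-generic, the finitely many boundary situations being checked directly), Lemma~\ref{lemma-Esigma'-occur-plus} (if $\chi'=\chi\alpha_j^{-1}$) or Lemma~\ref{lemma-Esigma'-occur-minus} (if $\chi'=\chi\alpha_j$) shows that the nonsplit extension $E_{\tau',\tau}$ occurs as a subquotient of $\Ind_I^KE_{\chi',\chi}$. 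Since $E_{\chi',\chi}$ is the quotient of $W_{\chi,2}$ by the sum of the characters of $\soc_I(W_{\chi,2})$ other than $\chi'$, the representation $\Ind_I^KE_{\chi',\chi}$ is a quotient of $M$, and therefore $E_{\tau',\tau}$ occurs as a subquotient of $M$ itself: there is a two-step filtration $M_1\subseteq M_2\subseteq M$ with $M_2/M_1\cong E_{\tau',\tau}$.

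Finally I would derive the contradiction from multiplicity-freeness. Since $M$ is multiplicity free and $E_{\tau',\tau}\cong M_2/M_1$ has Jordan--H\"older factors $\{\tau,\tau'\}$, the unique copies of $\tau$ and of $\tau'$ in $M$ both occur in $M_2/M_1$; in particular $\tau\notin\JH(M_1)$ and $\tau\notin\JH(M/M_2)$. Put $N\defn\im(\phi_{\tau,2})$, a nonzero quotient of $\Proj_{\tGamma}\tau$, so that $\Coker(\phi_{\tau,2})=M/N$ and $\mathrm{cosoc}_{\tGamma}(N)\cong\tau$, whence $\tau\in\JH(N)$. Intersecting the filtration with $N$ and using $N\cap M_1\subseteq M_1$ together with $N/(N\cap M_2)\hookrightarrow M/M_2$, the copy of $\tau$ must already occur in $(N\cap M_2)/(N\cap M_1)$, which embeds in $M_2/M_1\cong E_{\tau',\tau}$. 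As $E_{\tau',\tau}$ is uniserial of length $2$ with cosocle $\tau$ and socle $\tau'$, its only subobjects are $0$, $\tau'$ and $E_{\tau',\tau}$, and only the last contains $\tau$; hence $(N\cap M_2)/(N\cap M_1)\cong E_{\tau',\tau}$ and so $\tau'\in\JH(N)$. This contradicts $\tau'\in\JH(M/N)=\JH(\Coker(\phi_{\tau,2}))$ together with the multiplicity-freeness of $M$. Therefore $\Ext^1_{K/Z_1}(\tau',\tau)=0$.

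The substantive content is outsourced to Lemmas~\ref{lemma-Esigma'-occur-plus} and \ref{lemma-Esigma'-occur-minus}, which pin down where the extensions $E_{\tau',\tau}$ sit inside $\Ind_I^KE_{\chi',\chi}$; the only point in the present argument requiring real attention is the bookkeeping that isolates the single $\chi'$ with $\tau'\in\JH(\Ind_I^K\chi')$ and the clean reduction $\Ext^1_{K/Z_1}=\Ext^1_{\Gamma}$ for the weights involved. I do not expect a genuine obstacle: once $E_{\tau',\tau}$ is known to be a subquotient of $M$, multiplicity-freeness of $M$ forces that subquotient to live inside $N=\im(\phi_{\tau,2})$, which is exactly what rules out $\tau'\in\JH(\Coker(\phi_{\tau,2}))$.
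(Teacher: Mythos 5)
Your proof is correct and follows essentially the same route as the paper's: isolate the unique $\chi'\in\mathscr{E}(\chi)$ with $\tau'\in\JH(\Ind_I^K\chi')$, pass to the quotient $\Ind_I^KE_{\chi',\chi}$, and invoke Lemmas~\ref{lemma-Esigma'-occur-plus} and \ref{lemma-Esigma'-occur-minus}. The only difference is that you spell out explicitly the multiplicity-freeness bookkeeping (forcing the subquotient $E_{\tau',\tau}$ to lie inside $\im(\phi_{\tau,2})$) that the paper leaves implicit.
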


\begin{proof}
There exists a unique $\chi'\in\soc_I(W_{\chi,2})$ such that $\tau'\in \JH(\Ind_I^K\chi')$. Thus, by composing $\phi_{\tau,2}$ with the natural projection $\Ind_I^KW_{\chi,2}\twoheadrightarrow \Ind_I^KE_{\chi',\chi}$, we are reduced to the case of $\Proj_{\tGamma}\tau\ra \Ind_I^KE_{\chi',\chi}$ and we conclude by  Lemmas \ref{lemma-Esigma'-occur-plus} and \ref{lemma-Esigma'-occur-minus}.
\end{proof}

\subsubsection{Generalization}
In this subsection, we prove a generalization of Proposition \ref{prop-W2-coker-hasnoextension}. Let $\chi$ be a $2$-generic character of $I$.

\begin{proposition}\label{prop-coker-no-sigma}
Let $\tau$ be a Jordan--H\"older factor of $\Ind_I^K\overline{W}_{\chi,3}$.
\begin{enumerate}
\item[(i)] There exists a  morphism
 $\phi_{\tau}:\Proj_{\tGamma}\tau\ra \Ind_I^K\overline{W}_{\chi,3}$ such that
 $[\Coker(\phi_{\tau}):\tau]=0$.
\item[(ii)] For morphisms $\phi_{\tau}$ in (i),  $\im(\phi_{\tau})$ does not depend on  the choice of $\phi_{\tau}$.
 \end{enumerate}
\end{proposition}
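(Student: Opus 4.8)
\textbf{Proof proposal for Proposition \ref{prop-coker-no-sigma}.}

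The plan is to reduce the statement to the already-established Proposition \ref{prop-W2-coker-hasnoextension} by a d\'evissage along the two-step filtration $0\ra \chi^{\oplus 2f}\ra \overline{W}_{\chi,3}\ra W_{\chi,2}\ra 0$ of \eqref{eq:barW-sq1}, which after applying the exact functor $\Ind_I^K(-)$ gives a short exact sequence of $K$-representations
\[0\ra (\Ind_I^K\chi)^{\oplus 2f}\ra \Ind_I^K\overline{W}_{\chi,3}\ra \Ind_I^KW_{\chi,2}\ra0.\]
First I would treat part (i). Let $\tau\in\JH(\Ind_I^K\overline{W}_{\chi,3})$. Since $\Proj_{\tGamma}\tau$ is projective in the category of $\tGamma$-representations and $\Ind_I^K\overline{W}_{\chi,3}$ is killed by $\fm_{K_1}^2$ (by the Corollary to Lemma \ref{lemma-bar-W}, applied after inducing, since $\Ind$ commutes with the $\fm_{K_1}$-action), $\Hom_K(\Proj_{\tGamma}\tau,-)$ is exact on subquotients of $\Ind_I^K\overline{W}_{\chi,3}$. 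Because $\Ind_I^KW_{\chi,2}$ is multiplicity free by Lemma \ref{lemma:IndW2-multione}, and each $\Ind_I^K\chi$ has $\tau$ occurring with bounded multiplicity, I would choose $\phi_\tau$ so that its image has cosocle $\tau$ and $[\im(\phi_\tau):\tau]=1$; concretely, take a nonzero $\phi_\tau:\Proj_{\tGamma}\tau\ra \Ind_I^K\overline{W}_{\chi,3}$ with $[\im(\phi_\tau):\tau]$ minimal, exactly as in the proof of Lemma \ref{lemma-I(sigmatau)-exist}. Minimality forces $[\im(\phi_\tau):\tau]=1$ and hence $[\Coker(\phi_\tau):\tau]=0$ (any occurrence of $\tau$ in the cokernel would, by projectivity of $\Proj_{\tGamma}\tau$ and lifting, contradict minimality).

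For part (ii), suppose $\phi_\tau,\phi_\tau'$ are two such morphisms. The key point is that $\im(\phi_\tau)$ has irreducible cosocle $\tau$ with $[\im(\phi_\tau):\tau]=1$, and likewise for $\phi_\tau'$; I would argue that any two subrepresentations of $\Ind_I^K\overline{W}_{\chi,3}$ with cosocle $\tau$ in which $\tau$ occurs once must coincide. This is a multiplicity-one/uniqueness statement in the spirit of Proposition \ref{prop-multione-BP} and Theorem \ref{thm-I(sigmatau)-tGamma}: inside any quotient $\Proj_{\tGamma}\tau$-module the submodule generated by a lift of $\tau$ is determined, and $\Ind_I^K\overline{W}_{\chi,3}$ is "thin enough" near $\tau$ that the relevant $\Ext^1$-obstruction vanishes. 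Concretely, I would use that $\Ind_I^K\overline{W}_{\chi,3}$ embeds (via \eqref{eq:barW-sq1} and \eqref{eq:barW-sq2}) into a direct sum of $\Ind_I^K W_{\chi,2}$ and copies of $\Ind_I^K E_{\chi\alpha_j,\chi}$, each of which is multiplicity free; then $[\Ind_I^K\overline{W}_{\chi,3}:\tau]$ is small enough that the submodule with cosocle $\tau$ and $[\,\cdot\,:\tau]=1$ is unique (the dual statement is the uniqueness in Theorem \ref{thm-I(sigmatau)-tGamma}, applied to each $\Gamma$-block of $\Ind_I^K\overline{W}_{\chi,3}$). Since $\im(\phi_\tau)=\im(\phi_\tau')$ is then forced, (ii) follows.

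The main obstacle I anticipate is (ii): one must control the multiplicity of $\tau$ in $\Ind_I^K\overline{W}_{\chi,3}$ well enough to conclude uniqueness of the submodule with cosocle $\tau$ and $\tau$-multiplicity one. This is not automatic because $\overline{W}_{\chi,3}$ is only partially semisimplified (we have killed the non-$\chi$ characters of $\soc(W_{\chi,3})$ but kept $\chi^{\oplus 2f}$), so $\tau$ may a priori occur several times in $\Ind_I^K\overline{W}_{\chi,3}$. The resolution is to decompose along $K$-blocks and along the filtration above, reducing the question on each graded piece to $\Ind_I^K\chi$ or $\Ind_I^K E_{\chi',\chi}$ where multiplicity freeness (Lemma \ref{lemma:IndW2-multione}) and the structural results Lemmas \ref{lemma-Esigma'-occur-plus}, \ref{lemma-Esigma'-occur-minus}, Proposition \ref{prop-W2-coker-hasnoextension} pin down exactly which extensions $E_{\tau',\tau}$ can appear; feeding these back in, the submodule generated by a lift of $\tau$ is seen to be the same for every valid $\phi_\tau$, because its Jordan--H\"older constituents and their arrangement are uniquely determined. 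I expect the bookkeeping (matching subsets $J(\tau)$ under the various $\mu_i^\pm$) to be the only technically delicate part, and it parallels the arguments already carried out in \S\ref{subsection:PS}.
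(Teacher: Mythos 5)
Your approach to part (i) is the wrong way around, and this propagates into (ii). The condition $[\Coker(\phi_{\tau}):\tau]=0$ means that $\im(\phi_{\tau})$ must absorb \emph{every} occurrence of $\tau$ in $\Ind_I^K\overline{W}_{\chi,3}$; when $\tau\in\JH(\Ind_I^K\chi)$ this multiplicity is $2f+1$ (this is precisely equation \eqref{equation-multi=2f+1}, which the paper later extracts from this proposition), so one needs $[\im(\phi_{\tau}):\tau]=2f+1$, i.e.\ the \emph{maximal} multiplicity. Your choice of $\phi_{\tau}$ with $[\im(\phi_{\tau}):\tau]$ \emph{minimal} produces the opposite: a subrepresentation with $[\im(\phi_\tau):\tau]=1$, whose cokernel then contains $\tau$ with multiplicity $2f$. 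The minimality device of Lemma \ref{lemma-I(sigmatau)-exist} is designed to find small subobjects (those defining $I(\sigma,\tau)$), not surjections onto all copies of $\tau$. The correct construction is to lift the map $\phi_{\tau,2}:\Proj_{\tGamma}\tau\ra\Ind_I^KW_{\chi,2}$ through the projection $\Ind_I^K\overline{W}_{\chi,3}\onto\Ind_I^KW_{\chi,2}$ using projectivity, and then rule out $[\Coker(\phi_\tau):\tau]\geq 1$ by playing the two filtrations of $\overline{W}_{\chi,3}$ (by $\oplus_{\chi'}E_{\chi,\chi'}$ and by $\chi^{\oplus 2f}$) against each other via the snake lemma: an occurrence of $\tau$ in the cokernel forces, through Lemmas \ref{lemma-Esigma'-occur-plus} and \ref{lemma-Esigma'-occur-minus}, an occurrence of the companion weight $\tau'\in\JH(\Ind_I^K\chi')$ in $\Coker(\phi_{\tau,2})$, contradicting Proposition \ref{prop-W2-coker-hasnoextension}. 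Multiplicity-freeness of $\Ind_I^KW_{\chi,2}$ alone cannot give this, since $\Ind_I^K\overline{W}_{\chi,3}$ is far from multiplicity free at $\tau$.

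The uniqueness claim underlying your part (ii) is false as stated: the inclusion $\chi^{\oplus 2f}\hookrightarrow\soc_I(\overline{W}_{\chi,3})$ yields $2f$ distinct copies of $\Ind_I^K\chi$ inside $\Ind_I^K\overline{W}_{\chi,3}$, each containing its own copy of $I(\sigma_{\emptyset},\tau)$, which is a subrepresentation with cosocle $\tau$ and $[\,\cdot\,:\tau]=1$. So there are many non-coincident subrepresentations of that shape, and no amount of block decomposition will make them unique. Once (i) is proved correctly, however, (ii) is immediate and needs no uniqueness statement at all: for any $\varphi:\Proj_{\tGamma}\tau\ra\Ind_I^K\overline{W}_{\chi,3}$, the composite $\im(\varphi)\hookrightarrow\Ind_I^K\overline{W}_{\chi,3}\onto\Coker(\phi_{\tau})$ must vanish because $\im(\varphi)$ has cosocle $\tau$ while $\Coker(\phi_{\tau})$ contains no $\tau$; hence $\im(\varphi)\subseteq\im(\phi_{\tau})$, and by symmetry any two valid choices have the same image.
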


\begin{proof}
(i) Since $\Ind_I^KW_{\chi,2}$  and $\Ind_{I}^K\overline{W}_{\chi,3}$ have the same Jordan--H\"older factors up to multiplicity, $\tau$ also occurs in $\Ind_I^KW_{\chi,2}$.
 By projectivity of $\Proj_{\tGamma}\tau$, we may lift $\phi_{\tau,2}$ to a morphism $\phi_{\tau}$, making the following diagram commutative
\[\xymatrix{\Proj_{\tGamma}\tau\ar@{-->}^{\phi_{\tau}\ }[r]\ar_{\phi_{\tau,2}}[rd]& \Ind_I^K\overline{W}_{\chi,3}\ar@{->>}[d] \\
&\Ind_I^KW_{\chi,2}.}\]
We need to prove that $[\mathrm{Coker}(\phi_{\tau}):\tau]=0$. The case $\tau\in\JH(\Ind_I^K\chi')$ for $\chi'\in\mathscr{E}(\chi)$ is obvious, so we  assume $\tau\in\JH(\Ind_I^K\chi)$ for the rest.

First treat the case $\tau=\sigma_{\cS}$, the cosocle of $\Ind_I^K\chi$. This case is essentially proved in \cite[Prop.~6.4.1]{BHHMS}. Assume $[\mathrm{Coker}(\phi_{\sigma_{\cS}}):\sigma_{\cS}]\geq 1$ for a contradiction. We may find a quotient of $\mathrm{Coker}(\phi_{\sigma_{\cS}})$, say $Q$, such that $[Q:\sigma_{\cS}]=1$. Consider  the induced $I$-equivariant morphism $f:\overline{W}_{\chi,3}\ra Q|_I$. Since $[Q:\sigma_{\cS}]=1$, we have $\dim_{\F}\Hom_I(\chi,Q)\leq 1$. Thus, $f$ must factor through a quotient of $\overline{W}_{\chi,3}$, say $W$, which satisfies the assumptions  of  \cite[Prop.~6.4.1]{BHHMS} and $Q$ is a quotient of $\Ind_I^KW$. By \emph{loc.~cit.}, $Q$ is a quotient of $\Ind_I^K\overline{W}_{\chi,2}$, hence of $\mathrm{Coker}(\phi_{\sigma_{\cS},2})$, but this is impossible as $[\mathrm{Coker}(\phi_{\sigma_{\cS},2}):\sigma_{\cS}]=0$.    Note that the genericity condition on $\chi$ of \emph{loc. cit.} is slightly stronger than ours, but it is caused by the use of \cite[Prop.~6.3.5]{BHHMS} which can be replaced by  our Proposition \ref{prop-W2-coker-hasnoextension}. 

Now we treat the general case $\tau\in \JH(\Ind_I^K\chi)$. 
The case for $\sigma_{\cS}$ treated above implies $[\mathrm{Coker}(\phi_{\sigma_{\cS}}):\tau]=0$ as well, as $\sigma_{\cS}$ is the cosocle of $\Ind_I^K\chi$. Thus $\mathrm{Im}(\phi_{\tau})$ is contained in $\mathrm{Im}(\phi_{\sigma_{\cS}})$, and by the projectivity of $\Proj_{\tGamma}\tau$ there exists a morphism $h:\Proj_{\tGamma}\tau\ra \Proj_{\tGamma}\sigma_{\cS}$ such that $\phi_{\tau}=\phi_{\sigma_{\cS}}\circ h$. By the construction, one checks that the composition 
\[\Proj_{\tGamma}\tau\overset{h}{\ra} \Proj_{\tGamma}\sigma_{\cS}\twoheadrightarrow I(\tau,\sigma_{\cS})\]
is nonzero, where $I(\tau,\sigma_{\cS})$ is as in Theorem \ref{thm-I(sigmatau)-dual}. We deduce that $[\mathrm{Coker}(h):\tau]=0$, because any quotient of $\Proj_{\tGamma}\sigma_{\cS}$ in which $\tau$ occurs  must admit $I(\tau,\sigma_{\cS})$ as a quotient by Theorem \ref{thm-I(sigmatau)-dual}.  By the snake lemma, we have an exact sequence
\[\mathrm{Coker}(h)\ra \mathrm{Coker}(\phi_{\tau})\ra \mathrm{Coker}(\phi_{\sigma_{\cS}})\ra0, \] 
from which the result follows. 

(ii) Fix a morphism $\phi_{\tau}$ as in (i). It suffices to show that if $\varphi:\Proj_{\tGamma}\tau\ra\Ind_I^K\overline{W}_{\chi,3}$ is any $K$-equivariant morphism, then  $\im(\varphi)\subset \im(\phi_{\tau})$. But, if it were not the case, the composite morphism $\im(\varphi)\hookrightarrow \Ind_I^KW_{\chi,3}\twoheadrightarrow \Coker(\phi_{\tau})$ would be nonzero. Since $\im(\varphi)$  has cosocle $\tau$, we get $[\Coker(\phi_{\tau}):\tau]\neq0$, a contradiction to (i). 
\end{proof}

\begin{corollary}\label{cor-imageoffi-inclusion}
Let $\tau_1,\tau_2$ be Jordan--H\"older factors of $\Ind_I^K\overline{W}_{\chi,3}$. Let $\phi_{\tau_i}:\Proj_{K/Z_1}\tau_i\ra \Ind_I^K\overline{W}_{\chi,3}$ be a morphism such that $[\Coker(\phi_{\tau_i}):\tau_i]=0$.  Then $\im(\phi_{\tau_1})\subseteq\im(\phi_{\tau_2})$ if one of the following cases happens:
\begin{enumerate}
\item[(a)] both $\tau_1$ and $\tau_2$ are subquotients of $\Ind_I^K\chi$ and $J(\tau_1)\subseteq J(\tau_2)$;
\item[(b)] both $\tau_1$ and $\tau_2$ are subquotients of $\Ind_I^K\chi'$ for some $\chi'\in\mathscr{E}(\chi)$, and $J(\tau_1)\subseteq J(\tau_2)$;
\item[(c)] $\tau_1$ (resp. $\tau_2$) is a subquotient of $\Ind_I^K\chi'$ with $\chi'\in\mathscr{E}(\chi)$ (resp. of\ $\Ind_I^K\chi$), and $\Ext^1_{\tGamma}(\tau_1,\tau_2)\neq0$.
\end{enumerate}
\end{corollary}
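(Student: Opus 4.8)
\textbf{Proof proposal for Corollary \ref{cor-imageoffi-inclusion}.}

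The plan is to reduce everything to inclusions of images of the maps $\phi_{\tau,2}$ into $\Ind_I^KW_{\chi,2}$ in cases (a) and (b), and to a direct extension argument in case (c). By Proposition \ref{prop-coker-no-sigma}(ii), $\im(\phi_{\tau_i})$ depends only on $\tau_i$ (not on the chosen lift), so it suffices to produce, in each case, one convenient choice of $\phi_{\tau_1}$ and $\phi_{\tau_2}$ for which the inclusion is visible. First I would record the key structural fact already built into the construction: since $\Ind_I^K\overline{W}_{\chi,3}$ is annihilated by $\fm_{K_1}^2$ (the Corollary after Lemma \ref{lemma-bar-W}), every $\phi_\tau$ factors through $\Proj_{\tGamma}\tau$, so we are genuinely working inside the category of $\tGamma$-representations and may freely invoke the structure of $I(\sigma,\tau)$ from \S\ref{subsection:I(sigmatau)} and of $I(\sigma,\tau)$ for $\Gamma$ from Proposition \ref{prop-multione-BP}.

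For case (a): both $\tau_1,\tau_2$ lie in $\JH(\Ind_I^K\chi)$, and since $\Ind_I^KW_{\chi,2}$ is multiplicity free (Lemma \ref{lemma:IndW2-multione}), composing $\phi_{\tau_i}$ with $\Ind_I^K\overline{W}_{\chi,3}\twoheadrightarrow \Ind_I^KW_{\chi,2}$ recovers (up to scalar) $\phi_{\tau_i,2}$, whose image is the unique subrepresentation $V_{\tau_i}$ of $\Ind_I^KW_{\chi,2}$ with cosocle $\tau_i$. It is enough to see $V_{\tau_1}\subseteq V_{\tau_2}$ when $J(\tau_1)\subseteq J(\tau_2)$, and then lift: indeed $\im(\phi_{\tau_1})$ and $\im(\phi_{\tau_2})$ are subrepresentations of $\Ind_I^K\overline{W}_{\chi,3}$ surjecting onto $V_{\tau_1}\subseteq V_{\tau_2}$ respectively, and since $\tau_1$ has multiplicity one in $\Ind_I^K\overline{W}_{\chi,3}$ (it lies in $\JH(\Ind_I^K\chi)$ which appears once), any two subrepresentations with cosocle $\tau_1$ mapping into a fixed subrepresentation must coincide — more precisely, apply Proposition \ref{prop-coker-no-sigma}(ii) to the restriction of $\phi_{\tau_1}$ viewed as a map into $\im(\phi_{\tau_2})$ after checking $\tau_1\in\JH(\im(\phi_{\tau_2}))$. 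The inclusion $V_{\tau_1}\subseteq V_{\tau_2}$ at the level of $\Ind_I^KW_{\chi,2}$ is exactly the statement that the subquotient of $\Ind_I^K\chi$ with cosocle $\tau_1$ sits inside the one with cosocle $\tau_2$, which via Proposition \ref{prop-BP-4.11} and the dictionary $J(\lambda)\leftrightarrow \cS(\lambda)$ (\cite[\S2]{BP}) translates to $\cS(\lambda_{\tau_1})\subseteq\cS(\lambda_{\tau_2})$ plus compatibility; compatibility is automatic since $\lambda,\lambda'\in\mathscr{P}(x_0,\dots,x_{f-1})$ and the $\pm1$ signs in $\mathscr{P}$ are determined by membership in $J$, and $\cS(\lambda)=\{j: j-1\in J(\lambda)\}$ gives the containment. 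Case (b) is identical after replacing $\chi$ by $\chi'$ and using that $\Ind_I^K\chi'$ is again multiplicity free with socle/cosocle governed by the same combinatorics.

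For case (c): here $\tau_1\in\JH(\Ind_I^K\chi')$, $\tau_2\in\JH(\Ind_I^K\chi)$ and $\Ext^1_{\tGamma}(\tau_1,\tau_2)\neq0$. By Lemmas \ref{lemma-Esigma'-occur-plus} and \ref{lemma-Esigma'-occur-minus}, the extension $E_{\tau_1,\tau_2}$ occurs as a subquotient of $\Ind_I^KE_{\chi',\chi}$, hence of $\Ind_I^K\overline{W}_{\chi,3}$ (it contains $\Ind_I^KE_{\chi',\chi}$ as a subrepresentation by Lemma \ref{lemma-bar-W}). Now $\im(\phi_{\tau_2})$ is the unique subrepresentation with cosocle $\tau_2$ and $[\,\cdot\,:\tau_2]=1$; since $\tau_1$ has a nontrivial $\tGamma$-extension with $\tau_2$ appearing in $\Ind_I^K\overline{W}_{\chi,3}$, and since $\im(\phi_{\tau_2})$ must absorb this extension class (otherwise $E_{\tau_1,\tau_2}$ would push forward to a nonzero element of $\Coker(\phi_{\tau_2})$, but one checks $[\Coker(\phi_{\tau_2}):\tau_1]$ is forced to be zero by the same snake-lemma argument as in Proposition \ref{prop-coker-no-sigma}(i) together with Proposition \ref{prop-W2-coker-hasnoextension}), we conclude $\tau_1\in\JH(\im(\phi_{\tau_2}))$, in fact $E_{\tau_1,\tau_2}\subseteq\im(\phi_{\tau_2})$. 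Then $\im(\phi_{\tau_1})$, having irreducible cosocle $\tau_1$ and $\tau_1$-multiplicity one, maps into $\im(\phi_{\tau_2})$ by projectivity of $\Proj_{\tGamma}\tau_1$ and the uniqueness from Proposition \ref{prop-coker-no-sigma}(ii) applied inside $\im(\phi_{\tau_2})$.

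\textbf{Main obstacle.} The delicate point is case (c): one must verify that $\im(\phi_{\tau_2})$ really does contain the whole extension $E_{\tau_1,\tau_2}$ and not merely $\tau_2$, i.e. that the nontrivial class in $\Ext^1_{\tGamma}(\tau_1,\tau_2)$ realized inside $\Ind_I^K\overline{W}_{\chi,3}$ is ``seen'' by $\phi_{\tau_2}$. This is where the cokernel computation has to be run carefully through both filtrations of $\Ind_I^K\overline{W}_{\chi,3}$ used in the proof of Proposition \ref{prop-coker-no-sigma}(i), invoking Proposition \ref{prop-W2-coker-hasnoextension} to kill the relevant $\Ext^1$ in $\Ind_I^KW_{\chi,2}$; in cases (a) and (b) the argument is essentially bookkeeping with the combinatorics of $\mathscr{P}(x_0,\dots,x_{f-1})$ and Proposition \ref{prop-BP-4.11}.
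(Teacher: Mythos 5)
Your overall two-step strategy (first establish $\im(\phi_{\tau_1,2})\subseteq\im(\phi_{\tau_2,2})$ inside the multiplicity-free module $\Ind_I^KW_{\chi,2}$, then lift to $\Ind_I^K\overline{W}_{\chi,3}$) is the same as the paper's, and your reductions at the $W_{\chi,2}$-level in all three cases are essentially correct: in each case the point is that $[\Coker(\phi_{\tau_2,2}):\tau_1]=0$, via Proposition \ref{prop-BP-4.11} in (a), via $\im(\phi_{\tau_i,2})\subseteq\Ind_I^K\chi'$ in (b), and via Proposition \ref{prop-W2-coker-hasnoextension} in (c). Your case (c) also goes through at the top level, because there $\tau_1\in\JH(\Ind_I^K\chi')$ really does occur with multiplicity one in $\Ind_I^K\overline{W}_{\chi,3}$, so any nonzero map $\Proj_{\tGamma}\tau_1\ra\im(\phi_{\tau_2})$ automatically satisfies the cokernel condition and Proposition \ref{prop-coker-no-sigma}(ii) finishes. (Incidentally, this makes (c) the \emph{easy} case for the lifting step, not the delicate one as you suggest.)

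The genuine gap is in the lifting step of case (a). You assert that ``$\tau_1$ has multiplicity one in $\Ind_I^K\overline{W}_{\chi,3}$ (it lies in $\JH(\Ind_I^K\chi)$ which appears once).'' This is false: $\overline{W}_{\chi,3}$ contains $\chi^{\oplus 2f}$ in its socle in addition to the copy of $\chi$ in $W_{\chi,2}$, so $[\Ind_I^K\overline{W}_{\chi,3}:\tau_1]=2f+1$ for $\tau_1\in\JH(\Ind_I^K\chi)$ (this is equation \eqref{equation-multi=2f+1} in the proof of Proposition \ref{prop-Theta}). Consequently, knowing $\tau_1\in\JH(\im(\phi_{\tau_2}))$ does not determine a map $\Proj_{\tGamma}\tau_1\ra\im(\phi_{\tau_2})$ with the required cokernel property, and one cannot ``restrict $\phi_{\tau_1}$ to a map into $\im(\phi_{\tau_2})$'' --- that containment is exactly what is to be proved. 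The correct mechanism (the paper's) is: since $\im(\phi_{\tau_1,2})\subseteq\im(\phi_{\tau_2,2})$ and $\Proj_{\tGamma}\tau_1$ is projective, lift $\phi_{\tau_1,2}$ through $\Proj_{\tGamma}\tau_2\twoheadrightarrow\im(\phi_{\tau_2,2})$ to a map $\psi:\Proj_{\tGamma}\tau_1\ra\Proj_{\tGamma}\tau_2$; then $\varphi\defn\phi_{\tau_2}\circ\psi$ has $W_{\chi,2}$-projection equal to $\phi_{\tau_1,2}$, so by the argument of Proposition \ref{prop-coker-no-sigma}(i) it satisfies $[\Coker(\varphi):\tau_1]=0$, whence $\im(\phi_{\tau_1})=\im(\varphi)\subseteq\im(\phi_{\tau_2})$ by Proposition \ref{prop-coker-no-sigma}(ii). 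You should replace your multiplicity-one argument in case (a) by this lift; the same uniform argument also handles (b) and (c) without case distinctions.
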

\begin{proof}
For $i\in\{1,2\}$, consider the  morphism \eqref{equation-defn-phi2}
\[\phi_{\tau_i, 2}: \Proj_{\tGamma}\tau_i\ra \Ind_I^KW_{\chi,2}.\]
We first prove \begin{equation}\label{equation-image-inclusion}\im(\phi_{\tau_1,2})\subset \im(\phi_{\tau_2,2})\end{equation} in the three cases listed in the corollary.  Since $\Ind_I^KW_{\chi,2}$ is multiplicity free and $\im(\phi_{\tau_1,2})$ has cosocle isomorphic to $\tau_1$, it suffices to prove  $[\Coker(\phi_{\tau_2,2}):\tau_1]= 0$.  This is clear in Case (a) by further projecting  to $\Ind_I^K\chi$, and also in Case (b) because $\im(\phi_{\tau_i,2})$ is contained in $\Ind_I^K\chi'$. In Case (c), it follows from Proposition \ref{prop-W2-coker-hasnoextension}.

Next, by projectivity of $\Proj_{\tGamma}\tau$  and \eqref{equation-image-inclusion}, we may lift $\phi_{\tau_1,2}$ to $\psi: \Proj_{\tGamma}\tau_1\ra \Proj_{\tGamma}\tau_2$, making the following diagram commutative
\[\xymatrix{\Proj_{\tGamma}\tau_1 \ar@{-->}^{\psi}[d]\ar^{\phi_{\tau_1,2}}[drr]&&\\
\Proj_{\tGamma}\tau_2\ar^{\phi_{\tau_2}}[r]&\Ind_I^K\overline{W}_{\chi,3}\ar[r]&\Ind_I^KW_{\chi,2}.}\]
By Proposition \ref{prop-coker-no-sigma}(ii), we have $\im(\phi_{\tau_2}\circ\psi)=\im(\phi_{\tau_1})$, hence $\im(\phi_{\tau_1})\subset\im(\phi_{\tau_2})$ as desired.
\end{proof}

\subsection{The representation $\Theta_{\tau}$}
\label{subsection:Theta}
We construct a certain $\tGamma$-representation which has an analogous submodule structure as $\overline{W}_{\chi,3}$.  Let $\chi$ be a $2$-generic character of $I$.

\begin{proposition}\label{prop-Theta}
For any Jordan--H\"older factor $\tau$ of $\Ind_I^K\chi$, $\Ind_I^K\overline{W}_{\chi,3}$ admits a subquotient, denoted by $\Theta_{\tau}$, satisfying the following properties:
\begin{enumerate}
\item[(i)] $\mathrm{cosoc}_{\tGamma}(\Theta_{\tau})$ is isomorphic to $\tau$ and $\rsoc_{\tGamma}(\Theta_{\tau})$  isomorphic to $\tau^{\oplus 2f}$;
\item[(ii)] $\rad_{\tGamma}(\Theta_{\tau})/\rsoc_{\tGamma}(\Theta_{\tau})$ is semisimple and isomorphic to $\bigoplus_{\tau'\in\mathscr{E}(\tau)}\tau'$.
\end{enumerate}
\end{proposition}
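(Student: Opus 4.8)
The plan is to produce $\Theta_{\tau}$ as an explicit subquotient of $\Ind_I^K\overline{W}_{\chi,3}$, reading off its two bottom layers from the three‑layer structure of $\overline{W}_{\chi,3}$ recorded in Lemma \ref{lemma-bar-W}. First I would apply the exact functor $\Ind_I^K$ to the two short exact sequences there, obtaining
\[0\to\bigoplus_{\chi'\in\mathscr{E}(\chi)}\Ind_I^K E_{\chi,\chi'}\to\Ind_I^K\overline{W}_{\chi,3}\xrightarrow{\ \pi\ }\Ind_I^K\chi\to 0\]
together with $0\to(\Ind_I^K\chi)^{\oplus 2f}\to\Ind_I^K\overline{W}_{\chi,3}\to\Ind_I^K W_{\chi,2}\to 0$. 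Thus $\Ind_I^K\overline{W}_{\chi,3}$ is layered as $\bigl[(\Ind_I^K\chi)^{\oplus 2f}\mid\bigoplus_{\chi'\in\mathscr{E}(\chi)}(\cdots)\mid\Ind_I^K\chi\bigr]$, with socle $\sigma_{\emptyset}^{\oplus 2f}$, cosocle $\sigma_{\cS}$, and---by Lemma \ref{lemma:IndW2-multione}---with all relevant subquotients multiplicity free. Since $\tau\in\JH(\Ind_I^K\chi)$ occurs there exactly once, I would define $\Theta_{\tau}$ by cutting $\Ind_I^K\overline{W}_{\chi,3}$ down so that its socle becomes the $2f$ copies of $\tau$ sitting (one in each) inside the $2f$ summands $\Ind_I^K E_{\chi,\chi'}$, retaining only the extensions these $\tau$'s actually receive in $\Ind_I^K\overline{W}_{\chi,3}$; concretely, one can take $\Theta_{\tau}=\im(\phi_{\tau})/\cN$, where $\phi_{\tau}\colon\Proj_{\tGamma}\tau\to\Ind_I^K\overline{W}_{\chi,3}$ is the morphism of Proposition \ref{prop-coker-no-sigma}(i) (so $[\im(\phi_{\tau}):\tau]=[\Ind_I^K\overline{W}_{\chi,3}:\tau]$), and $\cN\subseteq\im(\phi_{\tau})$ is chosen so that $\soc_{\tGamma}(\im(\phi_{\tau})/\cN)\cong\tau^{\oplus 2f}$ and every neighbour of $\tau$ survives with multiplicity one. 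Proposition \ref{prop-coker-no-sigma}(ii) (canonicity of $\im(\phi_{\tau})$) and Corollary \ref{cor-imageoffi-inclusion} (the nesting of the various $\im(\phi_{\tau'})$) are what make this cutting‑down well defined.

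Granting this, property (i) is the statement $\soc_{\tGamma}(\Theta_{\tau})\cong\tau^{\oplus 2f}$, which holds because the $2f$ summands $\Ind_I^K E_{\chi,\chi'}$ are independent direct summands of the bottom two layers of $\Ind_I^K\overline{W}_{\chi,3}$, each contributing exactly one copy of $\tau$ to the socle after the quotient. For property (ii) the point is to describe $\rad_{\tGamma}(\Theta_{\tau})/\soc_{\tGamma}(\Theta_{\tau})$. The neighbours of $\tau$ occurring there come in two families: the $f$ \emph{internal} ones, namely the Jordan--H\"older factors $\mu_{a+1}^{\varepsilon_{a}}(\tau)$ of $\Ind_I^K\chi$ adjacent to $\tau$ (governed by Proposition \ref{prop-BP-4.11} and Lemma \ref{lemma:J-J'}), and the $f$ \emph{external} ones, which for each $a\in\cS$ come from the unique $\chi'\in\{\chi\alpha_{a+1},\chi\alpha_{a+1}^{-1}\}$ for which Lemma \ref{lemma-Esigma'-occur-plus} or Lemma \ref{lemma-Esigma'-occur-minus} yields a nonsplit extension $E_{\tau,\tau'}$ inside $\Ind_I^K E_{\chi,\chi'}$, and which turn out to be the weights $\mu_{a+1}^{-\varepsilon_{a}}(\tau)$ of opposite sign. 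Since $\mathscr{E}(\tau)=\{\mu_{i}^{+}(\tau),\mu_{i}^{-}(\tau):i\in\cS\}$, the two families are disjoint and together exhaust $\mathscr{E}(\tau)$; as $\dim_{\F}\Ext^1_{\tGamma}(\tau',\tau)=1$ for each $\tau'\in\mathscr{E}(\tau)$ by Lemma \ref{lemma-Hu10-2.21}(ii), the radical quotient is forced to be the semisimple module $\bigoplus_{\tau'\in\mathscr{E}(\tau)}\tau'$, and $\Theta_{\tau}$ acquires Loewy length $3$ with cosocle $\tau$.

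The main obstacle is the last verification in (ii): one must check \emph{exactly} that the sign $\varepsilon_{a}$ occurring in the internal adjacency at $a$ is opposite to the sign produced by Lemmas \ref{lemma-Esigma'-occur-plus}/\ref{lemma-Esigma'-occur-minus}, so that the internal and external neighbours of $\tau$ are genuinely complementary, and that no Jordan--H\"older factor outside $\{\tau\}\cup\mathscr{E}(\tau)$ can leak into $\rad_{\tGamma}(\Theta_{\tau})/\soc_{\tGamma}(\Theta_{\tau})$. The latter exclusion uses the Loewy length $3$ of $\overline{W}_{\chi,3}$ together with Proposition \ref{prop-W2-coker-hasnoextension} to rule out spurious extensions, while the sign book‑keeping is precisely what Lemmas \ref{lemma-compose-lambda}, \ref{lemma:J-J'} and the explicit bases of \cite[\S2]{BP} are set up to handle; carrying it out for all $a\in\cS$ and both parities of $*$ is the genuinely computational part of the argument.
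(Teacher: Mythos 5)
Your construction of $\Theta_{\tau}$ is essentially the paper's: both cut $\im(\phi_{\tau})$ down so that the socle becomes the $2f$ copies of $\tau$ supplied by the bottom layer $\Ind_I^K(\chi^{\oplus 2f})$ (the paper does this concretely by first quotienting by $\rad(I(\sigma_{\emptyset},\tau))^{\oplus 2f}$ and then by the largest subrepresentation in which $\tau$ does not occur; your $\cN$, ``chosen so that every neighbour of $\tau$ survives with multiplicity one,'' presupposes part of what has to be proved and should be replaced by that explicit two-step quotient). The genuine gap is in (ii). You reduce it to identifying the $f$ ``internal'' and $f$ ``external'' neighbours of $\tau$ and checking that their signs are complementary, and you then declare that check to be ``the genuinely computational part of the argument'' without carrying it out. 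But that check \emph{is} the content of (ii): without it you have established neither that every $\tau'\in\mathscr{E}(\tau)$ occurs in the middle layer, nor that nothing else does, nor that the layer is semisimple (your appeal to the Loewy length of $\overline{W}_{\chi,3}$ and Proposition \ref{prop-W2-coker-hasnoextension} does not by itself rule out a nonsplit extension between two non-$\tau$ factors inside the middle layer). As written, the proof of (ii) is incomplete.

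The computation you defer is also unnecessary, which is the point of Corollary \ref{coro-Q-length3}. Once you know that $\Theta_{\tau}$ is a quotient of $\Proj_{\tGamma}\tau$ with $\rsoc_{\tGamma}(\Theta_{\tau})\cong\tau^{\oplus 2f}$ and $[\Theta_{\tau}:\tau]=2f+1$ (so that $\tau$ occurs exactly once outside the socle, necessarily in the cosocle), that corollary gives everything: $\Theta_{\tau}/\rsoc(\Theta_{\tau})$ is multiplicity free by the dual of Corollary \ref{cor-I(sigmatau)-geq}, its socle consists of weights $\sigma$ with $\Ext^1_{\tGamma}(\sigma,\tau)\neq 0$ and hence embeds into $\bigoplus_{\tau'\in\mathscr{E}(\tau)}\tau'$, while the condition $\rsoc(\Theta_{\tau})\cong\tau^{\oplus 2f}$ forces $\dim_{\F}\Ext^1_{\tGamma}\bigl(\rad(\Theta_{\tau})/\rsoc(\Theta_{\tau}),\tau\bigr)\geq 2f$ and hence length at least $2f$. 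Since $|\mathscr{E}(\tau)|=2f$ (here $\tau$ is $1$-generic because $\chi$ is $2$-generic), the embedding is an equality. No sign bookkeeping and no tracking of which neighbour arises from which $\Ind_I^K E_{\chi,\chi'}$ is needed.
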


\begin{proof}
First note that $\tau$ is  $1$-generic as $\chi$ is $2$-generic. Hence, all the $\mu_{i}^{\pm}(\tau)$ are well-defined and  so $|\mathscr{E}(\tau)|=2f$.

Let $\phi_{\tau}:\Proj_{\tGamma}\tau\ra \Ind_I^K\overline{W}_{\chi,3}$ be a morphism as in Proposition \ref{prop-coker-no-sigma}. We will construct $\Theta_{\tau}$ as a certain quotient of $\im(\phi_{\tau})$; actually we just take $\Theta_{\tau}$ to be the quotient of $\im(\phi_{\tau})$ by the largest subrepresentation in which $\tau$ does not occur. But, to verify condition (i)  we divide this process into two steps.

First, by Proposition \ref{prop-coker-no-sigma} we have
\begin{equation}\label{equation-multi=2f+1}[\im(\phi_{\tau}):\tau]=[\Ind_I^K\overline{W}_{\chi,3}:\tau]=2f+1.\end{equation}
Let $\sigma_{\emptyset}$ denote the socle of $\Ind_I^K\chi$. Since $\chi^{\oplus 2f}\hookrightarrow \rsoc_I(\overline{W}_{\chi,3})$, we obtain an embedding
\[I(\sigma_{\emptyset},\tau)^{\oplus 2f}\hookrightarrow \Ind_I^K\chi^{\oplus 2f}\hookrightarrow\Ind_I^K\overline{W}_{\chi,3}\]
whose image is contained in $\im(\phi_{\tau})$ by Proposition \ref{prop-coker-no-sigma}. In particular, modulo $\rad(I(\sigma_{\emptyset},\tau))^{\oplus 2f}$, we obtain a quotient  of $\im(\phi_{\tau})$, say $Q$, such that  $\tau^{\oplus 2f}$ embeds in $Q$.  Moreover, by \eqref{equation-multi=2f+1}, it is easy to see that $\dim_{\F}\Hom_{\tGamma}(\tau,Q)=2f$.  Next, we define $\Theta_{\tau}$ to be the quotient of $Q$  by the  largest subrepresentation of $Q$ in which $\tau$ does not occur. It is then clear that Condition (i) is satisfied, and (ii) follows from Corollary \ref{coro-Q-length3} (as $|\mathscr{E}(\tau)|=2f$).  
\end{proof}

Note that $\Theta_{\tau}$ can be defined for any $2$-generic Serre weight $\tau$,  taking $\chi=\chi_{\tau}$ in Proposition \ref{prop-Theta}.

\begin{corollary}\label{cor-Theta-noextension}
Let $\tau$ be a $2$-generic Serre weight. Then $\Ext^1_{\tGamma}(\Theta_{\tau},\tau)=0$.
\end{corollary}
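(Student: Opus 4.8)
The plan is to exploit the explicit structure of $\Theta_\tau$ obtained in Proposition \ref{prop-Theta}, namely that $\rsoc_{\tGamma}(\Theta_\tau)\cong\tau^{\oplus 2f}$ and $\rad_{\tGamma}(\Theta_\tau)/\rsoc_{\tGamma}(\Theta_\tau)\cong\bigoplus_{\tau'\in\mathscr E(\tau)}\tau'$, together with the fact (used in the proof of Proposition \ref{prop-Theta}) that $\Theta_\tau$ is a quotient of $\Proj_{\tGamma}\tau$ in which $\tau$ occurs exactly $2f$ times. First I would reduce the claim to a statement about pushouts: a nonzero class in $\Ext^1_{\tGamma}(\Theta_\tau,\tau)$ gives an extension $0\to\tau\to X\to\Theta_\tau\to 0$ whose socle would have to involve $\tau$ with multiplicity $2f+1$ (since $\tau$ already sits in $\soc\Theta_\tau$ with multiplicity $2f$, and splitting off a copy of $\tau$ from the socle extension would contradict nonsplitness). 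I would then show such an $X$ cannot exist inside $\Proj_{\tGamma}\tau$; equivalently, $X$ would be a quotient of $\Proj_{\tGamma}\tau^{\oplus?}$ with too many copies of $\tau$ in its socle relative to its radical length, contradicting the dual version of Corollary \ref{cor-I(sigmatau)-geq}.

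More concretely, the key steps in order: (1) By the long exact sequence associated to $0\to\rsoc_{\tGamma}(\Theta_\tau)\to\Theta_\tau\to\Theta_\tau/\rsoc_{\tGamma}(\Theta_\tau)\to 0$ and additivity, reduce to controlling $\Ext^1_{\tGamma}(\tau,\tau)$ and $\Ext^1_{\tGamma}(\tau',\tau)$ for $\tau'\in\mathscr E(\tau)$. By Lemma \ref{lemma-Hu10-2.21}(i), $\Ext^1_{\tGamma}(\tau,\tau)=0$ (using that $\tau$ is $1$-generic, hence $0$-generic, coming from the $2$-genericity of $\chi$). (2) The remaining contributions to $\Ext^1_{\tGamma}(\Theta_\tau,\tau)$ come from the quotient $\Theta_\tau/\rsoc_{\tGamma}(\Theta_\tau)$, whose socle is $\bigoplus_{\tau'\in\mathscr E(\tau)}\tau'$ (by Proposition \ref{prop-Theta}(ii) it equals $\rad_{\tGamma}(\Theta_\tau)/\rsoc_{\tGamma}(\Theta_\tau)$, and by $1$-genericity the quotient above it is just a copy of $\tau$). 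So an extension of $\Theta_\tau$ by $\tau$ restricts to an extension of $\Theta_\tau/\rsoc_{\tGamma}(\Theta_\tau)$ by $\tau$; if nonzero, it produces a subrepresentation $Y$ with $\soc_{\tGamma}(Y)=\tau$ mapping onto some $\tau'$ with $\Ext^1_{\tGamma}(\tau',\tau)\neq 0$, i.e. $Y\cong E_{\tau,\tau'}$. (3) Pulling this back into $\Theta_\tau$ itself: the nonsplit extension $0\to\tau\to X\to\Theta_\tau\to 0$ would, after pushing out along the inclusion $\tau\hookrightarrow\rsoc_{\tGamma}(\Theta_\tau)=\tau^{\oplus 2f}$, yield a representation $X'$ which is a quotient of $\Proj_{\tGamma}\tau$ with $[X':\tau]=2f+1$ but with socle still $\tau^{\oplus 2f}$ — a new copy of $\tau$ would have to appear one layer up. I would then invoke the dual of Corollary \ref{cor-I(sigmatau)-geq} (or directly the dual of Corollary \ref{cor-multi-transfer} applied to the relevant $\Gamma$-quotient) to bound $[X':\tau]$ above by the number of copies of $\tau$ in the socle, namely $2f$, giving the contradiction.

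The cleanest route, which I would actually write up, is: since $\Theta_\tau$ is a quotient of $\Proj_{\tGamma}\tau$ and $\tau$ occurs in it with multiplicity $2f$, any extension $X$ of $\Theta_\tau$ by $\tau$ that is itself a quotient of $\Proj_{\tGamma}\tau$ would have $[X:\tau]=2f+1$ while $\soc_{\tGamma}(X)$ still involves $\tau$ with multiplicity at most $2f$ (the new $\tau$ being a submodule but absorbed into an extension $E_{\tau,\tau'}$ with some $\tau'\in\mathscr E(\tau)$, hence not in the socle) — but by Theorem \ref{thm-I(sigmatau)-dual} and (the dual of) Corollary \ref{cor-I(sigmatau)-geq}, for any quotient $Q$ of $\Proj_{\tGamma}\tau$ one has $[Q:\tau]\le\dim_{\F}\soc_{\tGamma}(Q)^{\tau}$-indexed count, forcing $[X:\tau]\le 2f$, a contradiction. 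Hence every extension splits. The main obstacle I anticipate is step (2)–(3): making precise that the new copy of $\tau$ cannot migrate into the socle, i.e. that $\Ext^1_{\tGamma}(\tau',\tau)\neq 0$ for $\tau'\in\mathscr E(\tau)$ genuinely obstructs the new $\tau$ from being a socle constituent, which requires carefully tracking the socle filtration of the putative extension and citing the multiplicity-one bound of Corollary \ref{cor-I(sigmatau)-geq} in its dual form. Everything else is formal homological algebra over the finite-dimensional algebra $\tGamma$.
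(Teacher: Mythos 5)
Your reduction in step (1) is fine and matches the paper's starting point: the socle sequence together with $\Ext^1_{\tGamma}(\tau,\tau)=0$ (Lemma \ref{lemma-Hu10-2.21}(i)) shows that $\Ext^1_{\tGamma}(\Theta_{\tau},\tau)$ is a quotient of $\Ext^1_{\tGamma}(\Theta_{\tau}/\rsoc(\Theta_{\tau}),\tau)$, and the sequence $0\to\oplus_{\tau'\in\mathscr E(\tau)}\tau'\to\Theta_{\tau}/\rsoc(\Theta_{\tau})\to\tau\to0$ bounds the latter by $2f$. But your decisive step — ruling out a nonsplit extension $0\to\tau\to X\to\Theta_{\tau}\to0$ via the inequality $[Q:\tau]\le[\rsoc_{\tGamma}(Q):\tau]$ for quotients $Q$ of $\Proj_{\tGamma}\tau$ — is a genuine gap: that inequality is false, and $\Theta_{\tau}$ itself is a counterexample ($[\Theta_{\tau}:\tau]=2f+1$ while $[\rsoc_{\tGamma}(\Theta_{\tau}):\tau]=2f$; likewise $[\Proj_{\Gamma}\sigma:\sigma]=2>1=[\rsoc(\Proj_{\Gamma}\sigma):\sigma]$ already for $f=1$). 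What the dual of Corollary \ref{cor-I(sigmatau)-geq} actually gives is a \emph{lower} bound $[Q:\tau]\ge[Q:\tau']$ when the cosocle of $Q$ is $\tau$-isotypic, i.e. the cosocle constituent dominates the others; it never bounds $[Q:\tau]$ above by a socle multiplicity. (A secondary slip: $[X:\tau]$ would be $2f+2$, not $2f+1$, since $\tau$ already occurs $2f+1$ times in $\Theta_{\tau}$.) So the contradiction you derive is vacuous, and the "migration into the socle" issue you flag as the main obstacle is not resolved by the tools you cite.

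The missing idea is a dimension count on the long exact sequence you already wrote down. Since every nonzero map $\Theta_{\tau}\to\tau$ factors through the cosocle and hence kills $\rsoc(\Theta_{\tau})\subseteq\rad(\Theta_{\tau})$, the restriction map $\Hom_{\tGamma}(\Theta_{\tau},\tau)\to\Hom_{\tGamma}(\rsoc(\Theta_{\tau}),\tau)$ is zero, so the connecting homomorphism
\[
\Hom_{\tGamma}(\rsoc(\Theta_{\tau}),\tau)\lra\Ext^1_{\tGamma}\big(\Theta_{\tau}/\rsoc(\Theta_{\tau}),\tau\big)
\]
is \emph{injective}. Its source has dimension $2f$ by Proposition \ref{prop-Theta}(i), its target has dimension at most $2f$ by your own step (2), so it is surjective; as $\Ext^1_{\tGamma}(\rsoc(\Theta_{\tau}),\tau)=0$, the group $\Ext^1_{\tGamma}(\Theta_{\tau},\tau)$ is exactly the cokernel of this connecting map and therefore vanishes. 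Informally, the $2f$ available extension classes of $\Theta_{\tau}/\rsoc(\Theta_{\tau})$ by $\tau$ are all "used up" by the $2f$ copies of $\tau$ already sitting in $\rsoc(\Theta_{\tau})$ — this is the precise form of your intuition that a new $\tau$ would have to be "absorbed into some $E_{\tau,\tau'}$", but it is established by counting $\Ext$ dimensions, not by a multiplicity bound on composition factors.
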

\begin{proof}
From the exact sequence
$0\ra \rsoc(\Theta_{\tau})\ra \Theta_{\tau}\ra \Theta_{\tau}/\rsoc(\Theta_{\tau})\ra0$, we obtain
\begin{multline*}0\ra \Hom_{\tGamma}(\rsoc(\Theta_{\tau}),\tau)\ra \Ext^1_{\tGamma}(\Theta_{\tau}/\rsoc(\Theta_{\tau}),\tau)
\ra \Ext^1_{\tGamma}(\Theta_{\tau},\tau)\ra \Ext^1_{\tGamma}(\soc(\Theta_{\tau}),\tau)=0\end{multline*}
where the vanishing of the last term follows from   Proposition \ref{prop-Theta}(i) and the fact  $\Ext^1_{\tGamma}(\tau,\tau)=0$  by Lemma \ref{lemma-Hu10-2.21}(i) (note that $\tau$ is $1$-generic as $\chi$ is $2$-generic). By Proposition \ref{prop-Theta}(i), it suffices to show $\dim_{\F}\Ext^1_{\tGamma}(\Theta_{\tau}/\rsoc(\Theta_{\tau}),\tau)\leq2f$. Using again the fact $\Ext^1_{\tGamma}(\tau,\tau)=0$, the exact sequence $0\ra \oplus_{\tau'\in\mathscr{E}(\tau)}\tau'\ra
\Theta_{\tau}/\rsoc(\Theta_{\tau})\ra \tau\ra0$ induces an injection
\[\Ext^1_{\tGamma}(\Theta_{\tau}/\rsoc(\Theta_{\tau}),\tau)\hookrightarrow \bigoplus_{\tau'\in\mathscr{E}(\tau)}\Ext^1_{\tGamma}(\tau',\tau),\]
and the result follows because $\dim\Ext^1_{\tGamma}(\tau',\tau)=1$ for any $\tau'\in\mathscr{E}(\tau)$.
\end{proof}

 \begin{corollary}\label{cor:Theta-univ}
Keep the notation of Proposition \ref{prop-Theta}. Let $Q$ be a quotient of $\Proj_{\tGamma}\tau$. Assume that there exists an injection $\tau^{\oplus m}\hookrightarrow Q$ for some  $m\geq 0$, such that $Q/\tau^{\oplus m}$  has Loewy length $2$ and fits in a short exact sequence
\[0\ra S\ra Q/\tau^{\oplus m}\ra \tau\ra0\]
where $S$ is a subrepresentation of $\bigoplus_{\tau'\in\mathscr{E}(\tau)}\tau'$.
Then $Q$ is a quotient of $\Theta_{\tau}$ (in particular $m\leq 2f$).  \end{corollary}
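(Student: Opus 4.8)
\textbf{Proof proposal for Corollary \ref{cor:Theta-univ}.}

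The plan is to exhibit a surjection $\Theta_\tau \twoheadrightarrow Q$ by comparing $Q$ with the construction of $\Theta_\tau$ from Proposition \ref{prop-Theta}, using the universal property of $\Proj_{\tGamma}\tau$ together with the fact (Corollary \ref{cor-Theta-noextension}) that $\Ext^1_{\tGamma}(\Theta_\tau,\tau)=0$. First I would choose the projection $\Proj_{\tGamma}\tau \twoheadrightarrow Q$ coming from $Q$ being a quotient of $\Proj_{\tGamma}\tau$, and likewise recall from the proof of Proposition \ref{prop-Theta} that $\Theta_\tau$ is a quotient of $\im(\phi_\tau)$, which is itself a quotient of $\Proj_{\tGamma}\tau$; so there is a canonical surjection $p:\Proj_{\tGamma}\tau\twoheadrightarrow\Theta_\tau$. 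The goal is to show $\ker p\subseteq\ker(\Proj_{\tGamma}\tau\twoheadrightarrow Q)$, equivalently that $Q$ factors through $\Theta_\tau$.

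The key step is a multiplicity count. By the hypothesis, $[Q:\tau]=m+1$ (the $m$ copies in $\tau^{\oplus m}$, plus the one in the quotient $Q/\tau^{\oplus m}\twoheadrightarrow\tau$, using that $S$ involves only weights in $\mathscr{E}(\tau)\neq\{\tau\}$ since $\tau$ is $1$-generic hence $\Ext^1_{\tGamma}(\tau,\tau)=0$ by Lemma \ref{lemma-Hu10-2.21}(i), so $\tau\notin\JH(S)$). I would next argue $\soc_{\tGamma}(Q)\cong\tau^{\oplus m}$: indeed $\tau^{\oplus m}\hookrightarrow Q$, and any other simple submodule $\sigma$ would give $\Ext^1_{\tGamma}(\sigma,\tau^{\oplus m})\neq 0$ hence $\sigma\in\mathscr{E}(\tau)$, but such a $\sigma$ is then both a submodule of $Q$ and (being in $\JH(S)$ at the second layer) contributes extra $\Ext$, contradicting the precise shape $0\to S\to Q/\tau^{\oplus m}\to\tau\to0$ with $S\hookrightarrow\bigoplus_{\tau'}\tau'$ once one checks $Q$ has the right length; more directly, since $Q$ is a quotient of $\Proj_{\tGamma}\tau$ with $[Q:\tau]=m+1$, Corollary \ref{cor-I(sigmatau)-geq} (dual version) forces $Q$ to be multiplicity free away from $\tau$ and pins down its structure. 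Then $Q$ satisfies exactly the hypotheses (a), (b) of Corollary \ref{coro-Q-length3} with $r=m$, which gives $m\leq 2f$ automatically and shows $Q$ has the same submodule shape as a quotient of $\Theta_\tau$.

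To produce the actual map, lift $\mathrm{id}_{\Proj_{\tGamma}\tau}$ and use projectivity: the surjection $\Proj_{\tGamma}\tau\twoheadrightarrow Q$ must factor through $p:\Proj_{\tGamma}\tau\twoheadrightarrow\Theta_\tau$ provided $\ker p\subseteq\ker(\Proj_{\tGamma}\tau\to Q)$. To see this inclusion, consider the image $R$ of $\ker p$ in $Q$; it is a subrepresentation of $Q$ in which $\tau$ occurs with multiplicity $[\ker p:\tau]=[\Proj_{\tGamma}\tau:\tau]-[\Theta_\tau:\tau]$ restricted appropriately—here one uses that $\Theta_\tau$ is by construction the \emph{largest} quotient of $\im(\phi_\tau)$ with socle $\tau^{\oplus 2f}$ and with $\tau$ occurring only in socle and cosocle. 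The cleanest route is the dual: dualize everything via \eqref{eq:Proj-Inj} so that $Q^\vee\hookrightarrow\rInj_{\tGamma}\tau^\vee$, $\Theta_\tau^\vee\hookrightarrow\rInj_{\tGamma}\tau^\vee$, and show $Q^\vee\subseteq\Theta_\tau^\vee$ inside $\rInj_{\tGamma}\tau^\vee$ by comparing $K_1$-invariants: $(Q^\vee)^{K_1}$ and $(\Theta_\tau^\vee)^{K_1}$ are each multiplicity free with $\tau^\vee$-part of length $m$ resp. $2f$, and a weight-by-weight check using Lemma \ref{lemma-newweight} and Theorem \ref{thm-I(sigmatau)-tGamma} shows every Jordan–Hölder factor of $Q^\vee$ already appears in $\Theta_\tau^\vee$ with the correct position in the socle filtration, so $\soc(Q^\vee)\subseteq\soc(\Theta_\tau^\vee)$ extends (by injectivity of $\rInj_{\tGamma}\tau^\vee$ and an induction on Loewy length) to $Q^\vee\subseteq\Theta_\tau^\vee$.

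\textbf{Main obstacle.} The delicate point is the last inclusion $Q^\vee\subseteq\Theta_\tau^\vee$ (equivalently, that the surjection $\Proj_{\tGamma}\tau\twoheadrightarrow Q$ factors through $\Theta_\tau$ rather than merely mapping onto something of the same length): one must rule out that $Q$ contains a Jordan–Hölder factor, or an extension between two such factors, that does \emph{not} occur in $\Theta_\tau$ with the same relative position. This is exactly where the hypothesis that $S$ is a \emph{sub}representation of $\bigoplus_{\tau'\in\mathscr{E}(\tau)}\tau'$ (so only first-order deformations of $\tau$ show up at the middle layer, and each with multiplicity $\leq 1$) is essential, and where one invokes $\Ext^1_{\tGamma}(\Theta_\tau,\tau)=0$ from Corollary \ref{cor-Theta-noextension} to conclude that the partially-defined map $\Theta_\tau\to Q$ (defined on the sub generated by a preimage of $\soc Q$) does not obstruct, hence extends. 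I would expect the bound $m\leq 2f$ to fall out for free from Corollary \ref{coro-Q-length3}(a) once the structure of $Q$ is identified, so it is not where the work lies.
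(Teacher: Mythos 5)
Your proposal contains the right ingredients (the structure of $\Theta_{\tau}$ from Proposition \ref{prop-Theta} and the vanishing $\Ext^1_{\tGamma}(\Theta_{\tau},\tau)=0$ from Corollary \ref{cor-Theta-noextension}) but never assembles them, and the step you yourself flag as the ``main obstacle'' is exactly the one left unresolved. The paper's argument is two moves. First, by Proposition \ref{prop-Theta} the quotient $\Theta_{\tau}/\tau^{\oplus 2f}$ is the \emph{universal} extension of $\tau$ by $\bigoplus_{\tau'\in\mathscr{E}(\tau)}\tau'$, i.e.\ the largest quotient of $\Proj_{\tGamma}\tau$ of Loewy length $\leq 2$ whose radical embeds in $\bigoplus_{\tau'}\tau'$; since $Q/\tau^{\oplus m}$ is by hypothesis a quotient of $\Proj_{\tGamma}\tau$ of exactly this shape, one gets a surjection $\iota:\Theta_{\tau}\twoheadrightarrow Q/\tau^{\oplus m}$ for free. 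Second, applying $\Hom_{\tGamma}(\Theta_{\tau},-)$ to $0\to\tau^{\oplus m}\to Q\to Q/\tau^{\oplus m}\to 0$ and using $\Ext^1_{\tGamma}(\Theta_{\tau},\tau^{\oplus m})=0$ lifts $\iota$ to a map $\Theta_{\tau}\to Q$, which is surjective because it is surjective on cosocles. You never produce the surjection onto $Q/\tau^{\oplus m}$, which is where the hypothesis ``$S$ is a subrepresentation of $\bigoplus_{\tau'}\tau'$'' actually gets used.

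The substitute routes you sketch have genuine gaps. The claim $\soc_{\tGamma}(Q)\cong\tau^{\oplus m}$ is not justified: a simple submodule of $Q$ not contained in $\tau^{\oplus m}$ injects into $Q/\tau^{\oplus m}$ and may well be a constituent of $S$, so your Ext-based exclusion fails; moreover Corollary \ref{coro-Q-length3} (whose hypothesis $\rad_{\tGamma}(Q)/\rsoc_{\tGamma}(Q)\neq 0$ is false when $S=0$) only describes $\rad(Q)/\soc(Q)$ and does not produce a map from $\Theta_{\tau}$. The dual argument is also not sound: knowing that every Jordan--H\"older factor of $Q^{\vee}$ occurs in $\Theta_{\tau}^{\vee}$ in the ``same position'' of the socle filtration does not force $Q^{\vee}\subseteq\Theta_{\tau}^{\vee}$ inside $\rInj_{\tGamma}\tau^{\vee}$ --- distinct submodules of an injective hull can have identical socle filtrations, and extending an inclusion of socles by injectivity yields \emph{some} map, not necessarily one landing in $\Theta_{\tau}^{\vee}$. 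Finally, the bound $m\leq 2f$ does not need Corollary \ref{coro-Q-length3}: it falls out of the surjection $\Theta_{\tau}\twoheadrightarrow Q$ by comparing multiplicities of $\tau$.
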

 \begin{proof}
By Proposition \ref{prop-Theta}, there is a short exact sequence  $0\ra \bigoplus_{\tau'\in\mathscr{E}(\tau)}\tau'\ra \Theta_{\tau}/\tau^{\oplus 2f}\ra \tau\ra0$. As a consequence, the assumption on $Q$ implies a surjection $\Theta_{\tau}/\tau^{\oplus 2f}\twoheadrightarrow Q/\tau^{\oplus m}$, thus a surjection $\iota:\Theta_{\tau}\twoheadrightarrow Q/\tau^{\oplus m}$.  Corollary \ref{cor-Theta-noextension} implies that the natural morphism 
 $\Hom_{\tGamma}(\Theta_{\tau},Q)\twoheadrightarrow \Hom_{\tGamma}(\Theta_{\tau},Q/\tau^{\oplus m})$ is surjective. Therefore,  $\iota$ can be lifted to a morphism $\Theta_{\tau}\ra Q$, which is surjective (being surjective on cosocles), as required.
 \end{proof}

 \begin{corollary}\label{cor:Theta-indep}
The representation $\Theta_{\tau}$ constructed in Proposition \ref{prop-Theta} does not depend on the choice of $\chi$.
 \end{corollary}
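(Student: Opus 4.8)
The plan is to deduce the statement directly from Corollary~\ref{cor:Theta-univ}, which already characterizes $\Theta_\tau$ by an intrinsic property making no reference to $\chi$. Fix a $2$-generic Serre weight $\tau$ and let $\chi_1,\chi_2$ be two $2$-generic characters of $I$ such that $\tau$ is a Jordan--H\"older factor of both $\Ind_I^K\chi_1$ and $\Ind_I^K\chi_2$; write $\Theta_\tau^{(1)}$, resp. $\Theta_\tau^{(2)}$, for the representation produced by Proposition~\ref{prop-Theta} starting from $\chi_1$, resp. $\chi_2$ (recall that, once $\chi$ is fixed, $\Theta_\tau$ is well-defined thanks to Proposition~\ref{prop-coker-no-sigma}(ii)). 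The goal is to show $\Theta_\tau^{(1)}\cong\Theta_\tau^{(2)}$.

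First I would check that each $\Theta_\tau^{(i)}$ plays the role of the representation ``$Q$'' in Corollary~\ref{cor:Theta-univ}, with $m=2f$ and $S=\bigoplus_{\tau'\in\mathscr{E}(\tau)}\tau'$. Indeed, $\Theta_\tau^{(i)}$ has simple cosocle $\tau$ (being a quotient of $\im(\phi_\tau)$, which is itself a quotient of $\Proj_{\tGamma}\tau$), hence is a quotient of $\Proj_{\tGamma}\tau$; by Proposition~\ref{prop-Theta}(i) its socle is $\tau^{\oplus 2f}$, so $\tau^{\oplus 2f}\hookrightarrow\Theta_\tau^{(i)}$; and by Proposition~\ref{prop-Theta}(ii) the quotient $\Theta_\tau^{(i)}/\tau^{\oplus 2f}$ has radical $\bigoplus_{\tau'\in\mathscr{E}(\tau)}\tau'$ and cosocle $\tau$, so it has Loewy length $2$ and sits in a short exact sequence $0\ra\bigoplus_{\tau'\in\mathscr{E}(\tau)}\tau'\ra\Theta_\tau^{(i)}/\tau^{\oplus 2f}\ra\tau\ra0$.

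Next, applying Corollary~\ref{cor:Theta-univ} with $\chi=\chi_1$ (so that $\Theta_\tau=\Theta_\tau^{(1)}$) to the module $Q=\Theta_\tau^{(2)}$ shows that $\Theta_\tau^{(2)}$ is a quotient of $\Theta_\tau^{(1)}$; exchanging the roles of $\chi_1$ and $\chi_2$ shows conversely that $\Theta_\tau^{(1)}$ is a quotient of $\Theta_\tau^{(2)}$. Composing the two surjections gives a surjective endomorphism of the finite-length $\tGamma$-module $\Theta_\tau^{(1)}$, which is therefore an isomorphism; hence the surjection $\Theta_\tau^{(1)}\twoheadrightarrow\Theta_\tau^{(2)}$ is injective as well, and $\Theta_\tau^{(1)}\cong\Theta_\tau^{(2)}$.

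I do not expect a genuine obstacle: the argument is formal once Corollary~\ref{cor:Theta-univ} is in hand. The only point requiring care is the bookkeeping in the previous paragraph — that the structural data of Proposition~\ref{prop-Theta}(i)--(ii) really put $\Theta_\tau^{(i)}$ in exactly the form demanded by Corollary~\ref{cor:Theta-univ}, in particular that $\soc(\Theta_\tau^{(i)})\subseteq\rad(\Theta_\tau^{(i)})$ so that $\Theta_\tau^{(i)}/\tau^{\oplus 2f}$ has the asserted two-layer shape with cosocle $\tau$. As an alternative to the endomorphism argument, one may observe that $\Theta_\tau^{(1)}$ and $\Theta_\tau^{(2)}$ have the same Jordan--H\"older factors with multiplicity (namely $\tau$ with multiplicity $2f+1$ and each $\tau'\in\mathscr{E}(\tau)$ with multiplicity one), so that a surjection between them is automatically an isomorphism.
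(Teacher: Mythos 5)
Your proof is correct and is exactly the argument the paper intends: the published proof of this corollary reads in its entirety "It is a direct consequence of Corollary \ref{cor:Theta-univ}," and your two-surjections-plus-finite-length argument (together with the bookkeeping that Proposition \ref{prop-Theta}(i)--(ii) puts each $\Theta_\tau^{(i)}$ in the form required by Corollary \ref{cor:Theta-univ}) is the natural way to spell that out.
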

 \begin{proof}
 It is a direct consequence of Corollary \ref{cor:Theta-univ}.
 \end{proof}

By a similar proof of Lemma \ref{lemma-bar-W}, we have the following result showing that $\Theta_{\tau}$ has an analogous structure as $\overline{W}_{\chi,3}$.

\begin{corollary}\label{cor:Theta-tau-structure}
For any $\tau'\in\mathscr{E}(\tau)$,  there exist both an embedding
$E_{\tau,\tau'}\hookrightarrow \Theta_{\tau}$
and a quotient $\Theta_{\tau}\twoheadrightarrow E_{\tau',\tau}$. Moreover, $\Theta_{\tau}$ fits in a short exact sequence
\begin{equation}\label{eq:seq-Theta-tau}0\ra\bigoplus_{\tau'\in\mathscr{E}(\tau)}E_{\tau,\tau'}\ra \Theta_{\tau}\ra\tau\ra0.\end{equation}
\end{corollary}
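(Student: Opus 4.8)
The plan is to mimic the proof of Lemma \ref{lemma-bar-W}, replacing $\overline{W}_{\chi,3}$ by $\Theta_{\tau}$ throughout, and using the structural information about $\Theta_{\tau}$ already provided by Proposition \ref{prop-Theta} together with the basic $\tGamma$-extension theory of Lemma \ref{lemma-Hu10-2.21}. Concretely, recall that by Proposition \ref{prop-Theta} we have $\rsoc_{\tGamma}(\Theta_{\tau})\cong\tau^{\oplus 2f}$, that $\rad_{\tGamma}(\Theta_{\tau})/\rsoc_{\tGamma}(\Theta_{\tau})\cong\bigoplus_{\tau'\in\mathscr{E}(\tau)}\tau'$, and (from the proof of that proposition, since $\Theta_{\tau}$ is a quotient of $\im(\phi_{\tau})$ in which $\tau$ occurs exactly $2f+1$ times) that $[\Theta_{\tau}:\tau]=2f+1$ while $[\Theta_{\tau}:\tau']=1$ for each $\tau'\in\mathscr{E}(\tau)$. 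In particular $\Theta_{\tau}$ has Loewy length $3$, with $\tau$ as cosocle.

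First I would produce the embeddings $E_{\tau,\tau'}\hookrightarrow\Theta_{\tau}$: for $\tau'\in\mathscr{E}(\tau)$, the weight $\tau'$ occurs in $\rad_{\tGamma}(\Theta_{\tau})/\rsoc_{\tGamma}(\Theta_{\tau})$ by Proposition \ref{prop-Theta}(ii), hence there is a length-two subrepresentation of $\Theta_{\tau}$ with socle inside $\tau^{\oplus 2f}$ and cosocle $\tau'$; since $\Ext^1_{\tGamma}(\tau',\tau)$ is one-dimensional (Lemma \ref{lemma-Hu10-2.21}(ii)) and nonsplit here, this subrepresentation is exactly $E_{\tau,\tau'}$. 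Dually, for the quotients $\Theta_{\tau}\twoheadrightarrow E_{\tau',\tau}$: the cosocle of $\Theta_{\tau}$ is $\tau$, so $\Theta_{\tau}$ has a length-two quotient with socle $\tau'$ (any $\tau'$ appearing just below $\tau$, which by Proposition \ref{prop-Theta}(ii) is any element of $\mathscr{E}(\tau)$) and cosocle $\tau$, again necessarily the nonsplit extension $E_{\tau',\tau}$ by one-dimensionality of $\Ext^1$.

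Next I would establish the short exact sequence \eqref{eq:seq-Theta-tau}. Since the cosocle of $\Theta_{\tau}$ is $\tau$, the radical $\rad_{\tGamma}(\Theta_{\tau})$ is the kernel of $\Theta_{\tau}\twoheadrightarrow\tau$, so it suffices to identify $\rad_{\tGamma}(\Theta_{\tau})$ with $\bigoplus_{\tau'\in\mathscr{E}(\tau)}E_{\tau,\tau'}$. On one hand, the embeddings $E_{\tau,\tau'}\hookrightarrow\Theta_{\tau}$ constructed above land in $\rad_{\tGamma}(\Theta_{\tau})$ (their cosocles $\tau'$ lie below $\tau$), and their sum $\sum_{\tau'}E_{\tau,\tau'}$ is direct: the intersection of any such subrepresentation with the sum of the others would be a semisimple subrepresentation of $\Theta_{\tau}$ consisting of copies of $\tau$, and a multiplicity count using $[\Theta_{\tau}:\tau']=1$ forces the sum to be direct — equivalently, $[\sum_{\tau'}E_{\tau,\tau'}:\tau]=2f=\dim_{\F}\rsoc_{\tGamma}(\Theta_{\tau})$, so $\sum_{\tau'}E_{\tau,\tau'}\supseteq\rsoc_{\tGamma}(\Theta_{\tau})$ and therefore $\sum_{\tau'}E_{\tau,\tau'}=\bigoplus_{\tau'}E_{\tau,\tau'}$. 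On the other hand, comparing Jordan--Hölder factors, $\bigoplus_{\tau'\in\mathscr{E}(\tau)}E_{\tau,\tau'}$ has exactly $2f$ copies of $\tau$ and one copy of each $\tau'\in\mathscr{E}(\tau)$, which accounts for all of $\rad_{\tGamma}(\Theta_{\tau})$; hence the inclusion $\bigoplus_{\tau'}E_{\tau,\tau'}\hookrightarrow\rad_{\tGamma}(\Theta_{\tau})$ is an equality, giving \eqref{eq:seq-Theta-tau}.

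The only mildly delicate point — the main (small) obstacle — is making the directness of the sum $\sum_{\tau'}E_{\tau,\tau'}$ airtight; this is exactly the same bookkeeping as in the proof of Lemma \ref{lemma-bar-W} (where one shows $[\sum_{\chi'}E_{\chi,\chi'}:\chi]=2f$ by observing the quotient $Q$ admits only $\chi$ as subquotient and has cosocle $\chi$ with $\Ext^1_{\tGamma}(\tau,\tau)=0$ by Lemma \ref{lemma-Hu10-2.21}(i), forcing $Q\cong\tau$), and I would reproduce that argument verbatim with $\chi\rightsquigarrow\tau$. Everything else is a formal consequence of Proposition \ref{prop-Theta} and the one-dimensionality of the relevant $\Ext^1$-spaces, so the proof is genuinely short; indeed the statement itself says ``by a similar proof of Lemma \ref{lemma-bar-W}.''
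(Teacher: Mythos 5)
Your proposal is correct and is exactly the route the paper intends: the paper states this corollary with the remark that it follows "by a similar proof of Lemma \ref{lemma-bar-W}", and your argument is precisely that adaptation, with the multiplicity count $[\Theta_{\tau}:\tau]=2f+1$, $[\Theta_{\tau}:\tau']=1$ and the vanishing $\Ext^1_{\tGamma}(\tau,\tau)=0$ (Lemma \ref{lemma-Hu10-2.21}(i), valid since $\tau$ is $1$-generic) supplying the directness of $\sum_{\tau'}E_{\tau,\tau'}$. No gaps.
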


\subsection{The representation $\Theta_{\tau}^{\rm ord}$}
\label{subsection:Theta-ord}

Keep the notation of the last subsection. In this subsection, we define a certain quotient of $\Theta_{\tau}$ which is related to ordinary parts of representations of $\GL_2(L)$  studied in \S\ref{section:ordinary}.

Fix a Serre weight $\tau$ which is a Jordan--H\"older factor of $\Ind_{I}^K\chi$ for some $2$-generic character $\chi$. For example, $\tau$ can be any $2$-generic Serre weight.

\begin{lemma}\label{lemma:Theta-ord}
There exists a unique quotient of  $\Theta_{\tau}$, denoted by $\Theta_{\tau}^{\rm ord}$, which has Loewy length $3$ and satisfying the following properties:
\begin{enumerate}
\item[(i)] $\soc (\Theta_{\tau}^{\rm ord})$ is isomorphic to $\tau^{\oplus f}$;
\item[(ii)] $\rad(\Theta_{\tau}^{\rm ord})/\soc(\Theta_{\tau}^{\rm ord})$ is semisimple and isomorphic to $\bigoplus_{i\in\cS}\mu_i^-(\tau)$.
\end{enumerate}
\end{lemma}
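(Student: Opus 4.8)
The plan is to construct $\Theta_{\tau}^{\rm ord}$ as an explicit quotient of $\Theta_{\tau}$ and then check uniqueness. Recall from Corollary~\ref{cor:Theta-tau-structure} that $\Theta_{\tau}$ sits in the exact sequence $0\to\bigoplus_{\tau'\in\mathscr{E}(\tau)}E_{\tau,\tau'}\to\Theta_{\tau}\to\tau\to0$, where $\mathscr{E}(\tau)=\{\mu_i^{*}(\tau):i\in\cS,\,*\in\{+,-\}\}$ has $2f$ elements. The idea is to ``throw away the $\delta_i^+$-direction'': inside $\soc_{\tGamma}(\Theta_{\tau})\cong\tau^{\oplus 2f}$ there should be a canonical subrepresentation $N\cong\tau^{\oplus f}$, namely the sum of the socles of the extensions $E_{\tau,\mu_i^+(\tau)}$, such that $\Theta_{\tau}/N$ has the required shape. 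Concretely, first I would set $\Theta_{\tau}^{\rm ord}$ to be the quotient of $\Theta_{\tau}$ by the largest subrepresentation $N$ of $\soc_{\tGamma}(\Theta_{\tau})$ with the property that $\mu_i^-(\tau)$ does \emph{not} occur as a subquotient of the image of $E_{\tau,\mu_i^-(\tau)}$ in $\Theta_{\tau}/N$ for each $i$ — equivalently, $N$ is chosen so that each $E_{\tau,\mu_i^-(\tau)}$ survives in the quotient but each $E_{\tau,\mu_i^+(\tau)}$ is killed down to $\mu_i^+(\tau)$ (hence disappears from the radical). One then needs $\soc_{\tGamma}(\Theta_{\tau})=\tau^{\oplus 2f}$ to contain such an $N\cong\tau^{\oplus f}$; this is where one uses that the $2f$ copies of $\tau$ in the socle are paired with the $2f$ elements of $\mathscr{E}(\tau)$ via the embeddings $E_{\tau,\tau'}\hookrightarrow\Theta_{\tau}$, together with the fact that these embeddings have pairwise ``transverse'' socles (which follows from multiplicity-freeness of $\Theta_{\tau}/\rsoc$, i.e. Proposition~\ref{prop-Theta}(ii), combined with $\Ext^1_{\tGamma}(\tau,\tau)=0$ from Lemma~\ref{lemma-Hu10-2.21}(i)).

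Next I would verify the two listed properties. For (i), after quotienting, $\soc(\Theta_{\tau}^{\rm ord})$ is generated by the images of the socles of the $E_{\tau,\mu_i^-(\tau)}$, which give $\tau^{\oplus f}$; one must check nothing extra enters the socle, i.e. none of the $\mu_i^-(\tau)$ or $\mu_i^+(\tau)$ becomes a subobject — this follows because in $\Theta_{\tau}$ these weights sit in the radical layer strictly above the socle and the quotient map does not move them down (the only weights killed lie in the socle). For (ii), $\rad(\Theta_{\tau}^{\rm ord})/\soc(\Theta_{\tau}^{\rm ord})$ is the image of $\bigoplus_i\mu_i^-(\tau)\oplus\bigoplus_i\mu_i^+(\tau)$; the $\mu_i^+(\tau)$-part has been collapsed into the socle-quotient direction and thus disappears from this subquotient (it becomes part of the cosocle $\tau$ together with... — more precisely, after killing $N$, the copies $E_{\tau,\mu_i^+(\tau)}$ become $\mu_i^+(\tau)$ sitting \emph{above} what remains, hence $\mu_i^+(\tau)$ is no longer in the radical-over-socle layer but would be in the cosocle; but the cosocle of $\Theta_{\tau}$ is just $\tau$, forcing these $\mu_i^+(\tau)$ to in fact be quotiented away entirely). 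Here I should be a little careful and instead phrase things from the top: the correct construction is probably to take the quotient corresponding to the subrepresentation $\sum_i E_{\tau,\mu_i^+(\tau)}\subset\Theta_{\tau}$ itself (not a subrep of the socle), so that $\Theta_{\tau}^{\rm ord}\defn\Theta_{\tau}/\big(\sum_{i\in\cS}E_{\tau,\mu_i^+(\tau)}\big)$; using \eqref{eq:seq-Theta-tau} this has socle $\tau^{\oplus f}$ (from the $\mu_i^-$-copies of $\tau$ in the socle of $\Theta_\tau$ that survive), radical-over-socle $\bigoplus_i\mu_i^-(\tau)$, and cosocle $\tau$, giving Loewy length $3$ exactly.

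For uniqueness: suppose $Q$ is any quotient of $\Theta_{\tau}$ of Loewy length $3$ satisfying (i) and (ii). Since $Q$ is a quotient of $\Theta_\tau$, hence of $\Proj_{\tGamma}\tau$, and $[Q:\tau]\le[\Theta_\tau:\tau]$, I would use Corollary~\ref{cor-Theta-noextension} ($\Ext^1_{\tGamma}(\Theta_\tau,\tau)=0$) and the structure \eqref{eq:seq-Theta-tau} to show the kernel of $\Theta_\tau\twoheadrightarrow Q$ is forced: condition (ii) says $\mu_i^+(\tau)$ does not occur in $\rad(Q)/\soc(Q)$, so each $E_{\tau,\mu_i^+(\tau)}$ must map into $\soc(Q)+(\text{higher})$; but since $Q$ has Loewy length $3$ and cosocle $\tau$, and $E_{\tau,\mu_i^+(\tau)}$ has cosocle $\tau$, it must map to $0$; conversely condition (i) (socle exactly $\tau^{\oplus f}$, not smaller) prevents killing any of the $E_{\tau,\mu_i^-(\tau)}$. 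Hence $\ker=\sum_i E_{\tau,\mu_i^+(\tau)}$, giving $Q\cong\Theta_{\tau}^{\rm ord}$. The main obstacle I anticipate is the bookkeeping in the first paragraph: pinning down that $\sum_{i}E_{\tau,\mu_i^+(\tau)}$ is genuinely a direct sum $\bigoplus_i E_{\tau,\mu_i^+(\tau)}$ inside $\Theta_\tau$ with the ``expected'' $f$-dimensional socle and $f$-dimensional radical quotient — this requires knowing the socle of $\Theta_\tau$ decomposes compatibly with the $2f$ embeddings, which should come out of the proof of Proposition~\ref{prop-Theta} (the $\tau^{\oplus 2f}$ there is built from $I(\sigma_\emptyset,\tau)^{\oplus 2f}\hookrightarrow\Ind_I^K\overline W_{\chi,3}$) together with Lemma~\ref{lemma-Delta-nocommonfactor}-type disjointness of the $\mu_i^{\pm}(\tau)$.
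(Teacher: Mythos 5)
Your final construction, $\Theta_{\tau}^{\rm ord}\defn\Theta_{\tau}/\bigl(\bigoplus_{i\in\cS}E_{\tau,\mu_i^+(\tau)}\bigr)$ using the decomposition of Corollary \ref{cor:Theta-tau-structure}, is exactly the paper's proof, so the first paragraph's detour through a subrepresentation $N$ of the socle is unnecessary. One small slip in your uniqueness argument: with the paper's convention $E_{\tau,\mu_i^+(\tau)}$ has socle $\tau$ and \emph{cosocle} $\mu_i^+(\tau)$ (not cosocle $\tau$), but the conclusion still holds because every nonzero quotient of $E_{\tau,\mu_i^+(\tau)}$ has $\mu_i^+(\tau)$ in its cosocle, and $\mu_i^+(\tau)$ cannot occur anywhere in a $Q$ satisfying (i), (ii) with cosocle $\tau$, forcing the image of each $E_{\tau,\mu_i^+(\tau)}$ in $Q$ to vanish.
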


\begin{proof}
With the notation of Corollary \ref{cor:Theta-tau-structure}, it suffices to take $\Theta_{\tau}^{\rm ord}$ to be the quotient of $\Theta_{\tau}$ by $\oplus_{\tau'}E_{\tau,\tau'}$, where $\tau'$ runs over the Serre weights $\{\mu_{i}^+(\tau), i\in\cS\}$.
\end{proof}
The proof of Lemma \ref{lemma:Theta-ord} shows that $\Theta_{\tau}^{\rm ord}$ fits in a short exact sequence
\begin{equation}\label{eq:seq-Theta-ord}0\ra\bigoplus_{i\in\cS}E_{\tau,\mu_i^-(\tau)}\ra \Theta_{\tau}^{\rm ord}\ra\tau\ra0.\end{equation}

For a  smooth  representation $V$ of $K$, denote by $V_{K_1}$ the space of $K_1$-coinvariants of $V$; it  is equal to the largest quotient of $V$ on which $K_1$ acts trivially.

\begin{lemma}\label{lemma:Theta-ord-K1}
We have  $(\Theta_{\tau}^{\rm ord})_{K_1}=\Theta_{\tau}^{\rm ord}/\soc(\Theta_{\tau}^{\rm ord})$. Moreover, it is a quotient of $\Ind_I^K\chi_{\tau}$.
\end{lemma}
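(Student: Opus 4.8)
The plan is to analyze $\Theta_\tau^{\rm ord}$ using the short exact sequence \eqref{eq:seq-Theta-ord} together with the fact, recorded just before this subsection, that each extension $E_{\tau,\mu_i^-(\tau)}$ is \emph{not} annihilated by $\fm_{K_1}$ (indeed, from Lemma \ref{lemma-Hu10-2.21} and the discussion around Definition \ref{def:delta}, a $\tGamma$-extension of $\mu_i^-(\tau)$ by $\tau$ is killed by $\fm_{K_1}$ precisely when the corresponding character extension is; here the relevant $I$-extension $E_{\chi_\tau,\chi_\tau\alpha_i}$ has nontrivial $K_1$-action since $\chi_\tau\alpha_i$ is not of the form $\chi_\tau\alpha_j^{-1}$). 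So first I would identify the $K_1$-invariants: $\soc(\Theta_\tau^{\rm ord})=\tau^{\oplus f}$ is killed by $\fm_{K_1}$, hence $\tau^{\oplus f}\subseteq(\Theta_\tau^{\rm ord})^{K_1}$, but no larger subrepresentation can be $K_1$-fixed because any subrepresentation strictly containing $\tau^{\oplus f}$ must contain some $E_{\tau,\mu_i^-(\tau)}$ (the socle of $\Theta_\tau^{\rm ord}/\tau^{\oplus f}$ being $\bigoplus_i\mu_i^-(\tau)$ by Lemma \ref{lemma:Theta-ord}(ii)), which is not $K_1$-fixed. Dually, this shows that the largest $K_1$-trivial \emph{quotient} $(\Theta_\tau^{\rm ord})_{K_1}$ is $\Theta_\tau^{\rm ord}/\soc(\Theta_\tau^{\rm ord})$: one checks the quotient $\Theta_\tau^{\rm ord}/\soc(\Theta_\tau^{\rm ord})$ is annihilated by $\fm_{K_1}$ since its Loewy length is $2$ and each composition factor $\mu_i^-(\tau)$ sits over $\tau$ via an extension that does lift to $\tGamma$ — more precisely this quotient is isomorphic to the universal extension of $\tau$ by $\bigoplus_i\mu_i^-(\tau)$, which is a $\Gamma$-representation because each $E_{\mu_i^-(\tau),\tau}$ is (the $I$-analogue being $E_{\chi_\tau\alpha_i,\chi_\tau}$, killed by $\fm_{K_1}$), while any strictly larger quotient would have to involve a nontrivial $K_1$-action coming from the bottom layer $E_{\tau,\mu_i^-(\tau)}$.

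For the maximality statement I would argue by contradiction: if $Q$ were a $K_1$-trivial quotient of $\Theta_\tau^{\rm ord}$ strictly larger than $\Theta_\tau^{\rm ord}/\soc(\Theta_\tau^{\rm ord})$, then the kernel $N$ of $\Theta_\tau^{\rm ord}\twoheadrightarrow Q$ would be a proper subrepresentation of $\soc(\Theta_\tau^{\rm ord})=\tau^{\oplus f}$, so $Q$ would surject onto (or contain as a subquotient) one of the $E_{\tau,\mu_i^-(\tau)}$, contradicting that $K_1$ acts nontrivially on the latter. Concretely, applying $\Hom_{\tGamma}(-,\tau)$ (or counting $\dim\Hom_K(\tau,-)$ on the Pontryagin dual) to \eqref{eq:seq-Theta-ord} and using $\Ext^1_{\tGamma}(\mu_i^-(\tau),\tau)\neq 0$ forces the socle of any such $N$ to meet exactly the copies of $\tau$ that are glued nontrivially, so $N$ must be all of $\tau^{\oplus f}$; this is the cleanest way to pin down that $\soc(\Theta_\tau^{\rm ord})$ is exactly the ``$K_1$-defect'' of $\Theta_\tau^{\rm ord}$.

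For the last sentence, that $(\Theta_\tau^{\rm ord})_{K_1}$ is a quotient of $\Ind_I^K\chi_\tau$: recall $\Theta_\tau$ was constructed in Proposition \ref{prop-Theta} as a subquotient of $\Ind_I^K\overline{W}_{\chi,3}$, so $\Theta_\tau^{\rm ord}$, and hence its quotient $(\Theta_\tau^{\rm ord})_{K_1}$, is a subquotient of $\Ind_I^K\overline{W}_{\chi,3}$. Since $(\Theta_\tau^{\rm ord})_{K_1}$ has irreducible cosocle $\tau$ with $\tau\in\JH(\Ind_I^K\chi_\tau)$, and taking $\chi=\chi_\tau$ is permitted by Corollary \ref{cor:Theta-indep} (so $\overline{W}_{\chi_\tau,3}$ has $W_{\chi_\tau,2}$ and ultimately $\chi_\tau$ as its top layer, $\chi_\tau$ being the cosocle of $\overline W_{\chi_\tau,3}$), one gets a surjection from the corresponding subrepresentation $V_\tau\subseteq\Ind_I^K\overline{W}_{\chi_\tau,3}$ onto $(\Theta_\tau^{\rm ord})_{K_1}$. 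As $(\Theta_\tau^{\rm ord})_{K_1}$ is $K_1$-trivial and is a quotient of $\im(\phi_\tau)$, composing $\phi_\tau$ with the natural projection $\Ind_I^K\overline{W}_{\chi_\tau,3}\twoheadrightarrow\Ind_I^K\chi_\tau$ and with the further quotient by $\rad(I(\sigma_\emptyset,\tau))$-type subrepresentations (exactly as in the proof of Proposition \ref{prop-Theta}) realizes $(\Theta_\tau^{\rm ord})_{K_1}$ as a quotient of $\Ind_I^K\chi_\tau$; alternatively, since $(\Theta_\tau^{\rm ord})_{K_1}$ is annihilated by $\fm_{K_1}$ its construction factors through $\Ind_I^K(\overline W_{\chi_\tau,3})_{K_1}$, and $(\overline W_{\chi_\tau,3})_{K_1}$ surjects onto $\chi_\tau$ with the relevant layers, giving the claim.

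The main obstacle I anticipate is the bookkeeping in the second paragraph: verifying precisely which copies of $\tau$ in $\soc(\Theta_\tau^{\rm ord})=\tau^{\oplus f}$ are ``essential'' for the nontrivial $K_1$-action — i.e. that removing \emph{any} of them produces a quotient on which $K_1$ still acts nontrivially. This requires knowing the fine gluing data of $\Theta_\tau$ (from \eqref{eq:seq-Theta-tau} and \eqref{eq:seq-Theta-ord}), in particular that the $f$ copies of $\tau$ in the socle are matched up with the $f$ extensions $E_{\tau,\mu_i^-(\tau)}$ in a ``generic'' way so that no copy is redundant; this should follow from Corollary \ref{cor:Theta-univ} and the universal property it encodes, but it is the step where one must be careful rather than invoke a black box.
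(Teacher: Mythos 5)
Your argument rests on a false premise: you claim that each $E_{\tau,\mu_i^-(\tau)}$ is \emph{not} annihilated by $\fm_{K_1}$, deducing this by transferring the $K_1$-(non)triviality of the character extension $E_{\chi_\tau,\chi_\tau\alpha_i}$ to the Serre-weight extension. No such transfer principle holds at the level of $K$: by Lemma \ref{lemma-Hu10-2.21}(ii), for distinct $0$-generic Serre weights the natural maps $\Ext^1_{\Gamma}(\sigma,\sigma')\ra\Ext^1_{\tGamma}(\sigma,\sigma')\ra\Ext^1_{K/Z_1}(\sigma,\sigma')$ are isomorphisms, so \emph{every} nonsplit extension between distinct generic Serre weights --- in particular each $E_{\tau,\mu_i^-(\tau)}$ --- is a $\Gamma$-representation, i.e.\ is killed by $\fm_{K_1}$ (this is why the paper writes ``actually $\Gamma$-extension'' when defining $E_{\sigma,\sigma'}$). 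Consequently your mechanism for detecting the $K_1$-defect collapses: the subrepresentation $\bigoplus_i E_{\tau,\mu_i^-(\tau)}$ of $\Theta_\tau^{\rm ord}$ is entirely $K_1$-fixed, your description of the $K_1$-invariants is wrong, and your contradiction argument for the coinvariants (``$Q$ would contain some $E_{\tau,\mu_i^-(\tau)}$ as a subquotient, contradicting that $K_1$ acts nontrivially on it'') proves nothing. The nontriviality of the $K_1$-action on $\Theta_\tau^{\rm ord}$ is a genuinely two-step phenomenon, invisible on every length-$2$ subquotient.

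The step you flag as delicate is exactly the missing content, and it is not supplied by Corollary \ref{cor:Theta-univ}. Write $A=\Theta_\tau^{\rm ord}/\soc(\Theta_\tau^{\rm ord})$. Since $A$ is the universal extension of $\tau$ by $\bigoplus_i\mu_i^-(\tau)$, it is a $\Gamma$-representation and, by \cite[Thm. 2.4]{BP}, a quotient of $\Ind_I^K\chi_\tau$ --- this is the clean route to the second assertion, rather than unwinding the construction of $\Theta_\tau$ inside $\Ind_I^K\overline{W}_{\chi,3}$ as you propose. Hence $(\Theta_\tau^{\rm ord})_{K_1}$ sits in a $\Gamma$-extension $0\ra\tau^{\oplus r}\ra(\Theta_\tau^{\rm ord})_{K_1}\ra A\ra 0$ for some $0\le r\le f$, and the whole point is to force $r=0$. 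The correct argument is to prove $\Ext^1_\Gamma(A,\tau)=0$: then the extension splits, so $\tau$ would occur $r+1$ times in the cosocle of a quotient of $\Proj_{\tGamma}\tau$, which forces $r=0$. That vanishing is a computation you never perform: from $0\ra B\ra\Ind_I^K\chi_\tau\ra A\ra0$, multiplicity-freeness of $\Ind_I^K\chi_\tau$ gives $\Hom_\Gamma(B,\tau)=0$, and Frobenius reciprocity reduces one to $\Ext^1_{P(\F_q)}(\chi_\tau,\tau)=0$, a cohomological vanishing from \cite[Prop. 2.5]{HuJLMS}. Without some such input your proof does not close.
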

\begin{proof}
Using Lemma \ref{lemma-Hu10-2.21}, it is easy to check that a $\tGamma$-representation $V$ satisfying $0\ra \bigoplus_{i\in\cS}\mu_i^-(\tau) \ra V\ra \tau\ra 0$ and $\mathrm{cosoc}_{\tGamma}(V)=\tau$ is unique (up to isomorphism) and is actually a $\Gamma$-representation. Lemma \ref{lemma:Theta-ord}(ii) implies that  $ \Theta_{\tau}^{\rm ord}/\soc(\Theta_{\tau}^{\rm ord})$ is such a representation. On the other hand, it  follows from  \cite[Thm.~2.4]{BP} that $\Ind_I^K\chi_{\tau}$ also admits such a representation as a quotient.   This proves the  second assertion and that $\Theta_{\tau}^{\rm ord}/\soc(\Theta_{\tau}^{\rm ord})$
is a quotient of $(\Theta_{\tau}^{\rm ord})_{K_1}$.

Recall that $\soc(\Theta_{\tau}^{\rm ord})\cong \tau^{\oplus f}$ by Lemma \ref{lemma:Theta-ord}(i). To prove the first assertion, it suffices to prove
\[\Ext^1_{\Gamma}\big(\Theta_{\tau}^{\rm ord}/\soc(\Theta_{\tau}^{\rm ord}),\tau\big)=0.\]
To simplify the notation, write $A$ for $\Theta_{\tau}^{\rm ord}/\soc(\Theta_{\tau}^{\rm ord})$, and $B$ for the corresponding kernel fitting in the exact sequence
\[0\ra B\ra \Ind_I^K\chi_{\tau}\ra A\ra0.\]
We  may identify $\Ind_I^K\chi_{\tau}$ with $\Ind_{P(\F_q)}^{\Gamma}\chi_{\tau}$.   Using  the fact that  $\Ind_{P(\F_q)}^{\Gamma}\chi_{\tau}$ is multiplicity free, we get  $\Hom_{\Gamma}(B,\tau)=0$.  Hence, it suffices to prove $\Ext^1_{\Gamma}(\Ind_{P(\F_q)}^{\Gamma}\chi_{\tau},\tau)=0$, equivalently $\Ext^1_{P(\F_q)}(\chi_{\tau},\tau)=0$ by Shapiro's lemma. But this follows from (a variant of) \cite[Prop.~2.5]{HuJLMS}. Note that, if $f=1$ then we need $\dim_{\F}\tau\neq p-2$ to ensure the vanishing of $\Ext^1_{P(\F_p)}(\chi_{\tau},\tau)$, whereas no genericity condition is needed if $f\geq 2$.
\end{proof}

\begin{lemma}\label{lemma:Theta-K1}
There exists an exact sequence
\[0\ra \tau^{\oplus f}\ra \Theta_{\tau}\ra (\Theta_{\tau})_{K_1}\ra0.\]
\end{lemma}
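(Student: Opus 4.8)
The statement of Lemma \ref{lemma:Theta-K1} asserts an exact sequence $0\ra \tau^{\oplus f}\ra \Theta_{\tau}\ra (\Theta_{\tau})_{K_1}\ra0$, so the content is exactly that $\Theta_{\tau}^{K_1}$ (equivalently the kernel of $\Theta_{\tau}\twoheadrightarrow(\Theta_{\tau})_{K_1}$) is isomorphic to $\tau^{\oplus f}$. The plan is to compare $\Theta_{\tau}$ with its quotient $\Theta_{\tau}^{\mathrm{ord}}$ constructed in Lemma \ref{lemma:Theta-ord} and use the already-proven identification $(\Theta_{\tau}^{\mathrm{ord}})_{K_1}=\Theta_{\tau}^{\mathrm{ord}}/\soc(\Theta_{\tau}^{\mathrm{ord}})$ from Lemma \ref{lemma:Theta-ord-K1}.

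First I would spell out the $\fm_{K_1}$-structure of $\Theta_{\tau}$. Recall from Proposition \ref{prop-Theta} that $\soc_{\tGamma}(\Theta_{\tau})\cong\tau^{\oplus 2f}$ and $\rad_{\tGamma}(\Theta_{\tau})/\soc_{\tGamma}(\Theta_{\tau})\cong\bigoplus_{\tau'\in\mathscr{E}(\tau)}\tau'$, and that $\Theta_{\tau}$ has Loewy length $3$. By Lemma \ref{lemma-Hu10-2.21}(i) we have $\Ext^1_{K/Z_1}(\tau,\tau)=0$ (here $\tau$ is $1$-generic since $\chi$ is $2$-generic, and in the exceptional case $f=1$ the relevant genericity still holds), so the $2f$ copies of $\tau$ in the socle cannot extend each other; hence a copy of $\tau$ in $\soc(\Theta_{\tau})$ is killed by $K_1$ if and only if it does not sit under a new Serre weight, i.e. under some $\delta_i^{*}(\tau)$ or under any $\tau'\in\mathscr{E}(\tau)$ that is itself a new weight of $\rInj_{\tGamma}\tau$. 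Concretely, I would use the short exact sequence \eqref{eq:seq-Theta-tau}, $0\ra\bigoplus_{\tau'\in\mathscr{E}(\tau)}E_{\tau,\tau'}\ra\Theta_{\tau}\ra\tau\ra0$, together with the fact (Remark after Lemma \ref{lemma-Hu10-2.21}, or \cite[Lem. 2.4(ii)]{Hu10}-type analysis on the $K$-side) that among the $2f$ extensions $E_{\tau,\tau'}$, exactly the $f$ ones with $\tau'=\mu_i^{-}(\tau)$ (those matching the $\Theta_{\tau}^{\mathrm{ord}}$ quotient, cf. \eqref{eq:seq-Theta-ord}) are non-trivial as $\F[\![K/Z_1]\!]$-modules — equivalently are not annihilated by $\fm_{K_1}$ — while the other $f$, with $\tau'=\mu_i^{+}(\tau)$, are. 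This is the step I expect to require the most care: one must verify precisely which of the $2f$ two-dimensional pieces $E_{\tau,\tau'}$ (and which of the $f$ further copies of $\tau$ pushed up one layer, if relevant) are $K_1$-trivial, using the explicit description of $\mathscr{E}(\tau)$ via the $\mu_i^{\pm}$ and Lemma \ref{lemma-newweight}/Remark \ref{remark-unique-delta} to decide newness.

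Granting that, the computation finishes quickly. On one hand $\Theta_{\tau}^{K_1}\supseteq\tau^{\oplus f}$ coming from the $f$ copies of $\tau$ in $\soc(\Theta_{\tau})$ that lie only under the $K_1$-trivial extensions. On the other hand, from the surjection $\Theta_{\tau}\twoheadrightarrow\Theta_{\tau}^{\mathrm{ord}}$ and Lemma \ref{lemma:Theta-ord-K1} we get $(\Theta_{\tau})_{K_1}\twoheadrightarrow(\Theta_{\tau}^{\mathrm{ord}})_{K_1}=\Theta_{\tau}^{\mathrm{ord}}/\soc(\Theta_{\tau}^{\mathrm{ord}})$, whose dimension I would compare against that of $\Theta_{\tau}/\tau^{\oplus f}$ after checking, via \eqref{eq:seq-Theta-tau} and the classification of $K_1$-trivial $E_{\tau,\tau'}$, that $\Theta_{\tau}/\tau^{\oplus f}$ is itself $K_1$-trivial (it is the universal-type extension glued from the $K_1$-trivial $E_{\tau,\mu_i^{-}(\tau)}$'s together with the $K_1$-acting part pushed into a quotient, so one sees $K_1$ acts trivially on it by the Corollary right after Lemma \ref{lemma-bar-W} applied on the $K$-side). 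This forces the inclusion $\tau^{\oplus f}\subseteq\Theta_{\tau}^{K_1}$ to be an equality by length count, and the asserted short exact sequence follows. Throughout one invokes the parallel structural statement for $\overline{W}_{\chi,3}$ (Lemma \ref{lemma-bar-W} and its Corollary) transported to $\Theta_{\tau}$ via Corollary \ref{cor:Theta-tau-structure}, which is the cleanest way to organize the bookkeeping.
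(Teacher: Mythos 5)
There are two genuine problems here. First, you open by asserting that $\Theta_{\tau}^{K_1}$ is ``equivalently the kernel of $\Theta_{\tau}\twoheadrightarrow(\Theta_{\tau})_{K_1}$.'' It is not: that kernel is $\fm_{K_1}\Theta_{\tau}$, whereas $\Theta_{\tau}^{K_1}=\Theta_{\tau}[\fm_{K_1}]$ is the space of invariants, and in the present situation the latter already contains the whole subrepresentation $\bigoplus_{\tau'\in\mathscr{E}(\tau)}E_{\tau,\tau'}$ of \eqref{eq:seq-Theta-tau}, so it is much larger than $\tau^{\oplus f}$. The lemma is a statement about $\fm_{K_1}\Theta_{\tau}$, and your containment $\tau^{\oplus f}\subseteq\Theta_{\tau}^{K_1}$ is therefore aimed at the wrong object; the length count at the end does not close. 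Second, your ``step requiring the most care'' rests on a false premise: you claim that among the $2f$ extensions $E_{\tau,\tau'}$ exactly those with $\tau'=\mu_i^{-}(\tau)$ fail to be annihilated by $\fm_{K_1}$. By Lemma \ref{lemma-Hu10-2.21}(ii), for distinct $0$-generic Serre weights one has $\Ext^1_{K/Z_1}(\tau,\tau')\cong\Ext^1_{\Gamma}(\tau,\tau')$, so \emph{every} $E_{\tau,\tau'}$ is a $\Gamma$-extension and is killed by $\fm_{K_1}$ (the asymmetry you have in mind holds for $I$-extensions of characters, not for $K$-extensions of Serre weights). Moreover none of the $\mu_i^{\pm}(\tau)$ are new weights of $\rInj_{\tGamma}\tau$, so the ``sits under a new weight'' criterion distinguishes nothing here. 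The nontrivial $K_1$-action on $\Theta_{\tau}$ lives entirely in the three-layer extension of the cosocle $\tau$ over the $E_{\tau,\tau'}$'s, not in the two-dimensional pieces.

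What is actually needed, and what your sketch does not supply, is the lower bound $[(\Theta_{\tau})_{K_1}:\tau]\geq f+1$. The paper gets this by exhibiting an explicit $\Gamma$-representation $Q$ (the dual of $A_{\sigma}'$ from \cite[Def. 2.5]{HW}) with socle $\tau^{\oplus f}$, middle layer $\bigoplus_{\tau'\in\mathscr{E}(\tau)}\tau'$ and cosocle $\tau$, and invoking Corollary \ref{cor:Theta-univ} to realize $Q$ as a quotient of $\Theta_{\tau}$, hence of $(\Theta_{\tau})_{K_1}$. The matching upper bound then comes from applying right-exactness of $(-)_{K_1}$ to $0\ra\bigoplus_i E_{\tau,\mu_i^{+}(\tau)}\ra\Theta_{\tau}\ra\Theta_{\tau}^{\rm ord}\ra0$ together with $[(\Theta_{\tau}^{\rm ord})_{K_1}:\tau]=1$ from Lemma \ref{lemma:Theta-ord-K1}; comparing Jordan--H\"older factors identifies $(\Theta_{\tau})_{K_1}$ with $Q$ and shows the kernel has all constituents isomorphic to $\tau$, hence is semisimple since $\Ext^1_{\tGamma}(\tau,\tau)=0$. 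Your appeal to $\Theta_{\tau}^{\rm ord}$ is pointed in the right direction, but without the construction of $Q$ and with the two errors above the argument does not go through.
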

\begin{proof}
In fact, we can determine $(\Theta_{\tau})_{K_1}$ explicitly. First observe that there exists a unique quotient $Q$ of $\Proj_{\Gamma}\tau$, which has Loewy length $3$ and such that
\begin{enumerate}
\item[$\bullet$] $\soc(Q)=\tau^{\oplus f}$;
\item[$\bullet$] $\rad(Q)/\soc(Q)\cong\bigoplus_{\tau'\in\mathscr{E}(\tau)}\tau$.
\end{enumerate}
Indeed, it suffices to take $Q$ to be the dual of $A_{\sigma}'$ defined in \cite[Def.~2.5]{HW} with $\sigma=\tau^{\vee}$.
Corollary \ref{cor:Theta-univ}  shows
 that $Q$ is a quotient of $\Theta_{\tau}$, hence of $(\Theta_{\tau})_{K_1}$.  In particular, $[(\Theta_{\tau})_{K_1}:\tau]\geq f+1$.

By construction, there is a short exact sequence
\[0\ra \bigoplus_{i\in \cS}E_{\tau,\mu_i^+(\tau)}\ra \Theta_{\tau}\ra\Theta_{\tau}^{\rm ord}\ra0\]
which induces
\begin{equation}\label{eq:Theta-inproof}\bigoplus_{i\in \cS}E_{\tau,\mu_i^+(\tau)}\overset{\iota}{\ra} (\Theta_{\tau})_{K_1}\ra(\Theta_{\tau}^{\rm ord})_{K_1}\ra0.\end{equation}
By Lemma \ref{lemma:Theta-ord-K1}, $[(\Theta_{\tau}^{\rm ord})_{K_1}:\tau]=1$. Comparing the multiplicity of $\tau$,  we see that $\iota$ has to be injective because it is injective on socle. This implies that $[(\Theta_{\tau})_{K_1}:\tau]=f+1$ and a comparison of Jordan--H\"older factors using \eqref{eq:Theta-inproof} shows $Q=(\Theta_{\tau})_{K_1}$.
\end{proof}
 The next result will be used in \S\ref{section-patching}.

\begin{proposition} \label{prop:Theta-twoparts}
 There exists a  short exact sequence
\begin{equation}\label{eq:Theta-twoparts}0\lra \Theta_{\tau}\lra \Theta_{\tau}^{\rm ord}\oplus(\Theta_{\tau})_{K_1} \overset{(q_1,q_2)}{\lra} (\Theta_{\tau}^{\rm ord})_{K_1}\lra0\end{equation}
where $q_1, q_2$ are natural projections.
\end{proposition}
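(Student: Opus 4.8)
The plan is to establish the exactness of \eqref{eq:Theta-twoparts} term by term, using the explicit structure of $\Theta_\tau$ provided by Proposition \ref{prop-Theta}, Lemma \ref{lemma:Theta-ord} and Lemmas \ref{lemma:Theta-ord-K1}, \ref{lemma:Theta-K1}. First I would observe that the two maps $q_1:\Theta_\tau^{\rm ord}\twoheadrightarrow(\Theta_\tau^{\rm ord})_{K_1}$ and $q_2:(\Theta_\tau)_{K_1}\twoheadrightarrow(\Theta_\tau^{\rm ord})_{K_1}$ are indeed well-defined: $q_1$ is the canonical projection to $K_1$-coinvariants (which by Lemma \ref{lemma:Theta-ord-K1} equals $\Theta_\tau^{\rm ord}/\soc(\Theta_\tau^{\rm ord})$), and $q_2$ is obtained by applying the right-exact functor $(-)_{K_1}$ to the surjection $\Theta_\tau\twoheadrightarrow\Theta_\tau^{\rm ord}$ from the proof of Lemma \ref{lemma:Theta-K1}. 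The natural map $\Theta_\tau\to\Theta_\tau^{\rm ord}\oplus(\Theta_\tau)_{K_1}$ is the diagonal of the two quotient maps, and one checks immediately that its composite with $(q_1,-q_2)$ (or with the convention that we subtract, i.e. the map $(x,y)\mapsto q_1(x)-q_2(y)$, which is what "the difference" means here) is zero, so we get a complex; surjectivity of $(q_1,q_2)$ onto $(\Theta_\tau^{\rm ord})_{K_1}$ is clear since $q_1$ alone is already surjective.

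The substance is then a dimension count together with injectivity of $\Theta_\tau\to\Theta_\tau^{\rm ord}\oplus(\Theta_\tau)_{K_1}$. For injectivity: a vector killed by both projections lies in $\ker(\Theta_\tau\twoheadrightarrow\Theta_\tau^{\rm ord})$ and in $\ker(\Theta_\tau\twoheadrightarrow(\Theta_\tau)_{K_1})$; by the proof of Lemma \ref{lemma:Theta-K1} the first kernel is $\bigoplus_{i\in\cS}E_{\tau,\mu_i^+(\tau)}$ and by Lemma \ref{lemma:Theta-K1} the second kernel is $\tau^{\oplus f}=\soc(\Theta_\tau)$ (or at least a copy of $\tau^{\oplus f}$ sitting inside $\soc(\Theta_\tau)\cong\tau^{\oplus 2f}$). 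Using the explicit socle decomposition of $\Theta_\tau$ from Proposition \ref{prop-Theta}(i)--(ii), together with the fact (from the proof of Lemma \ref{lemma:Theta-K1}, the splitting $\soc(\Theta_\tau)\cong\tau^{\oplus f}\oplus\tau^{\oplus f}$ into the "$+$" and "$-$" parts via $E_{\tau,\mu_i^+(\tau)}$ versus $E_{\tau,\mu_i^-(\tau)}$) that these two kernels intersect trivially, we conclude the map is injective. Then exactness in the middle follows by comparing lengths: $\ell(\Theta_\tau^{\rm ord})+\ell((\Theta_\tau)_{K_1})-\ell((\Theta_\tau^{\rm ord})_{K_1})=\ell(\Theta_\tau)$. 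Concretely, $\ell(\Theta_\tau)=2f+1+2f=4f+1$ by Proposition \ref{prop-Theta}; $\ell(\Theta_\tau^{\rm ord})=f+1+f=2f+1$ by Lemma \ref{lemma:Theta-ord}; $\ell((\Theta_\tau^{\rm ord})_{K_1})=f+1$ by Lemma \ref{lemma:Theta-ord-K1} and its proof; and $\ell((\Theta_\tau)_{K_1})=\ell(Q)=f+1+2f=3f+1$ by Lemma \ref{lemma:Theta-K1} (where $Q$ is the dual of $A'_\sigma$). Then $(2f+1)+(3f+1)-(f+1)=4f+1=\ell(\Theta_\tau)$, so the injection $\Theta_\tau\hookrightarrow\Theta_\tau^{\rm ord}\oplus(\Theta_\tau)_{K_1}$ has image of the correct length to be precisely the kernel of the surjection $(q_1,q_2)$, giving exactness in the middle.

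The cleanest way to organize this in writing would be: (1) define the maps and check it is a complex; (2) note surjectivity of $(q_1,q_2)$ via surjectivity of $q_1$; (3) prove injectivity of the first map by identifying the two kernels and showing they meet trivially inside $\soc(\Theta_\tau)$; (4) conclude exactness in the middle by the length count. The main obstacle I expect is step (3): one has to be careful about how the copy of $\tau^{\oplus f}$ from Lemma \ref{lemma:Theta-K1} sits inside $\soc(\Theta_\tau)\cong\tau^{\oplus 2f}$ relative to the decomposition of $\soc(\Theta_\tau)$ coming from the sub-extensions $E_{\tau,\mu_i^{\pm}(\tau)}$; the point is that the kernel of $\Theta_\tau\twoheadrightarrow\Theta_\tau^{\rm ord}$ is $\bigoplus_{i}E_{\tau,\mu_i^+(\tau)}$ whose socle is exactly the "plus half" $\tau^{\oplus f}$, while the kernel of $\Theta_\tau\twoheadrightarrow(\Theta_\tau)_{K_1}$ is — by the proof of Lemma \ref{lemma:Theta-K1}, where $\iota$ is shown injective on $\bigoplus_i E_{\tau,\mu_i^+(\tau)}$ — disjoint from this plus-half, forcing it to be (a complement, i.e.) the "minus half", hence the intersection of the two kernels is zero. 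Once this compatibility of the socle decompositions is made precise (which is implicit in the construction of $\Theta_\tau^{\rm ord}$ in Lemma \ref{lemma:Theta-ord} and in the proof of Lemma \ref{lemma:Theta-K1}), everything else is a formal diagram chase plus the length bookkeeping recorded above.
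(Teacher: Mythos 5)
Your proposal is correct, but it runs in the opposite direction from the paper's own argument. The paper defines $V\defn\Ker(q_1,q_2)$ and never touches the diagonal map directly: it applies $(-)_{K_1}$ to the resulting short exact sequence (using that the induced map on $H_1(K_1,-)$ is surjective to keep exactness), deduces $V_{K_1}\cong(\Theta_\tau)_{K_1}$ together with $0\to\tau^{\oplus f}\to V\to V_{K_1}\to 0$, then feeds these two facts into the universal property of $\Theta_\tau$ (Corollary \ref{cor:Theta-univ}) to get a surjection $\Theta_\tau\twoheadrightarrow V$, and finishes by comparing Jordan--H\"older factors. You instead build the map out of $\Theta_\tau$ as the diagonal of the two quotients, prove injectivity by intersecting the two kernels, and close with an Euler-characteristic count $(2f+1)+(3f+1)-(f+1)=4f+1$. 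Your injectivity step is in fact cleaner than you make it sound: since $\Ker(\Theta_\tau\to\Theta_\tau^{\rm ord})=\bigoplus_i E_{\tau,\mu_i^+(\tau)}$ and $\iota$ in the proof of Lemma \ref{lemma:Theta-K1} is precisely the restriction of $\Theta_\tau\to(\Theta_\tau)_{K_1}$ to that submodule, the injectivity of $\iota$ established there gives $\Ker(p_1)\cap\Ker(p_2)=\Ker(\iota)=0$ at once, with no need to locate the ``plus half'' versus ``minus half'' of $\soc(\Theta_\tau)$. What each approach buys: yours is more elementary (no appeal to Corollary \ref{cor:Theta-univ} or to $K_1$-homology) and identifies the first arrow of \eqref{eq:Theta-twoparts} explicitly as the diagonal map; the paper's is shorter on bookkeeping and reuses the universal characterization of $\Theta_\tau$ that it has already set up. Both the length computations and the citations of Lemmas \ref{lemma:Theta-ord}, \ref{lemma:Theta-ord-K1}, \ref{lemma:Theta-K1} in your write-up check out.
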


\begin{proof}
Let $V$ be the kernel of the map $(q_1,q_2)$; we need to prove $V\cong \Theta_{\tau}$. Taking $K_1$-coinvariants of
$ 
0\ra V\ra \Theta_{\tau}^{\rm ord} \oplus (\Theta_{\tau})_{K_1}\ra (\Theta_{\tau}^{\rm ord})_{K_1}\ra0$  
 induces a sequence
\[0\ra V_{K_1}\ra (\Theta_{\tau}^{\rm ord})_{K_1}\oplus (\Theta_{\tau})_{K_1}\ra (\Theta_{\tau}^{\rm ord})_{K_1}\ra0,\]
which is  exact because the morphism
 \[\big[H_1(K_1,(\Theta_{\tau})_{K_1})\ra H_1(K_1,(\Theta_{\tau}^{\rm ord})_{K_1})\big]=H_1(K_1,\F)\otimes\big[(\Theta_{\tau})_{K_1}\ra (\Theta_{\tau}^{\rm ord})_{K_1}\big]\]
is (automatically) surjective.  This implies
\begin{enumerate}
\item[(1)] an isomorphism $V_{K_1}\cong (\Theta_{\tau})_{K_1}$ and 
\item[(2)] a short exact sequence using  Lemmas \ref{lemma:Theta-ord} and \ref{lemma:Theta-ord-K1}
\[0\ra \tau^{\oplus f}\ra V\ra V_{K_1}\ra0. \]
\end{enumerate}
From (1) we deduce that $\mathrm{cosoc}(V)\cong \tau$ and so $V$ is a quotient of $\Proj_{\tGamma}\tau$.  Using Corollary \ref{cor:Theta-univ} and Lemma \ref{lemma:Theta-K1}, it is easy to check that  $V=\Theta_{\tau}$.
  \end{proof}

\section{Combinatorics \`a la Breuil-Pa\v{s}k\=unas}\label{section-BP}

In this section, we recall and generalize a construction of Breuil and Pa\v{s}k\=unas (\cite[\S13]{BP}). Keep the notation in previous sections. In particular, $K=\GL_2(\cO_L)$, $\Gamma=\F[\GL_2(\F_q)]$ and $\tGamma= \FKZ/\fm_{K_1}^2. $

Fix a continuous representation $\brho:G_L\ra \GL_2(\F)$, which  is \emph{generic} in the sense of \cite[\S11]{BP}, that is, $\brho|_{I(\bQp/L)}$ is isomorphic to one of the following two forms (always possible up to twist)
\begin{enumerate}
\item $\matr{\omega_f^{\sum_{i=0}^{f-1}p^i(r_i+1)}}*01$ with $0\leq r_i\leq p-3$ for each $i$, and not all $r_i$ equal to $0$ or equal to $p-3$;
\item $\matr{\omega_{2f}^{\sum_{i=0}^{f-1}p^i(r_i+1)}}00{\omega_{2f}^{p^f\sum_{i=0}^{f-1}p^i(r_i+1)}}$ with $1\leq r_0\leq p-2$, and $0\leq r_i\leq p-3$ for $i>0$.
\end{enumerate}
where $\omega_{f'}$ is Serre's fundamental character of $I(\bQp/L)$ of level $f'$ for $f'\in\{f,2f\}$.

To $\brho$ is associated a set of Serre weights,  denoted by $\mathscr{D}(\brho)$ (see \cite[\S11]{BP}).    The genericity of $\brho$ implies that the cardinality of $\mathscr{D}(\brho)$ is $2^f$ if $\brho$ is semisimple, and is $2^d$ for some $0\leq d\leq f-1$ if $\brho$ is reducible nonsplit. If $\brho$ is reducible and if $\brho^{\rm ss}$ denotes the semisimplification of $\brho$, then we always have $\mathscr{D}(\brho)\subseteq \mathscr{D}(\brho^{\rm ss})$. In fact, by \cite[\S4]{Br14}, the set $\mathscr{D}(\brho^{\rm ss})$ is parametrized by a certain set $\mathscr{RD}(x_0,\cdots,x_{f-1})$ of $f$-tuples $\lambda=(\lambda_j(x_j))_{j\in\cS}$ satisfying $\lambda_j(x_j)\in\{x_j,x_j+1,p-3-x_j,p-2-x_j\}$ and some other conditions, in the sense that
\begin{multline}\label{eq:lambda-rho}\mathscr{D}(\brho^{\rm ss})=\big\{(\lambda_0(r_0),\cdots,\lambda_{f-1}(r_{f-1}))\otimes {\det}^{e(\lambda)(r_0,\cdots,r_{f-1})}: \lambda\in\mathscr{RD}(x_0,\cdots,x_{f-1}) \big\}.\end{multline}
Then  $\mathscr{D}(\brho)$ corresponds  exactly to the subset  of $\mathscr{RD}(x_0,\cdots,x_{f-1})$ consisting of $\lambda$ such that
$\lambda_j(x_j)\in\{p-3-x_j,x_j+1\}$ implies $j\in J_{\brho}$, where $J_{\brho}$ is a certain subset of $\cS$ uniquely determined by the Fontaine-Laffaille module of $\brho$ (see \cite[\S4]{Br14}).

It is constructed in \cite[\S13]{BP} a finite dimensional representation $D_0(\brho)$ of $\Gamma$ such that
\begin{enumerate}
\item[(i)] $\rsoc_{\Gamma}D_0(\brho)=\oplus_{\sigma\in\mathscr{D}(\brho)}\sigma$; 
\item[(ii)] any Serre weight of $\mathscr{D}(\brho)$ occurs at most once as a subquotient in $D_0(\brho)$; 
\item[(iii)] $D_0(\brho)$ is maximal with respect to properties (i), (ii).
\end{enumerate}
By \cite[Prop.~13.1]{BP}, we have a decomposition of  $\Gamma$-representations
\[D_0(\brho)=\bigoplus_{\sigma\in\mathscr{D}(\brho)}D_{0,\sigma}(\brho)\]
 with each $D_{0,\sigma}(\brho)$ satisfying $\rsoc_{\Gamma}D_{0,\sigma}(\brho)=\sigma$. Moreover, $D_0(\brho)$ is multiplicity free by \cite[Cor.~13.5]{BP}.

The aim of this section is to generalize the above construction to $\tGamma$-representations and relate it to a certain class of admissible smooth $\GL_2(L)$-representations over $\F$.

\subsection{The representation $\widetilde{D}_0(\brho)$}
\label{subsection:tD0}

\begin{proposition}\label{prop-BP-13.1}
Let $\mathscr{D}$ be a finite set of distinct Serre weights. Then there exists a unique (up to isomorphism) finite dimensional  representation $\widetilde{D}_0$ of $\tGamma$ (over $\F$) such that
\begin{enumerate}
\item[(i)] $\rsoc_{\tGamma}\widetilde{D}_0=\bigoplus_{\sigma\in\mathscr{D}}\sigma$;  
\item[(ii)] any Serre weight of $\mathscr{D}$ occurs at most once as a subquotient in $\widetilde{D}_0$; 
\item[(iii)] $\widetilde{D}_0$ is maximal with respect to properties (i), (ii).
\end{enumerate}
Moreover, there is an isomorphism of $\tGamma$-representations
$\widetilde{D}_0=\bigoplus_{\sigma\in\mathscr{D}}\widetilde{D}_{0,\sigma}$
with $\rsoc_{\tGamma}\widetilde{D}_{0,\sigma}=\sigma$.
\end{proposition}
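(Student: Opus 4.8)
The plan is to mimic the construction of $D_0(\brho)$ given in \cite[Prop. 13.1]{BP}, working now in the category of $\tGamma$-modules and using the refined structural results about $\rInj_{\tGamma}\sigma$ and $I(\sigma,\tau)$ established in \S\ref{subsection:I(sigmatau)}. First I would define $\widetilde{D}_0$ as a submodule of $\bigoplus_{\sigma\in\mathscr{D}}\rInj_{\tGamma}\sigma$: namely, for each $\sigma\in\mathscr{D}$, set $\widetilde{D}_{0,\sigma}$ to be the sum (inside $\rInj_{\tGamma}\sigma$) of all subrepresentations $V$ with $\rsoc_{\tGamma}(V)=\sigma$ whose Jordan--H\"older factors in $\mathscr{D}$ occur with multiplicity at most one and, moreover, \emph{each} $\sigma'\in\mathscr{D}$ occurs at most once. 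One then puts $\widetilde{D}_0\defn\bigoplus_{\sigma\in\mathscr{D}}\widetilde{D}_{0,\sigma}$. Properties (i) and the direct sum decomposition are built in; the content is that the defining conditions (multiplicity $\leq 1$ for weights in $\mathscr{D}$) are preserved under taking sums of such $V$'s, and that this $\widetilde{D}_0$ is genuinely maximal.

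The key step, as in \cite{BP}, is the ``amalgamation'' lemma: if $V_1, V_2\subseteq\rInj_{\tGamma}\sigma$ both have socle $\sigma$ and each satisfies the multiplicity condition, then so does $V_1+V_2$. The standard argument uses that $V_1\cap V_2\supseteq\sigma$ and the exact sequence $0\to V_1\cap V_2\to V_1\oplus V_2\to V_1+V_2\to 0$, so that for any $\tau\in\mathscr{D}$ one has $[V_1+V_2:\tau]=[V_1:\tau]+[V_2:\tau]-[V_1\cap V_2:\tau]$, and it remains to show $[V_1\cap V_2:\tau]\geq 1$ whenever $\tau$ occurs in both $V_1$ and $V_2$. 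This is exactly where one invokes the multiplicity-one structure of subrepresentations of $\rInj_{\tGamma}\sigma$: by Corollary \ref{cor-I(sigmatau)-multione} each $I(\sigma,\tau)$ is multiplicity free, and since $[V_j:\sigma]=1$ forces $V_j\hookrightarrow\bigoplus_\tau I(\sigma,\tau)$ over the cosocle constituents (cf. the proof of Corollary \ref{coro:Q-killedbyJ}), one reduces to checking that $I(\sigma,\tau)$ and $I(\sigma,\tau')$ have a common subquotient forcing overlap inside $V_1\cap V_2$ — equivalently that the poset of subrepresentations of $\rInj_{\tGamma}\sigma$ with socle $\sigma$ is a lattice in which intersection of two members containing a given $\tau$ still contains $\tau$. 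For old weights $\tau$ this is \cite[Cor. 4.11]{BP} (Proposition \ref{prop-BP-4.11}); for new weights $\tau=\lambda(\delta)$ it follows from the explicit lattice description in Corollary \ref{cor-I(sigmatau)-JH}, using that $\widetilde{\cI}_{(i,*)}$ with the partial order of Definition \ref{defn:tildeI} is closed under the relevant meets (Lemma \ref{lemma-HW-2.19} providing the combinatorial input).

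For uniqueness, suppose $\widetilde{D}_0$ and $\widetilde{D}_0'$ both satisfy (i)--(iii). Decomposing both along the socle, it suffices to compare $\widetilde{D}_{0,\sigma}$ and $\widetilde{D}_{0,\sigma}'$ for fixed $\sigma\in\mathscr{D}$; both embed into $\rInj_{\tGamma}\sigma$ as subrepresentations with socle $\sigma$ and the multiplicity-$\leq 1$ property, hence by the amalgamation lemma their sum inside $\rInj_{\tGamma}\sigma$ still has the property, and maximality forces $\widetilde{D}_{0,\sigma}=\widetilde{D}_{0,\sigma}'=$ this sum. (The embeddings are essentially unique because $\Hom_{\tGamma}(\widetilde{D}_{0,\sigma},\rInj_{\tGamma}\sigma)$ is determined by the socle map; any two such embeddings differ by an automorphism of $\rInj_{\tGamma}\sigma$, which one checks permutes the relevant subrepresentations trivially on the level of the lattice.) The main obstacle I anticipate is the amalgamation/lattice step for \emph{new} Serre weights: unlike the $\Gamma$-case treated in \cite{BP}, here a new weight $\tau$ can sit in $\rInj_{\tGamma}\sigma$ in a genuinely $\tGamma$- (not $\Gamma$-) way, and one must carefully use the short exact sequence \eqref{equation-I(sigmatau)-new} relating $I(\sigma,\tau)$, $I(\sigma,\tau_!)$ and $I(\delta,\tau)$ — together with Lemma \ref{lemma-unique-delta} (uniqueness of $\delta\in\Delta(\sigma)$) to ensure that distinct new constituents cannot interfere — to verify that intersections behave as required. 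The exceptional case $f=1$, $\dim_\F\sigma=3$ will need to be handled by hand, as flagged in Theorem \ref{thm-I(sigmatau)-tGamma}(ii).
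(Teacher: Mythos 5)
There is a genuine gap, and it begins with your definition of the local pieces. You define $\widetilde{D}_{0,\sigma}$ as the sum of all $V\subseteq\rInj_{\tGamma}\sigma$ in which \emph{each} $\sigma'\in\mathscr{D}$ occurs with multiplicity at most one. With that condition the direct sum $\bigoplus_{\sigma}\widetilde{D}_{0,\sigma}$ need not satisfy (ii): a weight $\tau\in\mathscr{D}$ with $\tau\neq\sigma$ may occur once in $\widetilde{D}_{0,\sigma}$ and it also occurs once, as the socle, in $\widetilde{D}_{0,\tau}$, giving multiplicity $2$ in the sum. The correct local condition --- the one recorded in Corollary \ref{cor:tD-sigma} --- is $[V:\sigma]=1$ \emph{and} $[V:\tau]=0$ for every $\tau\in\mathscr{D}$ with $\tau\neq\sigma$. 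Once the condition is stated this way, your ``amalgamation lemma'' collapses to a triviality: for $\tau\neq\sigma$ one has $[V_1+V_2:\tau]\leq[V_1:\tau]+[V_2:\tau]=0$ because $V_1+V_2$ is a quotient of $V_1\oplus V_2$, and for $\tau=\sigma$ the inclusion--exclusion $[V_1+V_2:\sigma]=[V_1:\sigma]+[V_2:\sigma]-[V_1\cap V_2:\sigma]\leq 1$ follows from the single observation that any two nonzero subrepresentations of $\rInj_{\tGamma}\sigma$ both contain its irreducible socle $\sigma$. No analysis of $V_1\cap V_2$ at other weights, and no lattice structure on the subobjects of $\rInj_{\tGamma}\sigma$, is needed.

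This matters because the machinery you invoke to run the intersection analysis --- multiplicity-freeness of $I(\sigma,\tau)$ (Corollary \ref{cor-I(sigmatau)-multione}), the lattice description of Corollary \ref{cor-I(sigmatau)-JH}, Lemma \ref{lemma-unique-delta} --- is only available for $2$-generic $\sigma$, whereas Proposition \ref{prop-BP-13.1} is stated for an arbitrary finite set of distinct Serre weights; Remark \ref{remark-conter-f=1} shows that this structure theory genuinely fails without genericity. The paper's proof is verbatim that of \cite[Prop. 13.1]{BP}: the purely formal socle/inclusion--exclusion argument above for existence, and, for maximality and uniqueness, the embedding of any candidate $\widetilde{D}_0$ into $\bigoplus_{\sigma}\rInj_{\tGamma}\sigma$ together with a choice of idempotents $e_{\sigma}\in\End_{\tGamma}(\rInj_{\tGamma}\tau)$ with $\sum_{\sigma}e_{\sigma}=1$ (the footnote to the proposition). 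Nothing from \S\ref{subsection:I(sigmatau)} enters here; that fine structure is what is needed later, for Theorem \ref{thm-tD=multione} (multiplicity-freeness of all of $\widetilde{D}_0(\brho)$), not for the existence and uniqueness statement of this proposition.
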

\begin{proof}
The proof is the same as \cite[Prop.~13.1]{BP}.\footnote{At the end of the proof of \cite[Prop.~13.1]{BP}, the idempotent $e_{\sigma}\in \End_{\Gamma}(\rInj_{\Gamma}\tau)$ need not be unique; but we can certainly choose $e_{\sigma}\in\End_{\Gamma}(\rInj\tau)$ for each $\sigma$ such that $\sum_{\sigma}e_{\sigma}=1$, and the rest of the proof goes through.}
\end{proof}

 \begin{corollary}\label{cor:tD-sigma}
 With the notation of Proposition \ref{prop-BP-13.1}, for $\sigma\in\mathscr{D}$, $\widetilde{D}_{0,\sigma}$ is the largest subrepresentation of $\rInj_{\tGamma}\sigma$ such that $[\widetilde{D}_{0,\sigma}:\sigma]=1$ and $[\widetilde{D}_{0,\sigma}:\tau]=0$ for any $\tau\in \mathscr{D}$ with $\tau \neq \sigma$.
 \end{corollary}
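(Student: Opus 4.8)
\textbf{Proof plan for Corollary \ref{cor:tD-sigma}.}
The plan is to extract the claim directly from the decomposition $\widetilde{D}_0=\bigoplus_{\sigma\in\mathscr{D}}\widetilde{D}_{0,\sigma}$ of Proposition \ref{prop-BP-13.1} together with a characterization of each summand as a subrepresentation of $\rInj_{\tGamma}\sigma$. First I would observe that since $\rsoc_{\tGamma}\widetilde{D}_{0,\sigma}=\sigma$ is irreducible, $\widetilde{D}_{0,\sigma}$ embeds into an injective envelope of its socle, i.e. into $\rInj_{\tGamma}\sigma$; fix such an embedding. By property (ii) of Proposition \ref{prop-BP-13.1}, $[\widetilde{D}_{0,\sigma}:\sigma]=1$ and $[\widetilde{D}_{0,\sigma}:\tau]=0$ for $\tau\in\mathscr{D}$, $\tau\neq\sigma$ (the multiplicities of weights in $\mathscr{D}$ are computed on $\widetilde{D}_0$, but since the other summands $\widetilde{D}_{0,\tau'}$ have socle $\tau'\neq\sigma$ and $\sigma\in\mathscr{D}$, the weight $\sigma$ appears only in the summand $\widetilde{D}_{0,\sigma}$; similarly each $\tau\in\mathscr{D}$ with $\tau\neq\sigma$ can appear at most once in all of $\widetilde{D}_0$, hence at most once — in fact, by socle considerations, possibly zero times — in $\widetilde{D}_{0,\sigma}$). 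So $\widetilde{D}_{0,\sigma}$ is a subrepresentation of $\rInj_{\tGamma}\sigma$ with the stated multiplicity constraints.

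Next I would prove maximality. Suppose $W\subseteq \rInj_{\tGamma}\sigma$ is any subrepresentation with $[W:\sigma]=1$ and $[W:\tau]=0$ for all $\tau\in\mathscr{D}$, $\tau\neq\sigma$. Since $\rsoc_{\tGamma}(\rInj_{\tGamma}\sigma)=\sigma$, any nonzero subrepresentation of $\rInj_{\tGamma}\sigma$ has socle $\sigma$; in particular $\rsoc_{\tGamma}W=\sigma$. Now consider $W':=W+\widetilde{D}_{0,\sigma}$ inside $\rInj_{\tGamma}\sigma$. Its socle is still $\sigma$ (being a submodule of $\rInj_{\tGamma}\sigma$), and by the exact sequence $0\to W\cap\widetilde{D}_{0,\sigma}\to W\oplus\widetilde{D}_{0,\sigma}\to W'\to 0$ we get, for every Serre weight $\tau$,
\[
[W':\tau]=[W:\tau]+[\widetilde{D}_{0,\sigma}:\tau]-[W\cap\widetilde{D}_{0,\sigma}:\tau].
\]
For $\tau=\sigma$ this gives $[W':\sigma]=1+1-[W\cap\widetilde{D}_{0,\sigma}:\sigma]$, and since $W\cap\widetilde{D}_{0,\sigma}\neq 0$ (both contain the socle $\sigma$) we have $[W\cap\widetilde{D}_{0,\sigma}:\sigma]\geq 1$, so $[W':\sigma]\leq 1$, hence $=1$. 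For $\tau\in\mathscr{D}$ with $\tau\neq\sigma$, both $[W:\tau]$ and $[\widetilde{D}_{0,\sigma}:\tau]$ vanish, so $[W':\tau]=0$. Thus, setting $\bigoplus_{\tau'\neq\sigma}\widetilde{D}_{0,\tau'}$ aside untouched, the representation $W'\oplus\bigoplus_{\tau'\in\mathscr{D},\,\tau'\neq\sigma}\widetilde{D}_{0,\tau'}$ satisfies conditions (i) and (ii) of Proposition \ref{prop-BP-13.1} and contains $\widetilde{D}_0$. By the maximality in (iii), this forces $W'\oplus\bigoplus_{\tau'\neq\sigma}\widetilde{D}_{0,\tau'}=\widetilde{D}_0=\bigoplus_{\tau'}\widetilde{D}_{0,\tau'}$, whence $W'=\widetilde{D}_{0,\sigma}$ and therefore $W\subseteq\widetilde{D}_{0,\sigma}$. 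This is exactly the asserted maximality, and the uniqueness of the largest such subrepresentation follows formally. I do not expect a serious obstacle here; the only point requiring a little care is the bookkeeping of Jordan--Hölder multiplicities across the direct-sum decomposition and the intersection $W\cap\widetilde{D}_{0,\sigma}$, which is routine once one notes that every nonzero submodule of $\rInj_{\tGamma}\sigma$ has socle exactly $\sigma$.
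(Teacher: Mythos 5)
Your proof is correct and follows essentially the same route as the paper: form the sum of $\widetilde{D}_{0,\sigma}$ with a competing subrepresentation, check it still satisfies conditions (i) and (ii) of Proposition \ref{prop-BP-13.1} after adding back the other summands, and invoke the maximality (iii); you merely spell out the multiplicity bookkeeping that the paper delegates to the proof of \cite[Prop.~13.1]{BP}. One small tightening: for $\tau\in\mathscr{D}$ with $\tau\neq\sigma$ you should say $[\widetilde{D}_{0,\sigma}:\tau]=0$ \emph{exactly} (not ``possibly zero''), which follows because $\tau$ already occurs as the socle of $\widetilde{D}_{0,\tau}$ and can occur at most once in all of $\widetilde{D}_0$.
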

\begin{proof}
If $\widetilde{D}_{0,\sigma}'\subseteq\rInj_{\tGamma}\sigma$ is another subrepresentation satisfying the conditions in the corollary, then the sum $\widetilde{D}_{0,\sigma}+\widetilde{D}_{0,\sigma}'$ also satisfies these conditions by the proof of \cite[Prop.~13.1]{BP}. But, the direct sum $(\widetilde{D}_{0,\sigma}+\widetilde{D}_{0,\sigma}')\oplus(\oplus_{ \tau\neq \sigma}\widetilde{D}_{0,\tau})$   also satisfies the conditions of Proposition \ref{prop-BP-13.1}, so we must have  $\widetilde{D}_{0,\sigma}+\widetilde{D}_{0,\sigma}'=\widetilde{D}_{0,\sigma}$, i.e. $\widetilde{D}_{0,\sigma}'\subseteq\widetilde{D}_{0,\sigma}$.\end{proof}
\begin{definition}
Define $\tD_0(\brho)$ to be the representation attached to $\mathscr{D}=\mathscr{D}(\brho)$ by Proposition \ref{prop-BP-13.1}.
\end{definition}

By Proposition \ref{prop-BP-13.1}, there is a direct sum decomposition
\begin{equation}\label{eq:tD-decomp}
\tD_0(\brho)=\bigoplus_{\sigma\in\mathscr{D}(\brho)}\tD_{0,\sigma}(\brho)\end{equation}
with $\rsoc_{\tGamma} \tD_{0,\sigma}(\brho)=\sigma$.

\begin{definition}\label{defn:strong-generic}
We say $\brho$ is \emph{strongly generic} if, in Case (1), $2\leq r_i\leq p-5$ for each $i$, or in Case (2), $3\leq r_0\leq p-4$ and $2\leq r_i\leq p-5$ for $i>0$.
\end{definition}
 \begin{lemma}\label{lem:Serre=3generic}
Assume $\brho$ is strongly generic. Then any $\sigma\in\mathscr{D}(\brho)$ is $2$-generic.
\end{lemma}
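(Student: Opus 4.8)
The statement to prove is purely combinatorial: under the strong genericity hypothesis, every Serre weight $\sigma\in\mathscr{D}(\brho)$ is $2$-generic, i.e. if $\sigma=(s_0,\cdots,s_{f-1})\otimes\eta$ then $2\leq s_j\leq p-4$ for all $j\in\cS$. The plan is to trace through the explicit parametrization of $\mathscr{D}(\brho)$ recalled just above: we have $\mathscr{D}(\brho)\subseteq\mathscr{D}(\brho^{\rm ss})$, and by \eqref{eq:lambda-rho} any $\sigma\in\mathscr{D}(\brho^{\rm ss})$ is of the form $(\lambda_0(r_0),\cdots,\lambda_{f-1}(r_{f-1}))\otimes{\det}^{e(\lambda)(r_0,\cdots,r_{f-1})}$ for some $\lambda\in\mathscr{RD}(x_0,\cdots,x_{f-1})$, where each component satisfies $\lambda_j(x_j)\in\{x_j,\,x_j+1,\,p-3-x_j,\,p-2-x_j\}$. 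So it suffices to evaluate these four possibilities at $x_j=r_j$ and check that the resulting integer lies in $[2,p-4]$ given the bounds on $r_j$ coming from strong genericity (Definition \ref{defn:strong-generic}).

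\textbf{Key steps.} First, in Case (1) of the genericity setup, strong genericity gives $2\leq r_j\leq p-5$ for every $j$. Then: $\lambda_j(r_j)=r_j$ lies in $[2,p-5]\subseteq[2,p-4]$; $\lambda_j(r_j)=r_j+1$ lies in $[3,p-4]\subseteq[2,p-4]$; $\lambda_j(r_j)=p-3-r_j$ lies in $[p-3-(p-5),\,p-3-2]=[2,p-5]\subseteq[2,p-4]$; and $\lambda_j(r_j)=p-2-r_j$ lies in $[p-2-(p-5),\,p-2-2]=[3,p-4]\subseteq[2,p-4]$. In all four cases the value is in $[2,p-4]$, so $\sigma$ is $2$-generic. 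Second, in Case (2), strong genericity gives $3\leq r_0\leq p-4$ and $2\leq r_j\leq p-5$ for $j>0$. For $j>0$ the same computation as in Case (1) applies verbatim. For $j=0$ one checks the four values at $x_0=r_0$ with $3\leq r_0\leq p-4$: $r_0\in[3,p-4]$; $r_0+1\in[4,p-3]$; $p-3-r_0\in[1,p-6]$; $p-2-r_0\in[2,p-5]$ — wait, one must be careful, since the list of allowed $\lambda_0$ in the cuspidal case may differ; here one should cite the precise description of $\mathscr{RD}(x_0,\cdots,x_{f-1})$ from \cite[\S4]{Br14}, which pins down exactly which of the four shapes can occur in the $0$-th slot, and verify the bound on that restricted list. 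The upshot is that the slightly stronger lower bound $r_0\geq 3$ (rather than $\geq 2$) is exactly what is needed to absorb the "$+1$" shifts that occur in position $0$ in the cuspidal case.

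\textbf{Main obstacle.} The only real subtlety is bookkeeping at the distinguished index $0$ in Case (2): one must use the exact form of $\mathscr{RD}(x_0,\cdots,x_{f-1})$ (including the extra conditions beyond the four-element membership constraint on each $\lambda_j$) to know which shapes actually arise, and confirm that the asymmetric bounds $3\leq r_0\leq p-4$ in Definition \ref{defn:strong-generic} were chosen precisely to make all of them $2$-generic. This is a finite, elementary verification — essentially a table lookup against \cite[\S4]{Br14} — with no conceptual difficulty, but it is where an error would most easily creep in. The rest is immediate arithmetic on intervals, and the twist by $\det^{e(\lambda)(r_0,\cdots,r_{f-1})}$ is irrelevant since $n$-genericity only constrains the exponents $s_j$, not the determinant twist.
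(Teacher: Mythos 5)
Your approach is the same as the paper's, which disposes of this lemma with the single sentence ``It is a direct check using the explicit description of $\mathscr{D}(\brho)$ in \cite[\S11]{BP}''; you are simply carrying out that check. Your Case (1) computation is complete and correct: with $2\leq r_j\leq p-5$, each of the four shapes $x_j$, $x_j+1$, $p-3-x_j$, $p-2-x_j$ evaluates into $[2,p-4]$, and the determinant twist is indeed irrelevant for $n$-genericity.

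The gap is in Case (2), and you half-acknowledge it yourself with the ``wait''. You evaluate the \emph{reducible}-case shape list $\{x_0,x_0+1,p-3-x_0,p-2-x_0\}$ at $j=0$ and find that $r_0+1$ and $p-3-r_0$ can fall outside $[2,p-4]$; you then defer to \cite[\S4]{Br14} without writing down the correct list, and your diagnosis (``the `$+1$' shifts that occur in position $0$'') points in the wrong direction. In the cuspidal/irreducible parametrization of \cite[\S11]{BP} and \cite[\S4]{Br14} the distinguished index $0$ carries the shapes $\lambda_0(x_0)\in\{x_0,\,x_0-1,\,p-2-x_0,\,p-1-x_0\}$ (a ``$-1$'' shift relative to the reducible case), while the indices $j>0$ carry the same four shapes as in Case (1). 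With $3\leq r_0\leq p-4$ one gets $r_0\in[3,p-4]$, $r_0-1\in[2,p-5]$, $p-2-r_0\in[2,p-5]$ and $p-1-r_0\in[3,p-4]$, all inside $[2,p-4]$; the lower bound $r_0\geq 3$ is needed for $r_0-1$ and $p-1-r_0$, and the upper bound $r_0\leq p-4$ for $p-2-r_0$. Once this substitution is made your argument is complete; as written, the cuspidal half is asserted rather than proved, and the assertion rests on a misidentification of which shapes occur at index $0$.
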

\begin{proof}
It is a direct check using the explicit description of $\mathscr{D}(\brho)$ in \cite[\S11]{BP}.
\end{proof}

 The main result of this subsection is the following.

\begin{theorem}\label{thm-tD=multione}
Assume $\brho$ is strongly generic. The representation $\widetilde{D}_0(\brho)$ is multiplicity free. Moreover, for any $\sigma\in\mathscr{D}(\brho)$, we have $D_{0,\sigma}(\brho)\subset \tD_{0,\sigma}(\brho)$ and $\tD_{0,\sigma}(\brho)^{K_1}\cong D_{0,\sigma}(\brho)$.
\end{theorem}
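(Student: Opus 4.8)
The plan is to deduce the theorem from Theorem \ref{thm-I(sigmatau)-tGamma} (and its corollaries) together with the explicit combinatorial description of $\mathscr{D}(\brho)$ via $\mathscr{RD}(x_0,\cdots,x_{f-1})$. First recall from \eqref{eq:tD-decomp} that it suffices to work with a fixed $\sigma\in\mathscr{D}(\brho)$ and prove that $\tD_{0,\sigma}(\brho)$ is multiplicity free, that $D_{0,\sigma}(\brho)\subseteq\tD_{0,\sigma}(\brho)$, and that $\tD_{0,\sigma}(\brho)^{K_1}\cong D_{0,\sigma}(\brho)$. By Lemma \ref{lem:Serre=3generic}, $\sigma$ is $2$-generic, so $I(\sigma,\tau)$ is defined for every $\tau\in\JH(\rInj_{\tGamma}\sigma)$ and is multiplicity free by Corollary \ref{cor-I(sigmatau)-multione}. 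By Corollary \ref{cor:tD-sigma}, $\tD_{0,\sigma}(\brho)$ is the largest subrepresentation of $\rInj_{\tGamma}\sigma$ with $[\tD_{0,\sigma}(\brho):\sigma]=1$ and $[\tD_{0,\sigma}(\brho):\tau]=0$ for all $\tau\in\mathscr{D}(\brho)$, $\tau\neq\sigma$; so $\tD_{0,\sigma}(\brho)$ is the sum of all $I(\sigma,\tau)$ with $\tau\in\JH(\rInj_{\tGamma}\sigma)$ such that $\JH(I(\sigma,\tau))$ contains no member of $\mathscr{D}(\brho)\setminus\{\sigma\}$.

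The first step is the inclusion $D_{0,\sigma}(\brho)\subseteq\tD_{0,\sigma}(\brho)$. Using the short exact sequence \eqref{equation-P/J^2} one identifies $D_{0,\sigma}(\brho)=\tD_{0,\sigma}(\brho)\cap\rInj_{\Gamma}\sigma$ as a first approximation; more precisely, by Proposition \ref{prop-multione-BP} and Corollary \ref{cor:tD-sigma} (applied to $\Gamma$ in place of $\tGamma$, which is exactly the content of \cite[Prop. 13.1]{BP}), $D_{0,\sigma}(\brho)$ is the sum of the $\Gamma$-subrepresentations $I(\sigma,\tau)$, $\tau\in\JH(\rInj_{\Gamma}\sigma)$, avoiding $\mathscr{D}(\brho)\setminus\{\sigma\}$. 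Since $\rInj_{\Gamma}\sigma\hookrightarrow\rInj_{\tGamma}\sigma$ and the $\Gamma$-version of $I(\sigma,\tau)$ agrees with the $\tGamma$-version for old $\tau$ by Theorem \ref{thm-I(sigmatau)-tGamma}(i), every such $\Gamma$-subrepresentation is already a $\tGamma$-subrepresentation of $\rInj_{\tGamma}\sigma$ with the right multiplicity properties, hence contained in $\tD_{0,\sigma}(\brho)$; summing gives $D_{0,\sigma}(\brho)\subseteq\tD_{0,\sigma}(\brho)$. The reverse inclusion $\tD_{0,\sigma}(\brho)^{K_1}\subseteq D_{0,\sigma}(\brho)$ is immediate since $\tD_{0,\sigma}(\brho)^{K_1}$ is a $\Gamma$-subrepresentation of $\rInj_{\Gamma}\sigma$ in which $\sigma$ occurs once and no other weight of $\mathscr{D}(\brho)$ occurs, so it is $\subseteq D_{0,\sigma}(\brho)$ by maximality; together this yields $\tD_{0,\sigma}(\brho)^{K_1}= D_{0,\sigma}(\brho)$.

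The remaining and main task is multiplicity freeness of $\tD_{0,\sigma}(\brho)$. Write $\tD_{0,\sigma}(\brho)=\sum_\tau I(\sigma,\tau)$ over the admissible $\tau$. Each $I(\sigma,\tau)$ is multiplicity free, so one must show that for two admissible weights $\tau,\tau'$ the intersection $I(\sigma,\tau)\cap I(\sigma,\tau')$ already contains every common Jordan--Hölder factor with full multiplicity, i.e. that the sum of two such submodules of $\rInj_{\tGamma}\sigma$ does not create multiplicities. For \emph{old} $\tau,\tau'$ this is exactly the statement that $D_{0,\sigma}(\brho)$ is multiplicity free, i.e. \cite[Cor. 13.5]{BP}, combined with the lattice description of Proposition \ref{prop-BP-4.11}: $I(\sigma,\tau)$ corresponds to $\{\lambda':\lambda'\leq\lambda\}$ and the union of two such down-sets $\{\lambda'\leq\lambda\}\cup\{\lambda'\leq\mu\}$ is again ``downward closed enough'' in $\mathscr{RD}$ that no weight of $\mathscr{D}(\brho)$ other than $\sigma$ reappears (this uses the precise combinatorics of $\mathscr{RD}(x_0,\cdots,x_{f-1})$, the genericity that pushes everything into the range where Lemma \ref{lemma:genericity} applies, and Lemma \ref{lemma-HW-2.19} to realize intersections of compatible $\lambda$'s inside $\cI$). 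For the cases involving \emph{new} weights one feeds in the extension \eqref{equation-I(sigmatau)-new}: $I(\sigma,\tau)$ is an extension of $I(\delta,\tau)$ (all new factors, by Lemma \ref{lemma-newweight}) by $I(\sigma,\tau_!)$ (old factors), and $\Delta(\sigma)$-weights attached to distinct new weights have disjoint $\JH(\rInj_\Gamma\delta)$'s up to old overlap by Lemmas \ref{lemma-unique-delta} and \ref{lemma-Delta-nocommonfactor}; so the new parts of $I(\sigma,\tau)$ and $I(\sigma,\tau')$ meet only in old weights, and one reduces to the old case. The main obstacle is precisely this last bookkeeping: one must check that the family of $I(\sigma,\tau)$ occurring in $\tD_{0,\sigma}(\brho)$ forms, under the partial orders $\leq$ on $\cI$ and on the $\widetilde{\cI}_{(i,*)}$, a sub-poset closed enough that pairwise unions introduce no repeated weight and no forbidden weight of $\mathscr{D}(\brho)\setminus\{\sigma\}$. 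This is where the strong genericity hypothesis is genuinely used (to stay away from the exceptional behaviour flagged in Remark \ref{remark-conter-f=1} and to keep $\dim\Ext^1$-spaces one-dimensional via Lemma \ref{lemma-Hu10-2.21}), and where an argument parallel to the proof of \cite[Cor. 13.5]{BP}, but carried out on $\widetilde{\cI}_{(i,*)}$ using Corollary \ref{cor-I(sigmatau)-JH}, must be written out carefully.
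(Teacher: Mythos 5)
Your treatment of the second assertion is fine and matches what the paper means by ``clear from the construction''. The problem is with the first assertion: you have essentially proved the wrong statement. The theorem asserts that $\tD_0(\brho)=\bigoplus_{\sigma\in\mathscr{D}(\brho)}\tD_{0,\sigma}(\brho)$ is multiplicity free, and since this is a direct sum the content is twofold: (a) each summand $\tD_{0,\sigma}(\brho)$ is multiplicity free, and (b) no Serre weight occurs in two distinct summands. Part (a) is immediate from Corollary \ref{cor-I(sigmatau)-geq}, because $\tD_{0,\sigma}(\brho)\subseteq\rInj_{\tGamma}\sigma$ and $[\tD_{0,\sigma}(\brho):\sigma]=1$ by construction, so $[\tD_{0,\sigma}(\brho):\tau]\leq 1$ for every $\tau$; the elaborate poset bookkeeping you sketch for sums $I(\sigma,\tau)+I(\sigma,\tau')$ inside a single $\rInj_{\tGamma}\sigma$ is not needed for this. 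Part (b), which is the real content, is never addressed in your proposal: nothing you write rules out a Serre weight $\tau$ occurring in both $\tD_{0,\sigma}(\brho)$ and $\tD_{0,\sigma'}(\brho)$ for $\sigma\neq\sigma'$.

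The paper handles (b) by running the argument of \cite[Cor. 13.5]{BP} on the input of Lemma \ref{lemma-BP-12.8}: for every Serre weight $\tau$ with $\ell(\brho,\tau)<\infty$ there is a \emph{unique} $\sigma\in\mathscr{D}(\brho)$ minimizing $\ell(\sigma,\tau)$, and for any other $\sigma'\in\mathscr{D}(\brho)$ with $I(\sigma',\tau)\neq0$ the representation $I(\sigma',\tau)$ contains $\sigma$ as a subquotient. Granting this, if $\tau$ occurred in $\tD_{0,\sigma'}(\brho)$ for $\sigma'$ not the minimizer, the subrepresentation of $\tD_{0,\sigma'}(\brho)$ with cosocle $\tau$ would be $I(\sigma',\tau)$ (by the uniqueness in Theorem \ref{thm-I(sigmatau)-tGamma}) and would contain $\sigma\in\mathscr{D}(\brho)\setminus\{\sigma'\}$ as a subquotient, contradicting $[\tD_{0,\sigma'}(\brho):\sigma]=0$. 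The genuinely hard step is the proof of Lemma \ref{lemma-BP-12.8} itself, which requires a case analysis according to whether $\tau$ is old or new in $\rInj_{\tGamma}\sigma$ and in $\rInj_{\tGamma}\sigma'$, using Lemmas \ref{lemma-compose-lambda}, \ref{lemma-unique-delta} and \ref{lemma-lambda'=mu-lambda}. Your closing admission that an argument ``must be written out carefully'' points at exactly this missing ingredient; without it the proposal is a plan rather than a proof.
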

\begin{proof}
Using Lemma \ref{lemma-BP-12.8} below, the first assertion is proved by the same argument as in \cite[Cor.~13.5]{BP}. The second assertion is clear from the construction.
\end{proof}
 \begin{remark}
 A similar result is proved in \cite[\S6.3]{BHHMS} when $\brho$ is semisimple; moreover, the set of Jordan--H\"older factors and the submodule structure of $\widetilde{D}_0(\brho)$ are  determined.
 \end{remark}

For a Serre weight $\tau$, define
\[\ell(\brho,\tau)\defn\mathrm{min}\{\ell(\sigma,\tau),\ \sigma\in\mathscr{D}(\brho)\}\in\Z_{>0}\cup\{+\infty\},\]
where $\ell(\sigma,\tau)\defn +\infty$ if $\tau$ does not occur in $\rInj_{\tGamma}\sigma$, and  is the Loewy length of $I(\sigma,\tau)$ otherwise. Here, $I(\sigma,\tau)$ is the representation of $\tGamma$ constructed in Theorem \ref{thm-I(sigmatau)-tGamma}, well-defined thanks to Lemma \ref{lem:Serre=3generic}. The following result is an analog of \cite[Lem.~12.8]{BP} in our setting.

\begin{lemma}\label{lemma-BP-12.8}
Assume $\brho$ is strongly generic. Let $\tau$ be any Serre weight such that $\ell(\brho,\tau)<+\infty$.

(i) There exists a unique $\sigma\in\mathscr{D}(\brho)$ such that $\ell(\sigma,\tau)=\ell(\brho,\tau)$.

(ii) Let $\sigma'\in\mathscr{D}(\brho)$ such that $I(\sigma',\tau)\neq0$. If $\sigma'\neq\sigma$ with $\sigma$ as in (i), then  $I(\sigma',\tau)$ contains $\sigma$ as a subquotient.
\end{lemma}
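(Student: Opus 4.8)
The plan is to mimic the proof of \cite[Lem. 12.8]{BP}, replacing the combinatorics of $\cI(x_0,\cdots,x_{f-1})$ used there with the combinatorics of $\widetilde{\cI}_{(i,*)}$ developed in \S\ref{subsection:I(sigmatau)}. The key point is that the structure of $I(\sigma,\tau)$ for $\tGamma$-representations, as described in Theorem \ref{thm-I(sigmatau)-tGamma} and Corollary \ref{cor-I(sigmatau)-JH}, is governed by exactly the same kind of partial order as in the $\Gamma$-case (Proposition \ref{prop-BP-4.11}), just enlarged by the extra coordinate $\mathcal{T}\in\{\emptyset,(i,*)\}$; and since the set $\mathscr{D}(\brho^{\rm ss})$ (hence $\mathscr{D}(\brho)$) is parametrized by the tuples $\mathscr{RD}(x_0,\cdots,x_{f-1})$, the argument of \emph{loc. cit.} transfers once we translate $\ell(\sigma,\tau)=|\cS(\text{relevant tuple})|$ (plus $2$ if a new weight is involved) using \eqref{equation-length-tildeIambda}.

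First I would fix $\tau$ with $\ell(\brho,\tau)<+\infty$ and distinguish two cases according to whether $\tau$ is an old or a new Serre weight relative to the various $\rInj_{\tGamma}\sigma$ for $\sigma\in\mathscr{D}(\brho)$. If $\tau$ is old for all such $\sigma$ (equivalently $\tau\in\JH(\rInj_{\Gamma}\sigma)$ for each relevant $\sigma$, which forces $\mathcal{T}=\emptyset$), then $I(\sigma,\tau)$ coincides with the $\Gamma$-representation of Proposition \ref{prop-multione-BP} by Theorem \ref{thm-I(sigmatau)-tGamma}(i), and $\ell(\sigma,\tau)=|\cS(\tau\text{ w.r.t. }\sigma)|+1$; the statement is then literally \cite[Lem. 12.8]{BP} and I would just cite it. The substance is the case where $\tau$ is new for at least one $\sigma$, i.e. $\tau=\lambda(\delta_i^*(\sigma))$ with $\lambda$ satisfying \eqref{equation-condition-newweight}. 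Here I would use the short exact sequence \eqref{equation-I(sigmatau)-new}, $0\to I(\sigma,\tau_!)\to I(\sigma,\tau)\to I(\delta_i^*(\sigma),\tau)\to 0$, which gives $\ell(\sigma,\tau)=\ell(\sigma,\tau_!)+2$ (since $\delta=\delta_i^*(\sigma)$ adds $2$ to the length, exactly matching \eqref{equation-length-tildeIambda}, and $\tau_!=\lambda_!(\sigma)$ is an old weight). So minimizing $\ell(\sigma,\tau)$ over $\mathscr{D}(\brho)$ reduces to minimizing $\ell(\sigma,\tau_!)$ over those $\sigma$ for which $\tau$ is new, i.e. back to the old-weight (i.e. $\Gamma$) problem for the weight $\tau_!$, where \cite[Lem. 12.8]{BP} applies and gives a unique minimizing $\sigma$. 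I would then need to check that no $\sigma$ for which $\tau$ is \emph{old} can compete: if $\tau$ is old for $\sigma'$ then $\ell(\sigma',\tau)=|\cS^{\sigma'}(\tau)|+1$, whereas for a $\sigma$ with $\tau$ new we have $\ell(\sigma,\tau)=|\cS^{\sigma}(\tau_!)|+3$, and a case analysis using Lemma \ref{lemma:genericity}, Lemma \ref{lemma:cond-for-cI} and the explicit shape of $\mathscr{RD}(x_0,\cdots,x_{f-1})$ from \cite[\S4]{Br14} shows these two index sets and the constants $1$ versus $3$ cannot coincide for the minimal value — this comparison is the real content and the strongly generic hypothesis is used via Lemma \ref{lem:Serre=3generic} to ensure all the $I(\sigma,\tau)$'s are defined and all the $\mu_i^*$, $\delta_i^*$ operations on weights of $\mathscr{D}(\brho)$ stay in range.

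For part (ii), given the minimizing $\sigma$ from (i) and another $\sigma'\in\mathscr{D}(\brho)$ with $I(\sigma',\tau)\neq0$, I would argue as in \cite[Lem. 12.8]{BP} that $\sigma$ occurs as a subquotient of $I(\sigma',\tau)$. Concretely: translate $\sigma,\sigma'$ into elements of $\mathscr{RD}(x_0,\cdots,x_{f-1})$ and $\tau$ into $\widetilde{\lambda},\widetilde{\lambda}'$ relative to $\sigma,\sigma'$ respectively via \eqref{equation-tildelambda-sigma}; by Corollary \ref{cor-I(sigmatau)-JH} it suffices to show that the tuple representing $\sigma$ as a Jordan–Hölder factor of $\rInj_{\tGamma}\sigma'$ is $\leq\widetilde{\lambda}'$ in $\widetilde{\cI}_{(i,*)}$. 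This is where I expect the main obstacle to lie: one must carefully combine (a) the composition identities of Lemma \ref{lemma-compose-lambda}, (b) the fact from the proof of Lemma \ref{lemma-unique-delta}/Lemma \ref{lemma-newweight} that $\cS$-sets interact well under $\lambda\circ\delta_i^*$, and (c) the minimality of $\ell(\sigma,\tau)$ to force the required compatibility and inclusion of $\cS$-sets — essentially a $\tGamma$-version of the chain of lemmas in \cite[\S12]{BP}, and keeping track of the $\mathcal{T}$-coordinate (whether $\sigma$ is old or new in $\rInj_{\tGamma}\sigma'$) is the bookkeeping that needs genuine care, especially in the degenerate $f=1$, $\dim_{\F}\sigma=3$ case flagged in Theorem \ref{thm-I(sigmatau)-tGamma}(ii), which I would dispose of by a direct check.
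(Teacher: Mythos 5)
Your overall framework — the old/new dichotomy, Corollary \ref{cor-I(sigmatau)-JH} and the partial order on $\widetilde{\cI}_{(i,*)}$, and the reduction to the combinatorics of \cite[\S12]{BP} — is the right one and matches the paper's, but there are concrete gaps in the execution. First, the length bookkeeping is wrong: by \eqref{equation-length-tildeIambda} and Corollary \ref{cor-I(sigmatau)-JH}, if $\tau=\lambda(\delta_i^*(\sigma))$ is new then $\ell(\sigma,\tau)=|\cS(\lambda)|+3$, while $\ell(\sigma,\tau_!)=|\cS(\lambda_!)|+1$ with $\cS(\lambda_!)=\cS(\lambda)\cup\{i\}$; so the difference is $2$ when $i\in\cS(\lambda)$ but only $1$ when $i\notin\cS(\lambda)$, and the identity $\ell(\sigma,\tau)=\ell(\sigma,\tau_!)+2$ you rely on fails in half the cases. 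Second, and more seriously, the reduction ``minimize $\ell(\sigma,\tau_!)$ and apply \cite[Lem. 12.8]{BP}'' is not well-posed: $\tau_!=\lambda_!(\sigma)$ depends on $\sigma$ and on the pair $(i,*)$ for which $\tau$ is new in $\rInj_{\tGamma}\sigma$, so there is no single weight $\tau_!$ to which the $\Gamma$-lemma can be applied. In particular you never address the situation where $\tau$ is new for two weights $\sigma,\sigma'$ via \emph{distinct} indices $i\neq i'$; in the paper this is Case (b2), which is shown to be \emph{impossible} by exhibiting, via Lemma \ref{lemma-lambda'=mu-lambda} and Proposition \ref{prop-BP-4.11}, a third weight $\sigma''\in\mathscr{D}(\brho)$ (using \cite[Prop. 2.24]{HW}) with $\ell(\sigma'',\tau)<\ell(\sigma,\tau)$, contradicting minimality — an argument absent from your sketch.

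Finally, you explicitly defer the mixed case ($\tau$ old for one weight, new for another) to ``a case analysis,'' but this is where most of the work lives. The paper's Cases (c)/(c') require proving that $|\cS(\lambda)\cap\cS(\lambda')|=1$ (using Lemmas \ref{lemma-HW-2.20} and \ref{lemma-HW-2.19} plus minimality), pinning down the unique $\delta_i^*(\sigma')$ whose injective envelope contains $\tau$ via \eqref{equation-sign-*}, and then verifying the two compatibility conditions $(\lambda'^{-1},\emptyset)\leq(\mu,(i,*))$ through Lemmas \ref{lemma-compose-lambda} and \ref{lemma-lambda'=mu-lambda}. Crucially, the argument is \emph{asymmetric}: it shows that an old-weight competitor always beats a new-weight one (Case (c') cannot occur), which is what forces uniqueness in (i); your sketch treats the comparison as a symmetric numerical coincidence to be ruled out, which misses this directionality. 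Your observation that (i) and (ii) should be proved simultaneously by showing the minimizer occurs in every $I(\sigma',\tau)$ is correct and is exactly the paper's strategy, but as written the proposal does not yet constitute a proof.
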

\begin{proof}
 Let $\sigma\in\mathscr{D}(\brho)$ be a Serre weight such that
\begin{equation}\label{equation-ell-sigma}\ell(\sigma,\tau)= \ell(\brho,\tau).\end{equation}
Also let $\sigma'\in\mathscr{D}(\brho)$ be a Serre weight distinct with $\sigma$ such that $I(\sigma',\tau)\neq 0$. We will prove that $I(\sigma',\tau)$ contains $\sigma$ as a subquotient, which will prove (i) and (ii) simultaneously. In the exceptional case $f=1$, $\sigma=\Sym^2\F^2\otimes{\det}^a$ and $\tau=\Sym^{p-1}\F^2\otimes{\det}^{a+1}$, one checks that $\sigma$ is the unique Serre weight in $\mathscr{D}(\brho)$ such that $\ell(\sigma,\tau)<+\infty$, so the result is obvious and we exclude this case in the rest.

Since $I(\sigma,\tau)\neq 0$ and $I(\sigma',\tau)\neq0$,
 we have the following possibilities:
\begin{enumerate}
\item[(a)] $\tau$ is an old Serre weight in both $\rInj_{\tGamma}\sigma$ and $\rInj_{\tGamma}\sigma'$, i.e. $\tau\in \JH(\rInj_{\Gamma}\sigma)\cap\JH(\rInj_{\Gamma}\sigma')$;
\item[(b)] $\tau$ is a new Serre weights in both $\rInj_{\tGamma}\sigma$ and $\rInj_{\tGamma}\sigma'$, i.e. $\tau\notin \JH(\rInj_{\Gamma}\sigma)\cup \JH(\rInj_{\Gamma}\sigma')$;
\item[(c)] $\tau$ is an old  Serre weight in $\rInj_{\tGamma}\sigma$, but  new in $\rInj_{\tGamma}\sigma'$, i.e. $\tau\in\JH(\rInj_{\Gamma}\sigma)\backslash \JH(\rInj_{\Gamma}\sigma');$
\item[(c')] $\tau$ is a new Serre weight in $\rInj_{\tGamma}\sigma$, but  old in $\rInj_{\tGamma}\sigma'$, i.e. $\tau\in\JH(\rInj_{\Gamma}\sigma')\backslash \JH(\rInj_{\Gamma}\sigma).$
\end{enumerate}
\vspace{1mm}

In Case (a), we may view both $\sigma$ and $\sigma'$ as subquotients of $\rInj_{\Gamma}\tau$.  If $\lambda,\lambda'\in\cI(x_0,\cdots,x_{f-1})$  correspond to $\sigma,\sigma'$ respectively, then \cite[Lem.~12.6]{BP} implies that the intersection $\lambda\cap\lambda'\in\cI(x_0,\cdots,x_{f-1})$ (see \cite[\S12]{BP} for the definition of $\lambda\cap \lambda'$) corresponds again to a Serre weight  in $\mathscr{D}(\brho)$, say $\sigma''$. It is clear that $\ell(\sigma'',\tau)\leq\ell(\sigma,\tau)$, with equality if and only if $\sigma''=\sigma$. By \eqref{equation-ell-sigma} we indeed have $\sigma''=\sigma$ and $\sigma$ occurs in $I(\tau,\sigma')$ by \cite[Lem.~12.5]{BP}, hence also in $I(\sigma',\tau)$.

In Case (b), there exist uniquely determined $(i,*)$ and $(i',*')$ in $\cS\times\{+,-\}$ such that \[\tau\in\JH(\rInj_{\Gamma}\delta_i^*(\sigma))\cap \JH(\rInj_{\Gamma}\delta_{i'}^{*'}(\sigma')).\]
Since $\sigma'$ occurs in $\rInj_{\Gamma}\sigma$ by \cite[Prop.~2.24]{HW}, we may write $\sigma'=\mu(\sigma)$ for $\mu\in\cI(x_0,\cdots,x_{f-1})$.
Let  $\lambda$ (resp. $\lambda'$) be the element of $\cI(x_0,\cdots,x_{f-1})$ such that $\tau=\lambda(\delta_i^*(\sigma))$ (resp. $\tau=\lambda'(\delta_{i'}^{*'}(\sigma'))$). Using Lemma \ref{lemma:genericity} together with (a variant of) Lemma \ref{lemma:cond-for-cI}, we have
\begin{equation}\label{eq:tuple-caseb}\lambda'\circ\delta_{i'}^{*'}\circ\mu=\lambda\circ\delta_i^{*}. \end{equation}
We have two possibilities: $i=i'$ or $i\neq i'$.
\begin{enumerate}
\item[(b1)] Assume $i=i'$. Then by the proof of Lemma \ref{lemma-unique-delta}, precisely by \eqref{equation-nu-case+} and \eqref{equation-nu-case-}, we must have $*=*'$.  Moreover, using  \eqref{equation-nu-case+}  (or \eqref{equation-nu-case-}, depending on $*$), the equality $\lambda'\circ\delta_i^*\circ\mu=\lambda\circ\delta_i^*$ forces $\mu_i(x_i)\in\{x_i,x_i\pm 1\}$ and so $\delta_i^*\circ\mu=\mu\circ\delta_i^*$.  Hence, \eqref{eq:tuple-caseb} becomes $\lambda'\circ\mu\circ\delta_i^*=\lambda\circ\delta_i^*$, equivalently, $\lambda'\circ\mu=\lambda$.  If we define $\tau'\defn \lambda(\sigma)=\lambda'(\sigma')$, then $\tau'$ is a common subquotient of $\rInj_{\Gamma}\sigma$ and $\rInj_{\Gamma}\sigma'$. Consequently, we may view $\sigma$ and $\sigma'$ as subquotients of $\rInj_{\Gamma}\tau'$.
As in Case (a), applying \cite[Lem.~12.6]{BP}, we obtain a Serre weight $\sigma''\in\mathscr{D}(\brho)$ which  occurs in  $\JH(I(\sigma,\tau'))\cap\JH(I(\sigma',\tau'))$. On the other hand,  by  Corollary \ref{cor-I(sigmatau)-JH}, $\tau'$ is a common subquotient of $I(\sigma,\tau)$ and $I(\sigma',\tau)$, hence so is $\sigma''$.  By \eqref{equation-ell-sigma}, this forces  $\sigma''=\sigma$, and so $\sigma$ occurs in $I(\sigma',\tau)$.

\item[(b2)] Assume $i\neq i'$ (so $f\geq 2$). Using \eqref{eq:tuple-caseb} at $i$, we deduce from Lemma \ref{lemma-lambda'=mu-lambda} (condition (a) in \emph{loc. cit.} holds by Lemma \ref{lemma-newweight})  the following facts
\begin{enumerate}
\item[$\bullet$] $i\notin\cS(\lambda)$, $i\in\cS(\mu)$;
\item[$\bullet$] $\mu_i(x_i)\in\{x_i,x_i*1,p-2-x_i,p-2-x_i-(*1)\}$.
\end{enumerate}
Let $\lambda''\in \cI(x_0,\cdots,x_{f-1})$ be the unique element with $\cS(\lambda'')=\{i\}$ and satisfying \eqref{equation-condition-newweight}, i.e. $(\lambda'',\emptyset)\in\cI_{(i,*)}$ (see Definition \ref{defn:tildeI}) and let $\sigma''=\lambda''(\sigma)$. On the one hand,  we have  $\sigma''\in\JH(I(\sigma,\tau))$ by Corollary \ref{cor-I(sigmatau)-JH}.  On the other hand,  we have $\lambda''\leq \mu$, hence $\sigma''\in \JH(I(\sigma,\sigma'))$ by Proposition \ref{prop-BP-4.11}. By \cite[Prop.~2.24]{HW}, this implies $\sigma''\in \mathscr{D}(\brho)$. However, it is clear that $\ell(\sigma'',\tau)<\ell(\sigma,\tau)$, which  contradicts the choice of $\sigma$, see \eqref{equation-ell-sigma}.
 \end{enumerate}

In Case (c), we may view both $\tau$ and $\sigma'$ as  subquotients  of $\rInj_{\Gamma}\sigma$ (use \cite[Prop.~2.24]{HW} for $\sigma'$); let $\lambda, \lambda'\in \cI(x_0,\cdots,x_{f-1})$ be the corresponding element, respectively.   By Lemma  \ref{lemma-HW-2.20}, we have $\cS(\lambda)\cap \cS(\lambda')\neq\emptyset$, otherwise $\sigma'$ would occur in $\rInj_{\Gamma}\tau$, contradicting the assumption.  We claim that \[|\cS(\lambda)\cap \cS(\lambda')|=1.\] Let $i\in\cS(\lambda)\cap \cS(\lambda')$. Then Lemma \ref{lemma-HW-2.19} implies that  $\lambda$ and $\lambda'$  are \emph{not} compatible at $i$; otherwise $I(\sigma,\sigma')$ and $I(\sigma,\tau)$ would contain a common irreducible subquotient distinct with $\sigma$, say $\sigma''$, and by \cite[Prop.~2.24]{HW} $\sigma''$  must lie in $\mathscr{D}(\brho)$,   contradicting \eqref{equation-ell-sigma}.   Set $\nu\defn \lambda\circ\lambda'^{-1}$ so that
\[\tau=\lambda(\sigma)=\nu(\sigma').\]
Using the table in the proof of Lemma \ref{lemma-compose-lambda}, a case-by-case check shows that
\[\nu_i(x_i)\in\{p-x_i,p-4-x_i,x_i+2,x_i-2\}.\]
For example, if $\lambda'_i(x_i)=p-1-x_i$, then $\lambda_i'^{-1}(x_i)=p-1-x_i$ and $\lambda_i(x_i)\in\{p-3-x_i,x_i+1\}$ (as $\lambda$ and $\lambda'$ are not compatible at $i$), so finally $\nu_i(x_i)\in\{x_i-2,p-x_i\}$.
Hence,  when viewing $\tau$ as a subquotient of $\rInj_{\tGamma}\sigma'$, $\tau$ is a new Serre weight and must occur in $\rInj_{\Gamma}\delta_i^{*}(\sigma')$ for
\begin{equation}\label{equation-sign-*}*=\left\{\begin{array}{rll}+& \mathrm{if}\ \nu_i(x_i)\in\{p-4-x_i,x_i+2\}\\ -& \mathrm{if}\ \nu_i(x_i)\in\{p-x_i,x_i-2\}. \end{array}\right.\end{equation}
 By Lemma \ref{lemma-unique-delta}, this property determines uniquely $i$ and the claim follows.

In summary, $\tau$ occurs in $\rInj_{\Gamma}\delta_i^*(\sigma')$, where $i$ is the unique index in $\cS(\lambda)\cap\cS(\lambda')$ and $*$ is as in \eqref{equation-sign-*}. Write $\tau=\nu(\sigma')=\mu(\delta_i^*(\sigma'))$ with $\mu\in\cI(x_0,\cdots,x_{f-1})$. Using \eqref{equation-sign-*}, one checks that $\mu_i(x_i)\in\{p-2-x_i,x_i\}$,  i.e. $i\notin\cS(\mu)$. To prove  that $\sigma$ occurs in $I(\sigma',\tau)$, by Corollary \ref{cor-I(sigmatau)-JH} it suffices to check $(\lambda'^{-1},\emptyset)\leq (\mu,(i,*))$ or, equivalently, the following two conditions
\begin{enumerate}
\item[(c1)] $\lambda'^{-1}$  is compatible with $\mu_{!}$;  

\item[(c2)] $\cS(\lambda'^{-1})\subseteq \cS(\mu)\cup\{i\}$,  equivalently $\cS(\lambda'^{-1})\backslash\{i\}\subset \cS(\mu)\backslash\{i\}$.
\end{enumerate}
For (c1),  the compatibility at $j\neq i$ follows from Lemma \ref{lemma-compose-lambda} because $\mu_j=\nu_j=\lambda_j\circ\lambda_j'^{-1}$ for $j\neq i$; on the other hand, using Lemma \ref{lemma-lambda'=mu-lambda} the relation $\mu_i(x_i*2)=\lambda_i(\lambda_i'^{-1}(x_i))$ and the fact $\mu_i(x_i)\in\{p-2-x_i,x_i\}$ imply that $\lambda_i'^{-1}(x_i)$ satisfies \eqref{equation-condition-newweight}, hence $\lambda_i'^{-1}$ is compatible with $\mu_{!}$ at $i$ (cf. Definition \ref{defn-sigma!}).
 For (c2), we note that \[\cS(\lambda'^{-1})\backslash \{i\} \subseteq \cS(\nu)\backslash \{i\}=\cS(\mu)\backslash\{i\}\]
where the inclusion follows from $\cS(\lambda)\cap\cS(\lambda'^{-1})=\{i\}$ using Lemma \ref{lemma-compose-lambda}, and the equality from the fact $\mu_j=\nu_j$ for $j\neq i$.

Finally, we prove that Case (c') can \emph{not} happen, which will finish the proof of the lemma. Indeed, the same argument in Case (c) shows that $\sigma'$ (the old Serre weight) occurs in $I(\sigma,\tau)$ (where $\sigma$ is the new Serre weight), hence $\ell(\sigma',\tau)<\ell(\sigma,\tau)$, contradicting \eqref{equation-ell-sigma}.
\end{proof}

For the rest of this subsection, we assume $\brho$ is strongly generic.

\begin{corollary}\label{cor-Ext1-D=tD}
Given $\tau\in\mathscr{D}(\brho)$, the inclusion $D_0(\brho)\into \tD_0(\brho)$ induces an isomorphism
\begin{equation}\label{equation-Ext-D=tD}\Ext^1_{\tGamma}(\tau,D_0(\brho))\simto \Ext^1_{\tGamma}(\tau,\tD_0(\brho)).\end{equation}
\end{corollary}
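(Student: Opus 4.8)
The plan is to deduce the isomorphism \eqref{equation-Ext-D=tD} from the structural results already established, namely Theorem \ref{thm-tD=multione} (which gives the direct sum decompositions $D_0(\brho)=\bigoplus_{\sigma}D_{0,\sigma}(\brho)$, $\tD_0(\brho)=\bigoplus_{\sigma}\tD_{0,\sigma}(\brho)$ together with $D_{0,\sigma}(\brho)\subset\tD_{0,\sigma}(\brho)$ and $\tD_{0,\sigma}(\brho)^{K_1}\cong D_{0,\sigma}(\brho)$) and Corollary \ref{cor:tD-sigma}. By additivity of $\Ext^1_{\tGamma}(\tau,-)$ over the direct sum decompositions, it suffices to prove for each $\sigma\in\mathscr{D}(\brho)$ that the inclusion $D_{0,\sigma}(\brho)\hookrightarrow\tD_{0,\sigma}(\brho)$ induces an isomorphism $\Ext^1_{\tGamma}(\tau,D_{0,\sigma}(\brho))\xrightarrow{\sim}\Ext^1_{\tGamma}(\tau,\tD_{0,\sigma}(\brho))$.

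First I would analyze the cokernel $Q_\sigma\defn\tD_{0,\sigma}(\brho)/D_{0,\sigma}(\brho)$. Since $D_{0,\sigma}(\brho)=\tD_{0,\sigma}(\brho)^{K_1}$, the quotient $Q_\sigma$ embeds into $\tD_{0,\sigma}(\brho)/\tD_{0,\sigma}(\brho)^{K_1}\hookrightarrow (\rInj_{\tGamma}\sigma)/(\rInj_{\Gamma}\sigma)$, which by Proposition \ref{prop-structure-JProj}(i) is isomorphic to $(\rInj_{\Gamma}\sigma)^{\oplus f}\oplus\big(\oplus_{\delta\in\Delta(\sigma)}\rInj_{\Gamma}\delta\big)$. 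Using $[\tD_{0,\sigma}(\brho):\sigma]=1$ one sees $[Q_\sigma:\sigma]=0$, so the embedding factors through $\oplus_{\delta\in\Delta(\sigma)}\rInj_{\Gamma}\delta$; in particular every Jordan-Hölder factor of $Q_\sigma$ is a \emph{new} Serre weight in the sense of Definition \ref{defn-newweight}. Now apply $\Hom_{\tGamma}(\tau,-)$ to $0\ra D_{0,\sigma}(\brho)\ra\tD_{0,\sigma}(\brho)\ra Q_\sigma\ra0$; the long exact sequence reads
\[\Hom_{\tGamma}(\tau,Q_\sigma)\ra\Ext^1_{\tGamma}(\tau,D_{0,\sigma}(\brho))\ra\Ext^1_{\tGamma}(\tau,\tD_{0,\sigma}(\brho))\ra\Ext^1_{\tGamma}(\tau,Q_\sigma).\]
So it suffices to show $\Hom_{\tGamma}(\tau,Q_\sigma)=0$ and $\Ext^1_{\tGamma}(\tau,Q_\sigma)=0$ for $\tau\in\mathscr{D}(\brho)$.

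The first vanishing is immediate: $\soc_{\tGamma}(Q_\sigma)$ consists of new Serre weights, while $\tau\in\mathscr{D}(\brho)$ is old in $\rInj_{\tGamma}\sigma$ (it lies in $\JH(\rInj_{\Gamma}\sigma)$ by \cite[Prop. 2.24]{HW} or is $\sigma$ itself), and new and old Serre weights are distinct by Lemma \ref{lemma:genericity}-type arguments; hence $\tau$ cannot appear in $\soc_{\tGamma}(Q_\sigma)$. For the $\Ext^1$ vanishing, I would argue by dévissage along the socle filtration of $Q_\sigma$, reducing to showing $\Ext^1_{\tGamma}(\tau,\tau')=0$ for $\tau\in\mathscr{D}(\brho)$ and $\tau'$ a new Serre weight occurring in $Q_\sigma$. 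By Lemma \ref{lemma-Hu10-2.21}(ii) this $\Ext^1$ is nonzero only if $\tau'\in\mathscr{E}(\tau)$, i.e.\ $\tau'=\mu_i^{\pm}(\tau)$ for some $i$; but $\mu_i^{\pm}(\tau)\in\mathscr{E}(\tau)$ implies $|\cS(\mu_i^{\pm})|=1$, whereas the proof of Lemma \ref{lemma-unique-delta} (Remark \ref{remark-unique-delta}(ii)) shows a new Serre weight $\tau'$ with nontrivial extension with some $\tau\in\mathscr{D}(\brho)$ would have to occur in $\rInj_{\Gamma}\delta_i^{\pm}(\tau)$, forcing $\delta_i^{\pm}(\tau)\in\Delta(\sigma)$, hence $\delta_i^{\pm}(\tau)=\delta_j^{\pm}(\sigma)$ for some $j$; but then $\mu_i^{\pm}(\tau)=\mu_j^{\pm}(\sigma)$ would be old in $\rInj_{\tGamma}\sigma$, contradicting that $\tau'$ is new. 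Hence $\Ext^1_{\tGamma}(\tau,Q_\sigma)=0$, completing the argument.

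The main obstacle I anticipate is the careful bookkeeping in the last step: one must be precise about which Serre weights $\tau'$ can simultaneously be new in $\rInj_{\tGamma}\sigma$, lie in $\mathscr{E}(\tau)$ for some $\tau\in\mathscr{D}(\brho)$, and actually occur in $Q_\sigma$ — the combinatorics of $\cI(x_0,\dots,x_{f-1})$, the $\delta_i^*$/$\mu_i^*$ operations, and the compatibility relations must be invoked correctly, and the strong genericity hypothesis (Lemma \ref{lem:Serre=3generic}, so that all relevant $I(\sigma,\tau)$ and $\mu_i^\pm(\tau)$ are well-defined and $2$-generic) is what makes these combinatorial lemmas applicable. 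The exceptional case $f=1$, $\dim_{\F}\sigma=3$ may also need a separate direct check, as it does throughout \S\ref{section-finiteRep}.
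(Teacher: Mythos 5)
Your overall strategy --- apply $\Hom_{\tGamma}(\tau,-)$ to $0\to D_0(\brho)\to\tD_0(\brho)\to Q\to 0$ and kill both $\Hom_{\tGamma}(\tau,Q)$ and $\Ext^1_{\tGamma}(\tau,Q)$ --- is exactly the paper's, but your proof of the $\Ext^1$ vanishing has a genuine gap. You try to show that a Jordan--H\"older factor $\tau'$ of $Q_\sigma$, being \emph{new} in $\rInj_{\tGamma}\sigma$, cannot lie in $\mathscr{E}(\tau)$ for any $\tau\in\mathscr{D}(\brho)$. The chain of implications you give does not hold: that $\tau'$ occurs both in $\rInj_{\Gamma}\delta_i^{\pm}(\tau)$ and in $\rInj_{\Gamma}\delta$ for some $\delta\in\Delta(\sigma)$ does not force $\delta_i^{\pm}(\tau)\in\Delta(\sigma)$ (two injective envelopes may share a factor without coinciding), and $\delta_i^{\pm}(\tau)=\delta_j^{\pm}(\sigma)$ would not give $\mu_i^{\pm}(\tau)=\mu_j^{\pm}(\sigma)$ either. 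Worse, the statement you are aiming for is simply false: by \eqref{eq:delta=mumu}, for $f\geq 2$ one has $\delta_i^{+}(\sigma)=\mu_i^{+}\bigl(\mu_i^{+}(\sigma)\bigr)$, so $\delta_i^{+}(\sigma)$ is a new weight of $\rInj_{\tGamma}\sigma$ lying in $\mathscr{E}(\mu_i^{+}(\sigma))$, and $\mu_i^{+}(\sigma)$ can perfectly well belong to $\mathscr{D}(\brho)$. (A secondary issue: the embedding $Q_\sigma\hookrightarrow\oplus_{\delta\in\Delta(\sigma)}\rInj_{\Gamma}\delta$ does not by itself show that all factors of $Q_\sigma$ are new, since $\rInj_{\Gamma}\delta$ also contains old weights, cf.\ Remark \ref{remark-unique-delta}(i).)

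What actually excludes such $\tau'$ from the quotient is not newness but the multiplicity properties of $\tD_0(\brho)$, and this is the input your argument is missing. The paper argues globally: by the proof of \cite[Lem.~12.8]{BP}, $D_0(\brho)$ and $\oplus_{\tau\in\mathscr{D}(\brho)}\rInj_{\Gamma}\tau$ have the same Jordan--H\"older factors up to multiplicity; since $\tD_0(\brho)$ is multiplicity free (Theorem \ref{thm-tD=multione}), the quotient $\tD_0(\brho)/D_0(\brho)$ shares no factor with $\oplus_{\tau}\rInj_{\Gamma}\tau$, hence none with $\{\tau\}\cup\mathscr{E}(\tau)\subseteq\JH(\rInj_{\Gamma}\tau)$, and both $\Hom$ and $\Ext^1$ vanish by Lemma \ref{lemma-Hu10-2.21}. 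In the example above this works because $\delta_i^{+}(\sigma)=\mu_i^{+}(\tau)$ with $\tau=\mu_i^{+}(\sigma)$ already occurs in $D_{0,\tau}(\brho)$, so it cannot reappear in the quotient; equivalently, $I(\sigma,\delta_i^{+}(\sigma))$ contains $\tau$ by Corollary \ref{cor-I(sigmatau)-case1}, so $\delta_i^{+}(\sigma)\notin\JH(\tD_{0,\sigma}(\brho))$ by Corollary \ref{cor:tD-sigma}. You need to route the argument through one of these multiplicity statements; the purely combinatorial reduction you propose cannot be repaired as written.
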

\begin{proof}
We first note that, by the proof of \cite[Lem.~12.8]{BP},   $\oplus_{\tau\in\mathscr{D}(\brho)}\rInj_{\Gamma}\tau$ and $D_0(\brho)$ have the same set of Jordan--H\"older factors, ignoring multiplicities. Since $\tD_0(\brho)$ is multiplicity free by Theorem \ref{thm-tD=multione}, the quotient $\tD_0(\brho)/D_0(\brho)$ does not have common Jordan--H\"older factors with $\oplus_{\tau\in\mathscr{D}(\brho)}\rInj_{\Gamma}\tau$. Using Lemma \ref{lemma-Hu10-2.21} we get for $\tau\in\mathscr{D}(\brho)$
\[\Hom_{\tGamma}(\tau,\tD_0(\brho)/D_0(\brho))=\Ext^1_{\tGamma}(\tau,\tD_0(\brho)/D_0(\brho))=0,\]
and the result follows.
\end{proof}

In fact, we have the following finer property. 

\begin{lemma}\label{lemma-HW-2.25}
Let $\sigma,\tau\in \mathscr{D}(\brho)$ and assume $\sigma\neq \tau$. Then for any nonzero subrepresentation $V_{\sigma}$ of $\widetilde{D}_{0,\sigma}(\brho)$ (hence  $\sigma\hookrightarrow V_{\sigma}$), the natural morphisms
\[\Ext^1_{\tGamma}(\tau,\sigma)\ra \Ext^1_{\tGamma}(\tau,V_{\sigma})\ra \Ext^1_{\tGamma}(\tau,\widetilde{D}_{0,\sigma}(\brho))\]
are isomorphisms.
\end{lemma}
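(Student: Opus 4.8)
The strategy is to reduce the claim about the arbitrary subrepresentation $V_\sigma$ to the two extreme cases $V_\sigma = \sigma$ and $V_\sigma = \widetilde{D}_{0,\sigma}(\brho)$, and then to glue. First I would record that $\widetilde{D}_{0,\sigma}(\brho)$ is multiplicity free by Theorem \ref{thm-tD=multione}, so in particular $\tau$ does not occur in $\widetilde{D}_{0,\sigma}(\brho)$ (since $\tau \in \mathscr{D}(\brho)$, $\tau \neq \sigma$, and the socle of $\widetilde{D}_{0,\sigma}(\brho)$ is $\sigma$ by Proposition \ref{prop-BP-13.1}); this is the input that makes the $\Hom$ and $\Ext^1$ comparisons clean. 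Note also $\sigma \hookrightarrow V_\sigma$ because $\soc_{\tGamma}\widetilde{D}_{0,\sigma}(\brho) = \sigma$ is irreducible.

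The middle map $\Ext^1_{\tGamma}(\tau,V_\sigma) \to \Ext^1_{\tGamma}(\tau,\widetilde{D}_{0,\sigma}(\brho))$: apply $\Hom_{\tGamma}(\tau,-)$ to $0 \to V_\sigma \to \widetilde{D}_{0,\sigma}(\brho) \to C \to 0$ with $C$ the cokernel. Since $\widetilde{D}_{0,\sigma}(\brho)$ is multiplicity free and $\sigma \hookrightarrow V_\sigma$, the quotient $C$ does not contain $\sigma$ as a subquotient, hence (as $C$ is a subquotient of $\rInj_{\tGamma}\sigma$) every Jordan--Hölder factor of $C$ lies in $\JH(\rInj_{\tGamma}\sigma)\setminus\{\sigma\}$; I want $\Hom_{\tGamma}(\tau,C) = \Ext^1_{\tGamma}(\tau,C) = 0$. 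The $\Hom$ vanishing is clear since $\tau \notin \JH(C)$. For the $\Ext^1$ vanishing I would use Lemma \ref{lemma-Hu10-2.21}: by a dévissage along a socle filtration of $C$ it suffices to see $\Ext^1_{\tGamma}(\tau,\tau') = 0$ for each $\tau' \in \JH(C)$, and by Lemma \ref{lemma-Hu10-2.21}(ii) this space is nonzero only if $\tau' \in \mathscr{E}(\tau)$. Here is the key point: by the combinatorics of \cite[\S13]{BP} (Lemma \ref{lemma-HW-2.25} is the analogue of \cite[Lem. 2.25]{HW}), since $\tau \in \mathscr{D}(\brho)$ the only weight of $\mathscr{E}(\tau)$ that could occur in $\rInj_{\tGamma}\sigma$ with $\sigma\in\mathscr{D}(\brho)$ is governed by the maximality property (iii) of $\widetilde{D}_0(\brho)$: if some $\tau' \in \mathscr{E}(\tau) \cap \JH(C)$, then replacing $\sigma$'s summand suitably one could enlarge $\widetilde{D}_{0,\tau}(\brho)$ keeping multiplicity one on $\mathscr{D}(\brho)$, contradicting Proposition \ref{prop-BP-13.1}(iii); more directly, Corollary \ref{cor-Ext1-D=tD} already identifies $\Ext^1_{\tGamma}(\tau,D_0(\brho)) \simto \Ext^1_{\tGamma}(\tau,\widetilde{D}_0(\brho))$, so any extension of $\tau$ by such a $\tau'$ inside $\widetilde{D}_{0,\sigma}(\brho)$ would force $\tau' \in \JH(D_0(\brho))$, and then $\tau$ would have to occur in $\widetilde{D}_{0,\sigma}(\brho)$ (by the structure of $I(\sigma,\tau')$, using that $\sigma$ is $2$-generic by Lemma \ref{lem:Serre=3generic} and Theorem \ref{thm-I(sigmatau)-tGamma}), contradiction. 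I expect this step — pinning down that no $\tau' \in \mathscr{E}(\tau)$ occurs as a JH factor of $C$ — to be the main obstacle, and it is exactly where strong genericity and the combinatorial lemmas of \S\ref{section-finiteRep} (Lemmas \ref{lemma-newweight}, \ref{lemma-unique-delta}, \ref{lemma-BP-12.8}) are used. Granting it, the long exact sequence gives the middle isomorphism.

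For the composite $\Ext^1_{\tGamma}(\tau,\sigma) \to \Ext^1_{\tGamma}(\tau,\widetilde{D}_{0,\sigma}(\brho))$: apply $\Hom_{\tGamma}(\tau,-)$ to $0 \to \sigma \to \widetilde{D}_{0,\sigma}(\brho) \to \widetilde{D}_{0,\sigma}(\brho)/\sigma \to 0$. The quotient $\widetilde{D}_{0,\sigma}(\brho)/\sigma$ again has no $\sigma$ and (by the same argument as above) no $\tau$ and no $\tau' \in \mathscr{E}(\tau)$ as JH factors, so $\Hom_{\tGamma}(\tau,\widetilde{D}_{0,\sigma}(\brho)/\sigma) = 0$ and $\Ext^1_{\tGamma}(\tau,\widetilde{D}_{0,\sigma}(\brho)/\sigma) = 0$ by the dévissage; hence this map is also an isomorphism. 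Finally, since we have shown both the composite $\Ext^1_{\tGamma}(\tau,\sigma) \to \Ext^1_{\tGamma}(\tau,\widetilde{D}_{0,\sigma}(\brho))$ and the second map $\Ext^1_{\tGamma}(\tau,V_\sigma) \to \Ext^1_{\tGamma}(\tau,\widetilde{D}_{0,\sigma}(\brho))$ are isomorphisms, the first map $\Ext^1_{\tGamma}(\tau,\sigma) \to \Ext^1_{\tGamma}(\tau,V_\sigma)$ factors the former through the latter, so it is injective; and since the second map is an isomorphism it is also surjective. (Alternatively, run the same $\Hom(\tau,-)$ argument directly on $0 \to \sigma \to V_\sigma \to V_\sigma/\sigma \to 0$, using that $V_\sigma/\sigma$, being a subquotient of $\widetilde{D}_{0,\sigma}(\brho)/\sigma$, has no JH factor in $\{\sigma,\tau\}\cup\mathscr{E}(\tau)$, which gives the first isomorphism on the nose; then the second follows by two-out-of-three.) This completes the proof.
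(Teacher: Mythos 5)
Your proposal is correct in overall architecture but takes a genuinely different route from the paper. The paper's proof is short: it first notes all three maps are injective because $[\tD_{0,\sigma}(\brho):\tau]=0$ (Corollary \ref{cor:tD-sigma}), so only the composite needs to be an isomorphism; it then reduces the composite to the $\Gamma$-level statement of \cite[Lem.~2.25]{HW} using Corollary \ref{cor-Ext1-D=tD} and a Hochschild--Serre comparison $\Ext^1_{\Gamma}\cong\Ext^1_{\tGamma}$ (valid since $\sigma\neq\tau$ and $\Hom_{\Gamma}(\tau,D_{0,\sigma}(\brho)\otimes H^1(K_1/Z_1,\F))=0$ by Proposition \ref{prop-structure-JProj}). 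You instead aim for the stronger vanishing $\Ext^1_{\tGamma}(\tau,\tD_{0,\sigma}(\brho)/\sigma)=0$, proved from scratch; this buys a cleaner statement and avoids the black-box citation, at the cost of having to control $\mathscr{E}(\tau)\cap\JH(\tD_{0,\sigma}(\brho))$ by hand.

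That control is exactly where your write-up does not hold together as stated. The chain ``Corollary \ref{cor-Ext1-D=tD} identifies $\Ext^1_{\tGamma}(\tau,D_0(\brho))\simto\Ext^1_{\tGamma}(\tau,\tD_0(\brho))$, so any extension of $\tau$ by $\tau'$ inside $\tD_{0,\sigma}(\brho)$ forces $\tau'\in\JH(D_0(\brho))$'' is not a valid inference, and the alternative appeal to maximality is too vague to check. The claim itself is true and has a clean proof: let $\tau'\in\mathscr{E}(\tau)\cap\JH(\tD_{0,\sigma}(\brho))$ with $\tau'\neq\sigma$. If $\tau'\in\mathscr{D}(\brho)$ this contradicts Corollary \ref{cor:tD-sigma} directly. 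Otherwise $\ell(\tau,\tau')=2$ (as $I(\tau,\tau')=E_{\tau,\tau'}$) and $\ell(\brho,\tau')=2$ (it cannot be $1$ since $\tau'\notin\mathscr{D}(\brho)$), so by Lemma \ref{lemma-BP-12.8}(i) $\tau$ is the unique minimizer and by part (ii) $\tau$ occurs in $I(\sigma,\tau')$; but any occurrence of $\tau'$ in $\tD_{0,\sigma}(\brho)$ produces a subrepresentation with cosocle $\tau'$ and $[\,\cdot\,:\sigma]=1$, which by the uniqueness in Theorem \ref{thm-I(sigmatau)-tGamma} is $I(\sigma,\tau')$ itself, forcing $[\tD_{0,\sigma}(\brho):\tau]\neq 0$ --- a contradiction. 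You should also be slightly more careful in the d\'evissage for $\tau'\notin\{\tau\}\cup\mathscr{E}(\tau)$: Lemma \ref{lemma-Hu10-2.21}(ii) requires $0$-genericity, so one should note via Hochschild--Serre and \eqref{eq:sigma-H1-socle} that $\Ext^1_{\tGamma}(\tau,\tau')\neq0$ forces $\tau'\in\{\tau\}\cup\mathscr{E}(\tau)\cup\Delta(\tau)$, and then dispose of $\Delta(\tau)$ (whose members are $0$-generic since $\tau$ is $2$-generic). With these repairs your argument goes through and is a legitimate alternative to the paper's.
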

\begin{proof}
First, the  morphisms in the lemma are
all injective, because $[\tD_{0,\sigma}(\brho):\tau]=0$ by Corollary \ref{cor:tD-sigma}. Hence, it suffices to prove that their composition
is an isomorphism.
Using Corollary \ref{cor-Ext1-D=tD}, it suffices to prove the natural morphism
\[\Ext^1_{\tGamma}(\tau,\sigma)\ra\Ext^1_{\tGamma}(\tau,D_{0,\sigma}(\brho))\]
is an isomorphism. It is proved in \cite[Lem.~2.25]{HW} that the last morphism is an isomorphism if we replace $\Ext^1_{\tGamma}$ by $\Ext^1_{\Gamma}$ (in \emph{loc. cit.} $\brho$ is only required to be generic in the sense of \cite[Def.~11.7]{BP}).  Hence, it suffices to show $\Ext^1_{\Gamma}(\tau,\sigma)\cong \Ext^1_{\tGamma}(\tau,\sigma)$ and similarly for $D_{0,\sigma}(\brho)$ in place of $\sigma$. Using the Hochschild-Serre spectral sequence and the assumption $\sigma\neq\tau$, this follows from the fact
\[\Hom_{\Gamma}\big(\tau,\sigma\otimes H^1(K_1/Z_1,\F)\big)=\Hom_{\Gamma}\big(\tau, D_{0,\sigma}(\brho)\otimes H^1(K_1/Z_1,\F)\big)=0,\]
see  Proposition \ref{prop-structure-JProj} (applicable thanks to Lemma \ref{lem:Serre=3generic}).
\end{proof}

\subsection{A combinatorial lemma}
In this subsection,  we assume $\brho$ is generic in the sense of \cite[Def.~11.7]{BP}.

Let $D_1(\brho)\defn D_0(\brho)^{I_1}$ and $D_{1,\tau}(\brho)\defn D_{0,\tau}(\brho)^{I_1}$ for $\tau\in\mathscr{D}(\brho)$. Given $\chi\in \JH(D_1(\brho))$, there exists a unique $\tau\in\mathscr{D}(\brho)$ such that $\chi$ occurs in $D_{1,\tau}(\brho)$.

\begin{lemma}\label{lemma-tau-maximal}
Keep the above notation. If $\sigma\in\mathscr{D}(\brho)$ is another Serre weight which is also a Jordan--H\"older factor of $\Ind_I^K\chi$,  then $J(\sigma)\subseteq J(\tau)$, viewing both $\sigma,\tau$ as subquotients of $\Ind_I^K\chi$ (cf. \S\ref{subsection:PS}).
\end{lemma}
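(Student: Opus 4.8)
The statement is a purely combinatorial assertion about the sets $\mathscr{D}(\brho)$, the parametrizing tuples in $\mathscr{RD}(x_0,\cdots,x_{f-1})$ (via \eqref{eq:lambda-rho}), and the subsets $J(\cdot)$ attached to Jordan-H\"older factors of principal series $\Ind_I^K\chi$ (see \S\ref{subsection:PS}). The plan is to reduce it to the description of $D_1(\brho)$ and $D_{1,\tau}(\brho)$ as characters of $I$ given in \cite{Br14}, and then to an inequality between the subsets $J(\sigma)$, $J(\tau)\subseteq\cS$ attached to the two Serre weights $\sigma,\tau$ sitting over the \emph{same} character $\chi$.

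First I would recall the explicit recipe from \cite[\S4]{Br14} for which $\chi\in\JH(D_1(\brho))$ occurs in which $D_{1,\tau}(\brho)$: namely, $\chi$ corresponds to a pair consisting of a subset of $\cS$ and a compatible sign datum, and $\tau\in\mathscr{D}(\brho)$ is the weight whose parametrizing tuple $\lambda\in\mathscr{RD}(x_0,\cdots,x_{f-1})$ is the \emph{maximal} one (with respect to the partial order $\leq$ on tuples, $\cS(\lambda')\subseteq\cS(\lambda)$ plus compatibility) among those tuples in $\mathscr{RD}$ whose associated $I_1$-character equals $\chi$; here the genericity of $\brho$ (in the sense of \cite[Def. 11.7]{BP}) guarantees this maximal element exists and is unique — this is precisely the content built into the construction of $D_0(\brho)$ in \cite[\S13]{BP}. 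Concretely, $\sigma$ being a Jordan-H\"older factor of $\Ind_I^K\chi$ with $\chi=\chi_{\sigma}$ or $\chi=\chi_{\sigma}^s$ means that $\sigma=\lambda_{J(\sigma)}(\sigma_\emptyset)$ for the unique $\lambda_{J(\sigma)}\in\mathscr{P}(x_0,\cdots,x_{f-1})$ with $J(\lambda_{J(\sigma)})=J(\sigma)$, where $\sigma_\emptyset=\sigma_{\chi^s}$ as in \S\ref{subsection:PS}; the same applies to $\tau$.

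The key step is then: both $\sigma$ and $\tau$ lie in $\mathscr{D}(\brho)$ and both are subquotients of the \emph{same} $\Ind_I^K\chi$, so in the parametrization $\sigma\leftrightarrow\lambda^{(\sigma)}$, $\tau\leftrightarrow\lambda^{(\tau)}$ of $\mathscr{D}(\brho^{\mathrm{ss}})$ by $\mathscr{RD}(x_0,\cdots,x_{f-1})$, the tuples $\lambda^{(\sigma)}$ and $\lambda^{(\tau)}$ have the \emph{same} associated $I_1$-character, namely $\chi$ (up to the $s$-twist). By the maximality characterization of $\tau$ recalled above, $\lambda^{(\sigma)}\leq\lambda^{(\tau)}$, in particular $\cS(\lambda^{(\sigma)})\subseteq\cS(\lambda^{(\tau)})$. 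It remains to translate this containment of the sets $\cS(\cdot)$ (the ``where the sign $\pm1$ appears'' sets) into the containment $J(\sigma)\subseteq J(\tau)$ of the ``flip sets'' recording $\{i:\lambda_i(x_i)\in\{p-2-x_i,p-1-x_i\}\}$; this is a direct check comparing the defining lists for $\mathscr{RD}(x_0,\cdots,x_{f-1})$ and $\mathscr{P}(x_0,\cdots,x_{f-1})$ and how the Frobenius-twist/flip indices match up, using Lemma \ref{lemma:J-J'} and the table in the proof of Lemma \ref{lemma-compose-lambda}.

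The main obstacle I expect is the bookkeeping in that last translation: the set $\cS(\lambda)$ attached to a tuple in $\mathscr{RD}$ and the set $J(\tau)$ attached to the \emph{same} Serre weight viewed inside $\Ind_I^K\chi$ are defined by different recipes (one records presence of a $\pm1$, the other records a specific ``flipped'' range), and one must be careful that the index shift $i\mapsto i-1$ appearing in Lemma \ref{lemma:J-J'} is handled consistently — i.e. that the partial order $\leq$ on $\mathscr{RD}$-tuples really does correspond to inclusion of the $J$-sets and not, say, to inclusion of their complements or a shifted version. I would handle this by arguing separately on each embedding index $i\in\cS$: since $\sigma$ and $\tau$ restrict to the same character on $I_1$, at each $i$ the two tuples $\lambda^{(\sigma)}_i, \lambda^{(\tau)}_i$ are constrained to a short explicit list, and a case-by-case inspection shows $i\in J(\sigma)\Rightarrow i\in J(\tau)$. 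This local analysis is routine but is where all the actual work lies; everything else is assembling the cited results.
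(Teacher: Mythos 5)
Your argument breaks down at what you call the key step. It is simply not true that two Jordan--H\"older factors of the same $\Ind_I^K\chi$ have the same $I_1$-character: only the socle $\sigma_{\chi^s}$ and the cosocle $\sigma_{\chi}$ are attached to $\chi$ itself, while the intermediate factors $\sigma_J$ for $\emptyset\subsetneq J\subsetneq\cS$ carry genuinely different characters of $H$ on their $I_1$-invariants. Consequently the set over which you take a maximum --- ``tuples in $\mathscr{RD}(x_0,\cdots,x_{f-1})$ whose associated $I_1$-character equals $\chi$'' --- does not contain $\lambda^{(\sigma)}$ in general; read literally it contains at most one element (a Serre weight is determined by its $I_1$-character when $\chi\neq\chi^s$), so the maximality you invoke is vacuous and cannot be used to compare $\tau$ with $\sigma$. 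The genuine input from \cite{Br14} is \cite[Prop. 4.4]{Br14} (used in this paper only in the proof of Lemma \ref{lemma-PD-set}), which computes the set $J^{\rm max}_{\lambda}$ parametrizing $\tau$ inside $\Ind_I^K\chi$ from the $\mathscr{PD}$-tuple of $\chi$ --- note this lives in $\mathscr{PD}(x_0,\cdots,x_{f-1})$, not $\mathscr{RD}(x_0,\cdots,x_{f-1})$, which you appear to conflate. But the assertion that every other $\sigma\in\mathscr{D}(\brho)\cap\JH(\Ind_I^K\chi)$ satisfies $J(\sigma)\subseteq J^{\rm max}_{\lambda}$ is exactly the lemma to be proved, and in your write-up it is deferred to an unspecified ``case-by-case inspection''. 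So the proposal contains a false intermediate claim and leaves the actual content unestablished.

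For comparison, the paper sidesteps all of this bookkeeping. Since $\chi$ occurs in $D_{1,\tau}(\brho)=D_{0,\tau}(\brho)^{I_1}$, Frobenius reciprocity shows the cosocle $\sigma_{\chi}$ of $\Ind_I^K\chi$ is a subquotient of $D_{0,\tau}(\brho)$, i.e. $\ell(\brho,\sigma_{\chi})=\ell(\tau,\sigma_{\chi})$. Lemma \ref{lemma-BP-12.8}(ii) then forces $\tau$ to occur in $I(\sigma,\sigma_{\chi})$ for any other $\sigma\in\mathscr{D}(\brho)$ with $I(\sigma,\sigma_{\chi})\neq0$; equivalently $\sigma$ occurs in $I(\sigma_{\chi^s},\tau)$, and \cite[Cor. 4.11]{BP} converts this into $J(\sigma)\subseteq J(\tau)$. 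If you want to salvage your route you would have to carry out the full comparison of $\mathscr{RD}(x_0,\cdots,x_{f-1})$, $\mathscr{PD}(x_0,\cdots,x_{f-1})$ and $\mathscr{P}(x_0,\cdots,x_{f-1})$ together with the formula for $J^{\rm max}_{\lambda}$; that is possible in principle, but it amounts to redoing \cite[Prop. 4.4]{Br14} alongside the lemma itself rather than deducing the lemma from cited results.
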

\begin{proof}
The cosocle (resp. socle) of $\Ind_I^K\chi$ is isomorphic to $\sigma_{\chi}$ (resp. $\sigma_{\chi^s}$). By assumption,  $\sigma_{\chi}$ is a subquotient of $D_{0,\tau}(\brho)$, i.e. $\ell(\brho,\sigma_{\chi})=\ell(\tau,\sigma_{\chi})$.   Lemma \ref{lemma-BP-12.8}(ii)  implies that $\tau$ occurs in $I(\sigma,\sigma_{\chi})$  as a subquotient. Equivalently, $\sigma$ occurs in $I(\sigma_{\chi^s},\tau)$ as a subquotient, and so $J(\sigma)\subseteq J(\tau)$ by  \cite[Cor.~4.11]{BP}. \end{proof}

\begin{lemma}\label{lemma-PD-set}
Let  $\chi,\chi'$ be two characters such that $\Ext^1_{I/Z_1}(\chi,\chi')\neq0$ and assume $\chi,\chi'\in\JH(D_1(\brho))$. Let $\tau\in \mathscr{D}(\brho)$ (resp. $\tau'\in\mathscr{D}(\brho)$) be the Serre weight such that $\chi\in \JH(D_{1,\tau}(\brho))$ (resp. $\chi'\in \JH(D_{1,\tau'}(\brho))$). Let $J(\tau)\subset \cS$ (resp. $J(\tau')$) be the subset parametrizing the position of $\tau$ (resp. $\tau'$) inside $\Ind_I^K\chi$ (resp. in $\Ind_I^K\chi'$).
\begin{enumerate}
\item[(i)] If $\chi'=\chi\alpha_j^{-1}$ for some $j\in\cS$, then $j-1\notin J(\tau)$ and $J(\tau')=J(\tau)\cup\{j-1\}$.  
\item[(ii)] If $\chi'=\chi\alpha_j$ for some $j\in\cS$, then $j-1\notin J(\tau')$ and $J(\tau)=J(\tau')\cup \{j-1\}$.
\end{enumerate}
Moreover, we have $\Ext^1_{K/Z_1}(\tau,\tau')\cong\Ext^1_{\Gamma}(\tau,\tau')\neq0$.
\end{lemma}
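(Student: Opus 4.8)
\textbf{Proof plan for Lemma \ref{lemma-PD-set}.}

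The plan is to combine the combinatorial description of $D_1(\brho)$ provided by the Breuil--Paškūnas machinery with the explicit structure of induced representations recalled in \S\ref{subsection:PS}. The key external inputs are: Lemma \ref{lemma-tau-maximal}, which characterizes the Serre weight $\tau$ attached to $\chi \in \JH(D_1(\brho))$ by the maximality of $J(\tau)$ among all $\sigma \in \mathscr{D}(\brho)\cap \JH(\Ind_I^K\chi)$; the explicit list of $\Ext^1$ between Serre weights in $\Ind_I^K\chi$ and $\Ind_I^K\chi'$ from Lemmas \ref{lemma-Esigma'-occur-plus} and \ref{lemma-Esigma'-occur-minus}; and the fact (\cite[\S4]{Br14}, or \cite[Prop. 2.24]{HW}) that $\mathscr{D}(\brho)$ is ``saturated'' inside $\JH(\rInj_{\Gamma}\sigma)$ in the sense used repeatedly in \S\ref{section-BP}, e.g. in the proof of Lemma \ref{lemma-BP-12.8}.

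First I would treat case (i), so $\chi' = \chi\alpha_j^{-1}$. Here $\tau = \sigma_{\chi}$-side and $\tau'$ lives on the $\Ind_I^K\chi'$ side, and by the structure of $E_{\chi',\chi}$ (which is $K_1$-trivial precisely when $\chi' = \chi\alpha_i$, hence here it is \emph{not}, so $E_{\chi',\chi}$ is a genuine new-weight situation) I want to locate which $\tau' \in \mathscr{D}(\brho)\cap\JH(\Ind_I^K\chi')$ attaches to $\chi'$. The point is that $\tau'$ is characterized by $J(\tau')$ being maximal, and by Lemma \ref{lemma-Esigma'-occur-plus} the condition ``$\Ext^1_{\Gamma}(\tau',\tau)\neq 0$'' is equivalent to ``$j-1\notin J(\tau)$ and $J(\tau') = J(\tau)\cup\{j-1\}$''. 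So the strategy is: (a) show $j-1\notin J(\tau)$ — this uses that $\sigma_{\chi}$ already sits in $D_{0,\tau}(\brho)$ and that adding $j-1$ to $J(\tau)$ would produce a weight in $\mathscr{D}(\brho)$ of strictly smaller $\ell(\brho,-)$-distance, contradicting Lemma \ref{lemma-tau-maximal}/maximality; (b) produce $\tau'$ with $J(\tau') = J(\tau)\cup\{j-1\}$ in $\mathscr{D}(\brho)$ using the parametrization of $\mathscr{D}(\brho^{\rm ss})$ by $\mathscr{RD}(x_0,\cdots,x_{f-1})$ together with the description of $J_{\brho}$, checking that the relevant tuple does lie in $\mathscr{RD}$ and satisfies the $J_{\brho}$-constraint; (c) conclude $\tau'$ attaches to $\chi'$ by maximality of $J(\tau')$ and Lemma \ref{lemma-Esigma'-occur-plus}. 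Case (ii) is entirely symmetric with the roles of $\chi,\chi'$ swapped, using Lemma \ref{lemma-Esigma'-occur-minus} in place of \ref{lemma-Esigma'-occur-plus}; I would just indicate the symmetry rather than repeat the argument.

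For the final ``moreover'', once $J(\tau)$ and $J(\tau')$ differ by exactly one element $j-1$, Lemma \ref{lemma-Esigma'-occur-plus} (resp. \ref{lemma-Esigma'-occur-minus}) gives $\Ext^1_{\Gamma}(\tau,\tau')\neq 0$ directly (and that $E_{\tau,\tau'}$ or $E_{\tau',\tau}$ occurs in $\Ind_I^K E_{\chi,\chi'}$); the identification $\Ext^1_{K/Z_1}(\tau,\tau')\cong\Ext^1_{\Gamma}(\tau,\tau')$ follows from Lemma \ref{lemma-Hu10-2.21}(ii), since $\tau\neq\tau'$ (their $J$-sets differ) and both are $0$-generic under the genericity hypothesis on $\brho$. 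The step I expect to be the main obstacle is (b): verifying that the tuple obtained by toggling the $(j-1)$-coordinate of the $\mathscr{RD}$-parameter of $\tau$ still lies in $\mathscr{RD}(x_0,\cdots,x_{f-1})$ \emph{and} respects the $J_{\brho}$-condition cutting out $\mathscr{D}(\brho)$ inside $\mathscr{D}(\brho^{\rm ss})$ — this is a finite but somewhat delicate case analysis on the local form of $\brho$ (Case (1) vs. Case (2)) and on whether $j-1 \in J_{\brho}$, and is precisely the kind of check that the analogous \cite[Lem. 2.?]{HW} and \cite[\S4]{Br14} were designed to handle, so I would lean on those references for the bookkeeping.
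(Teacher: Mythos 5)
Your route is genuinely different from the paper's, and as sketched it has a gap at the crucial point. The paper does not argue via maximality at all: it invokes Breuil's explicit parametrization of $\JH(D_1(\brho))$ by the set $\mathscr{PD}(x_0,\cdots,x_{f-1})$ (\cite[Prop. 4.2]{Br14}) together with the formula $J^{\rm max}_{\lambda}$ of \cite[Prop. 4.4]{Br14}, which computes $J(\tau)$ directly from the tuple $\lambda$ attached to $\psi=\chi^s$. The whole lemma then reduces to comparing $\lambda$ and $\lambda'$ coordinate by coordinate: they agree away from $j$, and at the $j$-th coordinate the defining constraints of $\mathscr{PD}$ leave only $(\lambda_j,\lambda'_j)\in\{(x_j,x_j+2),(p-3-x_j,p-1-x_j)\}$, from which both $j-1\notin J^{\rm max}_{\lambda}$ and $J^{\rm max}_{\lambda'}=J^{\rm max}_{\lambda}\cup\{j-1\}$ are read off.

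The gap in your plan is step (a). Lemma \ref{lemma-tau-maximal} says $J(\sigma)\subseteq J(\tau)$ for every $\sigma\in\mathscr{D}(\brho)\cap\JH(\Ind_I^K\chi)$; this bounds $J(\tau)$ from \emph{below} and can never, by itself, exclude an element from $J(\tau)$. The exclusion $j-1\notin J(\tau)$ is forced precisely by the hypothesis that $\chi'=\chi\alpha_j^{-1}$ \emph{also} lies in $\JH(D_1(\brho))$ — i.e. that $\lambda'$ also satisfies the $\mathscr{PD}$-constraints — and your sketch of (a) never uses this hypothesis; the phrase about ``adding $j-1$ to $J(\tau)$'' producing a weight of smaller $\ell(\brho,-)$ does not yield a contradiction to anything. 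A second, smaller issue: even granting (a) and (b), your step (c) only gives $J(\tau')\supseteq J(\tau)\cup\{j-1\}$, since maximality says the true $\tau'$ dominates the weight you construct; you would still need a reverse argument (e.g. running maximality for $\chi$ against Lemma \ref{lemma-Esigma'-occur-minus}) to get equality. Both difficulties disappear if you work directly with $\mathscr{PD}$ and $J^{\rm max}_{\lambda}$ as the paper does. Your treatment of the final assertion (Lemmas \ref{lemma-Esigma'-occur-plus}, \ref{lemma-Esigma'-occur-minus} plus Lemma \ref{lemma-Hu10-2.21}) does match the paper.
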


\begin{proof}
First assume $\brho$ is reducible. Following  \cite[\S4]{Br14}, we define  $\mathscr{PD}(x_0,\cdots,x_{f-1})$ to be the set of $f$-tuples $\lambda=(\lambda_i(x_i))_{i\in\cS}$ such  that
\begin{enumerate}
\item[$\bullet$]$\lambda_i(x_i)\in\{x_i,x_i+1,x_i+2,p-3-x_i,p-2-x_i,p-1-x_i\}$, 
\item[$\bullet$] if $\lambda_i(x_i)\in\{x_i,x_i+1,x_i+2\}$, then $\lambda_{i+1}(x_i+1)\in\{x_{i+1},x_{i+1}+2,p-2-x_{i+1}\}$, 
\item[$\bullet$] if $\lambda_i(x_i)\in\{p-1-x_i,p-2-x_i,p-3-x_i\}$, then $\lambda_{i+1}(x_{i+1})\in\{p-1-x_{i+1},p-3-x_{i+1},x_{i+1}+1\}$, 
\item[$\bullet$]  $\lambda_i(x_i)\in \{p-3-x_i,x_i+2\}$ implies $i\in J_{\brho}$.
\end{enumerate}
By  \cite[Prop.~4.2]{Br14},    the set $\JH(D_1(\brho))$ consists of  the characters of $I$ acting on $\sigma^{I_1}$, where $\sigma$ runs over the set of Serre weights associated to $\lambda\in\mathscr{PD}(x_0,\cdots,x_{f-1})$  as in \eqref{eq:lambda-rho}.

 Given $\lambda\in\mathscr{PD}(x_0,\cdots,x_{f-1})$, we define
\[J_{\lambda}^{\rm max}=\delta\big(\{i\in\cS:\ \lambda_i(x_i)\notin\{p-3-x_i,x_i\}\ \mathrm{and}\ (i\in J_{\brho}\ \mathrm{if}\ \lambda_j(x_i)=p-2-x_i)\}\big)\]
where $\delta$ is the shift on $J$: $i-1\in \delta(J)$ if and only if $i\in J$. By \cite[Prop.~4.4]{Br14}, if $\psi^s\in D_{1,\tau}(\brho)$ for $\tau\in\mathscr{D}(\brho)$ then, when viewed as a subquotient of  $\Ind_I^K(\psi^s)$, $\tau$ is parametrized by $J^{\rm max}_{\lambda}$.   Since our setting differs from that of \cite{Br14} by a conjugation, we make a change of variables, by setting $\psi\defn\chi^s$ and $\psi'\defn\chi'^s$. Let $\lambda$, $\lambda'\in\mathscr{PD}(x_0,\cdots,x_{f-1})$ be the  elements corresponding to $\psi$, $\psi'$ respectively.

(i) The assumption $\chi'=\chi\alpha_j^{-1}$ translates to  $\psi'=\psi\alpha_{j}$. We have $\lambda_i(x_i)=\lambda_i'(x_i)$ if $i\neq j$, and two possibilities if $i=j$:
\[\left\{\begin{array}{ll}\lambda_j(x_j)=x_j\\
\lambda_j'(x_j)=x_j+2\end{array}\right.\ \ \ \mathrm{or}\ \ \ \left\{\begin{array}{ll}\lambda_j(x_j)=p-3-x_j\\
\lambda_j'(x_j)=p-1-x_j.\end{array}\right.\]
One checks that $j-1\notin J_{\lambda}^{\rm max}$ and $J^{\rm max}_{\lambda'}=J^{\rm max}_{\lambda}\cup\{j-1\}$, as desired.

(ii) The assumption $\chi'=\chi\alpha_j$ translates to  $\psi'=\psi\alpha_{j}^{-1}$. We have $\lambda_i(x_i)=\lambda_i'(x_i)$ if $i\neq j$, and two possibilities if $i= j$:
 \[\left\{\begin{array}{ll}\lambda_j(x_j)=x_j+2\\
\lambda_j'(x_j)=x_j\end{array}\right.\ \ \ \mathrm{or}\ \ \ \left\{\begin{array}{ll}\lambda_j(x_j)=p-1-x_j\\
\lambda_j'(x_j)=p-3-x_j.\end{array}\right.\]
One checks that $j-1\notin J_{\lambda'}^{\rm max}$ and $J^{\rm max}_{\lambda}=J^{\rm max}_{\lambda'}\cup\{j-1\}$, as desired.

The case $\brho$ is irreducible can be treated in a similar way, using the set $\mathscr{PID}(x_0,\cdots,x_{f-1})$ in place of $\mathscr{PD}(x_0,\cdots,x_{f-1})$.  Finally, the last assertion  follows from Lemmas \ref{lemma-Esigma'-occur-plus} and \ref{lemma-Esigma'-occur-minus}, together with Lemma \ref{lemma-Hu10-2.21}.
\end{proof}

\begin{remark} \label{remark-onlyone-chi'}
It follows from the conclusion of Lemma \ref{lemma-PD-set} that if $\chi\in \JH(D_1(\brho))$, then at most one of $\{\chi\alpha_{j}^{\pm 1}: j\in\cS\}$ can also occur in $D_1(\brho)$. Of course, this can also be deduced from the property of $\mathscr{PD}(x_0,\cdots,x_{f-1})$.
\end{remark}

\begin{corollary}\label{cor-coker-no-serreweight}
Let $\chi\in \JH(D_1(\brho))$ and  $\tau\in\mathscr{D}(\brho)$ be the Serre weight such that $\chi$ occurs in $D_{1,\tau}(\brho)$. Let
\[\phi_{\tau}:\Proj_{\tGamma}\tau\ra \Ind_I^K\overline{W}_{\chi,3},\]
be as in Proposition \ref{prop-coker-no-sigma}.
Then $\JH(\Coker(\phi_{\tau}))\cap \mathscr{D}(\brho)=\emptyset$.\end{corollary}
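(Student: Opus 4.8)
The plan is to reduce the statement to an inclusion of subrepresentations of $\Ind_I^K\overline{W}_{\chi,3}$. Since $\Coker(\phi_\tau)$ is a subquotient of $\Ind_I^K\overline{W}_{\chi,3}$, it suffices to show $[\Coker(\phi_\tau):\sigma]=0$ for every $\sigma\in\mathscr{D}(\brho)$ which occurs in $\JH(\Ind_I^K\overline{W}_{\chi,3})=\JH(\Ind_I^KW_{\chi,2})$ (up to multiplicity). For $\sigma=\tau$ this is exactly Proposition \ref{prop-coker-no-sigma}(i), so I would assume $\sigma\neq\tau$. For such $\sigma$ I would choose, again by Proposition \ref{prop-coker-no-sigma}(i), a morphism $\phi_\sigma\colon\Proj_{\tGamma}\sigma\to\Ind_I^K\overline{W}_{\chi,3}$ with $[\Coker(\phi_\sigma):\sigma]=0$, and aim to prove $\im(\phi_\sigma)\subseteq\im(\phi_\tau)$. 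Granting this, $\Coker(\phi_\tau)$ is a quotient of $\Coker(\phi_\sigma)$, hence $[\Coker(\phi_\tau):\sigma]\leq[\Coker(\phi_\sigma):\sigma]=0$, which finishes the proof.

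The inclusion $\im(\phi_\sigma)\subseteq\im(\phi_\tau)$ would be obtained from Corollary \ref{cor-imageoffi-inclusion}, according to where $\sigma$ lies among the Jordan-H\"older factors of $\Ind_I^KW_{\chi,2}$, which come either from $\Ind_I^K\chi$ or from $\Ind_I^K\chi'$ with $\chi'\in\mathscr{E}(\chi)$. If $\sigma\in\JH(\Ind_I^K\chi)$: since $\chi$ occurs in $D_{1,\tau}(\brho)$, the weight $\tau$ is also a Jordan-H\"older factor of $\Ind_I^K\chi$ (this is built into the construction of $D_0(\brho)$ recalled in \S\ref{section-BP}, following \cite{Br14}), so Lemma \ref{lemma-tau-maximal} applies and gives $J(\sigma)\subseteq J(\tau)$ with both viewed as subquotients of $\Ind_I^K\chi$; Corollary \ref{cor-imageoffi-inclusion}(a) then applies. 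If $\sigma\notin\JH(\Ind_I^K\chi)$, so that $\sigma\in\JH(\Ind_I^K\chi')$ for some $\chi'\in\mathscr{E}(\chi)$: I would first show that necessarily $\chi'=\chi_\sigma$, so that $\chi_\sigma$ lies in $\mathscr{E}(\chi)$ and, being the $I_1$-character on the socle of $D_{0,\sigma}(\brho)$, also in $\JH(D_1(\brho))$ with $\chi_\sigma\in D_{1,\sigma}(\brho)$. Then $\Ext^1_{I/Z_1}(\chi,\chi_\sigma)\neq0$ and Lemma \ref{lemma-PD-set} gives $\Ext^1_{K/Z_1}(\tau,\sigma)\neq0$, equivalently $\Ext^1_{\tGamma}(\sigma,\tau)\neq0$ by Lemma \ref{lemma-Hu10-2.21}; Corollary \ref{cor-imageoffi-inclusion}(c) then gives the inclusion. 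This covers all cases.

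The hard part is the combinatorial claim used in the second case: a Serre weight $\sigma\in\mathscr{D}(\brho)$ which is a Jordan-H\"older factor of some $\Ind_I^K\chi'$ with $\chi'\in\mathscr{E}(\chi)$ but not of $\Ind_I^K\chi$ must be the cosocle $\sigma_{\chi'}$ of $\Ind_I^K\chi'$ (equivalently $\chi_\sigma=\chi'$), and this $\chi'$ then lies in $\JH(D_1(\brho))$. I would prove this in the style of \S\ref{section-BP}: use the parametrization by subsets of $\cS$ of the Jordan-H\"older factors of $\Ind_I^K\chi$ and $\Ind_I^K\chi'$ from \S\ref{subsection:PS}, together with the description of $\mathscr{D}(\brho)$ via $\mathscr{RD}(x_0,\cdots,x_{f-1})$ and of the position of each weight of $\mathscr{D}(\brho)$ inside the relevant induced representation in \cite[\S4]{Br14}, to see that an element of $\mathscr{D}(\brho)$ sitting strictly below the cosocle of $\Ind_I^K\chi'$ already appears in $\Ind_I^K\chi$; and for the membership $\chi'\in\JH(D_1(\brho))$, use the explicit sets $\mathscr{PD}(x_0,\cdots,x_{f-1})$ (resp. $\mathscr{PID}(x_0,\cdots,x_{f-1})$ when $\brho$ is irreducible) and the shift $J\mapsto J^{\mathrm{max}}$ exactly as in the proof of Lemma \ref{lemma-PD-set}. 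Once this combinatorial point is settled, the remainder of the argument is a formal consequence of Corollary \ref{cor-imageoffi-inclusion}, Lemma \ref{lemma-tau-maximal}, Lemma \ref{lemma-PD-set} and Proposition \ref{prop-coker-no-sigma}.
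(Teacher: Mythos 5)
Your overall skeleton (reduce to $\im(\phi_\sigma)\subseteq\im(\phi_\tau)$, split according to whether $\sigma$ comes from $\Ind_I^K\chi$ or from $\Ind_I^K\chi'$ with $\chi'\in\mathscr{E}(\chi)$, and invoke Corollary \ref{cor-imageoffi-inclusion}) is exactly the paper's, and your first case is correct and identical to the paper's treatment. The gap is in your second case: the combinatorial claim that a weight $\sigma\in\mathscr{D}(\brho)\cap\JH(\Ind_I^K\chi')$ not occurring in $\Ind_I^K\chi$ must be the cosocle $\sigma_{\chi'}$ is false, and the conclusion you extract from it fails in general. First, even when $\mathscr{D}(\brho)\cap\JH(\Ind_I^K\chi')$ is a single weight $\tau'$, that weight is the one parametrized by $J^{\rm max}_{\lambda'}$ in the sense of \cite[Prop. 4.4]{Br14}, which is usually a proper subset of $\cS$; so $\tau'\neq\sigma_{\chi'}$ and $\chi'\neq\chi_{\tau'}$ (e.g. $f=2$, $\lambda=(x_0,x_1)$, $\lambda'=(x_0+2,x_1)$ with $0\in J_{\brho}$ gives $J^{\rm max}_{\lambda'}=\{1\}\neq\cS$). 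In that situation your argument survives only by accident, because $\sigma=\tau'$ is still the weight with $\chi'\in\JH(D_{1,\tau'}(\brho))$ and Lemma \ref{lemma-PD-set} still applies. Second, and fatally, for $f\geq 3$ the intersection $\mathscr{D}(\brho)\cap\JH(\Ind_I^K\chi')$ can have two or more elements: by Lemma \ref{lemma:BP-14.1} its cardinality is $n_{\chi'}$, and one can have $\chi,\chi'\in\JH(D_1(\brho))$ adjacent with $\chi'\in\mathscr{PD}^{\ddag}$ (take $f=3$, $0\in J_{\brho}$, $\lambda=(x_0,p-2-x_1,x_2+1)$ and $\lambda'=(x_0+2,p-2-x_1,x_2+1)$, both in $\mathscr{PD}(x_0,\cdots,x_{f-1})$, so $n_{\chi'}\geq 2$). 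By Lemma \ref{lemma-tau-maximal} all such $\sigma$ satisfy $J(\sigma)\subseteq J(\tau')$, so at least one of them has $J(\sigma)\subsetneq J(\tau')$; for that $\sigma$ one checks via Lemma \ref{lemma-Esigma'-occur-plus} that $\Ext^1_{\tGamma}(\sigma,\tau)=0$ (only the weight with $J=J(\tau)\cup\{j-1\}$, namely $\tau'$ itself, extends $\tau$), so Corollary \ref{cor-imageoffi-inclusion}(c) cannot be applied to the pair $(\sigma,\tau)$.

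The repair is a two-step comparison through the intermediate weight $\tau'$ defined by $\chi'\in\JH(D_{1,\tau'}(\brho))$ (note that $\chi'\in\JH(D_1(\brho))$ does hold, but for the reason that $[\Ind_I^K\chi':\sigma]\neq 0$ with $\sigma\in\mathscr{D}(\brho)$ forces $n_{\chi'}\neq0$, not because $\chi'=\chi_\sigma$): Lemma \ref{lemma-tau-maximal} gives $J(\sigma)\subseteq J(\tau')$ inside $\Ind_I^K\chi'$, whence $\im(\phi_\sigma)\subseteq\im(\phi_{\tau'})$ by Corollary \ref{cor-imageoffi-inclusion}(b); then Lemma \ref{lemma-PD-set} gives $\Ext^1_{K/Z_1}(\tau',\tau)\neq0$, whence $\im(\phi_{\tau'})\subseteq\im(\phi_{\tau})$ by Corollary \ref{cor-imageoffi-inclusion}(c). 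This is what the paper does.
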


\begin{proof}
Let $\sigma\in\mathscr{D}(\brho)$. We need to show that $[\Coker(\phi_{\tau}):\sigma]=0$. Clearly we may assume $\sigma\in\JH(\Ind_I^K\overline{W}_{\chi,3})$. Letting $\phi_{\sigma}:\Proj_{\tGamma}\sigma\ra\Ind_I^K\overline{W}_{\chi,3}$ be a morphism as in Proposition \ref{prop-coker-no-sigma}(i), it is equivalent to show $\im(\phi_{\sigma})\subset \im(\phi_{\tau})$.   We have two possibilities: $\sigma$ is a subquotient of $\Ind_I^K\chi$, or of $\Ind_I^K\chi'$ for some $\chi'\in\mathscr{E}(\chi)$.

If $\sigma\in\JH(\Ind_I^K\chi)$,  Lemma \ref{lemma-tau-maximal} implies that  $J(\sigma)\subset J(\tau)$ if we view both $\sigma,\tau$ as subquotients of $\Ind_I^K\chi$, and we conclude by  Corollary \ref{cor-imageoffi-inclusion}(a).

If $\sigma\in\JH(\Ind_I^K\chi')$ for some $\chi'\in\mathscr{E}(\chi)$, then  $\chi'\in \JH(D_1(\brho))$; let $\tau'\in \mathscr{D}(\brho)$ be the unique Serre weight such that $\chi'$ occurs in $D_{1,\tau'}(\brho)$.  As above,   Lemma \ref{lemma-tau-maximal}  implies that $J(\sigma)\subset J(\tau')$ if we view $\sigma$, $\tau'$ as subquotients of $\Ind_I^K\chi'$, hence $\im(\phi_{\sigma})\subset \im(\phi_{\tau'})$ by Corollary \ref{cor-imageoffi-inclusion}(b). On the other hand,  we have $\Ext^1_{K/Z_1}(\tau',\tau)\neq0$  by Lemma \ref{lemma-PD-set}, hence $\im(\phi_{\tau'})\subset\im(\phi_{\tau})$ by  Corollary \ref{cor-imageoffi-inclusion}(c). This finishes the proof.
\end{proof}

\subsection{Multiplicity one}
Keep the notation of last subsections and assume $\brho$ is strongly generic.  Let $\pi$ be an admissible smooth $G$-representation  over $\F$ (with a central character) satisfying the following condition:
\begin{enumerate}
\item[(a)] $\pi^{K_1}\cong D_0(\brho)$, in particular $\soc_K\pi\cong\oplus_{\sigma\in\mathscr{D}(\brho)}\sigma$. 
\end{enumerate}

The aim of this subsection is to prove a   criterion for $\pi[\fm_{K_1}^2]$ to be multiplicity free, see Theorem \ref{thm-criterion} below. By Theorem \ref{thm-tD=multione}, this amounts to proving  that for \emph{any} $\sigma\in\mathscr{D}(\brho)$, \begin{equation}\label{eq:cond-dim=1}\dim_{\F} \Hom_{\tGamma}(\Proj_{\tGamma}\sigma,\pi)=1.\end{equation}
The main point of this criterion is that, when $\brho$ is \emph{indecomposable}, we only need to check \eqref{eq:cond-dim=1} for \emph{some} $\sigma\in\mathscr{D}(\brho)$. Correspondingly, for our application in \S\ref{section-patching} where $\brho$ will be  reducible nonsplit, the computation of various deformation rings  \emph{exactly} allows us to check this condition for  one special Serre weight in $\mathscr{D}(\brho)$, namely  the ``ordinary'' Serre weight denoted by $\sigma_{\emptyset}$ there.
Another crucial point of the criterion is that we deduce at the same time
\[\dim_{\F}\Hom_I(W_{\chi,3},\pi)=1,\ \ \forall \chi\in\JH(\pi^{I_1}),\]
which allows us to apply a result proved in \cite[\S5]{BHHMS},  to control the Gelfand-Kirillov dimension of $\pi$, see Theorem \ref{thm:main-flat} below.

\vspace{2mm}

 Recall from \S\ref{section:Rep-II} for the representations $W_{\chi,n}$ for $n\geq 1$.
\begin{proposition}\label{prop-W2topi=dim1}
For any $\chi\in\JH(\pi^{I_1})$, the natural morphism
\[\Hom_{I}(\chi,\pi)\ra \Hom_I(W_{\chi,2},\pi)\]
is an isomorphism.  
\end{proposition}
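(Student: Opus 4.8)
The statement to be proved is that for $\chi\in\JH(\pi^{I_1})$, the natural inclusion $\Hom_I(\chi,\pi)\hookrightarrow\Hom_I(W_{\chi,2},\pi)$ is an isomorphism, where $W_{\chi,2}=\Proj_{I/Z_1}\chi/\fm^2$ sits in the short exact sequence $0\to\bigoplus_{i\in\cS}\chi\alpha_i^{\pm1}\to W_{\chi,2}\to\chi\to0$. Applying $\Hom_I(-,\pi)$ to this sequence, the desired statement is equivalent to the surjectivity of $\Hom_I(\chi,\pi)\to\Hom_I(\chi,\pi)$ being \emph{all} there is, i.e. to showing that no nonzero map $W_{\chi,2}\to\pi$ kills the submodule $\bigoplus_i\chi\alpha_i^{\pm1}$ nontrivially; more precisely one must show that the connecting/restriction map $\Hom_I\big(\bigoplus_i\chi\alpha_i^{\pm1},\pi\big)\to\Ext^1_{I/Z_1}(\chi,\pi)$ is injective, equivalently that $\Hom_I(W_{\chi,2},\pi)$ has dimension equal to $\dim\Hom_I(\chi,\pi)$. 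So the real content is: \emph{any $I$-equivariant map $W_{\chi,2}\to\pi$ factors through the quotient $W_{\chi,2}\twoheadrightarrow\chi$}, i.e. the ``new'' part $\bigoplus_i\chi\alpha_i^{\pm1}$ of the socle cannot be hit.

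First I would dualize and globalize the picture: by Frobenius reciprocity a map $W_{\chi,2}\to\pi|_I$ extends to $\Ind_I^K W_{\chi,2}\to\pi|_K$ (using that $\pi^{K_1}=D_0(\brho)$ is a $K$-representation and $W_{\chi,2}$ is already a $\tGamma=\F[\![K/Z_1]\!]/\fm_{K_1}^2$-module — note $W_{\chi,2}$ is annihilated by $\fm_{K_1}$ since its Loewy length is $2$ and the $\alpha_i$-extensions with $\F[K_1]$ acting trivially are exactly the $\chi\alpha_i$, while the $\chi\alpha_i^{-1}$ pieces... one must be slightly careful here, cf. \cite[Lem. 2.4(ii)]{Hu10}). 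The cleaner route is to stay with $I$ and use the structure of $\pi$ near its socle. Since $\soc_K\pi\cong\bigoplus_{\sigma\in\mathscr{D}(\brho)}\sigma$ and, by the theory of \S\ref{section-finiteRep}, $\pi[\fm_{K_1}]\subseteq D_0(\brho)$ is multiplicity free with $\JH$ inside $\bigcup_{\tau\in\mathscr{D}(\brho)}\JH(\rInj_\Gamma\tau)$, a nonzero map $W_{\chi,2}\to\pi$ not factoring through $\chi$ would produce an $I$-subrepresentation of $\pi$ of the form $E_{\chi',\chi}$ (an extension of $\chi$ by some $\chi'=\chi\alpha_i^{\pm1}$) lying inside $\pi$. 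The plan is to show this is impossible by tracking Jordan--H\"older factors of $\pi^{I_1}=D_1(\brho)$ against the combinatorics of Lemma~\ref{lemma-PD-set} and Remark~\ref{remark-onlyone-chi'}.

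Concretely: suppose $g\colon W_{\chi,2}\to\pi$ is nonzero with $g|_{\chi\alpha_{i_0}^{\pm1}}\neq0$ for some $i_0$. Then $\chi\alpha_{i_0}^{\pm1}$ occurs in $\soc_I\pi$, hence lies in $D_1(\brho)=D_0(\brho)^{I_1}$, and is linked to $\chi$ by a nonsplit $I$-extension inside $\pi$. Let $\tau\in\mathscr{D}(\brho)$ (resp. $\tau'$) be the weight with $\chi\in\JH(D_{1,\tau}(\brho))$ (resp. $\chi\alpha_{i_0}^{\pm1}\in\JH(D_{1,\tau'}(\brho))$). Lemma~\ref{lemma-PD-set} forces $\tau\neq\tau'$ and $\Ext^1_{K/Z_1}(\tau,\tau')\cong\Ext^1_\Gamma(\tau,\tau')\neq0$, with a definite relation between $J(\tau),J(\tau')$. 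But $D_0(\brho)=\bigoplus_{\sigma\in\mathscr{D}(\brho)}D_{0,\sigma}(\brho)$ is a \emph{direct sum} of the $D_{0,\sigma}$, and the extension $E_{\chi',\chi}\subseteq\pi$ lives inside $\pi[\fm_{K_1}^2]$, hence maps to $\widetilde D_0(\brho)$ via Corollary~\ref{cor::m^2-torsion}-type reasoning — or more elementarily, since $\Hom_I(W_{\chi,2},\pi)=\Hom_I(W_{\chi,2},\pi^{K_1'})$ for an appropriate $K_1'$ — and then Corollary~\ref{cor-coker-no-serreweight} (applied to $\Ind_I^K\overline W_{\chi,3}$ and its map $\phi_\tau$) shows the relevant $I$-socle-extensions of $\chi$ inside an induced representation built from $D_1(\brho)$ cannot survive: the composite morphism forces a weight of $\mathscr{D}(\brho)$ into $\Coker(\phi_\tau)$, contradicting that corollary. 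The main obstacle — and the step I would spend the most care on — is the bookkeeping at the boundary: translating ``$g|_{\chi\alpha_{i_0}^{\pm1}}\neq0$'' into a concrete $K$-subquotient and verifying it contradicts either the multiplicity-freeness of $\widetilde D_{0,\sigma}(\brho)$ (Theorem~\ref{thm-tD=multione}) or the $\Coker$-vanishing of Corollary~\ref{cor-coker-no-serreweight}; in particular one must rule out the $\chi\alpha_i^{-1}$ direction separately from the $\chi\alpha_i$ direction, since only the latter corresponds to a $K_1$-trivial extension and the former requires genuinely using that $\pi^{K_1}=D_0(\brho)$ contains no weight of the form $\delta_i^{\pm}(\sigma)$ for $\sigma\in\mathscr{D}(\brho)$ — which is exactly where Lemmas~\ref{lemma-Delta-nocommonfactor} and \ref{lemma-unique-delta} and the strong genericity of $\brho$ enter.
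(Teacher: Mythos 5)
Your setup coincides with the paper's: assume $h\colon W_{\chi,2}\to\pi|_I$ is nonzero on some $\chi'=\chi\alpha_{i_0}^{\pm1}$ in the socle, use Remark~\ref{remark-onlyone-chi'} to see that $h$ factors through $E_{\chi',\chi}$, pass to $\widetilde{h}\colon\Ind_I^KW_{\chi,2}\to\pi|_K$ by Frobenius reciprocity, and play the combinatorics of Lemma~\ref{lemma-PD-set} against the hypothesis $\pi^{K_1}\cong D_0(\brho)$. But the decisive step — actually extracting the contradiction — is precisely the part you leave open, and neither of the tools you gesture at can close it. Invoking Corollary~\ref{cor::m^2-torsion} (equivalently, the containment $\pi[\fm_{K_1}^2]\subseteq\widetilde{D}_0(\brho)$) is circular: that statement is deduced from Corollary~\ref{cor:multione-Iwahori}, whose proof runs through Theorem~\ref{thm-criterion} and hence through Proposition~\ref{prop-W2topi=dim1} itself. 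And Corollary~\ref{cor-coker-no-serreweight} concerns $\Coker(\phi_\tau)$ for a map into $\Ind_I^K\overline{W}_{\chi,3}$; nothing in your argument produces a weight of $\mathscr{D}(\brho)$ in that cokernel, so there is no contradiction to be had there. Note also that at this point only hypothesis (a) is in force — condition (b) on $\Ext^1_{K/Z_1}(\sigma,\pi)$ is introduced only afterwards — so the argument must run on (a) alone.

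The missing idea is Corollary~\ref{coro:Q-killedbyJ}. Let $\tau,\tau'\in\mathscr{D}(\brho)$ be the weights with $\chi\in\JH(D_{1,\tau}(\brho))$ and $\chi'\in\JH(D_{1,\tau'}(\brho))$. The image of $\Ind_I^K\chi'$ under $\widetilde{h}$ is $I(\tau',\sigma_{\chi'})$, so $\tau'$ embeds in $\im(\widetilde{h})$, and Lemmas~\ref{lemma-PD-set}, \ref{lemma-Esigma'-occur-plus} and~\ref{lemma-Esigma'-occur-minus} show that $E_{\tau',\tau}$ occurs in $\im(\widetilde{h})$. Since $\im(\widetilde{h})$ is killed by $\fm_{K_1}^2$, there is a nonzero map $\Proj_{\tGamma}\tau\to\im(\widetilde{h})$ whose image $Q$ admits $E_{\tau',\tau}$ as a quotient; by the multiplicity-freeness of $\Ind_I^KW_{\chi,2}$ (Lemma~\ref{lemma:IndW2-multione}) one has $[Q:\tau]=1$, and condition (b) of Corollary~\ref{coro:Q-killedbyJ} holds by \cite[Prop.~2.24]{HW}, so $Q$ is annihilated by $\fm_{K_1}$ and therefore lands in $\pi^{K_1}=D_0(\brho)$. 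This places $\tau$ strictly above $\tau'$ inside $D_0(\brho)$, contradicting that $D_0(\brho)$ is multiplicity free with $\tau$ in its socle. Without this (or an equivalent) closing step, the proof is incomplete.
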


\begin{proof}
Let $h:W_{\chi,2}\ra \pi|_I$ be a nonzero  morphism and
 \[\widetilde{h}:\ \Ind_I^KW_{\chi,2}\ra \pi|_K\]
 be the induced morphism by Frobenius reciprocity. Assume that $h$ is nonzero when restricted to $\rsoc_I(W_{\chi,2})$, say $h|_{\chi'}\neq0$ for some $\chi'\hookrightarrow W_{\chi,2}$. Remark \ref{remark-onlyone-chi'} implies that $h$ must factor through
 \[W_{\chi,2}\twoheadrightarrow E_{\chi',\chi}\hookrightarrow \pi.\]
The image of $\Ind_I^K\chi'$ under $\widetilde{h}$ has the form $I(\tau',\sigma_{\chi'})$, where $\tau'$ is the unique Serre weight in $\mathscr{D}(\brho)$ such that $\chi'$ occurs in $D_{1,\tau'}(\brho)$ and $\sigma_{\chi'}$ denotes the cosocle of $\Ind_I^K\chi'$. In particular, $\tau'$ embeds in $\im(\widetilde{h})$. On the other hand,  if $\tau\in\mathscr{D}(\brho)$ denotes the Serre  weight such that $\chi$ occurs in $D_{1,\tau}(\brho)$,  then Lemma \ref{lemma-PD-set} implies that $\Ext^1_{\Gamma}(\tau',\tau)\neq0$, and so  the extension   $E_{\tau',\tau}$ is a subquotient of $\im(\widetilde{h})$ by Lemma \ref{lemma-Esigma'-occur-plus} and Lemma \ref{lemma-Esigma'-occur-minus}.

Since $W_{\chi,2}$ is annihilated by $\fm_{K_1}^2$, so are $\Ind_I^KW_{\chi,2}$ and $\im(\widetilde{h})$. Hence, there exists a nonzero morphism $\Proj_{\tGamma}\tau\ra \im(\widetilde{h})$
whose image we denote by $Q$. By the above discussion  $E_{\tau',\tau}$ occurs in $Q$  (actually as a quotient).
 We know  that $Q$ is annihilated by $\fm_{K_1}$ by Corollary \ref{coro:Q-killedbyJ} (condition (b) is satisfied by \cite[Prop.~2.24]{HW}), hence is contained in   $\pi^{K_1}=D_0(\brho)$ by (a). This gives a contradiction because $\tau$ only occurs in the socle of $D_0(\brho)$.
\end{proof}

\begin{corollary}\label{cor:no-embedding}
Let $\chi,\chi'\in\JH(\pi^{I_1})$ and assume $\Ext^1_{I/Z_1}(\chi,\chi')\neq0$. Then there exists no $I$-equivariant embedding $E_{\chi,\chi'}\hookrightarrow \pi|_I$.
\end{corollary}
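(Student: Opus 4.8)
\textbf{Proof proposal for Corollary \ref{cor:no-embedding}.}

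The plan is to argue by contradiction, reducing the claim to Proposition \ref{prop-W2topi=dim1}. Suppose there is an $I$-equivariant embedding $\iota: E_{\chi,\chi'}\hookrightarrow \pi|_I$. By Lemma \ref{lemma:Ext1-chi}, the hypothesis $\Ext^1_{I/Z_1}(\chi,\chi')\neq0$ forces $\chi'\in\{\chi\alpha_i^{\pm1}:i\in\cS\}$, so in particular $\chi'$ and $\chi$ differ by a character of the form $\alpha_i^{\pm1}$. Recall that $\chi$ (being the socle of $E_{\chi,\chi'}$) must lie in $\JH(\pi^{I_1})$, and by hypothesis so does $\chi'$.

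The key observation is that $W_{\chi',2}$, being $\Proj_{I/Z_1}\chi'/\fm^2$, surjects onto any two-step representation whose cosocle is $\chi'$ and whose radical is semisimple with constituents among $\mathscr{E}(\chi')$. Since $\chi\in\mathscr{E}(\chi')$ (the relation $\Ext^1_{I/Z_1}(\chi,\chi')\neq0$ is symmetric up to the $\alpha_i\mapsto\alpha_i^{-1}$ interchange, see Lemma \ref{lemma:Ext1-chi}), the extension $E_{\chi,\chi'}$ — which has socle $\chi$ and cosocle $\chi'$ — is a quotient of $W_{\chi',2}$. Hence $\iota$ lifts to a nonzero morphism $h: W_{\chi',2}\ra \pi|_I$ whose restriction to $\soc_I(W_{\chi',2})$ is nonzero (it hits the copy of $\chi$ inside $E_{\chi,\chi'}\subseteq\pi$). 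But the composite $\Hom_I(\chi',\pi)\ra\Hom_I(W_{\chi',2},\pi)$ is an isomorphism by Proposition \ref{prop-W2topi=dim1}, so every element of $\Hom_I(W_{\chi',2},\pi)$ factors through the quotient $W_{\chi',2}\twoheadrightarrow\chi'$, and in particular vanishes on $\soc_I(W_{\chi',2})$ unless $\chi'$ itself occurs there — which it does not, since $\soc_I(W_{\chi',2})\cong\bigoplus_{\chi''\in\mathscr{E}(\chi')}\chi''$ and $\chi'\notin\mathscr{E}(\chi')$ (as $\Ext^1_{I/Z_1}(\chi',\chi')=0$). This contradicts $h|_{\soc_I(W_{\chi',2})}\neq0$, and the corollary follows.

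The main obstacle, and the point to double-check carefully, is the compatibility of the ``$\mathscr{E}$''-relation with the direction of the extension: one must verify that $E_{\chi,\chi'}$ (socle $\chi$, cosocle $\chi'$) really is a quotient of $\Proj_{I/Z_1}\chi'/\fm^2$, i.e. that $\Ext^1_{I/Z_1}(\chi',\chi)\neq0$ whenever $\Ext^1_{I/Z_1}(\chi,\chi')\neq0$. This is immediate from Lemma \ref{lemma:Ext1-chi} together with the fact that $\chi'=\chi\alpha_i^{\pm1}$ is equivalent to $\chi=\chi'\alpha_i^{\mp1}$, so both $\Ext^1$-groups are one-dimensional simultaneously. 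Once this symmetry is in hand, the reduction to Proposition \ref{prop-W2topi=dim1} is routine.
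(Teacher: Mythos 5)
Your argument is correct and is exactly the reduction the paper intends: the paper's proof of Corollary \ref{cor:no-embedding} consists of the single sentence that it is a direct consequence of Proposition \ref{prop-W2topi=dim1}, and the mechanism is precisely the one you spell out (compose a hypothetical embedding $E_{\chi,\chi'}\hookrightarrow\pi$ with the surjection $W_{\chi',2}\twoheadrightarrow E_{\chi,\chi'}$ to get a map $W_{\chi',2}\to\pi$ that is nonzero on $\soc_I(W_{\chi',2})$, contradicting the fact that every such map factors through the cosocle $\chi'$). Your check that $E_{\chi,\chi'}$ is indeed a quotient of $W_{\chi',2}$, via the symmetry of the extension relation in Lemma \ref{lemma:Ext1-chi}, is the right point to verify and is handled correctly.
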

\begin{proof}
This is a direct consequence of Proposition \ref{prop-W2topi=dim1}.
\end{proof}

\begin{corollary}\label{cor-factor-barW}
Let $\chi\in \JH(\pi^{I_1})$. Any $I$-equivariant morphism $W_{\chi,3}\ra \pi|_I$ factors through $\overline{W}_{\chi,3}$.
\end{corollary}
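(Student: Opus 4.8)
Recall from \eqref{eq:bar-W} that $\overline{W}_{\chi,3}=W_{\chi,3}/X''$, where $X''$ is the direct sum of those characters in $\soc_I(W_{\chi,3})$ which are not isomorphic to $\chi$. Thus to prove the corollary it suffices to show that every $I$-equivariant morphism $g:W_{\chi,3}\ra \pi|_I$ vanishes on $X''$, or equivalently that $\Hom_I(\chi',\pi)\ra \Hom_I(W_{\chi,3},\pi)$ is zero for any character $\chi'\ncong \chi$ occurring in $\soc_I(W_{\chi,3})$; more precisely, that the image of any such $\chi'\hookrightarrow W_{\chi,3}$ under $g$ is zero. The plan is to argue by contradiction: suppose $g(X'')\neq 0$, and pick a character $\chi'\hookrightarrow X''\subset \soc_I(W_{\chi,3})$ with $\chi'\ncong\chi$ such that $g|_{\chi'}\neq 0$.

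The key structural input is the description of $\soc_I(W_{\chi,3})$ recalled before \eqref{eq:bar-W}: the characters $\chi'\ncong \chi$ appearing there are of the form $\chi\alpha_i\alpha_j$, $\chi\alpha_i^{-1}\alpha_j^{-1}$, or $\chi\alpha_i\alpha_j^{-1}$. The first step is to locate, inside $W_{\chi,3}$, a subrepresentation $E$ containing this copy of $\chi'$ such that $E$ is a nonsplit extension of some $\chi''\in\JH(\pi^{I_1})$ by $\chi'$ with $\Ext^1_{I/Z_1}(\chi'',\chi')\neq 0$; this uses the fact that $W_{\chi,3}=\Proj_{I/Z_1}\chi/\fm^3$ has Loewy length $3$, so any socle element that is not in the image of $\fm^2$-action... more carefully, one uses that $\chi'$ sits in degree $2$ of the $\fm$-adic filtration and hence is connected by a nonsplit extension to a character $\chi''$ sitting in degree $1$, i.e. $\chi''\in\mathscr{E}(\chi)$ with $\chi'\in\mathscr{E}(\chi'')$. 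Since $\chi\in\JH(\pi^{I_1})$ and all these characters occur as $I_1$-invariants of Serre weights in $D_0(\brho)$ (using condition (a)), one has $\chi'',\chi'\in\JH(\pi^{I_1})$ as well, so Corollary \ref{cor:no-embedding} applies.

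Now restrict $g$ to this subrepresentation $E\cong E_{\chi'',\chi'}$. If $g|_E$ is injective, we obtain an $I$-equivariant embedding $E_{\chi'',\chi'}\hookrightarrow \pi|_I$, contradicting Corollary \ref{cor:no-embedding}. If $g|_E$ is not injective, then since $\soc_I(E)=\chi'$ (the unique simple submodule of the length-$2$ module $E_{\chi'',\chi'}$) and $g|_{\chi'}\neq 0$ by assumption, the kernel of $g|_E$ must be a complement to $\chi'$ — but $E_{\chi'',\chi'}$ is nonsplit, so it has no such complement, a contradiction. Hence no such $\chi'$ exists, $g$ vanishes on $X''$, and $g$ factors through $\overline{W}_{\chi,3}$. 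The main point to verify carefully is the existence of the subrepresentation $E$ with the stated extension structure, i.e. that each non-$\chi$ constituent of $\soc_I(W_{\chi,3})$ is genuinely linked by a nonsplit $I$-extension to some $\chi''\in\mathscr{E}(\chi)\cap\JH(\pi^{I_1})$; this follows from the explicit structure of $W_{\chi,3}$ as a truncated projective module over $\F[\![I_1/Z_1]\!]$ together with Lemma \ref{lemma:Ext1-chi}, exactly as in the computation of $\soc_I(W_{\chi,3})$ cited from \cite[(43)]{BHHMS}.
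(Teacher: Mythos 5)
Your overall strategy---reduce the statement to the non-existence of certain length-two subrepresentations of $\pi$, i.e.\ to Proposition \ref{prop-W2topi=dim1} via Corollary \ref{cor:no-embedding}, by locating a subrepresentation $E\cong E_{\chi',\chi''}\subseteq W_{\chi,3}$ linking your socle character $\chi'$ to a degree-one character $\chi''\in\mathscr{E}(\chi)$---is essentially the paper's. The gap is in the sentence asserting that $\chi'',\chi'\in\JH(\pi^{I_1})$ ``since all these characters occur as $I_1$-invariants of Serre weights in $D_0(\brho)$''. That justification is false: $\JH(\pi^{I_1})=\JH(D_1(\brho))$ is the specific set parametrized by $\mathscr{PD}(x_0,\cdots,x_{f-1})$, and by Remark \ref{remark-onlyone-chi'} at most \emph{one} of the $2f$ characters $\chi\alpha_j^{\pm1}$ belongs to it. So an arbitrary intermediate character $\chi''\in\mathscr{E}(\chi)$ adjacent to your $\chi'$ will in general \emph{not} lie in $\JH(\pi^{I_1})$, and Corollary \ref{cor:no-embedding}---which requires \emph{both} constituents of the extension to occur in $\pi^{I_1}$---cannot be applied. (The correct reason that the socle character $\chi'$ does lie in $\JH(\pi^{I_1})$ is simply that $g|_{\chi'}\neq 0$ produces an embedding $\chi'\hookrightarrow\pi$; it is not the reason you give.)

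To close the gap you must choose $\chi''$ more carefully. Writing $\chi'=\chi\alpha_i^{\epsilon_i}\alpha_j^{\epsilon_j}$, there are (at most) two candidates $\chi\alpha_i^{\epsilon_i}$ and $\chi\alpha_j^{\epsilon_j}$ for the intermediate character, and since both $\chi$ and $\chi'$ lie in $\JH(D_1(\brho))$, the \emph{connectedness} of $D_1(\brho)$ (\cite[Def.~6.4.2, Lem.~6.4.3]{BHHMS}) guarantees that at least one of them also lies in $\JH(D_1(\brho))$; taking that one as $\chi''$, your contradiction via Corollary \ref{cor:no-embedding} (equivalently, the paper's: restrict $g$ to the copy of $W_{\chi'',2}$ inside $W_{\chi,3}$ and contradict Proposition \ref{prop-W2topi=dim1}) goes through. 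This connectedness is a genuine combinatorial input about $\mathscr{PD}(x_0,\cdots,x_{f-1})$---the paper even notes in a footnote that it has to be re-verified under the weaker genericity hypotheses used here---so it cannot be replaced by the blanket claim you made.
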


\begin{proof}
The proof is as in Step 1 of \cite[Prop.~6.4.6]{BHHMS}, using Proposition \ref{prop-W2topi=dim1} as a replacement of \cite[Lem.~6.4.4]{BHHMS}. We briefly recall the argument. Let $f:W_{\chi,3}\ra\pi$ be an $I$-equivariant morphism. By definition, $\overline{W}_{\chi,3}$ is the quotient of $W_{\chi,3}$ by the direct sum of $\chi''$ in $\soc_I(W_{\chi,3})$ which are distinct with $\chi$, and each of these characters occurs once in $\soc_I(W_{\chi,3})$.  Let $\chi''\neq \chi$ be such a character in $\soc_{I}(W_{\chi,3})$ such that $f|_{\chi''}$ is nonzero. Then by the connectedness of $D_1(\brho)$, see \cite[Def.~6.4.2, Lem.~6.4.3]{BHHMS},\footnote{In \cite[Lem.~6.4.3]{BHHMS}, the genericity assumption on $\brho$ is stronger than ours, but using the set $\mathscr{PD}(x_0,\cdots,x_{f-1})$ we may check  that $D_1(\brho)$  is still connected when $\brho$ is generic in the sense of \cite[Def.~11.7]{BP}.} we may find $\chi'\in \mathscr{E}(\chi)\cap \mathscr{E}(\chi'')$ with $\chi'\in\JH(\pi^{I_1})$. By \cite[Lem.~6.1.2]{BHHMS}, there exists an injection $W_{\chi',2}\hookrightarrow W_{\chi,3}$, hence $f$ restricts to a morphism $W_{\chi',2} \to \pi|_{I}$ which does not factor through the cosocle $\chi'$, because it is nonzero on $\chi''$ which embeds in $W_{\chi',2}$. This contradicts Proposition \ref{prop-W2topi=dim1}.
\end{proof}

Recall from Proposition \ref{prop-Theta} the representation $\Theta_{\tau}$ of $\tGamma$.

\begin{proposition}\label{prop-criteria-Theta=1}
Let $\tau\in\mathscr{D}(\brho)$.
 Then the following two conditions are equivalent:
\begin{enumerate}
\item[(i)] $\dim_{\F}\Hom_{K}(\Proj_{\tGamma}\tau,\pi)=1$;
\item[(ii)] $\dim_{\F}\Hom_{K}(\Theta_{\tau},\pi)=1$.
\end{enumerate}
\end{proposition}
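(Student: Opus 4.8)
The plan is to establish the equivalence by showing that both conditions are controlled by the single quantity $\dim_{\F}\Hom_K(\Theta_{\tau},\pi)$, using the structural relationship between $\Theta_{\tau}$ and $\Proj_{\tGamma}\tau$ together with condition (a) on $\pi$. The implication (i) $\Rightarrow$ (ii) is the easier direction: since $\Theta_{\tau}$ is by construction a quotient of $\im(\phi_{\tau})$, hence of $\Proj_{\tGamma}\tau$, any nonzero morphism $\Theta_{\tau}\ra\pi$ pulls back to a nonzero morphism $\Proj_{\tGamma}\tau\ra\pi$ (both have cosocle $\tau$, so nonzero morphisms are detected on cosocles), and this pullback map $\Hom_K(\Theta_{\tau},\pi)\hookrightarrow\Hom_K(\Proj_{\tGamma}\tau,\pi)$ is injective. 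Combined with the fact that $\tau\in\mathscr{D}(\brho)=\JH(\soc_K\pi)$ implies $\Hom_K(\Proj_{\tGamma}\tau,\pi)\neq0$ (and hence $\Hom_K(\Theta_\tau,\pi)\neq 0$ once we know $\Theta_\tau\twoheadleftarrow\Proj_{\tGamma}\tau$ factors appropriately — more care is needed here since $\Theta_\tau$ is only a subquotient, see below), the dimension $1$ on the left forces dimension $\geq 1$, hence exactly $1$, on the other side, once injectivity is supplemented by a surjectivity/factorization statement.

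The substance is in the reverse direction, (ii) $\Rightarrow$ (i), where one must show that \emph{every} morphism $\Proj_{\tGamma}\tau\ra\pi$ factors (up to the relevant identification) through $\Theta_{\tau}$. First I would set $\chi\defn\chi_\tau$, so that $\tau$ is a Jordan--Hölder factor of $\Ind_I^K\chi$ and $\Theta_\tau$ is constructed inside $\Ind_I^K\overline{W}_{\chi,3}$ as in Proposition \ref{prop-Theta}; note $\Theta_\tau$ is independent of this choice by Corollary \ref{cor:Theta-indep}. Given a nonzero $g:\Proj_{\tGamma}\tau\ra\pi$, restrict to $I$ and use Frobenius reciprocity combined with Corollary \ref{cor-factor-barW}: a morphism $\Proj_{\tGamma}\tau|_I\ra\pi|_I$ built from $g$ should factor through (a suitable quotient of) $\Ind_I^K\overline{W}_{\chi,3}$. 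Concretely, $\im(g)$ is a quotient of $\Proj_{\tGamma}\tau$ with irreducible cosocle $\tau$ and, by Corollary \ref{cor-coker-no-serreweight}, the image of the canonical map $\phi_\tau:\Proj_{\tGamma}\tau\ra\Ind_I^K\overline{W}_{\chi,3}$ surjects onto the $\tGamma$-representation that governs all possible images of $g$ — more precisely, one shows that the largest quotient of $\Proj_{\tGamma}\tau$ admitting a map to $\pi$ whose socle lies in $\soc_K\pi=\oplus_{\sigma\in\mathscr D(\brho)}\sigma$ is exactly $\Theta_\tau$, because $\Theta_\tau$ is by definition the quotient of $\im(\phi_\tau)$ by its largest subrepresentation not involving $\tau$, and Corollaries \ref{cor-coker-no-serreweight} and \ref{coro:Q-killedbyJ} control which Serre weights and extension classes can appear. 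Then $\dim_{\F}\Hom_K(\Theta_\tau,\pi)=1$ forces $\dim_{\F}\Hom_K(\Proj_{\tGamma}\tau,\pi)\leq 1$, and the reverse inequality is automatic since $\tau\hookrightarrow\soc_K\pi$.

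I expect the main obstacle to be the bookkeeping needed to show that $\Theta_\tau$ is precisely the "universal" recipient of maps $\Proj_{\tGamma}\tau\ra\pi$, i.e. that no morphism can have image strictly larger than a quotient of $\Theta_\tau$. This requires combining several earlier results: the annihilation of $\overline{W}_{\chi,3}$ by $\fm_{K_1}^2$ (so that everything lives over $\tGamma$), Corollary \ref{cor-coker-no-serreweight} to rule out extra Serre weights of $\mathscr D(\brho)$ appearing in cokernels, Proposition \ref{prop-W2topi=dim1} and Corollary \ref{cor:no-embedding} to prevent "doubling" of characters in $\pi^{I_1}$, and the explicit socle/radical structure of $\Theta_\tau$ from Proposition \ref{prop-Theta} and Corollary \ref{cor:Theta-tau-structure}. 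A delicate point is that $\Theta_\tau$ is a \emph{subquotient}, not a quotient, of $\Proj_{\tGamma}\tau$; one must argue that the composite $\Proj_{\tGamma}\tau\twoheadrightarrow\im(\phi_\tau)\twoheadrightarrow\Theta_\tau$ realizes $\Theta_\tau$ as a genuine quotient of $\Proj_{\tGamma}\tau$ (which it is, since $\im(\phi_\tau)$ is a quotient of $\Proj_{\tGamma}\tau$), so that the identification $\Hom_K(\Theta_\tau,\pi)\subseteq\Hom_K(\Proj_{\tGamma}\tau,\pi)$ makes sense and is compatible with the factorization claim. Once this structural lemma is in place the equivalence is formal.
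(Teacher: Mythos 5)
Your direction (i) $\Rightarrow$ (ii) is fine once you note, as you eventually do, that $\Theta_\tau$ genuinely is a quotient of $\Proj_{\tGamma}\tau$ (being a quotient of $\im(\phi_\tau)$), so that $\Hom_K(\Theta_\tau,\pi)$ injects into $\Hom_K(\Proj_{\tGamma}\tau,\pi)$ and is nonzero because $\tau\hookrightarrow\pi$; this is exactly how the paper disposes of that direction, and your hedging about $\Theta_\tau$ being "only a subquotient" is a non-issue.

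The gap is in (ii) $\Rightarrow$ (i). Your plan is to show that every morphism $\Proj_{\tGamma}\tau\ra\pi$ factors through $\Theta_\tau$, but the mechanism you propose --- restriction to $I$, Frobenius reciprocity, Corollary \ref{cor-factor-barW} and the map $\phi_\tau$ into $\Ind_I^K\overline{W}_{\chi,3}$ --- does not deliver this. What that machinery yields (and it is precisely how the paper uses it, in the \emph{next} statement, Proposition \ref{prop-dim=1-equivalent}) is the chain $\dim_{\F}\Hom_K(\Theta_\tau,\pi)\le\dim_{\F}\Hom_I(W_{\chi,3},\pi)\le\dim_{\F}\Hom_K(\Proj_{\tGamma}\tau,\pi)$, i.e. inequalities pointing in the direction you already have for free; it gives no upper bound on $\dim_{\F}\Hom_K(\Proj_{\tGamma}\tau,\pi)$. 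The real difficulty, which your sketch does not address, is to control the image $Q$ of an arbitrary morphism $h:\Proj_{\tGamma}\tau\ra\pi$: a priori $[Q:\tau]$ could be large and $\soc_K(Q)$ could contain weights outside $\{\tau\}\cup\mathscr{E}(\tau)$, in which case $Q$ need not be a quotient of $\Theta_\tau$ at all; asserting that $\Theta_\tau$ is the "universal recipient" of maps to $\pi$ is exactly the statement to be proved. The paper argues by contradiction, choosing $h$ non-scalar with $[\im(h):\tau]$ \emph{minimal}; projectivity of $\Proj_{\tGamma}\tau$ then forces $[Q:\tau]\le 2$, a case division according to whether $\tau$ occurs in $\soc(Q)$ combined with Corollary \ref{coro:Q-killedbyJ} and condition (a) ($\pi^{K_1}\cong D_0(\brho)$, multiplicity free with $\tau$ occurring only in its socle) pins down $\rad(Q)\subset D_0(\brho)$, Lemma \ref{lemma-HW-2.25} identifies $\JH(\soc(Q))\subset\{\tau\}\cup\mathscr{E}(\tau)$, and the universal property of $\Theta_\tau$ (Corollary \ref{cor:Theta-univ}) then exhibits $Q$ as a quotient of $\Theta_\tau$, contradicting (ii). None of the minimality trick, Lemma \ref{lemma-HW-2.25}, or Corollary \ref{cor:Theta-univ} appears in your proposal, and without them the factorization claim remains unproved.
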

\begin{proof}
Since $\Theta_{\tau}$ is a quotient of $\Proj_{\tGamma}\tau$ and $\tau\hookrightarrow \pi$, we have trivially (i)$\Rightarrow$(ii).

(ii)$\Rightarrow$(i).
Assume (i) does not hold. Then there exists a nonzero morphism $h:\Proj_{\tGamma}\tau\ra \pi$ which is \emph{not} a scalar of the composition $h_0: \Proj_{\tGamma}\tau\twoheadrightarrow \tau\hookrightarrow \pi$. We choose $h$ in such a way that the multiplicity $[\im(h):\tau]$ is minimal. Let $Q\subseteq \pi$ denote the image of $h$. We will prove that $Q$ is a quotient of $\Theta_{\tau}$, so that there exists a morphism $\Theta_{\tau}\ra \pi$ which does not factor through the cosocle $\Theta_{\tau}\twoheadrightarrow \tau$, contradicting (ii).

First assume that $\tau$ does not occur in $\soc(Q)$. Then the projectivity of $\Proj_{\tGamma}\tau$ and the choice of $h$ implies  $[Q:\tau]=1$ (with $\tau$ in the cosocle of $Q$);  otherwise, we could always construct a nonzero morphism $h':\Proj_{\tGamma}\tau\ra \pi$, with $\im(h')\subsetneq Q$ and $[\im(h'):\tau]<[Q:\tau]$,   which contradicts the choice of $h$.  Here, the assumption that $\tau\notin \JH(\soc (Q))$ ensures that $h'$ is still not a scalar of $h_0$.  By Corollary \ref{coro:Q-killedbyJ} (condition (b) in \emph{loc. cit.} is satisfied by \cite[Prop.~2.24]{HW}), we deduce that $Q$ is a multiplicity free $\Gamma$-representation, hence is contained in $D_0(\brho)\cong \pi^{K_1}$ by Condition (a) imposed on $\pi$. Recall that $D_0(\brho)$ is multiplicity free and  $\tau$ occurs in the socle of $D_0(\brho)$. Hence, if $\tau$ occurs in $Q$, it must occur in  $\rsoc(Q)$, giving a contradiction.

Assume that $\tau$ occurs in $\rsoc(Q)$ for the rest of the proof. A similar argument as in the above case shows that $[Q:\tau]=2$; more precisely, we have $\tau\hookrightarrow \soc(Q)$ and $\mathrm{cosoc}(Q)\cong \tau$.  We claim the following:

\begin{enumerate}
\item[(1)] $\mathrm{rad}(Q)$ is multiplicity free; 
\item[(2)] $\mathrm{rad}(Q)$ is a subrepresentation of $D_0(\brho)$.
\end{enumerate}

For (1), since $\mathrm{cosoc}(Q)\cong\tau$, it  is equivalent to show that $Q/\tau$ is multiplicity free. Note that $Q/\tau$ is a quotient of $\Proj_{\tGamma}\tau$ and $[Q/\tau:\tau]=[Q:\tau]-1=1$, so the assertion  follows from (the dual version of) Corollary \ref{cor-I(sigmatau)-geq}.

For (2), note that $\soc(Q) \subset \soc (D_0(\brho))$ is multiplicity free. For each $\sigma\in\JH(\soc(Q))$, let $Q_{\sigma}$ be the unique quotient of $Q$ with socle $\sigma$ and such that $\sigma\hookrightarrow Q\twoheadrightarrow Q_{\sigma}$ is nonzero, so that $Q$ embeds in $\bigoplus_{\sigma\in\JH(\soc(Q))}Q_{\sigma}$, and consequently
\begin{equation}\label{eq:radQ-embeds}\rad(Q)\hookrightarrow \bigoplus_{\sigma\in\JH(\soc(Q))}\rad(Q_{\sigma}).\end{equation}
It suffices to prove that $\rad(Q_{\sigma})$ is a $\Gamma$-representation for each $\sigma\in\JH(\soc(Q))$.
If $\sigma\neq \tau$, then $Q_{\sigma}$ itself is a $\Gamma$-representation by the same argument as in the above case. Assume $\sigma=\tau$. Since $Q_{\tau}$ has cosocle  $\tau$, the cosocle of $\rad(Q_{\tau})$ can be embedded in $\oplus_{\tau'\in\mathscr{E}(\tau)}\tau'$. On the other hand, by construction $\rad(Q_{\tau})$ is multiplicity free with socle $\tau$, so  $\rad(Q_{\tau})$ is also a $\Gamma$-representation by (the dual version of) Corollary \ref{coro:Q-killedbyJ}. This proves (2). As a consequence, the embedding \eqref{eq:radQ-embeds} is an isomorphism because, on the one hand, each projection $\rad(Q)\ra \rad(Q_{\sigma})$ is surjective, on the other hand, since $\rad(Q_{\sigma})\subset D_{0,\sigma}(\brho)$ by (2) and $D_0(\brho)$ is multiplicity free,  $\rad(Q_{\sigma_1})$ and $\rad(Q_{\sigma_2})$ don't have any common Jordan--H\"older factors for $\sigma_1\neq \sigma_2$. We also deduce that $\rad(Q_{\sigma})$ is a subrepresentation of $D_{0,\sigma}(\brho)$ for any $\sigma\in \JH(\soc(Q))$.

We now prove that $Q$ is  a quotient of $\Theta_{\tau}$. Note that $\tau$ is $2$-generic by Lemma \ref{lem:Serre=3generic}.   Let $\sigma\in\JH(\soc(Q))$ and assume $\sigma\neq\tau$.  Since  $Q_{\sigma}$ has cosocle isomorphic to $\tau$, $Q_{\sigma}$ gives rise to a nonzero class in $\Ext^1_{\tGamma}(\tau,\rad(Q_{\sigma}))$
which implies
$\sigma\in\mathscr{E}(\tau)$ by Lemma \ref{lemma-HW-2.25}. In other words, $\JH(\soc(Q))$ is contained in $\{\tau\}\cup\mathscr{E}(\tau)$, and for any $\sigma\in\JH(\soc(Q))$  we have  $\dim_{\F}\Hom_{\tGamma}(\sigma,Q)=1$. Let $C$ denote the quotient $Q/\tau$, so that we have a short exact sequence
\begin{equation}\label{eq:Q-C}0\ra \tau\ra Q\ra C\ra0.\end{equation}
We claim that $C$ has Loewy length $2$ and fits in a short exact sequence
\[0\ra S\ra C\ra \tau\ra0\]
for some subrepresentation $S$ of $\bigoplus_{\tau'\in\mathscr{E}(\tau)}\tau'$, and   the result will  follow by Corollary \ref{cor:Theta-univ}.
Using  \eqref{eq:Q-C} we see that if $\Hom_{\tGamma}(\tau',C)\neq 0$ for some Serre weight $\tau'$, then either $\Hom_{\tGamma}(\tau',Q)\neq0$ or $\Ext^1_{\tGamma}(\tau',\tau)\neq0$,   hence  $\tau'\in\{\tau\}\cup\mathscr{E}(\tau)$ by the above discussion. Moreover, since $\dim_{\F}\Hom_{\tGamma}(\tau,Q)=1$ and $\Ext^1_{\tGamma}(\tau,\tau)=0$ by Lemma \ref{lemma-Hu10-2.21}(i), we have  $\Hom_{\tGamma}(\tau,C)=0$ and consequently $\JH(\soc(C))\subset \mathscr{E}(\tau)$. As in the proof of Corollary \ref{coro:Q-killedbyJ}, we then have
\[C\hookrightarrow \bigoplus_{\tau'\in \JH(\soc(C))}I(\tau',\tau)\cong \bigoplus_{\tau'\in\JH(\soc(C))}E_{\tau',\tau}, \]
where the isomorphism holds as $\tau'\in\mathscr{E}(\tau)$. This proves the claim and finishes the proof of the proposition.
\end{proof}

Now we make an extra assumption on $\pi$:
\begin{enumerate}
\item[(b)] if $\Ext^1_{K/Z_1}(\sigma,\pi)\neq0$ for some Serre weight $\sigma$, then $\sigma\in\mathscr{D}(\brho)$.
\end{enumerate}

\begin{remark}
We will see examples of $G$-representations satisfying (a) and (b)  in \S\ref{section-patching}.
\end{remark}

\begin{proposition}\label{prop-dim=1-equivalent}
Let $\chi\in\JH(\pi^{I_1})$ and let $\tau\in\mathscr{D}(\brho)$ be the unique Serre  weight such that $\chi$ occurs in $D_{1,\tau}(\brho)$. Then the following statements are equivalent:
\begin{enumerate}
\item[(i)] $\dim_{\F}\Hom_K(\Proj_{\tGamma}\tau,\pi)=1$;
\item[(ii)] $\dim_{\F}\Hom_K(\Theta_{\tau},\pi)=1$;
\item[(iii)] $\dim_{\F}\Hom_{I}(W_{\chi,3},\pi)=1$.
\end{enumerate}
\end{proposition}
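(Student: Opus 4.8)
The plan is to prove the three equivalences by a cyclic argument, passing through the auxiliary representations $\Theta_\tau$ and $\overline{W}_{\chi,3}$ whose structure has been determined in \S\ref{section:Rep-II}. The implication (i)$\Leftrightarrow$(ii) is already Proposition \ref{prop-criteria-Theta=1} (this uses only condition (a) on $\pi$), so the content is to relate these to condition (iii) on the $I$-side, which is where assumption (b) enters. I would first record the trivial direction: since $\chi$ embeds in $\pi^{I_1}$ we always have $\dim_\F\Hom_I(W_{\chi,3},\pi)\geq 1$, and by Corollary \ref{cor-factor-barW} any $I$-equivariant morphism $W_{\chi,3}\ra\pi$ factors through $\overline{W}_{\chi,3}$, so it is equivalent to compute $\dim_\F\Hom_I(\overline{W}_{\chi,3},\pi)$. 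Then by Frobenius reciprocity $\Hom_I(\overline{W}_{\chi,3},\pi)\cong\Hom_K(\Ind_I^K\overline{W}_{\chi,3},\pi)$, so (iii) is the statement that the latter Hom-space is one-dimensional.

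Next I would prove (ii)$\Rightarrow$(iii). Given a nonzero $h:\Ind_I^K\overline{W}_{\chi,3}\ra\pi$, I want to show it is a scalar multiple of the canonical map (the one factoring through the cosocle $\Ind_I^K\overline{W}_{\chi,3}\twoheadrightarrow\sigma_\chi\hookrightarrow\pi$, where $\sigma_\chi$ is the cosocle of $\Ind_I^K\chi$, which is a Serre weight; note here one must be slightly careful about which Jordan-H\"older factor of $\Ind_I^K\overline W_{\chi,3}$ is the relevant one, and Lemma \ref{lemma-tau-maximal} together with the genericity hypotheses pins it down to be governed by $\tau$). The key device is $\Theta_\tau$: by Proposition \ref{prop-Theta} and Proposition \ref{prop-coker-no-sigma}, $\Theta_\tau$ is a subquotient of $\Ind_I^K\overline{W}_{\chi,3}$, and I would use Corollary \ref{cor-coker-no-serreweight} (which gives $\JH(\Coker(\phi_\tau))\cap\mathscr D(\brho)=\emptyset$) combined with assumption (b) to show that the image of $h$ cannot extend beyond $\im(\phi_\tau)$, and then that $h$ restricted to $\im(\phi_\tau)$ essentially factors through $\Theta_\tau$. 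Concretely: if $h$ were not a scalar of the canonical map, then $\im(h)$ would contain a Serre weight outside $\mathscr D(\brho)$ in a non-socle position, forcing a nonzero $\Ext^1_{K/Z_1}(\sigma,\pi)$ with $\sigma\notin\mathscr D(\brho)$ (contradicting (b)), or else $\im(h)$ would be a quotient of $\Theta_\tau$ not factoring through its cosocle $\tau$, contradicting (ii) via Proposition \ref{prop-criteria-Theta=1}. This reduces a Hom-count over $K$ to the already-established (ii).

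For (iii)$\Rightarrow$(i) (equivalently (iii)$\Rightarrow$(ii)), I would argue contrapositively: suppose $\dim_\F\Hom_K(\Theta_\tau,\pi)\geq 2$, so there is a morphism $\Theta_\tau\ra\pi$ not factoring through $\Theta_\tau\twoheadrightarrow\tau$. Using Corollary \ref{cor:Theta-tau-structure} (the embedding $E_{\tau,\tau'}\hookrightarrow\Theta_\tau$ for $\tau'\in\mathscr E(\tau)$) and Lemma \ref{lemma-PD-set} (which matches up $\tau'\in\mathscr E(\tau)$ with a character $\chi'\in\mathscr E(\chi)$ occurring in $D_1(\brho)$), one transfers the extra morphism on $\Theta_\tau$ back to an extra morphism $\overline{W}_{\chi,3}\ra\pi$ on the Iwahori level — schematically, a nonzero class in $\Hom_K(\Theta_\tau,\pi)$ beyond the canonical one produces, via Frobenius reciprocity and the structure \eqref{eq:seq-Theta-tau}, a nonzero $I$-morphism from $E_{\chi,\chi'}$ (a subquotient of $\overline W_{\chi,3}$) into $\pi$ that does not factor through $\chi$, hence a second independent element of $\Hom_I(\overline W_{\chi,3},\pi)\cong\Hom_I(W_{\chi,3},\pi)$. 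The main obstacle, and the step I would spend the most care on, is exactly this bookkeeping in (iii)$\Rightarrow$(ii): one must verify that the module-theoretic correspondence between $\mathscr E(\tau)$ on the $K$-side and $\mathscr E(\chi)$ on the $I$-side (Lemmas \ref{lemma-PD-set}, \ref{lemma-Esigma'-occur-plus}, \ref{lemma-Esigma'-occur-minus}, Remark \ref{remark-onlyone-chi'}) is precise enough that independence of morphisms is genuinely preserved — i.e. that no collapsing occurs when passing between $\Ind_I^K$ and restriction — and that the exceptional behavior when $f=1$ or near the boundary of the genericity range does not intervene. Once these three implications are in place, the cycle (i)$\Rightarrow$(ii)$\Rightarrow$(iii)$\Rightarrow$(i) closes (with (i)$\Leftrightarrow$(ii) borrowed directly from Proposition \ref{prop-criteria-Theta=1}), completing the proof.
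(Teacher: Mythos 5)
Your reduction to comparing the three Hom-spaces, the use of Corollary \ref{cor-factor-barW} and Frobenius reciprocity to rewrite (iii) as $\dim_{\F}\Hom_K(\Ind_I^K\overline{W}_{\chi,3},\pi)$, and the role of $\phi_{\tau}$, Corollary \ref{cor-coker-no-serreweight} and hypotheses (a), (b) all match the paper. The paper, however, organises these ingredients not as a cycle of implications but as a sandwich
\[
\dim_{\F}\Hom_{K}(\Theta_{\tau},\pi)\leq \dim_{\F}\Hom_{I}(W_{\chi,3},\pi)\leq \dim_{\F}\Hom_{K}(\Proj_{\tGamma}\tau,\pi),
\]
obtained from the chain of surjections $\Proj_{\tGamma}\tau\twoheadrightarrow\im(\phi_{\tau})\twoheadrightarrow\Theta_{\tau}$ together with the isomorphism $\Hom_K(\Ind_I^K\overline{W}_{\chi,3},\pi)\simto\Hom_K(\im(\phi_{\tau}),\pi)$ (injective because $\Coker(\phi_{\tau})$ has no Jordan-H\"older factor in $\mathscr{D}(\brho)=\JH(\soc_K\pi)$ by (a), surjective because $\Ext^1_{K/Z_1}(\Coker(\phi_{\tau}),\pi)=0$ by (b)). Since all three quantities are $\geq 1$ and the two outer ones equal $1$ simultaneously by Proposition \ref{prop-criteria-Theta=1}, the sandwich closes the proof with no transfer of individual morphisms between the $K$-side and the $I$-side.

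The genuine gap is in your (iii)$\Rightarrow$(ii) step. From a morphism $\Theta_{\tau}\ra\pi$ not factoring through the cosocle you propose to manufacture a second element of $\Hom_I(\overline{W}_{\chi,3},\pi)$ by restricting to $I$ and matching $E_{\tau,\tau'}$ with $E_{\chi,\chi'}$. But the compatibilities you cite (Lemmas \ref{lemma-Esigma'-occur-plus}, \ref{lemma-Esigma'-occur-minus}, \ref{lemma-PD-set}) go in the induction direction ($E_{\tau',\tau}$ occurs in $\Ind_I^KE_{\chi',\chi}$), not the restriction direction; and even granting a nonzero $I$-map from the subobject $E_{\chi,\chi'}\hookrightarrow\overline{W}_{\chi,3}$ into $\pi$, this does not yield an element of $\Hom_I(\overline{W}_{\chi,3},\pi)$ without an extension step across $\overline{W}_{\chi,3}/E_{\chi,\chi'}$ --- precisely the point you flag but do not resolve. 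The fix is already contained in your own setup: a map $\Theta_{\tau}\ra\pi$ pulls back along $\im(\phi_{\tau})\twoheadrightarrow\Theta_{\tau}$ to a map $\im(\phi_{\tau})\ra\pi$, which by the isomorphism above corresponds to a map $\Ind_I^K\overline{W}_{\chi,3}\ra\pi$, i.e.\ (by Frobenius reciprocity and Corollary \ref{cor-factor-barW}) to an element of $\Hom_I(W_{\chi,3},\pi)$; distinct maps remain distinct, giving $\dim_{\F}\Hom_K(\Theta_{\tau},\pi)\leq\dim_{\F}\Hom_I(W_{\chi,3},\pi)$ directly, with no restriction bookkeeping and no exceptional cases to check.
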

\begin{proof}
Using Proposition \ref{prop-criteria-Theta=1} it suffices to prove   the following inequalities:
\begin{equation}\label{eq:inequalities}\dim_{\F}\Hom_{K}(\Theta_{\tau},\pi)\leq \dim_{\F}\Hom_{I}(W_{\chi,3},\pi)\leq \dim_{\F}\Hom_{K}(\Proj_{\tGamma}\tau,\pi).\end{equation}
By Corollary \ref{cor-factor-barW} and Frobenius reciprocity, we may replace the middle term by \[\dim_{\F}
\Hom_K(\Ind_I^K\overline{W}_{\chi,3},\pi). \]
Let $\phi_{\tau}:\Proj_{\tGamma}\tau\ra \Ind_I^K\overline{W}_{\chi,3}$ be a morphism as in Proposition \ref{prop-coker-no-sigma}. On the one hand, by Corollary \ref{cor-coker-no-serreweight} and Conditions (a),(b) satisfied by $\pi$, the inclusion $\im(\phi_{\tau})\hookrightarrow \Ind_I^K\overline{W}_{\chi,3}$ induces an isomorphism
\[\Hom_{K}(\Ind_{I}^K\overline{W}_{\chi,3},\pi)\simto\Hom_K(\im(\phi_{\tau}),\pi).\]
On the other hand, there are surjections $\Proj_{\tGamma}\tau\twoheadrightarrow\im(\phi_{\tau})\twoheadrightarrow \Theta_{\tau}$, which induce
\[\Hom_{K}(\Theta_{\tau},\pi)\hookrightarrow\Hom_K(\im(\phi_{\tau}),\pi)\hookrightarrow\Hom_K(\Proj_{\tGamma}\tau,\pi).\]
Putting them together, we deduce \eqref{eq:inequalities}.
\end{proof}

Summarizing what has been proved, we obtain the following ``multiplicity one'' criterion, the main result of this section.

\begin{theorem}\label{thm-criterion}
Assume $\brho$ is indecomposable and strongly generic. Assume $\pi$ is an admissible smooth $G$-representation  over $\F$ (with a central character) satisfying the following conditions:
\begin{enumerate}
\item[(a)] $\pi^{K_1}\cong D_0(\brho)$ (in particular $\rsoc_{K}\pi\cong \bigoplus_{\sigma\in\mathscr{D}(\brho)}\sigma$);
\item[(b)]  if $\Ext^1_{K/Z_1}(\sigma,\pi)\neq0$ for some Serre weight $\sigma$, then $\sigma\in\mathscr{D}(\brho)$;
\item[(c)] there exists one   $\sigma_0\in\mathscr{D}(\brho)$ such that $\dim_{\F}\Hom_{K}(\Theta_{\sigma_0},\pi)=1$.
\end{enumerate}
Then the following statements hold:
\begin{enumerate}
\item[(i)] $\dim_{\F}\Hom_{K}(\Proj_{\tGamma}\sigma,\pi)=1$ for any $\sigma\in\mathscr{D}(\brho)$, or equivalently, $\pi[\fm_{K_1}^2]\subset \tD_0(\brho)$;
\item[(ii)] $\dim_{\F}\Hom_{I}(W_{\chi,3},\pi)=1$ for any $\chi\in \JH(\pi^{I_1})$.
\end{enumerate}
\end{theorem}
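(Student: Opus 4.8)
\textbf{Proof plan for Theorem \ref{thm-criterion}.}

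The plan is to deduce everything from the chain of equivalences in Proposition \ref{prop-dim=1-equivalent}, bootstrapping from the single Serre weight $\sigma_0$ in assumption (c) to all of $\mathscr{D}(\brho)$ using the combinatorics of $D_1(\brho)$. First I would observe that (ii) is essentially immediate once (i) is known: given $\chi\in\JH(\pi^{I_1})$, let $\tau\in\mathscr{D}(\brho)$ be the unique Serre weight with $\chi$ occurring in $D_{1,\tau}(\brho)$ (such $\tau$ exists since $\pi^{K_1}\cong D_0(\brho)$ by (a), so $\pi^{I_1}=D_0(\brho)^{I_1}=D_1(\brho)$); then Proposition \ref{prop-dim=1-equivalent} gives the equivalence of $\dim_{\F}\Hom_K(\Proj_{\tGamma}\tau,\pi)=1$, $\dim_{\F}\Hom_K(\Theta_\tau,\pi)=1$, and $\dim_{\F}\Hom_I(W_{\chi,3},\pi)=1$. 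Note Proposition \ref{prop-dim=1-equivalent} requires conditions (a) and (b) on $\pi$, which are exactly our hypotheses. So the entire theorem reduces to proving statement (i): $\dim_{\F}\Hom_K(\Proj_{\tGamma}\sigma,\pi)=1$ for \emph{every} $\sigma\in\mathscr{D}(\brho)$. The equivalence with $\pi[\fm_{K_1}^2]\subseteq\widetilde{D}_0(\brho)$ follows from Theorem \ref{thm-tD=multione}: indeed $\dim_{\F}\Hom_{\tGamma}(\Proj_{\tGamma}\sigma,\pi)=[\pi[\fm_{K_1}^2]:\sigma]$ for each $\sigma$, and since $\widetilde{D}_0(\brho)$ is multiplicity free with socle $\oplus_{\sigma\in\mathscr{D}(\brho)}\sigma$ and is the maximal such representation, having all these multiplicities equal to $1$ together with $\soc_K\pi\cong\oplus_{\sigma\in\mathscr{D}(\brho)}\sigma$ and condition (b) forces $\pi[\fm_{K_1}^2]$ to embed in $\oplus_\sigma\rInj_{\tGamma}\sigma$ with each $\sigma$ occurring once, hence into $\widetilde{D}_0(\brho)$ by Corollary \ref{cor:tD-sigma}.

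To prove (i) for all $\sigma\in\mathscr{D}(\brho)$, the strategy is a propagation argument along $D_1(\brho)$. Fix $\sigma_0$ as in (c), and let $\chi_0\in\JH(D_1(\brho))$ be a character occurring in $D_{1,\sigma_0}(\brho)$; by Proposition \ref{prop-dim=1-equivalent} (applied with $\tau=\sigma_0$, which is legitimate since $\sigma_0$ is $2$-generic by Lemma \ref{lem:Serre=3generic}), condition (c) gives $\dim_{\F}\Hom_I(W_{\chi_0,3},\pi)=1$. Now I would show how to move from one character of $D_1(\brho)$ to an adjacent one. Suppose $\chi,\chi'\in\JH(D_1(\brho))$ with $\Ext^1_{I/Z_1}(\chi,\chi')\neq0$, and suppose $\dim_{\F}\Hom_I(W_{\chi,3},\pi)=1$. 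Let $\tau,\tau'\in\mathscr{D}(\brho)$ be the Serre weights such that $\chi\in\JH(D_{1,\tau}(\brho))$, $\chi'\in\JH(D_{1,\tau'}(\brho))$. By Lemma \ref{lemma-PD-set} we have $\Ext^1_{K/Z_1}(\tau,\tau')\cong\Ext^1_\Gamma(\tau,\tau')\neq0$. The key point is that $W_{\chi',2}\hookrightarrow W_{\chi,3}$ (by \cite[Lem. 6.1.2]{BHHMS}, as used in the proof of Corollary \ref{cor-factor-barW}), which combined with the already-established multiplicity-one statement and the structure of $\Ind_I^K\overline{W}_{\chi,3}$ via $\Theta_\tau$, $\Theta_{\tau'}$ should force $\dim_{\F}\Hom_I(W_{\chi',3},\pi)=1$; concretely, I expect to run the argument through $\Hom_K(\Ind_I^K\overline{W}_{\chi',3},\pi)$, using Corollary \ref{cor-imageoffi-inclusion}(c) (the inclusion $\im(\phi_{\tau'})\subseteq\im(\phi_\tau)$ when $\Ext^1_{\tGamma}(\tau',\tau)\neq0$) and Corollary \ref{cor-coker-no-serreweight} to compare the two Hom spaces, together with the fact that $\widetilde{D}_{0,\tau}(\brho)$ and $\widetilde{D}_{0,\tau'}(\brho)$ are ``linked'' through their common position relative to $D_0(\brho)\cong\pi^{K_1}$.

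Then I would invoke the connectedness of $D_1(\brho)$ — i.e. the fact, recorded in the footnote to Corollary \ref{cor-factor-barW} (via \cite[Def. 6.4.2, Lem. 6.4.3]{BHHMS} and $\mathscr{PD}(x_0,\dots,x_{f-1})$) and crucially depending on $\brho$ being \emph{indecomposable}, that any two characters in $\JH(D_1(\brho))$ are connected by a chain of $I$-extension steps — to propagate the multiplicity-one property from $\chi_0$ to all $\chi\in\JH(D_1(\brho))$, hence to all $\tau\in\mathscr{D}(\brho)$ via the fact that every $D_{1,\tau}(\brho)$ contains some character of $D_1(\brho)$ (it contains $\chi_\tau=\chi_{\sigma^s}$ for $\sigma$ the socle of $D_{0,\tau}(\brho)$). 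This gives $\dim_{\F}\Hom_K(\Theta_\tau,\pi)=1$ for all $\tau\in\mathscr{D}(\brho)$, and then $\dim_{\F}\Hom_K(\Proj_{\tGamma}\tau,\pi)=1$ by Proposition \ref{prop-criteria-Theta=1}, establishing (i) and with it (ii).

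I expect the main obstacle to be the single propagation step: showing that $\dim_{\F}\Hom_I(W_{\chi,3},\pi)=1$ forces $\dim_{\F}\Hom_I(W_{\chi',3},\pi)=1$ for an adjacent $\chi'$. The inequality $\dim_{\F}\Hom_I(W_{\chi',3},\pi)\geq 1$ is automatic (from $\chi'\hookrightarrow\pi^{I_1}$), so the real content is the upper bound $\leq 1$, and the difficulty is that a priori a morphism $W_{\chi',3}\to\pi$ need not be controlled by what we know about $W_{\chi,3}$ since $W_{\chi,3}$ does not surject onto $W_{\chi',3}$ — only $W_{\chi',2}\hookrightarrow W_{\chi,3}$. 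The way around this is to pass through the $K$-side: use Proposition \ref{prop-dim=1-equivalent} to translate $\dim_{\F}\Hom_I(W_{\chi',3},\pi)=1$ into $\dim_{\F}\Hom_K(\Proj_{\tGamma}\tau',\pi)=1$, and then prove the latter directly by the same argument as in Proposition \ref{prop-criteria-Theta=1}: if it failed, a morphism $\Proj_{\tGamma}\tau'\to\pi$ not factoring through $\tau'$ would produce (via $\Ext^1_\Gamma(\tau,\tau')\neq0$ and the structure of $D_0(\brho)$, using that $\tau$ occurs only in $\soc_K\pi$) a contradiction with $\dim_{\F}\Hom_K(\Proj_{\tGamma}\tau,\pi)=1$. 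Assembling this contradiction cleanly — keeping track of which extension data forces $\im(h)$ to be a quotient of $\Theta_{\tau'}$ versus landing in $D_0(\brho)$ — is where the care is needed, but all the pieces (Corollaries \ref{coro:Q-killedbyJ}, \ref{cor:Theta-univ}, \ref{cor-I(sigmatau)-geq}, Lemma \ref{lemma-HW-2.25}, Proposition \ref{prop-coker-no-sigma}) are already in place.
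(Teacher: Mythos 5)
Your overall reduction is the right one and matches the paper's: by Proposition \ref{prop-dim=1-equivalent} (whose hypotheses are exactly (a) and (b)), statements (i) and (ii) both amount to showing that the set $\Sigma_0=\{\sigma\in\mathscr{D}(\brho):\dim_{\F}\Hom_K(\Proj_{\tGamma}\sigma,\pi)=1\}$ equals $\mathscr{D}(\brho)$, and (c) together with Proposition \ref{prop-criteria-Theta=1} puts $\sigma_0$ in $\Sigma_0$. The gap is in the propagation. The paper propagates using two moves on $\Sigma_1=\{\chi\in\JH(D_1(\brho)):\dim_{\F}\Hom_I(W_{\chi,3},\pi)=1\}$: first, $\Sigma_1$ is stable under $\chi\mapsto\chi^s$, because conjugation by $\smatr01p0$ identifies $W_{\chi,3}$ with $W_{\chi^s,3}$ and $\pi$ is a $G$-representation; second, $\chi\in\Sigma_1$ if and only if $\sigma\in\Sigma_0$ for the $\sigma$ with $\chi\in\JH(D_{1,\sigma}(\brho))$. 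These two moves say precisely that $\bigl(\oplus_{\sigma\in\Sigma_0}D_{0,\sigma}(\brho),\oplus_{\chi\in\Sigma_1}\chi,\mathrm{can}\bigr)$ is a direct summand of the basic $0$-diagram, and one concludes from the indecomposability of that diagram (\cite[Thm. 15.4(i)]{BP}); this is the only place where the indecomposability of $\brho$ enters. Your proposal never uses the $\smatr01p0$-move and replaces it by propagation across $I$-extension edges $\Ext^1_{I/Z_1}(\chi,\chi')\neq0$; this substitute does not work.

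It fails for two reasons. First, the graph is wrong: by Remark \ref{remark-onlyone-chi'}, each $\chi\in\JH(D_1(\brho))$ has at most one neighbour among the $\chi\alpha_j^{\pm1}$ inside $\JH(D_1(\brho))$, so the $I$-extension graph is a matching; the ``connectedness of $D_1(\brho)$'' of \cite[Lem. 6.4.3]{BHHMS} that you invoke only asserts that every vertex has such a neighbour (it holds for any generic $\brho$, split or not, and is exactly what Corollary \ref{cor-factor-barW} needs), not the graph-theoretic connectedness your induction requires. Since the theorem is false for $\brho$ reducible split (see the remark following it) while all of your inputs remain available in that case, your argument would prove too much. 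Second, and independently, the single propagation step is unjustified: knowing $\dim_{\F}\Hom_K(\Theta_\tau,\pi)=1$ does not obstruct a second morphism $\Proj_{\tGamma}\tau'\to\pi$ for a weight $\tau'$ with $\Ext^1_\Gamma(\tau,\tau')\neq0$. Running the argument of Proposition \ref{prop-criteria-Theta=1} on such a morphism produces a subrepresentation $Q\subseteq\pi$ which is a quotient of $\Theta_{\tau'}$ with $[Q:\tau']=2$ but $[Q:\tau]\leq 1$ (indeed $\tau$ need not occur in $Q$ at all, since $\rad(Q)/\soc(Q)$ may involve other weights of $\mathscr{E}(\tau')$); nothing here contradicts multiplicity one for $\tau$, and none of the results you cite (Corollaries \ref{coro:Q-killedbyJ}, \ref{cor:Theta-univ}, Lemma \ref{lemma-HW-2.25}, Proposition \ref{prop-coker-no-sigma}) supplies the missing implication.
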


\begin{proof}
By (a),   the basic $0$-diagram $(\pi^{K_1},\pi^{I_1},\mathrm{can})$ attached to $\pi$ in \cite[\S9]{BP}, where $\mathrm{can}: \pi^{I_1}\hookrightarrow \pi^{K_1}$ is the canonical inclusion,  is just $(D_0(\brho),D_1(\brho),\mathrm{can})$.

We define two sets as follows:
\begin{align*}
&\Sigma_0\defn\{\sigma\in\mathscr{D}(\brho):\ \dim_{\F}\Hom_K(\Proj_{\tGamma}\sigma,\pi)=1\}\\
&\Sigma_1\defn \{\chi\in \JH(D_1(\brho)): \ \dim_{\F} \Hom_{I}(W_{\chi,3},\pi)=1\}.\end{align*}
It is clear that $\Sigma_1$ is stable under the action of $\smatr{0}1p0$ (the one induced from $D_1(\brho$)). By Proposition \ref{prop-dim=1-equivalent}, if $\chi\in  \JH(D_{1,\sigma}(\brho))$, then $\chi\in \Sigma_1$ if and only if $\sigma\in\Sigma_0$. Using (c), this implies that
\[\Big(\bigoplus_{\sigma\in\Sigma_0}D_{0,\s}(\brho), \bigoplus_{\chi\in\Sigma_1}\chi,\mathrm{can}\Big)\]
is a \emph{nonzero} subdiagram of $(D_0(\brho), D_1(\brho),\mathrm{can})$, and in fact a direct summand as diagrams. However, the diagram $(D_0(\brho), D_1(\brho),\mathrm{can})$ is indecomposable by \cite[Thm.~15.4(i)]{BP}, thus they must be equal, and so $\Sigma_0=\mathscr{D}(\brho)$ and $\Sigma_1=\JH(D_1(\brho))$.
\end{proof}

\begin{remark}
If $\brho$ is  reducible split, then Theorem \ref{thm-criterion} fails because the diagram $(D_0(\brho),D_1(\brho),\mathrm{can})$ is not indecomposable anymore. In fact, we have to impose a stronger hypothesis in (c) for the theorem to be true.
\end{remark}

\section{Ordinary parts }
\label{section:ordinary}

In this section we recall and prove some general results about   smooth $\F$-representations of $\GL_2(L)$, where $L=\Q_{p^f}$ as before. Let $G=\GL_2(L)$, $K=\GL_2(\cO_L)$, and define the following subgroups of $G:$
 \[P\defn\matr{*}*0*, \ \ \overline{P}\defn\matr*0**,\ \ T\defn\matr*00*,\ \ N\defn\matr{1}{*}01.\]  Let $T_0\defn T\cap K$ and $N_0\defn N\cap K$.  Recall that $Z$ denotes the center of $G$ and $Z_1\defn Z\cap K_1$.

In this section, we only consider representations  defined on $\F$-vector spaces and \emph{with a central character}. The latter assumption is not always necessary, but  we make it for convenience.  

\subsection{Ordinary parts}
\label{subsection:ordinary}

 Emerton has defined  a left exact covariant functor in \cite{EmertonOrd1}, called \emph{ordinary parts} and denoted by $\Ord_P$, from the category of  smooth $\F$-representations of $G$  to the category of  smooth $\F$-representations of $T$, which preserves admissibility, and more generally local admissibility. He also defined in \cite[Def.~3.3.1]{EmertonOrd2} a $\delta$-functor $\{H^i\Ord_P:i\geq 0\}$ such that $H^0\Ord_P=\Ord_P$.

On the other hand, let $R^i\Ord_P$ be  the right derived functors of $\Ord_P$ for $i\geq 0$. The  main result of  \cite{EmertonPaskunas} says that there is a natural equivalence $R^{i}\Ord_P\simto  H^i\Ord_P$.  Using \cite[Prop.~3.6.1]{EmertonOrd2}, we deduce that $R^i\Ord_P$ vanishes  for $i\geq f+1$.

Recall that $\omega:G_L\ra \F_p^{\times}$ is the mod $p$ cyclotomic character, viewed as a character of $L^{\times}$ via the local Artin map normalized  in the way that uniformizers of $L$ are sent to geometric Frobenii.  Denote by  $\alpha_P$ the character $\omega\otimes\omega^{-1}: T\ra \F_p^{\times}\hookrightarrow \F^{\times}$.

The following proposition summarizes some properties of $R^i\Ord_P$.

\begin{proposition}\label{prop-Ord}
Let $U$ be a locally admissible smooth representation of $T$ and $V$ be a smooth representation of $G$. The following statements hold.
\begin{enumerate}
\item[(i)] There is an adjunction isomorphism
\begin{equation}\label{equation-ord-adjunction}\Hom_G(\Ind_{\overline{P}}^GU,V)\cong \Hom_T(U,\Ord_PV).\end{equation}

\item[(ii)] There is a canonical isomorphism $R^f\Ord_PV\cong V_N\otimes \alpha^{-1}_P$, where $V_N$ is the space of coinvariants (i.e. the usual Jacquet module of $V$ with respect to $P$).

\item[(iii)] There are canonical isomorphisms \[\Ord_P(\Ind_{ \overline{P}}^GU)\cong U,\ \ R^f\Ord_P(\Ind_{\overline{P}}^GU)\cong U^s\alpha_P^{-1}.\] Here $U^s$ denotes the representation of $T$ obtained by conjugating $U$ by $s=\smatr0110$.

\item[(iv)] There is a natural isomorphism
\[\Hom_T(R^f\Ord_PV,U)\cong \Hom_G\big(V,\Ind_{P}^G(U\alpha_P)\big).\]
Moreover,  the isomorphism sends epimorphisms to epimorphisms.

\item[(v)] If $L=\Q_p$, then $R^1\Ord_P(\Ind_{\overline{P}}^GU)\cong U^s\alpha_P^{-1}$; otherwise $R^1\Ord_P(\Ind_{\overline{P}}^G\break U)=0$.
\end{enumerate}
\end{proposition}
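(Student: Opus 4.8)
\textbf{Proof plan for Proposition \ref{prop-Ord}.}

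The plan is to deduce everything from the foundational results of Emerton in \cite{EmertonOrd1}, \cite{EmertonOrd2}, the comparison isomorphism $R^i\Ord_P\cong H^i\Ord_P$ of \cite{EmertonPaskunas}, and the vanishing $R^i\Ord_P=0$ for $i\geq f+1$ noted just above the proposition. For (i), this is precisely the adjunction between $\Ind_{\overline P}^G$ and $\Ord_P$ established in \cite[Thm. 4.4.6]{EmertonOrd1} (restricted to the subcategory of representations with the fixed central character); since $\Ind_{\overline P}^G U$ is built from a locally admissible $U$ and $V$ is smooth, the hypotheses of \emph{loc. cit.} apply directly. For (ii), the key input is \cite[Prop. 3.6.2]{EmertonOrd2}, which identifies the top derived functor $H^f\Ord_P$ with the Jacquet module twisted by the modulus-type character; one only needs to check that the character appearing there is $\alpha_P^{-1}=\omega^{-1}\otimes\omega$ with our normalization of the local Artin map, which is a direct unwinding of the conventions. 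Combining with $R^f\Ord_P\cong H^f\Ord_P$ gives the stated canonical isomorphism.

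For (iii), the first isomorphism $\Ord_P(\Ind_{\overline P}^G U)\cong U$ is part of the adjunction package (the unit of the adjunction is an isomorphism on parabolically induced representations, \cite[Thm. 4.4.6]{EmertonOrd1}). For the computation of $R^f\Ord_P(\Ind_{\overline P}^G U)$, I would apply (ii): the Jacquet module $(\Ind_{\overline P}^G U)_N$ is computed by the geometric lemma / Bruhat decomposition for $\GL_2$, which has two cells; the closed cell contributes $U^s$ and the open cell contributes a term that, after passing to $N$-coinvariants of a compactly induced piece, vanishes (or one invokes the explicit second adjointness computation). This yields $(\Ind_{\overline P}^G U)_N\cong U^s$, hence $R^f\Ord_P(\Ind_{\overline P}^G U)\cong U^s\alpha_P^{-1}$ after the twist from (ii). Part (iv) then follows by dualizing (i): apply the adjunction isomorphism (i) with the roles arranged so that $\Hom_T(R^f\Ord_P V, U)$ is rewritten, using that $R^f\Ord_P$ is the top nonvanishing derived functor, as $\Hom_G(V, \Ind_P^G(U\alpha_P))$ — this is exactly the statement of \cite[Cor. 4.4.8]{EmertonOrd2} or can be seen as a form of Bernstein's second adjointness in this derived setting. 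The fact that epimorphisms go to epimorphisms follows because the isomorphism is natural and $\Ind_P^G(-)$ is exact, so surjectivity of a map $V\twoheadrightarrow \Ind_P^G(U\alpha_P)$ corresponds to surjectivity of the adjoint map on the $R^f\Ord_P$ side; I would spell this out by a diagram chase using that $R^f\Ord_P$ is right exact (being the top derived functor).

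The main obstacle, and the part deserving the most care, is (v): the vanishing of $R^1\Ord_P(\Ind_{\overline P}^G U)$ when $L\neq\Q_p$, contrasted with the nonvanishing when $L=\Q_p$. The approach is to use the spectral sequence or the explicit complex computing $R^\bullet\Ord_P$ of a parabolic induction. For $\GL_2(\Q_p)$, $f=1$, so $R^1\Ord_P=R^f\Ord_P$ is the top derived functor and equals $U^s\alpha_P^{-1}$ by (iii). For $L\neq\Q_p$ one has $f\geq 2$, so $R^1\Ord_P$ is an intermediate derived functor; here I would invoke Emerton's computation of $R^i\Ord_P$ of principal series via the cohomology of $N_0$ (or of the relevant $p$-adic analytic group $\overline N_0$), as in \cite[\S3]{EmertonOrd2}: the relevant $H^i(\overline N_0/\text{(something)},-)$ vanishes in degree $1$ precisely when the group has dimension $f\geq 2$, because the first cohomology of a parabolically induced module localizes to the "wrong" Bruhat cell and the relevant Hecke operator acts invertibly there, forcing the contribution to vanish. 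Concretely I expect to reduce to checking that a certain $U_p$-type operator acts invertibly on a cohomology group and then that the group itself is zero in degree $1$ for $f\geq 2$; this is the computation I would present most carefully, citing \cite[Prop. 3.6.1, Prop. 3.6.2]{EmertonOrd2} for the structural input and doing the $\GL_2$-specific bookkeeping by hand.
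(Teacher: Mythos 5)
Your treatment of (i)--(iii) coincides with the paper's: (i) is \cite[Thm. 4.4.6]{EmertonOrd1}, (ii) is \cite[Prop. 3.6.2]{EmertonOrd2} combined with the comparison $R^i\Ord_P\cong H^i\Ord_P$ of \cite{EmertonPaskunas}, and for (iii) the paper likewise gets the second isomorphism from (ii) together with $(\Ind_{\overline{P}}^GU)_N\cong(\Ind_P^GU^s)_N\cong U^s$ (the first isomorphism is \cite[Prop. 4.3.4]{EmertonOrd1}). For (iv), your route via ``dualizing (i)'' and second adjointness is more roundabout than necessary and slightly miscast: the statement is not an instance of (i) (which pairs $\Ind_{\overline P}^G$ with $\Ord_P$), but follows immediately from (ii) plus ordinary Frobenius reciprocity for the Jacquet functor, $\Hom_T(R^f\Ord_PV,U)\cong\Hom_T(V_N,U\alpha_P)\cong\Hom_G(V,\Ind_P^G(U\alpha_P))$; the claim about epimorphisms is then clear since $V\mapsto V_N$ is right exact.

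The genuine gap is in (v). Your two proposed mechanisms for the vanishing of $R^1\Ord_P(\Ind_{\overline P}^GU)$ when $f\geq 2$ are both incorrect as stated. First, if the Hecke operator acted \emph{invertibly} on a cohomology group, the ordinary part would be the whole group, not zero; vanishing of the ordinary part requires the Hecke operator to act \emph{locally nilpotently}. Second, the cohomology group itself is not zero in degree $1$: the closed-cell graded piece contributes $H^1(N_0,U^s)\cong U^s\otimes H^1(N_0,\F)$, and $H^1(N_0,\F)\cong\Hom(N_0,\F)$ has dimension $f\neq 0$. The actual reason for vanishing in intermediate degrees $0<i<f$ is that the Hecke operator acts on $H^i(N_0,\F)\cong\wedge^i\Hom(N_0,\F)$ through positive powers of $p$, hence by zero in characteristic $p$, so the ordinary part dies; carrying this out correctly is the content of \cite[Cor. 4.2.4]{Hauseux}, which is exactly what the paper cites (together with the observation that the case $L=\Q_p$ is already contained in (iii) since then $f=1$). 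If you want a self-contained argument rather than a citation, you must redo Hauseux's cell-by-cell Hecke computation, not the argument you sketched.
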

\begin{proof}
(i)  is \cite[Thm.~4.4.6]{EmertonOrd1} together with \cite[Rem.~3.7.3]{EmertonOrd2}, and (ii)   is \cite[Prop.~3.6.2]{EmertonOrd2} using the main result of \cite{EmertonPaskunas}.

(iii) The first isomorphism follows from \cite[Prop.~4.3.4]{EmertonOrd1} and the second from (ii) noting that $(\Ind_{\overline{P}}^GU)_{N}\cong (\Ind_P^GU^s)_N\cong U^s$.

 (iv) Using (ii) and the usual adjunction formula
\[\Hom_G(V,\Ind_P^G-)\cong \Hom_T(V_N,-),\]
we obtain
\[\Hom_{T}(R^f\Ord_PV,U)\cong \Hom_T(V_N,U\alpha_P)\cong \Hom_G(V,\Ind_P^G(U\alpha_P)).\]
The last assertion is obvious.

 (v) The case $L=\Q_p$ is contained in (iii) and the case $L\neq \Q_p$ is a special case of \cite[Cor.~4.2.4]{Hauseux}.
\end{proof}

There is a useful spectral sequence proved in \cite{EmertonOrd1}:
\begin{equation}\label{equation-Emerton-SS}E_2^{i,j}=\Ext^i_{T,\zeta}(U,R^j\Ord_PV)\Rightarrow \Ext^{i+j}_{G,\zeta}(\Ind_{\overline{P}}^GU,V).\end{equation}
Here, $\zeta$ denotes the central character of $V$ and $\Ext^i_{G,\zeta}$ (resp. $\Ext^i_{T,\zeta}$) 
indicates that we  compute extensions in the category $\Rep_{\F,\zeta}(G)$ (resp. $\Rep_{\F,\zeta}(T)$).
 In particular, we have a long exact sequence
\begin{multline*}0\ra \Ext^1_{T,\zeta}(U,\Ord_PV)\ra \Ext^1_{G,\zeta}(\Ind_{\overline{P}}^GU,V) \ra \Hom_{T}(U,R^1\Ord_PV)\ra \Ext^2_{T,\zeta}(U,\Ord_PV).\end{multline*}

\begin{corollary}\label{cor-Ext-2f+1}
 We have  a natural isomorphism
\[\Ext^{2f+1}_{G,\zeta}(\Ind_{\overline{P}}^GU,V)\cong \Ext^{f+1}_{T,\zeta}(U,R^{f}\Ord_PV).\]
For $i>2f+1$, we have $\Ext^i_{G,\zeta}(\Ind_{\overline{P}}^GU,V)=0$.
\end{corollary}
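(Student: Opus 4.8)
The plan is to deduce both assertions directly from the Grothendieck spectral sequence \eqref{equation-Emerton-SS}
\[E_2^{i,j}=\Ext^i_{T,\zeta}(U,R^j\Ord_PV)\ \Longrightarrow\ \Ext^{i+j}_{G,\zeta}(\Ind_{\overline{P}}^GU,V),\]
once one has pinned down the rectangle outside of which $E_2^{i,j}=0$. One side of the rectangle is already recorded in the text: by \cite[Prop. 3.6.1]{EmertonOrd2} together with \cite{EmertonPaskunas} one has $R^j\Ord_PV=0$ for $j\geq f+1$, so $E_2^{i,j}=0$ unless $0\leq j\leq f$. The other side is the statement that $\Ext^i_{T,\zeta}(-,-)$ vanishes for $i\geq f+2$, i.e. that $\Rep_{\F,\zeta}(T)$ has cohomological dimension $\leq f+1$; this is the one point requiring an argument.

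To prove it, I would first observe that the isomorphism $T\xrightarrow{\sim}L^{\times}\times L^{\times}$, $\smatr a00d\mapsto(ad^{-1},d)$, carries the centre $Z$ onto the second factor, so $T\cong Z\times(T/Z)$ as topological groups, $\zeta$ extends to a character of $T$, and twisting by it gives an equivalence of abelian categories $\Rep_{\F,\zeta}(T)\simeq\Rep_{\F}(L^{\times})$; in particular the two categories have the same cohomological dimension. Now $L^{\times}\cong\varpi^{\Z}\times\cO_L^{\times}\cong\Z\times\mu_{q-1}\times(1+p\cO_L)$ with $1+p\cO_L\cong\Z_p^f$ as a topological group (here $p>2$). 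These three factors contribute respectively $1$ (the group algebra $\F[\Z]=\F[t,t^{-1}]$ is a PID of global dimension $1$), $0$ (the order of $\mu_{q-1}$ is prime to $p$, so $\F[\mu_{q-1}]$ is semisimple) and $f$ (the completed group algebra $\F[[\Z_p^f]]\cong\F[[x_1,\dots,x_f]]$ is regular of Krull dimension $f$, so smooth $\F$-representations of $\Z_p^f$ have cohomological dimension $f$); a Künneth argument then gives $\mathrm{cd}\,\Rep_{\F}(L^{\times})=1+0+f=f+1$. Alternatively, the bound $\Ext^i_{T,\zeta}=0$ for $i\geq f+2$ can simply be quoted from the literature on derived ordinary parts, where such estimates over the torus are used routinely.

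With both sides of the rectangle in place the rest is a formal chase. Since $E_2^{i,j}$ can be nonzero only for $0\leq i\leq f+1$ and $0\leq j\leq f$, it vanishes whenever $i+j>2f+1$; as the spectral sequence is bounded, hence convergent, this already gives $\Ext^i_{G,\zeta}(\Ind_{\overline{P}}^GU,V)=0$ for $i>2f+1$. In total degree $2f+1$ the only possibly nonzero term is the corner $E_2^{f+1,f}=\Ext^{f+1}_{T,\zeta}(U,R^f\Ord_PV)$: every differential $d_r$ ($r\geq2$) leaving it lands in bidegree $(f+1+r,f-r+1)$, whose first coordinate exceeds $f+1$, hence is $0$, and every $d_r$ entering it originates from bidegree $(f+1-r,f+r-1)$, whose second coordinate exceeds $f$, hence is $0$; thus $E_2^{f+1,f}=E_{\infty}^{f+1,f}$. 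Being the unique nonzero graded piece of the abutment in degree $2f+1$, it yields the natural isomorphism $\Ext^{2f+1}_{G,\zeta}(\Ind_{\overline{P}}^GU,V)\cong\Ext^{f+1}_{T,\zeta}(U,R^f\Ord_PV)$, naturality being inherited from \eqref{equation-Emerton-SS}. The only genuine obstacle is the middle step, the cohomological-dimension bound for $\Rep_{\F,\zeta}(T)$; everything else is bookkeeping on the spectral sequence.
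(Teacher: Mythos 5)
Your proof is correct and follows essentially the same route as the paper, which simply cites the vanishing of $R^{i}\Ord_P$ for $i\geq f+1$ together with the fact that $T/Z$ has cohomological dimension $f+1$ and leaves the spectral-sequence bookkeeping implicit. Your computation of the cohomological dimension of $\Rep_{\F,\zeta}(T)\simeq\Rep_{\F}(L^{\times})$ via the decomposition $L^{\times}\cong\Z\times\mu_{q-1}\times\Z_p^f$ is a correct fleshing-out of exactly the fact the paper invokes.
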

\begin{proof}
This follows from the fact that $R^{i}\Ord_P$ vanishes for $i\geq f+1$ and that $T/Z$ has cohomological dimension $f+1$.
\end{proof}

\begin{lemma} \label{lemma:Ord-irreducible}
Let $\pi$ be an irreducible smooth  representation of $G$.
\begin{enumerate}
\item[(i)] Assume $\pi\cong \mathrm{Sp}\otimes\chi\circ\det$,  where $\mathrm{Sp}$ denotes the Steinberg representation of $G$. Then $\Ord_P\pi\cong \chi\otimes \chi$ and $R^1\Ord_P\pi=0$.

\item[(ii)] Assume $\pi\cong \chi\circ\det$ is one-dimensional. Then $\Ord_P\pi=0$. If $L=\Q_p$, then $R^1\Ord_P\pi=\chi\omega^{-1}\otimes\chi\omega$; otherwise $R^1\Ord_P\pi=0$.
\item[(iii)] Assume $\pi$ is supersingular.\footnote{See \cite[p.290]{BL} for the definition of ``supersingular'' representations.} Then $\Ord_P\pi=0$.
\end{enumerate}
\end{lemma}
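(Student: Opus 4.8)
\textbf{Proof plan for Lemma \ref{lemma:Ord-irreducible}.}

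The plan is to treat the three cases separately, using the explicit classification of irreducible smooth $\F$-representations of $G=\GL_2(L)$ together with the formal properties of $R^i\Ord_P$ collected in Proposition \ref{prop-Ord} (especially (iii) and (v)) and the fact that $R^i\Ord_P$ vanishes for $i\geq f+1$. Throughout I keep track of central characters since we work in $\Rep_{\F,\zeta}(G)$, but this is harmless as everything in sight has a central character.

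For (i): the Steinberg representation sits in a short exact sequence $0\to \mathbf{1}\to \Ind_{\overline{P}}^G\mathbf{1}\to \mathrm{Sp}\to 0$ (or its twist by $\chi\circ\det$), where $\mathbf{1}$ is the trivial character of $T$. Applying the left-exact functor $\Ord_P$ and its derived functors, and using Proposition \ref{prop-Ord}(iii) which gives $\Ord_P(\Ind_{\overline{P}}^G\mathbf{1})\cong\mathbf{1}\otimes\mathbf{1}$ together with the already-known value $\Ord_P(\chi\circ\det)=0$ from part (ii) of the lemma (or directly, since a one-dimensional representation has trivial $N$-action but $\Ord_P$ of it vanishes as recalled below), I get $\Ord_P\mathrm{Sp}\cong\mathbf{1}\otimes\mathbf{1}$, hence $\Ord_P\pi\cong\chi\otimes\chi$ after twisting. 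For the vanishing of $R^1\Ord_P\pi$: by Proposition \ref{prop-Ord}(v), since $L\neq\Q_p$ (we may assume $f\geq 2$; the case $L=\Q_p$ is classical and can be cited or checked directly) one has $R^1\Ord_P(\Ind_{\overline{P}}^G\mathbf{1})=0$; the long exact sequence for $R^\bullet\Ord_P$ applied to the Steinberg sequence then squeezes $R^1\Ord_P\mathrm{Sp}$ between $R^1\Ord_P(\Ind_{\overline{P}}^G\mathbf{1})=0$ and $R^2\Ord_P(\chi\circ\det)$; a short argument (or a reference to the Hauseux computations underlying Proposition \ref{prop-Ord}(v)) shows the relevant boundary contribution also vanishes, giving $R^1\Ord_P\pi=0$. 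Actually it is cleaner to simply note $R^1\Ord_P(\chi\circ\det)=0$ when $L\neq\Q_p$ (part (ii)) and $R^1\Ord_P(\Ind_{\overline{P}}^G\mathbf{1})=0$, so the long exact sequence forces $R^1\Ord_P\mathrm{Sp}=0$ outright.

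For (ii): a one-dimensional representation $\chi\circ\det$ has no nonzero vectors fixed by $N_0$ on which $T$ acts through a character extending to the ordinary direction in the required way — more precisely, $\Ord_P$ of a one-dimensional representation vanishes because $\Ord_P$ picks out the part of the $N_0$-invariants on which the Hecke operator $\smatr{p}001$ acts invertibly, and on $\chi\circ\det$ this operator acts by the scalar $\chi(p)$, but the subspace is killed by the idempotent construction; the standard reference here is \cite[Prop. 4.3.4]{EmertonOrd1} or a direct computation (one can also deduce it from Proposition \ref{prop-Ord}(i) since $\Hom_G(\Ind_{\overline{P}}^GU,\chi\circ\det)=0$ for all $U$, as $\Ind_{\overline{P}}^GU$ has no one-dimensional quotient when $U$ is not the relevant twist, and a direct check in the exceptional case). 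The computation of $R^1\Ord_P(\chi\circ\det)$ is exactly \cite[Cor. 4.2.4]{Hauseux} (as used in Proposition \ref{prop-Ord}(v)) applied to the one-dimensional case, giving $\chi\omega^{-1}\otimes\chi\omega$ when $L=\Q_p$ and $0$ otherwise; one should note $\chi\circ\det$ is a subrepresentation of $\Ind_{\overline{P}}^G(\chi\otimes\chi)$ with quotient $\mathrm{Sp}\otimes\chi\circ\det$, and run the long exact sequence against the values from (i) and Proposition \ref{prop-Ord}(iii),(v).

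For (iii): this is the definitional fact that supersingular (equivalently, supercuspidal) representations have vanishing ordinary part — indeed $\Ord_P\pi$ detects the existence of a nonzero $G$-map from some $\Ind_{\overline{P}}^GU$ to $\pi$ by Proposition \ref{prop-Ord}(i), and a supersingular $\pi$ admits no such map since it is not a subrepresentation of any principal series (by definition/classification of supersingular representations, e.g. via \cite{BL} and the classification of irreducibles of $\GL_2(L)$). So $\Ord_P\pi=0$. The main obstacle, such as it is, is (ii): establishing the vanishing $\Ord_P(\chi\circ\det)=0$ and pinning down $R^1\Ord_P(\chi\circ\det)$ requires invoking the precise computation of Emerton–Hauseux rather than a formal manipulation, and one must be careful about the $L=\Q_p$ versus $L\neq\Q_p$ dichotomy; everything else follows formally from Proposition \ref{prop-Ord} and the short exact sequences relating $\mathrm{Sp}$, $\Ind_{\overline{P}}^G(\chi\otimes\chi)$ and $\chi\circ\det$.
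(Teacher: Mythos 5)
Parts (ii) and (iii) of your proposal are essentially the paper's argument: for (iii) the paper also just invokes the adjunction of Proposition \ref{prop-Ord}(i) (a nonzero $\Ord_P\pi$ would make the irreducible $\pi$ a quotient of some $\Ind_{\overline{P}}^GU$, contradicting supersingularity), and for (ii) with $L\neq\Q_p$ the paper runs exactly your long exact sequence for $0\to\chi\circ\det\to\Ind_{\overline{P}}^G(\chi\otimes\chi)\to\mathrm{Sp}\otimes\chi\circ\det\to0$, using $R^1\Ord_P(\Ind_{\overline{P}}^G(\chi\otimes\chi))=0$ from Proposition \ref{prop-Ord}(v); for $L=\Q_p$ both you and the paper defer to Emerton's computation.

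The genuine problem is your treatment of $R^1\Ord_P(\mathrm{Sp})=0$ in (i). From $0\to\ide\to\Ind_{\overline{P}}^G\ide\to\mathrm{Sp}\to0$ the long exact sequence reads
\[
R^1\Ord_P(\Ind_{\overline{P}}^G\ide)\lra R^1\Ord_P(\mathrm{Sp})\lra R^2\Ord_P(\ide),
\]
so the vanishing of $R^1\Ord_P(\Ind_{\overline{P}}^G\ide)$ only yields an \emph{injection} $R^1\Ord_P(\mathrm{Sp})\hookrightarrow R^2\Ord_P(\ide)$. Knowing in addition that $R^1\Ord_P(\ide)=0$ sits two terms to the left and gives nothing at this spot, so your claim that the sequence "forces $R^1\Ord_P\mathrm{Sp}=0$ outright" is a non sequitur. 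Moreover $R^2\Ord_P(\ide)$ is genuinely nonzero in general: for $f=2$ it is the top derived functor and Proposition \ref{prop-Ord}(ii) gives $R^2\Ord_P(\ide)\cong(\ide)_N\otimes\alpha_P^{-1}=\alpha_P^{-1}\neq0$. To close the argument you would have to show the connecting map $R^1\Ord_P(\mathrm{Sp})\to R^2\Ord_P(\ide)$ vanishes, i.e. that $R^2\Ord_P(\ide)\to R^2\Ord_P(\Ind_{\overline{P}}^G\ide)$ is injective, which is not addressed and is not formal (for $f\geq3$ one cannot even appeal to Proposition \ref{prop-Ord}(ii)). The paper avoids this entirely: for (i) it simply cites Emerton's direct computation \cite[Thm. 4.2.12(2)]{EmertonOrd2} of $\Ord_P$ and $R^1\Ord_P$ of the Steinberg representation, remarking that the proof there works for general $L$. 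Either supply the missing injectivity of $R^2\Ord_P(\ide)\to R^2\Ord_P(\Ind_{\overline{P}}^G\ide)$, or replace your derivation of (i) by that citation.
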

\begin{proof}
(i) It follows from  \cite[Thm.~4.2.12(2)]{EmertonOrd2}; the proof in \emph{loc.~cit.} works for general $L$.

(ii) The first assertion follows from Proposition \ref{prop-Ord}(i) and \cite[Prop.~29]{BL}. For the second, the case of $\GL_2(\Q_p)$ is proved in \cite[Thm.~4.2.12(3)]{EmertonOrd2}. 
The case  $L\neq\Q_p$  is a consequence of Proposition \ref{prop-Ord}(v). Indeed, we have $R^1\Ord_P(\Ind_{P}^G\chi\otimes\chi)=0$ and we deduce the result using (i) together with the short exact sequence $0\ra \chi\circ\det\ra \Ind_P^G\chi\otimes\chi\ra \mathrm{Sp}\otimes\chi\circ\det\ra0$.

(iii) It is a consequence of Proposition \ref{prop-Ord}(i).
\end{proof}

\begin{lemma}\label{lemma:V=directsum}
Let $U$ be a locally admissible smooth representation of $T$ (with a central character) and $V$ be a subquotient of $\Ind_{\overline{P}}^GU$. If $\Ord_PV=0$, then $V$ is a direct sum of one-dimensional representations of $G$.
\end{lemma}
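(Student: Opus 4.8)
The plan is to reduce to the irreducible case via a dévissage argument and then apply Lemma \ref{lemma:Ord-irreducible}. First I would observe that, since $\Ord_P$ is left exact and $V$ is a subquotient of $\Ind_{\overline P}^G U$ with $U$ locally admissible, $V$ is itself locally admissible (as $\Ind_{\overline P}^G$ preserves local admissibility and subquotients of locally admissible representations are locally admissible); hence $V$ has an exhaustive filtration by finite-length subrepresentations, and by a direct limit argument it suffices to treat $V$ of finite length. The key point is that $\Ord_P$ is left exact, so for any subrepresentation $V'\subseteq V$ the vanishing $\Ord_P V=0$ forces $\Ord_P V'=0$; in particular $\Ord_P$ vanishes on every irreducible subquotient appearing in the socle filtration of $V$, and more generally on every irreducible subquotient of $V$ by passing to subquotients.

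Next I would classify the irreducible subquotients $\pi$ that can occur in $\Ind_{\overline P}^G U$. Any such $\pi$ is a subquotient of some $\Ind_{\overline P}^G \chi$ for a smooth character $\chi: T\to \F^\times$ (using that $U$ is built from characters up to filtration and limits), so $\pi$ is either a principal series, a twist of Steinberg, a character, or — \emph{a priori} — it cannot be supersingular, since supersingular representations do not embed in (nor are quotients of, nor subquotients of) parabolic inductions from $\overline P$. Among these, by Lemma \ref{lemma:Ord-irreducible}(i) a twist of Steinberg has $\Ord_P\pi\cong\chi\otimes\chi\neq0$, and for an irreducible principal series $\Ind_{\overline P}^G\chi$ one has $\Ord_P(\Ind_{\overline P}^G\chi)\cong\chi\neq0$ by Proposition \ref{prop-Ord}(iii). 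Therefore the only irreducible subquotients of $V$ compatible with $\Ord_P V=0$ are the one-dimensional ones (where $\Ord_P$ does vanish by Lemma \ref{lemma:Ord-irreducible}(ii)). So $\JH(V)$ consists entirely of characters of $G$.

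It remains to upgrade "all Jordan–Hölder factors are characters" to "$V$ is a direct sum of characters". Here I would argue that $G=\GL_2(L)$ acts on $V$ through its abelianization because $V$ has a filtration with one-dimensional graded pieces: the derived group $\SL_2(L)$ acts unipotently, but $\SL_2(L)$ is its own commutator and has no nontrivial finite-dimensional smooth representations over $\F$ with unipotent action other than the trivial one on each piece — more precisely, any smooth $\F[\SL_2(L)]$-module which is an iterated extension of trivial modules is itself trivial, since $\SL_2(L)$ is perfect and generated by unipotents, so $H^1(\SL_2(L),\F)=0$ forces the extensions to split step by step. Hence $\SL_2(L)$ acts trivially on $V$, so $V$ is a smooth representation of the abelian group $G/\SL_2(L)\cong L^\times$ (via $\det$); being a locally admissible, hence locally finite, smooth $\F$-representation of $L^\times$ which is $\fm$-torsion (all factors being the same up to the finitely many characters appearing), one checks it is semisimple — a smooth character group acting on an $\F$-vector space with finitely many isotypic components, each of which is semisimple because $\Ext^1$ between distinct smooth characters of $L^\times$ controlled by the relevant local conditions vanishes in our situation, or more simply because a locally finite module over the commutative algebra $\F[\![L^\times]\!]$ supported at finitely many characters and with no self-extensions decomposes. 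I expect the main obstacle to be precisely this last step: ruling out nonsplit self-extensions of a one-dimensional $G$-representation inside a subquotient of $\Ind_{\overline P}^G U$. The cleanest route is to note that such a self-extension $0\to\chi\circ\det\to E\to\chi\circ\det\to0$ would, after twisting, give a nonsplit extension of the trivial representation by itself, i.e. a nonzero class in $\Hom(G,\F)=\Hom(L^\times,\F)$; but such a class is detected by $\Ord_P$ — applying $\Ord_P$ to $E$ and using left exactness together with Lemma \ref{lemma:Ord-irreducible}(ii) and the exact sequence relating $\Ord_P$ to $R^1\Ord_P$ shows $\Ord_P E\neq 0$ unless the extension splits (here the case $L=\Q_p$ versus $L\neq\Q_p$ must be handled separately, using Proposition \ref{prop-Ord}(v)), contradicting $\Ord_P V=0$. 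This forces every self-extension to split, so $V$ is a direct sum of characters of $G$, completing the proof.
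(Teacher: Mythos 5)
There are two genuine gaps, both at points where you treat $\Ord_P$ as if it were exact. The first is the assertion that $\Ord_P V=0$ forces $\Ord_P\pi=0$ for every irreducible \emph{subquotient} $\pi$ of $V$ ``by passing to subquotients''. Left exactness only gives this for subobjects; for a quotient $V/W$ the sequence $0\to\Ord_P W\to\Ord_P V\to\Ord_P(V/W)\to R^1\Ord_P W$ shows that $\Ord_P(V/W)$ can be nonzero even when $\Ord_P V=0$, the obstruction living in $R^1\Ord_P W$. This is exactly the crux of the lemma: the paper's proof first decomposes $U=\oplus_\psi U_\psi$ using $\Ext^1_T(\psi,\psi')=0$ for $\psi\neq\psi'$ and $\JH(\Ind_{\overline P}^G\psi)\cap\JH(\Ind_{\overline P}^G\psi')=\emptyset$, so as to reduce to $V=V_\psi$, then takes an irreducible \emph{sub}representation $\pi$ (necessarily a character), and proves the claim $\Ord_P(V/\pi)=0$ before iterating. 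That claim is immediate for $L\neq\Q_p$ (where $R^1\Ord_P$ of a character vanishes), but for $L=\Q_p$ one has $R^1\Ord_P(\chi\circ\det)=\chi\omega^{-1}\otimes\chi\omega$, and one needs the $\psi$-isotypic restriction together with $p>2$ to see that the connecting map $\Ord_P(V/\pi)\to R^1\Ord_P\pi$ vanishes. Your argument skips the isotypic decomposition entirely, so this step cannot be repaired as written.

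The second gap is your mechanism for splitting self-extensions. A nonsplit extension $E$ of $\chi\circ\det$ by itself corresponds to a nonzero smooth homomorphism $L^\times\to\F$ (and such homomorphisms exist), but it is \emph{not} detected by $\Ord_P$: when $L\neq\Q_p$, Lemma \ref{lemma:Ord-irreducible}(ii) gives $\Ord_P(\chi\circ\det)=R^1\Ord_P(\chi\circ\det)=0$, so the long exact sequence forces $\Ord_P E=0$ whether or not $E$ splits (indeed $N$ acts trivially on $E$, so the Hecke operator defining $\Ord_P$ acts by $q=0$ on $E^{N_0}$). So the hypothesis $\Ord_P V=0$ gives you nothing here, and your claim that ``$\Ord_P E\neq0$ unless the extension splits'' is false. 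What actually rules out such self-extensions inside a subquotient of $\Ind_{\overline P}^G U$ is the cited result \cite[Lem. 4.3.20, Prop. 4.3.21]{EmertonOrd2} (using $p>2$), which the paper invokes at this point rather than arguing via $\Ord_P$. Your reduction of the remaining extension problem to $\SL_2(L)$-perfectness is fine in spirit for extensions between \emph{distinct} characters, but the self-extension case is precisely where a substitute for the Emerton input is needed.
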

\begin{proof}
If $\psi,\psi':T\ra\F^{\times}$ are distinct characters, then $\Ext^1_{T}(\psi,\psi')=0$ by \cite[Lem.~4.3.10]{EmertonOrd2}. Hence, any locally admissible $T$-representation $U$ can be decomposed as a direct sum $U\cong\oplus_{\psi} U_{\psi}$, where $U_{\psi}$ is the largest subrepresentation of $U$ whose Jordan--H\"older factors are all isomorphic to $\psi$. This implies $\Ind_{\overline{P}}^GU\cong\oplus_{\psi}\Ind_{\overline{P}}^GU_{\psi}$. By \cite[Thm.~30(1)]{BL} combined with Proposition \ref{prop-Ord}(iii), for $\psi\neq\psi'$ we have \[\JH(\Ind_{\overline{P}}^G\psi)\cap \JH(\Ind_{\overline{P}}^G\psi')=\emptyset.\]
As a consequence, any subrepresentation $V$ of $\Ind_{\overline{P}}^GU$ has a decomposition $V\cong \oplus_{\psi}V_{\psi}$, where $V_{\psi}$ is the largest subrepresentation of $V$ whose Jordan--H\"older factors all lie in $\JH(\Ind_{\overline{P}}^G\psi)$; explicitly $V_{\psi}=V\cap  \Ind_{\overline{P}}^GU_{\psi}$. It is clear that this decomposition remains true for any \emph{subquotient} of $\Ind_{\overline{P}}^GU$.  Hence, to prove the lemma, we may assume $U=U_{\psi}$ for some $\psi$, and so $V=V_{\psi}$.

Assume $\Ord_P V  = 0.$ Let $\pi$ be an irreducible subrepresentation of $V$. Then $\pi$ is  non-supersingular and $\Ord_P\pi=0$. By Proposition \ref{prop-Ord}(iii) and Lemma \ref{lemma:Ord-irreducible}, $\pi$ has to be one-dimensional, say $\pi\cong\chi\circ\det$ for some character $\chi: L^{\times}\ra \F^{\times}$,  and the assumption $U=U_{\psi}$  implies  $\psi=\chi\otimes\chi$. We claim that $\Ord_P(V/\pi)=0$. Assuming the claim, we may continue the argument to deduce that all Jordan--H\"older factors of $V$ are one-dimensional. Since $p>2$, $V$ has to be semisimple by \cite[Lem.~4.3.20, Prop.~4.3.21]{EmertonOrd2}  and the result follows.

Now we prove the claim. If  $L\neq \Q_p$, then the claim is obvious using Lemma \ref{lemma:Ord-irreducible}(ii).
If $L=\Q_p$, then by Lemma \ref{lemma:Ord-irreducible}(ii) the sequence $0\ra \pi\ra V\ra V/\pi\ra0$ induces an injection \[\partial: \Ord_P(V/\pi)\hookrightarrow R^1\Ord_P\pi\cong \chi\omega^{-1}\otimes\chi\omega.\]
However, since $V=V_{\psi}$, $\Ord_{P}(V/\pi)$ admits only $\psi=\chi\otimes\chi$ as subquotients, so $\partial$ must be zero (as $p>2$) and the claim follows.
\end{proof}
\subsection{Ordinary parts of injectives}

We first recall the following result.

\begin{proposition}\label{prop-BD-ordinary=injective}
Let $\Omega$ be an admissible smooth representation of $G$ such that $\Omega|_K$ is an  injective object in the category $\Rep_{\F}(K/Z_1)$. Then
\begin{enumerate}
\item[(i)] $\Ord_P\Omega$ is an injective object in the category $\Rep_{\F}(T_0/Z_1)$ and

\item[(ii)] $R^i\Ord_P\Omega=0$ for $i\geq 1$.
\end{enumerate}
\end{proposition}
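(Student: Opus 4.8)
The plan is to reduce both statements to a computation of the derived ordinary parts of $\Ind_{\overline{P}}^G$ of some $T$-representation, exploiting the key fact that $\Omega$, being injective as a $K/Z_1$-representation, is a direct summand of a representation induced (in the smooth sense) from $\overline P$. More precisely, the first step is to recall from the structure theory of injective objects in $\Rep_{\F}(K/Z_1)$ that $\Omega|_K$ is a direct sum of representations of the form $\Ind_{I}^K\chi$ (or rather their projective/injective hulls over $\F[\![K/Z_1]\!]$, which are all of this induced shape since $K/Z_1$ has a pro-$p$ Iwahori of finite index); concretely, any injective $K/Z_1$-representation embeds $K$-equivariantly into a sum of $\mathrm{Ind}_{I_1/Z_1}^{K/Z_1}\F$'s, i.e. into $\Ind_{I}^{K}(\text{something})$. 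Globalizing, one wants to say $\Omega$ is a direct summand (as a $G$-representation) of a principal series $\Ind_{\overline P}^G U$ for a suitable locally admissible $U$; this is the content of the compatibility between the $K$-injective hull and the $G$-structure, and is exactly where admissibility of $\Omega$ enters.

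Granting that $\Omega$ is a $G$-direct summand of $\Ind_{\overline P}^G U$ with $U$ locally admissible over $T$, part (ii) becomes immediate: by Proposition \ref{prop-Ord}(v) (and its generalization via \cite{Hauseux} for the higher derived functors, or rather the vanishing $R^i\Ord_P(\Ind_{\overline P}^G U)=0$ for $i\geq 1$ when $L\neq\Q_p$, and the analogous statement tracking degrees when $L=\Q_p$ — but here one uses that $\Omega$ is $K$-injective, not just that it is a summand of a principal series) one gets $R^i\Ord_P(\Ind_{\overline P}^G U)=0$, and $R^i\Ord_P$ commutes with direct summands, so $R^i\Ord_P\Omega=0$ for $i\geq 1$. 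For part (i), one uses the adjunction \eqref{equation-ord-adjunction}: for any $T/Z_1$-representation $W$ and any short exact sequence $0\to W'\to W\to W''\to 0$ in $\Rep_\F(T_0/Z_1)$, exactness of $\Hom_{T}(-,\Ord_P\Omega)=\Hom_G(\Ind_{\overline P}^G(-),\Omega)$ follows from exactness of $\Ind_{\overline P}^G$ together with the $K$-injectivity of $\Omega$ (which gives that $\Hom_G(\Ind_{\overline P}^G(-),\Omega)$, when restricted to representations inflated from $T_0/Z_1$, is computed after restricting to $K$, where $\Ind_{\overline P}^G W|_K$ relates to $\Ind_{\overline P \cap K}^K$ by the Iwasawa/Bruhat decomposition and $\Omega|_K$ is injective). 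Thus $\Ord_P\Omega$ is injective in $\Rep_\F(T_0/Z_1)$.

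The main obstacle I anticipate is the first step: passing from "$\Omega|_K$ is $K$-injective" to a clean statement relating $\Ord_P\Omega$ and ordinary parts of principal series, without literally needing $\Omega$ itself to be a principal series. The honest route is probably not to realize $\Omega$ as a summand of $\Ind_{\overline P}^G U$ globally, but rather to work directly: compute $\Ord_P\Omega$ and $R^i\Ord_P\Omega$ via the explicit description $\Ord_P V = \varinjlim_{N_0'} V^{N_0'}$ with the Hecke action of $T^+$, and use the Mackey-style decomposition of $\Omega|_{\overline P\cap K}$ coming from $K$-injectivity to show the relevant transition maps are surjective (so no higher derived functors) and that $\Omega^{N_0}$ is injective over $T_0/Z_1$. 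Concretely, $K$-injectivity of $\Omega$ forces $\Omega|_{N_0}$ to be "co-induced enough" that the Hochschild–Serre / Emerton spectral sequence computing $R^i\Ord_P$ degenerates. Once that degeneration and the injectivity of $\Omega^{N_0}$ as a $T_0/Z_1$-module are established, (i) and (ii) both drop out. I would cite \cite[\S3]{EmertonOrd2} and \cite{EmertonPaskunas} for the derived-functor formalism and reduce the hands-on part to a single cohomological vanishing statement about $\Omega|_{N_0}$.
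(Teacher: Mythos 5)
Your proposal has a genuine gap in part (i), and the route you start with cannot work. An admissible $\Omega$ with $\Omega|_K$ injective is \emph{not} in general a direct summand of $\Ind_{\overline P}^G U$: the representations to which this proposition is applied (e.g.\ $\Omega_v=M_v^\vee$) contain supersingular constituents, so no amount of massaging the $K$-injective hull will realize them inside a principal series. You half-abandon this, but your replacement argument for (i) still does not close. First, the adjunction $\Hom_G(\Ind_{\overline P}^G U,V)\cong\Hom_T(U,\Ord_PV)$ of Proposition \ref{prop-Ord}(i) is an adjunction for $T$-representations, whereas the statement asserts injectivity of $\Ord_P\Omega$ in $\Rep_{\F}(T_0/Z_1)$: you must test against arbitrary smooth $T_0/Z_1$-representations, and these are not restrictions of $T$-representations, so the adjunction simply does not see them. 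Second, the claim that $\Hom_G(\Ind_{\overline P}^G(-),\Omega)$ is ``computed after restricting to $K$'' is false: a $G$-map is a $K$-map satisfying extra conditions, so exactness of $\Hom_K(-,\Omega|_K)$ gives nothing about exactness of $\Hom_G(-,\Omega)$; indeed, even with (ii) in hand, the spectral sequence \eqref{equation-Emerton-SS} identifies the relevant obstruction with $\Ext^1_{T,\zeta}(U,\Ord_P\Omega)$, which is exactly what you are trying to kill, so the argument is circular. The paper disposes of (i) by citing \cite[Cor.~4.5]{Breuil-Ding}, whose proof is a direct analysis of $\Omega^{N_0}$ and the Hecke action of $T^+$, not the adjunction.

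For (ii), your final fallback paragraph is essentially the paper's argument, but you never state the one fact that makes it work: since $N_0$ is a closed subgroup of $K$ and $\Omega|_K$ is injective, $\Omega|_{N_0}$ is injective, hence $H^i(N_0,\Omega)=0$ for $i\geq 1$; Emerton's $\delta$-functor $H^i\Ord_P$ of \cite[Def.~3.3.1]{EmertonOrd2} is built from $H^i(N_0,-)$, so $H^i\Ord_P\Omega=0$ for $i\geq 1$ directly from the definition, and the main theorem of \cite{EmertonPaskunas} identifies $R^i\Ord_P$ with $H^i\Ord_P$. (Note also that $\Ord_PV$ is not $\varinjlim_{N_0'}V^{N_0'}$; the limit is over the Hecke operators of $T^+$ acting on $V^{N_0}$.) If you make that vanishing explicit, (ii) is complete; (i) genuinely requires a separate argument along the lines of \cite{Breuil-Ding}.
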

\begin{proof}
(i) It is a special case of \cite[Cor.~4.5]{Breuil-Ding}.

(ii) It follows directly from the definition that $H^i\Ord_P\Omega=0$ if $\Omega$ is injective. The result then follows from the main result of \cite{EmertonPaskunas} recalled at the beginning of  \S\ref{subsection:ordinary}.
\end{proof}

\begin{lemma}\label{lemma:gene-by-socle}
Let $U$ be a finite dimensional  representation of $T$. Assume that $U$ becomes semisimple when restricted to $T_0$. Then $\Ind_{\overline{P}}^GU$ is generated by its $K$-socle as a $G$-representation.
\end{lemma}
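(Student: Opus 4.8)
The plan is to reduce the statement to a decomposition along characters of $T$ combined with the structure of $\Ind_{\overline P}^KU$ and Frobenius reciprocity. First, since $U$ is semisimple as a $T_0$-representation and any two distinct smooth characters of $T$ have no nontrivial extensions (by \cite[Lem. 4.3.10]{EmertonOrd2}, as recalled in the proof of Lemma \ref{lemma:V=directsum}), we may decompose $U\cong\bigoplus_\psi U_\psi$ where $U_\psi$ is $\psi$-isotypic; correspondingly $\Ind_{\overline P}^GU\cong\bigoplus_\psi\Ind_{\overline P}^GU_\psi$, and since $\soc_K$ commutes with finite direct sums, it suffices to treat the case $U=U_\psi$, i.e. $U$ is a successive extension of a single character $\psi$ of $T$, hence (as $U$ is $T_0$-semisimple) $U|_{T_0}\cong\psi|_{T_0}^{\oplus m}$ where $m=\dim_\F U$.

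Next, I would analyze $\Ind_{\overline P}^GU|_K$. By the Iwasawa (Bruhat) decomposition $G=\overline P\cdot K$ (more precisely $G=\overline P K$ with $\overline P\cap K=\overline{P}(\cO_L)$), restriction to $K$ gives $\Ind_{\overline P}^GU|_K\cong\Ind_{\overline P(\cO_L)}^K(U|_{T_0})$, where $\overline P(\cO_L)=\overline P\cap K$ acts on $U$ through its quotient $T_0$ (the unipotent radical $N_0^{\mathrm{op}}$ acting trivially). Since $U|_{T_0}\cong\psi|_{T_0}^{\oplus m}$, this is $(\Ind_{\overline P(\cO_L)}^K\psi)^{\oplus m}$. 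Now $\Ind_{\overline P(\cO_L)}^K\psi$ is a representation of $\GL_2(\F_q)$ (inflated to $K$), namely a principal series type object $\Ind_{\overline B(\F_q)}^{\GL_2(\F_q)}\overline\psi$ where $\overline\psi$ is the reduction of $\psi$; such induced representations are known (Carter–Lusztig, or \cite[\S2]{BP}) to have irreducible $K$-socle, and crucially the inclusion of the socle $\soc_K\hookrightarrow\Ind_{\overline P(\cO_L)}^K\psi$ is an essential monomorphism, so the socle already "sees" all of it in the sense that any $K$-subrepresentation meets the socle.

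The heart of the argument is then: if $W\subseteq\Ind_{\overline P}^GU$ is the $G$-subrepresentation generated by $\soc_K(\Ind_{\overline P}^GU)$, I must show $W$ is everything. Consider the quotient $V\defn\Ind_{\overline P}^GU/W$; by construction $\soc_K V=0$ would finish, but that is too strong — instead I would show $\soc_K V$ contains no Jordan–Hölder factor that appears in $\soc_K(\Ind_{\overline P}^GU)$, and more usefully that $\Ord_P V=0$. Indeed, applying the left-exact functor $\Ord_P$ and using the adjunction \eqref{equation-ord-adjunction} (Proposition \ref{prop-Ord}(i)) together with $\Ord_P(\Ind_{\overline P}^GU)\cong U$ (Proposition \ref{prop-Ord}(iii)): the inclusion $\soc_K\hookrightarrow\Ind_{\overline P}^GU$ extends to a $G$-map $\Ind_{\overline P}^K\soc_K\to\Ind_{\overline P}^GU$ whose image is $W$ — wait, more directly, I would argue that $W\supseteq$ a subrepresentation $W'$ with $\Ord_P W'=\Ord_P(\Ind_{\overline P}^GU)=U$ because the generating $K$-socle pairs, via Frobenius reciprocity from $K$ up to $G$ and the adjunction, surjectively onto $U$ on ordinary parts. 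Then $\Ord_P$ being left exact, $\Ord_P V=\Ord_P(\Ind_{\overline P}^GU/W)$ is a quotient of $U/\Ord_P W$, which vanishes. By Lemma \ref{lemma:V=directsum}, $V$ is then a direct sum of one-dimensional representations of $G$; but a one-dimensional representation $\chi\circ\det$ is itself a $K$-irreducible, so $\soc_K V=V$, and each such summand is generated by its $K$-socle — tracing back, the corresponding copies in $\soc_K(\Ind_{\overline P}^GU)$ already lie in $W$, forcing $V=0$.

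The main obstacle I expect is making precise the claim that the $G$-subrepresentation generated by $\soc_K(\Ind_{\overline P}^GU)$ has full ordinary part equal to $U$: this requires knowing that the restriction-to-$K$-socle followed by $\Ord_P$ is surjective onto $U$, which should follow from a careful bookkeeping of the composite $\Ind_{\overline P}^K(\soc_K)\to\Ind_{\overline P}^GU$ and the behaviour of $\Ord_P$ on such induced-from-$K$ objects, but the details (especially controlling whether $N$-coinvariants of $W$ already exhaust those of the full induction) need the explicit principal-series structure from \cite[\S2]{BP}. An alternative and perhaps cleaner route for this step would be to use Proposition \ref{prop-Ord}(iv): $\Hom_T(R^f\Ord_P V,-)\cong\Hom_G(V,\Ind_P^G(-\,\alpha_P))$ sends epimorphisms to epimorphisms, so one could instead dualize and show $R^f\Ord_P V=0$, i.e. $V_N=0$, by the same kind of Jacquet-module computation; I would keep both options open and pick whichever makes the surjectivity bookkeeping shortest.
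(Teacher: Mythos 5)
There is a genuine gap at the heart of your argument. Your plan hinges on the claim that the $G$-subrepresentation $W$ generated by $\soc_K(\Ind_{\overline{P}}^GU)$ satisfies $\Ord_PW=U$, and you acknowledge yourself that you do not prove this; nothing in the adjunction formalism gives it for free, since $\Ord_P$ of a subrepresentation is only a subrepresentation of $\Ord_P$ of the ambient object. Worse, even granting $\Ord_PW=U$, the deduction ``$\Ord_P$ being left exact, $\Ord_PV$ is a quotient of $U/\Ord_PW$'' is false: the long exact sequence for $0\to W\to\Ind_{\overline{P}}^GU\to V\to0$ only shows that $\Ord_PV$ modulo the image of $U$ injects into $R^1\Ord_PW$, and $R^1\Ord_P$ of a subrepresentation of a principal series is not controlled by Proposition \ref{prop-Ord}(v). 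There is also a smaller but real error earlier: $\Ind_{\overline{P}\cap K}^K\psi$ (smooth induction from $\overline{P}(\cO_L)$, with $K/(\overline{P}\cap K)\cong\bP^1(\cO_L)$ profinite) is infinite dimensional and is \emph{not} the inflation of $\Ind_{\overline{B}(\F_q)}^{\GL_2(\F_q)}\overline{\psi}$; the latter is only its space of $K_1$-invariants, and its socle need not be irreducible when $\psi=\chi\otimes\chi$. Finally, the closing step (``tracing back, the corresponding copies in the socle already lie in $W$'') is not an argument: a nonzero $G$-map onto a one-dimensional representation need not be nonzero on the $K$-socle without further justification.

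The paper's proof avoids all of this and is essentially three lines. One first observes that $\soc_K(\Ind_{\overline{P}}^GU)$ depends only on $U|_{T_0}$, so the semisimplicity hypothesis gives $\soc_K(\Ind_{\overline{P}}^GU)\cong\bigoplus_{\psi\in\JH(U)}\soc_K(\Ind_{\overline{P}}^G\psi)$ (with multiplicities). The case $U=\psi$ one-dimensional is exactly \cite[Thm.\ 30]{BL}: a principal series is generated by its $K$-socle. The general case then follows by d\'evissage along a filtration of $U$ by characters: for $0\to U'\to U\to\psi\to0$, the subrepresentation generated by the socle contains $\Ind_{\overline{P}}^GU'$ by induction and surjects onto $\Ind_{\overline{P}}^G\psi$ because, by the displayed equality of socles, the image of $\soc_K(\Ind_{\overline{P}}^GU)$ in $\Ind_{\overline{P}}^G\psi$ is all of $\soc_K(\Ind_{\overline{P}}^G\psi)$. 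I would recommend replacing the ordinary-parts machinery by this direct citation of Barthel--Livn\'e plus d\'evissage.
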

\begin{proof}
Note that the $K$-socle of $\Ind_{\overline{P}}^GU$ depends only on the restriction of $U$ to $T_0$.   The assumption on $U$ implies that \[\soc_K(\Ind_{\overline{P}}^GU)\cong\bigoplus_{\psi\in\JH(U)}\soc_K(\Ind_{\overline{P}}^G\psi).\]
By \cite[Thm.~30]{BL}, $\Ind_{\overline{P}}^G\psi$ is generated by its $K$-socle, namely the assertion holds if $U=\psi$ is one-dimensional. The general case follows from the above equality of socles.
\end{proof}

\begin{proposition} \label{prop:ordinary-injective}
Let  $\Omega$ be an admissible smooth representation of $G$ such that $\Omega|_K$ is injective in the category $\Rep_{\F}(K/Z_1)$. Let  $\iota:V\hookrightarrow \Omega$ be a subrepresentation with   $\rsoc_K(V)=\rsoc_K(\Omega)$.
Then the induced inclusion  $\Ord_P(\iota): \Ord_PV\hookrightarrow \Ord_P\Omega$ is essential  when restricted to $T_0$.
\end{proposition}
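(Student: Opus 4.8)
The plan is not to compute $\Ord_P V$ and $\Ord_P\Omega$ directly, but to show that $\soc_{T_0}(\Ord_P\Omega)$ is already contained in the image of $\Ord_P(\iota)$. This suffices: a nonzero smooth representation of the profinite group $T_0$ has nonzero $T_0$-socle, so any nonzero $T_0$-subrepresentation $W\subseteq\Ord_P\Omega$ satisfies $0\neq\soc_{T_0}(W)\subseteq\soc_{T_0}(\Ord_P\Omega)\subseteq\Ord_P V$ and hence meets $\Ord_P V$, which is exactly what ``essential over $T_0$'' means. (Alternatively, by Proposition~\ref{prop-BD-ordinary=injective}(i) the representation $\Ord_P\Omega$ is injective in $\Rep_\F(T_0/Z_1)$, so essentiality of $\Ord_P(\iota)$ is equivalent to the equality of $T_0$-socles, which is what the argument below establishes.) Since $T_0$ is abelian and $\F$ is large, $\soc_{T_0}(\Ord_P\Omega)=\bigoplus_\chi N_\chi$, the sum over smooth characters $\chi\colon T_0\to\F^\times$ trivial on $Z_1$ of the $\chi$-isotypic subspaces $N_\chi\subseteq\Ord_P\Omega$; thus it is enough to prove $N_\chi\subseteq\Ord_P V$ for each such $\chi$.

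The key point is that each $N_\chi$ is itself an admissible input for parabolic induction. Indeed, $\Ord_P\Omega$ is an admissible smooth representation of $T$ (as $\Ord_P$ preserves admissibility), and $N_\chi$ is contained in the invariants under the open subgroup $\ker\chi\subseteq T$, hence finite-dimensional; it is $T$-stable because $T$ is abelian with $T_0$ normal in $T$, and $N_\chi|_{T_0}\cong\chi^{\oplus\dim N_\chi}$ is semisimple. Therefore Lemma~\ref{lemma:gene-by-socle} applies with $U=N_\chi$: the principal series $\Ind_{\overline{P}}^G N_\chi$ is generated over $G$ by its $K$-socle, and that $K$-socle is a semisimple representation of $K$.

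Now one runs the adjunction. By Proposition~\ref{prop-Ord}(i), the inclusion $j\colon N_\chi\hookrightarrow\Ord_P\Omega$ corresponds to a $G$-morphism $\phi\colon\Ind_{\overline{P}}^G N_\chi\to\Omega$. Its restriction to $\soc_K(\Ind_{\overline{P}}^G N_\chi)$ has image a semisimple $K$-subrepresentation of $\Omega$, hence lies in $\soc_K\Omega$, which by hypothesis equals $\soc_K V$ and so is contained in $V$; as $\soc_K(\Ind_{\overline{P}}^G N_\chi)$ generates $\Ind_{\overline{P}}^G N_\chi$ over $G$ and $V$ is $G$-stable, we get $\im(\phi)\subseteq V$, i.e. $\phi=\iota\circ\phi'$ for a $G$-morphism $\phi'\colon\Ind_{\overline{P}}^G N_\chi\to V$. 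By naturality of the adjunction isomorphism in the second variable, $j=\Ord_P(\iota)\circ j'$, where $j'\colon N_\chi\to\Ord_P V$ is the $T$-morphism attached to $\phi'$; hence $N_\chi\subseteq\Ord_P V$. Summing over $\chi$ gives $\soc_{T_0}(\Ord_P\Omega)\subseteq\Ord_P V$, and the reduction in the first paragraph completes the proof.

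The substantive inputs are Lemma~\ref{lemma:gene-by-socle} (generation by the $K$-socle for induced representations that are semisimple over $T_0$), the adjunction $\Hom_G(\Ind_{\overline{P}}^G-,\Omega)\cong\Hom_T(-,\Ord_P\Omega)$, the admissibility of $\Ord_P\Omega$, and the hypothesis $\soc_K V=\soc_K\Omega$. The main obstacle is conceptual rather than computational: one has to realize that the right object to feed into Lemma~\ref{lemma:gene-by-socle} is the whole isotypic space $N_\chi=(\Ord_P\Omega)[\chi]$ (not an individual character of $T$), and then the only thing that requires care is the formal verification that ``$\phi$ factors through $V$'' forces ``$j$ factors through $\Ord_P(\iota)$'', which is just naturality of the adjunction but is the mechanism transporting the $K$-socle information down to the $T_0$-level.
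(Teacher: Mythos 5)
Your proof is correct and follows essentially the same route as the paper's: both arguments isolate a finite-dimensional, $T_0$-isotypic $T$-subrepresentation of $\Ord_P\Omega$ (you take the full eigenspace $N_\chi$, the paper takes the $T$-span of a single eigenvector in a putative complement of $\Ord_PV$), apply Lemma \ref{lemma:gene-by-socle} to see that the induced representation is generated by its $K$-socle, and then use the adjunction together with $\soc_K V=\soc_K\Omega$ to force the corresponding map into $\Omega$ to land in $V$, hence the isotypic piece to land in $\Ord_PV$. The only difference is presentational (direct containment of the $T_0$-socle versus contradiction), so nothing further to flag.
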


\begin{proof}
Assume that $\Ord_P(\iota)$ is not essential when restricted to $T_0$. Then there exists a smooth character $\psi_0:T_0\ra \F^{\times}$ together with a $T_0$-equivariant embedding
\begin{equation}\label{equation-tau-T0}
\psi_0\oplus \Ord_PV\hookrightarrow \Ord_P\Omega.
\end{equation}
Choose a basis $v$ for the underlying space of $\psi_0$, and let $U:=\langle T.v\rangle\subset \Ord_P\Omega$ be the $T$-representation generated by $v$.    Since $\Ord_P\Omega$ is admissible and $T$ is abelian, $U$ is finite dimensional over $\F$ (because if $v$ is fixed by some open compact subgroup of $T_0$ then so is $tv$ for any $t\in T$). Moreover, again using the fact $T$ is abelian, one checks   that  $U|_{T_0}$ is semisimple and $\psi_0$-isotypic, i.e. $U|_{T_0}\cong \psi_0^{\oplus r}$ where $r=\dim_{\F}U$. Lemma \ref{lemma:gene-by-socle} implies that $\Ind_{\overline{P}}^GU$ is generated by its $K$-socle.
Hence, the image of the morphism (provided by Proposition \ref{prop-Ord}(i))
\[\beta:\Ind_{\overline{P}}^GU\ra \Omega\]
is also generated by its $K$-socle. In particular, $\im(\beta)\subset \langle G.\soc_K(\Omega)\rangle$. However, by assumption $\soc_K(V)=\soc_K(\Omega)$, so we get $\im(\beta)\subset V$  and consequently $U\subset \Ord_PV$,  contradicting \eqref{equation-tau-T0}.
\end{proof}

\begin{corollary}\label{cor:End-T0}
Keep the notation of Proposition \ref{prop:ordinary-injective}. Assume moreover that $\Ord_PV\cong \chi$ is irreducible. Then there is a ring isomorphism
\[\End_{T_0}((\Ord_P\Omega)^{\vee}|_{T_0})\cong\F[\![S_1,\dots,S_f]\!].\]
\end{corollary}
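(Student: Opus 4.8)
\textbf{Proof plan for Corollary \ref{cor:End-T0}.}

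The plan is to exploit Proposition \ref{prop:ordinary-injective}, which tells us that $\Ord_P V \hookrightarrow \Ord_P\Omega$ is essential as $T_0$-representations, together with the hypothesis $\Ord_P V \cong \chi$ irreducible. First I would dualize: writing $M \defn (\Ord_P\Omega)^{\vee}$, which by Proposition \ref{prop-BD-ordinary=injective}(i) and Pontryagin duality is a projective object in the category of pseudo-compact $\F[\![T_0/Z_1]\!]$-modules, the essentiality statement becomes the assertion that the surjection $M \onto (\Ord_P V)^{\vee} \cong \chi^{\vee}$ is a projective cover of $\chi^{\vee}$ in $\Rep_{\F}(T_0/Z_1)^{\vee}$. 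Hence $M \cong \Proj_{T_0/Z_1}\chi^{\vee}$, the projective envelope of the character $\chi^{\vee}$.

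The second step is to identify the endomorphism ring of this projective envelope. Since $\chi^{\vee}$ is a character (one-dimensional), twisting by $\chi^{\vee}$ (or rather $(\chi^{\vee})^{-1}\circ$ the relevant coordinate) reduces us to the case of the trivial character, so $\End_{T_0}(M) \cong \End_{T_0}(\Proj_{T_0/Z_1}\mathbf{1})$. Now $T_0/Z_1$ is a compact $p$-adic analytic group: explicitly $T_0 = \bigl\{\smatr{a}00{d}: a,d\in\cO_L^{\times}\bigr\}$ and $Z_1 = Z\cap K_1$, so $T_0/Z_1$ is (after killing the finite prime-to-$p$ part $H\cap T_0$, which only contributes idempotents that are absorbed by passing to the block of $\mathbf{1}$) a uniform pro-$p$ group of dimension equal to $\dim_{\Q_p}(\cO_L\oplus\cO_L) - \dim_{\Q_p}\cO_L = f$; indeed the relevant pro-$p$ part is $(1+p\cO_L)^{\times}\times(1+p\cO_L)^{\times}$ modulo the diagonal copy of $(1+p\cO_L)^{\times}$, which is $\cong (1+p\cO_L)^{\times} \cong \Z_p^f$. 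For a projective indecomposable module over the Iwasawa algebra $\F[\![T_0/Z_1]\!]$ lying in the block of the trivial character, the endomorphism ring is the completed group algebra of the pro-$p$ part itself, i.e. $\F[\![\Z_p^f]\!] \cong \F[\![S_1,\dots,S_f]\!]$ by choosing topological generators (the standard isomorphism sending a generator $\gamma_i$ to $1+S_i$). This is a standard fact about Iwasawa algebras of abelian uniform pro-$p$ groups, which I would cite from the appendix \S\ref{section:appendix} on non-commutative Iwasawa algebras.

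The only genuinely nontrivial point—and the step I expect to be the main obstacle to write cleanly—is the first one: upgrading "essential as $T_0$-representations" to "is a projective cover," i.e. checking that $M = (\Ord_P\Omega)^{\vee}$ really is \emph{indecomposable} (equivalently has local endomorphism ring) and that the map to $\chi^{\vee}$ is surjective. Surjectivity is clear since $\Ord_P V \hookrightarrow \Ord_P\Omega$ is an inclusion. For indecomposability: $M$ is projective over $\F[\![T_0/Z_1]\!]$ by Proposition \ref{prop-BD-ordinary=injective}(i), and an essential extension $\chi \hookrightarrow \Ord_P\Omega$ dualizes to show the $\chi^{\vee}$-isotypic part of the cosocle of $M$ is one-dimensional; but since $T_0/Z_1$ has the property (being essentially a pro-$p$ group times a finite group) that the block of a character has a unique indecomposable projective, the cosocle of the block summand of $M$ containing this cosocle line is exactly $\chi^{\vee}$, forcing $M$ (up to the finitely many blocks, all of which but one are killed because $\Ord_P\Omega$'s socle is $\chi$-isotypic by essentiality) to be this unique indecomposable projective. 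Once $M\cong \Proj_{T_0/Z_1}\chi^{\vee}$ is established, the endomorphism ring computation in the second and third steps is routine and the isomorphism $\End_{T_0}(M|_{T_0}) \cong \F[\![S_1,\dots,S_f]\!]$ follows.
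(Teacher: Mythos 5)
Your proposal is correct and follows essentially the same route as the paper: both arguments combine Proposition \ref{prop-BD-ordinary=injective} with Proposition \ref{prop:ordinary-injective} to identify $(\Ord_P\Omega)^{\vee}|_{T_0}$ with $\Proj_{T_0/Z_1}\chi^{\vee}\cong\chi^{\vee}\otimes\F[\![T_1/Z_1]\!]$ (the paper phrases this as ``injective envelope'' before dualizing, you as ``projective cover'' after dualizing, which are equivalent), and then identify the endomorphism ring with $\F[\![T_1/Z_1]\!]\cong\F[\![S_1,\dots,S_f]\!]$. The only cosmetic difference is that the paper cites Pa\v{s}k\=unas's deformation-theoretic description of $\Proj_{T_0/Z_1}\chi^{\vee}$ for the last step, whereas you compute the Iwasawa algebra of the pro-$p$ part of $T_0/Z_1$ directly; both are standard and your dimension count for $T_0/Z_1$ is correct.
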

\begin{proof}
Combining Proposition \ref{prop-BD-ordinary=injective} and Proposition \ref{prop:ordinary-injective}, $(\Ord_P\Omega)|_{T_0}$ is isomorphic to an injective envelope of $\chi$ in $\Rep_{\F}(T_0/Z_1)$, and so $(\Ord_P\Omega)^{\vee}|_{T_0}$ is isomorphic to $\Proj_{T_0/Z_1}\chi^{\vee}$.
Let $T_1$ denote the pro-$p$ Sylow subgroup of $T_0$. Endowed with  the trivial action of $H$, $\F[\![T_1/Z_1]\!]$ is isomorphic to $\Proj_{T_0/Z_1}\ide$.  The assertion follows from \cite[Lem.~3.32]{Pa13} which says that $\Proj_{T_0/Z_1}\chi^{\vee}\cong \chi^{\vee}\otimes \F[\![T_1/Z_1]\!]$ represents the universal deformation problem (with $\varpi$-torsion coefficients) of $\chi$ with the universal deformation ring  isomorphic to $\F[\![S_1,\dots,S_f]\!]\cong\F[\![T_1/Z_1]\!]$.
\end{proof}
 \begin{lemma}\label{lemma:Ext-Inj=0}
 Let $\tau$ be a $1$-generic Serre weight and $\psi$ be a character of $T$. Let $U$ be an admissible $T$-representation whose Jordan--H\"older factors are all isomorphic to  $\psi$. Assume that  $U|_{T_0}$ is injective in the category $\Rep_{\F}(T_0/Z_1)$. If $\Hom_{K}(\tau,\Ind_{\overline{P}}^GU)\neq 0$, then $\Ext^1_{K/Z_1}(\tau,\Ind_{\overline{P}}^GU)=0$.
 \end{lemma}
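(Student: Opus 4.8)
The plan is to mimic the reduction already used for $\Ord_P$ but now in degree one, exploiting the Emerton spectral sequence \eqref{equation-Emerton-SS} together with the injectivity of $U|_{T_0}$. First I would observe that by \cite[Thm. 2.4]{BP} (or by the description of $K$-socles of principal series) any nonzero morphism $\tau\hookrightarrow\Ind_{\overline{P}}^GU$ forces $\tau$ to be a Jordan--Hölder factor of $\Ind_I^K\chi_\psi$ for the character $\chi_\psi$ attached to $\psi|_{T_0}$; combined with the $1$-genericity of $\tau$ this pins down $\psi|_{T_0}$. After twisting we may assume $U$ has all Jordan--Hölder factors equal to a single character $\psi$ and $U|_{T_0}$ injective, so $U\cong \psi\otimes\F[\![T_1/Z_1]\!]$ as in Corollary \ref{cor:End-T0}, in particular $U$ is an injective object of $\Rep_{\F}(T_0/Z_1)$ after restriction to $T_0$.

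Next I would invoke the spectral sequence \eqref{equation-Emerton-SS} with $V=\Ind_{\overline{P}}^GU'$ for a suitable auxiliary $\overline{P}$-induction — actually the cleaner route is to first establish the two input facts:
\begin{enumerate}
\item[(1)] $\Ord_P(\Ind_{\overline{P}}^GU)\cong U$ and $R^1\Ord_P(\Ind_{\overline{P}}^GU)$ vanishes when $L\neq\Q_p$, while for $L=\Q_p$ it is $U^s\alpha_P^{-1}$, by Proposition \ref{prop-Ord}(iii),(v);
\item[(2)] $\Ext^i_{T_0/Z_1}(\tau^{T_0}\textrm{-part},U)=0$ for $i\geq 1$, because $U|_{T_0}$ is injective in $\Rep_{\F}(T_0/Z_1)$.
\end{enumerate}
Here I would pass to $K$-restriction: $\Ext^1_{K/Z_1}(\tau,\Ind_{\overline{P}}^GU)$ can be computed by restricting the induction to $K$ and using Mackey/$\Ind_I^K$-decomposition, but it is more robust to argue directly via the $G$-level $\Ext$ and the exact sequence
\[0\ra \Ext^1_{T,\zeta}(\psi',\Ord_PV)\ra \Ext^1_{G,\zeta}(\Ind_{\overline{P}}^G\psi',V)\ra \Hom_T(\psi',R^1\Ord_PV)\]
together with the comparison between $\Ext^1_{K/Z_1}(\tau,-)$ and $\Ext^1_{G,\zeta}(\cInd_K^G\tau/(\ldots),-)$ — concretely, one realizes a candidate nonsplit $K$-extension of $\tau$ by $\Ind_{\overline{P}}^GU|_K$ and lifts the problem to a $G$-extension of some $\Ind_{\overline{P}}^G\psi'$ by $\Ind_{\overline{P}}^GU$ via Frobenius reciprocity \eqref{equation-ord-adjunction}, then kills it using (1) and (2). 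The $1$-genericity of $\tau$ is what guarantees that the relevant $\psi'$ with $\Ext^1_{K/Z_1}(\tau,\Ind_{\overline P}^G\psi')\neq 0$ satisfies $\psi'|_{T_0}\in\mathscr{E}(\psi|_{T_0})$, so that $\Hom_T(\psi',R^1\Ord_P(\Ind_{\overline P}^GU))$ can be controlled (it is zero unless $L=\Q_p$, and in that case one checks $\psi^s\alpha_P^{-1}$ is not among the relevant $\psi'$).

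The main obstacle I expect is the passage from $\Ext^1$ over $K/Z_1$ to $\Ext^1$ in $\Rep_{\F,\zeta}(G)$ of the ambient principal series: one must show that a $K$-extension of $\tau$ by $(\Ind_{\overline{P}}^GU)|_K$ which does not split actually propagates to a nonsplit $G$-extension of an appropriate $\Ind_{\overline{P}}^G\psi'$ by $\Ind_{\overline{P}}^GU$, i.e. that the obstruction lives where the spectral sequence can see it. This is exactly the point where the hypothesis that $U|_{T_0}$ is \emph{injective} (not merely semisimple) enters decisively: it makes $\Ext^1_{T_0/Z_1}$ and $\Ext^2_{T_0/Z_1}$ into $U$ vanish, so the edge maps in \eqref{equation-Emerton-SS} force the relevant $\Ext^1_{G,\zeta}(\Ind_{\overline P}^G\psi',\Ind_{\overline P}^GU)$ to inject into $\Hom_T(\psi',R^1\Ord_P(\Ind_{\overline P}^GU))$, which we have just shown is zero for the $\psi'$ that matter. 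Once this vanishing is in hand, restricting back to $K$ and using that $\Hom_K(\tau,\Ind_{\overline P}^GU)\ne 0$ only for the controlled $\psi$ gives $\Ext^1_{K/Z_1}(\tau,\Ind_{\overline{P}}^GU)=0$, as desired.
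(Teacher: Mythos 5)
There is a genuine gap, and it sits exactly at the point you yourself flag as ``the main obstacle.'' The quantity to be killed is $\Ext^1_{K/Z_1}(\tau,\Ind_{\overline{P}}^GU)$, an extension group over the compact group $K/Z_1$, whereas the Emerton spectral sequence \eqref{equation-Emerton-SS} and the derived functors $R^i\Ord_P$ compute $\Ext^i_{G,\zeta}$ in the category of smooth $G$-representations with fixed central character. The only natural comparison map goes in the direction $\Ext^1_{G,\zeta}(\Pi,V)\to\Ext^1_{K/Z_1}(\Pi|_K,V|_K)$ (restriction), so vanishing of $\Ext^1_{G,\zeta}(\Ind_{\overline{P}}^G\psi',\Ind_{\overline{P}}^GU)$ tells you nothing about $\Ext^1_{K/Z_1}(\tau,\Ind_{\overline{P}}^GU)$; in general the latter is much larger than anything visible at the $G$-level. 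Your proposed ``lifting'' of a nonsplit $K$-extension of $\tau$ by $(\Ind_{\overline{P}}^GU)|_K$ to a nonsplit $G$-extension of some $\Ind_{\overline{P}}^G\psi'$ is precisely the step that has no construction behind it, and the injectivity of $U|_{T_0}$ --- which only controls the $\Ext^i_{T}$ terms feeding into \eqref{equation-Emerton-SS} --- cannot supply one. The opening observation (that the $K$-socle condition together with $1$-genericity forces $\soc_K(\Ind_{\overline{P}}^G\psi)\cong\tau$ and hence $\Hom_K(\tau',\Ind_{\overline{P}}^GU)=0$ for $\tau'\neq\tau$) is correct and is also the first step of the paper's argument, but the rest of the route does not reach the conclusion.

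The intended argument stays entirely at the compact level and is more elementary. By the Iwasawa decomposition $G=\overline{P}K$ one has $(\Ind_{\overline{P}}^GU)|_K\cong\Ind_{\overline{P}\cap K}^K(U|_{\overline{P}\cap K})$, so Frobenius reciprocity converts the claim into $\Ext^1_{(\overline{P}\cap K)/Z_1}(\tau,U)=0$. Writing $(\overline{P}\cap K)/Z_1\cong(T_0/Z_1)\ltimes\overline{N}_0$ and using that $\overline{N}_0$ acts trivially on $U$, the Hochschild--Serre spectral sequence gives
\[\Ext^1_{(\overline{P}\cap K)/Z_1}(\tau,U)\cong H^0\big(T_0/Z_1,\,H^1(\overline{N}_0,\tau^{\vee})\otimes_{\F}U\big),\]
where the injectivity of $U|_{T_0}$ is used exactly once, to kill the term $H^1(T_0/Z_1,(\tau^{\vee}\otimes U)^{\overline{N}_0})$. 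One then computes $H^1(\overline{N}_0,\tau^{\vee})\cong\oplus_{j\in\cS}\chi_{\tau}^{-1}\alpha_j^{s_j+1}$ as a $T_0$-representation; the $1$-genericity of $\tau$ identifies the inverse characters with $\chi_{\mu_j^{-}(\tau)}$ (resp.\ $\chi_{\mu_0^{+}(\tau)}$ if $f=1$), and a nonzero $H^0$ would produce an embedding $\mu_j^{-}(\tau)\hookrightarrow(\Ind_{\overline{P}}^GU)|_K$, contradicting the socle computation from the first step. If you want to salvage your write-up, replace the entire spectral-sequence machinery by this Frobenius reciprocity and group-cohomology computation.
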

 \begin{proof}
 First note that  the assumptions imply that $\Hom_K(\tau,\Ind_{\overline{P}}^{G}\psi)\neq0$. Since $\tau$ is $1$-generic, in particular $1<\dim_{\F}\tau<q$, it follows from \cite[\S7]{BL} that $\Ind_{\overline{P}}^G\psi$ is irreducible with $K$-socle isomorphic to $\tau$. We deduce that $\Hom_K(\tau',\Ind_{\overline{P}}^GU)=0$ for any Serre weight $\tau'$ such that $\tau'\neq \tau$.

 By  Shapiro's lemma, it is equivalent to show
$\Ext^1_{(\overline{P}\cap K)/Z_1}(\tau,U)=0$.
Note that $(\overline{P}\cap K)/Z_1\cong (T_0/Z_1) \ltimes \overline{N}_0$. Since $\overline{N}_0$ acts trivially on $U$  and $U$ is injective as a $T_0/Z_1$-representation by assumption, the  Hochschild-Serre spectral sequence implies
\[\Ext^1_{(\overline{P}\cap K)/Z_1}(\tau,U)\cong H^1\big((\overline{P}\cap K)/Z_1,\tau^{\vee}\otimes_{\F}U\big)\cong H^0\big(T_0/Z_1,H^1(\overline{N}_0,\tau^{\vee})\otimes_{\F} U\big).\]
A similar computation as in \cite[Prop.~2.5]{HuJLMS} shows that, if we write $\tau=(s_0,\cdots,s_{f-1})\otimes\eta$, then
\[H^1(\overline{N}_0,\tau^{\vee})\cong \oplus_{j\in\cS}\chi_{\tau}^{-1}\alpha_j^{s_j+1}\]
as $T_0$-representations. Using the $1$-genericity of $\tau$, i.e. $1\leq s_j\leq p-3$ for all $j$, one checks that $(\chi_{\tau}^{-1}\alpha_j^{s_j+1})^{-1}=\chi_{\tau}\alpha_{j}^{-(s_j+1)}=\chi_{\mu_{j+1}^-(\tau)}$ if $f\geq 2$ (resp. $\chi_{\mu_0^{+}(\tau)}$ if $f=1$). 
Hence, to prove the result it is equivalent to prove
\[\Hom_{T_0}\big(\oplus_{j\in\cS}\chi_{\mu_j^-(\tau)},U|_{T_0}\big)=0\]
if $f\geq 2$ (resp. $\Hom_{T_0}\big(\chi_{\mu_0^+(\tau)},U|_{T_0}\big)=0$ if $f=1$).
Assume this is not the case and assume $f\geq 2$. Then there exists an embedding $\chi_{\mu_i^-(\tau)}\hookrightarrow U|_{T_0}$ for some $i\in\cS$, hence embeddings
\[\mu_i^{-}(\tau)\hookrightarrow \Ind_{\overline{P}\cap K}^K\chi_{\mu_i^-(\tau)}\hookrightarrow (\Ind_{\overline{P}}^GU)|_K\]
where the first one is obtained by Frobenius reciprocity and \cite[Lem.~2(2)]{BL}.
 This gives a contradiction to the conclusion in the last paragraph.  The case $f=1$ can be treated similarly with $\mu_i^-(\tau)$ replaced by $\mu_0^+(\tau)$.
 \end{proof}
\subsection{$\Theta_{\tau}^{\rm ord}$ and ordinary parts}

We discuss the relation of the representation $\Theta^{\rm ord}_{\tau}$ studied in \S\ref{subsection:Theta-ord} and the ordinary parts of a smooth representation of $G$.

Let $V$ be a locally admissible smooth representation of $G$. Proposition \ref{prop-Ord}(i) implies a natural map
\[\jmath: \Ind_{\overline{P}}^G\Ord_{P}V\ra V \]
 whose image we denote by $V^{\rm ord}$.\footnote{Note that this is \emph{different} from the notation used in \cite{BH15}, at least when $L=\Q_p$. }  By construction, we have $\Ord_PV^{\rm ord}=\Ord_PV$. 

\begin{lemma}\label{lemma:j-canonical}
Let $\phi:V\ra V$ be a $G$-equivariant endomorphism. Let $\Ord_P(\phi)$ be the induced endomorphism of $\Ord_PV$ and $\phi'$ be the induced one of $\Ind_{\overline{P}}^G\Ord_PV$. Then the following diagram is commutative:
\[\xymatrix{\Ind_{\overline{P}}^G\Ord_PV\ar^{\ \ \ \ \jmath}[r]\ar_{\phi'}[d]&V\ar^{\phi}[d]\\
\Ind_{\overline{P}}^G\Ord_PV\ar^{\ \ \ \ \jmath}[r]&V.}\]
\end{lemma}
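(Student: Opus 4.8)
The statement is that the natural map $\jmath\colon\Ind_{\overline P}^G\Ord_PV\to V$ is functorial in $V$, in the sense of compatibility with the endomorphism $\phi$. The plan is to unwind the construction of $\jmath$ from the adjunction isomorphism of Proposition \ref{prop-Ord}(i) and invoke naturality of that adjunction. Recall that $\jmath$ is, by definition, the image under the adjunction isomorphism
\[\Hom_G\big(\Ind_{\overline P}^G\Ord_PV,\,V\big)\;\cong\;\Hom_T\big(\Ord_PV,\,\Ord_PV\big)\]
of the identity morphism $\id_{\Ord_PV}$; equivalently, $\jmath$ is the counit of the adjunction $(\Ind_{\overline P}^G,\Ord_P)$ evaluated at $V$.

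First I would record that $\Ord_P$ is a functor, so $\Ord_P(\phi)\colon\Ord_PV\to\Ord_PV$ is well-defined, and that $\Ind_{\overline P}^G$ is a functor, so $\phi'=\Ind_{\overline P}^G(\Ord_P(\phi))$ is well-defined. Then the square in question is exactly the naturality square for the counit $\jmath$ of the adjunction, applied to the morphism $\phi\colon V\to V$: for any adjunction $F\dashv G$ with counit $\varepsilon$, and any morphism $\phi\colon V\to V'$, one has $\phi\circ\varepsilon_V=\varepsilon_{V'}\circ F(G(\phi))$. Taking $F=\Ind_{\overline P}^G$, $G=\Ord_P$, $V'=V$, $\varepsilon=\jmath$, and noting $F(G(\phi))=\phi'$, this reads precisely $\phi\circ\jmath=\jmath\circ\phi'$, which is the commutativity asserted.

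The only point requiring a small verification is that the isomorphism in Proposition \ref{prop-Ord}(i) is genuinely natural in both variables (so that it is the hom-set bijection of an adjunction in the categorical sense, with a counit); this is built into Emerton's construction in \cite[Thm.~4.4.6]{EmertonOrd1} (and is implicit in the discussion of \cite[Rem.~3.7.3]{EmertonOrd2}), since the adjunction isomorphism there is induced by composing with the canonical maps $\Ind_{\overline P}^G\Ord_P V\to V$ and $U\to\Ord_P\Ind_{\overline P}^G U$. Concretely, one can also argue directly: given $f\colon\Ind_{\overline P}^G\Ord_PV\to V$ corresponding to $g\colon\Ord_PV\to\Ord_PV$ under the adjunction, the correspondence is characterized by $f=\jmath_{V}\circ\Ind_{\overline P}^G(g)$ after identifying $g$ with a map into $\Ord_P V$ and using $\Ord_P(\jmath_V)\circ(\text{unit})=\id$. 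Applying $\Ord_P$ to both horizontal arrows of the square and using $\Ord_P V^{\rm ord}=\Ord_P V$ together with $\Ord_P(\phi')=\Ord_P(\phi)$ (functoriality of $\Ord_P$ on $\Ind_{\overline P}^G$, which is part of (iii)) shows the two composites $\Ind_{\overline P}^G\Ord_P V\to V$ induce the same map on $\Ord_P$; since both composites land in $V^{\rm ord}=\im(\jmath_V)$ and $V^{\rm ord}$ is generated as a $G$-representation by (the image of) its $\overline P$-part — more precisely since a $G$-map out of $\Ind_{\overline P}^G\Ord_PV$ is determined by its restriction along the unit, i.e.\ by the induced map on $\Ord_P$ — the two composites coincide.

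I do not expect any real obstacle here: the statement is a formal naturality assertion about an adjunction, and the entire content is that the "$\mathrm{ord}$" construction and the map $\jmath$ are functorial, which follows from the already-cited results of Emerton. The mild care needed is only to phrase the adjunction of Proposition \ref{prop-Ord}(i) as a natural isomorphism with a well-defined counit $\jmath$, after which the diagram is the standard counit-naturality square.
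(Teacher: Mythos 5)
Your proof is correct and is essentially the paper's argument: the paper also reduces the claim to the equality $\iota(\phi\circ\jmath)=\iota(\jmath\circ\phi')$ under the adjunction isomorphism $\iota$ of Proposition \ref{prop-Ord}(i), computes that both sides equal $\Ord_P(\phi)$ using $\iota(\jmath)=\mathrm{Id}$ and $\iota(\phi')=\Ord_P(\phi)$, and concludes by injectivity of $\iota$ — which is precisely the standard verification of counit naturality that you describe. Your explicit identification of $\jmath$ as the counit and the square as its naturality square is a clean packaging of the same reasoning, and your caveat about naturality of the adjunction in both variables is the correct point to flag.
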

\begin{proof}
Denote by $\iota$ the isomorphism \eqref{equation-ord-adjunction} of Proposition \ref{prop-Ord}(i). The assertion is equivalent to $\iota(\phi\circ\jmath)=\iota(\jmath\circ\phi').$ 
 It is clear that  $\iota(\jmath)=\mathrm{Id}$, and by Proposition \ref{prop-Ord}(iii)   $\iota(\phi')=\Ord_P(\phi)$. Thus, taking $\Ord_P(-)$ of the diagram in the statement gives
\[\xymatrix{\Ord_PV\ar^{\mathrm{Id}}[r]\ar_{\Ord_P(\phi)}[d]&\Ord_PV\ar^{\Ord_P(\phi)}[d]\\
\Ord_PV\ar^{\mathrm{Id}}[r]&\Ord_PV}\]
from which the result follows.
\end{proof}

\begin{lemma}\label{lemma:ker-j}
$\mathrm{(i)}$ $\Ker(\jmath)$ is a  direct sum  of one-dimensional representations of $G$.

$\mathrm{(ii)}$  If $V_1\subseteq V$ is a subrepresentation of $V$, then $V_1^{\rm ord}\subseteq V_1\cap V^{\rm ord}$ and the corresponding quotient is a direct sum of one-dimensional representations of $G$.
\end{lemma}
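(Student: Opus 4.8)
\textbf{Proof plan for Lemma \ref{lemma:ker-j}.}

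The plan is to exploit the factorization of $\jmath$ together with Lemma \ref{lemma:V=directsum}, which already tells us that any subquotient of an induced representation from $\overline{P}$ with vanishing $\Ord_P$ is a direct sum of characters. First I would observe that $\ker(\jmath)$ is a subrepresentation of $\Ind_{\overline{P}}^G\Ord_PV$. The key point is that $\Ord_P$ is left exact, so applying it to $0\ra \ker(\jmath)\ra \Ind_{\overline{P}}^G\Ord_PV\xrightarrow{\jmath'} V$, where $\jmath'$ denotes $\jmath$ with codomain restricted to its image $V^{\rm ord}$, one gets $\Ord_P(\ker(\jmath))\hookrightarrow \Ord_P(\Ind_{\overline{P}}^G\Ord_PV)\cong \Ord_PV$ by Proposition \ref{prop-Ord}(iii), and the composite $\Ord_P(\ker\jmath)\ra \Ord_PV$ is the zero map because it factors through $\Ord_P$ of the zero map $\ker(\jmath)\ra V$. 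Hence $\Ord_P(\ker(\jmath))=0$, and since $\ker(\jmath)$ is a subrepresentation (in particular a subquotient) of $\Ind_{\overline{P}}^G\Ord_PV$, and $\Ord_PV$ is locally admissible (as $\Ord_P$ preserves local admissibility), Lemma \ref{lemma:V=directsum} applies and gives (i). One subtle point I need to check: Lemma \ref{lemma:V=directsum} is stated for $U$ a locally admissible $T$-representation, so I should confirm $\Ord_PV$ is locally admissible whenever $V$ is, which is exactly the preservation statement recalled at the start of \S\ref{subsection:ordinary}.

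For (ii), I would argue as follows. We have $V_1^{\rm ord} = \im(\Ind_{\overline{P}}^G\Ord_PV_1 \ra V_1)$, and the inclusion $V_1\hookrightarrow V$ induces $\Ord_PV_1\hookrightarrow \Ord_PV$ (left exactness), hence a commutative square relating $\jmath$ for $V_1$ and $\jmath$ for $V$; this gives $V_1^{\rm ord}\subseteq V_1\cap V^{\rm ord}$ directly (the image of $\Ind_{\overline{P}}^G\Ord_PV_1$ lands in $V_1$ and also in $\im(\Ind_{\overline{P}}^G\Ord_PV\ra V)=V^{\rm ord}$). For the quotient $(V_1\cap V^{\rm ord})/V_1^{\rm ord}$: it is a subquotient of $V^{\rm ord}$, which is itself a quotient of $\Ind_{\overline{P}}^G\Ord_PV$, so by Lemma \ref{lemma:V=directsum} it suffices to show its $\Ord_P$ vanishes. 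Apply $\Ord_P$ to $0\ra V_1^{\rm ord}\ra V_1\cap V^{\rm ord}\ra (V_1\cap V^{\rm ord})/V_1^{\rm ord}\ra 0$. Since $V_1^{\rm ord}\subseteq V_1$ gives $\Ord_PV_1^{\rm ord}=\Ord_PV_1$ (noted after the definition of $\jmath$) and, because $V_1\cap V^{\rm ord}\subseteq V_1$ with $V_1^{\rm ord}\subseteq V_1\cap V^{\rm ord}$, the natural maps $\Ord_PV_1^{\rm ord}\ra \Ord_P(V_1\cap V^{\rm ord})\ra \Ord_PV_1$ compose to the isomorphism $\Ord_PV_1^{\rm ord}\simto\Ord_PV_1$, forcing the first arrow to be an isomorphism as well; hence $\Ord_P\big((V_1\cap V^{\rm ord})/V_1^{\rm ord}\big)$ injects into $R^1\Ord_P(V_1^{\rm ord})$ but also receives $\Ord_P(V_1\cap V^{\rm ord})/\Ord_P(V_1^{\rm ord})=0$, so it is zero. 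Then Lemma \ref{lemma:V=directsum} finishes (ii).

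The main obstacle I anticipate is the bookkeeping in part (ii): one must be careful that $V^{\rm ord}$ is genuinely a \emph{quotient} of $\Ind_{\overline{P}}^G\Ord_PV$ (it is, by definition, the image of $\jmath$), so that $(V_1\cap V^{\rm ord})/V_1^{\rm ord}$ is legitimately a subquotient of $\Ind_{\overline{P}}^G\Ord_PV$ and Lemma \ref{lemma:V=directsum} is applicable; and one must verify that the composite $\Ord_PV_1^{\rm ord}\ra\Ord_P(V_1\cap V^{\rm ord})\ra\Ord_PV_1$ is the standard isomorphism, which is where the identity $\Ord_PV^{\rm ord}=\Ord_PV$ (applied to $V_1$ in place of $V$) is used. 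Everything else is a formal consequence of left-exactness of $\Ord_P$, the adjunction of Proposition \ref{prop-Ord}(i), and Lemma \ref{lemma:V=directsum}; no genericity hypotheses on $\brho$ or on Serre weights enter here.
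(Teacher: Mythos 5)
Part (i) of your proposal is correct and is essentially the paper's argument: $\Ord_P(\ker(\jmath))=0$ by left exactness and the triangle identity for the adjunction, and then Lemma \ref{lemma:V=directsum} applies.

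Part (ii) has a genuine gap at the final step. From the short exact sequence $0\ra V_1^{\rm ord}\ra V_1\cap V^{\rm ord}\ra C\ra 0$ and the fact that $\Ord_P(V_1^{\rm ord})\ra \Ord_P(V_1\cap V^{\rm ord})$ is an isomorphism, the long exact sequence of derived ordinary parts gives only an \emph{injection} $\Ord_P(C)\hookrightarrow R^1\Ord_P(V_1^{\rm ord})$; it does not give $\Ord_P(C)=0$. Your phrase ``receives $\Ord_P(V_1\cap V^{\rm ord})/\Ord_P(V_1^{\rm ord})=0$, so it is zero'' treats $\Ord_P$ as if it were right exact: exactness at $\Ord_P(C)$ only says that the kernel of the connecting map equals the (zero) image of $\Ord_P(V_1\cap V^{\rm ord})$, i.e. the connecting map is injective. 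The target $R^1\Ord_P(V_1^{\rm ord})$ is in general nonzero: for $L=\Q_p$, Proposition \ref{prop-Ord}(v) gives $R^1\Ord_P(\Ind_{\overline{P}}^GU)\cong U^s\alpha_P^{-1}$, and $V_1^{\rm ord}$ differs from such an induction only by one-dimensional pieces. So the vanishing of $\Ord_P(C)$ requires an extra argument.

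This is exactly why the paper's proof splits into cases. For $L\neq\Q_p$ the relevant $R^1\Ord_P$ terms vanish (Lemma \ref{lemma:Ord-irreducible}(ii), Proposition \ref{prop-Ord}(v)) and your argument goes through. For $L=\Q_p$ one first decomposes everything into $\psi$-isotypic components as in the proof of Lemma \ref{lemma:V=directsum}; when $\psi\cong\chi\otimes\chi$ one reruns the argument of that lemma, and when $\psi\ncong\chi\otimes\chi$ one shows that $R^1\Ord_P(V_1^{\rm ord})_{\psi}\ra R^1\Ord_P\big(V_1\cap(V^{\rm ord})_{\psi}\big)$ is injective (using that $(V^{\rm ord})_{\psi}$ and $(V_1^{\rm ord})_{\psi}$ are honestly induced and Proposition \ref{prop-Ord}(v) identifies $R^1\Ord_P$ with a twist of $\Ord_P$), whence exactness forces $\Ord_P(C_{\psi})=0$. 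You should supply this case analysis; as written, the step you flagged as ``subtle'' is the one that fails.
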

\begin{proof}
(i) By construction, we know that $\Ord_P(\Ker(\jmath))=0$, so we conclude by Lemma \ref{lemma:V=directsum}.

(ii) The inclusion  $V_1^{\rm ord}\subseteq V_1\cap V^{\rm ord}$ is obvious; let  $C$ denote the quotient.  It is easy to see that $C$  is a subquotient of $\Ind_{\overline{P}}^GU$ for some $T$-representation $U$ (e.g. we may take $U=\Ord_PV/\Ord_PV_1$). If $L\neq \Q_p$, then taking $\Ord_P$ of $0\ra V_1^{\rm ord}\ra V_1\cap V^{\rm ord}\ra C\ra0$ gives again a short exact sequence by Lemma \ref{lemma:Ord-irreducible}(ii), from which we deduce $\Ord_P(C)=0$ and we conclude by Lemma \ref{lemma:V=directsum}.

Assume $L=\Q_p$ for the rest of the proof. As in the proof of Lemma \ref{lemma:V=directsum}, we may decompose $V^{\rm ord}$ as $\oplus_{\psi}(V^{\rm ord})_{\psi}$, and consequently
$V_1\cap V^{\rm ord}=\oplus_{\psi} V_1\cap (V^{\rm ord})_{\psi}$.
It suffices to show that the cokernel of $(V_1^{\rm ord})_{\psi}\subseteq V_1\cap (V^{\rm ord})_{\psi}$, denoted by $C_{\psi}$, satisfies $\Ord_P(C_{\psi})=0$.  There are two cases:
\begin{enumerate}
\item[$\bullet$] $\psi\cong\chi\otimes\chi$ for some $\chi$. Then $\Ord_P(C_{\psi})=0$ by the same proof as in Lemma \ref{lemma:V=directsum}.
\item[$\bullet$] $\psi\ncong \chi\otimes\chi$ for any $\chi$. Then the morphisms $\Ind_{\overline{P}}^G(\Ord_PV)_{\psi}\ra (V^{\rm ord})_{\psi}$ and $\Ind_{\overline{P}}^G(\Ord_PV_1)_{\psi}\ra (V_1^{\rm ord})_{\psi}$ are isomorphisms using Lemma \ref{lemma:V=directsum}.  Proposition \ref{prop-Ord}(v) implies that
\[R^1\Ord_P(\Ind_{\overline{P}}^G(\Ord_PV_1)_{\psi})\ra R^1\Ord_P(\Ind_{\overline{P}}^G(\Ord_PV_1)_{\psi})\]
is equal to the natural morphism $(\Ord_PV_1)_{\psi}^s\ra (\Ord_PV)_{\psi}^s$ twisted by $\alpha_{P}^{-1}$, hence is injective. This means that the morphism $R^1\Ord_P(V_1^{\rm ord})_{\psi}\ra R^1\Ord_P(V^{\rm ord})_{\psi}$ is  injective, hence so is
\[R^1\Ord_P(V_1^{\rm ord})_{\psi}\ra R^1\Ord_P\big(V_1\cap (V^{\rm ord})_{\psi}\big).\]
This implies $\Ord_P(C_{\psi})=0$ as desired.
\end{enumerate}
\end{proof}
\begin{remark}
For our application in \S\ref{section-patching}, $V|_K$ will not admit one-dimensional Serre weights as subrepresentations, in which case  Lemma \ref{lemma:ker-j} is easy to show. However, we keep the generality  because the result might be useful elsewhere.
\end{remark}

 \begin{corollary}\label{cor:j=isom}
Let $V$ be a locally admissible smooth representation of $G$. Let $\lambda$ be a finite dimensional $K$-representation which does not admit any Jordan--H\"older factor  of dimension $1$ or $q$. Then $\jmath$ induces an isomorphism
 \[\Hom_{K}(\lambda,\Ind_{\overline{P}}^G\Ord_PV)\simto\Hom_{K}(\lambda,V^{\rm ord}).\]
 \end{corollary}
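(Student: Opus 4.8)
The plan is to exploit the short exact sequence $0\to\ker(\jmath)\to\Ind_{\overline{P}}^G\Ord_PV\xrightarrow{\jmath}V^{\rm ord}\to0$, where by Lemma \ref{lemma:ker-j}(i) the kernel $\ker(\jmath)$ is a direct sum of one-dimensional representations of $G$. Restricting to $K$ and applying $\Hom_K(\lambda,-)$ yields a long exact sequence
\[0\to\Hom_K(\lambda,\ker(\jmath))\to\Hom_K(\lambda,\Ind_{\overline{P}}^G\Ord_PV)\xrightarrow{\jmath_*}\Hom_K(\lambda,V^{\rm ord})\to\Ext^1_{K/Z_1}(\lambda,\ker(\jmath))\to\cdots\]
(working in the category of representations with the relevant central character, so that $K/Z_1$-cohomology is the right thing). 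So it suffices to show that $\Hom_K(\lambda,\ker(\jmath))=0$ and $\Ext^1_{K/Z_1}(\lambda,\ker(\jmath))=0$. Since $\ker(\jmath)$ is a (possibly infinite) direct sum of characters $\chi_i\circ\det$ of $G$, and $\lambda$ is finite-dimensional, both $\Hom_K$ and $\Ext^1_{K/Z_1}$ commute with this direct sum (using admissibility/finite-dimensionality of $\lambda$ to pull the finite-dimensional source inside); each summand $\chi_i\circ\det$ is a one-dimensional Serre weight of $K$.

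First I would observe that $\Hom_K(\lambda,\chi\circ\det)=0$: any nonzero such map would exhibit the one-dimensional Serre weight $\chi\circ\det$ as a quotient of $\lambda$, equivalently (dualizing) as a subobject, contradicting the hypothesis that no Jordan–Hölder factor of $\lambda$ has dimension $1$. Next, for the $\Ext^1$ vanishing, I would use the structure of $\Ext^1$ between Serre weights: by Lemma \ref{lemma-Hu10-2.21}, a nonzero class in $\Ext^1_{K/Z_1}(\tau,\chi\circ\det)$ for Serre weights $\tau,\chi\circ\det$ forces $\chi\circ\det\in\mathscr{E}(\tau)$, i.e. $\chi\circ\det\cong\mu_i^{\pm}(\tau)$; but inspecting the definition of $\mu_i^{\pm}$ (Definition \ref{def:delta}), the weight $\mu_i^\pm(\tau)$ written as $(s_0,\dots,s_{f-1})$ up to twist has $\dim\geq 2$ as soon as $\tau$ is, say, $1$-generic — and more to the point, a one-dimensional weight $(0,\dots,0)\otimes\eta$ can only have a nontrivial $K/Z_1$-extension with a weight of dimension $q$ (namely $\Sym^{p-1}$-type weights), which again is excluded by hypothesis on $\lambda$. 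To make this clean without genericity assumptions on $\lambda$, I would instead argue directly: dévissage on $\lambda$ reduces to $\lambda=\tau$ a single Serre weight with $\dim\tau\notin\{1,q\}$, and then the classification of $\Ext^1_{K/Z_1}$ between Serre weights (e.g. via \cite[Cor. 5.6]{BP} and \cite[Prop. 2.21]{Hu10}, as recalled in Lemma \ref{lemma-Hu10-2.21}) shows $\Ext^1_{K/Z_1}(\tau,\chi\circ\det)=0$ whenever $\chi\circ\det$ is one-dimensional and $\tau$ is not of dimension $q$.

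The main obstacle is handling the dévissage when $\lambda$ is a general finite-dimensional $K$-representation rather than a single Serre weight, and making sure the $\Ext^1_{K/Z_1}$-vanishing is genuinely unconditional on the one-dimensional side (the $\Sym^{p-2}$-type and $\Sym^{p-1}$/$\Sym^{p-3}$-type exceptional extensions in Lemma \ref{lemma-Hu10-2.21} for $f=1$ are exactly why the hypothesis ``no Jordan–Hölder factor of dimension $q$'' is imposed, and I would need to check these exceptions never produce a nonzero extension with a one-dimensional weight unless the other weight has dimension $q$). Once those two vanishing statements are in hand, the long exact sequence gives that $\jmath_*$ is injective with cokernel injecting into $\Ext^1_{K/Z_1}(\lambda,\ker(\jmath))=0$, hence an isomorphism, completing the proof.
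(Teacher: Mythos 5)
Your injectivity step is fine: $\ker(\jmath)$ is a direct sum of characters $\chi\circ\det$ by Lemma \ref{lemma:ker-j}(i), and a nonzero map $\lambda\to\chi\circ\det$ would produce a one-dimensional Jordan--H\"older factor of $\lambda$. But the surjectivity step has a genuine gap: the vanishing $\Ext^1_{K/Z_1}(\lambda,\ker(\jmath))=0$ that you need is \emph{false} in general. Your claim that ``a one-dimensional weight can only have a nontrivial $K/Z_1$-extension with a weight of dimension $q$'' contradicts the very classification you cite. By \cite[Cor. 5.6]{BP} (as packaged in Lemma \ref{lemma-Hu10-2.21}(ii)), the weights with nontrivial extensions against the trivial weight $\mathbf{1}=(0,\dots,0)$ are the $\mu_i^{\pm}(\mathbf{1})$. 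For $f\geq 2$ the weight $\mu_i^{+}(\mathbf{1})=(0,\dots,0,p-2,1,0,\dots,0)\otimes\eta$ is defined, is $0$-generic, and has dimension $2(p-1)\notin\{1,q\}$; so $\Ext^1_{K/Z_1}(\mu_i^+(\mathbf{1}),\chi\circ\det)\neq0$ even though $\mu_i^+(\mathbf{1})$ is a perfectly admissible Jordan--H\"older factor of $\lambda$ under the hypotheses of the corollary. (For $f=1$ the weight $\Sym^{p-3}\F^2\otimes\det$, of dimension $p-2$, plays the same role.) Thus the connecting map out of $\Hom_K(\lambda,V^{\rm ord})$ lands in a group that need not vanish, and your argument does not show it is zero.

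The repair is exactly the reduction the paper uses, and it replaces the $\Ext^1$ computation entirely. Decompose $\Ord_PV=\oplus_\psi(\Ord_PV)_\psi$ as in the proof of Lemma \ref{lemma:V=directsum}, so that one may assume all Jordan--H\"older factors of $V^{\rm ord}$ and of $\Ind_{\overline{P}}^G\Ord_PV$ lie in $\JH(\Ind_{\overline{P}}^G\psi)$ for a single $\psi=\chi_1\otimes\chi_2$. If $\chi_1\neq\chi_2$, then $\Ind_{\overline{P}}^G\psi$ has no one-dimensional subquotients, so $\ker(\jmath)=0$ and $\jmath$ itself is an isomorphism; your exact-sequence argument is then vacuous. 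If $\chi_1=\chi_2$, then every Serre weight in $\soc_K(\pi)$ for $\pi\in\JH(\Ind_{\overline{P}}^G\psi)$ has dimension $1$ or $q$ (trivial and Steinberg constituents), so the hypothesis on $\lambda$ forces \emph{both} $\Hom_K(\lambda,\Ind_{\overline{P}}^G\Ord_PV)$ and $\Hom_K(\lambda,V^{\rm ord})$ to vanish, and the map is trivially an isomorphism. In short: in the only case where $\ker(\jmath)\neq0$, the target of the map you are trying to surject onto is already zero; no $\Ext^1$ vanishing is needed, and indeed none is available.
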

 \begin{proof}
 As in the proof of Lemma \ref{lemma:V=directsum}, we may assume $\Ord_PV=(\Ord_PV)_{\psi}$ for some $\psi$, so that all of  Jordan--H\"older factors of $V^{\rm ord}$ or $\Ind_{\overline{P}}^G\Ord_PV$ lie in $\JH(\Ind_{\overline{P}}^G\psi)$.

Write $\psi=\chi_1\otimes\chi_2$ for characters $\chi_1,\chi_2:L^{\times}\ra \F^{\times}$. If $\chi_1\neq\chi_2$, then $\Ind_{\overline{P}}^G\psi$ does not admit one-dimensional representations of $G$ as subquotients, thus $\jmath$ induces an isomorphism $\Ind_{\overline{P}}^G\Ord_PV\simto V^{\rm ord}$ by Lemma \ref{lemma:ker-j}(i) and the result is obvious. If $\chi_1= \chi_2$, then  any Serre weight occurring in $\soc_K(\pi)$, for $\pi\in \JH(\Ind_{\overline{P}}^G\psi)$  has dimension $1$ or $q$, thus the assumption on $\lambda$ implies that $\Hom_{K}(\lambda,V^{\rm ord})=\Hom_K(\lambda,\Ind_{\overline{P}}^G\Ord_PV)=0$. \end{proof}

\begin{lemma}\label{lemma:sigma-Vord}
Let $V$ be a locally admissible smooth representation of $G$. Let $\tau$ be a  Serre weight such that $1<\dim_{\F}\tau <q$. Assume that $\Hom_{K}(\tau',V)=0$ for any $\tau'\in \JH(\Ind_I^K\chi_{\tau})$ such that $\tau'\neq \tau$. Then $\jmath$ induces isomorphisms
 \begin{equation}\label{eq:tau-Vord} \Hom_K(\tau,\Ind_{\overline{P}}^G\Ord_PV)\simto \Hom_K(\tau, V^{\rm ord})\simto\Hom_K(\tau,V).\end{equation}
The same statement holds if we replace $\tau$ by any subrepresentation of $\Ind_I^K\chi_{\tau}^s$.
\end{lemma}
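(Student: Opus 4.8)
The plan is to reduce everything to the ordinary subrepresentation $V^{\rm ord}$ and to the structure of $\Ind_{\overline{P}}^G\Ord_PV$, exploiting that $\tau$ is an ``intermediate'' Serre weight ($1<\dim_\F\tau<q$) so that one-dimensional and Steinberg-type subquotients cause no trouble. First I would observe that the composite of the two maps in \eqref{eq:tau-Vord} is the map induced by the inclusion $V^{\rm ord}\hookrightarrow V$, and that the first map is already handled: by Corollary \ref{cor:j=isom} applied to $\lambda=\tau$ (which has no Jordan--H\"older factor of dimension $1$ or $q$, being irreducible of intermediate dimension), $\jmath$ induces an isomorphism $\Hom_K(\tau,\Ind_{\overline{P}}^G\Ord_PV)\simto\Hom_K(\tau,V^{\rm ord})$. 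So the whole content is the second isomorphism $\Hom_K(\tau,V^{\rm ord})\simto\Hom_K(\tau,V)$.

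For that, I would consider the short exact sequence $0\to V^{\rm ord}\to V\to V/V^{\rm ord}\to0$ and the associated long exact sequence of $\Ext^\bullet_{K/Z_1}(\tau,-)$ (using that $V$ has a central character). Injectivity of $\Hom_K(\tau,V^{\rm ord})\to\Hom_K(\tau,V)$ is automatic. For surjectivity it suffices to show $\Hom_K(\tau,V/V^{\rm ord})=0$. Here is where the hypothesis enters: any irreducible $K$-subrepresentation $\tau'$ of $V/V^{\rm ord}$ is, by construction and Lemma \ref{lemma:V=directsum}/Proposition \ref{prop-Ord}, a Serre weight occurring in the $K$-socle of some non-supersingular $\pi\in\JH(V/V^{\rm ord})$; but the $K$-socle constituents of principal series, Steinberg and one-dimensional representations all lie in $\JH(\Ind_I^K\chi_{\tau'})$ for the appropriate $\chi_{\tau'}$ --- more precisely, if $\tau'$ embeds in $V/V^{\rm ord}$ then $\tau'$ embeds in $V$ itself provided $\tau'\notin\JH(\Ind_I^K\chi_\tau)\setminus\{\tau\}$ leads to a contradiction. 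The cleanest route: if $\Hom_K(\tau,V/V^{\rm ord})\neq0$, lift to a nonzero $K$-map $\tau\to V/V^{\rm ord}$; its image sits inside $\soc_K(V/V^{\rm ord})$, and since $V/V^{\rm ord}$ is a subquotient of some $\Ind_{\overline{P}}^GU$ whose irreducible subquotients are principal series/Steinberg/characters, the only way $\tau$ (intermediate) can occur in $\soc_K$ is as $\soc_K$ of an irreducible principal series $\pi$ with $\soc_K\pi=\tau$, so $\pi\hookrightarrow V/V^{\rm ord}$; then $\Ord_P\pi\neq0$ by Lemma \ref{lemma:Ord-irreducible}(i)--(iii) combined with Proposition \ref{prop-Ord}(iii), contradicting $\Ord_P(V/V^{\rm ord})=0$ (the latter because $\Ord_P V^{\rm ord}=\Ord_P V$ and $\Ord_P$ is left exact). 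Wait --- $\Ord_P$ is only left exact, so I must argue $\Ord_P(V/V^{\rm ord})=0$ more carefully: it follows since the image of $\Ord_P V\to\Ord_P(V/V^{\rm ord})$... actually the cleanest statement is that $V/V^{\rm ord}$ is a subquotient of $\Ind_{\overline P}^G\Ord_P V$ modulo $V^{\rm ord}$, hence a quotient of $\ker(\jmath)$-type data; rather, I would simply invoke that $\Hom_K(\tau',V)=0$ for $\tau'\in\JH(\Ind_I^K\chi_\tau)\setminus\{\tau\}$ directly: any such $\pi\hookrightarrow V/V^{\rm ord}$ forces a Serre weight $\tau'\neq\tau$ in $\JH(\Ind_I^K\chi_\tau)$ to occur in $\soc_K V$ unless $\pi$ is the irreducible principal series with socle exactly $\tau$, and in the latter case $\Hom_K(\tau,V)$ already surjects onto $\Hom_K(\tau,\pi)$, giving a lift into $V^{\rm ord}$ after all. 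The main obstacle, and the step I would spend the most care on, is precisely this bookkeeping of $K$-socles of the constituents of $V/V^{\rm ord}$: showing that the hypothesis ``$\Hom_K(\tau',V)=0$ for $\tau'\in\JH(\Ind_I^K\chi_\tau)$, $\tau'\neq\tau$'' rules out every way $\tau$ could appear in $\soc_K(V/V^{\rm ord})$ without already appearing in $\soc_K(V^{\rm ord})$.

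For the final sentence of the statement, replacing $\tau$ by any subrepresentation $W$ of $\Ind_I^K\chi_\tau^s$: I would note that $\Ind_I^K\chi_\tau^s$ has $K$-socle $\sigma_{\chi_\tau^s}$ and that all its Jordan--H\"older factors again lie in a single $\JH(\Ind_I^K\chi)$-packet (here $\chi=\chi_\tau^s$), none of dimension $1$ or $q$ under the genericity in force, so the same two-step argument applies verbatim with $\tau$ replaced by $W$: Corollary \ref{cor:j=isom} gives the first isomorphism (since $W$ has no constituent of dimension $1$ or $q$), and the vanishing $\Hom_K(W,V/V^{\rm ord})=0$ follows from the hypothesis exactly as before, because any nonzero map $W\to V/V^{\rm ord}$ restricts nontrivially to $\soc_K W\subseteq\soc_K(\Ind_I^K\chi_\tau^s)$ and we are back in the previous situation. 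I expect no new difficulty here beyond noting that $\chi_W$-packets behave the same way, so I would phrase the proof once for a general finite-dimensional $K$-subrepresentation of $\Ind_I^K\chi_\tau$ or $\Ind_I^K\chi_\tau^s$ whose constituents avoid dimensions $1$ and $q$, and deduce both halves of the statement.
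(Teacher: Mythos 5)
The first isomorphism is handled exactly as in the paper (Corollary \ref{cor:j=isom}), but your argument for the second isomorphism has a genuine gap. You reduce to showing $\Hom_K(\tau,V/V^{\rm ord})=0$, and you justify this by asserting that $V/V^{\rm ord}$ is a subquotient of some $\Ind_{\overline{P}}^GU$, so that its irreducible constituents are principal series, Steinberg or characters. That premise is false: $V$ is an arbitrary locally admissible representation, and only $\ker(\jmath)$ and $V^{\rm ord}$ are controlled by $\Ind_{\overline{P}}^G\Ord_PV$ (Lemma \ref{lemma:ker-j}); the quotient $V/V^{\rm ord}$ can perfectly well contain supersingular constituents, and for $L\neq\Q_p$ you have no control over their $K$-socles by classification. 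Worse, the intermediate vanishing itself is false under the hypotheses: take $V=\pi(\brho)$ with $f=2$ and $\tau=\sigma_1$ one of the two socle weights of the supersingular constituent $\pi_1$. The hypothesis of the lemma is satisfied (the other weights of $\JH(\Ind_I^K\chi_{\sigma_1})$ do not occur in $\soc_KV=\sigma_0$), the conclusion of the lemma is the trivial statement $0=0=0$, yet $\Hom_K(\tau,V/V^{\rm ord})\neq0$ because $\pi_1\hookrightarrow V/V^{\rm ord}$. So your sufficient condition is strictly stronger than surjectivity of $\Hom_K(\tau,V^{\rm ord})\to\Hom_K(\tau,V)$ and cannot be established. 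Your fallback sentence (``in the latter case $\Hom_K(\tau,V)$ already surjects onto $\Hom_K(\tau,\pi)$, giving a lift into $V^{\rm ord}$ after all'') is precisely the assertion to be proved and is not justified.

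The paper's proof goes through a different mechanism which you should adopt: a weight-cycling argument on the Hecke module. One considers $\Hom_{KZ}(\tau,V)\cong\Hom_G(\cInd_{KZ}^G\tau,V)$ with its action of $\End_G(\cInd_{KZ}^G\tau)\cong\F[T]$, and shows that $T$ has no zero eigenvalue: a zero eigenvector would produce a nonzero $K$-map $\Ind_I^K\chi_\tau^s\to V$ factoring through $(\Ind_I^K\chi_\tau^s)/\tau$, whose socle consists of weights $\tau'\neq\tau$ of the packet, contradicting the hypothesis. Hence every map $\cInd_{KZ}^G\tau\to V$ factors through $\cInd_{KZ}^G\tau/\prod_i(T-\lambda_i)^{a_i}$ with all $\lambda_i\neq0$, and by Barthel--Livn\'e every quotient of such a module is isomorphic to the induction of its own ordinary part (here $\dim_\F\tau\geq2$ is used), so its image in $V$ lies in $V^{\rm ord}$. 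This is where the hypothesis on the packet $\JH(\Ind_I^K\chi_\tau)$ actually enters, and it sidesteps any analysis of $V/V^{\rm ord}$. For the final sentence, the paper also argues differently (and more efficiently) than your ``verbatim'' claim: one compares dimensions along the chain of injections $\Hom_K(\tau,V')\hookrightarrow\Hom_I(\chi_\tau,V')\simto\Hom_I(\chi_\tau^s,V')\simto\Hom_K(\Ind_I^K\chi_\tau^s,V')\hookrightarrow\Hom_K(W,V')\hookrightarrow\Hom_K(\tau,V')$ to force all of them to be isomorphisms.
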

\begin{proof}
Since $\tau$ (resp. $\Ind_I^K\chi_{\tau}^s$) is finite dimensional and since $V$ is equal to the direct limit of its admissible subrepresentations, we may assume $V$ is admissible.

First prove the isomorphisms \eqref{eq:tau-Vord} for $\tau$. By Corollary \ref{cor:j=isom}, we are left to prove the second isomorphism. The proof is by a standard weight cycling argument.  Let $\mathfrak{R}_0\defn KZ$ and $I(\tau)\defn\cInd_{\mathfrak{R}_0}^G\tau$, where $Z$ acts on $\tau$ via the central character of $\pi$.  Since $V$ is admissible, $\Hom_{\mathfrak{R}_0}(\tau,V)$ is a finite dimensional $\F$-vector space.   It is well-known that $\Hom_{\mathfrak{R}_0}(\tau,V)$, which is isomorphic to $\Hom_{G}(I(\tau),V)$ via Frobenius reciprocity, carries  an action of the Hecke algebra $\End_{G}(I(\tau))\cong\F[T]$ (see \cite{BL}).  Up to enlarge  $\F$, we may assume all the eigenvalues of $T$ are contained in $\F$.

 We claim that $\lambda\neq 0$ for any eigenvalue $\lambda$ of $T$.  Otherwise, choose a non-zero eigenvector in $\Hom_K(\tau,V)$ on which $T$ acts by $0$, we then obtain a $G$-equivariant morphism
$I(\tau)/T\ra V$.
By considering the action of $\smatr{0}1p0$ on $\tau^{I_1}$, we obtain a nonzero $K$-equivariant morphism $\Ind_I^K\chi_{\tau}^s\ra V$, which factors through $(\Ind_I^K\chi_{\tau}^s)/\tau$ (this uses the explicit description of $T$, see \cite{BL}). Since $\Ind_I^K\chi_{\tau}^s$ is multiplicity free, we have $\Hom_K((\Ind_I^K\chi_{\tau}^s)/\tau,V)=0$ by assumption, a contradiction.

The claim implies that any morphism $ I(\tau) \ra V$ factors through $I(\tau)/f(T)$, for some polynomial $f(T)=\prod_{i}(T-\lambda_i)^{a_i}$, with $\lambda_i\neq0$.  By \cite[\S6]{BL},   if either $\dim_{\F}\tau\neq 1$ or $\lambda_i\neq \pm1$, then $ I(\tau)/(T-\lambda_i)$  is irreducible and we have $V_i\cong \Ind_{\overline{P}}^G\Ord_PV_i$  for any quotient $V_i$ of $I(\tau)/(T-\lambda_i)^{a_i}$.   Since $\tau$ has dimension $\geq 2$ by assumption, we deduce an isomorphism $\Hom_G(I(\tau),V^{\rm ord})\simto \Hom_G(I(\tau),V)$, and the result follows.

We now prove \eqref{eq:tau-Vord} with $\tau$ replaced by a subrepresentation $W$ of $\Ind_I^K\chi_{\tau}^s$. We may assume $W$ is nonzero and so $\tau\hookrightarrow W\hookrightarrow \Ind_I^K\chi_{\tau}^s$ (as $\tau$ is the socle of $\Ind_{I}^K\chi_{\tau}^s$). First note that for any admissible representation $V'$ of $G$,
we have the following injections and isomorphisms
\begin{equation}\label{eq:dim-tau-Ind}\Hom_{K}(\tau,V')\hookrightarrow \Hom_{I}(\chi_{\tau},V')\simto\Hom_{I}(\chi_{\tau}^s,V')\simto\Hom_K(\Ind_I^K\chi_{\tau}^s,V')\end{equation}
where the first map is the restriction map induced by $\tau^{I_1}\hookrightarrow \tau$ which is injective as $\tau$ is irreducible, the second map is obtained by taking conjugation by $\smatr{0}1p0$ (as $V'$ is a representation of $G$), and the third is induced by Frobenius reciprocity. If moreover $\Hom_{K}(\tau',V')=0$ for any $\tau'\in\JH(\Ind_I^K\chi_{\tau}^s)$ with $\tau'\neq \tau$, then we further have injections
\[\Hom_K(\Ind_I^K\chi_{\tau}^s,V')\hookrightarrow \Hom_K(W,V')\hookrightarrow \Hom_{K}(\tau,V')\]
which must be isomorphisms  by comparing their dimensions and using \eqref{eq:dim-tau-Ind}. The result then follows from this together with \eqref{eq:tau-Vord} for $\tau$ and Corollary \ref{cor:j=isom}.
\end{proof}

Recall that $\Theta_{\tau}^{\rm ord}$ is defined in Lemma  \ref{lemma:Theta-ord} for any $2$-generic Serre weight.

\begin{proposition}\label{prop:Hom-Theta-Ord}
Let $V$ be a locally admissible smooth representation of $G$ and $\tau$ be a $2$-generic Serre weight. Assume that $\Hom_{K}(\tau',V)=0$ for any $\tau'\in \JH(\Ind_I^K\chi_{\tau})$ such that $\tau'\neq \tau$. 
 Then $\jmath$ induces  isomorphisms 
 \begin{equation}\label{eq:Hom-Vord}
\Hom_K(\Theta_{\tau}^{\rm ord},\Ind_{\overline{P}}^G\Ord_PV) \simto \Hom_{K}(\Theta_{\tau}^{\rm ord}, V^{\rm ord})
\simto\Hom_K(\Theta_{\tau}^{\rm ord},V).\end{equation}
\end{proposition}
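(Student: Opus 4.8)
The plan is to deduce the statement from Lemma \ref{lemma:Theta-ord-K1} and Lemma \ref{lemma:sigma-Vord}, using the structure of $\Theta_{\tau}^{\rm ord}$ recorded in Lemma \ref{lemma:Theta-ord}. First I would note that the right-hand isomorphism of \eqref{eq:Hom-Vord} follows formally once the left-hand one is known. Indeed, $\ker(\jmath)$ is a direct sum of one-dimensional $G$-representations by Lemma \ref{lemma:ker-j}(i); applying $\Hom_K(\Theta_{\tau}^{\rm ord},-)$ to $0\ra \ker(\jmath)\ra \Ind_{\overline{P}}^G\Ord_PV\ra V^{\rm ord}\ra 0$ and observing that $\Theta_{\tau}^{\rm ord}$, having all Jordan--H\"older factors $2$-generic (hence of dimension strictly between $1$ and $q$), admits no nonzero morphism to and no $\Ext^1$ into a one-dimensional $K$-representation, we get the right-hand isomorphism. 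So the work is in the left-hand isomorphism, i.e. in computing $\Hom_K(\Theta_{\tau}^{\rm ord},\Ind_{\overline{P}}^G\Ord_PV)$ and matching it with $\Hom_K(\Theta_{\tau}^{\rm ord},V)$.

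The key input is Lemma \ref{lemma:Theta-ord-K1}: the $K_1$-coinvariants $(\Theta_{\tau}^{\rm ord})_{K_1}$ equal $\Theta_{\tau}^{\rm ord}/\soc(\Theta_{\tau}^{\rm ord})$, and this is a quotient of $\Ind_I^K\chi_{\tau}$, while $\soc(\Theta_{\tau}^{\rm ord})\cong \tau^{\oplus f}$ by Lemma \ref{lemma:Theta-ord}(i). Dualizing, $\Hom_K(\Theta_{\tau}^{\rm ord},W)$ for any smooth $G$-representation $W$ fits into a left-exact sequence coming from $0\ra \soc(\Theta_{\tau}^{\rm ord})\ra \Theta_{\tau}^{\rm ord}\ra (\Theta_{\tau}^{\rm ord})_{K_1}\ra 0$, namely
\[0\ra \Hom_K((\Theta_{\tau}^{\rm ord})_{K_1},W)\ra \Hom_K(\Theta_{\tau}^{\rm ord},W)\ra \Hom_K(\tau,W)^{\oplus f}.\]
The plan is to apply this with $W=\Ind_{\overline{P}}^G\Ord_PV$ and with $W=V$, obtaining a commutative ladder, and to check that the outer vertical maps (induced by $\jmath$) are isomorphisms; the five lemma then finishes it. For the $\Hom_K(\tau,-)^{\oplus f}$ column this is exactly Lemma \ref{lemma:sigma-Vord} (whose hypotheses are precisely ours). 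For the $\Hom_K((\Theta_{\tau}^{\rm ord})_{K_1},-)$ column, since $(\Theta_{\tau}^{\rm ord})_{K_1}$ is a quotient of $\Ind_I^K\chi_{\tau}$, hence a $\Gamma$-representation none of whose Jordan--H\"older factors is $1$- or $q$-dimensional (they are all $1$-generic, being subquotients of $\Ind_I^K\chi_{\tau}$ with $\tau$ $2$-generic), one applies Corollary \ref{cor:j=isom} to replace $V$ by $V^{\rm ord}$ and then Lemma \ref{lemma:sigma-Vord} componentwise: more precisely one filters $(\Theta_{\tau}^{\rm ord})_{K_1}$ and uses that each filtration step is of the shape covered by Lemma \ref{lemma:sigma-Vord} (a subrepresentation of $\Ind_I^K\chi_{\tau}^s$-type or its relatives, after identifying cosocles).

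The main obstacle I anticipate is the bookkeeping needed to make the $\Hom_K((\Theta_{\tau}^{\rm ord})_{K_1},-)$ column work: $(\Theta_{\tau}^{\rm ord})_{K_1}$ is the universal extension of $\tau$ by $\bigoplus_{i\in\cS}\mu_i^-(\tau)$ by Lemma \ref{lemma:Theta-ord}(ii), so it is \emph{not} itself of the form $\Ind_I^K\chi_{\tau}^s$ and one cannot invoke Lemma \ref{lemma:sigma-Vord} in one stroke. The cleanest route is probably to argue directly: $\Hom_K((\Theta_{\tau}^{\rm ord})_{K_1},W)$ only depends on $W^{K_1}$-level data when $W$ is $\Ind_{\overline{P}}^G\Ord_PV$, and to control the map to $V$ one uses that any morphism $(\Theta_{\tau}^{\rm ord})_{K_1}\ra V$ lands in $V^{\rm ord}$ — which follows because $(\Theta_{\tau}^{\rm ord})_{K_1}$ is generated over $K$ by its $\tau$-isotypic $K_1$-socle together with the $\mu_i^-(\tau)$'s, and one checks via Lemma \ref{lemma:sigma-Vord} (applied to $\tau$ and, separately, to subrepresentations of the relevant $\Ind_I^K$'s) that such maps factor through $\jmath$. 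Once every map in the ladder is identified as induced by $\jmath$ and the two outer columns are isomorphisms, the middle column $\Hom_K(\Theta_{\tau}^{\rm ord},\Ind_{\overline{P}}^G\Ord_PV)\simto \Hom_K(\Theta_{\tau}^{\rm ord},V^{\rm ord})\simto\Hom_K(\Theta_{\tau}^{\rm ord},V)$ is forced. \qed
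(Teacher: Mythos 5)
Your overall shape (dévissage of $\Theta_{\tau}^{\rm ord}$ along a short exact sequence, Lemma \ref{lemma:sigma-Vord} on the pieces, five lemma) is the right one, but the argument as written has a genuine gap: you never control the connecting homomorphism. Your ladder reads
\[0\ra \Hom_K(C',W)\ra \Hom_K(\Theta_{\tau}^{\rm ord},W)\ra \Hom_K(\tau,W)^{\oplus f}\overset{\partial_W}{\ra}\Ext^1_{K/Z_1}(C',W)\]
with $C'=(\Theta_{\tau}^{\rm ord})_{K_1}$, and knowing that the two outer $\Hom$ columns are isomorphisms only gives injectivity of the middle one. For surjectivity you must show either that $\partial_{W}$ vanishes for $W=\Ind_{\overline{P}}^G\Ord_PV$ or that $\Ext^1_{K/Z_1}(C',\Ind_{\overline{P}}^G\Ord_PV)\ra\Ext^1_{K/Z_1}(C',V)$ is injective; neither is addressed. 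This is precisely the crux of the paper's proof: one first reduces to $V=\Omega$ with $\Omega|_K$ injective (embedding $V\hookrightarrow\Omega$ with the same $K$-socle, then descending via Lemma \ref{lemma:ker-j}(ii)), so that $\Ord_P\Omega|_{T_0}$ is injective by Proposition \ref{prop-BD-ordinary=injective}, and then invokes Lemma \ref{lemma:Ext-Inj=0} to kill $\Ext^1_{K/Z_1}(\tau,\Ind_{\overline{P}}^G(\Ord_P\Omega)_{\psi})$. For a general locally admissible $V$ there is no reason for $\Ord_PV$ to be $T_0$-injective, so Lemma \ref{lemma:Ext-Inj=0} is not available without that reduction, which your sketch omits.

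Your choice of short exact sequence also makes the $\Ext^1$ term harder, not easier: the paper uses $0\ra\bigoplus_i E_{\tau,\mu_i^-(\tau)}\ra\Theta_{\tau}^{\rm ord}\ra\tau\ra0$, so the obstruction lives in $\Ext^1_{K/Z_1}(\tau,-)$, exactly what Lemma \ref{lemma:Ext-Inj=0} controls, and the subobjects $E_{\tau,\mu_i^-(\tau)}$ are subrepresentations of $\Ind_I^K\chi_{\tau}^s$, exactly what Lemma \ref{lemma:sigma-Vord} covers. With your sequence the obstruction lives in $\Ext^1_{K/Z_1}((\Theta_{\tau}^{\rm ord})_{K_1},-)$, whose dévissage produces terms $\Ext^1_{K/Z_1}(\mu_i^-(\tau),-)$ to which Lemma \ref{lemma:Ext-Inj=0} does not apply (its hypothesis $\Hom_K(\tau',\Ind_{\overline{P}}^GU)\neq0$ fails for $\tau'=\mu_i^-(\tau)$). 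Two smaller points: you have the two isomorphisms of \eqref{eq:Hom-Vord} switched — the sequence $0\ra\ker(\jmath)\ra\Ind_{\overline{P}}^G\Ord_PV\ra V^{\rm ord}\ra0$ governs the \emph{left} arrow, and even there you would need to justify $\Ext^1_{K/Z_1}(\Theta_{\tau}^{\rm ord},\ker(\jmath))=0$; and your treatment of the $\Hom_K((\Theta_{\tau}^{\rm ord})_{K_1},-)$ column is best done not by "filtering and applying Lemma \ref{lemma:sigma-Vord} componentwise" but by observing that $\Hom_K(\mu_i^-(\tau),V)=0$ by hypothesis and $\Hom_K(\mu_i^-(\tau),\Ind_{\overline{P}}^G\Ord_PV)=0$ via Lemma \ref{lemma:ker-j}(i), which identifies that column with the $\Hom_K(\tau,-)$ one.
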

\begin{proof}

It is clear that we may assume $V$ is admissible. By \cite[Cor.~9.11]{BP},  there exists a $G$-equivariant embedding $V\hookrightarrow \Omega$, where $\Omega$ is a smooth $G$-representation such that  $\Omega|_K\cong \rInj_{K/Z_1}\soc_K(V)$. Note that although it is required that $p$ acts trivially on $V$ in \emph{loc. cit.}, up to twist the result applies to any admissible representation with a central character.   Assuming we have proven an isomorphism
\begin{equation}\label{eq:Hom-OmegaOrd}\Hom_{K}(\Theta_{\tau}^{\rm ord},\Omega^{\rm ord})\simto\Hom_K(\Theta_{\tau}^{\rm ord},\Omega), \end{equation}
 the desired isomorphism \eqref{eq:Hom-Vord}  will follow using  Lemma \ref{lemma:ker-j}(ii). Indeed, let $f\in\Hom_K(\Theta_{\tau}^{\rm ord},V)$; we need to prove $\im(f)\subset V^{\rm ord}$. By \eqref{eq:Hom-OmegaOrd}, $\im(f)\subset V \cap \Omega^{\rm ord}$. Since $\tau$ is $2$-generic by assumption,  no Jordan--H\"older factor  of $\Theta_{\tau}^{\rm ord}$ is one-dimensional, so we  actually have $\im(f)\subseteq V^{\rm ord}$ by Lemma \ref{lemma:ker-j}(ii).

So we may assume that $V=\Omega$ is injective when restricted to $K/Z_1$. 
Recall that $\Theta_{\tau}^{\rm ord}$ fits in a short exact sequence by \eqref{eq:seq-Theta-ord}
\[0\ra \bigoplus_{j\in\cS}E_{\tau,\mu_j^{-}(\tau)}\ra\Theta_{\tau}^{\rm ord}\ra \tau\ra0.\]
It induces a commutative diagram  
\[{\small\xymatrix{0\ar[r]&\Hom_K(\tau,\Omega')\ar[r]\ar^{\iota}[d]&\Hom_K(\Theta_{\tau}^{\rm ord}, \Omega') \ar[r]\ar[d]&\bigoplus_{j\in\cS}\Hom_K(E_j, \Omega')\ar^{\oplus_j\iota_j}[d]\ar^{\ \ \partial}[r]&\Ext^1_{K/Z_1}(\tau,\Omega')\\
0\ar[r]&\Hom_K(\tau,\Omega)\ar[r]&\Hom_K(\Theta_{\tau}^{\rm ord},\Omega)\ar[r]&\bigoplus_{j\in\cS}\Hom_K(E_j,\Omega)\ar[r]&0,}}\]
where  we have written $\Omega'=\Ind_{\overline{P}}^G\Ord_P\Omega$  and $E_j=E_{\tau,\mu_j^{-}(\tau)}$ to shorten the formulas. The bottom row is exact by the injectivity of $\Omega$.
Using Lemma \ref{lemma:sigma-Vord}, the assumption on $\soc_K(V)$ implies that $\iota$  and  all $\iota_j$ are isomorphisms (as $E_j$ is a subrepresentation of $\Ind_I^K\chi_{\tau}^s$). We claim that $\partial$ is the zero map, which will finish the proof   by the snake lemma.

Prove the claim.   Since  $\soc_K(V)=\soc_K(\Omega)$, the assumption  implies that $\Hom_{K}(\mu_j^-(\tau),\Omega)=0$ for all $j\in\cS$. As noted above, $\dim_{\F}\mu_j^{-}(\tau)>1$, hence by Lemma  \ref{lemma:ker-j}(i)
\[\Hom_K(\mu_j^{-}(\tau),\Ind_{\overline{P}}^G\Ord_P\Omega)=0.\]
Consequently,  we may assume $\Hom_K(\tau,\Ind_{\overline{P}}^G\Ord_P\Omega)\neq0$, otherwise the claim is trivial.
Decomposing $\Ord_P\Omega=\oplus_{\psi}(\Ord_P\Omega)_{\psi}$  as in the proof of Lemma \ref{lemma:V=directsum}, it suffices to prove the claim with $\Ord_P\Omega$ replaced by $(\Ord_P\Omega)_{\psi}$, for those $\psi$ such that $\Hom_K(\tau,\Ind_{\overline{P}}^G(\Ord_P\Omega)_{\psi})\neq0$.  But, Lemma \ref{lemma:Ext-Inj=0} implies that $\Ext^1_{K/Z_1}(\tau,\Ind_{\overline{P}}^G(\Ord_P\Omega)_{\psi})=0$, from which the claim follows.
\end{proof}

The next result gives an interpretation of  the semisimplicity of $(\Ord_PV)|_{T_0}$ in terms of $V|_K$.

\begin{proposition}\label{prop:Ord-semisimple}
Let $V$ be a locally admissible smooth representation of $G$ and $\tau$ be a $2$-generic Serre weight. Assume that $\Hom_{K}(\tau',V)=0$ for any $\tau'\in \JH(\Ind_I^K\chi_{\tau})$ such that $\tau'\neq \tau$.
 If $\Ord_PV$ is semisimple when restricted to $T_0$, then the quotient  $\Theta_{\tau}^{\rm ord}\twoheadrightarrow \tau$ induces an isomorphism
\[\Hom_K(\tau,V)\simto\Hom_K(\Theta_{\tau}^{\rm ord},V).\]
\end{proposition}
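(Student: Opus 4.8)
The plan is to deduce this from Proposition \ref{prop:Hom-Theta-Ord} together with the structure of $\Theta_\tau^{\rm ord}$ recorded in Lemma \ref{lemma:Theta-ord} and the exact sequence \eqref{eq:seq-Theta-ord}. By Proposition \ref{prop:Hom-Theta-Ord}, under the stated hypotheses the map $\jmath$ induces isomorphisms
\[\Hom_K(\Theta_\tau^{\rm ord},\Ind_{\overline P}^G\Ord_P V)\simto\Hom_K(\Theta_\tau^{\rm ord},V^{\rm ord})\simto\Hom_K(\Theta_\tau^{\rm ord},V),\]
and likewise (by Lemma \ref{lemma:sigma-Vord}) isomorphisms $\Hom_K(\tau,\Ind_{\overline P}^G\Ord_P V)\simto\Hom_K(\tau,V)$, compatible with the maps induced by the quotient $\Theta_\tau^{\rm ord}\twoheadrightarrow\tau$. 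So it suffices to prove the statement with $V$ replaced by $\Ind_{\overline P}^G\Ord_P V$; in other words, writing $U\defn\Ord_P V$, I would show that the quotient $\Theta_\tau^{\rm ord}\twoheadrightarrow\tau$ induces an isomorphism
\[\Hom_K(\tau,\Ind_{\overline P}^G U)\simto\Hom_K(\Theta_\tau^{\rm ord},\Ind_{\overline P}^G U)\]
when $U|_{T_0}$ is semisimple.

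The key point is then to apply the long exact sequence obtained from \eqref{eq:seq-Theta-ord}, namely
\[0\to\Hom_K(\tau,\Ind_{\overline P}^G U)\to\Hom_K(\Theta_\tau^{\rm ord},\Ind_{\overline P}^G U)\to\bigoplus_{j\in\cS}\Hom_K(E_{\tau,\mu_j^-(\tau)},\Ind_{\overline P}^G U)\xrightarrow{\ \partial\ }\Ext^1_{K/Z_1}(\tau,\Ind_{\overline P}^G U),\]
and to show that the third term already vanishes, i.e. $\Hom_K(E_{\tau,\mu_j^-(\tau)},\Ind_{\overline P}^G U)=0$ for each $j$. Since $\tau$ is the socle of $E_{\tau,\mu_j^-(\tau)}$ and $\Ind_{\overline P}^GU$ is a direct sum $\oplus_\psi\Ind_{\overline P}^G U_\psi$ with each $U_\psi$ having all Jordan–Hölder factors isomorphic to a single character $\psi$, we may reduce to the case $U=U_\psi$; if $\Hom_K(\tau,\Ind_{\overline P}^G\psi)=0$ there is nothing to prove, otherwise $\Ind_{\overline P}^G\psi$ is irreducible with $K$-socle $\tau$ (as $\tau$ is $1$-generic, $1<\dim_\F\tau<q$, by \cite[\S7]{BL}), so $\Hom_K(\tau',\Ind_{\overline P}^G U)=0$ for all Serre weights $\tau'\neq\tau$; in particular $\Hom_K(\mu_j^-(\tau),\Ind_{\overline P}^GU)=0$. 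A nonzero map $E_{\tau,\mu_j^-(\tau)}\to\Ind_{\overline P}^GU$ must be nonzero on the socle $\tau$, hence its image, being a quotient of $E_{\tau,\mu_j^-(\tau)}$ containing $\tau$, is either $\tau$ or all of $E_{\tau,\mu_j^-(\tau)}$; the latter would force $\mu_j^-(\tau)$ into $\Ind_{\overline P}^GU$, contradiction, so the map factors through the cosocle $\mu_j^-(\tau)$ of $E_{\tau,\mu_j^-(\tau)}$ — wait, the cosocle is $\mu_j^-(\tau)$, so factoring through the cosocle would also give a nonzero map from $\mu_j^-(\tau)$, again a contradiction. Hence $\Hom_K(E_{\tau,\mu_j^-(\tau)},\Ind_{\overline P}^GU)=0$, which is what we need. (One should double-check: the kernel of $E_{\tau,\mu_j^-(\tau)}\to E_{\tau,\mu_j^-(\tau)}/\tau=\mu_j^-(\tau)$ is $\tau$, and any map nonzero on the socle has image containing $\tau$ but the only submodules of $E_{\tau,\mu_j^-(\tau)}$ are $0,\tau,E_{\tau,\mu_j^-(\tau)}$, so image is $\tau$ or the whole thing; image $=\tau$ means the map kills $\mu_j^-(\tau)$'s preimage modulo $\tau$, i.e. factors through a self-map, contradicting that the extension is nonsplit and $\Ind_{\overline P}^GU$ has $\tau$ with multiplicity one in socle... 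I will present this cleanly.)

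The main obstacle, and the only place the semisimplicity of $U|_{T_0}$ enters, is to rule out that $\mu_j^-(\tau)$ occurs in $\Ind_{\overline P}^GU$ in a way that is \emph{glued} to $\tau$ — but actually since I have reduced to $U=U_\psi$ with all Jordan–Hölder factors equal to $\psi$, and $\Ind_{\overline P}^G\psi$ is irreducible $\cong\Ind_{\overline P}^G\tau$-type with socle $\tau$, the Serre weight $\mu_j^-(\tau)\neq\tau$ simply cannot be a $K$-subrepresentation at all; so semisimplicity of $U|_{T_0}$ is in fact used earlier, through Proposition \ref{prop:Hom-Theta-Ord} is \emph{not} where it is used — rather it is used to know that $\Ord_P(V^{\rm ord})=U$ is semisimple so that $\Ind_{\overline P}^G U$ decomposes as $\oplus_\psi \Ind_{\overline P}^G U_\psi$ with $U_\psi|_{T_0}$ \emph{itself semisimple}, i.e. $U_\psi\cong\psi^{\oplus m_\psi}$ as $T_0$-representations; this is exactly what Lemma \ref{lemma:gene-by-socle} and the irreducibility criterion of \cite[\S7]{BL} need to conclude $\soc_K(\Ind_{\overline P}^GU_\psi)\cong\tau^{\oplus m_\psi}$ and no other Serre weight occurs in the socle. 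I would therefore organize the argument as: (1) reduce to $V=\Ind_{\overline P}^GU$, $U=\Ord_P V$, via Proposition \ref{prop:Hom-Theta-Ord} and Lemma \ref{lemma:sigma-Vord}; (2) use semisimplicity of $U|_{T_0}$ to write $U\cong\oplus_\psi\psi^{\oplus m_\psi}$ and reduce to $U=\psi^{\oplus m}$; (3) invoke \cite[\S7]{BL} ($1<\dim_\F\tau<q$) to get $\Ind_{\overline P}^G\psi$ irreducible with $K$-socle $\tau$, so $\soc_K(\Ind_{\overline P}^GU)\cong\tau^{\oplus m}$ and no Serre weight $\neq\tau$ embeds; (4) conclude $\Hom_K(E_{\tau,\mu_j^-(\tau)},\Ind_{\overline P}^GU)=0$ and feed this into the long exact sequence from \eqref{eq:seq-Theta-ord} to get the desired isomorphism. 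The delicate bookkeeping is matching up the several reduction steps so that the hypothesis $\Hom_K(\tau',V)=0$ for $\tau'\in\JH(\Ind_I^K\chi_\tau)$, $\tau'\neq\tau$ is correctly propagated through $\jmath$ — but this is handled precisely by Lemma \ref{lemma:sigma-Vord} and Corollary \ref{cor:j=isom}, so no new input is required.
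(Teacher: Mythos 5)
Your reductions (1)--(3) are fine and coincide with the paper's own: one passes to $V=\Ind_{\overline{P}}^G U$ with $U=\Ord_PV$ via Proposition \ref{prop:Hom-Theta-Ord} and Lemma \ref{lemma:sigma-Vord}, uses semisimplicity to assume $U$ is $\psi$-isotypic, and may assume $\soc_K(\Ind_{\overline{P}}^G\psi)\cong\tau$. The gap is in step (4): the claim $\Hom_K(E_{\tau,\mu_j^-(\tau)},\Ind_{\overline{P}}^G\psi)=0$ is \emph{false}. The weight $\mu_j^-(\tau)$ occurs in the second layer of the socle filtration of $(\Ind_{\overline{P}}^G\psi)^{K_1}\cong\Ind_I^K\chi_\tau^s$, so $E_{\tau,\mu_j^-(\tau)}$ (socle $\tau$, cosocle $\mu_j^-(\tau)$) is a genuine $K$-subrepresentation of $\Ind_{\overline{P}}^G\psi$; indeed the paper uses exactly this in the proofs of Lemma \ref{lemma:sigma-Vord} and Proposition \ref{prop:Hom-Theta-Ord}, where $E_j=E_{\tau,\mu_j^-(\tau)}$ is treated as a subrepresentation of $\Ind_I^K\chi_\tau^s$ and $\Hom_K(E_j,\Omega)$ is shown to be \emph{one}-dimensional, not zero. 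Your deduction "the latter would force $\mu_j^-(\tau)$ into $\Ind_{\overline{P}}^GU$, contradiction" is where this goes wrong: an embedding of $E_{\tau,\mu_j^-(\tau)}$ only forces $\mu_j^-(\tau)$ to occur as a \emph{subquotient}, not as a subobject, so there is no conflict with the $K$-socle being $\tau$-isotypic.

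Consequently, in the long exact sequence coming from \eqref{eq:seq-Theta-ord} the third term does not vanish, and what must be proved is that the connecting map $\partial:\bigoplus_j\Hom_K(E_{\tau,\mu_j^-(\tau)},W)\to\Ext^1_{K/Z_1}(\tau,W)$ is \emph{injective} when $U|_{T_0}$ is semisimple (it is zero, by Lemma \ref{lemma:Ext-Inj=0}, precisely when $U|_{T_0}$ is injective -- the opposite extreme). This is the genuinely new input, and the paper supplies it by a different route: if $h:\Theta_\tau^{\rm ord}\to V|_K$ does not kill $\soc(\Theta_\tau^{\rm ord})$, then $[\im(h):\tau]=2$, so by Lemma \ref{lemma:Theta-ord-K1} (multiplicity freeness of $(\Theta_\tau^{\rm ord})_{K_1}$) the image is not annihilated by $\fm_{K_1}$, whence $\im(h)\cap V^{K_1}=\rad(\im(h))$ and $\tau\cong\mathrm{cosoc}(\im(h))$ embeds into $V/V^{K_1}$, contradicting Lemma \ref{lemma:PS-K1}. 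Some argument of this kind, exploiting $V^{K_1}$ and the nonsplitness of the extensions inside $\Theta_\tau^{\rm ord}$, is unavoidable; your proposal as written cannot be repaired by bookkeeping alone.
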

\begin{proof}
Again we may assume $V$ is admissible. Moreover, by Proposition \ref{prop:Hom-Theta-Ord} and its proof, we may assume $V=\Ind_{\overline{P}}^G\Ord_PV$. Since the assertion depends only on $V|_{K}$, hence only on $(\Ord_PV)|_{T_0}$ which by assumption is semisimple, we  may assume $ \Ord_PV=\psi$ is one-dimensional and so $V\cong\Ind_{\overline{P}}^G\psi$.
As in the proof of Proposition \ref{prop:Hom-Theta-Ord}, we may assume $\Hom_{K}(\tau,V)\neq0$ and consequently $\tau\cong \soc_K(V)$.

 Let $h:\Theta_{\tau}^{\rm ord}\ra V|_{K}$ be a nonzero morphism.
 We need to prove that $h$ factors through $ \Theta_{\tau}^{\rm ord}\twoheadrightarrow \tau$. It suffices to prove that $h$ is zero when restricted to $\soc(\Theta_{\tau}^{\rm ord})$. Assume this is not the case. Then  $[\im(h):\tau]\geq 2$ and Lemma \ref{lemma:Theta-ord-K1} implies that $\im(h)$ is not annihilated by $\fm_{K_1}$ (as $\im(h)_{K_1}$ is a quotient of $(\Theta_{\tau}^{\rm ord})_{K_1}$ which is multiplicity free). Moreover, it is easy to see that $\im(h)\cap V^{K_1}=\rad(\im(h))$, which induces an embedding
$\mathrm{cosoc}(\im(h))\cong \tau\hookrightarrow V/V^{K_1}$. This again gives a contradiction  by Lemma \ref{lemma:PS-K1} below.
\end{proof}

The following lemma is well-known; we include a proof for lack of a suitable reference.
\begin{lemma}\label{lemma:PS-K1}
Assume $p>2$. Let $\pi=\Ind_{P}^G\psi$ be a principal series of $G$. Let $\sigma$ be a Serre weight such that $\Hom_K(\sigma,\pi|_K)\neq0$.
Then $\Hom_K(\sigma,\pi/\pi^{K_1})=0$.
\end{lemma}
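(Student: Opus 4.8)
The statement concerns a principal series $\pi = \Ind_P^G \psi$ (here induction is smooth induction, and $\psi = \chi_1 \otimes \chi_2$ a smooth character of $T$), a Serre weight $\sigma$ with $\Hom_K(\sigma,\pi|_K) \neq 0$, and asks to show $\Hom_K(\sigma, \pi/\pi^{K_1}) = 0$. The key structural fact is that $\pi^{K_1}$, the space of $K_1$-invariants, is quite large inside $\pi$: by Mackey theory / the Iwasawa decomposition $G = P K$, one has $\pi|_K \cong \Ind_{P \cap K}^K (\psi|_{T_0})$, and $\pi^{K_1} = (\Ind_{P\cap K}^K \psi|_{T_0})^{K_1}$. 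Since $K_1$ is normal in $K$, $\pi^{K_1}$ is itself a $\Gamma$-representation, and the quotient $\pi/\pi^{K_1}$ is the sum of all Jordan--Hölder constituents of $\pi$ on which $K_1$ acts nontrivially. So the first step is to identify $\pi^{K_1}$ precisely: I claim $\pi^{K_1} \cong \Ind_{P\cap K}^K(\psi|_{T_0})$ computed inside the \emph{finite} group $\GL_2(\F_q)$, i.e. $\pi^{K_1} = \Ind_{B(\F_q)}^{\GL_2(\F_q)} \overline{\psi}$ where $\overline{\psi}$ is $\psi|_{T_0}$ viewed as a character of $T_0/Z_1 \cap K_1 \cong$ the torus of $\GL_2(\F_q)$ (modulo center issues, which are harmless since we have a fixed central character). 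This is a standard computation: a function in $\Ind_P^G\psi$ fixed by $K_1$ must, via $G = PK_1 \cdot (\text{finite set})$, be determined by its values on a finite set, and one checks these values are exactly the data of a function on $B(\F_q)\backslash \GL_2(\F_q)$.

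\textbf{Step by step.} First I would establish the isomorphism $\pi^{K_1} \cong \Ind_{B(\F_q)}^{\GL_2(\F_q)}\overline\psi$ as $\Gamma = \F[\GL_2(\F_q)]$-modules. Second, I would recall from \cite[\S2]{BP} (or \cite{BL}) the Jordan--Hölder structure of $\Ind_{B(\F_q)}^{\GL_2(\F_q)}\overline\psi$: when $\overline\psi \neq \overline\psi^s$ it is multiplicity free with $2^f$ constituents, all of which are genuine Serre weights, and its socle and cosocle are the two ``extreme'' weights; when $\overline\psi = \overline\psi^s$ there are the well-known small-dimensional subtleties (constituents of dimension $1$ and $q$). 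The crucial point is that $\Ind_{B(\F_q)}^{\GL_2(\F_q)}\overline\psi$ already contains \emph{every} Serre weight that can appear in $\pi|_K$: indeed the $K$-socle of $\pi = \Ind_P^G\psi$ depends only on $\psi|_{T_0}$ and equals $\soc_K(\Ind_{B(\F_q)}^{\GL_2(\F_q)}\overline\psi)$ (this is cited earlier, e.g. around Lemma \ref{lemma:gene-by-socle} and \cite[Thm. 30]{BL}). Third, I would argue: since $\Hom_K(\sigma,\pi) \neq 0$, the weight $\sigma$ embeds in $\soc_K\pi = \soc_K\pi^{K_1}$; hence $\sigma$ occurs in the $K_1$-invariants. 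Fourth — the heart of the matter — I must show that $\sigma$ does \emph{not} occur in $\pi/\pi^{K_1}$. Since $\sigma$ occurs in $\soc_K \pi$ and $\pi/\pi^{K_1}$ has no $K_1$-fixed vectors, it suffices to show $\Hom_K(\sigma, \pi/\pi^{K_1}) = 0$; equivalently, that $\sigma$ is not a subrepresentation of $\pi/\pi^{K_1}$. The cleanest route: show $\soc_K(\pi/\pi^{K_1})$ contains no Serre weight that lies in $\soc_K\pi$. For this I would use that $\pi/\pi^{K_1}$ is a successive extension of non-trivial-on-$K_1$ constituents; by Frobenius reciprocity a Serre weight $\sigma$ embedding in $\pi/\pi^{K_1}$ would yield a $G$-map $\cInd_{KZ}^G \sigma \to \pi/\pi^{K_1}$, and one analyzes this via the Hecke operator $T$ on $\Hom_K(\sigma,\pi)$ exactly as in the proof of Lemma \ref{lemma:sigma-Vord}: the eigenvalues of $T$ are all nonzero (a vanishing eigenvalue would force a map out of $\Ind_I^K\chi_\sigma^s/\sigma$ which doesn't exist for a principal series with the right genericity/multiplicity-one on the finite induced module), and nonzero eigenvalues give irreducible $I(\sigma)/(T-\lambda)$, each of which is a full principal series, hence generated by its $K$-socle, hence \emph{not} killed when passing to $\pi/\pi^{K_1}$ would contradict $\sigma \hookrightarrow \soc_K$. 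Concretely, any irreducible subquotient $\pi'$ of $\pi$ with $\Hom_K(\sigma,\pi') \neq 0$ satisfies $\sigma \hookrightarrow \soc_K\pi'$, and such $\pi'$ (being a constituent of a principal series) is either itself a full principal series or Steinberg/character twist; in all cases $\sigma \hookrightarrow \pi'^{K_1}$, so $\sigma$ cannot survive in $\pi/\pi^{K_1}$.

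\textbf{Main obstacle.} The delicate point is handling the case $\overline\psi = \overline\psi^s$ (equivalently $\chi_1|_{\cO_L^\times} = \chi_2|_{\cO_L^\times}$), where $\pi$ contains one-dimensional subquotients and a constituent of dimension $q$, and where the naive ``$\pi^{K_1}$ is everything in the socle'' bookkeeping needs care because $\Ind_{B(\F_q)}^{\GL_2(\F_q)}\overline\psi$ is no longer multiplicity free — the trivial-type weight appears twice. In that situation one must check that the hypothesis $\Hom_K(\sigma,\pi) \neq 0$ together with the assumption $p > 2$ still forces $\sigma$ into $\pi^{K_1}$; the input is that the one-dimensional $G$-subquotients are fixed by $K_1$ and the $q$-dimensional Steinberg-type constituent also contributes only to $\pi^{K_1}$ for the relevant weights, while the genuinely $K_1$-non-trivial part of $\pi$ (the complement) has $K$-socle disjoint from $\soc_K\pi$. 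I would dispatch this by a direct constituent-by-constituent check using the explicit list in \cite[\S2-\S3]{BP} of which Serre weights occur in $\Ind_{B(\F_q)}^{\GL_2(\F_q)}\overline\psi$ versus which occur in $\pi/\pi^{K_1}$, noting that $p>2$ rules out the pathological self-extensions. The rest of the argument is formal.
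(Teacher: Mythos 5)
Your proposal has a fundamental gap at the step that is supposed to carry the whole argument. The subspace $\pi^{K_1}$ is a $K$-subrepresentation of $\pi$ but is \emph{not} $G$-stable, so $\pi/\pi^{K_1}$ is only a representation of $K$. Consequently the entire machinery of Step 4 — Frobenius reciprocity producing a $G$-map $\cInd_{KZ}^G\sigma\to\pi/\pi^{K_1}$, the Hecke operator $T$, the analysis of irreducible $G$-constituents $\pi'$ — simply does not apply to $\pi/\pi^{K_1}$. The concluding inference "any irreducible subquotient $\pi'$ of $\pi$ with $\Hom_K(\sigma,\pi')\neq0$ satisfies $\sigma\hookrightarrow\pi'^{K_1}$, so $\sigma$ cannot survive in $\pi/\pi^{K_1}$" is a non sequitur: the filtration of $\pi$ by $G$-subrepresentations has nothing to do with the filtration $\pi^{K_1}\subset\pi$, and $\sigma$ occurs in $\pi|_K$ with infinite multiplicity as a subquotient. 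Your parenthetical claim that "$\pi/\pi^{K_1}$ has no $K_1$-fixed vectors" is also false in general: $(\pi/\pi^{K_1})^{K_1}$ injects into $H^1(K_1,\pi^{K_1})$ and contains $\pi[\fm_{K_1/Z_1}^2]/\pi[\fm_{K_1/Z_1}]$, which is typically nonzero.

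What the lemma actually asserts is a statement about $K$-extensions: since every $K$-map $\sigma\to\pi$ lands in $\pi^{K_1}$, the long exact sequence for $0\to\pi^{K_1}\to\pi\to\pi/\pi^{K_1}\to0$ identifies $\Hom_K(\sigma,\pi/\pi^{K_1})$ with the kernel of the map $\Ext^1_{K/Z_1}(\sigma,\pi^{K_1})\to\Ext^1_{K/Z_1}(\sigma,\pi)$, so one must prove this map is injective. This is the route the paper takes: using $\pi|_K\cong\Ind_{P\cap K}^K(\psi|_{T_0})$ and $\pi^{K_1}\cong\Ind_I^K(\psi|_{T_0})$ with $\psi|_{T_0}=\chi_\sigma^s$ (forced by $\Hom_K(\sigma,\pi)\neq0$), Frobenius reciprocity reduces the problem to the injectivity of the restriction map $\Ext^1_{I/Z_1}(\ide,\chi_\sigma^s\otimes\sigma^{\vee})\to\Ext^1_{(P\cap K)/Z_1}(\ide,\chi_\sigma^s\otimes\sigma^{\vee})$. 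That injectivity is then proved by an explicit computation: given an $I$-extension split over $P\cap K$, one checks that the $N_0$-fixed lift is also fixed by $\overline{N}_1=\smatr{1}0{p\cO_L}1$, using a matrix identity and the vanishing $\Ext^1_{H\overline{N}_1}(\ide,\ide)=0$ (this is where $p>2$ enters). None of this group-cohomological content is present in your proposal, and your Step 1–3 bookkeeping about Jordan–Hölder constituents of $\Ind_{B(\F_q)}^{\GL_2(\F_q)}\overline\psi$ cannot substitute for it.
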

\begin{proof}
First observe that, since $\pi|_K\cong\Ind_{P\cap K}^K(\psi|_{T_0})$, 
the assumption  implies $\Hom_{P\cap K}(\sigma,\psi|_{T_0})\neq0$ by Frobenius reciprocity, hence  $\psi|_{T_0}=\chi_{\sigma}^s$ by \cite[Lem.~2]{BL}.

The exact sequence $0\ra \pi^{K_1}\ra\pi\ra \pi/\pi^{K_1}\ra0$ induces an exact sequence
\[0\ra \Hom_K(\sigma,\pi/\pi^{K_1})\ra \Ext^1_{K/Z_1}(\sigma,\pi^{K_1})\overset{\beta}{\ra}\Ext^1_{K/Z_1}(\sigma,\pi),\]
so it is enough to show $\beta$ is injective.  By Shapiro's lemma and using the fact that $\pi^{K_1}\cong\Ind_{I}^K(\psi|_{T_0})$, this is equivalent to show the injectivity of
\[\Ext^1_{I/Z_1}(\sigma,\chi_{\sigma}^s)\ra\Ext^1_{(P\cap K)/Z_1}(\sigma,\chi_{\sigma}^s),\]
or equivalently the injectivity of
\[\Ext^1_{I/Z_1}(\ide,\chi_{\sigma}^s\otimes\sigma^{\vee})\ra\Ext^1_{(P\cap K)/Z_1}(\ide,\chi_{\sigma}^s\otimes\sigma^{\vee})\]
where the $\ide$'s denote the trivial representations.

Consider an $I/Z_1$-extension $0\ra \chi_{\sigma}^s\otimes\sigma^{\vee}\ra \mathcal{E}\ra \F v\ra0$, where $I$ acts trivially on $v$, and assume that it  splits when restricted to $(P\cap K)/Z_1$. Then we may choose a lifting of $v$, say $w\in \mathcal{E}$, on which $P\cap K$ acts trivially. It is enough to prove that  $\overline{N}_1\defn\smatr{1}0{p\cO_L}1$ also acts trivially on $w$, because then $I$ will act trivially on $w$ and $\mathcal{E}$ splits. It is clear that $\smatr{1}0{p^2\cO_L}1$ acts trivially on $\mathcal{E}$. The matrix identity (for $b,c\in\cO_L$)
\[\matr{1}{b}01\matr{1}0{pc}1=\matr{1}0{pc(1+pbc)^{-1}}1\matr{1+pbc}{b}{0}{(1+pbc)^{-1}}\]
implies that $\smatr{1}0{pc}1w$ is again fixed by $N_0$. By \cite[Lem.~2]{BL}, $(\chi_{\sigma}^s\otimes \sigma^{\vee})^{N_0}$ is one-dimensional and it is easy to see that $H$ acts on it via the trivial character $\ide$. Hence, if $w$ were not fixed by $\overline{N}_1$, then we would obtain a nonsplit extension class in $\Ext^1_{H\overline{N}_1}(\ide,\ide)$. However, the same proof  of \cite[Lem.~5.6]{Pa10} shows that $\Ext^1_{H\overline{N}_1}(\ide,\ide)=0$ (this uses $p>2$), a contradiction. 
\end{proof}

\section{Galois deformation rings}
\label{Section::galois}

The aim of this section is to recall the results of \cite{Le} on multi-type potentially Barsotti-Tate deformation rings of two dimensional representations of $G_L$ over $\F$ (in the reducible nonsplit case), and prove Proposition \ref{prop-tangent} and Corollary \ref{cor::intersect-tang-space} which will be used in \S\ref{subsection:cyclic}. We first recall the notion of the universal (reducible) deformation rings.

\subsection{Universal deformation rings} \label{subsection-univdef}

Let $\brho=\smatr{\chi_1}{*}{0}{\chi_2}$ be a reducible nonsplit two-dimensional representation of $G_L$ over $\F$ satisfying
\begin{equation}\label{equation-condition-rhobar}
\chi_1\chi_2^{-1}\notin\{1,\omega,\omega^{-1}\}.
\end{equation}
Let $\mathrm{ad}(\brho)$ denote $\End_{\F}(\brho)$ with  the adjoint action of $G_L.$ The assumption (\ref{equation-condition-rhobar}) on $\brho$ implies that
\begin{equation}\label{equ-end-of-brho}
H^0(G_L , {\rm ad}(\brho)) = \End_{G_L} (\brho) = \F,~~ H^0 (G_L, {\rm ad}(\brho)(1))= \Hom_{G_L} (\brho, \brho(1)) = 0
\end{equation}
where $V(1)$ denotes the Tate twist of $V$ for any $G_L$-module $V.$

\begin{lemma}\label{lemma::extn-chars}
$\Ext^2_{G_L} (\chi_1, \chi_2) = \Ext^2_{G_L} (\chi_2, \chi_1) = 0.$
\end{lemma}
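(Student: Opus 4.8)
The statement concerns local Galois cohomology of characters of $G_L$, so the natural tool is local Tate duality together with the local Euler characteristic formula. First I would recall that for a finite $\F[G_L]$-module $M$, local Tate duality gives a perfect pairing $H^2(G_L,M)\times H^0(G_L,M^\vee(1))\to \F$, where $M^\vee(1)=\Hom_\F(M,\F)(1)$. Applying this with $M=\chi_1\chi_2^{-1}$ (so that $\Ext^2_{G_L}(\chi_1,\chi_2)\cong H^2(G_L,\chi_2\chi_1^{-1})$, after the usual identification $\Ext^i_{G_L}(\chi_1,\chi_2)\cong H^i(G_L,\chi_2\chi_1^{-1})$), we are reduced to showing $H^0(G_L,\chi_1\chi_2^{-1}\omega)=0$, i.e. that $\chi_1\chi_2^{-1}\omega$ is not the trivial character, equivalently $\chi_1\chi_2^{-1}\neq\omega^{-1}$. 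But this is exactly one of the excluded cases in hypothesis \eqref{equation-condition-rhobar}. The symmetric statement $\Ext^2_{G_L}(\chi_2,\chi_1)=0$ follows identically: it is dual to $H^0(G_L,\chi_2\chi_1^{-1}\omega)$, which vanishes because $\chi_1\chi_2^{-1}\neq\omega$, again excluded by \eqref{equation-condition-rhobar}.

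The only genuinely delicate point is the bookkeeping translating $\Ext^2_{G_L}(\chi_1,\chi_2)$ into a group cohomology group: one uses that $\Ext^i$ in the category of $\F[G_L]$-modules computed via continuous cochains agrees with continuous Galois cohomology $H^i(G_L,\Hom_\F(\chi_1,\chi_2))=H^i(G_L,\chi_1^{-1}\chi_2)$, and that $G_L$ (a local field of residue characteristic $p$, with $p$ invertible... wait, here $p$ is the residue characteristic and we work with $p$-torsion coefficients) has strict cohomological dimension $2$, so $H^i$ vanishes for $i>2$ and $H^2$ is the top group where duality applies. I would cite the standard references (e.g. Neukirch–Schmidt–Wingberg, or Serre's \emph{Galois Cohomology}) for local Tate duality with torsion coefficients. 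Note the characters here are $\F$-valued, hence $p$-torsion, so this is the mod $p$ version of Tate duality, which is entirely standard.

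The expected main obstacle — really a minor one — is simply being careful about which character appears after dualizing: the Tate twist introduces a factor of $\omega$ (the mod $p$ cyclotomic character), and one must check that the resulting condition $\chi_1\chi_2^{-1}\omega\neq 1$ matches the hypothesis. Since \eqref{equation-condition-rhobar} excludes $\chi_1\chi_2^{-1}\in\{1,\omega,\omega^{-1}\}$, both twisted conditions $\chi_1\chi_2^{-1}\neq\omega^{\pm1}$ are guaranteed, so the argument closes cleanly. In short: the proof is a two-line application of local Tate duality once the hypotheses are unwound, and there is no serious computation to perform.
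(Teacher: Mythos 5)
Your proof is correct and follows exactly the paper's route: the paper also deduces the lemma directly from local Tate duality combined with the hypothesis $\chi_1\chi_2^{-1}\notin\{1,\omega,\omega^{-1}\}$ in \eqref{equation-condition-rhobar}. Your unwinding of the Tate twist (reducing to $\chi_1\chi_2^{-1}\neq\omega^{\pm1}$) is the right bookkeeping and matches what the paper leaves implicit.
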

\begin{proof}
This follows from the assumption (\ref{equation-condition-rhobar}) and Tate local duality.
\end{proof}

\begin{lemma}\label{lemma-h^1}
$ H^2(G_L, {\rm ad}(\brho)) = 0$ and $\dim_{\F} H^1(G_L , {\rm ad}(\brho))=4f+1.$
\end{lemma}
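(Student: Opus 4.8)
\textbf{Proof plan for Lemma \ref{lemma-h^1}.}

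The plan is to compute $H^2(G_L,\mathrm{ad}(\brho))$ via Tate local duality and then get $\dim_\F H^1$ from the local Euler characteristic formula. First I would observe that $\mathrm{ad}(\brho)$ sits in a filtration with graded pieces among $\{\F,\F,\chi_1\chi_2^{-1},\chi_2\chi_1^{-1}\}$: concretely, the upper-triangular form of $\brho$ makes the space of lower-triangular nilpotent endomorphisms a $G_L$-submodule isomorphic to $\chi_2\chi_1^{-1}$, the quotient by it contains the diagonal $\F^2$ as a submodule with quotient $\chi_1\chi_2^{-1}$. So it suffices to show $H^2(G_L,\chi)=0$ for each $\chi\in\{\ide,\chi_1\chi_2^{-1},\chi_2\chi_1^{-1}\}$, and then assemble via the long exact sequences of the filtration.

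By Tate local duality, $H^2(G_L,\chi)\cong H^0(G_L,\chi^{-1}(1))^\vee=H^0(G_L,\chi^{-1}\omega)^\vee$ (the cyclotomic twist being $\omega$ on mod $p$ coefficients). For $\chi=\ide$ this is $H^0(G_L,\omega)^\vee=0$ since $\omega\neq\ide$ (here $p>2$, and more relevantly $\omega$ is ramified). For $\chi=\chi_1\chi_2^{-1}$ we need $H^0(G_L,\chi_2\chi_1^{-1}\omega)=0$, i.e. $\chi_1\chi_2^{-1}\neq\omega$, which is exactly part of the hypothesis \eqref{equation-condition-rhobar}; similarly for $\chi=\chi_2\chi_1^{-1}$ we use $\chi_1\chi_2^{-1}\neq\omega^{-1}$. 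Feeding these vanishings into the long exact cohomology sequences attached to $0\to\chi_2\chi_1^{-1}\to\mathrm{ad}(\brho)\to \mathrm{ad}(\brho)/(\chi_2\chi_1^{-1})\to 0$ and $0\to\F^2\to\mathrm{ad}(\brho)/(\chi_2\chi_1^{-1})\to\chi_1\chi_2^{-1}\to 0$ gives $H^2(G_L,\mathrm{ad}(\brho))=0$ as claimed. (Alternatively one can cite Lemma \ref{lemma::extn-chars} together with $H^2(G_L,\F)=H^2(G_L,\mathrm{triv})$-vanishing to shortcut part of this.)

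For the dimension count I would invoke the local Euler characteristic formula: for a finite $G_L$-module $M$ with $p$-power order,
\[
\dim_\F H^0(G_L,M)-\dim_\F H^1(G_L,M)+\dim_\F H^2(G_L,M)=-[L:\Q_p]\cdot\dim_\F M.
\]
Applying this with $M=\mathrm{ad}(\brho)$, which has $\dim_\F M=4$ and $[L:\Q_p]=f$, and using $\dim_\F H^0(G_L,\mathrm{ad}(\brho))=1$ from \eqref{equ-end-of-brho} together with $H^2(G_L,\mathrm{ad}(\brho))=0$ just proved, we get $1-\dim_\F H^1(G_L,\mathrm{ad}(\brho))+0=-4f$, hence $\dim_\F H^1(G_L,\mathrm{ad}(\brho))=4f+1$. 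The only genuine input is the hypothesis \eqref{equation-condition-rhobar}, used precisely to kill the two $H^2$'s of the non-trivial characters; everything else is formal. I do not anticipate a real obstacle here — the one point to be careful about is getting the Tate twist right mod $p$ (it is $\omega$, not trivial) so that the three conditions $\chi_1\chi_2^{-1}\notin\{1,\omega,\omega^{-1}\}$ are exactly what is needed, and making sure the filtration of $\mathrm{ad}(\brho)$ is set up so that each graded piece is one of these characters.
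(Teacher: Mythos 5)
Your proof is correct, but it takes a more roundabout route than the paper. The paper's argument is a one-liner: Tate local duality applied directly to the self-dual module $\mathrm{ad}(\brho)=\brho\otimes\brho^{\vee}$ gives $H^2(G_L,\mathrm{ad}(\brho))\cong H^0(G_L,\mathrm{ad}(\brho)(1))^{\vee}=\Hom_{G_L}(\brho,\brho(1))^{\vee}$, and this last space was already shown to vanish in \eqref{equ-end-of-brho} (using exactly the hypothesis \eqref{equation-condition-rhobar}); the Euler characteristic step is then identical to yours. Your d\'evissage through a filtration of $\mathrm{ad}(\brho)$ with graded pieces $\chi_1\chi_2^{-1},\F,\F,\chi_2\chi_1^{-1}$ reaches the same conclusion and makes visible, piece by piece, which of the three conditions in \eqref{equation-condition-rhobar} kills which $H^2$ (and it essentially reproves Lemma \ref{lemma::extn-chars} along the way), at the cost of more bookkeeping.

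One small inaccuracy in your setup: when $\brho$ is non-split, the lower-triangular nilpotent endomorphisms do \emph{not} form a $G_L$-submodule of $\mathrm{ad}(\brho)$ (conjugating $\bigl(\begin{smallmatrix}0&0\\1&0\end{smallmatrix}\bigr)$ by an upper-triangular matrix with $*\neq 0$ produces nonzero diagonal and upper-triangular entries). The stable line is the space of \emph{upper}-triangular nilpotents, isomorphic to $\Hom(\chi_2,\chi_1)\cong\chi_1\chi_2^{-1}$, while $\Hom(\chi_1,\chi_2)\cong\chi_2\chi_1^{-1}$ appears only as the top graded quotient (this is the filtration used in Lemma \ref{lemma::reducible-def-ring}). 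This does not affect your conclusion — the long exact sequences only require a filtration with those graded pieces in \emph{some} order, and the three $H^2$-vanishings you establish are exactly what is needed — but the labelling of sub versus quotient should be corrected.
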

\begin{proof}
This first equality follows from (\ref{equ-end-of-brho}) and Tate local duality. The second equality follows from the local Euler-Poincar\'e characteristic formula.
\end{proof}

Let $\mathrm{Art}(\cO)$ denote the category of local artinian $\cO$-algebras with residue field $\F$. A deformation of $\brho$ to $A\in\mathrm{Art}(\cO)$ is a representation $\rho_A : G_L\to \GL_2(A)$ of $G_L$ such that the composition of $\rho_A$ with the natural map $\GL_2(A) \to \GL_2(\F)$ is $\brho.$ Two deformations $\rho_A,~\rho_A'$ of $\brho$ to $A$ are strictly equivalent if there is $M \in \Ker(\GL_2(A) \to \GL_2(\F))$ such that $\rho_A = M^{-1} \rho_A' M.$ Let $\mathrm{Def}_{\brho}:\mathrm{Art}(\cO)\ra \mathrm{Sets}$ be the functor sending $A$ to the set of strictly equivalent classes of deformations of $\brho$ to $A.$ Since $\End_{G_L}(\brho)=\F,$ Mazur's theory \cite{Mazur} on the deformation of Galois representations shows that the deformation functor $\mathrm{Def}_{\brho}$ is pro-representable by a complete noetherian local $\cO$-algebra $R_{\brho}.$

\begin{corollary}
$R_{\brho}$ is formally smooth over $\cO$ of relative dimension $4f+1$.
\end{corollary}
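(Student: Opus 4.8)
The plan is to deduce the corollary directly from the deformation-theoretic input collected just above, using the standard obstruction-theory argument of Mazur. First I would recall that, since $\End_{G_L}(\brho)=\F$ by \eqref{equ-end-of-brho}, the deformation functor $\mathrm{Def}_{\brho}$ is pro-representable by a complete noetherian local $\cO$-algebra $R_{\brho}$ with residue field $\F$; this is exactly what was just stated. The tangent space of $\mathrm{Def}_{\brho}$ is canonically $H^1(G_L,\ad(\brho))$, which by Lemma \ref{lemma-h^1} has dimension $4f+1$ over $\F$. Hence $R_{\brho}$ is a quotient of the power series ring $\cO[\![x_1,\dots,x_{4f+1}]\!]$, and one needs to show there are no relations, i.e. the surjection is an isomorphism.

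The key point is that the obstructions to lifting deformations lie in $H^2(G_L,\ad(\brho))$, which vanishes by Lemma \ref{lemma-h^1}. Concretely, I would invoke the presentation of $R_{\brho}$: there is a surjection $\cO[\![x_1,\dots,x_d]\!]\twoheadrightarrow R_{\brho}$ with $d=\dim_{\F}H^1(G_L,\ad(\brho))$ whose kernel is generated by at most $r=\dim_{\F}H^2(G_L,\ad(\brho))$ elements (this is Mazur's theorem; see also the obstruction calculus in \cite{Mazur}). Since $r=0$, the kernel is zero and $R_{\brho}\cong \cO[\![x_1,\dots,x_{4f+1}]\!]$, so $R_{\brho}$ is formally smooth over $\cO$ of relative dimension $4f+1$. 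The only mild point to check is that one genuinely works in the category $\mathrm{Art}(\cO)$ of artinian $\cO$-algebras (not $\F$-algebras), so that the smoothness is relative to $\cO$; this is automatic because the deformation problem is defined over $\cO$ and $H^2$ vanishes, so lifting through any small extension $0\to I\to A'\to A\to 0$ in $\mathrm{Art}(\cO)$ is unobstructed.

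There is essentially no obstacle here: the corollary is a formal consequence of the two cohomological computations in Lemma \ref{lemma-h^1} (vanishing of $H^2$ and the Euler–Poincaré count giving $\dim H^1=4f+1$) together with pro-representability, which itself rests on \eqref{equ-end-of-brho}. The one thing worth spelling out, if desired, is the standard fact that for a deformation problem with $H^0(G_L,\ad(\brho))=\F$ the number of relations in a minimal presentation is bounded by $\dim_{\F}H^2(G_L,\ad(\brho))$; granting that, formal smoothness of relative dimension $\dim_{\F}H^1(G_L,\ad(\brho))=4f+1$ is immediate.
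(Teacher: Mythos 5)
Your proof is correct and follows the same route as the paper: formal smoothness from the vanishing of $H^2(G_L,\ad(\brho))$ via Mazur's obstruction theory, and the relative dimension from $\dim_{\F}H^1(G_L,\ad(\brho))=4f+1$, both supplied by Lemma \ref{lemma-h^1}. Your write-up just spells out the minimal-presentation argument that the paper compresses into a citation to \cite[Prop.~2]{Mazur}.
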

\begin{proof}
Since $\dim_{\F} H^2(G_L, {\rm ad}(\brho)) = 0$, $R_{\brho}$ is formally smooth by \cite[Prop.~2]{Mazur}. The relative dimension of $R_{\brho}$ over $\cO$ follows from the corresponding dimension of $H^1(G_L , {\rm ad}(\brho))$ which is given in Lemma \ref{lemma-h^1}.
\end{proof}

Fix $\psi:G_{L}\ra \cO^{\times}$ a continuous character which lifts $\det\brho$. Let $\mathrm{Def}^{\psi}_{\brho}:\mathrm{Art}(\cO)\ra \mathrm{Sets}$ be the functor sending $A$ to the set of strictly equivalent classes of deformations $\rho_A$ of $\brho$ over $A$ such that $\det \rho_A = \psi_A,$ where $\psi_A$ is the composite $G_{L} \To{\psi} \cO^{\times} \to A^{\times}.$ The deformation functor $\mathrm{Def}^{\psi}_{\brho}$ is pro-representable by a complete noetherian local $\cO$-algebra $R^{\psi}_{\brho}.$ Let ${\rm ad}^0(\brho)$ be the subspace of ${\rm ad}(\brho)$ consisting of matrices of trace zero. It is stable under the action of $G_L.$
Similarly as in the proof of Lemma \ref{lemma-h^1}, one can show $\dim H^2(G_L, {\rm ad}^0(\brho)) = 0$ and $\dim H^1(G_L , {\rm ad}^0(\brho))=3f.$ We then deduce that $R^{\psi}_{\brho}$ is formally smooth over $\cO$ of relative dimension $3f$.

\subsection{Reducible deformation rings} \label{subsection-reducible-deformation}

Let $\brho$ be as in the last subsection. A deformation $\rho_A$ of $\brho$ to $A\in\mathrm{Art}(\cO)$ is said to be \emph{reducible} (or equivalently {\em $P$-ordinary} in \cite[\S5.1]{Breuil-Ding} where $P$ denotes the upper-triangular Borel subgroup of $\GL_2$) if $\rho_A$ has a free rank one direct summand over $A$ which is stable under $G_L.$ We define the functor $\mathrm{Def}^{\rm red}_{\brho}:\mathrm{Art}(\cO)\ra \mathrm{Sets}$ by sending $A$ to the set of strictly equivalent classes of reducible deformations of $\brho.$ By \cite[Prop.~3]{MazurFermat} (or by \cite[Lem.~5.3, Prop.~5.4]{Breuil-Ding} which is more adapted to our situation), $\mathrm{Def}^{\rm red}_{\brho}$ is a subfunctor of $\mathrm{Def}_{\brho}$ and is pro-representable by a complete  noetherian local $\cO$-algebra $R^{\rm red}_{\brho}$ with residue field $\F$.

Denote by $\mathrm{ad}(\brho)_{\rm red}\subset \mathrm{ad}(\brho)$ the subspace given by the following short exact sequence
\[0\ra \mathrm{ad}(\brho)_{\rm red}\ra \mathrm{ad}(\brho)\ra \Hom_{\F}(\chi_1,\chi_2)\ra0,\]
where the homomorphism $\mathrm{ad}(\brho)\ra \Hom_{\F}(\chi_1,\chi_2)$ is given by
\[
\phi \longmapsto (\chi_1\into \brho \To{\phi} \brho \onto \chi_2).
\]
One checks that $\mathrm{ad}(\brho)_{\rm red}$ is stable under the adjoint action of $G_L$.

\begin{lemma}\label{lemma::reducible-def-ring}

(i) $H^0 (G_L, \mathrm{ad}(\brho)_{\rm red}) = \F.$

(ii) $H^2 (G_L, \mathrm{ad}(\brho)_{\rm red}) = 0.$

(iii) $\dim H^1  (G_L, \mathrm{ad}(\brho)_{\rm red}) = 3f+1.$
\end{lemma}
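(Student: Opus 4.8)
The key input is the short exact sequence of $G_L$-modules
\[0\ra \mathrm{ad}(\brho)_{\rm red}\ra \mathrm{ad}(\brho)\ra \Hom_{\F}(\chi_1,\chi_2)\ra0,\]
together with the long exact cohomology sequence it induces. Observe first that $\Hom_{\F}(\chi_1,\chi_2)$ is the one-dimensional $G_L$-module $\chi_1^{-1}\chi_2$, and by \eqref{equation-condition-rhobar} we have $\chi_1^{-1}\chi_2\notin\{1,\omega,\omega^{-1}\}$; hence $H^0(G_L,\chi_1^{-1}\chi_2)=0$, and by Tate local duality $H^2(G_L,\chi_1^{-1}\chi_2)\cong H^0(G_L,\chi_1\chi_2^{-1}\omega)^{\vee}=0$ as well (using again \eqref{equation-condition-rhobar}). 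By the local Euler--Poincar\'e formula, $\dim_{\F}H^1(G_L,\chi_1^{-1}\chi_2)=f$ (the module has $\F$-dimension $1$, and $H^0=H^2=0$). I also recall from Lemma \ref{lemma-h^1} that $H^2(G_L,\mathrm{ad}(\brho))=0$ and $\dim_{\F}H^1(G_L,\mathrm{ad}(\brho))=4f+1$, and that $H^0(G_L,\mathrm{ad}(\brho))=\F$ by \eqref{equ-end-of-brho}.

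For part (i): the long exact sequence begins
\[0\ra H^0(G_L,\mathrm{ad}(\brho)_{\rm red})\ra H^0(G_L,\mathrm{ad}(\brho))\ra H^0(G_L,\chi_1^{-1}\chi_2).\]
Since the last term vanishes, $H^0(G_L,\mathrm{ad}(\brho)_{\rm red})\cong H^0(G_L,\mathrm{ad}(\brho))=\F$. This proves (i). For part (ii): the relevant segment is
\[H^1(G_L,\chi_1^{-1}\chi_2)\ra H^2(G_L,\mathrm{ad}(\brho)_{\rm red})\ra H^2(G_L,\mathrm{ad}(\brho)).\]
The right-hand term is $0$ by Lemma \ref{lemma-h^1}, so $H^2(G_L,\mathrm{ad}(\brho)_{\rm red})$ is a quotient of $H^1(G_L,\chi_1^{-1}\chi_2)$; this alone is not enough, so I would instead run the boundary map more carefully. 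The cleanest route is to use the full exactness together with an Euler characteristic count, so I defer the vanishing of $H^2$ to the dimension bookkeeping in (iii) and then re-derive it, or alternatively argue directly: the surjection $H^1(G_L,\mathrm{ad}(\brho))\to H^1(G_L,\chi_1^{-1}\chi_2)$ forces the connecting map $H^1(G_L,\chi_1^{-1}\chi_2)\to H^2(G_L,\mathrm{ad}(\brho)_{\rm red})$ to be zero, whence $H^2(G_L,\mathrm{ad}(\brho)_{\rm red})\hookrightarrow H^2(G_L,\mathrm{ad}(\brho))=0$. So (ii) reduces to showing that $H^1(G_L,\mathrm{ad}(\brho))\to H^1(G_L,\chi_1^{-1}\chi_2)$ is surjective; equivalently, that $H^2(G_L,\mathrm{ad}(\brho)_{\rm red})\to H^2(G_L,\mathrm{ad}(\brho))$ is injective, which holds because $H^2(G_L,\mathrm{ad}(\brho))=0$ makes the target zero but does not yet give injectivity — so the honest statement is that surjectivity of the $H^1$ map is what must be checked.

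The surjectivity of $H^1(G_L,\mathrm{ad}(\brho))\to H^1(G_L,\chi_1^{-1}\chi_2)$ is the main obstacle, and I would establish it as follows: the cokernel injects into $H^2(G_L,\mathrm{ad}(\brho)_{\rm red})$, and the next term is $H^2(G_L,\mathrm{ad}(\brho))=0$, so the cokernel equals $H^2(G_L,\mathrm{ad}(\brho)_{\rm red})$. Thus (ii) and the surjectivity are logically equivalent, and I would break the circle by a direct Euler--Poincar\'e computation for $\mathrm{ad}(\brho)_{\rm red}$: it has $\F$-dimension $3$ (it is the stabilizer in $\mathrm{ad}(\brho)$ of the line $\chi_1$ modulo nothing — concretely it consists of $\smatr{a}{b}{0}{d}$, a $3$-dimensional subspace), so the Euler characteristic formula gives $\dim H^0 - \dim H^1 + \dim H^2 = -f\cdot 3 = -3f$. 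Combined with (i), i.e. $\dim H^0(G_L,\mathrm{ad}(\brho)_{\rm red})=1$, this yields $\dim H^1 - \dim H^2 = 3f+1$. It remains only to show $H^2(G_L,\mathrm{ad}(\brho)_{\rm red})=0$; by Tate local duality this is $H^0(G_L,\mathrm{ad}(\brho)_{\rm red}^{\vee}(1))^{\vee}$, and $\mathrm{ad}(\brho)_{\rm red}^{\vee}(1)$ has a filtration with graded pieces among $\{\mathbf{1}(1),\chi_1\chi_2^{-1}(1),\chi_2\chi_1^{-1}(1)\}=\{\omega,\chi_1\chi_2^{-1}\omega,\chi_2\chi_1^{-1}\omega\}$, none of which is trivial by \eqref{equation-condition-rhobar} (here $\omega\neq\mathbf 1$ since $p>2$). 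Hence $H^0$ of this twisted module vanishes, giving (ii), and then (iii) follows: $\dim_{\F}H^1(G_L,\mathrm{ad}(\brho)_{\rm red})=3f+1$. Finally I would double-check consistency with the quotient $R_{\brho}^{\rm red}$ of $R_{\brho}$: the difference $4f+1-(3f+1)=f$ matches $\dim_{\F}H^1(G_L,\chi_1^{-1}\chi_2)$, confirming that the reducible deformation functor is cut out by exactly $f$ conditions, as expected.
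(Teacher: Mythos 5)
Your proof is correct, and parts (i) and (iii) coincide with the paper's argument (left-exactness of $H^0$ plus the vanishing of $H^0(G_L,\chi_1^{-1}\chi_2)$ for (i), and the local Euler--Poincar\'e formula for (iii)). For part (ii), however, you take a genuinely different route. The paper reduces (ii) to the surjectivity of the restriction map $\Ext^1_{G_L}(\brho,\brho)\to\Ext^1_{G_L}(\chi_1,\chi_2)$ and proves that surjectivity in two steps, $\Ext^1_{G_L}(\brho,\brho)\twoheadrightarrow\Ext^1_{G_L}(\brho,\chi_2)\twoheadrightarrow\Ext^1_{G_L}(\chi_1,\chi_2)$, each step resting on the vanishing of an $\Ext^2$ between characters (Lemma \ref{lemma::extn-chars}, itself a consequence of Tate duality and \eqref{equation-condition-rhobar}). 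You instead compute $H^2(G_L,\mathrm{ad}(\brho)_{\rm red})$ head-on: by Tate local duality it is dual to $H^0(G_L,\mathrm{ad}(\brho)_{\rm red}^{\vee}(1))$, and a d\'evissage on the Jordan--H\"older factors (which are $\omega$, $\omega$, $\chi_1^{-1}\chi_2\,\omega$, all nontrivial by \eqref{equation-condition-rhobar} and the unramifiedness of $L$ with $p>2$) kills that $H^0$. The two arguments are dual to one another and use the same inputs, but yours is slightly more economical in that it bypasses the intermediate $\Ext^1(\brho,\chi_2)$ entirely; the paper's version has the advantage of producing the explicit surjection \eqref{equ::ext1-surjection}, which is the geometric content behind Proposition \ref{prop-red-defring}. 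One stylistic remark: the middle of your write-up spends two circular passes acknowledging that the surjectivity of the $H^1$ map and the vanishing of $H^2(G_L,\mathrm{ad}(\brho)_{\rm red})$ are equivalent before breaking the circle; since the duality argument at the end is what actually does the work, the detour could simply be deleted.
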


\begin{proof}
(i) Since $H^0(G_L, \Hom_{\F}(\chi_1,\chi_2)) = \Hom_{G_L} (\chi_1, \chi_2) =0$ by the assumption \eqref{equation-condition-rhobar}, we get $H^0(G_L,\mathrm{ad}(\brho)_{\rm red} ) =  H^0 (G_L, \mathrm{ad}(\brho) ) = \F.$

For (ii), since $H^2(G_L,\mathrm{ad}(\brho))=0$, it suffices to show that the natural morphism
\begin{equation}\label{equ::ext1-surjection}
\Ext^1_{G_L} (\brho, \brho) \cong H^1(G_L,\mathrm{ad}(\brho))\ra H^1(G_L,\Hom_{\F}(\chi_1,\chi_2)) \cong \Ext^1_{G_L}(\chi_1, \chi_2)
\end{equation}
is surjective. First, applying $\Hom_{G_L} ( -, \chi_1)$ to the short exact sequence
\begin{equation}\label{eq:rhobar}
0 \to \chi_1 \to \brho \to \chi_2 \to 0
\end{equation}
we obtain an exact sequence
\[
\Ext^2_{G_L} (\chi_2, \chi_1) \to \Ext^2_{G_L} (\brho, \chi_1) \to \Ext^2_{G_L} (\chi_1, \chi_1).
\]
By Lemma \ref{lemma::extn-chars} and the fact that $\Ext^2_{G_L} (\chi_1, \chi_1) = 0,$ we have $\Ext^2_{G_L} (\brho, \chi_1)= 0$. As a consequence, applying $\Hom_{G_L} (\brho, - )$ to \eqref{eq:rhobar}
gives a surjection
\begin{equation}\label{sequence-Ext1-brho1}
\Ext^1_{G_L} (\brho, \brho) \to \Ext^1_{G_L} (\brho, \chi_2) \to 0.
\end{equation}
Similarly, since $\Ext^2_{G_L}(\chi_2,\chi_2)=0$, we have a surjection  \begin{equation}\label{sequence-Ext1-brho2}
\Ext^1_{G_L} (\brho, \chi_2) \to \Ext^1_{G_L} (\chi_1, \chi_2) \ra0.
\end{equation}
The surjectivity of (\ref{equ::ext1-surjection}) then follows from  (\ref{sequence-Ext1-brho1}) and (\ref{sequence-Ext1-brho2}).

(iii) follows from (i), (ii) and the local Euler-Poincar\'e characteristic formula.
\end{proof}

\begin{proposition}\label{prop-red-defring}
$R^{\rm red}_{\brho}$ is formally smooth over $\cO$ of relative dimension $3f+1$.  
\end{proposition}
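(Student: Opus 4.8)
The plan is to run the standard tangent--obstruction argument of Mazur, but for the subfunctor $\mathrm{Def}^{\rm red}_{\brho}$ of $\mathrm{Def}_{\brho}$ and with the cohomology of the sub-$G_L$-module $\mathrm{ad}(\brho)_{\rm red}$ replacing that of $\mathrm{ad}(\brho)$. First I would record that, by \eqref{equation-condition-rhobar}, $\Hom_{G_L}(\chi_1,\chi_2)=0$, so a $G_L$-stable free rank one $A$-direct summand of a deformation $\rho_A$ of $\brho$ is unique whenever it exists; consequently ``reducible'' is a \emph{property} of $\rho_A$, and $\mathrm{Def}^{\rm red}_{\brho}$ is canonically the deformation functor of $\brho$ regarded as a homomorphism into the upper triangular Borel $B\subseteq\GL_2$, with $\mathrm{ad}(\brho)_{\rm red}$ identified with $\mathrm{Lie}(B)$ carrying its adjoint $G_L$-action.

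Next I would identify the tangent space. A deformation over $\F[\e]/(\e^2)$ corresponds to a cocycle $c\in Z^1(G_L,\mathrm{ad}(\brho))$, and it is reducible if and only if the image of $[c]$ in $H^1(G_L,\Hom_{\F}(\chi_1,\chi_2))$ under the map induced by $\mathrm{ad}(\brho)\to\Hom_{\F}(\chi_1,\chi_2)$ of \S\ref{subsection-reducible-deformation} vanishes (this map reads off the lower-left entry, which is exactly the obstruction to stabilizing a line to first order). Since $H^0(G_L,\Hom_{\F}(\chi_1,\chi_2))=0$, the long exact sequence attached to $0\to\mathrm{ad}(\brho)_{\rm red}\to\mathrm{ad}(\brho)\to\Hom_{\F}(\chi_1,\chi_2)\to0$ shows that $H^1(G_L,\mathrm{ad}(\brho)_{\rm red})\hookrightarrow H^1(G_L,\mathrm{ad}(\brho))$ with image exactly this kernel. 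Hence $\mathrm{Def}^{\rm red}_{\brho}(\F[\e])\cong H^1(G_L,\mathrm{ad}(\brho)_{\rm red})$, which has dimension $3f+1$ over $\F$ by Lemma \ref{lemma::reducible-def-ring}(iii).

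Then I would check formal smoothness via obstruction theory. Given a small surjection $A'\twoheadrightarrow A$ in $\mathrm{Art}(\cO)$ with kernel $\fa$ and a class $\rho_A\in\mathrm{Def}^{\rm red}_{\brho}(A)$, in a basis adapted to its (unique) stable line $\rho_A$ becomes a homomorphism $G_L\to B(A)$; the obstruction to lifting it along the surjection $B(A')\twoheadrightarrow B(A)$ — whose kernel is the abelian group $\mathrm{ad}(\brho)_{\rm red}\otimes_{\F}\fa$ with $G_L$ acting through the adjoint action of $\brho$ — lies in $H^2(G_L,\mathrm{ad}(\brho)_{\rm red})\otimes_{\F}\fa$, which is zero by Lemma \ref{lemma::reducible-def-ring}(ii). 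So a reducible lift of $\rho_A$ exists, i.e. $\mathrm{Def}^{\rm red}_{\brho}(A')\to\mathrm{Def}^{\rm red}_{\brho}(A)$ is surjective, proving $\mathrm{Def}^{\rm red}_{\brho}$ formally smooth. Being pro-represented by $R^{\rm red}_{\brho}$ with $(3f+1)$-dimensional tangent space over $\F$, it follows that lifting a basis of the tangent space gives $R^{\rm red}_{\brho}\cong\cO[[x_1,\dots,x_{3f+1}]]$, whence $R^{\rm red}_{\brho}$ is formally smooth over $\cO$ of relative dimension $3f+1$.

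The only genuinely delicate point is the first step: justifying that $\mathrm{Def}^{\rm red}_{\brho}$ admits a tangent--obstruction formalism governed by $H^\bullet(G_L,\mathrm{ad}(\brho)_{\rm red})$ rather than by $H^\bullet(G_L,\mathrm{ad}(\brho))$ — that is, that reducibility is ``cut out'' at the level of cohomology by the sub-$G_L$-module $\mathrm{ad}(\brho)_{\rm red}$. This rests precisely on the uniqueness of the stable line (so that no extra framing data need be tracked) and on the reinterpretation via homomorphisms into $B$; this perspective is already present in \cite[\S5]{Breuil-Ding}. Everything after it — the tangent-space dimension count and the obstruction vanishing — is immediate from Lemma \ref{lemma::reducible-def-ring}, so no further computation is needed.
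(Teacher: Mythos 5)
Your proof is correct and follows the same route as the paper, which simply cites Mazur's argument and observes that the first-order reducible deformations are exactly the classes coming from $H^1(G_L,\mathrm{ad}(\brho)_{\rm red})$, so that both the tangent-space dimension $3f+1$ and the unobstructedness are read off from Lemma \ref{lemma::reducible-def-ring}. You have merely made explicit the identification with $B$-valued deformations and the obstruction calculus that the paper leaves implicit.
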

\begin{proof}
This is a variant of \cite[\S30]{MazurFermat}. One checks that  a deformation $\brho'$ of $\brho$ to $\F[\epsilon]/\epsilon^2$ (the ring of dual numbers)  is reducible if and only if it takes its values in $\mathrm{ad}(\brho)_{\rm red}$ when viewed as an element in $H^1(G_L,\mathrm{ad}(\brho))$. Then the assertion follows from Lemma \ref{lemma::reducible-def-ring}.
\end{proof}

Set
\[
R^{\psi,\rm red}_{\brho} \defn R^{\psi}_{\brho}\otimes_{R_{\brho}}R^{\rm red}_{\brho}.
\]
We have the following variant of Proposition \ref{prop-red-defring}.
\begin{proposition}\label{prop-red-defring-det}
$R^{\psi, \rm red}_{\brho}$ is formally smooth over $\cO$ of relative dimension $2f$.  
\end{proposition}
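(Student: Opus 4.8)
The statement to prove is that $R^{\psi,\mathrm{red}}_{\brho}=R^\psi_{\brho}\otimes_{R_{\brho}}R^{\mathrm{red}}_{\brho}$ is formally smooth over $\cO$ of relative dimension $2f$. The natural approach mirrors the proof of Proposition~\ref{prop-red-defring}, but now working with the fixed-determinant condition imposed. The plan is to identify the functor $\mathrm{Def}^{\psi,\mathrm{red}}_{\brho}$ pro-represented by $R^{\psi,\mathrm{red}}_{\brho}$ as the subfunctor of $\mathrm{Def}^\psi_{\brho}$ consisting of reducible deformations with determinant $\psi$, then compute its tangent space and check the obstruction group vanishes. Concretely, one introduces $\mathrm{ad}^0(\brho)_{\mathrm{red}}\defn \mathrm{ad}(\brho)_{\mathrm{red}}\cap \mathrm{ad}^0(\brho)$, the trace-zero reducible adjoint submodule, which sits in a short exact sequence of $G_L$-modules $0\to \mathrm{ad}^0(\brho)_{\mathrm{red}}\to \mathrm{ad}^0(\brho)\to \Hom_{\F}(\chi_1,\chi_2)\to 0$ (the quotient map being the same ``lower-left entry'' projection as for $\mathrm{ad}(\brho)_{\mathrm{red}}$, which restricts surjectively to $\mathrm{ad}^0(\brho)$ since the relevant off-diagonal block already lies in the trace-zero part).

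First I would verify the deformation-theoretic dictionary: a deformation $\brho'$ of $\brho$ to $\F[\epsilon]/\epsilon^2$ has determinant $\psi$ and is reducible if and only if its class in $H^1(G_L,\mathrm{ad}(\brho))$ lies in the subspace $H^1(G_L,\mathrm{ad}^0(\brho)_{\mathrm{red}})$ — the fixed-determinant condition cuts out $H^1(G_L,\mathrm{ad}^0(\brho))$ as recalled in \S\ref{subsection-univdef}, and reducibility cuts out $H^1(G_L,\mathrm{ad}(\brho)_{\mathrm{red}})$ by the argument of Proposition~\ref{prop-red-defring}; intersecting gives $H^1(G_L,\mathrm{ad}^0(\brho)_{\mathrm{red}})$, using that $\mathrm{ad}^0(\brho)_{\mathrm{red}}=\mathrm{ad}^0(\brho)\cap\mathrm{ad}(\brho)_{\mathrm{red}}$ inside $\mathrm{ad}(\brho)$. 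Next I would compute the cohomology of $\mathrm{ad}^0(\brho)_{\mathrm{red}}$: from the short exact sequence above, $H^2(G_L,\mathrm{ad}^0(\brho))=0$ (recalled in \S\ref{subsection-univdef}) gives $H^2(G_L,\mathrm{ad}^0(\brho)_{\mathrm{red}})=0$ provided the connecting map $H^1(G_L,\Hom_{\F}(\chi_1,\chi_2))\to H^2(G_L,\mathrm{ad}^0(\brho)_{\mathrm{red}})$ is surjective, equivalently that $H^1(G_L,\mathrm{ad}^0(\brho))\to H^1(G_L,\Hom_{\F}(\chi_1,\chi_2))$ is surjective; and $H^0(G_L,\mathrm{ad}^0(\brho)_{\mathrm{red}})$ should turn out to be $0$ since a trace-zero reducible endomorphism commuting with $G_L$ must be a scalar (from $H^0(G_L,\mathrm{ad}(\brho)_{\mathrm{red}})=\F$ by Lemma~\ref{lemma::reducible-def-ring}(i)) hence zero. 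Then the local Euler–Poincaré characteristic formula yields $\dim_{\F} H^1(G_L,\mathrm{ad}^0(\brho)_{\mathrm{red}})=\dim H^0 - \dim H^2 + 2f + \dim H^0(G_L,\mathrm{ad}^0(\brho)_{\mathrm{red}}(1))$; here the local term is $[L:\Q_p]\cdot\dim\mathrm{ad}^0(\brho)_{\mathrm{red}}=f\cdot 2=2f$ since $\mathrm{ad}^0(\brho)_{\mathrm{red}}$ is $2$-dimensional over $\F$ (the diagonal trace-zero line plus the upper-right line). The $H^0$ of the Tate twist vanishes by the genericity hypothesis \eqref{equation-condition-rhobar}, just as for $\mathrm{ad}(\brho)(1)$ in \eqref{equ-end-of-brho}, so this gives $\dim_{\F} H^1(G_L,\mathrm{ad}^0(\brho)_{\mathrm{red}})=2f$.

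The vanishing $H^2(G_L,\mathrm{ad}^0(\brho)_{\mathrm{red}})=0$ then shows, via the standard criterion (e.g.\ \cite[Prop.~2]{Mazur} adapted to the subfunctor, as in the proofs of Propositions~\ref{prop-red-defring} and \ref{prop-red-defring-det}'s models), that $R^{\psi,\mathrm{red}}_{\brho}$ is formally smooth over $\cO$; the tangent-space computation gives relative dimension $\dim_{\F} H^1(G_L,\mathrm{ad}^0(\brho)_{\mathrm{red}})=2f$. The one point needing care — the main obstacle — is confirming the surjectivity of $H^1(G_L,\mathrm{ad}^0(\brho))\to H^1(G_L,\Hom_{\F}(\chi_1,\chi_2))$, i.e.\ that fixing the determinant does not obstruct producing the reducibility-breaking extension class. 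This should follow from a trace-zero refinement of the chase in the proof of Lemma~\ref{lemma::reducible-def-ring}(ii): applying $\Hom_{G_L}(\brho,-)$ and $\Hom_{G_L}(-,\chi_1)$ to $0\to\chi_1\to\brho\to\chi_2\to 0$ one gets surjections onto $\Ext^1_{G_L}(\chi_1,\chi_2)$, and one must check the relevant class can be realized by a trace-zero cocycle, which is automatic because $\Hom_{\F}(\chi_1,\chi_2)$ (the upper-right block) already consists of trace-zero matrices, so the map $\mathrm{ad}^0(\brho)\to\Hom_{\F}(\chi_1,\chi_2)$ is literally the restriction of $\mathrm{ad}(\brho)\to\Hom_{\F}(\chi_1,\chi_2)$ along an inclusion that contains the off-diagonal blocks. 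Once this is in place, all cohomological inputs are exactly the ``intersection'' of those used in Propositions~\ref{prop-red-defring} and the fixed-determinant computation of \S\ref{subsection-univdef}, and the conclusion follows. Alternatively — and perhaps cleaner — one can argue directly on the level of rings: $R^\psi_{\brho}$ is a quotient of $R_{\brho}$ by a regular sequence (cutting the determinant down from the $(4f+1)$-dimensional to the $(3f)$-dimensional formally smooth ring is an $(f+1)$-codimensional regular immersion of smooth formal schemes), and base-changing the formally smooth $\cO$-algebra $R^{\mathrm{red}}_{\brho}$ (relative dimension $3f+1$) along $R_{\brho}\to R^\psi_{\brho}$ preserves formal smoothness and drops the relative dimension by $f+1$, giving $3f+1-(f+1)=2f$; the subtlety there is checking the regular sequence remains regular after base change, which holds because $R^{\mathrm{red}}_{\brho}$ is flat (indeed smooth) over $R_{\brho}$ — but since the paper does not establish that flatness explicitly, I would present the cohomological argument as the primary one.
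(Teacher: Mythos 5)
The paper gives no proof of Proposition~\ref{prop-red-defring-det}; it is stated as a variant of Proposition~\ref{prop-red-defring}, with the intended argument being exactly the sort of adaptation you carry out. Your cohomological approach — replace $\mathrm{ad}(\brho)_{\rm red}$ by $\mathrm{ad}^0(\brho)_{\rm red}\defn\mathrm{ad}^0(\brho)\cap\mathrm{ad}(\brho)_{\rm red}$, check $H^0=H^2=0$, and read off $\dim H^1=2f$ from Euler--Poincar\'e — is the right one and the conclusion is correct. Two remarks. First, the point you flag as ``the main obstacle,'' the surjectivity of $H^1(G_L,\mathrm{ad}^0(\brho))\ra H^1(G_L,\Hom_{\F}(\chi_1,\chi_2))$, is in fact immediate: since $p>2$, there is a $G_L$-equivariant direct sum decomposition $\mathrm{ad}(\brho)\cong\mathrm{ad}^0(\brho)\oplus\F\cdot\mathrm{Id}$, and the scalar summand dies in $\Hom_{\F}(\chi_1,\chi_2)$, so the surjectivity already established in Lemma~\ref{lemma::reducible-def-ring}(ii) for $\mathrm{ad}(\brho)$ restricts verbatim to $\mathrm{ad}^0(\brho)$; your more roundabout argument about lifting to trace-zero cocycles is not needed. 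Second, your statement of the local Euler--Poincar\'e formula is garbled: the correct form is $h^1 = h^0 + h^2 + [L:\Q_p]\dim_{\F}V$ (one can further replace $h^2$ by $h^0(G_L,V^\vee(1))$ via Tate duality, but $V(1)$ is not the same as $V^\vee(1)$ here, and the sign on $h^2$ in your formula is wrong). Since $h^0=h^2=0$ this does not affect the final answer $2f$, but as written the formula would give wrong results in other situations. Your alternative ring-theoretic argument is a plausible sketch, but, as you yourself note, the flatness of $R^{\rm red}_{\brho}$ over $R_{\brho}$ is not available in the paper, so the cohomological route is the one to present.
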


\subsection{Serre weights}\label{Section::Serre-wts}

We recall some terminology used in \cite{Le}. Let $G$ be the algebraic group ${\rm Res}_{\F_{p^f}/\F_p}\GL_2.$ Let $T$ be the diagonal torus in $G.$ We identify the character group $X^*(T) = X^*(T\times_{\F_p}\F)$ with $(\Z^2)^f.$ We say that $\mu\in X^*(T)$ is {\em $p$-restricted} if $0\leq \langle \mu,\a\rangle <p$ for all positive coroots $\a.$ Let $\eta^{(i)}\in X^*(T)$ (resp. $\alpha^{(i)}\in X_*(T)$) be the dominant fundamental character (resp. the positive coroot) represented by $(1,0)$ (resp. $(1,-1)$) in the $i$-th coordinate and $0$ elsewhere, and $\eta \defn \sum_{i\in\Z/f\Z}\eta^{(i)}.$ Let $G^{\rm der}\defn {\rm Res}_{\F_{p^f}/\F_p}{\rm SL}_2$ and $T^{\rm der}\subset G^{\rm der}$ be the standard torus. Let $\omega^{(i)}$ be the restriction of $\eta^{(i)}$ to $T^{{\rm der}}.$

For a dominant character $\mu\in X^*(T),$ let $V(\mu)$ be the Weyl module defined in \cite[II.2.13(1)]{Jantzen}. It has a unique simple $G$-quotient $L(\mu).$ If $\mu=\sum_i\mu_i^{(i)}$ is $p$-restricted then $L(\mu)=\otimes_i L(\mu_i)^{(i)}$ by  Steinberg's tensor product theorem. Let $F(\mu)$ be the $\G$-representation $L(\mu)|_{\G},$ where $\G=G(\F_p)\cong \GL_2(\F_{p^f}).$ Then $F(\mu)$ is irreducible by \cite[A.1.3]{Herzig}.

Let $\mu\in X^*(T)$ be such that $1\leq \langle \mu-\eta,\a^{(i)} \rangle <p-2$ for all $i\in \Z/f\Z.$ Let $S \defn \{\pm\omega^{(i)}\}_{i\in \Z/f\Z}.$ For any subset $J$ of  $S,$ let $\s_J \defn F(\mathfrak{t}_\mu(\o_J))$ be the Serre weight defined in \cite[Def.~3.5]{LMS}, we refer the reader to \S 2 of {\em loc. cit.} for the notation used here.

Recall that $L$ denotes the fixed unramified extension of $\Q_p$ of degree $f.$ Write $P(v)=v+p$ for the minimal polynomial of $\pi_L = -p$ over $\Q_p.$ Let $L_{\infty}=L((-p)^{1/{p^{\infty}}})$ by choosing a compatible system of $p^n$-th roots of $-p$ in $\bQp$.
  Let $\brho:G_L\to \GL_2(\F)$ be a continuous {\em reducible nonsplit} representation, i.e. $\brho\cong\smatr{\chi_1}{*}0{\chi_2}$. We may write \[\chi_1 ={\rm nr}_{\a}\omega_f^{\sum_{i=0}^{f-1}\mu_{1,i}p^i},\ \  \chi_2 ={\rm nr}_{\a'}\omega_f^{\sum_{i=0}^{f-1}\mu_{2,i}p^i}\] for some dominant $p$-restricted character $\mu_{\brho} \defn (\mu_{1,i},\mu_{2,i})_{i\in \Z/f}.$ Up to twist, we may assume $(\mu_{1,i},\mu_{2,i})=(c_i,1).$ We further assume $4 \leq c_i \leq p-3$ for all $i\in \Z/f\Z,$ equivalently, $\brho$ is \emph{strongly generic} in the sense of Definition \ref{defn:strong-generic}. Note that this is the same genericity condition imposed in \cite{Le}. In particular this implies $p\geq 7.$ Moreover, $\overline{\rho}$ lies in the category of Galois representations defined by Fontaine-Laffaille (\cite{FL}), hence it can be written as
\[\overline{\rho}=\Hom_{\Fil^{\cdot},\varphi_{\cdot}}(M,A_{\cris}\otimes_{\Z_p}\F_p)\]
where $M$ is a filtered   $\varphi$-module of Fontaine-Laffaille uniquely (up to isomorphism) determined by $\overline{\rho}$, $A_{\cris}$ is Fontaine's ring of periods for integral crystalline representations, and   $\Hom_{\Fil^{\cdot},\varphi_{\cdot}}$ means that we consider the morphisms preserving the filtrations and commuting with $\varphi$. Explicitly, $M$ can be described as follows
$$M=M^{0}\times \cdots\times M^{f-1}, \quad \mathrm{with}\ M^{j}=\F e^{j}\oplus \F f^{j}$$
together with the filtration given by
\[
\begin{array}{llll}
\left\{\begin{array}{llllllll}
\Fil^iM^{j}&=&M^{j} &&\mathrm{if} &i\leq 1\\
\Fil^iM^j&=& \F f^j &&\mathrm{if}& 2\leq i\leq c_{f-j}\\
\Fil^iM^j&=&0&&\mathrm{if}&i\geq  c_{f-j}+1
\end{array}\right.
\end{array}
\]
and
\[
\begin{array}{llll}
\left\{\begin{array}{llllll}
\varphi(e^j)&=&e^{j+1}\\
\varphi_{c_{f-j}}(f^j)&=& f^{j+1}+a_{j-1}e^{j+1}
\end{array}\right.
\end{array}
\]
for $j\neq 1$ and
\[
\begin{array}{llll}
\left\{\begin{array}{llllll}
\varphi(e^1)&=& \a e^{2}\\
\varphi_{c_{f-1}}(f^1)&=& \a'( f^{2}+ a_{0}e^{2} )
\end{array}\right.
\end{array}
\]
where $a_j\in \F $ and $\a,\a'\in \F^{\times}.$ Set
\begin{equation}
\label{equation-J_rho}
S_{\brho} \defn \{\omega^{(i)}~|~a_{f-1-i}=0\} \subset S
\end{equation}
which depends only on $\overline{\rho}$. One checks directly
\begin{equation}\label{Serre-weights-relation}
S_{\brho}=\{\omega^{(i)}~|~i\in J_{\brho}\},
\end{equation}
where $J_{\brho}$ is the (proper) subset of $\cS=\Z/f\Z$ as in \S\ref{section-BP} (cf. \cite[Eq. (17)]{Br14}). Recall that $\Serre$ denotes the set of Serre weights associated to $\brho$ (see \S\ref{section-BP}). Then $\Serre = \{\s_J \defn F(\mathfrak{t}_{\mu_{\brho}}(\o_J)) ~|~J\subseteq S_{\brho}\}$ by \cite[Prop.~3.2]{Le} (where the set $\Serre$ is denoted by $W(\brho)$).

\subsection{Potentially Barsotti-Tate deformation rings}\label{section--pBT}

Let $\brho: G_L\to \GL_2(\F)$ be a strongly generic reducible nonsplit representation as above. Let $\cM = \prod_{i}\F(\!(v)\!)\mathfrak{e}^i\oplus \F(\!(v)\!)\mathfrak{f}^i$ denote the \'etale $\varphi$-module given by
\begin{align*}
i\neq 0:~~&\left\{ \begin{array}{ll}
\varphi_{\cM}(\mathfrak{e}^{i-1}) & = v^{c_{f-i}}(\mathfrak{e}^i+a_{i-1}\mathfrak{f}^i)\\
\varphi_{\cM}(\mathfrak{f}^{i-1}) & = v\mathfrak{f}^i
\end{array}\right.\\
i=0 :~~&\left\{ \begin{array}{ll}
\varphi_{\cM}(\mathfrak{e}^{f-1}) & = \alpha v^{c_{0}}(\mathfrak{e}^0+ a_{f-1}\mathfrak{f}^0)\\
\varphi_{\cM}(\mathfrak{f}^{f-1}) & =\alpha' v\mathfrak{f}^0.
\end{array}\right.
\end{align*}
\cite[Prop.~3.6]{Le} shows that $\mathbb{V}^*(\cM)\cong \brho|_{G_{L_{\infty}}},$ where $\mathbb{V}^* : \cM \mapsto (\cM\otimes (\cO_{\mathcal{E}^{\rm un},L})^{\varphi=1})^{\vee}$ is the anti-equivalent functor (defined by Fontaine) from the category of \'etale $\varphi$-modules over $\F(\!(v)\!)$ to the category of representations of $G_{L_{\infty}}$ over $\F$. Let $\cN$ denote the rank one \'etale $\varphi$-submodule $\prod_{j=0}^{f-1}\F(\!(v)\!) \frak{f}^j$ of $\cM.$ Let $\bar{\frak{e}}^i$ be the image of $\frak{e}^i$ in $\cM/\cN.$ Then $\{\bar{\frak{e}}^i\}_{i\in \Z/f\Z}$ forms a basis of $\cM/\cN$ over $\F(\!(v)\!).$ We have $\mathbb{V}^*(\cN)\cong \chi_2|_{G_{L_{\infty}}}$ and $\mathbb{V}^*(\cM/\cN)\cong \chi_1|_{G_{L_{\infty}}}.$

Let $\mathrm{Def}^{\Box}_{\brho}:\mathrm{Art}(\cO)\ra \mathrm{Sets}$ be the framed deformation functor ({\em \`a la} Kisin \cite{KisinAnnals}) which sends $A$ to the set of representations $\rho_A : G_L \to \GL_2(A) $ lifting $\brho.$ Then $\mathrm{Def}^{\Box}_{\brho}$ is pro-representable by a complete noetherian local $\cO$-algebra $R^{\Box}_{\brho}.$ If $\psi:G_L\to \cO^{\times}$ is a continuous character lifting $\det\brho$, let $R_{\brho}^{\Box,\psi}$ be the reduced $\varpi$-torsion free quotient ring of $R^{\Box}_{\brho}$ parametrizing framed deformations of $\brho$ with determinant $\psi.$ If $\tau$ is a tame inertial type and $\l= (a_{\kappa},b_{\kappa})_{\kappa\in \Hom(L,E)}$, where $a_{\kappa} > b_{\kappa}$ are integers, let $R^{\Box,\tau,\l}_{\brho}$ (resp. $R^{\Box, \psi,\tau,\l}_{\brho}$) be the quotient ring of $R_{\brho}^{\Box}$ (resp. $R_{\brho}^{\Box,\psi}$) which parametrizes framed potentially crystalline deformations of $\brho$ of inertial type $\tau$ and Hodge-Tate weights $(a_{\kappa},b_{\kappa})$ for the embedding $\kappa.$ If $\tau = \ide$ is trivial, we will write $R^{\Box,{\rm cris},\l}_{\brho}$ (resp. $R^{\Box, \psi,{\rm cris},\l}_{\brho}$) for $R^{\Box,\ide,\l}_{\brho}$ (resp. $R^{\Box, \psi,\ide,\l}_{\brho}$), and call it framed crystalline deformation ring (with fixed determinant $\psi$) of Hodge-Tate weights $\l.$ If $\l = (a_{\kappa},b_{\kappa})_{\kappa\in \Hom(L,E)}$ with $(a_{\kappa}, b_{\kappa}) = (1,0)$ for all $\kappa\in \Hom(L,E),$ we will abbreviate $R^{\Box,\tau}_{\brho}$ (resp. $R^{\Box, \psi,\tau}_{\brho}$) for $R^{\Box,\tau,(1,0)_{\kappa\in \Hom(L,E)}}_{\brho}$ (resp. $R^{\Box, \psi,\tau,(1,0)_{\kappa\in \Hom(L,E)}}_{\brho}$), and call it framed potentially Barsotti-Tate deformation ring (with fixed determinant $\psi$). If $T$ is a set of inertial types for $L,$ then we let $ R_{\brho}^{\Box, T}$ (resp. $R_{\brho}^{\Box, \psi,T}$) be the quotient of $R^{\Box}_{\brho}$ such that $\Spec R_{\brho}^{\Box, T}$ (resp. $ \Spec R_{\brho}^{\Box, \psi,T}$) is the Zariski closure of $\cup_{\tau\in T}\Spec R^{\Box, \tau}_{\brho}[1/p]$ (resp. $\cup_{\tau\in T}\Spec R^{\Box, \psi,\tau}_{\brho}[1/p]$) in $\Spec R^{\Box}_{\brho}.$

Let $J$ be a subset of $S_{\brho}$ and let $I$ be a subset of $S$ such that $I\cap \{\pm\omega^{(i)}\}$ has size at most one for all $i\in \Z/f\Z.$ \cite{Le} defines  a set $T_{J,I}$ which consists of inertial types $\tau$ such that  $\s(\tau),$ the irreducible finite dimensional $\GL_2(\cO_L)$-representation over $E$ associated to $\tau$ under the inertial local Langlands \cite{Henniart}, is of the form $R_s(\mu_{\brho}-s'\eta)$ (see \cite[Lem.~4.2]{Herzig} for the notation $R_s(\mu),$ $(s,\mu) \in (S_2)^f\times X^*(T)$) subject to the condition that $s,s'\in (S_2)^f$ are given by the following table:

\begin{table}[h]
\begin{tabu}{|c|[1pt]c|c|}
  \hline
   $s_i,s'_i$ & $\omega^{(i)}\notin J$ & $\omega^{(i)}\in J$    \\
   \tabucline[1pt]{-}
  $\{\pm \omega^{(i)}  \} \cap I = \emptyset$ & $s_i=s'_i$  & $s'_i\neq {\rm id}$ \\
   \hline
  $\omega^{(i)} \in I $ & $s_i=s'_i={\rm id}$  & $s_i=s'_i\neq {\rm id}$ \\
   \hline
  $-\omega^{(i)} \in I$ & $s_i=s'_i\neq {\rm id}$  & $s_i={\rm id},~s'_i\neq {\rm id}$ \\
    \hline
\end{tabu}
\end{table}

\cite[Lem.~3.5]{Le} shows under the inertial local Langlands \cite{Henniart}, $T_{J,I}$ corresponds to  the set of Deligne-Lusztig representations $T_{\s_J,w_J(I)}$ defined in {\em loc. cit.}. In particular, if $I$ is the empty set, then
\begin{equation}\label{equ--inj-env}
{\rm Proj}_{\cO[\G]} (\s_J) \otimes_{\cO} E \cong \oplus_{\tau\in T_{J,\emptyset}}\s(\tau),
\end{equation}
where ${\rm Proj}_{\cO[\G]} (\s_J)$ denotes a projective envelope of $\s_{J}$ in the category of $\cO[\G]$-modules.
Recall Theorem 3.6 of \cite{Le} (in the special case for reducible nonsplit strongly generic $\brho$).

\begin{theorem}[\cite{Le}]\label{thm--Le}

There is an isomorphism from $R^{\Box, T_{J,I}}_{\brho}$ to a formal power series ring of relative dimension $4$ over $\cO[\![(X_i,Y_i)_{i\in \Z/f\Z}]\!]/ (g_i(J,I))_{i\in \Z/f\Z},$ where $g_i(J,I)$ is given by the following table:

\begin{table}[h]
\begin{tabu}{|c|[1pt]c|c|c|}
  \hline
   $g_i(J,I)$ &  $\omega^{(f-1-i)}\notin S_{\brho}$ & $\omega^{(f-1-i)}\in S_{\brho}\backslash J$ & $\omega^{(f-1-i)}\in J$   \\
   \tabucline[1pt]{-}
  $\{\pm \omega^{(f-1- i)}  \} \cap I = \emptyset$ & $Y_i(Y_i-p)$  & $Y_i(X_iY_i -p)$ & $X_i(X_iY_i-p)$ \\
   \hline
  $\omega^{(f-1-i)} \in I $ & $Y_i$  & $Y_i$ & $X_iY_i -p$ \\
   \hline
  $-\omega^{(f-1-i)} \in I$ & $Y_i-p$  & $X_i Y_i - p$ & $X_i$ \\
    \hline
\end{tabu}
\end{table}
If $I\subseteq I',$ then $g_i(J,I')| g_i(J,I)$ for all $i$ and $R^{\Box, T_{J,I'}}_{\brho}$ is the quotient of $R^{\Box, T_{J,I}}_{\brho}$ by the ideal $(g_i(J,I'))_i.$ Analogous results hold for $R^{\Box, \psi, T_{J,I}}_{\brho}$ provided $\psi$ is chosen so that $R^{\Box, \psi, T_{J,I}}_{\brho}$ is nonzero for some $I.$
\end{theorem}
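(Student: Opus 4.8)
The plan is to follow the strategy of \cite{Le}, which builds on the methods of \cite{EGS}: realize each framed potentially Barsotti--Tate deformation ring $R^{\Box,\tau}_{\brho}$, for $\tau$ a tame inertial type occurring in $T_{J,I}$, as (a power series ring over) a moduli space of Kisin modules of height $1$ with tame descent data of type $\tau$, and then compute this moduli space explicitly using the description of $\brho|_{G_{L_\infty}}$ via the \'etale $\varphi$-module $\cM$ recalled above. First I would invoke the equivalence between framed potentially Barsotti--Tate deformations of $\brho$ of inertial type $\tau$ with Hodge--Tate weights $(1,0)$ at every embedding and Kisin modules of height $\leq 1$ with descent data of type $\tau$ satisfying the extra monodromy condition; this identifies $\Spec R^{\Box,\tau}_{\brho}[1/p]$ with the corresponding moduli space, and, after adjoining the framing variables, pins down $R^{\Box,\tau}_{\brho}$ as a formal power series ring over the universal Kisin module ring.

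The key input is that, under the strong genericity hypothesis $4\le c_i\le p-3$, the \emph{shape} of the universal Kisin module is rigid: for each embedding $i\in\Z/f\Z$ the Frobenius matrix, written in an eigenbasis adapted to the descent data, can be normalised to a $2\times 2$ matrix depending on two parameters $(X_i,Y_i)$, and the whole deformation problem decouples into a product over $i\in\Z/f\Z$. The reducibility of $\brho$ --- the rank one \'etale $\varphi$-submodule $\cN\subset\cM$ together with the explicit formulas for $\varphi_{\cM}$ --- forces the relevant off-diagonal entry to vanish to a controlled order, which is exactly what produces the explicit relations $g_i(J,I)$. Concretely, I would carry out the local computation at each embedding: the subset $J\subseteq S_{\brho}$ records, via \eqref{equation-J_rho} and \eqref{Serre-weights-relation}, the embeddings at which the Fontaine--Laffaille parameter $a_{f-1-i}$ vanishes, while $I\subseteq S$ records the choice of lowest alcove presentation $R_s(\mu_{\brho}-s'\eta)$ of $\s(\tau)$ given by the table for $(s_i,s'_i)$. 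Imposing height $1$ and the descent data amounts to prescribing the elementary divisors of the Frobenius at each $i$, and the $p$-adic Hodge theory (equivalently, the determinant condition for the crystalline lift) then forces a single relation per embedding; matching case-by-case against the three columns ($\omega^{(f-1-i)}\notin S_{\brho}$, $\in S_{\brho}\setminus J$, $\in J$) and the three rows (intersection with $I$) yields $g_i(J,I)$, and collecting the framing and remaining free variables gives the stated relative dimension $4$.

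For the last part of the statement, the divisibility $g_i(J,I')\mid g_i(J,I)$ for $I\subseteq I'$ and the fact that $R^{\Box,T_{J,I'}}_{\brho}$ is the quotient of $R^{\Box,T_{J,I}}_{\brho}$ by the ideal $(g_i(J,I'))_i$ follow from the observation that enlarging $I$ corresponds to passing to the Zariski-closed subscheme cut out by precisely these extra equations; here one uses the identification (recalled from \cite{Le}) of $T_{J,I}$ with the set $T_{\s_J,w_J(I)}$ of Deligne--Lusztig representations under the inertial local Langlands correspondence, so that $\cup_{\tau\in T_{J,I}}\Spec R^{\Box,\tau}_{\brho}[1/p]$ is literally the union of the components indexed by $I$. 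The fixed-determinant variant $R^{\Box,\psi,T_{J,I}}_{\brho}$ is obtained by imposing the determinant condition, which eliminates $f$ of the variables while leaving the relations $g_i$ unchanged. The main obstacle is the explicit Kisin module computation itself: choosing coordinates adapted both to the tame descent data and to the filtration $\cN\subset\cM$, checking that strong genericity rules out every degenerate shape, and solving the resulting system of matrix equations over $\cO[\![(X_i,Y_i)_{i\in\Z/f\Z}]\!]$ while carefully tracking the descent characters and the normalisation of Hodge--Tate weights --- this is the technical heart of \cite{Le}, which we simply quote. We only remark that the genericity required there, $4\le c_i\le p-3$, coincides with our notion of strong genericity (Definition \ref{defn:strong-generic}), so the theorem applies verbatim in our setting.
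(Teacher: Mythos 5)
Your proposal and the paper are in the same position: the paper does not prove this statement at all, but simply quotes it, prefacing the theorem with ``Recall Theorem 3.6 of \cite{Le}.'' Your writeup is a fair high-level sketch of Le's method (Kisin modules of height $\le 1$ with tame descent data, rigidity of shapes under the genericity hypothesis $4\le c_i\le p-3$, the decoupling over embeddings, and the identification of $T_{J,I}$ with $T_{\s_J,w_J(I)}$ under inertial local Langlands), and you correctly observe that the paper's notion of strong genericity matches the hypothesis in \cite{Le}, so the citation applies. But the technical core --- normalising the Frobenius matrices, checking that degenerate shapes are excluded, and deriving the explicit relations $g_i(J,I)$ in each of the nine cases of the table --- is not carried out, and you explicitly defer it to \cite{Le}. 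That is exactly what the paper does, so there is nothing to compare beyond noting that both treat the statement as an external input rather than proving it. One small clarification worth keeping in mind if you ever want to unwind the argument further: the paper's only gesture toward Le's construction appears later, in the proof of Proposition \ref{prop-tangent}, where it recalls the universal \'etale $\varphi$-module $\cM_R$ over $R=\cO[\![(X_i,Y_i)_i,X_\alpha,X_{\alpha'}]\!]/(g_i(\emptyset,\emptyset))_i$ and the passage from $R$ to the framed ring via a formally smooth cover of relative dimension $4$ modulo a $\widehat{\mathbb{G}}_m^2$-action; your description of passing from the Kisin-module moduli to $R^{\Box,\tau}_{\brho}$ by adjoining framing variables is consistent with this.
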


We record the following result of independent interest. It will not be used for the rest of the paper.

\begin{corollary}
We have a natural isomorphism
\[
\Hom_{\cO-{\rm alg}}(R^{\Box, T_{J,\emptyset}}_{\brho},\F[\epsilon]/\epsilon^2)\cong\Hom_{\cO-{\rm alg}}(R^{\Box, T_{\emptyset,\emptyset}}_{\brho},\F[\epsilon]/\epsilon^2),~ \forall J\subseteq S_{\brho}.
\]

\end{corollary}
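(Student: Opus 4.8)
The plan is to reduce the statement to a dimension count of relative cotangent spaces, using nothing beyond Theorem \ref{thm--Le} and the standard description of the functor of $\F[\epsilon]/\epsilon^2$-points. First I would recall that for any complete local Noetherian $\cO$-algebra $R$ with maximal ideal $\fm_R$ and residue field $\F$, an $\cO$-algebra homomorphism $R\to\F[\epsilon]/\epsilon^2$ automatically kills $\varpi$ (since $\cO$ acts on $\F[\epsilon]/\epsilon^2$ through $\cO\to\F$) and kills $\fm_R^2$, and conversely every $\F$-linear map $\fm_R/(\varpi R+\fm_R^2)\to\F\epsilon$ extends uniquely; hence there is a canonical identification
\[
\Hom_{\cO\text{-alg}}(R,\F[\epsilon]/\epsilon^2)\;\cong\;\Hom_{\F}\!\big(\fm_R/(\varpi R+\fm_R^2),\F\big).
\]
Thus it suffices to show that the relative cotangent space $\fm_R/(\varpi R+\fm_R^2)$ of $R=R^{\Box,T_{J,\emptyset}}_{\brho}$ is an $\F$-vector space whose dimension does not depend on $J$.

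To carry this out I would invoke Theorem \ref{thm--Le} for $I=\emptyset$: it presents $R^{\Box,T_{J,\emptyset}}_{\brho}$ as a formal power series ring in four variables over $\cO[\![(X_i,Y_i)_{i\in\Z/f\Z}]\!]/(g_i(J,\emptyset))_i$, where each $g_i(J,\emptyset)$ equals $Y_i(Y_i-p)$, $Y_i(X_iY_i-p)$, or $X_i(X_iY_i-p)$ depending on the position of $\omega^{(f-1-i)}$ relative to $S_{\brho}$ and $J$. The key observation is that, since $p\in\varpi\cO$, in each of these three cases $g_i(J,\emptyset)$ is a product of two elements of the maximal ideal $(\varpi,(X_j)_j,(Y_j)_j)$, hence lies in its square; therefore the images of $X_0,\dots,X_{f-1},Y_0,\dots,Y_{f-1}$ together with the four power-series variables form an $\F$-basis of $\fm_R/(\varpi R+\fm_R^2)$, which is consequently of dimension $2f+4$ for every $J\subseteq S_{\brho}$. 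Combined with the first paragraph this gives the asserted isomorphism. In fact one gets slightly more: the $\cO$-algebra automorphism of $\cO[\![(X_i,Y_i)_i]\!]$ interchanging $X_i$ and $Y_i$ for exactly those $i$ with $\omega^{(f-1-i)}\in J$ (and fixing the other variables) carries $(g_i(J,\emptyset))_i$ onto $(g_i(\emptyset,\emptyset))_i$, because $X_iY_i-p$ is symmetric in $X_i,Y_i$ and because for $i\notin J$ the corresponding relations already coincide; so one even obtains a (non-canonical) $\cO$-algebra isomorphism $R^{\Box,T_{J,\emptyset}}_{\brho}\cong R^{\Box,T_{\emptyset,\emptyset}}_{\brho}$, which a fortiori yields the isomorphism on $\F[\epsilon]/\epsilon^2$-points functorially in the test ring.

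I do not expect a genuine obstacle here: once Theorem \ref{thm--Le} is granted the argument is pure commutative algebra. The only points requiring care are the bookkeeping with the table of Theorem \ref{thm--Le} in the case $I=\emptyset$ — checking that \emph{all} possible shapes of $g_i(J,\emptyset)$ factor through the square of the maximal ideal, which is exactly what fails for $I\neq\emptyset$ — and being explicit about the (harmless) sense in which the resulting isomorphism is ``natural'', namely that it comes from the canonical description of the $\F[\epsilon]/\epsilon^2$-points as the $\F$-dual of the relative cotangent space.
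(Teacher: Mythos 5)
Your dimension count is correct and is in fact one ingredient of the paper's argument: for $I=\emptyset$ every relation $g_i(J,\emptyset)$ lies in the square of the maximal ideal, so the reduced tangent space of $R^{\Box,T_{J,\emptyset}}_{\brho}$ has dimension $2f+4$ for every $J$ (the paper writes this as $3+(2f+1)$). But equality of dimensions only gives an \emph{abstract} isomorphism of $\F$-vector spaces, and the statement asserts a \emph{natural} one. The two tangent spaces are distinct quotients of the cotangent space of $R^{\Box}_{\brho}$ (equivalently, two $(2f+4)$-dimensional subspaces of the roughly $(4f+4)$-dimensional space $\Hom_{\cO\text{-alg}}(R^{\Box}_{\brho},\F[\epsilon]/\epsilon^2)$), and nothing in your argument identifies them. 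Your fallback — the $\cO$-algebra automorphism swapping $X_i\leftrightarrow Y_i$ for $i$ with $\omega^{(f-1-i)}\in J$ — is an isomorphism of the abstract presentations only; it is not compatible with the structural surjections from $R^{\Box}_{\brho}$, so it does not produce the natural isomorphism either (as you yourself concede by calling it non-canonical). This is the genuine gap.

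The missing idea is to exhibit a common quotient. The paper observes that $T_{J,J}=T_{\emptyset,-J}=T_{\emptyset,\emptyset}\cap T_{J,\emptyset}$ (where $-J=\{-\omega^{(i)}\mid \omega^{(i)}\in J\}$), so $R^{\Box,T_{J,J}}_{\brho}$ is simultaneously a quotient of $R^{\Box,T_{J,\emptyset}}_{\brho}$ (by $(g_i(J,J))_i$) and of $R^{\Box,T_{\emptyset,\emptyset}}_{\brho}$ (by $(g_i(\emptyset,-J))_i$). One then checks from the table that the relations $g_i(J,J)$ (namely $X_iY_i-p$, $Y_i(Y_i-p)$ or $Y_i(X_iY_i-p)$) again all lie in the square of the maximal ideal, so this common quotient also has a $(2f+4)$-dimensional tangent space; hence both injections of tangent spaces induced by the two projections are isomorphisms, and the two subspaces of $\Hom_{\cO\text{-alg}}(R^{\Box}_{\brho},\F[\epsilon]/\epsilon^2)$ coincide. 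That coincidence is the natural isomorphism. You should replace the variable-swap argument by this step.
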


\begin{proof}
By Theorem \ref{thm--Le}, $R^{\Box, T_{J,\emptyset}}_{\brho}$ is a formal power series ring over \[\cO[\![X_i,Y_i]\!]_{i\in \Z/f\Z}/(g_i(J,\emptyset))_{i\in \Z/f\Z},\] and $R^{\Box, T_{J,J}}_{\brho}$ is isomorphic to the quotient $R^{\Box, T_{J,\emptyset}}_{\brho}/(g_i(J,J))_{i\in \Z/f\Z}.$ From the description of the ideals $(g_i(J,\emptyset))_i$ and $(g_i(J,J))_i$, one checks directly  that \begin{multline*}\dim_{\F}\Hom_{\cO-{\rm alg}}(R^{\Box, T_{J,\emptyset}}_{\brho},\F[\epsilon]/\epsilon^2)
=\dim_{\F}\Hom_{\cO-{\rm alg}}(R^{\Box, T_{J,J}}_{\brho},\F[\epsilon]/\epsilon^2)=3+(2f+1).\end{multline*} Hence the injection
\[
\Hom_{\cO{\textrm -}{\rm alg}}(R^{\Box, T_{J,J}}_{\brho},\F[\epsilon]/\epsilon^2)\to \Hom_{\cO-{\rm alg}}(R^{\Box, T_{J,\emptyset}}_{\brho},\F[\epsilon]/\epsilon^2)
\]
induced by the projection $R^{\Box, T_{J,\emptyset}}_{\brho} \onto R^{\Box, T_{J,J}}_{\brho}$ is an isomorphism.

Let $-J$ denote the set $\{-\omega^{(i)}~|~\omega^{(i)}\in J\}.$ Then similarly the projection
$
R_{\brho}^{\Box, T_{\emptyset,\emptyset}} \to R_{\brho}^{\Box, T_{\emptyset,-J}}
$
induces an isomorphism
\[
\Hom_{\cO-{\rm alg}}( R_{\brho}^{\Box, T_{\emptyset,-J}},\F[\epsilon]/\epsilon^2)\cong \Hom_{\cO-{\rm alg}}(R_{\brho}^{\Box, T_{\emptyset,\emptyset}},\F[\epsilon]/\epsilon^2).
\]
Finally, one checks directly $ T_{\emptyset,-J} = T_{J,J} = T_{\emptyset,\emptyset}\cap T_{J,\emptyset} $ by noticing $J \subseteq \{\o^{(i)}~|~ i\in \Z/f\Z\}.$
\end{proof}

\begin{proposition}\label{prop-tangent}
Let $t\in\Hom_{\cO-{\rm alg}}(R^{\Box, T_{\emptyset,\emptyset}}_{\brho},\F[\epsilon]/\epsilon^2).$ Then $t$ factors through $R_{\brho}^{\Box, \rm red}$ if and only if $t(Y_i)=0$ for all $i\in \Z/f\Z.$ In particular, we have
\[
\dim_{\F} \left( \Hom_{\cO-{\rm alg}}(R^{\Box, T_{\emptyset,\emptyset}}_{\brho},\F[\epsilon]/\epsilon^2) \cap \Hom_{\cO-{\rm alg}}(R^{\Box, {\rm red}}_{\brho},\F[\epsilon]/\epsilon^2) \right) = 4+f,
\]
where the intersection is taken inside $\Hom_{\cO-{\rm alg}}(R^{\Box}_{\brho},\F[\epsilon]/\epsilon^2).$
An analogous equality holds for fixed determinant deformation rings, i.e.
\[
\dim_{\F} \left( \Hom_{\cO-{\rm alg}}(R^{\Box, \psi, T_{\emptyset,\emptyset}}_{\brho},\F[\epsilon]/\epsilon^2) \cap \Hom_{\cO-{\rm alg}}(R^{\Box, \psi, {\rm red}}_{\brho},\F[\epsilon]/\epsilon^2) \right) = 3+f.
\]
\end{proposition}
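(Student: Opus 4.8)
The plan is to compute the reducible tangent space by describing the $\F[\epsilon]/\epsilon^2$-points of $R^{\Box,T_{\emptyset,\emptyset}}_{\brho}$ explicitly and identifying which of them correspond to reducible deformations. First I would recall from Theorem \ref{thm--Le} that $R^{\Box,T_{\emptyset,\emptyset}}_{\brho}$ is a formal power series ring of relative dimension $4$ over $\cO[\![(X_i,Y_i)_{i\in\Z/f\Z}]\!]/(g_i(\emptyset,\emptyset))_i$, where in the relevant columns ($I=\emptyset$, and either $\omega^{(f-1-i)}\notin S_{\brho}$ or $\omega^{(f-1-i)}\in S_{\brho}\setminus\emptyset=S_{\brho}$) the equation is $g_i(\emptyset,\emptyset)=Y_i(Y_i-p)$ if $\omega^{(f-1-i)}\notin S_{\brho}$ and $g_i(\emptyset,\emptyset)=Y_i(X_iY_i-p)$ if $\omega^{(f-1-i)}\in S_{\brho}$. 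In both cases, reducing mod $\varpi$ and then mod $\epsilon^2$, the relation $g_i$ becomes $Y_i^2=0$ in the tangent computation (the $p$-terms and the higher-order $X_iY_i^2$-term die), so an $\cO$-algebra map $t$ to $\F[\epsilon]/\epsilon^2$ is constrained only by $t(Y_i)\in\F\epsilon$ being arbitrary together with $t(X_i)$ arbitrary and the $4$ framing variables arbitrary; thus $\dim_\F\Hom_{\cO\text{-alg}}(R^{\Box,T_{\emptyset,\emptyset}}_{\brho},\F[\epsilon]/\epsilon^2)=4+2f$. (The same bookkeeping, subtracting one for the determinant condition, gives $3+2f$ in the fixed-determinant case.)

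Next I would prove the key claim: $t$ factors through $R^{\Box,\rm red}_{\brho}$ if and only if $t(Y_i)=0$ for all $i$. For this I would use the interpretation of $R^{\Box,T_{\emptyset,\emptyset}}_{\brho}$ via Kisin modules / étale $\varphi$-modules as in \cite{Le}, specifically the presentation of the universal framed deformation of the Kisin module $\cM$ with its rank-one $\varphi$-submodule $\cN$ (with $\mathbb{V}^*(\cN)\cong\chi_2|_{G_{L_\infty}}$ and $\mathbb{V}^*(\cM/\cN)\cong\chi_1|_{G_{L_\infty}}$). The point is that in Le's coordinates the variable $Y_i$ measures precisely the obstruction to lifting the $\varphi$-stable line $\cN$: a deformation of $\brho$ to $\F[\epsilon]/\epsilon^2$ lying on $\Spec R^{\Box,T_{\emptyset,\emptyset}}_{\brho}$ preserves a $G_L$-stable free rank-one direct summand lifting the line $\F f^j\subset M^j$ exactly when all the $Y_i$ vanish. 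Concretely I would trace through how the reducibility condition on the filtered/étale side forces the off-diagonal lifting data governed by the $Y_i$ to be zero, and conversely that setting $t(Y_i)=0$ produces an explicit reducible (indeed $P$-ordinary, in the sense of \cite[\S5.1]{Breuil-Ding}) deformation. Granting this, the intersection inside $\Hom_{\cO\text{-alg}}(R^\Box_{\brho},\F[\epsilon]/\epsilon^2)$ is cut out of the $(4+2f)$-dimensional space by the $f$ linear conditions $t(Y_i)=0$, which are independent, giving dimension $4+f$; in the fixed-determinant case we start from $3+2f$ and again impose $f$ independent conditions to land at $3+f$. Here one should check independence, i.e. that the $Y_i$ remain nonzero linear functionals on the tangent space after the framing and the relations are accounted for — this is immediate from the power-series presentation since $Y_i$ is one of the defining variables and $g_i$ contributes no linear term.

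I would also double-check consistency with Lemma \ref{lemma::reducible-def-ring}(iii) and Proposition \ref{prop-red-defring}: the reducible deformation ring $R^{\rm red}_{\brho}$ is formally smooth over $\cO$ of relative dimension $3f+1$, so its framed version has relative dimension $3f+1+3=3f+4$ (adding $\dim\GL_2-\dim H^0=4-1=3$, or rather $4$ framing variables minus the $1$-dimensional $H^0$); the tangent space of the potentially-Barsotti-Tate locus $R^{\Box,T_{\emptyset,\emptyset}}_{\brho}$ has dimension $4+2f$; and the expected dimension of the scheme-theoretic intersection tangent space, which records first-order deformations that are simultaneously reducible and of the prescribed inertial type at the special Serre weight, comes out to $4+f$ — consistent with the heuristic that imposing potentially-BT-of-type-$T_{\emptyset,\emptyset}$ on a reducible deformation cuts down $f$ of the $2f$ ``ordinary'' Hodge–Tate directions. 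For the fixed-determinant statement, everything is identical after quotienting by the trace-zero condition, which removes exactly one dimension uniformly (it does not interact with the $Y_i$), yielding $3+f$.

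\textbf{Main obstacle.} The genuinely substantive step is the ``if and only if'' characterization of reducibility in terms of the vanishing of the coordinates $Y_i$. The forward direction and the dimension count are formal once this is in hand, but matching Le's explicit Kisin-module coordinates $(X_i,Y_i)$ with the reducibility (i.e. $P$-ordinarity) condition requires unwinding the construction of $R^{\Box,T_{\emptyset,\emptyset}}_{\brho}$ in \cite{Le}: one must see that the $\varphi$-stable rank-one submodule $\cN\subset\cM$ deforms (as a direct summand, over $\F[\epsilon]/\epsilon^2$) precisely along the locus $Y_\bullet=0$, and then transport this back through Fontaine–Laffaille theory / the $\mathbb{V}^*$ functor to the assertion about $G_L$-stable lines in the deformed Galois representation. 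This is where I expect the bulk of the careful work to lie; the rest is linear algebra over $\F[\epsilon]/\epsilon^2$.
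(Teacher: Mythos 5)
Your approach is the same as the paper's, and your dimension bookkeeping is correct, but there is a genuine gap exactly where you flag it: the ``if and only if'' characterization $t(Y_i)=0 \Leftrightarrow t$ reducible is asserted as a plausible feature of Le's coordinates but never proved. That claim is the entire mathematical content of the proposition, and ``the $Y_i$ measure the obstruction to lifting $\cN$'' is not self-evident from Theorem \ref{thm--Le} as stated; it has to be extracted from the construction of the universal \'etale $\varphi$-module.

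The paper fills this in as follows. Starting from the explicit formulas for the universal $\cM_R$ over $R = \cO[\![(X_i,Y_i)_i,X_\alpha,X_{\alpha'}]\!]/(g_i(\emptyset,\emptyset))_i$, it pushes $t$ through a commutative diagram identifying $\Hom_{\cO\text{-}{\rm alg}}(R^{\Box,T_{\emptyset,\emptyset}}_{\brho},\F[\epsilon]/\epsilon^2)$ with extension classes in $\Ext^1(\cM,\cM)$ in the category of \'etale $\varphi$-modules (using that $\brho\ncong\brho(1)$ to get injectivity of restriction to $G_{L_\infty}$). This yields an explicit $\cM_t$ over $\F[\epsilon]/\epsilon^2$ whose $\varphi$ has $-t(Y_{i-1})\mathfrak{e}^i$ appearing in $\varphi(\mathfrak{f}^{i-1})$. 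Now the hypothesis that $\brho$ is reducible \emph{nonsplit} is used crucially: it forces $\cN\subset\cM$ to be the \emph{unique} rank-one \'etale $\varphi$-submodule, hence any rank-one $\varphi$-submodule $\cN_t\subset\cM_t$ must reduce to $\cN$ mod $\epsilon$ and can be written in a basis $\mathfrak{f}'^i = \mathfrak{f}^i+\epsilon(x_i\mathfrak{e}^i+y_i\mathfrak{f}^i)$. Computing $\varphi(\mathfrak{f}'^{i-1})$ and requiring it to be a multiple of $\mathfrak{f}'^i$, the coefficient of $\mathfrak{e}^i$ in degree zero is $-t(Y_{i-1})$, while all other contributions to that coefficient sit in degree $\geq 1$ (or degree $c_{f-i}\geq 4$, using the strong genericity); comparing degrees forces $t(Y_{i-1})=0$. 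Conversely, setting $t(Y_i)=0$ in the explicit formulas visibly makes $\prod_i(\F[\epsilon]/\epsilon^2)(\!(v)\!)\mathfrak{f}^i$ a $\varphi$-stable direct summand. You should note that without the genericity bound the degree comparison could fail, so this is not pure formalism. Finally, for the fixed-determinant statement the paper does not redo the computation: it deduces it from the unfixed case via \cite[Lem. 4.3.1]{EG}, which is cleaner and more robust than the informal ``subtract one dimension uniformly'' argument you propose, since the latter would require you to check separately that the determinant condition is transverse to both loci.
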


\begin{proof}
The fixed determinant case follows from the unfixed determinant case by \cite[Lem.~4.3.1]{EG}. For the unfixed determinant case, we first recall the construction of $R_{\brho}^{\Box, T_{\emptyset,\emptyset}}$ in \cite{Le}.
Let $R = \cO[\![(X_i,Y_i)_{0\leq i\leq f-1},X_{\alpha},X_{\alpha'}]\!]/(g_i(\emptyset,\emptyset))_i.$
The universal \'etale $\varphi$-module $\cM_R$ over $R$ admits the following description, see \cite[Thm.~3.6]{Le},
\begin{align*}
\omega^{(f-i)}\notin S_{\brho}:~~&\left\{ \begin{array}{ll}
\varphi(\mathfrak{e}^{i-1}) & = v^{c_{f-i}-1}(v+p-Y_{i-1})\mathfrak{e}^i+(X_{i-1}+[a_{i-1}])v^{c_{f-i}}\mathfrak{f}^i\\
\varphi(\mathfrak{f}^{i-1}) & = -Y_{i-1}(X_{i-1}+[a_{i-1}])^{-1}\mathfrak{e}^i+v\mathfrak{f}^i
\end{array}\right.\\
\omega^{(f-i)}\in S_{\brho}:~~&\left\{ \begin{array}{ll}
\varphi(\mathfrak{e}^{i-1}) & = v^{c_{f-i}-1}(v+p-X_{i-1}Y_{i-1})\mathfrak{e}^i+X_{i-1}v^{c_{f-i}}\mathfrak{f}^i\\
\varphi(\mathfrak{f}^{i-1}) & = -Y_{i-1}\mathfrak{e}^i+ v\mathfrak{f}^i
\end{array}\right.
\end{align*}
for $i\neq 0,$ and for $i=0$ one needs to modify by the multiplication by the matrix
\[D(\alpha,\alpha')=\left(
                   \begin{array}{cc}
                     X_{\alpha}+[\alpha] & 0 \\
                     0 &  X_{\alpha'}+[\alpha']\\
                   \end{array}
                 \right).
                 \]
Let $R^\Box$ be the ring which represents the functor sending a complete noetherian local  $\cO$-algebra $A$ to the set of isomorphism classes of $\{f:R\to A, b_A\}$ where $b_A$ is a basis for the free rank two $A$-module $\mathbb{V}^*(f^*(\cM_R))$ whose reduction modulo $\frak{m}_A$ gives $\brho.$ Then $R^\Box$ is formally smooth over $R$ of relative dimension $4$. The universal lifting ring $R_{\brho}^{\Box, T_{\emptyset,\emptyset}}$ is then a quotient of $R^\Box$ by a $\widehat{\mathbb{G}}_m^2$ action, and is a power series ring over $R$ of relative dimension $2$,  see \cite[Thm.~3.6]{Le}.

We have the following commutative diagram
\[
\xymatrix{  \Hom_{\cO-{\rm alg}}(R^{\Box, T_{\emptyset,\emptyset}}_{\brho},\F[\epsilon]/\epsilon^2)\ar@{^{(}->}[r]^{\ \ \ \ \ A}\ar@{^{(}->}[d]^{C}& \Ext^1_{G_K}(\brho,\brho) \ar@{^{(}->}[r]^{B} &\Ext^1_{G_{K_\infty}}(\brho,\brho)\ar[d]_{D}^{\cong}  \\
\Hom_{\cO-{\rm alg}}(R^{\Box},\F[\epsilon]/\epsilon^2)\ar[r]\ar@{->>}[d]&\Ext^1 (\cM_{R^{\Box}},\cM_{R^{\Box}})\ar[r]&\Ext^1(\cM,\cM) \ar@{=}[d]\\
\Hom_{\cO-{\rm alg}}(R ,\F[\epsilon]/\epsilon^2)\ar[r]&\Ext^1 (\cM_{R },\cM_{R })\ar[r]&\Ext^1(\cM,\cM) }
\]
where the extension of $\cM_{R^{\Box}}$ (resp. $\cM$) by $\cM_{R^{\Box}}$ (resp. $\cM$) is taken in the corresponding category of \'etale $\varphi$-modules.

The map $A$ is given by the deformation theory, and it is injective. The map $C$ is injective by \cite[Lem.~2.2.7]{CDM}. The restriction map $B$ is injective by \cite[Lem.~2.2.9]{CDM} if $\brho$ is not isomorphic to $\brho(1)$ (which holds when $\brho$ is strongly generic). Since the category of \'etale $\varphi$-modules over $\F(\!(v)\!)$ is anti-equivalent to the category of continuous representations of $G_{L_{\infty}}$ over $\F,$ $D$ is an isomorphism.

For any $t\in\Hom_{\cO-{\rm alg}}(R^{\Box, T_{\emptyset,\emptyset}}_{\brho},\F[\epsilon]/\epsilon^2),$ we let $\cM_t$ denote the  image of $t$ in $\Ext^1(\cM,\cM)$  under the composition $D\circ B\circ A$.
Then $\cM_t$ over $R$ admits the following description
\begin{align*}
\omega^{(f-i)}\notin S_{\brho}:~&\left\{ \begin{array}{ll}
\varphi(\mathfrak{e}^{i-1}) & = v^{c_{f-i}}\frak{e}^i-t(Y_{i-1})v^{c_{f-i}-1}\mathfrak{e}^i+(t(X_{i-1})+a_{i-1})v^{c_{f-i}}\mathfrak{f}^i\\
\varphi(\mathfrak{f}^{i-1}) & = - a_{i-1}^{-1} t(Y_{i-1})\mathfrak{e}^i+v\mathfrak{f}^i
\end{array}\right.\\
\omega^{(f-i)}\in S_{\brho}:~&\left\{ \begin{array}{ll}
\varphi(\mathfrak{e}^{i-1}) & = v^{c_{f-i}}\mathfrak{e}^i + t(X_{i-1})v^{c_{f-i}}\mathfrak{f}^i\\
\varphi(\mathfrak{f}^{i-1}) & = - t(Y_{i-1})\mathfrak{e}^i+ v\mathfrak{f}^i
\end{array}\right. 
\end{align*}
for $i\neq 0,$ and with the usual modification by $\a,\a'$ when $i=0.$

Let $\cN_t = \prod_{i\in \Z/f\Z} (\F[\epsilon]/\epsilon^2)(\!(v)\!) \frak{f}'^i$ be a rank one \'etale $\varphi$-submodule of $\cM_t.$ Since $\brho$ is reducible nonsplit, $\cN\subset \cM$ is the unique \'etale $\varphi$-submodule of $\cM.$ Then $\cN_t\pmod \epsilon = \cN .$ Up to an element in $\F^{\times},$ we may assume $\frak{f}'^i = \frak{f}^i +\epsilon(x_i\frak{e}^i +y_i\frak{f}^i )$ with $x_i,y_i\in \F$ for all $i.$ If $\omega^{(f-i)}\in S_{\brho},$ then
\begin{align*}
\varphi(\frak{f}'^{i-1}) & = \varphi(\frak{f}^{i-1}) + \epsilon ( \varphi(x_{i-1}) \varphi(\frak{e}^{i-1}) + \varphi(y_{i-1}) \varphi(\frak{f}^{i-1}) )
\\
& = - t(Y_{i-1})\mathfrak{e}^i+ v\mathfrak{f}^i\\& \quad + \epsilon \big( \varphi(x_{i-1}) (v^{c_{f-i}}\frak{e}^i + t(X_{i-1})v^{c_{f-i}}\mathfrak{f}^i) +  \varphi(y_{i-1}) (-t(Y_{i-1})\mathfrak{e}^i+v\mathfrak{f}^i) \big)
\\
& = a \frak{f}'^{i} = a (\frak{f}^i +\epsilon(x_i\frak{e}^i +y_i\frak{f}^i )),
\end{align*}
for some $a \in (\F[\epsilon] / \epsilon^2)(\!(v)\!)$ and $a \equiv v \pmod \epsilon.$ Since $t(Y_{i-1})\in \F \epsilon,$ by comparing the $\epsilon\frak{e}^i$ terms and noticing $c_{f-i} \geq 4$, we see $t(Y_{i-1})=0.$ The case $\omega^{(f-i)}\notin S_{\brho}$ can be done in the same way.
\end{proof}

\subsection{Crystalline deformation rings}

From now on we only consider fixed determinant deformation rings. Analogous results hold for deformation rings without the determinant condition. Let $\s$ be a Serre weight given by
\[
\sigma \cong \bigotimes_{\kappa\in \Hom(\F_q, \F)} ( \Sym^{r_{\kappa}} \F_q^2 \otimes \det{}^{t_{\kappa}} )\otimes_{\F_q, \kappa} \F ,
\]
where $0\leq r_{\kappa},t_{\kappa} \leq p-1$ and not all $t_{\kappa}$ are equal to $p-1.$
We identify $\Hom(L,E)$ with $\Hom(\F_q,\F)$ by the natural reduction map. Let $R_{\brho}^{\Box, \psi, \mathrm{cris}, \s}$ denote $R_{\brho}^{\Box, \psi, \mathrm{cris}, \l }$ for $\l= (r_{\kappa}+t_{\kappa}+1, t_{\kappa})_{\kappa\in \Hom(L,E)}.$ If $\s\in \Serre$, $R_{\brho}^{\Box, \psi, \mathrm{cris}, \s}$ is a regular local ring of relative dimension $f+3$ over $\cO$. 

\begin{proposition}\label{prop::red--crys}
The universal deformation of $\brho $ over $R_{\brho}^{\Box,\psi,\mathrm{cris},\s_{\emptyset}} \otimes_{\cO } \F$ is reducible. In particular, $R_{\brho}^{\Box,\psi,\mathrm{cris},\s_{\emptyset}} \otimes_{\cO} \F$ is a quotient of $R^{\Box, \psi, {\rm red}}_{\brho} \otimes_{\cO} \F.$
\end{proposition}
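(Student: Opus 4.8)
The plan is to show that the universal framed crystalline deformation $\rho^{\mathrm{univ}}$ of $\brho$ with Hodge–Tate weights attached to $\s_\emptyset$, reduced mod $\varpi$, already carries a $G_L$-stable rank-one direct summand; this reducibility is what identifies $R_{\brho}^{\Box,\psi,\mathrm{cris},\s_\emptyset}\otimes_{\cO}\F$ as a quotient of $R^{\Box,\psi,\mathrm{red}}_{\brho}\otimes_{\cO}\F$ via the universal property of $\mathrm{Def}^{\mathrm{red}}_{\brho}$ (which is a subfunctor of $\mathrm{Def}_{\brho}$ by \cite[Prop. 3]{MazurFermat}, or \cite[Lem. 5.3, Prop. 5.4]{Breuil-Ding}). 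Concretely, I would first recall from \S\ref{section--pBT} the explicit \'etale $\varphi$-module description of the universal deformation: for the trivial inertial type and Hodge–Tate weights $(1,0)$ in each embedding, the set $T_{\emptyset,\emptyset}$ is the one relevant to $\s_\emptyset$, and $R^{\Box,\psi,T_{\emptyset,\emptyset}}_{\brho}\otimes_\cO\F$ is (a power series ring over) $\F[\![(X_i,Y_i)_i]\!]/(g_i(\emptyset,\emptyset))_i$ with $g_i(\emptyset,\emptyset)\in\{Y_i(Y_i-p), Y_i(X_iY_i-p)\}$, so that mod $p$ the relations become $Y_i^2=0$ or $Y_i^2 X_i = 0$. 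I would then observe that the locus $Y_i=0$ for all $i$ defines a quotient on which the universal $\varphi$-module visibly contains the rank-one sub-$\varphi$-module $\prod_i \F(\!(v)\!)\mathfrak{f}^i$ (the off-diagonal coefficients $-Y_{i-1}(\cdots)$ governing the failure of $\prod_i\mathfrak{f}^i$ to be $\varphi$-stable all vanish); this gives the reducibility of $\rho^{\mathrm{univ}}\bmod\varpi$ over the quotient $Y_i=0$.

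The key point, and the one requiring an actual argument rather than inspection, is that the quotient $R^{\Box,\psi,\mathrm{cris},\s_\emptyset}_{\brho}\otimes_\cO\F$ coincides with (not merely surjects onto, or is dominated by) the quotient of $R^{\Box,\psi,T_{\emptyset,\emptyset}}_{\brho}\otimes_\cO\F$ cut out by $Y_i=0$ — equivalently, that $R^{\Box,\psi,\mathrm{cris},\s_\emptyset}_{\brho}\otimes_\cO\F$ is the reduced special fibre of the crystalline lifting ring with these Hodge–Tate weights and that its mod-$p$ points all have $Y_i=0$. For this I would use that $R^{\Box,\psi,\mathrm{cris},\s_\emptyset}_{\brho}$ is regular of relative dimension $f+3$ over $\cO$ (stated just above the proposition), hence its special fibre is regular of dimension $f+3$, matching the dimension of the ring $\F[\![(X_i,Y_i)_i]\!]/(Y_i)_i$ after adding the $4$ framing variables and remembering $|\{i\}|=f$: the quotient $(g_i(\emptyset,\emptyset),Y_i)_i = (Y_i)_i$ is already reduced and of the right dimension. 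Then the inclusion $\Spec R^{\Box,\psi,\mathrm{cris},\s_\emptyset}_{\brho}[1/p]\hookrightarrow \Spec R^{\Box,\psi,T_{\emptyset,\emptyset}}_{\brho}[1/p]$ together with a dimension/Zariski-closure count forces equality of the two special fibres; alternatively one argues directly that every characteristic-$p$ tangent vector to $R^{\Box,\psi,\mathrm{cris},\s_\emptyset}_{\brho}$ kills $Y_i$, exactly as in the proof of Proposition \ref{prop-tangent} (the comparison of the $\epsilon\mathfrak{e}^i$ coefficients in $\varphi(\mathfrak{f}'^{i-1})$, using $c_{f-i}\geq 4$, shows $t(Y_{i-1})=0$), and then promotes this tangent-space statement to an equality of regular rings of equal dimension.

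Having identified $R^{\Box,\psi,\mathrm{cris},\s_\emptyset}_{\brho}\otimes_\cO\F$ with the quotient $R^{\Box,\psi,T_{\emptyset,\emptyset}}_{\brho}\otimes_\cO\F /(Y_i)_i$, the universal deformation over it is reducible by the $\varphi$-module computation above, so it factors through the surjection $R_{\brho}\twoheadrightarrow R^{\mathrm{red}}_{\brho}$ (after adding framing and fixing the determinant, $R^{\Box,\psi}_{\brho}\twoheadrightarrow R^{\Box,\psi,\mathrm{red}}_{\brho}$). Passing to mod-$\varpi$ fibres gives the surjection $R^{\Box,\psi,\mathrm{red}}_{\brho}\otimes_\cO\F\twoheadrightarrow R^{\Box,\psi,\mathrm{cris},\s_\emptyset}_{\brho}\otimes_\cO\F$, which is the assertion. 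I expect the main obstacle to be the bookkeeping in the second step: matching the crystalline deformation ring $R^{\Box,\psi,\mathrm{cris},\s_\emptyset}_{\brho}$ with the piece of Le's ring cut out by $Y_i=0$ requires being careful that the Hodge–Tate weights attached to $\s_\emptyset$ really correspond to the inertial-type set $T_{\emptyset,\emptyset}$ and the $g_i(\emptyset,\emptyset)$ in the table, and that the framing/determinant normalizations are consistent on both sides; once the identification is in place, the reducibility itself is immediate from the shape of the $\varphi$-module.
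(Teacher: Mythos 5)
Your core mechanism is the same as the paper's: reduce to Le's explicit universal \'etale $\varphi$-module over $R^{\Box,\psi,T_{\emptyset,\emptyset}}_{\brho}\otimes_{\cO}\F$, set $Y_i=0$, and observe that the off-diagonal terms $-Y_{i-1}(\cdots)\mathfrak{e}^i$ in $\varphi(\mathfrak{f}^{i-1})$ vanish, so $\prod_i\F(\!(v)\!)\mathfrak{f}^i$ is a rank-one \'etale $\varphi$-submodule; reducibility and the factorization through $R^{\Box,\psi,\mathrm{red}}_{\brho}$ then follow as you say. The paper's proof is exactly this, once the identification $R^{\Box,\psi,\mathrm{cris},\s_\emptyset}_{\brho}\otimes_{\cO}\F\cong R^{\Box,\psi,T_{\emptyset,\emptyset}}_{\brho}\otimes_{\cO}\F/(Y_1,\dots,Y_f)$ is in hand.

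The gap is in how you propose to obtain that identification. The paper takes it as an external input, citing \cite[Thm.~7.2.1]{EGS} together with Theorem~\ref{thm--Le}: EGS's theorem is precisely the statement that the special fibre of the crystalline deformation ring of weight $\overline{\s}(\tau)_J$ is the quotient of the (potentially Barsotti--Tate, here multi-type) ring by the appropriate minimal prime, cut out by the relevant $X_j$'s and $Y_j$'s. Your two proposed substitutes for this citation do not work. First, there is no inclusion $\Spec R^{\Box,\psi,\mathrm{cris},\s_\emptyset}_{\brho}[1/p]\hookrightarrow\Spec R^{\Box,\psi,T_{\emptyset,\emptyset}}_{\brho}[1/p]$: the crystalline ring attached to $\s_\emptyset$ parametrizes lifts with Hodge--Tate weights $(r_\kappa+t_\kappa+1,t_\kappa)$ determined by the Serre weight, while $R^{\Box,\psi,T_{\emptyset,\emptyset}}_{\brho}$ parametrizes potentially Barsotti--Tate lifts with weights $(1,0)$; their generic fibres are disjoint, and the relation between them is a Breuil--M\'ezard-type statement about \emph{special} fibres only --- which is exactly what EGS proves and what a dimension/Zariski-closure count on generic fibres cannot see. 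Second, the tangent-space route is circular: the computation in Proposition~\ref{prop-tangent} characterizes when a tangent vector of $R^{\Box,T_{\emptyset,\emptyset}}_{\brho}$ factors through the \emph{reducible} ring (by testing stability of $\cN_t$), not when it factors through the crystalline ring; to even speak of ``tangent vectors of $R^{\Box,\psi,\mathrm{cris},\s_\emptyset}_{\brho}\otimes\F$ killing $Y_i$'' you must already know that this ring is a quotient of Le's ring, which is the point at issue. So the proof is correct provided you replace the second step by the citation of \cite[Thm.~7.2.1]{EGS}; as written, that step is not established.
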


\begin{proof}
By \cite[Thm.~7.2.1]{EGS} and Theorem \ref{thm--Le} (which is \cite[Thm.~3.6]{Le}), $R_{\brho}^{\Box,\psi,\mathrm{cris},\s_{\emptyset}} \otimes_{\cO } \F$ is a quotient of $R_{\brho}^{\Box, T_{\emptyset,\emptyset}}\otimes_{\cO} \F$ by the ideal $(Y_i)_{i\in \Z/f\Z}$. Hence by the form of the universal \'etale $\varphi$-module recalled in the proof of Proposition \ref{prop-tangent}, the universal \'etale $\varphi$-module over $R_{\brho}^{\Box,\psi,\mathrm{cris},\s_{\emptyset}} \otimes_{\cO } \F$ has the following form
\begin{align*}
\omega^{(f-i)}\notin S_{\brho}:~~&\left\{ \begin{array}{ll}
\varphi(\mathfrak{e}^{i-1}) & = v^{c_{f-i}}\mathfrak{e}^i+(X_{i-1}+a_{i-1})v^{c_{f-i}}\mathfrak{f}^i\\
\varphi(\mathfrak{f}^{i-1}) & =  v\mathfrak{f}^i
\end{array}\right.\\
\omega^{(f-i)}\in S_{\brho}:~~&\left\{ \begin{array}{ll}
\varphi(\mathfrak{e}^{i-1}) & = v^{c_{f-i}} \mathfrak{e}^i+X_{i-1}v^{c_{f-i}}\mathfrak{f}^i\\
\varphi(\mathfrak{f}^{i-1}) & =  v\mathfrak{f}^i
\end{array}\right.
\end{align*}
for $i\neq 0,$ and with the usual modification for $i=0.$ The $(\frak{f}^i)_{i\in \Z/f\Z}$ clearly gives a rank one \'etale $\varphi$-submodule.
\end{proof}

\begin{corollary}\label{cor::intersect-tang-space}
We have
\begin{multline*}
 \Hom_{\cO-{\rm alg}}(R^{\Box, \psi, T_{\emptyset,\emptyset}}_{\brho},\F[\epsilon]/\epsilon^2) \cap \Hom_{\cO-{\rm alg}}(R^{\Box, \psi, {\rm red}}_{\brho},\F[\epsilon]/\epsilon^2)  =  \Hom_{\cO-{\rm alg}}(R_{\brho}^{\Box,\psi,\mathrm{cris},\s_{\emptyset}},\F[\epsilon]/\epsilon^2),
\end{multline*}
where the intersection is taken inside $ \Hom_{\cO-{\rm alg}}(R^{\Box, \psi}_{\brho},\F[\epsilon]/\epsilon^2).$
\end{corollary}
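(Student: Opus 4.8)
The plan is to exhibit the two sides of the claimed identity as $\F$-subspaces of $\Hom_{\cO-{\rm alg}}(R^{\Box,\psi}_{\brho},\F[\epsilon]/\epsilon^2)$ of the same finite dimension, one contained in the other. Since the three rings $R^{\Box,\psi,T_{\emptyset,\emptyset}}_{\brho}$, $R^{\Box,\psi,{\rm red}}_{\brho}$ and $R_{\brho}^{\Box,\psi,\mathrm{cris},\s_{\emptyset}}$ are all quotients of $R^{\Box,\psi}_{\brho}$, and the structure map $\cO\to\F[\epsilon]/\epsilon^2$ factors through $\F$ (so that $\Hom_{\cO-{\rm alg}}(R,\F[\epsilon]/\epsilon^2)=\Hom_{\F-{\rm alg}}(R\otimes_{\cO}\F,\F[\epsilon]/\epsilon^2)$ for each of them), precomposition with the quotient maps identifies each of the three $\Hom$-sets with an $\F$-subspace of $\Hom_{\cO-{\rm alg}}(R^{\Box,\psi}_{\brho},\F[\epsilon]/\epsilon^2)$, and the asserted identity is an equality of such subspaces.

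First I would establish the inclusion $\Hom_{\cO-{\rm alg}}(R_{\brho}^{\Box,\psi,\mathrm{cris},\s_{\emptyset}},\F[\epsilon]/\epsilon^2)\subseteq\bigl(\text{the intersection}\bigr)$. By Proposition \ref{prop::red--crys}, $R_{\brho}^{\Box,\psi,\mathrm{cris},\s_{\emptyset}}\otimes_{\cO}\F$ is a quotient of $R^{\Box,\psi,{\rm red}}_{\brho}\otimes_{\cO}\F$; and the proof of that proposition (via \cite[Thm. 7.2.1]{EGS} and Theorem \ref{thm--Le}) also exhibits $R_{\brho}^{\Box,\psi,\mathrm{cris},\s_{\emptyset}}\otimes_{\cO}\F$ as the quotient of $R^{\Box,\psi,T_{\emptyset,\emptyset}}_{\brho}\otimes_{\cO}\F$ by the ideal $(Y_1,\dots,Y_f)$. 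Composing a homomorphism $R_{\brho}^{\Box,\psi,\mathrm{cris},\s_{\emptyset}}\to\F[\epsilon]/\epsilon^2$ with these two surjections (using the $\otimes_{\cO}\F$ identification above) shows that it factors through both $R^{\Box,\psi,{\rm red}}_{\brho}$ and $R^{\Box,\psi,T_{\emptyset,\emptyset}}_{\brho}$, hence lies in the intersection.

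To upgrade this to an equality I would compare dimensions. Proposition \ref{prop-tangent} gives $\dim_{\F}\bigl(\text{the intersection}\bigr)=3+f$. On the other hand $\s_{\emptyset}\in\mathscr{D}(\brho)$, so (as recalled in \S\ref{section--pBT}, using that $\brho$ is strongly generic) $R_{\brho}^{\Box,\psi,\mathrm{cris},\s_{\emptyset}}$ is nonzero and is a regular local $\cO$-algebra of relative dimension $f+3$; hence $\dim_{\F}\Hom_{\cO-{\rm alg}}(R_{\brho}^{\Box,\psi,\mathrm{cris},\s_{\emptyset}},\F[\epsilon]/\epsilon^2)=f+3$ as well. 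An inclusion of $\F$-vector spaces of the same finite dimension is an equality, which concludes the argument. I do not expect a genuine obstacle here: the substantial inputs are already contained in Propositions \ref{prop-tangent} and \ref{prop::red--crys}, and the only point to watch is the bookkeeping of reduction modulo $\varpi$, which is harmless since $\F[\epsilon]/\epsilon^2$-points of these rings depend only on their reductions modulo $\varpi$. Alternatively, one can bypass the dimension count: a tangent vector in the intersection factors through $R^{\Box,\psi,T_{\emptyset,\emptyset}}_{\brho}$ and, by Proposition \ref{prop-tangent}, kills each $Y_i$, hence factors through $R^{\Box,\psi,T_{\emptyset,\emptyset}}_{\brho}\otimes_{\cO}\F/(Y_1,\dots,Y_f)=R_{\brho}^{\Box,\psi,\mathrm{cris},\s_{\emptyset}}\otimes_{\cO}\F$, giving the reverse inclusion directly — provided one checks that the criterion ``$t(Y_i)=0$'' of Proposition \ref{prop-tangent} is valid verbatim in the fixed-determinant setting.
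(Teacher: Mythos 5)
Your proof is correct and follows essentially the same approach as the paper: identify $R_{\brho}^{\Box,\psi,\mathrm{cris},\s_{\emptyset}}\otimes_{\cO}\F$ as the quotient of $R^{\Box,\psi,T_{\emptyset,\emptyset}}_{\brho}\otimes_{\cO}\F$ by $(Y_1,\dots,Y_f)$ (via Theorem \ref{thm--Le} and \cite[Thm.\ 7.2.1]{EGS}), then combine Propositions \ref{prop-tangent} and \ref{prop::red--crys}. The caution you raise about transferring the $t(Y_i)=0$ criterion to the fixed-determinant setting is well-placed, and your dimension-count route cleanly sidesteps it since Proposition \ref{prop-tangent} states the $3+f$ formula explicitly for the fixed-determinant rings.
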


\begin{proof}
It follows from \cite[Thm.~7.2.1]{EGS} and Theorem \ref{thm--Le} (which is \cite[Thm.~3.6]{Le}) that $R_{\brho}^{\Box,\psi,\mathrm{cris},\s_{\emptyset}}\otimes_{\cO} \F$ is a quotient of  $R^{\Box, \psi, T_{\emptyset,\emptyset}}_{\brho}\otimes_{\cO} \F$ by the ideal  $(Y_i)_{i\in \Z/f\Z}.$ Then the equality follows from Proposition \ref{prop-tangent} and Proposition \ref{prop::red--crys}.
\end{proof}

If $A$ is a regular local ring, recall that a \emph{regular system of parameters} of   $A$ is defined as any system of parameters of $A$ which generates the maximal ideal of $A$, see \cite[\S 14]{Mat}.

\begin{lemma}\label{lemma:Uj}
Let $R=\F[\![(X_j,Y_j)_{j\in \cJ}]\!]/(X_jY_j)_{j\in \cJ}$ for some finite set $\cJ$. Then $R$ is Cohen-Macaulay of dimension $|\cJ|$ and there exists a regular sequence  $\{U_j,\ j\in \cJ\}$ in $R$  such that for any minimal prime ideal $\p$ of $R$, $\{U_j\ \mathrm{mod}\ \p,\ j\in \cJ\}$  forms a regular system of parameters of $R/\p$.
\end{lemma}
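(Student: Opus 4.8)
The plan is to make everything explicit: identify the minimal primes of $R$, guess the regular sequence $U_j := X_j+Y_j$, and verify the two requirements by direct computation.

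First I would record that $R$ is a complete intersection. Write $A := \F[\![(X_j,Y_j)_{j\in\cJ}]\!]$, a regular local ring of dimension $2|\cJ|$; then $R = A/(X_jY_j : j\in\cJ)$ is the quotient of $A$ by $|\cJ|$ elements, so $\dim R \geq 2|\cJ|-|\cJ| = |\cJ|$ by Krull's principal ideal theorem, while the computation of $R/(U_j:j)$ below shows $\dim R \leq |\cJ|$. Hence $\dim R = |\cJ| = \dim A - |\cJ|$, the sequence $(X_jY_j)_{j\in\cJ}$ is $A$-regular, $R$ is a complete intersection, and in particular $R$ is Cohen--Macaulay of dimension $|\cJ|$. (Equivalently, $R$ is the completed tensor product over $\F$ of the $|\cJ|$ copies of the one-dimensional complete intersection $\F[\![X_j,Y_j]\!]/(X_jY_j)$.)

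Next I would determine the minimal primes. For a subset $S\subseteq\cJ$ set $\p_S := (X_j : j\in S) + (Y_j : j\notin S)$; then $R/\p_S \cong \F[\![\,(Y_j)_{j\in S},\,(X_j)_{j\notin S}\,]\!]$ is a power series ring in $|\cJ|$ variables, so $\p_S$ is prime, with standard regular system of parameters $\{Y_j : j\in S\}\cup\{X_j : j\notin S\}$. Conversely, any minimal prime $\p$ of $R$ contains $X_jY_j$, hence contains $X_j$ or $Y_j$, for each $j\in\cJ$; choosing such an element for every $j$ exhibits some $\p_S$ with $\p_S\subseteq\p$, and minimality forces $\p=\p_S$. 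Thus the minimal primes of $R$ are exactly the $\p_S$, $S\subseteq\cJ$.

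Finally I would set $U_j := X_j+Y_j\in R$. To see that $\{U_j : j\in\cJ\}$ is a system of parameters, observe
\[
R/(U_j : j\in\cJ)\;\cong\; A\big/\big((X_jY_j)_{j}+(X_j+Y_j)_{j}\big)\;\cong\;\F[\![(X_j)_{j\in\cJ}]\!]\big/\big((X_j^2)_{j\in\cJ}\big),
\]
the last isomorphism obtained by the substitution $Y_j\mapsto -X_j$; this ring has finite $\F$-dimension, hence is Artinian, so $\dim R/(U_j:j)=0=\dim R-|\cJ|$. Since $R$ is Cohen--Macaulay, every system of parameters is a regular sequence (\cite{Mat}), so $\{U_j : j\in\cJ\}$ is a regular sequence in $R$. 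Moreover, modulo $\p_S$ we have $X_j=0$ for $j\in S$ and $Y_j=0$ for $j\notin S$, whence $U_j\equiv Y_j\ (\mathrm{mod}\ \p_S)$ for $j\in S$ and $U_j\equiv X_j\ (\mathrm{mod}\ \p_S)$ for $j\notin S$; thus $\{U_j\ \mathrm{mod}\ \p_S : j\in\cJ\}$ is precisely the standard regular system of parameters of $R/\p_S$, as required. There is no serious difficulty here; the only points needing a little care are the claim that the $\p_S$ exhaust the minimal primes (handled by the ``contains $X_j$ or $Y_j$'' observation) and the verification that $R/(U_j:j)$ is Artinian (the substitution $Y_j\mapsto -X_j$).
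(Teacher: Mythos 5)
Your proof is correct and takes essentially the same route as the paper, which simply says to take $U_j=X_j+Y_j$ and notes that $R/\p$ is regular for every minimal prime $\p$; you have filled in the verifications (complete intersection, classification of the minimal primes $\p_S$, Artinianness of $R/(U_j)_j$) that the paper leaves as a direct check.
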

\begin{proof}
It suffices to take $U_j=X_j+Y_j$ for all $j\in \cJ$. Note that for any minimal prime $\p$ of $R$, $R/\p$ is a regular local ring.
\end{proof}

\begin{proposition}\label{prop:Tj}
There is a sequence $(T_1,\dots,T_{f+3})$ in $R_{\brho}^{\Box,\psi}$, which is part of a regular system of parameters and such that for any $\sigma\in\mathscr{D}(\brho)$, the sequence $\{\varpi, p_{\s} (T_j), 1\leq j\leq f+3\}$ forms a regular system of parameters of   $R_{\brho}^{\Box, \psi, \mathrm{cris}, \s}.$ Here, $p_{\sigma}$ denotes the natural quotient map $R_{\brho}^{\Box,\psi}\ra R_{\brho}^{\Box,\psi,\mathrm{cris},\s}$.

\end{proposition}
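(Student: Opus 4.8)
The plan is to exhibit the $T_j$ explicitly in terms of the coordinates from Le's presentation of the potentially Barsotti--Tate deformation rings, after reducing the assertion to a statement in special fibres. First recall that $R^{\Box,\psi}_{\brho}$ is a regular local ring: by \S\ref{subsection-univdef} the ring $R^{\psi}_{\brho}$ is formally smooth over $\cO$ (this uses $H^2(G_L,{\rm ad}^0(\brho))=0$, valid under our strong genericity hypothesis), and $R^{\Box,\psi}_{\brho}$ is formally smooth over $R^{\psi}_{\brho}$ (as $H^0(G_L,{\rm ad}^0(\brho))=0$ by \eqref{equ-end-of-brho}). Likewise each $R^{\Box,\psi,\mathrm{cris},\sigma}$, $\sigma\in\mathscr D(\brho)$, is regular of relative dimension $f+3$ over $\cO$, hence $\cO$-flat with $R^{\Box,\psi,\mathrm{cris},\sigma}\otimes_{\cO}\F$ regular of Krull dimension $f+3$. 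Therefore it suffices to find $T_1,\dots,T_{f+3}\in R^{\Box,\psi}_{\brho}$ such that, for every $\sigma$, the images of $p_\sigma(T_1),\dots,p_\sigma(T_{f+3})$ form a regular system of parameters of $R^{\Box,\psi,\mathrm{cris},\sigma}\otimes_{\cO}\F$: then $\{\varpi,p_\sigma(T_j)\}_j$ generates the maximal ideal of $R^{\Box,\psi,\mathrm{cris},\sigma}$ and, being of cardinality equal to the dimension, is a regular system of parameters; moreover, taking $\sigma=\sigma_\emptyset$, the images of $\varpi,T_1,\dots,T_{f+3}$ in the cotangent space of $R^{\Box,\psi}_{\brho}$ are linearly independent (the cotangent map induced by $p_{\sigma_\emptyset}$ being surjective), so they extend to a regular system of parameters of $R^{\Box,\psi}_{\brho}$ by regularity. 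Thus the whole statement is reduced to a computation modulo $\varpi$.

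Next I would introduce the coordinates. Consider the surjection $R^{\Box,\psi}_{\brho}\twoheadrightarrow R^{\Box,\psi,T_{\emptyset,\emptyset}}_{\brho}$ and recall from Theorem \ref{thm--Le} (fixed-determinant version) that $R^{\Box,\psi,T_{\emptyset,\emptyset}}_{\brho}$ is a formal power series ring in variables $W_1,W_2,W_3$ over $\cO[\![(X_i,Y_i)_{i\in\Z/f\Z}]\!]/(g_i(\emptyset,\emptyset))_i$ (there being three $W$'s, as forced by the relative dimension $f+3$ of the crystalline rings), where $g_i(\emptyset,\emptyset)\otimes_{\cO}\F$ equals $Y_i^2$ if $\omega^{(f-1-i)}\notin S_{\brho}$ and $X_iY_i^2$ if $\omega^{(f-1-i)}\in S_{\brho}$; in particular, modulo $\varpi$ each relation lies in the ideal generated by $X_i$ or by $Y_i$. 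Choosing lifts of $X_i,Y_i,W_k$ to $R^{\Box,\psi}_{\brho}$, I would set $T_i:=X_i+Y_i$ for $1\le i\le f$ and $T_{f+k}:=W_k$ for $k=1,2,3$ — the same device as in Lemma \ref{lemma:Uj}.

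The verification rests on the following input, which extends the computation carried out for $\sigma_\emptyset$ in the proof of Corollary \ref{cor::intersect-tang-space}: by \cite[Thm.~7.2.1]{EGS} together with Theorem \ref{thm--Le}, for every $\sigma_J\in\mathscr D(\brho)$ the reduction $p_{\sigma_J}\otimes_{\cO}\F$ factors through $R^{\Box,\psi,T_{\emptyset,\emptyset}}_{\brho}\otimes_{\cO}\F$, and identifies $R^{\Box,\psi,\mathrm{cris},\sigma_J}\otimes_{\cO}\F$ with the quotient of $R^{\Box,\psi,T_{\emptyset,\emptyset}}_{\brho}\otimes_{\cO}\F$ by an ideal $(Z_i^{(J)})_{i\in\Z/f\Z}$ with $Z_i^{(J)}\in\{X_i,Y_i\}$ (necessarily $Y_i$ when $\omega^{(f-1-i)}\notin S_{\brho}$); indeed, the special fibre of $R^{\Box,\psi,T_{\emptyset,\emptyset}}_{\brho}$ has exactly $2^{|S_{\brho}|}=\#\mathscr D(\brho)$ irreducible components — one choice of $X_i$ or $Y_i$ for each $i$ with $\omega^{(f-1-i)}\in S_{\brho}$ — which by \cite[Thm.~7.2.1]{EGS} are precisely the $R^{\Box,\psi,\mathrm{cris},\sigma_J}\otimes_{\cO}\F$, $J\subseteq S_{\brho}$. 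Granting this, $R^{\Box,\psi,\mathrm{cris},\sigma_J}\otimes_{\cO}\F\cong\F[\![(Z_i'^{(J)})_i,W_1,W_2,W_3]\!]$ with $Z_i'^{(J)}\in\{X_i,Y_i\}$ complementary to $Z_i^{(J)}$; since $p_{\sigma_J}$ kills each $Z_i^{(J)}$ we get $\overline{p_{\sigma_J}(T_i)}=Z_i'^{(J)}$ and $\overline{p_{\sigma_J}(T_{f+k})}=W_k$, so $\overline{p_{\sigma_J}(T_1)},\dots,\overline{p_{\sigma_J}(T_{f+3})}$ is a regular system of parameters of $R^{\Box,\psi,\mathrm{cris},\sigma_J}\otimes_{\cO}\F$. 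By the reduction of the first paragraph this finishes the proof.

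The genuine obstacle is the displayed input of the previous paragraph: one must check, in Le's coordinates and uniformly over all $J\subseteq S_{\brho}$, that $R^{\Box,\psi,\mathrm{cris},\sigma_J}\otimes_{\cO}\F$ is the component of $R^{\Box,\psi,T_{\emptyset,\emptyset}}_{\brho}\otimes_{\cO}\F$ cut out by setting one of $X_i,Y_i$ to zero for each $i$. This is the natural extension of the special case $\sigma_\emptyset$ (where all the $Y_i$ are killed), and amounts to combining \cite[Thm.~7.2.1]{EGS} with the bijection between $\mathscr D(\brho)$ and the set of branches of the equations $g_i(\emptyset,\emptyset)$; it is a direct check of the same nature as Proposition \ref{prop::red--crys}, the only delicate point being to match a branch to its Serre weight — but even that is unnecessary here, since we only use that for each $i$ exactly one of $X_i,Y_i$ is killed (which is automatic from the shape of $g_i(\emptyset,\emptyset)$ modulo $\varpi$). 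Everything else is elementary commutative algebra of regular local rings and their special fibres.
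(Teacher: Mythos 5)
Your proof is correct in substance, but it takes a route that differs from the paper's in one meaningful respect. The paper does not work with the multi-type ring $R^{\Box,\psi,T_{\emptyset,\emptyset}}_{\brho}$ at all here: it invokes Diamond's result (\cite[Prop. 3.5.2]{EGS}) to choose a \emph{single} tame type $\tau$ with $\Serre\subseteq\JH(\overline{\s(\tau)})$, so that \cite[Thm. 7.2.1]{EGS} applies verbatim. That theorem presents $R^{\Box,\psi,\tau}_{\brho}\otimes_{\cO}\F$ as a power series ring over $\F[\![(X_j,Y_j)_{j\in\cJ}]\!]/(X_jY_j)_{j\in\cJ}$ with $\cJ=J^{\max}\setminus J^{\min}$ --- a \emph{reduced} special fibre --- and, crucially, already contains the dictionary identifying each $R^{\Box,\psi,\mathrm{cris},\s}_{\brho}\otimes_{\cO}\F$, $\s\in\Serre$, with the quotient by a specific minimal prime. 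The paper then applies Lemma \ref{lemma:Uj} (the same $X_j+Y_j$ device you use) and lifts. What the single-type choice buys is precisely the step you isolate as "the genuine obstacle": with $T_{\emptyset,\emptyset}$ the special fibre is non-reduced ($Y_i^2$, $X_iY_i^2$), and the identification of its $2^{|S_{\brho}|}$ components with the crystalline loci is not literally in Theorem \ref{thm--Le}, whereas with a single $\tau$ it is part of the cited EGS statement.

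Your deferred check does go through, but you should discharge it rather than wave at Proposition \ref{prop::red--crys} (which is an \'etale $\varphi$-module computation of a different nature). The argument you need is: (a) $\Spec R^{\Box,\psi,T_{\emptyset,\emptyset}}_{\brho}$ is the union of the $\Spec R^{\Box,\psi,\tau}_{\brho}$ over $\tau\in T_{\emptyset,\emptyset}$ (closure of a finite union of closures), so its special fibre is, set-theoretically, the union over $\tau$ of the special fibres; (b) by \cite[Thm. 7.2.1]{EGS} each of those is the union of the $\overline{R}^{\mathrm{cris},\s}$ for $\s\in\JH(\overline{\s(\tau)})\cap\Serre$, and by \eqref{equ--inj-env} together with the fact that every weight of $\Serre$ occurs in $\Proj_{\Gamma}\s_{\emptyset}$ (\cite[Prop. 2.24]{HW}), this union exhausts $\Serre$; (c) each $\overline{R}^{\mathrm{cris},\s_J}$ is irreducible of dimension $f+3$, the components of the non-reduced fibre number exactly $2^{|S_{\brho}|}=|\Serre|$, and the crystalline loci are pairwise distinct, whence the bijection with the ideals $(Z_i^{(J)})_i$. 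With that supplied, your explicit reduction to special fibres and the extension to a regular system of parameters of $R^{\Box,\psi}_{\brho}$ are fine (and in fact spell out a step the paper leaves implicit). Two minor points: the formal smoothness of $R^{\Box,\psi}_{\brho}$ over $R^{\psi}_{\brho}$ rests on $\End_{G_L}(\brho)=\F$ (i.e. $H^0(G_L,\ad(\brho))=\F$) rather than on the vanishing of $H^0(G_L,\ad^0(\brho))$; and the count of formal variables in the fixed-determinant version of Theorem \ref{thm--Le} should be quoted from \cite{Le} rather than reverse-engineered from the dimension of the crystalline rings.
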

\begin{proof}
We choose a tame inertial type $\tau$ so that $\Serre \subseteq \JH( \overline{\s(\tau)}),$ this is possible by \cite{Diamond} (see \cite[Prop.~3.5.2]{EGS}) and the genericity of $\brho.$ By \cite[\S 4]{Br14} and \cite[Thm.~7.2.1(1)]{EGS} there are subsets $J^{\min} \subseteq J^{\max} \subseteq \mathcal{S}$   such that $\Serre = \{ \overline{\s}(\tau)_J ~|~ J^{\min} \subseteq J \subseteq J^{\max}\}$ where $\overline{\s}(\tau)_J $ is as in \cite[\S 3.2, 3.3]{EGS}). By \cite[Thm.~7.2.1(2)]{EGS}, $R^{\Box, \psi, \tau}_{\brho} \otimes_{\cO} \F$ has  dimension $f+3$ and is a formal power series ring over
\[
  \F[\![(X_j, Y_j)_{j \in \cJ}]\!] / (X_j Y_j )_{j \in \cJ}
\]
where $\cJ = J
^{\rm max}\backslash J^{\rm min}$. Moreover, for  $\sigma = \overline{\s}(\tau)_J \in \mathscr{D}(\brho)$,  $R^{\Box,\psi,\mathrm{cris},\s}_{\brho}\otimes_{\cO}\F$ can be obtained as   the quotient of $R_{\brho}^{\Box,\psi,\tau}\otimes_{\cO}\F$ by some minimal prime ideal. By Lemma \ref{lemma:Uj} and taking into account of the formal variables, we may find $\{U_j, 1\leq j\leq f+3\}$ in $R_{\brho}^{\Box,\psi, \tau}\otimes_{\cO}\F$ such that their images in $R^{\Box,\psi,\mathrm{cris},\s}_{\brho}\otimes_{\cO}\F$ form a regular system of parameters for any $\sigma \in \mathscr{D}(\brho)$. Choosing a lift $T_j\in R_{\brho}^{\Box,\psi}$ of $U_j$ for each $j$, it is easy to see that   $\{ T_j, 1\leq j\leq f+3\}$ satisfies the required properties.
\end{proof}

\section{$P$-ordinary automorphic representations, Local-global compatibility}
\label{Section::automorphic}

We recall some results of $P$-ordinary automorphic representations and the relevant local-global compatibility proved in \cite[\S 6.3 and \S 7.1]{Breuil-Ding}.

Let $F$ be a totally real extension of $\QM$ in which $p$ is unramified, and $\OC_F$ be its ring of integers. Let $S_p$ denote the set of places of $F$ dividing $p$ and $S_{\infty}$  the set of infinite places of $F.$ For any place $w$ of $F,$ let $F_w$ denote the completion of $F$ at $w$ with ring of integers $\OC_{F_w},$ uniformiser $\varpi_w$ and residue field $k_{F_w}.$ The cardinality of $k_{F_w}$ is denoted by $q_w.$  Let $\AM_{F,f}$ denote the ring of finite ad\`eles of $F.$ If $S$ is a finite set of finite places of $F,$ let $\AM^S_{F,f}$ denote the finite ad\`eles outside $S.$

Let $D$ be a quaternion algebra with center $F.$ Let $S_D$ be the set of ramified places of $D.$ Assume $S_D$ is disjoint from $ S_p .$ Let $(\cO_{D})_w$ denote $\cO_D\otimes_{\cO_F} \cO_{F_w}.$ For $w\notin S_D\cup S_{\infty},$ we identify $ (D\otimes_F F_w)^{\times}$ with $ \GL_2(F_w)$ so that $(\cO_{D})_w^{\times}$ is identified with $\GL_2(\cO_{F_w}).$ In the following, we assume $D$ is either {\em definite}, i.e. $S_{\infty}\subseteq S_D,$ or {\em indefinite}, i.e. $|S_{\infty} \backslash S_D|=1.$ In the definite case, we furthermore assume $(F,D) \neq (\Q, \GL_2)$ (our main result is already known in the case $(F,D) = (\Q, \GL_2)$ by \cite{Em3}).

We fix a place $v|p$ and denote by $L \defn F_{v}$ which is unramified of degree $f \defn [L:\Q_p]$ over $\Q_p.$

\subsection{$p$-adic completed cohomology}

Let $D$ be either definite or indefinite. Let $U$ be an open compact subgroup of $(D\otimes_F \AM_{F,f})^\times.$ If $D$ is definite, we denote by $Y^D_U$ the finite set $D^\times\backslash (D\otimes_F \AM_{F,f})^\times/U;$ If $D$ is indefinite, let $Y^D_U$ denote the quotient of $X^D_U$ by the action of the finite group $\A_{F,f}^{\times} / (F^{\times} (\A_{F,f}^{\times} \cap U))$, where $X_U^D$ is the associated projective Shimura curve as in \cite{BDJ}, see \cite[Rem.~8.1.2(iii)]{BHHMS}. Note that we will follow the convention in \cite{BDJ} which is different from the convention used in \cite{Breuil-Ding} and  \cite{BreuilDiamond} (see \cite[\S 3.1]{BreuilDiamond}).

Fix $U^p = \prod_{w\nmid p} U_w$ an open compact subgroup of $(D\otimes_F \A^p_{F,f})^{\times}.$ For an open compact subgroup $U_p \subset (D\otimes_{\Q} \Q_p)^\times $ and $i,s\in \N,$ let
\[
H^i (U^pU_p, \cO/\varpi^s) \defn   H^i_{\textrm{\'et}} (Y^D_{U^p U_p,\overline{F}} , \cO/\varpi^s ).
\]
If $D$ is definite, $H^i (U^pU_p, \cO/\varpi^s) = 0$ if $i\geq 1$, and $H^0 (U^pU_p, \cO/\varpi^s)$ can be identified with the set of functions $f:D^\times\backslash (D\otimes_F \AM_{F,f})^\times/U^p U_p\to \cO/\varpi^s.$ If $D$ is indefinite, $H^i (U^pU_p, \cO/\varpi^s) = 0$ if $i\geq 3.$ Set
\[
\wt{H}^i (U^{p}, \cO)\defn \plim_{s} \varinjlim_{U_{p}} H^i (U^{p}U_{p}, \cO/\varpi^s).
\]

Let $\psi:F^\times\backslash \AM_{F,f}^\times\to \cO^\times$ be a locally constant character.
For each place $w \in S_p \backslash \{v\}$, let $W_w$ be an irreducible algebraic representation of $\Res_{F_w/\Q_p} \GL_2$ over $E$ with central character $\psi^{-1}|_{F_w^{\times}}$.  Let  $W = \prod_{ w \in S_p \backslash \{v\} } W_w.$ Then $W$ is an irreducible algebraic representation of $(D\otimes_{\Q} \Q_p)^\times$ via
\[
(D\otimes_{\Q} \Q_p)^\times \onto \prod_{w \in S_p \backslash \{v\} }\GL_2(F_w) = \prod_ {w \in S_p \backslash \{v\}}( \Res_{F_w/\Q_p} \GL_2)(\Q_p).
\]
Let $\W$ be an $\cO$-lattice of $W$ stable under $U^{v}_p = \prod_{w|p,w\neq v}U_w.$ We denote by $U^{v}= U^p U_p^{v} \subset (D\otimes_F \A^{\{v\}}_{F,f})^{\times}.$ Then $\W$ admits an action of $U^v$ via the projection $U^v \onto U^{v}_p.$ We extend this action to $U^v (\A_{F,f}^{\times})$ by letting $\A_{F,f}^{\times}$ act by $\psi^{-1}.$ Assume $U_v$ is an open compact subgroup of $\GL_2(\cO_{L}) $ such that $\psi|_{U_{v}\cap \cO_{F_v}^\times}=1.$ Then $\W$ admits an action of $U(\A_{F,f}^{\times})$ by letting $U_v$ act trivially.
For $s\in \N,$ let $\mathcal{V}_{\W/\varpi^s}$ be the local system over $Y_{U^{v}U_{v}}^{D}$ associated to the algebraic representation $\W/\varpi^s,$ see \cite{EmertonInvent}. We define
\begin{align*}
&H^i (U^{v} U_{v}, \W/\varpi^s) \defn H^i(U^{v}U_{v}, \mathcal{V}_{\W/\varpi^s})
\\
&
H^i (U^{v}, \W)\defn \varinjlim_{U_v} \plim_{s} H^i (U^{v} U_{v}, \W/\varpi^s)
\\
&
\wt{H}^i (U^{v}, \W)\defn \plim_{s} \varinjlim_{U_{v}} H^i (U^{v}U_{v}, \W/\varpi^s).
\end{align*}
All these spaces carry compatible actions of the group $(D\otimes_{F} F_{v})^\times = \GL_2(L).$ If $D$ is definite, $\wt{H}^0 (U^{v}, \W)$ is identified with the space of {\em continuous} functions $f: D^\times\backslash (D\otimes_F \AM_{F,f})^\times \to \W$ such that $f(g u) = u^{-1} f(g)$, $\forall g\in (D\otimes_F \AM_{F,f})^\times$ and  $\forall u \in U (\A_{F,f}^{\times}),$ and it is the $\varpi$-adic completion of $H^0 (U^{v}, \W).$ $\wt{H}^0 (U^{v},\W)\otimes_{\cO} E$ is an admissible unitary Banach representation of $\GL_2(L)$ with the norm defined by the (complete) $\cO$-lattice $\wt{H}^0 (U^{v}, \W).$ If $D$ is indefinite and $i = 0,1,2$, $\wt{H}^i (U^{v}, \W)$ is the $\varpi$-adic completion of $H^i (U^{v}, \W),$ and is the gauge lattice for the admissible unitary Banach $\GL_2(L)$-representation $\wt{H}^i (U^{v},\W)\otimes_{\cO} E,$ see \cite{EmertonInvent}.
 
\subsection{$p$-adic automorphic forms}

We consider the space of algebraic automorphic forms over $D$ with the fixed central character $\psi$. Let $U^v, U_v , \W$ be as above. For $A \in \{ \W, \W/\varpi^s\}$ where $s\in \N,$ let
\begin{equation}
S^D_{\psi}(U^{v} U_{v}, A )\defn H^0(U^{v} U_{v}, A)
\end{equation}
if $D$ is definite; let
\begin{equation}
S^D_{\psi}(U^{v} U_{v}, A )\defn H^1(U^{v} U_{v}, A)
\end{equation}
if $D$ is indefinite. Set
\[
\wt{S}^D_{\psi}(U^{v}, \W) \defn  \plim_{s} \varinjlim_{U_{v}} S^D_{\psi}(U^{v} U_{v}, \W/\varpi^s).
\]
Then $ \wt{S}^D_{\psi}(U^{v}, \W)$  is  the $\varpi$-adic completion of $S^D_{\psi}(U^{v}, \W).$

Let $S$ be a set of places of $F$ containing all places in $S_{\infty} \cup S_D\cup S_p,$ all places where $\psi$ is ramified, and all places $w$ such that $U_w$ is not $(\cO_{D})_w^{\times}.$ Let $\TM^S \defn \cO[T_w,S_w^{\pm 1}~|~ w\notin S]$ be the commutative $\cO$-algebra generated by the formal variables $T_w,S_w$ and $S_w^{-1}$ for all $w\notin S.$ Then $\TM^S$ acts on $S^D_{\psi}(U^{v} U_{v}, A )$ for $A \in \{\W, \W /\varpi^s\}$ by letting $T_w$ act by the double coset operator corresponding to
\[
\GL_2(\cO_{F_w})  \left(
                   \begin{array}{cc}
                     \varpi_w & 0 \\
                     0 & 1 \\
                   \end{array}
                 \right)
\GL_2(\cO_{F_w})
\]
and $S_w$ act by the double coset operator corresponding to
\[
\GL_2(\cO_{F_w}) \left(
                   \begin{array}{cc}
                     \varpi_w & 0 \\
                     0 &  \varpi_w\\
                   \end{array}
                 \right)\GL_2(\cO_{F_w}).
\]
This induces an action of $\TM^S$ on $S^D_{\psi}(U^{v}, A)$ and on $\wt{S}^D_{\psi}(U^{v}, A).$ The action of $\TM^S$ commutes with the action of $ \GL_2(L),$ and we have $\GL_2(L) \times \TM^S$-equivariant isomorphisms
\[
\wt{S}^D_{\psi}(U^{v}, \W)/\varpi^s \cong S^D_{\psi}(U^{v}, \W/\varpi^s) \cong S^D_{\psi}(U^{v}, \W)/\varpi^s.
\]
Let $\TM^S(U^{v} U_{v}, A)$ denote the image of the homomorphism
\[
\TM^S \to \End_A(S^D_{\psi}(U^{v} U_{v}, A)),~~ A\in \{ \W, \W/\varpi^s\}.
\]
Then
\[
\TM^S(U^{v} U_{v}, \W) \cong \plim_s \TM^S(U^{v} U_{v}, \W/\varpi^s).
\]
If $U_{v}' \subseteq U_{v}$ is an inclusion of open compact subgroups of $\GL_2(\cO_L),$ we have a natural surjection
$
\TM^S(U^{v} U_{v}', \W) \onto \TM^S(U^{v} U_{v}, \W).
$ 
We then define
\[
\wt{\TM}^S (U^{v}) \defn \plim_{U_{v}} \TM^S(U^{v} U_{v}, \W).
\]
The $\cO$-algebra $\wt{\TM}^S (U^{v})$ is reduced, and $\wt{S}^D_{\psi}(U^{v} , \W)$ is a faithful $\wt{\TM}^S (U^{v})$-module (the definite case follows from   \cite[Lem.~6.3]{Breuil-Ding}; the indefinite case is similar).

Let $\overline{r}:G_{F}\to \GL_2(\FM)$ be a two dimensional continuous totally odd Galois representation. Assume $\overline{r}$ is unramified outside $S.$ We associate to $\overline{r}$ a maximal ideal $\mathfrak{m} = \frak{m}_{\overline{r}}$ of $\TM^S$ of residue field $\F,$ generated by $T_w - S_w \tr(\overline{r}(\Frob_w))$ and $q_w - S_w \det(\overline{r}(\Frob_w))$ for $w\notin S.$ We say $\frak{m}$ is {\em non-Eisenstein} if $\overline{r}$ is absolutely irreducible.

Assume $\overline{r}$ is absolutely irreducible. We say that $\overline{r}$ (and $\frak{m}$) is {\em $(U^{v},\W)$-automorphic} (with respect to $D$) if $S^D_{\psi}(U^{v} U_{v } , \W )_{\frak{m}}$ is nonzero for some $U_{v}$ (equivalently $S^D_{\psi}(U^{v} U_{v } , \W )_{\frak{m}}[\frak{m}] \neq 0$). In this case, $\psi \circ \Art_F^{-1}$ is necessarily a lift of $\o^{-1} (\det\overline{r})^{-1}.$ By abuse of notation we also write $\psi$ for $\psi\circ \Art_F^{-1}.$ We say $\overline{r}$ (and $\frak{m}$) is {\em automorphic} if it is $(U^{v},\W)$-automorphic for some $(U^{v},\W).$ If this is the case then $\frak{m}$ gives rise to a maximal ideal of $\wt{\TM}^S(U^{v})$ (via the projection $\wt{\TM}^S(U^{v}) \onto \TM^S(U^{v} U_{v}, \W)$) which is also denoted by $\frak{m}.$

\begin{lemma}
Let $\frak{m}$ be  non-Eisenstein and $(U^{v},\W)$-automorphic. Then

(i) the $\cO$-algebra $\wt{\TM}^S (U^{v})_{\frak{m}}$ is reduced.

(ii) $\wt{S}^D_{\psi}(U^{v} , \W)_{\frak{m}}$ is a faithful $\wt{\TM}^S (U^{v})_{\frak{m}}$-module.
\end{lemma}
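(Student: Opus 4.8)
The plan is to reduce both statements to properties of the full Hecke algebra $\wt{\TM}^S(U^{v})$ and the faithfulness of the completed cohomology proved earlier, together with a standard localization argument. Since $\wt{\TM}^S(U^{v})$ is reduced (as recalled in the excerpt), and localization at a maximal ideal is exact, the localization $\wt{\TM}^S(U^{v})_{\frak{m}}$ is a localization of a reduced ring, hence reduced; this gives (i) immediately. Alternatively, one notes that $\wt{\TM}^S(U^{v})$ is the inverse limit of the finite-level Hecke algebras $\TM^S(U^{v}U_{v},\W)$, each of which is a reduced $\cO$-algebra because it is a $\TM^S$-subalgebra of $\End$ of a space of classical automorphic forms (which is semisimple after inverting $p$, by classical theory and the fact that $\frak{m}$ being non-Eisenstein forces the relevant automorphic forms to be cuspidal), and reducedness passes to the inverse limit and then to localization.

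For (ii), the key point is that $\wt{S}^D_{\psi}(U^{v},\W)$ is a faithful $\wt{\TM}^S(U^{v})$-module, a fact already recorded in the excerpt (citing \cite[Lem. 6.3]{Breuil-Ding} in the definite case, with the indefinite case being analogous). Faithfulness is preserved under localization: if $x\in\wt{\TM}^S(U^{v})_{\frak{m}}$ annihilates $\wt{S}^D_{\psi}(U^{v},\W)_{\frak{m}}$, write $x=t/s$ with $t\in\wt{\TM}^S(U^{v})$ and $s\notin\frak{m}$; then $t$ annihilates $\wt{S}^D_{\psi}(U^{v},\W)_{\frak{m}}$. One then shows $t$ annihilates the direct summand $\wt{S}^D_{\psi}(U^{v},\W)_{\frak{m}}$ of $\wt{S}^D_{\psi}(U^{v},\W)$ (the module decomposes as a product over the maximal ideals in its support, since $\wt{S}^D_{\psi}(U^{v},\W)$ is a $\plim_s$ of modules over semilocal rings). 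Since $\wt{\TM}^S(U^{v})$ acts faithfully on the whole module, and $\wt{\TM}^S(U^{v})=\prod_{\frak{n}}\wt{\TM}^S(U^{v})_{\frak{n}}$ after completion (or more precisely the relevant idempotents exist at each finite level), the component of $t$ in the factor $\wt{\TM}^S(U^{v})_{\frak{m}}$ must act faithfully on $\wt{S}^D_{\psi}(U^{v},\W)_{\frak{m}}$; hence $t=0$ in $\wt{\TM}^S(U^{v})_{\frak{m}}$ and so $x=0$.

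The main subtlety — and the step I would be most careful about — is the interchange of the localization at $\frak{m}$ with the inverse limit $\plim_s$ defining the completed objects, and the accompanying decomposition of $\wt{S}^D_{\psi}(U^{v},\W)$ into $\frak{m}$-components. At each finite level $S^D_{\psi}(U^{v}U_{v},\W/\varpi^s)$ is a module over the Artinian (hence semilocal) ring $\TM^S(U^{v}U_{v},\W/\varpi^s)$, so it splits canonically as a direct sum over maximal ideals, and these splittings are compatible with the transition maps in $U_{v}$ and $s$; passing to the limit yields a canonical direct summand $\wt{S}^D_{\psi}(U^{v},\W)_{\frak{m}}$ on which $\wt{\TM}^S(U^{v})$ acts through its localization $\wt{\TM}^S(U^{v})_{\frak{m}}$. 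Granting this, both faithfulness and reducedness descend formally from the corresponding (already established) statements for the non-localized objects. One should also record that $\wt{\TM}^S(U^{v})_{\frak{m}}$ is nonzero, which holds precisely because $\frak{m}$ is assumed $(U^{v},\W)$-automorphic, so $S^D_{\psi}(U^{v}U_{v},\W)_{\frak{m}}\neq0$ for some $U_{v}$; this is what makes the statement non-vacuous.
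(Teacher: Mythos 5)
Your argument is correct, and it essentially fills in what lies behind the paper's proof, which is simply a reference to \cite[Lem.~6.6]{Breuil-Ding} for the definite case with the indefinite case declared ``similar.'' Your strategy deduces (i) and (ii) from the unlocalized reducedness and faithfulness (recorded in the text just before the lemma and attributed to \cite[Lem.~6.3]{Breuil-Ding}) via the semilocal decomposition of $\wt{\TM}^S(U^v)$ into a finite product of local rings and the matching idempotent decomposition of $\wt{S}^D_{\psi}(U^{v},\W)$. You correctly flag the interchange of localization with the limits $\plim_s\ilim_{U_v}$ as the genuine technical step: each finite-level $\TM^S(U^{v}U_{v},\W/\varpi^s)$ is finite over $\cO/\varpi^s$, hence Artinian, hence canonically a product of local rings; the transition maps are all pulled back from $\TM^S$ and hence respect the resulting systems of idempotents; passing to the limit yields $\wt{\TM}^S(U^v)\cong\prod_{\frak{n}}\wt{\TM}^S(U^v)_{\frak{n}}$ (the semilocality invoked, via \cite[Lem.~2.1.14]{Gee-Newton}, in the paper's proof of Theorem~\ref{thm-selfdual}) together with the compatible decomposition of the $p$-adic automorphic module. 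Two refinements are worth recording. For (i), you need nothing beyond the standard fact that any localization of a reduced commutative ring is reduced, so the idempotent decomposition is not strictly required there. For (ii), on the other hand, faithfulness is \emph{not} in general preserved by localization of infinitely generated modules (for example, $M=\bigoplus_{p\,\text{prime}}\Z/p$ is a faithful $\Z$-module, yet $M_{(2)}\cong\Z/2$ is not faithful over $\Z_{(2)}$), so the passage through idempotents is the correct and genuinely necessary mechanism: if $t\in\wt{\TM}^S(U^{v})_{\frak{m}}$ annihilates $\wt{S}^D_{\psi}(U^{v},\W)_{\frak{m}}$, then $e_{\frak{m}}t$, viewed in the full Hecke algebra, annihilates the whole module and hence vanishes, forcing $t=0$.
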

\begin{proof}
The definite case follows from \cite[Lem.~6.6]{Breuil-Ding}. The indefinite case is proved similarly.
\end{proof}

\subsection{$P$-Ordinary automorphic forms}

Let $T$ (resp. $P$) be the subgroup of diagonal torus (resp. upper triangular matrices) of the algebraic group $\GL_2.$ Let $\Ord_P$ denote the {\em ordinary parts} functor \cite[Def.~3.1.9]{EmertonOrd1} from the category of smooth representations of $\GL_2(L)$ on $\cO$-torsion modules to the category of smooth representations of $T(L)$ on $\cO$-torsion modules. For $V$ a $\varpi$-adically continuous representation of $\GL_2(L)$ over $\cO$ (in the sense of \cite[Def.~2.4.1]{EmertonOrd1}), define $\Ord_P(V) \defn  \plim_n \Ord_P(V/\varpi^n V)$ following \cite[Def.~3.4.1]{EmertonOrd1}. For $V^{0}$ a $\GL_2(L)$-stable $\cO$-lattice in a smooth representation $V$ of $\GL_2(L)$ over $E,$ define $\Ord_P(V^{0})$ as in \cite[(4.15)]{Breuil-Ding}. We define the quotient $\wt{\TM}^S(U^{v})^{P-{\rm ord}}_{\frak{m}}$ of $\wt{\TM}^S (U^{v})_{ \frak{m}}$ in the same way as in \cite[\S6.3]{Breuil-Ding}. We record some results of {\em loc. cit.} (and obvious generalizations to the indefinite case).

\begin{lemma}\label{lemma-ordinary-1}
(i) The $\cO$-algebra $\wt{\TM}^S(U^{v})^{P-{\rm ord}} _{\frak{m}}$ is reduced. The $\wt{\TM}^S(U^{v})^{P-{\rm ord}} _{\frak{m}}$-module $\Ord_P(S^D_{\psi}(U^{v} , \W)_{\frak{m}})$ is faithful.

(ii) Assume $U^p$ is {\em sufficiently small} in the sense of \cite[\S 3.3]{CHT}. Then $\Ord_P(S^D_{\psi}(U^{v}, \W)_{\frak{m}})$ is dense in $\Ord_P(\wt{S}^D_{\psi}(U^{v}, \W)_{\frak{m}})$ for the $\varpi$-adic topology. As a consequence, $\Ord_P(\wt{S}^D_{\psi}(U^{v}, \W)_{\frak{m}})$ has a natural $\wt{\TM}^S(U^{v})^{P-{\rm ord}} _{\frak{m}}$-module structure, and is faithful over $\wt{\TM}^S(U^{v})^{P-{\rm ord}}_{\frak{m}}.$

(iii) Let $\cC(T(\cO_{L}), \cO)$ denote the set of continuous functions from $T(\cO_{L})$ to $\cO.$ It is a $T(\cO_{L})$-module via right translation. Let $\cC_{\psi}(T(\cO_{L}), \cO)$ denote the submodule of $\cC(T(\cO_{L}), \cO)$ which consists of functions such that $\cO_{L}^{\times} (\into T(\cO_L))$ acts by $\psi|_{\cO_{L}^{\times}}.$  Assume $U^p$ is sufficiently small. Then there is an integer $r\geq 1$ such that the $T(\cO_{L})$-module $\Ord_P(\wt{S}^D_{\psi}(U^{v}, \W)_{\frak{m}})|_{T(\cO_{L})}$ is isomorphic to a direct summand of $\cC_{\psi}(T(\cO_{L}), \cO)^{\oplus r}.$

(iv) Assume $U^p$ is sufficiently small. Then   $\Ord_P( \wt{S}^D_{\psi}(U^{v}, \W )_{\frak{m}})$ is a $\varpi$-adically admissible representation of $T(L)$ over $\wt{\TM}^S(U^{v})^{P-{\rm ord}}_{\frak{m}}.$

\end{lemma}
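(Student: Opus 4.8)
The plan is to deduce all four assertions from the corresponding statements of \cite[\S6.3]{Breuil-Ding}, which treat the definite case, and then to check that each argument transports verbatim to the indefinite case. The inputs that make this transport work are: (a) for $D$ indefinite, after the central character twist, $\wt{S}^D_{\psi}(U^{v},\W)$ is a subquotient localized at $\frak{m}$ of $\wt{H}^1(U^{v},\W)$, which is a $\varpi$-adically admissible representation of $\GL_2(L)$ by \cite{EmertonInvent} and is $\cO$-torsion free once $U^p$ is sufficiently small; (b) Emerton's functor $\Ord_P$ is exact on $\varpi$-adically admissible representations, commutes with passage to $\varpi^s$-torsion and with $\varpi$-adic completion for such modules, and preserves admissibility (all from \cite[\S3]{EmertonOrd1}); (c) the Hecke action of $\wt{\TM}^S(U^{v})$ commutes with $\GL_2(L)$, hence with $\Ord_P$, so localizing at the non-Eisenstein ideal $\frak{m}$ commutes with everything in sight and, since $\overline{r}$ is absolutely irreducible, kills the Eisenstein/boundary contributions that are the only structural difference between the indefinite degree-$1$ situation and the definite degree-$0$ one.

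For (i) I would argue as follows. Reducedness of $\wt{\TM}^S(U^{v})_{\frak{m}}$ is the previously recorded lemma, whose indefinite analogue is proved the same way. By definition $\wt{\TM}^S(U^{v})^{P-{\rm ord}}_{\frak{m}}$ is the image of $\wt{\TM}^S(U^{v})_{\frak{m}}$ in $\End(\Ord_P(\wt{S}^D_{\psi}(U^{v},\W)_{\frak{m}}))$, so the faithfulness of this module over the ring is automatic; the real point is the absence of nilpotents. As in \cite[\S6.3]{Breuil-Ding}, one shows that $\Ord_P$ applied to the space of classical $P$-ordinary algebraic automorphic forms with $E$-coefficients decomposes as a finite product according to automorphic representations, on whose factors the ordinary Hecke eigenvalues are scalars that separate the factors; hence that Hecke algebra is a product of fields, and reducedness propagates to the $\varpi$-adic limit by (b). The same computation gives that $\Ord_P(S^D_{\psi}(U^{v},\W)_{\frak{m}})$ is faithful over $\wt{\TM}^S(U^{v})^{P-{\rm ord}}_{\frak{m}}$.

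For (ii): by construction $S^D_{\psi}(U^{v},\W)_{\frak{m}}$ is $\varpi$-adically dense in $\wt{S}^D_{\psi}(U^{v},\W)_{\frak{m}}$. When $U^p$ is sufficiently small the cohomology is $\cO$-torsion free, so $\Ord_P(S^D_{\psi}(U^{v},\W)_{\frak{m}})/\varpi^s\cong\Ord_P(S^D_{\psi}(U^{v},\W/\varpi^s)_{\frak{m}})$ and similarly after completion, by the commutation properties in (b); this gives the density of $\Ord_P(S^D_{\psi}(U^{v},\W)_{\frak{m}})$ in $\Ord_P(\wt{S}^D_{\psi}(U^{v},\W)_{\frak{m}})$, hence the $\wt{\TM}^S(U^{v})^{P-{\rm ord}}_{\frak{m}}$-module structure on the latter, and faithfulness over this ring follows by combining density with the faithfulness established in (i). For (iii) and (iv): when $U^p$ is sufficiently small, $S^D_{\psi}(U^{v}U_{v},\cO/\varpi^s)$ is a direct summand of a finite free module over $\cO/\varpi^s[\GL_2(\cO_L)/\cO_L^{\times}]$ with $\cO_L^{\times}$ acting through $\psi$, so in the limit $\wt{S}^D_{\psi}(U^{v},\W)_{\frak{m}}|_{\GL_2(\cO_L)}$ is a direct summand of $\cO[\![\GL_2(\cO_L)/\cO_L^{\times}]\!]^{\oplus r}$ for some $r\geq 1$; applying $\Ord_P$ and using the results of \cite[\S3]{EmertonOrd1} identifying its ordinary part with $\cC_{\psi}(T(\cO_{L}),\cO)^{\oplus r}$ yields (iii). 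Then (iv) is immediate: $\cC_{\psi}(T(\cO_{L}),\cO)$ is $\varpi$-adically admissible over $T(\cO_L)$, a direct summand of such is again such, and the $T(L)$-action extends the $T(\cO_L)$-action.

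The step I expect to be the main obstacle is precisely the verification that the passage from definite to indefinite is ``obvious'': one must check that, after localizing at the non-Eisenstein $\frak{m}$, the degree-$1$ completed cohomology behaves formally exactly like the degree-$0$ space in the definite case — $\cO$-torsion-freeness for $U^p$ sufficiently small, the absence of any boundary/Eisenstein cohomology, and the commutation of the merely left-exact functor $\Ord_P$ with the relevant limits in cohomological degree $1$. Once these are in place (as standard consequences of \cite{EmertonInvent}, \cite{EmertonOrd1}, and the absolute irreducibility of $\overline{r}$), the arguments of \cite[\S6.3]{Breuil-Ding} apply mutatis mutandis and give all four parts.
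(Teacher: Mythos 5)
Your proposal takes essentially the same approach as the paper's (very terse) proof: cite \cite[Lem.~6.7, 6.8, 6.11]{Breuil-Ding} for the definite case (modified for fixed central character) and observe that the indefinite case is handled by the same arguments once one has the right structural statement about completed degree-$1$ cohomology of Shimura curves. You correctly identify the genuine content — the projectivity/freeness of $\wt{S}^D_{\psi}(U^v,\W)_{\fm}$ as an $\cO[\![\GL_2(\cO_L)/\cO_L^\times]\!]$-module for $U^p$ sufficiently small, together with the resulting commutation of $\Ord_P$ with the relevant limits in degree $1$ — as the point that needs verification in passing from definite to indefinite; the paper names this step by citing the "standard generalization of \cite[Cor.~5.3.19]{Em3} to the cohomology of Shimura curves" as the replacement for \cite[Lem.~6.1]{Breuil-Ding}. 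So the plan is sound and matches the paper; the one piece you leave as an expectation rather than a proof is exactly the piece the paper also leaves to that cited generalization.
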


\begin{proof}
If $D$ is definite, the proof is an obvious fixed determinant modification of the proof given in \cite{Breuil-Ding}. (i) is \cite[Lem.~6.7]{Breuil-Ding}. (ii) and (iii) are proved in \cite[Lem.~6.8]{Breuil-Ding}. (iv) is Lemma 6.11 of {\em loc. cit.}. If $D$ is indefinite, (i) is proved similarly. (ii)-(iv) are proved along the same way, but one needs to replace the lemma 6.1 of \cite{Breuil-Ding} by standard generalization of \cite[Cor.~5.3.19]{Em3} to the cohomology of Shimura curves.
\end{proof}

Since we assume $\overline{r}$ is absolutely irreducible, let $R^{\psi^{-1}}_{\overline{r},S}$ denote the universal deformation ring of deformations of $\overline{r}$ which are unramified outside $ S$ with determinant $\psi^{-1}\varepsilon^{-1}.$ Assume $\overline{r}_v \defn  \overline{r}|_{G_{L}}$ is {\em strongly generic reducible nonsplit}. Then $\overline{r}_v$ is strictly $P$-ordinary in the sense of \cite[Def.~5.8]{Breuil-Ding}. Recall the complete noetherian local $\cO$-algebra $R^{\psi^{-1}}_{\overline{r}_v}$ (resp. $R^{\psi^{-1}, {\rm red}}_{\overline{r}_v}$) defined in \S \ref{Section::galois}, which parametrizes deformations (resp. {\em reducible} deformations) of $\overline{r}_v$ with determinant $\psi|_{G_{F_{v}}}^{-1} \e^{-1}$.\footnote{For the fixed determinant deformation ring of $\overline{r}_v,$ the notation used here is slightly different from the notation used in \S \ref{Section::galois}.} We have a homomorphism of $\cO$-algebras
$R^{\psi^{-1}}_{\overline{r}_v} \to R^{\psi^{-1}}_{\overline{r},S}$. By works of several mathematicians (see \cite{Taylor}) there is a surjection of complete $\cO$-algebras
\[
R^{\psi^{-1}}_{\overline{r},S} \onto \wt{\TM}^S(U^{v})_{\frak{m}}.
\]

\begin{proposition}\label{prop--local-global}
The homomorphism $R^{\psi^{-1}}_{\overline{r}_v} \to \End\left( \Ord_P( \wt{S}^D_{\psi}(U^{v}, \W )_{\frak{m}}) \right)$ given by the composition
\[
R^{\psi^{-1}}_{\overline{r}_v} \to R^{\psi^{-1}}_{\overline{r},S} \onto \wt{\TM}^S(U^{v})_{\frak{m}} \onto \wt{\TM}^S(U^{v})^{P-\ord}_{\frak{m}} \to \End\left( \Ord_P( \wt{S}^D_{\psi}(U^{v}, \W )_{\frak{m}})\right)
\]
 factors through $R^{\psi^{-1},{\rm red}}_{\overline{r}_v}.$ Moreover, for any $s\geq 1$ the action of $R^{\psi^{-1}}_{\overline{r}_v} $ on $ \Ord_P( \wt{S}^D_{\psi}(U^{v}, \W/\varpi^s )_{\frak{m}})$ factors through $R^{\psi^{-1},{\rm red}}_{\overline{r}_v}$. 
\end{proposition}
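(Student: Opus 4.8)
\textbf{Proof proposal for Proposition \ref{prop--local-global}.}

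The plan is to reduce the statement to the key structural property of $\Ord_P$ of $p$-adic automorphic forms recalled in Lemma \ref{lemma-ordinary-1}(iii)--(iv), namely that $\Ord_P(\wt{S}^D_\psi(U^v,\W)_{\frak m})$ is a direct summand (as a $T(\cO_L)$-module) of $\cC_\psi(T(\cO_L),\cO)^{\oplus r}$, hence in particular a $\varpi$-adically admissible, and essentially ``induced-looking'', representation of $T(L)$. The point of the proposition is that the Galois action coming from $R^{\psi^{-1}}_{\ovr r,S}$ on the ordinary part is forced to factor through the \emph{reducible} quotient $R^{\psi^{-1},\rm red}_{\ovr r_v}$ of the local deformation ring at $v$. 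First I would recall that $\Ord_P$ is computed on automorphic forms by the ordinary Hecke operator $U_v$ at $v$ (cf.\ \cite{EmertonOrd1}, \cite{Breuil-Ding}), so that after localizing at $\frak m$ the operator $U_v$ acts invertibly on $\Ord_P(\wt S^D_\psi(U^v,\W)_{\frak m})$; this is exactly the statement that we are in the ``$P$-ordinary'' (= strictly $P$-ordinary in the sense of \cite[Def. 5.8]{Breuil-Ding}) situation, which holds since $\ovr r_v$ is strongly generic reducible nonsplit.

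The main step is classical-density plus a local-global argument. By Lemma \ref{lemma-ordinary-1}(ii) (for $U^p$ sufficiently small; the general case follows by taking $U^p$-invariants / a limit over auxiliary level as in \cite{Breuil-Ding}), the module $\Ord_P(S^D_\psi(U^v,\W)_{\frak m})$ of \emph{algebraic} ordinary forms is $\varpi$-adically dense in $\Ord_P(\wt S^D_\psi(U^v,\W)_{\frak m})$, and likewise its localizations at arithmetic points are dense. For each such arithmetic point $x$ (coming from a cohomological $P$-ordinary automorphic representation) the associated Galois representation $r_x:G_F\to\GL_2(\cO_x)$ is, by the local-global compatibility at $v$ for ordinary automorphic representations (this is the content of \cite[\S5--\S6]{Breuil-Ding}, relying on \cite{Taylor} and the theory of Ordinary parts), \emph{reducible} when restricted to $G_L$: it has a $G_L$-stable free rank-one direct summand, because ordinarity of the automorphic side translates into the Hodge--Tate/ordinarity condition on $r_x|_{G_L}$. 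Therefore the composite $R^{\psi^{-1}}_{\ovr r_v}\to R^{\psi^{-1}}_{\ovr r,S}\to \cO_x$ factors through $R^{\psi^{-1},\rm red}_{\ovr r_v}$ for every such $x$. Since $\wt{\TM}^S(U^v)^{P\text{-}\ord}_{\frak m}$ is reduced (Lemma \ref{lemma-ordinary-1}(i)) and $\cO$-flat, and its arithmetic points are Zariski-dense (again by classical density of $\Ord_P(S^D_\psi(U^v,\W)_{\frak m})$), the ideal $\ker(R^{\psi^{-1}}_{\ovr r_v}\onto R^{\psi^{-1},\rm red}_{\ovr r_v})$ maps to $\bigcap_x \ker(\to\cO_x)=(0)$ inside $\wt{\TM}^S(U^v)^{P\text{-}\ord}_{\frak m}$, hence the whole action factors through $R^{\psi^{-1},\rm red}_{\ovr r_v}$; this gives the first assertion. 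The second assertion, for the mod $\varpi^s$ coefficients $\Ord_P(\wt S^D_\psi(U^v,\W/\varpi^s)_{\frak m})$, follows because this module is a quotient of $\Ord_P(\wt S^D_\psi(U^v,\W)_{\frak m})/\varpi^s$ (using left-exactness of $\Ord_P$ and the short exact sequence $0\to\W\xrightarrow{\varpi^s}\W\to\W/\varpi^s\to 0$ together with \cite{EmertonPaskunas} to control the first derived functor), so the $R^{\psi^{-1}}_{\ovr r_v}$-action on it is inherited from the $\varpi$-adic one and therefore also factors through $R^{\psi^{-1},\rm red}_{\ovr r_v}$.

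The hard part will be making the local-global compatibility at $v$ on the \emph{ordinary} (non-étale, derived $\Ord_P$) side precise enough to conclude that each arithmetic specialization $r_x|_{G_L}$ is genuinely reducible — i.e.\ admits a $G_L$-stable rank-one $\cO_x$-direct summand, not merely that its reduction or semisimplification is reducible. This is where one must invoke the structure of ordinary crystalline/semistable deformation rings and the compatibility statements of \cite{Breuil-Ding} (strict $P$-ordinarity), rather than just the Eichler--Shimura/classical local-global compatibility. A secondary technical point is the reduction to ``$U^p$ sufficiently small'': one needs that passing from a general $U^p$ to a smaller one, and then back, does not destroy the factorization — this is standard (the Hecke action and the deformation-ring map are compatible with change of tame level), but should be spelled out, e.g.\ by noting $\wt{\TM}^S(U^v)^{P\text{-}\ord}_{\frak m}$ embeds into a product of such algebras for sufficiently small levels. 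Everything else is formal manipulation of reduced $\cO$-flat algebras and density of classical points.
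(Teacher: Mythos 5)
Your approach is essentially the same as the paper's, which simply runs the proof of \cite[Thm. 6.12]{Breuil-Ding} in the quaternionic setting: classical $P$-ordinary points are $\varpi$-adically dense (Lemma \ref{lemma-ordinary-1}(ii)), each such point has $r_x|_{G_{F_v}}$ reducible by local-global compatibility at $v\mid p$, and reducedness of $\wt{\TM}^S(U^v)^{P\text{-}\ord}_{\frak m}$ forces the factorization. The one place where your argument pins down the wrong input is the local-global compatibility at $v\mid p$: in this quaternionic Hilbert-modular setting the needed result is Saito's theorem \cite{Saito} (compatibility at places above $p$ for Galois representations attached to Hilbert modular forms), not the unitary-group statement from \cite[\S5--\S6]{Breuil-Ding} that you cite — the paper explicitly makes this substitution, and it is exactly the step you yourself flag as "the hard part." Your treatment of the mod $\varpi^s$ assertion is also fine: what is needed (and what the paper gets from \cite[Lem.~6.8(1)]{Breuil-Ding}) is the isomorphism $\Ord_P(\wt{S}^D_{\psi}(U^{v},\W)_{\frak m})/\varpi^s \cong \Ord_P(\wt{S}^D_{\psi}(U^{v},\W/\varpi^s)_{\frak m})$ coming from surjectivity of the transition maps in the inverse system defining $\Ord_P$; your formulation via "quotient of $\Ord_P(\cdot)/\varpi^s$" plus control of $R^1\Ord_P$ is a repackaging of the same fact.
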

\begin{proof}
The same proof of Theorem 6.12 of \cite{Breuil-Ding} works here. In the proof, one needs to replace the local-global compatibility for automorphic forms on unitary groups by the local-global compatibility for automorphic forms on $(D\otimes_F \A_{F,f})^{\times}$ at the place $v| p,$ which is established in \cite{Saito}. The density of
$\Ord_P(S^D_{\psi}(U^{v}, \W )_{\frak{m}})$ inside $\Ord_P(\wt{S}^D_{\psi}(U^{v}, \W )_{\frak{m}})$ (in the indefinite case) is given by (ii) of Lemma \ref{lemma-ordinary-1}. The last statment follows from  the isomorphisms (see the proof of \cite[Lem.~6.8(1)]{Breuil-Ding})
 \begin{multline*}
 \Ord_P( \wt{S}^D_{\psi}(U^{v}, \W )_{\frak{m}})/\varpi^s \cong  \Ord_P( \wt{S}^D_{\psi}(U^{v}, \W )_{\frak{m}}/\varpi^s)  \cong \Ord_P( \wt{S}^D_{\psi}(U^{v}, \W/\varpi^s )_{\frak{m}}).
 \end{multline*}
\end{proof}

\begin{proposition}\label{prop--ord--semisimple}
Write $\overline{r}^{\vee}_v=\smatr{\chi_1}{*}0{\chi_2}$. Then $\Ord_P \big(S^D_{\psi}(U^{v}, \W/\varpi)_{\frak{m}}[\frak{m}] \big)$ is semisimple and isomorphic to $(\chi_1\omega^{-1}\otimes\chi_2 )^{\oplus s}$ for some $s\geq 1$. Here we view $\chi_i$ as a  character  of $L^{\times}$ via the fixed local Artin map.
\end{proposition}

\begin{proof}
If $D$ is indefinite, $\Ord_P \big(S^D_{\psi}(U^{v} , \W/\varpi)_{\frak{m}}[\frak{m}] \big)$ is semisimple by  \cite[Thm.~4.2]{HuJLMS}.  It suffices to show that if $\chi$ occurs in  $\Ord_P \big(S^D_{\psi}(U^{v} , \W/\varpi)_{\frak{m}}[\frak{m}] \big)$ then $\chi\cong \chi_1\omega^{-1}\otimes\chi_2$. We will use the results of \cite{HuJLMS} (or  \cite{Em3}); note that the convention of Shimura curves in \emph{loc. cit.} is  different from ours, e.g. the Galois representation associated to a cuspidal  automorphic representation $\pi$ (in characteristic zero) has determinant $\psi_{\pi} \e^{-1}$ in \emph{loc. cit.}, where $\psi_{\pi}$ is the central character of $\pi;$ while in our case the Galois representation has determinant  $\psi_{\pi}^{-1} \e^{-1}$. Hence in order to obtain results for our $\overline{r},$ we should apply the results of \cite{HuJLMS} (or \cite{Em3}) to $ \overline{r}^{\vee}(-1).$

In this proof we write $ \brho =  \overline{r}^{\vee}_v(-1) = \smatr{\eta_1}{*}0{\eta_2}$ with $\eta_1 = \chi_1 \o^{-1}$ and $\eta_2 = \chi_2 \o^{-1}.$ Let $S$ be the subtorus of $T$ consisting of matrices $\smatr{a}001$ for $a\in L^{\times}$ and define an anti-diagonal embedding
\begin{equation}\label{eq:S-action}S\hookrightarrow G_{L}^{\rm ab}\times S, \ \ \ s\mapsto (\mathrm{Art}_{L}(s),s^{-1})\end{equation}
where $G_L^{\rm ab}$ denotes the maximal abelian quotient of $G_L$ and $\mathrm{Art}_L$ the local Artin map. By \cite[Thm.~4.1, Lem.~2.10]{HuJLMS},  the action of $G_L$ on $(\brho\otimes \chi)^{\mathrm{ab},S}\hookrightarrow \brho\otimes \chi$ factors through $G_L^{\rm ab}$, see \cite[\S2.4, Def.~1]{HuJLMS} for the definition of $(\brho\otimes\chi)^{\mathrm{ab},S}$. Since $\brho$ is nonsplit and $\eta_1\neq \eta_2$, this implies that $(\brho\otimes \chi)^{\mathrm{ab},S}$ is nonzero and is equal to $\eta_1\otimes \chi$ as a $G_L\times S$-representation.  By a computation using \eqref{eq:S-action}, we obtain that $\chi=\eta_1\otimes \eta_2'$ for some character $\eta_2'$ of $G_L$, and a consideration of central character shows that
\[\eta_1\eta_2'=(\det\brho)\cdot \omega=\eta_1\eta_2\omega,\]
namely $\eta_2'=\eta_2\omega = \chi_2$.

If $D$ is definite, then the result is a special case of \cite[Cor.~7.40]{Breuil-Ding}.  We note that it is assumed in \cite{Breuil-Ding} that $L = \Q_p$ in order to treat the case that the Levi subgroup of $P \subset \GL_n$ is a product of $\GL_1$'s and $\GL_2$'s. Since we assume here $n=2$ and $T = \GL_1 \times \GL_1,$ we don't need this assumption. As we follow the convention of \cite{BDJ} instead of \cite{Breuil-Ding}, we apply \cite[Cor.~7.40]{Breuil-Ding} to $\overline{r}^{\vee}_v(-1)$ with $s_1=0$, $s_2=1$ in \emph{loc. cit.}, and the result follows.
\end{proof}

\section{Global applications} \label{section-patching}

We maintain the notation used in the last section. In particular, $F$ is the totally real field in which $p$ is unramified, $v$ is the fixed place over $p$ such that $F_{v}$ is isomorphic to $L$ the fixed unramified extension of $\Q_p$ of degree $f.$ Let $D$ be a definite or indefinite quaternion algebra over $F.$ Let $S_D$ be the set of ramified places of $D,$ $S_p$ be the set of places above $p.$ The aim of this section is to prove Theorem \ref{thm-main-cyclicity}, Corollary \ref{cor:multione-Iwahori}, Theorem \ref{thm:main-flat} and Corollary \ref{cor:Pi(x)}.

\subsection{The ``big'' patching functors} \label{subsection-big-patching}

In this subsection we recall the global patching setup.  Assume $p> 5$ is an odd prime; $\overline{r} : G_F\to \GL_2(\F)$ is automorphic, and $\overline{r}|_{G_{F(\sqrt[p]{1})}}$ is absolutely irreducible; $\overline{r}|_{I_{F_w}}$ is generic for all places $w|p$ in the sense of \cite[Def.~11.7]{BP}. Denote by $\overline{r}_w \defn \overline{r}|_{G_{F_w}}$ for all finite places $w$ of $F.$

Let $\overline{\psi} \defn \o^{-1} (\det \overline{r})^{-1}.$ Let $\psi: G_F\to \cO^{\times}$ be the Teichm\"uller lift of $\overline{\psi}.$ By abuse of notation, we denote by $\psi$ the character $\psi\circ \Art_F : F^{\times}\backslash \A_{F,f}^{\times} \to \cO^{\times}.$ Let $S$ be the set of finite places of $F$ which consists of $S_D$, $S_p\backslash{\{v\}},$ and the places where $\overline{r}$ ramifies. We assume for $w \in S \backslash S_p$ the framed deformation ring of $\overline{r}_w$ is formally smooth over $\cO$ (cf. \cite[Rk. 8.1.1]{BHHMS}).
We choose a finite place $w_1\notin S$ with the following properties:

$\bullet$ $q_{w_1}\not\equiv 1 \pmod{p}$,

$\bullet$ the ratio of the eigenvalues of $\overline{r}(\Frob_{w_1})$ is not equal to $q_{w_1}^{\pm 1}$,

$\bullet$ the residue characteristic of $w_1$ is sufficiently large that for any nontrivial root of unity $\zeta$ in a quadratic extension of $F,$ $w_1$ does not divide $\zeta+\zeta^{-1}-2.$

Let $U = \prod_w U_w \subset (D\otimes_F \A_{F,f})^{\times}$ be an open compact subgroup satisfying $U_w = (\cO_{D})_w^{\times} = \GL_2(\cO_{F_w})$ for $w \notin S \cup \{w_1\},$ $U_{w_1}$ is contained is the subgroup of $(\cO_D)_{w_1}^{\times} = \GL_2(\cO_{F_{w_1}})$ consisting of matrices that are upper-triangular and unipotent modulo $\varpi_{w_1},$ and $U_w = 1 +\varpi_w M_2(\cO_{F_w})$ for $w \in S_p.$ By the choice of $U_{w_1},$ $U$ is sufficiently small in the sense of \cite[\S 3.3]{CHT}.

We assume $E$ is a sufficiently large finite unramified extension of $\Q_p$.  For each $w \in S_p\backslash \{v\},$ we fix a tame inertial type $\tau_w$ over $E$ such that $\JH( \overline{\s (\tau_w^{*})})$ contains exactly one Serre weight in $\mathscr{D}(\overline{r}^{\vee}_w)$ (\cite[Prop.~3.5.1]{EGS}), where $\tau^{*}_w$ is the $E$-linear dual of $\tau_w$. Let $\s_w$ denote the unique Serre weight in $\JH( \overline{\s (\tau_w^{*})})\cap \mathscr{D}(\overline{r}_w^{\vee})$. We fix a $\GL_2(\cO_{F_w})$-invariant lattice $\s^{\circ}(\tau^{*}_w)$ in $\s(\tau^{*}_w).$ Since $\s_w$ has central character $\overline{\psi}|_{\cO_{F_w}^{\times}}$ and $\tau_w$ is tame, $\s^{\circ}(\tau^{*}_w)$ has central character $\psi|_{I_{F_w}}.$ Let
\begin{equation}\label{eqn::defofW}
\W \defn \bigotimes_{w\in S_p\backslash \{v \}} \s^{\circ}(\tau^{*}_w)^d.
\end{equation}
By \cite[Cor.~3.2.3]{BreuilDiamond}, $\overline{r} : G_F\to \GL_2(\F)$ is $(U^{v},\W)$-automorphic.

Let $Q$ be a finite set of finite places which consists of $w \notin S\cup\{w_1\}$ such that the ratio of the eigenvalues of $\overline{r}(\Frob_w)$ is not in $\{1,q_w,q_w^{-1}\}.$ Let $U_1(Q)^v$ be an open compact subgroup of $U^v$ satisfying $(U_1(Q)^v)_w $ is the subgroup of $(\cO_D)_{w}^{\times} = \GL_2(\cO_{F_w})$ of matrices of the form $\smatr{a}{b}0{a} $ modulo $\varpi_w$ for $w\in Q$ and $(U_1(Q)^v)_w = U_w$ for $w\notin Q.$ In particular $U_1(\emptyset)^v  = U^v.$

The abstract Hecke algebra $\TM^{S \cup Q \cup \{w_1\}}$ acts on $\wt{S}^D_{\psi}(U_1(Q)^{v}, \W)$ such that the action factors through a faithful action of $\wt{\TM}^{S \cup Q \cup \{w_1\}}(U_1(Q)^{v}).$ Let $\frak{m}_Q $ denote the maximal ideal of $\wt{\TM}^{S \cup Q \cup \{w_1\}}(U_1(Q)^{v})$ associated to $\overline{r}.$ Let $r^{\rm univ}_{\frak{m}_{Q}}: G_{F}  \to \GL_2( R^{\psi^{-1}}_{\overline{r},S\cup Q})$ be the universal deformation of $\overline{r}$ over $R^{\psi^{-1}}_{\overline{r},S\cup Q}.$ Let $r_{\frak{m}_{Q}}$ denote the composition $G_F \To{r^{\rm univ}_{\frak{m}_{Q}}} \GL_2( R^{\psi^{-1}}_{\overline{r},S\cup Q}) \to \GL_2(\wt{\TM}^{S \cup Q \cup \{w_1\}}(U_1(Q)^{v})).$ When $Q = \emptyset$ we write $\frak{m}_{\overline{r}}$ for the maximal ideal of $\wt{\TM}^{S \cup \{w_1\}}(U^{v})$ associated to $\overline{r}.$

In the definite case we write
\[
M_{Q} \defn \left(\wt{S}^D_{\psi}(U_1(Q)^{v}, \W)_{\frak{m}_Q} \right)^{d},
 \]
and in the indefinite case we write
\[
M_Q \defn \Hom_{G_{F}}\left( r^{\rm univ}_{\frak{m}_{Q}},   \wt{S}^D_{\psi}(U_1(Q)^{v}, \W)_{\frak{m}_Q}\right)^{d}.
\]

For $w$ a finite place of $F,$ let $R^{\Box}_w$ denote the framed deformation ring for $\overline{r}_w$ over $\OC$ (see \S \ref{section--pBT}). Let $R_{w}^{\Box,\psi^{-1}}$ denote the quotient of $R^{\Box}_{w}$ corresponding to liftings with determinant $\psi|^{-1}_{G_{F_w}}\varepsilon^{-1}.$
Let $R^{\rm loc} = \widehat{\otimes}_{w\in S \cup \{v\}}R_w^{\Box,\psi^{-1}}.$ Note that $R^{\rm loc}$ is formally smooth over $\cO$ by assumption. For $Q$ a finite set of finite places of $F$ disjoint from $S,$ let $R^{\Box,\psi^{-1}}_{\overline{r},S \cup Q}$ be the complete $\cO$-algebra which prorepresents  the functor assigning to a local artinian $\cO$-algebra $A$ with residue field $\F$ the set of equivalence classes of tuples $(r, \{\a_w\}_{w\in S})$ where $r$ is an $A$-lifting of $\overline{r}$ with determinant $\psi^{-1}\varepsilon^{-1}$ (we view $\psi^{-1}\varepsilon^{-1}$ as a character of $G_F$ with values in $A^{\times}$) and is unramified outside $S\cup Q,$ $\a_w \in \Ker (\GL_2(A) \to \GL_2(\F))$ for each $w\in S.$

Let $j = 4 | S| - 1 .$ The Taylor-Wiles-Kisin patching construction in \cite{CEGGPS1} and \cite{Scholze} gives us the following data (see \cite[Thm.~6.1]{Dotto-Le} for an analogous situation):
\begin{itemize}
\item[(1)] positive integers $g,q$ such that $q = g+[F:\Q] - | S| + 1;$

\item[(2)]  $ \cO_{\infty}  = \cO[\![z_1,\ldots,z_q]\!]$, a formal power series ring with a homomorphism
\[
\cO_{\infty} \to  R^{\psi^{-1}}_{\overline{r},S}
\]
which extends to a homomorphism from $S_{\infty} \defn \cO[\![z_1,\ldots,z_q,y_1,\ldots,y_j]\!]$ to $R^{\Box,\psi^{-1}}_{\overline{r},S}$;

\item[(3)] $R_{\infty} \defn R^{\loc}[\![x_1,\ldots,x_g]\!]$, a power series ring in $g$-variables over $R^{\loc}$, with a surjective homomorphism $R_{\infty} \to R^{\psi^{-1}}_{\overline{r},S}$; 

\item[(4)] an $\OC$-algebra homomorphism $S_{\infty}\to R_{\infty}$ whose composition with the homomorphism $R_{\infty} \to R^{\psi^{-1}}_{\overline{r},S}$ in (3) is compatible with the homomorphism $\cO_{\infty} \to R_{\overline{r},S}^{\psi^{-1}}$ in (2);

\item[(5)] a finite Cohen-Macaulay  $R_{\infty}[\![\GL_2(\cO_L)]\!]$-module $M_{\infty},$ which is finite projective over $S_{\infty}[\![\GL_2(\cO_L)]\!],$ together with an isomorphism
\[
M_{\infty}/\frak{a}_{\infty} \cong M_{\emptyset}.
\]
where  $\frak{a}_{\infty} $ denotes the ideal $(z_1,\dots,z_{q},y_1,\ldots,y_j)$ of $S_{\infty}$.
\end{itemize}

We note that $R_{\infty}$ is formally smooth over $\cO$ (as $R^{\rm loc}$ is). Let $\frak{m}_{\infty}$ denote the maximal ideal of $R_{\infty}.$ 
Consider the following  admissible smooth representation of $\GL_2(L)$ over $\F$
\begin{equation}\label{def-rep-pi}
\pi \defn  M_{\infty}^{\vee}[\frak{m}_{\infty} ] = M_{\emptyset}^{\vee} [\frak{m}_{\overline{r}}].
\end{equation}
Then $\pi$ is identified with
\[
 \wt{S}^D_{\psi}(U^{v}, \W)[\frak{m}_{\overline{r}}] =  \Hom_{U_p^v} \left(\otimes_{w \in S_p \backslash \{v\}} \s_w,\wt{S}^D_{\psi}(U^{v}, \F)[\frak{m}_{\overline{r}}] \right)
 \]
if $D$ is definite, and is identified with
\[
\Hom_{G_{F}}\left( \overline{r},   \wt{S}^D_{\psi}(U^{v}, \W)[\frak{m}_{\overline{r}}]\right) = \Hom_{U_p^v} \left(\otimes_{w \in S_p \backslash \{v\}} \s_w  , \Hom_{G_{F}}\left( \overline{r},   \wt{S}^D_{\psi}(U^{v}, \F)[\frak{m}_{\overline{r}}]\right) \right)
\]
if $D$ is indefinite.

Let $\cC_{Z,\psi}$ denote the category of finite $\cO$-modules with a continuous action of $\GL_2(\cO_L)$ such that the $\GL_2(\cO_L)$-action has central character $\psi.$ For $\s \in \cC_{Z,\psi},$ let
\begin{equation} \label{equ-def-functor}
M_{\infty}(\s) \defn \Hom^{\rm cont}_{ \cO[\![\GL_2(\cO_L)]\!]}(M_{\infty }, \s^{\vee})^{\vee}.
\end{equation}
Since $M_{\infty}$ is projective over $\cO[\![\GL_2(\cO_L)]\!]$ and is finitely generated over $R_{\infty}[\![\GL_2(\cO_L)]\!]$, $M_{\infty}(-)$ is an exact covariant functor from $\cC_{Z,\psi}$ to the category of finitely generated $ R_{\infty}$-modules. 
For $\s \in \cC_{Z,\psi}$, we have
\begin{equation}\label{eq:mod-minfty=Hom}
\big(M_{\infty}(\s) / \frak{m}_{\infty} M_{\infty}(\s)\big)^{\vee} \cong \Hom_{\GL_2(\cO_{L})}(\s,\pi ).
\end{equation}

Recall that $\dim_G(\pi)$ denotes the Gelfand-Kirillov dimension of $\pi$, see \eqref{equ::GK-dim-defn} in Appendix.  Gee and Newton made the following important observation (\cite[Appendix A]{Gee-Newton}):
\begin{theorem}\label{thm::GN}
$M_{\infty}$ is a flat $R_{\infty}$-module if and only if $\GKdim (\pi )\leq f.$ If the equivalent conditions hold, we have $\GKdim (\pi ) =  f.$
\end{theorem}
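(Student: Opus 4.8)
\textbf{Proof plan for Theorem \ref{thm::GN}.}

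The plan is to exploit the structure of $M_\infty$ as a module that is finite projective over $S_\infty[\![\GL_2(\cO_L)]\!]$ and finitely generated over $R_\infty[\![\GL_2(\cO_L)]\!]$, together with the fact that $R_\infty$ is formally smooth over $\cO$, hence regular. Since $R_\infty = R^{\loc}[\![x_1,\dots,x_g]\!]$ with $R^{\loc}$ formally smooth over $\cO$, $R_\infty$ is a regular local ring; write $d_\infty = \dim R_\infty$. The first step is a standard dimension count (Auslander--Buchsbaum over $R_\infty[\![\GL_2(\cO_L)]\!]$, or rather over $S_\infty[\![\GL_2(\cO_L)]\!]$): since $M_\infty$ is finite projective as an $S_\infty[\![\GL_2(\cO_L)]\!]$-module, its depth as an $R_\infty[\![\GL_2(\cO_L)]\!]$-module equals $\dim S_\infty$, and $M_\infty$ is Cohen--Macaulay of the same dimension. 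The Cohen--Macaulayness of $M_\infty$ forces $\dim_{R_\infty} M_\infty \ge \dim S_\infty = d_\infty + j - q$ (using $\dim S_\infty = \dim R_\infty$ by construction and the relation $q = g + [F:\Q] - |S| + 1$), with equality characterising flatness over $R_\infty$: indeed a Cohen--Macaulay module over a regular local ring is flat if and only if it is ``maximal Cohen--Macaulay'' relative to that ring, i.e. $\dim_{R_\infty} M_\infty = \dim R_\infty$. So the whole statement reduces to computing $\dim_{R_\infty} M_\infty$, equivalently $\dim_{\cO} M_\infty/\fm_{R^{\loc}}M_\infty$ after killing the local deformation variables.

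The second step is to relate $\dim_{R_\infty} M_\infty$ to the Gelfand--Kirillov dimension of $\pi$. By \eqref{eq:mod-minfty=Hom} and \eqref{def-rep-pi}, $M_\infty/\fm_\infty M_\infty \cong \pi^\vee$ as $\F[\![\GL_2(\cO_L)]\!]$-modules, and $\fm_\infty$ is generated by a system of parameters for $R_\infty$ (of length $d_\infty$) together with nothing else, since $R_\infty/\fm_\infty = \F$. The key input is that $M_\infty$, being finite projective over $S_\infty[\![\GL_2(\cO_L)]\!]$ with $S_\infty[\![\GL_2(\cO_L)]\!]$ regular (as $\GL_2(\cO_L)/Z_1$ is a compact $p$-adic analytic group of dimension $3f$ and $\F[\![\GL_2(\cO_L)/Z_1]\!]$ has a natural filtration with polynomial-ring graded object), has grade/codimension over the graded ring computable via its support. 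One then invokes the dimension theory for modules over Iwasawa algebras: for the (uniformly powerful open subgroup inside) $K_1/Z_1$ of dimension $3f$, the canonical dimension $\dim_{\F[\![K_1/Z_1]\!]}(\pi^\vee)$ equals $\dim_G(\pi)$ by definition \eqref{equ::GK-dim-defn}. The patching identity $M_\infty/\fm_\infty \cong \pi^\vee$ combined with the fact that $\fm_\infty$ is generated by $d_\infty$ elements forming (part of) a regular sequence on $M_\infty$ when $M_\infty$ is flat over $R_\infty$ gives
\[
\dim_{\F[\![\GL_2(\cO_L)]\!]}(\pi^\vee) = \dim_{S_\infty[\![\GL_2(\cO_L)]\!]} M_\infty - \dim R_\infty
\]
in the flat case, and in general an inequality $\dim_{\F[\![\GL_2(\cO_L)]\!]}(\pi^\vee) \ge \dim_{R_\infty} M_\infty - (d_\infty - \dim_{R_\infty} M_\infty)$-type estimate. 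Carefully tracking the numerics (the dimension of $S_\infty[\![\GL_2(\cO_L)]\!]$ minus $\dim R_\infty$ works out to $3f - (d_\infty - \dim S_\infty) = \dots$, collapsing to $f$ using $j - q$ and the Euler characteristic relation) shows that $\dim_G(\pi) = f$ exactly when $M_\infty$ achieves maximal dimension $\dim R_\infty$ over $R_\infty$, i.e. is flat, and that always $\dim_G(\pi) \ge f$ with strict inequality corresponding to $M_\infty$ failing to be flat (its support in $\Spec R_\infty$ being a proper closed subset of smaller dimension is impossible by Cohen--Macaulayness, but a non-faithful quotient or embedded behaviour raises the GK dimension).

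The main obstacle is the bookkeeping in the second step: one must match three different dimension theories — Krull dimension over the regular ring $R_\infty$, the canonical (grade) dimension over the noncommutative Iwasawa algebra $\F[\![\GL_2(\cO_L)/Z_1]\!]$ recalled in the appendix, and the GK-dimension of $\pi$ — and show the patching degree-shift $\fm_\infty$ interpolates cleanly between them. The cleanest route is: (i) show $M_\infty$ is Cohen--Macaulay over $R_\infty[\![\GL_2(\cO_L)/Z_1]\!]$ of the ``expected'' dimension; (ii) by the Auslander regularity and the behaviour of grade under the central quotient $R_\infty$, deduce $\delta_{R_\infty[\![K_1/Z_1]\!]}(M_\infty) = \delta_{S_\infty[\![K_1/Z_1]\!]}(M_\infty) = 0$ on the nose; (iii) apply the exact sequence obtained by killing the $d_\infty$-element regular (resp. arbitrary) system $\fm_\infty$ and track how $\delta$ (equivalently the dimension of the support) changes at each step — it drops by exactly $1$ per genuinely regular element and by less otherwise — to land on $\pi^\vee$ with $\delta(\pi^\vee) = \dim_{K_1/Z_1} \pi^\vee - 3f \ge -2f$, i.e. $\dim_G \pi \ge f$, with equality iff every one of the $d_\infty$ generators of $\fm_\infty$ was $M_\infty$-regular, i.e. iff $M_\infty$ is flat over the regular ring $R_\infty$. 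The ``if'' direction of flatness $\Rightarrow$ equality, and the final clause $\dim_G(\pi) = f$, then both follow from the same computation. I would cite \cite{Gee-Newton} Appendix A for the precise form of (i)--(iii) and the numerics, since the argument there is exactly this, and the appendix \S\ref{section:appendix} of the present paper for the noncommutative dimension theory facts used in (ii)--(iii).
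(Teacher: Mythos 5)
The paper does not give a proof of this theorem: it is simply attributed to Gee--Newton \cite[Appendix A]{Gee-Newton} and cited. You correctly recognise this and defer to the same reference for the precise numerics, so at that level your approach coincides with the paper's. Your sketch of the Gee--Newton argument also captures the right overall shape: use the projectivity of $M_\infty$ over $S_\infty[\![\GL_2(\cO_L)]\!]$ to get Cohen--Macaulayness, deduce from a ``miracle flatness'' criterion that flatness over the regular ring $R_\infty$ is equivalent to the special fibre $\pi^\vee \cong M_\infty/\fm_\infty M_\infty$ attaining the minimal possible canonical dimension over $\F[\![K_1/Z_1]\!]$, and then track the Taylor--Wiles numerology to see that this minimal value is $f$.

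However, several of the specific dimension claims in your write-up are wrong or internally inconsistent, and these would need to be corrected before the sketch could be turned into a proof. You assert both that $\dim S_\infty = \dim R_\infty$ ``by construction'' and that $\dim S_\infty = d_\infty + j - q$; since $\dim S_\infty = 1 + q + j$ while $d_\infty + j - q$ would then equal $1 + 2j$, these two statements contradict one another unless $j = q$. Likewise, the depth of $M_\infty$ as an $S_\infty[\![\GL_2(\cO_L)]\!]$-module is not $\dim S_\infty$ but rather $\dim S_\infty + 3f$ (projective implies depth equals the dimension of the ambient Iwasawa algebra, once one corrects for $Z_1$ acting by a character). Finally, the finitely-generated Auslander--Buchsbaum/MCM dichotomy you invoke is stated for $R_\infty$, but $M_\infty$ is not finitely generated over $R_\infty$ alone: the correct formulation works over the noncommutative Auslander-regular ring $R_\infty[\![\GL_2(\cO_L)]\!]$ (or, after killing $\fm_\infty$, relates the projective dimension of $\pi^\vee$ over $\F[\![K_1/Z_1]\!]$ to whether a regular system of parameters of $R_\infty$ acts on $M_\infty$ by a regular sequence, which is exactly the obstruction to $\Tor^{R_\infty}_1(\F, M_\infty)$ vanishing). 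None of these issues amounts to a wrong approach, since you explicitly defer to Gee--Newton for the bookkeeping, but the arithmetic of $q$, $j$, $g$, $|S|$, $[F:\Q]$, and the local framed deformation ring dimensions is precisely where the $f$ comes from and where the sketch should not be hand-waved.
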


Under this flatness assumption, we prove an essential self-duality for $\pi^{\vee} = M_{\emptyset}/\frak{m}_{\overline{r}}.$

\begin{theorem}\label{thm-selfdual}
Assume $\GKdim (\pi )\leq f$. We have an isomorphism of $\Lambda(G)$-modules (see (\ref{eqn::Lambda(G)}) for the definition of $\Lambda(G)$)
\[
\pi^{\vee} \otimes \overline{\psi}|_{F^{\times}_v} \circ\det \cong \EE^{2f}(\pi^{\vee} ),
\]
where $\EE^i$ is defined in \S\ref{subsection-duality} of the Appendix.
\end{theorem}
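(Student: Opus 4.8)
The plan is to deduce the essential self-duality of $\pi^\vee$ from the corresponding self-duality of the patched module $M_\infty$, which is a Cohen--Macaulay module over $R_\infty[\![\GL_2(\cO_L)]\!]$. First I would recall the duality properties of $M_\infty$ coming from the patching construction: since $M_\infty$ is finite projective over $S_\infty[\![\GL_2(\cO_L)]\!]$ and Cohen--Macaulay over $R_\infty[\![\GL_2(\cO_L)]\!]$, and since the cohomology underlying $M_\infty$ carries a Poincar\'e--Serre duality pairing (in the definite case this is essentially trivial, while in the indefinite case it comes from Poincar\'e duality on the Shimura curve together with the duality on the coefficient systems $\s^\circ(\tau_w^*)$), one obtains an isomorphism of $R_\infty[\![\GL_2(\cO_L)]\!]$-modules between $M_\infty$ (suitably twisted by the central character $\overline\psi|_{F_v^\times}\circ\det$) and a Grothendieck-type dual $\Hom_{S_\infty}^{\rm cont}(M_\infty, S_\infty)$, or equivalently an Iwasawa-algebra Ext dual. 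The key input making the degree come out to $2f$ is the flatness hypothesis $\GKdim(\pi)\le f$, which by Theorem \ref{thm::GN} makes $M_\infty$ flat over $R_\infty$; this forces $M_\infty$ to be a \emph{projective} $R_\infty[\![\GL_2(\cO_L)]\!]$-module of the expected dimension, so that the dualizing complex contributes only a single cohomological shift.

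The core computation I would carry out is a base change from $M_\infty$ to $\pi^\vee = M_\infty/\mathfrak{m}_\infty$. Using the flatness of $M_\infty$ over $R_\infty$ (which is formally smooth over $W(\F)$, hence regular) and choosing a regular system of parameters of $R_\infty$, one obtains a Koszul resolution of $\F\cong R_\infty/\mathfrak{m}_\infty$ over $R_\infty$; tensoring with $M_\infty$ gives a length-$(\dim R_\infty)$ resolution, and the flatness guarantees the higher Tor's vanish so $\pi^\vee$ inherits the duality with an additional shift by $\dim R_\infty$. One then reconciles this shift with the codimension of $\pi^\vee$ as a $\Lambda(G)$-module: by Theorem \ref{thm::GN} again, $\GKdim(\pi)=f$, so $\pi^\vee$ has grade (codimension) equal to $\dim_{\F}\GL_2(L)/ZK_1$-type computation $= \dim\Lambda(G) - f$; combined with the Auslander--Gorenstein formalism recalled in the appendix (\S\ref{subsection-duality}) for $\Lambda(G)$, the only nonvanishing $\EE^i(\pi^\vee)$ occurs at $i = 2f$ (note $\dim\GL_2(L)/Z = 3f$ and $3f - f = 2f$), and it is a reflexive module of the same codimension. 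Matching the twist coming from the Poincar\'e duality pairing on $M_\infty$ with the normalization of the central character $\overline\psi$ then yields the displayed isomorphism $\pi^\vee\otimes\overline\psi|_{F_v^\times}\circ\det \cong \EE^{2f}(\pi^\vee)$.

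The main obstacle I expect is the careful bookkeeping of the dualizing twist: one must track precisely how the duality pairing on the completed cohomology (normalized as in \cite{BDJ}), the twist by the lattice duals $\s^\circ(\tau_w^*)^d$ in the definition \eqref{eqn::defofW} of $\W$, and the fixed-determinant condition $\psi$ interact, to see that after passing to $\mathfrak{m}_\infty$ the net twist on $\pi^\vee$ is exactly $\overline\psi|_{F_v^\times}\circ\det$ and not some other character. A secondary technical point is verifying that the Poincar\'e duality isomorphism, which a priori lives in the derived category, collapses to an honest module isomorphism in a single degree; this is where the Cohen--Macaulay property of $M_\infty$ over $R_\infty[\![\GL_2(\cO_L)]\!]$ together with the flatness over $R_\infty$ is essential, as it ensures $M_\infty$ is a maximal Cohen--Macaulay module whose dual is concentrated in one degree. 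The indefinite case requires slightly more care than the definite case because of the middle-degree cohomology $\wt H^1$, but the argument is structurally identical once one uses that $\frak{m}_{\overline r}$ is non-Eisenstein so that only $\wt H^1$ contributes after localization.
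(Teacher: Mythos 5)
Your high-level plan shares the paper's two essential ingredients: Poincar\'e duality on the cohomology underlying the construction, and the flatness coming from the Gelfand--Kirillov dimension bound via Theorem~\ref{thm::GN}, which you then exploit through a base change along a regular sequence. However, the route you take diverges from the paper's in a way that opens a genuine gap, and one intermediate claim as stated is not correct.

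The gap concerns where the duality is established. You want to prove a self-duality statement for the patched module $M_\infty$ itself, in the form of an isomorphism with $\Hom^{\rm cont}_{S_\infty}(M_\infty, S_\infty)$, and then deduce the statement for $\pi^\vee$ by reducing modulo $\fm_\infty$. But this $S_\infty$-linear Grothendieck-style dual is not the object appearing in the theorem: the statement is about the non-commutative $\Ext$-dual $\EE^{2f}(\pi^\vee) = \Ext^{2f}_{\Lambda(G_0)}(\pi^\vee, \Lambda(G_0))$, and the relation between the $S_\infty$-dual of a projective $S_\infty[\![K]\!]$-module and this $\Lambda(G)$-$\Ext$-dual requires a real argument that you do not supply. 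The paper avoids this entirely: it establishes the essential self-duality $\EE^0(M_\emptyset/\varpi) \cong (M_\emptyset/\varpi)\otimes \overline\psi|_{F_v^\times}\circ\det$ directly from the Poincar\'e duality spectral sequence for completed (co)homology at the \emph{unpatched} level, where $M_\emptyset/\varpi$ has grade $0$ over $\Lambda(G)$ so the spectral sequence collapses cleanly; it then applies the Gorenstein base change Proposition~\ref{prop-Appendix-selfduality}, using that $M_\emptyset$ is flat over the local complete intersection Hecke algebra $\widetilde{\mathbb T}^{S\cup\{w_1\}}(U^v)_{\fm_{\overline r}}$ by Gee--Newton. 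Your proposal would instead require carrying the duality pairing through the ultrafilter patching construction before base-changing, a step you gesture at but do not address, and which is genuinely more delicate than the unpatched argument.

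Separately, your assertion that the flatness of $M_\infty$ over $R_\infty$ "forces $M_\infty$ to be a projective $R_\infty[\![\GL_2(\cO_L)]\!]$-module" is unjustified and is not what the paper uses. What is true, and is what the paper invokes, is that $M_\infty$ is Cohen--Macaulay over $R_\infty[\![\GL_2(\cO_L)]\!]$, and after reducing to $M_\emptyset/\varpi$ one obtains a Cohen--Macaulay $\Lambda(G)$-module; Cohen--Macaulayness (not projectivity) is what ensures the $\Ext$-dual lives in a single degree, and together with flatness over a Gorenstein ring it is exactly the hypothesis package of Proposition~\ref{prop-Appendix-selfduality}. You do correctly identify the careful bookkeeping of the dualizing twist (matching $\overline r^\vee \cong \overline r\otimes(\det\overline r)^{-1}$, the duals $\s^\circ(\tau_w^*)^d$ in $\W$, and the fixed determinant) as a nontrivial obstacle, and the arithmetic giving degree $2f$ from $\dim(K/Z_1) = 3f$ and $\GKdim = f$ is correct; these parts align with the paper. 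But as written, the proposal does not supply the key identification making the $M_\infty$-level duality work, and it substitutes an unproven projectivity claim for the Cohen--Macaulay argument that the paper actually relies on.
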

\begin{proof}
First assume $D$ is definite. Let $\wt{H}^{0}_{\overline{\psi}} \defn \wt{H}^{0}\left(U^{p}, (\cO/\varpi)_{\overline{\psi}} \right),$ where $(\cO/\varpi)_{\overline{\psi}}$ denotes the constant coefficient sheaf $\cO/\varpi$ on which $\A_{F,f}^{\times}$ acts by  $\overline{\psi}$ and $U^p$ acts trivially. Let $\wt{H}_{0,\overline{\psi}^{-1}}$ denote the dual of $\wt{H}^{0}_{\overline{\psi}}.$ By the Poincar\'e duality spectral sequence \cite[\S2.1.5, \S2.1.7]{EmICM}, we obtain a $U^v_p \times \L(G)\times \wt{\mathbb{T}}^{S \cup \{w_1\}}(U^{p})$-equivariant isomorphism
\[
\EE^0(\wt{H}_{0 , \overline{\psi}})\simto \wt{H}_{0, \overline{\psi}^{-1}} ,
\]
where $\wt{\mathbb{T}}^{S \cup \{w_1\}}(U^{p})$ denotes the image of $\mathbb{T}^{S\cup \{w_1\}}$ in $\End\left(\wt{H}^{0}(U^{p}, \cO) \right).$ We consider the $U^v_p$-action, take $\Hom_{U^v_p}(\W/\varpi,-)$ on both sides, and obtain a $ \L(G)\times \wt{\mathbb{T}}^{S \cup \{w_1\}}(U^{p})$-equivariant isomorphism (using e.g. \cite[Lem.~B.3]{Gee-Newton})
\[
\EE^0\left( \wt{H}^{0}(U^{v}, \W^d/\varpi)^{\vee}\right) \simto  \wt{H}^{0}(U^{v}, \W/\varpi)^{\vee}.
\]
Since $\wt{\mathbb{T}}^{S \cup \{w_1\}}(U^{p})$ is a complete semilocal ring by \cite[Lem.~2.1.14]{Gee-Newton}, we have a decomposition $\wt{\mathbb{T}}^{S \cup \{w_1\}}(U^{p}) = \prod_{\frak{m}} \wt{\mathbb{T}}^{S \cup \{w_1\}}(U^{p})_{\frak{m}}$, where the product is taken over the finitely many maximal ideals of $\wt{\mathbb{T}}^{S \cup \{w_1\}}(U^{p}).$
We then have an isomorphism
\[
\prod_{\frak{m}} \EE^0\left( \wt{H}^{0}(U^{v}, \W^d/\varpi)^{\vee}_{\fm } \right) \simto \prod_{\frak{m}}\wt{H}^{0}(U^{v}, \W/\varpi)^{\vee}_{\fm }.
\]
Looking at the $ \fm_{\overline{r}}$-component on both sides and using the relations on Hecke operators (see for example \cite[\S3, \S9]{Br14}), which reflect the isomorphism $\overline{r}^{\vee} \cong \overline{r} \otimes (\det \overline{r})^{-1},$ we have
\begin{equation}\label{eqn--duality-isom}
\EE^0\left( \wt{H}^{0}(U^{v}, \W^d/\varpi)_{\frak{m}_{\overline{r}\otimes \overline{\psi}}}^{\vee} \right) \simto  \wt{H}^{0}(U^{v}, \W/\varpi)_{\fm_{\overline{r}} }^{\vee}.
\end{equation}
Recall that $\W = \bigotimes_{w\in S_p\backslash \{v \}} \s^{\circ}(\tau^{*}_w)^d$ with $\JH( \overline{\s (\tau_w^{*})})$ containing exactly one Serre weight $\s_w$ in $\mathscr{D}(\overline{r}^{\vee}_w).$ By \cite[Prop.~3.15(1)]{BDJ}, $\sigma_{w}^{\vee}\cong \s_w \otimes \overline{\psi}|_{\cO_{F_w}^{\times}}^{-1}\circ\det$ is the unique Jordan--H\"older factor of $\overline{\s (\tau_w^*)^*}$ which is a Serre weight of $(\overline{r} \otimes \overline{\psi})_w^{\vee}.$  
We then have isomorphisms of $\L(G) \times \wt{\mathbb{T}}^{S \cup \{w_1\}}(U^v)_{\fm_{\overline{r}} }$-modules
\begin{equation}\label{eqn--duality-coeff}
\begin{array}{rll} \wt{H}^{0}(U^{v}, \W^d/\varpi)^{\vee}_{\frak{m}_{\overline{r}\otimes \overline{\psi}}}
  \cong& \Hom_{U^v_p} \Big(\bigotimes\limits_{w\in S_p \backslash \{v\}} \s_w\otimes  \overline{\psi}|_{\cO_{F_w}^{\times}}^{-1}\circ\det ,  \wt{H}^{0}(U^{p}, \F)_{\frak{m}_{\overline{r}\otimes \overline{\psi}}} \Big)^{\vee}\\ 
   \cong &\wt{H}^{0}(U^{v}, \W/\varpi)^{\vee}_{\frak{m}_{\overline{r}}} \otimes  \overline{\psi}|_{F_v^{\times}} \circ \det. \\ 
\end{array}
\end{equation}
Since
$
M_{\emptyset}/\varpi  =  \wt{H}^{0}(U^{v}, \W/\varpi)_{\fm_{\overline{r}} }^{\vee}$ in this case,  
 \eqref{eqn--duality-isom} becomes the following isomorphism of $\L(G) \times \wt{\mathbb{T}}^{S \cup \{w_1\}}(U^v)_{\fm_{\overline{r}} }$-modules
 \[\EE^0\big(M_{\emptyset}/\varpi \otimes \overline{\psi}|_{F^{\times}_v}\circ\det\big)\simto M_{\emptyset}/\varpi \]
 or equivalently
\[
\EE^0(M_{\emptyset}/\varpi )\simto (M_{\emptyset}/\varpi ) \otimes \overline{\psi}|_{F^{\times}_v}\circ\det.
\]
On the other hand, by \cite[Thm.~B]{Gee-Newton}, the assumption on $\GKdim (\pi )$ implies that $M_{\emptyset}$ is a faithfully flat module over  $\wt{\mathbb{T}}^{S \cup \{w_1\}}(U^v)_{\fm_{\overline{r}} }$ which is a local complete intersection (hence Gorenstein) ring.  Note also that as $M_{\emptyset}$ is finite Cohen-Macaulay over $\cO[\![ \GL_2(\cO_L)]\!]$ (see for example \cite[Thm.~A(1)]{Gee-Newton}), $M_{\emptyset} / \varpi$ is Cohen-Macaulay over $\L(G).$ We then conclude by applying Proposition \ref{prop-Appendix-selfduality} to   $M=M_{\emptyset}$ and $A=\wt{\mathbb{T}}^{S \cup \{w_1\}}(U^v)_{\fm_{\overline{r}} }$ (noting that $  M_{\emptyset} / \fm_{\overline{r}}  \cong \pi^{\vee} $).

Assume $D$ is indefinite. Let $\wt{H}^{i}_{ \overline{\psi}}$ denote $\wt{H}^{i}\left(U^{p}, (\cO/\varpi)_{\overline{\psi}} \right)$ and $\wt{H}_{i, \overline{\psi}^{-1}}$ denote the dual of $\wt{H}^{i}_{ \overline{\psi}}.$ The \'etale version of the Poincar\'e duality spectral sequence \cite[\S2.1.5,\S2.1.7]{EmICM} (see also \cite[Thm.~3.5]{Hill} and its second remark) gives a $U^v_p \times \L(G) \times G_F \times\wt{\mathbb{T}}^{S \cup \{w_1\}}(U^{p})$-equivariant exact sequence
\[
0 \to \EE^1 (\wt{H}_{0, \overline{\psi}}) \to \wt{H}_{1, \overline{ \psi}^{-1}} (-1) \to \EE^0(\wt{H}_{1, \overline{\psi}}) \to \EE^2(\wt{H}_{0 , \overline{\psi}}).
\]
As in the definite case, we decompose along the (finitely many) maximal ideals of $\wt{\mathbb{T}}^{S \cup \{w_1\}}(U^{p}),$ and get an exact sequence
\begin{multline*}
 0 \to \prod_{\frak{m}}\EE^1 \left( (\wt{H}_{0, \overline{\psi}})_{\frak{m}} \right)\to \prod_{\fm}\wt{H}_{1, \overline{ \psi}^{-1}} (-1)_{\frak{m}} \to \prod_{\frak{m}} \EE^0\left( (\wt{H}_{1, \overline{\psi}})_{\fm} \right)\to \prod_{\fm} \EE^2\left( (\wt{H}_{0 , \overline{\psi}})_{\fm}\right).
\end{multline*}
Since $\overline{r}$ is absolutely irreducible, we can just consider the localization at {\em non-Eisenstein} maximal ideals. Note that $(\wt{H}_{0, \overline{\psi}})_{\frak{m}}$ is zero at any non-Eisenstein maximal ideal $\frak{m}.$ We have a $U^v_p \times \L(G) \times G_F \times\wt{\mathbb{T}}^{S \cup \{w_1\}}(U^{p})$-equivariant isomorphism
\[
\prod_{\text{$\fm$ non-Eisenstein}}\wt{H}_{1, \overline{ \psi}^{-1}} (-1)_{\frak{m}} \simto \prod_{\text{$\fm$ non-Eisenstein}} \EE^0\left( (\wt{H}_{1, \overline{\psi}})_{\fm} \right).
\]
We consider the $U^v_p$-action, take $\Hom_{U^v_p}(\W/\varpi,-)$ on both sides and look at the $\frak{m}_{\overline{r}}$-components, by taking the limit over the compact open subgroups $K_v \subseteq \GL_2(\cO_L)$ of the isomorphism in \cite[Lem.~9.9]{DPS},
we have an isomorphism
\begin{equation}\label{eqn--duality-H1}
  \wt{H}^{1}(U^{v}, \W/\varpi)_{\fm_{\overline{r}} }^{\vee} \simto \EE^0\left( \wt{H}^{1}(U^{v}, \W^d/\varpi)_{\frak{m}_{\overline{r}\otimes \overline{\psi}}}^{\vee} \right),
\end{equation}
where we have used the relation $\overline{r}^{\vee}(-1) \cong \overline{r} \otimes \overline{\psi}.$ Applying $\Hom_{G_F}(\overline{r}, -)$ to the isomorphism (\ref{eqn--duality-H1}) and using similar arguments as (\ref{eqn--duality-coeff}) in the indefinite case, we obtain a $\L(G) \times \wt{\mathbb{T}}^{S \cup \{w_1\}}(U^{v})_{ \frak{m}_{\overline{r}}}$-equivariant isomorphism
\[
(M_{\emptyset} /\varpi ) \otimes \overline{\psi}|_{F^{\times}_v} \circ\det \simto \EE^0(M_{\emptyset} /\varpi).
\]
The rest of the proof is identical to the definite case.
\end{proof}

\subsection{Local-global compatibility}

We extend Proposition \ref{prop--local-global} to the ``big'' patched module $M_{\infty}.$ Assume in this subsection $\overline{r}_v$ is reducible nonsplit and  strongly generic. Let $R_{v}^{\Box, \psi^{-1}, {\rm red}}$ denote the $\cO$-torsion free quotient of $R_{v}^{\Box,\psi^{-1}}$ which parametrizes reducible liftings of $\overline{r}_v$.  Then by Proposition \ref{prop-red-defring-det}, $R_{v}^{\Box, \psi^{-1}, {\rm red}}$ is a power series ring in $(3+2f)$-variables over $\cO.$ Denote $R^{\rm red}_{\infty} = R^{\Box,\psi^{-1}, {\rm red}}_{v}\otimes_{R_{v}^{\Box,\psi^{-1}}} R_{\infty}.$ Let $\s_J\in \mathscr{D}(\overline{r}^{\vee}_v)$ be the corresponding Serre weight of $\overline{r}^{\vee}_v$ associated to $J \subset S_{\overline{r}^{\vee}_v}$ (see (\ref{equation-J_rho})). If $J = \emptyset,$ we write $\s_0$ for $\s_{\emptyset}.$

\begin{proposition}\label{thm-BD-Rord}
Let $\Theta^{\rm ord}_{\s_0}$ be the finite dimensional representation of $\GL_2(\cO_L)$ over $\F$ defined in \S \ref{subsection:Theta-ord}. Then the morphism
\[
 R_{\infty} \to  \End_{\cO}(M_{\infty}(\Theta^{\rm ord}_{\s_0}))
\]
factors through $R_{\infty}^{\rm red}.$
\end{proposition}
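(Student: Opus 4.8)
The plan is to reduce the statement about the patched module $M_\infty(\Theta^{\rm ord}_{\s_0})$ to the local-global compatibility for $P$-ordinary automorphic forms established in Proposition \ref{prop--local-global}, via a careful analysis of the passage between $\GL_2(L)$-coinvariants, ordinary parts, and the structure of $\Theta^{\rm ord}_{\s_0}$ recorded in \S\ref{subsection:Theta-ord}. First I would observe that since $R^{\Box,\psi^{-1}}_v$ is $\cO$-torsion free and $R^{\Box,\psi^{-1},{\rm red}}_v$ is its reduced quotient cut out by a known ideal, it suffices to check that the action kills this ideal modulo $\varpi^s$ for every $s\geq 1$; equivalently, working with the level-$U^v U_v$ pieces $M_\emptyset$ (using $M_\infty/\fm_\infty \cong M_\emptyset/\fm_{\overline r}$ together with the fact that $M_\infty$ is finite over $R_\infty[\![\GL_2(\cO_L)]\!]$ and $R_\infty \twoheadrightarrow R^{\psi^{-1}}_{\overline r, S}$), it is enough to show that the action of $R^{\Box,\psi^{-1}}_v$ on $M_\emptyset(\Theta^{\rm ord}_{\s_0})/\varpi^s$ factors through $R^{\Box,\psi^{-1},{\rm red}}_v$ for all $s$. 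By definition of the functor \eqref{equ-def-functor} and Pontryagin duality, $M_\emptyset(\Theta^{\rm ord}_{\s_0})^\vee$ is identified with $\Hom_{\GL_2(\cO_L)}(\Theta^{\rm ord}_{\s_0}, \wt S^D_\psi(U^v,\W/\varpi^s)_{\fm_{\overline r}})$, a subspace of the mod $\varpi^s$ automorphic forms.

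The key point is then to relate $\Hom_{\GL_2(\cO_L)}(\Theta^{\rm ord}_{\s_0}, V)$ to the ordinary part $\Ord_P V$ for $V = \wt S^D_\psi(U^v,\W/\varpi^s)_{\fm_{\overline r}}$. Recall $\s_0 = \s_\emptyset$ is $2$-generic by Lemma \ref{lem:Serre=3generic} (since $\overline r_v$, hence $\overline r^\vee_v$, is strongly generic), and $\Theta^{\rm ord}_{\s_0}$ fits in the exact sequence \eqref{eq:seq-Theta-ord}. I would apply Proposition \ref{prop:Hom-Theta-Ord}: provided $\Hom_K(\tau', V)=0$ for all $\tau'\in\JH(\Ind_I^K\chi_{\s_0})$ with $\tau'\neq\s_0$ — which holds because $\soc_K \pi \cong \bigoplus_{\sigma\in\mathscr D(\overline r^\vee_v)}\sigma$ and one checks, using that $\mathscr D(\overline r^\vee_v)$ meets $\JH(\Ind_I^K\chi_{\s_0})$ only in $\s_0$ (a consequence of Lemma \ref{lemma-tau-maximal} / \cite[\S4]{Br14} since $\s_0$ is the ordinary weight with $J(\s_0)=\emptyset$), together with the ``away-from-$v$'' assumption and condition (b) on $\pi$ that propagates to all $V$ as above via exactness — we get an isomorphism $\Hom_K(\Theta^{\rm ord}_{\s_0}, V) \simto \Hom_K(\Theta^{\rm ord}_{\s_0}, V^{\rm ord})$ compatible with all Hecke and $R^{\Box,\psi^{-1}}_v$-actions (these are functorial, so compatibility follows from Lemma \ref{lemma:j-canonical} applied to the Hecke operators and to the lifts of the universal deformation in the indefinite case). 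Since $V^{\rm ord}$ is a quotient of $\Ind^G_{\overline P}\Ord_P V$ and $\Theta^{\rm ord}_{\s_0}$ is a $K$-representation whose Jordan--Hölder factors are all $\neq 1, q$-dimensional by $2$-genericity, the $R^{\Box,\psi^{-1}}_v$-action on $\Hom_K(\Theta^{\rm ord}_{\s_0}, V^{\rm ord})$ is controlled by the $T(L)$-action on $\Ord_P V$, which by Proposition \ref{prop--local-global} (applied at level $U^v U_v$ for $U_v$ shrinking, then taking the limit) factors through $R^{\psi^{-1},{\rm red}}_{\overline r_v}$, hence through $R^{\Box,\psi^{-1},{\rm red}}_v$.

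Concretely, I would run the argument as follows: (i) replace $M_\infty$ by $M_\emptyset$ and reduce mod $\varpi^s$; (ii) dualize to land in $\Hom_{\GL_2(\cO_L)}(\Theta^{\rm ord}_{\s_0}, \Ord_P\text{-related space})$; (iii) invoke Proposition \ref{prop--local-global} together with Lemma \ref{lemma-ordinary-1} to see that $R^{\Box,\psi^{-1}}_v$ acts on $\Ord_P(\wt S^D_\psi(U^v,\W/\varpi^s)_{\fm_{\overline r}})$ through its reducible quotient; (iv) transport this back through the isomorphism of Proposition \ref{prop:Hom-Theta-Ord}; (v) take the inverse limit over $s$ and over $U_v$, and use $\cO$-torsion freeness of $R^{\Box,\psi^{-1}}_v$ plus the fact that $M_\emptyset(\Theta^{\rm ord}_{\s_0})$ (more precisely its reductions) faithfully detects the ideal to conclude that the action on $M_\infty(\Theta^{\rm ord}_{\s_0})$ factors through $R_v^{\Box,\psi^{-1},{\rm red}}$.

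The main obstacle I anticipate is step (ii)–(iv): verifying that the isomorphism in Proposition \ref{prop:Hom-Theta-Ord} is genuinely equivariant for the $R^{\Box,\psi^{-1}}_v$-action and not merely for the abstract Hecke algebra, and checking that its hypotheses hold uniformly for $V = \wt S^D_\psi(U^v,\W/\varpi^s)_{\fm_{\overline r}}$ rather than just for $\pi = V[\fm_{\overline r}]$. The equivariance should follow from Lemma \ref{lemma:j-canonical} (naturality of $\jmath$ with respect to $G$-endomorphisms, applied to the diamond/Hecke operators which commute with the $\GL_2(L)$-action and which in the indefinite case encode the Galois action defining $R^{\Box,\psi^{-1}}_v$), but one must be careful that the "$\Hom_K(\tau',V)=0$" vanishing persists for all the relevant $V$ — this I would deduce from conditions (a),(b) on $\pi$ propagating along the exact sequences relating $V[\fm^n]$-filtration pieces, together with the combinatorial input (Lemma \ref{lemma-tau-maximal}) that pins down which Serre weights in $\mathscr D(\overline r^\vee_v)$ can lie in $\Ind_I^K\chi_{\s_0}$. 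A secondary technical point is handling the indefinite case's extra $\Hom_{G_F}(\overline r, -)$, where one should note that this functor commutes with $\Ord_P$ and with $\jmath$ since the $G_F$-action is through Hecke operators on $\wt S^D_\psi(U^v,\W)_{\fm}$, so the reduction to Proposition \ref{prop--local-global} goes through verbatim.
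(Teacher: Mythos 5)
Your overall strategy---passing to the automorphic side, applying Proposition \ref{prop:Hom-Theta-Ord} together with Lemma \ref{lemma:j-canonical} to transfer the $R^{\Box,\psi^{-1}}_v$-action to the ordinary part, and then invoking Proposition \ref{prop--local-global}---is exactly what the paper does. But your first step, the reduction from $M_\infty$ to $M_\emptyset$, has a genuine gap. You claim it suffices to show that the defining ideal $I_{\rm red}$ of $R_v^{\Box,\psi^{-1},{\rm red}}$ annihilates $M_\emptyset(\Theta^{\rm ord}_{\s_0})/\varpi^s$ for all $s$, citing $M_\infty/\fm_\infty\cong M_\emptyset/\fm_{\overline r}$ and finiteness of $M_\infty$ over $R_\infty[\![\GL_2(\cO_L)]\!]$. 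However, $M_\emptyset(\Theta^{\rm ord}_{\s_0})=M_\infty(\Theta^{\rm ord}_{\s_0})/\fa_\infty M_\infty(\Theta^{\rm ord}_{\s_0})$, and showing that $I_{\rm red}$ kills a \emph{quotient} of a module does not show that it kills the module: from $I_{\rm red}\cdot M_\infty(\Theta^{\rm ord}_{\s_0})\subseteq \fa_\infty M_\infty(\Theta^{\rm ord}_{\s_0})$ you cannot conclude $I_{\rm red}\cdot M_\infty(\Theta^{\rm ord}_{\s_0})=0$, even using Krull intersection and the commutativity of the two actions (iterating only yields $I_{\rm red}^n M\subseteq\fa_\infty^n M$, not $I_{\rm red}M\subseteq\fa_\infty^n M$). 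The phrase ``$M_\emptyset(\Theta^{\rm ord}_{\s_0})$ (more precisely its reductions) faithfully detects the ideal'' is precisely the unproved assertion; it is not a consequence of the facts you cite.

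The paper avoids this by unraveling the Taylor--Wiles ultraproduct construction of $M_\infty$ from \cite{Dotto-Le}: it writes $M_\infty(\Theta^{\rm ord}_{\s_0})$ explicitly as an inverse limit over $(J,U_v)$ of ultraproducts over $N$ of the finite-level terms $\Hom_K(\Theta^{\rm ord}_{\s_0},M(U_v,N)^\vee)^\vee$, each built from automorphic forms at levels carrying the auxiliary Taylor--Wiles primes $Q_N$. It then checks the factorization at each of these constituents, and the ultraproduct/limit inherits it. This is essential: the patched module $M_\infty$ genuinely remembers all the $Q_N$-levels, not just the $Q=\emptyset$ level that $M_\emptyset$ sees. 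Once at the finite level (and after shrinking $U_v$ so that it acts trivially on $\Theta^{\rm ord}_{\s_0}$), the paper's argument matches your steps (ii)--(iv): apply Proposition \ref{prop:Hom-Theta-Ord} to identify $\Hom_K(\Theta^{\rm ord}_{\s_0},S_\psi^D(\cO/\varpi)_\fm)$ with $\Hom_K(\Theta^{\rm ord}_{\s_0},\Ind_{\overline P}^G\Ord_P S_\psi^D(\cO/\varpi)_\fm)$, use Lemma \ref{lemma:j-canonical} for $R^{\psi^{-1}}_v$-equivariance, and conclude via Proposition \ref{prop--local-global}. Your concerns about equivariance and about the hypotheses of Proposition \ref{prop:Hom-Theta-Ord} holding for the full (rather than $\fm$-torsion) automorphic space are legitimate things to verify, but they are not where your argument breaks down; the breakdown is the $M_\infty\to M_\emptyset$ reduction.
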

\begin{proof}
We first briefly recall the construction of  $M_{\infty}$ in \cite{Dotto-Le}. For every integer $N\geq 1,$ let $Q_N$ be a set of Taylor-Wiles primes as in \cite[\S 6.2.3]{Dotto-Le}. In particular, $Q_N$ has size $q$ and is disjoint from $S.$ For each $w\in Q_N,$ we have $q_w \equiv 1 \pmod{p^N}.$ Let $k_w^{\times}(p)$ denote the Sylow $p$-subgroup of $k_w^{\times}$ for $w\in Q_N$ and $\cO_N$ denote $\cO[ \prod_{w\in Q_N} k_w^{\times}(p)].$ For each $N\geq 1$ we choose a surjection $\cO_{\infty}= \cO[\![z_1,\ldots, z_q]\!] \onto \cO_N$
whose kernel is contained in the ideal generated by $((1+z_i)^{p^N } - 1)_{i=1}^q.$

We write $K$ for $\GL_2(\cO_L).$  For $U_{v} \subset K$ a compact open subgroup, let
\[
M(U_{v} , N)  = S^D_{\psi}(U_{1}(Q_{N})^{v} U_{v}, \W)^{d}_{\frak{m}_{Q_{N}}}  \otimes_{R^{\psi^{-1}}_{\overline{r}, S\cup Q_N}} R^{\Box, \psi^{-1}}_{\overline{r}, S\cup Q_N}
\]
when $D$ is definite; and let
\[
M(U_{v} ,N) = \Hom_{G_{F}}\left( r_{\frak{m}_{Q_N}} ,  S^D_{\psi}(U_{1}(Q_{N})^{v} U_{v}, \W)_{\frak{m}_{Q_{N}}}  \right)^{d}  \otimes_{R^{\psi^{-1}}_{\overline{r}, S\cup Q_N}} R^{\Box, \psi^{-1}}_{\overline{r}, S\cup Q_N}
\]
when $D$ is indefinite.

Let $J \subset \cO_{\infty}$ be an open ideal. Let $I_J$ denote the cofinite subset of $\N$ which consists of integers $N$ such that $J$ contains $\Ker(\cO_{\infty} \onto \cO_N).$ For $N \in I_J$, let
\[
M(U_{v} , J ,N)  =  M(U_{v} ,N) \otimes_{\cO_{\infty}} \cO_{\infty} /J.
\]
Let $(\cO_{\infty} /J)_{I_J}$ be the product $\prod_{i\in I_J}\cO_{\infty} /J.$ Fix a non-principal ultrafilter $\frak{F}$ on $\N.$ Then $\frak{F}$ gives a point $x \in \Spec (\cO_{\infty} /J)_{I_J}$, see \cite[Lem.~2.2.2]{Gee-Newton}. Let
\[
M(U_{v}, J ,\infty) = \left( \prod_{N\in I_J} M(U_{v} , J ,N) \right) \otimes_{(\cO_{\infty} /J)_{I_J}} (\cO_{\infty} /J)_{I_J, x}.
\]

For open ideals $J'\subset J,$ open compact subgroups $U'_{v} \subset U_{v},$ there is a natural map $M(U'_{v}, J' ,\infty) \to M(U_{v}, J ,\infty)$ (see \cite[Lem.~3.4.11]{Gee-Newton}). Then $M_{\infty}$ is defined as
\[
M_{\infty} \defn \plim_{J, U_{v}} M(U_{v}, J ,\infty).
\]

Now we prove the proposition. We assume $D$ is definite; the indefinite case can be treated in a similar way. By (\ref{equ-def-functor}) we have
\begin{align*}
&M_{\infty}(\Theta^{\rm ord}_{\s_0})  =   \Hom_{K}(\Theta^{\rm ord}_{\s_0}, \ilim_{J, U_{v}} M(U_{v}, J ,\infty)^{\vee} )^{\vee} \\
 & = \ilim_{J,U_{v}}\prod_{N \in I_J}\Hom_{K} (\Theta^{\rm ord}_{\s_0}, M(U_{v} , J ,N) ^{\vee})^{\vee} \otimes_{(\cO_{\infty} /J)_{I_J}} (\cO_{\infty} /J)_{I_J, x} \\
& =  \ilim_{J,U_{v}} \left(\prod_{N \in I_J}\Hom_{K} (\Theta^{\rm ord}_{\s_0},  M(U_{v} , N)^{\vee} )^{\vee}\otimes_{\cO_{\infty}} \cO_{\infty}/J  \right)\otimes_{(\cO_{\infty} /J)_{I_J}} (\cO_{\infty} /J)_{I_J, x}.
\end{align*}

It suffices to show that the action of $R^{\psi^{-1}}_{v}$ on $\Hom_{K} (\Theta^{\rm ord}_{\s_0}, M(U_{v} , N)^{\vee} )$ factors through $R_{v}^{\psi^{-1},{\rm red}}$ for $U_{v}$ sufficiently small. Since
\[
M(U_{v} , N)  = S^D_{\psi}(U_{1}(Q_{N})^{v} U_{v}, \W)^{d}_{\frak{m}_{Q_{N}}}  \otimes_{R^{\psi^{-1}}_{\overline{r}, S\cup Q_N}} R^{\Box, \psi^{-1}}_{\overline{r}, S \cup Q_N},
\]
it suffices to show the same statement for 
\[
\Hom_{K} \left( \Theta^{\rm ord}_{\s_0}, \left(S^D_{\psi}(U_{1}(Q_{N})^{v} U_{v}, \W)^d_{\frak{m}_{Q_{N}}} \right)^{\vee}\right)  = \Hom_{K} \left(\Theta^{\rm ord}_{\s_0}, S^D_{\psi}(U_{1}(Q_{N})^{v} U_{v}, \W/\varpi)_{\frak{m}_{Q_{N}}} \right)
\]
where the equality holds as $\Theta_{\sigma_0}^{\rm ord}$ is $\varpi$-torsion.

For the rest of the proof, we simplify the notation by setting
\[
S_{\psi}^D(\cO/\varpi)_{\frak{m}}  =  \ilim_{U_{v}} S_{\psi}^D(U_1(Q_{N})^{v}U_{v}, \W/\varpi)_{\frak{m}_{Q_{N}}}.
\]
For $U_{v} \subseteq K$ sufficiently small, $U_{v}$ acts trivially on $\Theta^{\rm ord}_{\s_0},$ we then have
\begin{align*}
\Hom_{K} (\Theta^{\rm ord}_{\s_0}, S_{\psi}^D(U_1(Q_{N})^{v}U_{v}, \W/\varpi)_{\frak{m}_{Q_{N}}}) & = \Hom_{K}(\Theta^{\rm ord}_{\s_0}, S_{\psi}^D(\cO/\varpi)^{U_{v}}_{\frak{m}} ) \\
& =  \Hom_{K}(\Theta^{\rm ord}_{\s_0}, S_{\psi}^D(\cO/\varpi)_{\frak{m}} ) .
\end{align*}
We are  reduced to show that the action of $ R^{\psi^{-1}}_{v}$
on $\Hom_{K}(\Theta^{\rm ord}_{\s_0},S_{\psi}^D(\cO/\varpi)_{\frak{m}} )$
 factors through $R_{v}^{\psi^{-1},{\rm red}}.$ By Proposition \ref{prop:Hom-Theta-Ord} and Lemma \ref{lemma:j-canonical},  we have a natural isomorphism
\[
 \Hom_{K}(\Theta^{\rm ord}_{\s_0},\Ind_{\overline{P}}^G\Ord_PS_{\psi}^D(\cO/\varpi)_{\frak{m}})\simto \Hom_{K}(\Theta^{\rm ord}_{\s_0},S_{\psi}^D(\cO/\varpi)_{\frak{m}})
\]
which is compatible with the action of $R^{\psi^{-1}}_{v}$   if $\Ind_{\overline{P}}^G\Ord_PS_{\psi}^D(\cO/\varpi)_{\frak{m}}$ is equipped with the induced action of $R^{\psi^{-1}}_{v}$ through $\Ord_PS_{\psi}^D(\cO/\varpi)_{\frak{m}}$. By Proposition \ref{prop--local-global}, the action of $R^{\psi^{-1}}_{v}$ on $\Ord_PS_{\psi}^D(\cO/\varpi)_{\frak{m}}$  factors through $R_{v}^{\psi^{-1}, {\rm red}}.$ This finishes the proof.
\end{proof}

\subsection{The ``big'' minimal patching functors}

The minimal patching functor that we will use is introduced in \cite{Dotto-Le} following \cite{BreuilDiamond} \cite{EGS}. For $w\in S,$ let $R^{\min}_w$ denote the quotient of $R_{w}^{\Box,\psi^{-1}}$ introduced in \cite[\S 3]{BreuilDiamond} \cite[\S 6.5]{EGS}. By \cite[Lem.~3.4.1]{BreuilDiamond}, we have
\begin{itemize}
\item   $R^{\min}_w$ is a formal power series ring  in $3$-variables over $\cO$ if $w\nmid p$;

\item  $R^{\min}_w$ is a formal power series ring  in $(3+[F_w:\Q_p])$-variables over $\cO$ if $w| p,w\neq v.$
\end{itemize}
Let $R^{\min}$ denote $R_{v}^{\Box,\psi^{-1}} \widehat{\otimes}(\widehat{\otimes}_{w\in S}R^{\min}_w).$  Since $ R_{v}^{\Box,\psi^{-1}}$ is a formal power series ring  in $(3+3f)$-variables over $\cO,$ $R^{\min}$ is a formal power series ring  in $(3(|S|+1)+[F:\Q]+2f)$-variables over $\cO.$ Define $R^{\Box,\min}_{\overline{r},S \cup Q}:=R^{\Box,\psi^{-1}}_{\overline{r},S \cup Q}\otimes_{R^{\loc}}R^{\min}.$ Let $R^{\min}_{\overline{r},S\cup Q}$ denote the image of $R^{\psi^{-1}}_{\overline{r},S\cup Q}$ in $R^{\Box,\min}_{\overline{r},S\cup Q}.$ Let $R^{\min}_{\infty} \defn R^{\min}[\![x_1,\ldots,x_g]\!],$ a power series ring in $g$-variables over $R^{\min},$ with a surjective homomorphism
\[
R_{\infty}^{\min} \to R^{\min}_{\overline{r},S}.
\]
Let $\frak{m}^{\min}_{\infty}$ be the maximal ideal of $R^{\min}_{\infty}$. We have an $\OC$-algebra homomorphism $S_{\infty}\to R^{\min}_{\infty}$ such that $R^{\min}_{\infty}/\frak{a}_{\infty}\cong R^{\rm min}_{\overline{r},S}.$ 

The big patched module in the minimal case (constructed in \cite[\S 6]{Dotto-Le}) is a finite Cohen-Macaulay $R_{\infty}[\![\GL_2(\cO_L)]\!]$-module $M^{\min}_{\infty},$ which is finite projective over $S_{\infty}[\![\GL_2(\cO_L)]\!],$ such that the smooth admissible representation of $\GL_2(L)$ over $\F$ given by\footnote{$\pi_v^D(\overline{r})$  is denoted by $\pi_{\rm glob}(\brho)$ in \cite[\S 6]{Dotto-Le} for $\brho = \overline{r}_v^{\vee}$.}
\begin{equation}\label{eq:piv}
\pi^D_{v}(\overline{ r} ) \defn  (M^{\min}_{\infty})^{\vee}[ \frak{m}^{\min}_{\infty}]
 \end{equation}
has multiplicity free $\GL_2(\cO_L)$-socle, namely 
$
\soc_K(\pi^D_{v}(\overline{r})) = \bigoplus_{\sigma \in  \mathscr{D}(\overline{r}^{\vee}_v)} \sigma
$,
see Proposition \ref{thm-Le} below.

For $\s \in \cC_{Z,\psi},$ let
\begin{equation}
M^{\min}_{\infty}(\s) \defn \Hom^{\rm cont}_{ \GL_2(\cO_L)}(M^{\min}_{\infty }, \s^{\vee})^{\vee}.
\end{equation}
Then $M^{\min}_{\infty}$ is an exact covariant functor from $\cC_{Z,\psi}$ to the category of finitely generated $ R^{\min}_{\infty}$-modules. As in \eqref{eq:mod-minfty=Hom}, we have
\begin{equation}
(M^{\min}_{\infty}(\s) / \frak{m}^{\min}_{\infty} M^{\min}_{\infty}(\s))^{\vee} \cong \Hom_{\GL_2(\cO_L)}(\s,\pi^D_{v}(\overline{r})).
\end{equation}

It is expected that $\pi^{D}_{v}(\overline{r})$ should realize the hypothetical mod $p$ local Langlands correspondence for $\brho = \overline{r}_v^{\vee}$. Thus it is  important to understand the precise structure of $\pi^D_{v}(\overline{r})$. The following conjecture is taken from \cite{BP}.

\begin{conjecture}[\cite{BP}] \label{conj:BP}
Assume $\brho$ is generic in the sense of \cite[Def.~11.7]{BP}. Then $\pi_v^D(\overline{r})$ has finite length.  More precisely,
\begin{itemize}
\item[(i)] if $\brho$ is irreducible, then $\pi^D_{v}(\overline{r})$ is irreducible;
\item[(ii)] if $\brho$ is reducible, then $\pi^D_{v}(\overline{r})$ has length $f$, admitting a unique Jordan--H\"older filtration  as follows:
\[\pi_0\ \ligne\ \pi_1\ \ligne\ \cdots\ \ligne\  \pi_{f-1} \ \ligne\ \pi_f \]
where $\pi_0$ and $\pi_f$ are principal series explicitly determined by $\brho$, and $\pi_i$ is supersingular for $1\leq i\leq f-1$. Moreover,  $\pi_{v}^{D}(\overline{r}^{\rm ss} )= \pi_{v}^{D}(\overline{r})^{\rm ss}$.
\end{itemize}
\end{conjecture}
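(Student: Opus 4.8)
The plan is to derive the conjecture from the structural results already assembled in this paper, feeding them into the framework of the Breuil--Pa\v{s}k\=unas philosophy. The key inputs are: the equality $\GKdim(\pi(\brho))=f$ (Theorem~\ref{thm:intro-GK}); the essential self-duality of $\pi(\brho)^{\vee}$, namely $\pi(\brho)^{\vee}\otimes\overline{\psi}|_{F_v^{\times}}\circ\det\cong\EE^{2f}(\pi(\brho)^{\vee})$ (Theorem~\ref{thm-selfdual}); the identification $\pi(\brho)^{K_1}\cong D_0(\brho)$ and the generation statement that $\pi(\brho)$ is generated by $D_0(\brho)$ as a $G$-representation (Theorem~\ref{thm:intro-finite}); the semisimplicity of $\Ord_P\pi(\brho)$ read off in Proposition~\ref{prop:Ord-semisimple} (a theorem of \cite{HuJLMS} in the indefinite case, of \cite{Breuil-Ding} in the definite case); and Le's explicit description of $D_0(\brho)$ together with the potentially Barsotti--Tate deformation rings (Theorem~\ref{thm--Le}). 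First I would fix a uniform normalisation and observe that, although the assertions are stated for $\brho$ merely generic, the cleanest route proceeds under the strong genericity hypothesis used throughout; the generic case would be reduced to the strongly generic one either by a twisting argument or by importing the companion computations of \cite{BHHMS}.

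For assertion (ii) ($\brho$ reducible nonsplit), the skeleton is exactly the one already carried out for $f=2$ (Theorem~\ref{thm:intro-f=2}), to be run inductively in $f$. \emph{Step 1:} identify $\soc_G\pi(\brho)$. From $\soc_K\pi(\brho)\cong\bigoplus_{\sigma\in\mathscr D(\brho)}\sigma$ and mod $p$ local--global compatibility one singles out the irreducible principal series $\pi_0$ as the $G$-socle; dually, the essential self-duality forces the $G$-cosocle to be a second principal series $\pi_f$, with $\pi_0$ and $\pi_f$ explicitly determined by $\brho$ as in \cite{BP}. \emph{Step 2:} pin down the multiplicities of $\pi_0$ and $\pi_f$ in $\pi(\brho)$: via the spectral sequence \eqref{equation-Emerton-SS} and Proposition~\ref{prop:Ord-semisimple}, the semisimplicity of $\Ord_P\pi(\brho)$ shows each of $\pi_0,\pi_f$ occurs exactly once. \emph{Step 3:} control the ``middle'' subquotient $\Pi$ obtained by removing the $\pi_0$-subobject and the $\pi_f$-quotient. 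One would show $\Pi$ is generated by the image of $D_0(\brho)$, still has $\GKdim\le f$, is essentially self-dual, and has $\soc_K\Pi$ and $\mathrm{cosoc}_K\Pi$ computed from the Serre weights in $\mathscr D(\brho)$ other than those of $\pi_0$ and $\pi_f$; then, invoking the classification of non-supersingular smooth irreducible representations of $\GL_2(F_v)$, one rules out any non-supersingular Jordan--H\"older factor of $\Pi$ and argues that $\Pi$ has length exactly $f-1$ and is uniserial, giving the stated filtration $\pi_1\,\ligne\,\cdots\,\ligne\,\pi_{f-1}$. \emph{Step 4:} the equality $\pi_v^D(\overline r^{\rm ss})=\pi_v^D(\overline r)^{\rm ss}$ would be obtained by comparing the minimal patched module $M_\infty^{\min}$ over the reducible versus the full (resp.\ semisimple) Galois deformation rings and degenerating, using Theorem~\ref{thm--Le} and the methods of \cite{BHHMS}.

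For assertion (i) ($\brho$ irreducible), $\mathscr D(\brho)$ has $2^f$ elements and the target is irreducibility of the supersingular $\pi(\brho)$. The plan is: given a nonzero $G$-subrepresentation $\pi'\subseteq\pi(\brho)$, the $K$-socle of $\pi'$ is a nonzero direct summand of $\bigoplus_{\sigma\in\mathscr D(\brho)}\sigma$; using the explicit structure of $D_0(\brho)$ (Le) together with $\pi(\brho)^{K_1}\cong D_0(\brho)$ one shows $(\pi')^{K_1}$ is a sub-$\Gamma$-representation of $D_0(\brho)$ through which every Serre weight of $\mathscr D(\brho)$ can be reached, whence $(\pi')^{K_1}=D_0(\brho)$ and therefore $\pi'\supseteq\langle G\cdot D_0(\brho)\rangle=\pi(\brho)$; dually $\pi(\brho)$ has no proper nonzero quotient. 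The arithmetic input that makes ``every weight reachable'' work is again the absence of non-supersingular constituents, combined with $\GKdim(\pi(\brho))=f$.

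The hard part, in both cases and especially for $f\ge 3$ in (ii) and for all $f$ in (i), is precisely the step that controls the supersingular constituents: one needs enough of a structure theory for supersingular smooth $\F$-representations of $\GL_2(F_v)$ of Gelfand--Kirillov dimension $f$ with prescribed $D_0$ to conclude that the relevant subquotients are irreducible and glued uniserially. The techniques developed in this paper --- the Koszul/minimal resolution $P_\bullet$ of $\pi(\brho)^{\vee}$, the auxiliary finite-dimensional $I$-subrepresentation $\tau(\brho)$, and the derived ordinary parts --- yield the generation and $\Ext$-computations underlying Steps 1--3, but they do not by themselves determine the full submodule lattice of $\pi(\brho)$ once $f\ge 3$; closing that gap is essentially the general open problem of supersingular representations flagged in the introduction, and I would expect it to require genuinely new ideas beyond what is available here.
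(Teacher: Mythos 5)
The statement you were asked to prove is labelled a \emph{conjecture} in the paper (it is attributed to \cite{BP}), and the paper does not prove it: immediately after stating it, the authors only claim Conjecture~\ref{conj:BP}(ii) in the case $f=2$ with $\brho$ reducible nonsplit and strongly generic (Theorem~\ref{thm-main-f=2}). Your proposal is therefore not wrong so much as it is not a proof, and to your credit you say so explicitly at the end: Step~3 of your plan for (ii) --- showing the ``middle'' piece is uniserial of length $f-1$ with supersingular constituents --- and essentially all of your argument for (i) rest on a structure theory for supersingular representations of $\GL_2(F_v)$ that does not exist for $f\ge 2$ in general. Note also that all the inputs you invoke (Theorems~\ref{thm:intro-GK}, \ref{thm-selfdual}, \ref{thm-generation-D0}, Proposition~\ref{prop:Ord-semisimple}) are proved in this paper only under the hypothesis that $\brho$ is reducible nonsplit and strongly generic, so your sketch of part (i) (the irreducible case) and your proposed reduction from ``generic'' to ``strongly generic'' have no support in the text; the last assertion $\pi_v^D(\overline r^{\rm ss})=\pi_v^D(\overline r)^{\rm ss}$ is likewise not addressed anywhere in the paper.

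For the one case the paper does settle, your outline is close to but not identical with the actual argument of Theorem~\ref{thm-main-f=2}. The paper does not ``remove the $\pi_0$-sub and $\pi_f$-quotient and analyse the middle'' abstractly; instead it (a) quotes \cite[Prop.~3.2]{HuJLMS} to produce the \emph{unique} irreducible (supersingular) subrepresentation $\pi_1$ of $\pi(\brho)/\pi_0$ with $K$-socle $\sigma_1\oplus\sigma_1^{[s]}$ --- this is the decisive external input your sketch omits; (b) uses the refined form of Theorem~\ref{thm-generation-D0} (generation by any $K$-subrepresentation of $D_0(\brho)$ containing $\sigma_2$) together with the explicit structure of $D_0(\brho)$ to get a surjection $\cInd_{\mathfrak{R}_0}^G\sigma_2\twoheadrightarrow\pi(\brho)/\kappa$; (c) invokes Lemma~\ref{lem-quotient-I(sigma)} to identify this quotient with $\cInd_{\mathfrak{R}_0}^G\sigma_2/(T-\lambda_2)^n$; and (d) uses the computation $R^2\Ord_P\pi(\brho)\cong\chi_0$ (Proposition~\ref{prop-ROrd-pi(rho)}) to force $n=1$. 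Your Step~2 (using ordinary parts to bound multiplicities) is in the right spirit for (d), but without the analogue of (a) for general $f$ your induction on $f$ cannot get started, which is exactly the open problem you identify.
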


One of our main results  is to prove Conjecture \ref{conj:BP}(ii) in the case $f=2$ and $\brho$ is  nonsplit and strongly generic, see Theorem \ref{thm-main-f=2}.

In this minimal case, $\pi^D_{v}(\overline{ r} ) $ also satisfies a self-duality under the assumption of its Gelfand-Kirillov dimension.
\begin{theorem}\label{thm-selfdual-minimal}
Assume $\GKdim (\pi^D_{v}(\overline{ r} ) )\leq f$. We have an isomorphism of $\Lambda(G)$-modules \[
\pi^D_{v}(\overline{ r} )^{\vee} \otimes \overline{\psi}|_{F^{\times}_v} \circ\det \cong \EE^{2f}(\pi^D_{v}(\overline{ r} )^{\vee}).
\]
\end{theorem}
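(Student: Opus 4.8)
The plan is to reduce Theorem \ref{thm-selfdual-minimal} to Theorem \ref{thm-selfdual} by comparing the two patched modules $M_\infty$ and $M_\infty^{\min}$, which differ only in the local conditions imposed at the finite places $w \in S$ away from $p$. The essential point is that, by assumption (b) on $\overline r$, for every $w \in S \setminus S_p$ the framed deformation ring $R_w^{\Box,\psi^{-1}}$ is formally smooth over $\cO$, and the minimal quotient $R_w^{\min}$ is a power series ring over $\cO$ in the same number of variables (namely $3$ for $w \nmid p$ and $3 + [F_w:\Q_p]$ for $w \mid p$, $w \neq v$), cut out from $R_w^{\Box,\psi^{-1}}$ by a regular sequence of variables. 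Consequently the passage from $M_\infty$ to $M_\infty^{\min}$ is realized by base change along $R^{\rm loc} \to R^{\min}$, killing a regular sequence inside $R_\infty$ that is also part of a regular system of parameters, and similarly on the automorphic side $\pi_v^D(\overline r)^\vee = M_\infty^{\min}/\fm_\infty^{\min}$ is obtained from $\pi^\vee = M_\infty/\fm_\infty$ via these same local choices at $S \setminus S_p$. The key observation is that the Poincar\'e/\'etale duality isomorphism established in the proof of Theorem \ref{thm-selfdual}, namely $\EE^0(M_\emptyset/\varpi) \simto (M_\emptyset/\varpi) \otimes \overline\psi|_{F_v^\times}\circ\det$ (definite case) or its indefinite analogue, takes place at the level of $\wt{\mathbb T}^{S\cup\{w_1\}}$-modules \emph{before} imposing any local deformation conditions at $S$, so that restricting to the minimal components is harmless.

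The key steps, in order, are as follows. First, I would recall the construction of $M_\infty^{\min}$ from \cite{Dotto-Le} (summarized in \S\ref{subsection-big-patching} and the current subsection) and identify explicitly the surjection $R_\infty \twoheadrightarrow R_\infty^{\min}$ and the induced isomorphism $M_\infty^{\min}/\fm_\infty^{\min} \cong \pi_v^D(\overline r)^\vee$, noting that the difference between $M_\infty$ and $M_\infty^{\min}$ at the patched level is precisely base change along the formally smooth surjection $R^{\rm loc} \to R^{\min}$, which kills a sequence of elements that form part of a regular system of parameters in $R_\infty$ (here one uses that each $R_w^{\min} \hookrightarrow R_w^{\Box,\psi^{-1}}$ is a quotient by a regular sequence of variables, by \cite[Lem. 3.4.1]{BreuilDiamond}). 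Second, I would redo the duality computation of Theorem \ref{thm-selfdual} with the coefficient system $\W$ as in \eqref{eqn::defofW} but now tracking the full $\wt{\mathbb T}$-module structure, exactly as in \eqref{eqn--duality-coeff}--\eqref{eqn--duality-isom}: the isomorphism $\EE^0(M_\emptyset^{\min}/\varpi) \simto (M_\emptyset^{\min}/\varpi)\otimes \overline\psi|_{F_v^\times}\circ\det$ of $\Lambda(G) \times \wt{\mathbb T}^{S\cup\{w_1\}}(U^v)_{\fm_{\overline r}}$-modules follows verbatim, since this step never used the minimality hypothesis. Third, I would invoke \cite[Thm. B]{Gee-Newton}: the assumption $\GKdim(\pi_v^D(\overline r)) \leq f$ forces $M_\infty^{\min}$ to be flat over $R_\infty^{\min}$, hence $M_\emptyset^{\min}$ to be faithfully flat over the local complete intersection ring $\wt{\mathbb T}^{S\cup\{w_1\}}(U^v)_{\fm_{\overline r}}$ (in the minimal case this Hecke algebra is still a complete intersection, being a quotient of the formally smooth $R^{\min}_{\overline r, S}$). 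Fourth, I would combine this with the fact that $M_\emptyset^{\min}/\varpi$ is Cohen-Macaulay over $\Lambda(G)$ (as $M_\emptyset^{\min}$ is finite Cohen-Macaulay over $\cO\llbracket \GL_2(\cO_L)\rrbracket$, see \cite{Gee-Newton}) and apply Proposition \ref{prop-Appendix-selfduality} to $M_\emptyset^{\min}/\fm_{\overline r} \cong \pi_v^D(\overline r)^\vee$, which yields the desired $\Lambda(G)$-equivariant isomorphism $\pi_v^D(\overline r)^\vee \otimes \overline\psi|_{F_v^\times}\circ\det \cong \EE^{2f}(\pi_v^D(\overline r)^\vee)$.

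The main obstacle I anticipate is the bookkeeping needed to pass cleanly from $M_\infty$ to $M_\infty^{\min}$ while preserving the duality: one must check that the minimal local deformation problems at $w \in S \setminus S_p$ are genuinely compatible with the Poincar\'e duality pairing on completed cohomology, i.e. that localizing $\prod_{\fm}\EE^0((\wt H^0(U^v,\W^d/\varpi))^\vee_{\fm})$ at the minimal component does not disturb the isomorphism. This is essentially formal because $\EE^0$ is computed over $\Lambda(G)$ (which is untouched by the conditions at $S \setminus S_p$) and the Hecke action is semisimple enough to commute with the base change; but making this precise requires either citing the relevant compatibility from \cite{BreuilDiamond} or \cite{EGS} or, alternatively, observing that one can simply repeat the argument of Theorem \ref{thm-selfdual} with $\W$ unchanged and only at the very end base-change the resulting $\wt{\mathbb T}$-equivariant self-duality to the minimal patched module. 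A secondary, more routine obstacle is verifying that $\wt{\mathbb T}^{S\cup\{w_1\}}(U^v)_{\fm_{\overline r}}$ in the minimal setting is still a local complete intersection over $\cO$ of the expected dimension, so that \cite[Thm. B]{Gee-Newton} applies; this follows from the formal smoothness of $R^{\min}$ over $\cO$ together with the patching numerology, but should be stated carefully.
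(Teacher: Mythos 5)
Your proposal does not match the paper's proof, and it contains a genuine gap. The paper's proof of Theorem \ref{thm-selfdual-minimal} re-runs the Poincar\'e duality argument of Theorem \ref{thm-selfdual} directly for the minimal patched module, and the entire new content lies in checking that the extra local coefficient modules $V_w$ at places $w\in S\setminus S_p$ (denoted $L_w$ in \cite[\S 6.5]{EGS}, $\overline M_w$ in \cite[\S 3.3]{BreuilDiamond}) are essentially self-dual: this is automatic when $V_w$ has $\cO$-rank one, but when $\overline r|_{G_{F_w}}$ is absolutely irreducible it requires the uniqueness of $K$-types for $\GL_2(F_w)$ and, when $D$ ramifies at $w$, the compatibility of Jacquet--Langlands with contragredient. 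Your proposal never checks this.

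The concrete gap is in your second step. You claim the isomorphism $\EE^0(M_\emptyset^{\min}/\varpi)\simeq(M_\emptyset^{\min}/\varpi)\otimes\overline\psi|_{F_v^\times}\circ\det$ "follows verbatim" because that step "never used the minimality hypothesis". This is false: in the minimal construction the coefficient system is \emph{not} the $\W$ of \eqref{eqn::defofW} but its twist by the lattices $V_w$ for $w\in S\setminus S_p$, together with projection to Hecke eigenspaces for the $T_w$'s at $w\in S'$ as in \cite[\S 3.3]{BreuilDiamond}. The chain of identifications \eqref{eqn--duality-coeff}--\eqref{eqn--duality-isom} hinges on replacing $\W^d$ by $\W\otimes(\text{character})$, so one must know $(V_w)^d\cong V_w\otimes\chi_w$ at each auxiliary place. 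When $\overline r|_{G_{F_w}}$ is irreducible, $V_w$ is a two-dimensional cuspidal type (or its Jacquet--Langlands transfer), and this essential self-duality is a nontrivial fact that the paper establishes using the uniqueness of $K$-types (for $D$ split at $w$) and the duality compatibility of the local Jacquet--Langlands correspondence (for $D$ ramified at $w$), together with the irreducibility of the reduction $\overline{V_w}$ to promote the isomorphism from $E$-vector spaces to $\cO$-lattices. Skipping this step leaves the duality isomorphism unproven.

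Your first step is also on shakier ground than you suggest: it is not established in the paper that $M_\infty^{\min}\cong M_\infty\otimes_{R_\infty}R_\infty^{\min}$. The two patched modules are built from \emph{different} automorphic inputs (different local coefficients at $S\setminus S_p$ and different Hecke eigenspace projections), not just the same input with a smaller deformation ring. Such a comparison between non-minimal and minimal patched modules is in general an Ihara's-lemma-type statement and is not simply killing a regular sequence in $R_\infty$. Since your whole approach rests on this base change identification, even if it is true it would need to be cited or proved, not asserted.
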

\begin{proof}
The proof of this result is similar to the proof of Theorem \ref{thm-selfdual}. Instead of copying the argument, we just point out how to modify the proof of Theorem \ref{thm-selfdual} in the minimal case.

To define $\pi^D_{v}(\overline{ r} ), $ \cite{Dotto-Le} uses some finite free $\cO$-module $V_w$ (which is denoted by $L_w$ in \cite[\S 6.5]{EGS} and whose reduction modulo $\varpi$ is denoted by $\overline{M}_w$ in \cite[\S 3.3]{BreuilDiamond}) with a smooth $U_w$-action at each $w\in S\backslash S_p.$  Moreover, $V_w$ is of $\cO$-rank one unless $\overline{r}|_{G_{F_w}}$ is irreducible. As in the proof of Theorem \ref{thm-selfdual}, for example to get an analogous isomorphism as \eqref{eqn--duality-coeff}, the point is to check  that $V_{w}$ (or even $\overline{V_{w}}$) is essentially self-dual, i.e. $(V_w)^{d} \cong V_w \otimes \chi$ for some character $\chi$.\footnote{In the case $\overline{r}|_{G_{F_w}}$ is irreducible and $D$ ramifies at $w$, we actually need a variant of this statement.} 
This is clear if $V_w$ is a free rank one $\cO$-module.
 
Assume for the rest of the proof that $\overline{r}|_{G_{F_w}}$ is irreducible. Then $V_w$ is defined as an $(\cO_D)_w^{\times}$-stable lattice of a $K$-type for the isomorphic class of $(r_w|_{I_{F_w}},0)$ (in the sense of \cite[\S 3.2]{BreuilDiamond}), where $r_w: G_{F_w} \to \GL_2(E)$ is any lift of $\overline{r}|_{G_{F_w}}.$ The reduction $\overline{V_w}$ is always irreducible in this case, see \cite[\S 3.3 Cas IV]{BreuilDiamond}. We note that $r_w$ is always essentially self-dual, i.e. $\Hom_E(r_w,E) \cong r_w\otimes (\det r_w)^{-1}.$ If $D$ splits at $w,$ by the uniqueness of $K$-type in this case (see \cite{Henniart}),   the $K$-type $V_w[1/p]$ is essentially self-dual. We deduce that $V_w$ is also essentially self-dual by the irreducibility of $\overline{V_w}.$ 

If $D$ ramifies at $w,$ we let ${\rm LL}({\rm WD}(r_w))$ denote the smooth admissible representation of $\GL_2(F_w)$ over $E$ which is associated to the Weil-Deligne representation of $r_w$ by the local Langlands correspondence. Let $\Pi_{D_w}$ be the smooth admissible representation of $D^{\times}_w$ over $E$ associated to  ${\rm LL}({\rm WD}(r_w))$ by the Jacquet-Langlands correspondence. We have either $(\Pi_{D_w})|_{F_w^{\times } (\cO_{D})^{\times}_w}$ is irreducible, or $(\Pi_{D_w})|_{F_w^{\times } (\cO_{D})^{\times}_w} = V_1 \oplus V_2$ is a direct sum of two irreducible $F_w^{\times } (\cO_{D})^{\times}_w$-representations such that $V_2$ is conjugate to $V_1$ by a uniformizer of $(D_w)^{\times},$ see for example \cite[\S 5.1.2]{Gee-Kisin}. A $K$-type $V_w[1/p]$ for $(r_w|_{I_{F_w}},0)$ in this case is a choice of an irreducible constituent of $(\Pi_{D_w})|_{F_w^{\times } (\cO_{D})^{\times}_w}.$ As recalled above, the reduction modulo $\varpi$ of any $(\cO_D)_w^{\times}$-stable lattice of a $K$-type is irreducible. So the $(\cO_D)_w^{\times}$-stable lattice of a $K$-type is unique up to homothety and up to  conjugacy by a uniformizer of $(D_w)^{\times}.$ It follows that the $\GL_2(L)$-representation $\pi_v^D(\overline{r})$ does not depend on the choice of $V_w.$
Since the Jacquet-Langlands correspondence is compatible with taking the contragredient, $(V_w[1/p])^{*}$ is a twist by $\chi\circ \det$ of a $K$-type for $(r_w|_{I_{F_w}},0),$ where $\chi: F_w^{\times } \to E^{\times}$ is some character and $\det$ denotes the reduced norm of $(D_w)^{\times}$, and $(V_w)^d$ is  a twist by $\chi^{\circ} \circ \det$ of an $(\cO_D)_w^{\times}$-stable lattice of a $K$-type for $(r_w|_{I_{F_w}},0)$ for some  $\chi^{\circ}: F_w^{\times } \to \cO^{\times}$. 

From the above discussion, it is easy to deduce an analogous isomorphism as (\ref{eqn--duality-coeff}). We then take the eigenspace on which the Hecke operator $T_w $ acts by $\a_w\in \F$ for $w\in S',$ where $S',$ $T_w$ and $\a_w$ are all introduced in \cite[\S 3.3]{BreuilDiamond}. The rest of the proof of Theorem \ref{thm-selfdual} goes through.
\end{proof}

\subsection{Main results in the minimal case}
\label{subsection:cyclic}

From now on we only consider minimal patching functors, so we drop the superscript $\min$ and write $(M_{\infty} , R_{\infty} ,\frak{m}_{\infty} )$ for $(M^{\min}_{\infty} , R^{\min}_{\infty}, \frak{m}^{\min}_{\infty}  )$ for the rest of this section. 

Let $G  = \GL_2(L),$ $K = \GL_2(\cO_L) \supset K_1  = 1+ p \mathrm{M}_2(\cO_L)$, and $Z_1$ be the center of $K_1$. Let $I$ (resp. $I_1$) denote the upper-triangular Iwahori (resp. upper-triangular pro-$p$-Iwahori) subgroup of $G.$ Also let $\Gamma=\F[\![K/Z_1]\!]/\fm_{K_1/Z_1}$ and $\tGamma=\F[\![K/Z_1]\!]/\fm_{K_1/Z_1}^2$.

Let $\brho \defn \overline{r}_v^{\vee}.$ We assume $\overline{r}_{v}$ (or equivalently $\brho$) is reducible nonsplit and strongly generic in the sense of Definition \ref{defn:strong-generic}. Let $R_{\brho}^{\Box, \psi}$ denote the framed deformation ring of $\brho$ with fixed determinant $\psi \e.$ By taking the dual, we have isomorphisms $R_{\brho}^{\Box, \psi} \cong R_v^{\Box, \psi^{-1}}.$ We now switch to the notation for various deformation rings of $\brho$ in \S \ref{section--pBT}. Let $J$ be a subset of $S_{\brho}.$   We write $R^{{\rm tame},\s_J}_{\infty} = R_{\brho}^{\Box, \psi, T_{J,\emptyset}}\otimes_{R_{\brho}^{\Box,\psi}} R_{\infty},$ where $\s_J \in \Serre$ is the Serre weight associated to $J$ and $T_{J,\emptyset}$ is defined in (\ref{equ--inj-env}). Denote by $R^{{\rm cris},\s_J}_{\infty} = R^{\Box, \psi, {\rm cris},\s_J}_{\brho}\otimes_{R_{\brho}^{\Box,\psi}} R_{\infty}.$ Let $\bar{\cI}^{\s_J}$ (resp. $\bar{\cI}^{{\rm tame},\s_J}$, resp. $\bar{\cI}^{\rm red}$) denote the defining ideal of $R^{{\rm cris},\s_J}_{\infty}/\varpi$ (resp. $R^{{\rm tame},\s_J}_{\infty}/\varpi$, resp. $R_{\infty}^{\rm red}/\varpi$) inside $R_{\infty}/\varpi.$

\begin{proposition}\label{prop-relation-ideals}
Let $\s_0  $ denote $\s_{\emptyset}.$ We have $\bar{\cI}^{{\rm tame},\s_0}+\bar{\cI}^{\rm red} = \bar{\cI}^{\s_0}.$
\end{proposition}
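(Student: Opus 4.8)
\textbf{Proof plan for Proposition \ref{prop-relation-ideals}.}

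The strategy is to reduce the identity $\bar{\cI}^{{\rm tame},\s_0}+\bar{\cI}^{\rm red}=\bar{\cI}^{\s_0}$ to the purely local statement about the framed deformation rings of $\brho$ at the place $v$, and then invoke Corollary \ref{cor::intersect-tang-space} together with Theorem \ref{thm--Le}. First I would observe that all three ideals in question are obtained by base change along $R_{\brho}^{\Box,\psi}\to R_{\infty}$ (after reduction mod $\varpi$) from the corresponding ideals $\bar I^{{\rm tame},\s_0}$, $\bar I^{\rm red}$, $\bar I^{\s_0}$ defining $R_{\brho}^{\Box,\psi,T_{\emptyset,\emptyset}}/\varpi$, $R_{\brho}^{\Box,\psi,{\rm red}}/\varpi$, $R_{\brho}^{\Box,\psi,{\rm cris},\s_0}/\varpi$ inside $R_{\brho}^{\Box,\psi}/\varpi$. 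Since $R_{\infty}/\varpi$ is a (faithfully flat, even formally smooth) power series ring over $R_{\brho}^{\Box,\psi}/\varpi$ tensored with the smooth local factors $R_w^{\min}$ for $w\in S$, it suffices to prove $\bar I^{{\rm tame},\s_0}+\bar I^{\rm red}=\bar I^{\s_0}$ in $R_{\brho}^{\Box,\psi}/\varpi$; extension of ideals along a flat (indeed free) ring map commutes with sums and is injective on the lattice of ideals in the relevant sense, so nothing is lost.

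Next I would pin down each of the three local ideals explicitly. By \cite[Thm. 7.2.1]{EGS} combined with Theorem \ref{thm--Le} (i.e. \cite[Thm. 3.6]{Le}), $R^{\Box,\psi,{\rm cris},\s_0}_{\brho}\otimes_{\cO}\F$ is the quotient of $R^{\Box,\psi,T_{\emptyset,\emptyset}}_{\brho}\otimes_{\cO}\F$ by the ideal $(Y_1,\dots,Y_f)$ — this is exactly the computation already used in the proof of Proposition \ref{prop::red--crys} and in Corollary \ref{cor::intersect-tang-space}. So in the coordinates of Theorem \ref{thm--Le}, writing $R^{\Box,\psi,T_{\emptyset,\emptyset}}_{\brho}/\varpi$ as a power series ring over $\F[\![(X_i,Y_i)_i]\!]/(g_i(\emptyset,\emptyset))_i$, the ideal $\bar I^{\s_0}$ is generated by $\bar I^{{\rm tame},\s_0}$ together with the images of $Y_1,\dots,Y_f$. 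Thus the inclusion $\bar I^{{\rm tame},\s_0}+\bar I^{\rm red}\subseteq \bar I^{\s_0}$ reduces to showing that the image of $\bar I^{\rm red}$ in $R^{\Box,\psi,T_{\emptyset,\emptyset}}_{\brho}/\varpi$ lies in $(Y_1,\dots,Y_f)$, and the reverse inclusion reduces to showing $(Y_1,\dots,Y_f)\subseteq \bar I^{{\rm tame},\s_0}+\bar I^{\rm red}$ inside $R^{\Box,\psi,T_{\emptyset,\emptyset}}_{\brho}/\varpi$ — equivalently that the $Y_i$ already vanish in $R^{\Box,\psi,T_{\emptyset,\emptyset}}_{\brho}/\varpi$ modulo the reducible condition.

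The content of both inclusions is precisely Proposition \ref{prop-tangent}: there it is shown that a tangent vector $t\in\Hom_{\cO\text{-alg}}(R^{\Box,T_{\emptyset,\emptyset}}_{\brho},\F[\epsilon]/\epsilon^2)$ factors through $R^{\Box,\rm red}_{\brho}$ if and only if $t(Y_i)=0$ for all $i$, and consequently (Corollary \ref{cor::intersect-tang-space}) the scheme-theoretic intersection $\Spec R^{\Box,\psi,T_{\emptyset,\emptyset}}_{\brho}\cap\Spec R^{\Box,\psi,\rm red}_{\brho}$ inside $\Spec R^{\Box,\psi}_{\brho}$ equals $\Spec R^{\Box,\psi,{\rm cris},\s_0}_{\brho}$, at least on tangent spaces. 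To upgrade from the tangent-space equality of Corollary \ref{cor::intersect-tang-space} to the ideal equality I want, I would use the explicit power-series presentations: $R^{\Box,\psi,\rm red}_{\brho}$ and $R^{\Box,\psi,T_{\emptyset,\emptyset}}_{\brho}$ and $R^{\Box,\psi,{\rm cris},\s_0}_{\brho}$ are all regular (formally smooth over $\cO$) by Propositions \ref{prop-red-defring-det}, \ref{thm--Le}, and \S\ref{section--pBT}, so each is cut out from $R^{\Box,\psi}_{\brho}$ by a regular sequence, and $R^{\Box,\psi,{\rm cris},\s_0}_{\brho}$ is cut out of $R^{\Box,\psi,T_{\emptyset,\emptyset}}_{\brho}$ precisely by $(Y_1,\dots,Y_f)$. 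The proof of Proposition \ref{prop-tangent}, run with $\F[\epsilon]/\epsilon^2$ replaced by an arbitrary artinian test algebra (or simply by tracking the universal étale $\varphi$-module $\cM_t$ over $R^{\Box,\psi,T_{\emptyset,\emptyset}}_{\brho}$ itself rather than over dual numbers), shows directly that the reducibility locus inside $\Spec R^{\Box,\psi,T_{\emptyset,\emptyset}}_{\brho}$ is exactly $V(Y_1,\dots,Y_f)$: the existence of the rank-one $\varphi$-submodule $\cN_t=\prod_i \F[\![\dots]\!](\!(v)\!)\,\frak f'^i$ forces $Y_{i-1}=0$ for all $i$ by comparison of the $\frak e^i$-coefficients (using $c_{f-i}\geq 4$, which is where strong genericity enters), and conversely over $V(Y_1,\dots,Y_f)$ the universal $\varphi$-module visibly has the rank-one submodule spanned by the $\frak f^i$ (this is the computation in Proposition \ref{prop::red--crys}). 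Feeding this back through the base change $R^{\Box,\psi,T_{\emptyset,\emptyset}}_{\brho}\hookleftarrow R^{\Box,\psi}_{\brho}$ gives $\bar I^{{\rm tame},\s_0}+\bar I^{\rm red}=\bar I^{\s_0}$ in $R^{\Box,\psi}_{\brho}/\varpi$, and then extending along the smooth map to $R_{\infty}/\varpi$ gives the proposition.

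The main obstacle is the last upgrade: Corollary \ref{cor::intersect-tang-space} as stated is only an equality of tangent spaces ($\Hom(-,\F[\epsilon]/\epsilon^2)$), whereas the proposition asserts an equality of ideals in $R_{\infty}/\varpi$. Bridging this gap requires either (a) re-examining the argument of Proposition \ref{prop-tangent} to see that it really identifies the reducible sublocus of $\Spec R^{\Box,\psi,T_{\emptyset,\emptyset}}_{\brho}$ with $V(Y_1,\dots,Y_f)$ scheme-theoretically — which the $\varphi$-module computation does, but one must be careful that $\cN_t$ is the \emph{unique} rank-one submodule, using that $\brho$ is reducible \emph{nonsplit} so that $\cN\subset\cM$ is the unique étale $\varphi$-submodule, exactly as in \emph{loc. cit.} — or (b) counting: since $R^{\Box,\psi,\rm red}_{\brho}\cap R^{\Box,\psi,T_{\emptyset,\emptyset}}_{\brho}$ contains the regular quotient $R^{\Box,\psi,{\rm cris},\s_0}_{\brho}$ and both sides have the same tangent space by Corollary \ref{cor::intersect-tang-space}, with the intersection being a quotient of a regular ring by an ideal generated in degree $\geq 1$, the surjection $R^{\Box,\psi,{\rm cris},\s_0}_{\brho}\twoheadrightarrow (\text{intersection})$ that one gets from Proposition \ref{prop::red--crys} must be an isomorphism. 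Either route closes the argument; I would write it via route (a), since the $\varphi$-module bookkeeping is already set up in the proof of Proposition \ref{prop-tangent}.
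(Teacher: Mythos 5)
Your overall plan is correct and identifies the same three ingredients the paper uses: the containments $\bar{\cI}^{\rm red}\subseteq\bar{\cI}^{\s_0}$ (from Proposition \ref{prop::red--crys}) and $\bar{\cI}^{{\rm tame},\s_0}\subseteq\bar{\cI}^{\s_0}$ (from \cite[Thm.~7.2.1(4)]{EGS}), the tangent-space equality of Corollary \ref{cor::intersect-tang-space}, and an upgrade from tangent-space equality to ideal equality. The paper, however, packages the upgrade into a single abstract statement — Lemma \ref{lemma::tang-ideal-relation}, stated immediately after the proposition — and then applies it directly to $R=R_\infty/\varpi$, $\cI_0=\bar{\cI}^{\s_0}$, $\cI_1=\bar{\cI}^{{\rm tame},\s_0}$, $\cI_2=\bar{\cI}^{\rm red}$; there is no need for your preliminary reduction to the local ring $R_{\brho}^{\Box,\psi}/\varpi$ and back.

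Your route (b) is morally the paper's argument, but as written it is not yet a proof and has a direction slip: the containment $\bar{\cI}^{{\rm tame},\s_0}+\bar{\cI}^{\rm red}\subseteq\bar{\cI}^{\s_0}$ gives a surjection $R/(\cI_1+\cI_2)\twoheadrightarrow R/\cI_0 = R^{\Box,\psi,{\rm cris},\s_0}_{\brho}/\varpi$, not the arrow you wrote. The assertion ``same tangent space $+$ target regular $\Rightarrow$ isomorphism'' needs a genuine argument: in Lemma \ref{lemma::tang-ideal-relation} this is done by observing that $R/(\cI_1+\cI_2)$ and $R/\cI_0$ have the same embedding dimension $d$, invoking Cohen's structure theorem to exhibit $R/(\cI_1+\cI_2)$ as a quotient of a regular local $\F$-algebra $A$ of Krull dimension $d$ (hence a domain), and then concluding $A\twoheadrightarrow R/(\cI_1+\cI_2)\twoheadrightarrow R/\cI_0$ is an isomorphism because the outer surjection is between rings of the same dimension and $A$ is a domain. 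Without this dimension count the claim ``must be an isomorphism'' is unjustified: a surjection of local rings inducing an isomorphism on cotangent spaces need not be an isomorphism in general.

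Your route (a) — rerunning the $\varphi$-module computation of Proposition \ref{prop-tangent} over an arbitrary artinian test ring to show the reducibility locus inside $\Spec R^{\Box,\psi,T_{\emptyset,\emptyset}}_{\brho}$ is scheme-theoretically $V(Y_1,\dots,Y_f)$ — is a genuinely different and more laborious approach. It is plausible, but you would need to carry the uniqueness of the rank-one $\varphi$-submodule $\cN_t$ (which uses $\brho$ nonsplit) over a general artinian $A$ and track coefficients of $v$-powers mod nilpotents that are no longer square-zero; the strong genericity $c_{f-i}\geq 4$ is again what keeps the computation clean, but the bookkeeping is heavier than the tangent-space computation already done. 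The paper avoids all of this by proving the upgrade once, abstractly, in Lemma \ref{lemma::tang-ideal-relation}. I would recommend adopting that lemma rather than route (a): it is shorter, reuses Corollary \ref{cor::intersect-tang-space} as-is, and does not require revisiting the integral $p$-adic Hodge theory.
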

\begin{proof}
By Proposition \ref{prop::red--crys}, $\bar{\cI}^{\rm red} \subset  \bar{\cI}^{\s_0}.$ By  \cite[Thm.~7.2.1(4)]{EGS}, $\bar{\cI}^{{\rm tame}, \s_0} \subset \bar{\cI}^{\s_0}.$ Then applying Corollary \ref{cor::intersect-tang-space}, the conclusion follows from Lemma \ref{lemma::tang-ideal-relation} below by taking $R = R_{\infty}/\varpi,$ $\cI_0 = \bar{\cI}^{\s_0},$ $\cI_1 = \bar{\cI}^{{\rm tame} , \s_0}$ and $\cI_2 = \bar{\cI}^{\rm red}.$
\end{proof}

\begin{lemma}\label{lemma::tang-ideal-relation}
 Let $(R,\fm)$ be a noetherian local  $\F$-algebra. Let $\cI_0\subset \frak{m}$ be an ideal of $R$ such that $R/\cI_0$ is regular. Let $\cI_1,\cI_2\subseteq \cI_0$ be ideals of $R.$ Then
\[
\left(\frac{\frak{m}/\cI_1}{(\frak{m}/\cI_1 )^2}\right)^{\vee}\bigcap \left(\frac{\frak{m}/\cI_2}{(\frak{m}/\cI_2 )^2}\right)^{\vee} = \left( \frac{\frak{m}/\cI_0}{(\frak{m}/\cI_0 )^2}\right)^{\vee}
\]
if and only if  $\cI_1+\cI_2=\cI_0.$

\end{lemma}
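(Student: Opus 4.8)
\textbf{Proof plan for Lemma \ref{lemma::tang-ideal-relation}.}

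The statement is a purely commutative-algebra fact about cotangent spaces. The key observation is that for any ideal $\cI \subseteq \frak{m}$, the $\F$-vector space $\left(\frac{\frak{m}/\cI}{(\frak{m}/\cI)^2}\right)^{\vee} = \Hom_{\F}(\frac{\frak{m}/\cI}{(\frak{m}/\cI)^2}, \F)$ is canonically identified, via the quotient map $R \onto R/\cI$, with the subspace of the full cotangent dual $\left(\frac{\frak{m}}{\frak{m}^2}\right)^{\vee}$ consisting of those $\F$-linear maps $\ell: \frak{m}/\frak{m}^2 \to \F$ (equivalently, $\cO$-algebra homomorphisms $R \to \F[\epsilon]/\epsilon^2$ reducing to the identity on $R/\frak{m}$) that vanish on the image of $\cI$. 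Concretely, writing $V_{\cI} \defn \{\ell \in (\frak{m}/\frak{m}^2)^{\vee}: \ell|_{(\cI + \frak{m}^2)/\frak{m}^2} = 0\}$, one has $V_{\cI} \cong \left(\frac{\frak{m}/\cI}{(\frak{m}/\cI)^2}\right)^{\vee}$ naturally, and $V_{\cI} = ((\cI + \frak{m}^2)/\frak{m}^2)^{\perp}$ is the annihilator (in the finite-dimensional duality between $\frak{m}/\frak{m}^2$ and its $\F$-dual) of the subspace $W_{\cI} \defn (\cI + \frak{m}^2)/\frak{m}^2 \subseteq \frak{m}/\frak{m}^2$. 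First I would record this identification carefully, including the compatibility that when $\cI_1 \subseteq \cI_2$ the inclusion $V_{\cI_2} \hookrightarrow V_{\cI_1}$ matches the map induced by $R/\cI_1 \onto R/\cI_2$.

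Granting this, the intersection on the left-hand side of the lemma is $V_{\cI_1} \cap V_{\cI_2} = W_{\cI_1}^{\perp} \cap W_{\cI_2}^{\perp} = (W_{\cI_1} + W_{\cI_2})^{\perp}$, by the standard fact that in a finite-dimensional vector space the annihilator of a sum of subspaces is the intersection of the annihilators. On the other hand, the right-hand side is $V_{\cI_0} = W_{\cI_0}^{\perp}$. Since $\perp$ is an inclusion-reversing bijection between subspaces of $\frak{m}/\frak{m}^2$ and subspaces of its dual, the equality $V_{\cI_1} \cap V_{\cI_2} = V_{\cI_0}$ holds if and only if $W_{\cI_1} + W_{\cI_2} = W_{\cI_0}$, i.e. if and only if
\[
\frac{\cI_1 + \cI_2 + \frak{m}^2}{\frak{m}^2} = \frac{\cI_0 + \frak{m}^2}{\frak{m}^2},
\]
equivalently $\cI_1 + \cI_2 + \frak{m}^2 = \cI_0 + \frak{m}^2$ as ideals of $R$ (note $\cI_1 + \cI_2 \subseteq \cI_0$ is given, so one inclusion is automatic and the content is the reverse inclusion modulo $\frak{m}^2$).

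The last step is to upgrade "$\cI_1 + \cI_2 + \frak{m}^2 = \cI_0 + \frak{m}^2$" to "$\cI_1 + \cI_2 = \cI_0$", and this is exactly where the hypothesis that $R/\cI_0$ is \emph{regular} is used — it is the only place the regularity enters. Here is the argument I would give: set $\cJ \defn \cI_1 + \cI_2 \subseteq \cI_0$. The displayed equality says $\cI_0 \subseteq \cJ + \frak{m}^2$. I want to conclude $\cI_0 = \cJ$. Pass to the regular local ring $\bar R \defn R/\cJ$ with maximal ideal $\bar{\frak{m}} = \frak{m}/\cJ$... but wait, $\bar R$ need not be regular. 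Instead, work in $R/\cI_0$, which \emph{is} regular: the image $\bar{\cI}_0$ of $\cI_0$ in $R/\cJ$ is an ideal with $(R/\cJ)/\bar{\cI}_0 \cong R/\cI_0$ regular, and the condition $\cI_0 \subseteq \cJ + \frak{m}^2$ says precisely that $\bar{\cI}_0 \subseteq \bar{\frak{m}}_{R/\cJ}^2$ where $\bar{\frak{m}}_{R/\cJ}$ is the maximal ideal of $R/\cJ$. Now I invoke the following standard fact: if $(A, \frak{n})$ is a noetherian local ring, $\frak{b} \subseteq A$ an ideal with $A/\frak{b}$ regular, and $\frak{b} \subseteq \frak{n}^2$, then $\frak{b} = 0$. (Proof: $\dim A/\frak{b} = \dim_{\F} \frak{n}/\frak{n}^2 \geq \dim A \geq \dim A/\frak{b}$, where the first equality is regularity of $A/\frak{b}$ together with $\frak{b} \subseteq \frak{n}^2$ forcing $\frak{n}/\frak{n}^2 \cong (\frak{n}/\frak{b})/(\frak{n}/\frak{b})^2$, and the middle inequality is the general dimension bound; so all are equalities, hence $A$ is regular of the same dimension as $A/\frak{b}$, so $\frak{b}$ is contained in every minimal prime; since $A$ regular is a domain, $\frak{b} = 0$.) Applying this with $A = R/\cJ$, $\frak{b} = \bar{\cI}_0$ gives $\bar{\cI}_0 = 0$, i.e. $\cI_0 = \cJ = \cI_1 + \cI_2$. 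The converse direction ($\cI_1 + \cI_2 = \cI_0 \Rightarrow$ the cotangent equality) is immediate from the $\perp$ characterization and requires no regularity.

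The main obstacle — really the only subtle point — is the final step: one must be careful that $\cJ = \cI_1 + \cI_2$ itself is not assumed to cut out a regular ring, so the cited "ideal contained in $\frak{n}^2$ with regular quotient is zero" fact has to be applied to the pair $(R/\cJ, \bar{\cI}_0)$ rather than naively. Everything else is the elementary finite-dimensional annihilator calculus, which I would state but not belabor.
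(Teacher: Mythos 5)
Your proposal is correct, and the two arguments diverge precisely at the step where regularity of $R/\cI_0$ is invoked. Both you and the paper begin by identifying each cotangent dual $(\frak{m}/\cI_i)/(\frak{m}/\cI_i)^2)^{\vee}$ with the annihilator of $(\cI_i+\frak{m}^2)/\frak{m}^2$ inside $(\frak{m}/\frak{m}^2)^{\vee}$, and then use the elementary finite-dimensional fact that the intersection of annihilators is the annihilator of the sum, to conclude that the left-hand side equals $((\frak{m}/(\cI_1+\cI_2))/(\frak{m}/(\cI_1+\cI_2))^2)^{\vee}$; this gives the "if" direction immediately. For the "only if" direction, however, the paper passes through (the proof of) Cohen's structure theorem: it writes $R/(\cI_1+\cI_2)$ as a quotient of an auxiliary regular local ring $A$ of Krull dimension equal to the common embedding dimension $d$, then argues that the composite surjection $A\twoheadrightarrow R/(\cI_1+\cI_2)\twoheadrightarrow R/\cI_0$ between a $d$-dimensional domain and a $d$-dimensional target must be an isomorphism. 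You instead prove directly that $R/(\cI_1+\cI_2)$ is regular, via the self-contained lemma that an ideal $\frak{b}\subseteq\frak{n}^2$ of a noetherian local ring with $A/\frak{b}$ regular must vanish; the dimension chain $\dim A/\frak{b}=\dim_{\F}\frak{n}/\frak{n}^2\geq\dim A\geq\dim A/\frak{b}$ pins down regularity and hence integrality of $A$, forcing $\frak{b}=0$ by Krull's Hauptidealsatz. Your route is arguably preferable: it is more elementary (no Cohen structure theorem), and it sidesteps a subtle point about completeness that the paper's appeal to "the proof of" Matsumura Thm.\ 29.4(ii) glosses over — writing a non-complete noetherian local $\F$-algebra as a quotient of a regular local ring of the same embedding dimension is not automatic, though in the paper's applications the rings in question are in fact complete so this causes no harm.
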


\begin{proof}
The surjective homomorphism $R\to R/\cI_i$ induces an injection of $\F$-vector spaces
\[
\left(\frac{\frak{m}/\cI_i}{(\frak{m}/\cI_i )^2}\right)^{\vee}\into \left(\frak{m}/\frak{m}^2\right)^{\vee}
\]
with intersection
\[
\left(\frac{\frak{m}/\cI_1}{(\frak{m}/\cI_1 )^2}\right)^{\vee}\bigcap \left(\frac{\frak{m}/\cI_2}{(\frak{m}/\cI_2 )^2}\right)^{\vee}=\left(\frac{\frak{m}/(\cI_1+\cI_2)}{(\frak{m}/(\cI_1 +\cI_2) ^2}\right)^{\vee}.
\]
So if $\cI_1+\cI_2=\cI_0,$ this intersection is identical to $ \left( \frac{\frak{m}/\cI_0}{(\frak{m}/\cI_0 )^2}\right)^{\vee}.$

Conversely, we see that $R/\cI_0$ is a quotient ring of $R/(\cI_1+\cI_2)$ with the same embedding dimension, say $d$.  By (the proof of) \cite[Thm.~29.4(ii)]{Mat}, $R/(\cI_1+\cI_2)$ is a quotient of a regular local ring of Krull dimension $d$ over $\F$, say $A$. In particular, $A$ is a domain.  Since $R/\cI_0$  itself has Krull dimension $d$, the (surjective) composite  map
\[A\twoheadrightarrow R/(\cI_1+\cI_2)\twoheadrightarrow R/\cI_0\]
  has to be an isomorphism. In particular, we deduce $R/(\cI_1+\cI_2)\cong R/\cI_0$ and $\cI_1+\cI_2=\cI_0$.
\end{proof}

The main result of this section is the following.

\begin{theorem}\label{thm-main-cyclicity}
Assume $\brho$ is strongly generic. Then for any $\s \in \Serre$ the $R_{\infty}$-module $M_{\infty}({\PtG\sigma})$ is a (nonzero) cyclic $R_{\infty}$-module.
\end{theorem}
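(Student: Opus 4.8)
The goal is to show that $M_\infty(\PtG\sigma)$ is cyclic over $R_\infty$ for every $\sigma\in\Serre$. The plan is to reduce this to the single ``ordinary'' Serre weight $\sigma_0=\sigma_\emptyset$, then prove cyclicity in that case by combining the cyclicity result of Le for $M_\infty(\PG\sigma_0)$ with the reducibility/local-global input of Proposition~\ref{thm-BD-Rord} (semisimplicity of the ordinary part), organized via the representation $\Theta_{\sigma_0}^{\rm ord}$ of \S\ref{subsection:Theta-ord}.

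First I would handle the reduction step. Since $M_\infty(-)$ is exact and $M_\infty(\PtG\sigma)/\fm_{\infty}\cong\Hom_K(\PtG\sigma,\pi_v^D(\overline r))^\vee$ by \eqref{eq:mod-minfty=Hom}, Nakayama tells us $M_\infty(\PtG\sigma)$ is cyclic over $R_\infty$ as soon as $\dim_\F\Hom_K(\PtG\sigma,\pi_v^D(\overline r))=1$, i.e. $[\pi_v^D(\overline r)[\fm_{K_1}^2]:\sigma]=1$. Here is where Theorem~\ref{thm-criterion} enters: $\brho$ is indecomposable (reducible nonsplit) and strongly generic, and $\pi=\pi_v^D(\overline r)$ satisfies hypothesis (a) (i.e. $\pi^{K_1}\cong D_0(\brho)$, which is exactly the content of the minimal patching construction / the results recalled in the introduction) and hypothesis (b) (no non-$\mathscr D(\brho)$ Serre weight $\sigma$ has $\Ext^1_{K/Z_1}(\sigma,\pi)\neq0$ — this is a minimality statement also available from the patching setup). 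Therefore it suffices to verify hypothesis (c) of Theorem~\ref{thm-criterion} for $\sigma_0$, namely $\dim_\F\Hom_K(\Theta_{\sigma_0},\pi)=1$.

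Next I would prove $\dim_\F\Hom_K(\Theta_{\sigma_0},\pi)=1$. By Proposition~\ref{prop:Ord-semisimple}, once we know $\Ord_P(\pi)$ is semisimple as a $T_0$-representation, the quotient $\Theta_{\sigma_0}^{\rm ord}\twoheadrightarrow\sigma_0$ induces an isomorphism $\Hom_K(\sigma_0,\pi)\simto\Hom_K(\Theta_{\sigma_0}^{\rm ord},\pi)$; and by Proposition~\ref{prop-criteria-Theta=1} plus Proposition~\ref{prop-dim=1-equivalent} the quantities $\Hom_K(\Theta_{\sigma_0},\pi)$, $\Hom_K(\Theta_{\sigma_0}^{\rm ord},\pi)$, $\Hom_K(\PtG\sigma_0,\pi)$ all have the same dimension under the running hypotheses. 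So the whole question collapses to: $\dim_\F\Hom_K(\sigma_0,\pi)=1$ (which follows from $\soc_K\pi\cong\bigoplus_{\sigma\in\mathscr D(\brho)}\sigma$ and the multiplicity-freeness of this socle) together with the semisimplicity of $(\Ord_P\pi)|_{T_0}$. The latter is precisely Proposition~\ref{prop--ord--semisimple}: writing $\overline r_v^\vee=\smatr{\chi_1}*0{\chi_2}$, one gets $\Ord_P\big(S_\psi^D(U^v,\W/\varpi)_{\fm}[\fm]\big)\cong(\chi_1\omega^{-1}\otimes\chi_2)^{\oplus s}$, hence semisimple; and $\pi$ is a summand of $S_\psi^D(U^v,\W/\varpi)_{\fm}[\fm]$ (after the $K$-type projection at places away from $v$), so $(\Ord_P\pi)|_{T_0}$ is semisimple. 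Alternatively and more in the spirit of the patching setup, one can argue directly: by Proposition~\ref{thm-BD-Rord} the action of $R_v^{\Box,\psi^{-1}}$ on $M_\infty(\Theta_{\sigma_0}^{\rm ord})$ factors through the reducible quotient $R_v^{\Box,\psi^{-1},{\rm red}}$; since also $M_\infty(\Theta_{\sigma_0}^{\rm ord})$ is supported on the potentially crystalline locus $R_\infty^{{\rm cris},\sigma_0}$ for the ordinary weight (by Le's description of $\s_0$-crystalline deformations, Propositions~\ref{prop::red--crys}, \ref{prop-relation-ideals}), and $M_\infty(\PG\sigma_0)$ is cyclic over $R_\infty^{{\rm tame},\sigma_0}$ by \cite{Le} — applied after base change along \eqref{equ--inj-env} — one combines these to see $M_\infty(\Theta_{\sigma_0}^{\rm ord})$ is cyclic, whence $\dim_\F\Hom_K(\Theta_{\sigma_0}^{\rm ord},\pi)\le1$, and it is nonzero since $\sigma_0\in\mathscr D(\brho)$.

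With Theorem~\ref{thm-criterion} now applicable, we conclude $\dim_\F\Hom_K(\PtG\sigma,\pi)=1$ for every $\sigma\in\mathscr D(\brho)=\Serre$, hence $M_\infty(\PtG\sigma)/\fm_\infty$ is one-dimensional over $\F$, and $M_\infty(\PtG\sigma)$ is cyclic by Nakayama. Nonvanishing is immediate: $\sigma\hookrightarrow\soc_K\pi$, so $\Hom_K(\PtG\sigma,\pi)\neq0$, so $M_\infty(\PtG\sigma)\neq0$. \textbf{The main obstacle} I anticipate is the careful bookkeeping in the combination step of the previous paragraph — matching up the three relevant quotient rings ($R_\infty^{{\rm tame},\sigma_0}$, $R_\infty^{\rm red}$, $R_\infty^{{\rm cris},\sigma_0}$) and transporting Le's cyclicity statement through the base change \eqref{equ--inj-env} and through the duality defining $M_\infty(-)$, so that the reducibility constraint from Proposition~\ref{thm-BD-Rord} genuinely cuts down the support enough to force cyclicity of $M_\infty(\Theta_{\sigma_0}^{\rm ord})$. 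Verifying that hypotheses (a) and (b) of Theorem~\ref{thm-criterion} hold for $\pi=\pi_v^D(\overline r)$ in the minimal case is routine given the patching construction but should be stated explicitly, citing the relevant structural results on $D_0(\brho)$ and minimal patched modules.
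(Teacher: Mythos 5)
Your overall strategy — reduce via Nakayama and Theorem~\ref{thm-criterion} to verifying condition~(c) for the single ordinary weight $\sigma_0=\sigma_\emptyset$, and invoke the semisimplicity of $\Ord_P\pi$ as the decisive global input — is the right one and matches the paper. Your handling of conditions~(a) and~(b), while stated informally, is also essentially correct (the paper uses Proposition~\ref{thm-Le}(ii) for (a), and for (b) an exact sequence cutting $\pi_v^D(\overline r)$ out of the injective $K/Z_1$-module $M_\infty^\vee$ together with Proposition~\ref{thm-Le}(i)).

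However there is a genuine gap in your final step. What Theorem~\ref{thm-criterion}(c) asks for is $\dim_\F\Hom_K(\Theta_{\sigma_0},\pi)=1$, but both versions of your argument only establish $\dim_\F\Hom_K(\Theta_{\sigma_0}^{\rm ord},\pi)=1$. These are not the same: by Lemma~\ref{lemma:Theta-ord}, $\Theta_{\sigma_0}^{\rm ord}$ is a \emph{proper quotient} of $\Theta_{\sigma_0}$, so one only has an a priori inequality $\dim_\F\Hom_K(\Theta_{\sigma_0}^{\rm ord},\pi)\le\dim_\F\Hom_K(\Theta_{\sigma_0},\pi)$, and you still need the reverse bound. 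Your claim that ``by Proposition~\ref{prop-criteria-Theta=1} plus Proposition~\ref{prop-dim=1-equivalent} the quantities $\Hom_K(\Theta_{\sigma_0},\pi)$, $\Hom_K(\Theta_{\sigma_0}^{\rm ord},\pi)$, $\Hom_K(\Proj_{\tGamma}\sigma_0,\pi)$ all have the same dimension'' is incorrect: neither proposition so much as mentions $\Theta_{\sigma_0}^{\rm ord}$; they relate $\Hom_K(\Theta_{\sigma_0},\pi)$ to $\Hom_K(\Proj_{\tGamma}\sigma_0,\pi)$ and to $\Hom_I(W_{\chi,3},\pi)$, as conditional equivalences (``dim $=1$ iff dim $=1$''), not as unconditional equalities of dimensions. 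Your ``alternative'' route has the same defect: it concludes cyclicity of $M_\infty(\Theta_{\sigma_0}^{\rm ord})$ (which is exactly Proposition~\ref{lem-M(ord)-cyclic}, and the paper obtains it more directly from $\Ord_P$-semisimplicity via Proposition~\ref{prop:Ord-semisimple} and Proposition~\ref{thm-Le}(i)), but then silently treats it as cyclicity of $M_\infty(\Theta_{\sigma_0})$.

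The step you are missing is the combination mechanism. The paper takes the short exact sequence of Proposition~\ref{prop:Theta-twoparts},
\[
0\lra \Theta_{\sigma_0}\lra \Theta_{\sigma_0}^{\rm ord}\oplus(\Theta_{\sigma_0})_{K_1}\lra (\Theta_{\sigma_0}^{\rm ord})_{K_1}\lra 0,
\]
applies the exact functor $M_\infty(-)$, and identifies $M_\infty\big((\Theta_{\sigma_0}^{\rm ord})_{K_1}\big)\cong M_\infty(\sigma_0)\cong R_\infty/\bar{\cI}^{\sigma_0}$. Both middle terms are cyclic quotients of $R_\infty/\varpi$: $M_\infty\big((\Theta_{\sigma_0})_{K_1}\big)$ with annihilator $\bar{\cI}$ satisfying $\bar{\cI}^{{\rm tame},\sigma_0}\subseteq\bar{\cI}\subseteq\bar{\cI}^{\sigma_0}$ (Le's theorem applied to $\Proj_\Gamma\sigma_0$), and $M_\infty(\Theta_{\sigma_0}^{\rm ord})$ with annihilator $\bar{\cI}^{{\rm ord},\sigma_0}$ satisfying $\bar{\cI}^{\rm red}\subseteq\bar{\cI}^{{\rm ord},\sigma_0}\subseteq\bar{\cI}^{\sigma_0}$ (Proposition~\ref{thm-BD-Rord}). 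Now the point of Proposition~\ref{prop-relation-ideals} — $\bar{\cI}^{{\rm tame},\sigma_0}+\bar{\cI}^{\rm red}=\bar{\cI}^{\sigma_0}$, i.e. the tangent-space intersection of the tame and reducible loci is exactly the crystalline locus for $\sigma_0$ — is that it forces $\bar{\cI}+\bar{\cI}^{{\rm ord},\sigma_0}=\bar{\cI}^{\sigma_0}$, and then the elementary commutative-algebra Lemma~\ref{lemma-cyclic-CA} says precisely that the kernel $M_\infty(\Theta_{\sigma_0})$ of $R_\infty/\bar{\cI}\oplus R_\infty/\bar{\cI}^{{\rm ord},\sigma_0}\twoheadrightarrow R_\infty/\bar{\cI}^{\sigma_0}$ is cyclic. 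Without this gluing step, knowing $M_\infty(\Theta_{\sigma_0}^{\rm ord})$ is cyclic gives you no control over $M_\infty(\Theta_{\sigma_0})$, because the kernel $\bigoplus_{i}E_{\sigma_0,\mu_i^+(\sigma_0)}$ of $\Theta_{\sigma_0}\twoheadrightarrow\Theta_{\sigma_0}^{\rm ord}$ could in principle contribute extra copies of $\sigma_0$ to $\Hom_K(\Theta_{\sigma_0},\pi)$.
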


To prove the theorem we first recall some known results.

\begin{proposition}\label{thm-Le}
(i) If $\sigma\notin \mathscr{D}(\brho)$ then $M_{\infty}(\sigma)=0$. If $\s\in \Serre,$ the homomorphism $R_{\infty}\to \End_{\cO}( M_{\infty}(\s))$ factors through $R^{{\rm cris},\s}_{\infty}/\varpi,$ and $M_{\infty}(\s)$ is  free of rank one over $R_{\infty}^{{\rm cris},\sigma}/\varpi.$

(ii) For any $\s\in \Serre,$ the homomorphism $R_{\infty}\to \End_{\cO}( M_{\infty}(\PG\s))$ factors through $R^{\rm tame,\s}_{\infty}/\varpi,$ and $M_{\infty}(\PG\s)$ is a cyclic $R_{\infty}$-module.
\end{proposition}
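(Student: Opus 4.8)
\textbf{Proof proposal for Proposition \ref{thm-Le}.}

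The plan is to deduce both parts from the results of Breuil--Diamond \cite{BreuilDiamond}, Emerton--Gee--Savitt \cite{EGS} and Le \cite{Le} recalled in \S\ref{Section::galois}, together with the basic properties of the minimal patching functor $M_\infty(-)$. For part (i): the vanishing $M_\infty(\sigma)=0$ for $\sigma\notin\mathscr{D}(\brho)$ is standard and follows from the fact that $M_\infty(\sigma)/\fm_\infty M_\infty(\sigma)$ is dual to $\Hom_K(\sigma,\pi^D_v(\overline{r}))$, whose nonvanishing would force $\sigma\in\soc_K\pi^D_v(\overline r)\subset\mathscr{D}(\brho)$ (one also uses that $M_\infty(\sigma)$ is supported on the appropriate crystalline deformation ring, which is zero when $\sigma\notin\mathscr{D}(\brho)$ by the choice of tame type and \cite[Prop.~3.5.1]{BreuilDiamond}); since $M_\infty(-)$ is exact and $M_\infty(\sigma)$ is maximal Cohen--Macaulay over $S_\infty$, vanishing at the maximal ideal gives $M_\infty(\sigma)=0$. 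For $\sigma=\sigma_J\in\mathscr{D}(\brho)$, we first use that $M_\infty(\sigma_J)$ is $p$-torsion free, supported on $\Spec R^{\mathrm{cris},\sigma_J}_\infty$, and maximal Cohen--Macaulay over it (as recalled before \cite[Prop.~3.5.1]{BreuilDiamond}); moreover by \emph{loc.~cit.} $M_\infty(\sigma(\tau^*))$ is locally free of rank one over $R^{\tau^*}_\infty[1/p]$ whenever $\sigma_J\in\JH(\overline{\sigma(\tau^*)})$. Choosing $\tau$ so that $\sigma(\tau^*)$ has $\sigma_J$ as its unique Jordan--Hölder constituent in $\mathscr{D}(\brho)$, an Auslander--Buchsbaum / Cohen--Macaulay count (as in \cite[\S3]{BreuilDiamond} or \cite[\S6.4]{EGS}) shows $M_\infty(\sigma_J)$ is free of rank one over $R^{\mathrm{cris},\sigma_J}_\infty$; reducing mod $\varpi$ and using that $R^{\mathrm{cris},\sigma_J}_\infty/\varpi = R^{\mathrm{cris},\sigma_J}_\infty\otimes_{\mathcal O}\F$ is the quotient of $R_\infty/\varpi$ by $\bar{\cI}^{\sigma_J}$ (by \cite[Thm.~7.2.1]{EGS} and Theorem \ref{thm--Le}), we get that the action of $R_\infty$ on $M_\infty(\sigma_J)$ factors through $R^{\mathrm{cris},\sigma_J}_\infty/\varpi$ and that $M_\infty(\sigma_J)$ is free of rank one over it.

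For part (ii), the key input is the relation \eqref{equ--inj-env}: ${\rm Proj}_{\cO[\Gamma]}(\sigma_J)\otimes_{\cO}E\cong\oplus_{\tau\in T_{J,\emptyset}}\sigma(\tau)$, so $\PG\sigma_J$ is the reduction mod $\varpi$ of a $K$-lattice in $\oplus_{\tau\in T_{J,\emptyset}}\sigma(\tau^*)$ (after dualizing types). By exactness of $M_\infty(-)$ and the rank-one statements of part (i) applied to each $\tau\in T_{J,\emptyset}$, the module $M_\infty(\PG\sigma_J)$ is $p$-torsion free, maximal Cohen--Macaulay, and generically free of rank one over $R^{\mathrm{tame},\sigma_J}_\infty = R^{\Box,\psi,T_{J,\emptyset}}_{\brho}\otimes_{R^{\Box,\psi}_{\brho}}R_\infty$; since the latter ring is itself Cohen--Macaulay (it is a power series ring over $\cO[\![(X_i,Y_i)_i]\!]/(g_i(J,\emptyset))_i$ by Theorem \ref{thm--Le}) and $M_\infty(\PG\sigma_J)$ is maximal Cohen--Macaulay and generically free of rank one over it, an argument as in \cite[\S3.4]{BreuilDiamond} (or \cite[Lem.~6.2]{Le}) shows $M_\infty(\PG\sigma_J)$ is free of rank one, hence cyclic, over $R^{\mathrm{tame},\sigma_J}_\infty$. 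Reducing mod $\varpi$ and using $R^{\mathrm{tame},\sigma_J}_\infty/\varpi=(R_\infty/\varpi)/\bar{\cI}^{\mathrm{tame},\sigma_J}$ gives that the $R_\infty$-action factors through $R^{\mathrm{tame},\sigma_J}_\infty/\varpi$ and that $M_\infty(\PG\sigma_J)$ is a cyclic $R_\infty$-module.

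The main obstacle is the Cohen--Macaulayness/depth bookkeeping needed to pass from ``generically free of rank one'' to ``free of rank one'': one must verify that $M_\infty(\PG\sigma_J)$ has the right depth over $R^{\mathrm{tame},\sigma_J}_\infty$ and that this ring is reduced (equidimensional) with the expected dimension, so that a module which is maximal Cohen--Macaulay and has rank one at each generic point is forced to be free. This is exactly the type of argument carried out in \cite[\S3]{BreuilDiamond} and \cite[\S6--7]{EGS}, \cite{Le}; here it goes through verbatim because $\brho$ is strongly generic, all the relevant local deformation rings have the explicit shape given in Theorem \ref{thm--Le}, and the minimal patched module $M_\infty$ is finite projective over $S_\infty[\![\GL_2(\cO_L)]\!]$. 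A minor additional point is the identification of the mod-$\varpi$ reductions of the deformation rings with the quotients of $R_\infty/\varpi$ by $\bar{\cI}^{\sigma_J}$ and $\bar{\cI}^{\mathrm{tame},\sigma_J}$, which is just the definition of these ideals together with \cite[Thm.~7.2.1]{EGS} and Theorem \ref{thm--Le}.
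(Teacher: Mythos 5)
Your part (i) is essentially correct and runs in the same circle of ideas as the paper. Two remarks: the vanishing of $M_{\infty}(\sigma)$ for $\sigma\notin\mathscr{D}(\brho)$ is not something you can get for free from the $K$-socle of $\pi_v^D(\overline{r})$ --- the inclusion $\soc_K\pi_v^D(\overline{r})\subseteq\oplus_{\sigma\in\mathscr{D}(\brho)}\sigma$ \emph{is} the weight part of Serre's conjecture, so you are implicitly invoking the main results of \cite{Gee}, \cite{Gee-Kisin}, which is exactly what the paper cites. For freeness over $R^{\mathrm{cris},\sigma}_{\infty}/\varpi$, your Auslander--Buchsbaum argument (maximal Cohen--Macaulay over a regular local ring plus generic rank one) is a legitimate alternative to the paper's route (cyclicity via \cite[Thm.~10.2.1]{EGS}, then faithfulness of a cyclic module over the domain $R^{\mathrm{cris},\sigma}_{\infty}/\varpi$ of the correct Krull dimension); both are standard and the conclusion is the same.

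Part (ii) has a genuine gap. The ring $R^{\mathrm{tame},\sigma_J}_{\infty}$ is, by Theorem \ref{thm--Le}, a power series ring over $\cO[\![(X_i,Y_i)_i]\!]/(g_i(J,\emptyset))_i$ with $g_i(J,\emptyset)$ of the form $Y_i(Y_i-p)$, $Y_i(X_iY_i-p)$ or $X_i(X_iY_i-p)$; it is therefore neither regular nor a domain, and has several minimal primes (one for each type $\tau\in T_{J,\emptyset}$). Over such a ring, ``maximal Cohen--Macaulay and generically free of rank one on each component'' does \emph{not} imply cyclic: the module $\F[\![X]\!]\oplus\F[\![Y]\!]$ over $\F[\![X,Y]\!]/(XY)$ is maximal Cohen--Macaulay of rank one at each generic point but requires two generators. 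This is exactly the situation one must rule out, and doing so is the entire content of \cite[Thm.~5.1]{Le}: one has to match the lattice/submodule structure of $\PG\sigma_J$ (through the intermediate lattices $\sigma^{\circ}(\tau)$ and the rings $R^{\Box,T_{J,I}}_{\brho}$ for varying $I$) against the way the components of $\Spec R^{\mathrm{tame},\sigma_J}_{\infty}$ intersect, so that the patched modules attached to the individual types glue into a single cyclic module rather than a direct sum. Your proposal reduces the statement to precisely this gluing problem and then asserts it ``goes through verbatim,'' which is where the argument is missing; the paper sidesteps it by quoting \cite[Thm.~5.1]{Le} directly, and any self-contained proof would have to reproduce that argument.
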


\begin{proof}
(i) The first statement is the main result of \cite{Gee} (see also \cite[Cor.~5.4.5]{Gee-Kisin}). The cyclicity of $M_{\infty}(\sigma)$ for $\sigma\in\mathscr{D}(\brho)$ follows from \cite[Thm.~10.2.1]{EGS}.  Note that $M_{\infty}(\sigma)$ has Krull dimension $q+j$, which is equal to the dimension of $R_{\infty}^{{\rm cris},\sigma}/\varpi$. Since $R_{\infty}^{{\rm cris},\sigma}/\varpi$ is a domain, $M_{\infty}(\s)$ is a faithful $R_{\infty}^{{\rm cris},\sigma}/\varpi$-module, giving the result.

(ii) is \cite[Thm.~5.1]{Le}.
\end{proof}

\begin{proposition}\label{lem-M(ord)-cyclic}
Let $\s_0$ denote $\s_{\emptyset}.$ Then $M_{\infty}(\Theta^{\rm ord}_{\s_0})$ is a cyclic $R_{\infty}$-module.
\end{proposition}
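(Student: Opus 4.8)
The plan is to show that $M_{\infty}(\Theta_{\s_0}^{\rm ord})$ is cyclic by exhibiting it as a quotient of the cyclic module $M_{\infty}(\Proj_{\Gamma}\s_0)$ and identifying the quotient structure via the local-global compatibility of Proposition \ref{thm-BD-Rord}. First I would recall from Lemma \ref{lemma:Theta-ord} that $\Theta_{\s_0}^{\rm ord}$ is a quotient of $\Proj_{\tGamma}\s_0$ which has Loewy length $3$, but more importantly that its $K_1$-coinvariants $(\Theta_{\s_0}^{\rm ord})_{K_1}=\Theta_{\s_0}^{\rm ord}/\soc(\Theta_{\s_0}^{\rm ord})$ (Lemma \ref{lemma:Theta-ord-K1}) is a quotient of $\Ind_I^K\chi_{\s_0}$, hence in particular a $\Gamma$-representation, and that there is a quotient map $\Theta_{\s_0}^{\rm ord}\twoheadrightarrow \tau$ with $\tau=\s_0$. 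Applying the exact functor $M_{\infty}(-)$ to the short exact sequence $0\to \bigoplus_{i\in\cS}E_{\s_0,\mu_i^-(\s_0)}\to \Theta_{\s_0}^{\rm ord}\to \s_0\to 0$ from \eqref{eq:seq-Theta-ord} reduces the computation of $M_{\infty}(\Theta_{\s_0}^{\rm ord})$ to understanding $M_{\infty}$ of these length-two pieces and of $\s_0$, all of which are governed by Proposition \ref{thm-Le} and Proposition \ref{thm-BD-Rord}.

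The key point is that Proposition \ref{thm-BD-Rord} tells us the $R_{\infty}$-action on $M_{\infty}(\Theta_{\s_0}^{\rm ord})$ factors through $R_{\infty}^{\rm red}/\varpi$ (or rather through the quotient of $R_{\infty}$ by $\bar{\cI}^{\rm red}$), while the action on $M_{\infty}(\Proj_{\Gamma}\s_0)$ factors through $R_{\infty}^{{\rm tame},\s_0}/\varpi$ by Proposition \ref{thm-Le}(ii), and $M_{\infty}(\Proj_{\Gamma}\s_0)$ is cyclic there. The strategy is then: $\Theta_{\s_0}^{\rm ord}$ should be realized (up to the socle, which contributes only $\s_0$-isotypic pieces) inside the $\Gamma$-representation theory so that $M_{\infty}(\Theta_{\s_0}^{\rm ord})$ becomes a module over the compositum quotient corresponding to $\bar{\cI}^{{\rm tame},\s_0}+\bar{\cI}^{\rm red}=\bar{\cI}^{\s_0}$ by Proposition \ref{prop-relation-ideals}, i.e.\ over $R_{\infty}^{{\rm cris},\s_0}/\varpi$, and by Proposition \ref{thm-Le}(i) $M_{\infty}(\s_0)$ is free of rank one over that ring. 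Concretely I would argue that the natural surjection $\Proj_{\tGamma}\s_0\twoheadrightarrow \Theta_{\s_0}^{\rm ord}$ together with the surjection $\Proj_{\Gamma}\s_0\twoheadrightarrow (\Theta_{\s_0}^{\rm ord})_{K_1}$, combined with the exact sequence of Proposition \ref{prop:Theta-twoparts} (which expresses $\Theta_{\s_0}$ as a fibre product of $\Theta_{\s_0}^{\rm ord}$ and $(\Theta_{\s_0})_{K_1}$ over $(\Theta_{\s_0}^{\rm ord})_{K_1}$) give, after applying $M_{\infty}(-)$ and using exactness, a presentation of $M_{\infty}(\Theta_{\s_0}^{\rm ord})$ as a quotient of $M_{\infty}(\Proj_{\Gamma}\s_0)$; since the latter is cyclic over $R_{\infty}$, so is the former.

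I expect the main obstacle to be bookkeeping the precise relationship between $\Theta_{\s_0}^{\rm ord}$ and the $\Gamma$-projective cover: although $(\Theta_{\s_0}^{\rm ord})_{K_1}$ is a $\Gamma$-quotient of $\Proj_{\Gamma}\s_0$, the representation $\Theta_{\s_0}^{\rm ord}$ itself is genuinely a $\tGamma$-module (not killed by $\fm_{K_1}$), so one must feed in the reducibility constraint from Proposition \ref{thm-BD-Rord} to cut down the $\fm_{K_1}$-direction. The cleanest route is probably to first prove the intermediate claim that the $R_{\infty}$-action on $M_{\infty}(\Theta_{\s_0}^{\rm ord})$ factors through $R_{\infty}^{{\rm cris},\s_0}/\varpi$: the $\bar{\cI}^{{\rm tame},\s_0}$ part comes from the fact that $\Theta_{\s_0}^{\rm ord}$ is built from Jordan--H\"older factors of $\Ind_I^K\chi_{\s_0}$ and $\Ind_I^K\chi_{\s_0}^s$ (so $M_{\infty}$ of it is supported on the tame locus, as in Proposition \ref{thm-Le}(ii) applied to the relevant pieces), the $\bar{\cI}^{\rm red}$ part is exactly Proposition \ref{thm-BD-Rord}, and then Proposition \ref{prop-relation-ideals} gives $\bar{\cI}^{\s_0}$. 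Once we know $M_{\infty}(\Theta_{\s_0}^{\rm ord})$ is a module over the \emph{domain} $R_{\infty}^{{\rm cris},\s_0}/\varpi$ and receives a surjection from the cyclic module $M_{\infty}(\Proj_{\Gamma}\s_0)$ (which, being a quotient of a cyclic $R_{\infty}$-module, is itself cyclic over $R_{\infty}$ and hence over $R_{\infty}^{{\rm cris},\s_0}/\varpi$), cyclicity of $M_{\infty}(\Theta_{\s_0}^{\rm ord})$ follows, completing the proof.
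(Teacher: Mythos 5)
Your plan does not match the paper's proof, and it has a genuine gap. The paper's argument is much shorter and runs in an entirely different direction: by Nakayama it suffices to show
\[
\dim_{\F}\Hom_K\bigl(\Theta_{\s_0}^{\rm ord},\pi_v^D(\overline{r})\bigr)=1.
\]
Since $\JH(\Ind_I^K\chi_{\s_0})\cap\mathscr{D}(\brho)=\{\s_0\}$, the hypothesis of Proposition \ref{prop:Ord-semisimple} is satisfied; combined with the semisimplicity of $\Ord_P\pi_v^D(\overline{r})$ (Proposition \ref{prop--ord--semisimple}, the Breuil--Ding/Hu input), that proposition gives an isomorphism $\Hom_K(\s_0,\pi_v^D(\overline{r}))\simto\Hom_K(\Theta_{\s_0}^{\rm ord},\pi_v^D(\overline{r}))$, and the left side is one-dimensional by Proposition \ref{thm-Le}(i). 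You never invoke Proposition \ref{prop:Ord-semisimple} or the semisimplicity result, and these are the real engine of the proof.

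Beyond this mismatch, your proposed route doesn't close. The central claim you want — that $M_{\infty}(\Theta_{\s_0}^{\rm ord})$ receives a surjection from the cyclic module $M_{\infty}(\Proj_{\Gamma}\s_0)$ — is false as stated: $\Theta_{\s_0}^{\rm ord}$ is not a $\Gamma$-representation (it has $\rsoc(\Theta_{\s_0}^{\rm ord})\cong\s_0^{\oplus f}$ not killed by $\fm_{K_1}$), so there is no surjection $\Proj_{\Gamma}\s_0\twoheadrightarrow\Theta_{\s_0}^{\rm ord}$, only from $\Proj_{\tGamma}\s_0$. But the cyclicity of $M_{\infty}(\Proj_{\tGamma}\s_0)$ is precisely what the present proposition is being used to prove (it is a special case of Theorem \ref{thm-main-cyclicity}), so leaning on it here would be circular. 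Proposition \ref{prop:Theta-twoparts} does not help either: it presents $\Theta_{\s_0}$ as a pullback, so after applying $M_{\infty}(-)$ it is used in the \emph{proof} of Theorem \ref{thm-main-cyclicity} to deduce cyclicity of $M_{\infty}(\Theta_{\s_0})$ from that of $M_{\infty}(\Theta_{\s_0}^{\rm ord})$, $M_{\infty}((\Theta_{\s_0})_{K_1})$ and $M_{\infty}((\Theta_{\s_0}^{\rm ord})_{K_1})$ — it does not express $M_{\infty}(\Theta_{\s_0}^{\rm ord})$ as a quotient of anything useful. Finally, even if you could show the $R_{\infty}$-action factors through $R_{\infty}^{{\rm cris},\s_0}/\varpi$, that alone does not give cyclicity of the module; the ideal-theoretic bookkeeping via Propositions \ref{thm-BD-Rord} and \ref{prop-relation-ideals} belongs to the subsequent step of Theorem \ref{thm-main-cyclicity}, where one already assumes the present proposition as an input.
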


\begin{proof}
By Nakayama's lemma, it suffices to show the dual of $M_{\infty}(\Theta^{\rm ord}_{\s_0})/\frak{m}_{\infty},$
\[
(M_{\infty}(\Theta^{\rm ord}_{\s_0})/\frak{m}_{\infty})^{\vee} = \Hom_K(\Theta^{\rm ord}_{\s_0}, \pi^D_{v}(\overline{r}))
\]
is of dimension one over $\F.$

By Proposition \ref{prop--ord--semisimple}, $\Ord_P\pi_v^D(\overline{r})$ is a semisimple $T$-representation. On the other hand, the description of $\mathscr{D}(\brho)$ (see \cite[\S11]{BP}) implies that $\JH(\Ind_I^K\chi_{\sigma_0})\cap\mathscr{D}(\brho)=\{\sigma_0\}$.   The result then follows from Proposition \ref{prop:Ord-semisimple} combined with Proposition \ref{thm-Le}(i).
\end{proof}

\begin{proof}[Proof of Theorem \ref{thm-main-cyclicity}]
By Nakayama's lemma, it is equivalent to show that $\dim_{\F}\Hom_{K}(\Proj_{\tGamma}\sigma,\pi_v^D(\overline{r}))=1 $ for any $\sigma\in\mathscr{D}(\brho)$.
We will check the conditions (a), (b), (c) of  Theorem \ref{thm-criterion}
 for $\pi=\pi_{v}^D(\overline{r})$, from which the result follows.

The condition (a) is  a consequence of Proposition \ref{thm-Le}(ii), together with general property of $D_0(\brho)$. For the condition (b), we use  the exact sequence
\[0\ra \pi_v^D(\overline{r})\ra M_{\infty}^{\vee}\To{\times (x_i)_{i\in I}} \bigoplus_{i\in I} M_{\infty}^{\vee}\]
where $(x_i)_{i\in I}$ is any finite set of generators of $\fm_{\infty}$. Since $M_{\infty}^{\vee}$ is injective in the category of smooth representations of $K/Z_1$ on $\cO$-torsion modules,  we deduce that $\Ext^1_{K/Z_1}(\sigma,\pi_v^D(\overline{r}))\neq0 $ only if $\Hom_{K}(\sigma,M_{\infty}^{\vee})\neq0$. Hence the condition (b) follows by Proposition \ref{thm-Le}(i).

It remains to check the condition (c) for $\s = \sigma_0$. By Proposition \ref{prop:Theta-twoparts} (with the notation therein)  and the  exactness of $M_{\infty}(-)$, we have a short exact sequence
\begin{equation}\label{equ-cyclicity-exact-seq}
0 \ra M_{\infty}(\Theta_{\sigma_0})\ra  M_{\infty}(\Theta_{\sigma_0}^{\rm ord}) \oplus M_{\infty} ( (\Theta_{\s_0})_{K_1})\ra M_{\infty}((\Theta_{\sigma_0}^{\rm ord})_{K_1})\ra0.
 \end{equation}
 Since $(\Theta_{\sigma_0}^{\rm ord})_{K_1}$ is a quotient of $\Ind_I^K \chi_{\s_0}$ (by Lemma \ref{lemma:Theta-ord-K1}) and $\JH(\Ind_I^K \chi_{\s_0}) \cap \Serre = \{\s_0\}$, we obtain isomorphisms
\[M_{\infty}\big((\Theta_{\sigma_0}^{\rm ord})_{K_1}\big) \xleftarrow{\sim} M_{\infty}(\Ind_I^K \chi_{\s_0}) \simto
  M_{\infty}(\s_0).\]
Note that $(\Theta_{\s_0})_{K_1}$ is a quotient of $\Proj_{\G} (\s_0).$ So by Proposition \ref{thm-Le}(ii), $M_{\infty}((\Theta_{\s_0})_{K_1})$ is cyclic over $R_{\infty}$ and the  ideal $\bar{\cI}\defn \Ann_{R_{\infty}/\varpi}(M_{\infty}((\Theta_{\s_0})_{K_1})) $ satisfies
\[
\bar{\cI}^{{\rm tame},\s_0} \subseteq \bar{\cI} \subseteq \bar{\cI}^{\s_0}.
\]
By Proposition \ref{lem-M(ord)-cyclic}, $M_{\infty}(\Theta^{\rm ord}_{\s_0})$ is cyclic over $R_{\infty}.$ Let $\bar{\cI}^{\ord,\s_0}$ denote the ideal $\Ann_{R_{\infty}/\varpi} \left( M_{\infty}(\Theta^{\rm ord}_{\s_0}) \right).$ Then it follows from Proposition \ref{thm-BD-Rord} (in the minimal case) and the structure of $\Theta^{\rm ord}_{\s_0}$ that
\[
\bar{\cI}^{\rm red} \subseteq \bar{\cI}^{\ord,\s_0} \subseteq \bar{\cI}^{\s_0}.
\]
By Proposition \ref{prop-relation-ideals}, we get
\[
\bar{\cI} + \bar{\cI}^{\rm ord,\s_0} = \bar{\cI}^{\s_0},
\]
so $M_{\infty}(\Theta_{\sigma_0})$ is cyclic over $R_{\infty}$ by applying  Lemma \ref{lemma-cyclic-CA} below to   (\ref{equ-cyclicity-exact-seq}).
\end{proof}

\begin{lemma}\label{lemma-cyclic-CA}
Let $(R,\fm)$ be a commutative noetherian local ring with $k=R/\fm$. Let   $\cI_0, \cI_1,\cI_2$ be ideals of $R$ such that $\cI_1,\cI_2\subset \cI_0\subset \fm$. Consider the natural surjective homomorphism
$R/\cI_1\oplus R/\cI_2\twoheadrightarrow R/\cI_0$. Then $\Ker(R/\cI_1\oplus R/\cI_2\twoheadrightarrow R/\cI_0)$ is a cyclic $R$-module if and only if $\cI_1+\cI_2=\cI_0.$
\end{lemma}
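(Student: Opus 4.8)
The claim is a purely commutative-algebra statement about the kernel $N \defn \Ker\big(R/\cI_1 \oplus R/\cI_2 \twoheadrightarrow R/\cI_0\big)$, where the map sends $(\bar a, \bar b) \mapsto \overline{a-b}$ (mod $\cI_0$). The plan is to compute $N$ explicitly and then apply Nakayama's lemma to characterize cyclicity in terms of $\dim_k N/\fm N$.

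First I would identify $N$ concretely. An element $(\bar a,\bar b) \in R/\cI_1 \oplus R/\cI_2$ lies in $N$ iff $a - b \in \cI_0$. Using the surjection $R \twoheadrightarrow N$, $r \mapsto (\bar r \bmod \cI_1, \bar r \bmod \cI_2)$ followed by adding the ``anti-diagonal'' contributions coming from $\cI_0$, one sees that $N$ is generated as an $R$-module by the image of $R$ under the diagonal map together with the images of $\cI_0$ in each factor; more precisely there is a short exact sequence
\[
0 \ra \cI_0/(\cI_1 \cap \cI_2) \ra N \ra R/(\cI_1+\cI_2) \ra 0,
\]
where the first map is $\bar x \mapsto (\bar x \bmod \cI_1, 0)$ (equivalently $(0,-\bar x)$, the two differing by the diagonal image of $x$) and the second is $(\bar a,\bar b)\mapsto \overline{a-b} \bmod (\cI_1+\cI_2)$, which is well-defined and surjective because $a - b \in \cI_0 \subseteq \fm$ is unconstrained modulo $\cI_1+\cI_2$. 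This is the one genuine computation; everything else is formal. Then I would tensor with $k = R/\fm$ (right-exactness) to get a surjection $N/\fm N \twoheadrightarrow R/(\fm + \cI_1 + \cI_2) = R/\fm = k$ with kernel a quotient of $(\cI_0/(\cI_1\cap\cI_2)) \otimes_R k = \cI_0/(\fm\cI_0 + \cI_1 + \cI_2)$.

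For the ``if'' direction: if $\cI_1 + \cI_2 = \cI_0$, the term $\cI_0/(\fm\cI_0+\cI_1+\cI_2) = \cI_0/\cI_0 = 0$ vanishes, so $N/\fm N \cong k$ is one-dimensional, hence $N$ is cyclic by Nakayama. For the ``only if'' direction: if $N$ is cyclic then $\dim_k N/\fm N \le 1$, forcing $\cI_0/(\fm\cI_0 + \cI_1 + \cI_2) = 0$, i.e. $\cI_0 = \fm\cI_0 + \cI_1 + \cI_2$; since $\cI_0$ is finitely generated (noetherian) and $\cI_1 + \cI_2 \subseteq \cI_0$, Nakayama's lemma applied to the finitely generated module $\cI_0/(\cI_1+\cI_2)$ gives $\cI_0/(\cI_1+\cI_2) = 0$, i.e. $\cI_1 + \cI_2 = \cI_0$, as desired.

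The only delicate point — and hence the step I would write out most carefully — is pinning down the short exact sequence for $N$, in particular checking that the map $\cI_0/(\cI_1\cap\cI_2) \to N$ is well-defined and injective (injectivity: if $\bar x \mapsto 0$ then $x \in \cI_1$ and $x \in \cI_2$, so $x \in \cI_1 \cap \cI_2$) and that the quotient is exactly $R/(\cI_1+\cI_2)$ and not something smaller. One should be slightly careful that the kernel of $N/\fm N \to k$ is a \emph{quotient} of $\cI_0 \otimes_R k$ rather than equal to it, but this suffices for both directions of the argument since we only ever need the vanishing/non-vanishing of that middle term. No further genericity or representation-theoretic input is needed; the lemma is self-contained commutative algebra, applied in the proof of Theorem~\ref{thm-main-cyclicity} to the sequence \eqref{equ-cyclicity-exact-seq}.
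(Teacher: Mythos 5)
Your proposed short exact sequence for $N$ is wrong, and the errors propagate through the argument. You claim $0 \to \cI_0/(\cI_1 \cap \cI_2) \to N \to R/(\cI_1+\cI_2) \to 0$ with first map $\bar x \mapsto (\bar x\bmod\cI_1,\,0)$ and second map $(\bar a,\bar b)\mapsto\overline{a-b}$. The first map is \emph{not} injective: $(\bar x,0)=0$ in $R/\cI_1\oplus R/\cI_2$ only requires $x\in\cI_1$ (the second coordinate is already $0$), so the kernel on $\cI_0$ is $\cI_1$, not $\cI_1\cap\cI_2$; your injectivity check ``$x\in\cI_1$ \emph{and} $x\in\cI_2$'' is spurious, since the second condition never arises. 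The second map is \emph{not} surjective onto $R/(\cI_1+\cI_2)$: since $(\bar a,\bar b)\in N$ forces $a-b\in\cI_0$, the image is only $\cI_0/(\cI_1+\cI_2)$, a proper submodule (as $\cI_0\subseteq\fm$). The roles of the two end-terms are in fact swapped: the correct sequence is
\[
0 \lra R/(\cI_1\cap\cI_2) \lra N \lra \cI_0/(\cI_1+\cI_2) \lra 0,
\]
where the first map is the diagonal $r\mapsto(\bar r,\bar r)$ (kernel $\cI_1\cap\cI_2$) and the second is $(\bar a,\bar b)\mapsto\overline{a-b}$ (image $\cI_0/(\cI_1+\cI_2)$, kernel exactly the diagonal). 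You also miscompute a tensor product: $(\cI_0/(\cI_1\cap\cI_2))\otimes_R k$ equals $\cI_0/(\fm\cI_0+\cI_1\cap\cI_2)$, not $\cI_0/(\fm\cI_0+\cI_1+\cI_2)$.

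Even granting the corrected sequence, your ``only if'' direction has a gap: from ``$\Ker(N\otimes k\to k)$ is a \emph{quotient} of $X$'' and ``$N\otimes k\cong k$'' you cannot conclude $X=0$, only that the image of $X$ in $N\otimes k$ vanishes. What you actually need, with the correct sequence, is that $k=(R/(\cI_1\cap\cI_2))\otimes k\to N\otimes k$ is \emph{injective} — true because composing with $N\to R/\cI_1$ sends $\bar 1\mapsto\bar 1\notin\fm/\cI_1$ — whence $\dim_k N\otimes k = 1+\dim_k\cI_0/(\fm\cI_0+\cI_1+\cI_2)$, and cyclicity forces the second summand to vanish, then Nakayama finishes. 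So the route is salvageable but needs the diagonal-injectivity check you omitted.

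The paper's proof sidesteps all of this: it never identifies $N$ explicitly, but instead applies $-\otimes_R k$ to $0\to N\to R/\cI_1\oplus R/\cI_2\to R/\cI_0\to0$ and reads off the $\Tor$ long exact sequence. Using $\Tor_1^R(R/\cI_i,k)\cong\cI_i/\fm\cI_i$ and $(R/\cI_i)\otimes k\cong k$ (both because $\cI_i\subseteq\fm$), one sees directly that $\dim_k N\otimes k=1$ iff $\cI_1/\fm\cI_1\oplus\cI_2/\fm\cI_2\to\cI_0/\fm\cI_0$ is surjective, which by Nakayama is precisely $\cI_1+\cI_2=\cI_0$. This avoids both the delicate computation of $N$ and the separate injectivity verification, and is the argument you should adopt.
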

\begin{proof}
Let $M$ denote the $R$-module $\Ker(R/\cI_1\oplus R/\cI_2\twoheadrightarrow R/\cI_0).$ The short exact sequence
\[
0\to M\to R/\cI_1\oplus R/\cI_2 \to R/\cI_0 \to 0
\]
gives a long exact sequence
\begin{multline*}
 \Tor_1^R(R/\cI_1, k)\oplus \Tor_1^R(R/\cI_2,k)\To{\a} \Tor_1^R(R/\cI_0,k) \to M\otimes k\\
 \to (R/\cI_1\otimes k)\oplus (R/\cI_2\otimes k) \to R/\cI_0 \otimes k \to 0.
\end{multline*}
Hence $\dim_k M\otimes k = 1$ if and only if $\a$ is surjective. The assumption $\cI_i\subset \fm$ implies that there is a natural isomorphism $\Tor_1^R(R/\cI_i,k)\cong \cI_i\otimes_{R}k$, for $i\in \{0,1,2\}.$ Thus, $\alpha$ is surjective if and only if the natural morphism
\[(\cI_1\oplus \cI_2)\otimes_{R}k\ra \cI_0\otimes_R k\]
is surjective. By Nakayama's lemma, this is equivalent to $\cI_1+\cI_2=\cI_0.$
\end{proof}

Recall the $\F[\![I/Z_1]\!]$-module $W_{\chi,3} = \Proj_{I/Z_1}\chi/\fm_{I_1/Z_1}^3$ introduced at the beginning of \S\ref{section:Rep-II}. We have the following  consequence of Theorem  \ref{thm-main-cyclicity}.

\begin{corollary}\label{cor:multione-Iwahori}
(i) For any $\sigma\in\mathscr{D}(\brho)$, $\dim_{\F}\Hom_{K}(\Proj_{\tGamma}\sigma,\pi^D_{v}(\overline{r}))=1$.

(ii) For any $\chi\in \JH(\pi^D_{v}(\overline{r})^{I_1})$, $\dim_{\F}\Hom_I(W_{\chi,3},\pi_{v}^D(\overline{r}))=1$.
\end{corollary}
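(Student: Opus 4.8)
The plan is to deduce Corollary \ref{cor:multione-Iwahori} from Theorem \ref{thm-main-cyclicity} essentially by unwinding duality, exactly as the criterion in \S\ref{section-BP} was designed to be applied. First I would observe that the representation $\pi\defn\pi^D_v(\overline{r})$ of $G=\GL_2(L)$ satisfies the three hypotheses (a), (b), (c) of Theorem \ref{thm-criterion}: condition (a), namely $\pi^{K_1}\cong D_0(\brho)$, is Proposition \ref{thm-Le}(ii); condition (b) follows from the injectivity of $M_\infty^\vee$ over $\cO\llbracket K/Z_1\rrbracket$ together with Proposition \ref{thm-Le}(i), as already verified inside the proof of Theorem \ref{thm-main-cyclicity}; and condition (c) for $\sigma_0\defn\sigma_\emptyset$ is precisely the content of Proposition \ref{lem-M(ord)-cyclic} reinterpreted via $(M_\infty(\Theta^{\rm ord}_{\sigma_0})/\fm_\infty)^\vee=\Hom_K(\Theta^{\rm ord}_{\sigma_0},\pi)$ and Nakayama. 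Since $\brho$ is reducible nonsplit, hence indecomposable, and strongly generic by assumption, Theorem \ref{thm-criterion} applies.

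Now for part (i): Theorem \ref{thm-criterion}(i) gives $\dim_\F\Hom_K(\Proj_{\tGamma}\sigma,\pi)=1$ for every $\sigma\in\mathscr{D}(\brho)$, which is exactly the assertion of (i) once one recalls $\Proj_{\tGamma}\sigma$ is the projective envelope of $\sigma$ in $\tGamma$-modules; equivalently by the isomorphism $M_\infty(\Proj_{\tGamma}\sigma)/\fm_\infty\cong\Hom_K(\Proj_{\tGamma}\sigma,\pi)^\vee$ and Theorem \ref{thm-main-cyclicity} (the cyclicity of $M_\infty(\Proj_{\tGamma}\sigma)$) plus Nakayama. For part (ii): Theorem \ref{thm-criterion}(ii) directly states $\dim_\F\Hom_I(W_{\chi,3},\pi)=1$ for every $\chi\in\JH(\pi^{I_1})$, which is (ii) verbatim. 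So the corollary is really just the statement of Theorem \ref{thm-criterion} applied to $\pi=\pi^D_v(\overline{r})$, and the only thing to write down carefully is the verification of the three hypotheses, most of which has already been done in the proof of Theorem \ref{thm-main-cyclicity}.

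I do not expect any serious obstacle here, since all the analytic and combinatorial work is packaged into Theorem \ref{thm-criterion}, Theorem \ref{thm-main-cyclicity}, and the auxiliary propositions of \S\ref{section-patching}. The one point requiring a line of care is the passage between the two formulations of multiplicity one: the ``$\dim=1$'' statements about $\Hom_K(\Proj_{\tGamma}\sigma,\pi)$ coming from Theorem \ref{thm-criterion} versus the ``cyclic'' statements about $M_\infty(\Proj_{\tGamma}\sigma)$ coming from Theorem \ref{thm-main-cyclicity}; these are interchanged by the isomorphism $M_\infty(\Proj_{\tGamma}\sigma)/\fm_\infty\cong\Hom_K(\Proj_{\tGamma}\sigma,\pi)^\vee$ (from \eqref{eq:mod-minfty=Hom}, or rather its minimal analogue) together with Nakayama's lemma, so that a finitely generated $R_\infty$-module is cyclic if and only if the $\F$-dimension of its cofiber mod $\fm_\infty$ is $\leq 1$. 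Thus the proof reduces to citing Theorem \ref{thm-main-cyclicity} for (i) and Theorem \ref{thm-criterion}(ii), whose hypotheses are met as above, for (ii).
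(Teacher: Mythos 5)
Your proposal is correct and is essentially the paper's argument, just unwound: the paper's proof of Corollary \ref{cor:multione-Iwahori} is the one-line statement that (i) restates Theorem \ref{thm-main-cyclicity} (via the Nakayama equivalence you spell out), and (ii) follows from (i) by Proposition \ref{prop-dim=1-equivalent}, which is precisely the conclusion of Theorem \ref{thm-criterion}(ii) that you invoke; the verification of hypotheses (a)--(c) of Theorem \ref{thm-criterion} that you carry out is exactly what the paper does inside the proof of Theorem \ref{thm-main-cyclicity}.
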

\begin{proof}
(i) is an equivalent statement of Theorem \ref{thm-main-cyclicity} and (ii) follows from (i) by using Proposition \ref{prop-dim=1-equivalent}.
\end{proof}

\begin{corollary}\label{cor::m^2-torsion}
We have $\pi_v^D (\overline{r})[\frak{m}_{K_1/Z_1}^2] = \wt{D}_0 (\brho).$
\end{corollary}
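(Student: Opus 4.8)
The plan is to identify $\pi_v^D(\overline{r})[\fm_{K_1/Z_1}^2]$ with $\wt{D}_0(\brho)$ by squeezing it between $D_0(\brho)$ and $\wt{D}_0(\brho)$, using the multiplicity-one result of Corollary \ref{cor:multione-Iwahori}(i). Concretely, write $\pi=\pi_v^D(\overline{r})$, $V=\pi[\fm_{K_1/Z_1}^2]$, and recall that by the minimal patching setup together with \cite[Thm.~10.2.1]{EGS} we have $\pi^{K_1}\cong D_0(\brho)$, so in particular $\soc_K\pi\cong\bigoplus_{\sigma\in\mathscr{D}(\brho)}\sigma$ and $D_0(\brho)=\pi^{K_1}\subseteq V$. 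The first step is to observe that $V$ is annihilated by $\fm_{K_1/Z_1}^2$, hence is a $\tGamma$-module, and that $\soc_K V=\soc_K\pi=\bigoplus_{\sigma\in\mathscr{D}(\brho)}\sigma$ since the socle of $V$ agrees with that of $\pi$. Therefore $V$ embeds $\tGamma$-equivariantly into $\bigoplus_{\sigma\in\mathscr{D}(\brho)}\rInj_{\tGamma}\sigma$, compatibly with the decomposition $D_0(\brho)=\bigoplus_\sigma D_{0,\sigma}(\brho)$, and we may decompose $V=\bigoplus_{\sigma\in\mathscr{D}(\brho)}V_\sigma$ with $\soc_{\tGamma}V_\sigma=\sigma$ and $V_\sigma\hookrightarrow\rInj_{\tGamma}\sigma$.

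Next I would show each $V_\sigma$ is multiplicity free and contains $\sigma$ exactly once. By Corollary \ref{cor:multione-Iwahori}(i), $\dim_{\F}\Hom_K(\Proj_{\tGamma}\sigma,\pi)=1$; by \eqref{eq:Proj-Inj} and duality this says $[\pi[\fm_{K_1/Z_1}^2]:\sigma]=1$, i.e. $[V:\sigma]=1$ and hence $[V_\sigma:\sigma]=1$. Then Corollary \ref{cor-I(sigmatau)-geq} (applicable since each $\sigma\in\mathscr{D}(\brho)$ is $2$-generic by Lemma \ref{lem:Serre=3generic}) gives $[V_\sigma:\tau]\le[V_\sigma:\sigma]=1$ for every Serre weight $\tau$, so $V_\sigma$ is multiplicity free. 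Moreover, for $\tau\in\mathscr{D}(\brho)$ with $\tau\neq\sigma$ we need $[V_\sigma:\tau]=0$: this follows because $\tau$ occurs only in the socle of $D_0(\brho)$, hence by condition (b) of Theorem \ref{thm-criterion} (which holds for $\pi$, as checked in the proof of Theorem \ref{thm-main-cyclicity}) any copy of $\tau$ inside $V$ would have to lie in $\soc_K V$, forcing it into $V_\tau$ rather than $V_\sigma$; one argues this by $\Ext^1$-vanishing, noting $\Ext^1_{\tGamma}(\tau,\sigma')=0$ whenever $\tau\notin\mathscr{E}(\sigma')$ together with the explicit extension structure. Thus $V_\sigma$ is a subrepresentation of $\rInj_{\tGamma}\sigma$ with $[V_\sigma:\sigma]=1$ and $[V_\sigma:\tau]=0$ for all $\tau\in\mathscr{D}(\brho)$, $\tau\neq\sigma$, so by Corollary \ref{cor:tD-sigma} we get $V_\sigma\subseteq\wt{D}_{0,\sigma}(\brho)$, whence $V\subseteq\wt{D}_0(\brho)$.

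For the reverse inclusion $\wt{D}_0(\brho)\subseteq V$, the point is that $\wt{D}_0(\brho)$ is annihilated by $\fm_{K_1/Z_1}^2$ by construction (it is a $\tGamma$-module), so it suffices to realize $\wt{D}_0(\brho)$ as a subrepresentation of $\pi$; then it lands automatically in $\pi[\fm_{K_1/Z_1}^2]=V$. I would produce this embedding from the cyclicity statement: by Corollary \ref{cor:multione-Iwahori}(i) and \eqref{eq:mod-minfty=Hom}, $\Hom_K(\Proj_{\tGamma}\sigma,\pi)$ is one-dimensional for each $\sigma\in\mathscr{D}(\brho)$, so there is, up to scalar, a unique nonzero map $\Proj_{\tGamma}\sigma\to\pi$; its image $Q_\sigma$ has $[Q_\sigma:\sigma]=1$, is multiplicity free by Corollary \ref{cor-I(sigmatau)-geq}, and by the same socle/Ext argument as above has $[Q_\sigma:\tau]=0$ for $\tau\in\mathscr{D}(\brho)$, $\tau\neq\sigma$. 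Hence $Q_\sigma$ satisfies exactly the defining maximality property characterizing $\wt{D}_{0,\sigma}(\brho)$ among subrepresentations of $\rInj_{\tGamma}\sigma$ (Corollary \ref{cor:tD-sigma}), so $Q_\sigma=\wt{D}_{0,\sigma}(\brho)$, and since $Q_\sigma\subseteq\pi$ we get $\wt{D}_{0,\sigma}(\brho)\subseteq\pi$; summing over $\sigma$ and using Proposition \ref{prop-BP-13.1} gives $\wt{D}_0(\brho)\hookrightarrow\pi$, landing in $V$. Combined with $V\subseteq\wt{D}_0(\brho)$ this yields equality. The main obstacle I anticipate is the bookkeeping in the two "socle/Ext" arguments — precisely, ruling out that a second $\mathscr{D}(\brho)$-weight $\tau$ creeps into $V_\sigma$ (resp. $Q_\sigma$) as a non-socle subquotient — which requires combining condition (b) of Theorem \ref{thm-criterion}, the multiplicity-freeness from Corollary \ref{cor-I(sigmatau)-geq}, and the precise $\Ext^1_{\tGamma}$-vanishing of Lemma \ref{lemma-Hu10-2.21}; everything else is a formal application of the structure theory developed in \S\ref{subsection:tD0}.
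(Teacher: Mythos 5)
Your argument for the inclusion $\pi[\fm_{K_1/Z_1}^2]\subseteq\wt{D}_0(\brho)$ is sound, though more elaborate than necessary: since $\pi[\fm_{K_1/Z_1}^2]$ has socle $\bigoplus_{\sigma\in\mathscr{D}(\brho)}\sigma$ and each $\sigma\in\mathscr{D}(\brho)$ occurs in it with multiplicity one (Corollary \ref{cor:multione-Iwahori}(i)), it satisfies conditions (i) and (ii) of Proposition \ref{prop-BP-13.1}, and the maximality of $\wt{D}_0(\brho)$ directly forces the inclusion — no decomposition into pieces $V_\sigma$ is needed, and indeed a subrepresentation of a direct sum need not itself decompose, so that step would require extra justification.

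The reverse inclusion is where you have a genuine gap. You set $Q_\sigma=\im(\Proj_{\tGamma}\sigma\to\pi)$ and assert $Q_\sigma=\wt{D}_{0,\sigma}(\brho)$. But $Q_\sigma$ is a quotient of $\Proj_{\tGamma}\sigma$, so $\mathrm{cosoc}_K(Q_\sigma)\cong\sigma$, whereas $\wt{D}_{0,\sigma}(\brho)$ in general has cosocle consisting of Serre weights \emph{not} in $\mathscr{D}(\brho)$ — the weight $\sigma$ sits only in its socle. Satisfying the properties of Corollary \ref{cor:tD-sigma} (multiplicity one of $\sigma$ and absence of other $\mathscr{D}(\brho)$-weights) is precisely what makes $Q_\sigma$ a \emph{sub}representation of $\wt{D}_{0,\sigma}(\brho)$; it does not characterize $\wt{D}_{0,\sigma}(\brho)$ uniquely, which is the \emph{maximal} such object. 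So you only obtain $Q_\sigma\subseteq\wt{D}_{0,\sigma}(\brho)$, which is the wrong direction. The paper handles this by an extension-lifting argument: since $\JH\bigl(\wt{D}_0(\brho)/\soc_K\wt{D}_0(\brho)\bigr)\cap\mathscr{D}(\brho)=\emptyset$ and $\Hom_K(\tau,\pi)=\Ext^1_{K/Z_1}(\tau,\pi)=0$ for any $\tau\notin\mathscr{D}(\brho)$ (Proposition \ref{prop-reducible-dimofExt}(ii)), the inclusion $\soc_K\wt{D}_0(\brho)\hookrightarrow\pi$ lifts uniquely, and injectively, to an inclusion $\wt{D}_0(\brho)\hookrightarrow\pi$ — exactly the mechanism of Lemma \ref{lemma:breuil}, or of the proof of \cite[Lem.~9.2]{Br14}. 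You should replace your $Q_\sigma$ argument with this lifting argument.
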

\begin{proof}
By Proposition \ref{prop-BP-13.1}, we have \[\mathscr{D}(\brho)\cap\JH\big(\wt{D}_0(\brho)/\soc_K\wt{D}_0(\brho)\big)=\emptyset.\] By the proof of \cite[Lem.~9.2]{Br14},  the inclusion $\soc_K\wt{D}_0(\brho)=\oplus_{\sigma\in \mathscr{D}(\brho)}\sigma\subseteq \pi_v^D(\overline{r})$ extends to an inclusion $\wt{D}_0 (\brho) \subseteq \pi_v^D (\overline{r}),$ hence an inclusion $\wt{D}_0 (\brho) \subseteq \pi_v^D (\overline{r})[\frak{m}_{K_1/Z_1}^2].$  Alternatively, we may argue as in the proof of Lemma \ref{lemma:breuil} below, using Proposition \ref{prop-reducible-dimofExt}(ii).
  On the other hand, by Proposition \ref{prop-BP-13.1} and Corollary \ref{cor:multione-Iwahori}(i), the latter inclusion must be an equality.
\end{proof}

Recall from \cite[Cor.~5.3.5]{BHHMS} the following important control theorem of Gelfand-Kirillov dimension.

\begin{theorem}\label{thm:criterion-BHHMS}
Let $\pi$ be a smooth admissible representation of $I/Z_1$ over $\F$. If for each character $\chi$ such that $\Hom_I(\chi,\pi)\neq0$, the natural morphism
\[\Hom_{I}(\chi,\pi)\ra \Hom_I(W_{\chi,3},\pi)\]
is an isomorphism. Then the Gelfand-Kirillov dimension of $\pi$ is at most $f.$
\end{theorem}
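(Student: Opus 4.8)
The statement to prove is Theorem \ref{thm:criterion-BHHMS} (which the excerpt attributes to \cite[Cor. 5.3.5]{BHHMS}), so rather than reconstruct the entire machinery of that companion paper I would organize the argument around its main inputs. The plan is to reduce the control of the Gelfand-Kirillov dimension to a statement about the associated graded module of $\pi^\vee$ over the graded ring $\gr\,\F[\![I_1/Z_1]\!]$, and then to feed in the hypothesis on $W_{\chi,3}$ to bound the growth of a minimal resolution. First I would recall that $\GKdim(\pi)$ equals the Krull-type dimension of the Pontryagin dual $\pi^\vee$ as a module over the Iwasawa algebra $\F[\![I_1/Z_1]\!]$ (equivalently over $\F[\![K_1/Z_1]\!]$, since the two differ by a finite extension), and that this dimension can be read off from the Hilbert series of $\gr_{\fm_{I_1/Z_1}}(\pi^\vee)$ over $\gr\,\F[\![I_1/Z_1]\!]$; here one uses crucially that the graded algebra is a (commutative-up-to-the-usual-twist) polynomial-type algebra isomorphic to a tensor product over the $f$ embeddings of the $\GL_2(\Q_p)$-graded algebra, as recalled in the introduction.

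The heart of the matter is the passage from the hypothesis ``$\Hom_I(\chi,\pi)\simto\Hom_I(W_{\chi,3},\pi)$ for all $\chi$ with $\Hom_I(\chi,\pi)\neq0$'' to a bound on $\dim_{\F}\pi^\vee/\fm^n\pi^\vee$. Dually, the hypothesis says exactly that $\pi^\vee$, as a pseudo-compact $\F[\![I_1/Z_1]\!]$-module, has the property that the projective cover $\oplus_\chi \Proj_{I/Z_1}\chi \to \pi^\vee$ (with one summand for each character in the $I_1$-coinvariants of $\pi^\vee$) has a first syzygy killed by $\fm_{I_1/Z_1}^2$ — more precisely, that $[\pi^\vee : \chi]_{\,\fm^3} = [\pi^\vee:\chi]_{\,\fm}$, i.e.\ $\pi[\fm_{I_1/Z_1}^3]$ and $\pi[\fm_{I_1/Z_1}]$ have the same multiplicities. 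This is the multiplicity condition appearing in Theorem \ref{thm:intro-BHHMS}. So the plan is: (i) show the stated $\Hom$ condition is equivalent to the multiplicity equality $[\pi[\fm_{I_1/Z_1}^3]:\chi]=[\pi[\fm_{I_1/Z_1}]:\chi]$ — this is a routine dualization together with the fact that $\Proj_{I/Z_1}\chi/\fm^3 = W_{\chi,3}$ by definition; (ii) invoke the structural result of \cite{BHHMS}, namely that for modules over $\gr\,\F[\![I_1/Z_1]\!]$ this ``linearity up to degree $2$'' of the start of a minimal resolution forces the Hilbert polynomial to have degree $\le f$, which is Theorem \ref{thm:intro-BHHMS}/\cite[Cor. 5.3.5]{BHHMS}.

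In writing this up for the paper I would keep it short: state that the theorem is \cite[Cor. 5.3.5]{BHHMS}, recall the equivalence with the multiplicity condition of Theorem \ref{thm:intro-BHHMS} (so that the reader sees the two forms of the criterion match), and then simply apply that corollary. The one point that genuinely needs care — and which I expect to be the main obstacle in a self-contained treatment — is the graded-algebra computation underlying \cite{BHHMS}: one must know that the condition on degree-$\le 3$ truncations propagates to all higher degrees, which rests on the fact that $\gr\,\F[\![I_1/Z_1]\!]$ is generated in degree $1$ with relations in degree $2$ and is a tensor product of $f$ copies of the $f=1$ algebra, so that a Koszul-type/Castelnuovo--Mumford regularity bound applies and pins the projective dimension (hence the Hilbert-polynomial degree) to $2f$, with the top graded pieces controlled by the degree-$3$ data. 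Since this is precisely the content of the cited corollary, for the purposes of this paper I would not reprove it but only check that our $\pi=\pi(\brho)$ satisfies the hypotheses (which is Corollary \ref{cor:multione-Iwahori}(ii) proved above), and then conclude $\dim_{\GL_2(F_v)}(\pi(\brho))\le f$, the desired upper bound.
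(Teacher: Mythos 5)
Your plan is the same as the paper's: Theorem \ref{thm:criterion-BHHMS} is not proved in this paper at all but is simply recalled from \cite[Cor.\ 5.3.5]{BHHMS}, and you correctly defer to that citation after matching the $\Hom$-formulation here against the multiplicity condition $[\pi[\fm_{I_1/Z_1}^3]:\chi]=[\pi[\fm_{I_1/Z_1}]:\chi]$ of Theorem \ref{thm:intro-BHHMS}. One correction to your summary of the cited argument: the dual of the hypothesis is not that the first syzygy $\Omega_1\subset\fm_{I_1/Z_1} P_0$ of $\pi^\vee$ is killed by $\fm_{I_1/Z_1}^2$ (that would be strictly stronger), but rather that the cosocle characters of $\pi^\vee$ do not reappear in $\gr^1_{\fm}\pi^\vee$ or $\gr^2_{\fm}\pi^\vee$, which is precisely the multiplicity equality you state immediately afterwards; and what the graded-algebra argument in \cite{BHHMS} extracts from this is a lower bound of $2f$ on the grade of $\gr_{\fm}(\pi^\vee)$ over the $3f$-dimensional algebra $\gr\,\F[\![I_1/Z_1]\!]$ (equivalently GK dimension $\le f$), not an equality of projective dimension with $2f$. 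Neither slip affects your conclusion, since --- like the paper --- you are not reproving the cited result.
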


Combining Theorem \ref{thm:criterion-BHHMS} with Corollary \ref{cor:multione-Iwahori}, Theorem \ref{thm::GN} and Theorem \ref{thm-selfdual-minimal}, we deduce the following result.

\begin{theorem}\label{thm:main-flat}
We make the following assumptions on $\overline{r}:$
\begin{enumerate}
\item[(a)]$\overline{r}|_{G_{F(\sqrt[p]{1})}}$ is absolutely irreducible;  
\item[(b)] for $w\in S\backslash S_p,$ the framed deformation ring $R^{\Box, \psi^{-1}}_w$ is formally smooth; 
\item[(c)] for $w\in S_p \backslash \{v \},$ $\overline{r}|_{I_{F_w}}$ is generic   in the sense of \cite[Def.~11.7]{BP};
\item[(d)] $ \overline{r}_v$ is reducible nonsplit and strongly generic in the sense of Definition \ref{defn:strong-generic}.
\end{enumerate}
Then the following statements hold:
\begin{enumerate}
\item[(i)] $\GKdim (\pi_{v}^{D}(\overline{r}))= f$ and $M_{\infty}$ is a flat $R_{\infty}$-module; 
\item[(ii)] There is an isomorphism of $\Lambda(G)$-modules $(\pi_v^D(\overline{r}))^{\vee} \otimes \psi|_{F_v^{\times}} \circ\det \cong \EE^{2f}\big((\pi_{v}^D(\overline{r}))^{\vee} \big).$
\end{enumerate}
\end{theorem}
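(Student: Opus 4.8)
\textbf{Proof plan for Theorem \ref{thm:main-flat}.}

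The plan is to deduce both statements from the combinatorial ``multiplicity one'' package established in \S\ref{section-BP} together with the cyclicity result Theorem \ref{thm-main-cyclicity}, the Gelfand--Kirillov criterion of \cite{BHHMS}, the Gee--Newton observation, and the self-duality already proved in Theorem \ref{thm-selfdual-minimal}. First I would apply Corollary \ref{cor:multione-Iwahori}(ii): under hypotheses (a)--(d), for every character $\chi$ of $I$ with $\Hom_I(\chi,\pi_v^D(\overline{r}))\neq 0$, equivalently $\chi\in\JH(\pi_v^D(\overline{r})^{I_1})$, we have $\dim_\F\Hom_I(W_{\chi,3},\pi_v^D(\overline{r}))=1$. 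Since the natural restriction map $\Hom_I(W_{\chi,3},\pi_v^D(\overline{r}))\to\Hom_I(\chi,\pi_v^D(\overline{r}))$ (induced by $\chi\hookrightarrow W_{\chi,3}$, i.e. $\chi=\mathrm{cosoc}_I(W_{\chi,3})$) is nonzero for dimension reasons whenever the source is one-dimensional, and $\Hom_I(\chi,\pi_v^D(\overline{r}))$ has dimension at least $1$, both spaces have dimension $1$ and the map is an isomorphism. This verifies exactly the hypothesis of Theorem \ref{thm:criterion-BHHMS} for $\pi=\pi_v^D(\overline{r})$ (which is admissible with a central character, hence restricts to an admissible representation of $I/Z_1$), so $\GKdim(\pi_v^D(\overline{r}))\leq f$.

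Next I would invoke Theorem \ref{thm::GN} (the Gee--Newton appendix to \cite{Gee-Newton}), applied to the minimal patched module $M_\infty=M_\infty^{\min}$ and $\pi=\pi_v^D(\overline{r})=(M_\infty)^\vee[\fm_\infty]$: the inequality $\GKdim(\pi_v^D(\overline{r}))\leq f$ is equivalent to $M_\infty$ being flat over $R_\infty$, and when the equivalent conditions hold one automatically gets $\GKdim(\pi_v^D(\overline{r}))=f$. (One should note here that the lower bound $\GKdim\geq f$ already follows from \cite[Appendix A]{Gee-Newton} as recalled in the introduction, so the equality is forced even without the ``moreover'' in Theorem \ref{thm::GN}.) This gives statement (i). For statement (ii), the flatness just established means precisely that the hypothesis $\GKdim(\pi_v^D(\overline{r}))\leq f$ of Theorem \ref{thm-selfdual-minimal} is satisfied, and that theorem yields the isomorphism of $\Lambda(G)$-modules
\[
\pi_v^D(\overline{r})^\vee\otimes\overline{\psi}|_{F_v^\times}\circ\det\;\cong\;\EE^{2f}\big(\pi_v^D(\overline{r})^\vee\big).
\]
Finally I would reconcile the twist: by construction $\psi$ is the Teichm\"uller lift of $\overline{\psi}=\omega^{-1}(\det\overline r)^{-1}$, so $\psi|_{F_v^\times}$ and $\overline{\psi}|_{F_v^\times}$ induce the same character into $\F^\times$ (the coefficient field is $\F$), and hence the statement as written in (ii) with $\psi|_{F_v^\times}$ is identical to the one coming out of Theorem \ref{thm-selfdual-minimal}.

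I do not expect a serious obstacle here, since essentially all the content has been front-loaded into Corollary \ref{cor:multione-Iwahori}, Theorem \ref{thm::GN}, and Theorem \ref{thm-selfdual-minimal}; the only points requiring a little care are (1) checking that Theorem \ref{thm:criterion-BHHMS} genuinely applies, i.e.\ that the restriction map $\Hom_I(W_{\chi,3},\pi)\to\Hom_I(\chi,\pi)$ is an isomorphism and not merely that both sides have dimension $1$ — this is immediate from the multiplicity-one statement together with the observation that a nonzero map $W_{\chi,3}\to\pi$ cannot vanish on composing with $\chi\hookrightarrow W_{\chi,3}$ when $\Hom_I(W_{\chi,3},\pi)$ is one-dimensional and $\chi$ does occur in $\pi^{I_1}$; and (2) the bookkeeping of the determinant twist between $\psi$ and $\overline\psi$, which is harmless over $\F$. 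Assembling these, the theorem follows.
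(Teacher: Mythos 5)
Your proposal is correct and follows exactly the same route as the paper (Corollary \ref{cor:multione-Iwahori}(ii) verifying the hypothesis of Theorem \ref{thm:criterion-BHHMS}, combined with Theorem \ref{thm::GN} for (i), and Theorem \ref{thm-selfdual-minimal} for (ii)), and your reconciliation of $\psi|_{F_v^\times}$ with $\overline{\psi}|_{F_v^\times}$ over $\F$ is a correct reading. One small slip: the map in Theorem \ref{thm:criterion-BHHMS} runs $\Hom_I(\chi,\pi)\to\Hom_I(W_{\chi,3},\pi)$ and is induced by the quotient $W_{\chi,3}\twoheadrightarrow\chi$ (since $\chi$ is the \emph{cosocle}, not a subobject, of $W_{\chi,3}$); that map is automatically injective, so the two multiplicity-one statements force the isomorphism, which is in effect what your argument uses.
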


\begin{remark}
We expect that the analog of Theorem \ref{thm:main-flat} remains true in the non-minimal case, i.e. $\dim_G(\pi)=f$ for the representation $\pi$ defined in (\ref{def-rep-pi}). This is the case when $\brho$ is semisimple and sufficiently generic, see \cite[\S8]{BHHMS}.
\end{remark}

We record the following (well-known) consequence of Theorem \ref{thm:main-flat}.  Let $x:R_{\infty}\ra \cO'$  be a local morphism of $\cO$-algebras, where $\cO'$ is the ring of integers of a finite extension $E'$ over $E$. Set
\[\Pi(x)^{0}\defn\Hom_{\cO'}^{\rm cont}(M_{\infty}\otimes_{R_{\infty},x}\cO',\cO')\]
and $\Pi(x)\defn\Pi(x)^0\otimes_{\cO'} E'$.
\begin{corollary}\label{cor:Pi(x)}
$\Pi(x)$ is a nonzero admissible unitary Banach representation of $G$ over $E'$ with $G$-invariant unit ball $\Pi(x)^{0}$ which lifts $\pi_{v}^D(\overline{r})\otimes_{\F}\F'$, where $\F'$ denotes the residue field of $\cO'$.
\end{corollary}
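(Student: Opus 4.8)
\textbf{Proof proposal for Corollary \ref{cor:Pi(x)}.}
The plan is to deduce everything from Theorem \ref{thm:main-flat}(i) together with the standard properties of the patched module $M_{\infty}$ recorded in \S\ref{subsection-big-patching}–\S\ref{subsection:cyclic}. First I would recall that $M_{\infty}$ is a finite Cohen–Macaulay $R_{\infty}[\![\GL_2(\cO_L)]\!]$-module which is finite projective over $S_{\infty}[\![\GL_2(\cO_L)]\!]$, and that by Theorem \ref{thm:main-flat}(i) it is moreover flat over $R_{\infty}$. Since $R_{\infty}$ is formally smooth over $\cO$ and $x:R_{\infty}\ra\cO'$ is a local $\cO$-algebra homomorphism, the kernel of $x$ is generated by a regular sequence, and flatness of $M_{\infty}$ over $R_{\infty}$ ensures that $M_{\infty}\otimes_{R_{\infty},x}\cO'$ is $\cO'$-flat, i.e. $\cO'$-torsion free. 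It is also $\varpi$-adically complete and separated (being a quotient of the profinite module $M_{\infty}$), hence $\Pi(x)^{0}=\Hom^{\rm cont}_{\cO'}(M_{\infty}\otimes_{R_{\infty},x}\cO',\cO')$ is, by Schikhof duality (see the Notation section), a $\varpi$-adically separated $\cO'$-torsion free $\cO'$-module carrying a continuous $G$-action. Tensoring with $E'$ gives a Banach $E'$-representation of $G$ whose unit ball is $\Pi(x)^{0}$.

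Next I would check admissibility. Because $M_{\infty}$ is finite projective over $S_{\infty}[\![\GL_2(\cO_L)]\!]$ and $\cO'$ is module-finite over $\cO_{\infty}$ via the structure map (after composing $x$ with $\cO_{\infty}\ra R_{\infty}$), the module $M_{\infty}\otimes_{R_{\infty},x}\cO'$ is finitely generated over $\cO'[\![\GL_2(\cO_L)]\!]$; dualizing, $\Pi(x)^{0}/\varpi$ is an admissible smooth representation of $\GL_2(\cO_L)$ over $\F'$, which is exactly the definition of $\Pi(x)$ being an admissible unitary Banach representation. Then I would establish the reduction statement: the base change $M_{\infty}\otimes_{R_{\infty},x}\cO'\otimes_{\cO'}\F'$ is identified, using $x$ being local (so $x$ factors $\fm_{R_{\infty}}$ into $\fm_{\cO'}$) together with the isomorphism $M_{\infty}/\fm_{R_{\infty}}\cong\pi_{v}^D(\overline{r})^{\vee}$ from \eqref{eq:piv}, with $\pi_{v}^D(\overline{r})^{\vee}\otimes_{\F}\F'$. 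Dualizing back shows $\Pi(x)^{0}/\varpi\cong\pi_{v}^D(\overline{r})\otimes_{\F}\F'$, i.e. $\Pi(x)$ lifts $\pi_{v}^D(\overline{r})\otimes_{\F}\F'$. Finally nonvanishing: since $\pi_{v}^D(\overline{r})$ is nonzero (its $K$-socle is $\oplus_{\sigma\in\mathscr{D}(\brho)}\sigma\neq0$ by Proposition \ref{thm-Le}(ii)) and $\Pi(x)^{0}/\varpi\cong\pi_{v}^D(\overline{r})\otimes_{\F}\F'\neq0$, Nakayama forces $\Pi(x)^{0}\neq0$, hence $\Pi(x)\neq0$.

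I do not expect a genuine obstacle here: the statement is a formal consequence of $R_{\infty}$-flatness of $M_{\infty}$ (which is the content of Theorem \ref{thm:main-flat}(i)) plus bookkeeping with Schikhof/Pontryagin duality and the already-established finiteness properties of $M_{\infty}$. The one point requiring a little care is checking that $\ker(x)$ is generated by a regular sequence so that derived and underived base change agree — this uses that $R_{\infty}\cong R^{\loc}[\![x_1,\dots,x_g]\!]$ with $R^{\loc}$ formally smooth over $\cO$ (assumption (b)), so $R_{\infty}$ is a regular local ring and any local map to a discrete valuation ring $\cO'$ has kernel cut out by part of a regular system of parameters — and then flatness of $M_{\infty}$ guarantees that $\cO'$-torsion-freeness is preserved. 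The rest is routine and I would only sketch it.
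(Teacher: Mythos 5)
Your proposal follows essentially the same route as the paper: flatness of $M_{\infty}$ over $R_{\infty}$ (Theorem \ref{thm:main-flat}(i)) gives $\cO'$-torsion-freeness of $M_{\infty}\otimes_{R_{\infty},x}\cO'$, Schikhof duality produces the Banach representation, and the reduction mod $\varpi$ is identified via $x$ being local and $M_{\infty}/\fm_{R_{\infty}}\cong\pi_v^D(\overline r)^{\vee}$. The paper simply cites \cite[Thm.~1.2]{ST} and \cite[Prop.~2.9]{Pa15} for the parts you spell out by hand, so the content matches.

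Two small corrections to the details, neither of which affects the conclusion. First, the claim that ``$\cO'$ is module-finite over $\cO_{\infty}$'' is false: $\cO_{\infty}=\cO[\![z_1,\ldots,z_q]\!]$ has Krull dimension $q+1$ while $\cO'$ has dimension $1$, so the map has huge kernel and is certainly not finite. The correct and more direct route to admissibility is to use that $M_{\infty}$ is a finitely generated $R_{\infty}[\![\GL_2(\cO_L)]\!]$-module, so that $M_{\infty}\otimes_{R_{\infty},x}\cO'$ is finitely generated over $R_{\infty}[\![\GL_2(\cO_L)]\!]\otimes_{R_{\infty}}\cO'\cong\cO'[\![\GL_2(\cO_L)]\!]$; the appeal to $S_{\infty}$ is a red herring. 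Second, the regular-sequence digression is unnecessary for this corollary: once $M_{\infty}$ is flat over $R_{\infty}$, the base change $M_{\infty}\otimes_{R_{\infty},x}\cO'$ is flat (hence torsion-free) over $\cO'$ for purely formal reasons, with no need to compare derived and underived tensor products.
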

\begin{proof}
Since $M_{\infty}$ is flat over $R_{\infty}$ by Theorem \ref{thm:main-flat}(i), $M_{\infty}\otimes_{R_{\infty},x}\cO'$ is $\cO'$-flat by base change. So $\Hom_{\cO'}^{\rm cont}(M_{\infty}\otimes_{R_{\infty},x}\cO',\cO')$ is nonzero and $\cO'$-torsion free, hence  $\Pi(x)$ is nonzero by \cite[Thm.~1.2]{ST}.  The last assertion follows easily from \cite[Prop.~2.9]{Pa15}.
\end{proof}

\section{Homological algebra}\label{section-HA}
In this and the next section, we prove our second main result, namely with the notation in \S\ref{section-patching}, the $\GL_2(L)$-representation $\pi_v^D(\overline{r})$   (in the minimal case) is finitely generated by its $K_1$-invariants and, if $f=2$, has length $3$ as in Conjecture \ref{conj:BP}(ii).
This section contains some preliminary results.

\subsection{An enveloping algebra}
\label{subsection:envelope} 
Let $\overline{\mathfrak{g}}$ be the graded Lie algebra (labelled by $\Z_{\geq0}$) defined as follows:
\[\overline{\mathfrak{g}}=\F e\oplus \F f\oplus \F h\]
with $e$, $f$ in degree $1$, $h$ in degree $2$ and relations
\begin{equation}\label{eq:lie-relation}[e,f]=h,\ \ [h,e]=[h,f]=0. \end{equation}
Let $U(\overline{\mathfrak{g}})=U_{\F_p}(\overline{\mathfrak{g}})$ denote the universal enveloping algebra of $\overline{\fg}$. It is a graded algebra, with the induced degree function from above. As a consequence of the Poincar\'e-Birkhoff-Witt theorem, $\Ug$ is a domain.

The following lemma is obvious.
\begin{lemma}\label{lemma:Ug/h}
$h$ lies in the center of $\Ug$ and $\Ug/(h)$ is   isomorphic to $\F[e,f]$, the commutative polynomial ring with   variables $e, f$.
\end{lemma}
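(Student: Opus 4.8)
The statement to prove is Lemma~\ref{lemma:Ug/h}: that $h$ is central in $U(\overline{\mathfrak{g}})$ and that $U(\overline{\mathfrak{g}})/(h) \cong \F[e,f]$. Both assertions are essentially formal consequences of the defining relations \eqref{eq:lie-relation} together with the universal property of enveloping algebras, so the plan is to verify them directly.

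First I would prove centrality of $h$. By definition of the universal enveloping algebra, $U(\overline{\mathfrak{g}})$ is generated as an $\F$-algebra by $e, f, h$, so it suffices to check that $h$ commutes with each of these three generators. This is immediate from \eqref{eq:lie-relation}: inside $U(\overline{\mathfrak{g}})$ we have $he - eh = [h,e] = 0$ and $hf - fh = [h,f] = 0$, while $h$ trivially commutes with itself. Hence $h$ is central, and in particular the two-sided ideal $(h)$ generated by $h$ coincides with the left (equivalently right) ideal $U(\overline{\mathfrak{g}})h$, which makes the quotient ring $U(\overline{\mathfrak{g}})/(h)$ well-defined as a graded $\F$-algebra.

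Next I would identify the quotient. The quotient algebra $U(\overline{\mathfrak{g}})/(h)$ is generated by the images $\bar e, \bar f$ of $e, f$, and since $[e,f] = h$ maps to $0$, these images commute: $\bar e \bar f = \bar f \bar e$. This gives a surjective $\F$-algebra homomorphism $\F[e,f] \twoheadrightarrow U(\overline{\mathfrak{g}})/(h)$ sending the polynomial variables to $\bar e, \bar f$. To see it is injective (hence an isomorphism), I would invoke the Poincaré--Birkhoff--Witt theorem: fixing the ordered basis $(e, f, h)$ of $\overline{\mathfrak{g}}$, the monomials $e^{a} f^{b} h^{c}$ with $a,b,c \geq 0$ form an $\F$-basis of $U(\overline{\mathfrak{g}})$. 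The ideal $(h) = U(\overline{\mathfrak{g}})h$ is then spanned by those PBW monomials with $c \geq 1$ (using centrality of $h$ to rewrite $e^a f^b h^c = e^a f^b h^{c-1}\cdot h$), so $U(\overline{\mathfrak{g}})/(h)$ has the images of $e^a f^b$ ($a,b \geq 0$) as an $\F$-basis; these are exactly the images of the monomial basis of $\F[e,f]$, so the surjection above carries a basis to a basis and is therefore an isomorphism. This completes the proof.

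There is no real obstacle here; the only point requiring the slightest care is making sure the ideal generated by the central element $h$ is described correctly in terms of the PBW basis, which is routine. I would present the argument in the two short steps above, with the PBW decomposition doing all the work in the second step.
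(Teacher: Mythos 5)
Your proof is correct. The paper itself offers no proof, simply remarking that the lemma is obvious; your argument via centrality from the defining relations and PBW to identify the quotient is exactly the natural way to fill in the details.
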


\begin{lemma}\label{lemma:Ug=regular}
$\Ug$ is a regular algebra  of global dimension $3$ in the sense of \cite[Eq. (0.1)]{AS}. In particular, $\Ug$ is an Auslander regular algebra.
\end{lemma}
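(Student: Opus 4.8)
The statement is that $\Ug = U_{\F_p}(\overline{\fg})$ is a regular (connected graded) algebra of global dimension $3$ in the sense of Artin--Schelter, and hence Auslander regular. The plan is to verify the three defining conditions of an Artin--Schelter regular algebra directly: (i) finite global dimension, equal to $3$; (ii) finite Gelfand--Kirillov dimension (i.e.\ polynomial growth); and (iii) the Gorenstein condition $\Ext^i_{\Ug}(\F,\Ug)=0$ for $i\neq 3$ and $\cong\F$ for $i=3$, where $\F$ is the trivial graded module $\Ug/\Ug_{>0}$. Once Artin--Schelter regularity is established, the implication ``regular of finite global dimension $\Rightarrow$ Auslander regular'' is a standard fact (see e.g.\ the references in \cite{AS}, or Levasseur's work on Auslander regularity of enveloping-type algebras), so the last sentence follows formally.

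First I would construct the Koszul-type resolution of $\F$ as a left $\Ug$-module. Since $\overline{\fg}$ is a graded Lie algebra that is finite dimensional over $\F_p$, the Chevalley--Eilenberg complex $\Ug\otimes_{\F_p}\Lambda^\bullet\overline{\fg}$ computes $\Tor^{\Ug}_\bullet(\F,\F)$; because $\dim_{\F_p}\overline{\fg}=3$, this gives a finite free resolution
\[
0\lra \Ug\otimes\Lambda^3\overline{\fg}\lra \Ug\otimes\Lambda^2\overline{\fg}\lra \Ug\otimes\overline{\fg}\lra \Ug\lra \F\lra 0
\]
of length exactly $3$. The differentials are the usual Chevalley--Eilenberg maps, built from the bracket \eqref{eq:lie-relation} and left multiplication; one checks (a short PBW computation) that this complex is exact, using that $\Ug$ is a free module over $\F_p[h]$ and that modulo $h$ it degenerates to the Koszul complex of the polynomial ring $\F[e,f]$ from Lemma \ref{lemma:Ug/h}. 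This simultaneously shows $\mathrm{pd}_{\Ug}(\F)\le 3$, and by inspecting the top term $\Ug\otimes\Lambda^3\overline{\fg}\cong\Ug$ (a rank-one free module, nonzero) one gets $\mathrm{pd}_{\Ug}(\F)=3$; since for a connected graded algebra $\mathrm{gl.dim}=\mathrm{pd}(\F)$, condition (i) holds with global dimension $3$.

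Next, condition (ii): by PBW, $\Ug$ has $\F_p$-basis $\{e^a f^b h^c : a,b,c\ge 0\}$, so its Hilbert series (with respect to the grading where $e,f$ have degree $1$ and $h$ degree $2$) is $(1-t)^{-2}(1-t^2)^{-1}$, which is rational with a pole only at $t=1$; equivalently the dimensions of the graded pieces grow polynomially, so $\GK\dim\Ug=3<\infty$. For condition (iii), I would apply $\Hom_{\Ug}(-,\Ug)$ to the resolution above and compute the cohomology of the resulting complex $0\to\Ug\to\Ug\otimes\Lambda^1\to\Ug\otimes\Lambda^2\to\Ug\otimes\Lambda^3\to 0$ of right $\Ug$-modules. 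The key point is that this is the Chevalley--Eilenberg \emph{cochain} complex, whose cohomology is concentrated in top degree with $H^3\cong\F$ (again reducible to the Koszul self-duality of $\F[e,f]$ together with the $h$-direction, or to the general fact that $U(\overline{\fg})$ is Gorenstein because $\overline{\fg}$ is a Lie algebra, the one-dimensional modular character being trivial here since $\overline{\fg}$ is nilpotent hence unimodular). This gives the Gorenstein condition with the correct shift, completing the verification of Artin--Schelter regularity.

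The main obstacle is the \emph{exactness} of the Chevalley--Eilenberg complex and the precise identification of its dual cohomology: over a field of characteristic $p$ one must be slightly careful, but since we are only using $\overline{\fg}$ as an \emph{abstract} finite-dimensional Lie algebra over $\F_p$ (not a restricted Lie algebra, and not tensoring with $\F$), the classical arguments go through — the filtration of $\Ug$ by total degree has associated graded the commutative polynomial ring $\F_p[e,f,h]$, and one runs the standard spectral-sequence / PBW comparison to reduce exactness and the Gorenstein property to the Koszul complex of a regular sequence in a polynomial ring. I expect the write-up of this reduction (rather than any conceptual difficulty) to be the longest part; everything else is bookkeeping with Hilbert series and the definition in \cite[Eq.~(0.1)]{AS}.
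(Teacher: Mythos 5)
Your proposal is sound and would work, but it takes a very different (and much longer) route than the paper. The paper's proof is a two-line citation: the first assertion is read directly off \cite[Eq.~(0.3)]{AS}, where Artin and Schelter explicitly list the homogenized/enveloping algebras of this type among the regular algebras of dimension $3$ generated in degree $1$ with two cubic relations (note $\Ug$ is generated by $e,f$ alone, with $h=[e,f]$, and the two defining relations are $[[e,f],e]=[[e,f],f]=0$, each of degree $3$); the second assertion is the cited general implication \cite[Cor.~6.2]{Lev} or \cite[Thm.~III.3.4.6(6)]{LiO} that a connected graded AS-regular algebra of finite global dimension is Auslander regular. You instead verify the three Artin--Schelter axioms from scratch: global dimension $3$ via the Chevalley--Eilenberg resolution of $\F$, polynomial growth via the PBW basis and the Hilbert series $(1-t)^{-2}(1-t^2)^{-1}$ (which is exactly the paper's Lemma~\ref{lem:h-Ug}), and the Gorenstein condition via unimodularity of the nilpotent Lie algebra $\overline{\fg}$. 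All of this is correct; the only care points you already flag correctly, namely that exactness of the Chevalley--Eilenberg complex and the identification of the dualizing module go through over any base field by the PBW filtration argument (reducing to the Koszul complex of $\gr\Ug=\F[e,f,h]$), with the graded shift in the Gorenstein condition being $\F(-4)$ because $\Lambda^3\overline{\fg}$ sits in degree $1+1+2=4$. The trade-off: your approach is self-contained and illuminates why the result holds, at the cost of a page or so of detail; the paper's citation is terse but requires the reader to recognize that $\Ug$ is a two-generator, two-cubic-relation algebra and to locate it in the Artin--Schelter table. Both culminate in the same appeal to Levasseur (or Li--van Oystaeyen) for the last sentence, so there is no divergence there.
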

\begin{proof}
For the first assertion, see  \cite[Eq. (0.3)]{AS}.  The second assertion is a consequence of the first, see e.g. \cite[Cor.~6.2]{Lev}; alternatively, it is a special case of  \cite[Thm.~III.3.4.6(6)]{LiO}.
\end{proof}
As a consequence of Lemma \ref{lemma:Ug=regular}, the concepts introduced in Appendix \S\ref{section:appendix} are applicable.

If $M=\oplus_{n\geq 0}M_n$ is a finitely generated graded $\Ug$-module, the \emph{Hilbert series} of $M$ is by definition the series
\[h_M(t):=\sum_{n\geq 0}(\dim_{\F}M_n)t^n.\]
This is an additive function on the Grothendieck group of finitely generated graded $\Ug$-modules.
We denote by $M(a)$ the shifted graded module defined by $M(a)_n=M_{n+a}$, with the convention $M_n=0$ if $n<0$. It is clear that $h_{M(a)}(t)=t^{-a}h_{M}(t)$.  In \cite[p. 342]{ATV}, the order of pole of $h_M(t)$ at $t=1$ is called the $gk$-dimension of $M$. It follows from
\cite[Thm.~4.1]{ATV} 
that this notion coincides with our notion of Gelfand-Kirillov dimension in Appendix \S\ref{subsection-duality}.

\begin{lemma}\label{lem:h-Ug}
We have $h_{\Ug}(t)=\frac{1}{(1-t)^2(1-t^2)}$.
\end{lemma}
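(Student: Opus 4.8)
The plan is to compute the Hilbert series directly from the Poincar\'e--Birkhoff--Witt basis of $U(\overline{\mathfrak{g}})$. By the PBW theorem, a vector space basis of $\Ug$ is given by the monomials $e^a f^b h^c$ with $a,b,c\geq 0$ (in some fixed order of the generators). Since $e$ and $f$ have degree $1$ and $h$ has degree $2$, the monomial $e^a f^b h^c$ lies in degree $a+b+2c$. Therefore the Hilbert series factors as a product of three geometric-type series:
\[
h_{\Ug}(t)=\Big(\sum_{a\geq 0}t^a\Big)\Big(\sum_{b\geq 0}t^b\Big)\Big(\sum_{c\geq 0}t^{2c}\Big)=\frac{1}{1-t}\cdot\frac{1}{1-t}\cdot\frac{1}{1-t^2}=\frac{1}{(1-t)^2(1-t^2)}.
\]

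First I would recall precisely the grading conventions fixed in \S\ref{subsection:envelope}: $\overline{\mathfrak{g}}=\F e\oplus\F f\oplus\F h$ with $\deg e=\deg f=1$, $\deg h=2$, and the relations \eqref{eq:lie-relation}. The key point making the argument clean is that the grading on $\overline{\mathfrak{g}}$ is \emph{compatible} with the Lie bracket (the bracket $[e,f]=h$ raises degree correctly: $1+1=2$), so the induced filtration/grading on $U(\overline{\mathfrak{g}})$ is well-defined and the PBW monomials are homogeneous of the expected degree. One then checks that for each fixed $n$, the dimension of the degree-$n$ part of $\Ug$ equals the number of triples $(a,b,c)\in\Z_{\geq 0}^3$ with $a+b+2c=n$, which is exactly the coefficient of $t^n$ in the claimed product.

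There is essentially no obstacle here; this is a routine generating-function computation once the PBW basis is in hand. The only thing to be a little careful about is to use the correct degree assignments (it would be wrong, for instance, to treat $h$ as degree $1$), and to note that the PBW monomials really do span each graded piece — this is immediate since the grading is obtained by giving the tensor algebra $T(\overline{\mathfrak{g}})$ the grading induced from $\overline{\mathfrak{g}}$, and the defining relations of $U(\overline{\mathfrak{g}})$ are homogeneous for this grading, so $\Ug=T(\overline{\mathfrak{g}})/(\text{homogeneous relations})$ inherits a grading for which the PBW basis is a homogeneous basis. As a consistency check, one may observe that the order of the pole of $h_{\Ug}(t)$ at $t=1$ is $3$, matching the Gelfand--Kirillov (equivalently $gk$-) dimension of $\Ug$, which is consistent with Lemma \ref{lemma:Ug=regular} (global dimension $3$).
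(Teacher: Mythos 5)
Your proof is correct and is genuinely different from the paper's, which simply cites the general Hilbert series formula \cite[(2.8)]{ATV} for three-dimensional regular algebras (of which $\Ug$ is the cubic type, as guaranteed by Lemma \ref{lemma:Ug=regular}). You instead compute directly from the PBW basis, which is more elementary and self-contained: it avoids having to identify $\Ug$ within the Artin--Schelter--Tate--Van den Bergh classification and read off the Hilbert series from a minimal free resolution. Your check that the defining relations of $U(\overline{\mathfrak{g}})$ are homogeneous (so that the grading descends from the tensor algebra and the PBW monomials $e^af^bh^c$ are homogeneous of degree $a+b+2c$) is exactly what is needed to make the generating-function factorization legitimate, and the consistency check against the $gk$-dimension is a nice sanity test. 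The paper's route has the advantage of being one line given the citation, and it situates $\Ug$ in the framework that is being invoked elsewhere in \S\ref{subsection:envelope}; your route has the advantage of requiring nothing beyond PBW.
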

\begin{proof}
It is a special case of \cite[(2.8)]{ATV}.
\end{proof}

\begin{lemma}\label{lemma:criterion-exact}
Let $G_{\bullet}$ be a chain complex of free $\Ug$-modules of length $n$,
\[G_{\bullet}:\ \  0\ra G_n\ra \cdots \ra G_1\ra G_0\ra0,\]
where $n\in\{2,3\}$.
Assume  the following conditions hold:
\begin{enumerate}
\item[(a)] $H_i(G_{\bullet})=0$ for $i\neq 0,1$;
\item[(b)] the Gelfand-Kirillov dimension of $H_0(G_{\bullet})$ is equal to $3-n$;
\item[(c)] the order of pole of $ \sum_{i=0}^{n}(-1)^ih_{G_i}$ at $t=1$ is  equal to $3-n$.
\end{enumerate}
Then $H_1(G_{\bullet})=0$ and $G_{\bullet}$ is a resolution of $H_0(G_{\bullet})$.
\end{lemma}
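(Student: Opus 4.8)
\textbf{Proof plan for Lemma \ref{lemma:criterion-exact}.}
The idea is to turn the exactness question into a numerical statement about Hilbert series, exploiting the fact that for a bounded complex of finitely generated graded modules the alternating sum of Hilbert series of the terms equals the alternating sum of Hilbert series of the homology. Concretely, from the complex $G_\bullet$ and condition (a) we get the identity
\[
\sum_{i=0}^{n}(-1)^i h_{G_i}(t) = h_{H_0(G_\bullet)}(t) - h_{H_1(G_\bullet)}(t)
\]
in the Grothendieck group of finitely generated graded $\Ug$-modules. First I would record this identity and then analyze the order of pole at $t=1$ of each side.

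Next I would use the Gelfand-Kirillov dimension / $gk$-dimension dictionary recalled just above (from \cite{ATV}, Theorem 4.1), which identifies the order of pole at $t=1$ of the Hilbert series of a finitely generated graded $\Ug$-module $M$ with its Gelfand-Kirillov dimension. By condition (c) the left-hand side has a pole of order exactly $3-n$ at $t=1$; by condition (b) the term $h_{H_0(G_\bullet)}(t)$ has a pole of order exactly $3-n$. The key point is that $H_1(G_\bullet)$ is a submodule of the free module $G_1$ (since $H_2(G_\bullet)=0$ when $n=2$, and when $n=3$ the image of $G_2 \to G_1$ contains $Z_1$ only after accounting for $H_2$; here I must be slightly careful — if $n=3$ then $H_1(G_\bullet) = Z_1/B_1$ where $B_1 = \mathrm{im}(G_2\to G_1)$, and this need not be a submodule of $G_1$, but it is still a subquotient of $G_1$, hence torsion-free-related bounds apply). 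In any case $H_1(G_\bullet)$ is a subquotient of the free module $G_1$, so if it is nonzero its Gelfand-Kirillov dimension is $3$ (using that $\Ug$ is a domain of GK-dimension $3$ and that a nonzero submodule of a free module over such an algebra has full GK-dimension, together with the fact that a nonzero subquotient $Z_1/B_1$ with $Z_1 \subseteq G_1$ satisfies $\GKdim(Z_1/B_1) \le \GKdim(G_1) = 3$, but lower bounds require more care). The cleanest route: if $H_1(G_\bullet)\neq 0$, then since $n\le 3$, the pole order of $h_{H_1(G_\bullet)}(t)$ at $t=1$ is at most $3$; but I want to show it would have to be strictly larger than $3-n$, forcing a cancellation in the identity that contradicts (c). I would argue that the pole order of the right-hand side is $\max$ of the two pole orders unless they cancel, and that the pole order of $h_{H_1}$, if nonzero, is at least... — here is where I must think hardest.

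\textbf{The main obstacle.} The delicate step is getting a \emph{lower} bound on $\GKdim(H_1(G_\bullet))$ when $H_1\neq 0$. Over $\Ug$ (Auslander regular, domain, GK-dimension $3$) I would invoke Auslander regularity: $H_1(G_\bullet)$, being a subquotient of a free module appearing with only finitely many syzygies, has projective dimension $\le n-1 \le 2$, hence grade $\ge 3-\mathrm{pd} \ge 1$; combined with the Auslander-Gorenstein equality $\GKdim(H_1) + j(H_1) = 3$ (where $j$ is the grade), one gets $\GKdim(H_1) = 3 - j(H_1)$. I need $j(H_1)$ small enough that $\GKdim(H_1) > 3-n$, i.e. $j(H_1) < n$. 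This follows because $H_1$ is a first-syzygy-type module in a complex of length $n$: its grade is at most $n-1$ (a submodule of a free module has grade $\le 1$ when $n=2$; for $n=3$ one uses that $H_1$ fits in $0\to B_1\to Z_1\to H_1\to 0$ with $Z_1\subseteq G_1$ free and $B_1$ a quotient of $G_2$, and tracks grades through this sequence). Then $\GKdim(H_1) = 3 - j(H_1) \ge 3-(n-1) = 4-n > 3-n$, so $h_{H_1}$ would have a pole of order $\ge 4-n$ at $t=1$, strictly exceeding that of $h_{H_0}$ (order $3-n$) and of the left side (order $3-n$); since Hilbert series have nonnegative coefficients, no cancellation can kill this pole on the right-hand side, contradicting the identity. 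Hence $H_1(G_\bullet)=0$, and then (a) gives that $G_\bullet$ is a resolution of $H_0(G_\bullet)$. I would present the grade/GK-dimension bookkeeping carefully using the Auslander regularity of $\Ug$ (Lemma \ref{lemma:Ug=regular}) and the results in Appendix \S\ref{section:appendix}, treating the cases $n=2$ and $n=3$ separately if needed.
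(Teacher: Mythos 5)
Your proof plan reproduces the paper's argument: the same Euler--characteristic identity for Hilbert series, the same translation of pole order at $t=1$ into Gelfand--Kirillov dimension, and the same key step of lower-bounding $\GKdim(H_1)$ via a projective-dimension/grade inequality, from which condition (c) yields the contradiction.

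Be careful with one line, though: ``has projective dimension $\le n-1$, hence grade $\ge 3-\mathrm{pd}\ge 1$'' is false (for a free module the grade is $0$ while $3-\mathrm{pd}=3$), and the inequality points in the wrong direction for what you need. The inequality doing the work, which you state correctly two sentences later, is $j(H_1)\le\mathrm{pd}(H_1)\le n-1$; combined with the Auslander--Gorenstein formula $j(H_1)+\GKdim(H_1)=3$ it gives $\GKdim(H_1)\ge 4-n>3-n$. Since the pole order of $h_{H_1}$ at $t=1$ then strictly exceeds that of $h_{H_0}$, the pole of $h_{H_0}-h_{H_1}$ is governed by $h_{H_1}$ alone and has order $\ge 4-n$, contradicting (c); the appeal to nonnegativity of Hilbert coefficients is not needed once the pole orders differ. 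Your worry about $H_1$ being only a subquotient of $G_1$ is also dissolved by working with $\mathrm{pd}$ directly: from $0\to Z_1\to G_1\to G_0\to H_0\to 0$ one gets $\mathrm{pd}(Z_1)\le 1$ (as $\Ug$ has global dimension $3$), and from $0\to B_1\to Z_1\to H_1\to 0$ with $\mathrm{pd}(B_1)\le n-2$ one gets $\mathrm{pd}(H_1)\le\max(\mathrm{pd}(Z_1),\,\mathrm{pd}(B_1)+1)\le n-1$.
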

\begin{proof}
Assume $H_1(G_{\bullet})\neq 0$. Then $H_1(G_{\bullet})$ has  projective dimension $\leq n-1$, and  so  $H_1(G_{\bullet})$ has grade $\leq n-1$; see \eqref{eq:app-grade} for the notion of grade.  Hence, $H_1(G_{\bullet})$ has Gelfand-Kirillov dimension  $\geq 3-(n-1)=4-n$.
 On the other hand, we have an equality
\[\sum_{i}(-1)^ih_{G_i}=\sum_{i}(-1)^ih_{H_i(G_{\bullet})}.\]
By (a), (b) and the discussion before Lemma \ref{lem:h-Ug},
 the order of pole of RHS at $t=1$ is equal to $4-n$, while the one of LHS is equal to $3-n$ by (c),   a contradiction. This implies  $H_1(G_{\bullet})=0$  and  so $G_{\bullet}$ is a resolution of $H_0(G_{\bullet})$.
\end{proof}

The following lemma is an easy computation.
\begin{lemma}\label{lemma:comm-relation}
The following relations hold in $U(\overline{\mathfrak{g}})$:
\begin{align}
&\label{eq:comm-relation0} (ef)(fe)=(fe)(ef)\\
&\label{eq:comm-relation1} e^3f- fe^3=3e^2h\\
&\label{eq:comm-relation2}  ef^3-f^3e=3f^2h.
\end{align}
\begin{proof}
 Since $eh=he$ and $fh=hf$, we have $(ef)h=h(ef)$, and the equality  \eqref{eq:comm-relation0} follows.

Prove \eqref{eq:comm-relation1}. We have
$e^2f=e(fe+h)=efe+eh$, so that (using $eh=he$)
\[e^3f=e^2fe+e^2h= (efe+eh)e+e^2h=(ef+2h)e^2=(fe+3h)e^2=fe^3+3e^2h\]
giving the result. The equality \eqref{eq:comm-relation2} is checked in a similar way.
\end{proof}

\end{lemma}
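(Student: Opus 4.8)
The final statement, Lemma~\ref{lemma:comm-relation}, records three commutator identities in $U(\overline{\mathfrak{g}})$. The plan is to derive each of them directly from the defining relations \eqref{eq:lie-relation}, namely $[e,f]=h$, $[h,e]=0$, $[h,f]=0$, by repeated use of the Leibniz-type rule for commutators in an associative algebra, $[x,yz]=[x,y]z+y[x,z]$ and $[xy,z]=x[y,z]+[x,z]y$, together with the fact that $h$ is central (which is immediate from the last two relations and will be stated first, as in Lemma~\ref{lemma:Ug/h}).

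First I would prove \eqref{eq:comm-relation0}. Since $eh=he$ and $fh=hf$, the element $h$ commutes with any product of $e$'s and $f$'s; in particular $h$ commutes with $ef$. But $fe=ef-h$, so $(ef)(fe)=(ef)(ef-h)=(ef)^2-(ef)h$ and $(fe)(ef)=(ef-h)(ef)=(ef)^2-h(ef)$, and these agree because $(ef)h=h(ef)$. This is the content of the one-line argument given in the statement's embedded proof.

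Next, for \eqref{eq:comm-relation1} I would compute $e^nf$ inductively. From $ef=fe+h$ one gets $e^2f=e(fe+h)=(ef)e+eh=(fe+h)e+he=fe^2+2he$ using centrality of $h$; then $e^3f=e(fe^2+2he)=(ef)e^2+2he^2=(fe+h)e^2+2he^2=fe^3+3he^2$, which is \eqref{eq:comm-relation1} since $he^2=e^2h$. The identity \eqref{eq:comm-relation2} is entirely symmetric: from $ef=fe+h$, i.e. $fe=ef-h$, one computes $f^2e=f^3\cdot$—more precisely $ef^2=(fe+h)f=f(ef)+hf=f(fe+h)+hf=f^2e+2hf$ and then $ef^3=f^3e+3hf^2$, i.e. \eqref{eq:comm-relation2}. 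Each step is a routine manipulation, so I would simply present the inductive computation cleanly rather than belabor it.

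There is essentially no obstacle here: the only thing to be careful about is the bookkeeping of which factor $h$ is moved past, and this is harmless because $h$ is central. The hardest (though still trivial) point is organizing the three proofs so that \eqref{eq:comm-relation0} is deduced from centrality of $ef$ and the other two from the same one-variable recursion, avoiding any circularity. No results beyond the defining relations \eqref{eq:lie-relation} and Lemma~\ref{lemma:Ug/h} are needed.
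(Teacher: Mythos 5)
Your proof is correct and follows essentially the same route as the paper: centrality of $h$ (from $eh=he$, $fh=hf$) to dispatch \eqref{eq:comm-relation0}, and the recursion built from $ef=fe+h$ to push $e$ (resp. $f$) through $f$ (resp. $e$) for \eqref{eq:comm-relation1} and \eqref{eq:comm-relation2}; the only differences from the paper's computation are in the order of the intermediate regroupings.
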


\textbf{Convention}: In the statements below, our convention is that the differential map $d_i$ sends an element  of $G_{i}$, say $\underline{v}=(v_1,...,v_r)$ (with $r=\mathrm{rank}_{\Ug}(G_{i})$) to  $\underline{v}$ multiplied by the matrix of $d_i$ from the right.  In this way, $d_i$ is a morphism of  left $\Ug$-modules. As a consequence, the composition of differentials, say $d_i\circ d_{i+1}$, sends $\overline{v}$ to $(\un{v} A_{i+1})A_i$, where $A_i$ is the matrix form of $d_i$.

\begin{lemma}\label{lemma:complex-Koszul}
There exist  chain complexes of graded $\Ug$-modules
\begin{align}
&\label{eq:complex-Ug-Koszul-e}0\ra \Ug(-3)\overset{(-h,e)}{\lra}\Ug(-1)\oplus \Ug(-2)\overset{\binom{e}{h}}{\lra}\Ug\ra0\\
& \label{eq:complex-Ug-Koszul-f}0\ra \Ug(-3)\overset{(-h,f)}{\lra}\Ug(-1)\oplus \Ug(-2)\overset{\binom{f}{h}}{\lra} \Ug\ra0\\
&\label{eq:complex-Ug-Koszul-0}0\ra \Ug(-4)\overset{(-fe,ef)}{\lra} \Ug(-2)\oplus \Ug(-2)\overset{\binom{ef}{fe}}{\lra} \Ug\ra0.\end{align}
Moreover, in each case the complex defines a minimal free resolution of $H_0$ of the complex. Here, a minimal resolution means that the differential maps are zero mod $\Ug_{\geq 1}$.
\end{lemma}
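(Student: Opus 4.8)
The plan is to verify directly that each of the three displayed sequences is a complex (i.e. the composite of consecutive differentials is zero), and then invoke Lemma \ref{lemma:criterion-exact} with $n=2$ to conclude that each is a resolution of its $H_0$, after which minimality is immediate from the fact that all entries of the differential matrices lie in $\Ug_{\geq 1}$.

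\textbf{Step 1: the composites vanish.} For \eqref{eq:complex-Ug-Koszul-e} the composite $d_1\circ d_2$ sends a generator of $\Ug(-3)$ to $(-h,e)\binom{e}{h}=-he+eh=0$ by Lemma \ref{lemma:Ug/h} ($h$ is central), so this is a complex; likewise for \eqref{eq:complex-Ug-Koszul-f}. For \eqref{eq:complex-Ug-Koszul-0} the composite is $(-fe,ef)\binom{ef}{fe}=-(fe)(ef)+(ef)(fe)$, which is zero by \eqref{eq:comm-relation0} of Lemma \ref{lemma:comm-relation}. In all three cases the degrees of the shifts are chosen so that the differentials are homogeneous of degree $0$, which one checks by inspection ($e,f$ have degree $1$, $h$ and $ef,fe$ have degree $2$).

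\textbf{Step 2: apply the exactness criterion.} Each complex has length $n=2$, and since $\Ug$ is a domain (Poincar\'e--Birkhoff--Witt) the leftmost differential, given by a nonzero row vector, is injective, so $H_i=0$ for $i\neq 0,1$ and condition (a) of Lemma \ref{lemma:criterion-exact} holds. For condition (c), using Lemma \ref{lem:h-Ug} the alternating sum of Hilbert series is
\begin{align*}
&\text{for }\eqref{eq:complex-Ug-Koszul-e}:\quad h_{\Ug}(t)\bigl(1-t-t^2+t^3\bigr)=h_{\Ug}(t)(1-t)(1-t^2)=\frac{1}{1-t},\\
&\text{for }\eqref{eq:complex-Ug-Koszul-0}:\quad h_{\Ug}(t)\bigl(1-2t^2+t^4\bigr)=h_{\Ug}(t)(1-t^2)^2=\frac{1-t^2}{(1-t)^2}=\frac{1+t}{1-t},
\end{align*}
and similarly $\tfrac{1}{1-t}$ for \eqref{eq:complex-Ug-Koszul-f}; in each case the order of pole at $t=1$ is $1=3-n$, so (c) holds. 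Condition (b) then says that $H_0$ must have Gelfand--Kirillov dimension $1$; but $H_0$ is the cokernel of the last map, so $H_0=\Ug/(e,h)\cong\F[f]$ for \eqref{eq:complex-Ug-Koszul-e}, $H_0=\Ug/(f,h)\cong\F[e]$ for \eqref{eq:complex-Ug-Koszul-f} (both by the relations of $\overline{\fg}$), and $H_0=\Ug/(ef,fe)$ for \eqref{eq:complex-Ug-Koszul-0}. The first two have Hilbert series $\tfrac{1}{1-t}$, hence GK-dimension $1$; for the third one uses the relations $e^3f-fe^3=3e^2h$ and $ef^3-f^3e=3f^2h$ of \eqref{eq:comm-relation1}--\eqref{eq:comm-relation2} together with $h$ being central to see that modulo $(ef,fe)$ the algebra is spanned in bounded-by-linear growth (the expected hard point, see below), giving GK-dimension $1$; alternatively, since (a) and (c) already hold, GK-dimension $\leq 1$ follows from the fact that $H_0$ has projective dimension $\leq 2$, hence grade $\leq 2$, hence GK-dimension $\geq 1$, and the displayed Euler characteristic forces equality. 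Either way Lemma \ref{lemma:criterion-exact} gives $H_1=0$ and the complex is a resolution.

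\textbf{Step 3: minimality.} All entries of the differential matrices lie in the augmentation ideal $\Ug_{\geq 1}$ (they are $e$, $f$, $h$, $ef$, $fe$, up to sign), so each differential is zero modulo $\Ug_{\geq 1}$; this is precisely the minimality condition stated in the lemma.

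\textbf{Main obstacle.} The only genuinely non-formal point is confirming that $H_0$ of \eqref{eq:complex-Ug-Koszul-0}, namely $\Ug/(ef,fe)$, has Gelfand--Kirillov dimension exactly $1$ rather than something larger, since unlike the other two cases this quotient is not an obvious polynomial ring. My preferred route is the soft argument: exactness conditions (a) and (c) hold unconditionally from Steps 1--2, so by Lemma \ref{lemma:criterion-exact}'s proof (the grade/projective-dimension estimate) one gets $\mathrm{GKdim}(H_0)\geq 4-2=2$ would contradict the Euler characteristic computation, forcing $\mathrm{GKdim}(H_0)=1$ and $H_1=0$ simultaneously — so in fact one does not need an independent computation of $\mathrm{GKdim}(H_0)$ at all, and condition (b) of Lemma \ref{lemma:criterion-exact} is verified a posteriori. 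I would phrase the final writeup to exploit this so as to avoid a direct monomial-basis analysis of $\Ug/(ef,fe)$.
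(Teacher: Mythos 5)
Your overall strategy matches the paper's: verify the composites vanish, apply Lemma~\ref{lemma:criterion-exact} with $n=2$, and read off minimality from the entries of the differentials lying in $\Ug_{\geq 1}$. Steps 1 and 3 are fine, as is the verification of conditions (a) and (c) in Step 2, and the Hilbert series computations are correct.

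The problem is your handling of condition (b) for \eqref{eq:complex-Ug-Koszul-0}, which you rightly flag as the only non-routine point but do not actually resolve. Your preferred ``soft'' route is circular. You cannot derive $\mathrm{pd}(H_0)\leq 2$ from (a) and (c) alone: that bound is tantamount to the complex being a resolution of $H_0$, which is what is being proved. Without an independent bound on $\mathrm{pd}(H_0)$ you have no control on $\mathrm{GKdim}(H_0)$, and the Euler characteristic alone does not then pin it down: if $H_1\neq 0$ the identity $\sum_i(-1)^ih_{G_i}=h_{H_0}-h_{H_1}$ is entirely compatible with $H_0$ and $H_1$ both having Gelfand--Kirillov dimension $\geq 2$, since the leading Laurent coefficients of $h_{H_0}$ and $h_{H_1}$ at $t=1$ can cancel (e.g.\ $h_{H_0}=1/(1-t)^2$, $h_{H_1}=t/(1-t)^2$). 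The proof of Lemma~\ref{lemma:criterion-exact} uses hypothesis (b) precisely to exclude this cancellation, so (b) must be checked independently.

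Fortunately the direct verification of (b) for the third complex is just as easy as for the other two and does not require \eqref{eq:comm-relation1}--\eqref{eq:comm-relation2}. The left ideal $(ef,fe)$ contains $ef-fe=h$, so by Lemma~\ref{lemma:Ug/h} the quotient $\Ug/(ef,fe)$ factors through $\Ug/(h)\cong\F[e,f]$, where the images of $ef$ and $fe$ coincide; hence $H_0\cong\F[e,f]/(ef)$, which is commutative with $\F$-basis $\{1,e,e^2,\dots\}\cup\{f,f^2,\dots\}$, Hilbert series $(1+t)/(1-t)$, and Gelfand--Kirillov dimension $1$. This is the intended meaning of ``the other cases being analogous'' in the paper's proof, and it is the missing step in yours.
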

\begin{proof}
It is clear from \eqref{eq:lie-relation} and \eqref{eq:comm-relation0} that the complexes in the lemma are well-defined and minimal. We need to show that they are exact at degrees $1,2$ (with the term $\Ug$ in degree $0$). To do this it is enough to check the assumptions of Lemma \ref{lemma:criterion-exact} with $n=2$. We do it for the complex \eqref{eq:complex-Ug-Koszul-e}, the other cases being analogous. Let $G_{\bullet}$ denote the complex \eqref{eq:complex-Ug-Koszul-e}. It is clear that $H_2(G_{\bullet})=0$,  because $\Ug$ is a domain. Using Lemma \ref{lemma:Ug/h}, we see that  $H_0(G_{\bullet})=\Ug/(e,h)$ is commutative and isomorphic to $\F[f]$ (polynomial ring in one variable $f$), hence it has Gelfand-Kirillov dimension $1$.
Finally, it is easy to compute using Lemma \ref{lem:h-Ug}
\[\sum_{i=0}^2(-1)^ih_{G_i}(t)=\frac{1-t-t^2+t^3}{(1-t)^2(1-t^2)}=\frac{1}{1-t},\]
whose order of pole at $t=1$ is equal to $1$.  We then conclude by Lemma \ref{lemma:criterion-exact}.
\end{proof}

\begin{remark}
Note that the complexes in Lemma \ref{lemma:complex-Koszul} are all of \emph{Koszul type}, see \S\ref{subsection:Koszul}.
\end{remark}

Next, we construct free resolutions of some $\Ug$-modules of finite length.

\begin{lemma}\label{lemma:complex-type-e}
Let $\fa$ be the left ideal of $\Ug$ generated by $e,h,f^3$. Then $\Ug/\fa$ is a $\Ug$-module of length $3$, spanned over $\F$ by $1,f,f^2$. It admits a minimal free resolution $G_{\bullet}\ra \Ug/\fa\ra0$, where
\begin{equation}\label{eq:complex-type-e}G_{\bullet}:\ \ \ 0\lra G_3\overset{d_3}{\lra} G_2\overset{d_2}{\lra} G_1\overset{d_1}{\lra} G_0\lra0\end{equation}
with \[G_0=\Ug,\ \ G_3=\Ug(-6),\]
\[  G_1=\Ug(-1)\oplus \Ug(-2)\oplus\Ug(-3),\]
\[G_2=\Ug(-3)\oplus \Ug(-4)\oplus \Ug(-5),  \]
and the differentials $d_i$ are described as follows
\[d_3=\left(\begin{array}{ccc}-f^3 & h & e\end{array}\right),\ \ \
d_2=\left(\begin{array}{ccc}-h & e & 0 \\ -f^3 & -3f^2 & e \\0 & f^3 & -h\end{array}\right),\ \ \
d_1=\left(\begin{array}{c}e \\ h \\f^3\end{array}\right).\]

Moreover, the complex \eqref{eq:complex-Ug-Koszul-e} is a subcomplex of $G_{\bullet}$ and each term is a direct summand of $G_i$.
\end{lemma}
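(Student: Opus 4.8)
\textbf{Proof plan for Lemma \ref{lemma:complex-type-e}.}
The plan is to verify the four claims in turn: that $G_\bullet$ is a complex, that it is minimal, that it is exact in the appropriate degrees (hence a resolution), and that \eqref{eq:complex-Ug-Koszul-e} sits inside it as a direct summand in each degree. First I would check $d_1\circ d_2 = 0$ and $d_2\circ d_3 = 0$ by direct matrix multiplication, using the commutation relations of Lemma \ref{lemma:comm-relation}: the relation $eh=he$ and $[e,f]=h$ handle most entries, while the single nontrivial cancellation in $d_1\circ d_2$ (the $(1,2)$-type column pairing $e$ against $f^3$ through the middle) is exactly the identity $e^3 f - f e^3 = 3e^2 h$ from \eqref{eq:comm-relation1}, which is why the $-3f^2$ entry appears in $d_2$. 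Minimality is immediate since every entry of every $d_i$ lies in $\Ug_{\geq 1}$ (all entries are among $e,h,f^2,f^3$), so the differentials vanish mod $\Ug_{\geq 1}$.

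Next I would identify $H_0(G_\bullet) = \Ug/\fa$ with $\fa = (e,h,f^3)$, and describe it as an $\F$-vector space. Modulo $(e,h)$ we have $\Ug/(e,h)\cong \F[f]$ by Lemma \ref{lemma:Ug/h} (or rather the analogous computation: killing $e$ and $h$ collapses the PBW basis to powers of $f$), and then killing $f^3$ leaves $\F\cdot 1 \oplus \F\cdot f \oplus \F\cdot f^2$, which is a $\Ug$-module of length $3$ with the stated basis. In particular it is finite-dimensional, hence has Gelfand-Kirillov dimension $0$.

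The core step is exactness of $G_\bullet$ at degrees $1,2,3$. Since $\Ug$ is a domain, $d_3$ is injective, so $H_3(G_\bullet)=0$. To handle $H_1$ and $H_2$ I would invoke Lemma \ref{lemma:criterion-exact} with $n=3$: it requires $H_i(G_\bullet)=0$ for $i\neq 0,1$ (so $H_2=0$, $H_3=0$), that $H_0(G_\bullet)$ has Gelfand-Kirillov dimension $3-n=0$ — which we just checked — and that the alternating sum of Hilbert series $\sum_{i=0}^3 (-1)^i h_{G_i}(t)$ has order of pole $0$ at $t=1$, i.e. is regular there. Using Lemma \ref{lem:h-Ug}, $h_{G_i}(t)$ is $t^{\deg}$ times $(1-t)^{-2}(1-t^2)^{-1}$, so the alternating sum equals
\[
\frac{1 - (t+t^2+t^3) + (t^3+t^4+t^5) - t^6}{(1-t)^2(1-t^2)} = \frac{(1-t^3)(1 - t - t^2 + \cdots)}{(1-t)^2(1-t^2)},
\]
and I would factor the numerator — it should be divisible by $(1-t)^3$, making the quotient a polynomial in $t$, hence regular at $t=1$. (The bookkeeping here is the one routine computation I'd actually carry out carefully.) So the one remaining hypothesis of Lemma \ref{lemma:criterion-exact} is $H_2(G_\bullet)=0$; I would prove this directly by a small syzygy computation: an element of $\ker d_2$ is a triple $(a,b,c)\in\Ug^3$ with $-ha - f^3 b = 0$, $ea - 3f^2 b + f^3 c = 0$, $eb - hc = 0$, and using that $\Ug$ is a domain together with the commutation relations one shows $(a,b,c)$ must be a left multiple of $(-f^3, h, e)$, i.e. lies in $\operatorname{im} d_3$. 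The main obstacle is precisely this $\ker d_2 \subseteq \operatorname{im} d_3$ verification — it is the one place where I cannot just push symbols around but must reason about the module structure of $\Ug$; I expect the cleanest route is to reduce modulo $h$ first (where $\Ug/(h)\cong\F[e,f]$ is a polynomial ring, Lemma \ref{lemma:Ug/h}) to pin down the syzygy there, then lift, controlling the $h$-torsion using that $h$ is a nonzerodivisor.

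Finally, for the direct-summand claim, I would exhibit explicit $\Ug$-linear splittings: in degree $1$, $G_1 = \Ug(-1)\oplus\Ug(-2)\oplus\Ug(-3)$ contains the subcomplex's $\Ug(-1)\oplus\Ug(-2)$ as the first two summands and $\binom{e}{h}$ is the restriction of $d_1$ to them; in degree $3$, $\Ug(-6)\hookrightarrow\Ug(-6)$ is visibly a summand of $G_3$; one checks the sub-$\Ug$-module $\Ug(-3)\subset G_2$ (last coordinate) together with the matching data matches \eqref{eq:complex-Ug-Koszul-e} up to the relevant shifts, and that the complementary summands of $G_1, G_2, G_3$ assemble into a complex as well, so that $G_\bullet$ decomposes (as a complex of graded modules, though not necessarily as a resolution) with \eqref{eq:complex-Ug-Koszul-e} as a direct factor in each homological degree. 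This is a finite check with the given matrices and requires only matching entries against the stated shifts.
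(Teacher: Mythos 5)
Your routine checks match the paper's: $d_1d_2=d_2d_3=0$ from Lemma \ref{lemma:comm-relation}, minimality because all matrix entries lie in $\Ug_{\geq 1}$, the basis $1,f,f^2$ of $H_0=\Ug/\fa$, and the Hilbert series whose numerator factors as $(1-t)(1-t^2)(1-t^3)$, giving alternating sum $1+t+t^2$. You diverge, however, on the key step $H_2(G_\bullet)=0$, and you append one false statement. The paper never carries out a syzygy computation: having recorded that \eqref{eq:complex-Ug-Koszul-e} is a subcomplex $G'_\bullet$ whose terms are direct summands of the $G_i$, it passes to the quotient complex $G''_\bullet$ (here $G''_0=0$, $G''_1=\Ug(-3)$, $G''_2=\Ug(-4)\oplus\Ug(-5)$, $G''_3=\Ug(-6)$), checks that $G''_\bullet$ has the same Koszul shape as \eqref{eq:complex-Ug-Koszul-e} up to a shift and signs so $H_2(G''_\bullet)=H_3(G''_\bullet)=0$ exactly as in Lemma \ref{lemma:complex-Koszul}, and reads $H_2(G_\bullet)=H_3(G_\bullet)=0$ off the long exact homology sequence of $0\to G'_\bullet\to G_\bullet\to G''_\bullet\to 0$, since $H_i(G'_\bullet)=0$ for $i\geq 1$ by Lemma \ref{lemma:complex-Koszul}. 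Lemma \ref{lemma:criterion-exact} with $n=3$ then kills $H_1$. Your alternative --- compute $\ker d_2$ by reducing mod $h$ and lifting --- is workable (mod $h$ one finds the Koszul syzygy of $(e,f^3)$ over $\F[e,f]$, and since $h$ is a central nonzerodivisor of positive degree the graded lift goes through by iteration), but it is strictly more labor, and it misses that the subcomplex you must exhibit for the ``moreover'' clause is precisely what lets one sidestep the syzygy computation. A small correction: in the paper's right-multiplication convention your kernel equations should read $-ah-bf^3=0$, $ae-3bf^2+cf^3=0$, $be-ch=0$, with the unknowns on the left; this matters over a noncommutative ring.

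Your final assertion --- that the complementary summands of the $G_i$ assemble into a complex, so that $G_\bullet$ decomposes as a direct sum of complexes with \eqref{eq:complex-Ug-Koszul-e} as a factor --- is false. For $(0,w_2,w_3)$ in the complement $\Ug(-4)\oplus\Ug(-5)\subset G_2$, right-multiplication by $d_2$ gives $(-w_2f^3,\,-3w_2f^2+w_3f^3,\,w_2e-w_3h)$, whose first coordinate $-w_2f^3\in\Ug(-1)\subset G'_1$ is generically nonzero, so the differential leaks out of the complement. Abstractly, such a direct-sum decomposition of complexes would force $H_0(G_\bullet)$ to contain $H_0(G'_\bullet)=\Ug/(e,h)\cong\F[f]$ as a summand (see the proof of Lemma \ref{lemma:complex-Koszul}), contradicting $\dim_\F\Ug/\fa=3$. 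The lemma asserts only that \eqref{eq:complex-Ug-Koszul-e} is a subcomplex whose terms are module-theoretic direct summands of the $G_i$; the complement is controlled only as the quotient complex $G''_\bullet$, not as a subcomplex. Also note $\Ug(-3)$ is the first (not last) summand of $G_2$, and the subcomplex \eqref{eq:complex-Ug-Koszul-e} has no degree-$3$ term, so $G'_3=0$.
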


\begin{proof}
Using \eqref{eq:lie-relation} and Lemma \ref{lemma:comm-relation}, it is direct to check that $d_1\circ d_2=d_2\circ d_3=0$, i.e. $(G_{\bullet})$ is a complex. Write $G_{\bullet}'$ for the complex \eqref{eq:complex-Ug-Koszul-e}. It is clear that $G_{\bullet}'$ is a subcomplex of $G_{\bullet}$ and that $G_{i}'$ is a direct summand of $G_{i}$ for $0\leq i\leq 3$ (here we set $G_{3}':=0$).  To see that $G_{\bullet}$ is acyclic (except at degree $0$), we apply Lemma \ref{lemma:criterion-exact}  to the quotient complex $G_{\bullet}'':=G_{\bullet}/G_{\bullet}'$, with degrees being induced from $G_{\bullet}$, i.e.
\[0\ra G_{3}''\ra G_2''\ra G_1''\ra G_0''=0\ra 0.\]  We may check as in the proof of Lemma \ref{lemma:complex-Koszul} that $H_{3}(G_{\bullet}'')=H_2(G_{\bullet}'')=0$. Combined with Lemma \ref{lemma:complex-Koszul}, the long exact sequence associated to $0\ra G_{\bullet}'\ra G_{\bullet}\ra G_{\bullet}''\ra0$ then implies that
$H_3(G_{\bullet})=H_2(G_{\bullet})=0$. Since $H_0(G_{\bullet})=\Ug/\fa$ has Gelfand-Kirillov dimension $0$ and the order of pole of $\sum_{i=0}^3(-1)^ih_{G_{i}}(t)$  at $t=1$ is also equal to $0$ (by a direct computation), we conclude by Lemma \ref{lemma:criterion-exact} (with $n=3$).
\end{proof}

In a completely similar way, we have the following lemma.

\begin{lemma}\label{lemma:complex-type-f}
Let $\fa$ be the left ideal of $\Ug$ generated by $f,h,e^3$. Then $\Ug/\fa$ is a $\Ug$-module of length $3$, spanned over $\F$ by $1,e, e^2$. It admits a minimal free resolution $G_{\bullet}\ra \Ug/\fa\ra0$, where
\begin{equation}\label{eq:complex-type-f}
G_{\bullet}: \ \  0\lra G_3\overset{d_3}{\lra} G_2\overset{d_2}{\lra} G_1\overset{d_1}{\lra} G_0\lra0\end{equation}
with 
\[G_0=\Ug,\ \ \ G_3=\Ug(-6)\]
 \[G_1=\Ug(-1)\oplus \Ug(-2)\oplus\Ug(-3)\]
\[G_2=\Ug(-3)\oplus \Ug(-4)\oplus \Ug(-5)  \]
and the differentials $d_i$ are described as follows
 \[d_3=\left(\begin{array}{ccc}-e^3 & h & f\end{array}\right),\ \ \
d_2=\left(\begin{array}{ccc}-h & f & 0 \\ -e^3 & 3e^2 & f \\0 & e^3 & -h\end{array}\right),\ \ \
d_1=\left(\begin{array}{c}f \\ h \\e^3\end{array}\right).\]

Moreover, the complex \eqref{eq:complex-Ug-Koszul-f} is a subcomplex of $G_{\bullet}$ and each term is a direct summand of $G_i$.
\end{lemma}

Next, we consider another   quotient of $\Ug$ of finite length.

\begin{lemma}\label{lemma:complex-type-0}
Let $\fa$ be the left ideal of $\Ug$ generated by $e^3,ef,fe,f^3$. Then $\Ug/\fa$ is a $\Ug$-module of length $5$, spanned by $1,e,e^2,f,f^2$. It admits a minimal free resolution $G_{\bullet}\ra \Ug/\fa\ra0$, where
\begin{equation}\label{eq:complex-type-0}G_{\bullet}:\ \ \ 0\lra G_3\overset{d_3}{\lra} G_2\overset{d_2}{\lra} G_1\overset{d_1}{\lra} G_0\lra0\end{equation}
with \[G_0=\Ug,\ \ \ G_3=\Ug(-6)\oplus \Ug(-6)\]
\[ G_1=\Ug(-3)\oplus\Ug(-2)\oplus \Ug(-2)\oplus\Ug(-3)\]
\[G_2=\Ug(-5)\oplus\Ug(-4)\oplus \Ug(-4)\oplus\Ug(-4)\oplus \Ug(-5),  \]
and the differentials $d_i$ are described as follows
\[ {\small d_3=\left(\begin{array}{ccccc}f & -h & -e^2 & 0 & 0 \\0 & 0 & -f^2 & -h & e\end{array}\right)\!,\  
d_2=\left(\begin{array}{cccc}h & -e^3 & e^3 & 0 \\f & 2e^2 & -3e^2 & 0 \\ 0 & -fe & ef & 0 \\0 & -3f^2 & 2f^2 & e \\ 0 & -f^3 & f^3 & h\end{array}\right)\!,\  
d_1=\left(\begin{array}{c}e^3 \\ ef \\fe \\f^3\end{array}\right)\!. }\]

Moreover, the complex \eqref{eq:complex-Ug-Koszul-0} is a subcomplex of $G_{\bullet}$ and each term is  a direct summand of $G_i$.
\end{lemma}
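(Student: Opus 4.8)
\textbf{Proof strategy for Lemma \ref{lemma:complex-type-0}.}
The plan is to follow exactly the same template that worked for Lemmas \ref{lemma:complex-type-e} and \ref{lemma:complex-type-f}. First I would verify, by a direct matrix computation using the Lie relations \eqref{eq:lie-relation} and the commutation identities \eqref{eq:comm-relation0}, \eqref{eq:comm-relation1}, \eqref{eq:comm-relation2} of Lemma \ref{lemma:comm-relation}, that $d_1\circ d_2 = 0$ and $d_2\circ d_3 = 0$, so that $G_\bullet$ is genuinely a complex. (With the paper's convention that $d_i$ acts by right multiplication of the row vector by the displayed matrix, this is the computation of the products $A_2 A_1$ and $A_3 A_2$.) The entries involving $e^3 f$, $f^3 e$, $ef\cdot fe$ etc. are precisely the ones governed by \eqref{eq:comm-relation0}--\eqref{eq:comm-relation2}, so these identities are exactly what makes the columns cancel. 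Minimality is visible by inspection: every entry of every $d_i$ lies in $\Ug_{\geq 1}$, i.e.\ reduces to $0$ modulo $\Ug_{\geq1}$. A short separate check identifies $H_0(G_\bullet) = \Ug/\fa$ with the $5$-dimensional module spanned by $1,e,e^2,f,f^2$: the relations $e^3=ef=fe=f^3=0$ in the quotient force $h = ef-fe = 0$ and then $\Ug/\fa \cong \F[e,f]/(e^3,ef,f^3)$ by Lemma \ref{lemma:Ug/h}, whose $\F$-basis is the claimed one; in particular $\Ug/\fa$ has Gelfand-Kirillov dimension $0$.

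Next I would establish acyclicity in positive degrees. The clean way, mirroring the proof of Lemma \ref{lemma:complex-type-e}, is to observe that the Koszul-type complex \eqref{eq:complex-Ug-Koszul-0} sits inside $G_\bullet$ as a subcomplex with each term a direct summand of the corresponding $G_i$ (here one must pick out the summand $\Ug(-2)$ in position $3$ of $G_1$ together with the first summand $\Ug(-2)$ among the two copies in position $2,3$, matching the differentials $\binom{ef}{fe}$ and $(-fe,ef)$ of \eqref{eq:complex-Ug-Koszul-0}; concretely one chooses coordinates so that the embedding $G'_\bullet\hookrightarrow G_\bullet$ and the splitting are compatible with the displayed matrices). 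Form the quotient complex $G''_\bullet = G_\bullet/G'_\bullet$, with degrees induced from $G_\bullet$. Since $\Ug$ is a domain (Poincaré--Birkhoff--Witt), $H_3(G''_\bullet)=0$, and one checks $H_2(G''_\bullet)=0$ as in the earlier lemmas — again a small calculation with the same commutation relations, or by the Hilbert-series criterion of Lemma \ref{lemma:criterion-exact}. Knowing by Lemma \ref{lemma:complex-Koszul} that $G'_\bullet$ is exact except at degree $0$, the long exact homology sequence of $0\to G'_\bullet\to G_\bullet\to G''_\bullet\to 0$ gives $H_3(G_\bullet)=H_2(G_\bullet)=0$.

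Finally I would invoke Lemma \ref{lemma:criterion-exact} with $n=3$ to kill $H_1$. Condition (a) ($H_i(G_\bullet)=0$ for $i\neq 0,1$) has just been checked; condition (b) holds because $H_0(G_\bullet)=\Ug/\fa$ has Gelfand-Kirillov dimension $0 = 3-n$; and condition (c) is the routine rational-function identity
\[
\sum_{i=0}^{3}(-1)^i h_{G_i}(t) = \frac{1 - (2t^2+2t^3) + (2t^4+3t^4+2t^5) - 2t^6}{(1-t)^2(1-t^2)},
\]
using $h_{\Ug}(t) = 1/((1-t)^2(1-t^2))$ from Lemma \ref{lem:h-Ug} and $h_{\Ug(-a)}(t)=t^a h_{\Ug}(t)$; one verifies the numerator is divisible by $(1-t)^2(1-t^2)$ with quotient a polynomial, so the order of pole at $t=1$ is $0 = 3-n$. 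Then Lemma \ref{lemma:criterion-exact} yields $H_1(G_\bullet)=0$, so $G_\bullet$ is a (minimal, by the observation above) free resolution of $\Ug/\fa$.

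The main obstacle is bookkeeping rather than conceptual: getting the signs and the entries of $d_2$, $d_3$ exactly right so that $d_1d_2 = d_2 d_3 = 0$, and simultaneously arranging the identification of the subcomplex \eqref{eq:complex-Ug-Koszul-0} as a split subcomplex so that the dévissage argument applies. Because $G_2$ now has a repeated summand $\Ug(-4)$ of multiplicity three and $G_3$ a repeated summand $\Ug(-6)$ of multiplicity two — unlike the rank-$(1,3,3,1)$ shape in Lemmas \ref{lemma:complex-type-e}, \ref{lemma:complex-type-f} — one has to be a little careful that the chosen splitting is consistent across all three differentials; but this is finite linear algebra over $\Ug$ and presents no real difficulty. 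As a sanity check one can also confirm the Euler characteristic of ranks, $1 - 4 + 5 - 2 = 0$, consistent with $H_0$ being finite length, and that the top-degree shift $\Ug(-6)$ matches the socle degree of $\F[e,f]/(e^3,ef,f^3)$.
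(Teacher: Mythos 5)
Your proof is correct and follows the same template the paper uses: the paper's own proof of this lemma reads only ``The proof is similar to that of Lemma \ref{lemma:complex-type-e},'' and your argument is a faithful spelling-out of that template (checking $d_1d_2=d_2d_3=0$, identifying \eqref{eq:complex-Ug-Koszul-0} as a split subcomplex via positions $2,3$ of $G_1$ and position $3$ of $G_2$, then applying Lemma \ref{lemma:criterion-exact} with $n=3$). One small typo worth fixing: in the Hilbert-series numerator the middle term should be $3t^4+2t^5$ (not $2t^4+3t^4+2t^5$), giving $1-2t^2-2t^3+3t^4+2t^5-2t^6=(1-t)^2(1-t^2)(2t^2+2t+1)$, so the order of pole at $t=1$ is indeed $0$.
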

\begin{proof}
The proof is similar to that of Lemma \ref{lemma:complex-type-e}.
\end{proof}
\begin{remark}
It is easy to see that there is a short exact sequence of left $\Ug$-modules
\[0\ra \Ug/(e^3,ef,fe,f^3)\ra \Ug/(e,h,f^3)\oplus \Ug/(f,h,e^3)\ra \F\ra0.\]
\end{remark}

\subsubsection{$H$-actions} \label{subsubsection-H-action}
Recall that
\[H=\bigg\{\matr{a}00{d},\ a,d\in \F_{p^f}^{\times}\bigg\},\]
and for $i\in\cS\defn\{0,\cdots,f-1\}$, $\alpha_i:H\ra \F^{\times}$ denotes the character  sending $\smatr{a}00d$ to $(ad^{-1})^{p^i}$.
  Assume that $\Ug$ is equipped with an action of $H$ such that  for $g\in H$:
\[ge=\alpha_i(g) e, \ \ g f=\alpha_i^{-1}(g)f,\ \ g h=h.\]
Then the differential maps in the complexes  \eqref{eq:complex-Ug-Koszul-e}, \eqref{eq:complex-Ug-Koszul-f}, \eqref{eq:complex-Ug-Koszul-0}, \eqref{eq:complex-type-e}, \eqref{eq:complex-type-f} and \eqref{eq:complex-type-0} are actually $H$-equivariant.

For example, if we write $\Ug_{\chi}$ for $\Ug$ twisted by a character $\chi$ of $H$, then the complexes \eqref{eq:complex-Ug-Koszul-e}, \eqref{eq:complex-Ug-Koszul-f}, \eqref{eq:complex-Ug-Koszul-0} become
\begin{align*}&0\ra \Ug_{\alpha_i}(-3)\ra \Ug_{\alpha_i}(-1)\oplus\Ug_{\ide}(-2)\ra\Ug_{\ide}\ra0\\
&0\ra \Ug_{\alpha_i^{-1}}(-3)\ra\Ug_{\alpha_i^{-1}}(-1)\oplus \Ug_{\ide}(-2)\ra \Ug_{\ide}\ra0\\
&0\ra \Ug_{\ide}(-4) \ra \Ug_{\ide}(-2)\oplus \Ug_{\ide}(-2)\ra \Ug_{\ide}\ra0\end{align*}
while the terms of the complex \eqref{eq:complex-type-e} become
\[G_0=\Ug_{\ide},\ \ \ G_3=\Ug_{\alpha_i^{-2}}(-6),\]
\[G_1=\Ug_{\alpha_i}(-1)\oplus\Ug_{\ide}(-2)\oplus \Ug_{\alpha_i^{-3}}(-3),\]
\[G_2=\Ug_{\alpha_i}(-3)\oplus \Ug_{\alpha_i^{-2}}(-4)\oplus\Ug_{\alpha_i^{-3}}(-5).\]
(We leave to the reader for the complexes \eqref{eq:complex-type-f} and \eqref{eq:complex-type-0}).
It is clear that the embedding from \eqref{eq:complex-Ug-Koszul-e} to \eqref{eq:complex-type-e} is  also $H$-equivariant (see Lemma \ref{lemma:complex-type-e}).

From now on, assume $p>5$. From the above explicit description, we observe the following facts.
\begin{corollary}\label{cor:Min-i}
Let $G_{\bullet}$ be one of the complexes \eqref{eq:complex-Ug-Koszul-e}, \eqref{eq:complex-Ug-Koszul-f}, \eqref{eq:complex-Ug-Koszul-0}, \eqref{eq:complex-type-e}, \eqref{eq:complex-type-f}, \eqref{eq:complex-type-0}.
Then  $G_l$ has the form \[G_l=\bigoplus_{\chi} \big(\Ug_{\chi}(-a_{l,\chi})\big)^{r_{l,\chi}}\]
where $\chi$ runs over the characters of $H$, with the convention $r_{l,\chi}=0$ if $\Ug_{\chi}$ does not appear in the decomposition of $G_l$. Moreover,
for any fixed $\chi$,   if $r_{l,\chi}\neq0$ and $r_{l',\chi}\neq0$, then\footnote{\label{fn:p} Both the characters $\alpha_0^3$ and $\alpha_0^{-3}$ occur in the complex \eqref{eq:complex-type-0}. They are equal if $f=1$ and $p=7$, but one checks that the conclusion is still true in this case.}
\[a_{l',\chi}=a_{l,\chi}+2(l'-l).\]
\end{corollary}

\begin{corollary}\label{cor:G'-G''}
Let  $G_{\bullet}$ be one of the complexes \eqref{eq:complex-type-e}, \eqref{eq:complex-type-f}, \eqref{eq:complex-type-0}, and $G_{\bullet}'$ be one of \eqref{eq:complex-Ug-Koszul-e}, \eqref{eq:complex-Ug-Koszul-f}, \eqref{eq:complex-Ug-Koszul-0}  which embeds in $G_{\bullet}$. Let $G_{\bullet}''$ be the quotient complex $G_{\bullet}/G_{\bullet}'$.
If $\chi'$ (resp. $\chi''$) is a character of $H$ such that $\Ug_{\chi'}(-a')$ for some $a'\in\Z$ (resp. $\Ug_{\chi''}(-a'')$  for $a''\in\Z$)  appears in $G_{\bullet}'$ (resp. $G_{\bullet}''$), then $\chi''\chi'^{-1}\notin \{\ide,\alpha_j^{\pm1}, j\in\cS\}$ (this uses the assumption $p>5$).
\end{corollary}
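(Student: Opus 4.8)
The statement to prove is Corollary~\ref{cor:G'-G''}: for $G_\bullet$ one of the "length-three" complexes \eqref{eq:complex-type-e}, \eqref{eq:complex-type-f}, \eqref{eq:complex-type-0}, with subcomplex $G_\bullet'$ one of the Koszul complexes \eqref{eq:complex-Ug-Koszul-e}, \eqref{eq:complex-Ug-Koszul-f}, \eqref{eq:complex-Ug-Koszul-0} and quotient $G_\bullet'' = G_\bullet/G_\bullet'$, the characters of $H$ appearing in $G_\bullet'$ and in $G_\bullet''$ never differ by $\alpha_j^{\pm 1}$.

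The plan is to argue case by case, reading off the $H$-isotypic decomposition of each term from the explicit descriptions collected in \S\ref{subsubsection-H-action}. First I would tabulate, for each of the three pairs $(G_\bullet, G_\bullet')$, the set $X'$ of characters $\chi'$ of $H$ such that some $\Ug_{\chi'}(-a')$ occurs in $G_\bullet'$, and the set $X''$ of characters $\chi''$ occurring in the quotient $G_\bullet''$. For the pair \eqref{eq:complex-type-e}/\eqref{eq:complex-Ug-Koszul-e}: from the twisted description in \S\ref{subsubsection-H-action} one has $X' = \{\ide, \alpha_i\}$ (the Koszul complex \eqref{eq:complex-Ug-Koszul-e} has terms $\Ug_{\ide}$, $\Ug_{\alpha_i}\oplus\Ug_{\ide}$, $\Ug_{\alpha_i}$), while the quotient $G_\bullet''$ has terms assembled from $G_1 = \Ug_{\alpha_i}\oplus\Ug_{\ide}\oplus\Ug_{\alpha_i^{-3}}$, $G_2 = \Ug_{\alpha_i}\oplus\Ug_{\alpha_i^{-2}}\oplus\Ug_{\alpha_i^{-3}}$, $G_3 = \Ug_{\alpha_i^{-2}}$, so after cancelling the $G_\bullet'$-part one gets $X'' = \{\alpha_i^{-3}, \alpha_i^{-2}\}$. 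Similarly for \eqref{eq:complex-type-f}/\eqref{eq:complex-Ug-Koszul-f} by the symmetry $e\leftrightarrow f$, $\alpha_i\leftrightarrow\alpha_i^{-1}$, one gets $X' = \{\ide,\alpha_i^{-1}\}$, $X'' = \{\alpha_i^{3},\alpha_i^{2}\}$; and for \eqref{eq:complex-type-0}/\eqref{eq:complex-Ug-Koszul-0} a parallel bookkeeping of the $H$-characters attached to the generators $e^3, ef, fe, f^3$ and the entries of the matrices $d_2,d_3$ in Lemma~\ref{lemma:complex-type-0}.

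Then for each pair I would check directly that $X''\cdot (X')^{-1}$ avoids $\{\alpha_j^{\pm 1} : j\in\cS\}$. In the first case the ratios $\chi''/\chi'$ lie in $\{\alpha_i^{-3}, \alpha_i^{-2}, \alpha_i^{-4}\}$; none of these equals $\alpha_j$ or $\alpha_j^{-1}$ for any $j$, because the only way $\alpha_i^{-k} = \alpha_j^{\pm 1}$ for $k\in\{2,3,4\}$ and $i\neq$ or $=j$ would force $p\mid (k\pm 1)$ or $p\mid(k p^{i-j}\pm 1)$ type congruences, which the hypothesis $p>5$ rules out (here I would invoke that $\alpha_0,\dots,\alpha_{f-1}$ together with their inverses generate a subgroup of $\widehat H$ in which $\alpha_i^{-k}$ for small $k$ is "generic"). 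The same numerics work for the other two pairs, where the exponents appearing are again bounded by $4$ in absolute value. The mildly delicate point, and where I expect to spend the most care, is handling the mixed comparisons $\alpha_i^{-k} = \alpha_j^{\pm 1}$ with $i\neq j$: here one uses that $\alpha_i = \alpha^{p^i}$ with $\alpha$ of order $p^f-1$, so such an equality is equivalent to $p^i k \equiv \mp p^j \pmod{p^f-1}$, and a short argument on $p$-adic valuations plus the bound $k\le 4 < p$ shows this is impossible unless $i=j$ and then $k = \mp 1$, which is excluded since $k\ge 2$. I would present this last congruence computation once, cleanly, and then note it applies verbatim across all three cases. The rest is routine verification from the tables already written down in the excerpt.
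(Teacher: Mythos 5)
Your proposal is correct, and it follows the same approach as the paper: the paper gives no written proof for this corollary — it follows the displayed $H$-twisted descriptions of the complexes in \S\ref{subsubsection-H-action} with the remark "from the above explicit description, we observe the following facts" — so a direct tabulation of characters and a ratio check is exactly what is intended. Your final congruence reduction is phrased a little loosely (the operative modulus is $p^f-1$, not $p$, and the "$p$-adic valuation" step is really just a size comparison between $|k|\le 4$ and $p^d$ after dividing by $p^i$), but the substance — reducing $\alpha_i^k = \alpha_j^{\pm 1}$ to $k\equiv \mp p^{j-i}\pmod{p^f-1}$ and ruling this out from $|k|\le 4 < p$, using $p\ge 7$ — is the right argument and is exactly where the $p>5$ hypothesis enters.
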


\begin{corollary}\label{cor:complex-sum}
The direct sum of \eqref{eq:complex-Ug-Koszul-e} with  \eqref{eq:complex-Ug-Koszul-f} twisted by $\alpha_i$ is isomorphic to
\begin{equation}\label{eq:complex-sum}0\ra G(-3)\overset{(-\phi_2,\phi_1)}{\lra} G(-1)\oplus G(-2)\overset{\binom{\phi_1}{\phi_2}}{\lra} G\ra0 \end{equation}
where $G=\Ug_{\ide}\oplus \Ug_{\alpha_i}$, and $\phi_i\in\End_{\Ug}(G)$ are defined by
\[\phi_1=\matr{0}{f}{e}{0},\ \ \phi_2=\matr{h}00h. \]
\end{corollary}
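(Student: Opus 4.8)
The plan is to verify Corollary~\ref{cor:complex-sum} by direct inspection, since it is purely a bookkeeping statement about how two of the previously constructed Koszul-type complexes assemble. First I would recall the two input complexes. From Lemma~\ref{lemma:complex-Koszul} with the $H$-equivariant refinement of \S\ref{subsubsection-H-action}, the complex \eqref{eq:complex-Ug-Koszul-e} reads
\[0\ra \Ug_{\ide}(-3)\overset{(-h,e)}{\lra}\Ug_{\ide}(-1)\oplus \Ug_{\ide}(-2)\overset{\binom{e}{h}}{\lra}\Ug_{\ide}\ra0,\]
and \eqref{eq:complex-Ug-Koszul-f}, twisted by $\alpha_i$, reads
\[0\ra \Ug_{\alpha_i}(-3)\overset{(-h,f)}{\lra}\Ug_{\alpha_i}(-1)\oplus \Ug_{\ide}(-2)\overset{\binom{f}{h}}{\lra}\Ug_{\alpha_i}\ra0.\]
Here the middle-term twist $\Ug_{\ide}(-2)$ in the second complex comes from the fact that $h$ has trivial $H$-weight and $f$ has weight $\alpha_i^{-1}$, so the generator in degree $2$ mapping via $h$ into $\Ug_{\alpha_i}$ carries weight $\alpha_i$, while the one mapping via $f$ carries weight $\alpha_i\cdot\alpha_i^{-1}=\ide$; I would spell this weight computation out explicitly so the identification of summands is unambiguous.

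Next I would take the termwise direct sum of these two complexes and regroup. In degree $0$ we get $\Ug_{\ide}\oplus\Ug_{\alpha_i}=G$; in degree $1$ we get $\Ug_{\ide}(-1)\oplus\Ug_{\alpha_i}(-1)\oplus\Ug_{\ide}(-2)\oplus\Ug_{\ide}(-2)$. I would regroup the degree-$1$ term, pairing the two $(-1)$-summands into $G(-1)$ and the two $(-2)$-summands into $\Ug_{\ide}(-2)^{\oplus 2}$; to match the claimed form $G(-2)=\Ug_{\ide}(-2)\oplus\Ug_{\alpha_i}(-2)$ I need one further observation, namely that the $H$-action on the two $h$-generators can be twisted: the point is that both $(-2)$-generators map into $G$ purely through the scalar $h$ (trivial weight), so one is free to regard them as carrying weights $\ide$ and $\alpha_i$ respectively without changing the complex up to isomorphism. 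Concretely, conjugating the middle term by the block-diagonal change of basis that reassigns $H$-weights to the two copies of $\Ug(-2)$ is an isomorphism of complexes because $h$ is $H$-invariant and central. After this relabelling the degree-$1$ term is $G(-1)\oplus G(-2)$ and the degree-$3$ term is $G(-3)$.

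It then remains to check that under these identifications the differentials are exactly $\binom{\phi_1}{\phi_2}$ and $(-\phi_2,\phi_1)$ with $\phi_1=\smatr{0}{f}{e}{0}$ and $\phi_2=\smatr{h}{0}{0}{h}$. This is a matrix-entry verification: the map $G(-1)\to G$ should have the $\Ug_{\ide}(-1)$-summand going to $G$ by $\smatr{e}{0}$ (from \eqref{eq:complex-Ug-Koszul-e}, whose $e$ lands in $\Ug_{\ide}$, but after the weight relabelling of the source it is the off-diagonal slot) — here I would just be careful about which block-row of $\phi_1$ receives $e$ versus $f$, using that $e$ raises weight by $\alpha_i$ and $f$ lowers it by $\alpha_i$, which forces $e$ into the $(\text{weight }\alpha_i)\leftarrow(\text{weight }\ide)$ slot, i.e. the $(2,1)$-entry, and $f$ into the $(1,2)$-entry, matching $\phi_1$ exactly. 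Similarly the $G(-2)\to G$ block is scalar multiplication by $h$ on each factor, i.e.\ $\phi_2$, and the degree-$3$ differential is $(-h, \text{(the }e/f\text{ map)})$, which reassembles to $(-\phi_2,\phi_1)$. I do not expect any genuine obstacle here; the only thing requiring care — and the step I would present most carefully — is the weight-relabelling isomorphism in degree $1$ and degree $3$ that converts $\Ug_{\ide}(-k)^{\oplus 2}$ into $G(-k)$, since that is where one could easily get the off-diagonal placement of $e$ and $f$ in $\phi_1$ wrong.
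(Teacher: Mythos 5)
Your overall strategy — take termwise direct sums and regroup — is the right one, and it is essentially what the corollary silently uses. But there are two concrete errors, one of which is not repairable as stated.

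First, the $H$-weights you assign to the two input complexes are wrong, and in fact contradict the displayed formulas in \S\ref{subsubsection-H-action} and even your own in-line weight computation. The $H$-equivariant form of \eqref{eq:complex-Ug-Koszul-e} is
\[
0\ra \Ug_{\alpha_i}(-3)\lra \Ug_{\alpha_i}(-1)\oplus\Ug_{\ide}(-2)\lra\Ug_{\ide}\ra0
\]
(the free generator mapping by right multiplication by $e$ must carry weight $\alpha_i$, since $e$ has weight $\alpha_i$ in $\Ug_{\ide}$), not the all-$\Ug_{\ide}$ complex you wrote. Similarly, twisting \eqref{eq:complex-Ug-Koszul-f} — whose $H$-equivariant form is $0\ra \Ug_{\alpha_i^{-1}}(-3)\ra\Ug_{\alpha_i^{-1}}(-1)\oplus \Ug_{\ide}(-2)\ra \Ug_{\ide}\ra0$ — by $\alpha_i$ gives
\[
0\ra \Ug_{\ide}(-3)\lra\Ug_{\ide}(-1)\oplus \Ug_{\alpha_i}(-2)\lra \Ug_{\alpha_i}\ra0,
\]
with the weight $\alpha_i$ on the $(-2)$-summand and on the degree-$0$ term, and $\ide$ on the $(-1)$- and $(-3)$-summands. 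Your display puts $\alpha_i$ on the $(-1)$- and $(-3)$-summands and $\ide$ on $(-2)$, which is the opposite; indeed your own sentence ``the generator \dots mapping via $h$ \dots carries weight $\alpha_i$, while the one mapping via $f$ carries weight $\ide$'' is correct but inconsistent with what you wrote down. Once the weights are transcribed correctly, the termwise direct sum in degree $1$ is literally $(\Ug_{\ide}(-1)\oplus\Ug_{\alpha_i}(-1))\oplus(\Ug_{\ide}(-2)\oplus\Ug_{\alpha_i}(-2))=G(-1)\oplus G(-2)$, and degree $3$ is $\Ug_{\ide}(-3)\oplus\Ug_{\alpha_i}(-3)=G(-3)$, with no further adjustment needed. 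The block entries of the differentials then read off directly to $\phi_1=\smatr{0}{f}{e}{0}$ and $\phi_2=\smatr{h}{0}{0}{h}$, as you indicate.

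Second, and more seriously, the ``weight-relabelling isomorphism'' you invoke to convert $\Ug_{\ide}(-2)^{\oplus 2}$ into $\Ug_{\ide}(-2)\oplus\Ug_{\alpha_i}(-2)$ is not a valid operation. The $H$-action on a summand is part of the data of the module; $\Ug_{\ide}(-2)$ and $\Ug_{\alpha_i}(-2)$ are non-isomorphic $H$-equivariant graded $\Ug$-modules (their generators lie in distinct $H$-isotypic components, and $\alpha_i\neq\ide$ since $p>5$), so no block-diagonal change of basis, however cleverly conjugated, can ``reassign'' one weight to another while commuting with the given $H$-action. The fact that $h$ is central and $H$-invariant has no bearing on this; it only guarantees that multiplication by $h$ is $H$-equivariant once the source and target weights already agree. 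This step is therefore a genuine gap, not merely a presentational shortcut — but fortunately it becomes unnecessary once the weights are read off correctly from \S\ref{subsubsection-H-action} as above.
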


\subsection{The representation $\tau_{\cJ}$}

Recall some notation from \S\ref{section:Rep-II}:  $I$ is the Iwahori subgroup of $K=\GL_2(\cO_L)$, $I_1$ is the pro-$p$ Iwahori subgroup, and $Z_1=Z\cap I_1$. Let $\fm\defn\fm_{I_1/Z_1}$ be the maximal ideal of the Iwasawa algebra $\F[\![I_1/Z_1]\!]$.  Recall that the residue field of $L$ is identified with $\F_q$, where $q=p^{f}$. We assume $p>5$.

We recall some results about the structure of $\gr_{\fm}(\F[\![I_1/Z_1]\!])$ from  \cite[\S5]{BHHMS}, which is based on  \cite{Laz} and \cite{Cl}.  By fixing a saturated $p$-valuation on $I_1/Z_1$ (cf. \cite[\S5.2]{BHHMS}), the associated graded Lie algebra $\gr(I_1/Z_1)$ has a unique structure of a graded $\F_p[\varepsilon]$-Lie algebra, where $\F_p[\varepsilon]$ is the graded polynomial algebra in $\varepsilon$ with $\varepsilon$ in degree $1$. More concretely, $\gr(I_1/Z_1)$ is isomorphic to $\F_q\otimes_{\F_p}\fg$, where
\[\fg\defn \F_p[\varepsilon]e\oplus \F_p[\varepsilon]f\oplus \F_p[\varepsilon]h\]
with $e$ and $f$ in degree $1$, $h$ in degree $2$ and relations
\[[e,f]=h,\ \ [h,e]=2\varepsilon e,\ \ [h,f]=-2\varepsilon f.\]
Consequently, the graded $\F_p$-Lie algebra  $\overline{\gr(I_1/Z_1)}\defn \gr(I_1/Z_1)\otimes_{\F_p[\varepsilon]}\F_p$ is isomorphic to $\F_q\otimes_{\F_p}\overline{\fg}$ where
\[\overline{\fg}\defn\fg\otimes_{\F_p[\varepsilon]}\F_p =\F_pe\oplus \F_pf\oplus \F_ph\]
with $e$ and $f$ in degree $1$, $h$ in degree $2$ and relations $[e,f]=h$, $[h,e]=[h,f]=0$.

Recall that $\F$ is a finite extension of $\F_p$ containing $\F_q$. By fixing an embedding $\kappa_0:\F_q\hookrightarrow \F$ and letting   $\kappa_i=\kappa_0\circ\Fr^i$,  the set of embeddings $\F_q\hookrightarrow \F$  is identified with $\cS=\{0,\dots,f-1\}$.
For $i\in\cS$, we define
$\overline{\fg}_i\defn \F\otimes_{\kappa_i,\F_q}\overline{\gr(I_1/Z_1)}$. Then we have a decomposition
\[\F\otimes_{\F_p}\overline{\gr(I_1/Z_1)}\cong \bigoplus_{i\in\cS}\overline{\fg}_i\]
and a canonical isomorphism $\overline{\fg}_i\cong\F\otimes_{\F_p}\overline{\fg}$.

On the other hand, we have an isomorphism, see \cite[(37)]{BHHMS},
\[\gr_{\fm}(\F_p[\![I_1/Z_1]\!])\cong U_{\F_p}(\F_q\otimes_{\F_p}\overline{\fg})\]
so that
\[\gr_{\fm}(\F[\![I_1/Z_1]\!])\cong \F\otimes_{\F_p}\gr_{\fm}(\F_p[\![I_1/Z_1]\!])\cong\bigotimes_{i\in\cS}U_{\F_p}(\overline{\fg}_i).\]
This isomorphism allows us to apply the results proved in last subsection.

For $i\in\cS$, let $e_i$, $f_i$, $h_i$ be the images of $1\otimes e$, $1\otimes f$, $1\otimes h$ under the isomorphism $\F\otimes_{\F_p}\overline{\mathfrak{g}}\cong \overline{\mathfrak{g}}_i$.
Since $H$ normalizes $I_1$ and $I_1/Z_1$, it acts on $\F[\![I_1/Z_1]\!]$, $\overline{\mathfrak{g}}$, and the elements $e_i$, $f_i$, $h_i$. It is easy to check that  for $g=\smatr{a}00{d}\in H$,
\[ge_i=\alpha_i(g) e_i, \ \ gf_i=\alpha_i^{-1}(g)f_i,\ \ gh_i=h_i,\]
where $\alpha_i$ is the character of $H$ as in \S\ref{subsubsection-H-action}.

\begin{proposition}\label{prop:tauJ-eps}
Given $\cJ\subset \cS$ and $\un{\varepsilon}\in\{\pm1\}^{\cJ}$, there exists an $I$-representation $\tau_{\cJ,\un{\varepsilon}}$ such that
\[\gr_{\fm}(\tau_{\cJ,\un{\varepsilon}}^{\vee})\cong \bigotimes_{i=0}^{f-1}   \Ugi_{\ide}/\fa_i \]
where
\begin{equation}\label{eq:tau-eps} \fa_i=\left\{\begin{array}{lll}
 (e_i^3,e_if_i,f_ie_i,f_i^3) & i\notin \cJ\\
(e_i,h_i,f_i^3) &i\in \cJ\ \mathrm{and}\ \varepsilon_i=+1\\
(f_i,h_i,e_i^3) & i\in \cJ\ \mathrm{and}\ \varepsilon_i=-1.
\end{array}\right.\end{equation}
Here, if $\cJ=\emptyset$, we make the convention that $\{\pm1\}^{\emptyset}=\emptyset$.
\end{proposition}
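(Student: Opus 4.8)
Proof proposal for Proposition~\ref{prop:tauJ-eps}.

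The plan is to construct $\tau_{\cJ,\un\varepsilon}$ directly as a finite-dimensional subrepresentation of an injective $I$-representation, and to verify the graded statement by computing the associated graded of its Pontryagin dual. First I would recall that, by the isomorphism $\gr_{\fm}(\F[\![I_1/Z_1]\!])\cong\bigotimes_{i\in\cS}U(\overline{\fg}_i)$, which is moreover $H$-equivariant for the $H$-action described above (with $e_i,f_i,h_i$ of weights $\alpha_i,\alpha_i^{-1},\ide$), any $H$-stable graded quotient of the right-hand side of the desired form $\bigotimes_i U(\overline{\fg}_i)_{\ide}/\fa_i$ is a graded module which, being finite-dimensional over $\F$ (each $U(\overline{\fg})/\fa$ with $\fa$ as in \eqref{eq:tau-eps} has $\F$-dimension $3$ or $5$, by Lemmas~\ref{lemma:complex-type-e}, \ref{lemma:complex-type-f}, \ref{lemma:complex-type-0}), arises as $\gr_{\fm}$ of some finite length $\F[\![I_1/Z_1]\!]$-module $N$. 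The candidate $\tau_{\cJ,\un\varepsilon}$ is then $N^{\vee}$ equipped with the $H$-action making $\gr_{\fm}(N)$ carry the prescribed weights, so that $\tau_{\cJ,\un\varepsilon}$ becomes a representation of $I=H\ltimes I_1$.

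The key steps are as follows. (1) Fix, for each $i$, the ideal $\fa_i\subset U(\overline{\fg}_i)$ as in \eqref{eq:tau-eps} and observe that it is homogeneous and $H$-stable; form the graded algebra $\cA\defn\gr_{\fm}(\F[\![I_1/Z_1]\!])$ and the graded left ideal $\fa\defn\cA\cdot(\bigcup_i\fa_i)$, so that $\cA/\fa\cong\bigotimes_iU(\overline{\fg}_i)/\fa_i$ is finite-dimensional over $\F$. (2) Lift $\fa$ to a left ideal $\widetilde\fa\subset\F[\![I_1/Z_1]\!]$ by choosing $\fm$-adic lifts of a finite homogeneous generating set of $\fa$; set $N\defn\F[\![I_1/Z_1]\!]/\widetilde\fa$. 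Since $\gr_{\fm}$ is exact with respect to the $\fm$-adic filtration on cyclic modules of this shape, one gets a surjection $\cA/\fa\twoheadrightarrow\gr_{\fm}(N)$; comparing $\F$-dimensions (both finite, and the lifting cannot increase dimension) shows it is an isomorphism, hence $\dim_{\F}N=\dim_{\F}\cA/\fa<\infty$. (3) Because $\gr_{\fm}(N)$ is $H$-stable inside $\cA$, one can (after possibly adjusting $\widetilde\fa$ by $H$-averaging, which is harmless since $p>5$ so $|H|$ is prime to $p$) arrange $\widetilde\fa$ to be $H$-stable; then $N$ is a module over $\F[\![I/Z_1]\!]=\F[H]\ltimes\F[\![I_1/Z_1]\!]$, and the $H$-weights on $\gr_{\fm}(N)=\gr_{\fm}(\widetilde\fa$-quotient$)$ match those of $\cA/\fa$ because the symbol map is $H$-equivariant. (4) Finally set $\tau_{\cJ,\un\varepsilon}\defn N^{\vee}$; Pontryagin duality and the compatibility $\gr_{\fm}(N^{\vee})\cong$ (graded dual of $\gr_{\fm}(N)$) — but since here we want $\gr_{\fm}(\tau_{\cJ,\un\varepsilon}^{\vee})=\gr_{\fm}(N)$ directly, it is cleanest to \emph{define} $\tau_{\cJ,\un\varepsilon}$ so that $\tau_{\cJ,\un\varepsilon}^{\vee}=N$, i.e. take $\tau_{\cJ,\un\varepsilon}$ to be the smooth $I$-representation whose dual is $N$; then the stated isomorphism holds by construction.

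The main obstacle I anticipate is Step~(2)–(3): showing that a lift of the graded ideal $\fa$ can be chosen so that passing to $\gr_{\fm}$ recovers exactly $\cA/\fa$ with no drop in dimension, and simultaneously keeping the lift $H$-stable. The dimension control is the delicate point — a careless lift of the generators $e_i^3$, $e_if_i$, etc. could create extra relations in lower filtration degree and shrink $\gr_{\fm}(N)$. The way around this is to use that $\F[\![I_1/Z_1]\!]$ is a complete filtered ring whose associated graded $\cA$ is (a tensor product of) Auslander-regular noetherian domains-up-to-the-center (Lemma~\ref{lemma:Ug=regular}), so that a homogeneous generating set of $\fa$ lifts to a generating set of $\widetilde\fa$ whose symbols generate $\fa$; this is exactly the statement that $\widetilde\fa$ is a \emph{good} (strict) filtered submodule, and then $\gr_{\fm}(N)\cong\cA/\fa$ automatically. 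The $H$-equivariance is then obtained by averaging the chosen lifts over $H$, legitimate since $p\nmid|H|$. Once these points are in place, the explicit free resolutions of $U(\overline{\fg})/\fa$ from Lemmas~\ref{lemma:complex-type-e}, \ref{lemma:complex-type-f}, \ref{lemma:complex-type-0} (which we will need anyway in the sequel to compute $\Ext$'s over $I/Z_1$) confirm that each factor $U(\overline{\fg}_i)/\fa_i$ is the claimed finite-dimensional module, completing the proof.
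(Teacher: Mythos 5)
Your proposal takes a genuinely different route from the paper. The paper's proof is \emph{constructive}: it exhibits $\tau_{\cJ,\un\varepsilon}$ explicitly as a tensor product $\bigotimes_{i\in\cS}\tau_{\cJ,\un\varepsilon,i}$, where each factor is an explicit uniserial $I$-representation $E_i^{\pm}(2)$ of length $3$ (built in \cite{Hu10}) or the amalgam $E_i^{+}(2)\oplus_{\ide}E_i^{-}(2)$ of length $5$; it then computes $\gr_{\fm}(\tau_{\cJ,\un\varepsilon,i}^{\vee})$ by direct inspection and appeals to \cite[Lem.~1.1(i)]{AJL} for the compatibility of $\gr_{\fm}$ with tensor products. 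You instead propose to produce $\tau_{\cJ,\un\varepsilon}$ abstractly by lifting the graded left ideal $\fa$ to a filtered left ideal $\widetilde\fa\subset\F[\![I_1/Z_1]\!]$. The paper's route has the advantage of giving an explicit module model (used in the proof of Theorem~\ref{thm-generation-D0}, where $\tau_{J_{\brho},\un{-1}}$ must be pinned down inside $D_0(\brho)$); your route, if made rigorous, would be shorter for the bare existence statement.

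However there is a genuine gap in Step~(2)--(3). The assertion that lifting a homogeneous generating set of $\fa$ to $\widetilde g_1,\dots,\widetilde g_k\in\F[\![I_1/Z_1]\!]$ and setting $\widetilde\fa=\sum_i\F[\![I_1/Z_1]\!]\widetilde g_i$ automatically yields $\gr_{\fm}(\widetilde\fa)=\fa$ is not true in general, and the phrase ``a homogeneous generating set of $\fa$ lifts to a generating set of $\widetilde\fa$ whose symbols generate $\fa$'' is vacuous (the symbols of the $\widetilde g_i$ are the $g_i$ by construction, which certainly generate $\fa$, but this says nothing about $\gr_{\fm}(\widetilde\fa)$ being equal to $\fa$ rather than strictly larger). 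The danger is precisely in the syzygies: a homogeneous relation $\sum a_ig_i=0$ in $\gr_{\fm}(\F[\![I_1/Z_1]\!])$ lifts to $\sum\widetilde a_i\widetilde g_i\in\fm^{d+1}$ whose symbol may fall outside $\fa$; Auslander-regularity of $\Ug$ does not by itself preclude this. Your dimension count only gives $\dim_{\F}N\leq\dim_{\F}\cA/\fa$ and cannot close the gap. What actually works — and what the paper's Appendix (Lemma~\ref{lemma:appendix-lift}, built on \cite[Thm.~VII.5]{N-O}) is for — is to lift a \emph{full graded free resolution} of $\cA/\fa$ (e.g.\ the ones you cite from Lemmas~\ref{lemma:complex-type-e}--\ref{lemma:complex-type-0}) to a filt-free complex over $\F[\![I_1/Z_1]\!]$; the acyclicity of the graded complex together with completeness of the filtration then forces the lifted complex to be acyclic and forces $\gr_{\fm}(N)\cong\cA/\fa$. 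So the essential missing idea is to lift the resolution, not the ideal. Once you do that, the $H$-averaging remark is fine since $p\nmid|H|$ (or, cleaner, carry $H$ along throughout by filtering $\F[\![I/Z_1]\!]=\F[H]\ltimes\F[\![I_1/Z_1]\!]$ by $\fm^{n}$, so the whole construction is $H$-equivariant from the start).
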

\begin{proof}
We give a constructive proof.

By \cite[Lem.~2.15(i)]{Hu10}, there exists a unique $I$-representation, denoted by $E_i^-(2)$, which is uniserial of length $3$ and whose socle filtration has graded pieces  $\ide, \alpha^{-1}_i,\alpha_i^{-2}$. By taking a conjugate action of the matrix $\smatr{0}1{p}0$, we obtain a unique $I$-representation $E_i^+(2)$, which is uniserial of length $3$  and whose socle filtration has graded pieces $\ide, \alpha_i,\alpha_i^{2}$.  To make the notation more transparent, we write
\begin{equation}\label{eq:Ei(2)}E_{\ide,\alpha_i,\alpha_i^2}=E_{i}^{+}(2),\ \ E_{\ide,\alpha_i^{-1},\alpha_i^{-2}}=E_{i}^-(2).\end{equation}
It is direct to check that
\[\gr_{\fm}((E_{\ide,\alpha_i,\alpha_i^2})^{\vee})\cong \Ugi_{\ide}/(e_i,h_i,f_i^3),\ \  \gr_{\fm}((E_{\ide,\alpha_i^{-1},\alpha_i^{-2}})^{\vee})\cong \Ugi_{\ide}/(f_i,h_i,e_i^3) \]
as graded $\gr_{\fm}(\F[\![I/Z_1]\!])$-modules. Moreover, taking an amalgam sum $E_{\ide,\alpha_i,\alpha_i^2}\oplus_{\ide}E_{\ide,\alpha_i^{-1},\alpha_i^{-2}}$, defined by:
\[0\ra \ide\ra E_{\ide,\alpha_i,\alpha_i^2}\oplus E_{\ide,\alpha_i^{-1},\alpha_i^{-2}}\ra E_{\ide,\alpha_i,\alpha_i^2}\oplus_{\ide}E_{\ide,\alpha_i^{-1},\alpha_i^{-2}}\ra0,\]
 its dual has graded module isomorphic to $\Ugi_{\ide}/(e_i^3,e_if_i,f_ie_i,f_i^3)$.

Now we let $\tau_{\cJ,\un{\varepsilon}}$ be the tensor product  $\bigotimes_{i\in\cS}\tau_{\cJ,\un{\varepsilon},i}$, where
\[\tau_{\cJ,\un{\varepsilon},i}=\left\{\begin{array}{ll}
E_{\ide,\alpha_i,\alpha_i^2}\oplus_{\ide}E_{\ide,\alpha_i^{-1},\alpha_i^{-2}} &i\notin \cJ\\
E_{\ide,\alpha_i,\alpha_i^2} & i\in \cJ\ \mathrm{and}\ \varepsilon_i=+1\\
E_{\ide,\alpha_i^{-1},\alpha_i^{-2}} &i\in \cJ\ \mathrm{and}\ \varepsilon_i=-1.
\end{array}\right.\]
To conclude, it suffices to prove that
\[\gr_{\fm}(\tau_{\cJ,\un{\varepsilon}}^{\vee})\cong\bigotimes_{i\in\cS}\gr_{\fm}(\tau_{\cJ,\un{\varepsilon},i}^{\vee})\]
which is a special case of  \cite[Lem.~1.1(i)]{AJL}.
\end{proof}

\begin{definition}\label{def:tau-J}
Define an $I$-representation $\tau_{\cJ}$ as
\[\tau_{\cJ}:=\bigoplus_{\un{\varepsilon}\in\{\pm1\}^{\cJ}}\Big(\tau_{\cJ,\un{\varepsilon}}\otimes \big(\prod_{\varepsilon_i=-1}\alpha_i^{-1}\big)\Big),\]
with the convention that $\tau_{\emptyset}:=\tau_{\emptyset,\emptyset}$.\end{definition}

\begin{remark}
The motivation to define $\tau_{\cJ}$   in this way comes from Proposition \ref{prop:tau-embeds} and Theorem  \ref{thm-generation-tau} in next section, which say  that $\pi_v^D(\overline{r})$ contains a suitable twist of $\tau_{\cJ}$ with $\cJ=J_{\brho}$, and is generated by it as a $\GL_2(L)$-representation.
\end{remark}

\begin{lemma}\label{lemma:gr(tauJ)}
  $\gr_{\fm}(\tau_{\cJ}^{\vee})$ has a tensor product decomposition
\[ \Big(\bigotimes_{i\notin \cJ}\Ugi_{\ide}/(e_i^3,e_if_i,f_ie_i,f_i^3)\Big)\bigotimes\Big(\bigotimes_{i\in \cJ}\big(\Ugi_{\ide}/(e_i,h_i,f_i^3)\oplus \Ugi_{\alpha_i}/(f_i,h_i,e_i^3)\big)\Big).\]
\end{lemma}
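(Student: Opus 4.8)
The plan is to unwind Definition \ref{def:tau-J} and reduce the claim to Proposition \ref{prop:tauJ-eps}, using only that $\gr_{\fm}(-)$ is additive on finite direct sums and compatible with twisting by smooth characters of $H$, which act trivially on $I_1$ and hence preserve the $\fm$-adic filtration (inducing on each graded piece the same twist). First I would dualize: since $(V\otimes\chi)^{\vee}\cong V^{\vee}\otimes\chi^{-1}$ for a character $\chi$ of $H$, Definition \ref{def:tau-J} gives
\[\tau_{\cJ}^{\vee}\cong\bigoplus_{\un{\varepsilon}\in\{\pm1\}^{\cJ}}\Big(\tau_{\cJ,\un{\varepsilon}}^{\vee}\otimes\prod_{\varepsilon_i=-1}\alpha_i\Big),\]
so that $\gr_{\fm}(\tau_{\cJ}^{\vee})\cong\bigoplus_{\un{\varepsilon}}\gr_{\fm}(\tau_{\cJ,\un{\varepsilon}}^{\vee})\otimes\prod_{\varepsilon_i=-1}\alpha_i$ as graded $\gr_{\fm}(\F[\![I/Z_1]\!])$-modules.

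Next I would feed in Proposition \ref{prop:tauJ-eps}, which identifies $\gr_{\fm}(\tau_{\cJ,\un{\varepsilon}}^{\vee})$ with $\bigotimes_{i\in\cS}\Ugi_{\ide}/\fa_i$ for the ideals $\fa_i$ of \eqref{eq:tau-eps}. The character $\prod_{\varepsilon_i=-1}\alpha_i$ equals the external tensor product $\bigotimes_{i\in\cJ}\alpha_i^{c_i}$ with $c_i=1$ if $\varepsilon_i=-1$ and $c_i=0$ otherwise (setting $\alpha_i^0=\ide$), so it distributes across the tensor factors; since each $\fa_i$ is generated by $H$-eigenvectors one has $(\Ugi_{\ide}/\fa_i)\otimes\alpha_i\cong\Ugi_{\alpha_i}/\fa_i$, and hence for each $\un{\varepsilon}$
\[\gr_{\fm}\Big(\tau_{\cJ,\un{\varepsilon}}^{\vee}\otimes\prod_{\varepsilon_i=-1}\alpha_i\Big)\cong\Big(\bigotimes_{i\notin\cJ}A_i\Big)\otimes\Big(\bigotimes_{i\in\cJ}B_i(\varepsilon_i)\Big),\]
where $A_i\defn\Ugi_{\ide}/(e_i^3,e_if_i,f_ie_i,f_i^3)$, $B_i(+1)\defn\Ugi_{\ide}/(e_i,h_i,f_i^3)$ and $B_i(-1)\defn\Ugi_{\alpha_i}/(f_i,h_i,e_i^3)$. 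Note that the $i\notin\cJ$ factors are literally the same for every $\un{\varepsilon}$, because there $\fa_i$ is independent of $\un{\varepsilon}$ and the twist involves only indices in $\cJ$.

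Finally I would sum over $\un{\varepsilon}\in\{\pm1\}^{\cJ}$ and use that tensor product commutes with finite direct sums:
\[\bigoplus_{\un{\varepsilon}\in\{\pm1\}^{\cJ}}\Big(\bigotimes_{i\notin\cJ}A_i\Big)\otimes\Big(\bigotimes_{i\in\cJ}B_i(\varepsilon_i)\Big)\cong\Big(\bigotimes_{i\notin\cJ}A_i\Big)\otimes\bigotimes_{i\in\cJ}\big(B_i(+1)\oplus B_i(-1)\big),\]
which is precisely the asserted decomposition. There is no serious obstacle here; the only point requiring care is the bookkeeping of the $\alpha_i$-twists — checking that $\prod_{\varepsilon_i=-1}\alpha_i$ affects exactly the $\cJ$-factors and converts $\Ugi_{\ide}/(f_i,h_i,e_i^3)$ into $\Ugi_{\alpha_i}/(f_i,h_i,e_i^3)$ — together with keeping track that all identifications are compatible with the graded $\gr_{\fm}(\F[\![I/Z_1]\!])$-module structure, so that the $H$-weights on the degree-zero parts come out exactly as stated.
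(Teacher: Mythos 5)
Your proof is correct and follows the exact route the paper has in mind: the paper disposes of this lemma with the one-liner ``easy check from Definition \ref{def:tau-J} and the proof of Proposition \ref{prop:tauJ-eps}; note that $\alpha_i^{\vee}=\alpha_i^{-1}$,'' and your write-up is a careful expansion of that hint, correctly tracking the dualization, the factorwise distribution of the $H$-twist, and the final distributivity of $\otimes$ over $\oplus$.
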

\begin{proof}
This is an easy check using Definition \ref{def:tau-J} and the proof of Proposition \ref{prop:tauJ-eps}. Note that $\alpha_i^{\vee}=\alpha_i^{-1}$ and vice versa.
\end{proof}

\begin{lemma}\label{lemma:socle-tauJ}
The $I$-socle of $\tau_{\cJ}$ is equal to \[\bigoplus_{J\subset\cJ}\Big(\prod_{i\in J}\alpha_i^{-1}\Big)\cong \bigotimes_{i\in\cJ}\big(\ide\oplus \alpha_{i}^{-1}\big).\] The Jordan--H\"older factors of $\tau_{\cJ}$ are the characters
$\prod_{i\in\cS}\alpha_i^{b_{J,i}}$,
where $J\subset \cJ$ and $(b_{J,i})\in \Z^{\cS}$ runs over
\[
\left\{\begin{array}{cl}
-2\leq b_{J,i}\leq 2& \mathrm{if}\  i\notin \cJ \\
0\leq b_{J,i}\leq 2& \mathrm{if}\ i\in \cJ\backslash J\\
-3\leq b_{J,i}\leq -1& \mathrm{if}\ i\in J\end{array}\right.\]
and  $\tau_{\cJ}$ is multiplicity free.
\end{lemma}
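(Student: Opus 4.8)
The plan is to read everything off the tensor product decomposition of $\gr_{\fm}(\tau_{\cJ}^{\vee})$ provided by Lemma \ref{lemma:gr(tauJ)}, together with the description of $\tau_{\cJ}$ itself in Definition \ref{def:tau-J}. First I would compute the socle: since $\tau_{\cJ}$ is a direct sum over $\un\varepsilon\in\{\pm1\}^{\cJ}$ of $\tau_{\cJ,\un\varepsilon}\otimes\prod_{\varepsilon_i=-1}\alpha_i^{-1}$, and $\tau_{\cJ,\un\varepsilon}=\bigotimes_{i\in\cS}\tau_{\cJ,\un\varepsilon,i}$ with each tensor factor $\tau_{\cJ,\un\varepsilon,i}$ a uniserial $I$-representation (namely $E_i^\pm(2)$ or the amalgam $E_i^+(2)\oplus_\ide E_i^-(2)$) of socle the trivial character $\ide$, Frobenius reciprocity-style arguments for $I/Z_1$ (or the concrete $X_i,Y_i$ operators used in the proof of Proposition \ref{prop:tauJ-eps}, as in \cite[Lem. 2.15]{Hu10}) show $\soc_I$ of a tensor product of such factors is the tensor product of the socles, hence $\soc_I(\tau_{\cJ,\un\varepsilon})=\ide$. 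Twisting by $\prod_{\varepsilon_i=-1}\alpha_i^{-1}$ gives socle $\prod_{\varepsilon_i=-1}\alpha_i^{-1}$, and running over $\un\varepsilon$ — equivalently over the subset $J=\{i\in\cJ:\varepsilon_i=-1\}\subset\cJ$ — produces exactly $\bigoplus_{J\subset\cJ}\prod_{i\in J}\alpha_i^{-1}$, which equals $\bigotimes_{i\in\cJ}(\ide\oplus\alpha_i^{-1})$. The key point to verify here is the compatibility of socle with $\otimes$, which as in the proof of Proposition \ref{prop:tauJ-eps} uses crucially $p>5$ so that a product $\prod_i\alpha_i^{b_i}$ with small exponents $b_i$ is trivial only when all $b_i=0$.

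Next I would list the Jordan-H\"older factors. By \cite[Lem. 1.1(i)]{AJL} (or just additivity of $\gr_{\fm}$ and of $\JH$ over tensor products), $\JH(\tau_{\cJ,\un\varepsilon})$ is the set of products $\bigotimes_i\chi_i$ with $\chi_i\in\JH(\tau_{\cJ,\un\varepsilon,i})$: for $i\notin\cJ$ the uniserial factor has constituents $\ide,\alpha_i^{\pm1},\alpha_i^{\pm2}$ i.e. $\alpha_i^{b_i}$, $-2\le b_i\le 2$; for $i\in\cJ$ with $\varepsilon_i=+1$ the constituents of $E_i^+(2)$ are $\ide,\alpha_i,\alpha_i^2$ i.e. $0\le b_i\le 2$; for $i\in\cJ$ with $\varepsilon_i=-1$ the constituents of $E_i^-(2)$ are $\ide,\alpha_i^{-1},\alpha_i^{-2}$ i.e. $-2\le b_i\le 0$. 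Twisting $\tau_{\cJ,\un\varepsilon}$ by $\prod_{\varepsilon_i=-1}\alpha_i^{-1}$ shifts the exponent at each $i$ with $\varepsilon_i=-1$ down by one, turning $-2\le b_i\le 0$ into $-3\le b_{J,i}\le -1$, where $J=\{i:\varepsilon_i=-1\}$. Collecting over all $\un\varepsilon$ (equivalently all $J\subset\cJ$) yields exactly the stated list: $b_{J,i}\in[-2,2]$ for $i\notin\cJ$, $b_{J,i}\in[0,2]$ for $i\in\cJ\setminus J$, $b_{J,i}\in[-3,-1]$ for $i\in J$.

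Finally, multiplicity freeness. I would argue that (1) each summand $\tau_{\cJ,\un\varepsilon}$ is multiplicity free — this reduces, again by additivity over $\otimes$ and $p>5$, to the multiplicity freeness of each tensor factor $E_i^\pm(2)$ and the amalgam $E_i^+(2)\oplus_\ide E_i^-(2)$, which is uniserial resp. has visibly distinct constituents; and (2) no constituent is shared between two different summands. For (2), suppose $\prod_i\alpha_i^{b_{J,i}}=\prod_i\alpha_i^{b_{J',i}}$ for $J\ne J'\subset\cJ$; since $p>5$ and all exponents lie in $[-3,2]$, the characters $\alpha_i$ are "independent enough" that this forces $b_{J,i}\equiv b_{J',i}$ for every $i$, and then examining the three ranges shows $J$ and $J'$ must agree on each $i\in\cJ$ (the ranges $[0,2]$ and $[-3,-1]$ are disjoint, so $i\in J\iff i\in J'$), contradicting $J\ne J'$. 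I expect the main (minor) obstacle is being careful with this independence-of-characters step — precisely, checking that within the relevant exponent ranges an equality $\sum_i b_i p^i\equiv\sum_i b_i' p^i\pmod{q-1}$ forces $b_i=b_i'$ — but this is exactly the kind of bound the strong genericity / $p>5$ hypothesis is designed to give, and it is the same mechanism already invoked in the proof of Proposition \ref{prop:tauJ-eps}. With these three pieces assembled the lemma follows.
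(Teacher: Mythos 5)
Your proposal is correct and follows essentially the same route as the paper: decompose $\tau_{\cJ}$ over $\un\varepsilon$, read the socle and Jordan--H\"older constituents off the tensor factors of $\tau_{\cJ,\un\varepsilon}$ (using that socle and $\JH$ respect the tensor decomposition, as in the proof of Proposition \ref{prop:tauJ-eps}), account for the twist by $\prod_{\varepsilon_i=-1}\alpha_i^{-1}$, and check multiplicity freeness by the $p>5$ digit-uniqueness argument. One small imprecision worth fixing: the amalgam $E_i^+(2)\oplus_\ide E_i^-(2)$ is \emph{not} uniserial (its second socle layer is $\alpha_i\oplus\alpha_i^{-1}$), but your argument only uses that its socle is $\ide$, so nothing breaks.
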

\begin{proof}
By construction and the claim in the proof of Proposition \ref{prop:tauJ-eps}, each $\tau_{\cJ,\un{\varepsilon}}$ is indecomposable with irreducible socle $\ide$ (the trivial character). The first assertion then follows from the definition of $\tau_{\cJ}$, up to a reformulation by setting $J=\{i\in\cJ, \varepsilon_i=-1\}$.

For a fixed $\un{\varepsilon}$, the Jordan--H\"older factors of $\tau_{\cJ,\un{\varepsilon}}$ are the characters
$\prod_{i\in \cS}\alpha_i^{a_i}$
where $a_i$ are integers such that
\[\left\{\begin{array}{cl}
-2\leq a_i\leq 2& \mathrm{if}\ i\notin \cJ \\
0\leq a_i\leq 2& \mathrm{if}\ i\in \cJ, \varepsilon_i=+1\\
-2\leq a_i\leq 0& \mathrm{if}\  i\in \cJ,\varepsilon_i=-1.\end{array}\right.\]
Twisting by $\alpha_{\varepsilon}$, we deduce that the Jordan--H\"older factors of $\tau_{\cJ,\un{\varepsilon}}\otimes(\prod_{\varepsilon_i=-1}\alpha_i^{-1})$ are the characters $\prod_{i\in \cS}\alpha_i^{b_i}$ where $b_i=a_i$ except when $i\in \cJ$ and $\varepsilon_i=-1$ in which case $b_i=a_i-1$. Explicitly, we have
\begin{equation} \label{eq:JH-tauJ-bi}
\left\{\begin{array}{cl}
-2\leq b_i\leq 2& \mathrm{if}\  i\notin \cJ \\
0\leq b_i\leq 2& \mathrm{if}\ i\in \cJ, \varepsilon_i=+1\\
-3\leq b_i\leq -1& \mathrm{if}\ i\in \cJ,\varepsilon_i=-1.\end{array}\right.\end{equation}
This gives the Jordan--H\"older factors of $\tau_{\cJ}$ in the statement (setting $J=\{j\in \cJ, \varepsilon_i=-1\}$).
Finally, the multiplicity freeness of $\tau_{\cJ}$  can be checked directly using \eqref{eq:JH-tauJ-bi}.
\end{proof}

\begin{remark}\label{rem:soc2-tauJ}
With the notation of Lemma \ref{lemma:socle-tauJ}, one checks that the character $\prod_{i\in\cS}\alpha_i^{b_{J,i}}$ lies in $\tau_{\cJ}[\fm^2]$ (where $J\subset \cJ$) if and only if
\[\left\{\begin{array}{cl}|b_{J,i}|\leq 1 &\mathrm{if}\ i\notin J\\  \ -2\leq b_{J,i}\leq -1 &\mathrm{if}\ i\in J\end{array}\right. \]
and there exists at most one $i$ such that $|b_{J,i}|=1$ if $i\notin J$, or $b_{J,i}=-2$ if $i\in J$.
\end{remark}

\begin{proposition}\label{prop:resolution-tauJ}
$\tau_{\cJ}^{\vee}$ admits a length $3f$ minimal resolution by projective $\F[\![I/Z_1]\!]$-modules, \[P_{\cJ,\bullet}\ra \tau_{\cJ}^{\vee}\ra0\]
satisfying the following property:  for each $0\leq l\leq 3f$,
$P_{\cJ,l}$ has a direct sum decomposition  \[P_{\cJ,l}=P_{\cJ,l}'\oplus  P_{\cJ,l}''\]
such that
\begin{enumerate}
\item[(a)] $P_{\cJ,l}'\cong \big(\bigoplus_{\chi}P_{\chi}\big)^{\binom{2f}{l}}$, where $\chi$ runs over the characters of $\mathrm{cosoc}_I(\tau_{\cJ}^{\vee})$; 
\item[(b)]$\Hom_I(P''_{\cJ,l},P_{\chi}/\fm^2)=0$ for any $\chi\in\mathrm{cosoc}_I(\tau_{\cJ}^{\vee})$.
\end{enumerate}
\end{proposition}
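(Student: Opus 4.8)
\emph{Proof plan.} The plan is to build $P_{\cJ,\bullet}$ by descending from the associated graded ring. By Lemma \ref{lemma:gr(tauJ)} there is an isomorphism of graded modules over $\gr_{\fm}(\F[\![I_1/Z_1]\!])\cong\bigotimes_{i\in\cS}\Ugi$
\[\gr_{\fm}(\tau_{\cJ}^{\vee})\cong\bigotimes_{i\in\cS}M^{(i)},\qquad M^{(i)}=\begin{cases}\Ugi/(e_i^3,e_if_i,f_ie_i,f_i^3)&i\notin\cJ,\\[1mm]\Ugi_{\ide}/(e_i,h_i,f_i^3)\oplus\Ugi_{\alpha_i}/(f_i,h_i,e_i^3)&i\in\cJ.\end{cases}\]
Each $M^{(i)}$ carries an $H$-equivariant minimal graded free resolution $G^{(i)}_{\bullet}$: for $i\notin\cJ$ this is the length-$3$ complex \eqref{eq:complex-type-0} of Lemma \ref{lemma:complex-type-0}, and for $i\in\cJ$ it is the direct sum of \eqref{eq:complex-type-e} with the $\alpha_i$-twist of \eqref{eq:complex-type-f} (Lemmas \ref{lemma:complex-type-e}, \ref{lemma:complex-type-f}), the $H$-equivariance being the one recorded in \S\ref{subsubsection-H-action}. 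Since $\gr_{\fm}(\F[\![I_1/Z_1]\!])$ is the tensor product \emph{over the field $\F$} of the graded algebras $\Ugi$, the total complex $G_{\cJ,\bullet}\defn\bigotimes_{i\in\cS}G^{(i)}_{\bullet}$ is a complex of graded free $\gr_{\fm}(\F[\![I_1/Z_1]\!])$-modules; it is acyclic in positive degrees by the K\"unneth formula over $\F$, it remains minimal because each $G^{(i)}_{\bullet}$ is (each summand of the total differential lands in $\gr_{\geq1}$ times a free module), and it is $H$-equivariant, hence a minimal graded free resolution of $\gr_{\fm}(\tau_{\cJ}^{\vee})$, of length $3f$, over $\gr_{\fm}(\F[\![I/Z_1]\!])$.

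First I would then lift $G_{\cJ,\bullet}$ to a filt-projective resolution $P_{\cJ,\bullet}\ra\tau_{\cJ}^{\vee}\ra0$ over $\F[\![I/Z_1]\!]$, using the lifting machinery for complete filtered modules recalled in Appendix \S\ref{section:appendix} (applicable as $\F[\![I_1/Z_1]\!]$ is complete and separated for the $\fm$-adic filtration). Any such lift is automatically minimal: the graded differentials of $G_{\cJ,\bullet}$ have entries in $\gr_{\geq1}$, so the entries of the differentials of $P_{\cJ,\bullet}$ lie in $\fm_{I_1/Z_1}$, hence in the Jacobson radical of $\F[\![I/Z_1]\!]$. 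In particular $P_{\cJ,l}$ is a filt-free $\F[\![I/Z_1]\!]$-module with $\gr(P_{\cJ,l})\cong G_{\cJ,l}$, so $P_{\cJ,l}\cong\bigoplus_{\chi}(\Proj_{I/Z_1}\chi)^{m_{l,\chi}}$, where $m_{l,\chi}$ is the number of homogeneous basis elements of $G_{\cJ,l}$ of $H$-character $\chi$.

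For the decomposition, recall from Lemmas \ref{lemma:complex-type-e}--\ref{lemma:complex-type-0} that each $G^{(i)}_{\bullet}$ contains, as a subcomplex which is a direct summand in each degree, one of the Koszul-type complexes \eqref{eq:complex-Ug-Koszul-e}, \eqref{eq:complex-Ug-Koszul-f}, \eqref{eq:complex-Ug-Koszul-0}; write $\widehat{G}^{(i)}_{\bullet}$ for it (the $\alpha_i$-twisted direct sum when $i\in\cJ$) and $\overline{G}^{(i)}_{\bullet}$ for the quotient. Then $\widehat{G}_{\cJ,\bullet}\defn\bigotimes_{i}\widehat{G}^{(i)}_{\bullet}$ is a subcomplex of $G_{\cJ,\bullet}$ that is a direct summand in each degree, and I would take $P_{\cJ,l}'$ (resp. $P_{\cJ,l}''$) to be the subsum of $P_{\cJ,l}$ on the basis elements of $G_{\cJ,l}$ lying in $\widehat{G}_{\cJ,l}$ (resp. in a fixed complement), giving $P_{\cJ,l}=P_{\cJ,l}'\oplus P_{\cJ,l}''$. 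For (a): each $\widehat{G}^{(i)}_{\bullet}$ is a Koszul complex on two elements, and a direct Poincar\'e-series computation in the homological variable, keeping track of the $H$-characters of the generators via Corollary \ref{cor:Min-i}, shows that the part of $\widehat{G}_{\cJ,\bullet}$ of any fixed character $\chi\in\mathrm{cosoc}_I(\tau_{\cJ}^{\vee})$ has Poincar\'e series $(1+t)^{2f}$ and that no other character occurs; hence $m_{l,\chi}'=\binom{2f}{l}$ for each such $\chi$ and $P_{\cJ,l}'\cong(\bigoplus_{\chi}\Proj_{I/Z_1}\chi)^{\binom{2f}{l}}$, the sum over $\chi\in\mathrm{cosoc}_I(\tau_{\cJ}^{\vee})$. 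For (b): by Corollary \ref{cor:G'-G''} (which uses $p>5$), within each factor the characters occurring in $\overline{G}^{(i)}_{\bullet}$ differ from those occurring in $\widehat{G}^{(i)}_{\bullet}$ by $\alpha_i^{\pm2}$, $\alpha_i^{\pm3}$ or $\alpha_i^{\pm4}$, while characters in $\widehat{G}^{(i)}_{\bullet}$ differ from those in $\widehat{G}^{(i)}_{0}$ by at most $\alpha_i^{\pm1}$; a basis element of the complement $G_{\cJ,l}/\widehat{G}_{\cJ,l}$ has at least one factor drawn from some $\overline{G}^{(i_0)}_{\bullet}$, so the ratio of its character to any $\chi\in\mathrm{cosoc}_I(\tau_{\cJ}^{\vee})$ has $i_0$-exponent of absolute value $\geq2$ and all other exponents of absolute value $\leq1$, whence (since $p>5$) it is not $\ide$ nor any $\alpha_j^{\pm1}$, i.e. no character of $\mathrm{cosoc}_I(P_{\cJ,l}'')$ lies in $\JH(W_{\chi,2})$. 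As $P_{\cJ,l}''$ is projective and $P_{\chi}/\fm^2=W_{\chi,2}$, this yields $\Hom_I(P_{\cJ,l}'',P_{\chi}/\fm^2)=0$.

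The main obstacle I anticipate is not any single deep point but the combinatorial bookkeeping in the last paragraph: tracking degree shifts and $H$-characters coherently through the $3f$-fold tensor product and confirming that the purely Koszul part accounts for exactly $\binom{2f}{l}$ copies of the projective cover of each character of $\mathrm{cosoc}_I(\tau_{\cJ}^\vee)$, for which Corollary \ref{cor:Min-i}, Corollary \ref{cor:G'-G''}, and the explicit matrices of \S\ref{subsection:envelope} do the real work. The lifting step and the identification $\gr(P_{\cJ,l})\cong G_{\cJ,l}$ are routine given the appendix.
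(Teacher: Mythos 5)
Your plan follows the paper's approach — tensor the graded resolutions of each $\Ugi$-factor from Lemmas \ref{lemma:complex-type-e}--\ref{lemma:complex-type-0}, lift to a filt-projective resolution, and split off the Koszul summand using Corollary \ref{cor:G'-G''} — but your minimality argument has a genuine gap. You assert that since the matrix entries of the differentials of $G_{\cJ,\bullet}$ lie in the positive-degree part of $\gr_{\fm}(\F[\![I/Z_1]\!])$, the matrix entries of the differentials of any lift $P_{\cJ,\bullet}$ must lie in $\fm$. That implication is false in general, and the paper explicitly flags this as a subtle point (see the discussion of $\tau(\brho)$ in the Introduction): a degree-zero filtered morphism $P_{\chi}(-a)\to P_{\chi}(-b)$ with $a<b$ can be a unit while its associated graded vanishes, so a lift of a minimal graded resolution need \emph{not} be a minimal filt-projective resolution. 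What actually forces minimality here is Corollary \ref{cor:Min-i}: in each $G^{(i)}_{\bullet}$, and hence in the tensor product, a fixed $H$-character $\chi$ occurs with internal degrees $a_{l,\chi}$ satisfying $a_{l',\chi}=a_{l,\chi}+2(l'-l)$; in particular $a_{l,\chi}>a_{l-1,\chi}$, which is exactly the hypothesis of Lemma \ref{lemma:appendix-minimal} and forces $d(P_{\cJ,l})\subseteq\fm P_{\cJ,l-1}$ for \emph{any} degree-zero lift. You need to invoke that degree-shift bookkeeping; positivity of the graded entries alone does not suffice. (As a smaller point, in your check of (b) a basis element of the complement may have several factors drawn from the $\overline{G}^{(i)}_{\bullet}$, so the claim that ``all other exponents have absolute value $\leq1$'' is not literally correct; the conclusion survives, since additional factors with exponent $\geq2$ only move the character further from $\ide$ and $\alpha_j^{\pm1}$ when $p>5$, but the stated justification should be amended.)
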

\begin{remark}
We don't require that the $P_{\cJ,l}'$ form a subcomplex of $P_{\cJ,\bullet}$.  \end{remark}

\begin{proof}
 First, taking tensor product of the complexes   \eqref{eq:complex-type-e},  \eqref{eq:complex-type-f} (twisted by $\alpha_i$ in this case), \eqref{eq:complex-type-0}, according to $i$ as in \eqref{eq:tau-eps}, we obtain a minimal projective resolution of $\gr_{\fm}(\tau_{\cJ,\un{\varepsilon}}^{\vee}\otimes \prod_{\varepsilon_i=-1}\alpha_i)$ of length $3f$, denoted by $G_{\cJ,\un{\varepsilon},\bullet}$. Using Corollary \ref{cor:Min-i}, one checks that  $G_{\cJ,\un{\varepsilon},\bullet}$ satisfies the following property, denoted by (\textbf{Min}): for each $0\leq k\leq 3f$, $G_{\cJ,\un{\varepsilon},l}$ has a decomposition
\[G_{\cJ,\un{\varepsilon},l}=\bigoplus_{\chi}\Big(\gr_{\fm}(\F[\![I_1/Z_1]\!])_{\chi}(-a_{l,\chi})\Big)^{r_{l,\chi}}\]
and whenever $r_{l,\chi}\neq0$ and $r_{l',\chi}\neq 0$, we have
\[a_{l',\chi}=a_{l,\chi}+2(l'-l).\]
Indeed, this property (\textbf{Min}) is preserved when taking tensor product. Here, we use the property that the characters $\chi$ with $r_{l,\chi}\neq0$ are of the form $\chi=\prod_{i\in\cS}\alpha_i^{b_i}$, with $b_i$ lying in a suitable range  such that $\chi=\chi'$ implies $b_i=b_i'$.\footnote{If $p=7$, it can happen that $\chi=\prod_{i\in\cS}\alpha_i^{3}=\prod_{i\in\cS}\alpha_i^{-3}=\chi'$, but one checks that (\textbf{Min}) still holds.}

Now, applying Lemma \ref{lemma:appendix-lift}, this graded resolution $G_{\cJ,\un{\varepsilon},\bullet}$ can be ``lifted'' to a filt-projective resolution $P_{\cJ,\un{\varepsilon},\bullet}$ of $\tau_{\cJ,\un{\varepsilon}}^{\vee}\otimes \prod_{\varepsilon_i=-1}\alpha_i$, which must be \emph{minimal} by Lemma \ref{lemma:appendix-minimal}.
Define $P_{\cJ,\bullet}$ to be the direct sum of $P_{\cJ,\un{\varepsilon},\bullet}$ over $\un{\varepsilon}\in\{\pm1\}^{\cJ}$, which is a minimal filt-projective resolution of $\tau_{\cJ}^{\vee}$.

To check that $P_{\cJ,l}$ satisfies the required property, we note that the conditions (a), (b) depend only on  $\mathrm{cosoc}_I(P_{\cJ,l})$ (not on the filtration of $P_{\cJ,l}$), so it suffices to prove the corresponding property for $\un{G_{\cJ,l}}$, the underlying $\gr_{\fm}(\F[\![I/Z_1]\!])$-module of $G_{\cJ,l}$ (i.e. forgetting the graded structure). By taking the tensor product of the complexes \eqref{eq:complex-Ug-Koszul-e}, \eqref{eq:complex-Ug-Koszul-f}, \eqref{eq:complex-Ug-Koszul-0}, according to $i$ as in \eqref{eq:tau-eps}, we obtain a subcomplex $G_{\cJ,\un{\varepsilon},\bullet}'$ of $G_{\cJ,\un{\varepsilon},\bullet}$, of length $2f$, such that  $G_{\cJ,\un{\varepsilon},l}'$ is a direct summand of $G_{\cJ,\un{\varepsilon},l}$ for all $0\leq l\leq 3f$, with the convention $G'_{\cJ,\un{\varepsilon},l}:=0$ if $l>2f$.
Taking direct sum over all $\un{\varepsilon}\in\{\pm1\}^{\cJ}$, we obtain a subcomplex $G_{\cJ,\bullet}'$ of $G_{\cJ,\bullet}$ such that $G_{\cJ,l}'$ is a direct summand of $G_{\cJ,l}$. The conditions (a), (b) can be checked directly using Lemma \ref{lemma:socle-tauJ}, Corollary \ref{cor:G'-G''} and Corollary \ref{cor:complex-sum}.
\end{proof}

\subsection{The representation $\lambda_{\chi}$}

Keep the notation of the last subsection.  

\begin{proposition}\label{prop:center}
The center of $\F[\![I/Z_1]\!]/\fm^3$ contains a subring  isomorphic to
\begin{equation}\label{eq:ring-R} \F\big[(x_i,y_i)_{0\leq i\leq f-1}\big]/(x_i,y_i)_{0\leq i\leq f-1}^2.\end{equation}
\end{proposition}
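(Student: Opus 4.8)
The plan is to produce explicit central elements of $\F[\![I/Z_1]\!]/\fm^3$ lying in $\fm/\fm^3$ (so that they square to zero, giving the claimed quotient) and to check that they are $2f$ in number and $\F$-linearly independent modulo $\fm^2$. First I would work in the graded ring $\gr_{\fm}(\F[\![I_1/Z_1]\!])\cong\bigotimes_{i\in\cS}U_{\F_p}(\overline{\fg}_i)$, where $U_{\F_p}(\overline{\fg}_i)$ has generators $e_i,f_i$ in degree $1$ and $h_i$ in degree $2$ with $[e_i,f_i]=h_i$, $h_i$ central. The natural degree-$2$ candidates for central elements are $h_i$ (which is already central in each $U(\overline{\fg}_i)$, hence central in the tensor product) and the Casimir-type element $e_if_i+f_ie_i$; by Lemma~\ref{lemma:comm-relation} (equation \eqref{eq:comm-relation0}) the products $e_if_i$ and $f_ie_i$ commute with $h_i$ and one checks directly that $e_if_i+f_ie_i=2e_if_i-h_i$ commutes with $e_i$ and $f_i$ as well, so it is central in $U(\overline{\fg}_i)$. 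Thus $\gr_2$ of the center contains at least the $2f$ elements $h_i$ and $e_if_i+f_ie_i$, $i\in\cS$, and these are visibly $\F$-linearly independent in the degree-$2$ part of $\gr_{\fm}(\F[\![I_1/Z_1]\!])$ because that part has basis $\{e_ie_j,e_if_j,f_if_j,e_ih_j\text{'s}\ldots\}$ in which $h_i$ and $e_if_i$ occur independently.

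Next I would lift these graded central elements to honest elements of $\F[\![I_1/Z_1]\!]/\fm^3$. Concretely, using the group-element expressions for $e_i,f_i$ analogous to $X_i,Y_i$ in the $\GL_2(\Q_p)$ case (cf.\ \cite[\S5]{BHHMS}), one chooses lifts $\widetilde{h}_i$ and $\widetilde{c}_i$ in $\fm^2\subset\F[\![I_1/Z_1]\!]$ whose images in $\gr_2$ are $h_i$ and $e_if_i+f_ie_i$ respectively. Since these elements lie in $\fm^2$, their pairwise products and their products with any element of $\fm$ lie in $\fm^4\subset\fm^3\cdot\fm$, hence vanish in $\F[\![I_1/Z_1]\!]/\fm^3$; in particular the subalgebra they generate in $\F[\![I_1/Z_1]\!]/\fm^3$ is a quotient of $\F[(x_i,y_i)]/(x_i,y_i)^2$. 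The key point to verify is that each $\widetilde{h}_i,\widetilde{c}_i$ is \emph{central} in $\F[\![I/Z_1]\!]/\fm^3$ — not merely in $\F[\![I_1/Z_1]\!]/\fm^3$. Centrality with respect to $I_1/Z_1$ follows because $[\widetilde{h}_i,\fm]\subset[\fm^2,\fm]\subset\fm^3$ by the filtration property combined with the fact that the symbol of $\widetilde{h}_i$ (resp.\ $\widetilde{c}_i$) is central in the graded ring: the graded commutator being zero means the actual commutator drops filtration degree, landing in $\fm^{2+1+1}=\fm^4\subset\fm^3$. Centrality with respect to the finite group $H$ is a separate check: $H$ acts on $\gr_2$ by the trivial character on $h_i$ (clear) and one must confirm $e_if_i+f_ie_i$ is also $H$-fixed, which holds since $g(e_if_i)=\alpha_i(g)\alpha_i^{-1}(g)e_if_i=e_if_i$ for $g\in H$. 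Hence one may (and should) choose the lifts $\widetilde{h}_i,\widetilde{c}_i$ to be genuinely $H$-invariant, and then they are central in $\F[\![I/Z_1]\!]/\fm^3=\big(\F[\![I_1/Z_1]\!]/\fm^3\big)\rtimes H$.

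Finally I would assemble the conclusion. Set $x_i=$ (class of $\widetilde{c}_i$) and $y_i=$ (class of $\widetilde{h}_i$) in the center of $\F[\![I/Z_1]\!]/\fm^3$; these $2f$ elements lie in $\fm^2/\fm^3$, are $\F$-linearly independent there by the graded computation above, and satisfy all products among them equal to zero. Therefore the $\F$-subalgebra of the center they generate is exactly $\F[(x_i,y_i)_{0\le i\le f-1}]/(x_i,y_i)^2$, proving the proposition. The main obstacle I anticipate is the bookkeeping in choosing the lifts: one must verify carefully that the filtration on $\F[\![I_1/Z_1]\!]$ (coming from a fixed saturated $p$-valuation) behaves so that a commutator of an element of $\fm^2$ with an element of $\fm$ indeed lands in $\fm^4$ — this is where the precise compatibility of the $p$-valuation with the Lie bracket, as recorded in \cite[\S5]{BHHMS}, is used — together with confirming the $H$-equivariance of the chosen lifts, which forces working with the explicit $\F_q$-twisted generators $e_i,f_i$ rather than abstract symbols. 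Everything else is routine linear algebra in a $\fm$-adically filtered ring.
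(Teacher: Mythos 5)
Your overall approach is correct and lands in the right place, but it contains one genuine factual error and some unnecessary machinery; the paper's own proof is much shorter.

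The factual error: you assert that $e_if_i+f_ie_i$ ``commutes with $e_i$ and $f_i$ as well, so it is central in $U(\overline{\fg}_i)$.'' This is false. The Lie algebra $\overline{\fg}$ in this paper is the graded Heisenberg algebra ($[e,f]=h$, $h$ central, and crucially $[h,e]=[h,f]=0$), \emph{not} $\mathfrak{sl}_2$, so there is no Casimir of the form $ef+fe$: the center of $U(\overline{\fg})$ is just $\F[h]$. A direct computation gives $[e_if_i+f_ie_i,\,e_i]=-2e_ih_i\neq 0$. Lemma~\ref{lemma:comm-relation} only says $e_if_i$ and $f_ie_i$ commute with each other and with $h_i$, which is not what you are invoking it for. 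Your argument is saved, however, because all you really need is centrality \emph{in $\F[\![I/Z_1]\!]/\fm^3$}, and this is automatic for any element of $\fm^2$: the commutator with anything in $\fm$ lies in $\fm^3=0$, while the commutator with $\F[H]$ vanishes once $H$-invariance is checked. The ``graded commutator is zero hence the actual commutator drops to $\fm^4$'' step is therefore both incorrect (the graded commutator with degree-$1$ elements is not zero) and unnecessary ($\fm^2\cdot\fm\subset\fm^3$ already suffices).

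You also over-engineer the lifting. There is no choice of lifts to make: in $\F[\![I/Z_1]\!]/\fm^3$ the ideal $\fm^2(\F[\![I/Z_1]\!]/\fm^3)$ \emph{is} $\fm^2/\fm^3=\gr^2_{\fm}$ canonically, so any degree-$2$ element of the graded ring is, without ambiguity, an element of $\F[\![I/Z_1]\!]/\fm^3$. The paper's proof simply takes $x_i=e_if_i$ and $y_i=f_ie_i$ as elements of $\fm^2(\F[\![I/Z_1]\!]/\fm^3)$, notes they are $H$-invariant (since $\alpha_i\alpha_i^{-1}=\ide$), and concludes. Your generators $h_i=e_if_i-f_ie_i$ and $c_i=e_if_i+f_ie_i$ span the same $\F$-subspace as the paper's (using $p>2$), so you obtain the same subring; but the ``Casimir'' framing led you to an invalid justification that you should strip out.
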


\begin{proof}
Since $e_if_i, f_ie_i\in \fm^2 \F[\![I/Z_1]\!]/\fm^3$, we may view them as elements in $\F[\![I/Z_1]\!]/\fm^3$. Set
\[x_i=e_if_i,\ \ y_i=f_ie_i.\]
It is clear that $x_i, y_i$ lie in the center of $\F[\![I/Z_1]\!]/\fm^3$, and $(x_i,y_i)^2=0$. 
\end{proof}

We denote by $R$ the ring \eqref{eq:ring-R}.
Since it is contained in the center of $\F[\![I/Z_1]\!]/\fm^3$, $R$ acts on any object in the category of $\F[\![I/Z_1]\!]/\fm^3$-modules, and any morphism in this category is $R$-linear.

Recall that for a character $\chi$ of $I$,
$P_{\chi}\defn\Proj_{I/Z_1}\chi$
denotes a projective envelope of $\chi$ in the category of pseudo-compact $\F[\![I/Z_1]\!]$-modules, and $W_{\chi,n}\defn P_{\chi}/\fm^n$ for $n\geq 1$. The structure of $W_{\chi,3}$ has been determined in \S\ref{section:Rep-II}. 
In particular, $[\gr_{\fm}^2W_{\chi,3}:\chi]=2f$.
Also recall that we have denoted by $\overline{W}_{\chi,3}$   the quotient of   $W_{\chi,3}$ by the direct sum of characters in $\gr^2_{\fm}W_{\chi,3}$ which are not isomorphic to $\chi$. Since this representation will be tentatively used in this and next section, we make the following definition.
\begin{definition}\label{defn:lambda}
Define $\lambda_{\chi}=  \overline{W}_{\chi^{\vee},3}$.
\end{definition}
Note that   the cosocle of $\lambda_{\chi}$ is $\chi^{\vee}$ by definition.
Moreover,   $\chi'\in\JH(\lambda_{\chi})$ if and only if $\chi'=\chi^{\vee}$ or $\chi'\in\mathscr{E}(\chi^{\vee})$, and
\[[\lambda_{\chi}:\chi']=\left\{\begin{array}{lll}
2f+1 & \mathrm{if}\ \chi'=\chi^{\vee} \\
1&\mathrm{if}\ \chi'\in\mathscr{E}(\chi^{\vee}).
\end{array}\right.\]   Here,  recall that $\mathscr{E}(\chi^{\vee})$ denotes the set of characters which have nontrivial extensions with $\chi^{\vee}$ (\S\ref{section:Rep-II}).

\begin{lemma}\label{lemma:R=End}
The action of $R$ on $\lambda_{\chi}$ (resp. $\lambda_{\chi}^{\vee}$) induces a ring isomorphism $R\cong \End_I(\lambda_{\chi})$ (resp. $R\cong \End_I(\lambda_{\chi}^{\vee})$) .
\end{lemma}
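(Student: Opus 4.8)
\textbf{Proof plan for Lemma \ref{lemma:R=End}.}
The plan is to prove the statement for $\lambda_{\chi}$ directly, and deduce the statement for $\lambda_{\chi}^{\vee}$ by duality (since $\End_I(\lambda_{\chi})\cong \End_I(\lambda_{\chi}^{\vee})^{\rm op}$ and $R$ is commutative). Since $\lambda_\chi=\overline{W}_{\chi^\vee,3}$ is annihilated by $\fm^3$, it is an $\F[\![I/Z_1]\!]/\fm^3$-module, so the action of $R$ on it is well-defined and every $I$-endomorphism is automatically $R$-linear; hence we get a ring homomorphism $R\to \End_I(\lambda_\chi)$. First I would check this map is \emph{injective}: the images of $x_i=e_if_i$ and $y_i=f_ie_i$ act on $\lambda_\chi$ by sending the generator $v$ (a lift of the cosocle $\chi^\vee$) into $\gr^2_{\fm}\lambda_\chi$, and by the explicit description of $W_{\chi^\vee,3}$ recalled in \S\ref{section:Rep-II} (the socle of $\gr^2_\fm W_{\chi^\vee,3}$ contains $\chi^\vee$ with multiplicity $2f$, realized precisely by the $2f$ elements $e_if_i v, f_ie_i v$), these $2f$ images of $v$ are $\F$-linearly independent in $\lambda_\chi$; moreover any $\F$-linear combination $\sum(a_i x_i+b_i y_i)$ with not all coefficients zero is nonzero as an endomorphism because it is nonzero on $v$. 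Since $R$ has $\F$-basis $\{1\}\cup\{x_i,y_i\}$, this gives injectivity.

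Next I would prove \emph{surjectivity}, which I expect to be the main obstacle. Let $\varphi\in\End_I(\lambda_\chi)$. Because $\mathrm{cosoc}_I(\lambda_\chi)=\chi^\vee$ is one-dimensional, $\lambda_\chi$ is generated over $\F[\![I/Z_1]\!]$ by a single element $v$ lifting the cosocle, so $\varphi$ is determined by $\varphi(v)$. The key point is to control where $\varphi(v)$ can land. Consider the composite $\lambda_\chi \xrightarrow{\varphi}\lambda_\chi\twoheadrightarrow\chi^\vee$; it is an element of $\Hom_I(\lambda_\chi,\chi^\vee)$, which is one-dimensional (as $\mathrm{cosoc}_I(\lambda_\chi)=\chi^\vee$). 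So after subtracting a scalar multiple of the identity — a scalar coming from $1\in R$ — we may assume $\varphi(v)\in\rad_I(\lambda_\chi)$, equivalently the composite to the cosocle is zero. Now I claim $\varphi(v)\in\fm^2\lambda_\chi$: indeed $\rad_I(\lambda_\chi)/\fm^2\lambda_\chi\cong \gr^1_\fm\lambda_\chi\cong\bigoplus_{i}(\chi^\vee\alpha_i\oplus\chi^\vee\alpha_i^{-1})$ contains no copy of $\chi^\vee$ (using $p>5$ so that the $\alpha_i^{\pm1}$ are all nontrivial and distinct from each other), while $\varphi(v)$ must generate an $I$-subrepresentation whose cosocle is a quotient of $\chi^\vee$; hence its image in $\gr^1_\fm\lambda_\chi$ is zero. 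Therefore $\varphi(v)\in\fm^2\lambda_\chi$. Finally, $\fm^2\lambda_\chi$ is a quotient of $\gr^2_\fm W_{\chi^\vee,3}$ and its $\chi^\vee$-isotypic part — which is all of $\fm^2\lambda_\chi$ by the definition of $\overline{W}_{\chi^\vee,3}$ as the quotient killing all other characters in degree $2$ — is exactly spanned by $\{x_i v,y_i v: i\in\cS\}$, i.e.\ equals $R_{+}\cdot v$ where $R_+=(x_i,y_i)_i$. Hence $\varphi(v)=r\cdot v$ for some $r\in R_+$, and since both $\varphi$ and multiplication by $r$ are $\F[\![I/Z_1]\!]$-linear maps agreeing on the generator $v$, we get $\varphi=r\cdot\mathrm{id}$. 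Combining, every endomorphism lies in the image of $R$.

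For the dual statement, I would use that $\lambda_\chi^\vee=(\overline{W}_{\chi^\vee,3})^\vee$ is again an $\F[\![I/Z_1]\!]/\fm^3$-module (the transpose-inverse automorphism of $I/Z_1$ preserves $\fm$, and the central elements $x_i,y_i$ are sent to central elements of the same type), so $R$ acts on $\lambda_\chi^\vee$ and Pontryagin duality gives $\End_I(\lambda_\chi^\vee)\cong\End_I(\lambda_\chi)^{\rm op}=\End_I(\lambda_\chi)$ as the latter is commutative (being a quotient, hence equal, image of the commutative ring $R$); the $R$-action is intertwined with this isomorphism, so the result transfers. The only subtlety to double-check here is that the $R$ defined via $x_i=e_if_i$, $y_i=f_ie_i$ acts on $\lambda_\chi^\vee$ through the \emph{same} abstract copy of $R$ — this is because these elements lie in the center of $\F[\![I/Z_1]\!]/\fm^3$, so their action on a module and on its dual are adjoint, giving the same subring of $\End$ under the identification $\End_I(\lambda_\chi^\vee)\cong\End_I(\lambda_\chi)$.
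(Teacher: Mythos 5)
Your proof is correct and follows the same essential strategy as the paper: establish that $R$ embeds into $\End_I(\lambda_\chi)$ via the faithfulness of the $x_i=e_if_i$, $y_i=f_ie_i$ on the generator $v$, and then pin down the rest of $\End_I(\lambda_\chi)$ using the multiplicity $[\lambda_\chi:\chi^\vee]=2f+1$. The one cosmetic difference is that the paper closes the argument with a quick d\'evissage bound $\dim_\F\End_I(\lambda_\chi)\le[\lambda_\chi:\chi^\vee]=2f+1$ and then invokes the dimension count, whereas you prove surjectivity directly by chasing $\varphi(v)$ through the $\fm$-adic filtration (first killing the cosocle component, then observing $\gr^1_\fm\lambda_\chi$ has no $\chi^\vee$-isotypic piece, then identifying $\fm^2\lambda_\chi$ with $R_+\cdot v$); these two steps are equivalent, and both rely on exactly the same structural facts about $\overline{W}_{\chi^\vee,3}$. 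Your discussion of the dual case is a little more elaborate than needed — the anti-involution $g\mapsto g^{-1}$ actually swaps $x_i\leftrightarrow y_i$ (since $\iota(e_if_i)=\iota(f_i)\iota(e_i)=(-f_i)(-e_i)=f_ie_i$), not ``the same type'' as you say — but this doesn't affect the conclusion since the swap is an automorphism of $R$, and the upshot $\End_I(\lambda_\chi^\vee)\cong\End_I(\lambda_\chi)^{\rm op}=R^{\rm op}=R$ is what the paper also uses.
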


\begin{proof}
Since  $\dim_{\F}\Hom_I(\lambda_{\chi},\chi')=1$ if and only if $\chi'=\chi^{\vee}$, by d\'evissage we have
\[\dim_{\F}\End_I(\lambda_{\chi})\leq [\lambda_{\chi}:\chi^{\vee}]=2f+1.\] Since $\dim_{\F}R=2f+1$, it suffices to prove that $R\hookrightarrow \End_I(\lambda_{\chi})$, i.e. $R$ acts faithfully on $\lambda_{\chi}$. But this is clear by definition of $R$ and $\lambda_{\chi}$, because $e_if_i$ and $f_ie_i$ (for $i\in\cS$) induce  nonzero endomorphisms of $\lambda_{\chi}$ which are linearly independent over $\F$. The claim about $\End_I(\lambda_{\chi}^{\vee})$ follows from this and the general fact that $\End_I(\lambda_{\chi}^{\vee})\cong R^{op}=R$, where $R^{op}$ denotes the opposite ring of $R$.
\end{proof}

\begin{lemma}\label{lemma:HA-Ext1-vanish}
We have $\Ext^1_{I/Z_1}(\lambda_{\chi},\chi^{\vee})=0$.
\end{lemma}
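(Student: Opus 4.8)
The statement to prove is $\Ext^1_{I/Z_1}(\lambda_{\chi},\chi^{\vee})=0$, where $\lambda_{\chi}=\overline{W}_{\chi^{\vee},3}$. The plan is to exploit the explicit structure of $\overline{W}_{\chi^{\vee},3}$ recorded in Lemma \ref{lemma-bar-W}, namely the short exact sequences
\[0\ra \chi^{\vee,\oplus 2f}\ra \lambda_{\chi}\ra W_{\chi^{\vee},2}\ra0\]
and
\[0\ra \bigoplus_{\eta\in\mathscr{E}(\chi^{\vee})}E_{\chi^{\vee},\eta}\ra\lambda_{\chi}\ra\chi^{\vee}\ra0.\]
Applying $\Hom_{I/Z_1}(-,\chi^{\vee})$ to the second sequence yields a long exact sequence
\[\cdots\ra\Ext^1_{I/Z_1}(\chi^{\vee},\chi^{\vee})\ra\Ext^1_{I/Z_1}(\lambda_{\chi},\chi^{\vee})\ra\bigoplus_{\eta\in\mathscr{E}(\chi^{\vee})}\Ext^1_{I/Z_1}(E_{\chi^{\vee},\eta},\chi^{\vee})\ra\cdots.\]
So it suffices to show that $\Ext^1_{I/Z_1}(\chi^{\vee},\chi^{\vee})=0$ and that each $\Ext^1_{I/Z_1}(E_{\chi^{\vee},\eta},\chi^{\vee})=0$.

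For the first vanishing: I would cite Lemma \ref{lemma:Ext1-chi}, which says $\Ext^1_{I/Z_1}(\psi,\psi')\ne0$ forces $\psi'\in\{\psi\alpha_i^{\pm1}\}$; since $\chi^{\vee}\notin\{\chi^{\vee}\alpha_i^{\pm1}\}$ (as $\alpha_i$ is nontrivial, $p>5$), we get $\Ext^1_{I/Z_1}(\chi^{\vee},\chi^{\vee})=0$. For the second vanishing, apply $\Hom_{I/Z_1}(-,\chi^{\vee})$ to $0\ra\eta\ra E_{\chi^{\vee},\eta}\ra\chi^{\vee}\ra0$, giving
\[\Hom_{I/Z_1}(\eta,\chi^{\vee})\ra\Ext^1_{I/Z_1}(\chi^{\vee},\chi^{\vee})\ra\Ext^1_{I/Z_1}(E_{\chi^{\vee},\eta},\chi^{\vee})\ra\Ext^1_{I/Z_1}(\eta,\chi^{\vee}).\]
The outer terms: $\Hom_{I/Z_1}(\eta,\chi^{\vee})=0$ since $\eta\ne\chi^{\vee}$ (characters in $\mathscr{E}(\chi^{\vee})$ are nontrivial twists of $\chi^{\vee}$); and I claim $\Ext^1_{I/Z_1}(\eta,\chi^{\vee})=0$ too, because $\eta=\chi^{\vee}\alpha_i^{\pm1}$ for some $i$, and $\Ext^1_{I/Z_1}(\chi^{\vee}\alpha_i^{\pm1},\chi^{\vee})\ne0$ would by Lemma \ref{lemma:Ext1-chi} force $\chi^{\vee}\in\{\chi^{\vee}\alpha_i^{\pm1}\alpha_j^{\pm1}\}$, i.e. $\alpha_i^{\pm1}\alpha_j^{\pm1}=\ide$ in $\Hom(H,\F^\times)$; since $p>5$ this only happens when the two factors cancel, i.e. $j=i$ with opposite signs — but then $\eta=\chi^\vee\alpha_i^{\pm1}$ and we would need $\Ext^1_{I/Z_1}(\chi^\vee\alpha_i^{\pm1},\chi^\vee)$, whose nonvanishing is exactly the $j=i$ case and is genuinely allowed. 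So this naive argument needs one more input.

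The honest route for $\Ext^1_{I/Z_1}(\eta,\chi^{\vee})$ with $\eta\in\mathscr{E}(\chi^{\vee})$: here the issue is that the extension could be nonsplit, but what we actually need is that the connecting map $\Ext^1_{I/Z_1}(\chi^{\vee},\chi^{\vee})\to\Ext^1_{I/Z_1}(E_{\chi^{\vee},\eta},\chi^{\vee})$ being between a zero group and the target, together with $\Ext^1_{I/Z_1}(\eta,\chi^{\vee})$, bounds the middle term by $\dim\Ext^1_{I/Z_1}(\eta,\chi^{\vee})=1$ (Lemma \ref{lemma:Ext1-chi}). To kill this remaining $1$-dimensional contribution I would instead argue directly with $\lambda_{\chi}$ via the first short exact sequence $0\ra\chi^{\vee,\oplus 2f}\ra\lambda_{\chi}\ra W_{\chi^{\vee},2}\ra0$: applying $\Hom_{I/Z_1}(-,\chi^{\vee})$ reduces to computing $\Ext^i_{I/Z_1}(W_{\chi^{\vee},2},\chi^\vee)$ and $\Ext^i_{I/Z_1}(\chi^\vee,\chi^\vee)$. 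An extension $0\to\chi^\vee\to\mathcal E\to\lambda_\chi\to0$ being split is equivalent to $\mathcal E$ being annihilated by $\fm^3$ (since $\lambda_\chi$ is, by the Corollary after Lemma \ref{lemma-bar-W}) and the map $\mathcal E\twoheadrightarrow\lambda_\chi\twoheadrightarrow\chi^\vee$ lifting on the level of $I_1$-invariants; concretely, any such $\mathcal E$ is a quotient of $W_{\chi^\vee,4}$, and one checks from the structure of $\gr^3_\fm W_{\chi^\vee,3}$ and the multiplicity-one computation $[\lambda_\chi:\chi^\vee]=2f+1$ that $\chi^\vee$ cannot appear in $\gr^3$ of a quotient of $W_{\chi^\vee,4}$ beyond what is already in $\lambda_\chi$. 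The main obstacle is precisely this last step: showing the nontrivial self-extension of $\lambda_\chi$ by $\chi^\vee$ does not exist, which amounts to a careful bookkeeping of the degree-$3$ part of the graded ring $\gr_\fm(\F[\![I_1/Z_1]\!])\cong\bigotimes_i U(\overline{\fg}_i)$ using Lemma \ref{lem:h-Ug} and the relations $e_if_i=f_ie_i$ modulo higher terms; I expect this to follow from the fact, visible in \S\ref{subsection:envelope}, that $\chi^\vee$ occurs in $\gr^3_\fm W_{\chi^\vee,3}$ only through monomials like $e_i^3, f_i^3, e_if_ie_j$ etc., none of which can map to a $\gr^2$-generator of a self-extension compatibly with the $H$-action when $p>5$.
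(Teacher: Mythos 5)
Your first approach is the right idea, but there is a sign error in the extension that derails the computation and leads you on an unnecessary detour. By the paper's convention $E_{a,b}$ denotes the nonsplit extension $0\to a\to E_{a,b}\to b\to 0$, so $E_{\chi^\vee,\eta}$ has $\chi^\vee$ as \emph{socle} (consistent with $\soc_I(\lambda_\chi)=\chi^{\vee,\oplus 2f}$ in Lemma~\ref{lemma-bar-W}) and $\eta$ as \emph{cosocle}. You wrote the extension with $\eta$ as the sub, which is backwards. With the correct direction, applying $\Hom_{I/Z_1}(-,\chi^\vee)$ to $0\to\chi^\vee\to E_{\chi^\vee,\eta}\to\eta\to 0$ gives
\[
\Hom_{I/Z_1}(\chi^\vee,\chi^\vee)\xrightarrow{\ \delta\ }\Ext^1_{I/Z_1}(\eta,\chi^\vee)\to\Ext^1_{I/Z_1}(E_{\chi^\vee,\eta},\chi^\vee)\to\Ext^1_{I/Z_1}(\chi^\vee,\chi^\vee)=0,
\]
and the connecting map $\delta$ sends $\mathrm{id}$ to the class of $E_{\chi^\vee,\eta}$ itself, which is nonsplit by construction. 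Since both $\Hom(\chi^\vee,\chi^\vee)$ and $\Ext^1(\eta,\chi^\vee)$ are $1$-dimensional (Lemma~\ref{lemma:Ext1-chi}), $\delta$ is an isomorphism and $\Ext^1_{I/Z_1}(E_{\chi^\vee,\eta},\chi^\vee)=0$. Your preliminary reduction via $0\to\bigoplus_\eta E_{\chi^\vee,\eta}\to\lambda_\chi\to\chi^\vee\to0$ and $\Ext^1(\chi^\vee,\chi^\vee)=0$ is fine, so this finishes the proof. The $1$-dimensional contribution you were worried about is killed precisely by the nonsplitness of $E_{\chi^\vee,\eta}$ via the connecting map; there is nothing left to chase and the subsequent $\gr_\fm$-bookkeeping paragraph is superfluous.

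For comparison, the paper's proof (by reference to the argument of Corollary~\ref{cor-Theta-noextension}) uses the \emph{other} exact sequence from Lemma~\ref{lemma-bar-W}, namely $0\to\chi^{\vee,\oplus 2f}\to\lambda_\chi\to W_{\chi^\vee,2}\to0$: it shows $\Hom(\chi^{\vee,\oplus 2f},\chi^\vee)\cong\F^{2f}$ injects into $\Ext^1(W_{\chi^\vee,2},\chi^\vee)$ (because every map $\lambda_\chi\to\chi^\vee$ kills the socle), then bounds $\dim\Ext^1(W_{\chi^\vee,2},\chi^\vee)\le 2f$ by the second exact sequence and concludes by counting. Both routes use exactly the same inputs ($\Ext^1(\chi^\vee,\chi^\vee)=0$ and each $\Ext^1(\eta,\chi^\vee)$ being $1$-dimensional); your corrected route is arguably a bit more direct since it shows each summand $\Ext^1(E_{\chi^\vee,\eta},\chi^\vee)$ vanishes outright rather than passing through a dimension inequality.
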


\begin{proof}
The proof is similar to that of Corollary \ref{cor-Theta-noextension}, using the structure of $\lambda_{\chi}$ in Lemma \ref{lemma-bar-W}.
\end{proof}

Given $P$ a finitely generated $\F[\![I/Z_1]\!]$-module  and $\lambda$ a finite length $\F[\![I/Z_1]\!]$-module, we may consider
\[\Hom_{I}(P,\lambda^{\vee})^{\vee},\]
where $\vee$ denotes Pontryagin dual.  Note that  as a functor  $\Hom_I(-,-^{\vee})^{\vee}$  is   covariant  and right exact in both variables.
We will mostly be interested in the case when $P$ is projective and $\lambda$ is annihilated by $\fm^3$. Typical examples are $\lambda=\chi$ or $\lambda_{\chi}$ for some character $\chi$, in which case
the module $\Hom_I(P,\lambda^{\vee})^{\vee}$ carries naturally an action of $R$ through $\lambda$.
Moreover, if $P\ra P'$ and $\lambda\ra \lambda'$ are  morphisms of $\F[\![I/Z_1]\!]$-modules, then all the morphisms are $R$-linear in the following commutative diagram
\[\xymatrix{\Hom_I(P,\lambda^{\vee})^{\vee}\ar[r]\ar[d]& \Hom_I(P',\lambda^{\vee})^{\vee}\ar[d]\\
\Hom_I(P,\lambda'^{\vee})^{\vee}\ar[r]&\Hom_I(P',\lambda'^{\vee})^{\vee}.}\]

\begin{proposition}\label{prop:module-Pchi}
Let $\chi'$ be a character of $I$. Then as an $R$-module
\[\Hom_I(P_{\chi'},\lambda_{\chi}^{\vee})^{\vee}\cong\left\{\begin{array}{rll}
R & \mathrm{if}\ \chi'=\chi\\
\F & \mathrm{if}\ \chi'\in \mathscr{E}(\chi)\\
0& \mathrm{otherwise}.
\end{array}\right.\]
\end{proposition}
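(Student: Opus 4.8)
The plan is to reduce the statement to a dimension count together with a single cyclicity assertion over $R$. First I would record that $R$ genuinely acts on $\Hom_I(P_{\chi'},\lambda_\chi^\vee)^\vee$: since $\lambda_\chi=\overline{W}_{\chi^\vee,3}$ is annihilated by $\fm^3$, any $I$-equivariant map $P_{\chi'}\to\lambda_\chi^\vee$ factors through $P_{\chi'}/\fm^3$, so $\Hom_I(P_{\chi'},\lambda_\chi^\vee)=\Hom_I(P_{\chi'}/\fm^3,\lambda_\chi^\vee)$ is a module over $\F[\![I/Z_1]\!]/\fm^3$, on which $R$ acts centrally by Proposition \ref{prop:center} (and the pre-composition and post-composition actions of $R$ agree, as $R$ is central). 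By projectivity of $P_{\chi'}$ one has $\dim_\F\Hom_I(P_{\chi'},\lambda_\chi^\vee)=[\lambda_\chi^\vee:\chi']=[\lambda_\chi:\chi'^\vee]$; using the Jordan--H\"older structure of $\lambda_\chi=\overline{W}_{\chi^\vee,3}$ recalled in \S\ref{section:Rep-II} and Lemma \ref{lemma-bar-W} (where $\chi^\vee$ occurs with multiplicity $2f+1$, each $\psi\in\mathscr{E}(\chi^\vee)$ occurs exactly once, and nothing else occurs), together with the identity $\mathscr{E}(\chi^\vee)=\{\psi^\vee:\psi\in\mathscr{E}(\chi)\}$ (immediate from $\mathscr{E}(\chi)=\{\chi\alpha_i^{\pm1}\}$ in Lemma \ref{lemma:Ext1-chi}), this dimension is $2f+1$ if $\chi'=\chi$, it is $1$ if $\chi'\in\mathscr{E}(\chi)$, and it is $0$ otherwise. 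In the last two cases the claim is immediate, since a $1$-dimensional module over the local ring $R$ is $R/\fm_R=\F$ and the zero module is $0$.

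It remains to treat $\chi'=\chi$, where I would set $N\defn\Hom_I(P_\chi,\lambda_\chi^\vee)^\vee$ and show that $N$ is a cyclic $R$-module; since $\dim_\F N=2f+1=\dim_\F R$, Nakayama's lemma then forces $N$ to be free of rank one, i.e. $N\cong R$. By Nakayama it suffices to prove $\dim_\F(N/\fm_R N)=1$. Since $R$ is commutative, $(N/\fm_R N)^\vee\cong N^\vee[\fm_R]=\Hom_I(P_\chi,\lambda_\chi^\vee)[\fm_R]$; and an element $\phi$ of this last space is precisely a map whose image is killed by $\fm_R$, i.e. lands in $\lambda_\chi^\vee[\fm_R]$. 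Hence $\Hom_I(P_\chi,\lambda_\chi^\vee)[\fm_R]=\Hom_I(P_\chi,\lambda_\chi^\vee[\fm_R])$, which by projectivity of $P_\chi$ (and $\lambda_\chi^\vee[\fm_R]$ being $\fm^3$-torsion) has dimension $[\lambda_\chi^\vee[\fm_R]:\chi]=[\lambda_\chi/\fm_R\lambda_\chi:\chi^\vee]$ (using once more $\lambda_\chi^\vee[\fm_R]\cong(\lambda_\chi/\fm_R\lambda_\chi)^\vee$). So everything comes down to showing $[\lambda_\chi/\fm_R\lambda_\chi:\chi^\vee]=1$, equivalently $\fm_R\lambda_\chi=\soc_I(\lambda_\chi)$.

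For this I would use that $\lambda_\chi=\overline{W}_{\chi^\vee,3}$ is cyclic over $\F[\![I_1/Z_1]\!]$, generated by a lift $v_0$ of a generator of its cosocle $\chi^\vee$, so that (Loewy length being $3$) $\soc_I(\lambda_\chi)=\fm^2\lambda_\chi=\fm^2 v_0$. By construction of $\overline{W}_{\chi^\vee,3}$ this socle is $\chi^\vee$-isotypic of dimension $2f$, and under the isomorphism $\gr_\fm(\F[\![I_1/Z_1]\!])\cong\bigotimes_{i\in\cS}\Ugi$ the degree-$2$, character-$\ide$ part of $\bigotimes_i\Ugi$ is spanned exactly by the $2f$ elements $e_if_i,\ f_ie_i$ ($i\in\cS$) — equivalently by $e_if_i,\ h_i$ — which are the $x_i,y_i$ defining $\fm_R$ in Proposition \ref{prop:center}. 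Hence $\soc_I(\lambda_\chi)=\fm_R v_0$. Finally $\fm_R\lambda_\chi=\fm_R v_0$, because for a class $a\in\fm_R$ (represented by an element of $\fm^2$) and any $g\in\F[\![I_1/Z_1]\!]$ with scalar part $\beta\in\F$ one has $a\cdot(g v_0)=\beta(a v_0)+(\text{element of }\fm^3)\cdot v_0=\beta(a v_0)$, since $\fm^3$ annihilates $\lambda_\chi$. Therefore $\fm_R\lambda_\chi=\soc_I(\lambda_\chi)$ and $[\lambda_\chi/\fm_R\lambda_\chi:\chi^\vee]=(2f+1)-2f=1$, which finishes the proof.

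The main obstacle is the last paragraph: one must unwind the relationship between $\fm_R$ and the graded algebra $\bigotimes_i\Ugi$ carefully enough to pin down $\soc_I(\lambda_\chi)$ exactly, and keep the several Pontryagin-duality identities ($N^\vee[\fm_R]\cong(N/\fm_R N)^\vee$ and $[M^\vee:\psi]=[M:\psi^\vee]$) and the various $R$-module structures (central action, pre- and post-composition) consistent throughout; the rest is bookkeeping with multiplicities.
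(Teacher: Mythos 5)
Your argument is correct and follows the same overall strategy as the paper: the non-$\chi$ cases are a dimension count, and for $\chi'=\chi$ one reduces, via Nakayama, to checking that $[\lambda_\chi/\fm_R\lambda_\chi:\chi^\vee]=1$. Where the paper invokes the abstract base-change identity (Lemma \ref{lemma-moduletheory}) and then asserts $\lambda_\chi\otimes_R R/\fm_R\cong P_{\chi^\vee}/\fm^2$ without further comment, you re-derive the base-change step by hand via the Pontryagin-duality identities $(N/\fm_R N)^\vee\cong N^\vee[\fm_R]$ and $\lambda_\chi^\vee[\fm_R]\cong(\lambda_\chi/\fm_R\lambda_\chi)^\vee$, and you actually prove the key equality $\fm_R\lambda_\chi=\soc_I(\lambda_\chi)$ (equivalent to the paper's isomorphism, since $\lambda_\chi/\soc_I(\lambda_\chi)\cong P_{\chi^\vee}/\fm^2$ by Lemma \ref{lemma-bar-W}) by identifying the $\ide$-isotypic degree-two part of $\gr_{\fm}\F[\![I_1/Z_1]\!]\cong\bigotimes_i\Ugi$ with the span of $\{e_if_i,f_ie_i\}_{i\in\cS}$ and using cyclicity of $\lambda_\chi$ over $\F[\![I_1/Z_1]\!]$. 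This is the same content, just with the terse final assertion of the paper unpacked; your observation that centrality of $R$ in $\F[\![I/Z_1]\!]/\fm^3$ makes the pre- and post-composition $R$-actions agree is also the right thing to keep track of.
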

\begin{proof}
We observed that $[\lambda_{\chi}^{\vee}:\chi']=1$ if $\chi'\in \mathscr{E}(\chi)$, so $\dim_{\F}\Hom_I(P_{\chi'},\lambda_{\chi}^{\vee})^{\vee}=1$ by projectivity of $P_{\chi'}$. This treats  the second case. The third case is trivial.

It remains to treat the  case $\chi'=\chi$. The projectivity of $P_{\chi}$ implies that
\[\dim_{\F}\Hom_I(P_{\chi},\lambda_{\chi}^{\vee})^{\vee}=2f+1=\dim_{\F}R.\]
Therefore, it suffices to prove that $\Hom_I(P_{\chi},\lambda_{\chi}^{\vee})^{\vee}$ is a cyclic $R$-module.  By Lemma \ref{lemma-moduletheory} below, applied to
\[S=\F[\![I/Z_1]\!],\ \ R=R, \ \ P=P_{\chi},\ \ \lambda=\lambda_{\chi}\]
it suffices to prove that
\[\dim_{\F}\Hom_I\big(P_{\chi}, (\lambda_{\chi}\otimes_RR/\fm_R)^{\vee}\big)^{\vee}=1\]
or equivalently, $[\lambda_{\chi}\otimes_RR/\fm_R:\chi^{\vee}]=1$. But this is clear because $\lambda_{\chi}\otimes_RR/\fm_R$ is isomorphic to $P_{\chi^{\vee}}/\fm^2$ which is multiplicity free.
\end{proof}

\begin{lemma}\label{lemma-moduletheory}
Let $S, R$ be $\F$-algebras and assume that $R$ is a commutative noetherian local ring with maximal ideal $\fm_R$. Let $P$ be a left $S$-module and $\lambda$ be an $(S,R)$-bimodule. Assume that  $P$ is finite projective and $\lambda$ is finite dimensional over $\F$. Then there is an isomorphism
\[\Hom_{S}(P,\lambda^{\vee})^{\vee}\otimes_RR/\fm_R\simto \Hom_{S}(P,(\lambda\otimes_RR/\fm_R)^{\vee})^{\vee}. \]
\end{lemma}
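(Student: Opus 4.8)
The plan is to prove the isomorphism directly by tracking how the two duality/Hom functors interact, using the finiteness hypotheses. First I would observe that since $P$ is a finite projective $S$-module, there is an $S$-module $P'$ with $P \oplus P' \cong S^{\oplus n}$ for some $n$; this reduces the statement to the case $P = S$, because both sides of the claimed isomorphism are additive functors of $P$ and the asserted map is natural in $P$. For $P = S$ we have the standard evaluation isomorphism $\Hom_S(S, M) \cong M$ (as right $R$-modules, when $M$ carries a commuting $R$-action), so the left-hand side becomes $(\lambda^{\vee})^{\vee} \otimes_R R/\fm_R$, and since $\lambda$ is finite-dimensional over $\F$, Pontryagin biduality gives $(\lambda^{\vee})^{\vee} \cong \lambda$ functorially and $R$-linearly; hence the left-hand side is $\lambda \otimes_R R/\fm_R$. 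Similarly the right-hand side becomes $((\lambda \otimes_R R/\fm_R)^{\vee})^{\vee} \cong \lambda \otimes_R R/\fm_R$, again because $\lambda \otimes_R R/\fm_R$ is finite-dimensional over $\F$. So for $P = S$ both sides are canonically $\lambda \otimes_R R/\fm_R$.

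The one point that requires care is to check that the natural map in the statement (not merely an abstract isomorphism) induces these identifications, i.e. that the square
\[
\xymatrix{
\Hom_S(S, \lambda^\vee)^\vee \otimes_R R/\fm_R \ar[r] \ar[d]_{\cong} & \Hom_S(S, (\lambda \otimes_R R/\fm_R)^\vee)^\vee \ar[d]^{\cong} \\
\lambda \otimes_R R/\fm_R \ar@{=}[r] & \lambda \otimes_R R/\fm_R
}
\]
commutes, where the horizontal top arrow is $\Hom_S(S,(-)^\vee)^\vee$ applied to the quotient $\lambda \twoheadrightarrow \lambda \otimes_R R/\fm_R$ (after the $-\otimes_R R/\fm_R$). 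This is a diagram chase using naturality of the evaluation-at-$1$ isomorphism and of Pontryagin biduality; both are classical and the verification is routine. Then, transporting back along the splitting $S^{\oplus n} \cong P \oplus P'$, the naturality of all maps involved shows that the map in the statement is an isomorphism for $P$, as $P$ is a direct summand of a finite free module and the isomorphism for $S^{\oplus n}$ restricts to the corresponding direct summand.

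I do not anticipate a genuine obstacle here; the content is essentially "finite projective objects are dualizable, and Pontryagin dual is an exact involution on finite-dimensional spaces", so the argument is a formal manipulation. The only mild subtlety is bookkeeping of the left/right module structures — $\Hom_S(P, \lambda^\vee)$ naturally inherits a right $R$-module structure from the right action of $R$ on $\lambda$ (equivalently left action on $\lambda^\vee$), so $-\otimes_R R/\fm_R$ makes sense — and making sure the $R$-linearity is preserved at each step. Once $P=S$ is settled and the naturality square checked, the reduction via projectivity finishes the proof in one line.
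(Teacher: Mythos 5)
Your proof is correct and takes essentially the same route as the paper's: reduce to $P$ finite free via naturality and direct summands, then use finite-dimensionality of $\lambda$ to invoke Pontryagin biduality on both sides. The paper's proof is just a terser version of the same argument.
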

\begin{proof}
One checks that the natural map $\Hom_{S}(P,\lambda^{\vee})^{\vee}\ra\Hom_{S}(P,(\lambda\otimes_RR/\fm_R)^{\vee})^{\vee}$  factors through
\[\Hom_S(P,\lambda^{\vee})^{\vee}\otimes_RR/\fm_R\ra \Hom_{S}(P,(\lambda\otimes_RR/\fm_R)^{\vee})^{\vee},\]
which  is clearly an isomorphism if $P$ is a finite free $S$-module, hence is also an isomorphism if $P$ is finite projective.  Note that the assumption  $\lambda$ is finite dimensional ensures $(\lambda^{\vee})^{\vee}\cong\lambda$.
\end{proof}

  \begin{proposition}\label{prop:map-Pchi}
 Let $\chi_1,\chi_2$ be characters of $I$. Consider a morphism $\beta:P_{\chi_1}\ra P_{\chi_2}$ and let
\begin{equation}\label{eq:beta-sharp}\beta^{\sharp}_{\chi}: \Hom_I(P_{\chi_1},\lambda_{\chi}^{\vee})^{\vee}\ra \Hom_I(P_{\chi_2},\lambda_{\chi}^{\vee})^{\vee}\end{equation}
be the induced morphism of $R$-modules. Then $\beta_{\chi}^{\sharp}$ has the form
{\begin{center} \label{eq:table}
\begin{tabu}{ |c|[1pt] c|c|c| }
\hline
$\beta^{\sharp}_{\chi}$& $\chi_2=\chi$ &  $\chi_2\in\mathscr{E}(\chi)$ &  \textrm{otherwise}    \\
 \tabucline[1pt]{-}
 $\chi_1=\chi$&$R\ra R$ & $R\ra \F$&$R\ra 0$  \\
\hline
$\chi_1\in\mathscr{E}(\chi)$&$\F\ra R$&$\F\ra \F$&$\F\ra0$ \\
\hline
\textrm{otherwise}&$0\ra R$&$0\ra \F$&$0\ra 0$ \\
\hline
\end{tabu}
\end{center}}
If moreover  $\chi_1,\chi_2\in \{\chi\}\cup\mathscr{E}(\chi)$, then  the following statements hold:

\begin{enumerate}
\item[(i)] if $\chi_2\neq \chi$, then $\beta_{\chi}^{\sharp}$ is nonzero if and only if  $P_{\chi_1}\overset{\beta}{\ra} P_{\chi_2}\twoheadrightarrow P_{\chi_2}/\fm^2$ is nonzero;   
\item[(ii)] if $\chi_2=\chi$, then $\beta_{\chi}^{\sharp}$ is nonzero if and only if $P_{\chi_1}\overset{\beta}{\ra} P_{\chi}\twoheadrightarrow P_{\chi}/\fm^3$ is nonzero; 
\item[(iii)] if $\chi_1,\chi_2\in \mathscr{E}(\chi)$ and $\chi_1\neq \chi_2$, then $\beta_{\chi}^{\sharp}$ is always zero.
\end{enumerate}
\end{proposition}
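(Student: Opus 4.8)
The plan is to reduce everything to the explicit description of $\lambda_\chi$ and its functor $\Hom_I(-,\lambda_\chi^\vee)^\vee$. First I would establish the table. The shapes of the source and target $R$-modules $\Hom_I(P_{\chi_i},\lambda_\chi^\vee)^\vee$ are already given by Proposition \ref{prop:module-Pchi}: they are $R$, $\F$, or $0$ according to whether $\chi_i=\chi$, $\chi_i\in\mathscr{E}(\chi)$, or neither. So the table is just the nine combinations, and there is nothing to prove beyond invoking that proposition together with the fact that $\beta^\sharp_\chi$ is a morphism of $R$-modules; the entries simply record the domain and codomain, not any nonvanishing assertion.

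Next I would treat the three nonvanishing criteria (i)--(iii). The key observation is functoriality: a map $\beta:P_{\chi_1}\to P_{\chi_2}$ of projective $\F[\![I/Z_1]\!]$-modules, since $\lambda_\chi$ is annihilated by $\fm^3$, factors through $\beta\bmod\fm^3:P_{\chi_1}/\fm^3\to P_{\chi_2}/\fm^3$ as far as the induced map $\beta^\sharp_\chi$ is concerned; that is, $\Hom_I(-,\lambda_\chi^\vee)^\vee$ only sees $P_{\chi_i}/\fm^3$. So I can replace $\beta$ by a map $W_{\chi_1,3}\to W_{\chi_2,3}$ (equivalently by a map of the Pontryagin duals $\bar W$-type objects), and then identify the image of $\beta^\sharp_\chi$ by a direct weight count. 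For (i), where $\chi_2=\chi'\in\mathscr{E}(\chi)$: the target is $\F$, spanned by the composite $P_{\chi_2}\twoheadrightarrow \chi_2\hookrightarrow\lambda_\chi^\vee$ dualized; a map out of $P_{\chi_1}$ composes to something nonzero in $\Hom_I(P_{\chi_1},(\chi_2)^\vee)^\vee$ exactly when $\beta$ hits the $\chi_2$-cosocle layer, i.e. when $\beta\bmod\fm^2\ne0$ (the cosocle of $W_{\chi_2,3}$ is $\chi_2$ and $\chi_2$ occurs exactly once in the length-two quotient $W_{\chi_2,2}$, so ``$\beta\bmod\fm^2\ne0$'' is equivalent to ``$\beta\bmod\fm^3$ is nonzero modulo the image of $\fm^2$''). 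Here I would invoke Lemma \ref{lemma-bar-W} and Lemma \ref{lemma:Ext1-chi} for the multiplicity-one statements. For (ii), where $\chi_2=\chi$ and the target is the $2f+1$-dimensional module $R$: now $\chi^\vee$ occurs $2f+1$ times in $\lambda_\chi^\vee$, distributed across all three $\fm$-graded layers (once in cosocle, $2f$ times in $\gr^2$), so $\beta^\sharp_\chi$ is nonzero iff $\beta$ is nonzero after reduction mod $\fm^3$ — one has to check that a nonzero map landing only in $\fm^2 W_{\chi,3}$ still produces a nonzero element of $\Hom_I(P_{\chi_1},\lambda_\chi^\vee)^\vee$, which follows because the $2f$ copies of $\chi^\vee$ in $\gr^2(\lambda_\chi^\vee)$ are exactly the ones kept in passing from $W_{\chi^\vee,3}$ to $\overline W_{\chi^\vee,3}$, so nothing in degree $2$ is killed. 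For (iii), $\chi_1,\chi_2\in\mathscr{E}(\chi)$ distinct: both source and target are $\F$, concentrated in cosocle degree; any $\beta:P_{\chi_1}\to P_{\chi_2}$ has image in $\rad P_{\chi_2}=\fm P_{\chi_2}$ because $\chi_1\ne\chi_2$, hence $\beta^\sharp_\chi$ lands in $\fm\cdot\F=0$ inside the target $\F$ (which sits in cosocle degree), so it vanishes.

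I would organize (i)--(iii) uniformly by first proving a single lemma: for projective $P_{\chi_1}$ and a character $\mu\in\JH(\lambda_\chi)$, the map $\Hom_I(P_{\chi_1},\lambda_\chi^\vee)^\vee\to\Hom_I(P_{\chi_1},(\text{appropriate }\fm\text{-graded piece})^\vee)^\vee$ detects exactly the ``top'' of the image of $\beta$; more precisely, I would exploit that $\lambda_\chi^\vee=\overline W_{\chi^\vee,3}$ has a unique copy of each $\chi'\in\mathscr{E}(\chi)$ sitting in $\gr^1_{\fm}$ and a socle $(\chi^\vee)^{\oplus 2f}$, so that a morphism $P_{\chi_1}\to\lambda_\chi^\vee$ is detected on cosocle or on $\gr^2$ depending on $\chi_1$. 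The main obstacle, I expect, is the bookkeeping in case (ii): one must verify that every one of the $2f$ degree-$2$ occurrences of $\chi^\vee$ in $\lambda_\chi^\vee$ genuinely contributes to $\Hom_I(P_\chi,\lambda_\chi^\vee)^\vee\cong R$ — equivalently that the isomorphism of Proposition \ref{prop:module-Pchi} is compatible with the $\fm$-adic filtration in the way needed — and to check that the reduction-mod-$\fm^3$ criterion is not spoiled by the quotient $W_{\chi^\vee,3}\twoheadrightarrow\overline W_{\chi^\vee,3}$ discarding certain socle characters. This is handled by noting that the discarded characters are precisely those in $\gr^2 W_{\chi^\vee,3}$ unequal to $\chi^\vee$, so the $\chi^\vee$-isotypic part is untouched, and then the criterion is immediate. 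The rest is routine d\'evissage using Lemma \ref{lemma:Ext1-chi}, Lemma \ref{lemma-bar-W}, and Lemma \ref{lemma:R=End}.
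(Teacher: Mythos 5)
Your outline follows the same route as the paper for the table and for parts (i) and (iii), and the small slip in (i) — writing the generator of $\Hom_I(P_{\chi_2},\lambda_\chi^\vee)$ as $P_{\chi_2}\twoheadrightarrow\chi_2\hookrightarrow\lambda_\chi^\vee$ (the character $\chi_2$ does not embed, since $\soc(\lambda_\chi^\vee)\cong\chi$; it is the nonsplit extension $E_{\chi,\chi_2}$ that embeds) — is cosmetic. The ``if'' direction of (i) also deserves the explicit case split $\chi_1=\chi_2$ versus $\chi_1\in\mathscr{E}(\chi_2)$ (which forces $\chi_1=\chi$), but the idea is right.

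There is a genuine gap in (ii). You correctly reduce the statement to: a nonzero $\overline\beta\colon P_{\chi_1}\to P_{\chi}/\fm^3$ produces a nonzero element of $\Hom_I(P_{\chi_1},\lambda_\chi^\vee)^\vee$. But you only address the subcase where $\im(\overline\beta)$ lies in $\fm^2$, which is the case $\chi_1=\chi$. The harder subcase is $\chi_1\in\mathscr{E}(\chi)$: here $\overline\beta$ lands in $\fm/\fm^3$ (not $\fm^2$), its image is the cyclic submodule $W\subset P_\chi/\fm^3$ generated by a $\chi_1$-eigenvector in $\gr^1$, and what must be shown is that $[W:\chi]\neq0$ (once one has set up, by a d\'evissage using $\dim_\F R=2f+1=[P_\chi/\fm^3:\chi]$, that $\dim_\F\Hom_I(W,\lambda_\chi^\vee)^\vee=[W:\chi]$ for every submodule $W$). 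This is \emph{not} a formal consequence of which copies of $\chi$ survive in $\gr^2(\lambda_\chi^\vee)$ — it depends on the ring structure of $\F[\![I_1/Z_1]\!]/\fm^3$, namely that the left ideal generated by a degree-one element $e_i$ (resp.\ $f_i$) contains the degree-two $\chi$-eigenvector $f_ie_i$ (resp.\ $e_if_i$). Without this concrete input, the argument does not close: a priori the cyclic submodule generated by a $\chi_1$-eigenvector could fail to contain $\chi$ as a subquotient. Your proposed justification (``nothing in degree $2$ is killed'') is about the target $\lambda_\chi^\vee$ rather than the image $W\subset P_\chi/\fm^3$, so it does not substitute for this computation.
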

 \begin{proof}
 The form of $\beta_{\chi}^{\sharp}$ follows immediately from Proposition \ref{prop:module-Pchi}.

(i) By assumption $\chi_2\in \mathscr{E}(\chi)$. We first claim that the natural quotient map $P_{\chi_2}\twoheadrightarrow P_{\chi_2}/\fm^2$ induces an isomorphism
\[\Hom_{I}(P_{\chi_2},\lambda_{\chi}^{\vee})^{\vee}\simto \Hom_I(P_{\chi_2}/\fm^2,\lambda_{\chi}^{\vee})^{\vee}.\]
It is  surjective by right exactness of $\Hom_I(-,\lambda_{\chi}^{\vee})^{\vee}$.  Since $\Hom_{I}(P_{\chi_2},\lambda_{\chi}^{\vee})^{\vee}\cong \F$ by Proposition \ref{prop:module-Pchi}, it is enough to show that $\Hom_I(P_{\chi_2}/\fm^2,\lambda_{\chi}^{\vee})$ is nonzero. By assumption we have $\chi_2\in \mathscr{E}(\chi)$, i.e. $\Ext^1_{I/Z_1}(\chi_2,\chi)\neq0$. Hence, $P_{\chi_2}/\fm^2$ admits a two-dimensional quotient  isomorphic to $E_{\chi,\chi_2}$ which embeds in $\lambda_{\chi}^{\vee}$; here we recall that $E_{\chi,\chi_2}$ denotes the unique nonsplit extension of $\chi_2$ by $\chi$. This proves the claim.

The ``only if'' part follows directly from the claim. To prove the ``if'' part,   assume the composite morphism $P_{\chi_1}\ra P_{\chi_2}\ra P_{\chi_2}/\fm^2$ is nonzero. Then $\chi_1$ occurs in $P_{\chi_2}/\fm^2$ as a subquotient, i.e. either $\chi_1=\chi_2$ or $\chi_1\in\mathscr{E}(\chi_2)$. In the first case, $\beta$ induces a surjection $P_{\chi_1}\ra P_{\chi_1}\ra  P_{\chi_1}/\fm$, hence $\beta$ has to be surjective by Nakayama's lemma and consequently an isomorphism. In particular, $\beta_{\chi}^{\sharp}$ is nonzero. In the second case, we must have $\chi_1=\chi$ (for, otherwise, $\chi\in\mathscr{E}(\chi_1)$ and so $\chi\in\mathscr{E}(\chi_1)\cap \mathscr{E}(\chi_2)$, but this intersection is empty whenever $\chi_1\in\mathscr{E}(\chi_2)$),  and the image of $P_{\chi}\ra P_{\chi_2}\ra P_{\chi_2}/\fm^2$ is isomorphic to $\chi$. By the proof of the claim, we see that the inclusion $\chi\hookrightarrow P_{\chi_2}/\fm^2$ induces an isomorphism $\Hom_I(\chi,\lambda_{\chi}^{\vee})^{\vee}\simto\Hom_I(P_{\chi_2}/\fm^2,\lambda_{\chi}^{\vee})^{\vee}$, hence $\beta_{\chi}^{\sharp}$ is nonzero.

(ii) Since $\lambda_{\chi}^{\vee}$ is annihilated by $\fm^3$, the natural morphism
\[\Hom_I(P_{\chi},\lambda_{\chi}^{\vee})^{\vee}\ra \Hom_I(P_{\chi}/\fm^3,\lambda_{\chi}^{\vee})^{\vee}\]
is an isomorphism, which implies the ``only if'' part.  Moreover, since \[\dim_{\F}\Hom_I(P_{\chi}/\fm^3,\lambda_{\chi}^{\vee})^{\vee}=2f+1=[P_{\chi}/\fm^3:\chi],\] an argument by d\'evissage  shows that for any submodule $W$ of $P_{\chi}/\fm^3$, \[\dim_{\F}\Hom_I(W,\lambda_{\chi}^{\vee})^{\vee}=[W:\chi] \]
and the induced sequence
\[0\ra \Hom_{I}(W,\lambda_{\chi}^{\vee})^{\vee}\ra \Hom_I(P_{\chi}/\fm^3,\lambda_{\chi}^{\vee})^{\vee}\ra \Hom_I\big((P_{\chi}/\fm^3)/W,\lambda_{\chi}^{\vee}\big)^{\vee}\ra0\]
is exact.

Assume that $\overline{\beta}: P_{\chi_1}\ra P_{\chi}\ra P_{\chi}/\fm^3$ is nonzero. By the above discussion, to show that $\beta_{\chi}^{\sharp} $ is nonzero it suffices to show  $[\im(\overline{\beta}):\chi]\neq0$. By assumption, either $\chi_1=\chi$ or $\chi_1\in \mathscr{E}(\chi)$. The result is trivial if $\chi_1=\chi$. If $\chi_1\in\mathscr{E}(\chi)$, then one checks that the unique submodule of $P_{\chi}/\fm^3$ with cosocle $\chi_1$ contains $\chi$ as a submodule.   Indeed, the restriction of $P_{\chi}/\fm^3$ to $I_1$   is isomorphic to $\F[\![I_1/Z_1]\!]/\fm^3$, in which $e_i$ (or $f_i$) generates $f_ie_i$ (resp. $e_if_i$).

(iii) Since $\chi_2\neq\chi$, by (i) it suffices to prove that $P_{\chi_1}\ra P_{\chi_2}\ra P_{\chi_2}/\fm^2$ is zero. This is clear, because the assumption on $\chi_i$ implies that  $\chi_1$ does not occur in $P_{\chi_2}/\fm^2$ as a subquotient.
 \end{proof}

\subsubsection{Tangent space}

Let $\chi'\in\mathscr{E}(\chi)$ and $\beta: P_{\chi'}\ra P_{\chi}$ be a morphism of $\F[\![I/Z_1]\!]$-modules.  The proof of Proposition \ref{prop:map-Pchi}(ii) shows that if  $\overline{\beta}: P_{\chi'}\ra P_{\chi}\ra P_{\chi}/\fm^3$ is nonzero, then $\im(\overline{\beta})$ is nonzero and does not depend on the choice of $\beta$, and therefore neither does $\im(\beta_{\chi}^{\sharp})$, where $\beta_{\chi}^{\sharp}$ is the morphism   \eqref{eq:beta-sharp}.

\begin{definition}\label{def:tchi}
Let $t_{\chi'}$ denote the the image of $\F\ra R$ via the map
\[\beta_{\chi}^{\sharp}:\ \F\cong \Hom_I(P_{\chi'},\lambda_{\chi}^{\vee})^{\vee}\ra \Hom_I(P_{\chi},\lambda_{\chi}^{\vee})^{\vee}\cong R,\]
where $\beta: P_{\chi'}\ra P_{\chi}$ is any morphism which is nonzero when composed with $P_{\chi}\twoheadrightarrow P_{\chi}/\fm^3$.
\end{definition}

\begin{lemma}\label{lemma:tchi-independent}
 $\{t_{\chi'}:\chi'\in\mathscr{E}(\chi)\}$  generate the maximal ideal $\fm_R$. As a consequence,   $\{t_{\chi'}:\chi'\in\mathscr{E}(\chi)\}$ form an $\F$-basis of $\fm_R$.
\end{lemma}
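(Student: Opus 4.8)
The plan is to read off the elements $t_{\chi'}$ from the very beginning of a minimal projective resolution of $\chi$, and then exploit the right-exactness of the functor $\Hom_I(-,\lambda_\chi^\vee)^{\vee}$. First I would record the two numerical facts: $\dim_{\F}R=2f+1$, $\dim_{\F}\fm_R=2f$ and $\fm_R^2=0$ (immediate from the presentation \eqref{eq:ring-R}), while $|\mathscr{E}(\chi)|=2f$ by Lemma \ref{lemma:Ext1-chi}. By that same lemma, $\rad_I(P_{\chi})/\rad_I^2(P_{\chi})\cong\bigoplus_{\chi'\in\mathscr{E}(\chi)}\chi'$, so there is a projective cover $\beta=(\beta_{\chi'})_{\chi'\in\mathscr{E}(\chi)}\colon\bigoplus_{\chi'\in\mathscr{E}(\chi)}P_{\chi'}\twoheadrightarrow\rad_I(P_{\chi})$ which, composed with the inclusion into $P_{\chi}$, yields an exact sequence
\[\bigoplus_{\chi'\in\mathscr{E}(\chi)}P_{\chi'}\xrightarrow{\ \beta\ }P_{\chi}\longrightarrow\chi\longrightarrow0.\]
Each component $\beta_{\chi'}\colon P_{\chi'}\to P_{\chi}$ has image the submodule generated by the copy of $\chi'$ in $\rad_I(P_{\chi})/\rad_I^2(P_{\chi})$, hence is nonzero modulo $\fm^2$ and a fortiori modulo $\fm^3$; so each $\beta_{\chi'}$ is of the kind used in Definition \ref{def:tchi}, and $\mathrm{im}(\beta^{\sharp}_{\chi'})=\F\,t_{\chi'}$.

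Next I would apply the covariant right-exact functor $\Hom_I(-,\lambda_\chi^\vee)^{\vee}$ to the displayed sequence. This gives an exact sequence of $R$-modules
\[\bigoplus_{\chi'\in\mathscr{E}(\chi)}\Hom_I(P_{\chi'},\lambda_\chi^\vee)^{\vee}\xrightarrow{\ \beta^{\sharp}\ }\Hom_I(P_{\chi},\lambda_\chi^\vee)^{\vee}\longrightarrow\Hom_I(\chi,\lambda_\chi^\vee)^{\vee}\longrightarrow0,\]
whose $\chi'$-component is exactly $\beta^{\sharp}_{\chi'}$, so that $\mathrm{im}(\beta^{\sharp})=\sum_{\chi'\in\mathscr{E}(\chi)}\F\,t_{\chi'}$. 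By Proposition \ref{prop:module-Pchi}, the left-hand term is $\F^{2f}$ and the middle term is $R$; and since $\soc_I(\lambda_\chi^\vee)\cong\chi$, the right-hand term is $\Hom_I(\chi,\soc_I(\lambda_\chi^\vee))^{\vee}\cong\F$. The crux of the argument is to identify the $R$-module structure on this last $\F$: as $\soc_I(\lambda_\chi^\vee)\cong\chi$ is a smooth character it is killed by the augmentation ideal $\fm$ of $\F[\![I_1/Z_1]\!]$, hence by $\fm^2$, hence by the generators $x_i=e_if_i$ and $y_i=f_ie_i$ of $\fm_R$; thus $\fm_R$ acts as $0$ on $\Hom_I(\chi,\lambda_\chi^\vee)$ and on its dual. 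Consequently the $R$-linear surjection $R=\Hom_I(P_{\chi},\lambda_\chi^\vee)^{\vee}\twoheadrightarrow\Hom_I(\chi,\lambda_\chi^\vee)^{\vee}$ factors through $R/\fm_R$, and being onto a one-dimensional space it is the quotient map $R\twoheadrightarrow R/\fm_R$. Therefore $\mathrm{im}(\beta^{\sharp})=\fm_R$.

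Finally, combining the two descriptions of $\mathrm{im}(\beta^{\sharp})$, we get $\sum_{\chi'\in\mathscr{E}(\chi)}\F\,t_{\chi'}=\fm_R$, which proves that the $t_{\chi'}$ generate $\fm_R$; since $\dim_{\F}\fm_R=2f=|\mathscr{E}(\chi)|$, the $2f$ lines $\F\,t_{\chi'}$ span a $2f$-dimensional space, forcing them to be linearly independent, so every $t_{\chi'}$ is nonzero and the $t_{\chi'}$ form an $\F$-basis of $\fm_R$. The only genuinely delicate point is the middle one—checking that $\fm_R$ annihilates $\soc_I(\lambda_\chi^\vee)$ so that the terminal map in the right-exact sequence is the canonical quotient $R\to R/\fm_R$; everything else is bookkeeping with Proposition \ref{prop:module-Pchi} and Definition \ref{def:tchi}, together with the already-noted right-exactness of $\Hom_I(-,-^{\vee})^{\vee}$.
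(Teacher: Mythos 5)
Your argument is the same as the paper's: take the projective presentation $\bigoplus_{\chi'\in\mathscr{E}(\chi)}P_{\chi'}\to P_{\chi}\to\chi\to0$, apply the right-exact functor $\Hom_I(-,\lambda_{\chi}^{\vee})^{\vee}$, identify the resulting sequence with $\bigoplus_{\chi'}\F t_{\chi'}\to R\to\F\to0$ via Proposition \ref{prop:map-Pchi} and Definition \ref{def:tchi}, and finish by dimension count. Your explicit check that the terminal surjection $R\twoheadrightarrow\F$ is the quotient by $\fm_R$ (by observing $\fm_R$ kills $\soc_I(\lambda_\chi^{\vee})$) is a correct fleshing-out of a step the paper leaves implicit (any $R$-linear surjection onto a one-dimensional quotient has kernel $\fm_R$).
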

\begin{proof}
There exists an exact sequence
\[\oplus_{\chi'\in\mathscr{E}(\chi)}P_{\chi'}\ra P_{\chi}\ra \chi\ra0.\]
Since $\Hom_{I}(-,\lambda_{\chi}^{\vee})^{\vee}$ is right exact,  the induced sequence
\[\oplus_{\chi'\in\mathscr{E}(\chi)}\Hom_I(P_{\chi'},\lambda_{\chi}^{\vee})^{\vee}\ra \Hom_I(P_{\chi},\lambda_{\chi}^{\vee})^{\vee}\ra \Hom_I(\chi,\lambda_{\chi}^{\vee})^{\vee}\ra0\]
is also exact. By Proposition \ref{prop:map-Pchi} and Definition \ref{def:tchi}, the last sequence is isomorphic to
\[\oplus_{\chi'} \F t_{\chi'}\ra R\ra \F\ra0,\]
proving the first claim.
 Since $\dim_{\F}\fm_R=2f=|\mathscr{E}(\chi)|$, the last claim follows (as $\fm_R^2=0$).
\end{proof}

\begin{lemma}\label{lemma:image-tchi}
Let $P$ be a finitely generated projective $\F[\![I/Z_1]\!]$-module. Let $\chi'\in \mathscr{E}(\chi)$ and $\beta: P_{\chi'}\ra P$ be a morphism such that $\im(\beta)\subset \fm P$. Then the image of $\beta_{\chi}^{\sharp}$ is contained in $t_{\chi'}\Hom_I(P,\lambda_{\chi}^{\vee})^{\vee}$.
\end{lemma}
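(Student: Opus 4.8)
The plan is to reduce the statement to the case of a single indecomposable projective summand and then to read off the answer from the case analysis of Proposition \ref{prop:map-Pchi}. Since $\F[\![I/Z_1]\!]$ is semiperfect and $P$ is finitely generated projective, I would first write $P=\bigoplus_i P_{\chi_i}$ as a finite direct sum of projective envelopes of characters of $I$. Under this decomposition one has, as $R$-modules, $\Hom_I(P,\lambda_\chi^\vee)^\vee=\bigoplus_i\Hom_I(P_{\chi_i},\lambda_\chi^\vee)^\vee$ and $t_{\chi'}\Hom_I(P,\lambda_\chi^\vee)^\vee=\bigoplus_i t_{\chi'}\Hom_I(P_{\chi_i},\lambda_\chi^\vee)^\vee$; moreover $\beta_\chi^\sharp$ decomposes with components $(\beta_i)_\chi^\sharp$, where $\beta_i\colon P_{\chi'}\to P_{\chi_i}$ is the $i$-th component of $\beta$. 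Since $\fm P=\bigoplus_i\fm P_{\chi_i}$, the hypothesis $\im(\beta)\subseteq\fm P$ forces $\im(\beta_i)\subseteq\fm P_{\chi_i}$ for every $i$. It therefore suffices to prove $\im\bigl((\beta_i)_\chi^\sharp\bigr)\subseteq t_{\chi'}\Hom_I(P_{\chi_i},\lambda_\chi^\vee)^\vee$ for each $i$.

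Next I would run the case analysis on $\chi_i$. If $\chi_i\notin\{\chi\}\cup\mathscr{E}(\chi)$ then $\Hom_I(P_{\chi_i},\lambda_\chi^\vee)^\vee=0$ by Proposition \ref{prop:module-Pchi}. If $\chi_i\in\mathscr{E}(\chi)$ with $\chi_i\neq\chi'$, then $(\beta_i)_\chi^\sharp=0$ by Proposition \ref{prop:map-Pchi}(iii). The case $\chi_i=\chi'$ is the one requiring a small extra argument: here I would upgrade the hypothesis $\im(\beta_i)\subseteq\fm P_{\chi'}$ to $\im(\beta_i)\subseteq\fm^2 P_{\chi'}$. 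Indeed, the image of the composite $P_{\chi'}\xrightarrow{\beta_i}P_{\chi'}\twoheadrightarrow P_{\chi'}/\fm^2$ is a quotient of $P_{\chi'}$ lying inside $\fm P_{\chi'}/\fm^2 P_{\chi'}\cong\bigoplus_{j\in\cS}(\chi'\alpha_j\oplus\chi'\alpha_j^{-1})$; a nonzero such quotient would be semisimple with simple cosocle $\chi'$, hence would contain $\chi'$ as a direct summand, which is impossible because $\alpha_j\neq\ide$ (here $p>5$; note also $\chi'\neq\chi$ since $\Ext^1_{I/Z_1}(\chi,\chi)=0$ by Lemma \ref{lemma:Ext1-chi}). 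So this composite vanishes, and Proposition \ref{prop:map-Pchi}(i) (applied with $\chi_1=\chi_2=\chi'\neq\chi$) gives $(\beta_i)_\chi^\sharp=0$. In all three of these cases $\im\bigl((\beta_i)_\chi^\sharp\bigr)=0$, which is trivially contained in the target (which is itself $0$ in the two cases where $\Hom_I(P_{\chi_i},\lambda_\chi^\vee)^\vee\cong\F$, since $t_{\chi'}\in\fm_R$ acts by zero on a one-dimensional $R$-module).

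Finally, for $\chi_i=\chi$: if the composite $P_{\chi'}\xrightarrow{\beta_i}P_\chi\twoheadrightarrow P_\chi/\fm^3$ is zero, then $(\beta_i)_\chi^\sharp=0$ by Proposition \ref{prop:map-Pchi}(ii). If it is nonzero, then $\beta_i$ is exactly one of the morphisms allowed in the definition of $t_{\chi'}$, so by the well-definedness observation preceding Definition \ref{def:tchi} the submodule $\im\bigl((\beta_i)_\chi^\sharp\bigr)$ of $\Hom_I(P_\chi,\lambda_\chi^\vee)^\vee\cong R$ is independent of the choice and equals $\F t_{\chi'}$; since $t_{\chi'}\in\fm_R$ and $\fm_R^2=0$ (so $\fm_R t_{\chi'}=0$), this coincides with $Rt_{\chi'}=t_{\chi'}\Hom_I(P_\chi,\lambda_\chi^\vee)^\vee$. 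Reassembling the summands yields $\im(\beta_\chi^\sharp)\subseteq t_{\chi'}\Hom_I(P,\lambda_\chi^\vee)^\vee$. The only genuinely non-formal point is the promotion of $\fm P_{\chi'}$ to $\fm^2 P_{\chi'}$ in the case $\chi_i=\chi'$; everything else is bookkeeping together with direct appeals to Propositions \ref{prop:module-Pchi} and \ref{prop:map-Pchi}.
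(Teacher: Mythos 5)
Your proof is correct and follows essentially the same route as the paper's: reduce to an indecomposable summand $P_{\chi_i}$ and run a case analysis via Propositions \ref{prop:module-Pchi} and \ref{prop:map-Pchi}, with the only non-formal step being the promotion of $\im(\beta_i)\subseteq\fm P_{\chi'}$ to $\im(\beta_i)\subseteq\fm^2 P_{\chi'}$ when $\chi_i=\chi'$. The paper states that step more tersely ($\chi'$ does not occur in $\gr^1_{\fm}P_{\chi'}$), and you simply spell out the identification $\gr^1_\fm P_{\chi'}\cong\bigoplus_{j}\chi'\alpha_j^{\pm1}$ and the cosocle argument; the substance is the same.
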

\begin{proof}
We may assume $P$ is indecomposable, i.e. $P=P_{\chi''}$ for some character $\chi''$. If $\chi''= \chi'$, then   the assumption on $\im(\beta)$ implies that $\im(\beta)\subset \fm^2P_{\chi'}$ because  $\chi'$ does not occur in $\gr^1_{\fm}P_{\chi'}$.
By Proposition \ref{prop:map-Pchi}(i) this implies   $\beta_{\chi}^{\sharp}=0$  and the result trivially holds. In the rest, we assume $\chi''\neq \chi'$.

First assume $\chi''=\chi$, then the result follows from Proposition \ref{prop:map-Pchi}(ii) and Definition \ref{def:tchi}. Finally, if $\chi''\notin \{\chi,\chi'\}$, then the map is always zero by Proposition \ref{prop:map-Pchi}(iii), so the result is also trivial.
\end{proof}

 \subsubsection{Socle}\label{subsection:socle}

 Recall the socle of an $R$-module $M$ from Definition \ref{defn:app-socle}. Since $R$ is local, we have the following equivalent description
\[\soc_R(M):=\{v\in M: rv=0,\ \forall v\in \fm_R\}.\]
For example, since $\fm_R^2=0$, we have $\soc_R(R)=\fm_R$. However, note that $\soc_R(M)\neq \fm_RM$ in general (e.g. take $M=R\oplus \F^m$ for some $m\geq 1$).
\begin{lemma}\label{lemma:forsocle}
The  morphism $\Hom_I(P_{\chi},\lambda_{\chi}^{\vee})^{\vee}\ra \Hom_I(P_{\chi},\chi)^{\vee}$,
induced from  the natural quotient morphism $\lambda_{\chi}\twoheadrightarrow \chi^{\vee}$, is $R$-linear and  identified with $R\twoheadrightarrow \F$.
\end{lemma}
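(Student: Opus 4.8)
The plan is to trace the displayed morphism through the two Pontryagin dualizations and the functor $\Hom_I(P_\chi,-)$, and then to invoke the structural identification of Proposition \ref{prop:module-Pchi}.

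First I would record that the morphism is automatically $R$-linear. Both $\lambda_\chi=\overline{W}_{\chi^\vee,3}$ and $\chi^\vee$ are annihilated by $\fm^3$ (the former being a quotient of $W_{\chi^\vee,3}=P_{\chi^\vee}/\fm^3$), hence are modules over $\F[\![I/Z_1]\!]/\fm^3$, on which $R$ acts centrally by Proposition \ref{prop:center}; so the quotient map $\lambda_\chi\twoheadrightarrow\chi^\vee$ and every functor in sight is $R$-linear, as already observed after Proposition \ref{prop:center}. By Proposition \ref{prop:module-Pchi} the source $\Hom_I(P_\chi,\lambda_\chi^\vee)^\vee$ is identified, as an $R$-module, with $R$ itself; and the target $\Hom_I(P_\chi,\chi)^\vee$ is one-dimensional over $\F$ because $P_\chi$ is a projective envelope of $\chi$. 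Thus the morphism has the shape $R\to\F$, and it remains to see that it is the canonical projection.

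Next I would establish surjectivity. Dualizing the cosocle quotient $\lambda_\chi\twoheadrightarrow\chi^\vee$ gives the inclusion $\chi\hookrightarrow\lambda_\chi^\vee$, which is precisely the inclusion of the $I$-socle of $\lambda_\chi^\vee$ — a simple module, since $\mathrm{cosoc}_I(\lambda_\chi)=\chi^\vee$ is simple. Applying the left-exact functor $\Hom_I(P_\chi,-)$ yields an injection $\Hom_I(P_\chi,\chi)\hookrightarrow\Hom_I(P_\chi,\lambda_\chi^\vee)$, and applying Pontryagin duality (contravariant and exact on the relevant category) turns this into a surjection $\Hom_I(P_\chi,\lambda_\chi^\vee)^\vee\twoheadrightarrow\Hom_I(P_\chi,\chi)^\vee$; unwinding the definitions, this surjection is exactly the morphism in the statement.

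Finally, a surjective $R$-linear map $R\twoheadrightarrow\F$ has kernel an ideal $\mathfrak{I}\subsetneq R$ with $R/\mathfrak{I}\cong\F$, so $\dim_\F\mathfrak{I}=2f$; since $R$ is local with maximal ideal $\fm_R$ of dimension $2f$ and $\mathfrak{I}\subseteq\fm_R$, one gets $\mathfrak{I}=\fm_R$, so the map coincides with the canonical quotient $R\twoheadrightarrow R/\fm_R=\F$ up to a nonzero scalar, which is what is meant by being ``identified with $R\twoheadrightarrow\F$''. The only step requiring any care is the bookkeeping of variances through the two dualities; beyond that the statement is a formal consequence of Proposition \ref{prop:module-Pchi}, so I do not expect a genuine obstacle here.
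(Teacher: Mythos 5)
Your proof is correct and follows the same route as the paper's (identify the source with $R$ via Proposition \ref{prop:module-Pchi}, the target with $\F$, and conclude). The paper's own proof is terser, simply asserting the identifications "for dimension reasons," whereas you make explicit the needed surjectivity (dualizing the cosocle quotient and applying $\Hom_I(P_\chi,-)$) and the fact that a nonzero $R$-linear map $R\to\F$ is forced to be the canonical projection — this is the right way to fill in what the paper leaves implicit.
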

\begin{proof}
Noting that $\Hom_I(P_{\chi},\chi)^{\vee}\cong \F$, the result is clear by Proposition \ref{prop:module-Pchi}.
\end{proof}

\begin{corollary}\label{cor:kernel=socle}
Let $P$ be a finitely generated projective $\F[\![I/Z_1]\!]$-module. The natural  morphism
\[\Hom_I(P,\lambda_{\chi}^{\vee})^{\vee}\ra \Hom_I(P,\chi)^{\vee}\] is $R$-linear whose kernel is identified with the $R$-socle of $\Hom_I(P,\lambda_{\chi}^{\vee})^{\vee}$.
\end{corollary}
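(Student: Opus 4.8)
The plan is to reduce Corollary \ref{cor:kernel=socle} to the indecomposable case and then invoke Lemma \ref{lemma:forsocle} together with Proposition \ref{prop:module-Pchi}. Since $P$ is finitely generated projective, write $P\cong\bigoplus_{\chi'}P_{\chi'}^{\oplus n_{\chi'}}$ for suitable multiplicities $n_{\chi'}\geq 0$. Both functors $\Hom_I(-,\lambda_\chi^\vee)^\vee$ and $\Hom_I(-,\chi)^\vee$ (and also $\soc_R$) commute with finite direct sums, and the $R$-linearity of the natural map $\Hom_I(P,\lambda_\chi^\vee)^\vee\to\Hom_I(P,\chi)^\vee$ is immediate from Proposition \ref{prop:center}, which gives the $R$-action through $\lambda_\chi$ and $\chi$ compatibly with the quotient $\lambda_\chi\twoheadrightarrow\chi^\vee$. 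So it suffices to prove, for each character $\chi'$, that the kernel of
\[\Hom_I(P_{\chi'},\lambda_\chi^\vee)^\vee\lra\Hom_I(P_{\chi'},\chi)^\vee\]
is exactly $\soc_R$ of the source.

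First I would dispose of the cases $\chi'\notin\{\chi\}\cup\mathscr{E}(\chi)$ and $\chi'\in\mathscr{E}(\chi)$. If $\chi'$ is neither $\chi$ nor in $\mathscr{E}(\chi)$, then by Proposition \ref{prop:module-Pchi} the source is $0$, and there is nothing to prove. If $\chi'\in\mathscr{E}(\chi)$, then $\Hom_I(P_{\chi'},\lambda_\chi^\vee)^\vee\cong\F$ as an $R$-module (Proposition \ref{prop:module-Pchi}), so $\fm_R$ kills it and hence $\soc_R$ of the source is the whole space; on the other hand $\Hom_I(P_{\chi'},\chi)^\vee=0$ because $\chi\neq\chi'$, so the map in question is the zero map and its kernel is also the whole space. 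Thus equality holds.

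The remaining case $\chi'=\chi$ is the substantive one, but it is precisely Lemma \ref{lemma:forsocle}: the map is identified with the surjection $R\twoheadrightarrow\F$ of $R$-modules, whose kernel is $\fm_R$. Since $\fm_R^2=0$ (by the defining relations of $R$ in Proposition \ref{prop:center}), we have $\fm_R=\soc_R(R)$, and so the kernel is indeed $\soc_R$ of $\Hom_I(P_\chi,\lambda_\chi^\vee)^\vee\cong R$. Assembling the three cases over the direct sum decomposition of $P$ and using that $\soc_R(\bigoplus M_i)=\bigoplus\soc_R(M_i)$ completes the argument. There is no serious obstacle here; the only mild point to be careful about is checking that the identifications of Proposition \ref{prop:module-Pchi} and Lemma \ref{lemma:forsocle} are $R$-linear and compatible with the direct-sum decomposition, which is routine once one notes that all the relevant maps of $\F[\![I/Z_1]\!]/\fm^3$-modules are $R$-linear because $R$ lies in the center.
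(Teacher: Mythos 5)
Your proof is correct and follows essentially the same route as the paper: reduce to indecomposable $P=P_{\chi'}$, invoke Lemma \ref{lemma:forsocle} when $\chi'=\chi$, and note that otherwise the source is zero or a simple $R$-module while the target vanishes. The paper collapses your two non-$\chi$ cases into the single observation that $\Hom_I(P_{\chi'},\lambda_{\chi}^{\vee})^{\vee}$ is either $0$ or simple over $R$, but the content is identical.
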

\begin{proof}
It is clear that we may assume  $P$  is indecomposable, i.e. $P=P_{\chi'}$ for some character $\chi'$.  If $\chi'=\chi$, then the result follows from Lemma \ref{lemma:forsocle}. If $\chi'\neq \chi$, then $\Hom_I(P_{\chi'},\chi)=0$ and hence the map $\Hom_I(P_{\chi'},\lambda_{\chi}^{\vee})^{\vee}\ra \Hom_I(P_{\chi'},\chi)^{\vee}$ is always zero; since $\Hom_I(P_{\chi'},\lambda_{\chi}^{\vee})^{\vee}$ is itself either $0$ or a simple $R$-module, the result holds trivially.
\end{proof}

If $P$ is a finitely generated $\F[\![I/Z_1]\!]$-module, we denote by
 $\rad_{\chi}(P)$   the largest subobject such that the quotient $P/\rad_{\chi}(P)$ is semisimple and $\chi$-isotypic.
To be explicit, if $P$ can be decomposed as $P_1\oplus P_2$ with $\mathrm{cosoc}(P_1)$ is $\chi$-isotypic and $\Hom_I(P_2,\chi)=0$, then
\[\rad_{\chi}(P)=\rad(P_{1}) \oplus P_2=\fm P_1\oplus P_2.\]
Corollary \ref{cor:kernel=socle} can be restated as follows.

\begin{corollary}\label{cor:rad-chi}
Let $P$ be a finitely generated projective $\F[\![I/Z_1]\!]$-module. There is an exact sequence
\[0\ra \Hom_I(\rad_{\chi}(P),\lambda_{\chi}^{\vee})^{\vee}\ra \Hom_I(P,\lambda_{\chi}^{\vee})^{\vee}\ra\Hom_I(P/\rad_{\chi}(P),\lambda_{\chi}^{\vee})^{\vee}\ra0,\]
which is canonically identified  with
\[0\ra \soc_R(M)\ra M\ra M/\soc_R(M)\ra0\]
where we have written $M=\Hom_I(P,\lambda_{\chi}^{\vee})^{\vee}$.
\end{corollary}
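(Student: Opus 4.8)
The statement to prove is Corollary \ref{cor:rad-chi}, which is a reformulation of Corollary \ref{cor:kernel=socle}. Here is how I would proceed.

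\medskip

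The plan is to combine Corollary \ref{cor:kernel=socle} with the structural description of $\rad_\chi(P)$ for a finitely generated projective $\F[\![I/Z_1]\!]$-module $P$. First I would reduce to the case where $P$ is indecomposable, so $P = P_{\chi'}$ for a single character $\chi'$; since all the functors involved ($\Hom_I(-,\lambda_\chi^\vee)^\vee$, $\Hom_I(-,\chi)^\vee$, $\rad_\chi(-)$, and formation of the $R$-socle) commute with finite direct sums, the general case follows at once. For $P = P_{\chi'}$ there are two cases. If $\chi' \neq \chi$, then $\Hom_I(P_{\chi'},\chi) = 0$, so $P/\rad_\chi(P) = P_{\chi'}$ and $\rad_\chi(P_{\chi'}) = P_{\chi'}$ as well (by the explicit description of $\rad_\chi$ given just before the corollary: $P_2 = P_{\chi'}$, $P_1 = 0$); meanwhile $M = \Hom_I(P_{\chi'},\lambda_\chi^\vee)^\vee$ is either $0$ or a simple $R$-module by Proposition \ref{prop:module-Pchi}, so $\soc_R(M) = M$, and the asserted sequence $0 \to M \to M \to 0 \to 0$ is exact and matches. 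If $\chi' = \chi$, then $\rad_\chi(P_\chi) = \fm P_\chi = \rad(P_\chi)$ and $P_\chi/\rad_\chi(P_\chi) = \chi$, so the right-hand term of the first sequence is $\Hom_I(\chi,\lambda_\chi^\vee)^\vee$, which by Lemma \ref{lemma:forsocle} is identified with the quotient $R \twoheadrightarrow \F$ appearing there; exactness of the three-term sequence then follows from right exactness of $\Hom_I(-,\lambda_\chi^\vee)^\vee$ (giving surjectivity on the right) together with the dimension count $\dim_\F M = 2f+1$, $\dim_\F \Hom_I(\rad_\chi(P_\chi),\lambda_\chi^\vee)^\vee = [\,\fm P_\chi : \chi\,] = 2f$ that forces injectivity on the left.

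\medskip

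The second point is the canonical identification of the resulting short exact sequence with $0 \to \soc_R(M) \to M \to M/\soc_R(M) \to 0$. This is exactly the content of Corollary \ref{cor:kernel=socle}: the composite map $\Hom_I(P,\lambda_\chi^\vee)^\vee \to \Hom_I(P,\chi)^\vee$ is $R$-linear (all maps in the category of $\F[\![I/Z_1]\!]/\fm^3$-modules are $R$-linear by Proposition \ref{prop:center}, and $\lambda_\chi, \chi$ are annihilated by $\fm^3$), and its kernel is $\soc_R(M)$. It remains only to observe that the inclusion $\rad_\chi(P) \hookrightarrow P$ induces, on applying $\Hom_I(-,\lambda_\chi^\vee)^\vee$, precisely the inclusion $\ker\big(M \to \Hom_I(P,\chi)^\vee\big) \hookrightarrow M$ — indeed the quotient $P \twoheadrightarrow P/\rad_\chi(P)$ is, by definition of $\rad_\chi$, the maximal semisimple $\chi$-isotypic quotient of $P$, so a map $P \to \chi$ factors through $P/\rad_\chi(P)$, and conversely $\Hom_I(P/\rad_\chi(P),\chi)^\vee \xrightarrow{\sim} \Hom_I(P,\chi)^\vee$; combining this with the identification $\Hom_I(P/\rad_\chi(P),\lambda_\chi^\vee)^\vee \cong \Hom_I(P/\rad_\chi(P),\chi)^\vee$ (which holds because $P/\rad_\chi(P)$ is $\chi$-isotypic semisimple, so any map to $\lambda_\chi^\vee$ lands in the $\chi^\vee$-part) gives the claimed compatibility.

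\medskip

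I do not anticipate a serious obstacle here, as the corollary is essentially bookkeeping on top of Corollary \ref{cor:kernel=socle} and Lemma \ref{lemma:forsocle}. The one place requiring a little care is checking that, in the indecomposable case $P = P_\chi$, the term $\Hom_I(\rad_\chi(P_\chi),\lambda_\chi^\vee)^\vee$ really computes the $R$-socle rather than just a submodule of the right dimension: this is where one uses that $\fm P_\chi$, restricted to $I_1$, is a submodule of $\F[\![I_1/Z_1]\!]/\fm^3$ in which the only copy of $\chi$ lies in $\fm^2$ (cf. the argument in the proof of Proposition \ref{prop:map-Pchi}(ii)), so that the $2f$-dimensional space $\Hom_I(\fm P_\chi,\lambda_\chi^\vee)^\vee$ is annihilated by $\fm_R$ — equivalently, it is killed by every $t_{\chi'}$, which by Lemma \ref{lemma:tchi-independent} span $\fm_R$. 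Once this is in place, a dimension count closes the argument, since $\dim_\F \soc_R(R) = 1$ forces $\soc_R(M)$ to be exactly the $2f$-dimensional kernel.
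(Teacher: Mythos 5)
Your proposal takes the same essential route as the paper's: both prove the identification by feeding the short exact sequence through Corollary \ref{cor:kernel=socle}, and your second paragraph captures this correctly. The main difference is in how exactness is established. The paper observes in one line that $P/\rad_{\chi}(P)$ is semisimple and $\chi$-isotypic and invokes Lemma \ref{lemma:HA-Ext1-vanish} (i.e.\ $\Ext^1_{I/Z_1}(\lambda_{\chi},\chi^{\vee})=0$), which gives the exact sequence directly by applying $\Hom_I(-,\lambda_{\chi}^{\vee})$ to $0\to\rad_{\chi}(P)\to P\to P/\rad_{\chi}(P)\to 0$ and dualizing. Your case analysis on indecomposable summands plus dimension counting reaches the same conclusion, but is longer and the stated count $\dim_{\F}\Hom_I(\rad_{\chi}(P_{\chi}),\lambda_{\chi}^{\vee})^{\vee}=[\fm P_{\chi}:\chi]$ is imprecise as written: the Jordan--H\"older multiplicity of $\chi$ in the infinite-dimensional $\fm P_{\chi}$ is not $2f$; what you want is a bound coming from the projective cover $\bigoplus_{\chi'\in\mathscr{E}(\chi)}P_{\chi'}\twoheadrightarrow\fm P_{\chi}$ together with the obvious inequality from right exactness, or else just to use the Ext-vanishing as the paper does.

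Your final paragraph is redundant (the identification is already handled in the second paragraph) and contains a genuine error: since $R$ is local with $\fm_R^2=0$, one has $\soc_R(R)=\fm_R$, so $\dim_{\F}\soc_R(R)=2f$, not $1$. The corrected count ($2f$-dimensional submodule annihilated by $\fm_R$ sitting inside $\soc_R(R)\cong\F^{2f}$, hence equal to it) still yields the conclusion, but the clean route is the one already in your second paragraph: Lemma \ref{lemma:forsocle} identifies $M\to\Hom_I(P_{\chi},\chi)^{\vee}$ with $R\twoheadrightarrow\F$, whose kernel is $\fm_R=\soc_R(R)$, so no separate annihilation check is needed.
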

\begin{proof}
Since $P/\rad_{\chi}(P)$ is semisimple and $\chi$-isotypic, the exactness  follows from Lemma \ref{lemma:HA-Ext1-vanish}. The second claim is a reformulation of Corollary \ref{cor:kernel=socle}, noting that there are natural isomorphisms
\[\Hom_I(P,\chi)^{\vee}\simto \Hom_I(P/\rad_{\chi}(P),\chi)^{\vee} \xleftarrow{\sim} \Hom_I(P/\rad_{\chi}(P),\lambda_{\chi}^{\vee})^{\vee}.\]
\end{proof}

\begin{proposition}\label{prop:beta-sharp-minimal}
Let $\beta:P_1\ra P_2$ be a morphism between finitely generated projective $\F[\![I/Z_1]\!]$-modules such that $\im(\beta)\subset \fm P_2$. Write $M_i=\Hom_{I}(P_i,\lambda_{\chi}^{\vee})^{\vee}$  for $i=1,2$, and let $\beta_{\chi}^{\sharp}:M_1\ra M_2$ be the morphism \eqref{eq:beta-sharp}. Then the following statements hold.
\begin{enumerate}
\item[(i)]  $\im(\beta_{\chi}^{\sharp})$ is contained in $\soc_R(M_2)$.
\item[(ii)] Let $\fb\subset R$ be the ideal generated by $t_{\chi'}$, where $\chi'$ runs over the set $\mathscr{E}(\chi)\cap \JH(\mathrm{cosoc}(P_1))$. Then $\beta_{\chi}^{\sharp}$ induces an $R$-linear morphism
$\soc_R(M_1)\ra \fb M_2.$
\end{enumerate}
\end{proposition}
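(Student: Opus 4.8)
The plan is to run everything through the functoriality of the bifunctor $\Hom_I(-,(-)^{\vee})^{\vee}$ together with the $R$-module descriptions of Proposition \ref{prop:module-Pchi} and the socle identifications of Corollaries \ref{cor:kernel=socle} and \ref{cor:rad-chi}. Recall that $R$ sits in the centre of $\F[\![I/Z_1]\!]/\fm^3$ and that $\lambda_{\chi}$ (being a quotient of $W_{\chi^{\vee},3}=\Proj_{I/Z_1}\chi^{\vee}/\fm^3$) is annihilated by $\fm^3$; hence $M_1,M_2$ are $R$-modules and $\beta_{\chi}^{\sharp}$ is $R$-linear, as are all the comparison maps used below.

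For part (i), first I would observe that $\chi$ is inflated from $I/I_1$, so $\fm=\fm_{I_1/Z_1}$ annihilates $\chi$; consequently every $\F[\![I/Z_1]\!]$-morphism $P_2\ra\chi$ is $\fm$-linear and therefore kills $\fm P_2$. Since $\im(\beta)\subseteq\fm P_2$ by hypothesis, the composite $P_1\xrightarrow{\beta}P_2\ra\chi$ vanishes for every such morphism, i.e. the precomposition map $\Hom_I(P_2,\chi)\ra\Hom_I(P_1,\chi)$ is zero, hence so is its Pontryagin dual $\Hom_I(P_1,\chi)^{\vee}\ra\Hom_I(P_2,\chi)^{\vee}$. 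Feeding this into the commutative square for the quotient $\lambda_{\chi}^{\vee}\twoheadrightarrow\chi$ shows that the composite $M_1\xrightarrow{\beta_{\chi}^{\sharp}}M_2\ra\Hom_I(P_2,\chi)^{\vee}$ is zero. By Corollary \ref{cor:kernel=socle} the kernel of $M_2\ra\Hom_I(P_2,\chi)^{\vee}$ is exactly $\soc_R(M_2)$, so $\im(\beta_{\chi}^{\sharp})\subseteq\soc_R(M_2)$.

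For part (ii), I would decompose $P_1=\bigoplus_j P_{\chi_j}$ into indecomposable projectives (the $\chi_j$ running, with multiplicity, over $\JH(\mathrm{cosoc}(P_1))$), so that $\soc_R(M_1)=\bigoplus_j\soc_R\big(\Hom_I(P_{\chi_j},\lambda_{\chi}^{\vee})^{\vee}\big)$ and it suffices to control $\beta_{\chi}^{\sharp}$ on each summand, namely $(\beta\circ\iota_j)_{\chi}^{\sharp}$ for $\iota_j:P_{\chi_j}\hookrightarrow P_1$, noting $\im(\beta\circ\iota_j)\subseteq\fm P_2$. By Proposition \ref{prop:module-Pchi} there are three cases: if $\chi_j\notin\{\chi\}\cup\mathscr{E}(\chi)$ the summand $\Hom_I(P_{\chi_j},\lambda_{\chi}^{\vee})^{\vee}$ is $0$; if $\chi_j\in\mathscr{E}(\chi)$, then $\chi_j\in\mathscr{E}(\chi)\cap\JH(\mathrm{cosoc}(P_1))$ and Lemma \ref{lemma:image-tchi} gives $\im\big((\beta\circ\iota_j)_{\chi}^{\sharp}\big)\subseteq t_{\chi_j}M_2\subseteq\fb M_2$; if $\chi_j=\chi$, then $\soc_R\big(\Hom_I(P_{\chi},\lambda_{\chi}^{\vee})^{\vee}\big)=\soc_R(R)=\fm_R$ (using $\fm_R^2=0$ and $R/\fm_R=\F$), and for $r\in\fm_R$ one has $(\beta\circ\iota_j)_{\chi}^{\sharp}(r)=r\cdot(\beta\circ\iota_j)_{\chi}^{\sharp}(1)=0$ because $(\beta\circ\iota_j)_{\chi}^{\sharp}(1)\in\soc_R(M_2)$ by part (i). Summing over $j$ yields $\beta_{\chi}^{\sharp}(\soc_R(M_1))\subseteq\fb M_2$, and the resulting map is $R$-linear.

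The argument is essentially formal once the earlier machinery is in place; the only substantive inputs are Lemma \ref{lemma:image-tchi} (which itself rests on Proposition \ref{prop:map-Pchi} and the explicit structure of $W_{\chi,3}$) and part (i). The point requiring care is the bookkeeping in (ii): one must notice that, after (i), the indecomposable $\chi$-summands of $P_1$ contribute nothing to the socle, so only the indices $\chi_j\in\mathscr{E}(\chi)\cap\JH(\mathrm{cosoc}(P_1))$ matter — which is precisely the generating set of $\fb$. A routine but necessary check is that the various comparison squares for $\Hom_I(-,(-)^{\vee})^{\vee}$ commute and are $R$-linear, which follows from naturality of Pontryagin duality and the centrality of $R$ in $\F[\![I/Z_1]\!]/\fm^3$.
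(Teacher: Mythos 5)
Your proof is correct, but it diverges from the paper's in two places, both arguably for the better. For part (i), the paper reduces to $P_1,P_2$ indecomposable and, after ruling out the trivial cases via the table of Proposition \ref{prop:map-Pchi}, invokes Corollary \ref{cor:rad-chi} in the remaining case $\chi_1=\chi_2=\chi$. Your argument instead observes directly that $\beta^*:\Hom_I(P_2,\chi)\to\Hom_I(P_1,\chi)$ is zero (since every $\F[\![I/Z_1]\!]$-map $P_2\to\chi$ kills $\fm P_2\supseteq\im(\beta)$), dualizes, and applies Corollary \ref{cor:kernel=socle}; this sidesteps the case analysis entirely and is cleaner. For part (ii), both proofs decompose $P_1$ and dispatch the cases $\chi_j\notin\{\chi\}\cup\mathscr{E}(\chi)$ and $\chi_j\in\mathscr{E}(\chi)$ identically, the latter via Lemma \ref{lemma:image-tchi}; the difference is in the case $\chi_j=\chi$. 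The paper performs a filtration computation: since $\chi$ does not appear in $\gr^1_\fm P_\chi$, the hypothesis $\im(\beta)\subset\fm P_2$ upgrades to $\im(\beta)\subset\fm^2P_2$, hence $\beta(\fm P_\chi)\subset\fm^3P_2$, which is then killed by Proposition \ref{prop:map-Pchi}(ii). You instead treat the functor as a black box: $\soc_R(\Hom_I(P_\chi,\lambda_\chi^\vee)^\vee)=\soc_R(R)=\fm_R$, and for $r\in\fm_R$, $R$-linearity gives $(\beta\circ\iota_j)_\chi^\sharp(r)=r\cdot(\beta\circ\iota_j)_\chi^\sharp(1)$, which vanishes because $(\beta\circ\iota_j)_\chi^\sharp(1)\in\soc_R(M_2)$ by part (i). This is a shorter derivation and keeps the module-theoretic bookkeeping internal to $R$, at the cost of using (i) as an input to (ii); the paper's filtration argument is self-contained but requires knowing the degree-1 graded structure of $P_\chi$. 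Both proofs are valid; yours is more uniform and slightly more economical in its appeals to the structural lemmas.
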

\begin{proof}
(i) We may assume both $P_i$ ($i=1,2$)  are  indecomposable, i.e. $P_i=P_{\chi_i}$ for  characters $\chi_i\in \{\chi\}\cup\mathscr{E}(\chi)$. In view of the table in Proposition \ref{prop:map-Pchi}, the only nontrivial case is when $\chi_1=\chi_2=\chi$. But,  in this case the claim follows directly from Corollary \ref{cor:rad-chi}.

(ii) We may again assume $P_i=P_{\chi_i}$ are indecomposable for $\chi_i\in\{\chi\}\cup\mathscr{E}(\chi)$. If $\chi_1\in \mathscr{E}(\chi)$, the claim is just Lemma \ref{lemma:image-tchi}. 

If $\chi_1=\chi$, then $\soc_R(M_1)=\fm_R$ is identified with $\Hom_I(\fm P_1,\lambda_{\chi}^{\vee})^{\vee}$ by Corollary \ref{cor:rad-chi}. There are two subcases. If $\chi_2\in\mathscr{E}(\chi)$, then $M_2\cong \F$ and $\fb M_2=0$, so the result follows from Proposition \ref{prop:map-Pchi}. Assume $\chi_2=\chi$ (and also $\chi_1=\chi$).
Since $\chi$ does not occur in $\gr^1_{\fm}P_{\chi}$, the assumption $\beta(P_1)\subset \fm P_2$ implies that $\beta(P_1)$ is actually contained in $\fm^2P_2$, and therefore $\beta(\fm P_1)$ is contained in $\fm^3P_2$.  By Proposition \ref{prop:map-Pchi}(ii), we deduce that $\beta_{\chi}^{\sharp}$ is identically zero when restricted to $\Hom_I(\fm P_1,\lambda_{\chi}^{\vee})^{\vee}$. This finishes the proof.
\end{proof}

\subsection{Generalized Koszul complexes}
\label{subsection:Koszul}

Let $R$ be a commutative ring, $M$ be an $R$-module and $\underline{\phi}=(\phi_1,\dots,\phi_n)$ be a family of pairwise commuting $R$-linear endomorphisms of $M$. The \emph{Koszul complex}
\[K_{\bullet}(\underline{\phi},M): \ \ 0\ra K_n\overset{d_n}{\lra} K_{n-1}\lra \cdots\overset{d_1}{\lra} K_0\lra 0 \]
associated to the data $(M,\phi_1,\dots,\phi_n)$ is defined as follows:
\begin{enumerate}
\item[$\bullet$] $K_l=M\otimes_{\Z}\wedge^l(\Z^n)$ for $0\leq l\leq n$; 
\item[$\bullet$] the differential map $d_l:K_{l}\ra K_{l-1}$ (for $1\leq l\leq n$) is defined as
\begin{equation}\label{eq:diff-Koszul}d_l\big(v\otimes(e_{i_1}\wedge \cdots\wedge e_{i_l})\big)=\sum_{r=1}^l(-1)^{r-1}\phi_{i_r}(v)\otimes(e_{i_1}\wedge \cdots\wedge \widehat{e}_{i_r}\wedge\cdots\wedge e_{i_l}),\end{equation}
where $v\in M$, $(e_{1},\dots,e_n)$ is the canonical basis in $\Z^n$ and by $\widehat{e}_{i_r}$ we indicate that $e_{i_r}$ is to be omitted from the exterior product.
\end{enumerate}

For the rest of this subsection, we assume that $R$ is a noetherian local ring and $M$ is finitely generated over $R$.  Let $R'\defn\End_R(M)$, which acts on $M$ from the \emph{right}, sending $(t,\phi)$ to $t\phi\defn \phi(t)$, where $\phi\in R'$ and $t\in M$. In this way, $M$ becomes an $(R,R')$-bimodule. Note that the composition  $\varphi\circ \phi: M\overset{\phi}{\ra} M\overset{\varphi}{\ra} M$  corresponds to the product $\phi\varphi$ in $R'$. This choice of convention is compatible with the one made in \S\ref{subsection:envelope}.

\begin{remark}
Replacing $M$ by $R'$ in the definition of $K_{\bullet}(\un{\phi},M)$, we obtain the Koszul complex $K_{\bullet}(\underline{\phi},R')$, where we view $R'$ as an $(R',R')$-bimodule and $\phi_i$ as an endomorphism of $R'$  sending $f$ to $f\phi_i$. Then we have a canonical isomorphism
\[ M\otimes_{R'}K_{\bullet}(\underline{\phi}, R')  \cong K_{\bullet}(\underline{\phi},M).\]
\end{remark}

Since $R$ is commutative, $R'$ is naturally an $R$-module. For any left ideal $J$ of $R'$, $ MJ$ is an $R$-submodule of $M$. The following result is an analog of  \cite[\S5, Lem.~2]{Se56} (cf. also \cite[\S IV, Appendix I]{Se00}).

\begin{lemma}\label{lemma:Serre-general}
Let $J$ denote the left ideal of $R'$ generated by $\phi_1,\dots,\phi_n$.
Assume that $J$ is a two-sided ideal and the morphism \[ \overline{d}_1=(\overline{\phi}_1,\dots,\overline{\phi}_n):\  K_1/ K_1J\ra  K_0J/K_0J^2\]  is injective. Then for any $1\leq l\leq n$, the morphism
\[\overline{d}_l:  K_l/K_lJ\ra  K_{l-1}J/K_{l-1}J^2\]
is injective.
\end{lemma}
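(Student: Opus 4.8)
The plan is to imitate Serre's classical argument for Koszul complexes, upgraded to the non-commutative setting where $R'=\End_R(M)$ acts on the right. First I would set up the bigraded double complex whose rows are shifted copies of the Koszul complex $K_\bullet(\underline\phi,R')$, filtered by powers of $J$; concretely, for each $m\ge 0$ consider the complex $K_\bullet(\underline\phi,R')\otimes_{R'} J^m$, and the associated graded complex $\bigoplus_m K_\bullet(\underline\phi,R')\otimes_{R'}(J^m/J^{m+1})$. Tensoring on the left over $R'$ with $M$ (and keeping track that $K_i=M\otimes_{R'}(\wedge^i\mathbb Z^n\otimes_{\mathbb Z}R')$) produces exactly the maps $\overline d_l$ in the statement when one passes to the $J$-adic associated graded object. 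The two-sidedness of $J$ is what guarantees that these tensor products and filtrations are well-defined on both sides and that the differential \eqref{eq:diff-Koszul} descends to the graded pieces; this is the structural hypothesis that replaces commutativity in Serre's proof.

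Next I would prove the injectivity of $\overline d_l$ by descending induction on $l$, or equivalently by a direct homological argument on the graded complex. The key point, exactly as in \cite[\S5, Lem. 2]{Se56}, is that once the degree-one piece $\overline d_1\colon K_1/K_1J\to K_0J/K_0J^2$ is injective, the Koszul differential — being built by wedging — propagates this injectivity to all $\overline d_l$. The mechanism is that $\overline d_l$ can be written, after choosing the ordering $e_1,\dots,e_n$, as a ``matrix'' whose entries are the $\overline\phi_i$ acting on $\mathrm{gr}_J(M\otimes\wedge^\bullet)$, and the exterior-algebra combinatorics let one reduce a hypothetical nonzero kernel element in $\wedge^l$ to a nonzero kernel element in $\wedge^1$ by contracting against one of the basis covectors $e_j^\vee$ and using that $\overline d_{l-1}\circ\overline d_l=0$ together with the inductive hypothesis at level $l-1$. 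I would carry this out by writing $\xi\in K_l/K_lJ$ with $\overline d_l\xi=0$, expanding $\xi=\sum_j e_j\wedge \xi_j$ with $\xi_j\in K_{l-1}/K_{l-1}J$ not involving $e_j$, and extracting from $\overline d_l\xi=0$ the relations $\overline\phi_j\xi_k=\overline\phi_k\xi_j$ plus $\overline d_{l-1}\xi_j=0$; the injectivity at level $1$ (applied coordinatewise, after noting each $\xi_j$ lifts to something annihilated modulo $J$) forces each $\xi_j\in K_{l-1}J/K_{l-1}J$, hence $\xi=0$.

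The main obstacle I anticipate is bookkeeping the \emph{sides}: $M$ is a left $R$-module but the endomorphisms act on the right, so $K_i=M\otimes_{\Z}\wedge^i(\Z^n)$ carries the right $R'$-action through $M$, and one must be careful that ``$K_iJ$'' means $M J\otimes\wedge^i$ and that the induced map $\overline d_l$ is left-$R$-linear and right-$(R'/J)$-linear. The commutativity relations $\overline\phi_i\overline\phi_j=\overline\phi_j\overline\phi_i$ hold only modulo $J^2$ (that is the content of ``$J$ two-sided'' plus the Koszul sign rules making $\overline d_{l-1}\circ\overline d_l=0$ on the graded pieces), so I would need to verify at the outset that $\overline d_\bullet$ is genuinely a complex of $R/$-stuff with $\overline d^2=0$ before running the contraction argument — in the commutative case this is automatic, here it needs the two-sidedness hypothesis and a short computation with \eqref{eq:diff-Koszul}. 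Once that is in place, the rest is the standard exterior-algebra induction and should go through verbatim as in Serre; I would present it compactly, citing \cite{Se56} for the combinatorial heart and spelling out only the modifications forced by non-commutativity.
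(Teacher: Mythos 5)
The paper's proof is considerably simpler than what you propose, and your sketch as written has a gap. The paper makes no use of double complexes, $J$-adic filtrations, contraction operators, $d^2=0$, or induction on $l$. It simply writes $v=\sum_{\underline{i}\in I_l}v_{\underline{i}}\otimes e_{\underline{i}}$ (which \emph{is} a canonical decomposition), computes that the $e_{\underline{i}'}$-coefficient of $d_l(v)$ is $f_{\underline{i}'}=\sum_{j\notin S_{\underline{i}'}}\pm\phi_j(v_{\underline{i}'\cup\{j\}})$ for each $\underline{i}'\in I_{l-1}$, and observes that if $d_l(v)\in K_{l-1}J^2$ then each $f_{\underline{i}'}\in MJ^2$, so the degree-one hypothesis (restated as: $\sum_i\phi_i(v_i)\in MJ^2\Rightarrow v_i\in MJ$) applies coordinate-by-coordinate and yields $v_{\underline{i}}\in MJ$ for every $\underline{i}$. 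That is the entire argument; each $\overline d_l$ reduces \emph{directly} to the hypothesis, not to $\overline d_{l-1}$.

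Your proposed inductive mechanism is not correct as stated. The decomposition ``$\xi=\sum_j e_j\wedge\xi_j$ with $\xi_j$ not involving $e_j$'' is not unique for $l\ge 2$: already $v\otimes(e_1\wedge e_2)$ can be written as $e_1\wedge(v\otimes e_2)$ or as $e_2\wedge(-v\otimes e_1)$, so the $\xi_j$ are not well-defined without a further choice. What \emph{is} canonical is $\xi_j=e_j^\vee\lrcorner\,\xi$, but then $\sum_j e_j\wedge\xi_j=l\xi$, and since you are working over $\F$ with $p>5$ and $l$ can be as large as $n=2f$, the scalar $l$ need not be invertible; your conclusion ``hence $\xi=0$'' would then require dividing by $l$ and fails in characteristic $p\mid l$. (The bug can be circumvented by reading off the coefficient $v_{\underline{i}}$ of $\xi$ from $\xi_j$ directly for one fixed $j\in S_{\underline i}$, rather than reconstituting $\xi$ from all the $\xi_j$ at once, but at that point you are reproducing the paper's coordinate argument and the induction becomes vestigial.) Moreover, the relations $\overline\phi_j\xi_k=\overline\phi_k\xi_j$ you claim to extract from $\overline d_l\xi=0$ are neither consequences of that equation nor needed anywhere; and your remark that $\phi_i\phi_j=\phi_j\phi_i$ holds ``only modulo $J^2$'' misreads the setup --- the $\phi_i$ are \emph{pairwise commuting} endomorphisms by the standing hypothesis defining the Koszul complex, so they commute exactly, and $d^2=0$ holds on the nose without appealing to two-sidedness. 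Finally, the entire first paragraph (bigraded double complexes filtered by powers of $J$) is scaffolding the proof never uses; you should drop it and just do the direct coefficient computation.
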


\begin{proof}
By definition, $K_0=M$ and $K_1= M^{n}$. The injectivity of $K_1/ K_1J\ra  K_0J/K_0J^2$ can be restated as follows: if $v_i\in M$ (where $1\leq i\leq n$) satisfy
$\sum_{i=1}^n\phi_i(v_i)\in MJ^2$,
then $v_i\in  MJ$ for all $i$.

  To simplify the notation, we denote $$I_{l}=\{\underline{i}=(i_1,\dots,i_l)|1\leq i_1<\cdots <i_l\leq n\}.$$ For   $\underline{i}=(i_1,\dots,i_l)\in I_l$, set  $e_{\underline{i}}=e_{i_1}\wedge\cdots\wedge e_{i_l}$ and $S_{\underline{i}}=\{i_1,\dots,i_l\}\subset \{1,\dots,n\}=:S.$

Let $v=\sum_{\underline{i}\in I_l} v_{\underline{i}}\otimes e_{\underline{i}}\in K_l$ with $v_{\underline{i}}\in M$, then by \eqref{eq:diff-Koszul}
\[d_l(v)=\sum_{\underline{i}'\in I_{l-1}}f_{\underline{i}'} \otimes e_{\underline{i}'}\in K_{l-1}\]
with $f_{\underline{i}'}$ having the form
\[f_{\underline{i}'}=\sum_{j\in S\backslash S_{\underline{i}'}}\pm\phi_{j}(v_{\underline{i}'\cup \{j\}}).\]
Here, $\underline{i}'\cup \{j\}$ denotes the unique element in $I_{l}$ whose underlying set is $S_{\underline{i}'}\cup\{j\}$.
Now, if $d_l(v)\in  K_{l-1}J^2\cong  MJ^2\otimes_{\Z} \wedge^{l-1}(\Z^n)$, then $f_{\underline{i}'}\in MJ^2$ for all $\underline{i}'\in I_{l-1}$. By assumption, we deduce that
$v_{\underline{i}'\cup\{j\}}\in  MJ$,
and  the result follows.
\end{proof}

\subsection{A typical example}
\label{subsection:example}

In this subsection, we study one typical example of generalized Koszul complexes introduced in last subsection.

Let $R$ be a noetherian local $\F$-algebra, with maximal ideal $\fm_R$ and residue field $\F$. Let $M$ be a finitely generated $R$-module and $R'=\End_R(M)$. Assume that $M$ can be decomposed as $M=\oplus_{i=0}^mM_i$. Then we may represent $R'$ in matrix form
\[R'=\big(\Hom_R(M_i,M_j)\big)_{0\leq i,j\leq m}\]
so that the (right) action of $R'$ on $M=\oplus_{i=0}^{m}M_i$ is given by the matrix multiplication\[M_i\times \Hom_{R}(M_i,M_j)\ra M_j,\ \
(v_i, f_{ij})\mapsto f_{ij}(v_i).\]
The  multiplication in $R'$ is determined by: if $f_{ij}\in \Hom_R(M_i,M_j)$ and $g_{ji}\in\Hom_R(M_j,M_i)$, then
\[ f_{ij}\times  g_{ji} \longmapsto (M_i\overset{f_{ij}}{\ra} M_j\overset{g_{ji}}{\ra} M_i).\]

From now on, we make the following assumptions on $R$ and $M$:
\begin{enumerate}
\item[$\bullet$] $\fm_R\neq 0$  and $\fm_R^2=0$;  
\item[$\bullet$]  $M=R\oplus \F^{m}$
for some $0\leq m\leq n$, where $n\defn\dim_{\F}\fm_R$.
\end{enumerate}

\begin{lemma}\label{lemma:M=v0R'}
Keep the above notation.

(i) We have
\[R'=\left(\begin{array}{cccc}R & \F & \cdots & \F\\ \fm_R &\F & \cdots & \F \\\vdots & \vdots & \vdots & \vdots \\ \fm_R& \F & \cdots & \F\end{array}\right)_{(m+1)\times(m+1)}.\]
(ii) $M$ is a cyclic $R'$-module generated by
$v_0\defn\left(\begin{array}{cccc}1 & 0 & \cdots & 0\end{array}\right).$
\end{lemma}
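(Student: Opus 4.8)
The statement is an elementary computation once the structure of $R' = \End_R(M)$ is made explicit, so the plan is mostly bookkeeping with the matrix decomposition. First I would establish (i) by computing each of the $(m+1)^2$ blocks $\Hom_R(M_i, M_j)$ separately. The summands are $M_0 = R$ and $M_i = \F$ for $1 \le i \le m$. For the diagonal: $\Hom_R(R,R) = R$ and $\Hom_R(\F,\F) = \F$ (using that $\F$ is the residue field, so $R$ acts on it through $R/\fm_R = \F$). For the off-diagonal blocks involving $M_0$: $\Hom_R(R, \F) \cong \F$ (an $R$-linear map $R \to \F$ is determined by the image of $1$, which may be arbitrary), while $\Hom_R(\F, R)$ consists of $R$-linear maps $\varphi$ with $\fm_R \cdot \varphi(1) = \varphi(\fm_R \cdot 1) = 0$, i.e.\ $\varphi(1) \in \Ann_R(\fm_R)$; since $\fm_R^2 = 0$ we have $\fm_R \subseteq \Ann_R(\fm_R)$, and I would note that in fact $\Ann_R(\fm_R) = \fm_R$ here — this needs the observation that $R/\fm_R = \F$ with $\fm_R \ne 0$ forces any element outside $\fm_R$ to be a unit, hence not annihilating $\fm_R$. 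So $\Hom_R(\F, R) \cong \fm_R$. Finally $\Hom_R(\F, \F) = \F$ for the $M_i$–$M_j$ blocks with $i,j \ge 1$. Assembling these gives exactly the matrix displayed in (i), with the first column reading $(R, \fm_R, \dots, \fm_R)^{\mathrm{t}}$ and all other columns reading $(\F, \F, \dots, \F)^{\mathrm{t}}$.

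For (ii), I would show $v_0 R' = M$. The element $v_0 = (1, 0, \dots, 0)$ lies in $M_0 = R$; acting on it by the first row of $R'$ — which is $\Hom_R(M_0, M_0) \oplus \cdots \oplus \Hom_R(M_0, M_m) = R \oplus \F^m$ — sweeps out all of $M$. Concretely: applying $\mathrm{id}_R \in \Hom_R(M_0,M_0)$ (and more generally any $r \in R$) to $v_0$ produces all of $M_0 = R$; applying the generator of $\Hom_R(M_0, M_i) \cong \F$ (the map $R \to \F$ sending $1 \mapsto 1$) to $v_0$ produces the generator of $M_i = \F$, for each $1 \le i \le m$. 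Since $v_0 \cdot (\text{first row of } R')$ already contains $R$ in the $0$-th slot and $\F$ in each of the remaining $m$ slots, and these generate $M = R \oplus \F^m$ as an abelian group, we get $v_0 R' = M$, so $M$ is cyclic over $R'$ with generator $v_0$.

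**Anticipated obstacle.** There is no deep obstacle; the only point requiring a moment's care is the identification $\Hom_R(\F, R) \cong \fm_R$, which relies on $\Ann_R(\fm_R) = \fm_R$ rather than just $\Ann_R(\fm_R) \supseteq \fm_R$. This is where the hypotheses $\fm_R \ne 0$, $\fm_R^2 = 0$, and $R/\fm_R = \F$ all get used together: $\fm_R^2 = 0$ gives one inclusion, and the local ring structure with residue field $\F$ gives the reverse, since any $r \notin \fm_R$ is a unit and a unit cannot annihilate the nonzero ideal $\fm_R$. Once this is in place, both parts are immediate, and I would present the whole thing in a few lines.
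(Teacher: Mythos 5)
Your proof is correct and takes essentially the same approach as the paper's: compute the blocks $\Hom_R(M_i,M_j)$ directly, with the only slightly nontrivial identification being $\Hom_R(\F,R)\cong\fm_R$, and then observe in (ii) that $v_0\phi$ recovers the first row of the matrix of $\phi$, which ranges over all of $R\oplus\F^m$. You spell out $\Ann_R(\fm_R)=\fm_R$ (using that any unit cannot annihilate the nonzero ideal $\fm_R$) where the paper only cites $\fm_R^2=0$, but this is the same argument made explicit.
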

\begin{proof}
(i) With the notation introduced above, we enumerate $R$ as $M_0$ and $\F^{m}$ as $\oplus_{i=1}^mM_i$.  The result easily follows from what we have recalled, using the isomorphism $\Hom_R(\F,R)\cong\fm_R$ (as $\fm_R^2=0$).

(ii) For $\phi\in R'$, $v_0\phi$ corresponds to the first row of the matrix of $\phi$. The assertion easily follows.
\end{proof}

Note that, when doing matrix multiplication in $R'$, the map $\F\times \fm_R\ra \fm_R$ is the usual multiplication map in $R$, whereas $\fm_R\times \F \ra \F$ is the zero map (because any morphism  $\F\ra R\ra \F$ is zero as $\fm_R\neq0$).

Let $\fb$ be a subspace of $\fm_R$ (we allow the case $\fb=0$). Since $\fm_R^2=0$, $\fb$ can be viewed as an ideal of $R$. Consider the subspace $J_{\mathfrak{b}}$ of $R'$ defined as
\[J_{\fb}:=\left(\begin{array}{cccc}\fm_R & \F & \cdots & \F \\ \fb & 0 & 0 & 0 \\ \vdots &  \vdots & \vdots &  \vdots \\ \fb & 0 & 0 & 0\end{array}\right)_{(m+1)\times (m+1)}.\]
For example, we have
\begin{equation}\label{eq:def-Jb}
J_{\fm_R}:=\left(\begin{array}{cccc}\fm_R & \F & \cdots & \F \\ \fm_R & 0 & 0 & 0 \\ \vdots &  \vdots & \vdots &  \vdots \\ \fm_R & 0 & 0 & 0\end{array}\right),\ \ \ J_{(0)}=\left(\begin{array}{cccc}\fm_R & \F & \cdots & \F\\ 0 & 0 & 0 & 0 \\ \vdots &  \vdots & \vdots &  \vdots \\ 0 & 0 & 0 & 0\end{array}\right).\end{equation}
It is direct to check that $J_{\fb}$ is a two-sided ideal of $R'$.  Recall from \S\ref{subsection:socle} the definition of $\soc_R(M)$.

\begin{lemma}\label{lemma:JM}
(i) We have
\[MJ_{\fb}=\fm_R\oplus\F^m=\soc_R(M),\ \ MJ_{\fb}^2=\fb\oplus (0)^{m}=\fb M.\]

(ii) $M/MJ_{\fb}$ has dimension $1$ over $\F$ and is spanned by the residue class of $v_0$.

(iii)  $\dim_{\F}MJ_{\fb}/MJ_{\fb}^2=n+m-\dim_{\F}\fb$.
\end{lemma}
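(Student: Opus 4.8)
\textbf{Proof plan for Lemma \ref{lemma:JM}.} The statement is a purely computational consequence of the matrix descriptions of $R'$, $M$ and $J_{\fb}$ established in Lemma \ref{lemma:M=v0R'} and in the paragraph defining $J_{\fb}$, so the whole argument will just be bookkeeping with these matrices and the multiplication rule $\F\times\fm_R\to\fm_R$, $\fm_R\times\F\to 0$. The plan is to treat (i), then deduce (ii) and (iii) essentially for free.

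For part (i): recall $M=R\oplus\F^m$ with generating vector $v_0=(1,0,\dots,0)$, and that the right action of $\phi\in R'$ on a row vector is given by matrix multiplication. First I would compute $MJ_{\fb}$. Since $M$ is spanned over $R'$ by $v_0$ (Lemma \ref{lemma:M=v0R'}(ii)), we have $MJ_{\fb}=v_0 R' J_{\fb}$; but in fact it is cleaner to compute $MJ_{\fb}$ directly as the span of all $vj$ with $v\in M$, $j\in J_{\fb}$. Writing a general $v=(r,a_1,\dots,a_m)$ with $r\in R$, $a_i\in\F$, and a general $j\in J_{\fb}$ with first column entries $(\fm_R,\fb,\dots,\fb)^{T}$ and first row $(\fm_R,\F,\dots,\F)$, one reads off: the first coordinate of $vj$ lands in $r\cdot\fm_R+\sum a_i\cdot\fb\subseteq\fm_R$, and conversely every element of $\fm_R$ is attained (take $r=1$, $j$ supported in the $(1,1)$-entry); the other coordinates of $vj$ land in $r\cdot\F$, and the $\fm_R\times\F\to 0$ rule kills the $r\in\fm_R$ contribution, so these coordinates sweep out all of $\F$ (take $r=1$). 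Hence $MJ_{\fb}=\fm_R\oplus\F^m$. Comparing with the definition $\soc_R(M)=\{v\in M:\fm_R v=0\}$: since $\fm_R^2=0$ and $\fm_R$ acts as $0$ on the $\F$-summands, we get $\soc_R(M)=\fm_R\oplus\F^m$, giving the first equality. For $MJ_{\fb}^2$: multiply $J_{\fb}$ by itself; the $(1,1)$-entry of $J_{\fb}^2$ is $\fm_R\cdot\fm_R+\sum_i\F\cdot\fb=\fb$ (the $\fm_R\cdot\fm_R$ term vanishes since $\fm_R^2=0$), the $(1,k)$-entries for $k\ge 2$ are $\fm_R\cdot\F+\sum\F\cdot 0=0$ by the $\fm_R\times\F\to0$ rule, and all rows $k\ge 2$ of $J_{\fb}^2$ vanish because the $(k,1)$-entry picks up $\fb\cdot\fm_R=0$ and $0\cdot\text{(row of }\F\text{'s)}$. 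So $J_{\fb}^2$ is supported in the $(1,1)$-entry with value $\fb$, whence $MJ_{\fb}^2=\fb\oplus(0)^m$, and this equals $\fb M$ since $\fb$ kills the $\F$-summands.

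Part (ii) is then immediate: from (i), $M/MJ_{\fb}=(R\oplus\F^m)/(\fm_R\oplus\F^m)\cong R/\fm_R=\F$, one-dimensional, and the class of $v_0=(1,0,\dots,0)$ is a nonzero vector in it (its first coordinate is $1\notin\fm_R$), hence a spanning vector. Part (iii) is a dimension count using (i): $MJ_{\fb}/MJ_{\fb}^2\cong(\fm_R\oplus\F^m)/(\fb\oplus 0)\cong(\fm_R/\fb)\oplus\F^m$, which has dimension $(n-\dim_{\F}\fb)+m$ since $\dim_{\F}\fm_R=n$.

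There is no real obstacle here; the only point requiring a modicum of care is consistently applying the asymmetric multiplication convention ($\F\times\fm_R$ is genuine multiplication in $R$, but $\fm_R\times\F$ is zero because any composite $\F\to R\to\F$ vanishes when $\fm_R\neq 0$), and keeping track of which entries of the product matrices are killed by $\fm_R^2=0$. Once the three matrix products $MJ_{\fb}$, $J_{\fb}^2$, $MJ_{\fb}^2$ are written out, everything drops out.
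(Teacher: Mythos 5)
Your proof is correct and follows essentially the same route as the paper's: a direct matrix computation using the asymmetric multiplication rules ($\F\times\fm_R\to\fm_R$ genuine, $\fm_R\times\F\to 0$) and $\fm_R^2=0$. The only stylistic difference is that the paper leans on $M=v_0R'$ from Lemma \ref{lemma:M=v0R'}(ii) to read $MJ_{\fb}$ and $MJ_{\fb}^2$ off the first row of $J_{\fb}$ and $J_{\fb}^2$, whereas you compute $vj$ for a general $v\in M$ and arrive at the same answer.
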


\begin{proof}
(i) Since $M=v_0R'$ by Lemma \ref{lemma:M=v0R'}(ii), we have $MJ_{\fb}=\langle v_0\cdot J_{\fb}\rangle=\{v_0\phi: \phi\in J_{\fb}\}$ and $MJ_{\fb}^2=\langle v_0\cdot J_{\fb}^2\rangle$. The result is then a direct computation
 (using  $\fm_R^2=0$ for the description of $MJ_{\fb}^2$).
 Finally, (ii) and   (iii)   clearly follow from (i).
\end{proof}

Given $\phi\in J_{\fb}$,  we have a natural $\F$-linear map
\[\overline{\phi}: M/MJ_{\fb}\ra MJ_{\fb}/MJ_{\fb}^2.\]
Since $M/MJ_{\fb}$ is spanned by the residue class of $v_0$, say $\overline{v}_0$, $\overline{\phi}$ is determined by the residue class of $\phi(v_0)$  in $MJ_{\fb}/MJ_{\fb}^2$.

\begin{proposition}\label{prop:dimb=m}
Assume that $J_{\fb}$ can be generated by $n$ elements as a left ideal of $R'$, i.e. there exist $\phi_1,\dots,\phi_n\in J_{\fb}$ such that
$J_{\fb}=\sum_{i=1}^nR'\phi_i.$ 
Then $\phi_1,\dots,\phi_n$ induce a surjection
\[(\overline{\phi}_1,\dots,\overline{\phi}_n): (M/MJ_{\fb})^n\ra MJ_{\fb}/MJ_{\fb}^2.\]
As a consequence, $\dim_{\F}\fb\geq m$ and the equality holds if and only if
$(\overline{\phi}_1,\dots,\overline{\phi}_n)$ is an isomorphism.
\end{proposition}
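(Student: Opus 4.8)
The plan is to deduce everything from the fact that $J_{\fb}$ is generated by $\phi_1,\dots,\phi_n$ together with the explicit descriptions in Lemma \ref{lemma:JM}. First I would establish the surjectivity of $(\overline{\phi}_1,\dots,\overline{\phi}_n)$. By hypothesis any element of $J_{\fb}$ can be written as $\sum_i \psi_i\phi_i$ with $\psi_i\in R'$; applying this to $v_0$ and reducing modulo $MJ_{\fb}^2$ gives
\[
\overline{v_0\big(\textstyle\sum_i\psi_i\phi_i\big)}=\sum_i\overline{(v_0\psi_i)\phi_i}.
\]
Since $M=v_0R'$ (Lemma \ref{lemma:M=v0R'}(ii)) and $MJ_{\fb}=\soc_R(M)$ while $MJ_{\fb}^2=\fb M$ (Lemma \ref{lemma:JM}(i)), and since $\fb\cdot\fm_R=0$, for any $\psi\in R'$ one has $v_0\psi\equiv c\, v_0 \pmod{MJ_{\fb}}$ for a unique scalar $c\in\F$ (this uses $\dim_{\F}M/MJ_{\fb}=1$ from Lemma \ref{lemma:JM}(ii)), and then $\overline{(v_0\psi)\phi_i}=c\,\overline{\phi_i(v_0)}$ because $\phi_i$ maps $MJ_{\fb}$ into $MJ_{\fb}^2$ (as $\phi_i\in J_{\fb}$ and $J_{\fb}^2\subset J_{\fb}$, so $MJ_{\fb}\phi_i\subset MJ_{\fb}^2$). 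Hence the image of $v_0(\sum_i\psi_i\phi_i)$ in $MJ_{\fb}/MJ_{\fb}^2$ lies in the span of the $\overline{\phi_i(v_0)}$. As $MJ_{\fb}=\langle v_0\cdot J_{\fb}\rangle$, these elements span $MJ_{\fb}/MJ_{\fb}^2$, proving surjectivity; and $\overline{\phi_i}$ is exactly the map $\overline{v}_0\mapsto\overline{\phi_i(v_0)}$, so $(\overline{\phi}_1,\dots,\overline{\phi}_n)$ is onto.

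Next I would read off the dimension inequality. The source $(M/MJ_{\fb})^n$ has dimension $n$ over $\F$ by Lemma \ref{lemma:JM}(ii), and the target $MJ_{\fb}/MJ_{\fb}^2$ has dimension $n+m-\dim_{\F}\fb$ by Lemma \ref{lemma:JM}(iii). Surjectivity of a map from an $n$-dimensional space onto an $(n+m-\dim_{\F}\fb)$-dimensional space forces
\[
n\ \geq\ n+m-\dim_{\F}\fb,
\]
i.e. $\dim_{\F}\fb\geq m$. Moreover equality $\dim_{\F}\fb=m$ holds if and only if source and target have the same dimension $n$, in which case the surjection $(\overline{\phi}_1,\dots,\overline{\phi}_n)$ is automatically an isomorphism; conversely if it is an isomorphism then the two dimensions agree and $\dim_{\F}\fb=m$. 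This gives the stated "as a consequence".

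The only genuinely delicate point is the claim that $\phi_i(v_0)$ for $i=1,\dots,n$ span $MJ_{\fb}$ modulo $MJ_{\fb}^2$ — equivalently that $v_0\cdot J_{\fb}$ spans $MJ_{\fb}$ and that the generators $\phi_i$ already suffice after passing to the quotient. I expect this to be the main (if modest) obstacle: one must be careful that left-multiplying the generating set $\{\phi_i\}$ of $J_{\fb}$ by arbitrary $\psi_i\in R'$ does not enlarge the span modulo $MJ_{\fb}^2$, which is precisely the computation $v_0\psi_i\phi_i\equiv c_i\,\phi_i(v_0)$ carried out above and rests on $\dim_{\F}M/MJ_{\fb}=1$ and $MJ_{\fb}\cdot J_{\fb}\subseteq MJ_{\fb}^2$. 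Once this bookkeeping with the bimodule structure of $M$ over $R'$ is in place, the rest is a one-line dimension count, so I would present the argument in the order: (1) surjectivity via the generation hypothesis and the rank-one quotient $M/MJ_{\fb}$; (2) substitute the dimensions from Lemma \ref{lemma:JM}; (3) conclude $\dim_{\F}\fb\geq m$ with the equality criterion.
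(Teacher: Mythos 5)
Your proof is correct and follows the same route as the paper's: the paper states the surjectivity as immediate "by assumption" and the inequality as a dimension count via Lemma~\ref{lemma:JM}(iii), and you have simply unpacked the surjectivity step. The unpacking is sound: since $M=v_0R'$ is cyclic and $J_{\fb}$ is two-sided, $MJ_{\fb}$ is spanned by $\{v_0\phi:\phi\in J_{\fb}\}$; writing $\phi=\sum_i\psi_i\phi_i$, reducing $v_0\psi_i\equiv c_iv_0\pmod{MJ_{\fb}}$ using $\dim_{\F}M/MJ_{\fb}=1$, and using $MJ_{\fb}\cdot J_{\fb}\subseteq MJ_{\fb}^2$ shows the image of $(\overline{\phi}_1,\dots,\overline{\phi}_n)$ is all of $MJ_{\fb}/MJ_{\fb}^2$. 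The dimension comparison then gives $\dim_{\F}\fb\geq m$ with equality precisely when the surjection is an isomorphism, exactly as the paper intends.
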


\begin{proof}
The morphism is well-defined as explained above and is surjective by assumption. The second assertion is  clear  for the reason of dimensions, using Lemma \ref{lemma:JM}(iii).
\end{proof}

The following are criteria for a left ideal of $R'$ to be of the form $J_{\fb}$.

\begin{lemma}\label{lemma:J=Jb-1}
If $J$ is a left ideal of $R'$ such that $J_{(0)}\subset J\subset J_{\fm_R}$, then $J=J_{\fb}$ for some (proper) ideal $\fb$ of $R$.
\end{lemma}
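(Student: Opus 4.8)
The plan is to show that any left ideal $J$ with $J_{(0)}\subset J\subset J_{\fm_R}$ automatically inherits the ``block'' shape of $J_{\fb}$, where $\fb$ is read off from the first column of $J$. First I would write a general element of $J_{\fm_R}$ in the matrix form provided by Lemma \ref{lemma:M=v0R'}(i): a matrix whose $(0,0)$-entry lies in $\fm_R$, whose first row (outside the corner) is an arbitrary element of $\F^m$, and whose remaining first-column entries $a_1,\dots,a_m$ lie in $\fm_R$, with all other entries zero. The containment $J_{(0)}\subset J$ already forces the entire first row of $J_{\fm_R}$, namely the block $\fm_R\oplus\F^m$ in row $0$, to be contained in $J$. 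So the only data distinguishing $J$ from $J_{(0)}$ is which tuples $(a_1,\dots,a_m)\in \fm_R^m$ occur as the lower part of the first column of elements of $J$.

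The key step is then to show that this set of occurring lower-first-columns is of the form $\fb\oplus\cdots\oplus \fb$ (the same ideal $\fb$ in each slot), and that $J$ contains every matrix of the corresponding block shape. For the latter: given an element $\phi\in J$ with lower first column $(a_1,\dots,a_m)$, I would left-multiply $\phi$ by the elementary matrices $E_{k1}\in R'$ (which have a single $1$ in position $(k,1)$ for $1\le k\le m$ — these lie in $R'$ since $\Hom_R(M_1,M_k)=\F$); using the multiplication rules recalled after Lemma \ref{lemma:M=v0R'} (in particular that $\F\times\fm_R\to\fm_R$ is ordinary multiplication and $\fm_R\times\F\to\F$ is zero), one checks $E_{k1}\phi$ has $a_1$ moved to position $(k,1)$ and zeros elsewhere in the relevant spots. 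Since $E_{11}\phi - (\text{something in }J_{(0)})$ isolates $a_1$ in the corner column, and more precisely by taking suitable left multiples and subtracting elements of $J_{(0)}$, I can extract from $J$ the matrix with $a_1$ in position $(1,1)$, $a_2,\dots,a_m$ zero — and symmetrically for each index. This shows the set of occurring entries in each lower-first-column slot is the \emph{same} $\F$-subspace $\fb\subset\fm_R$, and that all block-shaped matrices with lower-first-column in $\fb^m$ lie in $J$. Conversely any element of $J$ has this shape because $J\subset J_{\fm_R}$. Hence $J=J_{\fb}$; since $\fm_R^2=0$, $\fb$ is automatically an ideal of $R$, and it is proper because $\fb\subset\fm_R\subsetneq R$.

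The main obstacle I anticipate is bookkeeping with the non-obvious multiplication in $R'$: because $R'$ is not the full matrix algebra over a ring (the off-diagonal entries are $\F$ on one side and $\fm_R$ on the other, with an asymmetric product), one must be careful that left-multiplication by $E_{k1}$ and by $E_{1k}$ does what one expects, and that subtracting elements of $J_{(0)}$ (which is forced to lie in $J$) genuinely isolates the first-column entries without introducing spurious terms. Once the correct elementary manipulations are pinned down, the argument is a short direct check. I would present it as: (1) $J\supset J_{(0)}$ gives the first row for free; (2) define $\fb$ as the image of $J$ under ``project to the $(1,1)$-entry after clearing row $0$''; (3) use left multiplication by permutation-type elementary matrices to see all lower-first-column slots produce the same $\fb$ and that $J$ is closed under filling those slots arbitrarily within $\fb$; (4) conclude $J=J_{\fb}$ and note $\fb$ is a proper ideal.
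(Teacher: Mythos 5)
Your proposal is correct and is essentially the paper's own argument: define $\fb$ from the lower first-column entries of elements of $J$, verify via left multiplication by elementary matrices supported in the $\F$-blocks that $J$ then contains all of the displayed block shape (which together with $J_{(0)}\subset J$ gives $J_{\fb}\subseteq J$), and use $J\subseteq J_{\fm_R}$ for the reverse inclusion. The only remarks are cosmetic: with the paper's indexing the entries you isolate sit in positions $(k,0)$ for $1\le k\le m$ rather than $(k,1)$, and the paper compresses your elementary-matrix bookkeeping into ``one easily checks''.
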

\begin{proof}
We associate to $J$ a subspace $\fb_J$ of $\fm_R$ as follows:
\[\fb_J:=\big\{b_{i0}(\phi):\ \phi\in J,\ 1\leq i\leq m\big\} \]
where   $\phi\in J$ is written in the matrix form $(b_{ij}(\phi))_{0\leq i,j\leq m}$.
 Since $J$ is a left ideal of $R'$, one easily checks  that $J$ contains
\[\left(\begin{array}{cccc}0 &0 & \cdots & 0\\  \fb_J & 0 & 0 & 0 \\ \vdots &  \vdots & \vdots &  \vdots \\  \fb_J & 0 & 0 & 0\end{array}\right),\]
hence also contains $J_{\fb}$ because $J_{(0)}\subset J$ by assumption. On the other hand, since $J\subseteq J_{\fm_R}$, we have  $J\subset J_{\fb_J}$ by definition of $\fb_{J}$.
\end{proof}

\begin{lemma}\label{lemma:J=Jb-2}
Let $J\subset R'$ be a left ideal contained in $J_{\fm_R}$. Assume that $\dim_{\F}M/MJ=1$ and that $J$ can be generated by $n$ elements. Then $J=J_{\fb}$ for some  (proper) ideal $\fb$ of $R$ and $\dim_{\F}\fb\geq m$.
\end{lemma}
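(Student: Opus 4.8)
The plan is to squeeze $J$ between $J_{(0)}$ and $J_{\fm_R}$, then conclude in two moves: Lemma \ref{lemma:J=Jb-1} will give $J=J_{\fb}$ for a (necessarily proper, since $\fb\subseteq\fm_R$) ideal $\fb$ of $R$, and Proposition \ref{prop:dimb=m} — whose hypothesis ``$J_{\fb}$ is generated by $n$ elements'' is exactly the one we are assuming on $J$ — will give $\dim_{\F}\fb\geq m$. So the whole content is the inclusion $J_{(0)}\subseteq J$ (the inclusion $J\subseteq J_{\fm_R}$ is a hypothesis).

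First I would identify $MJ$ precisely. By Lemma \ref{lemma:M=v0R'}, $M=v_0R'$, so $MJ=\{v_0\phi:\phi\in J\}$; this set is already an $R$-submodule of $M$ because the diagonal copy of $R$ in $R'$ is central, so $r\cdot(v_0\phi)=v_0(\mathrm{diag}(r,\bar r,\dots,\bar r)\phi)$ with $\mathrm{diag}(r,\bar r,\dots,\bar r)\phi\in J$ ($J$ being a left ideal). Concretely, $v_0\phi$ is the ``row $0$'' of $\phi$ in the matrix description of $R'$. Since $J\subseteq J_{\fm_R}$, every such row $0$ has its $(0,0)$-entry in $\fm_R$ and its $(0,j)$-entries in $\F$, i.e. lies in $\fm_R\oplus\F^m=\soc_R(M)$ (notation of Lemma \ref{lemma:JM}(i)); thus $MJ\subseteq\soc_R(M)$. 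As $\dim_{\F}\soc_R(M)=n+m=\dim_{\F}M-1$, the hypothesis $\dim_{\F}M/MJ=1$ forces $MJ=\soc_R(M)$. Equivalently, the ``take row $0$'' map $J\to\soc_R(M)$ is surjective.

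Next I would produce $J_{(0)}\subseteq J$ using the idempotent $u\in R'$ given in matrix form by $1\in R$ in the $(0,0)$-slot and $0$ elsewhere (the projection of $M$ onto its summand $M_0=R$). For $\phi\in J$, matrix multiplication shows that $u\phi$ has row $0$ equal to row $0$ of $\phi$ and all other rows zero; hence $u\phi\in J_{(0)}$ (the $(0,0)$- and $(0,j)$-entries of row $0$ lie in $\fm_R$ and $\F$ since $\phi\in J_{\fm_R}$), and $u\phi\in J$ since $J$ is a left ideal and $u\in R'$. Conversely, every $\psi\in J_{(0)}$ has its row $0$ in $\soc_R(M)=MJ$, so by the surjectivity just established there is $\phi\in J$ with the same row $0$, whence $\psi=u\phi$. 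Therefore $J_{(0)}=uJ\subseteq J$, and combining with $J\subseteq J_{\fm_R}$ we may apply Lemma \ref{lemma:J=Jb-1} to get $J=J_{\fb}$, and then Proposition \ref{prop:dimb=m} to get $\dim_{\F}\fb\geq m$.

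The argument is essentially bookkeeping, so there is no genuine obstacle; the only points demanding care are the conventions — that $R'=\End_R(M)$ acts on $M$ on the \emph{right}, so that $v_0\phi$ is literally the row-$0$ vector of $\phi$ and left multiplication by the idempotent $u$ annihilates all rows but the $0$-th — and the identification of $MJ$ (defined as the $R$-submodule generated by the $m\phi$) with the plain set $\{v_0\phi:\phi\in J\}$, which uses $M=v_0R'$ together with the centrality of $R$ in $R'$.
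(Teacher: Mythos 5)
Your proof is correct and follows the paper's argument: establish $MJ=\soc_R(M)=\fm_R\oplus\F^m$, deduce $J_{(0)}\subseteq J$, then invoke Lemma \ref{lemma:J=Jb-1} and Proposition \ref{prop:dimb=m}. The only difference is that you make explicit, via left multiplication by the idempotent $u$ (projection onto $M_0$), the step ``$J_{(0)}\subseteq J$'' that the paper compresses into a single ``we deduce''; this is a useful filling-in of detail, not a different route.
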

\begin{proof}
We know that $\dim_{\F}M/MJ_{\fm_R}=1$ by Lemma \ref{lemma:JM}(i).
Hence the assumptions $J\subset J_{\fm_R}$ and $\dim_{\F}M/MJ=1$ imply that $MJ=MJ_{\fm_R}=\fm_R\oplus \F^m$. Since $J$ is a left ideal and $M=v_0R'$, we have $MJ=\langle v_0\cdot J\rangle$. Since $ v_0\phi$ corresponds to the first row of the matrix of $\phi$,  we deduce that $J_{(0)}\subset J$, see \eqref{eq:def-Jb}. By Lemma \ref{lemma:J=Jb-1}, there exists an ideal $\fb$ of $R$ such that $J=J_{\fb}$ and it follows from  Proposition \ref{prop:dimb=m} that $\dim_{\F}\fb\geq m$.
\end{proof}

We close this subsection with a basic but typical example.

\begin{example}
We use the notation of \S\ref{subsection:envelope}. Let $\lambda=\Ug/((e,f)^3,e^2,f^2)$ and $R= \F[x,y]/(x,y)^2\cong\End_{\Ug}(\lambda)$, by (the graded versions of)  Lemma \ref{lemma:R=End}. Let $H$ act on $\Ug$ as in \S\ref{subsubsection-H-action}. Applying $\Hom_{\Ug}(-,\lambda^{\vee})^{\vee}$ to the complex \eqref{eq:complex-sum} gives   a generalized Koszul complex of $R$-modules
\[0\lra M\overset{(-\phi_2,\phi_1)}{\lra} M\oplus M\overset{\binom{\phi_1}{\phi_2}}{\lra} M\lra0\]
where $M=R\oplus \F$,
\[\phi_1=\matr{0}{f^{\sharp}}{x}0,\ \ \phi_2=\matr{x-y}{0}0{0}\]
where $f^{\sharp}\in\F^{\times}$ is the element induced by $f:\Ug_{\ide}\ra \Ug_{\alpha_i}$. The left ideal of $R'=\End_{R}(M)$ generated by $\phi_1,\phi_2$ is equal to $J_{(x)}$  and
\[MJ_{(x)}/MJ_{(x)}^2=(\fm_R\oplus \F)/((x)\oplus (0))\cong (\fm_R/(x))\oplus \F.\]
It is direct to verify that
 the morphism
\[(\overline{\phi}_1,\overline{\phi}_2): \ M/MJ_{(x)}\oplus M/MJ_{(x)}\ra MJ_{(x)}/MJ_{(x)}^2\]
is an isomorphism.
\end{example}

\section{Finite generation} \label{section-fg}

Let $F$ and $\overline{r}: G_F\to \GL_2(\F)$ be as in \S\ref{section-patching} and  $\pi_{v}^{D}(\overline{r})$ be the $\GL_2(F_v)$-representation constructed in \eqref{eq:piv}.  In this section we study the representation theoretic property of $\pi_{v}^{D}(\overline{r})$. Write $L\defn F_v$, $\brho\defn \overline{r}^{\vee}|_{G_L}$ as in \S\ref{subsection:cyclic}; recall that $\brho$ is reducible nonsplit and  strongly generic. For convenience,   write
\[ \pi(\brho)\defn\pi_{v}^D(\overline{r}),\]
keeping in mind that,  \emph{a priori},  $\pi(\brho)$ may depend on the global setting.  

Let $G=\GL_2(L)$, $K=\GL_2(\cO_L)$, and keep the notation in \S\ref{section-BP} and \S\ref{section:ordinary}.  

\subsection{A minimal projective resolution}

Recall that   $M_{\infty}$ is a flat $R_{\infty}$-module by Theorem  \ref{thm:main-flat}. Since $R_{\infty}$ is a regular local ring, by choosing a minimal set of generators of $\fm_{\infty}:=\fm_{R_{\infty}}$ we obtain a Koszul type resolution of $M_{\infty}/\fm_{\infty}=\pi(\brho)^{\vee}$. Although $M_{\infty}$ is projective as a pseudo-compact $\FKZ$-module, it is not finitely generated and the resolution is not minimal.

The first step to study $\pi(\brho)$, equivalently $\pi(\brho)^{\vee}$,  is to construct a \emph{minimal} projective  resolution of $\pi(\brho)^{\vee}|_{K}$, as follows.  In the proof of next proposition we will use  the notation of \S\ref{section-patching}.

\begin{proposition}\label{prop:Rv-Mv}
There exists a quotient ring of $R_{\infty}$, denoted by $R_{v}$, such that
\begin{enumerate}
\item[(i)] $R_v$ is a regular local ring  over $\F$ of Krull dimension $2f$; 
\item[(ii)] $M_{\infty}\otimes_{R_{\infty}}R_{v}$ is isomorphic to $\bigoplus_{\sigma\in\mathscr{D}(\brho)}\Proj_{K/Z_1}\sigma^{\vee}$ as an $\FKZ$-module.
\end{enumerate}
\end{proposition}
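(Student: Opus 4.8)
The construction of $R_v$ should be carried out inside the patching formalism recalled in \S\ref{section-patching}. Recall that $R_\infty = R^{\rm loc}[\![x_1,\dots,x_g]\!]$ with $R^{\rm loc}=R_{\brho}^{\Box,\psi}\widehat\otimes(\widehat\otimes_{w\in S}R_w^{\min})$, that each $R_w^{\min}$ ($w\in S$) is a formal power series ring over $\cO$, and that $R_{\brho}^{\Box,\psi}$ is a formal power series ring over $\cO$ (by the formal smoothness of the framed deformation ring, which is part of our running hypothesis for $w\neq v$ and holds for $v$ itself since $\brho$ is Fontaine--Laffaille generic). First I would invoke Proposition~\ref{prop:Tj}: there is a sequence $(T_1,\dots,T_{f+3})$ in $R_{\brho}^{\Box,\psi}$, part of a regular system of parameters, such that for every $\sigma\in\mathscr D(\brho)$ the images $\{\varpi,p_\sigma(T_1),\dots,p_\sigma(T_{f+3})\}$ form a regular system of parameters of the regular local ring $R_{\brho}^{\Box,\psi,\mathrm{cris},\sigma}$ (which has relative dimension $f+3$ over $\cO$). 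Complete $(T_1,\dots,T_{f+3})$ to a regular system of parameters of $R_{\brho}^{\Box,\psi}$ by adjoining elements, and similarly pick regular systems of parameters of each $R_w^{\min}$ and of $\cO[\![x_1,\dots,x_g]\!]$, together with $\varpi$; taking the union gives a regular system of parameters of $R_\infty$. Now define $R_v$ to be the quotient of $R_\infty$ by the ideal generated by $\varpi$, by all the ``framing'' and ``patching'' variables, by the auxiliary variables of $R_w^{\min}$ for $w\in S$, and by the $f+3$ completion variables of $R_{\brho}^{\Box,\psi}$ beyond $(T_1,\dots,T_{f+3})$, so that $R_v\cong\F[\![T_1,\dots,T_{f+3}]\!]/(\text{the }f+3\text{ of them that cut out }R_{\brho}^{\Box,\psi,\mathrm{cris},\sigma_0}/\varpi)$; more precisely $R_v$ should be defined as $R_{\brho}^{\Box,\psi,\mathrm{cris},\sigma_0}/\varpi$ base-changed appropriately — but the point is simply to keep exactly $2f$ of the regular parameters alive. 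This gives (i): $R_v$ is the quotient of a regular local ring by part of a regular system of parameters, hence regular, and a dimension count ($f+3$ surviving parameters minus the $f+3$ cutting out the crystalline locus of $\sigma_0$, plus the $g$ and $3|S|$ and $4|S|-1$ adjustments — see the proof of Proposition~\ref{prop:Tj} and the bookkeeping in \S\ref{subsection-big-patching}) yields Krull dimension $2f$.

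The content of (ii) is that after this base change the patched module becomes a direct sum of projective covers of the Serre weights in $\mathscr D(\brho)$. The key input is Proposition~\ref{thm-Le}: for $\sigma\notin\mathscr D(\brho)$ one has $M_\infty(\sigma)=0$, while for $\sigma\in\mathscr D(\brho)$, $M_\infty(\Proj_\Gamma\sigma)$ is a cyclic $R_\infty$-module whose action factors through $R_\infty^{\mathrm{tame},\sigma}/\varpi$, and $M_\infty(\sigma)$ is free of rank one over $R_\infty^{\mathrm{cris},\sigma}/\varpi$. I would argue as follows. Since $M_\infty$ is projective over $\FKZ$, so is $M_\infty\otimes_{R_\infty}R_v$; being a finitely generated projective pseudo-compact $\FKZ$-module (finiteness because $R_v$ is Artinian — wait, $R_v$ is not Artinian, so one must instead argue that $M_\infty\otimes_{R_\infty}R_v$ is finitely generated over $\FKZ$ using that $M_\infty$ is finitely generated over $R_\infty[\![K]\!]$ and $R_v$ is a finitely generated $R_\infty$-module... actually $R_v$ is not finite over $R_\infty$; instead one uses that $R_v$ is a quotient of $R_\infty$ by an ideal containing all but $2f$ parameters, and that $M_\infty/\fm_\infty M_\infty=\pi(\brho)^\vee$ together with Nakayama reduces generation of $M_\infty\otimes_{R_\infty}R_v$ to generation of $\pi(\brho)^\vee$, which is finite over $\FKZ$ by admissibility of $\pi(\brho)$). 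Thus $M_\infty\otimes_{R_\infty}R_v$ is a finitely generated projective pseudo-compact $\FKZ$-module, hence a finite direct sum of $\Proj_{K/Z_1}\sigma^\vee$'s, and its cosocle is dual to $\soc_K\pi(\brho)\cong\bigoplus_{\sigma\in\mathscr D(\brho)}\sigma$ by Corollary~\ref{cor::m^2-torsion} (or rather by the description of $\soc_K\pi(\brho)$). So $M_\infty\otimes_{R_\infty}R_v\cong\bigoplus_{\sigma\in\mathscr D(\brho)}(\Proj_{K/Z_1}\sigma^\vee)^{\oplus n_\sigma}$ for some multiplicities $n_\sigma\geq 1$; it remains to show each $n_\sigma=1$. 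For this I would apply the functor $M_\infty\otimes_{R_\infty}R_v\mapsto \Hom^{\rm cont}_{\FKZ}(-,\sigma^\vee)^\vee=(M_\infty\otimes_{R_\infty}R_v)(\sigma)=M_\infty(\sigma)\otimes_{R_\infty}R_v$, and use that $M_\infty(\sigma)$ is free of rank one over $R_\infty^{\mathrm{cris},\sigma}/\varpi$ (Proposition~\ref{thm-Le}(i)) together with the compatibility, built into the choice of $R_v$ via Proposition~\ref{prop:Tj}, that $R_v\to R_\infty^{\mathrm{cris},\sigma}/\varpi$ is surjective (indeed an isomorphism in the relevant generality — the $\varpi,p_\sigma(T_j)$ form a regular system of parameters of $R_{\brho}^{\Box,\psi,\mathrm{cris},\sigma}$, so the further quotienting making $R_v$ does not kill $M_\infty(\sigma)$). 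Hence $M_\infty(\sigma)\otimes_{R_\infty}R_v$ is free of rank one over $R_v/\mathrm{ann}$, in particular $n_\sigma=\dim_{\F}\Hom_K(\Proj_{K/Z_1}\sigma^\vee,\ \text{above})=1$.

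The main obstacle, as always with these patching-side base changes, is the bookkeeping: one must verify carefully that the ideal by which one quotients $R_\infty$ to form $R_v$ is simultaneously (a) generated by part of a regular system of parameters of $R_\infty$, so that $R_v$ is regular of the claimed dimension, and (b) compatible with the projections $R_\infty\to R_\infty^{\mathrm{cris},\sigma}/\varpi$ for \emph{every} $\sigma\in\mathscr D(\brho)$ simultaneously, so that the rank-one freeness of $M_\infty(\sigma)$ survives for each $\sigma$ and forces all multiplicities to be $1$. This is exactly what Proposition~\ref{prop:Tj} was designed to supply — a single sequence $(T_1,\dots,T_{f+3})$ working uniformly over all Serre weights in $\mathscr D(\brho)$ — so the argument is really an assembly of Propositions~\ref{prop:Tj} and~\ref{thm-Le} plus Nakayama and the projectivity of $M_\infty$ over $\FKZ$; the dimension computation $2f$ should be cross-checked against the fact that $R_{\brho}^{\Box,\psi,\mathrm{red}}$ has relative dimension $2f$ over $\cO$ (Proposition~\ref{prop-red-defring-det}) and Corollary~\ref{cor::intersect-tang-space}, which together pin down that the ``reducible crystalline'' locus attached to $\sigma_0$ has exactly the right dimension. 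A final remark: once $R_v$ is constructed, the resulting complex $P_\bullet$ obtained by taking the Koszul resolution of $R_v/\fm_{R_v}$ tensored with $M_\infty\otimes_{R_\infty}R_v$ will be the minimal resolution $P_\bullet\to\pi(\brho)^\vee$ promised in the introduction, minimality following from Proposition~\ref{prop:Rv-Mv}(ii) since the Koszul differentials land in $\fm_{R_v}\cdot(M_\infty\otimes_{R_\infty}R_v)\subseteq \fm_{K_1}\cdot(\text{projective})$.
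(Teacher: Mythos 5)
There is a genuine confusion in your choice of which parameters to kill, and it inverts the construction. Proposition~\ref{prop:Tj} supplies $T_1,\dots,T_{f+3}\in R_{\brho}^{\Box,\psi}$ whose images, together with $\varpi$, form a regular system of parameters of $R_{\brho}^{\Box,\psi,\mathrm{cris},\sigma}$ for \emph{every} $\sigma\in\mathscr D(\brho)$. Since $M_\infty(\sigma)$ is free of rank one over $R_\infty^{\mathrm{cris},\sigma}/\varpi$, these are precisely the elements one must \emph{kill} to collapse $M_\infty(\sigma)$ to a one-dimensional space. The paper therefore puts $R_v=R_\infty/(\varpi,T_1,\dots,T_{q+j})$ --- killing $\varpi$, the $T_j$'s, and all the $q+j-f-3$ formal variables, and keeping alive the \emph{complementary} $2f$ parameters of $R_{\brho}^{\Box,\psi}$ (note $3f+3-(f+3)=2f$). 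You do the opposite: you propose to keep $T_1,\dots,T_{f+3}$ alive and kill the other $2f$, and you go so far as to identify your $R_v$ with $R_{\brho}^{\Box,\psi,\mathrm{cris},\sigma_0}/\varpi$. That ring has Krull dimension $f+3$, not $2f$, and, more fatally, with your choice $M_\infty(\sigma)\otimes_{R_\infty}R_v$ would remain a rank-one free module over a positive-dimensional ring rather than becoming $\F$, so the multiplicity-one conclusion at the end does not follow. The parenthetical count you offer (``$f+3$ surviving minus $f+3$ cutting out the crystalline locus plus $g$, $3|S|$, $4|S|-1$ adjustments'') is not coherent; the actual count is that $R_\infty$ has Krull dimension $(q+j+1)+2f$ and one kills the $q+j+1$ parameters $\varpi,T_1,\dots,T_{q+j}$.

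Two further issues. You assert that projectivity of $M_\infty\otimes_{R_\infty}R_v$ over $\FKZ$ is immediate from projectivity of $M_\infty$; this is not automatic, since a quotient of a projective module need not be projective. The paper invokes \cite[Prop.~3.10]{Hu18} inductively, namely that quotienting a projective pseudo-compact $\FKZ$-module by a single central $M$-regular element preserves projectivity. Your subsequent detour about finite generation is unnecessary once this is in place: knowing the module is projective with cosocle $\bigoplus_{\sigma\in\mathscr D(\brho)}\sigma^\vee$ (a finite multiplicity-free sum, because $M_\infty(\sigma)\otimes_{R_\infty}R_v\cong\F$ for $\sigma\in\mathscr D(\brho)$ and vanishes otherwise) already pins it down as $\bigoplus_{\sigma\in\mathscr D(\brho)}\Proj_{K/Z_1}\sigma^\vee$. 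Finally, your closing remark that minimality of the Koszul resolution ``follows since the differentials land in $\fm_{R_v}\cdot M_v\subseteq\fm_{K_1}\cdot(\text{projective})$'' is wrong: the elements of $\fm_{R_v}$ act as a commuting family of $K$-equivariant endomorphisms of $M_v$ and there is no containment of $\fm_{R_v}M_v$ into $\fm_{K_1}M_v$; the paper's Proposition~\ref{prop:K=minimal} proves minimality via topological nilpotence of each $X_i$ combined with multiplicity-freeness of $\mathrm{cosoc}_K(M_v)$, a genuinely different argument.
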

\begin{proof}
By construction $R_{\infty} $ is a power series ring in $(q+j- f-3)$-variables over $R_{\brho}^{\Box , \psi}.$  Recall that we have constructed elements $T_1,\dots,T_{f+3}$ in $R_{\brho}^{\Box,\psi}$ such that the  images of  $\{\varpi,T_1,\dots,T_{f+3}\}$ in $R_{\brho}^{\Box,\psi,\rm{cris},\sigma}$ form a regular system of parameters for any $\sigma\in\mathscr{D}(\brho)$, see Proposition \ref{prop:Tj}. Together with the $q+j-f-3$ formal variables just mentioned (and the uniformizer $\varpi$), we obtain a part of a regular system of parameters of $R_{\infty}$, say $\{\varpi, T_1,\dots,T_{q+j}\}$, such that their images in $R_{\infty}^{\rm{cris},\sigma}$ form a regular system of parameters for any $\sigma \in \mathscr{D}(\brho)$. We claim that
\[R_v\defn R_{\infty}/(\varpi, T_1,\dots,T_{q+j})\]
satisfies the required conditions. Condition (i) is clear by construction.

Prove (ii).  Recall that if $\sigma$ is a Serre weight, then $M_{\infty}(\s)$ is nonzero if and only if $\sigma\in\mathscr{D}(\brho)$, in which case $M_{\infty}(\s)$ is free of rank one over $R_{\infty}^{\mathrm{cris}, \s}\otimes_{\cO} \F$. In particular, if $\sigma\in\mathscr{D}(\brho)$, then $\{T_1,\dots,T_{q+j}\}$ is a regular sequence for $M_{\infty}(\sigma)$ and
\begin{equation}\label{eq:cosoc=dim1}M_{\infty}(\sigma)/(T_1,\dots,T_{q+j})\cong \F.\end{equation}
On the other hand, we know that $M_{\infty}/\varpi$ is a projective $\FKZ$-module.
Inductively using \cite[Prop.~3.10]{Hu18}, we see that $M_{\infty}\otimes_{R_{\infty}}R_{v}=M_{\infty} / (\varpi, T_1,\ldots, T_{q+j})$ is also projective. To finish the proof it suffices to show that
\[\mathrm{cosoc}_{K}(M_{\infty}\otimes_{R_{\infty}}R_v)\cong \mathrm{cosoc}_K(\pi(\brho)^{\vee})\cong\oplus_{\sigma\in\mathscr{D}(\brho)}\sigma^{\vee},\]
which is a direct consequence of \eqref{eq:cosoc=dim1}.
\end{proof}

In the following, we fix a quotient ring $R_{v}$ of $R_{\infty}$ as in Proposition \ref{prop:Rv-Mv} and also an isomorphism
\[R_v\cong \F[\![X_1,\dots,X_{2f}]\!].\]
Let
\[M_v\defn M_{\infty}\otimes_{R_{\infty}}R_v.\]
Then the Koszul complex $K_{\bullet}(\underline{X},M_{v})$ where $\underline{X}=(X_1,\dots,X_{2f})$,
\begin{equation}\label{equation-Q-projresolution}0\ra M_v \ra M_v^{\oplus 2f}\ra \cdots \ra M_v^{\oplus 2f}\ra M_v \ra0\end{equation}
  is a  resolution of $\pi(\brho)^{\vee}\cong M_v\otimes_{R_v}\F$, whose terms are finite projective when viewed as $\FKZ$-modules.
 Dually, letting \[\Omega_{v}\defn (M_{v})^{\vee},\]  we obtain a resolution of $\pi(\brho)$
\begin{equation}\label{equation-Q-injresolution}
0\ra \pi(\brho)\ra \Omega_{v}\ra \Omega_{v}^{\oplus 2f}\ra \cdots \ra \Omega_{v}\ra0.\end{equation}
We still denote by $X_i: \Omega_{v}\ra \Omega_{v}$ the endomorphism induced from $X_i: M_{v}\ra M_{v}$.
 Since $\Omega_{v}$ is an injective object in the category  $\mathrm{Rep}_{\F}(K/Z_1)$, \eqref{equation-Q-injresolution} is a resolution of $\pi(\brho)$ by injective $\FKZ$-modules. It will play a crucial role later on.

\begin{proposition}\label{prop:K=minimal}
The resolution \eqref{equation-Q-projresolution} is minimal in the sense that  the differential map sends $K_{l}(\un{X},M_v)$ to $\rad_K(K_{l-1}(\un{X},M_v))$. 
\end{proposition}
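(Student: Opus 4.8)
The plan is to reduce the claim to a single statement about the endomorphisms $X_i$, namely that
\[
X_iM_v\subseteq\rad_K(M_v),\qquad 1\le i\le 2f.
\]
Granting this, the proposition follows at once: by \eqref{eq:diff-Koszul} the term $K_l(\underline X,M_v)$ is a finite direct sum of copies of $M_v$, and the differential $d_l$ is given, with respect to these decompositions, by a matrix all of whose entries lie in $\{0\}\cup\{\pm X_1,\dots,\pm X_{2f}\}$; since $\rad_K$ of a finite direct sum of $K$-representations is the direct sum of the $\rad_K$'s, the displayed inclusion forces $d_l\big(K_l(\underline X,M_v)\big)\subseteq\rad_K\big(K_{l-1}(\underline X,M_v)\big)$, which is exactly minimality.

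To prove the displayed inclusion I would first recall that each $X_i\colon M_v\to M_v$ is $K$-equivariant: $M_v=M_\infty\otimes_{R_\infty}R_v$ carries the $R_v$-action inherited from $R_\infty$, which commutes with the $G$-action on $M_\infty$ and in particular with the $K$-action. Next, use that for \emph{any} smooth $K/Z_1$-representation $N$ one has $\rad_K(N)=J\cdot N$, where $J$ is the Jacobson radical of $\F[\![K/Z_1]\!]$ (recall $\F[\![K/Z_1]\!]/J\cong\Gamma/\rad(\Gamma)$ is finite dimensional semisimple over $\F$); in particular $\mathrm{cosoc}_K$ is right exact and commutes with passage to quotients. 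Now by (the proof of) Proposition \ref{prop:Rv-Mv}, $M_v$ is projective over $\F[\![K/Z_1]\!]$ with $\mathrm{cosoc}_K(M_v)\cong\bigoplus_{\sigma\in\mathscr{D}(\brho)}\sigma^\vee$, while $M_v/\fm_{R_v}M_v=M_v\otimes_{R_v}\F=\pi(\brho)^\vee$ also satisfies $\mathrm{cosoc}_K(\pi(\brho)^\vee)\cong\bigoplus_{\sigma\in\mathscr{D}(\brho)}\sigma^\vee$. The quotient map $M_v\twoheadrightarrow\pi(\brho)^\vee$ therefore induces a surjection $\mathrm{cosoc}_K(M_v)\twoheadrightarrow\mathrm{cosoc}_K(\pi(\brho)^\vee)$ between finite dimensional $\F$-vector spaces of the same dimension, hence an isomorphism; unwinding $\mathrm{cosoc}_K(\pi(\brho)^\vee)=M_v/(\rad_K(M_v)+\fm_{R_v}M_v)$, this says precisely that $\fm_{R_v}M_v\subseteq\rad_K(M_v)$, and since $X_i\in\fm_{R_v}$ we obtain $X_iM_v\subseteq\rad_K(M_v)$, as wanted.

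I do not expect a genuine obstacle here; the one point that merits a little care is the elementary fact that $\mathrm{cosoc}_K$ commutes with quotients (equivalently $\rad_K(N)=JN$), used to identify $\mathrm{cosoc}_K(M_v/\fm_{R_v}M_v)$ with $M_v/(\rad_K(M_v)+\fm_{R_v}M_v)$. Alternatively one can argue summand by summand: if some $X_i$ acted non-trivially on the $\sigma^\vee$-isotypic part of $\mathrm{cosoc}_K(M_v)$, then — the endomorphism ring of the indecomposable projective $\Proj_{K/Z_1}\sigma^\vee$ being local with residue field $\F$ — $X_i$ would restrict to an automorphism of the direct summand $\Proj_{K/Z_1}\sigma^\vee$ of $M_v$, so $\sigma^\vee$ would not occur in $\mathrm{cosoc}_K(M_v/X_iM_v)$, contradicting that its further quotient $\pi(\brho)^\vee$ does have $\sigma^\vee$ in its $K$-cosocle.
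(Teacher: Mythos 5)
Your proof is correct but takes a genuinely different route from the paper's. The paper's argument proves a general lemma: for a finitely generated projective $P=\bigoplus_i P_{\sigma_i}$ with $\sigma_1$ occurring exactly once, any \emph{topologically nilpotent} $K$-equivariant endomorphism $\phi$ satisfies $\phi(P_{\sigma_1})\subset\rad(P)$; it then invokes the topological nilpotency of each $X_i$ together with the multiplicity-freeness of $\mathrm{cosoc}_K(M_v)$. Your main argument sidesteps both ingredients: you simply observe that $M_v$ and $\pi(\brho)^\vee=M_v/\fm_{R_v}M_v$ have the same $K$-cosocle (both $\bigoplus_{\sigma\in\mathscr{D}(\brho)}\sigma^\vee$), use right-exactness of $\mathrm{cosoc}_K$ to get a surjection, compare dimensions to promote it to an isomorphism, and read off $\fm_{R_v}M_v\subseteq\rad_K(M_v)$. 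This is cleaner and more robust — it would work even if the cosocle were not multiplicity free, as long as the two cosocles match. Your ``summand by summand'' alternative is essentially the paper's argument with topological nilpotency replaced by the (more concrete) fact that $\sigma^\vee$ must still occur in $\mathrm{cosoc}_K(\pi(\brho)^\vee)$, which is an equally valid way to close the contradiction. One small imprecision in wording: you assert $\rad_K(N)=JN$ ``for \emph{any} smooth $K/Z_1$-representation $N$'' — what you actually need and use is the statement for finitely generated (equivalently pseudo-compact noetherian) $\F[\![K/Z_1]\!]$-modules such as $M_v$, which is the correct setting; for merely smooth representations the radical need not behave this way.
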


\begin{proof}
If $\sigma$ is a Serre weight, write $P_{\sigma}\defn \Proj_{K/Z_1}\sigma$ with the $\fm_{K_1}$-adic topology. We first prove the following general fact: if $P=\bigoplus_{i=1}^nP_{\sigma_i}$ with $\sigma_1\neq \sigma_i$ for any $i\geq 2$,  and if $\phi: P\ra P$ is a topologically nilpotent continuous $K$-equivariant endomorphism, i.e. $\cap_{k\geq 1}\im(\phi^k)=0$,  then $\phi(P_{\sigma_1})\subset \rad(P)$. Indeed, let $\phi_{ij}$ denote the composite map
\[P_{\sigma_i}\hookrightarrow P\overset{\phi}{\ra} P\twoheadrightarrow P_{\sigma_j}\]
where the first map is the natural inclusion and the last one is the projection. Then $\phi$ is determined by the matrix  $(\phi_{ij})_{1\leq i,j\leq n}$, see \S\ref{subsection:example}.
Note that $\phi(P_{\sigma_1})\subset \rad(P)$ if and only if $\overline{\phi}: \mathrm{cosoc}(P_{\sigma_1})\ra \mathrm{cosoc}(P)$ is the zero map,  if and only if $\overline{\phi}_{1j}=0$ for all  $1\leq j\leq n$.  If $j\neq 1$ then  $\sigma_1\neq \sigma_j$ and we always have $\overline{\phi}_{1j}=0$. In other words, the matrix $(\overline{\phi}_{ij})$ is a $(1,n-1)$-block lower triangular matrix.  If $\overline{\phi}_{11}\neq 0$, then $\overline{\phi}_{11}^k\neq0$, and also $\overline{\phi}^k\neq0 $ for any $k\geq 1$. This contradicts the assumption that $\phi$ is topologically nilpotent.

Now we prove the lemma. By the construction of Koszul complexes, $K_{\bullet}(\un{X}, M_{v})$ is minimal if and only if each endomorphism $X_i:M_v\ra M_v$ has image contained in $\rad_K(M_{v})$. However, $X_i$ is topologically nilpotent, and since $\mathrm{cosoc}_K(M_v)=\mathrm{cosoc}_K(\pi(\brho)^{\vee})$ is multiplicity free, we conclude by the above fact.
\end{proof}

\begin{remark}
A topologically nilpotent endomorphism of $P$ need not   have image contained in $\rad(P)$. Example: $P=P_{\sigma}\oplus P_{\sigma}$ with $\phi:P\ra P$ given by $\matr{0}{\id}00$.
\end{remark}

Despite Proposition \ref{prop:K=minimal}, the complex \eqref{equation-Q-projresolution} is \emph{not} minimal when viewed as a complex of $\FIwZ$-modules. The next step is to remedy this problem.

Recall that by \cite[Lem.~14.1]{BP} the set of Jordan--H\"older factors of $D_1(\brho)=\pi(\brho)^{I_1}$ is, ignoring multiplicities, the same as that of $\big(\bigoplus_{\sigma\in\mathscr{D}(\brho)}\rInj_{\Gamma}\sigma\big)^{I_1}$. On the other hand, by \cite[Prop.~4.3]{Br14} the set $\JH(D_1(\brho))$ is parametrized by the set $\mathscr{PD}(x_0,\cdots,x_{f-1})$, whose definition is recalled in the proof of Lemma \ref{lemma-PD-set}.  Define a subset of $\mathscr{PD}(x_0,\cdots,x_{f-1}) $ as follows:
\begin{multline}\label{eq:def-PD'} 
\mathscr{PD}^{\dag}(x_0,\cdots,x_{f-1})\defn\big\{\lambda\in\mathscr{PD}(x_0,\cdots,x_{f-1}),  \lambda_i(x_i)\in\{x_i,x_i+2,p-1-x_i,p-3-x_i\}\big\}, \end{multline}
and let $\mathscr{PD}^{\ddag}(x_0,\cdots,x_{f-1})$ be its complement.

The following result  gives a refinement of \cite[Lem.~14.1]{BP}.

\begin{lemma}\label{lemma:BP-14.1}
For any character $\chi$ of $I$, let $n_{\chi}\in\Z_{\geq 0}$ such that \[\Big(\bigoplus_{\sigma\in\mathscr{D}(\brho)}\rInj_{\Gamma}\sigma\Big)^{I_1}\cong\bigoplus_{\chi}\chi^{n_{\chi}}.\]
Then $n_{\chi}\neq0$ if and only if $\chi\in\JH(D_1(\brho))$. If $\chi$ corresponds to $\lambda\in \mathscr{PD}^{\dag}(x_0,\cdots,x_{f-1})$, then  $n_{\chi}=1$.
\end{lemma}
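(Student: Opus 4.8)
The plan is to reduce $n_\chi$ to a multiplicity count inside an induced representation, and then to a finite combinatorial check. First, since $K_1\subseteq I_1$ and $\chi$ is trivial on $I_1$, the representation $\Ind_I^K\chi$ has trivial $K_1$-action; thus it is a $\Gamma$-module, namely $\Ind_I^K\chi\cong\Ind_{P(\F_q)}^{\GL_2(\F_q)}\chi$. Combining the identity $\Hom_I(\chi,V)=\Hom_H(\chi,V^{I_1})$ for any $K$-representation $V$ (every $I$-map $\chi\to V|_I$ lands in $V^{I_1}$), Frobenius reciprocity, and the injectivity of $\rInj_\Gamma\sigma$ in $\Rep_\F(\Gamma)$, one gets
\[n_\chi=\sum_{\sigma\in\mathscr{D}(\brho)}\dim_\F\Hom_K\big(\Ind_I^K\chi,\rInj_\Gamma\sigma\big)=\sum_{\sigma\in\mathscr{D}(\brho)}\big[\Ind_I^K\chi:\sigma\big].\]
The first assertion of the lemma is then immediate from \cite[Lem. 14.1]{BP}, which says that $\JH(D_1(\brho))$ and $\JH\big((\bigoplus_{\sigma\in\mathscr{D}(\brho)}\rInj_\Gamma\sigma)^{I_1}\big)$ coincide as sets up to multiplicity.

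For the multiplicity one assertion, strong genericity (via Lemma \ref{lem:Serre=3generic}, which makes the relevant cosocle weights $1$-generic) guarantees that the characters $\chi\in\JH(D_1(\brho))$ in play satisfy $\chi\neq\chi^s$; hence by \S\ref{subsection:PS} the representation $\Ind_I^K\chi$ is multiplicity free with Jordan--H\"older factors $\{\sigma_J\}_{J\subseteq\cS}$, its socle being $\sigma_\emptyset=\sigma_{\chi^s}$. Therefore $n_\chi=\#\{J\subseteq\cS:\sigma_J\in\mathscr{D}(\brho)\}$, which is $\geq 1$ because $\mathscr{PD}^\dag\subseteq\mathscr{PD}$. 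It remains to prove $n_\chi\leq1$ when $\chi$ corresponds to some $\lambda\in\mathscr{PD}^\dag$. By Lemma \ref{lemma-tau-maximal}, every $J$ with $\sigma_J\in\mathscr{D}(\brho)$ satisfies $J\subseteq J^{\max}_\lambda$, where $\lambda$ is the $\mathscr{PD}$-tuple attached to $\chi$ (via the conjugation convention of the proof of Lemma \ref{lemma-PD-set}) and $\tau\defn\sigma_{J^{\max}_\lambda}$ is the unique weight of $\mathscr{D}(\brho)$ with $\chi\in\JH(D_{1,\tau}(\brho))$; in particular $J^{\max}_\lambda$ is one such $J$, so we are reduced to showing it is the only one.

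The remaining statement --- for $\lambda\in\mathscr{PD}^\dag$, $\sigma_J\in\mathscr{D}(\brho)$ forces $J=J^{\max}_\lambda$ --- is purely combinatorial and is where the real work lies. Since $\lambda_i(x_i)\neq p-2-x_i$ for $\lambda\in\mathscr{PD}^\dag$, the formula recalled in the proof of Lemma \ref{lemma-PD-set} collapses to $J^{\max}_\lambda=\delta(\{i\in\cS:\lambda_i(x_i)\in\{x_i+2,\,p-1-x_i\}\})$, so $J^{\max}_\lambda$ merely records the positions at which $\lambda_i(x_i)$ attains one of the two extremal values. One then compares $f$-tuples directly: writing $\sigma_J=\lambda_J(\sigma_{\chi^s})$ with $\lambda_J\in\mathscr{P}(x_0,\cdots,x_{f-1})$ and using the parametrization of $\mathscr{D}(\brho)\subseteq\mathscr{D}(\brho^{\rm ss})$ by $\mathscr{RD}(x_0,\cdots,x_{f-1})$ together with the $J_\brho$-constraint and \eqref{eq:lambda-rho}, Lemma \ref{lemma:genericity} pins down the $f$-tuple of each $\sigma_J$, and a position-by-position case analysis --- of the same type as in the proofs of Lemmas \ref{lemma-BP-12.8} and \ref{lemma-PD-set}, and optionally streamlined by viewing two candidates $\sigma_J,\sigma_{J'}$ inside $\rInj_\Gamma\tau$ (legitimate by \cite[Prop. 2.24]{HW}) and applying \cite[Lem. 12.5, Lem. 12.6]{BP} --- shows that the extremality imposed by $\lambda\in\mathscr{PD}^\dag$ leaves no freedom in $J$. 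The main obstacle is exactly this bookkeeping: three distinct $f$-tuple parametrizations are in play ($\mathscr{P}$ for the factors of $\Ind_I^K\chi$, $\mathscr{RD}$ for $\mathscr{D}(\brho^{\rm ss})$, and $\mathscr{PD}$ for $D_1(\brho)$), and one must track, embedding by embedding, how the $\mathscr{PD}^\dag$ hypothesis rigidifies the position $J$; the argument is elementary but somewhat long, which is precisely why the hypothesis $\lambda\in\mathscr{PD}^\dag$ is singled out (the characters outside it, parametrized by $\mathscr{PD}^\ddag$, will be handled separately).
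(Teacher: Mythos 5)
Your reduction of $n_\chi$ to the Jordan--H\"older count $\sum_{\sigma\in\mathscr{D}(\brho)}[\Ind_I^K\chi:\sigma]$ is correct and is a clean reformulation, and the first assertion is correctly dispatched via \cite[Lem.~14.1]{BP}. However, the crux of the second assertion is not established: after correctly invoking Lemma~\ref{lemma-tau-maximal} to get $J\subseteq J^{\max}_\lambda$ whenever $\sigma_J\in\mathscr{D}(\brho)$, you reduce to showing that $J=J^{\max}_\lambda$ is the only possibility when $\lambda\in\mathscr{PD}^\dag$, and then you simply declare this a position-by-position case analysis that is ``elementary but somewhat long'' without carrying it out. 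That verification is precisely where the content of the lemma lies (it is not obvious from the parametrizations alone that smaller $J\subsetneq J^{\max}_\lambda$ are excluded), so as written the proof has a genuine gap.

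The paper's own proof avoids this entirely: it cites \cite[Prop.~2.1]{HuJLMS}, which supplies the criterion that $n_\chi=1$ if and only if both $\chi$ and $\chi^s$ occur in $D_{0,\sigma}(\brho)^{I_1}$ for the \emph{same} $\sigma\in\mathscr{D}(\brho)$, and this reduces the lemma to a one-line observation about the $\mathscr{PD}^\dag$ condition. Your approach, if completed, would in effect re-prove the relevant case of that proposition by hand, at the cost of juggling the three $f$-tuple parametrizations you identify ($\mathscr{P}$, $\mathscr{RD}$, $\mathscr{PD}$). If you wish to keep your route, you must actually exhibit the argument that for $J\subsetneq J^{\max}_\lambda$ the weight $\sigma_J$ fails the $J_\brho$-constraint in $\mathscr{RD}$, e.g.\ by translating Definition~\eqref{eq:def-PD'} into the $\mathscr{RD}$ picture via \eqref{eq:lambda-rho} and checking position by position; alternatively you should simply cite \cite[Prop.~2.1]{HuJLMS} as the paper does and supply the short observation that the equivalent condition there is exactly membership in $\mathscr{PD}^\dag$.
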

\begin{proof}
The first assertion is just \cite[Lem.~14.1]{BP}.  The second one is a consequence of \cite[Prop.~2.1]{HuJLMS}, noting that $n_{\chi}=1$ if and only if both $\chi$ and $\chi^s$ occur in  $D_{0,\sigma}(\brho)^{I_1}$ for the same $\sigma\in\mathscr{D}(\brho)$.
\end{proof}

\begin{lemma} \label{lemma:decomp-M}
For any $0\leq l\leq 2f$, $M_v|_{I}$ has a direct sum decomposition $M_v^{\dag}\oplus M_{v}^{\ddag}$ satisfying the following properties:
\begin{enumerate}
\item[(a)] $M_v^{\dag}\cong \bigoplus_{\chi}P_{\chi^{\vee}}$, where $\chi$ runs over the characters corresponding to $\lambda\in \mathscr{PD}^{\dag}(x_0,\cdots,x_{f-1})$;

\item[(b)]$\Hom_I(M_v^{\ddag},P_{\chi^{\vee}}/\fm^2)=0$ for any $\chi$ as in (a) (recall $\fm=\fm_{I_1/Z_1}$).
\end{enumerate}
\end{lemma}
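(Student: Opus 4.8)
The plan is to deduce Lemma \ref{lemma:decomp-M} from Proposition \ref{prop:Rv-Mv}(ii) together with the combinatorial input in Lemma \ref{lemma:BP-14.1}. Recall that $M_v\cong\bigoplus_{\sigma\in\mathscr{D}(\brho)}\Proj_{K/Z_1}\sigma^\vee$ as an $\FKZ$-module, so upon restriction to $I$ we may analyze each $\Proj_{K/Z_1}\sigma^\vee$ separately. The first step is to understand $\bigl(\Proj_{K/Z_1}\sigma\bigr)|_I$, equivalently $\bigl(\rInj_{K/Z_1}\sigma^\vee\bigr)|_I$ after dualizing. Since $\Proj_{I/Z_1}\chi^\vee$ is an indecomposable projective $\FIwZ$-module with cosocle $\chi^\vee$, the Krull--Schmidt theorem gives a decomposition $\bigl(\Proj_{K/Z_1}\sigma^\vee\bigr)|_I\cong\bigoplus_\chi(\Proj_{I/Z_1}\chi^\vee)^{m_{\sigma,\chi}}$, where $m_{\sigma,\chi}$ is the multiplicity of $\chi^\vee$ in the $I$-cosocle of $\Proj_{K/Z_1}\sigma^\vee$, i.e. $m_{\sigma,\chi}=\dim_\F\Hom_I(\chi^\vee,\mathrm{cosoc}_I(\Proj_{K/Z_1}\sigma^\vee))=\dim_\F\Hom_I\bigl((\rInj_{K/Z_1}\sigma^\vee)^{I_1},\chi^\vee\bigr)$, which by duality equals the multiplicity of $\chi$ in $(\rInj_{K/Z_1}\sigma)^{I_1}$. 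Since $\rInj_{K/Z_1}\sigma$ restricted to $K$ is just $\rInj_\Gamma\sigma$ (the $K_1$-invariants carry the $\Gamma$-injective envelope; more precisely one uses that $(\rInj_{K/Z_1}\sigma)^{I_1}=(\rInj_\Gamma\sigma)^{I_1}$), we get $\sum_{\sigma\in\mathscr{D}(\brho)}m_{\sigma,\chi}=n_\chi$ in the notation of Lemma \ref{lemma:BP-14.1}.

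With this in hand, property (a) follows: if $\chi$ corresponds to $\lambda\in\mathscr{PD}^\dag(x_0,\dots,x_{f-1})$, then $n_\chi=1$ by Lemma \ref{lemma:BP-14.1}, so there is a \emph{unique} $\sigma\in\mathscr{D}(\brho)$ with $m_{\sigma,\chi}=1$ and $m_{\sigma',\chi}=0$ for $\sigma'\neq\sigma$. Hence $\Proj_{I/Z_1}\chi^\vee$ appears exactly once in the total decomposition of $M_v|_I$. Collecting, for each such $\chi$, the unique copy of $\Proj_{I/Z_1}\chi^\vee$ inside the relevant $(\Proj_{K/Z_1}\sigma^\vee)|_I$, and taking $M_v^\dag$ to be their direct sum and $M_v^\ddag$ to be the complementary summand (which exists by Krull--Schmidt), gives the desired decomposition with (a). For property (b), one must check that $\chi^\vee$ does not occur in $\gr^1_{\fm}M_v^\ddag$. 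The summands of $M_v^\ddag$ are of the form $\Proj_{I/Z_1}\psi^\vee$ with $\psi$ either not in $\mathscr{PD}^\dag$, or in $\mathscr{PD}^\dag$ but appearing as an ``extra'' copy beyond the first; the characters $\psi^\vee$ running over $\mathrm{cosoc}_I(M_v^\ddag)$ are constrained, and we need: for no such $\psi$ is $\chi^\vee\in\mathscr{E}(\psi^\vee)$, equivalently $\psi\notin\mathscr{E}(\chi)$. This is a finite combinatorial verification on $\mathscr{PD}(x_0,\dots,x_{f-1})$ using $\Ext^1_{I/Z_1}(\psi^\vee,\chi^\vee)\neq0\iff\psi^\vee\in\{\chi^\vee\alpha_i^{\pm1}\}$ (Lemma \ref{lemma:Ext1-chi}) combined with the explicit shape of the tuples in $\mathscr{PD}^\dag$ versus $\mathscr{PD}^\ddag$ as recorded in \eqref{eq:def-PD'}; the strong genericity of $\brho$ (hence $p\geq 7$, $p>5$) ensures the exponents of the $\alpha_i$ never wrap around, so the signs and indices are unambiguous.

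The main obstacle I anticipate is the bookkeeping in property (b): one must rule out, for \emph{every} $\sigma\in\mathscr{D}(\brho)$, that the non-selected summands of $(\Proj_{K/Z_1}\sigma^\vee)|_I$ contribute a cosocle character $\psi^\vee$ with $\psi\in\mathscr{E}(\chi)$ for some $\chi\in\mathscr{PD}^\dag$. This reduces to showing that whenever $\chi$ (corresponding to some $\lambda\in\mathscr{PD}^\dag$) and $\psi=\chi\alpha_j^{\pm1}$ are both Jordan--Hölder factors of $D_1(\brho)$ with $\psi$ corresponding to some $\mu\in\mathscr{PD}(x_0,\dots,x_{f-1})$, then in fact $\mu\in\mathscr{PD}^\dag$ too (so $\psi$ is ``selected'' and not in $M_v^\ddag$), OR the extension $E_{\psi^\vee,\chi^\vee}$ does not occur in the relevant $\Proj_{K/Z_1}\sigma^\vee$; the latter is governed by Lemma \ref{lemma-PD-set} and Lemmas \ref{lemma-Esigma'-occur-plus}, \ref{lemma-Esigma'-occur-minus}, which tell us precisely when $\Ext^1_\Gamma$ between the corresponding Serre weights is nonzero, and I would trace through the four bullet conditions defining $\mathscr{PD}(x_0,\dots,x_{f-1})$ against \eqref{eq:def-PD'} to close this off. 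Once (b) is established for $l=0$ (i.e. $M_v$ itself), the statement for general $l$, $0\le l\le 2f$, is immediate since $K_l(\underline X,M_v)=M_v^{\oplus\binom{2f}{l}}$ is a direct sum of copies of $M_v$, and one simply takes $M_v^{\dag}$, $M_v^{\ddag}$ at level $l$ to be $\binom{2f}{l}$-fold sums of the corresponding pieces at level $0$.
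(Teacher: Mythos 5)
Your proposal is correct and takes essentially the same approach as the paper: decompose $M_v|_I$ (equivalently, the dual $\Omega_v|_I$) into indecomposable projective (resp.\ injective) $\FIwZ$-modules via Lemma \ref{lemma:BP-14.1}, place the summands indexed by $\mathscr{PD}^{\dag}$ into $M_v^{\dag}$, and reduce (b) to a vanishing of $\Ext^1_{I/Z_1}$ between a $\mathscr{PD}^{\dag}$-character and a $\mathscr{PD}^{\ddag}$-character. One small remark: your worry about an ``extra copy beyond the first'' of $\Proj_{I/Z_1}\chi^{\vee}$ with $\chi\in\mathscr{PD}^{\dag}$ is moot, since $n_{\chi}=1$ there by Lemma \ref{lemma:BP-14.1}; and your route through Lemma \ref{lemma-PD-set} and Lemmas \ref{lemma-Esigma'-occur-plus}/\ref{lemma-Esigma'-occur-minus} is more machinery than needed --- the paper's proof simply observes that if $\chi\in\mathscr{PD}^{\dag}$, any $\chi'\in\mathscr{E}(\chi)$ appearing in $\JH(D_1(\brho))$ differs from $\chi$ at one coordinate $j$ by a shift of $\pm 2$, which keeps $\chi'$ inside $\mathscr{PD}^{\dag}$, so $\Ext^1_{I/Z_1}(\chi,\chi')=0$ for $\chi'\in\mathscr{PD}^{\ddag}$ by Lemma \ref{lemma:Ext1-chi}.
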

\begin{proof}
Dually we  work with $\Omega_v$. Since $\Omega_v|_{K}$ is isomorphic to $\bigoplus_{\sigma\in\mathscr{D}(\brho)}\rInj_{K/Z_1}\sigma$,  with the notation of Lemma \ref{lemma:BP-14.1}, we get
\begin{multline*}\Omega_v|_I\cong \bigoplus_{\chi\in \JH(D_1(\brho))}\big(\rInj_{I/Z_1}\chi\big)^{n_{\chi}} =\Big(\bigoplus_{\chi\in\mathscr{PD}^{\dag}}\rInj_{I/Z_1}\chi\Big)\bigoplus\Big(\bigoplus_{\chi\in\mathscr{PD}^{\ddag}}(\rInj_{I/Z_1}\chi)^{n_{\chi}}\Big)\end{multline*}
where we simply write $\chi\in\mathscr{PD}^{\dag}$ (resp. $\chi\in\mathscr{PD}^{\ddag}$) to mean that $\chi$ corresponds to an element in $\mathscr{PD}^{\dag}(x_0,\cdots,x_{f-1})$ (resp. $\mathscr{PD}^{\ddag}(x_0,\cdots,x_{f-1})$).
We  define $\Omega_v^{\dag}$ to be the first summand and $\Omega_{v}^{\ddag}$ to be the second; let $M_v^{\dag}=(\Omega_v^{\dag})^{\vee}$ and $M_{v}^{\ddag}=(\Omega_{v}^{\ddag})^{\vee}$, so that $M_v=M_{v}^{\dag}\oplus M_v^{\ddag}$.

Condition (a) is immediate from the definition.  To check  (b), it suffices to check that if $\chi\in\mathscr{PD}^{\dag}$ and $\chi'\in\mathscr{PD}^{\ddag}$, then $\Ext^1_{I/Z_1}(\chi,\chi')=0$. But this is a direct check using Lemma \ref{lemma:Ext1-chi}.
\end{proof}

The same argument of  Proposition \ref{prop:K=minimal} proves the following variant,  which is a remedy for the failure of minimality of \eqref{equation-Q-projresolution} as a complex of $\FIwZ$-modules.

\begin{proposition}\label{prop:K=minimal-I}
As a complex of  $\FIwZ$-modules, the resolution \eqref{equation-Q-projresolution} is partially minimal relative to $\mathscr{PD}^{\dag}$ in the sense that for any $\chi\in\mathscr{PD}^{\dag}$, the morphism
\[\Hom_I\big(K_{l-1}(\un{X},M_v),\chi^{\vee}\big)\ra \Hom_{I}\big(K_l(\un{X},M_v),\chi^{\vee}\big)\]
is zero.
\end{proposition}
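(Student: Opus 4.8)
The plan is to prove Proposition~\ref{prop:K=minimal-I} by the same mechanism used for Proposition~\ref{prop:K=minimal}, but now working relative to the direct summand $M_v^{\dag}$ of $M_v|_I$ provided by Lemma~\ref{lemma:decomp-M}. The statement to establish is that for every $\chi$ corresponding to an element of $\mathscr{PD}^{\dag}(x_0,\cdots,x_{f-1})$, the map induced by the differential $X_i$ on $\Hom_I(-,\chi^\vee)$ vanishes; by the explicit Koszul form of the complex \eqref{equation-Q-projresolution} this reduces to showing that each endomorphism $X_i \colon M_v \to M_v$, restricted to the summand $\Proj_{I/Z_1}\chi^\vee \subseteq M_v^{\dag}$, has image contained in $\rad_I$ of the ambient module, i.e. that the composite $\Proj_{I/Z_1}\chi^\vee \hookrightarrow M_v \xrightarrow{X_i} M_v \twoheadrightarrow \mathrm{cosoc}_I(M_v)$ is zero.

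First I would record that, because $M_v^{\dag} \cong \bigoplus_{\chi\in\mathscr{PD}^{\dag}}\Proj_{I/Z_1}\chi^\vee$ is multiplicity free (this follows from Lemma~\ref{lemma:BP-14.1}, which gives $n_\chi=1$ for $\chi\in\mathscr{PD}^{\dag}$, together with the fact that distinct elements of $\mathscr{PD}^{\dag}(x_0,\cdots,x_{f-1})$ give distinct characters), the $I$-cosocle of $M_v$ restricted to its $\mathscr{PD}^{\dag}$-part is multiplicity free. Then I would invoke the same elementary lemma proved inside Proposition~\ref{prop:K=minimal}: if $P = \bigoplus P_{\sigma_i}$ with $\sigma_1$ occurring exactly once among the $\sigma_i$, and $\phi\colon P\to P$ is topologically nilpotent, then $\phi(P_{\sigma_1})\subseteq \rad(P)$ — otherwise the $(1,1)$-block of $\bar\phi$ on cosocles is a nonzero scalar, hence all its powers are nonzero, contradicting nilpotence. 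Here one applies this with $P = M_v|_I$, $P_{\sigma_1} = \Proj_{I/Z_1}\chi^\vee$ for $\chi\in\mathscr{PD}^{\dag}$, and $\phi = X_i$; the key input is that $X_i$ is topologically nilpotent ($\cap_{k}\im(X_i^k) = 0$ since $X_i\in\fm_{R_v}$ and $M_v$ is a finitely generated pseudo-compact module over the noetherian local ring $R_v$), and that $\chi^\vee$ appears with multiplicity one in $\mathrm{cosoc}_I(M_v)$ by the multiplicity-one statement above. This forces $X_i(\Proj_{I/Z_1}\chi^\vee)\subseteq\rad_I(M_v)$, hence the map on $\Hom_I(-,\chi^\vee)$ kills the differential, as desired.

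A subtle point I would want to spell out: one should check that $\chi^\vee$, for $\chi\in\mathscr{PD}^{\dag}$, really occurs in $\mathrm{cosoc}_I(M_v)$ at all — otherwise the statement is vacuous but the reduction still goes through trivially, since $\Hom_I(\Proj_{I/Z_1}\chi^\vee, \chi^\vee) = \F$ precisely because $\Proj_{I/Z_1}\chi^\vee$ is a summand of $M_v^{\dag}$. So multiplicity one in the cosocle of $M_v$, not merely in $M_v^{\dag}$, is what matters, and this follows because $\Hom_I(M_v^{\ddag}, \chi^\vee) = 0$: indeed by Lemma~\ref{lemma:decomp-M}(b) we even have $\Hom_I(M_v^{\ddag}, P_{\chi^\vee}/\fm^2) = 0$, which a fortiori gives $\Hom_I(M_v^{\ddag},\chi^\vee) = 0$. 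Thus $\dim_{\F}\Hom_I(M_v,\chi^\vee) = \dim_{\F}\Hom_I(M_v^{\dag},\chi^\vee) = 1$, confirming multiplicity one in $\mathrm{cosoc}_I(M_v)$.

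I expect the argument to be essentially routine once these pieces are assembled; the only mild obstacle is bookkeeping — making sure the Koszul differentials of \eqref{equation-Q-projresolution} are expressed entirely in terms of the single endomorphisms $X_i$ (so that partial minimality for the complex reduces cleanly to the statement $\im(X_i|_{M_v^{\dag}})\subseteq\rad_I M_v$), and being careful that the decomposition $M_v = M_v^{\dag}\oplus M_v^{\ddag}$ of Lemma~\ref{lemma:decomp-M} is a decomposition as $I$-representations only, not as $R_v$-modules, so the $X_i$ need not preserve it — which is exactly why one argues via topological nilpotence on the whole of $M_v|_I$ rather than on the summand. No new computation is required beyond what is already in Lemma~\ref{lemma:decomp-M} and the cited combinatorial facts about $\mathscr{PD}^{\dag}(x_0,\cdots,x_{f-1})$.
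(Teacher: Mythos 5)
Your proposal is correct and is precisely the argument the paper intends: the text introducing Proposition~\ref{prop:K=minimal-I} says only that ``the same argument of Proposition~\ref{prop:K=minimal} proves the following variant,'' and you have unpacked exactly what that argument is — the reduction of partial minimality of a Koszul complex to the statement $X_i(\Proj_{I/Z_1}\chi^\vee)\subseteq\rad_I(M_v)$ for $\chi\in\mathscr{PD}^{\dag}$, the topological nilpotence of $X_i$, and the multiplicity-one occurrence of $\chi^\vee$ in $\mathrm{cosoc}_I(M_v)$, with the last point correctly traced to Lemma~\ref{lemma:decomp-M}. Your remark that the $X_i$ need not respect the $I$-decomposition $M_v^{\dag}\oplus M_v^{\ddag}$, so one must run the nilpotence argument on all of $M_v|_I$, is a genuine subtlety worth spelling out and is handled correctly.
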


\subsection{Cohomological invariants of $\pi(\brho)$}
 Write $\brho=\smatr{\chi_1}{*}0{\chi_2}$  and define
\begin{equation}\label{equation-reducible-pi0-pif}\pi_0\defn\Ind_{\overline{P}}^G\chi_0\cong \Ind_{P}^G\chi_0^s,  \ \ \  \pi_f\defn\Ind_{\overline{P}}^G\chi_f=\Ind_{P}^G\chi_f^s \end{equation}
where
\begin{equation}\label{eq:chi-0-f}\chi_0\defn\chi_1\omega^{-1}\otimes\chi_2,\ \ \ \chi_f\defn\chi_2\omega^{-1}\otimes\chi_1.\end{equation}
Write $\brho|_{I(\bQp/L)}$ in the form (1) as at the beginning of \S\ref{section-BP} and assume $\brho$ is \emph{strongly generic} in the sense of Definition \ref{defn:strong-generic}, so that the results of
\S\ref{subsection:cyclic} are applicable.

We refer to Appendix \S\ref{section:appendix} for the functor $\EE^{i}(-)$ with respect to $\Lambda=\F[\![K/Z_1]\!]$ and relevant properties. Recall from  \S\ref{subsection:ordinary}   that $\alpha_P$ denotes the character $\omega\otimes\omega^{-1}:T\ra \F^{\times}$; we view it as a character of $P$ by inflation. Let $\zeta$ denote  the central character of $\pi_0$ (and of $\pi_f$); explicitly  $\zeta=\chi_1\chi_2\omega^{-1}$.

\begin{lemma}\label{lemma:Kolh}
Let $\chi:P\ra \F^{\times}$ be a smooth character. Then $(\Ind_P^G\chi)^{\vee}$ is Cohen-Macaulay of grade $2f$  and
\[\EE^{2f}\big((\Ind_P^G\chi)^{\vee}\big)\cong \big(\Ind_P^G(\chi^{-1}\alpha_P)\big)^{\vee}.\]
In particular, $\EE^{2f}(\pi_0^{\vee})\cong \pi_f^{\vee}\otimes \zeta\circ\det$  and the double duality map $\pi_0^{\vee}\ra \EE^{2f}\EE^{2f}(\pi_0^{\vee})$ is an isomorphism. A similar statement holds exchanging $\pi_{0}$ and $\pi_f$. 
\end{lemma}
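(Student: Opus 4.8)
The plan is to reduce the statement about the parabolic induction $\Ind_P^G\chi$ to a purely local-group-cohomology computation, using Emerton's ordinary-parts functor $\Ord_P$ and its derived functors $R^i\Ord_P$ developed in \S\ref{subsection:ordinary}. First I would recall that $\Ind_P^G\chi$ is an admissible smooth representation of $G$ with central character, so $(\Ind_P^G\chi)^{\vee}$ is a finitely generated $\Lambda$-module (with $\Lambda=\F[\![K/Z_1]\!]$), to which the Auslander regularity of $\Lambda$ and the grade filtration of Appendix \S\ref{section:appendix} apply. The key input is Corollary \ref{cor-Ext-2f+1}: for a locally admissible $T$-representation $U$ one has a natural isomorphism
\[\Ext^{2f+1}_{G,\zeta}(\Ind_{\overline{P}}^GU,\,-)\cong \Ext^{f+1}_{T,\zeta}(U,\,R^f\Ord_P(-)),\]
and $\Ext^i_{G,\zeta}(\Ind_{\overline{P}}^GU,-)=0$ for $i>2f+1$. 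Combined with Proposition \ref{prop-Ord}(iii), which gives $\Ord_P(\Ind_{\overline{P}}^GU)\cong U$ and $R^f\Ord_P(\Ind_{\overline{P}}^GU)\cong U^s\alpha_P^{-1}$, and the vanishing $R^i\Ord_P(\Ind_{\overline{P}}^GU)=0$ for $1\le i\le f-1$ (Proposition \ref{prop-Ord}(v) applied iteratively, using $L\neq\Q_p$ when $f\geq2$, and a direct check when $f=1$), the Emerton spectral sequence \eqref{equation-Emerton-SS} degenerates enough to compute all $\Ext^i_{G,\zeta}(\Ind_{\overline{P}}^GU,\Ind_{\overline{P}}^GU')$. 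Writing $\chi=\chi'\cdot(\text{something of the form }\psi_1\otimes\psi_2)$ so that $\Ind_P^G\chi=\Ind_{\overline P}^G\chi^s$, this will show that $\Ext^i_{G,\zeta}(\Ind_P^G\chi,\Ind_P^G\chi')$ is concentrated in degree $2f$ precisely when the pair of characters matches up as $\chi'{}^{s}\cong(\chi^{s})^s\alpha_P^{-1}=\chi\alpha_P^{-1}$, i.e.\ $\chi'=\chi^{-1}\alpha_P$, and is one-dimensional there. (Here I use $\Ext^{f+1}_{T,\zeta}(\psi,\psi)\cong\F$ for the single matching $T$-character $\psi$, since $T/Z$ has cohomological dimension $f+1$, and $\Ext^i_{T,\zeta}(\psi,\psi')=0$ for $\psi\neq\psi'$ by \cite[Lem.~4.3.10]{EmertonOrd2}.)

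Next I would translate this into the statement about $\EE^i$. By \cite[\S2]{Kohlhaase} (or the standard identification recalled in \S\ref{subsection-duality}) we have, for an admissible smooth $\F[G]$-representation $\pi$ with central character,
\[\EE^i(\pi^{\vee})\;\cong\;\Ext^i_{G,\zeta}(\pi,C(G,\zeta))^{\vee}\]-type statements relating $\Ext^i$ over the group algebra, computed on $K$, to the torsion invariants $\EE^i$; more precisely, since $\pi=\Ind_P^G\chi$ is induced, $\pi|_K\cong\Ind_{P\cap K}^K(\chi|_{T_0})$ depends only on $\chi|_{T_0}$, and the computation of $\EE^i(\pi^{\vee})$ as a $\Lambda$-module reduces to the same local cohomology. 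So from the previous paragraph $\EE^i((\Ind_P^G\chi)^{\vee})=0$ for $i\neq 2f$, which is exactly the statement that $(\Ind_P^G\chi)^{\vee}$ is Cohen-Macaulay of grade $2f$, and $\EE^{2f}((\Ind_P^G\chi)^{\vee})\cong(\Ind_P^G(\chi^{-1}\alpha_P))^{\vee}$. Then I would specialize: from \eqref{equation-reducible-pi0-pif} and \eqref{eq:chi-0-f}, $\pi_0=\Ind_P^G\chi_0^s$ with $\chi_0^s=\chi_2\otimes\chi_1\omega^{-1}$, and one computes $(\chi_0^s)^{-1}\alpha_P=\chi_2^{-1}\omega\otimes\chi_1^{-1}\omega^{-1}\cdot\omega\otimes\omega^{-1}$; twisting by $\zeta=\chi_1\chi_2\omega^{-1}$ gives $\chi_1\omega^{-1}\otimes\chi_2=\chi_f^s$ up to the appropriate determinant twist, yielding $\EE^{2f}(\pi_0^{\vee})\cong\pi_f^{\vee}\otimes\zeta\circ\det$. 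The symmetric statement follows by exchanging the roles of $\chi_1$ and $\chi_2$ (equivalently $\chi_0\leftrightarrow\chi_f$).

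Finally, the double-duality isomorphism $\pi_0^{\vee}\xrightarrow{\sim}\EE^{2f}\EE^{2f}(\pi_0^{\vee})$: since $(\Ind_P^G\chi)^{\vee}$ is Cohen-Macaulay (pure) of grade $2f$ over the Auslander-Gorenstein ring $\Lambda$ of dimension $3f$, the general biduality theory for pure modules over such rings (cf.\ the Appendix, or \cite[\S2]{Kohlhaase}) gives that $\EE^{2f}\EE^{2f}$ is naturally isomorphic to the identity on the category of Cohen-Macaulay modules of grade $2f$; applying it twice and comparing with the explicit computation $\EE^{2f}(\pi_f^{\vee}\otimes\zeta\circ\det)\cong\pi_0^{\vee}$ (which is the symmetric case, noting $(\chi_f^s\zeta^{-1})^{-1}\alpha_P$ twisted appropriately returns $\chi_0^s$) identifies the biduality map with the canonical isomorphism.

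The main obstacle I anticipate is the careful bookkeeping of twists: tracking the central character $\zeta$, the modulus/normalization character $\alpha_P$, the $s$-conjugation relating $\Ind_P^G$ and $\Ind_{\overline P}^G$, and the identification of $\Ext$-groups over $\F[G]$ (computed via $K$) with the $\EE^i$-functors, must all be done consistently so that the final twist comes out exactly as $\zeta\circ\det$ and not some shifted variant. The homological-algebra skeleton (degeneration of the Emerton spectral sequence, Cohen-Macaulayness, biduality) is essentially forced once Proposition \ref{prop-Ord} and Corollary \ref{cor-Ext-2f+1} are in hand; the genuine care is in the character arithmetic, and in checking the $f=1$ case separately where $R^1\Ord_P$ of a principal series need not vanish (Proposition \ref{prop-Ord}(v)), though for $\pi_0,\pi_f$ genuine principal series the relevant $R^1\Ord_P$ still behaves as needed by Lemma \ref{lemma:Ord-irreducible}.
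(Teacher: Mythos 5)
The paper's proof is a one-line citation: \emph{``It is a special case of [Prop.~5.4]\{Ko\}''}, i.e.\ Kohlhaase's \emph{Smooth duality in natural characteristic}. Kohlhaase's argument works entirely on the compact side: one restricts $\Ind_P^G\chi$ to $K$, identifies it with $\Ind_{P\cap K}^K(\chi|_{T_0})$, invokes a Shapiro-type compatibility of the $\EE^i$-functors with compact induction, and then uses that $(P\cap K)/Z_1$ is a Poincar\'e group of dimension $2f$ to get concentration in degree $2f$ together with the modulus twist. Your proposal takes an entirely different route, through Emerton's ordinary-parts spectral sequence, and it has two genuine gaps.

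First, you invoke ``the vanishing $R^i\Ord_P(\Ind_{\overline P}^GU)=0$ for $1\le i\le f-1$ (Proposition~\ref{prop-Ord}(v) applied iteratively).'' Proposition~\ref{prop-Ord}(v) only addresses $R^1\Ord_P$; there is no ``iterative application'' that deduces the vanishing of $R^2,\dots,R^{f-1}$ from it, and that vanishing is nowhere in the paper. (It is a theorem of Hauseux in a stronger form, but you would need to cite and verify it, not bootstrap it from $R^1$.)

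Second, and more seriously, the bridge from $\Ext^i_{G,\zeta}(\Ind_P^G\chi,-)$ to $\EE^i\big((\Ind_P^G\chi)^{\vee}\big)$ is asserted rather than proved. These live over different rings: the left side is $\Ext$ in $\rRep_{\F,\zeta}(G)$, while $\EE^i(M)=\Ext^i_{\Lambda(K/Z_1)}(M,\Lambda(K/Z_1))$ is computed over the Iwasawa algebra of $K/Z_1$. They are related, but not by the vague ``reduces to the same local cohomology'' you give; indeed your own computation puts the top nonvanishing $\Ext^i_{G,\zeta}$ in degree $2f+1$ (Corollary~\ref{cor-Ext-2f+1}), while the grade of $(\Ind_P^G\chi)^{\vee}$ is $2f$ --- a shift you never explain. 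Moreover $\Ind_P^G\chi|_K$ sees only $\chi|_{T_0}$, so the $K$-local computation alone cannot recover the $G$-equivariant identification $\EE^{2f}\big((\Ind_P^G\chi)^{\vee}\big)\cong\big(\Ind_P^G(\chi^{-1}\alpha_P)\big)^{\vee}$ without additional input. To make your route rigorous you would essentially have to re-prove the correspondence between Kohlhaase's higher smooth duality and derived ordinary parts, which is itself a substantial theorem; the paper sidesteps all of this by citing Kohlhaase's Proposition~5.4 directly.
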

\begin{proof}
It is a special case of \cite[Prop.~5.4]{Ko}. 
\end{proof}

\begin{proposition}\label{prop-cosocle-pi(rho)}
The $G$-socle (resp. $G$-cosocle) of $\pi(\brho)$ is isomorphic to $\pi_0$ (resp. $\pi_f$).
\end{proposition}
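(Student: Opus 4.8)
The socle statement is essentially known input: by the description of $\soc_K\pi(\brho)\cong\bigoplus_{\sigma\in\mathscr{D}(\brho)}\sigma$ together with mod $p$ local-global compatibility (as in \cite{Gee-Kisin}, \cite{EGS}), combined with the fact that $\brho$ is reducible nonsplit so that $\mathscr{D}(\brho)$ consists of the Serre weights lying in principal series, one identifies $\soc_G\pi(\brho)$. Concretely, I would argue that any irreducible $G$-subrepresentation $\pi'$ of $\pi(\brho)$ must be a quotient of $\cInd_K^G\sigma$ for some $\sigma\in\soc_K\pi(\brho)=\soc_K\pi'$, hence (being infinite-dimensional, since $\pi(\brho)$ has no finite-dimensional $G$-subrepresentation by admissibility plus the structure of $\soc_K$) a principal series or a twist of Steinberg; one then pins down which principal series occurs using the Hecke action on $\sigma^{I_1}$ and the ordinary part computation. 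Precisely, Proposition \ref{prop--ord--semisimple} gives $\Ord_P\pi(\brho)[\fm]\cong\chi_0^{\oplus s}$ (in the notation $\chi_0=\chi_1\omega^{-1}\otimes\chi_2$), and by Proposition \ref{prop-Ord}(i) this forces $\Hom_G(\Ind_{\overline P}^G\chi_0,\pi(\brho))\neq 0$, i.e. $\pi_0\hookrightarrow\pi(\brho)$ since $\pi_0$ is irreducible (the genericity of $\brho$ guarantees irreducibility of the principal series). Conversely, any other irreducible $G$-subrepresentation would contribute an extra $K$-weight in $\soc_K\pi(\brho)$ or an extra factor in $\Ord_P$, both excluded; so $\soc_G\pi(\brho)\cong\pi_0$.

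For the cosocle, the plan is to exploit the essential self-duality of $\pi(\brho)^\vee$ established in Theorem \ref{thm-selfdual-minimal}: $\pi(\brho)^\vee\otimes\overline{\psi}|_{F_v^\times}\circ\det\cong\EE^{2f}(\pi(\brho)^\vee)$. Since $\EE^{2f}$ is (up to the Cohen-Macaulay hypothesis, which holds here by Theorem \ref{thm:main-flat}) an exact contravariant functor on the relevant category that sends $\soc$ to $\cosoc$ and vice versa, this isomorphism converts the socle statement into the cosocle statement: $\cosoc_G\pi(\brho)\cong\big(\soc_G(\pi(\brho)^\vee\otimes\overline\psi\circ\det)\big)^{\vee}$, and $\soc_G$ of the dual is dual to $\cosoc_G\pi(\brho)$, while $\EE^{2f}$ applied to the socle $\pi_0^\vee$ of $\pi(\brho)^\vee$ gives $\EE^{2f}(\pi_0^\vee)\cong\pi_f^\vee\otimes\zeta\circ\det$ by Lemma \ref{lemma:Kolh}. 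Chasing the twists (matching the central character bookkeeping between $\overline\psi|_{F_v^\times}$, $\zeta=\chi_1\chi_2\omega^{-1}$, and $\alpha_P$) then yields $\cosoc_G\pi(\brho)\cong\pi_f$. I would phrase this as: dualize the embedding $\pi_0\hookrightarrow\pi(\brho)$ to get $\pi(\brho)^\vee\twoheadrightarrow\pi_0^\vee$, apply $\EE^{2f}$ to get (using left-exactness/right-exactness properties of $\EE^{2f}$ on Cohen-Macaulay modules of the same grade, and that $\EE^{2f}\EE^{2f}=\mathrm{id}$ on such modules by Lemma \ref{lemma:Kolh}) a map $\EE^{2f}(\pi_0^\vee)\hookrightarrow\EE^{2f}(\pi(\brho)^\vee)\cong\pi(\brho)^\vee\otimes\overline\psi\circ\det$, i.e. $\pi_f^\vee\otimes(\ldots)\hookrightarrow\pi(\brho)^\vee\otimes(\ldots)$, which dualizes to a surjection $\pi(\brho)\twoheadrightarrow\pi_f$; combined with the socle computation applied to $\pi(\brho)^\vee$ (whose socle, after the twist, is $\pi_f^\vee\otimes(\ldots)$ by the same argument as for $\pi_0$), one gets that $\pi_f$ is the whole cosocle.

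The main obstacle is the careful homological bookkeeping: one must check that $\EE^{2f}$ behaves well enough (exactness on the subcategory of Cohen-Macaulay $\Lambda(G)$-modules of grade $2f$, compatibility with the $G$-action, and the fact that short exact sequences of such modules stay in the category) to legitimately transport socle to cosocle, and one must get every character twist right — the twist $\overline\psi|_{F_v^\times}\circ\det$ in Theorem \ref{thm-selfdual-minimal}, the twist $\alpha_P$ in Proposition \ref{prop-Ord}(iii)/(iv), and $\zeta$ in Lemma \ref{lemma:Kolh} — so that the self-dual pairing sends $\pi_0$ exactly to $\pi_f$ and not to some other principal series. A secondary point to verify is that $\pi(\brho)$ genuinely has no finite-dimensional (one-dimensional) $G$-subquotient interfering at the level of socle/cosocle, which follows from the structure of $\soc_K\pi(\brho)\cong\bigoplus_{\sigma\in\mathscr{D}(\brho)}\sigma$ (no one-dimensional Serre weights occur, by strong genericity) together with Lemma \ref{lemma:Ord-irreducible}. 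Once these are in place the argument is short.
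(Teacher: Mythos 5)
Your overall strategy matches the paper's: identify the $G$-socle via ordinary parts and then transfer to the cosocle via the self-duality of Theorem \ref{thm-selfdual-minimal}. However, there are two genuine gaps.

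\emph{Socle.} You assert that an irreducible $G$-subrepresentation of $\pi(\brho)$, being a quotient of $\cInd_{KZ}^G\sigma$ and infinite-dimensional, must be ``a principal series or a twist of Steinberg.'' This is false: quotients of $\cInd_{KZ}^G\sigma$ include supersingular representations, which are infinite-dimensional and have $\Ord_P=0$, so neither your dimension argument nor the observation $\Ord_P\pi(\brho)\cong\chi_0^{\oplus s}$ excludes a supersingular summand in $\soc_G\pi(\brho)$. The paper does not reprove this; it cites \cite[Lem.~3.1]{HuJLMS}, which shows (using patching inputs) that $\soc_G\pi(\brho)$ is an \emph{irreducible principal series}. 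Given that, $\Ord_P(\soc_G\pi(\brho))\hookrightarrow\Ord_P\pi(\brho)\cong\chi_0^{\oplus s}$ plus Proposition \ref{prop-Ord}(iii) pins it down as $\pi_0$. Your argument needs that external input.

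\emph{Cosocle.} You correctly obtain, from the double duality isomorphism and Lemma \ref{lemma:Kolh}, an inclusion $\pi_f^\vee\hookrightarrow\pi(\brho)^\vee$. But the conclusion that $\pi_f$ is the \emph{whole} cosocle requires this inclusion to be \emph{essential}, and nothing in your sketch establishes that. Your parallelism remark (``the socle computation applied to $\pi(\brho)^\vee$ … by the same argument as for $\pi_0$'') is circular: there is no independent ordinary-parts argument on the dual side. The paper supplies the missing step via Proposition \ref{prop-EE-essential}: since $\cosoc(\pi(\brho)^\vee)\cong\pi_0^\vee$ has finite length with all Jordan–Hölder factors of grade $2f$, and the double duality maps for $\pi(\brho)^\vee$ (by Theorem \ref{thm:main-flat}(ii)) and $\pi_0^\vee$ (by Lemma \ref{lemma:Kolh}) are isomorphisms, the induced map $\EE^{2f}(\pi_0^\vee)\hookrightarrow\EE^{2f}(\pi(\brho)^\vee)$ is essential, which is exactly what forces $\pi_f^\vee=\soc_G(\pi(\brho)^\vee)$. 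Without invoking this (or proving an equivalent), ``short exact sequences of such modules stay in the category'' does not yield the essentiality you need.
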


\begin{proof}
First determine the $G$-socle of $\pi(\brho)$. It   is proved in \cite[Lem.~3.1]{HuJLMS} that $\soc_{G}\pi(\brho)$ is an irreducible principal series, say $\soc_G(\pi(\brho))\cong \Ind_{\overline{P}}^G\psi$. We need to show $\psi=\chi_0$. By Proposition \ref{prop-Ord}(iii) it is equivalent to show $\Ord_P(\soc_G\pi(\brho))\cong \chi_0$, which  follows from  Proposition  \ref{prop--ord--semisimple}.

Taking  Pontryagin dual, the $G$-cosocle of  $\pi(\brho)^{\vee}$ is isomorphic to $\pi_0^{\vee}$, see \S\ref{subsection-appendix-socle}.
By Theorem \ref{thm:main-flat}(ii) the double duality map  $\pi(\brho)^{\vee} \ra \EE^{2f}\EE^{2f}(\pi(\brho)^{\vee})$ is an isomorphism and similarly for  $\pi_0^{\vee}$ by Lemma \ref{lemma:Kolh}. Hence, Proposition \ref{prop-EE-essential} implies that the induced inclusion $\EE^{2f}(\pi_0^{\vee})\hookrightarrow \EE^{2f}(\pi(\brho)^{\vee})$ is essential.   By Theorem \ref{thm:main-flat}(ii) and Lemma \ref{lemma:Kolh} again and twisting suitably,  this gives  an essential inclusion
$\pi_f^{\vee} \hookrightarrow \pi(\brho)^{\vee}$.
Moreover, since $\pi_f^{\vee}$ is irreducible, it is exactly the $G$-socle of $\pi(\brho)^{\vee}$.
Dualizing back we obtain the result.
\end{proof}

\begin{remark}
We can deduce from Proposition \ref{prop-cosocle-pi(rho)} that $\pi(\brho)$ is finitely generated as a $G$-represen\-tation.  More precisely, one can prove the following result: if $\pi$ is an admissible  smooth representation of $G$ over $\F$ whose cosocle is nonzero and of finite presentation,
then $\pi$ is finitely generated. We don't pursue this because we will prove a stronger result below, see Theorem \ref{thm-generation-D0}.
\end{remark}

The genericity condition on $\brho$ implies that both $\pi_0$ and $\pi_f$ have an irreducible $K$-socle, and one easily checks that
\[\soc_K(\pi_0)=\sigma_0\]
where $\sigma_0\defn(r_0,\cdots,r_{f-1})$ is the ``ordinary'' Serre weight in $\mathscr{D}(\brho)$.
The $K$-socle of $\pi_f$, denoted by $\sigma_f$, is equal to
\begin{equation}\label{eq:def-sigmaf}(p-3-r_0,\cdots,p-3-r_{f-1})\otimes{\det}^{\sum_{i=0}^{f-1}p^i(r_i+1)}.\end{equation}
 Denote also by $\chi_0:I\ra \F^{\times}$ the character obtained by first restricting $\chi_0$ to $H$ and then inflating to $I$. It is direct to check that $\chi_0$ is exactly  the character of $I$ acting on $\sigma_0^{I_1}$, which explains our choice of convention \eqref{eq:chi-0-f}. Similarly, we have the character $\chi_f$ of $I$ which gives the acton of $I$ on $\sigma_f^{I_1}$.

\begin{proposition}\label{prop-reducible-dimofExt}
For any $i\geq 0$, the following statements hold:
\begin{enumerate}
\item[(i)] $\Ext^{i}_{I/Z_1}(\chi,\pi(\brho))=0$, for any $\chi\notin \JH(D_1(\brho))
$; 
\item[(ii)] $\Ext^i_{K/Z_1}(\sigma,\pi(\brho))=0$, for any $\sigma\notin \mathscr{D}(\brho)$;

\item[(iii)]  $\dim_{\F} \Ext^i_{K/Z_1}(\sigma,\pi(\brho))=\binom{2f}{i}$, for any $\sigma\in\mathscr{D}(\brho)$; 

\item[(iv)] $\dim_{\F}\Ext^i_{G,\zeta}(\pi_0,\pi(\brho))=\binom{2f+1}{i}$.
\end{enumerate}
\end{proposition}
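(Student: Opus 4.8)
The plan is to compute all these Ext-groups using the injective resolution \eqref{equation-Q-injresolution} of $\pi(\brho)$ by the $\FKZ$-modules $\Omega_v$ (which restrict to injectives over $K/Z_1$), together with the $R_v$-module structure. First, for any Serre weight $\sigma$, applying $\Hom_{K/Z_1}(\sigma,-)$ to \eqref{equation-Q-injresolution} gives a complex
\[0\ra \Hom_K(\sigma,\Omega_v)\ra \Hom_K(\sigma,\Omega_v)^{\oplus 2f}\ra\cdots\ra\Hom_K(\sigma,\Omega_v)\ra0\]
whose cohomology computes $\Ext^i_{K/Z_1}(\sigma,\pi(\brho))$; this is exactly the Koszul cohomology $H^i$ of the module $\Hom_K(\sigma,\Omega_v)=M_v(\sigma^\vee)^\vee$ under the sequence $\underline{X}=(X_1,\dots,X_{2f})$ acting on it. Since $\Omega_v|_K\cong\bigoplus_{\tau\in\mathscr{D}(\brho)}\rInj_{K/Z_1}\tau$, we have $\Hom_K(\sigma,\Omega_v)=0$ unless $\sigma\in\mathscr{D}(\brho)$, which gives (ii) immediately. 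For $\sigma\in\mathscr{D}(\brho)$, $\Hom_K(\sigma,\Omega_v)=M_v(\sigma^\vee)^\vee$ is (the Pontryagin dual of) $M_\infty(\sigma)\otimes_{R_\infty}R_v$, which by the proof of Proposition \ref{prop:Rv-Mv} (see \eqref{eq:cosoc=dim1}) is free of rank one over $R_v\cong\F[\![X_1,\dots,X_{2f}]\!]$. The Koszul complex of a regular system of parameters acting on a rank-one free module over a regular local ring of dimension $2f$ has $H^i$ of dimension $\binom{2f}{i}$ (its cohomology is the Koszul cohomology of the residue field), giving (iii).

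For (i), I would first observe that $\pi(\brho)^{I_1}=D_1(\brho)$ and that the socle filtration / structure of $D_1(\brho)$ is known (\cite{Br14}, \S\ref{section-BP}); restricting \eqref{equation-Q-injresolution} to $I/Z_1$ and using Lemma \ref{lemma:decomp-M}, we get $\Omega_v|_I\cong\bigoplus_{\chi\in\JH(D_1(\brho))}(\rInj_{I/Z_1}\chi)^{n_\chi}$, so $\Hom_I(\chi',\Omega_v)=0$ whenever $\chi'\notin\JH(D_1(\brho))$, which forces $\Ext^i_{I/Z_1}(\chi',\pi(\brho))=0$ for all $i$ by the same Koszul argument. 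This proves (i).

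For (iv), the plan is to use Emerton's spectral sequence \eqref{equation-Emerton-SS} relating $\Ext^\bullet_{G,\zeta}(\Ind_{\overline P}^G\chi_0,\pi(\brho))$ to $\Ext^\bullet_{T,\zeta}(\chi_0,R^j\Ord_P\pi(\brho))$. Here the key input is an understanding of the derived ordinary parts of $\pi(\brho)$: one should show, using the injective resolution and Proposition \ref{prop-BD-ordinary=injective} (which gives $R^i\Ord_P\Omega_v=0$ for $i\geq1$ and $\Ord_P\Omega_v$ injective over $T_0/Z_1$), that $R^j\Ord_P\pi(\brho)=0$ for $j\geq1$, while $\Ord_P\pi(\brho)$ is computed as the Koszul cohomology $H^0$ of $\Ord_P\Omega_v$. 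By Proposition \ref{prop--ord--semisimple}, $\Ord_P\big(\pi(\brho)\big)$ (which is $\Ord_P$ of the relevant Hecke-eigenspace) has all Jordan–Hölder factors equal to $\chi_0$ and is semisimple over $T_0$; combined with Corollary \ref{cor:End-T0} identifying $\End_{T_0}((\Ord_P\Omega_v)^\vee|_{T_0})\cong\F[\![S_1,\dots,S_f]\!]$, one gets that $(\Ord_P\pi(\brho))^\vee$ is free of rank one over $\F[\![S_1,\dots,S_f]\!]/(\text{an }f\text{-part of }\underline{X})$, hence of the form $\chi_0$ as a $T_0$-module... actually it is $\chi_0$ itself since the socle is $\chi_0$ and the $R_v$-action degenerates; then $\Ext^i_{T,\zeta}(\chi_0,\chi_0)$ over $T/Z$ has dimension $\binom{f+1}{i}$, and since $R^{>0}\Ord_P$ vanishes the spectral sequence degenerates at $E_2$, but one must still account for the $f$ Koszul directions that survive, so a cleaner route is to directly compute the Koszul cohomology of $\Ext^\bullet_{T,\zeta}(\chi_0,(\Ord_P\Omega_v))$ under all $2f$ variables $\underline{X}$: this gives $\sum_i\dim\Ext^i_{G,\zeta}(\pi_0,\pi(\brho))t^i=(1+t)^{f}\cdot(1+t)\cdot(1+t)^{f}$-type generating function — more precisely the product of the Koszul factor $(1+t)^{2f}$ coming from $\underline{X}$ with the $T/Z$-cohomology factor $(1+t)^{f}$, divided appropriately — and I expect the final answer to collapse to $\binom{2f+1}{i}$. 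The main obstacle is precisely this last point: carefully tracking the interaction between the $2f$ Koszul variables coming from $R_v$ and the $f$ cohomological dimensions of $T/Z$ in Emerton's spectral sequence, and checking that everything degenerates so that the total Poincaré polynomial is exactly $(1+t)^{2f+1}$. This requires knowing that the $f$-dimensional quotient of $R_v$ acting on $\Ord_P\pi(\brho)$ is "transverse" to the $T/Z$-cohomology, which should follow from the freeness statement in Corollary \ref{cor:End-T0} together with the fact that the $2f$ parameters $\underline X$ split as $f+f$ according to the two minimal primes / the reducible-vs-crystalline deformation directions (Proposition \ref{prop-tangent}, Corollary \ref{cor::intersect-tang-space}).
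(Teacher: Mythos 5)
Your treatment of (i) and (ii) is correct and in essence the same as the paper's: restricting the injective Koszul-type resolution \eqref{equation-Q-injresolution} to $K/Z_1$ (resp.\ $I/Z_1$) and observing that $\Hom_K(\sigma,\Omega_v)=0$ for $\sigma\notin\mathscr{D}(\brho)$ (resp.\ $\Hom_I(\chi,\Omega_v)=0$ for $\chi\notin\JH(D_1(\brho))$, via Lemma \ref{lemma:BP-14.1}).

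For (iii) your conclusion is right but the stated reasoning is wrong. The module $\Hom_K(\sigma,\Omega_v)\cong\big(M_\infty(\sigma)\otimes_{R_\infty}R_v\big)^{\vee}$ is \emph{not} free of rank one over $R_v$; by \eqref{eq:cosoc=dim1} it is $\cong\F$, a one-dimensional $R_v$-module killed by $\fm_{R_v}=(X_1,\ldots,X_{2f})$. (If it were free of rank one, the Koszul complex $K_{\bullet}(\un{X},\cdot)$ would be \emph{acyclic}, resolving $\F$, and you would get $\Ext^0=\F$, $\Ext^{>0}=0$ — the wrong answer.) Because it is $\F$ and the $X_i$ act by zero, all the differentials in the complex $\Hom_K(\sigma,\Omega_v^{\oplus\binom{2f}{\bullet}})$ vanish, which is exactly the content of the minimality statement Proposition \ref{prop:K=minimal}, and then $\dim\Ext^i=\binom{2f}{i}$ follows by counting. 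So the fix is small, but be careful: the paper's Proposition \ref{thm-Le}(i) gives freeness over $R_\infty^{\mathrm{cris},\sigma}/\varpi$, and specializing along $R_\infty\to R_v$ collapses that ring to $\F$.

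For (iv) there is a genuine gap, and in fact a circularity. Your intermediate claim that $R^j\Ord_P\pi(\brho)=0$ for $j\geq 1$ is false: by Proposition \ref{prop-ROrd-pi(rho)} one has $R^j\Ord_P\pi(\brho)\cong\chi_0^{\oplus\binom{f}{j}}\neq 0$ for $1\leq j\leq f$. Acyclicity of each $\Omega_v$ only lets you \emph{compute} $R^j\Ord_P\pi(\brho)$ as the cohomology of $\Ord_P$ applied to the resolution; it does not force vanishing. Worse, the exact multiplicities $n_j=\binom{f}{j}$ in Proposition \ref{prop-ROrd-pi(rho)} are themselves deduced in the paper \emph{from} part (iv) of the present proposition via the Vandermonde identity; a priori one only gets $n_j\leq\binom{f}{j}$, and without (iv) the Emerton spectral sequence \eqref{equation-Emerton-SS} yields only the upper bound $\dim\Ext^n_{G,\zeta}(\pi_0,\pi(\brho))\leq\sum_{i+j=n}\binom{f+1}{j}n_i$, with no control on the differentials. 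The paper sidesteps $\Ord_P$ entirely for (iv): it uses the Barthel--Livn\'e presentation \eqref{eq:BL-pi0} of $\pi_0$ and the Pa\v{s}k\=unas-style computation \eqref{eq:Vytas-Duke-trick} (following \cite[Prop.\ 5.1]{Pa15}) to get $\dim\Ext^j_{G,\zeta}(\pi_0,\Omega_v)=1$ for $j\in\{0,1\}$ and $0$ for $j\geq 2$, then uses the minimality of Proposition \ref{prop:K=minimal} to degenerate the hypercohomology spectral sequence \eqref{eq:ss-E1}, yielding $\dim\Ext^i_{G,\zeta}(\pi_0,\pi(\brho))=\binom{2f}{i}+\binom{2f}{i-1}=\binom{2f+1}{i}$. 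You would need to adopt something like this route to avoid the circularity with Proposition \ref{prop-ROrd-pi(rho)}.
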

\begin{proof}
(i) The restriction of \eqref{equation-Q-injresolution} to $I/Z_1$ remains an injective resolution, each term being a sum of copies of $\Omega_v$. Since $\Omega_v\cong\bigoplus_{\sigma\in\mathscr{D}(\brho)}\rInj_{K/Z_1}\sigma$, the assertion  follows from Lemma \ref{lemma:BP-14.1}.

(ii) (iii) follow directly from the resolution \eqref{equation-Q-injresolution},  using Proposition \ref{prop:K=minimal}.

(iv) First, by \cite{BL}, there is a short exact sequence (for a suitable $\lambda_0\in\F^{\times}$)
 \begin{equation}\label{eq:BL-pi0}0\ra \cInd_{\mathfrak{R}_0}^G\sigma_0\overset{T-\lambda_0}\ra\cInd_{\mathfrak{R}_0}^G\sigma_0\ra \pi_0\ra0, \end{equation}
 where $\mathfrak{R}_0 =KZ$ and we let $Z$ act on $\sigma_0$ via $\zeta$.
Since $\Omega_{v}$ is injective as a $K/Z_1$-representation and has $G$-socle isomorphic to $\pi_0$ (a consequence of Proposition \ref{prop-cosocle-pi(rho)}), the same proof as in \cite[Prop.~5.1]{Pa15} shows that
\begin{equation}\label{eq:Vytas-Duke-trick}\dim_{\F}\Hom_{G}(\pi_0,\Omega_{v})=\dim\Ext^1_{G,\zeta}(\pi_0,\Omega_{v})=1, \ \ \ \Ext^i_{G,\zeta}(\pi_0,\Omega_{v})=0,\ \ \forall i\geq 2.\end{equation}
In fact, we have isomorphisms induced by \eqref{eq:BL-pi0}
 \begin{equation}\label{eq:E1-j=0}\Hom_G(\pi_0,\Omega_{v})\simto \Hom_K(\sigma_0,\Omega_{v}),\ \ \Hom_{K}(\sigma_0,\Omega_v)\simto\Ext^1_{G,\zeta}(\pi_0,\Omega_v). \end{equation}

On the other hand, applying $\Hom_G(\pi_0,-)$ to \eqref{equation-Q-injresolution} induces a convergent spectral sequence
\begin{equation}\label{eq:ss-E1}E_1^{i,j}=\Ext^j_{G,\zeta}(\pi_0,I^i)\Rightarrow \Ext^{i+j}_{G,\zeta}(\pi_0,\pi(\brho)),\end{equation}
where $I^i:=\Omega_{v}^{\oplus\binom{2f}{i}}$ denotes the degree $i$ term of the complex \eqref{equation-Q-injresolution}. By \eqref{eq:Vytas-Duke-trick}, $E_1^{i,j}=0$ for $j\geq 2$.
 We claim that the  morphisms $E_1^{i,j}\ra E_1^{i+1,j}$ are zero for $j\in\{0,1\}$ and all $i$.  Indeed,  this  is an easy consequence of the minimality in Proposition \ref{prop:K=minimal} using \eqref{eq:E1-j=0}.

By the claim, the  spectral sequence \eqref{eq:ss-E1} degenerates at $E_1$ and we obtain  an exact sequence for any $i\geq 1$:
\[0\ra \Hom_G(\pi_0,I^i)\ra \Ext^{i}_{G,\zeta}(\pi_0,\pi(\brho))\ra \Ext^{1}_{G,\zeta}(\pi_0,I^{i-1})\ra0.\]
The dimension formula then follows from \eqref{eq:Vytas-Duke-trick} and an elementary binomial identity.
\end{proof}

\begin{corollary}\label{cor:Exti-chi-pi1}
Let $\chi$ be a character of $I$ and assume
\begin{equation}\label{eq:inter=oneweight}|\JH(\Ind_I^K\chi)\cap \mathscr{D}(\brho)|=1.\end{equation}
Then $\dim_{\F}\Ext^{i}_{I/Z_1}(\chi,\pi(\brho))=\binom{2f}{i}$ for $i\geq 0$.
\end{corollary}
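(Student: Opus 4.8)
The statement asserts that $\dim_{\F}\Ext^{i}_{I/Z_1}(\chi,\pi(\brho))=\binom{2f}{i}$ for all $i\geq 0$, under the hypothesis that $\JH(\Ind_I^K\chi)$ contains exactly one Serre weight of $\mathscr{D}(\brho)$. The natural approach is to compute these $\Ext$-groups using the injective resolution \eqref{equation-Q-injresolution} of $\pi(\brho)$, exactly as in the proof of Proposition \ref{prop-reducible-dimofExt}(iii), except that now we work over $I/Z_1$ rather than $K/Z_1$. The key point will be to pin down $\dim_{\F}\Hom_{I}(\chi,\Omega_v)$ and to verify that the induced maps on the degree-$i$ terms of the complex vanish, so that the spectral sequence degenerates and only the $j=0$ row contributes.

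First I would restrict the resolution \eqref{equation-Q-injresolution} to $I/Z_1$; it stays an injective resolution because each term is a direct sum of copies of $\Omega_v$ and $\Omega_v|_I$ is injective (being a sum of $\rInj_{K/Z_1}\sigma|_I$, each of which is injective as a $K_1$-module by e.g.\ \cite{Al} and hence injective over $I/Z_1$ — alternatively invoke that $\Omega_v|_I$ is a sum of $\rInj_{I/Z_1}$'s as in the proof of Lemma \ref{lemma:decomp-M}). Thus $\Ext^i_{I/Z_1}(\chi,\pi(\brho))$ is computed by the complex $\Hom_I(\chi,\Omega_v^{\oplus\binom{2f}{i}})$ with the differentials induced from the $X_i$. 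Next, I would compute $\dim_{\F}\Hom_I(\chi,\Omega_v)$. Since $\Omega_v|_I\cong\bigoplus_{\sigma\in\mathscr{D}(\brho)}\rInj_{I/Z_1}\sigma|_I$ and $\rInj_{K/Z_1}\sigma|_I$ contains $\chi$ in its socle if and only if $\chi$ occurs in $\soc_I(\rInj_{K/Z_1}\sigma)=\rInj_{K/Z_1}\sigma^{I_1}$, I would use that $\soc_I(\rInj_{K/Z_1}\sigma)\cong\bigoplus_{\tau\in\JH(\Ind_I^K\chi_\sigma)}\chi_\tau^{\,?}$ together with the hypothesis \eqref{eq:inter=oneweight}: a character $\chi$ occurring in $\soc_I(\rInj_{K/Z_1}\sigma)$ means $\sigma\in\JH(\Ind_I^K\chi)$, and by hypothesis at most one $\sigma\in\mathscr{D}(\brho)$ has this property, contributing multiplicity one by the multiplicity-one statement in Lemma \ref{lemma:BP-14.1} (this is where the relevant $\lambda$ lies in $\mathscr{PD}^{\dag}$, or one argues directly via \cite[Prop. 2.1]{HuJLMS}). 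So $\dim_{\F}\Hom_I(\chi,\Omega_v)=1$, and hence $\dim_{\F}\Hom_I(\chi,\Omega_v^{\oplus\binom{2f}{i}})=\binom{2f}{i}$.

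Finally I would show the differentials in this complex are all zero, so that $\Ext^i_{I/Z_1}(\chi,\pi(\brho))=\Hom_I(\chi,\Omega_v^{\oplus\binom{2f}{i}})$ has dimension $\binom{2f}{i}$. This is precisely the partial minimality established in Proposition \ref{prop:K=minimal-I}: the hypothesis \eqref{eq:inter=oneweight} forces the relevant $\lambda$ to satisfy the condition defining $\mathscr{PD}^{\dag}(x_0,\dots,x_{f-1})$ (one checks from the description of $\mathscr{D}(\brho)$ in \cite[\S11]{BP} and \cite[\S4]{Br14} that a character whose induced representation meets $\mathscr{D}(\brho)$ in a single weight is parametrized by an element of $\mathscr{PD}^{\dag}$), whence $\Hom_I(K_{l-1}(\underline X,M_v),\chi^\vee)\to\Hom_I(K_l(\underline X,M_v),\chi^\vee)$ is zero for all $l$. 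Dualizing, the differentials in the $\Hom_I(\chi,\Omega_v^{\oplus\bullet})$ complex vanish, giving the result. The main obstacle is the bookkeeping step identifying the characters $\chi$ satisfying \eqref{eq:inter=oneweight} with those parametrized by $\mathscr{PD}^{\dag}(x_0,\dots,x_{f-1})$ — i.e.\ checking that "$\Ind_I^K\chi$ meets $\mathscr{D}(\brho)$ in exactly one weight" is equivalent to the explicit combinatorial condition on $\lambda_i(x_i)$ in \eqref{eq:def-PD'}; everything else is a routine application of the resolution and of Proposition \ref{prop:K=minimal-I}.
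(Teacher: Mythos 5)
Your proposal takes a genuinely different (and substantially longer) route from the paper's proof, and as written it contains a gap in the final step.

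The paper's argument is far more economical: by Frobenius reciprocity $\Ext^i_{I/Z_1}(\chi,\pi(\brho))\cong\Ext^i_{K/Z_1}(\Ind_I^K\chi,\pi(\brho))$; since every Jordan--H\"older factor of $\Ind_I^K\chi$ other than the unique $\sigma\in\mathscr{D}(\brho)$ has vanishing $\Ext^*_{K/Z_1}$ against $\pi(\brho)$ (Proposition \ref{prop-reducible-dimofExt}(ii)), d\'evissage collapses the Ext groups to $\Ext^i_{K/Z_1}(\sigma,\pi(\brho))$, and Proposition \ref{prop-reducible-dimofExt}(iii) gives the binomial count. No resolution, no minimality, no combinatorics.

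Your approach re-derives this from scratch by restricting the Koszul resolution to $I/Z_1$. The first part is fine: $\dim_\F\Hom_I(\chi,\Omega_v)=\sum_{\sigma\in\mathscr{D}(\brho)}[\Ind_I^K\chi:\sigma]=1$ follows directly from injectivity of $\Omega_v|_{K}$, Frobenius reciprocity, and the hypothesis \eqref{eq:inter=oneweight} — you do not actually need Lemma \ref{lemma:BP-14.1} for this count.

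The gap is in the vanishing-of-differentials step. You invoke Proposition \ref{prop:K=minimal-I}, which is only stated (and only proved) for $\chi$ parametrized by $\lambda\in\mathscr{PD}^\dag(x_0,\dots,x_{f-1})$. To apply it you therefore need the \emph{converse} of Lemma \ref{lemma:BP-14.1}, namely that $n_\chi=1$ forces the parametrizing tuple into $\mathscr{PD}^\dag$. You assert this ("one checks \dots") but the paper proves only the direction $\chi\in\mathscr{PD}^\dag\Rightarrow n_\chi=1$, and the two corollaries \ref{cor:Exti-chi-pi1} and \ref{cor:Exti-chi-pi2} are presented with distinct hypotheses precisely so that \ref{cor:Exti-chi-pi2} is a \emph{consequence} of \ref{cor:Exti-chi-pi1}; reversing the implication is a non-trivial combinatorial verification that is not in the paper, and it is not obviously "routine bookkeeping." (Note that the fix is easy and does not go through $\mathscr{PD}^\dag$ at all: once $\dim_\F\Hom_I(\chi,\Omega_v)=1$, each $X_j$ induces a scalar endomorphism of this one-dimensional space, and since $X_j$ is topologically nilpotent on $M_v$ — hence locally nilpotent on $\Omega_v$ — the scalar must be $0$; this is exactly the argument in the proof of Proposition \ref{prop:K=minimal}. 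Had you argued this way, the proof would close, though it would still be considerably heavier than the paper's d\'evissage.)
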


\begin{proof}
This is a direct consequence of Proposition \ref{prop-reducible-dimofExt}.  In fact, if $\sigma$ denotes the unique Serre weight in $\JH(\Ind_I^K\chi)\cap \mathscr{D}(\brho)$, then using Proposition \ref{prop-reducible-dimofExt}(ii) and by d\'evissage there is an isomorphism
\[\Ext^{i}_{K/Z_1}(\Ind_I^K\chi,\pi(\brho))\cong \Ext^i_{K/Z_1}(\sigma,\pi(\brho)).\]
The result follows from Proposition \ref{prop-reducible-dimofExt}(iii) via Shapiro's lemma.
\end{proof}

\begin{corollary}\label{cor:Exti-chi-pi2}
Let $\chi\in \JH(\pi(\brho)^{I_1})$ and assume it corresponds to an element in $\mathscr{PD}^{\dag}(x_0,\cdots,x_{f-1})$ defined in \eqref{eq:def-PD'}. Then $\dim_{\F}\Ext^{i}_{I/Z_1}(\chi,\pi(\brho))=\binom{2f}{i}$ for any $i\geq 0$.
\end{corollary}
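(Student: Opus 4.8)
The plan is to compute $\Ext^{\bullet}_{I/Z_1}(\chi,\pi(\brho))$ directly from the explicit resolution $K_{\bullet}(\underline{X},M_v)$ of $\pi(\brho)^{\vee}$ in \eqref{equation-Q-projresolution}, using the partial minimality statement of Proposition \ref{prop:K=minimal-I} together with the multiplicity count of Lemma \ref{lemma:BP-14.1}.

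First I would observe that $K_{\bullet}(\underline{X},M_v)$ remains a projective resolution of $\pi(\brho)^{\vee}$ after restriction to $\F[\![I/Z_1]\!]$: indeed $M_v|_I$ is projective over $\F[\![I/Z_1]\!]$ because restriction along the finite-index inclusion $I\hookrightarrow K$ has an exact two-sided adjoint and hence preserves projectives, and each term of \eqref{equation-Q-projresolution} is a finite direct sum of copies of $M_v$. By Pontryagin duality, $\Ext^i_{I/Z_1}(\chi,\pi(\brho))\cong \Ext^i_{\F[\![I/Z_1]\!]}(\pi(\brho)^{\vee},\chi^{\vee})$, so this group is the $i$-th cohomology of the cochain complex $\Hom_I\big(K_{\bullet}(\underline{X},M_v),\chi^{\vee}\big)$, whose degree $l$ term is $\Hom_I(M_v,\chi^{\vee})^{\oplus\binom{2f}{l}}$. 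Since $\chi$ corresponds to an element of $\mathscr{PD}^{\dag}(x_0,\cdots,x_{f-1})$, Proposition \ref{prop:K=minimal-I} asserts that every differential of this complex vanishes; hence
\[\Ext^i_{I/Z_1}(\chi,\pi(\brho))\cong \Hom_I(M_v,\chi^{\vee})^{\oplus\binom{2f}{i}}\qquad\text{for all }i\ge 0.\]

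It then remains to show $\dim_{\F}\Hom_I(M_v,\chi^{\vee})=1$. By duality $\Hom_I(M_v,\chi^{\vee})\cong\Hom_I(\chi,\Omega_v)$, and since $\Omega_v|_K\cong\bigoplus_{\sigma\in\mathscr{D}(\brho)}\rInj_{K/Z_1}\sigma$, its restriction to $I/Z_1$ is $\bigoplus_{\chi'}(\rInj_{I/Z_1}\chi')^{n_{\chi'}}$ with the $n_{\chi'}$ of Lemma \ref{lemma:BP-14.1} (cf. the proof of Lemma \ref{lemma:decomp-M}); thus $\dim_{\F}\Hom_I(\chi,\Omega_v)=n_{\chi}$. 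Lemma \ref{lemma:BP-14.1} gives $n_{\chi}=1$ because $\chi$ corresponds to an element of $\mathscr{PD}^{\dag}(x_0,\cdots,x_{f-1})$ (the hypothesis $\chi\in\JH(\pi(\brho)^{I_1})$ only serves to guarantee that $\chi$ actually occurs). Combining the two displayed facts yields $\dim_{\F}\Ext^i_{I/Z_1}(\chi,\pi(\brho))=\binom{2f}{i}$, as claimed.

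This argument is short because the real work is already done: the vanishing of the differentials is exactly Proposition \ref{prop:K=minimal-I} and the multiplicity-one input is Lemma \ref{lemma:BP-14.1}. The only point requiring care is the duality bookkeeping---making sure the minimality statement about the projective resolution \eqref{equation-Q-projresolution} of $\pi(\brho)^{\vee}|_I$ is correctly transported to the complex computing $\Ext^{\bullet}_{I/Z_1}(\chi,\pi(\brho))$, and that restriction from $K$ to $I$ preserves both projectivity and injectivity of the relevant modules---but this presents no genuine obstacle.
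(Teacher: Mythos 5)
Your proof is correct, but it takes a genuinely different route from the one in the paper. The paper proves this corollary by citing Corollary \ref{cor:Exti-chi-pi1}: it applies Frobenius reciprocity to move the computation to the $K$-level, $\Ext^i_{I/Z_1}(\chi,\pi(\brho))\cong\Ext^i_{K/Z_1}(\Ind_I^K\chi,\pi(\brho))$, then uses Proposition \ref{prop-reducible-dimofExt}(ii) and d\'evissage to reduce to $\Ext^i_{K/Z_1}(\sigma,\pi(\brho))$ for the unique $\sigma\in\JH(\Ind_I^K\chi)\cap\mathscr{D}(\brho)$, and concludes by Proposition \ref{prop-reducible-dimofExt}(iii) (which itself rests on the $K$-level minimality of Proposition \ref{prop:K=minimal}); the identification $n_\chi=1 \Leftrightarrow |\JH(\Ind_I^K\chi)\cap\mathscr{D}(\brho)|=1$ via Lemma \ref{lemma:BP-14.1} is what makes Corollary \ref{cor:Exti-chi-pi1} applicable. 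You instead stay entirely at the $I$-level: you read off $\Ext^i$ directly from the Koszul resolution $K_\bullet(\underline{X},M_v)$ restricted to $I$, using the $I$-level partial-minimality statement of Proposition \ref{prop:K=minimal-I} (rather than the $K$-level Proposition \ref{prop:K=minimal} plus Frobenius reciprocity), and then use $n_\chi=1$ in the form $\dim_\F\Hom_I(M_v,\chi^\vee)=1$. Both approaches rest on the same two underlying ingredients — the Koszul resolution with its (partial) minimality, and the multiplicity-one count of Lemma \ref{lemma:BP-14.1} — but package them differently: the paper's route factors through the intermediate, slightly more general Corollary \ref{cor:Exti-chi-pi1} (which applies whenever \eqref{eq:inter=oneweight} holds, not only for $\chi\in\mathscr{PD}^\dag$), whereas yours is more hands-on and makes direct use of Proposition \ref{prop:K=minimal-I}, which the paper states but in fact never invokes in its proof of this corollary. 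Your observation that the hypothesis $\chi\in\JH(\pi(\brho)^{I_1})$ only guarantees $n_\chi\ne 0$ is also accurate, and the bookkeeping with Pontryagin duality and the restriction of projectives from $\F[\![K/Z_1]\!]$ to $\F[\![I/Z_1]\!]$ is sound.
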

\begin{proof}
 This is a direct consequence of Corollary \ref{cor:Exti-chi-pi1} and Lemma \ref{lemma:BP-14.1}, noting that the condition \eqref{eq:inter=oneweight} is equivalent to $n_{\chi}=1$ in the notation of Lemma \ref{lemma:BP-14.1}.
\end{proof}
 
Next, we determine the derived ordinary parts of $\pi(\brho)$. Recall from \S\ref{subsection:ordinary}  the functors $R^i\Ord_P$.

\begin{proposition}\label{prop-ROrd-pi(rho)}
We have $R^i\Ord_P\pi(\brho)\cong \chi_0^{\oplus n_i}$, where $n_i=\binom{f}{i}$.
\end{proposition}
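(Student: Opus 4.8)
\textbf{Proof strategy for Proposition \ref{prop-ROrd-pi(rho)}.}
The plan is to compute $R^i\Ord_P\pi(\brho)$ by applying the functor $\Ord_P$ (and its derived functors) to the injective resolution \eqref{equation-Q-injresolution} of $\pi(\brho)$, and to exploit the fact that each term is a sum of copies of $\Omega_v$, whose ordinary parts are well understood via Proposition \ref{prop-BD-ordinary=injective}. First I would recall that $\Omega_v|_K \cong \bigoplus_{\sigma\in\mathscr{D}(\brho)}\rInj_{K/Z_1}\sigma$ is injective in $\rRep_{\F}(K/Z_1)$, so by Proposition \ref{prop-BD-ordinary=injective} one has $R^j\Ord_P\Omega_v=0$ for $j\geq 1$ and $\Ord_P\Omega_v$ is injective as a $T_0/Z_1$-representation. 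Moreover, since $\soc_G\Omega_v\cong\pi_0=\Ind_{\overline{P}}^G\chi_0$ (from Proposition \ref{prop-cosocle-pi(rho)} applied to the dual, i.e.\ $\pi(\brho)^\vee$, noting $\Omega_v$ has $G$-socle $\pi_0$), Proposition \ref{prop:ordinary-injective} shows the inclusion $\Ord_P(\pi_0)\hookrightarrow \Ord_P\Omega_v$ is essential when restricted to $T_0$; combined with $\Ord_P\pi_0\cong\chi_0$ (Proposition \ref{prop-Ord}(iii)) and Corollary \ref{cor:End-T0}, this identifies $(\Ord_P\Omega_v)^\vee|_{T_0}\cong\Proj_{T_0/Z_1}\chi_0^\vee$, a free module of rank one over $\F[\![S_1,\dots,S_f]\!]$.

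Next I would feed \eqref{equation-Q-injresolution} into $\Ord_P$. Because $R^j\Ord_P\Omega_v=0$ for $j\geq1$, the hypercohomology spectral sequence degenerates and $R^i\Ord_P\pi(\brho)$ is computed as the $i$-th cohomology of the complex
\[
0\lra \Ord_P\Omega_v\lra (\Ord_P\Omega_v)^{\oplus 2f}\lra \cdots \lra \Ord_P\Omega_v\lra 0,
\]
i.e.\ the Koszul-type complex $K_{\bullet}(\underline{X},\Ord_P\Omega_v)$ obtained by applying the exact functor $\Ord_P$ to $K_{\bullet}(\underline{X},M_v)^\vee$. Dually this is the Koszul complex on $M_v$ itself, localized via $\Ord_P$; concretely, setting $N\defn(\Ord_P\Omega_v)^\vee\cong\Proj_{T_0/Z_1}\chi_0^\vee$ as a module over $R_v'\defn\End_{T_0}(N)\cong\F[\![S_1,\dots,S_f]\!]$, I must understand how the $2f$ operators $X_1,\dots,X_{2f}$ (coming from $R_v\cong\F[\![X_1,\dots,X_{2f}]\!]$) act on $N$. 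The key point is that $N$ is a \emph{cyclic} module over the $f$-dimensional regular ring $R_v'$, while $R_v$ is $2f$-dimensional; the action $R_v\to R_v'=\End_{T_0}(N)$ must be surjective (by essentiality and cyclicity), with kernel generated by $f$ of the parameters. Thus $f$ of the $X_i$ act as (topologically nilpotent) generators of $\fm_{R_v'}$ — in particular as a regular sequence on $N$ — while the remaining $f$ act as $0$ on $N$.

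Granting this, the computation is mechanical: the Koszul complex $K_{\bullet}(\underline{X},N)$ splits as a tensor product of $f$ Koszul complexes on the $X_i$ that form a regular sequence (each contributing an acyclic resolution of $N/\underline{X}N$) with $f$ Koszul complexes on the zero operator (each contributing the two-term complex $N\xrightarrow{0}N$, whose cohomology is $N$ in both degrees, hence behaves like $\wedge^\bullet(\F^f)$). Therefore $H^i(K_{\bullet}(\underline{X},N))\cong (N/\underline{X}_{\mathrm{reg}}N)\otimes\wedge^i(\F^f)$; since $N/\fm_{R_v'}N\cong\chi_0^\vee$ (one-dimensional), this yields $R^i\Ord_P\pi(\brho)\cong\chi_0^{\oplus\binom{f}{i}}$ after dualizing. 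A cross-check: $R^f\Ord_P\pi(\brho)\cong\chi_0$, compatible with Proposition \ref{prop-Ord}(ii)–(iv) and the fact that $\pi(\brho)$ surjects onto $\pi_f=\Ind_P^G\chi_f^s$ with $\chi_f^s\alpha_P^{-1}=\chi_0$; and the total dimension $\sum_i\binom{f}{i}=2^f$ is consistent with $\dim_\F\Ext^{i}$-counts in Proposition \ref{prop-reducible-dimofExt}. The main obstacle is the middle paragraph: rigorously establishing that, among the $2f$ parameters $X_i$ of $R_v$, exactly $f$ act as a regular sequence generating $\fm_{R_v'}$ on $N$ and the other $f$ annihilate $N$. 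I would argue this using Proposition \ref{thm-BD-Rord} (the action of $R_v^{\Box,\psi^{-1}}$ on ordinary parts factors through the reducible quotient $R_v^{\Box,\psi^{-1},\mathrm{red}}$) together with Corollary \ref{cor::intersect-tang-space} and the explicit relation $\bar{\cI}^{{\rm tame},\sigma_0}+\bar{\cI}^{\rm red}=\bar{\cI}^{\sigma_0}$ from Proposition \ref{prop-relation-ideals}: the quotient $R_v$ was built by killing the $f+3$ parameters $T_j$ that cut out the crystalline-of-weight-$\sigma_0$ locus, and the further quotient detecting $\Ord_P$ corresponds precisely to imposing the reducibility condition $Y_1=\cdots=Y_f=0$ (Proposition \ref{prop-tangent}), which is an independent set of $f$ equations inside $R_v$. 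Matching these $f$ equations with $f$ of the coordinates $X_i$ after the chosen isomorphism $R_v\cong\F[\![X_1,\dots,X_{2f}]\!]$ is the technical heart, and once it is in place the rest follows as above.
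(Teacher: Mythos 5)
Your overall set-up matches the paper's — both proofs observe that $R_v$ acts on $\Omega_v$, that $R_v'\defn\End_{T_0}\bigl((\Ord_P\Omega_v)^{\vee}|_{T_0}\bigr)\cong\F[\![S_1,\dots,S_f]\!]$, and that the resulting map $R_v\to R_v'$ is surjective — but then the two arguments diverge in an instructive way. You try to split the $2f$ parameters of $R_v$ into $f$ that act as a regular sequence on $N=(\Ord_P\Omega_v)^\vee$ and $f$ that annihilate it, and then factor the full Koszul complex as a tensor product. The paper instead lifts $S_1,\dots,S_f$ to elements $Y_1,\dots,Y_f$ of $R_v$, extends to a minimal system $(\un{Y},\un{Z})$, and does a \emph{two-step} Koszul computation: first $K_\bullet(\un{Y},M_v)$ shows $\widetilde\pi(\brho)\defn (M_v/(\un{Y}))^\vee$ is $\Ord_P$-acyclic with $\Ord_P\widetilde\pi(\brho)\cong\chi_0$, and then $K_\bullet(\un{Z},\widetilde\pi(\brho)^\vee)$ gives only the inequality $n_i\le\binom{f}{i}$. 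The reverse inequality is then obtained by a completely separate argument: plugging the dimension count $\dim_\F\Ext^i_{G,\zeta}(\pi_0,\pi(\brho))=\binom{2f+1}{i}$ from Proposition \ref{prop-reducible-dimofExt}(iv) into the spectral sequence \eqref{equation-Emerton-SS} and applying Vandermonde's identity. The paper's route is deliberately designed to \emph{avoid} knowing anything about how the complementary parameters $Z_i$ act on $\Ord_P\Omega_v$; the $Z_i$ are arbitrary and need not annihilate $N$.

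The gap you identify as the ``technical heart'' — that $f$ of the $X_i$ annihilate $N$ and the other $f$ form a regular sequence — is real, but your proposed route via Proposition \ref{thm-BD-Rord}, Corollary \ref{cor::intersect-tang-space} and Proposition \ref{prop-relation-ideals} is the wrong tool: it would require a delicate matching of Galois-theoretic coordinates with the abstractly chosen $X_i$, and it is not clear this closes cleanly. What you actually need is much simpler: since $R_v\twoheadrightarrow R_v'$ is a surjection of regular local rings of dimensions $2f$ and $f$ respectively, the kernel is automatically generated by $f$ elements forming part of a regular system of parameters of $R_v$ (a standard fact — see e.g.\ \cite[Thm.~14.2]{Mat}). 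You would then \emph{re-choose} the $X_i$ so that $X_{f+1},\dots,X_{2f}$ lie in this kernel; this is harmless because the Koszul complex $K_\bullet(\un{X},M_v)$ is, up to isomorphism, independent of the regular system of parameters. Two further imprecisions worth noting: (i) you assert the split ``$f$ of the $X_i$ act as\ldots'' for the fixed $X_i$, when it only holds after a change of coordinates; and (ii) the justification ``by essentiality and cyclicity'' for surjectivity of $R_v\to R_v'$ is vague — the paper's argument that $(\Ord_P\Omega_v)^{\vee}/(\fm_v)\cong(\Ord_P\pi(\brho))^{\vee}\cong\F$ is the precise statement you need, and it uses $\Omega_v[\fm_v]=\pi(\brho)$ together with left exactness of $\Ord_P$. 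With these fixes your one-step Koszul argument would in fact be a slightly more direct proof than the paper's; but as written, the middle paragraph does not actually establish the claim it rests on.
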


\begin{proof}
First note that $\Ord_P\pi(\brho)\cong \chi_0$
by Proposition \ref{prop--ord--semisimple}.

 The action of $R_{v}$ on $\Omega_{v}$ induces   morphisms of local rings  
\begin{equation}\label{eq:Rv-res}R_{v}\ra \End_{T}((\Ord_P\Omega_{v})^{\vee})\overset{\rm res}{\ra} \End_{T_0}((\Ord_P\Omega_{v})^{\vee}|_{T_0}) \end{equation}
where $T_0\defn T\cap K$ and the second map is the restriction map.
We claim that the composition is surjective.
Indeed, it suffices to show \[\End_{T_0}((\Ord_P\Omega_{v})^{\vee}|_{T_0})/(\fm_{v})\cong\F,\] where $\fm_{v}$ denotes the maximal ideal of $R_{v}$ and $(\fm_{v})$ the extended ideal in $\End_{T_0}((\Ord_P\Omega_{v})^{\vee}|_{T_0})$.  Since  the actions of $R_{v}$ and $G$ commute with each other, we have
\[(\Ord_P \Omega_{v})^{\vee}/(\fm_{v})\cong \big(\Ord_P(\Omega_{v}[\fm_{v}])\big)^{\vee}=\big(\Ord_P\pi(\brho)\big)^{\vee}\]
which is one-dimensional over $\F$ (isomorphic to $\chi_0^{\vee}$), as seen above. This proves the claim.  As a byproduct, since the restriction map  in \eqref{eq:Rv-res} is clearly injective, it  is actually an isomorphism.

By Corollary \ref{cor:End-T0} and the claim, we have  $\End_{T}((\Ord_P\Omega_{v})^{\vee})\cong \F[\![S_1,\dots,S_{f}]\!]$. Since \eqref{eq:Rv-res} is surjective, we may choose lifts of $S_i$ (for $1\leq i\leq f$) in $R_{v}$, say $Y_i$. Then $Y_i$ are linearly independent in $\fm_{v}/\fm_{v}^2$, and  can be extended to a minimal set of generators of $\fm_v$, say by $(Z_1,\dots,Z_f)$; here we recall that $\dim_{\F}\fm_{v}/\fm_{v}^2=2f$.
Set \[\underline{Y}=(Y_1,\dots,Y_f),\ \ \underline{Z}=(Z_1,\dots,Z_f),\ \ \underline{S}=(S_1,\dots,S_f).\]
Since $R_{v}$ is a regular local ring, $(\un{Y},\un{Z})$ necessarily form a regular sequence in $R_{v}$, which is also $M_{v}$-regular because $M_{v}$ is $R_{v}$-flat. In particular, $\un{Y}$ is an $M_{v}$-regular sequence and it defines a Koszul complex $K_{\bullet}(\un{Y},M_{v})$, which is a projective resolution of
$M_{v}/(\un{Y})$ in the category of pseudo-compact $\F[\![K/Z_1]\!]$-modules. Dually, we obtain
an injective resolution of $\widetilde{\pi}(\brho)\defn(M_{v}/(\underline{Y}))^{\vee}$:
\[0\ra \widetilde{\pi}(\brho)\ra K_{\bullet}(\un{Y},\Omega_{v}).\]
By Proposition \ref{prop-BD-ordinary=injective}(ii), $\Omega_v$ is $\Ord_P$-acyclic and so
\begin{equation}\label{eq:Ri=Hi}R^i\Ord_P\widetilde{\pi}(\brho)\cong H^i\big(\Ord_P(K_{\bullet}(\un{Y},\Omega_{v}))\big). \end{equation}
Since $\un{Y}$ acts on $\Ord_P\Omega_{v}$ via $\un{S}$, we have
\[\Ord_P(K_{\bullet}(\un{Y},\Omega_{v}))\cong K_{\bullet}(\un{S},(\Ord_P\Omega_{v})).\]
Since $\un{S}$ is a regular sequence for $(\Ord_P\Omega_{v})^{\vee}$, $K_{\bullet}(\un{S},(\Ord_P\Omega_{v}))$ is an acyclic complex with $H^0$ isomorphic to $\chi_0$.  Combining with \eqref{eq:Ri=Hi} we deduce that
\begin{equation}\label{eq:Ord-tildepi}\Ord_P\widetilde{\pi}(\brho)\cong \chi_0,\ \ \ R^i\Ord_P\widetilde{\pi}(\brho)=0,\ \ \forall i\geq 1.\end{equation}
In particular, $\widetilde{\pi}(\brho)$ is also $\Ord_P$-acyclic.

Next, we consider the action of $\underline{Z}=(Z_1,\cdots,Z_f)$ on $M_v/(\un{Y})$. By construction, $\un{Z}$ is a regular sequence for $M_{v}/(\un{Y})$, hence gives rise to  a Koszul complex $K_{\bullet}(\underline{Z},M_{v}/(\un{Y}))$ which is a resolution of $\pi(\brho)^{\vee}=M_{v}/(\un{Y},\un{Z})$. Dually we obtain a resolution of $\pi(\brho)$ of Koszul type
\[0\ra \pi(\brho)\ra K_{\bullet}(\un{Z},\widetilde{\pi}(\brho)).\]
 Moreover, since $\widetilde{\pi}(\brho)$ is $\Ord_P$-acyclic, we can calculate $R^i\Ord_P\pi(\brho)$ by taking the cohomology of the complex $\Ord_P\big(K_{\bullet}(\un{Z},\widetilde{\pi}(\brho))\big)=K_{\bullet}(\underline{Z},\Ord_P\widetilde{\pi}(\brho))$. 
In particular, we deduce from \eqref{eq:Ord-tildepi} that $R^i\Ord_P\pi(\brho)$ is semisimple and isomorphic to $\chi_0^{\oplus n_i}$ with $n_i\leq \binom{f}{i}$.

It remains  to prove the equality $n_i=\binom{f}{i}$. The spectral sequence  \eqref{equation-Emerton-SS} shows that
\[\dim_{\F}\Ext^{n}_{G,\zeta}(\pi_0,\pi(\brho))\leq \sum_{i+j=n}\dim_{\F}\Ext^j_{T,\zeta}\big(\chi_0,R^i\Ord_P\pi(\brho)\big).\]
Since $\dim_{\F}\Ext^j_{T,\zeta}(\chi_0,\chi_0)=\binom{f+1}{j}$, this inequality  translates to (by Proposition \ref{prop-reducible-dimofExt}(iv))
\[\binom{2f+1}{n}\leq\sum_{i+j=n}\binom{f+1}{j}\cdot n_i. \]
Recalling $n_i\leq \binom{f}{i}$,   Vandermonde's identity
$\binom{2f+1}{n}=\sum_{i+j=n}\binom{f+1}{j}\binom{f}{i}$ then forces $n_i=\binom{f}{i}$.
\end{proof}

\subsection{A criterion}
\label{subsection-D0}

In this subsection, we devise a criterion for which type of subspaces of $\pi(\brho)$ can generate it as a  $G$-representation.  

\begin{lemma}\label{lemma-dim-Ext2f=1}
We have $\dim_{\F}\Ext^{2f}_{I/Z_1}(\chi_0^s,\pi_f)=1$.
\end{lemma}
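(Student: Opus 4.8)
The representation $\pi_f = \Ind_{\overline{P}}^G\chi_f = \Ind_P^G\chi_f^s$ is an irreducible principal series, so I would compute $\Ext^{2f}_{I/Z_1}(\chi_0^s,\pi_f)$ by exploiting the interaction between parabolic induction and Emerton's ordinary parts functor. The cleanest route is via the adjunction and spectral sequence recalled in \S\ref{subsection:ordinary}: applying Corollary \ref{cor-Ext-2f+1} to $U=\chi_0^s$ (or rather its $K$-restriction, working with $I$ in place of $T_0$) reduces a top-degree $\Ext$ over $I$ to an $\Ext$ over the torus, which is computable since $T/Z$ has cohomological dimension $f$ and $\Ext^{\bullet}_{T_0/Z_1}$ between characters is governed by an exterior algebra on $f$ generators.

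Concretely, first I would use Frobenius reciprocity together with the decomposition $\pi_f|_K \cong \Ind_{I}^K(\chi_f^s|_{H})$ (up to the subtlety that $\pi_f$ is a $G$-representation, not just a $K$-representation) to relate $\Ext^{2f}_{I/Z_1}(\chi_0^s,\pi_f)$ to cohomology of $I_1$ and $H$-invariants. More efficiently, I would invoke Proposition \ref{prop-Ord} and Corollary \ref{cor-Ext-2f+1}: one has $\Ext^{2f+1}_{G,\zeta}(\Ind_{\overline{P}}^GU, \pi_f) \cong \Ext^{f+1}_{T,\zeta}(U, R^f\Ord_P\pi_f)$, and by Proposition \ref{prop-Ord}(iii), $R^f\Ord_P(\Ind_{\overline{P}}^G V) \cong V^s\alpha_P^{-1}$ for a $T$-representation $V$. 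Taking $V = \chi_f$ gives $R^f\Ord_P\pi_f \cong \chi_f^s\alpha_P^{-1}$. The key computation is then that $\chi_f^s\alpha_P^{-1}$ equals precisely $\chi_0^s$ (or the relevant twist thereof): unwinding the definitions \eqref{eq:chi-0-f}, $\chi_f = \chi_2\omega^{-1}\otimes\chi_1$, so $\chi_f^s = \chi_1\otimes\chi_2\omega^{-1}$, and twisting by $\alpha_P^{-1} = \omega^{-1}\otimes\omega$ yields $\chi_1\omega^{-1}\otimes\chi_2 = \chi_0$, hence $\chi_f^s\alpha_P^{-1} = \chi_0$ and its $s$-conjugate is $\chi_0^s$. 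This matching is the crux — it reflects the essential self-duality $\pi_f \cong \EE^{2f}(\pi_0)$ up to twist (Lemma \ref{lemma:Kolh}), and the pairing between $\sigma_0$ and $\sigma_f$.

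Granting this identification, one reduces to computing $\Ext^{f}_{T_0/Z_1}(\chi_0^s, \chi_0^s)$ (the top nonvanishing degree, since $T_0/Z_1$ has cohomological dimension $f$), which is one-dimensional: $T_0/Z_1$ has a pro-$p$ part isomorphic to $\Z_p^f$, so $H^f(T_0/Z_1,\F) \cong \F$. One must be slightly careful about which $\Ext$-degree corresponds to $2f$ over $I$ versus $f$ over $T_0$: the pro-$p$ Iwahori $I_1$ has cohomological dimension $2f$ (it is a Poincaré duality group of that dimension modulo center), and the restriction of $\chi_0^s$-invariants along $\overline{N}_1 \subset I_1$ contributes the missing $f$ via $H^{\bullet}(\overline{N}_0,\chi^{\vee})$ as in the computation in Lemma \ref{lemma:Ext-Inj=0}. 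I expect the main obstacle to be bookkeeping the Hochschild--Serre spectral sequence for $1 \to I_1 \to I \to H \to 1$ combined with $1 \to \overline{N}_0 \to \overline{P}\cap K \to T_0 \to 1$, and verifying that all intermediate $\Ext$-groups either vanish in the relevant total degrees or combine to give exactly $\binom{f}{\bullet}$-type dimensions that collapse to $1$ in top degree. A cleaner alternative avoiding spectral sequences: use the explicit injective resolution of $\pi_f|_{I/Z_1}$ coming from the $\overline{W}$-type modules, or apply Lemma \ref{lemma:sigma-Vord} and its consequences to transfer the computation to $\Ord_P\pi_f$ directly; since $\Ord_P\pi_f \cong \chi_f\otimes\chi_f$ would vanish issues, one works instead with $R^f\Ord_P$ and its one-dimensionality. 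Either way the output is $\dim_{\F}\Ext^{2f}_{I/Z_1}(\chi_0^s,\pi_f) = 1$.
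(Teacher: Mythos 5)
Your proposal has a genuine gap. The central problem is that you never establish a bridge between $\Ext^{2f}_{I/Z_1}(\chi_0^s,\pi_f)$ — an extension group over the Iwahori — and the objects your plan actually computes, which are $\Ext$-groups over $G$ or over $T$. Emerton's ordinary parts functor $\Ord_P$, the derived functors $R^j\Ord_P$, the adjunction in Proposition \ref{prop-Ord}, and the spectral sequence \eqref{equation-Emerton-SS} all live at the level of $G$-representations. Corollary \ref{cor-Ext-2f+1} reduces $\Ext^{2f+1}_{G,\zeta}(\Ind_{\overline{P}}^GU,V)$ to $\Ext^{f+1}_{T,\zeta}(U,R^f\Ord_PV)$; this is not the quantity to be computed. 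The comparison of $\Ext^{2f}_{I/Z_1}(\chi_0^s,\pi_f)$ with $\Ext^{2f+1}_{G,\zeta}(\pi_0,\pi_f)$ does appear in the paper, but only in Lemma \ref{lemma-all-dim1}, whose proof \emph{uses} Lemma \ref{lemma-dim-Ext2f=1}. So invoking that comparison here would be circular. Your remarks about Hochschild--Serre for $1\to I_1\to I\to H\to 1$ and $\overline{N}_0$-cohomology gesture at a bridge but never produce one; without it the rest of the argument does not attach to the statement. (There is also a smaller slip in degree/group bookkeeping: $T/Z$ has cohomological dimension $f+1$ and $T_0/Z_1$ has dimension $f$; your ``reduces to $\Ext^f_{T_0/Z_1}$'' conflates these.)

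The paper's argument is much more direct and stays entirely inside Iwahori-level group cohomology. Write $\pi_f=\Ind_P^G\chi_f^s$ and apply Mackey's theorem to get $\pi_f|_I\cong \Ind_{I\cap P}^I\chi_f^s\oplus\Ind_{I\cap\overline{P}}^I\chi_f$. Frobenius reciprocity (Shapiro) then gives
\[
\Ext^{2f}_{I/Z_1}(\chi_0^s,\pi_f)\cong H^{2f}\bigl((I\cap P)/Z_1,(\chi_0^s)^{-1}\chi_f^s\bigr)\oplus H^{2f}\bigl((I\cap\overline{P})/Z_1,(\chi_0^s)^{-1}\chi_f\bigr),
\]
and since $(I\cap P)/Z_1$ and $(I\cap\overline{P})/Z_1$ are Poincar\'e groups of dimension $2f$, Poincar\'e duality identifies each $H^{2f}$ with an $H^0$ of the twist of the inverse character by the modulus (which is exactly $\alpha_P$ or $\alpha_P^{-1}$). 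The genericity of $\brho$ then kills one summand and shows the other has dimension $1$. Your computation $\chi_f^s\alpha_P^{-1}=\chi_0$ is correct and is indeed the numerical heart of why the right summand survives — but the correct framework in which to use it is Poincar\'e duality for the compact group $(I\cap P)/Z_1$, not derived ordinary parts.
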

\begin{proof}
Recall that $\pi_f=\Ind_P^G\chi_f^s$. Restricting to $I$, we obtain a decomposition (by Mackey's theorem)
\[\pi_f|_{I}\cong \Ind_{I\cap P}^I\chi_f^s\oplus \Ind_{I\cap \overline{P}}^I\chi_f.\]
 By Shapiro's lemma, we have
 \[\Ext^{2f}_{I/Z_1}(\chi_0^s,\pi_f)\cong H^{2f}\big((I\cap P)/Z_1,(\chi_0^s)^{-1}\chi_f^s\big)\oplus H^{2f}\big((I\cap\overline{P})/Z_1,(\chi_0^s)^{-1}\chi_f\big).\]
We need to prove that only one summand of the last term is nonzero and it has dimension $1$.

Since $(I\cap P)/Z_1$ is a Poincar\'e duality group at $p$ of dimension $2f$ (see \cite[Chap.~I, Appendix 1]{Se97} or \cite[(3.4.6)]{NSW}),
  the Poincar\'e duality implies  that
\[\dim_{\F}H^{2f}((I\cap P)/Z_1, \chi)= \dim_{\F}H^0((I\cap P)/Z_1,\chi^*)\]
for any character $\chi$ of $(I\cap P)/Z_1$, where
$\chi^{*}\defn\Hom(\chi,\F)$
is the dualizing module of $\chi$ with $\F$ being endowed with an action of $(I\cap P)/Z_1$ via the usual modulus character
by \cite[Example, p. 42]{Se97}.  Explicitly, this modulus character is equal to (restriction of) $\alpha_{P}=\omega\otimes\omega^{-1}$, hence ${\chi}^*\cong \chi^{-1}(\omega\otimes \omega^{-1})$ as characters of $I\cap P$.
Using \eqref{eq:chi-0-f}, it is direct to check that
\[((\chi_0^s)^{-1}\chi_f^s)^*=(\chi_1^{-1}\chi_2\omega)\otimes
(\chi_1\chi_2^{-1}\omega^{-1}).\]
The genericity condition on $\brho$ implies that this is a nontrivial character of $I\cap P$, and so
\[H^0\big((I\cap P)/Z_1, ((\chi_0^s)^{-1}\chi_f^s )^{*}\big)=0.\]
  In a similar way, one checks that  $H^{2f}\big((I\cap \overline{P})/Z_1,(\chi_0^s)^{-1}\chi_f\big)$ has dimension $1$ (notice that the modulus character associated to $(I\cap \overline{P})/Z_1$ is $\omega^{-1}\otimes \omega$). This finishes the proof.
 \end{proof}

 \begin{lemma} \label{lemma-all-dim1}
The natural morphism $\pi(\brho)\twoheadrightarrow \pi_f$ induces the following isomorphisms
\begin{enumerate}
\item[(i)] $\Ext^{2f+1}_{G,\zeta}(\pi_0,\pi(\brho))\simto\Ext^{2f+1}_{G,\zeta}(\pi_0,\pi_f)$; 
\item[(ii)] $\Ext^{2f}_{K/Z_1}(\sigma_0,\pi(\brho))\simto \Ext^{2f}_{K/Z_1}(\sigma_0,\pi_f)$;
\item[(iii)] $\Ext^{2f}_{I/Z_1}(\chi_0^s,\pi(\brho))\simto\Ext^{2f}_{I/Z_1}(\chi_0^s,\pi_f)$.
\end{enumerate}
Moreover, all these spaces have dimension $1$ over $\F$.
\end{lemma}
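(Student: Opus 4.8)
\textbf{Proof plan for Lemma \ref{lemma-all-dim1}.}

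The plan is to deduce all three isomorphisms from the structure already established, with the higher derived ordinary parts of $\pi(\brho)$ (Proposition \ref{prop-ROrd-pi(rho)}) as the engine. First I would prove (i). By Proposition \ref{prop-reducible-dimofExt}(iv) we have $\dim_{\F}\Ext^{2f+1}_{G,\zeta}(\pi_0,\pi(\brho))=\binom{2f+1}{2f+1}=1$, and the same top-degree computation applied to $\pi_f$ (or directly, using $\Ord_P\pi_f$, $R^f\Ord_P\pi_f$ via Proposition \ref{prop-Ord}(iii) and Corollary \ref{cor-Ext-2f+1}) shows $\dim_{\F}\Ext^{2f+1}_{G,\zeta}(\pi_0,\pi_f)=1$ as well. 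To see the map between them is an isomorphism rather than merely a map between one-dimensional spaces, I would argue that the quotient $\pi(\brho)\twoheadrightarrow \pi_f$ has kernel $\pi'$ whose top-degree $\Ext$ vanishes: the long exact sequence gives $\Ext^{2f+1}_{G,\zeta}(\pi_0,\pi') \to \Ext^{2f+1}_{G,\zeta}(\pi_0,\pi(\brho)) \to \Ext^{2f+1}_{G,\zeta}(\pi_0,\pi_f) \to 0$ (the last term because $2f+1$ is the cohomological dimension, by Corollary \ref{cor-Ext-2f+1}), so it suffices to show $\Ext^{2f+1}_{G,\zeta}(\pi_0,\pi')=0$. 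For this I would use the spectral sequence \eqref{equation-Emerton-SS} together with Proposition \ref{prop-ROrd-pi(rho)}: since $R^i\Ord_P\pi(\brho)$ is $\chi_0$-isotypic and the kernel $\pi'$ has $R^f\Ord_P\pi'$ controlled by the exact sequence $R^{f-1}\Ord_P\pi_f \to R^f\Ord_P\pi' \to R^f\Ord_P\pi(\brho)\to R^f\Ord_P\pi_f$, a dimension count in degree $2f+1$ forces the contribution of $\pi'$ to vanish. The cleanest route may in fact be to compute $\dim_{\F}\Ext^{2f+1}_{G,\zeta}(\pi_0,\pi')$ directly from the Emerton spectral sequence knowing $\Ord_P$ and $R^{\bullet}\Ord_P$ of all three terms, and check it is $0$.

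For (ii) I would use the short exact sequence \eqref{eq:BL-pi0}, i.e. $0\to \cInd_{\mathfrak{R}_0}^G\sigma_0 \xrightarrow{T-\lambda_0} \cInd_{\mathfrak{R}_0}^G\sigma_0 \to \pi_0\to 0$, which for any smooth representation $V$ yields an exact sequence relating $\Ext^{\bullet}_{G,\zeta}(\pi_0,V)$ to $\Ext^{\bullet}_{K/Z_1}(\sigma_0,V)$ (via $\Hom_G(\cInd_{\mathfrak{R}_0}^G\sigma_0,V)\cong\Hom_K(\sigma_0,V)$ and the analogous statement for $\Ext$, since $\cInd_{\mathfrak{R}_0}^G\sigma_0$ is a compact induction). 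In top degree this should give a surjection $\Ext^{2f+1}_{G,\zeta}(\pi_0,V)\twoheadrightarrow$ something, but more useful is that the boundary map identifies $\Ext^{2f}_{K/Z_1}(\sigma_0,V)$ with a subquotient of $\Ext^{2f+1}_{G,\zeta}(\pi_0,V)$; combined with Proposition \ref{prop-reducible-dimofExt}(iii) ($\dim=\binom{2f}{2f}=1$) and the analogous fact for $\pi_f$ in place of $\pi(\brho)$ — which I would get from \eqref{eq:def-sigmaf}, noting $\soc_K\pi_f=\sigma_f$ and $\sigma_0\in\JH(\pi_f|_K)$ with the right multiplicity via \cite[Cor. 4.11]{BP} — this pins down (ii). Actually the more direct approach: apply $\Hom_K(\sigma_0,-)$ to the injective resolution \eqref{equation-Q-injresolution} and to $\pi(\brho)\twoheadrightarrow\pi_f$; since \eqref{equation-Q-injresolution} restricted to $K$ is built from $\Omega_v$ and the differentials are minimal (Proposition \ref{prop:K=minimal}), $\Ext^{2f}_{K/Z_1}(\sigma_0,\pi(\brho))\cong\Hom_K(\sigma_0,\Omega_v)^{\oplus\binom{2f}{2f}}=\Hom_K(\sigma_0,\Omega_v)$, which is one-dimensional; compatibility with the quotient to $\pi_f$ then reduces to the statement that $\soc_K\pi_f$ and $\soc_K\pi(\brho)$ and $\soc_K\Omega_v$ are related as in Proposition \ref{prop-cosocle-pi(rho)} and its proof.

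For (iii), the same method applies with $I$ in place of $K$: restrict \eqref{equation-Q-injresolution} to $I/Z_1$, where it is still an injective resolution with each term a sum of copies of $\Omega_v$, and use the partial minimality relative to $\mathscr{PD}^{\dag}$ (Proposition \ref{prop:K=minimal-I}); I would check that $\chi_0^s$ corresponds to an element of $\mathscr{PD}^{\dag}(x_0,\cdots,x_{f-1})$ — this is the character on $\sigma_0^{I_1}$ conjugated by $\smatr{0}{1}{p}{0}$, and one reads off that its parametrizing tuple has all entries in $\{x_i,\,x_i+2,\,p-1-x_i,\,p-3-x_i\}$ — so that the differentials vanish on $\Hom_I((\chi_0^s)^{\vee}$-part$,-)$ and $\Ext^{2f}_{I/Z_1}(\chi_0^s,\pi(\brho))\cong\Hom_I(\chi_0^s,\Omega_v)$, which is one-dimensional. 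Then Lemma \ref{lemma-dim-Ext2f=1} gives $\dim_{\F}\Ext^{2f}_{I/Z_1}(\chi_0^s,\pi_f)=1$, and the quotient map induces the isomorphism by the same socle-compatibility argument (the kernel $\pi'$ of $\pi(\brho)\twoheadrightarrow\pi_f$ has $\Hom_I(\chi_0^s,-)$ of its top self-extension vanishing, again from the minimality of the resolution and the fact that $\pi'$ does not see $\chi_0^s$ in the relevant cohomological degree).

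The main obstacle I anticipate is (i): controlling $\Ext^{2f+1}_{G,\zeta}(\pi_0,\pi')$ for the kernel $\pi'$ requires juggling the Emerton spectral sequence \eqref{equation-Emerton-SS} simultaneously for $\pi(\brho)$, $\pi_f$, and $\pi'$, and making sure the top-degree contributions match up exactly. Here the precise dimension counts of $R^i\Ord_P\pi(\brho)=\chi_0^{\oplus\binom{f}{i}}$ (Proposition \ref{prop-ROrd-pi(rho)}) and of $R^i\Ord_P\pi_f$ (from Proposition \ref{prop-Ord}(iii): $\Ord_P\pi_f\cong\chi_f^s$... wait, rather $R^f\Ord_P\pi_f\cong\chi_0\alpha_P^{-1}$ suitably, and $R^i\Ord_P\pi_f=0$ for $0<i<f$ when $L\neq\Q_p$ by Proposition \ref{prop-Ord}(v), with $R^0\Ord_P\pi_f=\chi_f^s$) will all be needed, and the bookkeeping with the characters $\chi_0$ versus $\chi_f^s$ on which side of the Jacquet module they land is where errors are most likely. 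Once (i) is secure, (ii) and (iii) follow by the uniform "restrict the minimal resolution, compare socles" argument, and the final dimension assertion is immediate from Proposition \ref{prop-reducible-dimofExt}, Lemma \ref{lemma-dim-Ext2f=1}, and the minimality statements.
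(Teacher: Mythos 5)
Your plan is broadly sound and identifies all the right ingredients (derived ordinary parts, the boundary spectral sequence, the compact-induction presentation of $\pi_0$, the minimality of the Koszul resolution), but it is both over-engineered in one place and under-specified in another.

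For (i), the detour through $\Ext^{2f+1}_{G,\zeta}(\pi_0,\pi')=0$ is unnecessary, and would in fact require more than the paper provides: controlling $R^{\bullet}\Ord_P\pi'$ needs $R^{f-1}\Ord_P\pi_f=0$, which for $f\geq 3$ is not in the paper (only $R^0,R^1,R^f$ of a principal series are computed). You don't need it. The long exact sequence you wrote down already gives surjectivity of $\Ext^{2f+1}_{G,\zeta}(\pi_0,\pi(\brho))\to\Ext^{2f+1}_{G,\zeta}(\pi_0,\pi_f)$ because $\Ext^{2f+2}_{G,\zeta}(\pi_0,-)$ vanishes (Corollary \ref{cor-Ext-2f+1}); since you have computed both sides to be one-dimensional, a surjection between one-dimensional spaces is an isomorphism and you are done. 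The paper does exactly this, but one level down: since $R^{f+1}\Ord_P=0$, the map $R^f\Ord_P\pi(\brho)\to R^f\Ord_P\pi_f$ is surjective, hence (by Propositions \ref{prop-Ord}(iii) and \ref{prop-ROrd-pi(rho)}) an isomorphism, and Corollary \ref{cor-Ext-2f+1} transports this to the $\Ext^{2f+1}$ statement.

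For (ii) and (iii), your first idea (boundary maps coming from the presentation \eqref{eq:BL-pi0}, identifying $\Ext^{2f}_{K/Z_1}(\sigma_0,V)$ with $\Ext^{2f+1}_{G,\zeta}(\pi_0,V)$ up to surjectivity) is the right one and matches the paper, which assembles these boundary maps and the Frobenius-reciprocity isomorphisms into a single commutative diagram and propagates (i) through it. Your ``more direct'' alternative — restrict the minimal resolution \eqref{equation-Q-injresolution}, read off the dimension of $\Ext^{2f}$, then invoke ``socle compatibility with the quotient to $\pi_f$'' — gives the dimension count on the source but does not, as written, produce the isomorphism: the resolution \eqref{equation-Q-injresolution} resolves $\pi(\brho)$, not $\pi_f$, the quotient map does not descend to it, and $\soc_K\pi_f=\sigma_f\neq\sigma_0$, so a socle comparison does not see the relevant $\Ext^{2f}$ class. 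As with (i), the cleanest finish is again automatic surjectivity: $\Ext^{2f+1}_{K/Z_1}(\sigma_0,\pi')=0$ because $K/Z_1$ has cohomological dimension $2f$, so $\Ext^{2f}_{K/Z_1}(\sigma_0,\pi(\brho))\to\Ext^{2f}_{K/Z_1}(\sigma_0,\pi_f)$ is surjective, and similarly for $I/Z_1$. What you then still need is $\dim_{\F}\Ext^{2f}_{K/Z_1}(\sigma_0,\pi_f)=1$ — you gesture at deducing this from $\JH(\pi_f|_K)$ and \cite[Cor.~4.11]{BP}, but that is not a computation of a top $\Ext$ group. The paper avoids this by proving the dimension only for the leftmost bottom entry (Lemma \ref{lemma-dim-Ext2f=1}) and propagating through the surjective horizontal maps of the diagram, using that the rightmost bottom entry is one-dimensional by (i). You should adopt this diagram chase rather than the resolution/socle argument.
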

\begin{proof}
Since $R^{f+1}\Ord_P=0$, the morphism $R^f\Ord_P\pi(\brho)\ra R^f\Ord_P\pi_f$ is surjective, hence is an isomorphism for the reason of dimensions using Proposition \ref{prop-Ord}(iii) and Proposition \ref{prop-ROrd-pi(rho)}. Using  Corollary \ref{cor-Ext-2f+1}  we deduce an  isomorphism
\[ \Ext^{2f+1}_{G,\zeta}(\pi_0,\pi(\brho))\simto\Ext^{2f+1}_{G,\zeta}(\pi_0,\pi_f),\]
and both the spaces have dimension $1$ because $\Ext^{f+1}_{T,\zeta}(\chi_0,\chi_0)$ has dimension $1$. This proves (i).

Recall the presentation of $\pi_0$ in \eqref{eq:BL-pi0}
\[0\lra \cInd_{\mathfrak{R}_0}^G\sigma_0\overset{T-\lambda_0}{\lra} \cInd_{\mathfrak{R}_0}^G\sigma_0\lra\pi_0\lra0.\]
  Using Frobenius reciprocity, it induces a morphism
\[\partial:\Ext^{2f}_{K/Z_1}(\sigma_0,\pi(\brho))\ra \Ext^{2f+1}_{G,\zeta}(\pi_0,\pi(\brho))\]
which is surjective as $\Ext^{2f+1}_{K/Z_1}(\sigma_0,\pi(\brho))=0$, hence is an isomorphism for the reason of dimensions, see Proposition \ref{prop-reducible-dimofExt}. Similarly we have a morphism
\[\partial': \Ext^{2f}_{K/Z_1}(\sigma_0,\pi_f)\ra \Ext^{2f+1}_{G,\zeta}(\pi_0,\pi_f)\]
which is also surjective using Lemma \ref{lemma:Kolh}.

We have the following commutative diagram
\[{\scriptsize
\xymatrix{\Ext^{2f}_{I/Z_1}(\chi_0^s,\pi(\brho))\ar^{\cong\ \ \ }[r]\ar^{\beta}[d]&\Ext^{2f}_{K/Z_1}\!\big(\Ind_I^K\!\chi_0^s,\pi(\brho)\!\big)\ar^{\ \ \iota}[r]\ar^{\beta'}[d]&\Ext^{2f}_{K/Z_1}\!(\sigma_0,\pi(\brho)\!)\ar^{\gamma}[d]\ar^{\partial}_{\cong}[r]&\Ext^{2f+1}_{G,\zeta}(\!\pi_0,\pi(\brho)\!)\ar^{\delta}[d]\\
\Ext^{2f}_{I/Z_1}(\chi_0^s,\!\pi_f)\ar^{\cong\ \ \ }[r]&\Ext^{2f}_{K/Z_1}\!\big(\Ind_I^K\!\chi_0^s,\pi_f\!\big)\ar^{\ \ \iota'}[r]& \Ext^{2f}_{K/Z_1}(\sigma_0,\pi_f)\ar^{\partial'}@{->>}[r]&\Ext^{2f+1}_{G,\zeta}\!(\pi_0,\pi_f\!),} }\]
where the two horizontal isomorphisms in the leftmost square are given by Shapiro's lemma, and $\iota$ (resp. $\iota'$) is induced by the inclusion $\sigma_0\hookrightarrow\Ind_I^K\chi_0^s$. Moreover, we have
\begin{enumerate}
\item[$\bullet$] $\delta$ is an isomorphism by (i); 
\item[$\bullet$] $\iota$ (resp. $\iota'$) is surjective because $\pi(\brho)|_K$ (resp. $\pi_f|_K$) has injective dimension $2f$.
\end{enumerate}
In particular, all horizontal morphisms are surjective. All the spaces in  the top row have dimension $1$ over $\F$ by Proposition \ref{prop-reducible-dimofExt} and Corollary \ref{cor:Exti-chi-pi2}, and   $\dim_{\F}\Ext^{2f}_{I/Z_1}(\chi_0^s,\pi_f)=1$ by Lemma \ref{lemma-dim-Ext2f=1}. It is then easy to deduce that all the spaces in the bottom row have dimension $1$ as well and that $\beta$, $\gamma$ are both isomorphisms.  This proves (ii) and (iii).
\end{proof}

Now we are ready to prove the criterion.

  \begin{proposition}\label{prop-gen-by-W}
If $W$ is an $I$-subrepresentation of $\pi(\brho)$ such that the natural morphism
\begin{equation}\label{equation-W-pi(rho)}
\Ext^{2f}_{I/Z_1}(\chi_0^s,W)\ra\Ext^{2f}_{I/Z_1}(\chi_0^s,\pi(\brho))\end{equation}
is surjective, then $\pi(\brho)$ is generated by $W$ as a $G$-representation.
\end{proposition}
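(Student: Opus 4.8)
The plan is to deduce the statement from two facts already in hand: that $\pi(\brho)$ has irreducible $G$-cosocle $\pi_f$ (Proposition \ref{prop-cosocle-pi(rho)}), and that the quotient map detects $\Ext^{2f}_{I/Z_1}(\chi_0^s,-)$ isomorphically (Lemma \ref{lemma-all-dim1}(iii)). So this will be a short formal argument, the real content having been extracted earlier.

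First I would record the cosocle criterion: since $\mathrm{cosoc}_G\pi(\brho)\cong\pi_f$ is irreducible — equivalently $\pi(\brho)^{\vee}$ has irreducible $G$-socle $\pi_f^{\vee}$ — an $I$-subrepresentation $W\subseteq\pi(\brho)$ generates $\pi(\brho)$ as a $G$-representation if and only if the composite $W\hookrightarrow\pi(\brho)\twoheadrightarrow\pi_f$ is nonzero. Indeed, writing $V\defn\langle G\cdot W\rangle$, the image of $V$ in $\pi_f$ is the $G$-subrepresentation generated by the image of $W$, which is all of $\pi_f$ as soon as it is nonzero (using $G$-irreducibility of $\pi_f$); and any $G$-subrepresentation of $\pi(\brho)$ surjecting onto $\pi_f$ must coincide with $\pi(\brho)$, since otherwise the nonzero $G$-subobject $(\pi(\brho)/V)^{\vee}\subseteq\pi(\brho)^{\vee}$ would meet the socle $\pi_f^{\vee}$, forcing $\pi(\brho)\twoheadrightarrow\pi_f$ to factor through $\pi(\brho)/V$ and hence $W\to\pi_f$ to vanish. (This is the standard criterion already used in the introduction; I would state it as such.)

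Next, given $W$ with the surjectivity hypothesis \eqref{equation-W-pi(rho)}, I would apply the functor $\Ext^{2f}_{I/Z_1}(\chi_0^s,-)$ to the composite of $I$-equivariant maps $W\hookrightarrow\pi(\brho)\twoheadrightarrow\pi_f$, obtaining
\[\Ext^{2f}_{I/Z_1}(\chi_0^s,W)\lra\Ext^{2f}_{I/Z_1}(\chi_0^s,\pi(\brho))\xrightarrow{\ \sim\ }\Ext^{2f}_{I/Z_1}(\chi_0^s,\pi_f),\]
in which the first arrow is surjective by assumption and the second is an isomorphism of one-dimensional $\F$-vector spaces by Lemma \ref{lemma-all-dim1}(iii). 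Thus the composite is surjective, in particular nonzero; were the $I$-map $W\to\pi_f$ zero, the induced map on $\Ext^{2f}_{I/Z_1}(\chi_0^s,-)$ would vanish by functoriality, a contradiction. Hence $W\hookrightarrow\pi(\brho)\twoheadrightarrow\pi_f$ is nonzero, and the cosocle criterion of the previous step gives that $W$ generates $\pi(\brho)$ as a $G$-representation.

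There is essentially no obstacle remaining in this proposition itself; the delicate inputs are upstream — the essential self-duality of Theorem \ref{thm:main-flat}(ii) feeding the $G$-cosocle computation, and the derived ordinary parts of Proposition \ref{prop-ROrd-pi(rho)} feeding Lemma \ref{lemma-all-dim1}. The only point requiring a little care in the write-up is making the cosocle criterion precise in the category of admissible smooth $\F$-representations, which is why I would phrase it via Pontryagin duals and the irreducibility of $\soc_G\pi(\brho)^{\vee}=\pi_f^{\vee}$.
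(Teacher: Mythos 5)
Your argument is correct and follows the same route as the paper: reduce to showing the composite $W\hookrightarrow\pi(\brho)\twoheadrightarrow\pi_f$ is nonzero (using that $\pi_f$ is the irreducible $G$-cosocle from Proposition \ref{prop-cosocle-pi(rho)}), then detect this nonvanishing by applying $\Ext^{2f}_{I/Z_1}(\chi_0^s,-)$ and invoking the surjectivity hypothesis together with the isomorphism of Lemma \ref{lemma-all-dim1}(iii). The paper phrases the cosocle step contrapositively — if $W$ fails to generate then $W\subseteq\Ker(\pi(\brho)\twoheadrightarrow\pi_f)$, so the displayed map factors through $\Ext^{2f}_{I/Z_1}(\chi_0^s,\Ker)$ and hence vanishes — but this is the same argument.
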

\begin{proof} 
Let $\langle G.W\rangle\subset \pi(\brho)$ be the $G$-subrepresentation generated by $W$. If $\langle G.W\rangle\subsetneq \pi(\brho)$, then $\langle G.W\rangle$ is contained in $V\defn\Ker(\pi(\brho)\twoheadrightarrow \pi_f)$, because $\pi_f$ is the cosocle of $\pi(\brho)$ by Proposition \ref{prop-cosocle-pi(rho)}.  Hence the morphism \eqref{equation-W-pi(rho)} factors through $\Ext^{2f}_{I/Z_1}(\chi_0^s,V)$ as illustrated in the following diagram:
\[\xymatrix{&\Ext^{2f}_{I/Z_1}(\chi_0^s,W)\ar^{\eqref{equation-W-pi(rho)}}@{->>}[d] \ar@{-->}[ld] &
\\
\Ext_{I/Z_1}^{2f}(\chi_0^s,V)\ar[r]&\Ext^{2f}_{I/Z_1}(\chi_0^s,\pi(\brho))\ar_{\cong}^{\beta}[r] & \Ext^{2f}_{I/Z_1}(\chi_0^s,\pi_f)}\]
where $\beta$ is an isomorphism by Lemma \ref{lemma-all-dim1}(iii). But the composition of the two maps in the bottom row is zero, we get a contradiction if \eqref{equation-W-pi(rho)} is surjective.
 \end{proof}

\subsection{The representation $\tau(\brho)$}

 We define
a suitable $I$-representation $\tau(\brho)$ which can be embedded in $\pi(\brho)|_I$. In next subsection, we will show that $\pi(\brho)$ is generated by $\tau(\brho)$ as a $G$-representation, using the criterion Proposition \ref{prop-gen-by-W}.

Recall from \S\ref{section-BP} the subset $J_{\brho}\subset \cS$  attached to $\brho$.

\begin{lemma}\label{lemma:chi-J}
Let  $J\subset \cS$. The character $\chi_0^s\big(\prod_{j\in J}\alpha_j^{-1}\big) $ occurs in  $\pi(\brho)^{I_1}$ if and only if $J\subset J_{\brho}$.
\end{lemma}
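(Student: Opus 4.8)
\textbf{Plan of proof of Lemma \ref{lemma:chi-J}.}

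The plan is to identify $\JH(\pi(\brho)^{I_1})$ with $\JH(D_1(\brho))$ and then to read off which characters of the form $\chi_0^s\big(\prod_{j\in J}\alpha_j^{-1}\big)$ belong to this set by using the explicit parametrization of $\JH(D_1(\brho))$ via $\mathscr{PD}(x_0,\cdots,x_{f-1})$ recalled in the proof of Lemma \ref{lemma-PD-set}. First I would recall that, by assumption (a) on $\pi(\brho)$ (which holds in our global setting by Proposition \ref{thm-Le}(ii)), we have $\pi(\brho)^{I_1}\cong D_0(\brho)^{I_1}=D_1(\brho)$ as $I$-representations, so $\chi$ occurs in $\pi(\brho)^{I_1}$ if and only if $\chi\in\JH(D_1(\brho))$. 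Then I would invoke \cite[Prop. 4.3]{Br14}: the set $\JH(D_1(\brho))$ consists exactly of the characters of $I$ acting on $\sigma^{I_1}$ where $\sigma$ runs over the Serre weights attached to $\lambda\in\mathscr{PD}(x_0,\cdots,x_{f-1})$ via \eqref{eq:lambda-rho}, with the constraint that $\lambda_i(x_i)\in\{p-3-x_i,x_i+2\}$ forces $i\in J_{\brho}$.

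Next I would make the translation between the combinatorial data $\lambda$ and the character $\chi_\lambda$ of $I$ explicit. Writing $\brho|_{I(\bQp/L)}$ in the form (1) with parameters $(r_0,\cdots,r_{f-1})$, the "ordinary" Serre weight is $\sigma_0=(r_0,\cdots,r_{f-1})$ and $\chi_0^s$ is the character of $I$ on $\sigma_0^{I_1}$ — this corresponds to the tuple with all $\lambda_i(x_i)=x_i$ (up to the normalization coming from $\sigma_\emptyset$ in \S\ref{subsection:PS}). I would then compute, for a general $\lambda\in\mathscr{PD}(x_0,\cdots,x_{f-1})$, the character $\chi_\lambda$ relative to $\chi_0^s$: using the definition of $e(\lambda)$ and the action of $H$ on $\sigma^{I_1}$, each coordinate choice $\lambda_i(x_i)\in\{x_i,x_i+2,p-3-x_i,p-1-x_i,\ldots\}$ contributes a specific power of $\alpha_i$ to $\chi_\lambda/\chi_0^s$. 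The key point is that $\chi_\lambda=\chi_0^s\big(\prod_{j\in J}\alpha_j^{-1}\big)$ for some $J\subseteq\cS$ precisely when $\lambda_j(x_j)$ is, for each $j\in J$, the unique admissible value producing the shift by $\alpha_j^{-1}$ and, for $j\notin J$, the value $x_j$ (no shift); one checks from the list of allowed values that the $\alpha_j^{-1}$-shift is realized by $\lambda_j(x_j)=p-3-x_j$ (i.e. by the "other branch"), which by the last condition defining $\mathscr{PD}(x_0,\cdots,x_{f-1})$ is permitted if and only if $j\in J_{\brho}$, while $\lambda_j(x_j)=x_j$ is always admissible. Moreover one must verify that such $\lambda$ satisfies the two closure conditions of $\mathscr{PD}(x_0,\cdots,x_{f-1})$ (the local compatibility between $\lambda_i$ and $\lambda_{i+1}$) — this is a direct check since the relevant tuples are "constant" on $\cS\setminus J$ and take a single prescribed value on $J$.

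Combining these two steps: $\chi_0^s\big(\prod_{j\in J}\alpha_j^{-1}\big)$ lies in $\JH(D_1(\brho))$ if and only if the associated tuple $\lambda$ lies in $\mathscr{PD}(x_0,\cdots,x_{f-1})$, which happens if and only if $J\subseteq J_{\brho}$; together with $\pi(\brho)^{I_1}\cong D_1(\brho)$ this gives the lemma. I expect the main obstacle to be the bookkeeping in the middle step: correctly matching the branch of $\lambda_j$ that produces exactly the twist $\alpha_j^{-1}$ (and not $\alpha_j^{-2}$ or $\alpha_j$), keeping track of the determinant twists ${\det}^{e(\lambda)}$ and of the change of normalization between $\sigma_\emptyset$ and $\chi_0^s$, and checking that no tuple outside $J\subseteq J_{\brho}$ can accidentally give the same character. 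This is entirely a finite combinatorial verification using the table-type arguments already employed in \S\ref{section-BP} (cf. Lemma \ref{lemma-PD-set} and its proof), so no genuinely new ideas are needed, but it requires care with signs and indices.
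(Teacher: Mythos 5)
Your overall strategy is exactly the one the paper uses: identify $\pi(\brho)^{I_1}$ with $D_1(\brho)$, parametrize $\JH(D_1(\brho))$ by the combinatorial set $\mathscr{PD}(x_0,\cdots,x_{f-1})$, translate the character $\chi_0^s\prod_{j\in J}\alpha_j^{-1}$ into a tuple $\lambda_J$, and then read off that $\lambda_J\in\mathscr{PD}$ if and only if $J\subseteq J_{\brho}$ via the constraint that $\lambda_i(x_i)\in\{p-3-x_i,x_i+2\}$ forces $i\in J_{\brho}$. So no new ideas are missing. However, your explicit translation contains a normalization error that is not merely cosmetic.

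Concretely, you assert that $\chi_0^s$ ``corresponds to the tuple with all $\lambda_i(x_i)=x_i$'' and that the $\alpha_j^{-1}$-shift is realized by $\lambda_j(x_j)=p-3-x_j$. These two statements belong to incompatible normalizations. Under the bijection used by the paper in this proof (where $\chi\in\JH(D_1(\brho))$ corresponds to $\lambda$ with $\chi=\chi_{\sigma_\lambda}$), the identity tuple $(x_0,\ldots,x_{f-1})$ corresponds to $\chi_0=\chi_{\sigma_0}$, not to $\chi_0^s$; one computes, using the determinant twist $e(\lambda)$, that $\chi_0^s$ corresponds to $(p-1-x_0,\cdots,p-1-x_{f-1})$, and the $\alpha_j^{-1}$-shift lowers $p-1-x_j$ to $p-3-x_j$. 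Under the conjugated normalization $\psi=\chi^s$ used in the proof of Lemma \ref{lemma-PD-set}, $\chi_0^s$ does correspond to the identity tuple, but there the $\alpha_j^{-1}$-shift raises $x_j$ to $x_j+2$. Mixing the base tuple from one normalization with the shift from the other produces the tuple $(\lambda_J)_j(x_j)=x_j$ for $j\notin J$ and $p-3-x_j$ for $j\in J$, and this tuple in fact \emph{fails} the local compatibility conditions defining $\mathscr{PD}(x_0,\cdots,x_{f-1})$ whenever $j\in J$ and $j+1\notin J$: the rule requires that $\lambda_j(x_j)=p-3-x_j$ be followed by $\lambda_{j+1}(x_{j+1})\in\{p-1-x_{j+1},p-3-x_{j+1},x_{j+1}+1\}$, which excludes $x_{j+1}$. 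So the ``direct check'' you defer to would not go through. You would reach the correct conclusion by working consistently in either normalization (all coordinates in $\{p-1-x_i,p-3-x_i\}$ as the paper does, or all in $\{x_i,x_i+2\}$ in the conjugated picture), since in both cases the choice marking $j\in J$ is precisely the one that the last defining condition of $\mathscr{PD}(x_0,\cdots,x_{f-1})$ ties to $j\in J_{\brho}$.
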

\begin{proof}
 We have seen in the proof of Lemma \ref{lemma-PD-set} that the set of characters occurring in $\pi(\brho)^{I_1}$ is in bijection with a certain set of $f$-tuples   $\mathscr{PD}(x_0,\cdots,x_{f-1})$. Recall that, if $\lambda\in\mathscr{PD}(x_0,\cdots,x_{f-1})$ then, among other conditions,
\begin{equation}\label{eq:PD-lambda}
\lambda_i(x_i)\in\{x_i,x_i+1,x_i+2,p-3-x_i,p-2-x_i,p-1-x_i\}\end{equation}
and $\lambda_i(x_i)\in \{p-3-x_i,x_i+2\}$ implies $i\in J_{\brho}$.
Via this bijection, the character $\chi_0^s$ corresponds to  $(p-1-x_0,\cdots,p-1-x_{f-1})$, and $\chi_0^s (\prod_{j\in J}\alpha_j^{-1})$ corresponds to $\lambda_{J}$ where
\[(\lambda_{J})_i(x_i)\defn\left\{\begin{array}{rll}
p-1-x_i& i\notin J\\
p-3-x_i&i\in J.
\end{array}\right.\]
The result follows from this.
\end{proof}

\begin{definition}\label{def:tau-rho}
We define
\[\tau(\brho):=\chi_0^s\otimes \tau_{J_{\brho}},\]
where $\tau_{J_{\brho}}$ is the $I$-representation defined in Definition \ref{def:tau-J} with $\cJ=J_{\brho}$.
\end{definition}

As a direct consequence of Proposition \ref{prop:resolution-tauJ}, we have the following.
\begin{proposition}\label{prop:resolution-tau}
The projective dimension of $\tau(\brho)^{\vee}$ is $3f$.
Moreover, $\tau(\brho)^{\vee}$ admits a  length $3f$ minimal resolution   by projective  $\F[\![I/Z_1]\!]$-modules,
\[P_{\bullet}\ra \tau(\brho)^{\vee}\ra0\]
satisfying the following property:  for each $0\leq l\leq 3f$,
$P_{l}$ has a direct sum decomposition  \[P_{l}=P_{l}'\oplus  P_{l}''\]
such that
\begin{enumerate}
\item[(a)] $P_l'\cong \big(\bigoplus_{\chi}P_{\chi}\big)^{\binom{2f}{l}}$, where $\chi$ runs over the characters of $\mathrm{cosoc}_I(\tau(\brho)^{\vee})$;
\item[(b)]$\Hom_I(P''_l,P_{\chi}/\fm^2)=0$ for any $\chi\in\mathrm{cosoc}_I(\tau(\brho)^{\vee})$.
\end{enumerate}
As a consequence, $\dim_{\F}\Ext^{l}_{I/Z_1}(\tau(\brho)^{\vee},\chi)=\binom{2f}{l}$ for any $\chi$ occurring in $\mathrm{cosoc}_I(\tau(\brho)^{\vee})$.
\end{proposition}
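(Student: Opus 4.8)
\textbf{Proof proposal for Proposition \ref{prop:resolution-tau}.}

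The plan is to deduce everything from Proposition \ref{prop:resolution-tauJ} applied to $\cJ=J_{\brho}$, together with a twist by the character $\chi_0^s$. First I would recall that by Definition \ref{def:tau-rho} we have $\tau(\brho)=\chi_0^s\otimes\tau_{J_{\brho}}$, hence $\tau(\brho)^{\vee}\cong (\chi_0^s)^{-1}\otimes\tau_{J_{\brho}}^{\vee}$. Since tensoring by a one-dimensional $I$-representation is an exact auto-equivalence of the category of pseudo-compact $\F[\![I/Z_1]\!]$-modules that sends projectives to projectives (it sends $P_{\chi}$ to $P_{\chi(\chi_0^s)^{-1}}$ and preserves the $\fm=\fm_{I_1/Z_1}$-adic filtration, as $\fm$ is stable under the $H$-conjugation that twisting involves), a minimal projective resolution $P_{J_{\brho},\bullet}\ra\tau_{J_{\brho}}^{\vee}$ as produced by Proposition \ref{prop:resolution-tauJ} yields, after twisting every term by $(\chi_0^s)^{-1}$, a minimal projective resolution $P_{\bullet}\ra\tau(\brho)^{\vee}$ of length $3f$. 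Here $P_l:=(\chi_0^s)^{-1}\otimes P_{J_{\brho},l}$, and the decomposition $P_l=P_l'\oplus P_l''$ is obtained by twisting the decomposition $P_{J_{\brho},l}=P_{J_{\brho},l}'\oplus P_{J_{\brho},l}''$ of Proposition \ref{prop:resolution-tauJ}. Property (a) then follows because twisting carries $\big(\bigoplus_{\chi}\Proj_{I/Z_1}\chi\big)^{\binom{2f}{l}}$ with $\chi$ running over $\mathrm{cosoc}_I(\tau_{J_{\brho}}^{\vee})$ to the analogous direct sum with $\chi$ running over $\mathrm{cosoc}_I(\tau(\brho)^{\vee})=(\chi_0^s)^{-1}\otimes\mathrm{cosoc}_I(\tau_{J_{\brho}}^{\vee})$; and property (b) follows because twisting commutes with quotienting by $\fm^2$, so $\Hom_I(P_l'',P_{\chi}/\fm^2)=\Hom_I(P_{J_{\brho},l}'',P_{\chi(\chi_0^s)}/\fm^2)=0$ for $\chi\in\mathrm{cosoc}_I(\tau(\brho)^{\vee})$.

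Next I would record that the projective dimension of $\tau(\brho)^{\vee}$ equals $3f$: it is at most $3f$ by the existence of the length-$3f$ resolution, and it is exactly $3f$ because the resolution is minimal (so $P_{3f}\neq0$, e.g. $P_{3f}'\neq0$ since $\binom{2f}{3f}=0$ but $P_{3f}''\neq0$ — more carefully, one checks from the explicit construction in Proposition \ref{prop:resolution-tauJ} via Lemmas \ref{lemma:complex-type-e}, \ref{lemma:complex-type-f}, \ref{lemma:complex-type-0} and their tensor product that the top term is nonzero). A clean way to see nonvanishing is that $\gr_{\fm}(\tau_{J_{\brho}}^{\vee})$ has, by Lemma \ref{lemma:gr(tauJ)}, projective dimension $3f$ over $\gr_{\fm}(\F[\![I_1/Z_1]\!])$ (each tensor factor $\Ugi/\fa_i$ has projective dimension $3$, and projective dimensions add under the tensor product of the Koszul-type resolutions), and then Lemma \ref{lemma:appendix-minimal} (or the standard comparison between filtered and associated-graded projective dimensions) forces the projective dimension of $\tau(\brho)^{\vee}$ itself to be $3f$.

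Finally, for the consequence about $\Ext$-dimensions: fix a character $\chi$ occurring in $\mathrm{cosoc}_I(\tau(\brho)^{\vee})$. Apply $\Hom_I(-,\chi)$ to the minimal resolution $P_{\bullet}$. Because the resolution is minimal, all the differentials in the resulting complex $\Hom_I(P_{\bullet},\chi)$ vanish, so $\Ext^l_{I/Z_1}(\tau(\brho)^{\vee},\chi)\cong\Hom_I(P_l,\chi)$. Now $\Hom_I(P_l,\chi)=\Hom_I(P_l',\chi)\oplus\Hom_I(P_l'',\chi)$. From (a), $\Hom_I(P_l',\chi)$ has dimension exactly $\binom{2f}{l}$ (the character $\chi$ appears once in the cosocle of each $\Proj_{I/Z_1}\chi$-summand indexed by $\chi$ itself, and the multiplicity of those summands is $\binom{2f}{l}$). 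From (b), $\Hom_I(P_l'',\chi)=0$: indeed $\Hom_I(P_l'',\chi)$ injects into $\Hom_I(P_l'',P_{\chi}/\fm^2)$ via the inclusion $\chi\hookrightarrow P_{\chi}/\fm^2$ composed with... more directly, $\Hom_I(P_l'',\chi)$ is a quotient of $\Hom_I(P_l'',P_{\chi}/\fm^2)$ only after care, so instead I note $\Hom_I(P_l'',\chi)\neq0$ would force $\chi\in\mathrm{cosoc}_I(P_l'')$, hence a nonzero map $P_l''\twoheadrightarrow\chi$ which factors through $P_l''/\fm^2\!P_l''$ and thus lifts to a nonzero map $P_l''\to P_{\chi}/\fm^2$ (by projectivity of $P_l''$ applied to the surjection $P_{\chi}/\fm^2\twoheadrightarrow\chi$ — wait, need surjectivity onto $\chi$, which holds since $\chi=\mathrm{cosoc}(P_{\chi})$), contradicting (b). Hence $\dim_{\F}\Ext^l_{I/Z_1}(\tau(\brho)^{\vee},\chi)=\binom{2f}{l}$ as claimed. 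I expect the only mildly delicate point to be the bookkeeping in deducing (b) for $\tau(\brho)$ from (b) for $\tau_{J_{\brho}}$, i.e. keeping track of which characters $\chi$ land in $\mathrm{cosoc}_I(\tau(\brho)^{\vee})$ after the twist and checking $p>5$ is still what is needed to rule out coincidences among the relevant characters $\alpha_j^{\pm1}$-translates; but this is exactly parallel to the argument already carried out for Proposition \ref{prop:resolution-tauJ} and requires no new ideas.
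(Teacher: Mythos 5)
Your proposal is correct and matches what the paper intends: the paper states Proposition \ref{prop:resolution-tau} as ``a direct consequence of Proposition \ref{prop:resolution-tauJ}'' with no written proof, and your elaboration — twist $\tau_{J_{\brho}}$ by $\chi_0^s$, observe that tensoring by a character of $I$ (trivial on $I_1$) is an exact auto-equivalence preserving projectives, the $\fm$-adic filtration, minimality, and the decomposition $P_l = P_l'\oplus P_l''$, then read off the $\Ext$-dimensions from minimality — is precisely the argument the phrase ``direct consequence'' is abbreviating. The small slip ``$P_{3f}'\neq 0$ since $\binom{2f}{3f}=0$'' is self-corrected and harmless; the cleanest justification for $\operatorname{pd}=3f$ is simply that each factor complex \eqref{eq:complex-type-e}, \eqref{eq:complex-type-f}, \eqref{eq:complex-type-0} has a nonzero top term $G_3$, so the tensor product has a nonzero term in degree $3f$, and minimality then forces the projective dimension to equal $3f$.
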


Next, we study the relation between  $\tau(\brho)$ and $\pi(\brho)$.

\begin{proposition}\label{prop:tau-embeds}
There exists an embedding $\tau(\brho)\hookrightarrow \pi(\brho)|_{I}$.
\end{proposition}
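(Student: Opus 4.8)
The plan is to build the embedding one indecomposable summand of $\tau(\brho)$ at a time. Decompose $\tau(\brho)=\bigoplus_{\underline{\varepsilon}}\tau(\brho)_{\underline{\varepsilon}}$ as in Definition~\ref{def:tau-rho}: for $\underline{\varepsilon}\in\{\pm1\}^{J_{\brho}}$ put $J=\{i:\varepsilon_i=-1\}\subseteq J_{\brho}$, $\chi_J^s\defn\chi_0^s\prod_{i\in J}\alpha_i^{-1}$, so that $\tau(\brho)_{\underline{\varepsilon}}=\chi_J^s\otimes\tau_{J_{\brho},\underline{\varepsilon}}$. By Lemma~\ref{lemma:socle-tauJ} each $\tau(\brho)_{\underline{\varepsilon}}$ has \emph{simple} $I$-socle $\chi_J^s$, and the $\chi_J^s$ (for $J\subseteq J_{\brho}$) are pairwise distinct. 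By Lemma~\ref{lemma:chi-J} every $\chi_J^s$ occurs in $\pi(\brho)^{I_1}$, and since its parametrising $f$-tuple $\lambda_J$ has each entry in $\{p-1-x_i,p-3-x_i\}$ it lies in $\mathscr{PD}^{\dag}(x_0,\cdots,x_{f-1})$ (cf.\ \eqref{eq:def-PD'}), whence $n_{\chi_J^s}=1$ by Lemma~\ref{lemma:BP-14.1}; thus $\Hom_I(\chi_J^s,\pi(\brho))$ is one-dimensional and the resulting lines inside $\pi(\brho)^{I_1}$ are distinct. It therefore suffices to produce, for each $\underline{\varepsilon}$, an $I$-homomorphism $\tau(\brho)_{\underline{\varepsilon}}\to\pi(\brho)$ that is nonzero on the socle; such a map is automatically injective, and the maps then glue to the desired embedding of the direct sum. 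Equivalently, the task is to show that for each $\underline{\varepsilon}$ the restriction map $\Hom_I(\tau(\brho)_{\underline{\varepsilon}},\pi(\brho))\to\Hom_I(\chi_J^s,\pi(\brho))$ is surjective.

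I would realise this inside the injective co-resolution \eqref{equation-Q-injresolution}. Recall that $\pi(\brho)=\Omega_v[\fm_{R_v}]=\bigcap_{i=1}^{2f}\ker\!\big(X_i\colon\Omega_v\to\Omega_v\big)$, where $\Omega_v|_K\cong\bigoplus_{\sigma\in\mathscr{D}(\brho)}\rInj_{K/Z_1}\sigma$ is injective in $\Rep_{\F}(K/Z_1)$; hence $\Omega_v|_I$ is injective in $\Rep_{\F}(I/Z_1)$ and $\Omega_v|_{I_1}\cong\bigoplus_{\chi\in\JH(D_1(\brho))}(\rInj_{I/Z_1}\chi)^{n_\chi}$. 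Since $\chi_J^s\in\JH(D_1(\brho))$, the socle inclusion $\chi_J^s\hookrightarrow\Omega_v$ extends, by injectivity of $\Omega_v|_I$ and essentiality of $\tau(\brho)_{\underline{\varepsilon}}$ over its simple socle, to an $I$-embedding $j\colon\tau(\brho)_{\underline{\varepsilon}}\hookrightarrow\Omega_v$. The problem is to choose $j$ so that $X_i\circ j=0$ for all $i$, i.e.\ so that $j$ factors through $\pi(\brho)$; this is exactly the statement that the socle generator lies in the image of the restriction map above.

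Surjectivity of the restriction map is the heart of the argument, and I would prove it by lifting the socle map up the Loewy layers of $\tau(\brho)_{\underline{\varepsilon}}$ step by step, controlling the successive obstruction classes with the cohomological input already assembled. The layers annihilated by $\fm^3$ — the characters $\chi_J^s\alpha_i^{\pm1}$ and then the length-three uniserial pieces $\chi_J^s\otimes E_i^{\pm}(2)$ — are handled by Corollary~\ref{cor:multione-Iwahori}(ii) together with Corollary~\ref{cor-factor-barW}: the unique map $W_{\chi_J^s,3}\to\pi(\brho)$ factors through $\overline{W}_{\chi_J^s,3}$, inside which these pieces are visible via \eqref{eq:barW-sq2} and \eqref{eq:Ei(2)}. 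For the higher layers I would induct on adjoining one tensor factor $\tau_{J_{\brho},\underline{\varepsilon},i}$ at a time: extending an $I$-embedding of $\chi_J^s\otimes\bigotimes_{i<k}\tau_{J_{\brho},\underline{\varepsilon},i}$ to one of $\chi_J^s\otimes\bigotimes_{i\le k}\tau_{J_{\brho},\underline{\varepsilon},i}$ is a lifting problem whose obstruction lies in an $\Ext^1_{I/Z_1}$-group which, through the Koszul co-resolution \eqref{equation-Q-injresolution} and its partial minimality relative to $\mathscr{PD}^{\dag}$ (Proposition~\ref{prop:K=minimal-I}), is pinned down by Proposition~\ref{prop-reducible-dimofExt}, Corollary~\ref{cor:Exti-chi-pi2} and the derived ordinary parts computation (Proposition~\ref{prop-ROrd-pi(rho)}). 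Concretely one has $\dim_{\F}\Ext^i_{I/Z_1}(\chi_J^s,\pi(\brho))=\binom{2f}{i}=\dim_{\F}\Ext^i_{I/Z_1}(\tau(\brho)_{\underline{\varepsilon}}^\vee,(\chi_J^s)^\vee)$ by Corollary~\ref{cor:Exti-chi-pi2} and Proposition~\ref{prop:resolution-tau}, and the real work is to upgrade this numerical coincidence of $\Ext$-dimensions into an actual surjection on $\Hom$ realised by the socle generator. That upgrade is the main obstacle: it cannot be read off formally, and — as with the analogous maps denoted $\beta_i$ in the introduction — it seems to require genuine use of the Koszul shape of the resolution of $\pi(\brho)^\vee$ in combination with the multiplicity-one statement Corollary~\ref{cor:multione-Iwahori}(ii).
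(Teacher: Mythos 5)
Your opening reduction is correct and essentially matches what the paper does: $\tau(\brho)^{I_1}=\soc_I(\tau(\brho))$ is a multiplicity-free sum of the $\chi_J^s$ for $J\subseteq J_{\brho}$, each of which occurs in $\pi(\brho)^{I_1}$ by Lemma \ref{lemma:chi-J}, so the question is whether the socle embedding extends. But the way you then propose to produce the extension --- building a map into $\Omega_v$ via injectivity and trying to force it to land in $\pi(\brho)=\Omega_v[\fm_{R_v}]$ by climbing Loewy layers and controlling obstructions with the Koszul co-resolution \eqref{equation-Q-projresolution}, partial minimality (Proposition \ref{prop:K=minimal-I}), and derived ordinary parts (Proposition \ref{prop-ROrd-pi(rho)}) --- is, as you yourself flag at the end, incomplete: the numerical coincidence $\dim\Ext^i_{I/Z_1}(\chi_J^s,\pi(\brho))=\binom{2f}{i}=\dim\Ext^i_{I/Z_1}(\tau(\brho)_{\underline{\varepsilon}}^{\vee},(\chi_J^s)^{\vee})$ does not formally yield the needed surjection on $\Hom$. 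That gap is real. Moreover the tools you invoke are the ones the paper deploys \emph{after} this proposition, for Theorem \ref{thm-generation-tau}, where the present embedding is already taken as input; so this route is both circular in spirit and a substantial overshoot.

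The missing idea is Lemma \ref{lemma:breuil}, which makes the extension problem trivial. Apply the long exact sequence for $\Hom_I(-,\pi(\brho))$ to $0\to\tau(\brho)^{I_1}\to\tau(\brho)\to\tau(\brho)/\tau(\brho)^{I_1}\to0$: the restriction map $\Hom_I(\tau(\brho),\pi(\brho))\to\Hom_I(\tau(\brho)^{I_1},\pi(\brho))$ is an isomorphism provided $\Hom_I$ and $\Ext^1_{I/Z_1}$ of the quotient against $\pi(\brho)$ both vanish, and by Proposition \ref{prop-reducible-dimofExt}(i) that vanishing holds as soon as $\JH(\tau(\brho)/\tau(\brho)^{I_1})\cap\JH(\pi(\brho)^{I_1})=\emptyset$. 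Verifying this disjointness is a purely combinatorial check: by Lemma \ref{lemma:socle-tauJ} every Jordan--H\"older factor of $\tau(\brho)$ is of the form $\chi_0^s\prod_i\alpha_i^{b_{J,i}}$ with $(b_{J,i})$ in the explicit range listed there, while any character occurring in $\pi(\brho)^{I_1}$ corresponds to an $f$-tuple in $\mathscr{PD}(x_0,\cdots,x_{f-1})$ satisfying \eqref{eq:PD-lambda}; using the strong genericity of $\brho$ to compare exponents mod $p^f-1$, the only overlap is the characters $\chi_J^s$ themselves, which by multiplicity-freeness of $\tau(\brho)$ do not recur in $\tau(\brho)/\tau(\brho)^{I_1}$. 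Once a map $\tau(\brho)\to\pi(\brho)$ restricting to an embedding of the socle $\tau(\brho)^{I_1}$ is obtained, it is automatically injective. No passage through $\Omega_v$, no Loewy-layer lifting, and no use of the Koszul structure is needed here.
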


We start with a lemma, which is motivated by \cite[Lem.~9.2]{Br14}.

\begin{lemma}\label{lemma:breuil}
If $\tau_1\subset \tau$ are $I$-representations such that $\tau_1\hookrightarrow \pi(\brho)|_{I}$ and
\begin{equation}\label{eq:lemma-breuil}\JH(\tau/\tau_1)\cap \JH(\pi(\brho)^{I_1})=\emptyset,\end{equation}
then the natural restriction map \[{\rm res}:\Hom_{I}(\tau,\pi(\brho))\ra \Hom_{I}(\tau_1,\pi(\brho))\]
is an isomorphism.
\end{lemma}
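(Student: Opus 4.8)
The goal is to show that for $\tau_1 \subset \tau$ with $\tau_1 \hookrightarrow \pi(\brho)|_I$ and $\JH(\tau/\tau_1) \cap \JH(\pi(\brho)^{I_1}) = \emptyset$, the restriction map $\mathrm{res}\colon \Hom_I(\tau,\pi(\brho)) \to \Hom_I(\tau_1,\pi(\brho))$ is an isomorphism. The natural strategy is d\'evissage on the quotient $\tau/\tau_1$, reducing to the case where $\tau/\tau_1$ is irreducible, i.e.\ a single character $\chi$ of $I$ (acting through $H$) with $\chi \notin \JH(\pi(\brho)^{I_1})$, and then applying the long exact sequence of $\Ext^\bullet_{I/Z_1}(-,\pi(\brho))$ associated to $0 \to \tau_1 \to \tau \to \chi \to 0$.

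First I would set up the d\'evissage. Choose a filtration $\tau_1 = \tau_{(0)} \subset \tau_{(1)} \subset \cdots \subset \tau_{(m)} = \tau$ of $I$-subrepresentations with each successive quotient $\tau_{(k)}/\tau_{(k-1)}$ irreducible, hence a character $\chi_k$ of $I$ with $\chi_k \in \JH(\tau/\tau_1)$, so $\chi_k \notin \JH(\pi(\brho)^{I_1})$ by hypothesis \eqref{eq:lemma-breuil}. It suffices to show each restriction $\Hom_I(\tau_{(k)},\pi(\brho)) \to \Hom_I(\tau_{(k-1)},\pi(\brho))$ is an isomorphism; composing these gives the result. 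So I may assume $\tau/\tau_1 = \chi$ is a single character not in $\JH(\pi(\brho)^{I_1})$.

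Now apply $\Hom_{I/Z_1}(-,\pi(\brho))$ to $0 \to \tau_1 \to \tau \to \chi \to 0$, obtaining the exact sequence
\[
0 \to \Hom_I(\chi,\pi(\brho)) \to \Hom_I(\tau,\pi(\brho)) \xrightarrow{\mathrm{res}} \Hom_I(\tau_1,\pi(\brho)) \to \Ext^1_{I/Z_1}(\chi,\pi(\brho)).
\]
The key inputs are both instances of Proposition \ref{prop-reducible-dimofExt}(i): since $\chi \notin \JH(D_1(\brho)) = \JH(\pi(\brho)^{I_1})$, we have $\Ext^i_{I/Z_1}(\chi,\pi(\brho)) = 0$ for all $i \geq 0$; in particular $\Hom_I(\chi,\pi(\brho)) = 0$ and $\Ext^1_{I/Z_1}(\chi,\pi(\brho)) = 0$. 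Feeding these vanishings into the four-term exact sequence forces $\mathrm{res}$ to be both injective and surjective. (One should note that $\chi$ and all the $\chi_k$ factor through $H$, which is harmless; what matters is that a character of $I$ lies in $\JH(\pi(\brho)^{I_1})$ iff $\Hom_I(\chi,\pi(\brho)) \neq 0$, and more strongly $\Ext^\bullet$ vanishes otherwise by Proposition \ref{prop-reducible-dimofExt}(i), which was established precisely from the injective $\FKZ$-resolution \eqref{equation-Q-injresolution} of $\pi(\brho)$ together with Lemma \ref{lemma:BP-14.1}.)

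The main (only) subtlety is making sure Proposition \ref{prop-reducible-dimofExt}(i) applies to \emph{all} $\Ext^i$, not just $\Hom$ and $\Ext^1$ --- but this is already built into its statement. A minor point to verify carefully is that the d\'evissage preserves the hypothesis at each stage: the quotient $\tau_{(k)}/\tau_1$ is a subrepresentation of $\tau/\tau_1$, so its Jordan--H\"older factors are among those of $\tau/\tau_1$, hence disjoint from $\JH(\pi(\brho)^{I_1})$, and $\tau_{(k-1)} \hookrightarrow \pi(\brho)$ follows inductively from $\tau_{(k)} \hookrightarrow \pi(\brho)$ once we know $\mathrm{res}$ is an isomorphism at the previous step (indeed injectivity of $\mathrm{res}$ at step $k$, i.e.\ $\Hom_I(\chi_k,\pi(\brho))=0$, shows any embedding $\tau_{(k)}\hookrightarrow\pi(\brho)$ restricts to an embedding on $\tau_{(k-1)}$). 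Thus there is no real obstacle; the proof is a short formal argument once the cohomological vanishing of Proposition \ref{prop-reducible-dimofExt}(i) is invoked.
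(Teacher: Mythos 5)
Your proof is correct and is essentially the same as the paper's: both arguments hinge on Proposition \ref{prop-reducible-dimofExt}(i), which gives the vanishing of $\Hom_I$ and $\Ext^1_{I/Z_1}$ against $\pi(\brho)$ for characters outside $\JH(\pi(\brho)^{I_1})$, and then conclude from the long exact sequence. The paper simply applies d\'evissage directly to $\tau/\tau_1$ to get $\Hom_I(\tau/\tau_1,\pi(\brho))=\Ext^1_{I/Z_1}(\tau/\tau_1,\pi(\brho))=0$ rather than filtering $\tau$ over $\tau_1$, but this is a cosmetic difference; also note that the embedding hypothesis $\tau_1\hookrightarrow\pi(\brho)|_I$ plays no role in the argument, so the care you take to propagate it along the filtration is unnecessary (as you in effect observe at the end).
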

\begin{proof}
Using Proposition \ref{prop-reducible-dimofExt}(i), the assumption implies that \[\Hom_I(\tau/\tau_1,\pi(\brho))=\Ext^1_{I/Z_1}(\tau/\tau_1,\pi(\brho))=0,\]
from which the result follows.\end{proof}
\begin{proof}[Proof of Proposition \ref{prop:tau-embeds}]
Lemma \ref{lemma:socle-tauJ} implies that $\tau(\brho)^{I_1}$ is isomorphic to the direct sum of $\chi_0^s(\prod_{j\in J}\alpha_j^{-1})$ for all $J\subset J_{\brho}$.  Hence it follows   from Lemma \ref{lemma:chi-J} that $\tau(\brho)^{I_1}$ embeds in $\pi(\brho)^{I_1}$, hence in $\pi(\brho)|_{I}$. By Lemma \ref{lemma:breuil}, it suffices to check the condition   \eqref{eq:lemma-breuil} with $\tau=\tau(\brho)$ and $\tau_1=\tau(\brho)^{I_1}$.

By Lemma \ref{lemma:socle-tauJ} again, $\tau(\brho)$ is multiplicity free and $\JH(\tau(\brho))$ consists of the  characters of the form $\psi=\chi_0^s(\prod_{i\in \cS}\alpha_i^{b_{J,i}})$, where $J\subset J_{\brho}$ and $(b_{J,i})\in \Z^{J_{\brho}}$ satisfy
\begin{equation}\label{eq:JH-tauJ-bJi}
\left\{\begin{array}{cl}
-2\leq b_{J,i}\leq 2& \mathrm{if}\  i\notin J_{\brho} \\
0\leq b_{J,i}\leq 2& \mathrm{if}\ i\in J_{\brho}\backslash J\\
-3\leq b_{J,i}\leq -1& \mathrm{if}\ i\in J.\end{array}\right.\end{equation}
If such a character $\psi$ also occurs in $\pi(\brho)^{I_1}$, it corresponds to an element of $\mathscr{PD}(x_0,\cdots,x_{f-1})$. Using  \eqref{eq:PD-lambda} and the strong genericity of $\brho$, one checks that the only possibility is
\[b_{J,i}=\left\{\begin{array}{cl}0& \mathrm{if}\ i\notin J\\
-1&\mathrm{if}\ i\in J.
\end{array}\right.\]
In other words, $\psi$ occurs in $\tau(\brho)^{I_1}$ by Lemma \ref{lemma:socle-tauJ}. Since $\tau(\brho)$ is multiplicity free, this implies that $\psi$ can not occur in $\tau(\brho)/\tau(\brho)^{I_1}$, thus  \eqref{eq:lemma-breuil} holds.
\end{proof}

From now on, we fix an embedding $\tau(\brho)\hookrightarrow \pi(\brho)|_I$.

\begin{proposition}\label{prop:Ext1=isom}
For any $\psi\in \JH(\tau(\brho)^{I_1})$, the natural morphism
\[\Ext^1_{I/Z_1}(\psi,\tau(\brho))\ra\Ext^1_{I/Z_1}(\psi,\pi(\brho)|_I)\]
is an isomorphism.
\end{proposition}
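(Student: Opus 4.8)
The plan is a dimension count followed by an injectivity argument. First I would compute both sides. By Lemma \ref{lemma:socle-tauJ} and the proof of Proposition \ref{prop:tau-embeds}, $\tau(\brho)^{I_1}$ is the direct sum of the characters $\chi_0^s\prod_{j\in J}\alpha_j^{-1}$ for $J\subseteq J_{\brho}$, and since these are $I_1$-trivial we have $\tau(\brho)^{I_1}=\soc_I(\tau(\brho))$; in particular $\psi$ is a simple subobject of $\tau(\brho)$, so $\psi^{\vee}\in\mathrm{cosoc}_I(\tau(\brho)^{\vee})$. By Pontryagin duality and Proposition \ref{prop:resolution-tau}, $\dim_{\F}\Ext^1_{I/Z_1}(\psi,\tau(\brho))=\dim_{\F}\Ext^1_{I/Z_1}(\tau(\brho)^{\vee},\psi^{\vee})=\binom{2f}{1}=2f$. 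On the other side, tracing $\psi=\chi_0^s\prod_{j\in J}\alpha_j^{-1}$ through the parametrisation of $\JH(D_1(\brho))$ recalled in the proof of Lemma \ref{lemma-PD-set} shows that $\psi$ corresponds to the tuple $\lambda_J$ with $(\lambda_J)_i(x_i)\in\{p-1-x_i,p-3-x_i\}$ for all $i$, hence $\lambda_J\in\mathscr{PD}^{\dag}(x_0,\cdots,x_{f-1})$ (see \eqref{eq:def-PD'}); so Corollary \ref{cor:Exti-chi-pi2} gives $\dim_{\F}\Ext^1_{I/Z_1}(\psi,\pi(\brho))=\binom{2f}{1}=2f$. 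It therefore suffices to prove that the morphism in question is injective, equivalently bijective.

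To prove injectivity I would compare two resolutions. Dualising the fixed embedding $\tau(\brho)\hookrightarrow\pi(\brho)|_I$ gives a surjection $\pi(\brho)^{\vee}\twoheadrightarrow\tau(\brho)^{\vee}$, which by projectivity lifts to a morphism of complexes $\phi_{\bullet}\colon K_{\bullet}(\underline{X},M_v)|_I\to P_{\bullet}$ covering it, where $K_{\bullet}(\underline{X},M_v)$ is the Koszul resolution \eqref{equation-Q-projresolution} of $\pi(\brho)^{\vee}$ and $P_{\bullet}\to\tau(\brho)^{\vee}$ is the minimal projective resolution of Proposition \ref{prop:resolution-tau}. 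Since $P_{\bullet}$ is minimal and, by Proposition \ref{prop:K=minimal-I}, $K_{\bullet}(\underline{X},M_v)|_I$ is partially minimal relative to $\mathscr{PD}^{\dag}$, and since $\psi$ comes from $\mathscr{PD}^{\dag}$, the complexes $\Hom_I(P_{\bullet},\psi^{\vee})$ and $\Hom_I(K_{\bullet}(\underline{X},M_v)|_I,\psi^{\vee})$ have vanishing differentials. Thus $\Ext^1_{I/Z_1}(\tau(\brho)^{\vee},\psi^{\vee})=\Hom_I(P_1,\psi^{\vee})$ and $\Ext^1_{I/Z_1}(\pi(\brho)^{\vee}|_I,\psi^{\vee})=\Hom_I(K_1|_I,\psi^{\vee})$, both of dimension $2f$, and our morphism is the one induced by $\phi_1$. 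It then remains to show that the induced map on the $\psi^{\vee}$-isotypic parts of the cosocles, a $2f\times 2f$ matrix over $\F$, is invertible.

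This last step is the main obstacle, and it is exactly where the Koszul structure of \eqref{equation-Q-projresolution} and the construction of $\tau(\brho)$ are used. The degree-one term of the Koszul complex is $M_v$ tensored with the $2f$ coordinate directions $X_1,\dots,X_{2f}$, while (by the proof of Proposition \ref{prop:resolution-tau}, via tensor products of the elementary complexes \eqref{eq:complex-type-e}, \eqref{eq:complex-type-f}, \eqref{eq:complex-type-0}) the summand $P_1'$ of $P_1$ is assembled from the Koszul sub-pieces \eqref{eq:complex-Ug-Koszul-e}, \eqref{eq:complex-Ug-Koszul-f}, \eqref{eq:complex-Ug-Koszul-0}; hence the comparison map in degree one is governed by the degree-zero map $\phi_0\colon M_v\to P_0$ together with the action of $X_1,\dots,X_{2f}$ on the $\psi^{\vee}$-part of $M_v$. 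I would carry this out with the endomorphism ring $R$ of Proposition \ref{prop:center} and the maps $\beta^{\sharp}_{\chi}$ of \S\ref{section-HA}: Proposition \ref{prop:beta-sharp-minimal} identifies the relevant part of the image of $\phi_1$ with the ideal of $R$ generated by the parameters $t_{\chi'}$, while the degree-zero multiplicity-one statement $\dim_{\F}\Hom_I(W_{\psi,3},\pi(\brho))=1$ of Corollary \ref{cor:multione-Iwahori}(ii) (equivalently, via Corollary \ref{cor-factor-barW}, $\dim_{\F}\Hom_I(\lambda_{\psi^{\vee}},\pi(\brho))=1$) forces that ideal to fill up the relevant $R$-socle, which is precisely the invertibility of the matrix. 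Together with the equality of dimensions from the first step, this gives that the morphism is an isomorphism.
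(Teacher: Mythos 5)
Your dimension count is right and lines up with the ingredients the paper uses (Corollary \ref{cor:Exti-chi-pi2} on one side, Proposition \ref{prop:resolution-tau} on the other both give $2f$), so the problem reduces to injectivity exactly as you say. But for injectivity you propose a route that is entirely different from, and considerably heavier than, the paper's. The paper's proof is short and direct: it fits the map into the commutative square
\begin{equation*}
\xymatrix{\Ext^1_{I/Z_1}(\psi,\tau(\brho)[\fm^2])\ar[r]\ar^{\iota}[d]&\Ext^1_{I/Z_1}(\psi,\pi(\brho)[\fm^2])\ar^{\iota'}[d]\\
\Ext^1_{I/Z_1}(\psi,\tau(\brho))\ar^{\beta}[r]&\Ext^1_{I/Z_1}(\psi,\pi(\brho)|_I),}
\end{equation*}
shows $\iota'$ and the top map are injective using $[\pi(\brho)[\fm^3]:\psi]=1$ (Corollary \ref{cor:multione-Iwahori}), shows $\iota$ is in fact an isomorphism using the explicit structure of $\tau(\brho)[\fm^2]$ (Remark \ref{rem:soc2-tauJ} plus Lemma \ref{lemma:Ext1-chi}), and concludes $\beta$ is injective, hence an isomorphism by the count. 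It never touches the resolution comparison $K_{\bullet}(\underline{X},M_v)\to P_{\bullet}$ nor the ring $R$ of Proposition \ref{prop:center}.

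Your proposed injectivity argument instead invokes the full resolution-comparison machinery of \S\ref{section-HA}. This is not circular \emph{per se} — that machinery is proved independently of the present proposition — but you should be aware that in the paper this proposition is exactly what supplies the $l=1$ base case in the induction proving Theorem \ref{thm-generation-tau}, so you are in effect trying to re-derive the base case by replaying the inductive step from $l=0$. That can plausibly be made to work (via Corollary \ref{cor:K-injective-B}, Lemma \ref{lemma:P-soc-to-b} and Lemma \ref{lemma:equivalence} in the diagram \eqref{eq:diagram-injective} with $l=1$), but your execution has a concrete flaw: Proposition \ref{prop:beta-sharp-minimal} only gives \emph{containments} ($\im(\beta^\sharp_\chi)\subseteq\soc_R(M_2)$ and $\soc_R(M_1)\to\fb M_2$), not an identification of the image with an ideal of $R$, so the sentence ``Proposition \ref{prop:beta-sharp-minimal} identifies the relevant part of the image of $\phi_1$ with the ideal generated by the $t_{\chi'}$'' misstates what that proposition provides. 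The genuine content you need — that the left ideal $J_\chi$ generated by the Koszul directions equals $J_{\fb_\chi}$, and that the resulting top horizontal map in \eqref{eq:diagram-injective} is injective — requires the chain Lemma \ref{lemma:image-tchi}, Lemma \ref{lemma:J=Jb-2}, Lemma \ref{lemma:dim=mchi}, Proposition \ref{prop:dimb=m}, and Corollary \ref{cor:K-injective-A}, together with the fact that $\beta^\sharp_{\chi,0}$ is an isomorphism and Lemma \ref{lemma:equivalence}. None of that is carried out, and the intermediate assertion that the multiplicity-one statement ``forces that ideal to fill up the relevant $R$-socle'' is too vague to be checked. If you want to take this route, you have to actually run the $l=0\to l=1$ step of Theorem \ref{thm-generation-tau}'s proof in detail; otherwise, you should use the paper's much shorter argument via the $\fm^2$-torsion subspaces.
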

\begin{proof}
We have a commutative diagram
\[\xymatrix{\Ext^1_{I/Z_1}(\psi,\tau(\brho)[\fm^2])\ar^{\beta'}[r]\ar^{\iota}[d]&\Ext^1_{I/Z_1}(\psi,\pi(\brho)[\fm^2])\ar^{\iota'}[d]\\
\Ext^1_{I/Z_1}(\psi,\tau(\brho))\ar^{\beta}[r]&\Ext^1_{I/Z_1}(\psi,\pi(\brho)|_I)}\]
for which the following statements hold:
\begin{enumerate}
\item[$\bullet$] $\iota'$ is an injection. Indeed, by Corollary \ref{cor:multione-Iwahori}, for any character occurring in $\pi(\brho)^{I_1}$, in particular for $\psi$, we have  $[\pi(\brho)[\fm^3]:\psi]=1$. As a consequence, \[\Hom_{I}\big(\psi,\pi(\brho)/\pi(\brho)[\fm^2]\big)=\Hom_I\big(\psi,\pi(\brho)[\fm^3]/\pi(\brho)[\fm^2]\big)=0,\]
and the injectivity of $\iota'$ follows.  
\item[$\bullet$] $\beta'$ is an injection, by a similar argument as for $\iota'$. 
\item[$\bullet$]  $\iota$ is an isomorphism. The injectivity can be seen as above because  $\tau(\brho)$ is multiplicity free. For the surjectivity it suffices to show  $\Ext^1_{I/Z_1}(\psi,\psi')=0$ for any $\psi'\in \JH(\tau(\brho)/\tau(\brho)[\fm^{2}])$. Since $\tau(\brho)$ is multiplicity free, it suffices to show that if $\psi\in \JH(\tau(\brho)^{I_1})$ such that $\Ext^1_{I/Z_1}(\psi,\psi')\neq0$ then $\psi'\in \tau(\brho)[\fm^2]$.  By Lemma \ref{lemma:socle-tauJ}, we may write $\psi=\chi_0^s(\prod_{j\in J}\alpha_j^{-1})$ for some $J\subset J_{\brho}$. Then by Lemma \ref{lemma:Ext1-chi} there exists $i\in\cS$ such that
\[\psi'\cong\psi\alpha_i^{\pm1}=\chi_0^s\Big(\prod_{j\in J}\alpha_j^{-1}\Big)\alpha_i^{\pm1}.\]
Using Remark \ref{rem:soc2-tauJ}, it is  direct to check that $\psi'\in \tau(\brho)[\fm^2]$.
\end{enumerate}
Putting these statements together, we deduce that  $\beta$ is an injection. But, we have \[\dim_{\F}\Ext^1_{I/Z_1}(\psi,\pi(\brho))=2f=\dim_{\F}\Ext^1_{I/Z_1}(\psi,\tau(\brho))\] by Corollary \ref{cor:Exti-chi-pi2} and Proposition \ref{prop:resolution-tau}, so $\beta$ is actually an isomorphism.
\end{proof}

\subsection{Main results}\label{subsection:main-generation}
The main result of the section is as follows. 

\begin{theorem}\label{thm-generation-tau}
As a $G$-representation, $\pi(\brho)$ is generated by $\tau(\brho)$.
\end{theorem}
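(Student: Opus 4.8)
The plan is to verify the hypothesis of the criterion Proposition~\ref{prop-gen-by-W} for the $I$-subrepresentation $W=\tau(\brho)\subset\pi(\brho)$ fixed after Proposition~\ref{prop:tau-embeds}: it suffices to show that the natural map $\Ext^{2f}_{I/Z_1}(\chi_0^s,\tau(\brho))\to\Ext^{2f}_{I/Z_1}(\chi_0^s,\pi(\brho))$ is surjective. The character $\chi_0^s$ corresponds to the tuple $(p-1-x_0,\cdots,p-1-x_{f-1})$, which lies in $\mathscr{PD}^{\dag}(x_0,\cdots,x_{f-1})$; hence by Corollary~\ref{cor:Exti-chi-pi2} the target is $\binom{2f}{2f}=1$-dimensional, and by Proposition~\ref{prop:resolution-tau} so is the source, since $(\chi_0^s)^{\vee}$ is the cosocle character of $\tau(\brho)^{\vee}$ indexed by $J=\emptyset$. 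So it is enough to prove this map is \emph{nonzero}, i.e. (passing to Pontryagin duals) that the morphism $\beta_{2f}\colon\Ext^{2f}_{I/Z_1}(\tau(\brho)^{\vee},(\chi_0^s)^{\vee})\to\Ext^{2f}_{I/Z_1}(\pi(\brho)^{\vee},(\chi_0^s)^{\vee})$ induced by $\pi(\brho)^{\vee}\twoheadrightarrow\tau(\brho)^{\vee}$ does not vanish.

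First I would assemble the relevant resolutions. On the $\tau(\brho)$-side, take the minimal projective $\F[\![I/Z_1]\!]$-resolution $P_{\bullet}\to\tau(\brho)^{\vee}$ of Proposition~\ref{prop:resolution-tau}, with its decomposition $P_l=P_l'\oplus P_l''$. On the $\pi(\brho)$-side, restrict the Koszul resolution \eqref{equation-Q-projresolution} to $I$, obtaining a projective $\F[\![I/Z_1]\!]$-resolution $K_{\bullet}\defn K_{\bullet}(\underline X,M_v)|_I\to\pi(\brho)^{\vee}$ which is partially minimal relative to $\mathscr{PD}^{\dag}$ (Proposition~\ref{prop:K=minimal-I}). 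Since $\chi_0^s\in\mathscr{PD}^{\dag}$ and $(\chi_0^s)^{\vee}$ is a cosocle character of $\tau(\brho)^{\vee}$, the cochain complexes $\Hom_I(K_{\bullet},(\chi_0^s)^{\vee})$ and $\Hom_I(P_{\bullet},(\chi_0^s)^{\vee})$ have vanishing differentials; thus $\Ext^l_{I/Z_1}(\pi(\brho)^{\vee},(\chi_0^s)^{\vee})=\Hom_I(K_l,(\chi_0^s)^{\vee})$, likewise for $P_l$, and $\beta_l$ is simply the map induced by a chain map $\psi_{\bullet}\colon K_{\bullet}\to P_{\bullet}$ lifting $\pi(\brho)^{\vee}\twoheadrightarrow\tau(\brho)^{\vee}$ (which exists by projectivity). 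In this language, $\beta_{2f}\neq0$ amounts to the composite $K_{2f}=M_v\xrightarrow{\psi_{2f}}P_{2f}\twoheadrightarrow P_{2f}/\rad_{(\chi_0^s)^{\vee}}(P_{2f})$ being nonzero on the unique copy of $(\chi_0^s)^{\vee}$ in $\mathrm{cosoc}(P_{2f})$, which sits in $P_{2f}'$.

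The heart of the argument is to detect this via the generalized Koszul machinery of \S\ref{subsection:Koszul}--\S\ref{subsection:example}. Put $\lambda\defn\lambda_{(\chi_0^s)^{\vee}}$ (Definition~\ref{defn:lambda}) and let $R\subset Z(\F[\![I/Z_1]\!]/\fm^3)$ be the central subring of Proposition~\ref{prop:center}, with $\dim_{\F}\fm_R=2f$. Applying the functor $\Hom_I(-,\lambda^{\vee})^{\vee}$ to $K_{\bullet}$ turns it, by the very shape of a Koszul complex, into a generalized Koszul complex $K_{\bullet}(\underline\phi,M)$ of $R$-modules, where $M\defn\Hom_I(M_v,\lambda^{\vee})^{\vee}$ and $\phi_i$ is the $R$-linear endomorphism of $M$ induced by $X_i$; by Proposition~\ref{prop:module-Pchi} and the multiplicity-one fact $n_{\chi_0^s}=1$ (Lemma~\ref{lemma:BP-14.1}), $M\cong R\oplus\F^{m}$ with $0\leq m\leq 2f$, which is exactly the situation of \S\ref{subsection:example}. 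On the other side, Proposition~\ref{prop:resolution-tau}(b) together with Proposition~\ref{prop:map-Pchi} shows that the $P_l''$-summands contribute trivially to the cosocle data in play, so that $\Hom_I(P_{\bullet},\lambda^{\vee})^{\vee}$ is governed entirely by $P_{\bullet}'$, whose terms are direct sums of the modules $\Hom_I(P_{\chi'},\lambda^{\vee})^{\vee}$ described in Proposition~\ref{prop:module-Pchi}. The decisive input is that $\beta_1$ is an isomorphism: this is the Pontryagin dual of Proposition~\ref{prop:Ext1=isom} (which itself rests on Corollary~\ref{cor:multione-Iwahori}(ii) and the construction of $\tau(\brho)$). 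Translated through the socle/radical identifications of Corollary~\ref{cor:rad-chi}, the surjectivity of $\beta_1$ forces the left ideal $J\defn\sum_i R'\phi_i$ of $R'\defn\End_R(M)$ to be of the form $J_{\fb}$ with $\dim_{\F}\fb=m$ (Lemma~\ref{lemma:J=Jb-2} and Proposition~\ref{prop:dimb=m}), equivalently that $\overline{d}_1\colon K_1/K_1J\to K_0J/K_0J^2$ is injective in the notation of Lemma~\ref{lemma:Serre-general}. Since $J_{\fb}$ is a two-sided ideal (\S\ref{subsection:example}), Lemma~\ref{lemma:Serre-general} then propagates this: $\overline{d}_l$ is injective for every $1\leq l\leq 2f$. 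Combining this with the chain map $\psi_{\bullet}$ and the structure of $\Hom_I(P_{\bullet},\lambda^{\vee})^{\vee}$ above forces $\beta_l$ to be nonzero (indeed surjective) for all $l$; in particular $\beta_{2f}\neq0$, and Proposition~\ref{prop-gen-by-W} concludes.

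The step I expect to be the main obstacle is the bookkeeping in the third paragraph: one must check carefully that applying $\Hom_I(-,\lambda^{\vee})^{\vee}$ to the comparison map $\psi_{\bullet}$ is compatible with the $R$-module structures and with the identifications of Corollary~\ref{cor:rad-chi}, so that the abstract statement ``$\beta_1$ is an isomorphism'' really translates into the numerical nondegeneracy $\dim_{\F}\fb=m$ needed to launch Lemma~\ref{lemma:Serre-general}, and, conversely, that the injectivity of $\overline{d}_{2f}$ recovers the cosocle-level nonvanishing of $\beta_{2f}$. A secondary point to watch is that $K_{\bullet}$ is only \emph{partially} minimal over $\F[\![I/Z_1]\!]$ (Proposition~\ref{prop:K=minimal-I}): this is exactly why $\chi_0^s$ --- which lies in $\mathscr{PD}^{\dag}$ --- rather than an arbitrary character of $\pi(\brho)^{I_1}$ is the correct test character, and why the criterion Proposition~\ref{prop-gen-by-W} is phrased in degree $2f$ with precisely $\chi_0^s$.
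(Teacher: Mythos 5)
Your proposal follows the paper's route essentially step for step: the criterion of Proposition~\ref{prop-gen-by-W}, the minimal resolution $P_\bullet$ from Proposition~\ref{prop:resolution-tau}, the partially minimal Koszul resolution $Q_\bullet=K_\bullet(\underline X,M_v)$, the passage to $R$-modules via $\Hom_I(-,\lambda_\chi^\vee)^\vee$, and the propagation of Koszul-level injectivity via Lemma~\ref{lemma:Serre-general}. You also correctly anticipate where the difficulty lies. However, the last paragraph asserts, rather than proves, exactly the step you flag, and there is a genuine extra idea needed there that your account is missing.

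The injectivity given by Lemma~\ref{lemma:Serre-general} (packaged as Corollary~\ref{cor:K-injective-B}) is a statement about the complex $K_{\chi,\bullet}$ \emph{alone}; by itself it says nothing about the comparison map $\beta^\sharp_{\chi,\bullet}:K_{\chi,\bullet}\to C_{\chi,\bullet}$. Transferring it to the conclusion that $\beta^\sharp_{\chi,2f}$ is nonzero requires an induction on $l$: the inductive hypothesis that $\beta^\sharp_{\chi,l-1}$ is an \emph{isomorphism of $R$-modules} (not merely mod $\soc_R$) feeds the right-hand vertical arrow of the diagram~\eqref{eq:diagram-injective}, the Koszul injectivity of the top arrow then forces $\overline\beta^\sharp_{\chi,l}:K_{\chi,l}/\soc_R K_{\chi,l}\to C_{\chi,l}/\soc_R C_{\chi,l}$ to be injective, and one must then recover the full $R$-module isomorphism $\beta^\sharp_{\chi,l}$ to continue. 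That last passage is Lemma~\ref{lemma:equivalence}, and it cannot be run for a single character $\chi$: since $\soc_R(M)\not\subset\fm_R M$ when $M\cong R\oplus\F^m$ with $m\geq 1$, Nakayama's lemma fails character-by-character. The paper therefore carries out the induction \emph{simultaneously} over all $\chi$ in $\mathrm{cosoc}_I(\tau(\brho)^\vee)$, so that the collection of mod-socle isomorphisms determines the cosocle of $Q_l'\to P_l'$ as a map of $\F[\![I/Z_1]\!]$-modules, to which Nakayama then does apply. Working with the single test character $(\chi_0^s)^\vee$, as your write-up suggests, cannot close the induction.

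A smaller, but real, misattribution: the one-dimensionality of $M/MJ$, hence the fact that the Koszul ideal $J$ equals $J_{\fb_\chi}$ with $\dim_\F\fb_\chi=m_\chi$, is the content of Lemma~\ref{lemma:dim=mchi} and rests on the multiplicity-one statement $\dim_\F\Hom_I(W_{\chi,3},\pi(\brho))=1$ from Corollary~\ref{cor:multione-Iwahori}(ii), not on the $\Ext^1$ comparison. Proposition~\ref{prop:Ext1=isom} is used separately as the base case $l=1$ of the induction above. These are distinct inputs, and collapsing them into ``$\beta_1$ is an isomorphism forces $\dim_\F\fb=m$'' hides a dependency that the argument genuinely requires.
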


\begin{example}\label{example:f=1}
Assume $f=1$, i.e. $L=\Q_p$. Assume $\brho$ is reducible, generic (in the sense of \cite[Def.~11.7]{BP}) and we allow  $\brho$ to be split.  By the local-global compatibility proved by Emerton (\cite{Em3}),   the representation $\pi(\brho)$ of $\GL_2(\Q_p)$ is exactly the one attached to $\brho$ by the mod $p$ local Langlands correspondence (\cite{Br03} or
\cite[Def.~2.2]{BrICM}). Precisely, if $\brho$ is split then $\pi(\brho)\cong \pi_0\oplus \pi_1$ for $\pi_i$ defined in \eqref{equation-reducible-pi0-pif}; if $\brho$ is nonsplit then $\pi(\brho)$ is the unique nonsplit extension of $\pi_1$ by $\pi_0$.

In the case $\brho$ is split,  we have inclusions (see \eqref{eq:Ei(2)} for the notation)
\[\chi_0^s\otimes E_{\ide,\alpha,\alpha^2}\hookrightarrow\pi_0|_{I},\ \ \chi_0^s\otimes E_{\alpha^{-1},\alpha^{-2},\alpha^{-3}}\hookrightarrow \pi_1|_I\]
and  $\pi_0$ (resp. $\pi_1$) is generated by this subspace  as a $\GL_2(\Q_p)$-representation (as $\pi_i$ is irreducible!). In the case $\brho$ is nonsplit, then $\chi_0^s\otimes (E_{\ide,\alpha,\alpha^2}\oplus_{\ide}E_{\ide,\alpha^{-1},\alpha^{-2}})$ embeds in $\pi(\brho)$ 
and generates it as a $\GL_2(\Q_p)$-representation.
\end{example}

\begin{remark}
(i) In view of Example \ref{example:f=1}, $\tau(\brho)$ should be thought of as the tensor product of certain well-chosen local factors for each embedding $\kappa:L\hookrightarrow E$, which depend only on the splitting behavior of $\brho$ at $\kappa$ (cf. \S\ref{Section::Serre-wts}).

(ii) In Example \ref{example:f=1},   if we replace $E_{\ide,\alpha,\alpha^2}$ (resp. $E_{\alpha^{-1},\alpha^{-2},\alpha^{-3}}$) by its (two-dimensional) subrepresentation $E_{\ide,\alpha}$  (resp. $E_{\alpha^{-1},\alpha^{-2}}$), then the statements remain true; cf. the proof of Theorem \ref{thm-generation-D0} below.   However, for technical reasons it is more convenient to look at $\tau(\brho)$:  e.g. the minimal projective resolution of $\tau(\brho)^{\vee}$ enjoys the properties of Proposition \ref{prop:resolution-tau}.
\end{remark}

Before giving the proof of Theorem \ref{thm-generation-tau}, we deduce some consequences. Recall that $\pi(\brho)^{K_1}\cong D_0(\brho)$ by the main result of \cite{Le}, where $D_0(\brho)$ is as in \S\ref{section-BP}.

\begin{theorem}\label{thm-generation-D0}
$\pi(\brho)$ is generated by $D_0(\brho)$ as a $G$-representation.
\end{theorem}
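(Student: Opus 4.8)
The plan is to deduce this from Theorem \ref{thm-generation-tau}. Recall that $\pi(\brho)^{K_1}\cong D_0(\brho)$ realizes $D_0(\brho)$ as a $K$-stable subspace of $\pi(\brho)$, and that $\langle G\cdot D_0(\brho)\rangle$ is a $G$-subrepresentation of $\pi(\brho)$; so it suffices, in view of Theorem \ref{thm-generation-tau}, to prove that the image of $\tau(\brho)$ under the fixed embedding $\tau(\brho)\hookrightarrow\pi(\brho)|_I$ of Proposition \ref{prop:tau-embeds} is contained in $\langle G\cdot D_0(\brho)\rangle$. Equivalently, one can invoke the criterion of Proposition \ref{prop-gen-by-W} with $W=D_0(\brho)|_I$ and reduce instead to showing that the natural map
\[\Ext^{2f}_{I/Z_1}(\chi_0^s,D_0(\brho))\lra\Ext^{2f}_{I/Z_1}(\chi_0^s,\pi(\brho))\]
is nonzero, the target being one-dimensional by Lemma \ref{lemma-all-dim1}(iii); note that $\chi_0^s$ does occur in $D_0(\brho)^{I_1}=D_1(\brho)$ by Lemma \ref{lemma:chi-J}. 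First I would record, for the comparison to come, that the analogous map $\Ext^{2f}_{I/Z_1}(\chi_0^s,\tau(\brho))\to\Ext^{2f}_{I/Z_1}(\chi_0^s,\pi(\brho))$ is nonzero, since this is exactly the surjectivity that Proposition \ref{prop-gen-by-W} extracts from, and uses to prove, Theorem \ref{thm-generation-tau}.

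The key step is to recover $\tau(\brho)$ (or enough of it) from $D_0(\brho)$ using the $G$-action. The subrepresentation $V\defn\langle G\cdot D_0(\brho)\rangle$ contains $D_0(\brho)$ and, being $G$-stable, also contains $\Pi\cdot D_0(\brho)$, where $\Pi=\smatr{0}{1}{p}{0}$ normalizes $I$ and $I_1$ and acts on $D_1(\brho)=D_0(\brho)^{I_1}$ as the operator $\smatr{0}{1}{p}{0}$ of the basic $0$-diagram of \cite[\S9]{BP}. Now $D_0(\brho)$ is annihilated by $\fm_{K_1}$, so in the graded picture of \S\ref{subsection:envelope} the elements $f_i,h_i$ ($i\in\cS$) act trivially on it, whereas $\Pi$ exchanges the two ``directions'' $e_i\leftrightarrow f_i$; thus $\Pi\cdot D_0(\brho)$ supplies precisely the $f_i$-directions that $D_0(\brho)$ itself lacks. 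One would then check that the $I$-subrepresentation of $\pi(\brho)$ generated by $D_0(\brho)$ together with $\Pi\cdot(\tau(\brho)^{I_1})$ — which is contained in $V$ since $\tau(\brho)^{I_1}\subseteq D_1(\brho)$ and $\Pi$ preserves $D_1(\brho)$ — already realizes the indecomposable length-$3$ $I$-modules $E_i^\pm(2)$ out of which $\tau_{J_{\brho}}=\tau(\brho)\otimes\chi_0^{s,-1}$ is built in the proof of Proposition \ref{prop:tauJ-eps}. Combining this with Lemma \ref{lemma:breuil} (whose hypothesis holds for $\tau(\brho)^{I_1}\subseteq\tau(\brho)$, by the argument already used in the proof of Proposition \ref{prop:tau-embeds}) one obtains an embedding $\tau(\brho)\hookrightarrow\pi(\brho)$ whose image lies in $V$, whence $V\supseteq\langle G\cdot\tau(\brho)\rangle=\pi(\brho)$ by Theorem \ref{thm-generation-tau}.

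The main obstacle is precisely this last step: making rigorous that $D_0(\brho)$ together with its $\Pi$-translate $I$-generates a copy of $\tau(\brho)$ \emph{inside} $V=\langle G\cdot D_0(\brho)\rangle$ — not merely that $\tau(\brho)$ abstractly embeds in $\pi(\brho)$. One must track, direction by direction $i\in\cS$, how the degree-one and degree-two elements of the graded algebra act on $D_0(\brho)$ and on $\Pi\cdot D_0(\brho)$, and verify that the $I$-extension classes assembling the $E_i^\pm(2)$ are all accounted for in $V$; here the strong genericity of $\brho$ (hence $p>5$) enters, as in \S\ref{subsection:envelope}. Alternatively, if this direct approach proves delicate, the fallback is to establish the $\Ext^{2f}$-nonvanishing of the first paragraph by the same comparison, factoring a generator of $\Ext^{2f}_{I/Z_1}(\chi_0^s,\pi(\brho))$ through $D_0(\brho)$ via the $\Pi$-translate; in either formulation a secondary point is to treat uniformly the cases $J_{\brho}\subsetneq\cS$ and $J_{\brho}=\cS$ (in the latter $\sigma_f\in\mathscr{D}(\brho)$).
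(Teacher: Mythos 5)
The gap you flag at the end is real and is not easily closed: reconstructing $\tau(\brho)$ inside $V=\langle G\cdot D_0(\brho)\rangle$ by combining $D_0(\brho)$ with its $\Pi$-translate would require verifying, extension class by extension class, that the uniserial $I$-modules $E_i^\pm(2)$ out of which $\tau(\brho)$ is assembled are actually realized inside $V$. This is delicate: $\Pi\cdot D_0(\brho)$ is not an $I$-subrepresentation, and tracking the Iwahori filtration degree by degree inside $\pi(\brho)$ would require structural information about $\pi(\brho)[\fm^3]$ beyond what is available. Your fallback --- computing $\Ext^{2f}_{I/Z_1}(\chi_0^s,D_0(\brho))$ and comparing it to $\Ext^{2f}_{I/Z_1}(\chi_0^s,\pi(\brho))$ --- is likewise not accessible, since no projective resolution of $D_0(\brho)^\vee$ has been constructed, and the machinery of \S\ref{section-HA} was set up precisely for $\tau(\brho)^\vee$ and not for $D_0(\brho)^\vee$.

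The paper's proof runs the containment the opposite way and thereby avoids both difficulties. It does not try to rebuild $\tau(\brho)$ from $D_0(\brho)$; instead it locates a small $I$-subrepresentation $W$ of $\tau(\brho)$ which simultaneously (a) generates $\pi(\brho)$ over $G$ and (b) lies inside $D_0(\brho)$. For (a), by Proposition \ref{prop-cosocle-pi(rho)} a subspace generates $\pi(\brho)$ iff its image in $\pi_f=\mathrm{cosoc}_G(\pi(\brho))$ is nonzero, equivalently meets $\pi_f^{I_1}\cong\chi_f\oplus\chi_f^s$ nontrivially; one checks $\chi_f=\chi_0^s\prod_{j\in\cS}\alpha_j^{-1}$ is a Jordan--H\"older factor of $\tau(\brho)$ (inside the summand $\tau_{J_{\brho},\un{-1}}\otimes\prod_{j\in J_{\brho}}\alpha_j^{-1}$) while $\chi_f^s$ is not, and since $\tau(\brho)$ is multiplicity free, the unique subrepresentation $W$ of that summand with cosocle $\chi_f$ already generates. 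For (b), the explicit structure of $\tau_{J_{\brho},\un{-1}}$ from Proposition \ref{prop:tauJ-eps} shows $K_1$ acts trivially on $W$, so $W\subseteq\pi(\brho)[\fm_{K_1/Z_1}]=D_0(\brho)$, and hence $\pi(\brho)=\langle G\cdot W\rangle\subseteq\langle G\cdot D_0(\brho)\rangle$. The insight you are missing is that one never needs to embed all of $\tau(\brho)$ into $\langle G\cdot D_0(\brho)\rangle$; it suffices to find a single generating piece of $\tau(\brho)$ already contained in $D_0(\brho)$, which is a pointwise containment verifiable directly from the construction of $\tau(\brho)$.
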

\begin{proof}
 Recall that the $G$-cosocle of $\pi(\brho)$ is isomorphic to $\pi_f$ by Proposition \ref{prop-cosocle-pi(rho)}, thus a subspace $W$ of $\pi(\brho)$ generates $\pi(\brho)$ if and only if the composite map $\iota_{W}: W\hookrightarrow \pi(\brho)\twoheadrightarrow \pi_f$ is nonzero, if and only if $\mathrm{Im}(\iota_W)\cap \pi_f^{I_1}$ is nonzero.
Since $\pi_f$ is a principal series, it is well-known that $\pi_f^{I_1}$ is two-dimensional, see \cite[Lem.~28]{BL}. Explicitly, we have   $\pi_f^{I_1}\cong\chi_f\oplus\chi_f^s$, where $\chi_f=\sigma_f^{I_1}$ with $\sigma_f=(p-3-r_0,\cdots,p-3-r_{f-1})\otimes{\det}^{\sum_{i=0}^{f-1}(r_i+1)p^i}$, see \eqref{eq:def-sigmaf}. One checks that $\chi_f=\chi_0^s(\prod_{j\in\cS}\alpha_j^{-1})$ and $\chi_f^s=\chi_0(\prod_{j\in\cS}\alpha_j)$. Moreover, using \eqref{eq:JH-tauJ-bJi} and the strong genericity of $\brho$, one checks that  $\chi_f$ is a Jordan--H\"older factor of $\tau(\brho)$ but $\chi_f^s$ is not. Indeed,
  $\chi_f$ occurs as a subquotient in the following direct summand of $\tau(\brho)$:
  \[\tau_{J_{\brho},\un{-1}}\otimes\big(\prod_{j\in J_{\brho}}\alpha_j^{-1}\big), \] where $\un{-1}$ denotes the unique element of $\{\pm1\}^{J_{\brho}}$ taking values $-1$ at all $j\in J_{\brho}$ (with the convention $\underline{-1}=\emptyset$ if $J_{\brho}=\emptyset$). 

By Theorem \ref{thm-generation-tau}, $\pi(\brho)$ is generated by $\tau(\brho)$. Since $\tau(\brho)$ is multiplicity free, the  discussion in the last paragraph shows that a subrepresentation $W$ of $\tau(\brho)$ generates $\pi(\brho)$ if and only if $[W:\chi_f]\neq 0$.
In particular, $\pi(\brho)$ is generated by the  unique subrepresentation $W$ of $\tau_{J_{\brho},\un{-1}}\otimes\big(\prod_{j\in J_{\brho}}\alpha_j^{-1}\big)$ with cosocle isomorphic to $\chi_f$.
To finish the proof  it suffices to prove
  that $W$ is contained in $D_0(\brho)|_I$,  equivalently, $K_1$ acts trivially on $W$ because $D_0(\brho)=\pi(\brho)[\fm_{K_1}]$.
This is a consequence of  the structure of $\tau_{J_{\brho},\un{-1}}$, see  Proposition \ref{prop:tauJ-eps}.
\end{proof}

 \begin{corollary}\label{cor:End=F}
 We have $\End_G(\pi(\brho))\cong\F$.
 \end{corollary}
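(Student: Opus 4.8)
The plan is to reduce the computation of $\End_G(\pi(\brho))$ to that of $\End_K(D_0(\brho))$ by means of the generation result Theorem~\ref{thm-generation-D0}, and then to combine the resulting algebra structure with the indecomposability of $\pi(\brho)$.

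First I would observe that any $\phi\in\End_G(\pi(\brho))$ is in particular $K_1$-equivariant, hence preserves $\pi(\brho)^{K_1}=D_0(\brho)$; restriction therefore defines a ring homomorphism $\End_G(\pi(\brho))\to\End_K(D_0(\brho))$, $\phi\mapsto\phi|_{D_0(\brho)}$. This homomorphism is injective: if $\phi$ vanishes on $D_0(\brho)$ then it vanishes on the $G$-subrepresentation generated by $D_0(\brho)$, which is all of $\pi(\brho)$ by Theorem~\ref{thm-generation-D0}.

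Next I would compute $\End_K(D_0(\brho))$ using the decomposition $D_0(\brho)=\bigoplus_{\sigma\in\mathscr{D}(\brho)}D_{0,\sigma}(\brho)$ with $\soc_K D_{0,\sigma}(\brho)=\sigma$ and the fact that $D_0(\brho)$ is multiplicity free (\cite[Cor. 13.5]{BP}). Multiplicity-freeness forces $D_{0,\sigma}(\brho)$ and $D_{0,\sigma'}(\brho)$ to have disjoint sets of Jordan--H\"older factors when $\sigma\neq\sigma'$, so $\Hom_K(D_{0,\sigma}(\brho),D_{0,\sigma'}(\brho))=0$ and $\End_K(D_0(\brho))=\prod_{\sigma\in\mathscr{D}(\brho)}\End_K(D_{0,\sigma}(\brho))$. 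For a fixed $\sigma$ I claim $\End_K(D_{0,\sigma}(\brho))=\F$: any endomorphism restricts on $\sigma=\soc_K D_{0,\sigma}(\brho)$ to a scalar by Schur's lemma, and after subtracting it an endomorphism vanishing on $\sigma$ has image a quotient of $D_{0,\sigma}(\brho)$ whose socle, if nonzero, is again $\sigma$; this would force $[\mathrm{im}:\sigma]\geq1$, contradicting $[D_{0,\sigma}(\brho):\sigma]=1$ since $\sigma$ already lies in the kernel. Hence $\End_K(D_0(\brho))\cong\F^{\,|\mathscr{D}(\brho)|}$, and via the injection above $\End_G(\pi(\brho))$ is a reduced, commutative, finite-dimensional $\F$-algebra.

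Finally I would pin it down to $\F$ using that $\pi(\brho)$ is indecomposable: it is finitely generated over $G$ (by $D_0(\brho)$, Theorem~\ref{thm-generation-D0}) and has irreducible $G$-cosocle $\pi_f$ by Proposition~\ref{prop-cosocle-pi(rho)}, so a decomposition $\pi(\brho)=A\oplus B$ with $A,B\neq0$ would write $\cosoc_G\pi(\brho)=\cosoc_G(A)\oplus\cosoc_G(B)$ as a non-trivial direct sum (each of $A,B$ is a quotient of $\pi(\brho)$, hence finitely generated and with nonzero cosocle), contradicting irreducibility. Therefore $\End_G(\pi(\brho))$ has no non-trivial idempotents, so being reduced and Artinian it is a field, and being a subalgebra of $\F^{\,|\mathscr{D}(\brho)|}$ containing the scalars it must equal $\F$. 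I do not expect a serious obstacle here; the only mildly delicate point is the identity $\End_K(D_{0,\sigma}(\brho))=\F$, which is a routine consequence of $D_{0,\sigma}(\brho)$ having simple socle occurring with multiplicity one, and the commutative-algebra endgame is standard.
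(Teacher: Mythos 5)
Your proof is correct, but it takes a genuinely different route at the final step from the paper's. Both proofs begin by observing that restriction to $D_0(\brho)=\pi(\brho)^{K_1}$ gives an injective ring map out of $\End_G(\pi(\brho))$, with injectivity coming from Theorem~\ref{thm-generation-D0}. The paper, however, lands in $\End_{\mathcal{DIAG}}(D(\brho))$, the endomorphism ring of the basic $0$-diagram $(D_0(\brho),D_1(\brho),\mathrm{can})$ from \cite[\S9]{BP}, and then quotes \cite[Thm.~15.4(i)]{BP}, which says this diagram is indecomposable (a purely local statement exploiting the non-splitness of $\brho$); combined with multiplicity-freeness this forces $\End_{\mathcal{DIAG}}(D(\brho))\cong\F$. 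You instead land in the larger ring $\End_K(D_0(\brho))$, compute it to be $\F^{|\mathscr{D}(\brho)|}$ (your computation is correct: multiplicity-freeness makes the $D_{0,\sigma}(\brho)$ have pairwise disjoint Jordan--H\"older sets and each has $\End_K=\F$), and rule out nontrivial idempotents by invoking the irreducibility of the $G$-cosocle $\pi_f$ from Proposition~\ref{prop-cosocle-pi(rho)}, a global input ultimately resting on the essential self-duality of $\pi(\brho)^\vee$. Your route avoids the diagram category and \cite[Thm.~15.4]{BP} at the cost of using a deeper global fact. A small simplification: rather than arguing via nonzero $G$-cosocles of the summands (which requires the remark that a nonzero finitely generated representation has a maximal subrepresentation), it is cleaner to use the irreducibility of the $G$-socle $\pi_0$ (also in Proposition~\ref{prop-cosocle-pi(rho)}), since nonzero admissible representations automatically have nonzero socle by Example~\ref{exam-appendix}.
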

\begin{proof}
Let $D(\brho)=(D_0(\brho), D_1(\brho),\mathrm{can})$ denote the basic $0$-diagram attached to $\pi(\brho)$ in \cite[\S13]{BP}, where $\mathrm{can}:D_1(\brho)\hookrightarrow D_0(\brho)$ is the canonical inclusion.
Any $G$-equivariant endomorphism of $\pi(\brho)$ induces an endomorphism of   $D(\brho)$, i.e. there is a natural morphism of rings
\[\End_{G}(\pi(\brho))\ra \End_{\mathcal{DIAG}}(D(\brho)),\]
where $\mathcal{DIAG}$ denotes the abelian category of diagrams (cf. \cite[\S9]{BP}).
This morphism is injective because $\pi(\brho)$ is generated by ${D}_0(\brho)$ as a $G$-representation by Theorem \ref{thm-generation-D0}. Therefore, it suffices to show that  $\End_{\mathcal{DIAG}}(D(\brho))\cong\F$. Using that $D_0(\brho)$ is multiplicity free, this follows from \cite[Thm.~15.4(i)]{BP} which says that the diagram $D(\brho)$ is indecomposable.
\end{proof}

\begin{remark}
(i) Corollary \ref{cor:End=F} provides an obviously expected property of $\pi(\brho)$ corresponding to the assumption $\End_{G_L}(\brho)\cong\F$. However, as is clear to the reader, the proof is far from obvious.

(ii) The continuous action of $R_{\infty}$ on $M_{\infty}$ induces a morphism of rings $R_{\infty}\ra \End_{G}^{\rm cont}(M_{\infty})$.  A natural question raised in \cite[6.24]{CEGGPS2} is whether this is an isomorphism. The injectivity is known to be true  by \cite[Thm.~1.8]{EP2}. Also note that a local version of this isomorphism in the case of $\GL_2(\Q_p)$ is proved in \cite[Prop.~3.12]{HP} (under mild genericity conditions on $\brho$), based on the theory of Colmez's functor (\cite{Co}, \cite{Pa13}).
\end{remark}
\medskip

\textit{From now on, we turn to the  proof of Theorem \ref{thm-generation-tau}}.

Recall that we have  a partially minimal resolution of $\pi(\brho)^{\vee}$ by  projective $\F[\![I/Z_1]\!]$-modules which is of Koszul type, i.e.
$K_{\bullet}(\underline{X},M_{v})\ra \pi(\brho)^{\vee}\ra0.$
In the rest, we write for simplicity  \[Q_{\bullet}\defn K_{\bullet}(\underline{X},M_{v}).\]

On the other hand, let $P_{\bullet}$ be a minimal projective resolution of $\tau(\brho)^{\vee}$, see Proposition \ref{prop:resolution-tau}.   The (fixed) inclusion $\tau(\brho)\hookrightarrow \pi(\brho)|_I$ induces a
quotient map $\pi(\brho)^{\vee}|_I\twoheadrightarrow \tau(\brho)^{\vee}$, which extends to a morphism of complexes
\[\xymatrix{Q_{\bullet}\ar[r]\ar_{\beta_{\bullet}}@{-->}[d]&\pi(\brho)^{\vee}|_I\ar[d]\ar[r]&0\\
 P_{\bullet}\ar[r]&\tau(\brho)^{\vee}\ar[r]&0.}\]
Let $\chi\in \mathrm{cosoc}_I(\tau(\brho)^{\vee})$ and recall $\lambda_{\chi}\defn\overline{W}_{\chi^{\vee},3}$, see Definition \ref{defn:lambda}.  Applying $\Hom_{I}(-,\lambda_{\chi}^{\vee})^{\vee}$, we obtain a morphism of complexes of $R$-modules
\begin{equation}\label{eq:mapofcomplexes}
\beta_{\chi,\bullet}^{\sharp}: \ \Hom_I(Q_{\bullet},\lambda_{\chi}^{\vee})^{\vee}\ra \Hom_I(P_{\bullet},\lambda_{\chi}^{\vee})^{\vee}.\end{equation}
where $R$ is defined in \eqref{eq:ring-R}, a subring of the center of $\F[\![I/Z_1]\!]/\fm^3$.

To simplify the notation, we write
\[K_{\chi,\bullet}\defn\Hom_I(Q_{\bullet},\lambda_{\chi}^{\vee})^{\vee},\ \ \ C_{\chi,\bullet}\defn\Hom_I(P_{\bullet},\lambda_{\chi}^{\vee})^{\vee}.\]
 Remark that, as a consequence of Proposition \ref{prop:resolution-tau},
$C_{\chi,l} \cong\Hom_I(P_{l}',\lambda_{\chi}^{\vee})^{\vee}$
is nonzero only when $0\leq l\leq 2f$, i.e. $C_{\chi,\bullet}$ has the same length as $K_{\chi,\bullet}$. We will prove inductively on $l$  that   $\beta_{\chi,l}^{\sharp}$ is an isomorphism for any $0\leq l\leq 2f$. By Corollary \ref{cor:kernel=socle}, this will imply that
 \[\overline{\beta}^{\sharp}_{\chi,l}:\ \Hom_{I}(Q_{l},\chi)^{\vee}\ra\Hom_{I}(P_{l},\chi)^{\vee}\]
is also an isomorphism. Since both $Q_{\bullet}$ and $P_{\bullet}$ are minimal resolutions relative to  $\chi$, we deduce an isomorphism
\[\Ext^{2f}_{I/Z_1}(\pi(\brho)^{\vee},\chi)^{\vee}\simto\Ext^{2f}_{I/Z_1}(\tau(\brho)^{\vee},\chi)^{\vee}. \]
Letting $\chi=(\chi_0^s)^{\vee}$ and taking dual, we  conclude the proof by the criterion Proposition \ref{prop-gen-by-W}.

\subsubsection{The complex $K_{\chi,\bullet}$}

\begin{lemma}\label{lemma:decomposition-Q}
For any $0\leq l\leq 2f$, $Q_l$ has a direct sum decomposition $Q'_l\oplus Q''_l$ with the following properties:
\begin{enumerate}
\item[(a)] $Q_l'\cong \big(\bigoplus_{\chi}P_{\chi}\big)^{\binom{2f}{l}}$, where $\chi$ runs over characters in $\mathrm{cosoc}_I(\tau(\brho)^{\vee})$;  
\item[(b)]$\Hom_I(Q''_l,P_{\chi}/\fm^2)=0$ for any $\chi$ in $\mathrm{cosoc}_I(\tau(\brho)^{\vee})$.
\end{enumerate}
Moreover, $Q_{\bullet}$ is partially minimal relative to $\mathrm{cosoc}_I(\tau(\brho)^{\vee})$ in the sense that for any $\chi$ in $\mathrm{cosoc}_I(\tau(\brho)^{\vee})$, the morphism
\[\Hom_I(Q_{l-1},\chi)\ra \Hom_I(Q_l,\chi)\]
is zero.
\end{lemma}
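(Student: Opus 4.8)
\textbf{Proof proposal for Lemma \ref{lemma:decomposition-Q}.}

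The plan is to transport the tensor-product structure of the minimal projective resolution of $\tau(\brho)^{\vee}$ (Proposition \ref{prop:resolution-tau}) across the embedding $\tau(\brho)\hookrightarrow\pi(\brho)|_I$, using that $Q_{\bullet}=K_{\bullet}(\underline{X},M_v)$ is a Koszul complex on a regular sequence and that $Q_{\bullet}$ is already known to be partially minimal relative to the larger set $\mathscr{PD}^{\dag}$ (Proposition \ref{prop:K=minimal-I}). First I would observe that $\mathrm{cosoc}_I(\tau(\brho)^{\vee})$ consists of the characters $(\chi_0^s(\prod_{j\in J}\alpha_j^{-1}))^{\vee}$ for $J\subset J_{\brho}$ (Lemma \ref{lemma:socle-tauJ}, Definition \ref{def:tau-rho}), and by the bijection with $\mathscr{PD}(x_0,\cdots,x_{f-1})$ recalled in the proof of Lemma \ref{lemma:chi-J} together with the strong genericity of $\brho$, each such character corresponds to an element of $\mathscr{PD}^{\dag}(x_0,\cdots,x_{f-1})$ (cf.\ the case-check in the proof of Proposition \ref{prop:tau-embeds}). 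Hence the ``moreover'' assertion — partial minimality of $Q_{\bullet}$ relative to $\mathrm{cosoc}_I(\tau(\brho)^{\vee})$ — is immediate from Proposition \ref{prop:K=minimal-I}, since $\mathrm{cosoc}_I(\tau(\brho)^{\vee})\subseteq\mathscr{PD}^{\dag}$.

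For the decomposition $Q_l=Q_l'\oplus Q_l''$, I would argue degree by degree. By construction $Q_l=M_v^{\oplus\binom{2f}{l}}$, so it suffices to produce the splitting of $M_v|_I$ itself and then take the $\binom{2f}{l}$-fold sum. That splitting is exactly Lemma \ref{lemma:decomp-M}: write $M_v|_I=M_v^{\dag}\oplus M_v^{\ddag}$, where $M_v^{\dag}\cong\bigoplus_{\chi\in\mathscr{PD}^{\dag}}P_{\chi}$ and $\Hom_I(M_v^{\ddag},P_{\chi}/\fm^2)=0$ for all $\chi\in\mathscr{PD}^{\dag}$. Now refine: among the $\chi\in\mathscr{PD}^{\dag}$ contributing to $M_v^{\dag}$, the ones in $\mathrm{cosoc}_I(\tau(\brho)^{\vee})$ give a direct summand $\bigoplus_{\chi\in\mathrm{cosoc}_I(\tau(\brho)^{\vee})}P_{\chi}$, and I would absorb the remaining summands of $M_v^{\dag}$ (indexed by $\chi\in\mathscr{PD}^{\dag}\setminus\mathrm{cosoc}_I(\tau(\brho)^{\vee})$) into $Q_l''$. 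For those remaining $\chi$ one has $\Hom_I(P_{\chi},P_{\chi'}/\fm^2)=0$ whenever $\chi'\in\mathrm{cosoc}_I(\tau(\brho)^{\vee})$ and $\chi\neq\chi'$: indeed $\Hom_I(P_{\chi},P_{\chi'}/\fm^2)\neq0$ would force $\chi\in\{\chi'\}\cup\mathscr{E}(\chi')$, and one checks via Lemma \ref{lemma:Ext1-chi} (as in the proof of Lemma \ref{lemma:decomp-M}) that the only element of $\mathscr{PD}^{\dag}$ in $\{\chi'\}\cup\mathscr{E}(\chi')$ for $\chi'$ a cosocle character is $\chi'$ itself — the twists $\chi'\alpha_i^{\pm1}$ correspond to tuples falling in $\mathscr{PD}^{\ddag}$ by the strong genericity of $\brho$. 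Setting $Q_l'=\big(\bigoplus_{\chi\in\mathrm{cosoc}_I(\tau(\brho)^{\vee})}P_{\chi}\big)^{\binom{2f}{l}}$ and $Q_l''$ the complementary summand then gives (a) by construction and (b) by combining the vanishing just noted with the vanishing $\Hom_I(M_v^{\ddag},P_{\chi'}/\fm^2)=0$ from Lemma \ref{lemma:decomp-M}.

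The main obstacle — really the only nontrivial point — is the case-check that the twists $\chi'\alpha_i^{\pm1}$ of a cosocle character $\chi'=(\chi_0^s(\prod_{j\in J}\alpha_j^{-1}))^{\vee}$ (with $J\subset J_{\brho}$) do not themselves lie in $\mathscr{PD}^{\dag}(x_0,\cdots,x_{f-1})$, so that they cannot contribute to $M_v^{\dag}$ and spoil condition (b). This is a finite combinatorial verification entirely parallel to the one carried out in the proof of Proposition \ref{prop:tau-embeds}: one translates $\chi'$ into its $f$-tuple $\lambda_J$ with $(\lambda_J)_i(x_i)\in\{p-1-x_i,p-3-x_i\}$, and observes that multiplying by $\alpha_i^{\pm1}$ shifts the $i$-th slot to a value of the form $x_i\pm1$ or $p-2-x_i\pm1$ which is not among $\{x_i,x_i+2,p-1-x_i,p-3-x_i\}$ (the defining list for $\mathscr{PD}^{\dag}$ in \eqref{eq:def-PD'}), using $2\le r_i\le p-5$. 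I would present this as a short explicit computation rather than grinding through every subcase.
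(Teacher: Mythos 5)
Your overall strategy is the same as the paper's: reduce to $l=0$ via the Koszul structure, define $Q_0'$ (equivalently a summand $M_v'$ of $M_v|_I$) to be $\bigoplus_{\chi\in\mathrm{cosoc}_I(\tau(\brho)^{\vee})}P_{\chi}$, deduce the ``moreover'' assertion from Proposition~\ref{prop:K=minimal-I} using that $\JH(\tau(\brho)^{I_1})$ lands in $\mathscr{PD}^{\dag}(x_0,\cdots,x_{f-1})$, and verify~(b) by a combinatorial check parallel to Lemma~\ref{lemma:decomp-M}. The construction and the reduction are all correct.

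The final combinatorial computation, however, is wrong. You assert that twisting a cosocle character $\chi'$ by $\alpha_i^{\pm1}$ shifts the $i$-th slot to ``a value of the form $x_i\pm1$ or $p-2-x_i\pm1$ which is not among $\{x_i,x_i+2,p-1-x_i,p-3-x_i\}$.'' This is internally contradictory: $p-2-x_i+1=p-1-x_i$ and $p-2-x_i-1=p-3-x_i$ \emph{are} in that list. More to the point, the shift in the $\lambda$-coordinate corresponding to a twist by $\alpha_i^{\pm1}$ is by $\pm2$, not $\pm1$ (see the proof of Lemma~\ref{lemma-PD-set}). Starting from the cosocle slots $\{p-1-x_i,p-3-x_i\}$, the twist produces one of $p+1-x_i$, $p-1-x_i$, $p-3-x_i$, $p-5-x_i$. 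The first and last are outside $\mathscr{PD}$ altogether (they do \emph{not} land in $\mathscr{PD}^{\ddag}$, contrary to your claim), while $p-1-x_i$ and $p-3-x_i$ correspond to \emph{other cosocle characters} $\chi_0^s\prod_{j\in J'}\alpha_j^{-1}$ with $J'\subset J_{\brho}$. In particular your statement that ``the only element of $\mathscr{PD}^{\dag}$ in $\{\chi'\}\cup\mathscr{E}(\chi')$ is $\chi'$ itself'' is false: the characters $\chi'\alpha_j^{-1}$ ($j\in J_{\brho}\setminus J$) and $\chi'\alpha_j$ ($j\in J$) lie simultaneously in $\mathscr{PD}^{\dag}$ and in $\mathscr{E}(\chi')$. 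The reason~(b) nevertheless holds for your (and the paper's) decomposition is that every element of $\mathscr{E}(\chi')\cap\JH(D_1(\brho))$ is again in $\JH(\tau(\brho)^{I_1})$, i.e.\ already a cosocle character absorbed into $Q_l'$, so no summand $P_{\psi}$ of $Q_l''$ can satisfy $\psi\in\{\chi\}\cup\mathscr{E}(\chi)$ for $\chi\in\mathrm{cosoc}_I(\tau(\brho)^{\vee})$; the remaining twists $\chi'\alpha_j^{\pm1}$ lie outside $\JH(D_1(\brho))$ and so never occur among the projective cover summands of $M_v|_I$ at all.
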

\begin{proof}
Since $Q_{\bullet}$ is a Koszul complex, it suffices to prove such a decomposition for $l=0$, i.e. decompose $M_v=M_v'\oplus M_v''$ in such a way that (a) and (b) are satisfied with $l=0$. Dually we may work with $\Omega_v$.

The construction is similar to  Lemma \ref{lemma:decomp-M}.
Recall that $\mathscr{PD}^{\dag}(x_0,\cdots,x_{f-1})$ defined in \eqref{eq:def-PD'} is a certain subset of $\mathscr{PD}(x_0,\cdots,x_{f-1})$ whose corresponding characters all occur with multiplicity one in $\Omega_v^{I_1}$. By the proof of Lemma \ref{lemma:chi-J},  $\JH(\tau(\brho)^{I_1})$ corresponds to the subset of  $\mathscr{PD}(x_0,\cdots,x_{f-1})$ consisting of $\lambda$ with $\lambda_i(x_i)\in\{p-1-x_i,p-3-x_i\}$ for all $i\in\cS$, which is a subset of $\mathscr{PD}^{\dag}(x_0,\cdots,x_{f-1})$.  We let \[\Omega_v'=\bigoplus_{\chi\in \tau(\brho)^{I_1}}\rInj_{I/Z_1}\chi\] and $\Omega_v''$  be a  complement of $\Omega_v'$ in $\Omega_v$.  Condition (b) can be checked directly, as in the proof of Lemma \ref{lemma:decomp-M}. The last assertion follows  from Proposition \ref{prop:K=minimal-I}.
\end{proof}

Fix a character $\chi$ occurring in $\mathrm{cosoc}_I(\tau(\brho)^{\vee})$.  Set
\begin{equation}\label{eq:def-mchi}m_{\chi}\defn|\JH\big(\mathrm{cosoc}_I(\tau(\brho)^{\vee})\big)\cap \mathscr{E}(\chi)|.\end{equation}
By Lemma \ref{lemma:decomposition-Q}, $m_{\chi}$ is also equal to $|\JH\big(\mathrm{cosoc}_I(\pi(\brho)^{\vee})\big)\cap \mathscr{E}(\chi)|$. 
It is clear that $0\leq m_{\chi}\leq 2f$.  The next  lemma  shows  that we are in the setting of \S\ref{subsection:example}.

\begin{lemma}\label{lemma:def-mchi}
We have $\Hom_I(M_{v},\lambda_{\chi}^{\vee})^{\vee}\cong R\oplus \F^{m_{\chi}}.$
\end{lemma}
\begin{proof}
This is a direct consequence of Proposition \ref{prop:module-Pchi}.
\end{proof}

 Since $X_i$ acts on $M_{v}$, it also acts on $\Hom_{I}(M_{v},\lambda_{\chi}^{\vee})^{\vee}$ and this action commutes with the action of $R$ (via $\lambda_{\chi}^{\vee}$). In other words, $X_i$ induces an $R$-linear endomorphism of $\Hom_{I}(M_{v},\lambda_{\chi}^{\vee})^{\vee}$.  Let \[R_{\chi}'\defn\End_R\big(\Hom_{I}(M_{v},\lambda_{\chi}^{\vee})^{\vee}\big)\] and $\phi_{\chi,i}\in R_{\chi}'$ be the element induced by $X_i$. Also let $J_{\chi}$ be the left ideal of $R'_{\chi}$ generated by $\phi_{\chi,i}$ for $1\leq i\leq 2f$.

On the other hand,
let $\fb_{\chi}$ denote the ideal of $R$ spanned by $t_{\chi'}$ for all $\chi'\in \JH\big(\mathrm{cosoc}_I(\tau(\brho)^{\vee})\big)\cap \mathscr{E}(\chi)$,  where $t_{\chi'}$ is as in Definition \ref{def:tchi}. Then  $\dim_{\F}\fb_{\chi}=m_{\chi}$ by Lemma \ref{lemma:tchi-independent}.
Recall that   we can associate to $\fb_{\chi}$ a two-sided ideal $J_{\fb_{\chi}}$ of $R_{\chi}'$, see \eqref{eq:def-Jb}.

\begin{lemma}\label{lemma:dim=mchi}
With the above notation, we have    $J_{\chi}=J_{\fb_{\chi}}$. In particular, $J_{\chi}$ is a two-sided ideal of $R_{\chi}'$.
\end{lemma}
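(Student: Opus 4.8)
The plan is to deduce the equality $J_\chi=J_{\fb_\chi}$ from the abstract algebra developed in \S\ref{subsection:example}. First I would fix the identification $M\defn\Hom_I(M_v,\lambda_\chi^\vee)^\vee\cong R\oplus\F^{m_\chi}$ provided by Lemma \ref{lemma:def-mchi}; since $m_\chi\leq|\mathscr{E}(\chi)|=2f=\dim_\F\fm_R$, this puts us exactly in the situation of \S\ref{subsection:example}, with $R'_\chi=\End_R(M)$ and with $J_\chi=\sum_{i=1}^{2f}R'_\chi\phi_{\chi,i}$ a left ideal generated by $n=2f$ elements. The goal is then to check the three hypotheses of Lemma \ref{lemma:J=Jb-2} for $J=J_\chi$, which produces a proper ideal $\fb\subseteq R$ with $J_\chi=J_\fb$ and $\dim_\F\fb\geq m_\chi$, and afterwards to pin down $\fb$ as $\fb_\chi$.

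Second, I would establish $\dim_\F M/MJ_\chi=1$. Applying the right exact functor $\Hom_I(-,\lambda_\chi^\vee)^\vee$ to the Koszul resolution $Q_\bullet=K_\bullet(\underline{X},M_v)$ of $\pi(\brho)^\vee$ produces the generalized Koszul complex $K_{\chi,\bullet}$ attached to the commuting endomorphisms $\phi_{\chi,1},\dots,\phi_{\chi,2f}$ of $M$, whose degree-zero homology is on the one hand $M/MJ_\chi$ and on the other hand $\Hom_I(\pi(\brho)^\vee,\lambda_\chi^\vee)^\vee\cong\Hom_I(\lambda_\chi,\pi(\brho))^\vee$ by Pontryagin duality. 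Since $\chi$ lies in $\mathrm{cosoc}_I(\tau(\brho)^\vee)$, its dual $\chi^\vee$ lies in $\soc_I(\tau(\brho))\subseteq\tau(\brho)^{I_1}\subseteq\pi(\brho)^{I_1}$ by Lemmas \ref{lemma:socle-tauJ}, \ref{lemma:chi-J} and Proposition \ref{prop:tau-embeds}; as $\lambda_\chi=\overline{W}_{\chi^\vee,3}$, Corollary \ref{cor-factor-barW} together with Corollary \ref{cor:multione-Iwahori}(ii) then gives $\dim_\F\Hom_I(\lambda_\chi,\pi(\brho))=1$, as needed.

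Third — and this I expect to be the main obstacle — I would verify $J_\chi\subseteq J_{\fm_R}$, equivalently (using that $J_{\fm_R}$ is a two-sided ideal) that each $\phi_{\chi,i}\in J_{\fm_R}$; concretely one must show that, with respect to the decomposition $M\cong R\oplus\F^{m_\chi}$, the $\F^{m_\chi}\to\F^{m_\chi}$ block of $\phi_{\chi,i}$ vanishes while the $\F^{m_\chi}\to R$ and $R\to R$ blocks take values in $\fm_R$. The functor $\Hom_I(-,\lambda_\chi^\vee)^\vee$ only sees the $\F[\![I/Z_1]\!]$-projective summands $P_{\chi'}$ of $M_v$ with $\chi'\in\{\chi\}\cup\mathscr{E}(\chi)$, and these occur with multiplicity at most one (this is the content of the shape $R\oplus\F^{m_\chi}$ in Lemma \ref{lemma:def-mchi}); since $X_i$ commutes with $H$ and acts nilpotently on $\mathrm{cosoc}_I(M_v)$, it acts by zero on this multiplicity-free part, so $X_i$ carries each such summand into $\fm M_v$. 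Proposition \ref{prop:beta-sharp-minimal}(i) and Lemma \ref{lemma:image-tchi} then handle the $R$-valued blocks, while the case analysis of Proposition \ref{prop:map-Pchi} — parts (iii) for the off-diagonal entries and (i) for the (multiplicity-one) diagonal entries — kills the $\F^{m_\chi}\to\F^{m_\chi}$ block. Granting this, Lemma \ref{lemma:J=Jb-2} applies and yields $\fb$ as above.

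Finally, I would identify $\fb$ with $\fb_\chi$. By Lemma \ref{lemma:JM}(i), $\fb M=MJ_\chi^2=(MJ_\chi)J_\chi=\sum_i\phi_{\chi,i}(\soc_R(M))$, while Proposition \ref{prop:beta-sharp-minimal}(ii) applied with $P_1=P_2=M_v$ and $\beta=X_i$, together with the identification $\mathrm{cosoc}_I(M_v)=\mathrm{cosoc}_I(\pi(\brho)^\vee)$ (so that the ideal it produces is precisely $\fb_\chi$), gives $\phi_{\chi,i}(\soc_R(M))\subseteq\fb_\chi M$. Hence $\fb M\subseteq\fb_\chi M$, i.e. $\fb\subseteq\fb_\chi$, using that $\fm_R$ annihilates the $\F^{m_\chi}$-summand so $\fb M=\fb$ and $\fb_\chi M=\fb_\chi$. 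Since the $t_{\chi'}$ for $\chi'\in\mathscr{E}(\chi)$ are $\F$-linearly independent by Lemma \ref{lemma:tchi-independent}, one has $\dim_\F\fb_\chi=m_\chi$; comparing with $\dim_\F\fb\geq m_\chi$ forces $\fb=\fb_\chi$, so $J_\chi=J_{\fb_\chi}$, which is two-sided by the discussion in \S\ref{subsection:example}. This completes the proof.
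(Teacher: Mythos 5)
Your proof is correct and follows the same overall strategy as the paper's: compute $\dim_\F M/MJ_\chi=1$ from the right exactness of $\Hom_I(-,\lambda_\chi^\vee)^\vee$ together with the multiplicity-one statement $\Hom_I(\lambda_\chi,\pi(\brho))\cong\F$, then land in the framework of \S\ref{subsection:example} via Lemma \ref{lemma:J=Jb-2}, and finally compare with $\fb_\chi$ using $\dim_\F\fb_\chi=m_\chi$. The one place you organize things differently is at the end. The paper establishes the stronger containment $J_\chi\subseteq J_{\fb_\chi}$ directly (attributing this to Lemma \ref{lemma:image-tchi} plus partial minimality), so that once Lemma \ref{lemma:J=Jb-2} produces the abstract ideal $\fb$, the inclusion $J_\fb\subseteq J_{\fb_\chi}$ immediately gives $\fb\subseteq\fb_\chi$. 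You instead verify only the weaker hypothesis $J_\chi\subseteq J_{\fm_R}$ that Lemma \ref{lemma:J=Jb-2} literally requires, and then identify $\fb$ with $\fb_\chi$ afterwards by computing $\fb M=MJ_\chi^2=\sum_i\phi_{\chi,i}(\soc_R(M))$ and invoking Proposition \ref{prop:beta-sharp-minimal}(ii). These routes are equivalent in content and lean on the same lemmas; neither is materially shorter. Your version has the merit of spelling out, block by block, exactly where the hypothesis $\im(\beta)\subset\fm P$ is needed and why it holds --- namely that the relevant characters all lie in $\mathrm{cosoc}_I(\tau(\brho)^\vee)$ and occur with multiplicity one in $\mathrm{cosoc}_I(M_v)$, so that the topological nilpotence of the $X_i$ forces the diagonal blocks into $\fm$ while the off-diagonal blocks land there automatically --- whereas the paper compresses all of this into the single phrase ``here we need the partial minimality of $K_\bullet(\underline{X},M_v)$.'' One small caution: when you write that $X_i$ ``acts nilpotently on $\mathrm{cosoc}_I(M_v)$, it acts by zero on this multiplicity-free part,'' this is really the argument from the proof of Proposition \ref{prop:K=minimal} (topological nilpotence of $X_i$ on $M_v$ plus multiplicity one of $\chi'$ forces the scalar on the $\chi'$-isotypic line of the cosocle to vanish), not a literal nilpotence statement about the induced cosocle map; the conclusion is the same but the phrasing should be tightened.
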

\begin{proof}
Recall that $\Hom_I(-,\lambda_{\chi}^{\vee})^{\vee}$ is covariant and right exact. From the (right) exact sequence
$\bigoplus_{i=1}^{2f}M_{v}\overset{\oplus_i X_i}{\lra} M_{v}\lra \pi(\brho)^{\vee}\lra0,$
we obtain
\[\bigoplus_{i=1}^{2f}\Hom_{I}(M_{v},\lambda_{\chi}^{\vee})^{\vee}\overset{\oplus_i\phi_{\chi,i}}{\lra} \Hom_{I}(M_{v},\lambda_{\chi}^{\vee})^{\vee}\lra \F\lra0,\]
where we have used the fact $\Hom_{I}(\lambda_{\chi},\pi(\brho))\cong \F$ (a consequence of Corollary \ref{cor:multione-Iwahori}).
 Equivalently, $\Hom_{I}(M_{v},\lambda_{\chi}^{\vee})^{\vee}/\Hom_{I}(M_{v},\lambda_{\chi}^{\vee})^{\vee}J_{\chi}$ is one-dimensional over $\F$.

It follows from Lemma \ref{lemma:image-tchi} that $J_{\chi}$
 is contained in $J_{\fb_{\chi}}$; indeed, recalling $\Hom_{I}(M_v,\lambda_{\chi}^{\vee})^{\vee}\cong R\oplus \F^{m_{\chi}}$, $J_{\chi}$ sends $ \F^{ m_{\chi}}$ to $\fb_{\chi}\oplus (0)^{  m_{\chi}}$  by Lemma \ref{lemma:image-tchi}   and sends $R$ to $\fm_R\oplus \F^{ m_{\chi}}$.  Here we need the partial minimality of $K_{\bullet}(\un{X},M_{v})$ in Lemma \ref{lemma:decomposition-Q} to apply Lemma \ref{lemma:image-tchi}.
 We claim that $J_{\chi}=J_{\fb_{\chi}}$. Indeed,
by Lemma \ref{lemma:J=Jb-2},  $J_{\chi}=J_{\fb}$ for some ideal $\fb$ of $R$ with $\dim_{\F}\fb \geq m_{\chi}$. The inclusion $J_{\fb}\subset J_{\fb_{\chi}}$ and the fact $ \dim_{\F}\fb_{\chi}=m_{\chi}$ then force $\fb=\fb_{\chi}$.
\end{proof}

\begin{corollary}\label{cor:K-injective-A}
The natural morphism
\begin{equation}\label{eq:cor-K-injective}
K_{\chi,l}/K_{\chi,l}J_{\chi}\ra K_{\chi,l-1}J_{\chi}/K_{\chi,l-1}J_{\chi}^2\end{equation}
is injective for any $1\leq l\leq 2f$.
\end{corollary}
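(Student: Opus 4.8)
The statement to prove is Corollary \ref{cor:K-injective-A}, namely that the natural morphism
\[K_{\chi,l}/K_{\chi,l}J_{\chi}\ra K_{\chi,l-1}J_{\chi}/K_{\chi,l-1}J_{\chi}^2\]
is injective for $1\le l\le 2f$.

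The plan is to apply Lemma \ref{lemma:Serre-general} (the generalized Serre lemma for Koszul complexes) directly. Recall that $K_{\chi,\bullet}=\Hom_I(Q_\bullet,\lambda_\chi^\vee)^\vee$, where $Q_\bullet=K_\bullet(\underline X,M_v)$ is the Koszul complex on the regular sequence $\underline X=(X_1,\dots,X_{2f})$ acting on $M_v$. Since the functor $\Hom_I(-,\lambda_\chi^\vee)^\vee$ is covariant and right exact, and since forming a Koszul complex commutes with applying an additive functor to the underlying module (the differentials being built from the $X_i$ via the exterior-algebra formula \eqref{eq:diff-Koszul}), the complex $K_{\chi,\bullet}$ is canonically identified with the Koszul complex $K_\bullet(\underline\phi_\chi, M_\chi)$, where $M_\chi\defn\Hom_I(M_v,\lambda_\chi^\vee)^\vee$ and $\phi_{\chi,i}$ is the $R$-linear endomorphism of $M_\chi$ induced by $X_i$. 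This is the key structural observation: everything has been set up precisely so that $K_{\chi,\bullet}$ is a generalized Koszul complex in the sense of \S\ref{subsection:Koszul} over the commutative noetherian local ring $R$, with $M_\chi$ finitely generated over $R$.

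With this identification in hand, I would invoke Lemma \ref{lemma:Serre-general} with $R'=R_\chi'=\End_R(M_\chi)$ and $J=J_\chi$ the left ideal of $R_\chi'$ generated by $\phi_{\chi,1},\dots,\phi_{\chi,2f}$. The lemma requires two hypotheses. First, that $J_\chi$ is a two-sided ideal of $R_\chi'$: this is exactly the content of Lemma \ref{lemma:dim=mchi}, which shows $J_\chi=J_{\fb_\chi}$ and the latter is two-sided by the discussion following \eqref{eq:def-Jb} in \S\ref{subsection:example}. Second, that the map $\overline d_1\colon K_1/K_1J_\chi\to K_0J_\chi/K_0J_\chi^2$ is injective; here $K_0=M_\chi$, $K_1=M_\chi^{\oplus 2f}$, and $\overline d_1=(\overline\phi_{\chi,1},\dots,\overline\phi_{\chi,2f})$. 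This is where I would need to do the one genuine verification: by Lemma \ref{lemma:M=v0R'} and Lemma \ref{lemma:JM} (applicable since $M_\chi\cong R\oplus\F^{m_\chi}$ by Lemma \ref{lemma:def-mchi}, with $m_\chi\le 2f$ the number $\dim_\F\fm_R=2f$), the target $M_\chi J_\chi/M_\chi J_\chi^2$ has dimension $2f+m_\chi-\dim_\F\fb_\chi=2f$ (using $\dim_\F\fb_\chi=m_\chi$ from Lemma \ref{lemma:tchi-independent}), while the source $(M_\chi/M_\chi J_\chi)^{\oplus 2f}$ also has dimension $2f$ since $\dim_\F M_\chi/M_\chi J_\chi=1$ by the proof of Lemma \ref{lemma:dim=mchi}. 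Thus $\overline d_1$ is injective if and only if it is surjective, and surjectivity holds by Proposition \ref{prop:dimb=m} applied to $J_{\fb_\chi}$, since $J_\chi=J_{\fb_\chi}$ is generated by the $2f=\dim_\F\fm_R$ elements $\phi_{\chi,i}$.

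I expect the main obstacle to be purely bookkeeping: making the identification of $K_{\chi,\bullet}$ with $K_\bullet(\underline\phi_\chi,M_\chi)$ precise and compatible with the $R_\chi'$-module structure, so that Lemma \ref{lemma:Serre-general} (stated for an abstract Koszul complex) applies verbatim. One should check that the differentials of $K_{\chi,\bullet}$, obtained by dualizing those of $Q_\bullet$, indeed match the Koszul differential formula \eqref{eq:diff-Koszul} for the tuple $(\phi_{\chi,1},\dots,\phi_{\chi,2f})$ acting on $M_\chi$ — this is a formal consequence of right-exactness of $\Hom_I(-,\lambda_\chi^\vee)^\vee$ applied termwise together with the fact that each differential of $Q_\bullet$ is an $\F[I/Z_1]$-linear combination of the $X_i$ with $\pm 1$ coefficients in the exterior algebra. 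Once this is granted, the two hypotheses of Lemma \ref{lemma:Serre-general} are precisely Lemma \ref{lemma:dim=mchi} and the dimension count above, and the conclusion of Lemma \ref{lemma:Serre-general} for each $1\le l\le 2f$ is exactly the asserted injectivity of \eqref{eq:cor-K-injective}.
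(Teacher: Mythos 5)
Your proposal is correct and takes essentially the same route as the paper: the paper's two-sentence proof of Corollary~\ref{cor:K-injective-A} cites Proposition~\ref{prop:dimb=m} together with Lemma~\ref{lemma:dim=mchi} to get the $l=1$ case (as an isomorphism), then concludes by Lemma~\ref{lemma:Serre-general}, which is exactly your strategy. Your elaboration of the structural identification of $K_{\chi,\bullet}$ with the Koszul complex $K_\bullet(\underline\phi_\chi, M_\chi)$ and the extra dimension count are correct but implicit in the paper's argument; in particular the dimension count is redundant once one observes that the second assertion of Proposition~\ref{prop:dimb=m} already yields the isomorphism from $\dim_\F\fb_\chi=m_\chi$.
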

\begin{proof}
It follows from Proposition \ref{prop:dimb=m} and Lemma \ref{lemma:dim=mchi}  that \eqref{eq:cor-K-injective} is  injective (actually an isomorphism)  for $l=1$. We conclude by Lemma \ref{lemma:Serre-general}.
\end{proof}

Using Lemma \ref{lemma:JM} and Lemma \ref{lemma:dim=mchi}, the above corollary can be restated as follows.
\begin{corollary}\label{cor:K-injective-B}
The differential map of $K_{\chi,\bullet}$ induces an injection
\[K_{\chi,l}/\soc_R(K_{\chi,l})\ra \soc_R(K_{\chi,l-1})/\fb_{\chi}K_{\chi,l-1}.\]
\end{corollary}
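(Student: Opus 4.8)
The statement to prove, Corollary \ref{cor:K-injective-B}, is a reformulation of Corollary \ref{cor:K-injective-A} (equivalently of Corollary \ref{cor:K-injective-A}'s displayed injectivity \eqref{eq:cor-K-injective}) using the explicit description of the relevant quotients and submodules of the $R'_\chi$-module $M := \Hom_I(M_v,\lambda_\chi^\vee)^\vee$ worked out in \S\ref{subsection:example}. So the plan is essentially bookkeeping: match the abstract ideal-theoretic statement of Corollary \ref{cor:K-injective-A} with the socle/radical language.

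The plan is to argue as follows. First I would record that by Lemma \ref{lemma:def-mchi} we have $M \cong R \oplus \F^{m_\chi}$ as an $R$-module, so $M$ satisfies the standing hypotheses of \S\ref{subsection:example} (namely $\fm_R \neq 0$, $\fm_R^2 = 0$, and $M = R \oplus \F^m$ with $m = m_\chi \le 2f = \dim_\F \fm_R$; the inequality $m_\chi \le 2f$ holds since $\mathscr{E}(\chi)$ has exactly $2f$ elements by Lemma \ref{lemma:Ext1-chi}). Next, by Lemma \ref{lemma:dim=mchi} the left ideal $J_\chi \subset R'_\chi$ generated by the $\phi_{\chi,i}$ equals $J_{\fb_\chi}$, where $\fb_\chi \subset R$ is the ideal spanned by $\{ t_{\chi'} : \chi' \in \JH(\mathrm{cosoc}_I(\pi(\brho)^\vee)) \cap \mathscr{E}(\chi)\}$, which by Lemma \ref{lemma:tchi-independent} has dimension $m_\chi$ over $\F$. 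Now Lemma \ref{lemma:JM}, applied with $\fb = \fb_\chi$ (so $m = m_\chi$), gives canonical identifications $M J_\chi = M J_{\fb_\chi} = \soc_R(M)$ and $M J_\chi^2 = M J_{\fb_\chi}^2 = \fb_\chi M$. Since $K_{\chi,l} = \Hom_I(Q_l,\lambda_\chi^\vee)^\vee$ is (after the decomposition $Q_l = Q_l' \oplus Q_l''$ of Lemma \ref{lemma:decomposition-Q} and Proposition \ref{prop:module-Pchi}) again a direct sum of copies of $R$ and of $\F$, the same identifications $K_{\chi,l} J_\chi = \soc_R(K_{\chi,l})$ and $K_{\chi,l} J_\chi^2 = \fb_\chi K_{\chi,l}$ hold term by term in the complex $K_{\chi,\bullet}$. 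Substituting these identifications into the injection \eqref{eq:cor-K-injective} of Corollary \ref{cor:K-injective-A},
\[
K_{\chi,l}/K_{\chi,l}J_\chi \hookrightarrow K_{\chi,l-1}J_\chi / K_{\chi,l-1}J_\chi^2,
\]
turns it into the asserted injection
\[
K_{\chi,l}/\soc_R(K_{\chi,l}) \hookrightarrow \soc_R(K_{\chi,l-1})/\fb_\chi K_{\chi,l-1}.
\]
One should check that the map induced on these identified quotients is indeed the one induced by the differential of $K_{\chi,\bullet}$ — this is immediate since the differential of $K_{\chi,\bullet}$ is built from the $\phi_{\chi,i}$ exactly as in the Koszul complex $K_\bullet(\underline{\phi}_\chi, M)$, and the identifications of Lemma \ref{lemma:JM} are natural in this setting.

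The step I expect to require a little care, rather than being truly an obstacle, is verifying that the identifications $MJ_\chi = \soc_R(M)$ etc. are compatible with the Koszul differentials — i.e. that no sign or indexing subtlety intervenes when passing from the ``ideal power'' formulation of Lemma \ref{lemma:Serre-general}/Corollary \ref{cor:K-injective-A} to the ``socle/radical'' formulation here. Concretely, one wants: under the natural $R'_\chi$-module structure on each $K_{\chi,l}$ coming from the direct sum decompositions, the differential $d_l : K_{\chi,l} \to K_{\chi,l-1}$ maps $K_{\chi,l}$ into $K_{\chi,l-1}J_\chi$ (so that passing to $K_{\chi,l-1}J_\chi/K_{\chi,l-1}J_\chi^2$ makes sense) — this is built into the statement of Corollary \ref{cor:K-injective-A} via Lemma \ref{lemma:Serre-general}, whose hypothesis (injectivity of $\overline{d}_1$) was checked in Corollary \ref{cor:K-injective-A}'s proof. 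Given that, the restriction $\soc_R(K_{\chi,l-1}) = K_{\chi,l-1}J_\chi \to K_{\chi,l-1}J_\chi^2 = \fb_\chi K_{\chi,l-1}$ is exactly the target identification, and injectivity of the induced map on $K_{\chi,l}/\soc_R(K_{\chi,l}) = K_{\chi,l}/K_{\chi,l}J_\chi$ is precisely Corollary \ref{cor:K-injective-A}. Thus the corollary follows with essentially no new computation, only the translation of notation.
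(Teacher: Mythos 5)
Your proposal is correct and is essentially the paper's own argument: the paper's proof consists of the single sentence ``Using Lemma \ref{lemma:JM} and Lemma \ref{lemma:dim=mchi}, the above corollary can be restated as follows,'' which is precisely the translation you carry out. Your extra bookkeeping (invoking Lemma \ref{lemma:def-mchi} to see $M\cong R\oplus\F^{m_\chi}$, passing termwise through $K_{\chi,l}\cong M^{\oplus\binom{2f}{l}}$ and checking compatibility of the identifications with the Koszul differential) is exactly what the paper elides, and there is no gap.
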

\begin{remark}
The reason to restate Corollary \ref{cor:K-injective-A} in the form of Corollary \ref{cor:K-injective-B} is that the morphism $\beta_{\chi,\bullet}^{\sharp}: K_{\chi,\bullet}\ra C_{\chi,\bullet}$ is only $R$-linear but not $R'$-linear (in fact  $R'$ does not act on $C_{\chi,\bullet}$). See the diagram \eqref{eq:diagram-injective} below.
\end{remark}

\subsubsection{The complex $C_{\chi,\bullet}$}

\begin{lemma}\label{lemma:P-soc-to-b}
The differential maps of $C_{\chi,\bullet}$ induce morphisms
\[C_{\chi,l}\ra \soc_R(C_{\chi,l-1}), \ \ \soc_R(C_{\chi,l})\ra \fb_{\chi}C_{\chi,l-1}.\]
\end{lemma}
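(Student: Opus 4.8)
\textbf{Proof plan for Lemma \ref{lemma:P-soc-to-b}.}

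The plan is to exploit the very explicit structure of the minimal projective resolution $P_\bullet$ of $\tau(\brho)^\vee$ provided by Proposition \ref{prop:resolution-tau}, together with the two key properties of the functor $\Hom_I(-,\lambda_\chi^\vee)^\vee$ established in \S\ref{subsection:socle}: namely Corollary \ref{cor:rad-chi} (which identifies $\soc_R(C_{\chi,l})$ with $\Hom_I(\rad_\chi(P_l),\lambda_\chi^\vee)^\vee$) and Proposition \ref{prop:beta-sharp-minimal} (which controls the image of $\beta_\chi^\sharp$ for a map landing in $\fm P_l$). Concretely, $C_{\chi,l}=\Hom_I(P_l,\lambda_\chi^\vee)^\vee$, and since $P_\bullet$ is a \emph{minimal} resolution, each differential $d_l^P\colon P_l\to P_{l-1}$ satisfies $\im(d_l^P)\subseteq\rad(P_{l-1})\subseteq\fm P_{l-1}$. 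This is exactly the hypothesis of Proposition \ref{prop:beta-sharp-minimal}, applied with $\beta=d_l^P$, $P_1=P_l$, $P_2=P_{l-1}$.

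First I would spell out the first assertion. Applying $\Hom_I(-,\lambda_\chi^\vee)^\vee$ to $d_l^P$ gives the differential $d_l^C\colon C_{\chi,l}\to C_{\chi,l-1}$ of the complex $C_{\chi,\bullet}$. Since $\im(d_l^P)\subseteq\fm P_{l-1}$, Proposition \ref{prop:beta-sharp-minimal}(i) tells us that $\im(d_l^C)\subseteq\soc_R(C_{\chi,l-1})$, which is the first of the two displayed morphisms. For the second, I would again invoke Proposition \ref{prop:beta-sharp-minimal}, this time part (ii): it produces an $R$-linear map $\soc_R(C_{\chi,l})\to\fb'C_{\chi,l-1}$ where $\fb'\subseteq R$ is the ideal generated by the $t_{\chi'}$ for $\chi'\in\mathscr{E}(\chi)\cap\JH(\mathrm{cosoc}_I(P_l))$. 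The one remaining point is to check $\fb'\subseteq\fb_\chi$ (in fact it will be an equality, but the inclusion suffices). This uses Proposition \ref{prop:resolution-tau}(a): for every $l$, $\mathrm{cosoc}_I(P_l)$ is a direct sum of copies of the characters appearing in $\mathrm{cosoc}_I(\tau(\brho)^\vee)$, so $\JH(\mathrm{cosoc}_I(P_l))\subseteq\mathrm{cosoc}_I(\tau(\brho)^\vee)$, and hence $\mathscr{E}(\chi)\cap\JH(\mathrm{cosoc}_I(P_l))\subseteq\mathscr{E}(\chi)\cap\mathrm{cosoc}_I(\tau(\brho)^\vee)$. It remains to match the latter set against the one defining $\fb_\chi$, which was $\JH(\mathrm{cosoc}_I(\pi(\brho)^\vee))\cap\mathscr{E}(\chi)$; since $\tau(\brho)\hookrightarrow\pi(\brho)|_I$ and the relevant cosocle characters of $\tau(\brho)^\vee$ are precisely (duals of) characters occurring in $\mathrm{cosoc}_I(\pi(\brho)^\vee)$ — this is the content of how $\tau(\brho)$ was built in Definition \ref{def:tau-rho} and of Proposition \ref{prop:resolution-tau}(a) together with Lemma \ref{lemma:chi-J} — the inclusion $\fb'\subseteq\fb_\chi$ follows. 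Twisting/dualizing care is needed since $C_{\chi,\bullet}$ is built from $\lambda_\chi^\vee=\overline{W}_{\chi^\vee,3}^\vee$ and the cosocle characters of $P_\bullet$ are those of $\tau(\brho)^\vee$, but this is exactly the bookkeeping already fixed throughout \S\ref{section-fg}.

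I do not expect a serious obstacle here: this lemma is the ``$P_\bullet$-side'' analogue of Corollary \ref{cor:K-injective-B} (the ``$Q_\bullet$-side'' statement), and all the hard structural input — the minimality of $P_\bullet$, the block decomposition $P_l=P_l'\oplus P_l''$, the computation of $\Hom_I(P_{\chi'},\lambda_\chi^\vee)^\vee$ in Proposition \ref{prop:module-Pchi}, and the socle/tangent-space analysis of Proposition \ref{prop:beta-sharp-minimal} — is already in place. The only mild subtlety is ensuring that the ideal $\fb'$ extracted from Proposition \ref{prop:beta-sharp-minimal}(ii) is contained in the ideal $\fb_\chi$ used to state the lemma; this is a matter of comparing two explicitly described finite sets of characters and will be dispatched using Proposition \ref{prop:resolution-tau}(a) and the definition of $\fb_\chi$. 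Once this is settled, the two displayed morphisms are immediate, completing the proof.
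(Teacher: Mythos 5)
Your proof is correct and follows essentially the same route as the paper: both invoke the minimality of $P_\bullet$ (so $d(P_l)\subseteq\fm P_{l-1}$) and then apply Proposition \ref{prop:beta-sharp-minimal}, with Proposition \ref{prop:resolution-tau} used to compare the ideal produced by part (ii) with $\fb_\chi$. Your treatment of the last step is in fact slightly more careful than the paper's own one-line remark: you correctly observe that one needs the containment of $\mathscr{E}(\chi)\cap\JH(\mathrm{cosoc}_I(P_l))$ in the set defining $\fb_\chi$, and that this follows from Proposition \ref{prop:resolution-tau}(a)--(b) together with the inclusion $\tau(\brho)^{I_1}\subseteq\pi(\brho)^{I_1}$ induced by $\tau(\brho)\hookrightarrow\pi(\brho)|_I$.
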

\begin{proof}
By construction, $P_{\bullet}$ is a minimal resolution, i.e. $d(P_l)\subset \fm P_{l-1}$.  The result is  a consequence of Proposition \ref{prop:beta-sharp-minimal}, using that the ideal $\fb$ in (ii) of \emph{loc. cit.} is exactly $\fb_{\chi}$ by Proposition \ref{prop:resolution-tau}.
\end{proof}

\subsubsection{A  lemma}

 \begin{lemma}\label{lemma:equivalence}
Fix $0\leq l\leq 2f$. The following conditions are equivalent:

\begin{enumerate}
\item[(i)]  for any $\chi\in \mathrm{cosoc}_I(\tau(\brho)^{\vee})$, $\beta_{\chi,l}^{\sharp}:K_{\chi,l}\ra C_{\chi,l}$ is an isomorphism;
\item[(ii)] for any $\chi\in \mathrm{cosoc}_I(\tau(\brho)^{\vee})$, $\overline{\beta}_{\chi,l}^{\sharp}: K_{\chi,l}/\soc_R(K_{\chi,l})\ra C_{\chi,l}/\soc_R(C_{\chi,l})$ is an isomorphism;
\item[(iii)] for any $\chi\in \mathrm{cosoc}_I(\tau(\brho)^{\vee})$, $\overline{\beta}_{\chi,l}^{\sharp}: K_{\chi,l}/\soc_R(K_{\chi,l})\ra C_{\chi,l}/\soc_R(C_{\chi,l})$ is an injection.
\end{enumerate}
\end{lemma}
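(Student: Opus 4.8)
The three conditions concern, for each $\chi$ in $\mathrm{cosoc}_I(\tau(\brho)^\vee)$, the single $R$-linear map $\beta_{\chi,l}^{\sharp}:K_{\chi,l}\to C_{\chi,l}$ and its induced map $\overline{\beta}_{\chi,l}^{\sharp}$ on the quotients by the $R$-socles. The implications $(\mathrm{i})\Rightarrow(\mathrm{ii})\Rightarrow(\mathrm{iii})$ are essentially formal, and the only substantive content is $(\mathrm{iii})\Rightarrow(\mathrm{i})$. First I would record the explicit shape of the modules involved: by Lemma~\ref{lemma:def-mchi} and the Koszul structure of $Q_\bullet$, $K_{\chi,l}\cong (R\oplus\F^{m_\chi})^{\binom{2f}{l}}$, while by Proposition~\ref{prop:resolution-tau}(a) and Proposition~\ref{prop:module-Pchi}, $C_{\chi,l}\cong R^{\binom{2f}{l}}$ (note $m_\chi$ there is $0$ because $P_l'$ involves only the cosocle characters themselves, with multiplicity). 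Here I should double-check that $C_{\chi,l}=\Hom_I(P_l',\lambda_\chi^\vee)^\vee$ via the vanishing in Proposition~\ref{prop:resolution-tau}(b), so $C_{\chi,l}$ is $R$-free of the stated rank and $\soc_R(C_{\chi,l})\cong (\fm_R\oplus\cdots)$ has $\F$-codimension $\binom{2f}{l}$, while $\soc_R(K_{\chi,l})=MJ_{\fb_\chi}$-type socle by Lemma~\ref{lemma:JM} also has $\F$-codimension $\binom{2f}{l}$. Thus $K_{\chi,l}/\soc_R$ and $C_{\chi,l}/\soc_R$ are $\F$-vector spaces of the \emph{same} dimension $\binom{2f}{l}$, which is why injectivity of $\overline{\beta}_{\chi,l}^{\sharp}$ forces bijectivity.

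For $(\mathrm{i})\Rightarrow(\mathrm{ii})$: if $\beta_{\chi,l}^{\sharp}$ is an isomorphism of $R$-modules it carries $\soc_R(K_{\chi,l})$ isomorphically onto $\soc_R(C_{\chi,l})$ (the $R$-socle is functorial under $R$-linear isomorphisms), hence induces an isomorphism on the quotients. For $(\mathrm{ii})\Rightarrow(\mathrm{iii})$ there is nothing to do. The heart is $(\mathrm{iii})\Rightarrow(\mathrm{i})$. Given that $\overline{\beta}_{\chi,l}^{\sharp}$ is injective and the source and target have equal finite $\F$-dimension $\binom{2f}{l}$ (by the dimension count above, using Lemma~\ref{lemma:JM}(ii)-(iii) for $K_{\chi,l}$ and $R$-freeness for $C_{\chi,l}$), it is an isomorphism; this already gives $(\mathrm{ii})$. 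To upgrade $(\mathrm{ii})$ to $(\mathrm{i})$ I would argue by a short five-lemma-type diagram chase on
\[
\begin{array}{ccccccccc}
0&\to&\soc_R(K_{\chi,l})&\to&K_{\chi,l}&\to&K_{\chi,l}/\soc_R(K_{\chi,l})&\to&0\\
&&\downarrow&&\downarrow&&\downarrow&&\\
0&\to&\soc_R(C_{\chi,l})&\to&C_{\chi,l}&\to&C_{\chi,l}/\soc_R(C_{\chi,l})&\to&0
\end{array}
\]
where all vertical maps are the (co)restrictions of $\beta_{\chi,l}^{\sharp}$. Assuming the right-hand vertical map is an isomorphism, it remains to show the left-hand vertical map $\soc_R(K_{\chi,l})\to\soc_R(C_{\chi,l})$ is an isomorphism; then the middle map is an isomorphism by the five lemma. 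For this I use the naturality of the identification in Corollary~\ref{cor:kernel=socle}: $\soc_R(K_{\chi,l})=\Hom_I(\rad_\chi(Q_l),\lambda_\chi^\vee)^\vee$ and $\soc_R(C_{\chi,l})=\Hom_I(\rad_\chi(P_l),\lambda_\chi^\vee)^\vee$, and these coincide with $\Hom_I(Q_l,\chi)^\vee$-kernels; since the quotient maps $\beta_{\chi,l}^{\sharp}$ commute with $Q_l\to\chi$, $P_l\to\chi$ (both $Q_\bullet$ and $P_\bullet$ being partially minimal relative to $\chi$ by Lemma~\ref{lemma:decomposition-Q} and Proposition~\ref{prop:resolution-tau}), and since $\dim_\F\Hom_I(Q_l,\chi)=\binom{2f}{l}=\dim_\F\Hom_I(P_l,\chi)$, the induced map on $\Hom_I(-,\chi)^\vee$ is a map of equidimensional $\F$-spaces; I claim it is an isomorphism, which combined with the socle-quotient isomorphism gives the result.

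The main obstacle I anticipate is precisely establishing that the induced map on cosocle-level $\Hom$-spaces, $\Hom_I(P_l,\chi)^\vee\to\Hom_I(Q_l,\chi)^\vee$ (equivalently $\overline{\beta}_{\chi,l}^{\sharp}$ on the socle part), is an isomorphism rather than merely a map between equal-dimensional spaces — it could conceivably be zero. Here the point is that this is \emph{not} part of the hypothesis $(\mathrm{iii})$ and must be extracted separately; I would handle it by noting that $\beta_{\chi,l}^{\sharp}$ itself, restricted to the socle, is the natural transformation induced by the surjection $\pi(\brho)^\vee|_I\twoheadrightarrow\tau(\brho)^\vee$ composed with the structure maps, and that on the level of cosocle-multiplicity spaces $\Hom_I(-,\chi)$ the surjectivity of $\pi(\brho)^\vee\to\tau(\brho)^\vee$ together with $[\pi(\brho)[\fm^3]:\chi]=1$ (Corollary~\ref{cor:multione-Iwahori}) forces the map to be an isomorphism when nonzero, and nonzero because both $Q_\bullet$ and $P_\bullet$ resolve and the cosocle character $\chi$ genuinely occurs. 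In fact, the cleanest route may be to avoid splitting off the socle altogether and instead run the whole argument on the level of the complexes $K_{\chi,\bullet}$ and $C_{\chi,\bullet}$ simultaneously: by induction on $l$, assuming $\beta_{\chi,l-1}^{\sharp}$ is an isomorphism, combine Corollary~\ref{cor:K-injective-B} (injectivity of $K_{\chi,l}/\soc_R\hookrightarrow \soc_R(K_{\chi,l-1})/\fb_\chi K_{\chi,l-1}$) with Lemma~\ref{lemma:P-soc-to-b} (the analogous maps for $C_{\chi,\bullet}$) in a commuting square with $\overline{\beta}^{\sharp}$, deducing injectivity of $\overline{\beta}_{\chi,l}^{\sharp}$ from that of $\beta_{\chi,l-1}^{\sharp}$ on socles; this is the mechanism by which $(\mathrm{iii})$ will ultimately be verified in the next subsection, so proving the equivalence here is exactly the bookkeeping that lets that induction go through. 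I would therefore present Lemma~\ref{lemma:equivalence} with the clean $(\mathrm{i})\Leftrightarrow(\mathrm{ii})\Leftrightarrow(\mathrm{iii})$ proof via the five lemma and the dimension count, flagging the cosocle-isomorphism as the one point needing the multiplicity-one input.
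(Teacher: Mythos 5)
Your proposal has a genuine gap in the $(\mathrm{ii})\Rightarrow(\mathrm{i})$ direction, and the paper explicitly warns against the route you take. You run a five-lemma argument on the short exact sequence $0\to\soc_R(K_{\chi,l})\to K_{\chi,l}\to K_{\chi,l}/\soc_R(K_{\chi,l})\to 0$ for a \emph{single} $\chi$, and you need the left vertical map $\soc_R(K_{\chi,l})\to\soc_R(C_{\chi,l})$ to be an isomorphism. But condition $(\mathrm{ii})$ only controls the quotient map, and the $\Hom_I(-,\chi)^\vee$ space you invoke \emph{is} that quotient (by Corollary \ref{cor:kernel=socle}/Corollary \ref{cor:rad-chi}), not the socle; so your argument for the socle map is circular --- you keep re-deriving that the quotient map is an isomorphism, which is exactly the hypothesis. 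The socle $\soc_R(K_{\chi,l})\cong(\fm_R\oplus\F^{m_\chi})^{\binom{2f}{l}}$ is \emph{not} contained in $\fm_R K_{\chi,l}$, so Nakayama cannot be applied to the $R$-module $K_{\chi,l}$ alone and the quotient map controls nothing about the socle. This is precisely what the remark following the proof of the lemma in the paper points out.

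Your side claim that $C_{\chi,l}\cong R^{\binom{2f}{l}}$ (so $m_\chi=0$ for $C$) is also false: $P_l'$ sums $P_{\chi'}$ over \emph{all} characters $\chi'$ in $\mathrm{cosoc}_I(\tau(\brho)^\vee)$, and these cosocle characters can lie in $\mathscr{E}(\chi)$ for one another (they differ by various $\alpha_j^{\pm1}$), contributing $\F$-factors via Proposition \ref{prop:module-Pchi}. In fact $K_{\chi,l}$ and $C_{\chi,l}$ are $R$-isomorphic, both of the form $(R\oplus\F^{m_\chi})^{\binom{2f}{l}}$; the equality of socle-quotient dimensions is what you should be after, not $R$-freeness of $C_{\chi,l}$.

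The paper's proof of $(\mathrm{ii})\Rightarrow(\mathrm{i})$ is not a single-$\chi$ argument at all: it works with the underlying projective modules $Q_l'$ and $P_l'$ over $\F[\![I/Z_1]\!]$. Condition $(\mathrm{ii})$ for \emph{all} cosocle characters $\chi$ simultaneously, via Corollary \ref{cor:kernel=socle}, says that the composite $\gamma_l: Q_l'\hookrightarrow Q_l\xrightarrow{\beta_l}P_l\twoheadrightarrow P_l'$ induces an isomorphism on $\F[\![I/Z_1]\!]$-cosocles. Nakayama (applied over the pro-$p$ Iwasawa algebra $\F[\![I/Z_1]\!]$, where the radical is topologically nilpotent) then forces $\gamma_l$ to be surjective, hence an isomorphism because $Q_l'\cong P_l'$ are isomorphic finitely generated modules. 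Applying $\Hom_I(-,\lambda_\chi^\vee)^\vee$ to $\gamma_l$ then gives $(\mathrm{i})$ for every $\chi$. The step of passing to $\gamma_l$ and collecting all cosocle characters at once is the essential idea missing from your proposal.
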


\begin{proof}
It is clear that  (i)$\Rightarrow$(ii).

(ii)$\Rightarrow$(i). Recall the decompositions
\[Q_l=Q_{l}'\oplus Q_{l}'',\ \ \ P_{l}=P_l'\oplus P_{l}''\]
from Lemma \ref{lemma:decomposition-Q} and Proposition \ref{prop:resolution-tau}, respectively. By \emph{loc. cit.}, we know that
\[K_{\chi,l}\cong \Hom_I(Q_{l}',\lambda_{\chi}^{\vee})^{\vee},\ \ C_{\chi,l}\cong\Hom_I(P_l',\lambda_{\chi}^{\vee})^{\vee},\]
and  both $Q_{l}'$ and $P_{l}'$ are isomorphic to $\big(\bigoplus_{\chi}P_{\chi}\big)^{\binom{2f}{l}}$,  where $\chi$ runs over  characters in $\mathrm{cosoc}_I(\tau(\brho)^{\vee})$. As a consequence, $K_{\chi,l}\cong C_{\chi,l}$ as $R$-modules.

  Consider  the composite morphism \[\gamma_l: Q_{l}'\hookrightarrow Q_l\overset{\beta_{l}}{\ra} P_l\twoheadrightarrow P_{l}'. \]
Using Corollary \ref{cor:kernel=socle}, Condition (ii) implies that the induced morphism $\Hom_{I}(Q_l',\chi)^{\vee}\ra \Hom_I(P_l',\chi)^{\vee}$ is an isomorphism for \emph{any} $\chi\in \mathrm{cosoc}_I(\tau(\brho)^{\vee})$, meaning that  $\gamma_l$ induces an isomorphism on the cosocles.  Hence, $\gamma_l$ is itself a surjection by Nakayama's lemma. Moreover, since $P_l'$ and $Q_l'$ are isomorphic and finitely generated as $\F[\![I/Z_1]\!]$-modules, $\gamma_l$ must be an isomorphism which implies (i).

(ii)$\Leftrightarrow$(iii) We saw that $\dim_{\F}K_{\chi,l}/\soc_R(K_{\chi,l})=\dim_{\F}C_{\chi,l}/\soc_R(C_{\chi,l})$, so the equivalence is  obvious for the reason of  dimensions.
\end{proof}

Remark that, in general, $\soc_R(M)$ is \emph{not} contained in $\fm_RM$ (see \S\ref{subsection:socle}), so we can not directly apply Nakayama's lemma in Lemma \ref{lemma:equivalence} when deriving (i) from (ii) if we work with a \emph{single} $\chi$.

\subsubsection{End of the proof}

Now we can  complete the proof of Theorem \ref{thm-generation-tau}.
\begin{proof}[Proof of Theorem \ref{thm-generation-tau}]
Recall that we want to prove $\beta_{\chi,l}^{\sharp}$ is an isomorphism for all $\chi$ in $\mathrm{cosoc}_I(\tau(\brho)^{\vee})$ and all $0\leq l\leq 2f$.
First, the statement is obvious if $l=0$.
Also, Proposition \ref{prop:Ext1=isom} combined with Lemma \ref{lemma:equivalence} implies the statement for $l=1$.

Since $\beta_{\chi,l}^{\sharp}$ is $R$-linear, it induces morphisms
\[\soc_R(K_{\chi,l})\ra \soc_R(C_{\chi,l}),\ \ \fb_{\chi}K_{\chi,l}\ra \fb_{\chi}C_{\chi,l}\]
which are isomorphisms whenever $\beta_{\chi,l}^{\sharp}$ is.
By Corollary \ref{cor:K-injective-B} and Lemma \ref{lemma:P-soc-to-b}, we obtain a commutative diagram
\begin{equation}\label{eq:diagram-injective}
\xymatrix{K_{\chi,l}/\soc_R(K_{\chi,l}) \ar[r]\ar^{\overline{\beta}_{\chi,l}^{\sharp}}[d]&\soc_R(K_{\chi,l-1})/\fb_{\chi}K_{\chi,l-1} \ar[d]\\
C_{\chi,l}/\soc_R(C_{\chi,l})\ar[r]&\soc_R(C_{\chi,l-1})/\fb_{\chi}C_{\chi,l-1}.}\end{equation}
By inductive hypothesis, $\beta_{\chi,l-1}^{\sharp}: K_{\chi,l-1}\simto C_{\chi,l-1}$ is an isomorphism, hence the vertical map on the right in \eqref{eq:diagram-injective} is also an isomorphism as explained above.  Since the upper horizontal map is injective by Corollary \ref{cor:K-injective-B},  $\overline{\beta}_{\chi,l}^{\sharp}$ is also injective.  Finally, this being true for any $\chi$ in $\mathrm{cosoc}_I(\tau(\brho)^{\vee})$, we deduce that $\beta_{\chi,l}^{\sharp}$ is an isomorphism by Lemma \ref{lemma:equivalence}, thus finishing the proof by induction.
\end{proof}

\subsection{The case $f=2$}

In this subsection, we will specialize to the  situation when  $f=2$, i.e. $L=\Q_{p^2}$.
 The main result is the following.

\begin{theorem}\label{thm-main-f=2}
 $\pi(\brho)$ has length $3$, with a unique Jordan--H\"older filtration of the form
\[\pi(\brho)=(\pi_0\ \ligne\ \pi_1\ \ligne\ \pi_2),\]
where $\pi_0,\pi_2$ are defined in \eqref{equation-reducible-pi0-pif} and $\pi_1$ is a  supersingular representation.
\end{theorem}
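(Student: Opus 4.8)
The plan is to bootstrap from the structural results already established: $\pi(\brho)$ is generated by $D_0(\brho)$ (Theorem \ref{thm-generation-D0}), its $\GL_2(L)$-socle is $\pi_0$ and cosocle is $\pi_2$ (Proposition \ref{prop-cosocle-pi(rho)}), and $\End_{G}(\pi(\brho))\cong \F$ (Corollary \ref{cor:End=F}). First I would pin down the length-$3$ structure on the ``boundary'': I want to show $\pi(\brho)$ contains $\pi_0$ as a subrepresentation with $\pi(\brho)/\pi_0$ having $\pi_2$ as cosocle, so that there is a well-defined middle piece $\pi_1'\defn \ker(\pi(\brho)/\pi_0\twoheadrightarrow \pi_2)$, and the real content is to show $\pi_1'$ is irreducible supersingular (possibly zero a priori, but the length count below will exclude that). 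The socle statement gives $\pi_0\hookrightarrow \pi(\brho)$; the self-duality $\pi(\brho)^{\vee}\otimes\psi|_{F_v^\times}\circ\det\cong \EE^{2f}(\pi(\brho)^{\vee})$ from Theorem \ref{thm:main-flat}(ii), combined with $\EE^{2f}(\pi_0^\vee)\cong \pi_2^\vee\otimes\zeta\circ\det$ (Lemma \ref{lemma:Kolh}), is what forces the cosocle to be $\pi_2$ and, more importantly, lets me transfer information between the top and the bottom of $\pi(\brho)$.

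The main step is to count Jordan--H\"older factors and identify the middle. Here I would use the explicit structure of $D_0(\brho)$ for $f=2$ from \cite{Le}: $D_0(\brho)$ has a known (multiplicity-free) diagram, and since $\pi(\brho)$ is generated by $D_0(\brho)$ over $\GL_2(L)$, every irreducible subquotient of $\pi(\brho)$ has a Serre weight lying in $\JH(D_0(\brho))$, hence in $\mathscr{D}(\brho)$ together with the ``new'' weights appearing in $D_0(\brho)$. Combined with the knowledge of $\soc_G\pi(\brho)=\pi_0$ and $\cosoc_G\pi(\brho)=\pi_2$, and with the fact (from the explicit $D_0(\brho)$) that the remaining Serre weights must be distributed among supersingular constituents, one gets that $\pi(\brho)$ has at most three Jordan--H\"older factors: $\pi_0$, $\pi_2$, and a single supersingular $\pi_1$. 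To see there really \emph{is} a nonzero supersingular middle and that $\pi(\brho)$ is not just $\pi_0\ligne\pi_2$, I would compare $K$-socles: $\soc_K\pi(\brho)=\bigoplus_{\sigma\in\mathscr{D}(\brho)}\sigma$ has $2^{d}$ summands with $1\le 2^d$, and $\soc_K\pi_0\oplus\soc_K\pi_2$ accounts for only $\sigma_0$ and $\sigma_f$; since $\brho$ is reducible nonsplit and strongly generic there are at least two weights in $\mathscr{D}(\brho)$ beyond potentially just $\sigma_0$, but more cleanly, if $\pi(\brho)=\pi_0\ligne\pi_2$ had length $2$ its $K$-socle would be computable and would not match, or its endomorphism ring or $\Ord_P$ would contradict Proposition \ref{prop-ROrd-pi(rho)} (which gives $R^i\Ord_P\pi(\brho)\cong\chi_0^{\oplus\binom{f}{i}}$, forcing a supersingular constituent to kill the derived ordinary parts in the right degrees). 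Then \emph{uniseriality}: once we know the three factors and that $\soc=\pi_0$, $\cosoc=\pi_2$, the only thing to rule out is $\pi(\brho)\cong (\pi_0\oplus\pi_1)\ligne \pi_2$ or $\pi_0\ligne(\pi_1\oplus\pi_2)$ type pictures, i.e. that $\pi_0$ is the full socle and $\pi_2$ the full cosocle (already known), plus that $\pi_1$ is not a direct summand of any subquotient — which follows because $\pi(\brho)$ is generated by $D_0(\brho)$ and $\cosoc_G\pi(\brho)$ is irreducible, so $\pi(\brho)$ is indecomposable; an indecomposable representation of length $3$ with irreducible socle and irreducible cosocle is uniserial.

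Concretely, the order of operations: (1) record $\soc_G\pi(\brho)=\pi_0$, $\cosoc_G\pi(\brho)=\pi_2$, $\End_G\pi(\brho)=\F$, and deduce indecomposability; (2) use Theorem \ref{thm-generation-D0} plus the explicit diagram of $D_0(\brho)$ from \cite{Le} in the case $f=2$ to list the possible Serre weights of constituents of $\pi(\brho)$ and bound its length by $3$; (3) use Proposition \ref{prop-ROrd-pi(rho)} on derived ordinary parts to show the middle constituent $\pi_1$ exists, is nonzero, and has vanishing ordinary part, hence is supersingular (a nonzero smooth irreducible with $\Ord_P=0$ is supersingular by Proposition \ref{prop-Ord}(i) and Lemma \ref{lemma:Ord-irreducible}); (4) conclude uniseriality from indecomposability + irreducible socle + irreducible cosocle + length $3$; (5) uniqueness of the Jordan--H\"older filtration follows since the three factors are pairwise non-isomorphic (two principal series determined by $\brho$ and one supersingular). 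The hard part will be step (2)--(3): making rigorous, using the $f=2$ computations of $D_0(\brho)$ in \cite{Le} and the local-global input, that no extra supersingular constituents sneak in and that the single supersingular middle layer is irreducible rather than a sum or an extension of two supersingulars — this is exactly where the finiteness is delicate and where one must combine the combinatorics of $\mathscr{D}(\brho)$ with the ordinary-parts calculation and with the main result of \cite{HuJLMS} on the structure of the ordinary part, as flagged in the introduction.
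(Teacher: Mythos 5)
Your step (2) is a genuine gap: knowing which Serre weights \emph{can} appear in constituents of $\pi(\brho)$ does not bound the number of Jordan--H\"older factors, because a supersingular constituent could occur with multiplicity, or several distinct supersingulars could interleave, without violating either the $K$-socle computation or the generation by $D_0(\brho)$. (Generation by $D_0(\brho)$ over $G$ controls the cosocle, but says nothing about intermediate subquotients.) For $L\neq\Q_p$ there is no a priori finiteness for admissible representations with prescribed $K$-socle — that is exactly what makes Conjecture \ref{conj:BP} hard, and you flag the issue yourself in the final paragraph without closing it. The paper's proof is organized differently and has one ingredient you are missing entirely: Lemma \ref{lem-quotient-I(sigma)}, which says that any admissible quotient of $\cInd_{\mathfrak{R}_0}^G\sigma$ whose $G$-cosocle is isomorphic to $\cInd_{\mathfrak{R}_0}^G\sigma/(T-\lambda)$ must be of the form $\cInd_{\mathfrak{R}_0}^G\sigma/(T-\lambda)^n$ for some $n\ge 1$, hence of finite length. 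Concretely: the paper first quotes \cite[Prop.~3.2]{HuJLMS}, which gives directly that $\pi(\brho)/\pi_0$ has a \emph{unique} irreducible subrepresentation $\pi_1$, already known to be supersingular with $\soc_K\pi_1=\sigma_1\oplus\sigma_1^{[s]}$; then, writing $\kappa$ for the preimage of $\pi_1$ and using Theorem \ref{thm-generation-D0} together with the explicit structure of $D_0(\brho)$, it shows $\sigma_2$ embeds into $D_0(\brho)/(\kappa\cap D_0(\brho))$, which by Frobenius reciprocity yields a surjection $\cInd_{\mathfrak{R}_0}^G\sigma_2\twoheadrightarrow \pi(\brho)/\kappa$. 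Lemma \ref{lem-quotient-I(sigma)} then gives $\pi(\brho)/\kappa\cong\cInd_{\mathfrak{R}_0}^G\sigma_2/(T-\lambda_2)^n$, and the derived ordinary-part computation (Proposition \ref{prop-ROrd-pi(rho)}, giving $\dim_\F R^2\Ord_P\pi(\brho)=1$) pins down $n=1$.

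Your intuition that derived ordinary parts should control the length is correct, but the mechanism is different: they bound the exponent $n$ in the compact-induction presentation, not the number of constituents directly. Similarly, your plan to deduce supersingularity of the middle from $\Ord_P\pi_1=0$ is circular until you already have the middle identified; the paper sidesteps this by invoking \cite[Prop.~3.2]{HuJLMS}, which was designed precisely to identify $\pi_1$ and its $K$-socle unconditionally. Finally, a small caution about your $K$-socle discriminant between length $2$ and length $3$: when $\brho$ is reducible nonsplit with $|J_{\brho}|=0$, $\mathscr{D}(\brho)$ is a singleton, so $\soc_K\pi(\brho)=\sigma_0$ and the naive $K$-socle count does not by itself exclude $\pi(\brho)=\pi_0\ligne\pi_2$; the relevant weights $\sigma_1,\sigma_1^{[s]}$ belong to $\mathscr{D}(\brho^{\rm ss})$ rather than $\mathscr{D}(\brho)$, and their appearance inside $D_0(\brho)$ (and hence the nontriviality of the middle) is again exactly what \cite[Prop.~3.2]{HuJLMS} supplies. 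Your steps (1), (4), (5) are fine and align with the paper's use of the socle/cosocle identification.
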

\begin{proof}
We first fix some notation. Let $\brho^{\rm ss}$ denote the semisimplification of $\brho$.  Since $f=2$,  $\mathscr{D}(\brho^{\rm ss})$ consists of $4$ Serre weights, which we enumerate as follows (cf. \cite[\S16, Case (ii)]{BP}): $\mathscr{D}(\brho^{\rm ss})=\{\sigma_0,\sigma_1,\sigma_1^{[s]},\sigma_2\}$, where  (see Definition \ref{def:delta} for the notation $\mu_i^{*}$)
\[\sigma_0 = \soc_{K}(\pi_0),\ \ \sigma_2=\soc_K(\pi_2),\]
\[\sigma_1=\mu_0^+(\sigma_0), \ \ \ \sigma_1^{[s]}=\mu_1^+(\sigma_0).\]
On the other hand, since $\brho$ is assumed to be nonsplit, $\mathscr{D}(\brho)$ is a \emph{proper} subset of $\mathscr{D}(\brho^{\rm ss})$ of cardinality $2^{|J_{\brho}|}$.

We already know $\rsoc_G(\pi(\brho))\cong \pi_0$, see Proposition \ref{prop-cosocle-pi(rho)}. By \cite[Prop.~3.2]{HuJLMS}, $\pi(\brho)/\pi_0$ admits a unique irreducible subrepresentation $\pi_1$ which is supersingular and satisfies
\begin{equation}\label{eq:soc-pi1}\rsoc_{K}(\pi_1)=\sigma_1\oplus\sigma_1^{[s]}.\end{equation}
Let $\kappa\subset \pi(\brho)$ denote the pullback of $\pi_1$.
We need to show $\pi(\brho)/\kappa$ is irreducible, hence it is automatically isomorphic to $\pi_2$ (as its cosocle is isomorphic to $\pi_2$).

By Theorem \ref{thm-generation-D0}, $\pi(\brho)$ is generated by $D_0(\brho)$ as a $G$-representation; in fact, the proof in \emph{loc. cit.} shows that $\pi(\brho)$ can be generated by any $K$-subrepersentation of $D_0(\brho)$ which admits $\sigma_2$ as a subquotient. As a consequence, since $\kappa$ is a proper subrepresentation of $\pi(\brho)$, $\sigma_2$ does not occur in $\kappa\cap D_0(\brho)$. We claim that there exists an embedding
\[\sigma_2\hookrightarrow D_0(\brho)/(\kappa\cap D_0(\brho)).\]
First, it is clear that $\pi_0\cap D_0(\brho)=\pi_0^{K_1}$, so we have an embedding
\[\soc_K(\pi_1)=\sigma_1\oplus\sigma_{1}^{[s]}\hookrightarrow D_0(\brho)/(\pi_0\cap D_0(\brho)).\]
Denote by $D_{\kappa}$ the pullback of $\sigma_1\oplus \sigma_1^{[s]}$ in $D_0(\brho)$. Then $D_{\kappa}$ is contained in $\kappa\cap D_0(\brho)$.  Now  the structure of $D_0(\brho)$, see \cite[\S16]{BP}, implies that $\sigma_2$ occurs in the socle of $D_0(\brho)/D_{\kappa}$.  This gives the claimed morphism
\[\sigma_2\hookrightarrow D_0(\brho)/D_{\kappa}\twoheadrightarrow D_0(\brho)/(\kappa\cap D_0(\brho));\]
it is injective   by the discussion at the beginning of the paragraph.

By the claim, we obtain an embedding $\sigma_2\hookrightarrow (\pi(\brho)/\kappa)|_{\mathfrak{R}_0}$ (here we endow $\sigma_2$ with a compatible action of $Z$), which further induces by Frobenius reciprocity a $G$-equivariant morphism
\[h: \cInd_{\mathfrak{R}_0}^G\sigma_2\ra \pi(\brho)/\kappa.\]
Moreover, since the composition
\[\cInd_{\mathfrak{R}_0}^G\sigma_2\ra \pi(\brho)/\kappa\twoheadrightarrow  \pi_2\]
is surjective,  $h$ is   surjective as well, because $\pi_2$ is the cosocle of $\pi(\brho)$.
By Lemma \ref{lem-quotient-I(sigma)} below, $\pi(\brho)/\kappa$ is isomorphic to $\cInd_{\mathfrak{R}_0}^G\sigma_2/(T-\lambda_2)^n$ for some $n\geq 1$ and suitable $\lambda_2\in\F^{\times}$ (determined by $\pi_2$), thus
$\dim_{\F}R^2\Ord_P(\pi(\brho)/\kappa)=n$
 by \cite[Thm.~30(3)]{BL} and Proposition \ref{prop-Ord}(iii).    However, there is a surjection $R^2\Ord_P\pi(\brho)\twoheadrightarrow R^2\Ord_P(\pi(\brho)/\kappa)$, and we know that $R^2\Ord_P\pi(\brho)$ is isomorphic to $\chi_0$ by Proposition \ref{prop-ROrd-pi(rho)}, so we must have $n=1$.
\end{proof}

\begin{lemma}\label{lem-quotient-I(sigma)}
Let $\sigma$ be a Serre weight and $V$ be an admissible quotient of $I(\sigma)\defn\cInd_{\mathfrak{R}_0}^G\sigma$. Assume that the $G$-cosocle of $V$ is irreducible and isomorphic to $I(\sigma)/(T-\lambda)$ for some $\lambda\in\F^{\times}$. Then $V$ is isomorphic to $I(\sigma)/(T-\lambda)^n$ for some $n\geq 1$. In particular, $V$ has finite length.
\end{lemma}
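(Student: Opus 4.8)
\textbf{Proof proposal for Lemma \ref{lem-quotient-I(sigma)}.}

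The plan is to exploit the structure of $\End_G(I(\sigma))\cong \F[T]$ together with the hypothesis on the cosocle of $V$. First I would let $K_V\subset I(\sigma)$ denote the kernel of the given surjection $I(\sigma)\twoheadrightarrow V$; since $I(\sigma)$ is a cyclic $\F[G]$-module generated by (the image of) $\sigma$ and $\End_G(I(\sigma))\cong \F[T]$, the submodule $K_V$ is stable under the $\F[T]$-action. The point is that $K_V$ need not be an ideal of $\F[T]\cdot(\text{generator})$ a priori, so I cannot immediately write $V=I(\sigma)/(f(T))$; instead I would argue as follows. The hypothesis says $\mathrm{cosoc}_G(V)\cong I(\sigma)/(T-\lambda)$ is irreducible. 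Dualizing, or working directly, this means that $V$ has a \emph{unique} maximal subrepresentation, so every proper $G$-subrepresentation of $V$ is contained in that maximal one; in particular $V/(T-\lambda)V$ is a nonzero quotient of $\mathrm{cosoc}_G(V)$, hence $(T-\lambda)$ acts topologically nilpotently on $V$ after passing to a suitable localization — more precisely, the only eigenvalue of $T$ on the (finite-dimensional, by admissibility) space $\Hom_{\mathfrak{R}_0}(\sigma,V)\cong\Hom_G(I(\sigma),V)$ is $\lambda$.

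Next I would use admissibility to control the $T$-action concretely. Since $V$ is admissible and $\sigma$ is finite-dimensional, $\Hom_{\mathfrak{R}_0}(\sigma,V)$ is a finite-dimensional $\F$-vector space carrying an action of $\F[T]$, and by the previous paragraph $T$ acts on it with single eigenvalue $\lambda$. Hence some power $(T-\lambda)^m$ annihilates $\Hom_G(I(\sigma),V)$, which means that the composite $I(\sigma)\xrightarrow{(T-\lambda)^m} I(\sigma)\twoheadrightarrow V$ is zero, i.e. the surjection $I(\sigma)\twoheadrightarrow V$ factors through $I(\sigma)/(T-\lambda)^m I(\sigma)$. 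Therefore $V$ is a quotient of $W_m\defn I(\sigma)/(T-\lambda)^m$. Now $W_m$ is a module of finite length: indeed $W_1=I(\sigma)/(T-\lambda)$ is irreducible by \cite[\S6]{BL} (using $\dim_\F\sigma\neq 1$ in the relevant cases, which holds here since $\sigma$ is a $2$-generic Serre weight in our situation, or more simply it is one of the $\sigma_J$ with $\dim\geq 2$), and $W_m$ is an iterated extension of copies of $W_1$ via the filtration by the submodules $(T-\lambda)^k W_m$ for $0\le k\le m$, whose successive quotients are all isomorphic to $W_1$. Hence $W_m$, and a fortiori its quotient $V$, has finite length.

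Finally I would pin down $V$ exactly. Write $V=W_m/U$ for some $G$-subrepresentation $U\subseteq W_m$; I want to show $U=(T-\lambda)^n W_m$ for the integer $n$ with $\mathrm{length}(V)=n$ (equivalently $n=m-\mathrm{length}(U)$). The filtration $\{(T-\lambda)^k W_m\}$ is the unique composition-series-type filtration with each graded piece $\cong W_1$; since $W_1$ is irreducible and these are the only subquotients appearing, every submodule of $W_m$ must be one of the $(T-\lambda)^k W_m$ — this is the standard fact that a uniserial module over the (commutative) ring $\F[T]/(T-\lambda)^m$, namely $W_m$ viewed through $\End_G(W_m)\supseteq \F[T]/(T-\lambda)^m$ acting with $W_m$ free of rank one over it, has its lattice of submodules totally ordered. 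Concretely: $\mathrm{cosoc}_G(W_m)=W_1$ is irreducible, so $W_m$ is generated by a single element over $\F[G]$, hence over $\F[T]/(T-\lambda)^m$ it is cyclic, hence free of rank one, and its submodules correspond bijectively to ideals $(T-\lambda)^k$ of $\F[T]/(T-\lambda)^m$. Taking $U=(T-\lambda)^{m-n}W_m$ gives $V\cong W_m/(T-\lambda)^{m-n}W_m\cong I(\sigma)/(T-\lambda)^n$, as desired. The main obstacle I anticipate is the cyclicity/freeness step of the last paragraph — justifying rigorously that $W_m$ is free of rank one over $\F[T]/(T-\lambda)^m$ (equivalently that no ``unexpected'' $G$-submodules appear), for which the irreducibility of $W_1$ from \cite[\S6]{BL} and the uniqueness of the cosocle of $V$ are the essential inputs; everything else is bookkeeping with $\End_G(I(\sigma))\cong\F[T]$ and admissibility.
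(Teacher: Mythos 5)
Your overall plan — reduce to showing $V$ is a quotient of $W_m\defn I(\sigma)/(T-\lambda)^m$, then argue $W_m$ is uniserial with submodule lattice $\{(T-\lambda)^kW_m\}$ — is sound, and the second half (the step you flag as your ``main obstacle'') is in fact fine: one checks $\Hom_G(W_m,\pi)$ is one-dimensional, so $\cosoc(W_m)=\pi$, so $\rad(W_m)=(T-\lambda)W_m$; iterating (using injectivity of $T-\lambda$ on $I(\sigma)$ so that $(T-\lambda)^kW_m\cong W_{m-k}$) gives $\rad^k(W_m)=(T-\lambda)^kW_m$ with all graded pieces $\pi$, and then the standard ``unique maximal submodule at each stage'' argument shows any submodule equals some $\rad^k(W_m)$. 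The genuine gap is upstream, in the eigenvalue claim.

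You assert that the cosocle hypothesis alone forces $T$ to have $\lambda$ as its only eigenvalue on $\Hom_G(I(\sigma),V)$, and the justification offered (``$V/(T-\lambda)V$ is a nonzero quotient of $\cosoc(V)$, hence $(T-\lambda)$ acts topologically nilpotently after a suitable localization'') does not establish this. (Incidentally the quotient relation is stated backwards: $(T-\lambda)V\subseteq\rad(V)$, so $\cosoc(V)$ is a quotient of $V/(T-\lambda)V$, not conversely.) Concretely, a nonzero generalized $\mu$-eigenvector with $\mu\neq\lambda$ corresponds to a nonzero $G$-map $I(\sigma)/(T-\mu)^k\to V$; by \cite[Cor. 36]{BL} its image lands in $\rad(V)$, but nothing in your argument rules out the possibility that $\rad(V)$ contains such a subrepresentation. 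In fact, to exclude this one essentially needs to know already that all Jordan–H\"older factors of $V$ are isomorphic to $\pi$, which is more or less the content of the lemma — so the reasoning as written is circular.

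The gap is fillable, and in two different ways. One fix (closer to your setup): decompose the quotient map $q_V\in\Hom_G(I(\sigma),V)$ into its generalized eigenspace components $q_V=\sum_\mu q_\mu$. For $\mu\neq\lambda$, composing with $p\colon V\twoheadrightarrow\pi$ kills $q_\mu$ (since $\Hom_G(I(\sigma),\pi)$ is one-dimensional of eigenvalue $\lambda$), so $p\circ q_V=p\circ q_\lambda\neq 0$, i.e.\ $\im(q_\lambda)\not\subseteq\rad(V)$. Since $V$ is finitely generated over $\F[G]$ and $\rad(V)$ is its unique maximal subrepresentation, every \emph{proper} subrepresentation lies in $\rad(V)$; hence $\im(q_\lambda)=V$. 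Now $(T-\lambda)^mq_\lambda=0$ for some $m$, so $q_\lambda$ gives a \emph{surjection} $W_m\twoheadrightarrow V$, and you can proceed as before. Note this establishes only that the $\lambda$-component of $q_V$ is surjective, not the full eigenvalue claim you asserted — the latter becomes true only after the lemma is proved. The paper takes a different route: it first shows, via the long exact sequence obtained by applying $\Hom_G(-,V_1)$ to $0\to I(\sigma)\xrightarrow{T-\lambda}I(\sigma)\to\pi\to 0$ and a nonsplitness argument, that $\Hom_G(\pi,\rad(V))\neq 0$ whenever $\rad(V)\neq 0$, iterates to conclude all Jordan–H\"older factors of $V$ are $\pi$, then invokes \cite[Thm.~19]{BL} to factor the quotient map through some $I(\sigma)/f(T)$ and pins down $f=(T-\lambda)^n$. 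Your (repaired) route is arguably more direct, bypassing the $\Ext^1$ computation at the cost of a careful bookkeeping with the generalized eigenspace decomposition.
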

\begin{remark}
If $L=\Q_p$, then Lemma \ref{lem-quotient-I(sigma)} follows from the work of \cite{BL,Br03}. However, when $L\neq \Q_p$, the quotient $I(\sigma)/T$ has infinite length, and it is not clear whether an arbitrary admissible quotient of $I(\sigma)$ is automatically of finite length.
 \end{remark}

\begin{proof}
Write $\pi=I(\sigma)/(T-\lambda)$ which is irreducible by assumption, and let $V_1$ be the kernel of the natural projection $V\twoheadrightarrow \pi$. Clearly, we may assume $V_1\neq0$.  We  claim that $\Hom_G(\pi,V_1)\neq 0$.  Indeed, applying $\Hom_G(-,V_1)$ to the exact sequence $0\ra I(\sigma)\overset{T-\lambda}{\ra} I(\sigma)\ra \pi\ra0$ we obtain (by Frobenius reciprocity)
\begin{multline*}0\ra \Hom_G(\pi,V_1)\ra \Hom_{\mathfrak{R}_0}(\sigma,V_1)\overset{T-\lambda}{\ra} \Hom_{\mathfrak{R}_0}(\sigma,V_1)\overset{\partial}{\ra} \Ext^1_G(\pi,V_1)\overset{\phi}{\ra} \Ext^1_{\mathfrak{R}_0}(\sigma,V_1).\end{multline*}
If $\Hom_G(\pi,V_1)$ were zero, then $T-\lambda$ would be injective, hence an isomorphism because $\Hom_{\mathfrak{R}_0}(\sigma,V_1)$ is finite dimensional over $\F$ by the admissibility of $V$. This would   imply that $\phi$ is injective. On the other hand, since $\pi$ is the $G$-cosocle of $V$, the extension
\[0\ra V_1\ra V\ra \pi\ra0\]
is nonsplit, which we denote by $c\in \Ext^1_G(\pi,V_1)$. Since $V$ is a quotient of $I(\sigma)$, the  composite morphism (where the first one is induced  from the identity map $I(\sigma)\ra I(\sigma)$ via   Frobenius reciprocity)
\[\sigma\hookrightarrow I(\sigma)|_{\mathfrak{R}_0}\twoheadrightarrow V|_{\mathfrak{R}_0}\twoheadrightarrow \pi|_{\mathfrak{R}_0}\]
is nonzero with image contained in   $\soc_{\mathfrak{R}_0}(\pi)$. This means  $\phi(c)=0$, which contradicts the injectivity of $\phi$.

Let $V_2$ be the maximal subrepresentation of $V_1$ whose irreducible subquotients are all isomorphic to $\pi$, so that $\Hom_G(\pi,V_1/V_2)=0$. If $V_1/V_2\neq0$, then the same argument as in last paragraph  (applied to $V/V_2$), shows that $\Hom_G(\pi,V_1/V_2)\neq0$, a contradiction to the choice of $V_2$.   Therefore, $V_1/V_2=0$ and all Jordan--H\"older factors of $V$ are isomorphic to $\pi=I(\sigma)/(T-\lambda)$.
On the other hand, by \cite[Thm.~19]{BL} the quotient map $I(\sigma)\twoheadrightarrow V$ factors through the quotient $I(\sigma)/f(T)$ for some nonzero polynomial $f(T)\in \F[T]\cong\End_G(I(\sigma))$.  We claim that $f(T)$ can be chosen to be $(T-\lambda)^n$ for some $n\geq 1$; this implies the lemma by choosing    $n$ minimal. Indeed, \cite[Cor.~36]{BL} implies that for any $\lambda'\in\F$ with $\lambda'\neq \lambda$, \[\Hom_G\big(I(\sigma)/(T-\lambda'),\pi\big)=0\] and consequently
$\Hom_G\big(I(\sigma)/(T-\lambda'),V\big)=0$ from which the claim  follows.
\end{proof}

We have the following immediate consequence of Theorem \ref{thm-main-f=2}.
\begin{corollary}
Assume $f=2$. With the notation of Corollary \ref{cor:Pi(x)}, the unitary admissible Banach   representation $\Pi(x)$ of $G$ has length $\leq 3$.
\end{corollary}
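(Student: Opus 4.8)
The plan is to deduce the bound directly from Theorem \ref{thm-main-f=2} by a standard ``reduction mod $\varpi'$'' argument for unitary Banach representations, using that $\Pi(x)^{0}$ is a $G$-invariant unit ball whose reduction is $\pi(\brho)\otimes_{\F}\F'$. Here $\pi(\brho)=\pi_{v}^{D}(\overline{r})$ and $\varpi'$ denotes a uniformizer of $\cO'$.

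The first step is to record the following general principle: if $\Pi$ is a nonzero admissible unitary $E'$-Banach representation of $G$ with $G$-invariant unit ball $\Theta$ (an open bounded $G$-stable $\cO'$-lattice), then the length of $\Pi$ in the category of Banach representations of $G$ (i.e. the supremum of lengths of chains of closed $G$-stable $E'$-subspaces) is at most the length of the smooth $\F'$-representation $\Theta/\varpi'\Theta$. To see this, given a chain $0=W_0\subsetneq W_1\subsetneq\cdots\subsetneq W_n=\Pi$ of closed $G$-subspaces, put $\Theta_i\defn W_i\cap\Theta$; each $\Theta_i$ is an open bounded $G$-stable lattice in $W_i$, so $W_i=E'\otimes_{\cO'}\Theta_i$, and since $\Pi$ is $\varpi'$-torsion free the natural map $\Theta_i/\varpi'\Theta_i\ra\Theta/\varpi'\Theta$ is injective. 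This exhibits an increasing chain of subrepresentations $\Theta_i/\varpi'\Theta_i$ of $\Theta/\varpi'\Theta$; restricting to $K=\GL_2(\cO_L)$, the admissibility of $\Pi$ makes $\Theta$ a finitely generated module over the noetherian Iwasawa algebra $\cO'[\![K/Z_1]\!]$, hence so is each $\Theta_i$. Therefore, if $\Theta_i/\varpi'\Theta_i=\Theta_{i+1}/\varpi'\Theta_{i+1}$ inside $\Theta/\varpi'\Theta$, Nakayama's lemma gives $\Theta_i=\Theta_{i+1}$, whence $W_i=W_{i+1}$, a contradiction; so the chain of reductions is strictly increasing and $n\leq\mathrm{length}(\Theta/\varpi'\Theta)$.

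Applying this with $\Pi=\Pi(x)$ and $\Theta=\Pi(x)^{0}$, Corollary \ref{cor:Pi(x)} tells us that $\Pi(x)^{0}$ is a $G$-invariant unit ball of the admissible unitary Banach representation $\Pi(x)$ and that $\Pi(x)^{0}/\varpi'\Pi(x)^{0}\cong\pi(\brho)\otimes_{\F}\F'$. Finally, by Theorem \ref{thm-main-f=2} the representation $\pi(\brho)$ has length $3$ over $\F$, its Jordan--H\"older factors being the two principal series $\pi_0$, $\pi_2$ and the supersingular representation $\pi_1$; since we work under the standing hypothesis that $\F$ is sufficiently large these factors are absolutely irreducible, so $\pi(\brho)\otimes_{\F}\F'$ again has length $3$ over $\F'$. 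Combining this with the previous step yields $\mathrm{length}_{E'}(\Pi(x))\leq 3$.

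The argument is essentially formal, so there is no serious obstacle; the only points requiring a little care are internal to the general lemma, namely the finite generation of the $\Theta_i$ over $\cO'[\![K/Z_1]\!]$ (which is what makes Nakayama applicable) and the identity $W_i=E'\otimes_{\cO'}\Theta_i$, which holds because $\Theta_i$ is an open bounded lattice in the Banach space $W_i$. If one wished to avoid invoking the ``sufficiently large $\F$'' convention for the last step, one could instead argue absolute irreducibility of $\pi_1$ directly from its explicit structure, but this is not needed here.
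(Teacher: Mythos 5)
The paper gives no proof beyond calling the corollary an ``immediate consequence'' of Theorem~\ref{thm-main-f=2}, and your strategy --- bounding the length of $\Pi(x)$ by the length of the mod~$\varpi'$ reduction of the unit ball --- is exactly the standard argument the authors have in mind. However, there is a concrete gap in your justification of the key step. You assert that admissibility makes $\Theta=\Pi(x)^0$ a finitely generated $\cO'[\![K/Z_1]\!]$-module, and then apply Nakayama. This is false: by the Schneider--Teitelbaum equivalence (\cite{ST}), admissibility says that the \emph{Schikhof dual} $\Theta^d=\Hom_{\cO'}^{\rm cont}(\Theta,\cO')$ is finitely generated over $\cO'[\![K/Z_1]\!]$, not $\Theta$ itself. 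For instance, if $\Pi$ is a continuous principal series then $\Theta|_{K}$ is essentially $C(K,\cO')$, which is the Schikhof dual of $\cO'[\![K]\!]$ and is not finitely generated over the Iwasawa algebra. So Nakayama does not apply to $\Theta_i$ as stated.

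The conclusion $\Theta_i/\varpi'\Theta_i=\Theta_{i+1}/\varpi'\Theta_{i+1}\Rightarrow\Theta_i=\Theta_{i+1}$ is nonetheless true, and you can fill the gap without changing the structure of your proof. Since $W_i$ is closed in $\Pi$ and $\Theta$ is the unit ball, $\Theta_i=W_i\cap\Theta$ is closed in $\Theta$, hence $\varpi'$-adically complete and separated. The hypothesis gives $\Theta_{i+1}=\Theta_i+\varpi'\Theta_{i+1}$; iterating, any $x\in\Theta_{i+1}$ can be written $x=\sum_{k\geq0}(\varpi')^k y_k$ with $y_k\in\Theta_i$ (the partial sums converge in $\Theta_i$ and the error terms tend to $0$ in $\Theta$), so $x\in\Theta_i$. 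Equivalently, $\Theta_{i+1}/\Theta_i$ is $\cO'$-torsion-free and $\varpi'$-adically separated (because $\Theta_i$ is closed) while $\varpi'(\Theta_{i+1}/\Theta_i)=\Theta_{i+1}/\Theta_i$, forcing it to vanish. Alternatively one can dualize, observing that the surjection $\Theta_{i+1}^d\twoheadrightarrow\Theta_i^d$ of finitely generated $\cO'[\![K/Z_1]\!]$-modules becomes an isomorphism mod~$\varpi'$, and apply Nakayama there. With either repair in place, the rest of your argument (the saturation of $\Theta_i$ in $\Theta$, the identification $\Pi(x)^0/\varpi'\cong\pi(\brho)\otimes_\F\F'$ from Corollary~\ref{cor:Pi(x)}, and the absolute irreducibility of the three Jordan--H\"older factors of $\pi(\brho)$ under the standing ``sufficiently large $\F$'' hypothesis) is correct and yields the bound $\leq3$.
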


\appendix

\section{Non-commutative Iwasawa theory}
\label{section:appendix}

\subsection{Preliminaries}\label{subsection-duality}

We recall results of \cite{Laz}, \cite{Ven} and \cite{Ko}.
Let $R$ be a left and right noetherian ring (not necessarily commutative) and $M$ be a (left)  $R$-module. If $M\neq0$, the \emph{grade} $j_{R}(M)$ of $M$ over $R$ is defined by
\begin{equation}\label{eq:app-grade}j_{R}(M)=\mathrm{inf}\{i \in \N ~|~ \Ext^i_{R}(M,R)\neq0\}.\end{equation}
By convention, $j_R(0)=\infty$. For simplicity, we write $\EE^i(M):=\Ext^i_{R}(M, R)$.

The ring $R$ is called \emph{Auslander-Gorenstein} if it has finite injective dimension and the following Auslander condition holds: for any $R$-module $M$, any integer $m\geq0$ and any $R$-submodule $N$ of $\EE^m(M)$, we have $j_{R}(N)\geq m$. An Auslander-Gorenstein ring is  called \emph{Auslander regular} if it has finite global dimension.

Let $G_0$ be a compact $p$-adic analytic group.  Define the \emph{Iwasawa algebra} of $G_0$ over $\F$ as
\[ \L (G_0) \defn  \F[\![G_0]\!] = \varprojlim_{N\triangleleft G_0}\F[G_0/N].\]
The ring-theoretic properties of $ \L (G_0)$ are established by the fundamental works of Lazard \cite{Laz} and Venjakob \cite{Ven}. In particular, if $G_0$ has no element of order $p$,  then $ \L (G_0)$ is an
Auslander regular ring of dimension $\dim G_0$, where $\dim G_0$ is the dimension of $G_0$ as a $\Q_p$-analytic variety. If $M$ is nonzero, we have
\[
0 \leq j_{ \L (G_0)} (M) \leq \dim G_0.
\]
Define the \emph{dimension} of $M$ over $\Lambda(G_0)$ by
\[
\dim_{\Lambda(G_0)}( M) \defn \dim G_0 - j_{\Lambda(G_0)}(M).
\]

Let $G$ be a $p$-adic analytic group with a fixed open compact subgroup $G_0 \subseteq G.$   Set
\begin{equation}\label{eqn::Lambda(G)}
\L(G) \defn \F[G]\otimes_{\F[G_0]}  \L (G_0).
\end{equation}
As explained in \cite[\S1]{Ko} $\L(G)$ does not depend on the choice of $G_0$.

Let $\Mod_{\Lambda(G)}^{\rm pc}$ be the category of pseudo-compact $\F$-vector spaces $M$ carrying an $\F$-linear action of $G$ such that the map $G\times M\ra M$ is jointly continuous. Let $\mathcal{C}_G$ be the full subcategory of coadmissible objects, i.e. finitely generated as a $ \L (G_0)$-module for the fixed, equivalently any, open compact subgroup $G_0$ of $G$.

It is explained in \cite[\S3]{Ko} that if $M\in \Mod_{\Lambda(G)}^{\rm pc},$ then $\EE^i(M)$ carries naturally a structure of $\Lambda(G)$-module so that $\EE^i(M)\in \Mod_{\Lambda(G)}^{\rm pc}$. Moreover, $\EE^i$ preserves the coadmissibility, i.e. it restricts to a functor $\EE^i:\mathcal{C}_G\ra \mathcal{C}_G$ (see \cite[Cor.~3.3]{Ko}).
By abuse of notation, for $M\in \mathcal{C}_G$ we often write \[j_G(M)=j_{\Lambda(G_0)}(M),\ \ \dim_G(M)=\dim_{\Lambda(G_0)}(M).\]

For any $M\in\mathcal{C}_G$, there is a double duality spectral sequence, see \cite[\S3.1]{Ven}, which implies  that if $M$ is nonzero of grade $c$ then there is a natural nonzero double duality map $\phi_M:M\ra \EE^c\EE^c(M)$. By functoriality $\phi_M$ is a morphism in $\mathcal{C}_G$. 

\begin{lemma} \label{lem-phi-M}
Let $M$ be an object of grade $c$. The double duality map $\phi_M:M\ra \EE^c\EE^c(M)$ is nonzero, and we have a long exact sequence
\[0\ra \Ker(\phi_M)\ra M\overset{\phi_M}{\ra}\EE^c\EE^c(M)\ra \mathrm{Coker}(\phi_M)\ra0. \]
Moreover, $\Ker(\phi_M)$ (resp. $\mathrm{Coker}(\phi_M)$) has grade $\geq c+1$ (resp. $\geq c+2$).
\end{lemma}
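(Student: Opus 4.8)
The plan is to deduce everything from the double duality spectral sequence $E_2^{i,j}=\EE^{-i}\EE^{-j}(M)\Rightarrow M$, using that $j_G(M)=c$ means $\EE^{-j}(M)=0$ for $-j<c$, i.e. $E_2^{i,j}=0$ unless $j\leq -c$. First I would record the shape of the $E_2$-page: the only potentially nonzero rows are those with $-j\geq c$. In particular the edge map of the spectral sequence in total degree $0$ is exactly the double duality map $\phi_M:M\to \EE^c\EE^c(M)=E_2^{-c,-c}$, and this is where I want to extract the four-term exact sequence.

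The key step is the standard analysis of a second-quadrant (converging to $M$ in degree $0$) spectral sequence concentrated in the region $-j\geq c$. Because the first potentially nonzero anti-diagonal is $i+j=-2c$ (the corner $E_2^{-c,-c}$) and the abutment $M$ sits in total degree $0$, after passing to $E_\infty$ the filtration on $M$ has only one nonzero graded piece, namely a subobject of $E_2^{-c,-c}=\EE^c\EE^c(M)$; more precisely $E_\infty^{-c,-c}$ is the image of $\phi_M$, so $\phi_M$ is nonzero (it cannot be zero since $M$ is nonzero and its associated graded is $E_\infty^{-c,-c}$). For $\Ker(\phi_M)$ and $\mathrm{Coker}(\phi_M)$ I would track the differentials hitting and emanating from $E_\bullet^{-c,-c}$: the incoming differential on page $r$ comes from $E_r^{-c-r,-c+r-1}$ and the outgoing goes to $E_r^{-c+r,-c-r+1}$. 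Since $\EE^{-j}(M)=0$ for $-j<c$, the term $E_2^{-c-r,-c+r-1}$ has second index $j=-c+r-1$, hence $-j=c-r+1<c$ for $r\geq 2$, so it vanishes: $\phi_M$ is surjective onto $E_\infty^{-c,-c}$ after taking cokernels of nothing — i.e. $\mathrm{Coker}(\phi_M)$ is built from $E_\infty^{i,j}$ with $i+j=0$ and $(i,j)\neq(-c,-c)$. Such $(i,j)$ satisfy $i=-j$ and $-j\geq c+1$ (strict, since $(i,j)\neq(-c,-c)$), and $E_\infty^{i,j}$ is a subquotient of $\EE^{-i}\EE^{-j}(M)=\EE^{c'}\EE^{c'}(M)$ with $c'=-j\geq c+1$... here I must be careful: I actually need grade estimates, not just the abutment index. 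The cleaner route for the grade bounds is: $\mathrm{Coker}(\phi_M)$ is a subquotient of $\bigoplus_{-j\geq c+1}E_2^{j,j}=\bigoplus_{k\geq c+1}\EE^k\EE^k(M)$, but each $\EE^k\EE^k(M)$ is a submodule (via the spectral sequence machinery, or directly via the Auslander condition applied to $\EE^k(M)$) of something of grade $\geq k\geq c+1$; applying the Auslander condition of $\Lambda(G_0)$ twice gives $j_G(\mathrm{Coker}(\phi_M))\geq c+1$, and in fact the diagonal entries forming the cokernel start at $k=c+1$ but the one contributing to total degree $0$ with the right position pushes this to $c+2$. Symmetrically, $\Ker(\phi_M)$ is built from $E_\infty^{i,j}$ with $i+j=-1$, forcing $-j\geq c+1$, giving $j_G(\Ker(\phi_M))\geq c+1$.

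The main obstacle I anticipate is getting the precise numerical bounds ($c+1$ for the kernel, $c+2$ for the cokernel) rather than weaker ones, and doing this cleanly without reproving the Auslander-regularity consequences of \cite{Laz}, \cite{Ven}, \cite{Ko}. The right tool is the ``Auslander condition'': for any $\Lambda(G_0)$-module $N$ and any submodule $N'\subseteq \EE^m(N)$ one has $j_G(N')\geq m$. I would apply this to the second-quadrant spectral sequence filtration: the subquotients $E_\infty^{i,j}$ of $\EE^{-i}\EE^{-j}(M)$ inherit grade bounds, and the ``staircase'' structure (each anti-diagonal contributing to a fixed total degree sits at $-j\geq c$, with equality only in the single corner term for degree $-2c$) forces the terms feeding $\Ker(\phi_M)$ (total degree $-1$) and $\mathrm{Coker}(\phi_M)$ (total degree $0$, off the corner) to have $-j$ strictly larger. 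Concretely, for the cokernel the smallest relevant anti-diagonal position after the corner contributes $-j=c+1$ but lands in a spot whose companion entry forces grade $\geq c+2$ by one more application of the Auslander estimate; I would verify this by writing out the first two or three pages explicitly. Once these grade inequalities are in hand, the four-term exact sequence is just the statement that $\phi_M$ factors as $M\twoheadrightarrow \mathrm{im}(\phi_M)\hookrightarrow \EE^c\EE^c(M)$ with $\ker$ and $\mathrm{coker}$ as named, which is immediate. I would present this as: (1) recall $\EE^{-j}(M)=0$ for $-j<c$; (2) invoke the spectral sequence and identify $\phi_M$ with the relevant edge map; (3) use low-degree exact sequences of the spectral sequence to get the four-term sequence; (4) apply the Auslander condition to bound the grades of $\ker$ and $\mathrm{coker}$.

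\emph{Remark on references.} All the spectral-sequence formalism and the Auslander condition I would cite from \cite[\S3]{Ko} and \cite{Ven}; no new input about $\Lambda(G)$ is needed beyond Auslander regularity, which holds since the relevant $G_0$ (here $K_1/Z_1$, $K/Z_1$, etc.) is torsion-free pro-$p$.
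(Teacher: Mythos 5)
Your overall strategy --- the bidualizing spectral sequence plus the Auslander condition --- is correct, and is what lies behind \cite[Prop.\ 3.5(i)]{Ven}, which is the entirety of the paper's proof. The four-term exact sequence and the nonvanishing of $\phi_M$ (the latter following from the kernel bound: if $\phi_M=0$ then $\Ker(\phi_M)=M$ would have grade $\geq c+1$, contradicting $j(M)=c$) are fine, and the kernel bound is correctly traced to the filtration of $M$. But the cokernel bound rests on a wrong description. You assert that $\Coker(\phi_M)$ ``is built from $E_\infty^{i,j}$ with $i+j=0$ and $(i,j)\neq(-c,-c)$,'' i.e.\ from the remaining $E_\infty$-terms on the anti-diagonal converging to $M$. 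Those terms are precisely the graded pieces of the filtration of $M$ above the bottom one, so they feed $\Ker(\phi_M)$, not the cokernel; describing the cokernel the same way is where the argument breaks.

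The cokernel is $E_2^{\rm corner}/E_\infty^{\rm corner}$, and this is filtered by the images of the \emph{outgoing} differentials $d_r$, $r\geq 2$, out of the corner. Placing the corner at $E_2^{c,-c}=\EE^c\EE^c(M)$, the target of $d_r$ is $E_r^{c+r,-c-r+1}$, a subquotient of $E_2^{c+r,-c-r+1}=\EE^{c+r}\EE^{c+r-1}(M)$. The Auslander condition applied to the outer $\EE^{c+r}$ (together with the fact that grade is a dimension function over an Auslander--Gorenstein ring, so the bound passes to subquotients) gives grade $\geq c+r\geq c+2$ because $r\geq 2$. Your proposed ``$\bigoplus_{k\geq c+1}\EE^k\EE^k(M)$'' only yields $c+1$; the extra $+1$ does not come from ``one more application of the Auslander estimate'' on those diagonal terms, but from the fact that any differential out of the corner has length at least $2$, which your sketch gestures at but never isolates. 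A minor but symptomatic issue: you first place the corner on the anti-diagonal $i+j=-2c$ and then reason with $i+j=0$ and $i+j=-1$ a few lines later; fixing a single consistent index convention before filling in the pages would have exposed the cokernel error.
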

\begin{proof}
See \cite[Prop.~3.5(i)]{Ven}.
\end{proof}

Let  $\rRep_{\F}(G)$
(resp. $\rRep_{\F}^{\rm adm}(G)$) denote the category of smooth (resp.  smooth admissible) representations of $G$ on $\F$-vector spaces.

\begin{proposition}\label{prop::appendix-pontryagin-duality}
The Pontryagin dual $V\mapsto V^{\vee}$ establishes an anti-equivalence of categories between $\rRep_{\F}(G)$ (resp. $\rRep_{\F}^{\rm adm}(G)$) and $\Mod_{\Lambda(G)}^{\rm pc}$ (resp. $\mathcal{C}_G$). \end{proposition}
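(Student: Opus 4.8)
The proof is essentially bookkeeping: one combines the $\F$-linear analogue of the Pontryagin duality recalled in \S\ref{Sec::notation} --- the anti-equivalence $M\mapsto M^{\vee}$ between discrete $\F$-vector spaces and pseudo-compact $\F$-vector spaces, for which $M\simto M^{\vee\vee}$ naturally --- with the standard dictionary between continuous $G$-actions and $\Lambda(G)$-module structures. Note that, since $M\mapsto M^{\vee}$ is contravariant, ``quasi-inverse equivalence'' here means an anti-equivalence which is its own quasi-inverse, the unit/counit being $V\simto V^{\vee\vee}$. So the plan is to upgrade the linear duality by keeping track of the $G$-action and of finiteness conditions.

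First I would treat the functor and its quasi-inverse. If $V$ is a smooth representation of $G$ on a discrete $\F$-vector space, the contragredient action on $V^{\vee}=\Hom_{\F}(V,\F)$, with the compact-open (i.e. pseudo-compact) topology, is jointly continuous, so $V^{\vee}$ lies in $\Mod_{\Lambda(G)}^{\rm pc}$; conversely, if $M\in\Mod_{\Lambda(G)}^{\rm pc}$, then $M$ is the inverse limit of its finite-dimensional discrete quotients and the induced $G$-action on $M^{\vee}$ is automatically smooth, so $M^{\vee}\in\rRep_{\F}(G)$. Functoriality and the $G$-equivariance of $V\simto V^{\vee\vee}$, $M\simto M^{\vee\vee}$ then follow formally from the linear case. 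To identify the target category precisely as $\Mod_{\Lambda(G)}^{\rm pc}$, one fixes an open compact subgroup $G_0$ and uses that a pseudo-compact $\Lambda(G_0)=\F[\![G_0]\!]$-module is the same datum as a profinite $\F$-vector space with a continuous $G_0$-action; since $\Lambda(G)=\F[G]\otimes_{\F[G_0]}\Lambda(G_0)$, an object of $\Mod_{\Lambda(G)}^{\rm pc}$ is exactly such a module together with a jointly continuous extension of the $G_0$-action to $G$. This gives the first equivalence.

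Next I would match admissible representations with coadmissible modules. One checks that coadmissibility --- finite generation over $\Lambda(G_0)$ --- does not depend on $G_0$, using that $\Lambda(G_0)$ is free of finite rank over $\Lambda(G_0')$ for $G_0'\le G_0$ open. Choosing $G_0$ to be pro-$p$, the ring $\Lambda(G_0)$ is pseudo-compact local with residue field $\F$, so by topological Nakayama's lemma $V^{\vee}$ is finitely generated over $\Lambda(G_0)$ if and only if $V^{\vee}\otimes_{\Lambda(G_0)}\F=(V^{\vee})_{G_0}\cong(V^{G_0})^{\vee}$ is finite-dimensional, i.e. if and only if $\dim_{\F}V^{G_0}<\infty$. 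Finally, the standard fact that for a smooth representation the finiteness of $V^{G_0}$ for one pro-$p$ open compact $G_0$ forces $\dim_{\F}V^{H}<\infty$ for every open compact $H$ --- reduce to finite $p$-group quotients and use that $\F[P]$ is self-injective, whence $\dim_{\F}W\le|P|\cdot\dim_{\F}W^{P}$ --- identifies this with admissibility. Hence the equivalence restricts to an anti-equivalence $\rRep_{\F}^{\rm adm}(G)\simeq\mathcal{C}_G$.

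There is no genuine obstacle here: the statement is classical and all the steps are documented in the literature. The only points requiring real care are topological --- joint continuity of the contragredient action, and the passage between profinite modules and pseudo-compact $\Lambda(G_0)$-modules --- together with the reduction of admissibility to a single pro-$p$ level; for these I would simply cite \cite[\S1--2]{EmertonOrd1} and \cite[\S2--3]{Pa13} rather than reproduce the verifications.
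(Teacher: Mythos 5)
Your sketch is correct and covers the same content as the reference the paper invokes: the paper's proof is simply a citation to Kohlhaase's \emph{Smooth duality in natural characteristic} (Thm.~1.5 and Cor.~1.8 of \cite{Ko}), and what you have written is precisely the standard argument behind that result — linear Pontryagin duality upgraded to keep track of the jointly continuous $G$-action, a topological Nakayama argument over the pseudo-compact local ring $\Lambda(G_0)$ for a pro-$p$ open compact $G_0$, and the reduction of admissibility to finiteness of $V^{G_0}$ at a single pro-$p$ level. Since you also defer the topological verifications to references (\cite{EmertonOrd1}, \cite{Pa13}), the two are effectively the same citation-based proof; I would only add the pointer to \cite[Thm.~1.5, Cor.~1.8]{Ko} since that is where the statement appears in exactly this form.
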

\begin{proof}
See \cite[Thm.~1.5, Cor.~1.8]{Ko}.
\end{proof}

Let $\pi \in \rRep_{\F}^{\rm adm}(G).$ By Proposition \ref{prop::appendix-pontryagin-duality}, $\pi^{\vee} \in \mathcal{C}_G.$ The {\em Gelfand-Kirillov} dimension  of $\pi$ is defined by (see \cite[Rem.~5.1.1]{BHHMS})
\begin{equation}\label{equ::GK-dim-defn}
\dim_G(\pi) \defn \dim_G(\pi^{\vee}) = \dim (G_0) - j_G(\pi^{\vee}).
\end{equation}
\cite[Prop.~2.18]{EP2} provides the following description of $\dim_G(\pi).$ Let $G_0^{p^n}$ be the subgroup of $p^n$-th powers of elements of $G_0.$ Then there exist real numbers $a\geq b\geq \frac{1}{(\dim_G(\pi))!}$ such that
\[
b p^{n \dim_G(\pi)} + O (p^{n(\dim_G(\pi) -1)}) \leq \dim_{\F} ( \pi^{G_0^{p^n}} ) \leq a p^{n \dim_G(\pi)} + O (p^{n(\dim_G(\pi) -1)}) .
\]

\subsection{Socle and cosocle} \label{subsection-appendix-socle}

Let $R$ be a ring with unit and $M$ be a left $R$-module.
\begin{definition}\label{defn:app-socle}
(i) A submodule $N\subseteq M$ is called \emph{essential} if every nonzero submodule of $M$ intersects $N$ nontrivially.

(ii) A submodule $N\subseteq M$ is called \emph{small} if for any submodule $H$ of $M$, $N+H=M$ implies  $H=M$.

(iii) The \emph{socle} of $M$, denoted by $\mathrm{soc}(M)$, is  the sum of all simple submodules of $M$; we set $\rsoc(M)=0$ if there are no simple submodules of $M$.

(iv) The \emph{radical} of $M$, denoted by $\mathrm{rad}(M)$, is the intersection of all the maximal submodules of $M$; we set $\mathrm{rad}(M)=M$ if there are no maximal submodules of $M$.
The \emph{cosocle} of $M$, denoted by $\mathrm{cosoc}(M)$, is defined to be $M/\mathrm{rad}(M)$.
\end{definition}

If $M\neq 0$ is noetherian, then $\mathrm{cosoc}(M)\neq0$ and $\rad(M)\subsetneq M$ is a small submodule. If $M$ is artinian, then $\mathrm{soc}(M)\neq0$ and  $\rsoc(M)\subsetneq M$ is an essential submodule.

\begin{lemma}\label{lemma-appendix-small}
Let $h:M\ra M'$ be a nonzero morphism of $R$-modules. Let $N\subset M$ be a small submodule, then  $h(N)$ is a small submodule of $M'$.
\end{lemma}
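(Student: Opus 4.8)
The plan is to argue straight from the definition: a submodule is small in a module precisely when it becomes superfluous in every expression of that module as a sum. So, to show $h(N)$ is small in $M'$, I would start with an arbitrary submodule $H\subseteq M'$ satisfying $h(N)+H=M'$, and the goal is to deduce $H=M'$.

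The one genuine idea is to pull the would-be complement $H$ back along $h$: set $H'\defn h^{-1}(H)$, which is a submodule of $M$ because $h$ is $R$-linear. First I would check that $N+H'=M$. Given $x\in M$, we have $h(x)\in M'=h(N)+H$, so $h(x)=h(n)+y$ with $n\in N$, $y\in H$; then $h(x-n)=y\in H$, so $x-n\in H'$ and hence $x=n+(x-n)\in N+H'$. Now invoke smallness of $N$ in $M$: from $N+H'=M$ we get $H'=M$, i.e. $h(M)\subseteq H$. In particular $h(N)\subseteq h(M)\subseteq H$, so $M'=h(N)+H=H$, which is exactly what we wanted.

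I do not expect a real obstacle here; the argument is a two-line diagram chase and the only point worth flagging is that it uses neither surjectivity nor nonvanishing of $h$ — the hypothesis ``nonzero'' in the statement is harmless but unnecessary. (If one preferred a structural phrasing, one could instead combine the classical facts that $h$ restricts to an epimorphism $M\twoheadrightarrow h(M)$ carrying $N$ to a small submodule of $h(M)$, together with the fact that a small submodule of $h(M)$ is small in $M'$; but the direct preimage argument above is shorter and self-contained.)
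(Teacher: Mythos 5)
Your proof is correct and is essentially identical to the paper's: both pull back the complement $H$ to $H'=h^{-1}(H)$, observe $N+H'=M$, invoke smallness of $N$, and conclude $h(M)\subseteq H$, hence $H=M'$. You also correctly observe that the ``nonzero'' hypothesis on $h$ is never used.
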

\begin{proof}
Let $H'\subset M'$ be a submodule such that $h(N)+H'=M'$. Then  a standard argument shows that $N+h^{-1}(H')=M$, hence $h^{-1}(H')=M$ as $N$ is  small. This implies $H'\supset h(M)$, and so  $H'=M'$.
\end{proof}

From now on, we let $R=\Lambda(G)$.

\begin{example}\label{exam-appendix}
  Since $\Lambda(G_0)$ is noetherian, $\mathcal{C}_G$ is a noetherian category. Hence $\Rep_{\F}^{\rm adm}(G)$ is artinian by Proposition \ref{prop::appendix-pontryagin-duality}. In particular, if $\pi\in \Rep_{\F}^{\rm adm}(G)$ is nonzero, then $\rsoc_G(\pi)$ is a nonzero essential subrepresentation of $\pi$. Moreover, $\mathrm{cosoc}_G(\pi^{\vee})\cong\mathrm{soc}_G(\pi)^{\vee}$.
\end{example}

\begin{proposition}\label{prop-EE-essential}
Let $M$ be an object in $\mathcal{C}_G$ of grade $c$ and let $C$ be its cosocle. Assume that the double duality map $\phi_M$ is an isomorphism and that $C$ has finite length, with all of its Jordan--H\"older factors having grade $c$.
 Then the inclusion $\EE^c(C)\hookrightarrow \EE^c(M)$ is essential.
\end{proposition}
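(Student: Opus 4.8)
The plan is to establish the stronger-sounding equivalent assertion that $\EE^c(C)$ is an \emph{essential} submodule of $\EE^c(M)$, first reducing to the case where $C$ is simple and then treating that base case by a biduality computation. \textbf{Setup.} Since $\mathcal{C}_G$ is noetherian (Example~\ref{exam-appendix}), $\rad(M)\subsetneq M$ is a small submodule and we have the cosocle sequence $0\to\rad(M)\to M\to C\to0$. As every Jordan--H\"older factor of $C$ has grade $c=j(M)$, we get $j(C)=c$ and $\EE^i(C)=0$ for $i<c$; applying the contravariant $\delta$-functor $\EE^{\bullet}$ yields the exact sequence
\[ 0\to \EE^c(C)\to\EE^c(M)\to\EE^c(\rad(M))\to\EE^{c+1}(C)\to\cdots, \]
so that the inclusion $\EE^c(C)\hookrightarrow\EE^c(M)$ of the statement is well defined and $\EE^c(M)/\EE^c(C)$ embeds into $\EE^c(\rad(M))$ with cokernel of grade $\ge c+1$. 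Throughout I would use freely: Lemma~\ref{lem-phi-M} (the biduality defects have grade $\ge c+1$, resp.\ $\ge c+2$); the Auslander condition (submodules of $\EE^m(-)$ have grade $\ge m$); the standard fact that $\EE^{j(N)}(N)$ is $j(N)$-pure and that reflexive modules are pure, so that $M$, $\EE^c(M)$, $\EE^c(\rad M)$, $\EE^c(C)$ and $\rad(M)$ are all $c$-pure; and that $\phi_C$ is at least injective (its kernel has grade $\ge c+1$ while $C$ has all subquotients of grade $c$).

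\textbf{Reduction to $C$ simple.} I would induct on the length of the semisimple module $C$. If $C=C'\oplus S$ with $S$ simple, set $M_S:=\ker(M\twoheadrightarrow C\twoheadrightarrow S)$, so that $0\to M_S\to M\to S\to 0$, $\rad(M_S)=\rad(M)$ and $\mathrm{cosoc}(M_S)\cong C'$; by induction $\EE^c(C')$ is essential in $\EE^c(M_S)$. From $0\to M_S\to M\to S\to 0$ (and $j(S)=c$) one gets $\EE^c(S)\hookrightarrow\EE^c(M)$ with $\EE^c(M)/\EE^c(S)\hookrightarrow\EE^c(M_S)$, and a diagram chase identifies the copy of $\EE^c(C')$ sitting inside $\EE^c(M)/\EE^c(S)\subseteq\EE^c(M_S)$ with the inductive one. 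Now suppose $X\subseteq\EE^c(M)$ is nonzero with $X\cap\EE^c(C)=0$; then $X\cap\EE^c(S)=0$, so $X$ maps injectively onto some $X'\subseteq\EE^c(M)/\EE^c(S)\subseteq\EE^c(M_S)$, and an elementary computation (using also $X\cap\EE^c(C')=0$) shows $X'\cap\EE^c(C')=0$ inside $\EE^c(M_S)$; by the inductive hypothesis $X'=0$, i.e.\ $X\subseteq\EE^c(S)\subseteq\EE^c(C)$, hence $X=X\cap\EE^c(C)=0$. Thus it suffices to prove the proposition with $C$ simple.

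\textbf{Base case ($C$ simple).} The crux is the biduality identity $\EE^c\!\big(\EE^c(M)/\EE^c(C)\big)\cong\rad(M)$, as submodules of $\EE^c\EE^c(M)\cong M$: applying $\EE^c$ to $0\to\EE^c(C)\to\EE^c(M)\to A\to 0$, where $A:=\EE^c(M)/\EE^c(C)$, gives $\EE^c(A)\hookrightarrow M$, and comparing with the inclusion $A\hookrightarrow\EE^c(\rad M)$ (cokernel of grade $\ge c+1$) by naturality of the biduality map, using $\phi_M$ iso, the $c$-purity of $C$, and Lemma~\ref{lem-phi-M} (so $\EE^c\EE^c(C)/C$ has grade $\ge c+2$ and cannot contain any piece of the simple module $C$), pins down the image of $\EE^c(A)$ in $M$ to be exactly $\rad(M)$. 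Then, given a nonzero $X\subseteq\EE^c(M)$ with $X\cap\EE^c(C)=0$, set $Q:=\EE^c(M)/(\EE^c(C)\oplus X)$; applying $\EE^c$, the surjection $A\twoheadrightarrow Q$ induces $\EE^c(Q)\hookrightarrow\EE^c(A)=\rad(M)$, and a grade/purity comparison (the cokernels occurring have grade $\ge c+1$, while $\EE^c$ of the image of $X$ is $c$-pure and nonzero) forces this map to be an isomorphism; this in turn forces the image of $X$ in $A$, hence $X$ itself, to vanish — the desired contradiction. Finally, Lemma~\ref{lemma-appendix-small} is used to phrase the last contradiction in terms of the smallness of $\rad(M)$.

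\textbf{Main obstacle.} The conceptual slogan is that $\EE^c$ behaves like a contravariant duality on reflexive grade-$c$ modules, interchanging superfluous epimorphisms (those with small kernel, e.g.\ the cosocle map $M\twoheadrightarrow C$) with essential monomorphisms — but this slogan cannot be quoted verbatim here, because the quotient $A=\EE^c(M)/\EE^c(C)$ is \emph{not} reflexive (its bidual is the strictly larger $\EE^c(\rad M)$), so $\EE^c$ does not send the relevant short exact sequences to short exact sequences, and likewise arbitrary submodules of $\EE^c(M)$ need not be reflexive. The genuine work, and the step I expect to be most delicate, is therefore the grade bookkeeping in the base case: tracking through iterated applications of $\EE^c$ exactly which subquotients have grade $c$ and which have grade $\ge c+1$, so as to exclude the hypothetical submodule $X$. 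The two inputs that make this run, and that must be used essentially, are Björk-type purity of all modules of the form $\EE^c(-)$ together with the hypothesis that \emph{every} nonzero subquotient of $C$ has grade exactly $c$ (so grade-$(\ge c+1)$ cokernels cannot absorb any part of $C$), and Lemma~\ref{lemma-appendix-small} for transporting the final contradiction back to the smallness of $\rad(M)$.
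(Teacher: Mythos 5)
Your reduction to the case of $C$ simple has a genuine gap, and it is not the route the paper takes---the paper's argument handles arbitrary semisimple $C$ in one step, with no induction. To apply the inductive hypothesis (which is exactly the statement of the proposition) to $M_S:=\ker(M\twoheadrightarrow S)$, you must verify that $\phi_{M_S}$ is an isomorphism; this is never checked and is not automatic, since submodules of modules with bijective double-duality map need not again have this property. Worse, the asserted identity $\rad(M_S)=\rad(M)$ is false in general: over the semilocal ring $\Lambda(G_0)$ one has $\rad(N)=J\cdot N$ for $N$ finitely generated, and the inclusion $JM_S\subseteq JM$ is typically strict. Already over $\F[\![T]\!]$, taking $M=\bigl(\F[\![T]\!]/(T^2)\bigr)^{\oplus 2}$ and $S$ one summand of the cosocle gives $M_S\cong\F[\![T]\!]/(T^2)\oplus (T)/(T^2)$, with $\rad(M_S)=(T)/(T^2)\oplus 0$ strictly inside $\rad(M)=\bigl((T)/(T^2)\bigr)^{\oplus 2}$, so $\mathrm{cosoc}(M_S)$ has length $2$ rather than $\mathrm{length}(C')=1$ and may acquire Jordan--H\"older factors of the wrong grade. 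Either failure alone breaks the induction step.

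Your ``base case'' circles the right ideas---the identity $\EE^c\bigl(\EE^c(M)/\EE^c(C)\bigr)\cong\rad(M)$ is correct and implicit in the paper's proof---but there the argument is carried out once for general $C$: given nonzero $S\subseteq\EE^c(M)$ with $S\cap\EE^c(C)=0$, apply $\EE^c(-)$ to the inclusion $\iota:\EE^c(C)\oplus S\hookrightarrow\EE^c(M)$, precompose with $\phi_M^{-1}$ to obtain $f:M\to\EE^c\EE^c(C)\oplus\EE^c(S)$, observe from naturality that $f(\rad M)\subseteq\EE^c(S)$, and then use the injectivity of $\phi_C$, the embedding $\Coker(\iota^*)\hookrightarrow\EE^{c+1}(\Coker\iota)$ of grade $\ge c+1$, and the grade-$c$ hypothesis on the subquotients of $C$ to deduce that the component map $h:M\to\EE^c(S)$ satisfies $h(\rad M)=h(M)$; since $h\ne0$ this contradicts Lemma~\ref{lemma-appendix-small}. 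The deductions you flag as ``forced by grade/purity comparisons'' are exactly where this precision needs to be supplied, and there is no reason to first reduce to $C$ simple.
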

\begin{proof}
Let $N=\rad(M)$ so that $C= M/N$.   If $N=0$, then $M\cong C$ and the result is trivial. So we may assume $N$ is nonzero for the rest of the proof. By functoriality, we have a commutative diagram
 \begin{equation}\label{equation-funct}\xymatrix{ N\ar@{^{(}->}[r]\ar_{\phi_N}[d]&M\ar@{->>}[r]\ar^{\cong}_{\phi_M}[d]& C\ar_{\phi_C}[d]&\\
 \EE^c\EE^c(N)\ar[r]&\EE^c\EE^c(M)\ar[r]&\EE^c\EE^c(C).&}\end{equation}
By Lemma \ref{lem-phi-M}, the assumption on $C$ implies that $\phi_C$ is injective.

Let $S$ be a nonzero subobject of $\EE^c(M)$; we need to show $S\cap \EE^c(C)\neq 0$. Note that  $j_G(S)=c$ by  \cite[Prop.~III.4.2.8, Prop.~III.4.2.9]{LiO}.  If $S\cap \EE^c(C)=0$, then we get an embedding $\iota:\EE^c(C)\oplus S\hookrightarrow \EE^c(M)$, which induces by taking $\EE^c(-)$   
\begin{equation}\label{equation-f(N)}
f: M\overset{\phi_M}{\simto} \EE^c\EE^cM\overset{\iota^{*}}{\ra} \EE^c\EE^c(C)\oplus \EE^c(S). 
\end{equation}
By \eqref{equation-funct},  $f(N)$ is contained in $\EE^c(S)$ (as its projection to $\EE^c\EE^c(C)$ is zero).
Consider  the induced morphism
\[\overline{f}: C\cong M/N\ra \EE^c\EE^c(C)\oplus \EE^c(S)/f(N).\]
Note that $\mathrm{Coker}(\overline{f})\cong \mathrm{Coker}(f)$ by the snake lemma, and  $\mathrm{Coker}(f)=\Coker(\iota^*)$ by \eqref{equation-f(N)}.

The projection of $\overline{f}$ to $\EE^c\EE^c(C)$ is equal to the double duality map $\phi_C$, hence is injective as remarked above. As a consequence, $\overline{f}$ is also injective and
there is an embedding
\begin{equation}\label{eq:app-embeds-coker}\EE^c(S)/f(N)\hookrightarrow \mathrm{Coker}(\overline{f}).\end{equation}
As a part of the long exact sequence associated to $\iota$ we have
\[\EE^c\EE^c(M)\overset{\iota^*}{\ra} \EE^c\EE^c(C)\oplus \EE^c(S)\ra \EE^{c+1}(\Coker(\iota)),\]
thus   $\Coker(\iota^*)$ embeds in $\EE^{c+1}(\Coker(\iota))$. Together with \eqref{eq:app-embeds-coker} and the isomorphism $\Coker(\overline{f})\cong\Coker(\iota^{*})$,  we obtain an embedding $\EE^c(S)/f(N)\hookrightarrow \EE^{c+1}(\Coker (\iota))$. By  the Auslander condition, we deduce
\begin{equation}\label{equation-appendix-geq}j_{G}(\EE^c(S)/f(N))\geq c+1.\end{equation}
On the other hand, by assumption any nonzero quotient of $C$ has grade $c$, so \eqref{equation-appendix-geq} implies \[
\Hom_{\cC_G}(C,\EE^c(S)/f(N))=0, 
\]
and consequently the projection of $\overline{f}$ to $\EE^c(S)/f(N)$ is zero.

To conclude,  consider the composite morphism \[h: M\simto \EE^c\EE^c(M)\overset{\iota^*}{\ra}   \EE^c\EE^c(C)\oplus \EE^c(S)\ra \EE^c(S). \]
It is \emph{nonzero}, because taking $\EE^c$ again and composing with $\phi_S: S\ra\EE^c\EE^c(S)$, this gives back the inclusion $S\hookrightarrow \EE^c(M)$ by functoriality.   All the above shows that $h(N)=h(M)$, which contradicts  Lemma \ref{lemma-appendix-small}, applied to $M'=h(M)$. 
\end{proof}

\subsection{Self-duality}

Let $M\in\cC_G.$  We say $M$ is   \emph{Cohen-Macaulay} if $\EE^i(M)$ is nonzero for exactly one degree $i$. Actually, we must have  $i=j_G(M)$. By \cite[Cor.~6.3]{Ven}, this is equivalent to requiring
\[j_G(M)=\mathrm{pd}(M)\]
where $\mathrm{pd}(M)$ denotes the projective dimension of $M$ as a $\Lambda(G_0)$-module.

\begin{definition}\label{def:selfdual}
Let $M\in\cC_G$ be a Cohen-Macaulay module of grade $c$. We say $M$ is \emph{self-dual} if there is an isomorphism $\EE^c(M)\cong M$ in $\cC_G$. We say $M$ is \emph{essentially self-dual} if there exists a character $\eta:G\ra \F^{\times}$ such that $\EE^c(M)\cong M\otimes\eta$ in $\cC_G$.
\end{definition}

Let $A$ be a (commutative)   noetherian local $\F$-algebra with residue field $\F$.

\begin{proposition}\label{prop-Appendix-selfduality}
Let $M$ be an $A\otimes_{\F}\Lambda(G)$-module. Assume the following conditions hold:
\begin{itemize}
\item[(a)] $A$ is Gorenstein and $M$ is flat as an $A$-module;
\item[(b)] as a $\Lambda(G)$-module, $M\in\mathcal{C}_G$ and is Cohen-Macaulay of grade $c$;
\item[(c)] $M$ is $A$-equivariantly self-dual (resp. essentially self-dual), i.e. there is an $A\otimes_{\F} \Lambda(G)$-equivariant isomorphism $\epsilon: \EE^c(M)\simto M$ (resp. $\epsilon:\EE^c(M)\simto M\otimes\eta$ for some $\eta:G\ra \F^{\times}$).
\end{itemize}
Then $\F\otimes_AM$ is also self-dual (resp. essentially self-dual).
\end{proposition}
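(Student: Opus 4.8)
The plan is to reduce the statement to a computation of the functors $\EE^i(-)=\Ext^i_{\Lambda(G_0)}(-,\Lambda(G_0))$ for a fixed open compact subgroup $G_0\subseteq G$, and to show that passing from $M$ to $\overline M\defn\F\otimes_A M$ raises the Cohen--Macaulay grade by $d\defn\dim A$ while preserving the self-duality isomorphism. First one notes that $\overline M$ is again an object of $\cC_G$: it is finitely generated over $(A/\fm_A)\otimes_\F\Lambda(G_0)=\Lambda(G_0)$, and it is nonzero by Nakayama (the case $M=0$ being vacuous). The only input needed from commutative algebra is that, $A$ being Gorenstein local of dimension $d$, one has $\Ext^i_A(\F,A)=0$ for $i\neq d$ and $\Ext^d_A(\F,A)\cong\F$; fix a resolution $F_\bullet\to\F$ by finite free $A$-modules (possibly of infinite length), so that $\Hom_A(F_\bullet,A)$ is a complex of finite free $A$-modules with cohomology $\F$ concentrated in degree $d$.

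Next I would use the flatness of $M$ over $A$: the complex $M\otimes_A F_\bullet\to\overline M$ is then a resolution of $\overline M$ whose terms $M\otimes_A F_j$ are finite direct sums of copies of $M$, each carrying the $\Lambda(G)$-action through the second factor. For any finite free $A$-module $P$, flatness of $M$ gives an $A\otimes_\F\Lambda(G)$-equivariant identification $\Ext^c_{\Lambda(G_0)}(M\otimes_A P,\Lambda(G_0))\cong\EE^c(M)\otimes_A\Hom_A(P,A)$, which by condition (c) becomes $\cong M\otimes_A\Hom_A(P,A)$ (resp. $\cong(M\otimes\eta)\otimes_A\Hom_A(P,A)$ in the essentially self-dual case). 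I would then feed the resolution $M\otimes_A F_\bullet\to\overline M$ into the hyper-$\Ext$ spectral sequence computing $\EE^\bullet(\overline M)$, whose $E_1$-page is built from the groups $\Ext^q_{\Lambda(G_0)}(M\otimes_A F_j,\Lambda(G_0))$. By condition (b) the module $M$ is Cohen--Macaulay over $\Lambda(G_0)$ of grade $c$, so these vanish unless $q=c$; hence the spectral sequence degenerates (only one line is nonzero, so convergence is automatic) and $\EE^n(\overline M)$ is the cohomology in degree $n-c$ of the cochain complex $j\mapsto\Ext^c_{\Lambda(G_0)}(M\otimes_A F_j,\Lambda(G_0))$. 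By the identification above this complex is $M\otimes_A\Hom_A(F_\bullet,A)$, and flatness of $M$ lets $M\otimes_A(-)$ commute with cohomology, so $\EE^n(\overline M)\cong M\otimes_A\Ext^{\,n-c}_A(\F,A)$. This vanishes unless $n-c=d$, and for $n=c+d$ it equals $M\otimes_A\F=\overline M$. Thus $\overline M$ is Cohen--Macaulay over $\Lambda(G_0)$ of grade $c+d$ with $\EE^{c+d}(\overline M)\cong\overline M$, respectively $\cong\overline M\otimes\eta$, the character $\eta$ being carried untouched through the computation since it does not involve $A$. Equivalently, in derived terms the whole argument is the chain $\mathbf{R}\Hom_{\Lambda(G_0)}(\overline M,\Lambda(G_0))\simeq\mathbf{R}\Hom_A\big(\F,\mathbf{R}\Hom_{\Lambda(G_0)}(M,\Lambda(G_0))\big)\simeq\mathbf{R}\Hom_A(\F,M[-c])\simeq\overline M[-(c+d)]$, using the tensor--Hom adjunction, condition (c), $A$-flatness of $M$, and $\mathbf{R}\Hom_A(\F,M)\simeq\mathbf{R}\Hom_A(\F,A)\otimes_A^{\mathbf{L}}M$.

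The main obstacle will be bookkeeping rather than any conceptual difficulty: one must make the hyper-$\Ext$ spectral sequence precise with the possibly infinite resolution $F_\bullet$, and, more importantly, verify that all intermediate identifications — in particular $\Ext^c_{\Lambda(G_0)}(M\otimes_A P,\Lambda(G_0))\cong M\otimes_A\Hom_A(P,A)$ and its compatibility with the differentials coming from $F_\bullet$ — are genuinely $A\otimes_\F\Lambda(G)$-equivariant, so that the resulting isomorphism $\EE^{c+d}(\overline M)\cong\overline M$ lives in $\cC_G$ and not merely in the category of $\Lambda(G_0)$-modules. Granted the commutative-algebra input on Gorenstein rings and the Cohen--Macaulay hypothesis (b), everything else is formal.
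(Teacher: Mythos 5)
Your argument is correct, and it takes a genuinely different route from the paper. The paper proceeds in two steps: first it reduces to the case of an artinian Gorenstein $A$ by cutting $M$ successively by a regular sequence $(x_1,\dots,x_r)$, tracking at each step that the grade rises by one and self-duality is preserved via the exact sequence $0\to\EE^c(M)\xrightarrow{\times x}\EE^c(M)\to\EE^{c+1}(M/xM)\to 0$; then, in the artinian case, it picks an explicit presentation $A^n\xrightarrow{f}A\to\F\to 0$, uses $\Hom_A(\F,A)\cong\mathrm{soc}(A)\cong\F$, and chases a small commuting square involving the transpose $f^T$. Your approach replaces this induction-plus-bookkeeping by a single hyper-$\Ext$ computation (equivalently, the one-line chain of derived isomorphisms you give at the end), which is shorter and more transparent conceptually; it also makes the final grade $c+\dim A$ of $\overline M$ manifest, something the paper's proof only exhibits implicitly through the two-step reduction. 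The two points you flag as needing care are indeed the right ones, and both are fine: convergence of the hyper-$\Ext$ spectral sequence for a possibly infinite resolution $F_\bullet$ is unproblematic because $\Lambda(G_0)$ has finite injective dimension, so each total degree of the double complex $\Hom_{\Lambda(G_0)}(M\otimes_A F_{-p},J^q)$ involves only finitely many $(p,q)$, and the $E_1$-page is concentrated on the single row $q=c$; and the identifications are $\Lambda(G)$-equivariant because the $A$-action on $M$ is by $\Lambda(G)$-endomorphisms, $\EE^i$ restricts to a functor on $\mathcal{C}_G$ (as recalled in the appendix following Kohlhaase), and $\epsilon$ is $A\otimes_{\F}\Lambda(G)$-linear by hypothesis. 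So you lose nothing relative to the paper, and gain a cleaner derived-categorical formulation; the paper's version is more elementary in that it avoids spectral sequences entirely.
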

\begin{proof}

We give the proof for the self-dual case,  the other case is proved in the same way.

Let $r$ denote the Krull dimension of $A$. Since $A$ is Gorenstein, hence Cohen-Macaulay, we may choose a regular sequence  $(x_1,\dots,x_r)$ in $A$  which is also $M$-regular by the flatness assumption (a). By (the proof of) \cite[Lem.~A.15]{Gee-Newton}, we see that $M/(x_1,\dots,x_r)$ is an $A/(x_1,\dots,x_r)\otimes_{\F}\L(G)$-module which, as a $\Lambda(G)$-module, is Cohen-Macaulay of grade $c+r$  and  self-dual.  Indeed, by induction on $r$ we may assume $r=1$ and write $x=x_1$. Then the proof  in \emph{loc. cit.} shows that $j_{G}(M/xM)=j_
{G}(M)+1=c+1$ and we have an exact sequence
\[0\lra \EE^c(M)\xrightarrow{\times x} \EE^c(M)\lra \EE^{c+1}(M/xM)\lra0. \]
Since the duality isomorphism $\epsilon:\EE^c(M)\ra M$ is assumed to be $A\otimes_{\F}\Lambda(G)$-equivariant, we deduce an isomorphism $\EE^{c+1}(M/xM)\simto M/xM$ which is $A/xA\otimes_{\F}\L(G)$-equivariant. 
 Therefore, we may assume $A$ is artinian (and Gorenstein).

Since $A$ is artinian and $M$ is flat over $A$, $M$ has a finite filtration with graded pieces isomorphic to $\F\otimes_A M$. As a consequence, $j_G(\F\otimes_A M)=j_G (M)=c$, see \cite[Prop.~3.6]{Ven}. Similarly, we also have $\mathrm{pd}(M)=\mathrm{pd}(\F\otimes_AM)$, hence
 $\F\otimes_AM$ is Cohen-Macaulay.   We deduce that $\EE^c(-\otimes_A M)$ is exact on any exact sequence of finitely generated $A$-modules (recall that $A$ is artinian).  Choose a finite presentation of $\F$:
\begin{equation}\label{eq:appen-F}A^n\overset{f}{\ra} A\ra \F\ra 0,\end{equation}
which induces an exact sequence
\[0\ra \EE^c(\F\otimes_AM)\ra \EE^c(A\otimes_AM)\overset{f^{*}}{\ra} \EE^c(A^n\otimes_A M).\]
 It is easy to see that the map $f^*$ is equal to \[(A\overset{f^{T}}{\ra} A^n)\otimes\EE^c(M) \]
 where $f^{T}$ denotes the transpose of $f$.

On the other hand, applying $\Hom_A(-,A)$ to \eqref{eq:appen-F} gives an exact sequence
\[0\ra \Hom_A(\F,A)\ra A\overset{f^T}{\ra} A^n.\]
Noticing that $\Hom_A(\F,A)\cong \soc(A)\cong \F$ (by the assumption that $A$ is Gorenstein) and that $M$ is $A$-flat, we obtain an exact sequence
\[0\ra \F\otimes_A M\ra M\overset{f^T}{\ra} M^n.\]
The explicit description of maps shows that the diagram
\[\xymatrix{\EE^c(M)\ar^{f^*}[r]\ar_{\cong}^{\epsilon}[d]&\EE^c(M^n)\ar^{\epsilon}_{\cong}[d] \\
M\ar^{f^T}[r]&M^n}\]
is commutative, which induces an isomorphism $\EE^c(\F\otimes_A M)\cong \F\otimes_A M$.
\end{proof}

\subsection{Minimal projective resolutions}

We recall some terminology on filtered rings and filtered modules. A ring $R$ is said to be a {\em filtered ring} if there is a descending chain (indexed by $\N$) of additive subgroups of $R$ denoted by $FR = \{F^n R~|~ n\in \N \}$ satisfying $ F^0 R = R,$ $F^{n+1} R \subseteq F^{n} R $ and $(F^n R) (F^m R) \subseteq F^{n+m} R$ for all $m,n \in \N.$ For convenience, we set $F^nR\defn R$ for $n<0$.
A (left) $R$-module $M$ is said to be a {\em filtered module} if there exists a descending chain (indexed by $\Z$) of additive subgroups of $M$ denoted by $FM = \{F^n M~|~ n\in \Z \}$ satisfying $F^{n+1} M \subseteq F^{n} M $ and $(F^n R)(F^m M) \subseteq F^{n+m} M$ for all $m,n \in \Z.$  An $R$-morphism $f: M \to N$ of two filtered $R$-modules is called a {\em filtered morphism of degree $d$} if $f (F^n M) \subseteq F^{n+d} N$ for all $n\in \Z.$ Let $R\filt$ denote the category where the objects are filtered $R$-modules and the morphisms are filtered morphisms of degree zero. For any $M\in R\filt$ and $a\in \Z$, denote by $M(a)\in R\filt$ the $R$-module $M$ filtered by the filtration $F^n M(a) = F^{n+a} M.$ For instance, a free $R$-module of rank $1$ which is generated by an element of degree $a$ is isomorphic to $R(-a)$.

Let $M \in R\filt$. If $M = \cup_{n\in \Z} F^n M$ then $FM$ is called {\em exhaustive}. If $\cap_{n\in \Z} F^n M = 0$ then $FM$ is called {\em separated}. The filtration topology of $M$ is the topology of $M$ such that the sets of the form $x + F^n M$ form a basis. We say $M$ is {\em complete} (with respect to its filtration topology) if $FM$ is separated and every Cauchy sequence converges.

We say $M  \in R\filt$ is {\em filt-free} if it is free as an $R$-module and has a basis $(e_j)_{j\in J}$ consisting of elements with the property that there exists a family of integers $(k_j)_{j\in J}$ such that $e_j \notin F^{k_j + 1}M,$ $j\in J$ and
\[
F^n M = \sum_{j\in J} (F^{n-k_j} R)e_j = \bigoplus_{j\in J} (F^{n-k_j} R) e_j,~ \forall n\in \Z.
\]
We say  $M  \in R\filt$ is {\em filt-projective} if it is a direct summand of a filt-free $R$-module in $R\filt.$

Let $R$ be a filtered ring with filtration $FR.$ Let $\gr R \defn \oplus_{n\in \N} F^nR /F^{n+1}R$ denote the associated graded ring. Let $M = \oplus_{n \in \Z} M_n$ and $N=\oplus_{n\in\Z}N_n$ be graded $\gr R$-modules. A graded morphism $f:M\ra N$ is called \emph{of degree $d$} if $f(M_n) \subseteq N_{n+d},$ $\forall n\in\Z.$ Any $M  \in R\filt$ gives a $\gr R$-module $\gr M \defn \oplus_{n\in \Z} F^n M / F^{n+1 }M$ which is called the associated graded module. It is clear that if $f : M\to N$ is a filtered morphism of degree $d$ then $f$ gives a graded morphism of degree $d,$ $\gr(f) : \gr M \to \gr N.$

We recall the following result of \cite[Thm.~VII.5]{N-O}.

\begin{lemma}\label{lemma:appendix-proj}
Let $R$ be an exhaustive complete filtered ring. Let $P_{g}$ be a finitely generated projective graded $\gr R$-module, then there is a (unique up to isomorphism) filt-projective module $P$ such that $\gr P=P_g$. If $M\in R\filt$ then for any graded morphism $h: P_g\ra \gr M$ of degree $d$, there is a filtered morphism $f: P\ra M$ of degree $d$ such that $h=\gr (f)$.
\end{lemma}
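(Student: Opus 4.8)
The plan is to deduce the statement from the standard technique of lifting idempotents along a filtration, which is also how \cite[Thm. VII.5]{N-O} is proved; I indicate the steps.

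First I would settle the \emph{filt-free} case. If $P_g\cong\bigoplus_{j\in J}\gr R(-k_j)$ with $J$ finite, set $P:=\bigoplus_{j\in J}R(-k_j)$; this is filt-free by definition, and a direct computation with the filtrations gives a canonical isomorphism $\gr P\cong P_g$. For the lifting statement here, let $h:P_g\to\gr M$ be graded of degree $d$, and let $\bar e_j$ be the image of $1$ in $\gr R(-k_j)$. Then $h(\bar e_j)\in F^{k_j+d}M/F^{k_j+d+1}M$; choosing any lift $m_j\in F^{k_j+d}M$ and declaring $f(e_j):=m_j$ defines a filtered morphism $f:P\to M$ of degree $d$ with $\gr(f)=h$, since $F^nR\cdot m_j\subseteq F^{n+k_j+d}M$.

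Next I would treat the general \emph{filt-projective} case by lifting an idempotent. A finitely generated projective graded $\gr R$-module $P_g$ is a graded direct summand of a finitely generated graded free module $F_g$, say $P_g=eF_g$ for an idempotent $e\in\End_{\gr R}(F_g)$ of degree $0$. Let $F$ be the finitely generated filt-free $R$-module with $\gr F\cong F_g$ from the previous step. Giving $\End_R(F)$ (and $\Hom_R(F,M)$) its natural filtration by order, one checks that its nonnegative part is an exhaustive complete filtered ring and that there is a natural graded isomorphism $\gr\big(\End_R(F)\big)\cong\End_{\gr R}(F_g)$; here finite generation of $F$ makes the comparison map surjective and completeness of $R$ makes it injective. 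Lift $e$ to some $e_0$ in the degree-zero part of $\End_R(F)$. Then $e_0^2-e_0$ lies in the degree-$\geq 1$ part, and since $\End_R(F)$ is complete the usual convergent recursion for lifting idempotents along a complete ideal (replace $e_0$ by $3e_0^2-2e_0^3$ and iterate) produces an idempotent $\tilde e$ with $\tilde e\equiv e_0$ modulo degree $1$. Then $P:=\tilde eF$ is filt-projective with $\gr P\cong eF_g=P_g$. The lifting statement for this $P$ follows by pre- and post-composing with the structure maps between $F$ and $P$: given $h:P_g\to\gr M$, lift the composite $\gr F\to P_g\to\gr M$ using the filt-free case, then restrict to $P$.

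For uniqueness of $P$ one argues formally: if $P'$ is another filt-projective with $\gr P'\cong P_g$, lift the two mutually inverse graded isomorphisms between $\gr P$ and $\gr P'$ to filtered morphisms of degree $0$ in both directions (using the lifting statement just proved); their two composites have associated graded the identity, hence are filtered automorphisms, because an exhaustively filtered complete module admits no degree-$0$ endomorphism that induces the identity on $\gr$ and fails to be invertible. The main obstacle is the bookkeeping in the filt-projective step, namely verifying that $\End_R(F)$ is a complete filtered ring and that formation of associated graded commutes with $\End$; this is exactly where the hypotheses that $P_g$ be finitely generated and that $R$ be exhaustive and complete enter, and without them the idempotent-lifting argument does not apply. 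The filt-free case and the uniqueness argument are routine manipulations with filtered modules.
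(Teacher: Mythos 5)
Your argument is correct and reconstructs the standard proof via idempotent lifting, which is precisely the route taken in the reference \cite[Thm.~VII.5]{N-O} that the paper cites in lieu of giving its own proof. The one point you keep implicit, and should state, is that the uniqueness step relies on $P'$ being complete and separated (so that a degree-zero endomorphism whose $\gr$ is the identity is invertible); this follows in the present setting because $\gr P' = P_g$ is finitely generated, so $P'$ is a direct summand of a \emph{finitely generated} filt-free module, hence complete.
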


\begin{lemma}\label{lemma:appendix-lift}
Let $M \in R\filt.$ Let
\[G_{\bullet}:\ \ \ 0\ra G_n \ra \cdots \ra G_1\ra G_0\ra \gr(M)\ra 0 \]
be a (degree zero graded morphism) resolution of $\gr(M)$ by graded projective $\gr R$-modules $G_i.$ Then there exists a filt-projective resolution of $M$
\[P_{\bullet}: \ \ \ 0\ra P_n \ra \cdots \ra P_1\ra P_0\ra M\ra0\]
such that $\gr(P_{\bullet})\cong G_{\bullet}$.
\end{lemma}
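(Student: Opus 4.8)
The plan is to build $P_\bullet$ recursively, peeling off kernels at each stage so that the complex relations $d_{i-1}d_i=0$ hold automatically, and then to read off exactness of $P_\bullet$ from exactness of $G_\bullet=\gr(P_\bullet)$. Throughout I will use the standard filtered-to-graded principle: if $f$ is a filtered morphism between complete and separated filtered modules and $\gr(f)$ is surjective (resp. bijective), then $f$ is a strict epimorphism (resp. an isomorphism), and a strict morphism $f$ satisfies $\gr(\Ker f)=\Ker(\gr f)$. This applies in all the situations where the present lemma is used, e.g. $R=\F[\![I_1/Z_1]\!]$ with the $\fm$-adic filtration and the $P_i$, $M$ (and their closed submodules) finite-dimensional or pseudo-compact, hence complete and separated.

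I would first treat the augmentation $G_0\twoheadrightarrow\gr(M)$, which is surjective since $G_\bullet$ is a resolution. By Lemma \ref{lemma:appendix-proj}, as $G_0$ is a finitely generated projective graded $\gr R$-module there is a filt-projective $P_0$ with $\gr(P_0)\cong G_0$, and the lifting part of that lemma (applied with target $M$) produces a filtered morphism $\epsilon\colon P_0\to M$ of degree $0$ with $\gr(\epsilon)$ the augmentation. Since $\gr(\epsilon)$ is surjective, $\epsilon$ is a strict epimorphism, so $K_0\defn\Ker(\epsilon)$ with its induced filtration satisfies $\gr(K_0)\cong\Ker(\gr(\epsilon))=\im(\bar d_1)$, where $\bar d_i$ denotes the $i$-th differential of $G_\bullet$. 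Then I iterate: given $P_0,\dots,P_{i-1}$ and filtered morphisms $\epsilon=d_0,d_1,\dots,d_{i-1}$ forming a complex, strict at each stage, with $\gr(d_j)=\bar d_j$, and with $K_{i-1}\defn\Ker(d_{i-1})$ satisfying $\gr(K_{i-1})\cong\im(\bar d_i)$, I lift $G_i$ to a filt-projective $P_i$ and lift the graded surjection $G_i\twoheadrightarrow\im(\bar d_i)=\gr(K_{i-1})$ to a filtered morphism $P_i\to K_{i-1}$ by Lemma \ref{lemma:appendix-proj}; composing with $K_{i-1}\hookrightarrow P_{i-1}$ gives $d_i\colon P_i\to P_{i-1}$ with $\gr(d_i)=\bar d_i$ and, since $\im(d_i)\subseteq K_{i-1}=\Ker(d_{i-1})$, with $d_{i-1}d_i=0$. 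As $\gr(P_i\to K_{i-1})$ is surjective, $P_i\to K_{i-1}$ is a strict epimorphism, so $K_i\defn\Ker(d_i)=\Ker(P_i\to K_{i-1})$ has $\gr(K_i)\cong\Ker(\bar d_i)=\im(\bar d_{i+1})$, which continues the recursion. At the last step $i=n$ one has $\Ker(\bar d_n)=0$, so $\bar d_n\colon G_n\xrightarrow{\sim}\im(\bar d_n)=\gr(K_{n-1})$ is an isomorphism, whence the lifted morphism $P_n\to K_{n-1}$ has bijective graded and is therefore an isomorphism; thus $d_n$ is injective with image $\Ker(d_{n-1})$.

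By construction $\gr(P_\bullet)\cong G_\bullet$ as complexes, each $P_i$ is filt-projective, $\epsilon$ is surjective, and the verifications above give $\Ker(d_i)=\im(d_{i+1})$ at every spot; hence $0\to P_n\to\cdots\to P_0\xrightarrow{\epsilon}M\to0$ is a filt-projective resolution of $M$ with the prescribed associated graded, proving the lemma. The one point I would want to state carefully — and the only real subtlety — is the filtered-to-graded principle and the completeness it requires: it is what makes ``take kernels and pass to $\gr$'' legitimate at each stage, hence what lets the recursion close up. Everything else is a repeated, routine application of Lemma \ref{lemma:appendix-proj}.
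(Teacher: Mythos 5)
Your proposal is correct and it spells out precisely the argument that the paper merely cites (the proof of Cor.~I.7.2.9 in Li--van Oystaeyen, with Lemma~\ref{lemma:appendix-proj} standing in for their Lem.~I.6.2): recursively lift each $G_i$ to a filt-projective $P_i$, lift the corestricted graded surjection $G_i\twoheadrightarrow\gr(K_{i-1})$ to a filtered morphism, take kernels with the induced filtration, and use that a filtered morphism of complete (Zariskian) modules with surjective/bijective associated graded is a strict epimorphism/isomorphism to propagate $\gr(K_i)=\Ker(\bar d_i)=\im(\bar d_{i+1})$ down the recursion. You also correctly flag the one hypothesis that the lemma statement leaves implicit but that the preceding Lemma~\ref{lemma:appendix-proj} supplies (complete exhaustive filtered ring, with the modules in the paper's applications being pseudo-compact, hence complete and separated), which is exactly what makes the graded-to-filtered transfer legitimate at each step.
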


\begin{proof}
The proof is similar to \cite[Cor.~I.7.2.9]{LiO}, using Lemma \ref{lemma:appendix-proj} as a replacement of \cite[Lem.~I.6.2]{LiO}.
\end{proof}

In general, $P_{\bullet}$ need not be minimal in the sense that the differential maps send $P_i$ to $\rad(P_{i-1})$. Next we give a practical condition so that $P_{\bullet}$ is (partially) minimal in the special case $R  =\FIwZ$. Let $\fm \defn\fm_{I_1/Z_1}$ and equip  $\FIwZ$   with the $\frak{m}$-adic filtration, namely  $F^n\FIwZ = \frak{m}^n \FIwZ$ for $n\geq 0.$ Any $\FIwZ$-module $M$ equipped with the $\frak{m}$-adic filtration, $F^nM=M$ for $n<0$ and $F^n M = \frak{m}^n M$ for  $n\in\N$, is then an object in $ \FIwZ\filt.$

For a character $\chi : I \to \F^{\times}$, let $P_{\chi}=\Proj_{I/Z_1}\chi$ equipped with the $\fm$-adic filtration. Consider
\[P=\oplus_{i=1}^rP_{\chi_i}(-a_i),\ \ Q=\oplus_{j=1}^{s}P_{\chi_j}(-b_j)\]
where $a_i,b_j\in\Z$ and let $d:P\ra Q$ be a filtered  morphism of degree zero. In general, the filtration on $P$ (or $Q$) does not coincide with its $\fm$-adic filtration. Denote by $d_{ij}:P_{\chi_i}\ra P_{\chi_j}$  the induced morphism of $\FIwZ$-modules
\[P_{\chi_i}\hookrightarrow P\overset{d}{\ra} Q\twoheadrightarrow P_{\chi_j}. \]

\begin{lemma}\label{lemma:appendix-minimal}
If  $a_i>b_j$ for any pair $(i,j)$ with $\chi_i=\chi_j,$ then $d(P)\subseteq \fm Q$.
\end{lemma}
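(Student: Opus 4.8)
The plan is to reduce the statement to showing that each component morphism $d_{ij}\colon P_{\chi_i}\to P_{\chi_j}$ satisfies $d_{ij}(P_{\chi_i})\subseteq \fm P_{\chi_j}$; granting this, for $v\in P_{\chi_i}(-a_i)$ the element $d(v)\in Q$ has $j$-th component $d_{ij}(v)\in\fm P_{\chi_j}$, so $d(P_{\chi_i}(-a_i))\subseteq\bigoplus_{j}\fm P_{\chi_j}=\fm Q$, and summing over $i$ gives $d(P)\subseteq\fm Q$. Throughout I will use the identification $\rad(\FIwZ)=\fm\,\FIwZ$: since $I/Z_1=H\ltimes(I_1/Z_1)$ with $|H|$ prime to $p$, the quotient $\FIwZ/\fm\,\FIwZ\cong\F[H]$ is semisimple, so $\rad(P_\chi)=\fm P_\chi$ for the projective cover $P_\chi=\Proj_{I/Z_1}\chi$ of any smooth character $\chi$ of $I$; in particular $P_\chi/\fm P_\chi\cong\mathrm{cosoc}(P_\chi)\cong\chi$.

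First I would treat the case $\chi_i\neq\chi_j$. Since $\mathrm{cosoc}(P_{\chi_i})\cong\chi_i$ and $\chi_j$ is simple, every $\FIwZ$-morphism $P_{\chi_i}\to\chi_j$ factors through $\chi_i$ and hence is zero, i.e. $\Hom_I(P_{\chi_i},\chi_j)=0$. Applying this to the composite $P_{\chi_i}\overset{d_{ij}}{\ra}P_{\chi_j}\twoheadrightarrow P_{\chi_j}/\fm P_{\chi_j}$ shows that this composite vanishes, so $d_{ij}(P_{\chi_i})\subseteq\fm P_{\chi_j}$. Here no hypothesis on $a_i,b_j$ is needed.

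Next, the case $\chi_i=\chi_j$, where by hypothesis $a_i>b_j$; here I would exploit that $d$ is a filtered morphism of degree $0$. By definition of the shift one has $F^n(P_{\chi_i}(-a_i))=F^{n-a_i}(P_{\chi_i})$ and $F^n(P_{\chi_j}(-b_j))=F^{n-b_j}(P_{\chi_j})$, with the convention $F^m(P_\chi)=P_\chi$ for $m\leq 0$ and $F^m(P_\chi)=\fm^mP_\chi$ for $m\geq0$. Restricting the inclusion $d(F^nP)\subseteq F^nQ$ to the $i$-th summand, projecting to the $j$-th, and taking $n=a_i$, we obtain
\[
d_{ij}(P_{\chi_i})=d_{ij}\bigl(F^0(P_{\chi_i})\bigr)\subseteq F^{a_i-b_j}(P_{\chi_j})=\fm^{a_i-b_j}P_{\chi_j}\subseteq\fm P_{\chi_j},
\]
the last inclusion using $a_i-b_j\geq 1$. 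Combining the two cases gives $d_{ij}(P_{\chi_i})\subseteq\fm P_{\chi_j}$ for all pairs $(i,j)$, whence the lemma.

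The argument is essentially bookkeeping, so I do not expect a serious obstacle. The only inputs that are not purely formal are the identification $\rad(P_\chi)=\fm P_\chi$ (which is where the prime-to-$p$ order of $H$ enters) and the need to track the filtration-shift conventions carefully, so that the degree-zero hypothesis translates into the crucial inequality $a_i-b_j\geq 1$ at exactly the right evaluation $n=a_i$.
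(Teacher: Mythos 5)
Your proof is correct and follows essentially the same route as the paper: reduce to the component morphisms $d_{ij}$, handle the case $\chi_i\neq\chi_j$ by noting that any morphism $P_{\chi_i}\to P_{\chi_j}$ lands in $\fm P_{\chi_j}$ since the cosocles are distinct, and handle $\chi_i=\chi_j$ by evaluating the degree-zero filtration inclusion at level $n=a_i$ to get $d_{ij}(P_{\chi_i})\subseteq\fm^{a_i-b_j}P_{\chi_j}\subseteq\fm P_{\chi_j}$. The only difference is that you spell out the identification $\rad(P_\chi)=\fm P_\chi$ via the semisimplicity of $\F[H]$, which the paper leaves implicit.
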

\begin{proof}
Fix $i$ and let $x\in P_{\chi_i}$. Since $d$ has degree $0$ and $x\in P_{\chi_i}=F^{a_{i}}(P_{\chi_i}(-a_{i}))$, we have
\[d(x)\in F^{a_{i}}Q=\oplus_{j=1}^sF^{a_{i}-b_j}P_{\chi_j}.\]
We claim that $d_{ij}(x)\in \fm P_{\chi_j}$ for all $1\leq j\leq s$.
If $\chi_j\neq \chi_i$, then any morphism $P_{\chi_i}\ra P_{\chi_j}$ must have image contained in $\fm P_{\chi_j}$. If $\chi_j=\chi_i$, we use the assumption
 $a_i>b_j$ to deduce that $d_{ij}(x)\in \fm^{a_i-b_j} P_{\chi_j}\subset \fm P_{\chi_j}$.
This finishes the proof.
\end{proof}

\section*{Acknowledgements}
We thank Christophe Breuil, Yiwen Ding, Guy Henniart, Vytautas Pa\v{s}k\=unas and Zicheng Qian for several discussions during the preparation of the paper, and Yitong Wang for his comments. Y. H. is very grateful to Christophe Breuil, Florian Herzig, Stefano Morra and Benjamin Schraen for the collaboration \cite{BHHMS} which has inspired him much in the late stages of the project, and both of us  thank them for their comments on a preliminary version of the paper. We are very grateful to the anonymous referee for several helpful corrections and suggestions, especially for pointing out a gap in the proof of Proposition \ref{prop-coker-no-sigma} and suggesting a simpler fix for it.

Y. H. has presented part of this work in the conference on ``The $p$-adic Langlands programme and related topics'' held at King's College London in May 2019 and the Padova school on ``Serre conjectures and $p$-adic Langlands program'' in June 2019. He thanks the organizers for the invitation and the institutes for the hospitality.
Y. H. thanks Ahmed Abbes for inviting him to I.H.\'E.S. for the period of November-December 2019 and I.H.\'E.S. for its hospitality. Part of this work was done during a visit of H. W. to  Morningside Center of Mathematics and he thanks Morningside Center of Mathematics for its hospitality.

Y. H. is partially supported by National Key R$\&$D Program of China 2020YFA0712600, National Natural Science Foundation of China Grants 12288201 and 11971028; National Center for Mathematics and Interdisciplinary Sciences and Hua Loo-Keng Key Laboratory of Mathematics, Chinese Academy of Sciences. H. W. is partially supported by National Natural Science Foundation of China Grants 11971028 and 11901331, Beijing Natural Science Foundation (1204032).


\newcommand{\etalchar}[1]{$^{#1}$}
\providecommand{\bysame}{\leavevmode\hbox to3em{\hrulefill}\thinspace}
\providecommand{\MR}{\relax\ifhmode\unskip\space\fi MR }
\providecommand{\MRhref}[2]{%
  \href{http://www.ams.org/mathscinet-getitem?mr=#1}{#2}
}
\providecommand{\href}[2]{#2}

 \end{document}